\newtheorem{thm}[subsubsection]{Theorem}
\newtheorem{defi}[subsubsection]{Definition}
\newtheorem{prop}[subsubsection]{Proposition}
\newtheorem{lem}[subsubsection]{Lemma}
\newtheorem{coro}[subsubsection]{Corollary}
\newtheorem{conj}[subsubsection]{Conjecture}
\theoremstyle{remark}
\newtheorem{rem}[subsubsection]{Remark}
\newtheorem{ex}[subsubsection]{Example}
\newtheorem{Principle}[subsubsection]{Principle}
\newcommand{\colim}{\mathrm{colim}}
\newcommand{\C}{\mathbb{C}}
\newcommand{\R}{\mathbb{R}}
\newcommand{\qq}{\mathbb{Q}}
\newcommand{\ZZ}{\mathbb{Z}}
\newcommand{\Z}{\mathbb{Z}}
\newcommand{\N}{\mathbb{N}}
\newcommand{\Spec}{\mathrm{Spec}}
\newcommand{\Spf}{\mathrm{Spf}}
\newcommand{\Spa}{\mathrm{Spa}}
\newcommand{\HH}{\mathrm{H}}
\newcommand{\ocal}{\mathcal{O}}
\newcommand{\oscr}{\mathscr{O}}
\def\WM{{}^MW}
\title{Higher Coleman Theory}
\author[G. Boxer]{George Boxer}  \email{george.boxer@universite-paris-saclay.fr} \address{Institut de math\'ematique d'Orsay,
Universit\'e Paris-Saclay,
B\^atiment 307, rue Michel Magat,
F-91405 Orsay Cedex,
France}
\author[V. Pilloni]{Vincent Pilloni}\email{vincent.pilloni@universite-paris-saclay.fr}\address{Institut de math\'ematique d'Orsay,
Universit\'e Paris-Saclay,
B\^atiment 307, rue Michel Magat,
F-91405 Orsay Cedex,
France}
\begin{document}

\maketitle

\begin{abstract}We develop local cohomology techniques to study the finite slope part of the coherent cohomology of Shimura varieties. The local cohomology groups we consider are a generalization of overconvergent modular forms, and they are defined by using a stratification on the Shimura variety obtained from the Bruhat stratification on a flag variety via the Hodge-Tate period map.  We construct a spectral sequence from the local cohomologies to the classical cohomology and use it to obtain classicality and vanishing results. We also develop a theory of $p$-adic families and construct eigenvarieties.  As an application, we prove some new properties of Galois representations arising from certain non-regular algebraic cuspidal automorphic representations.
\end{abstract}

\tableofcontents

\section{Introduction}

In the 90's, Coleman proved his classicality theorem for overconvergent $p$-adic modular forms \cite{MR1369416} and developed the theory of $p$-adic families of finite slope overconvergent modular forms \cite{1997InMat.127..417C}.  Subsequently, Coleman and Mazur constructed the eigencurve \cite{MR1696469}.  Since then, these works have been generalized to a wide class of Shimura varieties.

These Coleman theories concern classical and overconvergent modular forms, or zeroth cohomology of automorphic vector bundles on Shimura varieties.  Recently we have come to expect \cite{pilloniHidacomplexes}, \cite{BCGP}, \cite{loeffler2019higher}, \cite{Boxer-Pilloni} that there should be some analogous ``higher Coleman theories'' concerning the higher coherent cohomology of Shimura varieties.  The goal of this paper is to develop such theories for a rather general class of Shimura varieties.

\subsection{Setup}
Let $(G,X)$ be a Shimura datum of abelian type, so by definition $G/\qq$ is a reductive group and $X$ is a complex analytic space with an action of $G(\mathbb{R})$ satisfying a list of axioms (\cite{MR546620}). There are two opposite parabolic subgroups of $G$ attached to $(G,X)$, called $P_\mu$ and $P_\mu^{std}$, with common Levi $M_\mu$. The space $X$ embeds $G(\mathbb{R})$-equivariantly as an open subspace of  $FL_{G,\mu}^{std}(\C) = G/P_{\mu}^{std}(\C)$. This is the Borel embedding. 

For any neat compact open subgroup $K \subseteq G(\mathbb{A}_f)$, we let $S_K(\mathbb{C}) = G(\qq) \backslash X \times G(\mathbb{A}_f)/K$ be the corresponding Shimura variety over $\C$. This is a finite disjoint union of arithmetic quotients of $X$. 

Any representation of  $P_\mu^{std}$ defines a $G$-equivariant vector bundle over $FL_{G,\mu}^{std}$. Let $Z_s \subseteq G$ be the maximal torus of the center of $G$ which splits over $\mathbb{R}$ but contains no $\mathbb{Q}$-split subtorus.  We let $M_\mu^c = M_\mu/Z_s$. By pull back to $X$ and descent to $S_K(\mathbb{C})$, we obtain a functor from the category of representations of $M^c_\mu$ to the category of vector bundles on $S_K(\mathbb{C})$, called (totally decomposed) automorphic vector bundles. 
We make a choice of Borel subgroup $B$ contained in $P_\mu$, let $T$ be a maximal torus contained in this Borel, and let $T^c = T/Z_s$. We label irreducible representations of $M^c_\mu$ by their highest weight in $X^\star(T^c)^{M_\mu,+}$. For any weight $\kappa \in X^\star(T^c)^{M_\mu,+}$ we let $\mathcal{V}_{\kappa}$ be the corresponding vector bundle over $S_K(\C)$. 

The Shimura variety $S_K(\C)$ has a structure of an algebraic variety $S_K$ defined over a number field $E$, called the reflex field. For a combinatorial choice $\Sigma$ of cone decomposition, there are projective compactifications $S_{K,\Sigma}^{tor}$ whose (reduced) boundary $D_{K,\Sigma} =  S_{K,\Sigma}^{tor} \setminus S_K$ is a Cartier divisor. The vector bundles $\mathcal{V}_\kappa$ admit models over $S_K$ and canonical extensions $\mathcal{V}_{\kappa,\Sigma}$ to $S_{K, \Sigma}^{tor}$ (\cite{MR1044823}, \cite{harris-ann-arb}).  We also have $\mathcal{V}_{\kappa,\Sigma}(-D_{K,\Sigma})$, the so called sub-canonical extension (its sections are holomorphic cusp forms).

This paper is devoted to the study of the coherent cohomology of weight $\kappa$: $\HH^i(S^{tor}_{K,\Sigma}, \mathcal{V}_{\kappa,\Sigma})$, $\HH^i(S^{tor}_{K,\Sigma}, \mathcal{V}_{\kappa,\Sigma}(-D_{K,\Sigma}))$ as well as the interior cohomology:
$$ \overline{\HH}^i(S^{tor}_{K,\Sigma}, \mathcal{V}_{\kappa,\Sigma}) = \mathrm{Im} ( {\HH}^i(S^{tor}_{K,\Sigma}, \mathcal{V}_{\kappa,\Sigma}(-D_{K,\Sigma})) \rightarrow {\HH}^i(S^{tor}_{K,\Sigma}, \mathcal{V}_{\kappa,\Sigma})).$$
They are independent of the cone decomposition $\Sigma$ and we denote them simply by $\HH^i(K,\kappa)$, $\HH^i(K,\kappa,cusp)$, and $\overline{\HH}^i(K,\kappa)$.

We now fix a rational prime $p$ and pass to the $p$-adic theory.  We assume throughout this paper that $G_{\qq_p}$ is quasi-split (this assumption is necessary to have finite slope families over all of weight space).  We fix a finite extension $F$ of $\qq_p$ over which $G_{\qq_p}$ splits, and we assume that we have chosen $T\subseteq B\subseteq P_\mu$ so that $T$ and $B$ are defined over $\qq_p$ and $P_\mu$ is defined over $F$.  We now pass to analytic geometry and let $\mathcal{S}_{K,\Sigma}^{tor}/\Spa(F,\ocal_F)$ be the adic Shimura variety.  We assume for the rest of the introduction that $K=K^pK_p$ where $K_p\subset G(\qq_p)$ is a certain congruence subgroup (denoted $K_{p,1,0}$ in \ref{section-compact-open-subgroups}) possessing an Iwahori factorization
$$K_p=(\overline{U}(\qq_p)\cap K_p)\cdot T(\ZZ_p)\cdot (U(\qq_p)\cap K_p)$$
where $U$ and $\overline{U}$ denote the unipotent radicals of $B$ and the opposite Borel.  When $G_{\qq_p}$ is unramified, $K_p$ will be an Iwahori subgroup.

Our study of the $p$-adic properties of coherent cohomology is based on the Hodge-Tate period map introduced by Scholze \cite{scholze-torsion} and refined in \cite{MR3702677} and \cite{diao2019logarithmic}.  More specifically we will use the existence of a diagram
\begin{eqnarray*}
\xymatrix{\mathcal{S}_{K^p}\ar[r]^{\pi_{HT}}\ar[d]&\mathcal{FL}_{G,\mu}\ar[d]\\
\mathcal{S}_{K^pK_p}\ar[r]^{\pi_{HT,K_p}}&\mathcal{FL}_{G,\mu}/K_p}
\end{eqnarray*}
where $\mathcal{FL}_{G,\mu}=P_\mu\backslash G$ is the adic space associated to the Hodge-Tate flag space $FL_{G,\mu}=P_\mu\backslash G$.  Here we only view $\mathcal{FL}_{G,\mu}/K_p$ as a topological space, and the ``truncated'' Hodge-Tate period map $\pi_{HT,K_p}$ as a continuous map.  The map $\pi_{HT}$ is equivariant for the right actions of the group $G(\qq_p)$, while the map $\pi_{HT,K_p}$ is equivariant for the action of the Hecke algebra $\mathcal{H}_p=F[K_p\backslash G(\qq_p)/K_p]$ by correspondences.  It also extends to a map on the toroidal compactification $\pi_{HT,K_p}^{tor}:\mathcal{S}_{K^pK_p,\Sigma}^{tor}\to\mathcal{FL}_{G,\mu}/K_p$.  The basic idea in this paper is to use $\pi_{HT,K_p}^{tor}$ to define certain support conditions for coherent cohomology on the Shimura variety, and to study the dynamical properties of certain Hecke operators at $p$.

We introduce some further notation.  For a choice of $+$ or $-$ which we denote from now on by $\pm$, we consider the monoids
$$T^{\pm}=\{t\in T(\qq_p)\mid v(\alpha(t))\geq0,\forall \alpha\in\Phi^{\pm}\}$$
where $\Phi^+$ denotes the positive roots of $G$ with respect to $T\subseteq B$, and also the associated commutative subalgebras of the Iwahori Hecke algebra
$$\mathcal{H}_p^{\pm}=\mathrm{span}\{[K_ptK_p]\mid t\in T^\pm\} \subseteq\mathcal{H}_p.$$
These should be thought of as the algebra of ``$U_p$-type'' Hecke operators.  The reason we consider both the $+$ and $-$ algebras is in order to study duality (we note that $[K_ptK_p]$ and $[K_pt^{-1}K_p]$ are transposes as correspondences), but the reader might assume that all $\pm$'s are just $+$ on first reading.

We let $W$ be the (absolute) Weyl group of $G$ and $W_M\subseteq W$ the Weyl group of $M_\mu$.  We write $\ell:W\to\ZZ$ for the length function and we write $w_0\in W$ and $w_{0,M}\in W_M$ for the longest elements.  In this paper a crucial role will be played by the set $\WM\subseteq W$ of minimal length coset representatives for $W_M\backslash W$.  It has a unique longest element $w_0^M=w_{0,M}w_0$ of length $d=\dim S_K=\dim FL_{G,\mu}$.  We define $\ell_{\pm}:\WM\to[0,d]$ by $\ell_+=\ell$ and $\ell_-(w)=d-\ell(w)$.

The set $\WM$ will index the ``higher Coleman theories'' constructed in this paper.  We recall how it arises naturally in the study of coherent cohomology, both from the archimedean and $p$-adic points of view.  

The coherent cohomologies $\HH^i(K,\kappa)$ for $\kappa\in X^\star(T^c)^{M_\mu,+}$ can be computed in terms of automorphic forms on $G$ (\cite{harris-ann-arb}, \cite{su2018coherent}).  Cuspidal automorphic representations $\pi=\pi_\infty\otimes\pi_f$ contribute to coherent cohomology according to the archimedean component $\pi_\infty$.  The (essentially) tempered $\pi_\infty$ which contribute to coherent cohomology have been completely classified: according to work of Blasius-Harris-Ramakrishnan, Mirkovich, Schmid, and Williams, they are exactly the so called non-degenerate limits of discrete series (see \cite{harris-ann-arb}, theorems 3.4 and 3.5).  According to Harish-Chandra, for each element of $\WM$ there is a corresponding family of (limits of) discrete series representations, parameterized by the infinitesimal character (more accurately, this would be true if $G(\mathbb{R})$ were semisimple and simply connected).

We recall how the limits of discrete series contribute to coherent cohomology.  Beginning with a (dominant) ``C-algebraic'' infinitesimal character $\nu+\rho\in X^\star(T^c)_{\mathbb{R}}^+$ with $\nu\in X^\star(T^c)$, then for a choice of $w\in\WM$, if $\kappa_w=-w_{0,M}w(\nu+\rho)-\rho$ is $M_\mu$-dominant, then there is a (limit of) discrete series with infinitesimal character $\nu+\rho$ contributing to coherent cohomology in weight $\kappa_w$ and degree $\ell(w)$.  We recall some features of this description:
\begin{itemize}
\item The case of $w=1$ corresponds to the family of holomorphic (limits of) discrete series.  They contribute to degree zero coherent cohomology.
\item When $\nu+\rho$ is regular, $\kappa_w$ is automatically $M_\mu$-dominant, and the corresponding representations of $G(\mathbb{R})$ are discrete series.  Moreover a $\kappa\in X^\star(T^c)^{M_\mu,+}$ with $\kappa+\rho$ regular will be of the form $\kappa_w$ for a unique $\nu$ and $w\in\WM$, and so cusp forms which are tempered at $\infty$ contribute to cohomology in weight $\kappa$ in only a single degree.
\item When $\nu+\rho$ is not regular, there will be some subset of $\WM$ (possibly empty) for which $\kappa_w$ is $M_\mu$-dominant (we emphasize in particular that this set may be nonempty but not contain $1$, and this is a major motivation for studying higher coherent cohomology of Shimura varieties).  In this case the corresponding representations of $G(\mathbb{R})$ are limits of discrete series.  Beginning with a $\kappa\in X^\star(T)^+$ with $\kappa+\rho$ irregular, we will have $\kappa=\kappa_w$ for multiple $w\in\WM$ (with a range of lengths), and so the cohomology in weight $\kappa$ will have contributions from cuspidal automorphic representations tempered at $\infty$ in a range of degrees.
\end{itemize}
We note that in this paper, our study of coherent cohomology is purely geometric, and makes no use of the theory of automorphic forms or the results recalled above (except in the application to local-global compatibility for irregular automorphic representations in section \ref{subsec-localglobal}).  Nonetheless, it is useful to keep in mind, for interpreting the results that follow.

Now we pass to $p$-adic geometry, beginning with the Hodge-Tate flag space $FL_{G,\mu}=P_\mu\backslash G$. The set   $\WM$ indexes the Schubert stratification of $FL_{G,\mu}$:
$$FL_{G,\mu}=P_\mu\backslash G=\coprod_{w\in WM}P_\mu\backslash P_\mu w B$$
Passing to a more dynamical point of view, the set of fixed points for the action of $T$ on $FL_{G,\mu}$ is exactly
$$\mathrm{Fix}_T(FL_{G,\mu})=\{P_\mu w\mid w\in\WM\}$$
as indeed there is one $T$ fixed point in each Schubert cell.  A closely related fact is that the set of points of $\mathrm{FL}_{G,\mu}(\C_p)/K_p$ fixed by all of the ``$U_p$-type'' Hecke correspondences $[K_ptK_p]$ for $t\in T^\pm$ is
$$\{P_\mu w K_p\mid w\in \WM\}.$$
(Here we say that a point $xK_p\in \mathrm{FL}_{G,\mu}(\C_p)/K_p$ is fixed by $[K_ptK_p]$ means that $x\in xK_ptK_p$.)

According to Scholze-Weinstein \cite{MR3272049}, $FL_{G,\mu}(\C_p)/K_p$ should be thought of as the set of $p$-divisible groups ``with $G$-structure'' over $\ocal_{C_p}$, equipped with a $K_p$ level structure (on the generic fiber).  In particular, the points $P_\mu wK_p$ for $w\in\WM$ correspond to certain very special such $p$-divisible groups, which are fixed points for the action of the ``$U_p$-type'' Hecke correspondences.  We can now pass to the Shimura variety via the Hodge-Tate period map.  According to Caraiani-Scholze \cite{MR3702677}, the fibers of $\pi_{HT}$ are perfectoid Igusa varieties.  In this paper we will consider the ``Igusa varieties'' $(\pi_{HT,K_p}^{tor})^{-1}(P_\mu wK_p)\subset \mathcal{S}_{K,\Sigma}^{tor}$ at finite level.  We view them as some kind of fixed subspaces for the ``$U_p$-type'' Hecke correspondences.  We emphasize that in contrast to the situation at infinite level, they are not Zariski closed, and we do not try to give them any geometric structure (in fact, we will only consider neighborhoods of them, constructed using the Hodge-Tate period map).

We give two illustrative examples:
\begin{ex}If $(G,X)=(\mathrm{GSp}_{2g},\mathcal{H}_g^{\pm})$ is the Siegel Shimura datum and $K_p$ is Iwahori, then the corresponding Shimura varieties are moduli spaces of polarized abelian varieties of dimension $g$.  The polarized $p$-divisible groups with Iwahori level structures $(\mathcal{G},\lambda,\{H_i\}_{0\leq i\leq 2g})$ corresponding to the elements of $\WM$ are all ordinary (i.e. $\mathcal{G}=\mu_{p^\infty}^g\times (\qq_p/\ZZ_p)^g$) and the element $w\in\WM$ measures the relative position of the canonical subgroup $H_{can}=\mu_p^g$ with the symplectic flag $$0=H_0\subset H_1\subset\cdots\subset H_{2g}=\mathcal{G}[p]$$ giving the Iwahori level structure (alternatively, the $2^g$ elements of $\WM$ correspond to the $2^g$ possibilities that $H_i/H_{i-1}$ for $i=1,\ldots,g$ is either multiplicative or \'etale).  Passing to the Shimura variety, the ``Igusa varieties'' $\pi_{HT,K_p}^{-1}(P_\mu wK_p)$ are simply the (closures of the) corresponding components of the (quasi-compact, open) ordinary locus.
\end{ex}
\begin{ex}If $(G,X)=(\mathrm{Res}_{F/\qq}\mathrm{GL}_2,(\mathcal{H}_1^{\pm})^{[F:\qq]})$ is a Hilbert Shimura datum for $F$ a totally real field in which $p$ remains prime (for simplicity), then the corresponding Shimura varieties are (coarse) moduli spaces of abelian varieties of dimension $[F:\qq]$ with an action of $\ocal_F$.  In this case, $\WM=W=\{1,w\}^{\mathrm{Hom}(F,\C_p)}$.  For $I\subseteq\mathrm{Hom}(F,\C_p)$ we write $w_I\in W$ for the element which is $1$ in the factors corresponding to the elements of $I$.  Then the $p$-divisible group with $\ocal_F$-action and Iwahori level structure corresponding to $w_I$ is $(\mathrm{LT}_I\times\mathrm{LT}_{I^c},\mathrm{LT}_{I^c}[p])$ where $I^c=\mathrm{Hom}(F,\C_p)\setminus I$, and $\mathrm{LT}_I/\ocal_{\C_p}$ is the unique $p$-divisible group with $\ocal_F$-action of height $[F:\qq]$ and such that $\mathrm{Lie}_{LT_I}=\oplus_{\tau\in I}\C_p(\tau)$ as $F\otimes\C_p$ modules, where $\C_p(\tau)$ denotes the $F\otimes\C_p$ module on which $F$ acts via $\tau$.  The isogeny corresponding to quotienting by $\mathrm{LT}_{I}[p]$ witnesses these $p$-divisible groups as fixed points for the ``$U_p$-type'' Hecke correspondences.  In particular, in contrast to the previous example, only $1=w_{\mathrm{Hom}(F,\C_p)}$ and $w_0=w_\emptyset$ correspond to ordinary $p$-divisible groups.
\end{ex}

\subsection{Overconvergent cohomologies}
The first aim of this paper is the construction, for all $w\in\WM$, weights $\kappa\in X^{\star}(T^c)^{M_\mu,+}$, and choices $\pm$ of $+$ or $-$, of some finite slope, overconvergent cohomologies $\HH_w^i(K^p,\kappa)^{\pm,fs}$ and $\HH_w^i(K^p,\kappa,cusp)^{\pm,fs}$.  These spaces carry actions of the algebras $\mathcal{H}_p^\pm$ for which the ``$U_p$-type'' operators $[K_ptK_p]$ for $t\in T^\pm$ act invertibly (hence ``finite slope'') as well as actions of the prime to $p$ Hecke operators.

We explain the rough idea of the construction: we start with the complex $\mathrm{R}\Gamma_{\mathcal{U}\cap\mathcal{Z}}(\mathcal{U},\mathcal{V}_{\kappa,\Sigma})$ of cohomology with support, where $\mathcal{U}\cap\mathcal{Z}\subset\mathcal{S}_{K,\Sigma}^{tor}$ is a suitably chosen, locally closed neighborhood of the ``Igusa variety'' $\pi_{HT,K_p}^{-1}(w\cdot K_p)$.  The support conditions are chosen in order to have an action of the Hecke operators $[K_ptK_p]$ for $t\in T^{\pm}$, and so that furthermore sufficiently regular Hecke operators act compactly.  Then we use the spectral theory of compact operators to take the finite slope part, i.e. pass to the part where the Hecke operators $[K_ptK_p]$ for $t\in T^{\pm}$ act invertibly.  The space $\HH^0_1(K^p,\kappa)^{+,fs}$ is nothing but the usual space of finite slope overconvergent modular forms of weight $\kappa$.

The following theorem summarizes the basic results about the finite slope overconvergent cohomologies.

\begin{thm}\label{thm-overconvergent-introduction}
\begin{enumerate}
\item {\bf(Spectral sequence to classical cohomology)} There is a $\mathcal{H}_p^{\pm}$-equivariant spectral sequence
$$\mathbf{E}_1^{p,q}=\bigoplus_{w\in\WM,\ell_{\pm}(w)=p}\HH_w^{p+q}(K^p,\kappa)^{\pm,fs}\Rightarrow \HH^{p+q}(K,\kappa)^{\pm,fs}$$
and similarly for cuspidal cohomology.
\item {\bf(Analytic continuation)} The finite slope, overconvergent cohomologies $\mathrm{R}\Gamma_w(K^p,\kappa)^{\pm,fs}$ and $\mathrm{R}\Gamma_w(K^p,\kappa,cusp)^{\pm,fs}$, together with their Hecke action, can be computed on arbitrarily small neighborhoods of the ``Igusa variety'' $\pi_{HT,K_p}^{-1}(w\cdot K_p)\subseteq\mathcal{S}_{K,\Sigma}^{tor}$.  (See sections \ref{subsection-analytic-continuation} and \ref{section-overconvergent-cohomologies} for a precise statement.)
\item {\bf(Duality)} There is a perfect pairing
$$\langle,\rangle:\HH^i_w(K^p,\kappa,cusp)^{\pm,fs}\times \HH^{d-i}_w(K^p,\kappa^\vee)^{\mp,fs}\to F$$
such that for all $t\in T^\pm$, the Hecke operators $[K_ptK_p]$ on the left and $[K_pt^{-1}K_p]$ on the right are adjoint.  Moreover, these pairings are compatible, via the spectral sequence of part (1), with the classical Serre duality pairings
$$\HH^i(K,\kappa,cusp)\times \HH^{d-i}(K,\kappa^\vee)\to F.$$
(Here $\kappa^\vee:=-2\rho_{nc}-w_{0,M}\kappa$ where $2\rho_{nc}\in X^\star(T)$ is the sum of the positive roots of $G$ which are not roots of $M_\mu$.  The canonical bundle of $S^{tor}_{K,\Sigma}$ is $\mathcal{V}_{-2\rho_{nc},\Sigma}(-D_{K,\Sigma})$, and the dual of $\mathcal{V}_{\kappa,\Sigma}$ is $\mathcal{V}_{-w_{0,M}\kappa,\Sigma}$.)
\item {\bf(Vanishing)} We have 
\begin{eqnarray*}
\HH^i_w(K^p,\kappa)^{\pm,fs}&=&0\quad\mathrm{for}\quad i<\ell_{\pm}(w),\\
\HH^i_w(K^p,\kappa,cusp)^{\pm,fs}&=&0\quad\mathrm{for}\quad i>\ell_{\pm}(w),
\end{eqnarray*}
and so in particular the interior cohomology $\overline{\HH}_w^i(K^p,\kappa)^{\pm,fs}$ vanishes except when $i=\ell_{\pm}(w)$.
\end{enumerate}
\end{thm}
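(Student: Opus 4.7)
The plan is to derive all four statements from one geometric input---that $\pi_{HT,K_p}^{tor}$ pulls back the Schubert stratification of $\mathcal{FL}_{G,\mu}/K_p$ indexed by $\WM$ to a stratification of $\mathcal{S}_{K^pK_p,\Sigma}^{tor}$---together with the dynamical fact that for $t\in T^{\pm}$, the correspondence $[K_ptK_p]$ contracts any neighborhood of the ``Igusa variety'' $\pi_{HT,K_p}^{-1}(w\cdot K_p)$ into any smaller such neighborhood, and is compact on the cohomology complex for $t$ sufficiently regular in $T^{\pm}$. The finite slope part is then cut out by the spectral theory of these compact operators, as the maximal subspace on which the $[K_ptK_p]$ for $t\in T^{\pm}$ act invertibly.

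For (1), I filter $\mathcal{S}_{K^pK_p,\Sigma}^{tor}$ by neighborhoods of preimages $(\pi_{HT,K_p}^{tor})^{-1}\bigl(\bigcup_{\ell_{\pm}(w)\le p}P_\mu w B/K_p\bigr)$ of unions of Schubert cells, and run the spectral sequence of the associated filtered complex of local cohomologies with values in $\mathcal{V}_{\kappa,\Sigma}$. The $E_1$ page is by construction a direct sum over $w$ of local cohomologies supported on individual strata, and after passing to finite slope these become the $\HH_w^{p+q}(K^p,\kappa)^{\pm,fs}$; the abutment is $\HH^{p+q}(K,\kappa)^{\pm,fs}$ since the filtration is exhaustive. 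For (4), each $\HH^i_w$ is a local cohomology of a coherent sheaf supported on a locally closed neighborhood of codimension controlled by $\ell_{\pm}(w)$ in the flag variety direction, so the standard codimension bound on local cohomology of coherent sheaves gives vanishing below this degree; the dual vanishing above $\ell_{\pm}(w)$ for the cuspidal version follows formally from (3).

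For (2), once $[K_ptK_p]$ is invertible on the finite slope part, every class has iterated preimages, and the contraction property ensures that those preimages may be represented on arbitrarily small neighborhoods of the Igusa variety; this Coleman-style analytic continuation argument shows the finite slope cohomology is insensitive to the choice of tube. For (3), Serre duality on $\mathcal{S}_{K,\Sigma}^{tor}$ provides the global pairing between cuspidal and ``canonical'' coefficients, and under the Verdier-type duality that exchanges open and closed supports, this pairing restricts to a stratum-wise pairing $\HH^i_w(K^p,\kappa,cusp)^{\pm,fs}\times\HH^{d-i}_w(K^p,\kappa^\vee)^{\mp,fs}\to F$, with $\ell_+(w)$ and $\ell_-(w)=d-\ell_+(w)$ exchanged and each $[K_ptK_p]$ interchanged with its transpose $[K_pt^{-1}K_p]$. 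The main obstacle I anticipate is the geometric construction of neighborhoods of the Igusa varieties that are simultaneously stable and contractive under $[K_ptK_p]$ for all $t\in T^{\pm}$, and the verification that the resulting local duality is compatible with the filtration of (1) so as to recover classical Serre duality at the level of the spectral sequence.
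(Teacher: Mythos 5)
Your global outline---Bruhat-stratification filtration for (1), contraction dynamics plus compactness for (2), Serre duality plus compatibility with the filtration for (3), and vanishing for (4)---matches the paper's architecture for parts (1) and (2). However, there are two substantive issues, of which the first is a genuine gap.

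\textbf{On (4), the direction of your argument is reversed from the paper's, and the step you substitute does not hold as stated.} You propose to establish the lower bound $\HH^i_w(K^p,\kappa)^{\pm,fs}=0$ for $i<\ell_\pm(w)$ directly via a ``standard codimension bound on local cohomology'' applied to the support, and then to obtain the cuspidal upper bound formally by duality. The paper does the opposite: it proves the cuspidal upper bound $\HH^i_w(K^p,\kappa,cusp)^{\pm,fs}=0$ for $i>\ell_\pm(w)$ directly (Theorem \ref{theorem-coho-van}), and deduces the non-cuspidal lower bound via the perfect pairing (Theorem \ref{theorem-coho-van2}). This asymmetry is not cosmetic. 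The direct cuspidal argument relies essentially on the vanishing $\mathrm{R}^i\pi_\star\mathcal{V}_\kappa(-nD)=0$ for the projection to the minimal compactification (Theorem \ref{vanishingtominimal}), which holds for subcanonical but not for canonical coefficients; one then uses that the complement of the support inside a quasi-Stein preimage $\pi_{HT,K_p}^{-1}(]C_{w,k}[_{s,s-1})$ is covered by $\ell_+(w)$ acyclic opens, giving the upper bound by \v{C}ech. There is no parallel direct lower-bound argument for the non-cuspidal cohomology: the closed support is $(\pi_{HT,K_p}^{tor})^{-1}$ of a set like $]C_{w,k}[_{0,\overline 0}$, which is not a Zariski closed subvariety, has no well-defined algebraic codimension, and whose preimage fibres under $\pi_{HT}$ are Igusa varieties of varying structure. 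The depth/grade lower bound you invoke is a statement about local cohomology along Zariski closed subsets of Noetherian schemes; it does not transfer to this rigid analytic/adic setting with Hodge--Tate supports, and the paper carefully avoids relying on any such principle. You would need to first establish the cuspidal upper bound geometrically and then dualize, as the paper does, rather than the reverse.

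\textbf{On (3), the construction of the pairing is clear but the perfectness is the hard part and your sketch does not address it.} The existence of the pairing via cup products with supports and the trace map (Proposition \ref{prop-construction-cuprod} and Theorem \ref{thm-duality-GK}) is indeed routine, as is the identification of $[K_ptK_p]$ on one side with $[K_pt^{-1}K_p]$ on the other. But the perfectness statement (Theorem \ref{thm-perfect-pairing}) requires re-expressing $\mathrm{R}\Gamma_w(K^p,\kappa,cusp)^{\pm,fs}$ and $\mathrm{R}\Gamma_w(K^p,\kappa^\vee)^{\mp,fs}$ as finite slope parts of cones of (compactly supported) cohomology of dagger spaces (Propositions \ref{prop-dagger1}, \ref{prop-dagger2}, Corollary \ref{coro-dagger3}), and then invoking the topological duality between the relevant \v{C}ech complexes of Banach/nuclear Fr\'echet type afforded by Grosse-Kl\"onne--Kiehl duality for dagger affinoids, applied degreewise to slope-$\leq h$ pieces where everything becomes finite dimensional. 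You flag the compatibility with the filtration as the main anticipated obstacle, but the deeper issue is this finite slope perfectness; without it, (4)'s non-cuspidal lower bound cannot be concluded.

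Finally, a minor remark on (2): your phrasing ``the finite slope part is then cut out by the spectral theory of these compact operators, as the maximal subspace on which the $[K_ptK_p]$ for $t\in T^{\pm}$ act invertibly'' is correct, but the precise statement requires the machinery of allowed support conditions (Definition \ref{defi-support-condition}) and the infinite diamond (Section \ref{section-infinite-diamond}, Corollary \ref{coro-infinitediamond}, Theorem \ref{thm-finiteslopesupport}). Your sketch is consistent with this but glosses over the verification that the various cohomologies with different supports all lie in $\mathrm{Pro}_{\N}(\mathcal{K}^{proj}(\mathbf{Ban}(F)))$ and that the correspondences are represented by compact morphisms, which is where most of the technical labour sits.
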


The spectral sequence is simply the finite slope part of the spectral sequence of a filtration on $S_{K,\Sigma}^{tor}$ defined using $\pi_{HT}$ and the Bruhat stratification of the flag variety.  We note that if $K_p$ is Iwahori, then the ``$U_p$-type'' operators $[K_ptK_p]$ for $t\in T^{\pm}$ are already invertible in the Iwahori Hecke algebra $\mathcal{H}_p$, and hence the classical cohomology at Iwahori level is already ``finite slope'', i.e. $\HH^i(K,\kappa)^{\pm,fs}=\HH^i(K,\kappa)$.  The ``analytic continuation'' result follows from a dynamical study of the Hecke operators an their interactions with the support conditions.  The vanishing theorem is ultimately deduced from the affineness of the Hodge-Tate period map.

If the Shimura variety is compact, then the vanishing theorem implies that the overconvergent cohomologies $\mathrm{H}^\star_w(K^p,\kappa)^{\pm,fs}$ are concentrated in the single degree $\ell_{\pm}(w)$, and so the spectral sequence takes the particularly simple form of a single complex
$$ \HH^0_{Id/w_0^M}( K^p, \kappa)^{\pm,fs} \rightarrow \cdots \rightarrow \bigoplus_{w \in \WM,\ell_{\pm}(w)=i} \HH^{i}_{w}(K^p, \kappa)^{\pm,fs}\to \cdots \rightarrow \HH^{d}_{w_0^M/Id} ( K^p, \kappa)^{\pm,fs}$$
whose cohomology is the classical cohomology $\HH^i(K,\kappa)$.  We call this the Cousin complex of the Shimura variety, in analogy with the work of Kempf \cite{MR509802} on the cohomology of the flag variety.

\subsection{Slope bounds and classicality}

The key input for proving the classicality theorem is a lower bound for the slopes (i.e. $p$-adic valuations of eigenvalues) of the Hecke operators acting on finite slope, overconvergent cohomology.  More precisely we prove the following (see theorems \ref{thm-slopes} and \ref{thm-strongslopes-interior}).

\begin{thm}[Slope bounds]
For any $t\in T^{\pm}$ and any eigenvalue $\lambda$ of $[K_ptK_p]$ acting on $\overline{\HH}_w^i(K^p,\kappa)^{\pm,fs}$ we have
\begin{eqnarray*}
&v(\lambda)\geq v((w^{-1}w_{0,M}(\kappa+\rho)+\rho)(t))& \textrm{in the $+$ case}\\
&v(\lambda)\geq v((w^{-1}(\kappa+\rho)-\rho)(t))& \textrm{in the $-$ case}
\end{eqnarray*}
\end{thm}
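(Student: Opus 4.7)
The plan is to show that $[K_p t K_p]$ preserves a natural integral lattice in the local cohomology computing $\mathrm{R}\Gamma_w(K^p,\kappa)^{\pm,fs}$, and then to compute the minimal $T$-weight through which $t$ acts on that lattice.

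By the analytic continuation statement of Theorem \ref{thm-overconvergent-introduction}(2), we may replace $\mathrm{R}\Gamma_w(K^p, \kappa)^{\pm,fs}$ by $\mathrm{R}\Gamma_{\mathcal{U} \cap \mathcal{Z}}(\mathcal{U}, \mathcal{V}_{\kappa,\Sigma})$ for an arbitrarily small neighborhood $\mathcal{U}$ of $\pi_{HT,K_p}^{-1}(P_\mu w K_p)$. Working locally around the $T$-fixed point $P_\mu w \in \mathcal{FL}_{G,\mu}$, the opposite Bruhat cell through that point gives coordinates whose $T$-characters are the negatives of $w^{-1}\alpha$ for $\alpha$ ranging over the roots of the unipotent radical of $P_\mu$. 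These split, according to the sign of $w^{-1}\alpha$, into $\ell_\pm(w)$ \emph{tangential} directions (which right translation by $t \in T^\pm$ expands) and $\ell_\mp(w)$ \emph{normal} directions (which it contracts). The support condition defining $\mathcal{Z}$ cuts out the normal directions to be small, and then the dynamics of $t$ are exactly what is required for the correspondence $[K_p t K_p]$ to preserve an integral lattice $\mathcal{L}$ in the local cohomology.

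A lowest-slope $T$-eigenvector in $\mathcal{L}$ is furnished by the class of $\sigma \cdot \prod_\alpha z_\alpha^{-1}$, where the product runs over the normal coordinates and $\sigma$ is an extremal weight vector in the fiber of $\mathcal{V}_{\kappa,\Sigma}$ at $P_\mu w$: the lowest-weight vector of the $M_\mu$-representation of highest weight $\kappa$ in the $+$ case (contributing $T$-weight $w^{-1}w_{0,M}\kappa$) and the highest-weight vector in the $-$ case (contributing $w^{-1}\kappa$). The $T$-weight of $\prod_\alpha z_\alpha^{-1}$ is $\sum_\alpha (-w^{-1}\alpha)$ over the normal $\alpha$'s; a combinatorial identity for $w \in {}^MW$, using the decomposition of $\rho$ into its $M_\mu$- and unipotent-radical parts, reorganizes the sum of these two contributions into exactly $w^{-1}w_{0,M}(\kappa+\rho) + \rho$ in the $+$ case and $w^{-1}(\kappa+\rho) - \rho$ in the $-$ case. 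Since $\mathcal{L}$ is preserved, any eigenvalue $\lambda$ of $[K_p t K_p]$ has valuation at least the valuation of this character evaluated at $t$; the bound on $\overline{\HH}^i_w$ then follows because its eigenvalues form a subset of those on $\HH^i_w$.

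\emph{Main obstacle.} The hardest part is the construction of the Hecke-stable integral lattice $\mathcal{L}$ with the stated transparency: it must be compatible with the toroidal compactification near the boundary, stable under $[K_p t K_p]$, and have a $T$-weight decomposition sufficiently controlled that the minimal-slope summand can be identified. This relies on the integral models underlying the paper's analytic continuation result, combined with a careful study of the normal-coordinate dynamics. Once $\mathcal{L}$ is in place, the remaining $\rho$-bookkeeping reduces to standard root system identities using $\ell(w) = |w\Phi^- \cap \Phi^+|$ and the minimality of $w \in {}^MW$.
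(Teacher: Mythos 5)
Your proposal has a genuine gap, and it is worth being precise about where it lies, because it touches on one of the subtler points in the paper.

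You are attempting a direct lattice argument: exhibit a Hecke-stable lattice $\mathcal{L}$ in the local cohomology, then read off the minimal slope from a $T$-weight computation. The paper \emph{does} run exactly this kind of argument (Lemma \ref{lem-sheaf-map-integral} and Lemma \ref{lemma-construct-lattice}, leading to Theorem \ref{thm-slopes}), but it only yields the weaker bounds $v(\lambda)\geq v((w^{-1}w_{0,M}\kappa)(t))$ and, by duality, $v(\lambda)\geq v((w^{-1}w_{0,M}\kappa + w^{-1}2\rho_{nc})(t))$. The reason is that the normalization $p^{-\langle wv(t),\,w_{0,M}\kappa\rangle}$ that makes the cohomological correspondence preserve the lattice comes entirely from the action on the coefficient sheaf $\mathcal{V}_\kappa^+$; the trace map is used unrenormalized. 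The missing term $w^{-1}w_{0,M}\rho+\rho$, which separates the weak bound from the strong one, is the piece the paper attributes to ``the geometry of the correspondence (and in particular to the ramification of integral models of the correspondence)'' (see the remark after Conjecture \ref{conj-strongslopes}). Your claim that a ``combinatorial identity for $w\in{}^MW$'' reorganizes the local cohomology weight and the lowest-weight of $V_\kappa$ into $w^{-1}w_{0,M}(\kappa+\rho)+\rho$ is a correct \emph{heuristic} for the conjectural sharp bound (compare the character formula for the toy model, Lemma \ref{lem-BGG-character}), but it is not a proof of lattice stability: the Hecke correspondence acts through a trace map whose integral behaviour has not been shown to preserve your lattice with the $\rho$-shifted normalization, and the paper expressly leaves Conjecture \ref{conj-strongslopes} open for precisely this reason.

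The statement you are proving is about the \emph{interior} cohomology $\overline{\HH}^i_w$, and the paper proves it by a completely different mechanism that uses interiority essentially. The actual proof (Theorem \ref{thm-strongslopes-interior}) is a bootstrap via the eigenvariety: an eigensystem in $\overline{\HH}^{\ell_\pm(w)}_w$ lies on the maximal-dimensional interior eigenvariety $\mathcal{E}^!$ (Theorem \ref{thm-eigenvariety2}), so one can deform it, keeping the slope fixed, to a point in a large regular locally algebraic weight where the slope is small relative to the weight. That point is classical by Theorem \ref{thm-eigenvariety}(2), and for classical finite slope cohomology in regular weight the desired bound is known by Corollary \ref{coro-lafforgue-estimates}, which goes through Faltings's dual BGG spectral sequence and the elementary estimate for Betti cohomology (Proposition \ref{prop-lafforgue}). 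Your proposal never invokes the interior hypothesis, which is a structural sign that it cannot be correct as stated: a direct local argument of the type you describe, if valid, would prove the full Conjecture \ref{conj-strongslopes} for $\HH^i_w(K^p,\kappa)^{\pm,fs}$ and $\HH^i_w(K^p,\kappa,cusp)^{\pm,fs}$, which the paper explicitly cannot do.
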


Here $\rho\in X^\star(T)_\qq$ is half the sum of the positive roots of $G$ and, given an algebraic character $\mu\in X^\star(T)$, we may view it as a character $\mu:T(F)\to F^\times$, so that it makes sense to evaluate it on $t\in T(\qq_p)$ and take the valuation.  We remark that an eigenclass is ``ordinary'' if the inequalities of the theorem are in fact equalities.

Actually we expect (conjecture \ref{conj-strongslopes}) that the same bound holds for the full overconvergent cohomologies $\HH^i_w(K^p,\kappa)^{\pm,fs}$ and $\HH^i_w(K^p,\kappa,cusp)^{\pm,fs}$, but we were only able to prove a slightly weaker bound (theorem \ref{thm-slopes}).  The improved bound on interior cohomology above is proved by a bootstrapping trick using $p$-adic interpolation.

From the slope bound we can immediately deduce a classicality theorem in regular weight.  The idea is that if we fix $\kappa$, the lower bounds vary with $w\in\WM$, and so there will be a certain range of ``small slopes'' which can only occur for a single $w$.  Then by passing to the small slope part of the spectral sequence we will obtain an isomorphism between the small slope parts of the $w$ overconvergent and classical cohomologies.

\begin{thm}[Classicality theorem]\label{thm-classicity-intro} Let $\kappa \in X^\star(T^c)^{M,+}$ be a weight such that $\kappa + \rho$ is $G$-regular.  Let $w_+$ (resp. $w_-$) be the unique $w\in\WM$ such that $-w^{-1}w_{0,M}(\kappa+\rho)$ (resp. $w^{-1}(\kappa+\rho)$) is $G$-dominant.

Then we have  a quasi-isomorphism:
$$\mathrm{R}\Gamma_{w_\pm}(K^p, \kappa)^{\pm,sss^M(\kappa)} =  \mathrm{R}\Gamma(K^p, \kappa)^{\pm,sss^M(\kappa)}$$
and similarly for cuspidal cohomology.  Here the superscript is the ``strongly small slope part'', i.e. the part where the eigenvalues of the Hecke operators $[K_ptK_p]$ for $t\in T^{\pm}$ satisfy certain upper bounds (see section \ref{section-slope-conditions} for the precise definition).

If the Shimura variety is compact, then ``strongly small slope'' can be replaced by the weaker ``small slope'' condition $\pm,ss^M(\kappa)$ (again see section \ref{section-slope-conditions}).  Even if the Shimura variety is not compact it is still true that a small slope overconvergent cusp form (i.e. an element of $H^0_1(K^p,\kappa,cusp)^{+,ss^M(\kappa)})$ is classical.
\end{thm}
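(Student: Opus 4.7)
The plan is to pass to the (strongly) small slope part of the spectral sequence of Theorem~\ref{thm-overconvergent-introduction}(1) and show that the slope bounds force all columns except the one indexed by $w=w_\pm$ to vanish.

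The first step is combinatorial. Given $\kappa+\rho$ regular, I would verify that among $w\in\WM$, the characters $\mu_w^+ := w^{-1}w_{0,M}(\kappa+\rho)+\rho$ (respectively $\mu_w^- := w^{-1}(\kappa+\rho)-\rho$) appearing in the slope bound are strictly minimized on the interior of $T^+$ (resp.\ $T^-$) at a unique element, and that this minimizer is precisely $w_+$ (resp.\ $w_-$) as defined in the statement. This is a standard Weyl group calculation: the $G$-dominance characterization of $w_\pm$, together with the regularity of $\kappa+\rho$, translates into the strict inequality $v(\mu_w^\pm(t)) > v(\mu_{w_\pm}^\pm(t))$ for every $w \ne w_\pm$ and every $t$ sufficiently deep in $T^\pm$. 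This is exactly what is needed for the small slope condition $ss^M(\kappa)$ of Section~\ref{section-slope-conditions} (and its strengthening $sss^M(\kappa)$) to be well-defined and to separate the contributions of different $w$.

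With this in hand, I would take the strongly small slope part of the spectral sequence. The slope bound of Theorem~\ref{thm-slopes}, applied to each $\HH^i_w(K^p,\kappa)^{\pm,fs}$ and $\HH^i_w(K^p,\kappa,cusp)^{\pm,fs}$ for $w \ne w_\pm$, implies that the $sss^M(\kappa)$-part of each of these groups vanishes. Consequently the $E_1$-page collapses to the single column at $p = \ell_\pm(w_\pm)$, and one reads off the desired quasi-isomorphism $\mathrm{R}\Gamma_{w_\pm}(K^p,\kappa)^{\pm,sss^M(\kappa)} \simeq \mathrm{R}\Gamma(K^p,\kappa)^{\pm,sss^M(\kappa)}$, together with the cuspidal analogue. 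In the compact case $\HH^i_w = \overline{\HH}^i_w$, so the sharper bound of Theorem~\ref{thm-strongslopes-interior} is already available on the full overconvergent cohomology, and the same collapse argument goes through with the weaker condition $ss^M(\kappa)$.

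The main obstacle is that in the non-compact case the sharper interior slope bound of Theorem~\ref{thm-strongslopes-interior} is not (yet) known on the full overconvergent cohomology, and this is precisely why the strongly small slope condition, calibrated to the weaker bound of Theorem~\ref{thm-slopes}, must be invoked in the general statement. For the final assertion, that small-slope overconvergent cusp forms in degree zero are already classical even without the strengthening, the argument reduces via Serre duality (Theorem~\ref{thm-overconvergent-introduction}(3)) to a slope bound on a top-degree overconvergent cohomology of the dual weight $\kappa^\vee$, where the sharper interior bound of Theorem~\ref{thm-strongslopes-interior} can be brought to bear; alternatively, one can argue directly via analytic continuation (Theorem~\ref{thm-overconvergent-introduction}(2)), inductively extending a small-slope eigenform by applying the inverse of a sufficiently regular Hecke operator until it spreads out to a classical section.
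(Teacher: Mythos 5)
Your proof of the first two assertions follows the paper's own argument (Theorem \ref{thm-control-thm-Coleman}, proved via Corollary \ref{coro-theorem-slopes}) quite closely: pass to the $sss^M(\kappa)$-part of the spectral sequence of Theorem \ref{thm-spectral-sequence}, use Theorem \ref{thm-slopes} to kill every column $w\notin C(\kappa)^{\pm}=\{w_\pm\}$, and in the compact case replace $sss^M$ by $ss^M$ because $\HH^i_w=\overline{\HH}^i_w$ so the improved interior bound of Theorem \ref{thm-strongslopes-interior} applies. One small inaccuracy: the slope lower bounds $w^{-1}w_{0,M}(\kappa+\rho)+\rho$ for $w\in\WM$ are not totally ordered, so it is not quite right to say $w_+$ \emph{strictly minimizes} them; what actually matters, and what the definition of $ss^M(\kappa)$ encodes, is that $C(\kappa)^{+}$ is a singleton when $\kappa+\rho$ is regular, so the small-slope condition simply excludes every $w\neq w_+$.

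The final assertion — that an element of $\HH^0_1(K^p,\kappa,\mathrm{cusp})^{+,ss^M(\kappa)}$ is classical — is where your proposal has a genuine gap. Your Serre-duality route does not close the argument: the pairing of Theorem \ref{thm-pairing-over-coho} exchanges $\HH^0_1(K^p,\kappa,\mathrm{cusp})^{+,fs}$ with the \emph{full} overconvergent $\HH^d_1(K^p,\kappa^\vee)^{-,fs}$, not an interior group, while the improved slope bound of Theorem \ref{thm-strongslopes-interior} is established only for interior cohomology. Dualizing merely trades the $\HH^0$, $+$-theory classicality question for the equivalent $\HH^d$, $-$-theory one. Your alternative "direct analytic continuation" suggestion is too vague: the obstruction to spreading an overconvergent cuspform across the deeper strata is precisely the Cousin differential $d^0$, and one needs a reason for its vanishing. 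The paper's actual proof (Theorem \ref{thm-strong-classicality}) supplies exactly this: because the class starts out cuspidal, its image under $d^0$ lands in the \emph{interior} cohomologies $\overline{\HH}^1_w$ of the next Cousin column, where the improved interior slope bound \emph{is} available and forces vanishing under $ss^M(\kappa)$. The key observation you are missing is that cuspidality lets one replace the target of the Cousin differential by interior cohomology, which is what makes the sharper bound usable even though the source group $\HH^0_1(\ldots,\mathrm{cusp})$ is not itself interior.
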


\begin{rem} Many cases of this theorem for the degree $0$  cohomology of  PEL Shimura varieties  were already proven, see for example  \cite{MR1369416}, \cite{MR2219265}, \cite{MR2783930}, \cite{MR3488741}, \cite{MR3581176}, \cite{MR3742470}.
\end{rem}
\begin{rem} Even in cases of degree 0 cohomology where classicality theorems were already known, this theorem often improves on the slope bound.  For example in the Siegel case, with the usual labelling of weights $\kappa = (k_1, \cdots, k_g)$ with $k_1 \geq \cdots \geq k_g$, and for the usual ``minuscule'' Hecke operator $U_p$, the theorem states that any finite slope overconvergent cusp form of weight $\kappa$ and $U_p$-slope $< k_g-g$ is classical. The bound  previously proven in  \cite{MR2783930} and \cite{MR3488741} was $<k_g- \frac{g(g+1)}{2}$.  Holomorphic discrete series contribute to the $\HH^0$ in weights with $k_g>g$, while holomorphic limits of discrete series contribute when $k_g=g$.  Thus we cannot expect any ``numerical'' classicality results when $k_g=g$, while the theorem is non-vacuous for all $k_g>g$.
\end{rem}

\begin{rem}We conjecture that the theorem should hold with ``small slope'' instead of ``strongly small slope'' for all Shimura varieties.  This would follow from the conjectural improved slope bound \ref{conj-strongslopes}.  Moreover we believe that the ``small slope'' condition is optimal.  We remark that an ``ordinary'' cohomology class is small slope in all weights, but it may only be strongly small slope under an additional regularity condition.
\end{rem}

\begin{rem}
We cannot expect to have any ``numerical'' classicality criteria when $\kappa+\rho$ is not $G$-regular.  In this case, there are multiple $w\in\WM$ for which $-w^{-1}w_{0,M}(\kappa+\rho)$ (or $w^{-1}(\kappa+\rho)$) are $G$-dominant, and the lower bound on the slopes are the same for these $w$.  Thus we can impose some small slope condition to kill the contributions of the remaining $w\in\WM$ to the spectral sequence, but there will still be further cancellation among those that remain.
\end{rem}

\begin{ex}
We explicate all the results so far in the case of the modular curve, where everything was already done in \cite{Boxer-Pilloni}.  Then $\WM=W=\{1,w\}$ has only two elements, and the spectral sequence reduces to a four term exact sequence
$$0\rightarrow \HH^0(K,k)\rightarrow \HH^0_1(K^p,k)^{+,fs}\rightarrow \HH^1_w(K^p,k)^{+,fs}\rightarrow \HH^1(K,k)\to 0$$
relating the the classical coherent cohomology of the modular line bundle $\omega^k$, $k\in\ZZ$, to the two overconvergent cohomologies, and moreover $H^0_1(K,k)^{+,fs}$ is nothing but the usual space of finite slope overconvergent modular forms of weight $k$.

Then the slope bound says that the ``unnormalized $U_p$ operator'' $U_p^{naive}$ has slopes $\geq1$ on $\HH^0_1(K,k)^{+,fs}$, and $\geq k$ on $\HH^1_w(K,k)^{+,fs}$.  Motivated by this we renormalize the $U_p$ operator by setting $U_p=p^{-\min(1,k)}U_p^{naive}$, and then from the four term exact sequence above and the slope bounds, we immediately deduce Coleman's classicality theorem
$$\HH^0_1(K^p,k)^{<k-1}=\HH^0(K,k)^{<k-1}$$
in weights $k\geq 2$, as well as its ``$\HH^1$-analog''
$$\HH^1_w(K^p,k)^{<1-k}=\HH^1(K,k)^{<1-k}$$
in weights $k\leq 0$.  (Here the superscripts $<\star$ mean we take the slope $<\star$ part for the normalized $U_p$ operator.)
\end{ex}

Combining the slope bounds with the vanishing theorem for overconvergent cohomology, we obtain vanishing theorems for the small slope part of classical cohomology.  For any $\kappa \in X_\star(T^c)^{M_\mu,+}$, there is a range $[\ell_{\min}(\kappa), \ell_{\max}(\kappa)] \subseteq [0, d]$ where cuspidal automorphic representations which are (limits of) discrete series at $\infty$ contribute.  We recall a combinatorial description of this range: let $C(\kappa)^+ = \{ w \in W, -w^{-1}w_{0,M} (\kappa + \rho) \in X^\star(T)_{\qq}^+\}$ and then let $\ell_{\min}(\kappa) = \min_{w \in C(\kappa)^+} \ell(w)$, $\ell_{\max}(\kappa) = \max_{w \in C(\kappa)^+} \ell(w)$.  We note that when $\kappa+\rho$ is regular, then $C(\kappa)^+=\{w_+\}$ where $w_+$ is as in theorem \ref{thm-classicity-intro} above, and so $\ell_{\min}(\kappa)=\ell_{\max}(\kappa)=\ell(w_+)$ and hence the range is reduced to a single degree.

Then we have the following vanishing theorem for the small slope part of classical cohomology (theorems \ref{thm-12-main} and \ref{thm-strong-vanishing-ss}).

\begin{thm}[{Vanishing for classical cohomology}]\label{thm-1-intro}  For any $\kappa \in X_\star(T^c)^{M_\mu,+}$,
\begin{enumerate}
\item  $\overline{\HH}^i(K, \kappa)^{ss^M(\kappa)}$ is concentrated in the range  $[\ell_{\min}(\kappa), \ell_{\max}(\kappa)]$,
\item ${\HH}^i(K, \kappa, cusp)^{sss^M(\kappa)}$ is concentrated in the range  $[0, \ell_{\max}(\kappa)]$,
\item ${\HH}^i(K, \kappa)^{sss^M(\kappa)}$ is concentrated in the range  $[\ell_{\min}(\kappa),d]$.
\end{enumerate}
\end{thm}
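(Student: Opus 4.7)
The plan is to combine the Hecke-equivariant spectral sequence of Theorem \ref{thm-overconvergent-introduction}(1), the slope bounds of Theorems \ref{thm-slopes} and \ref{thm-strongslopes-interior}, and the overconvergent vanishing theorem \ref{thm-overconvergent-introduction}(4).

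First I would observe that the (strongly) small slope condition of section \ref{section-slope-conditions} is arranged so that, for each $w \in \WM$, the slope lower bound on $\HH_w^\bullet(K^p,\kappa)^{+,fs}$ (or on its cuspidal and interior variants) either lies below the cutoff (precisely when $w \in C(\kappa)^+$) or strictly above it. Thus the slope bound of Theorem \ref{thm-slopes} gives
$$\HH_w^i(K^p,\kappa)^{+,sss^M(\kappa)} \;=\; \HH_w^i(K^p,\kappa,cusp)^{+,sss^M(\kappa)} \;=\; 0 \quad\text{for } w \notin C(\kappa)^+,$$
and the improved bound of Theorem \ref{thm-strongslopes-interior} gives
$$\overline{\HH}_w^i(K^p,\kappa)^{+,ss^M(\kappa)} \;=\; 0 \quad\text{for } w \notin C(\kappa)^+.$$

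Next I would pass to the small slope (resp.\ strongly small slope) part of the spectral sequence of Theorem \ref{thm-overconvergent-introduction}(1), using that the slope decomposition is exact and commutes with the formation of the spectral sequence. By the previous step the $E_1$-page is concentrated in the columns $p=\ell(w)$ with $w \in C(\kappa)^+$. The overconvergent vanishing theorem \ref{thm-overconvergent-introduction}(4) then restricts the rows: for interior cohomology $\overline{\HH}_w^{p+q}$ is non-zero only for $q=0$, so the abutment is supported in total degree $\ell(w) \in [\ell_{\min}(\kappa),\ell_{\max}(\kappa)]$, proving (1). For cuspidal cohomology $\HH_w^{p+q}(K^p,\kappa,cusp)^{+,fs}=0$ for $q>0$, so the total degree is at most $\ell(w) \leq \ell_{\max}(\kappa)$; combined with the trivial vanishing in negative degrees this yields (2). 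For full cohomology $\HH_w^{p+q}(K^p,\kappa)^{+,fs}=0$ for $q<0$, so the total degree is at least $\ell(w) \geq \ell_{\min}(\kappa)$; combined with $\HH^i=0$ for $i>d$ this yields (3). (Alternatively, (3) can be deduced from (2) applied to $\kappa^\vee$ via the Serre duality of Theorem \ref{thm-overconvergent-introduction}(3).)

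The main obstacle is the compatibility check in the first step: one must verify that the cutoff defining $sss^M(\kappa)$ (respectively $ss^M(\kappa)$) is precisely the common value of the slope lower bound as $w$ ranges over $C(\kappa)^+$ (where that bound is minimal among all $w\in \WM$), so that the strictly larger bounds forced for $w \notin C(\kappa)^+$ already exceed the cutoff. Once this is confirmed from the definitions in section \ref{section-slope-conditions}, the remainder of the argument is bookkeeping on the $E_1$-page.
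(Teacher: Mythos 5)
Your proposal follows essentially the same approach as the paper: the slope bounds of Theorems \ref{thm-slopes} and \ref{thm-strongslopes-interior} give the vanishing $\HH^\bullet_w(\cdot)^{\pm,sss^M(\kappa)}=0$ (resp.\ $\overline{\HH}^\bullet_w(\cdot)^{\pm,ss^M(\kappa)}=0$) for $w\notin C(\kappa)^\pm$ (this is Corollary \ref{coro-theorem-slopes} and the argument after \ref{thm-strongslopes-interior}), and one then reads off the degree constraints from the $E_1$-page using the overconvergent amplitude bounds (Theorems \ref{theorem-coho-van}, \ref{theorem-coho-van2}, packaged as Propositions \ref{prop-deg-spect-sequ1}, \ref{prop-deg-spect-sequ2}). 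For part (1), the only thing you gloss over is the precise mechanism linking classical interior cohomology to the spectral sequence: interior cohomology is not an abutment, and the paper uses Corollary \ref{coro-concentration-interior}, which identifies $\overline{\HH}^p(K^p,\kappa,\chi)^{\pm,fs}$ as a subquotient of $\HH^p$ of the interior Cousin complex; once this is in hand, your degree bookkeeping goes through.
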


Again we conjecture that strongly small slope can be replaced by small slope in the second and third points, and this would follow from conjecture \ref{conj-strongslopes}.  We also prove a similar result at deeper levels at $p$, and give a more representation theoretic statement (see definition \ref{defi-small-slope-rep}).

\begin{rem} 
This result should be compared to vanishing theorems in \cite{MR3571345} or \cite{blasius-harris-ramak} where there is no small slope condition but instead a regularity condition on the weight $\kappa$.
\end{rem}

Using Faltings's dual BGG spectral sequence, we can also deduce vanishing theorems for de Rham and hence Betti cohomology.  Let $\nu \in X^\star(T^c)^+$ and let $W_\nu$ be the corresponding irreducible representation of $G$ and $W_\nu^\vee$ its contragredient.   We can attach to it a local system $\mathcal{W}^\vee_\nu$ on $S_{K}(\C)$. We have the Betti cohomology groups $\HH^\star(S_K(\C), \mathcal{W}_\nu^\vee)$, $\HH^\star_c(S_K(\C), \mathcal{W}_\nu^\vee)$ and the interior cohomology $\overline{\HH}^\star(S_K(\C), \mathcal{W}_\nu^\vee) = \mathrm{Im}( \HH^\star_c(S_K(\C), \mathcal{W}^\vee_\nu) \rightarrow \HH^\star(S_K(\C), \mathcal{W}^\vee_\nu))$.  We have the following (Theorems \ref{thm-12-main} and \ref{thm-strong-vanishing-ss}): 

\begin{thm}[Vanishing of Betti cohomology]\label{thm-2-intro} For any $\nu \in X_\star(T^c)^{+}$,
\begin{enumerate}
\item $\overline{\HH}^i(K, \mathcal{W}_\nu^\vee)^{ss_b(\nu)}$ is concentrated in the middle degree $d$, 
\item ${\HH}_c^i(K, \mathcal{W}_\nu^\vee)^{sss_b(\nu)}$ is concentrated in the range  $[0, d]$,
\item ${\HH}^i(K, \mathcal{W}_\nu^\vee)^{sss_b(\nu)}$ is concentrated in the range  $[d,2d]$.
\end{enumerate}
\end{thm}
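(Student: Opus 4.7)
\emph{Proof plan.} The plan is to deduce Theorem~\ref{thm-2-intro} from Theorem~\ref{thm-1-intro} via Faltings's dual BGG spectral sequence. For $\nu \in X^\star(T^c)^+$ this is a Hecke-equivariant spectral sequence expressing the de Rham (equivalently, Betti) cohomology $\HH^\star(S_K, \mathcal{W}_\nu^\vee)$ in terms of the coherent cohomologies $\HH^\star(S^{tor}_{K,\Sigma}, \mathcal{V}_{\kappa_w, \Sigma})$ indexed by $w \in \WM$, with $\kappa_w = -w_{0,M} w(\nu + \rho) - \rho$; analogous spectral sequences compute $\HH^\star_c$ using the sub-canonical extensions $\mathcal{V}_{\kappa_w, \Sigma}(-D_{K,\Sigma})$ and the interior cohomology. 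Because they are built from the log de Rham complex, these spectral sequences are equivariant not just for the prime-to-$p$ Hecke algebra but also for the $U_p$-type Hecke operators at $p$. With the standard normalization that sends the holomorphic (limits of) discrete series ($w=1$) to the middle Betti degree $d$, the $w$-th term contributes to total Betti degree $(d - \ell(w)) + q$ where $q$ is the coherent cohomological degree.

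I will define the Betti slope conditions $ss_b(\nu)$ and $sss_b(\nu)$ as the conjunctions, taken over $w \in \WM$ and suitably renormalized by the BGG position, of the coherent conditions $ss^M(\kappa_w)$ and $sss^M(\kappa_w)$. With these definitions in force every term of the BGG $E_1$-page inherits the corresponding coherent slope condition, and Theorem~\ref{thm-1-intro} then applies directly to each term: part~(2) kills cuspidal $E_1^{p,q}$ with $q > \ell_{\max}(\kappa_w)$, part~(3) kills $\HH^\star$ terms with $q < \ell_{\min}(\kappa_w)$, and part~(1) kills interior $E_1^{p,q}$ outside $[\ell_{\min}(\kappa_w), \ell_{\max}(\kappa_w)]$ under only the weaker condition $ss_b(\nu)$. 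Propagating these vanishings to the abutment through the total degree formula $(d - \ell(w)) + q$ and taking the union over $w \in \WM$ produces the claimed ranges: $[0,d]$ for $\HH^\star_c$, $[d, 2d]$ for $\HH^\star$, and $\{d\}$ for interior cohomology.

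The principal obstacle is the combinatorial verification that the abutment ranges really lie in those claimed. For regular $\nu$ the stabilizer $W_\nu \subseteq W$ of $\nu + \rho$ is trivial, $C(\kappa_w)^+ = \{w\}$, and Theorem~\ref{thm-1-intro} collapses each coherent range to $q = \ell(w)$; every $w$-term then contributes exactly in middle degree $d$, and the result is immediate. For irregular $\nu$ one has $C(\kappa_w)^+ = w W_\nu$, the ranges $[\ell_{\min}(\kappa_w), \ell_{\max}(\kappa_w)]$ can genuinely be wider than $\{\ell(w)\}$, and several $w \in \WM$ may share a common weight $\kappa_w$ at distinct BGG positions. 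The desired containments in this case must be seen by reducing to minimum-length representatives in the double coset $W_M \backslash W / W_\nu$, together with a length identity relating $\ell(w)$, $\ell_{\min}(\kappa_w)$ and $\ell_{\max}(\kappa_w)$, and by exploiting the $d_1$-cancellations among repeated-weight terms of the BGG spectral sequence (which is essential, in particular, for the tight middle-degree concentration in~(1)). This Weyl-combinatorial analysis is the hardest input of the proof.
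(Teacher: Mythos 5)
Your high-level strategy is the same as the paper's: pass through Faltings's dual BGG spectral sequence (Theorem~\ref{thm-H-dR}), note that it is equivariant for the $U_p$-type Hecke operators, observe that the definition of $ss_b(\nu)$ and $sss_b(\nu)$ is exactly the conjunction of the coherent conditions over $\WM$, and then feed in Theorem~\ref{thm-1-intro} (proved in the body as Theorems~\ref{thm-first-vanishing-sss} and, for the $ss^M$ version of the interior statement, \ref{thm-strong-vanishing-ss}). The degree bookkeeping $(d-\ell(w))+q$ is also correct for your parametrization.

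However, your last paragraph manufactures a difficulty that does not exist. For $\nu\in X^\star(T^c)^{+}$, the shifted weight $\nu+\rho$ is strictly dominant and hence $G$-regular: the stabilizer $W_\nu$ is trivial and each $C(\kappa_w)^{\pm}$ is a singleton for every weight $\kappa_w$ appearing in the BGG complex. Your ``irregular $\nu$'' branch is empty, and the promised ``Weyl-combinatorial analysis'' (minimum-length double coset representatives in $W_M\backslash W/W_\nu$, a length identity relating $\ell(w)$, $\ell_{\min}(\kappa_w)$, $\ell_{\max}(\kappa_w)$, cancellations among repeated-weight terms) is not needed because $\ell_{\min}(\kappa_w)=\ell_{\max}(\kappa_w)$ always here. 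Moreover, the dual BGG spectral sequence degenerates at $E_1$ (this is part of Theorem~\ref{thm-H-dR}), so there are no $d_1$-cancellations to exploit; the tight middle-degree concentration in part~(1) comes purely from the fact that each interior coherent cohomology group $\overline{\HH}^q(K,\kappa_w)^{ss^M(\kappa_w)}$ is concentrated in the single degree $q=\ell_{\min}(\kappa_w)=\ell_{\max}(\kappa_w)$, making the total Betti degree $(d-\ell(w))+q=d$ term by term. (The paper actually handles the interior cohomology via the bi-complex of Proposition~\ref{interior-deRham-cousin}, which avoids any questions about strictness of the Hodge filtration on the map $\HH_c\to\HH$; this is a point your plan glides over.) Finally, your claim that in the regular case ``every $w$-term then contributes exactly in middle degree $d$'' holds only for the interior cohomology; for $\HH_c$ and $\HH$ the coherent pieces lie in $[0,d-\ell(w)]$ and $[d-\ell(w),d]$ respectively, which is exactly why the conclusions for parts~(2) and~(3) are the ranges $[0,d]$ and $[d,2d]$ rather than $\{d\}$.
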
 

\begin{rem} The conditions $ss_b(\nu)$ and $sss_b(\nu)$ are the union of all conditions $ss^M(\kappa)$ and $sss^M(\kappa)$ respectively where $\kappa \in X_\star(T^c)^{M,+}$ runs through the set $\{-ww_0(\nu+\rho)-\rho, w \in \WM\}$.
\end{rem}

In \cite{MR3702677} and \cite{caraiani2019generic}, Caraiani and Scholze proved a similar concentration result for the Betti cohomology of  unitary Shimura varieties under a genericity condition for the action of the spherical Hecke algebra at a prime number $\ell$.  Their result is much more powerful because it also applies to the cohomology with  coefficients in a $p$-torsion local system for a prime $p\neq \ell$.  In this section we have seen three kinds of conditions that can be used to kill cohomology outside of the expected range of degrees: small slope at $p$, genericity at some prime $\ell$, and tempered at $\infty$.  We give the simplest example of a cohomology class outside of the expected degree, and explain how all of these conditions fail.

\begin{ex} Consider a compact Shimura curve $S_K$ associated to a quaternion algebra over $\qq$ split at $\infty$ and $p$.  Consider the constant function $1 \in \HH^0(S_K, \oscr_{S_K})$, which comes from the trivial automorphic representation. The degree $0$ cohomology is the ``wrong'' degree in this weight, in the sense that the interesting cohomology of $\oscr_{S_K} = \mathcal{V}_0$ sits in degree $1$.  We observe that the cohomology class $1$ is:
\begin{enumerate}
\item Non tempered at $\infty$, since $\mathrm{Vol}( \mathrm{PGL}_2(\R)) = \infty$.
\item Not small slope at $p$,   because the $U_p$ eigenvalue of the trivial representation is $p$, and $v(p) =1$. The small slope condition in weight $0$ is having $U_p$-eigenvalue of slope $< 1$. 
\item Not generic at any prime $\ell$, because the semi-simple conjugacy class attached to the trivial representation at $\ell$ via the local Langlands correspondence is $\mathrm{diag}( \ell^{\frac{1}{2}}, \ell^{-\frac{1}{2}})$. 
\end{enumerate}
\end{ex}

\subsection{Locally analytic cohomology, interpolation, and the eigenvariety} Our next results concern the $p$-adic interpolation of the overconvergent cohomologies over weight space, and the construction of the eigenvariety.  In the classical work \cite{MR1696469} (revisited in \cite{MR3265287} and \cite{MR3097946}), the eigencurve was constructed by interpolating the Hecke action on the spaces of overconvergent modular forms $\HH^0_{1}(K^p, k)^{+,fs}$ in the weight $k$. This approach of interpolating overconvergent modular forms  was  generalized to Siegel varieties in \cite{MR3275848} (see also \cite{MR3609197}, \cite{MR3563721}, \cite{MR3975499}, \cite{brasca2020padic},  \cite{birkbeck2020overconvergent} for further generalizations). 
In another direction, $p$-adic interpolation of the Betti cohomologies $\HH^i(S_K(\C), \mathcal{W}_\nu^\vee)$ was considered (and in this setting, one  considers general arithmetic quotients of locally symmetric space, not only Shimura varieties). See for example  \cite{ashstevensslopes}, \cite{MR2846490}, \cite{MR3692014}, \cite{MR2207783}. We also mention the case of algebraic modular forms where the group $G(\mathbb{R})$ is compact modulo center, which has been studied intensively, and for which there is no distinction between Betti and coherent cohomology (see for example \cite{MR2392353}, \cite{MR2075765}, \cite{MR2769113}). 

In this paper, we construct eigenvarieties by interpolating the local cohomologies $\HH^i_w(K^p,\kappa)^{\pm,fs}$.  For the case of overconvergent modular forms $\HH^0_{1}(K^p,\kappa)^{+,fs}$, we recover certain of the constructions recalled above. In this case, the improvement is that we are not assuming that the group $G_{\qq_p}$ is unramified or that there is a nonempty ordinary locus.

Let $\Lambda=\ZZ_p[[T^c(\ZZ_p)]]$ be the Iwasawa algebra and let $\mathcal{W} = \Spa(\Lambda,\Lambda) \times_{\Spa(\ZZ_p, \ZZ_p)} \Spa(\qq_p, \ZZ_p)$ be the weight space of continuous characters of $T^c(\ZZ_p)$.  In order to construct families, we must first replace the finite slope, overconvergent cohomologies $\HH^i_w(K^p,\kappa)^{\pm,fs}$, which are defined for algebraic weights $\kappa\in X^\star(T^c)^{+,M_\mu}$, with some finite slope, locally analytic overconvergent cohomologies $\HH^i_{w,an}(K^p,\nu)^{\pm,fs}$, which are defined for any $p$-adic weight $\nu:T^c(\ZZ_p)\rightarrow\mathbb{C}_p^\times$.

To define these cohomologies, we use the same support conditions, but we replace the automorphic vector bundles $\mathcal{V}_\kappa$ with certain Banach sheaves $\mathcal{V}_\nu^{an}$ and $\mathcal{D}_\nu^{an}$.  These Banach sheaves are defined on neighborhoods of $(\pi_{HT,K_p}^{tor})^{-1}(w\cdot K_p)$, using a reduction of the $M_\mu^c$-torsor which is used to define the sheaves $\mathcal{V}_\kappa$.  This reduction is defined using the Hodge-Tate period map (see section \ref{section-reduction}).  Locally, the sheaves $\mathcal{V}_\nu^{an}$ (resp. $\mathcal{D}_\nu^{an}$) are modeled on a locally analytic induction (resp. locally analytic distributions) of a certain profinite subgroup of $M_\mu(F)$ (actually with a fixed ``radius of analyticity'' which we suppress from the notation, as it will not matter when we pass to the finite slope part of cohomology).  We define $\HH^i_{w,an}(K^p,\nu)^{+,fs}$ using $\mathcal{V}_\nu^{an}$ and $\HH^i_{w,an}(K^p,\nu)^{-,fs}$ using $\mathcal{D}_\nu^{an}$.  We must consider both of the sheaves $\mathcal{V}_\nu^{an}$ and $\mathcal{D}_\nu^{an}$ in order to have a duality theory, and we note that here we break the symmetry between the $+$ and $-$ theories.

The finite slope, locally analytic overconvergent cohomologies $\HH^i_{w,an}(K^p,\nu)^{\pm,fs}$ carry a Hecke action of the monoid $T^\pm$ (see section \ref{subsubsec-hecke-action} for the definition), where one should think of the action of $t\in T^{\pm}$ as something like the ``$U_p$-type'' Iwahori Hecke operator $[K_ptK_p]$, except that one does not actually get an action of $\mathcal{H}_p^{\pm}$, as the ``diamond operators'' $t\in T(\ZZ_p)$ do not act trivially: in fact they act by $\nu(t)^{\pm}$.  The finite slope, locally analytic overconvergent cohomologies enjoy similar properties to parts (2),(3),(4) of theorem \ref{thm-overconvergent-introduction} (see section \ref{subsection-analytic-duality} for duality and theorems \ref{thm-one-half-conj} and \ref{theo-vanishing-big-sheaves} for vanishing).

We now explain the relation between the two cohomologies.  Fix $w\in\WM$ and let $\kappa\in X^\star(T^c)^{M_\mu,+}$ be an algebraic weight.  We may associate a $p$-adic weight $\nu:T(\ZZ_p)\to F^\times$, as follows: we first consider $\nu_{alg}=-w^{-1}w_{0,M}(\kappa+\rho)-\rho\in X^\star(T^c)$, take the associated character $T(F)\to F^\times$, and define $\nu$ to be its restriction to $T(\ZZ_p)$.  We emphasize that when $G$ does not split over $\qq_p$, neither $\kappa$ nor $w$ need be defined over $\qq_p$, and the (absolute) Weyl group $W$ need not act on $T^c(\ZZ_p)$ nor on $\mathcal{W}$.

From the construction we have maps of sheaves in a neighborhood of $\pi_{HT,K_p}^{-1}(w\cdot K_p)$
$$\mathcal{V}_{\kappa}\hookrightarrow\mathcal{V}_\nu^{an}\quad\mathrm{and}\quad\mathcal{D}_\nu^{an}\twoheadrightarrow\mathcal{V}_{\kappa^\vee}$$
which are locally modeled on the inclusion of an algebraic induction into a locally analytic induction and its dual.   Passing to cohomology with supports, we obtain maps
$$\HH^i_w(K^p,\kappa)^{+,fs}\rightarrow\HH^i_{w,an}(K^p,\nu)^{+,fs}(-\nu_{alg})$$
$$\HH^i_{w,an}(K^p,\nu)^{-,fs}(\nu_{alg})\to \HH^i_w(K^p,\kappa^\vee)^{-,fs}$$
where we have twisted the Hecke action of $T^\pm$ on the analytic cohomology in order to make these maps Hecke equivariant (the twist of $(-\nu_{alg})$ means that we multiply the action of $t\in T^+$ by $\nu_{alg}(t)^{-1}$.  Note that with this twist, $T(\ZZ_p)$ acts trivially.)

Then we can state our second classicality theorem (see corollary \ref{coro-class-overconvergent-analytic}).

\begin{thm}[Classicality at the level of the sheaf]
The two maps above become isomorphisms after passing to the $+,sss_{M,w}(\kappa)$ part in the $+$ case and the $-,sss_{M,w}(\kappa^\vee)$ part in the $-$ case.  If the Shimura variety is compact, the same statement is true with weaker conditions $+,ss_{M,w}(\kappa)$ and $-,ss_{M,w}(\kappa^\vee)$ (again see section \ref{section-slope-conditions} for the precise definitions of these small slope conditions).
\end{thm}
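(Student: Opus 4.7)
The plan is to reduce each of the two claimed isomorphisms to a vanishing statement for the cohomology of a kernel or cokernel sheaf on the Shimura variety, and then to establish this vanishing via the slope bounds of theorem \ref{thm-slopes}, exactly as the classicality theorem \ref{thm-classicity-intro} was deduced from them.

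First, working on a small enough neighborhood of the Igusa variety $(\pi_{HT,K_p}^{tor})^{-1}(w \cdot K_p)$ (using the analytic continuation property of theorem \ref{thm-overconvergent-introduction}(2)), the sheaf-level maps $\mathcal{V}_\kappa \hookrightarrow \mathcal{V}_\nu^{an}$ and $\mathcal{D}_\nu^{an} \twoheadrightarrow \mathcal{V}_{\kappa^\vee}$ fit into short exact sequences of Banach sheaves
$$0 \to \mathcal{V}_\kappa \to \mathcal{V}_\nu^{an} \to \mathcal{Q}_\nu^{an} \to 0 \quad \text{and} \quad 0 \to \mathcal{K}_\nu^{an} \to \mathcal{D}_\nu^{an} \to \mathcal{V}_{\kappa^\vee} \to 0,$$
each equivariant for the relevant $T^\pm$-action after the $\nu_{alg}$-twist. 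Passing to cohomology with supports and to finite slope, the two claimed isomorphisms become equivalent to the vanishing of $\mathrm{R}\Gamma_w(K^p, \mathcal{Q}_\nu^{an})^{+, sss_{M,w}(\kappa)}$ and $\mathrm{R}\Gamma_w(K^p, \mathcal{K}_\nu^{an})^{-, sss_{M,w}(\kappa^\vee)}$.

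Locally on the reduction of the $M_\mu^c$-torsor over our neighborhood, the sheaf $\mathcal{V}_\nu^{an}$ is modeled on locally analytic induction from a Borel of $M_\mu$, while $\mathcal{V}_\kappa$ corresponds to its algebraic sub-induction. Filtering by order of Taylor expansion at the identity, $\mathcal{Q}_\nu^{an}$ acquires an exhaustive filtration whose graded pieces are, as Banach sheaves, isomorphic to automorphic vector bundles $\mathcal{V}_{\kappa + \chi}$, where $\chi$ ranges over non-trivial $\mathbb{N}$-linear combinations of positive roots of $M_\mu$. The Hecke action on each graded piece is twisted from the standard one by the character $\chi$ of $T^+$: the operator $[K_p t K_p]$ is multiplied by a scalar of $p$-adic valuation $v(\chi(t))$, reflecting the dilation of the Taylor coordinates by $t \in T^+$. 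Applying the slope bound of theorem \ref{thm-slopes} to each $\mathcal{V}_{\kappa + \chi}$ and accounting for this extra twist, one finds that the slopes on $\mathrm{R}\Gamma_w(K^p, \mathrm{gr}^\chi \mathcal{Q}_\nu^{an})^{+,fs}$ exceed those predicted by the slope bound for $\mathcal{V}_\kappa$ by a strictly positive amount depending on $\chi$, calibrated so that the condition $sss_{M,w}(\kappa)$ precisely excludes them.

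Passing from the graded pieces to $\mathcal{Q}_\nu^{an}$ itself requires that the filtration interact well with cohomology with supports and with passage to the finite slope part; this is established via the spectral theory of compact operators, which shows that for any fixed slope bound only finitely many graded pieces contribute. The $-$ case is treated symmetrically using a dual filtration on $\mathcal{K}_\nu^{an}$, or alternatively deduced from the $+$ case via the duality pairing of theorem \ref{thm-overconvergent-introduction}(3). Finally, when the Shimura variety is compact, interior cohomology coincides with the full cohomology, so the improved slope bound \ref{thm-strongslopes-interior} applies directly, allowing the replacement of $sss_{M,w}$ by $ss_{M,w}$. The main difficulty I anticipate is the precise bookkeeping of the slope comparison: one must show that the twist on each graded piece exactly matches the shift in the slope bound when $\kappa$ is replaced by $\kappa + \chi$, so that the small slope condition for $\kappa$ really does kill every $\chi \neq 0$ contribution uniformly.
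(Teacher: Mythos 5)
There is a genuine gap, and it lies precisely in the step you set aside as mere bookkeeping. You claim that the cokernel sheaf $\mathcal{Q}_\nu^{an}$ (and dually the kernel) admits a filtration ``by order of Taylor expansion at the identity'' whose graded pieces are automorphic vector bundles $\mathcal{V}_{\kappa+\chi}$, and that one can then apply theorem \ref{thm-slopes} piece by piece. This fails for two reasons. First, the degree filtration on the locally analytic induction $V^{n-an}_{\kappa_A}$ is stable under the torus $T$ and under the contracting opposite unipotent part $\overline{N}_1$ of the Iwahori-type group $K^c_{p,w,M_\mu}\mathcal{M}^c_{\mu,n}$, but it is \emph{not} stable under the unipotent part $N_1$: after Iwahori-decomposing $h^{-1}\bar n = \bar n'\,t'\,n'$, the map $\bar n\mapsto\bar n'$ is a nonlinear change of coordinates on the big cell and the cocycle $(w_{0,M}\kappa_A)((t'n')^{-1})$ is nonconstant, so left translation by $N_1$ mixes degrees. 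Hence the degree filtration on the abstract representation does not descend to a filtration on the Banach sheaf built from the torsor $\mathcal{M}_{HT,n,K_p}$. Second, even where the filtration does make sense (for instance when $M_\mu$ is a torus), the graded pieces are weight spaces indexed by non-dominant characters $\kappa+\chi$: they are not finite-dimensional irreducible $M_\mu^c$-representations, so they do not define automorphic vector bundles $\mathcal{V}_{\kappa'}$ and theorem \ref{thm-slopes}, which is stated for those, cannot be applied to them.

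The proof in the paper avoids both problems by replacing your degree filtration with the \emph{locally analytic BGG resolution} (theorem \ref{thm-BGG-resolution}): a finite exact complex of $(M_1,T^{M,+})$-representations resolving $V_\kappa\otimes V_{\mathbf{1}}^{sm}$ whose terms are the locally analytic inductions $V^{lan}_{v\cdot\kappa}$ for $v\in W_M$, i.e.\ representations of exactly the type the sheaf machinery was built to handle. Sheafifying yields the spectral sequence of theorem \ref{spectral-sequence-lan-to-classical}, whose $E_1$-page is computed by locally analytic overconvergent cohomologies $\HH^q_{w,an}(K^p,\cdot)^{\pm,fs}$ twisted by $(((w_{0,M}w)^{-1}vw_{0,M}w)\cdot\nu_{alg})$, and the slope bound that actually applies there is theorem \ref{thm-slopes2} (the bound for locally analytic cohomology) rather than theorem \ref{thm-slopes}. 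Accounting for the twist, the condition $sss_{M,w}(\kappa)$ kills every term with $v\ne 1$, giving the isomorphism in one step with a finite complex and without any issue of group stability. Your remark on the compact case is essentially correct: there interior cohomology equals full cohomology, theorem \ref{theo-vanishing-big-sheaves} concentrates the cohomology in degree $\ell_\pm(w)$, and theorem \ref{thm-strongslopes-interior}(1) supplies the sharpened bound permitting $ss_{M,w}$ in place of $sss_{M,w}$.
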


\begin{rem}
We try to clarify the switch from $\kappa$ to $\nu$, and why it is really $\nu$ and not $\kappa$ that is the natural weight parameter on the eigenvariety.  The situation will become much clearer if we start by fixing a $G$-dominant $\nu_{alg}\in X^\star(T^c)^+$.  Then associated to each $w\in\WM$ we can consider $\kappa_w=-w_{0,M}w(\nu_{alg}+\rho)-\rho$.  Combining the two classicality theorems we have an isomorphism
$$\HH^i_{w,an}(K^p,\nu)^{+,sss_w(\nu_{alg})}=\HH^i(K,\kappa_w)^{+,sss_w(\nu_{alg})}$$
where the small slope condition $+,sss_w(\nu_{alg})$ is simply the combination of $+,sss^M(\kappa_w)$ and $+,sss_{M,w}(\kappa_w)$ (if the Shimura variety is compact, we can use the condition $ss(\nu_{alg})$, which is independent of $w$ and is the usual small slope condition in the algebraic modular form setting).

Now a stable $L$-packet where the archimedean components are discrete series with infinitesimal character $\nu_{alg}+\rho$ should give the same contribution to the cohomologies $\HH^{\ell(w)}(K,\kappa_w)$ for $w\in\WM$.  If the contributions to the $\HH^{\ell(w)}(K,\kappa_w)^{+,sss_w(\nu_{alg})}$ are nonempty, then we will see eigenclasses with the same system of Hecke eigenvalues in all of the $\HH^{\ell(w)}_{w,an}(K^p,\nu)^{+,fs}$.  These eigenclasses should all correspond to the same point of the eigenvariety, and thus we should use $\nu$, which is nothing but a $\rho$ shift of the dominant representative of the infinitesimal character, rather than the $\kappa_w$, which vary with $w\in\WM$.
\end{rem}

We finally turn to results about the eigenvariety $\pi:\mathcal{E}\rightarrow\mathcal{W}$.  The eigenvariety comes with  a map $T(\qq_p) \rightarrow \oscr_{\mathcal{E}}$ (compatible with the map $T(\ZZ_p) \rightarrow \ZZ_p[[T^c(\ZZ_p)]] \stackrel{\pi^\star} \rightarrow \oscr_{\mathcal{E}}$) and a map $\mathcal{H}^S \rightarrow \oscr_{\mathcal{E}}$ where $\mathcal{H}^S$ is the spherical Hecke algebra away from  the finite set of primes $S$ which contains $p$ and all primes $\ell$ where $K_\ell$ is not hyperspecial.  Points of the eigenvariety are pairs $( \lambda_p, \lambda^S)$ where $\lambda_p$ is a character of $T(\qq_p)$,  and $\lambda^S$ is a system of eigenvalues for the prime to $S$ spherical Hecke algebra.  The projection to $\mathcal{W}$ is given by  $\lambda_p \mapsto \nu = \lambda_p \vert_{T^c(\ZZ_p)}$.  When $\nu$ is associated to an algebraic weight $\nu_{alg}\in X^\star(T)$, we let $\lambda_p^{sm} = \lambda_p \nu_{alg}^{-1}$ be the smooth part of $\lambda_p$ (it is trivial on $T(\ZZ_p)$).  We have the following theorem on the existence and properties of the eigenvariety (Theorems \ref{thm-eigenvariety} and \ref{thm-eigenvariety2}): 
\begin{thm}\label{thm-eigenvariety-intro} 
 
 The eigenvariety $\pi:\mathcal{E} \rightarrow \mathcal{W}$ is locally quasi-finite and partially proper. It carries coherent sheaves $$\HH^k_{w,an}(K^p,\nu^{un})_{\mathcal{Z}}^{\pm,fs},\qquad \HH^k_{w,an}(K^p,\nu^{un}, cusp)_{\mathcal{Z}}^{\pm,fs}$$
 for all $w\in\WM$ and $k\in\ZZ$
 and they satisfy the following properties:
 \begin{enumerate} 
\item {\bf (Any classical, finite slope eigenclass gives a point of the eigenvariety)} For any $\kappa\in X^\star(T^c)^{M,+}$ and any system of Hecke eigenvalues $(\lambda^{sm}_p,\lambda^S)$ occurring in $\HH^i(K,\kappa)^{+,fs}$ there is a $w=w_Mw^M\in W$ with $w_M\in W_M$, $w^M\in\WM$, so that if $\nu:T^c(\ZZ_p)\to F^\times$ is associated with $\nu_{alg}=-w^{-1}w_{0,M}(\kappa_{alg}+\rho)-\rho$, then $(\lambda^{sm}_p \nu_{alg},\lambda^S)$ is a point of the eigenvariety $\mathcal{E}$ which lies in the support of $\HH^k_{w^M,an}(K^p,\nu^{un})_{\mathcal{Z}}^{+,fs}$ for some $k$.  The analogous result hold for the cuspidal and $-$ cohomologies.

\item {\bf (Small slope points of the eigenvariety in regular, locally algebraic weights are classical)} Conversely if $\nu_{alg}\in X^\star(T^c)^+$ is a $G$-dominant weight and $(\lambda_p,\lambda^S)$ is a point of $\mathcal{E}$ in weight $\nu$ and in the support of $\HH^k_{w,an}(K^p,\nu^{un})_{\mathcal{Z}}^{+,fs} $ for some $w\in\WM$, and if $\lambda_p^{sm}$ satisfies $+,sss_w(\nu)$ then $(\lambda^{sm}_p,\lambda^S)$ occurs in $\HH^i(K,\kappa)^{+,fs}$ for $\kappa=-w_{0,M}w(\nu_{alg}+\rho)-\rho$ and some $i$.  The analogous result hold for the cuspidal and $-$ cohomologies.

\item {\bf (Serre duality interpolates over the eigenvariety)} We have pairings: 
 $$\HH^k_{w,an}(K^p,\nu^{un})_{\mathcal{Z}}^{\pm,fs} \otimes \HH^{d-k}_{w,an}(K^p,\nu^{un}, cusp)_{\mathcal{Z}}^{\mp,fs} \rightarrow  \pi^{-1} \oscr_{\mathcal{W}}.$$
and these pairings are compatible with Serre duality under the classicality theorem. 

\item{\bf (Interior cohomology classes deform over the weight space)} Any interior cohomology eigenclass $c \in \overline{\HH}^\star(K,  \kappa)^{\pm,fs}$ defines a point on  a component of the eigenvariety of dimension equal to the dimension of the weight space. 
\end{enumerate} 
\end{thm}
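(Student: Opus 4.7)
The plan is to construct $\mathcal{E}$ via the Coleman--Mazur--Buzzard eigenvariety machine applied to the locally analytic overconvergent cohomologies, and then to deduce each of the four listed properties from the results already developed in the paper. I would work over an admissible affinoid cover $\{\mathcal{U}\}$ of $\mathcal{W}$, replacing the Banach sheaves $\mathcal{V}^{an}_\nu$, $\mathcal{D}^{an}_\nu$ by their universal analogs $\mathcal{V}^{an}_{\nu^{un}}$, $\mathcal{D}^{an}_{\nu^{un}}$ over $\mathcal{U}$. The dynamical analysis establishing that a sufficiently regular $t\in T^{\pm}$ acts compactly on supported cohomology near $\pi_{HT,K_p}^{-1}(w\cdot K_p)$ is uniform in $\mathcal{U}$, so Buzzard's formalism produces, for each $w\in\WM$ and each $\pm$, a local piece of a spectral variety carrying coherent sheaves $\HH^k_{w,an}(K^p,\nu^{un})^{\pm,fs}_{\mathcal{Z}}$. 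Gluing over $\mathcal{U}$ and taking the reduced support of the sum of these sheaves produces $\mathcal{E}$; local quasi-finiteness, partial properness, and the maps $T(\qq_p)\to \mathcal{O}_{\mathcal{E}}$ and $\mathcal{H}^S\to \mathcal{O}_{\mathcal{E}}$ then follow from the formalism.

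Parts (1) and (2) would be direct consequences of the spectral sequence of Theorem \ref{thm-overconvergent-introduction}(1) combined with the classicality theorem at the level of the sheaf stated just above. For (1), a classical eigenclass $c\in\HH^i(K,\kappa)^{+,fs}$ with system $(\lambda_p^{sm},\lambda^S)$ is detected by the spectral sequence in some $\HH^i_{w^M}(K^p,\kappa)^{+,fs}$ with $w^M\in\WM$; the sheaf-level map $\HH^i_{w^M}(K^p,\kappa)^{+,fs}\to \HH^i_{w^M,an}(K^p,\nu)^{+,fs}(-\nu_{alg})$ for $\nu_{alg}=-(w^M)^{-1}w_{0,M}(\kappa_{alg}+\rho)-\rho$ then produces the desired point of $\mathcal{E}$, while the extra factor $w_M\in W_M$ in the statement absorbs the freedom to shift by an element of $W_M$ inside the infinitesimal character. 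Part (2) is the converse: at a regular algebraic weight, a small-slope point of $\mathcal{E}$ lies in the image of the sheaf-classicality map, hence comes from a classical eigenclass.

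Part (3) is obtained by interpolating the Serre duality pairing of Theorem \ref{thm-overconvergent-introduction}(3) over $\mathcal{W}$. The sheaves $\mathcal{V}^{an}_{\nu^{un}}$ and $\mathcal{D}^{an}_{\nu^{un}}$ are set up as dual families, so running the same formalism in the relative setting over each $\mathcal{U}$ produces a pairing valued in $\pi^{-1}\mathcal{O}_{\mathcal{W}}$; compatibility at algebraic specializations with classical Serre duality is inherited from the compatibility of the sheaf-classicality maps with the classical pairing.

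The main obstacle is Part (4). The plan is to localize at the point $x\in\mathcal{E}$ defined by an interior eigenclass $c\in\overline{\HH}^{\ell_{\pm}(w)}(K,\kappa)^{\pm,fs}$---where the vanishing theorem concentrates interior cohomology in the single degree $\ell_{\pm}(w)$---and to use the pairing of Part (3) to constrain the local geometry. That pairing restricts to a pairing of the two germs $\HH^{\ell_{\pm}(w)}_{w,an}(K^p,\nu^{un})^{\pm,fs}_{\mathcal{Z},x}$ and $\HH^{d-\ell_{\pm}(w)}_{w,an}(K^p,\nu^{un},cusp)^{\mp,fs}_{\mathcal{Z},x}$, both finite over $\mathcal{O}_{\mathcal{W},\pi(x)}$ and both carrying faithful actions of $\mathcal{O}_{\mathcal{E},x}$. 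Since $c$ is interior, its image under the pairing is nonzero, forcing these germs to contain submodules which are torsion-free (generically free) over $\mathcal{O}_{\mathcal{W},\pi(x)}$. A standard argument in the style of Bella\"iche--Chenevier then forces at least one irreducible component of $\mathcal{E}$ through $x$ to surject onto a neighborhood of $\pi(x)$ in $\mathcal{W}$, hence to have dimension $\dim\mathcal{W}$. The delicate point, which is where most of the technical work must go, is verifying that the pairing at the coherent-sheaf level really does restrict non-degenerately to the image of the interior class $c$ rather than only to its enveloping overconvergent class; this requires carefully identifying interior cohomology inside locally analytic overconvergent cohomology and tracing its compatibility with the relative Serre pairing.
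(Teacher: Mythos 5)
Your outline for parts (1)–(3) matches the paper's strategy in its essentials: construct $\mathcal{E}$ via Buzzard's formalism from the universal locally analytic overconvergent cohomology sheaves, deduce (1) and (2) by chaining the spectral sequences (overconvergent-to-classical, locally-analytic-BGG, and — though you leave it implicit — the Tor spectral sequence for specialization from the universal sheaf over $\mathcal{W}$ to a fixed weight, which is what actually produces a point in the \emph{support} of the coherent sheaf rather than merely a class in a single fiber), and deduce (3) by interpolating the Serre pairing and invoking compatibility of the sheaf-level control maps. You are right that the $w_M\in W_M$ factor is produced by the BGG spectral sequence of Theorem~\ref{spectral-sequence-lan-to-classical}, whose $E_1$ terms are indexed by elements of $W_M$.

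For part (4), however, your route is genuinely different from the paper's, and the delicate point you flag at the end is not a minor technicality but the crux — and the paper deliberately avoids it. You propose a Bella\"iche--Chenevier style argument: localize at the point $x$ determined by the interior eigenclass $c$, feed the Serre pairing of (3) into the two germs, argue that the non-vanishing of $\langle c,-\rangle$ forces a torsion-free $\oscr_{\mathcal{W},\pi(x)}$-submodule, and conclude that some component through $x$ surjects onto $\mathcal{W}$. The step ``since $c$ is interior, its image under the pairing is nonzero'' is exactly what you would need to establish, and it requires (a) producing a canonical lift of $c$ into the locally analytic overconvergent sheaf and (b) showing this lift pairs non-trivially with the corresponding lift on the dual side — both over the base $\mathcal{W}$, not just in a single fiber. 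Nothing in your sketch fills this in, and it is not clear how one would do so without essentially reproving the paper's structural results.

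The paper's argument (Theorems~\ref{theo-vanishing-big-sheaves}, \ref{thm-eigenvariety2} and Proposition~\ref{prop-properties-interior}) proceeds in the opposite order and is considerably more direct. It first proves a strong amplitude bound: for every field-valued weight the cuspidal locally analytic overconvergent cohomology $\HH^i_{w,an}(K^p,\nu,cusp)^{\pm,fs}$ is concentrated in degrees $[0,\ell_\pm(w)]$ (a direct vanishing theorem via formal models and the affineness of $\pi_{HT}$), and by duality over the residue field the full cohomology $\HH^i_{w,an}(K^p,\nu)^{\pm,fs}$ sits in degrees $[\ell_\pm(w),d]$. Over an affinoid of $\mathcal{W}$ the relevant slope-bounded cohomology is computed by a perfect complex $C^\bullet$; since the fibers of $C^\bullet$ vanish below degree $\ell_\pm(w)$, the truncation $\tau_{\geq\ell_\pm(w)}C^\bullet$ is again perfect, and its lowest cohomology is torsion-free over a reduced base. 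Thus $\HH^{\ell_\pm(w)}_{w,an}(K^p,\nu^{un})^{\pm,fs}$ is torsion-free over $\oscr_{\mathcal{W}}$ \emph{globally}, the interior subsheaf $\overline{\HH}^{\ell_\pm(w)}_{w,an}(K^p,\nu^{un})^{\pm,fs}$ (defined as the image of cuspidal in full) is torsion-free as a subsheaf, and its support is a union of components of dimension $\dim\mathcal{W}$. The interior eigenclass then lands in this support because: the specialization map for the cuspidal sheaf at $\nu$ is an isomorphism (vanishing $+$ Tor spectral sequence), and the vanishing theorems combined with the BGG spectral sequence identify $\overline{\HH}^{\ell_\pm(w)}_w(K^p,\kappa,\chi)^{\pm,fs}$ with a subspace (resp. a quotient) of $\overline{\HH}^{\ell_\pm(w)}_{w,an}(K^p,\nu)^{\pm,fs}$ in the $+$ (resp. $-$) case, so the eigensystem of $c$ appears in a fiber of the torsion-free interior sheaf. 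No non-degeneracy of the pairing at $c$ is ever invoked.

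So: your parts (1)–(3) would compile after supplying the Tor spectral sequence step; your part (4) has an identified but unresolved gap, which the paper circumvents by proving torsion-freeness structurally (amplitude bound $+$ perfect complex truncation) rather than via non-degeneracy of the pairing on the interior class.
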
 

\begin{rem} It is plausible that the eigenvariety $\mathcal{E}$ coincides with an eigenvariety constructed via Betti cohomology interpolation, as in \cite{MR3692014}. One can also believe that there is a $p$-adic Eichler-Shimura theory relating both constructions. See \cite{MR3315057}, as well as some forthcoming work of Juan Esteban Rodriguez. 
\end{rem}

\begin{rem} We can define certain $\pi^{-1} \oscr_{\mathcal{W}}$-torsion free sheaves $\overline{\HH}^{\ell_{\pm}(w)}_{w,an}(K^p, \nu^{un})_{\mathcal{Z}}^{\pm,fs}$ over the eigenvariety. This sheaves interpolate the various modules of the interior Cousin complex (which can be used to compute the interior cohomology). For all $w \in \WM$,  we let $\mathcal{E}_w^!$ be the support of $\overline{\HH}^{\ell_{\pm}(w)}_{w,an}(K^p, \nu^{un})_{\mathcal{Z}}^{\pm,fs}$, which is a union of  irreducible components of the eigenvariety of dimension equal to $\dim~\mathcal{W}$. Any interior cohomology class lifts to a point on  $\cup_{w \in \WM} \mathcal{E}^!_w$. 
\end{rem}

 \begin{rem} For $\mathrm{GSp}_4/L$ with $L$ a totally real field, variants or special cases of this theory are considered in \cite{pilloniHidacomplexes}, \cite{BCGP}, \cite{loeffler2019higher}.
\end{rem}

\begin{rem} The point $(4)$ of the theorem is an advantage of the method we use to construct the eigenvarieties. Such a result was only available in a limited number of cases (e.g. Shimura sets and automorphic forms contributing to  cuspidal coherent $\HH^0$).
\end{rem}

Finally, using point $(4)$ we can give a new construction of Galois representations associated to certain automorphic representations realizing in the coherent cohomology of Shimura varieties, but not in the Betti cohomology. This construction is via $p$-adic interpolation, and yields new results on local-global compatibility at $p$.  In \cite{F-Pilloni}, section 9, we defined a certain class of cuspidal automorphic representations for the group $\mathrm{GL}_n/L$ where $L$ is a totally real or CM number field.  These are called weakly regular, odd, algebraic, essentially (conjugate) self dual, cuspidal automorphic representations. Informally, the weak regularity condition means that we allow the weights of the infinitesimal character  to repeat at most twice. 

  \begin{thm} Let $\pi$ be a  weakly regular, odd, algebraic, essentially (conjugate) self dual, cuspidal automorphic representation of $\mathrm{GL}_n/L$ with $\pi^c = \pi^\vee \otimes \chi$ and infinitesimal character $\lambda = ( ( \lambda_{1, \tau}, \ldots, \lambda_{n, \tau})_{\tau \in \mathrm{Hom}(L,\C)})$ with $\lambda_{1, \tau} \geq \cdots \geq \lambda_{n, \tau}$. 
  Then for each isomorphism $\iota:\overline{\qq}_p\simeq \C$ there is a continuous
  Galois representation $\rho_{\pi, \iota}: G_L \rightarrow \mathrm{GL}_n(\overline{\qq}_p)$
  such that:
  \begin{enumerate}
  \item $\rho_{\pi, \iota}^c \simeq \rho^\vee \otimes \epsilon_p^{1-n} \otimes \chi_{\iota}$ where $\chi_{\iota}$ is the $p$-adic realization of $\chi$.
 \item $\rho_{\pi, \iota}$ is unramified at all finite places $v \nmid p$ for which $\pi_v$ is unramified and one has
$$\iota WD (\rho_{\pi, \iota} \vert_{G_{L_v}})^{F-ss} \simeq \mathrm{rec} (\pi_v \otimes \vert \det \vert_v^{\frac{1-n}{2}}).$$
\item $\rho_{\pi, \iota}$ has generalized $\iota^{-1}\circ\tau$-Hodge--Tate weights $(-\lambda_{n,\tau} + \frac{n-1}{2}, \cdots,
 -\lambda_{1,\tau} + \frac{n-1}{2})$.
 \item Let $v \mid p$ be a place of $L$ and assume that $\pi_v$ is a regular principal series. Then $\rho_{\pi,\iota}\vert_{G_{L_v}}$ is potentially crystalline and 
 $$\iota WD (\rho_{\pi, \iota} \vert_{G_{L_v}})^{F-ss} \simeq \mathrm{rec} (\pi_v \otimes \vert \det \vert_v^{\frac{1-n}{2}}).$$
\end{enumerate}
 \end{thm}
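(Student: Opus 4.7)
The plan is to realize an avatar of $\pi$ in the interior coherent cohomology of an appropriate unitary similitude Shimura variety, and then to deform along the eigenvariety of Theorem~\ref{thm-eigenvariety-intro} to regular-weight classical points where the attached Galois representations are already known. First, after passing through a suitable CM extension of $\qq$, I would choose a unitary similitude group $G/\qq$ of PEL type whose associated Shimura datum is of abelian type and whose endoscopic transfer (via descent of $\pi$ to an essentially conjugate self-dual representation $\tilde\pi$ over an appropriate CM field, using Mok, Kaletha--Minguez--Shin--White and Labesse) produces a cuspidal automorphic representation $\Pi$ of $G(\mathbb{A})$ with base change $\tilde\pi$. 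Since $\pi$ is algebraic and weakly regular, $\Pi_\infty$ is a non-degenerate limit of discrete series, so by the Blasius--Harris--Ramakrishnan/Mirkovich/Schmid/Williams classification recalled in the introduction, $\Pi$ contributes to $\overline{\HH}^{\ell(w)}(K,\kappa)$ for some $w\in\WM$ and weight $\kappa\in X^\star(T^c)^{M,+}$ attached to the infinitesimal character $\lambda+\rho$ via $\kappa=-w_{0,M}w(\lambda+\rho)-\rho$.

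\textbf{Interpolation and construction of $\rho_{\pi,\iota}$.} Fixing a refinement of $\Pi_p$ (i.e.\ a choice of a Hecke eigensystem on the Iwahori invariants) produces a finite slope interior eigenclass $c\in\overline{\HH}^{\ell(w)}(K,\kappa)^{+,fs}$. By point (4) of Theorem~\ref{thm-eigenvariety-intro}, this class gives a point $x$ lying on a component $\mathcal{E}_w^!$ of the eigenvariety of dimension $\dim\mathcal{W}$. I would then appeal to a standard accumulation argument (as in Coleman--Mazur and Bella\"\i che--Chenevier): the regular-weight, small-slope classical points accumulate at $x$ and are Zariski dense on $\mathcal{E}_w^!$. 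By point (2) of the same theorem, each such nearby classical point arises from a regular-weight cohomological automorphic form of $G$, to which a Galois representation with full local-global compatibility away from $p$ has been attached by the combined works of Shin, Chenevier--Harris, Caraiani, and Harris--Lan--Taylor--Thorne. Interpolating the traces of Frobenius at unramified places through the structure map $\mathcal{H}^S\to\oscr_{\mathcal{E}}$ and passing to the limit at $x$ produces a continuous $n$-dimensional pseudo-character of $G_L$, whose associated semisimple representation is $\rho_{\pi,\iota}$. Properties (1) and (2) then follow from Chebotarev density and the behaviour of transfer/base change under local Langlands, while property (3) follows from the continuity of Hodge--Tate--Sen weights (Sen, Berger--Colmez) combined with the explicit interpolation of the weight character $\nu^{un}$ along $\mathcal{E}_w^!$.

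\textbf{Local-global compatibility at $p$ and the main obstacle.} Property (4) is the most delicate. By the $p$-adic interpolation theory of trianguline representations of Kedlaya--Pottharst--Xiao, the family of Galois representations $\rho^{un}\vert_{G_{L_v}}$ over (a neighborhood in) $\mathcal{E}_w^!$ is trianguline at each $v\mid p$, with triangulation parameters interpolating the refinement encoded by the map $T(\qq_p)\to\oscr_{\mathcal{E}}$. When $\pi_v$ is a regular principal series, the refinement at $x$ is non-critical: the graded pieces of the triangulation have distinct Hodge--Tate--Sen weights, matching the regularity of the principal series parameter. By results of Kisin and Bella\"\i che--Chenevier (in the form extended to $\mathrm{GL}_n$ by Liu, Hellmann, and Breuil--Hellmann--Schraen), a trianguline representation with distinct non-critical parameters is automatically potentially crystalline with the expected Weil--Deligne parameter, yielding (4). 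The hard part here is precisely to verify that the density argument of the previous paragraph can be carried out entirely within the small-slope locus required by point (2) of Theorem~\ref{thm-eigenvariety-intro}: the \emph{weakly regular} hypothesis (infinitesimal character weights repeating at most twice) is exactly what ensures that the gaps in the slope bounds of Theorem~\ref{thm-slopes} separate the contributions indexed by different $w\in\WM$ sufficiently to isolate the system of Hecke eigenvalues of $\Pi$ and to guarantee that the approximating regular-weight classical points really lie in the predicted single cohomological degree $\ell(w)$. A secondary bookkeeping difficulty, important but routine, is to track carefully the $\frac{1-n}{2}$ twist and the comparison between the normalizations of Hodge--Tate weights and of local Langlands on the unitary group and on $\mathrm{GL}_n/L$.
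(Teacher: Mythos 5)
Your overall strategy is the same as the paper's: base change so that $L$ is CM with primes above $p$ split in $L/L^+$ and $\pi_v$ of finite slope, descend $\pi$ to a cuspidal automorphic representation $\Pi$ of a quasi-split unitary similitude group $G/\qq$ with PEL Shimura datum (conditional on the endoscopy results announced in Moeglin--Waldspurger), observe that $\Pi_\infty$ is a non-degenerate limit of discrete series so that $\Pi$ contributes to interior coherent cohomology, lift the resulting finite slope eigenclass to the interior eigenvariety $\mathcal{E}_w^!$ of full dimension $\dim\mathcal{W}$, interpolate a pseudo-character from the dense locus of small slope classical points in regular weight (where Theorem~\ref{thm-regular-galois} applies), and extract local information at $p$ by $p$-adic interpolation.

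The genuine divergence is in the treatment of point (4). You invoke the global triangulation machinery of Kedlaya--Pottharst--Xiao, Liu, Hellmann, Breuil--Hellmann--Schraen to conclude from trianguline-ness plus ``non-criticality'' that $\rho_{\pi,\iota}|_{G_{L_v}}$ is potentially crystalline with the expected Weil--Deligne parameter. The paper instead appeals to Kisin's interpolation of crystalline periods, as packaged for $\mathrm{GL}_n$ by Jorza and Mok: the regular principal series hypothesis forces $\mathrm{rec}(\pi_v\otimes|\det|^{(1-n)/2}_v)$ to be a direct sum of $n$ \emph{distinct} characters, and one repeats the period-interpolation argument $n$ times (once per refinement) to exhibit enough crystalline periods and conclude. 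These two roads lead to the same place, but your version as stated has a gap at precisely the point where the weak regularity hypothesis bites: at a point of irregular weight the Hodge--Tate--Sen weights of the graded pieces of the triangulation are \emph{not} distinct (since some $\lambda_{i,\tau}$ repeat), so the assertion ``the graded pieces of the triangulation have distinct HTS weights'' and the ensuing non-criticality claim are wrong. What is distinct (by the regular principal series hypothesis) is the $\varphi$-parameters, not the weights. This is why the paper emphasizes the ``finding its periods one at a time'' formulation, which only uses distinctness of the refinement characters and is insensitive to repeated HT weights. To repair your argument you would need to either replace ``non-critical'' by an argument that directly applies at irregular weight (e.g.\ combining the $n$ distinct triangulation parameters with a dimension count), or fall back to the Kisin-style crystalline period argument as in the paper.

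A secondary correction: your closing paragraph attributes to the weak regularity hypothesis the role of ``separating contributions indexed by different $w$ in the slope bounds''. That is not its function here. Weak regularity enters only at the outset, via the Blasius--Harris--Ramakrishnan/Mirkovich/Schmid/Williams classification, to ensure that $\Pi_\infty$ is a non-degenerate limit of discrete series and hence that $\Pi$ contributes to interior coherent cohomology for some $w\in\WM$. Once the eigenclass exists, Theorem~\ref{thm-eigenvariety2}~(4) unconditionally produces a point on a component $\mathcal{E}_w^!$ of dimension $\dim\mathcal{W}$; the density of regular-weight, small-slope classical points on that component then follows because the slope is locally bounded while the small-slope threshold grows linearly in the weight, with no further appeal to weak regularity.
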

 
Here the hypothesis that $\pi_v$ is a regular principal series in part (4) is a ``$p$-distinguished'' condition, which in particular implies that $\mathrm{rec} (\pi_v \otimes \vert \det \vert_v^{\frac{1-n}{2}})$ is a sum of $n$ distinct characters, so that we can prove that $\rho_{\pi,\iota}|G_{L_v}$ is potentially crystalline by finding its periods one at a time.
 
When $\pi$ is regular rather than just weakly regular and odd, then a stronger form of the theorem holds according to results of Bellaiche, Caraiani, Chenevier, Clozel, Harris, Kottwitz, Labesse, Shin, Taylor, etc. (see \cite{MR3272052}, \cite{BLGGT}) including purity and local-global compatibility at all places.  The above theorem is deduced from these results by $p$-adic interpolation.   Moreover even in the irregular case, the Galois representations in the theorem have already been constructed by different methods in \cite{MR3512528} (and in special cases in \cite{2015arXiv150705922B}, \cite{MR3989256}).  The novelty in the above theorem is the results towards local global compatibility at $p$ in points (3) and (4).

The theorem is proved by first descending $\pi$ to a cuspidal automorphic representation on a quasi-split unitary group over $L$, which then contributes to the interior coherent cohomology of the corresponding Shimura variety.  Then we apply point (4) of theorem \ref{thm-eigenvariety-intro} to get a point on the eigenvariety which is a limit of regular weight classical points, and conclude using results of Kisin \cite{MR1992017} on the interpolation of crystalline periods in analytic families, as in the work of Jorza and Mok \cite{MR3039824}, \cite{MR3200667}.

\begin{rem} As explained, our proof of this theorem  makes use of the results of \cite{MR3338302} to descend certain automorphic representations  to quasi-split unitary groups, and is therefore conditional on the results announced in  \cite{MR3135650}.
\end{rem}

\subsection{A toy model: the cohomology of flag varieties and the Cousin complex of Kempf}

There is a strong analogy between the coherent cohomology of Shimura varieties, and the much simpler coherent cohomology of flag varieties.  In \cite{MR509802}, Kempf introduced a local cohomology method for computing the coherent cohomology of flag varieties, which may be viewed as a ``toy model'' for the methods of this paper.  We briefly review this computation and some of the analogies (see section \ref{subsec-kempf} for more details).

For this section of the introduction, which is independent of the rest of the paper, we let $G$ be a split reductive group over a field $F$ and let $T\subset B\subset G$ be a Borel and maximal torus.  Let $FL = B\backslash G$ be the flag variety for $G$ and write $\pi : G \rightarrow FL$ for the projection. Let $d$ be the dimension of $FL$. Let $W$ be the Weyl group of $G$ with length function $\ell$, and let $w_0\in W$ be the longest element.  For any weight $\kappa \in X^\star(T)$, we associate a $G$-equivariant line bundle $\mathcal{L}_\kappa$ over $FL$, whose sections on an open $U \hookrightarrow FL$ are
$$\mathcal{L}_\kappa(U)=\{f : \pi^{-1}(U) \rightarrow \mathbb{A}^1\mid f(bg)=(w_0\kappa)(b)f(g),\quad\forall b\in B,g\in \pi^{-1}(U)\}.$$
The right action of $G$ on $\mathrm{FL}$ induces a left action on the cohomology groups $\HH^i(FL, \mathcal{L}_\kappa)$.  If $\kappa$ is dominant, then $\HH^0(FL, \mathcal{L}_\kappa)$ contains a highest weight representation of weight $\kappa$.

The classical Borel-Weil-Bott theorem describes the cohomology of the sheaves $\mathcal{L}_\kappa$ over $FL$ when the characteristic of $F$ is $0$: 
\begin{thm}[\cite{MR2015057}, 5.5, corollary]\label{thmBWB} Assume that $\mathrm{char}(F) =0$. Let $\kappa \in X^\star(T)$ then:
\begin{enumerate}
\item If there exists no $w \in W$ such that $w(\kappa+\rho)-\rho$ is  dominant (i.e. $\kappa+\rho$ is irregular) then $\HH^i(FL, \mathcal{L}_\kappa) =0$ for all $i$.
\item If there exists $w \in W$ such that $w(\kappa+\rho)-\rho$ is dominant, then there is a unique such $w$, and $\HH^i(FL, \mathcal{L}_\kappa)=0$ if $\ell(w) \neq i$, while  $\HH^{\ell(w)}(FL, \mathcal{L}_\kappa)$ is the highest weight $w(\kappa+\rho)-\rho$ representation.
\end{enumerate}
\end{thm}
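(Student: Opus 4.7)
The plan is to follow Kempf's local cohomology approach outlined in the introduction, which serves as the toy model for the rest of the paper: use the Bruhat stratification of $FL$ to build a Cousin complex whose cohomology computes $\HH^\bullet(FL, \mathcal{L}_\kappa)$, and then analyze this complex using $T$-equivariance together with characteristic zero semisimplicity.

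First, I would stratify $FL = B\backslash G$ by the Bruhat cells $C_w = B\backslash B w B$ indexed by $w \in W$, with conventions chosen so that $C_w$ has codimension $\ell(w)$ (so $C_1$ is the closed $T$-fixed point and $C_{w_0}$ is the open dense cell). Since each $C_w$ is a smooth locally closed subvariety of codimension $\ell(w)$ in the smooth variety $FL$, and $\mathcal{L}_\kappa$ is locally free, the local cohomology sheaf $\underline{\HH}^{\bullet}_{C_w}(\mathcal{L}_\kappa)$ is concentrated in degree $\ell(w)$. Thus the spectral sequence of the associated filtration by codimension collapses into a single complex (the Cousin complex)
\begin{equation*}
0 \to \HH^0_{C_1}(FL, \mathcal{L}_\kappa) \to \bigoplus_{\ell(w)=1} \HH^1_{C_w}(FL, \mathcal{L}_\kappa) \to \cdots \to \HH^d_{C_{w_0}}(FL, \mathcal{L}_\kappa) \to 0
\end{equation*}
whose cohomology is $\HH^\bullet(FL, \mathcal{L}_\kappa)$.

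Next, I would compute each term $\HH^{\ell(w)}_{C_w}(FL, \mathcal{L}_\kappa)$ explicitly as a $G$-module. A $B$-equivariant neighborhood of $C_w$ in $FL$ is an affine bundle whose fiber coordinates carry specific $T$-weights (coming from the positive roots $\alpha$ with $w^{-1}\alpha < 0$), and the standard computation of the local cohomology of an affine space along its origin gives an explicit formula for the $T$-character of $\HH^{\ell(w)}_{C_w}(FL, \mathcal{L}_\kappa)$: it involves a shift by $w(\kappa+\rho)-\rho$ and a product of geometric series in negative roots. Incorporating the full $G$-action, one identifies the term as an algebraically induced representation of Verma type.

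Finally, in characteristic zero every finite-dimensional $G$-representation is semisimple, so the cohomology of the Cousin complex is determined by its total $G$-character. An application of the Weyl character formula (equivalently, the BGG resolution in disguise) shows that the alternating sum $\sum_w (-1)^{\ell(w)} \mathrm{ch}(\HH^{\ell(w)}_{C_w})$ vanishes whenever $\kappa+\rho$ lies on some reflection wall (since the involution $w \mapsto sw$ for the stabilizing simple reflection $s$ pairs up terms of opposite sign) and otherwise collapses to the Weyl character of the irreducible of highest weight $w(\kappa+\rho)-\rho$ concentrated in degree $\ell(w)$, where $w$ is the unique element sending $\kappa+\rho$ into the dominant chamber. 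The main obstacle is the second step: identifying each local cohomology group explicitly as a $G$-module, not merely as a $T$-module, since the description of $B$-equivariant neighborhoods of the Bruhat cells is classical but packaging the local cohomology as an induced representation with the correct $\rho$-shift requires care. Once this identification is in place, the rest of the argument is a character-theoretic calculation together with the use of characteristic zero semisimplicity to pass from equalities of characters to equalities of $G$-modules.
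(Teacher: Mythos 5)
Your overall strategy — build the Cousin complex from the Bruhat stratification, compute characters of each local cohomology term, and then argue in characteristic zero — is the same as the paper's, but your proposed final step has a genuine gap that your argument as written cannot close.

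First, a structural slip: the local cohomology groups $\HH^{\ell(w)}_{C_w}(FL,\mathcal{L}_\kappa)$ are \emph{not} $G$-modules. The group $G$ does not preserve the Bruhat cells, only $B$ does; these terms carry a $B$-action and a $\mathfrak{g}$-action (making them objects of the BGG category $\mathcal{O}$), but no integrated $G$-action. So the ``main obstacle'' you identify in your last paragraph (packaging each term as a $G$-module) is not merely difficult, it is impossible, and the real obstacle lies elsewhere.

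The serious gap is the inference ``in characteristic zero every finite-dimensional $G$-representation is semisimple, so the cohomology of the Cousin complex is determined by its total $G$-character.'' Semisimplicity of the abutment does not let you recover the individual $\HH^i(FL,\mathcal{L}_\kappa)$ from the Euler characteristic alone: the alternating sum $\sum_w(-1)^{\ell(w)}\mathrm{ch}(\HH^{\ell(w)}_{C_w})$ could in principle arise from $\HH^0 = L_\lambda\oplus M$, $\HH^1 = M$, etc., with cancellation. Your Weyl-character-formula computation correctly identifies the Euler characteristic as $0$ (singular case) or $\pm\mathrm{ch}(L_{w\cdot\kappa})$, but that is where the argument stops; it does not yield the Bott vanishing of $\HH^i$ for $i\neq\ell(w)$. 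What closes the gap in the paper's proof is a category-$\mathcal{O}$ argument: each Cousin term has the formal character of a Verma module, the whole complex lives in $\mathcal{O}$, a Verma $M_\nu$ has a finite-dimensional constituent only if $\nu$ is dominant and then that constituent is $L_\nu$ with multiplicity one, and among the Verma characters $w\cdot\kappa$ appearing at most one is dominant and it appears in exactly one term. Since $\HH^i(FL,\mathcal{L}_\kappa)$ is a finite-dimensional subquotient of the complex, it can only be built from that single constituent, forcing concentration in the expected degree; \emph{then} the Euler characteristic determines the multiplicity. Your proposal needs this uniqueness/multiplicity-one analysis in category $\mathcal{O}$; invoking ``the BGG resolution in disguise'' to deduce the vanishing is circular, since the exactness of the Cousin/BGG complex in a single degree is essentially the statement being proved.
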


Following section 12 of \cite{MR509802}, one can study the cohomology of the sheaves $\mathcal{L}_\kappa$ over $FL$ with the help of the Bruhat stratification $FL = \cup_{w \in W} B\backslash Bw B$.  Given $w\in W$, let $C_w=B\backslash BwB$, and let $U_w\subseteq \mathrm{FL}$ be an open so that $C_w\subseteq U_w$ is closed (for example one can take $U_w=B\backslash Bw_0Bw_0^{-1}w$).  Then one can introduce the local cohomology groups:
$$\HH_w^i(\kappa):=\HH^i_{C_w}(U_w,\mathcal{L}_\kappa).$$
Unlike the cohomology of the full flag variety, these local cohomologies do not carry an action of $G$.  However they do still carry an action of $B$, as well as of the Lie algebra $\mathfrak{g}$ of $G$.

Then we have the following results:
\begin{itemize}
\item {\bf(Vanishing)} The local cohomologies $H^i_w(\kappa)$ vanish unless $i=d-\ell(w)$.
\item {\bf(Cousin complex)} For each weight $\kappa\in X^\star(T)$ there is a Grothendieck-Cousin complex
$$\HH^0_{w_0}(\kappa)\to\bigoplus_{w\in W,\ell(w)=d-1} \HH^1_{w}(\kappa)\to\cdots\to\bigoplus_{w\in W,\ell(w)=d-i}\HH^i_w(\kappa)\to\cdots\to \HH^d_1(\kappa)$$
computing the cohomology $\HH^\star(FL,\mathcal{L}_\kappa)$.  This complex arises as the $E_1$ page of the spectral sequence of the following filtration of $FL$ associated to the Bruhat stratification:
$$Z^i=\bigcup_{w\in W,\ell(w)\leq d-i} C_w.$$

We also remark that this complex is closely related to the BGG resolution.  For example, if the characteristic of $F$ is zero $\kappa$ is dominant, then this complex is exactly the dual of the BGG resolution of the irreducible representation of highest weight $-w_0\kappa$.
\item {\bf(Weight bounds)}
The analog of the Hecke action of $\mathcal{H}_p^{\pm}$ on the finite slope, overconvergent cohomologies is the action of the torus $T$ on the local cohomologies $\HH^i_w(\kappa)$ (there is no analog of taking the ``finite slope part'', as we already have the action of a group).  Then the analog of the slope bound is the following ``weight bound'': any weight $\lambda\in X^\star(T)$ occurring in $H_w^{d-\ell(w)}$ satisfies $$\lambda\leq w^{-1}w_0(\kappa+\rho)-\rho$$ where we write $\lambda\leq\lambda'$ if $\lambda'-\lambda=\sum_{\alpha\in\Delta}c_\alpha\alpha$ with $c_\alpha\geq 0$ where $\Delta$ denotes the simple roots of $G$ associated to $T\subseteq B$.  Actually this weight bound is an immediate consequence of a much better result, which we lack an analog of in the Shimura variety setting: the character of $T$ acting on $\HH^i_w(\kappa)$ equals that of the Verma module of highest weight $w^{-1}w_0(\kappa+\rho)-\rho$.
\item {\bf(``Classicality'' and vanishing for the ``big weight'' part)} In analogy with passing to the small slope part in the Shimura variety setting, we can try to impose a ``big weight'' condition in order to kill most (ideally all but one) of the terms in the Cousin complex.

We make some combinatorial definitions: let $C(\kappa)=\{w\in W\mid w^{-1}w_0(\kappa+\rho)\in X^\star(T)_{\mathbb{R}}^+\}$ be the subset of $W$ for which the weight bound is as big as possible.  We say that a weight $\lambda\in X^\star(T)^+$ is a big weight (depending on $\kappa$) if $\lambda\not\leq w^{-1}w_0(\kappa+\rho)-\rho$.  The point of this definition is exactly that by the weight bound, $\HH_w^{d-\ell(w)}(\kappa)$ will not contain any big weights unless $w\in C(\kappa)$.  Then from this and the Cousin complex we have the following immediate consequences, which can be viewed as a weak form of the Borel-Weil-Bott theorem (as well as toy models for theorems \ref{thm-classicity-intro}, \ref{thm-1-intro}):
\begin{itemize}
\item $\HH^i(FL,\mathcal{L}_\kappa)^{bw}=0$ unless $i\in [\min_{w\in C(\kappa)} d-\ell(w),\max_{w\in C(\kappa)}d-\ell(w)]$.
\item If $\kappa+\rho$ is regular, so that $C(\kappa)=\{w\}$ consists of a single element, then we have the following ``classicality theorem'':
$$\HH^i_w(\kappa)^{bw}=\HH^i(FL,\mathcal{L}_\kappa)^{bw}$$
and both sides vanish when $i\not=d-\ell(w)$.
\end{itemize}
Although these results are not as strong as the Borel-Weil-Bott theorem, we emphasize that we have not assumed that $\mathrm{char}(F)=0$.  Moreover, in characteristic zero it is not too hard to deduce the full Borel-Weil-Bott theorem using some basic properties of the BGG category $\mathcal{O}$ (see section \ref{subsec-kempf}).

\item {\bf(Interpolation)} We assume $F$ has characteristic 0.  Although the line bundles $\mathcal{L}_\kappa$ and their cohomologies only exist for integral weights $\kappa\in X^\star(T)$, the local cohomologies $\HH^i_w(X,\kappa)$, together with their $\mathfrak{g}$ action can be defined for all $\kappa\in X^\star(T)^+\otimes F=\mathfrak{t}^\vee$, where $\mathfrak{t}$ is the Lie algebra of $T$.  The families of $\mathfrak{g}$-modules obtained in this way are called twisted Verma modules (see \cite{MR1985191}).
\end{itemize}

\bigskip

\noindent
\textbf{Acknowledgements}:  We would like to thank D. Loeffler and S. Zerbes for asking about the possibility to generalize the results of \cite{pilloniHidacomplexes} to more general situations and for many discussions about this paper. A first generalization was found in our joint work with C. Skinner \cite{loeffler2019higher}. We further thank D. Loeffler for his careful reading of a first draft of the manuscript, and his many useful comments which in particular lead us to prove theorem \ref{thm-perfect-pairing} and theorem \ref{theorem-coho-van2}  which where stated as conjectures in the draft.   It is also a pleasure to thank F. Calegari and T. Gee,  some of the ideas were  discovered while working on  \cite{BCGP}.  We also thank F. Andreatta, A. Iovita, J. Rodrigues and J.E. Rodriguez for many interesting exchanges in Lyon.  We thank  A. Caraiani  and the London number theory group for their interest in our work (and their patience), as well as the Montr\'eal number theory group for giving us the opportunity to give lectures on this topic. 
We finally acknowledge the financial support of the ERC-2018-COG-818856-HiCoShiVa.

\section{Cohomological preliminaries}

\subsection{Cohomology with support in a closed subspace} We recall the notion and the basic properties of the cohomology of an abelian sheaf on a topological space,  with support in a closed subspace. A reference for this material is \cite{MR2171939}, chapter I. 
Let $X$ be a topological space. We let $Ab_X$ be the category of abelian sheaves over $X$.  If $X$ is a point, $Ab_X$ is simply $Ab$ the category of abelian groups. We let $\mathcal{D}(Ab_X)$ be the derived category of $Ab_X$.
Let $i : Z \hookrightarrow X$ be a closed subspace.
 For an object $\mathscr{F}$ of $Ab_X$, we let $\Gamma_Z(X, \mathscr{F})$ be  the subgroup of $\HH^0(X, \mathscr{F})$ of sections whose support is included in $Z$.  We let $\mathrm{R}\Gamma_Z(X, -) : \mathcal{D}(Ab_X) \rightarrow \mathcal{D}(Ab)$ be the right derived functor of $\Gamma_Z(X,-)$ (see  \cite{stacks-project}, Tag 079V, in particular for the unbounded version). 
Let $U= X \setminus Z$ and let $\mathscr{F}$ be an object of $\mathcal{D}(Ab_X)$. We have an exact triangle in $\mathcal{D}(Ab)$ (\cite{MR2171939}, I, corollaire 2.9):

$$ \mathrm{R}\Gamma_Z(X, \mathscr{F}) \rightarrow \mathrm{R}\Gamma(X, \mathscr{F}) \rightarrow \mathrm{R}\Gamma(U, \mathscr{F}) \stackrel{+1}\rightarrow $$

 We have the classical pushforward functor $i_\star : Ab_{Z} \rightarrow Ab_{X}$ and it  admits a right adjoint $i^! : Ab_{X} \rightarrow Ab_{Z}$ which can be described as follows: Let $W \subseteq Z$ be an open subset. Let $W' \subseteq X$ be an open subset of $X$ such that $W = W' \cap Z$. For any object $\mathscr{F}$ of $Ab_X$,  we have  $i^! \mathscr{F}(W) = \Gamma_{W} ( W', \mathscr{F}\vert_{W'})$.  It follows that  $\Gamma_Z(X, \mathscr{F}) = \HH^0(X, i_\star i^! \mathscr{F})$. The functor $i^!$ has a right derived functor   $\mathrm{R}i^! : \mathcal{D}(Ab_{X}) \rightarrow \mathcal{D}(Ab_{Z})$. Moreover   $\mathrm{R}\Gamma_Z(X, \mathscr{F}) = \mathrm{R}\Gamma(X, i_\star \mathrm{R} i^! \mathscr{F})$.

Some  properties of the cohomology with support are: 
\begin{enumerate}
\item (corestriction)[\cite{MR2171939}, I, Proposition 1.8] If $Z \subseteq Z'\subseteq X$ are closed subsets, there is a corestriction map $\mathrm{R}\Gamma_Z(X, \mathscr{F}) \rightarrow \mathrm{R}\Gamma_{Z'}(X, \mathscr{F})$. 

\item (pull-back) If we have a continuous map $f:X\to X'$, closed subsets $Z\subset X$, $Z'\subset X'$ satisfying $f^{-1}(Z')\subseteq Z$, and a sheaf $\mathscr{F}$ on $X'$, then there is a map $\mathrm{R}\Gamma_{Z'}(X', \mathscr{F}) \rightarrow  \mathrm{R}\Gamma_{Z}(X, f^{-1} \mathscr{F})$.

\item (Change of ambient space)[\cite{MR2171939}, I, Proposition 2.2]  If we have $Z \subset U \subset X$ for some open $U$ of $X$, then the pull back map   $\mathrm{R}\Gamma_Z(X, \mathscr{F}) \rightarrow \mathrm{R}\Gamma_{Z}(U, \mathscr{F}) $ is a quasi-isomorphism. 
\end{enumerate}

The above properties imply easily the following lemma: 

\begin{lem}\label{lem-direct-sum-complex-support} Let $Z_1, Z_2 \subseteq X$ be two disjoint closed subsets. Then the natural map given by pushforward: 
$$ \mathrm{R}\Gamma_{Z_1} (X, \mathscr{F}) \oplus \mathrm{R}\Gamma_{Z_2} (X, \mathscr{F})   \rightarrow \mathrm{R}\Gamma_{Z_1 \cup Z_2} (X, \mathscr{F}) $$ is a quasi-isomorphism.
\end{lem}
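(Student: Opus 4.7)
The plan is to split the natural distinguished triangle for the pair of closed subsets $Z_1 \subseteq Z_1 \cup Z_2$, exploiting the hypothesis $Z_1 \cap Z_2 = \emptyset$.

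First, I would construct the distinguished triangle
\[
\mathrm{R}\Gamma_{Z_1}(X,\mathscr{F}) \xrightarrow{\alpha} \mathrm{R}\Gamma_{Z_1 \cup Z_2}(X,\mathscr{F}) \xrightarrow{\beta} \mathrm{R}\Gamma_{Z_2}(X \setminus Z_1,\mathscr{F}\vert_{X \setminus Z_1}) \xrightarrow{+1}
\]
in which $\alpha$ is the corestriction of property (1) and $\beta$ is the restriction of property (2); note $(Z_1 \cup Z_2) \setminus Z_1 = Z_2$ by disjointness. This is obtained by comparing the two open--closed triangles
\begin{align*}
\mathrm{R}\Gamma_{Z_1}(X,\mathscr{F}) &\to \mathrm{R}\Gamma(X,\mathscr{F}) \to \mathrm{R}\Gamma(X \setminus Z_1,\mathscr{F}), \\
\mathrm{R}\Gamma_{Z_1 \cup Z_2}(X,\mathscr{F}) &\to \mathrm{R}\Gamma(X,\mathscr{F}) \to \mathrm{R}\Gamma(X \setminus (Z_1 \cup Z_2),\mathscr{F})
\end{align*}
via the morphism of triangles given by (corestriction, identity, restriction). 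The octahedral axiom identifies the cone of $\alpha$ with the cone of the restriction $\mathrm{R}\Gamma(X \setminus Z_1,\mathscr{F}) \to \mathrm{R}\Gamma(X \setminus (Z_1 \cup Z_2),\mathscr{F})$, which by the open--closed triangle applied to $Z_2 \subset X \setminus Z_1$ is $\mathrm{R}\Gamma_{Z_2}(X \setminus Z_1, \mathscr{F})[1]$; this yields the triangle above.

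Second, since $Z_1 \cap Z_2 = \emptyset$ the closed set $Z_2$ lies in the open subset $X \setminus Z_1$, so property (3) provides a canonical quasi-isomorphism
\[
\gamma : \mathrm{R}\Gamma_{Z_2}(X,\mathscr{F}) \xrightarrow{\ \sim\ } \mathrm{R}\Gamma_{Z_2}(X \setminus Z_1,\mathscr{F}\vert_{X \setminus Z_1}).
\]
Let $c : \mathrm{R}\Gamma_{Z_2}(X,\mathscr{F}) \to \mathrm{R}\Gamma_{Z_1 \cup Z_2}(X,\mathscr{F})$ be the corestriction of property (1) for the inclusion $Z_2 \subseteq Z_1 \cup Z_2$. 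By the functoriality of corestriction together with pullback, the square expressing $\beta \circ c = \gamma$ commutes. Hence, modulo the isomorphism $\gamma$, $c$ is a section of $\beta$, so the triangle splits; combining the splitting with $\alpha$ produces the asserted quasi-isomorphism
\[
\mathrm{R}\Gamma_{Z_1}(X,\mathscr{F}) \oplus \mathrm{R}\Gamma_{Z_2}(X,\mathscr{F}) \xrightarrow{\ \sim\ } \mathrm{R}\Gamma_{Z_1 \cup Z_2}(X,\mathscr{F}).
\]

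The main obstacle is really just the bookkeeping in the first step: extracting the distinguished triangle with closed supports from the two open--closed triangles via the octahedral axiom, and identifying the third map there with the honest restriction so that $\beta \circ c = \gamma$ holds tautologically from the naturality statements (1)--(3). Everything else is formal.
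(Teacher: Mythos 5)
Your proof is correct but takes a genuinely different route from the paper's. The paper proves the claim directly at the level of \emph{underived} sheaves: writing $i_1, i_2, i$ for the closed inclusions of $Z_1$, $Z_2$, $Z_1\cup Z_2$, it shows that the natural map $(i_1)_\star i_1^!\mathscr{F}\oplus (i_2)_\star i_2^!\mathscr{F}\to i_\star i^!\mathscr{F}$ is already an isomorphism of sheaves, which is a local check on the open cover $X=Z_1^c\cup Z_2^c$ (on $Z_1^c$ the first summand vanishes and $i^!$ agrees with $i_2^!$, and symmetrically); the derived statement is then automatic. You instead work entirely in the derived category, constructing the distinguished triangle $\mathrm{R}\Gamma_{Z_1}(X)\to\mathrm{R}\Gamma_{Z_1\cup Z_2}(X)\to\mathrm{R}\Gamma_{Z_2}(X\setminus Z_1)\to{+1}$ (which, incidentally, the paper already records in Section 2.3 as the exact triangle for nested closed subsets $\emptyset\subseteq Z_1\subseteq Z_1\cup Z_2$, so you could have cited it directly rather than re-deriving it via the octahedral axiom), then using excision to identify $\mathrm{R}\Gamma_{Z_2}(X\setminus Z_1)\simeq\mathrm{R}\Gamma_{Z_2}(X)$ and checking that corestriction from $Z_2$ followed by restriction away from $Z_1$ equals the excision map, so that the triangle splits. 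Both routes are valid. The paper's argument is shorter and more elementary (one local check, no triangulated-category formalism), while yours isolates the splitting of an explicit triangle and makes the role of excision transparent — this is the form of the argument one typically sees in \'etale or $\ell$-adic cohomology, and it generalizes without change to settings where one only has access to derived-level constructions. One small point worth making precise if you write this out in full: the identity $\beta\circ c=\gamma$ is cleanest to verify on underived global sections with support (where it is the tautology that restricting a section supported on $Z_2$ to $X\setminus Z_1$ is the same whether you first view it as supported on $Z_1\cup Z_2$ or not), and then promoted to the derived category by naturality.
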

\begin{proof} For $i \in \{1,2\}$, we let $i_i : Z_i \hookrightarrow X$. We finally let $i : Z_1 \cup Z_2 \hookrightarrow X$.  The lemma follows from the claim that for any $\mathscr{F}\in Ob(Ab_X)$, the natural map $ (i_1)_\star i_1^! \mathscr{F} \oplus (i_2)_\star i_2^! \mathscr{F} \rightarrow i_\star i^! \mathscr{F} $ is an isomorphism of sheaves. This is a local computation. Since $X = Z_1^c \cup Z_2^c$ and the claim holds true over any open subset of $Z_1^c$ or $Z_2^c$, we are done. 
\end{proof}

\medskip

We now discuss the construction of the trace map in the context of schemes or adic spaces  and finite locally free morphisms (\cite{MR1734903}, Sect. 1.4.4).

\begin{lem}\label{lem-trace-1}  Consider a commutative  diagram  of  topological spaces:

\begin{eqnarray*}
\xymatrix{ Z \ar[r] \ar[d]& X \ar[d]^f \\
Z' \ar[r] & X'}
\end{eqnarray*}
with $X$ and $X'$  schemes (resp. adic spaces), $f$ a finite locally free morphism of schemes (resp. adic spaces), $Z' \rightarrow X'$ and $Z \rightarrow X$  closed subspaces. 
Let  $\mathscr{F}$ be a sheaf of $\oscr_{X'}$-modules. Then there is a map $\mathrm{R}\Gamma_{Z}(X, f^\star \mathscr{F}) \rightarrow  \mathrm{R}\Gamma_{Z'}(X', \mathscr{F})$.

\end{lem}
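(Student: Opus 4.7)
The plan is to build the trace map by combining the classical trace map of the finite locally free morphism $f$ with the functoriality of local cohomology under pushforward. Write $i : Z \hookrightarrow X$ and $i' : Z' \hookrightarrow X'$ for the closed immersions, and let $g : Z \to Z'$ denote the map induced by $f$ (which exists by commutativity of the diagram, i.e.\ $f(Z) \subseteq Z'$). Then $f \circ i = i' \circ g$, and since all three of $f$, $i$, $i'$ have exact pushforwards on abelian sheaves (finite morphisms have stalkwise finite direct sum pushforwards, closed immersions give fully faithful embeddings), no derived functor subtleties will arise from identifying $f_\star i_\star = i'_\star g_\star$.

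First I would recall the classical trace map for a finite locally free morphism of schemes or adic spaces: since $f_\star \oscr_X$ is a locally free $\oscr_{X'}$-module of finite rank, it carries a canonical trace $\mathrm{tr}_f : f_\star \oscr_X \to \oscr_{X'}$, and this extends via the projection formula (valid because $f$ is affine) to a natural transformation $\mathrm{tr}_{f,\mathscr{F}} : f_\star f^\star \mathscr{F} \to \mathscr{F}$ for every $\oscr_{X'}$-module $\mathscr{F}$.

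Next, I would rewrite both sides as cohomology on $X'$. On the target we have $\mathrm{R}\Gamma_{Z'}(X', \mathscr{F}) = \mathrm{R}\Gamma(X', i'_\star \mathrm{R}i'^! \mathscr{F})$, and on the source, using that $\mathrm{R}\Gamma(X, -) = \mathrm{R}\Gamma(X', f_\star -)$ and the identity $f_\star i_\star = i'_\star g_\star$, we have
$$\mathrm{R}\Gamma_Z(X, f^\star \mathscr{F}) = \mathrm{R}\Gamma(X', i'_\star g_\star \mathrm{R}i^! f^\star \mathscr{F}).$$
Constructing the desired map therefore reduces to producing a morphism $g_\star \mathrm{R}i^! f^\star \mathscr{F} \to \mathrm{R}i'^! \mathscr{F}$ in $\mathcal{D}(Ab_{Z'})$, or equivalently, by the adjunction $(i'_\star, \mathrm{R}i'^!)$, a morphism $f_\star i_\star \mathrm{R}i^! f^\star \mathscr{F} \to \mathscr{F}$ in $\mathcal{D}(Ab_{X'})$. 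Such a morphism is obtained by pushing the counit $i_\star \mathrm{R}i^! f^\star \mathscr{F} \to f^\star \mathscr{F}$ of the adjunction $(i_\star, \mathrm{R}i^!)$ forward by $f_\star$ and composing with the trace:
$$f_\star i_\star \mathrm{R}i^! f^\star \mathscr{F} \longrightarrow f_\star f^\star \mathscr{F} \xrightarrow{\;\mathrm{tr}_f\;} \mathscr{F}.$$

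The main thing to check — and really the only delicate point — is that this construction is well-behaved functorially, in particular that the trace map $\mathrm{tr}_f$ exists with the expected properties in the adic setting, for which I would invoke the reference to Huber cited in the lemma. Everything else is a formal manipulation with adjunctions that only uses exactness of the pushforward functors under consideration.
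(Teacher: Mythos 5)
Your proof is correct, and it takes a genuinely (if modestly) different route from the paper's. The paper's proof stays at the underived level: it observes that enough injectives reduce the problem to constructing a functorial map $\Gamma_Z(X, f^\star\mathscr{F}) \to \Gamma_{Z'}(X', \mathscr{F})$, then reduces to the case $Z = f^{-1}(Z')$ via the corestriction $\Gamma_Z \to \Gamma_{f^{-1}(Z')}$, and finally verifies by an explicit diagram chase (involving the open complements $U = X \setminus Z$, $U' = X' \setminus Z'$ and the restriction maps to $j'_\star (j')^\star\mathscr{F}$) that the trace map on global sections respects the support conditions. You instead work directly in the derived category, using the adjunction $i'_\star \dashv \mathrm{R}i'^!$ to make the support-preservation automatic: the counit $i_\star \mathrm{R}i^! f^\star\mathscr{F} \to f^\star\mathscr{F}$, pushed forward along the exact $f_\star$ and composed with $\mathrm{tr}_f$, gives a map into $\mathscr{F}$ which adjunction transports to a map into $\mathrm{R}i'^!\mathscr{F}$ on $Z'$. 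Both arguments rest on the same ingredients — the trace map of a finite locally free morphism and the projection formula — and both invoke the commutativity $f(Z) \subseteq Z'$, but in different places (you through the identity $f_\star i_\star = i'_\star g_\star$, the paper through the corestriction to $f^{-1}(Z')$). The paper's version is more concrete and needs no derived adjunction machinery; yours is cleaner and makes the support compatibility a formal consequence of the $i^!$ formalism, at the price of needing (as you correctly flag) the exactness of $f_\star$ on abelian sheaves to identify $\mathrm{R}f_\star$ with $f_\star$.
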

\begin{proof} We first recall that the category of sheaves of $\oscr_T$-modules on a ringed space $(T, \oscr_T)$ has enough injectives (\cite{stacks-project}, Tag 01DH). It follows that it is enough to construct a functorial map $\Gamma_{Z}(X, f^\star \mathscr{F}) \rightarrow  \Gamma_{Z'}(X', \mathscr{F}) $  for  sheaves  $\mathscr{F}$ of $\oscr_X$-modules. 
If we let $Z'' = f^{-1} (Z')$, then we have a map $\Gamma_{Z}(X, f^\star \mathscr{F}) \rightarrow  \Gamma_{f^{-1}(Z')}(X, f^\star \mathscr{F})$. Therefore, it suffices to consider the case where $Z = f^{-1}(Z')$.  We have a trace map $\mathrm{Tr} : f_\star \oscr_{X} \rightarrow \oscr_{X'}$. Moreover, the natural morphism  $ f_\star \oscr_{X} \otimes_{\oscr_{X'}} \mathscr{F} \rightarrow f_\star f^\star \mathscr{F} $ is an isomorphism. 
We therefore have a trace map $\mathrm{Tr} : f_\star f^\star \mathscr{F} \rightarrow \mathscr{F}$.  Let us complete the above diagram into:
\begin{eqnarray*}
\xymatrix{ Z \ar[r] \ar[d]& X \ar[d]^f & U \ar[d]^g \ar[l]\\
Z' \ar[r] & X' & U' \ar[l]_{j'}}
\end{eqnarray*}

where $U' = X' \setminus Z'$ and $U = X \setminus Z$.  We have a commutative diagram:

\begin{eqnarray*}
\xymatrix{ f_\star f^\star \mathscr{F} \ar[r] \ar[d]^{\mathrm{Tr}} & j'_\star g_\star g^\star (j')^\star \mathscr{F} \ar[d] \\
\mathscr{F} \ar[r] & j'_\star  (j')^\star \mathscr{F} }
\end{eqnarray*}

We deduce that the trace map $\Gamma(X, f^\star \mathscr{F}) \rightarrow \Gamma(X', \mathscr{F})$  induces a trace map  $\Gamma_Z(X, f^\star \mathscr{F}) \rightarrow \Gamma_{Z'}(X', \mathscr{F})$. 
\end{proof}

\subsection{Cup products} Let $(X, \oscr_X)$ be a ringed space. Let $K = \Gamma(X, \oscr_X)$. 

\begin{prop}\label{prop-construction-cuprod} Let $Z_1, Z_2  \subseteq X$ be two closed subsets. Let $\mathscr{F}$ and $\mathscr{G}$ be two flat sheaves of $\oscr_X$-modules. 
There is a natural map, functorial in $\mathscr{F}$ and $\mathscr{G}$, as well as the support $Z_1$ and $Z_2$:
$$ \mathrm{R}\Gamma_{Z_1} ( X, \mathscr{F}) \otimes_K^L \mathrm{R}\Gamma_{Z_2} ( X, \mathscr{G}) \rightarrow \mathrm{R}\Gamma_{Z_1 \cap Z_2}( X, \mathscr{F}\otimes^L_{\oscr_X} \mathscr{G}).$$ 
\end{prop}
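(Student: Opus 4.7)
The plan is to construct the cup product by first building a natural map at the level of local-section sheaves and then applying $\mathrm{R}\Gamma(X,-)$ together with the standard tensor-product cup product. Recall from the preceding discussion that $\mathrm{R}\Gamma_Z(X,-) = \mathrm{R}\Gamma(X, i_\star \mathrm{R}i^!(-))$; let $\underline{\Gamma}_Z$ denote the subsheaf-of-sections-with-support-in-$Z$ functor on sheaves of $\oscr_X$-modules, so that $i_\star \mathrm{R}i^!$ is its right derived functor, which I will denote $\mathrm{R}\underline{\Gamma}_Z$.

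At the underived level there is a tautological natural transformation
$$\underline{\Gamma}_{Z_1}(\mathscr{F}) \otimes_{\oscr_X} \underline{\Gamma}_{Z_2}(\mathscr{G}) \longrightarrow \underline{\Gamma}_{Z_1 \cap Z_2}(\mathscr{F} \otimes_{\oscr_X} \mathscr{G}),$$
defined by the fact that the tensor product of a section supported in $Z_1$ with a section supported in $Z_2$ is supported in $Z_1 \cap Z_2$. This map is manifestly functorial in $\mathscr{F}$ and $\mathscr{G}$, and compatible with the corestriction maps of property (1) in the supports $Z_1, Z_2$. To derive it, I would choose K-flat resolutions $P^\bullet \to \mathscr{F}$ and $Q^\bullet \to \mathscr{G}$; then $P^\bullet \otimes_{\oscr_X} Q^\bullet$ represents $\mathscr{F} \otimes^L_{\oscr_X} \mathscr{G}$, and the flatness hypothesis ensures that this complex is quasi-isomorphic to the underived tensor product $\mathscr{F} \otimes_{\oscr_X} \mathscr{G}$ sitting in degree zero. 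Choosing a compatible K-injective resolution of this complex and applying the underived transformation termwise produces a morphism
$$\mathrm{R}\underline{\Gamma}_{Z_1}(\mathscr{F}) \otimes^L_{\oscr_X} \mathrm{R}\underline{\Gamma}_{Z_2}(\mathscr{G}) \longrightarrow \mathrm{R}\underline{\Gamma}_{Z_1 \cap Z_2}(\mathscr{F} \otimes^L_{\oscr_X} \mathscr{G})$$
in the derived category of $\oscr_X$-modules. Composing with the standard tensor-product cup product
$$\mathrm{R}\Gamma(X, A) \otimes^L_K \mathrm{R}\Gamma(X, B) \longrightarrow \mathrm{R}\Gamma(X, A \otimes^L_{\oscr_X} B)$$
applied to $A = \mathrm{R}\underline{\Gamma}_{Z_1}(\mathscr{F})$, $B = \mathrm{R}\underline{\Gamma}_{Z_2}(\mathscr{G})$, and using $\mathrm{R}\Gamma(X, \mathrm{R}\underline{\Gamma}_Z(-)) = \mathrm{R}\Gamma_Z(X,-)$, yields the required map. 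Functoriality in $\mathscr{F}$, $\mathscr{G}$, and the supports $Z_1, Z_2$ is inherited at every step from its counterpart at the underived level.

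The main technical nuisance is to reconcile two classes of resolutions: $\mathrm{R}\underline{\Gamma}_Z$ is naturally computed via flabby or K-injective resolutions, whereas the derived tensor product wants K-flat ones, and in general these classes do not coincide. In full generality one would circumvent this with a functorial flabby resolution of Godement type carrying an Alexander--Whitney-style shuffle map $G^\bullet(\mathscr{F}) \otimes_{\oscr_X} G^\bullet(\mathscr{G}) \to G^\bullet(\mathscr{F} \otimes_{\oscr_X} \mathscr{G})$ that makes the construction explicit at the cochain level. Here the flatness assumption on $\mathscr{F}$ and $\mathscr{G}$ collapses the subtlety entirely: since $\mathscr{F} \otimes^L_{\oscr_X} \mathscr{G} = \mathscr{F} \otimes_{\oscr_X} \mathscr{G}$, the underived map is already the right object, and deriving it via a standard K-injective resolution of the ordinary tensor product is formal.
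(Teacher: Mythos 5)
There is a genuine gap in the final step of your argument, which you yourself flag as ``the main technical nuisance'' and then dismiss too quickly. You are right that the difficulty is to reconcile $\underline{\Gamma}_Z$-acyclic resolutions (flasque/injective) with K-flat ones, but the flatness of $\mathscr{F}$ and $\mathscr{G}$ does \emph{not} collapse that difficulty in the way you claim. To represent the source $\mathrm{R}\underline{\Gamma}_{Z_1}(\mathscr{F}) \otimes^L_{\oscr_X} \mathrm{R}\underline{\Gamma}_{Z_2}(\mathscr{G})$ by an honest complex, one needs K-flat representatives of $\mathrm{R}\underline{\Gamma}_{Z_1}(\mathscr{F})$ or $\mathrm{R}\underline{\Gamma}_{Z_2}(\mathscr{G})$. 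Take a flasque (say Godement) resolution $\mathscr{F} \to \mathscr{F}^\bullet$; then $\underline{\Gamma}_{Z_1}(\mathscr{F}^\bullet)$ computes $\mathrm{R}\underline{\Gamma}_{Z_1}(\mathscr{F})$, but it has no reason to be K-flat --- local cohomology does not preserve flatness, and flatness of $\mathscr{F}$ has no bearing on this. Your sentence ``since $\mathscr{F} \otimes^L \mathscr{G} = \mathscr{F} \otimes \mathscr{G}$, the underived map is already the right object'' only addresses the target of the sheaf-level map; it says nothing about the source, where the obstruction lives. Similarly ``applying the underived transformation termwise'' presupposes that you already have complexes computing both $\mathrm{R}\underline{\Gamma}_{Z_j}$ and the derived tensor product of these simultaneously, which is precisely what has not been supplied.

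The paper's proof repairs exactly this: after forming the Godement resolutions $\mathscr{F}^\bullet$, $\mathscr{G}^\bullet$, it introduces further K-flat resolutions $A^\bullet \to (i_1)_\star i_1^!\mathscr{F}^\bullet$ and $B^\bullet \to (i_2)_\star i_2^!\mathscr{G}^\bullet$, so that $\mathrm{Tot}(A^\bullet\otimes B^\bullet)$ genuinely represents $\mathrm{R}\underline{\Gamma}_{Z_1}(\mathscr{F}) \otimes^L_{\oscr_X} \mathrm{R}\underline{\Gamma}_{Z_2}(\mathscr{G})$, and the map of complexes is built out of $\mathrm{Tot}(A^\bullet\otimes B^\bullet)$. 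The flatness hypothesis on $\mathscr{F}$, $\mathscr{G}$ is used for a \emph{different} purpose in that proof: the Godement resolution of a flat sheaf is K-flat (its stalks are homotopy equivalent to the flat stalks of $\mathscr{F}$), so the natural map $\mathscr{F}\otimes\mathscr{G} \to \mathrm{Tot}(\mathscr{F}^\bullet\otimes\mathscr{G}^\bullet)$ is a quasi-isomorphism. That is what lets the target of the termwise shuffle map, $(i_3)_\star i_3^!\,\mathrm{Tot}(\mathscr{F}^\bullet\otimes\mathscr{G}^\bullet)$, be fed into a K-injective resolution computing $\mathrm{R}\underline{\Gamma}_{Z_1\cap Z_2}(\mathscr{F}\otimes\mathscr{G})$. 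To close your argument you should keep your Godement framework, but insert a K-flat replacement of $\underline{\Gamma}_{Z_1}(\mathscr{F}^\bullet)$ and $\underline{\Gamma}_{Z_2}(\mathscr{G}^\bullet)$ before tensoring; the rest of your outline --- including passing to global sections via the standard cup product --- is then fine.
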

\begin{proof} Let $Z_3 = Z_1 \cap Z_2$. For $1 \leq j \leq 3$,  we let $i_j : Z_i \hookrightarrow X$ be the inclusion. For  $\mathscr{F}$ and $\mathscr{G}$ sheaves of $\oscr_X$-modules we have a map $(i_{1})_\star i_1^! \mathscr{F} \otimes_{\oscr_{X}} (i_{2})_\star i_2^! \mathscr{G}  \rightarrow (i_{3})_\star i_3^! (\mathscr{F} \otimes \mathscr{G})$.
 We claim that there is a map in the derived category: 
 $$ (i_{1})_\star  \mathrm{R}i_1^! \mathscr{F} \otimes^L_{\oscr_{X}} (i_{2})_\star \mathrm{R}i_2^! \mathscr{G}  \rightarrow (i_{3})_\star \mathrm{R}i_3^! (\mathscr{F} \otimes \mathscr{G}).$$
 Indeed, taking the Godement resolution (\cite{stacks-project}, Tag 0FKR)  gives quasi-isomorphisms $\mathscr{F} \rightarrow \mathscr{F}^\bullet$ and $\mathscr{G} \rightarrow \mathscr{G}^\bullet$ where $\mathscr{F}^\bullet$ and $\mathscr{G}^\bullet$ are bounded below complexes of  flasque sheaves of $\oscr_{X}$-modules and for all $x \in X$, the maps $\mathscr{F}_x \rightarrow \mathscr{F}_x^\bullet$ and $\mathscr{G}_x \rightarrow \mathscr{G}_x^\bullet$ are homotopy equivalences in the category of $\oscr_{X,x}$-modules. In particular, $ \mathscr{F}^\bullet$ and $\mathcal{G}^\bullet$ are $K$-flat (by \cite{stacks-project}, Lemma 06YB and the property that $\mathscr{F}_x$ and $\mathscr{G}_x$ are flat $\oscr_{X}$-modules). Since $\mathscr{F}^\bullet$ and $\mathscr{G}^\bullet$ are $K$-flat, the map $\mathscr{F} \otimes \mathscr{G} \rightarrow \mathrm{Tot} \big( \mathscr{F}^\bullet \otimes \mathscr{G}^\bullet\big) $ is a quasi-isomorphism.   We see that $(i_{1})_\star  \mathrm{R}i_1^! \mathscr{F}$ is computed by $(i_{1})_\star i_1^! \mathscr{F}^\bullet$ and $(i_{2})_\star  \mathrm{R}i_2^! \mathscr{G}$ is computed by $(i_{2})_\star i_2^! \mathscr{G}^\bullet$. Taking $K$-flat resolutions $A^\bullet \rightarrow (i_{1})_\star i_1^! \mathscr{F}^\bullet $ and $B^\bullet \rightarrow (i_{2})_\star i_2^! \mathscr{G}^\bullet$, we see that $\mathrm{Tot}( A^\bullet \otimes_{\oscr_X} B^\bullet)$ computes $$ (i_{1})_\star  \mathrm{R}i_1^! \mathscr{F} \otimes^L_{\oscr_{X}} (i_{2})_\star \mathrm{R}i_2^! \mathscr{G}  $$ and there is a composite map  $$\mathrm{Tot}( A^\bullet \otimes_{\oscr_X} B^\bullet) \rightarrow \mathrm{Tot} \big((i_{1})_\star i_1^! \mathscr{F}^\bullet \otimes_{\oscr_X} (i_{2})_\star i_2^! \mathscr{G}^\bullet) $$ $$ \rightarrow (i_3)_\star i_3^! (\mathrm{Tot} \big( \mathscr{F}^\bullet \otimes \mathscr{G}^\bullet\big) ).$$
  Taking a $K$-injective  resolution $\mathrm{Tot}( \mathscr{F}^\bullet \otimes \mathscr{G}^\bullet) \rightarrow C^\bullet$ we finally find that $(i_3)_\star i_3^!(C^\bullet)$ computes $(i_{3})_\star \mathrm{R}i_3^! (\mathscr{F} \otimes \mathscr{G})$ and we have a morphism  $$ (i_3)_\star i_3^! (\mathrm{Tot} \big( \mathscr{F}^\bullet \otimes \mathscr{G}^\bullet\big) ) \rightarrow (i_3)_\star i_3^!(C^\bullet).$$
 There is also a usual cup-product map by \cite{stacks-project}, Tag 0FPJ: 
  $$ \mathrm{R}\Gamma(X, (i_{1})_\star  \mathrm{R}i_1^! \mathscr{F}) \otimes^L_K \mathrm{R}\Gamma(X, (i_{2})_\star  \mathrm{R}i_2^! \mathscr{G}) \rightarrow 
 \mathrm{R}\Gamma(X,  (i_{1})_\star  \mathrm{R}i_1^! \mathscr{F} \otimes^L_{\oscr_{X}} (i_{2})_\star \mathrm{R}i_2^! \mathscr{G}).$$
Combining the two maps gives the map of the proposition. 
\end{proof}

\subsection{The spectral sequence of a filtered topological space}\label{section-spectral-sequence}

 Let $X$ be a topological space, $\mathscr{F}$ a sheaf of abelian groups,  and let $W \subseteq Z$ be two closed subspaces of $X$. We can define $\mathrm{R}\Gamma_{Z/W}(X, \mathscr{F}) = \mathrm{R}\Gamma_{Z\setminus W} (X \setminus W, \mathscr{F})$. If $Z \subseteq Z'$ and $W \subseteq W'$ we have a map  $\mathrm{R}\Gamma_{Z/W}(X, \mathscr{F}) \rightarrow  \mathrm{R}\Gamma_{Z'/W'}(X, \mathscr{F})$. 

If we have $Z_3 \subseteq Z_2 \subseteq Z_1$, then there is an exact triangle (\cite{MR509802}, lemma 7.6): 
$\mathrm{R}\Gamma_{Z_2/Z_3}(X, \mathscr{F}) \rightarrow \mathrm{R}\Gamma_{Z_1/Z_3}(X, \mathscr{F}) \rightarrow \mathrm{R}\Gamma_{Z_1/Z_2}(X, \mathscr{F}) \stackrel{+1}\rightarrow  $

Assume that there is a filtration by closed subsets $X =Z_0 \supseteq Z_1 \supseteq \cdots Z_r = \emptyset$. 
Then there is a spectral sequence of filtered topological space (\cite{Hartshorne}, p. 227):
$$ E_1^{pq} = \HH^{p+q}_{Z_p/Z_{p+1}}(X, \mathscr{F}) \Rightarrow \HH^{p+q}(X, \mathscr{F})$$
which we can visualize as follows: 
\begin{tiny}
\begin{eqnarray*}
\xymatrix{ \vdots & \vdots & \vdots \\
\HH^1_{Z_0/Z_1}(X, \mathscr{F}) & \HH^2_{Z_1/Z_2}(X, \mathscr{F}) & \cdots \\
\HH^0_{Z_0/Z_1}(X, \mathscr{F}) & \HH^1_{Z_1/Z_2}(X, \mathscr{F}) & \cdots \\
& \HH^0_{Z_1/Z_2}(X, \mathscr{F}) & \cdots }
\end{eqnarray*}
\end{tiny}

The differential $d_1^{p,q} : \HH^{p+q}_{Z_p/Z_{p+1}}(X, \mathscr{F}) \rightarrow \HH^{p+q+1}_{Z_{p+1}/Z_{p+2}}(X, \mathscr{F})$ is the boundary map in the long exact sequence associated with the triangle: 
$$\mathrm{R}\Gamma_{Z_{p+1}/Z_{p+2}}(X, \mathcal{F}) \rightarrow \mathrm{R}\Gamma_{Z_p/Z_{p+2}}(X, \mathcal{F}) \rightarrow \mathrm{R}\Gamma_{Z_p/Z_{p+1}}(X, \mathcal{F}) \stackrel{+1}\rightarrow. $$

\subsection{The category of projective Banach modules} In this work  we will consider cohomologies  that will be naturally represented by complexes of Banach modules (or projective, inductive limits of such complexes). We therefore recall the basics of this theory.  Our discussion follows \cite{MR2846490}, section 2. Let $(A,A^+)$ be a complete Tate algebra over a non-archimedean field $(F, \ocal_F)$. We let $\varpi \in \ocal_F$ be a pseudo-uniformizer. 
\subsubsection{Modules} We let $\mathbf{Mod}(A)$ be the abelian category of $A$-modules, $\mathcal{C}(A)$ be the category of complexes of $A$-modules, $\mathcal{K}(A)$ its homotopy category and $\mathcal{D}(A)$ the derived category. We let $\mathcal{K}^{perf}(A)$ be the homotopy category of the category of bounded complexes of finite projective $A$-modules. These are called perfect complex. The category $\mathcal{K}^{perf}(A)$ is a full subcategory of the category $\mathcal{D}(A)$. 

\subsubsection{Banach modules} A Banach $A$-module $M$ is a topological $A$-module  whose topology can be described as follows: Let $A_0$ be an open and bounded subring of $A$. Then $M$ contains an open and bounded sub $A_0$-module $M_0$ which is $\varpi$-adically complete and separated (and $M = M_0[\frac{1}{\varpi}]$).  We let $\mathbf{Ban}(A)$ be the category whose objects are  Banach $A$-modules, and whose morphisms are continuous $A$-linear morphisms.  This is a quasi-abelian category in the sense of \cite{MR1779315}, def. 1.1.3. This means that $\mathbf{Ban}(A)$ is an additive category with kernels and cokernels,  strict epimorphisms are stable under pullback and strict monomorphisms are stable under pushout. Recall that  strict morphisms $f$ are those for which the natural morphism $\mathrm{coim}(f) \rightarrow \mathrm{im}(f)$ is an isomorphism in $\mathbf{Ban}(A)$.  
\subsubsection{The derived category}
We let $\mathcal{C}(\mathbf{Ban}(A))$ be the category of complexes of Banach modules, and $\mathcal{K}(\mathbf{Ban}(A))$ be its homotopy category. A complex $M^\bullet$ in $\mathcal{C}(\mathbf{Ban}(A))$ is strictly exact if each differential $d_i : M_i \rightarrow M_{i+1}$ is strict and the morphism $\mathrm{Im}(d_i) \rightarrow \mathrm{ker}(d_{i+1})$ is an isomorphism.  A morphism $f : M^\bullet \rightarrow N^\bullet$ in $\mathcal{C}(\mathbf{Ban}(A))$ is a strict quasi-isomorphism if its cone is strictly exact.  It is a general property of quasi-abelian categories that if  a morphism  is strictly exact, so is any other morphism in the same homotopy class. One can consider the derived category $\mathcal{D}(\mathbf{Ban}(A))$ obtained by inverting in $\mathcal{K}(\mathbf{Ban}(A))$ the strict quasi-isomorphisms  (\cite{MR1779315}, def. 1.2.16, \cite{MR2846490}, sect. 2.1.3).    In the case of the category $\mathbf{Ban}(A)$ we have the following lemma which helps recognize  strict quasi-isomorphisms:
\begin{lem} Let $M^\bullet$ be a complex in $\mathcal{C}(\mathbf{Ban}(A))$. Then $M^\bullet$ is exact as a complex of $A$-modules  if and only if it is strictly exact.  A morphism $f : M^\bullet \rightarrow N^\bullet$ in $\mathcal{C}(\mathbf{Ban}(A))$ is a quasi-isomorphism of complexes in $\mathbf{Mod}(A)$ if and only if it is a strict quasi-isomorphism.
\end{lem}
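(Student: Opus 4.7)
The plan is to reduce both assertions to the open mapping theorem for Banach modules over the non-archimedean field $F$. The second statement is a formal consequence of the first, applied to the mapping cone: by definition $f$ is a strict quasi-isomorphism iff $\mathrm{Cone}(f)$ is strictly exact, while $f$ is a quasi-isomorphism in $\mathbf{Mod}(A)$ iff $\mathrm{Cone}(f)$ is exact as a complex of $A$-modules. The mapping cone of a morphism in $\mathcal{C}(\mathbf{Ban}(A))$ is again an object of $\mathcal{C}(\mathbf{Ban}(A))$, since the term-wise direct sum of two Banach $A$-modules is a Banach $A$-module (take the direct sum of their open bounded $A_0$-submodules) and the cone differential is continuous. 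Hence it suffices to prove the first equivalence.

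For the easy direction, suppose $M^\bullet$ is strictly exact. Each differential $d_i : M_i \to M_{i+1}$ is strict, so the induced continuous bijection $M_i/\ker(d_i) \to \mathrm{im}(d_i)$ is a homeomorphism. Since $M_i/\ker(d_i)$ is complete, so is $\mathrm{im}(d_i)$; in particular $\mathrm{im}(d_i)$ is closed in $M_{i+1}$. Hence the image of $d_i$ in the quasi-abelian sense (the closure of the set-theoretic image) coincides with the set-theoretic image, and the hypothesis $\mathrm{Im}(d_i) = \ker(d_{i+1})$ in $\mathbf{Ban}(A)$ becomes the equality $\mathrm{im}(d_i) = \ker(d_{i+1})$ of $A$-submodules of $M_{i+1}$. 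Thus $M^\bullet$ is exact in $\mathbf{Mod}(A)$.

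For the converse, assume $M^\bullet$ is exact as a complex of $A$-modules, so $\mathrm{im}(d_i) = \ker(d_{i+1})$ for every $i$. The right hand side is closed in $M_{i+1}$ (as the preimage of $0$ under a continuous map) and inherits the structure of a Banach $A$-module from $M_{i+1}$. The map $d_i : M_i \twoheadrightarrow \ker(d_{i+1})$ is therefore a continuous $A$-linear surjection between Banach $A$-modules, hence between Banach $F$-vector spaces, and the non-archimedean open mapping theorem (applied to underlying $F$-Banach spaces, which are complete by construction) implies it is open. The induced continuous bijection $M_i/\ker(d_i) \to \ker(d_{i+1})$ is then an isomorphism in $\mathbf{Ban}(A)$, which is exactly the statement that $d_i$ is strict; combined with the fact that $\mathrm{im}(d_i)$ is already closed, this yields $\mathrm{Im}(d_i) = \ker(d_{i+1})$ in $\mathbf{Ban}(A)$, so $M^\bullet$ is strictly exact.

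The only substantive input is the open mapping theorem for Banach spaces over a complete non-archimedean field; once that is invoked, every remaining step is a direct unpacking of the definitions of strict morphism, strict exactness, and mapping cone.
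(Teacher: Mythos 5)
Your proof is correct and follows the same approach as the paper's (open mapping theorem for the first equivalence, mapping cone reduction for the second); the paper's own argument is a terse two-sentence sketch, and your version simply unpacks it step by step.
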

\begin{proof} It follows from the open mapping theorem that any epimorphism in $\mathbf{Ban}(A)$ is open and therefore strict. The first claim follows. The second claim is deduced by considering the cone of $f$.
\end{proof}
Because of this lemma, strict quasi-isomorphisms will simply be called quasi-isomorphisms. 

\subsubsection{Projectives} Let $I$ be a set. Denote by $A(I)$ the submodule of $A^I$ of sequences of  elements of $A$ indexed by $I$ converging to $0$ according to the filter in $I$ of the complement of the finite subsets of $I$.  This module can also be described as follows. Let $A_0$ be an open and bounded subring of $A$. Let $A_0(I)$ be the $\varpi$-adic completion of the free $A_0$-module with basis $I$. Then $A(I) = A_0(I)[\frac{1}{\varpi}]$. We see that $A(I)$ is a Banach $A$-module. A Banach $A$-module $M$ is orthonormalizable if there exists a set $I$ and an isomorphism $M \simeq A(I)$. 
 A Banach $A$-module  is called projective if it is a direct factor of an orthonormalizable Banach  $A$-module.  We let $\mathcal{C}^{proj}(\mathbf{Ban}(A))$ be the category of bounded complexes of projective Banach $A$-modules and  $\mathcal{K}^{proj}(\mathbf{Ban}(A))$ be the homotopy category.  There is a natural functor $\mathcal{K}^{proj}(\mathbf{Ban}(A)) \rightarrow \mathcal{D}(\mathbf{Ban}(A))$ and it is fully faithful (\cite{MR1779315}, prop. 1.3.22, \cite{MR2846490}, lem. 2.1.8).   In particular we deduce the following lemma which will be used in this work:
 \begin{lem}\label{lem-invert-quasi-isom} Let $f : M^\bullet \rightarrow N^\bullet$ be a   quasi-isomorphism in $\mathcal{C}^{proj}(\mathbf{Ban}(A))$. Then $f$ admits and inverse $g : N^\bullet \rightarrow M^\bullet$ up to homotopy. 
 \end{lem}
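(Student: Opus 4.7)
The plan is to deduce this directly from the full faithfulness of the functor $\mathcal{K}^{proj}(\mathbf{Ban}(A)) \rightarrow \mathcal{D}(\mathbf{Ban}(A))$ which is cited just before the statement (from \cite{MR1779315}, prop. 1.3.22). The only real content is to convert ``isomorphism in the derived category'' into ``homotopy equivalence among projective complexes''.

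First, I would observe that since $f$ is a (strict) quasi-isomorphism, its cone is strictly exact, so $f$ becomes an isomorphism in the derived category $\mathcal{D}(\mathbf{Ban}(A))$. In particular, $f$ is invertible in $\mathcal{D}(\mathbf{Ban}(A))$, and we obtain a morphism $\tilde g \in \mathrm{Hom}_{\mathcal{D}(\mathbf{Ban}(A))}(N^\bullet, M^\bullet)$ with $\tilde g \circ f = \mathrm{id}_{M^\bullet}$ and $f \circ \tilde g = \mathrm{id}_{N^\bullet}$ in $\mathcal{D}(\mathbf{Ban}(A))$.

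Next, since both $M^\bullet$ and $N^\bullet$ belong to $\mathcal{C}^{proj}(\mathbf{Ban}(A))$, they define objects in $\mathcal{K}^{proj}(\mathbf{Ban}(A))$, and the fully faithful functor $\mathcal{K}^{proj}(\mathbf{Ban}(A)) \rightarrow \mathcal{D}(\mathbf{Ban}(A))$ gives a bijection
\[
\mathrm{Hom}_{\mathcal{K}^{proj}(\mathbf{Ban}(A))}(N^\bullet,M^\bullet) \;\xrightarrow{\sim}\; \mathrm{Hom}_{\mathcal{D}(\mathbf{Ban}(A))}(N^\bullet,M^\bullet).
\]
Hence $\tilde g$ is the image of a unique class $g \in \mathrm{Hom}_{\mathcal{K}^{proj}(\mathbf{Ban}(A))}(N^\bullet, M^\bullet)$, i.e. the homotopy class of an honest map of complexes $g: N^\bullet \to M^\bullet$.

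Finally, applying full faithfulness again (this time to $\mathrm{Hom}(M^\bullet,M^\bullet)$ and $\mathrm{Hom}(N^\bullet,N^\bullet)$), the equalities $\tilde g \circ f = \mathrm{id}$ and $f \circ \tilde g = \mathrm{id}$ in $\mathcal{D}(\mathbf{Ban}(A))$ lift uniquely to the corresponding equalities $g \circ f = \mathrm{id}_{M^\bullet}$ and $f \circ g = \mathrm{id}_{N^\bullet}$ in $\mathcal{K}^{proj}(\mathbf{Ban}(A))$, which is precisely the statement that $g$ is a homotopy inverse to $f$. There is no substantial obstacle here; the only subtle point is the cited full faithfulness result of Schneiders, which supplies all the work.
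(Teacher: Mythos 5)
Your proof is correct and is precisely the detailed unwinding of the ``we deduce'' statement that the paper makes immediately before the lemma: the paper itself offers no proof beyond citing the full faithfulness of $\mathcal{K}^{proj}(\mathbf{Ban}(A)) \rightarrow \mathcal{D}(\mathbf{Ban}(A))$, and your argument carries out exactly the standard lift of the inverse isomorphism through this fully faithful functor.
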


 Recall that finite type $A$-modules are canonically   Banach $A$-modules. Therefore we can view $\mathcal{K}^{perf}(A)$  as a full subcategory  of $\mathcal{K}^{proj}(\mathbf{Ban}(A))$.

\subsubsection{Limits of Banach spaces}
We let $\mathrm{Pro}_{\mathbb{N}}(\mathbf{Ban}(A))$ be the category of countable projective systems of $\mathbf{Ban}(A)$.  Its objects are contravariant functors $\mathbb{N} \rightarrow \mathbf{Ban}(A)$.  An object of this category will be denoted  $``\lim_{i \in \mathbb{N}}"K_i$ with $K_i \in Ob(\mathbf{Ban}(A))$.
We let $\mathrm{Ind}_{\mathbb{N}}(\mathbf{Ban}(A))$ be the category of countable inductive systems of $\mathbf{Ban}(A)$. Its objects are covariant functors $\mathbb{N} \rightarrow \mathbf{Ban}(A)$.  An object of this category will be denoted  $``\colim_{i \in \mathbb{N}}"K_i$ with $K_i \in Ob(\mathbf{Ban}(A))$.
We have  a limit functor $\lim : \mathrm{Pro}_{\mathbb{N}}(\mathbf{Ban}(A)) \rightarrow  \mathbf{Mod}(A)$, and a colimit functor $\colim : \mathrm{Ind}_{\mathbb{N}}(\mathbf{Ban}(A)) \rightarrow  \mathbf{Mod}(A)$.  Of course, these functors completely forget the topology. We could consider instead limit and colimit functors in the category of locally convex $A$-modules instead, but this turns out to be unnecessary to us. 
We extend $\lim$ and $\colim$ to functors between the category of complexes:
$\lim : \mathcal{C}(\mathrm{Pro}_{\mathbb{N}}(\mathbf{Ban}(A))) \rightarrow  \mathcal{C}(A)$ and $\colim :  \mathcal{C}(\mathrm{Ind}_{\mathbb{N}}(\mathbf{Ban}(A))) \rightarrow  \mathcal{C}(A)$.

We let $\mathrm{Pro}_{\mathbb{N}} (\mathcal{K}^{proj}(\mathbf{Ban}(A)))$ be the category  whose objects are  projective systems of complexes  $\{K_i \in Ob(\mathcal{K}^{proj}(\mathbf{Ban}(A)))\}_{i \in \N}$ and the $K_i$'s have  non-zero cohomology in a uniformly bounded range of degrees. We denote an object of this category by $``\lim_{i \in \mathbb{N}}"K_i$  (we are not evaluating the limit).
 Let $``\lim_{i \in \mathbb{N}}"K_i \in Ob(\mathrm{Pro}_{\mathbb{N}} (\mathcal{K}^{proj}(\mathbf{Ban}(A))))$. Then there exists an object (unique up to a non unique isomorphism) $\lim_{i \in \mathbb{N}}K_i \in \mathcal{D}(A)$ by \cite{stacks-project}, Tag 08TB.  And similarly, for any morphism $``\lim_{i \in \mathbb{N}}"K_i  \rightarrow ``\lim_{i \in \mathbb{N}}"K'_i \in \mathrm{Pro}_{\mathbb{N}} (\mathcal{K}^{proj}(\mathbf{Ban}(A)))$, there is  a morphism $\lim K_i \rightarrow \lim K'_i$. 
 If  $``\lim_{i \in \mathbb{N}}"K_i \in Ob(\mathrm{Pro}_{\mathbb{N}} (\mathcal{K}^{proj}(\mathbf{Ban}(A))))$ can be represented by 
an object in $\mathcal{C}(\mathrm{Pro}_{\mathbb{N}}(\mathbf{Ban}(A)))$, then $\lim K_i$ is obtained by taking the degreewise projective limit.

We also let $\mathrm{Ind}_{\mathbb{N}} (\mathcal{K}^{proj}(\mathbf{Ban}(A)))$ be the category  whose objects are  inductive systems of complexes  $\{K_i \in Ob(\mathcal{K}^{proj}(A))\}_{i \in \N}$ and the $K_i$'s have  non-zero cohomology in a uniformly bounded range of degrees. We denote an object of this category by $``\colim_{i \in \mathbb{N}}"K_i$ (we are not evaluating the limit either).

Let $``\colim_{i \in \mathbb{N}}"K_i \in Ob(\mathrm{Ind}_{\mathbb{N}} (\mathcal{K}^{proj}(\mathbf{Ban}(A))))$. Then there exists an object (unique up to a non unique isomorphism) $\colim_{i \in \mathbb{N}}K_i \in \mathcal{D}(A)$ by \cite{stacks-project}, Tag 0A5K.  And similarly, for any morphism $``\colim_{i \in \mathbb{N}}"K_i  \rightarrow ``\colim_{i \in \mathbb{N}}"K'_i \in \mathrm{Ind}_{\mathbb{N}} (\mathcal{K}^{proj}(\mathbf{Ban}(A)))$,  there is  a morphism $\colim K_i \rightarrow \colim K'_i$. 
If  $``\colim_{i \in \mathbb{N}}"K_i \in Ob(\mathrm{Ind}_{\mathbb{N}} (\mathcal{K}^{proj}(\mathbf{Ban}(A))))$ can be represented by 
an object in $\mathcal{C}(\mathrm{Ind}_{\mathbb{N}}(\mathbf{Ban}(A)))$, then $\colim K_i$ is obtained by taking the degreewise inductive limit. 

\subsubsection{Compact operators} Recall that a  morphism $T : M \rightarrow N$  in $\mathbf{Ban}(A)$ is called compact if it is a limit of finite rank operators (for the supremum norm of operators, or equivalently the strong topology on $\mathrm{Hom}_{\mathbf{Ban}(A)}(M,N)$).  
A morphism $T :  M^\bullet \rightarrow N^\bullet$  in $\mathcal{C}(\mathbf{Ban}(A))$ is called compact if it is compact in each degree. 

\begin{defi} A morphism $T :  M^\bullet \rightarrow N^\bullet$  in $\mathcal{K}^{proj}(\mathbf{Ban}(A))$ is compact if it has a compact representative in $\mathcal{C}^{proj}(\mathbf{Ban}(A))$. 
\end{defi}

We need to extend these definitions  to the case of objects in $\mathrm{Pro}_{\mathbb{N}}(\mathcal{K}^{proj}(\mathbf{Ban}(A)))$ or $\mathrm{Ind}_{\mathbb{N}}(\mathcal{K}^{proj}(\mathbf{Ban}(A)))$. Let $T : ``\lim_i"M_i^\bullet \rightarrow  ``\lim_i" N_i^\bullet$ be a morphism in  $\mathrm{Pro}_{\mathbb{N}}(\mathcal{K}^{proj}(\mathbf{Ban}(A)))$. 
We say that $T$ is compact if  there exists a compact operator  $T' :  M^\bullet \rightarrow  N^\bullet \in \mathcal{K}^{proj}(\mathbf{Ban}(A))$, and a commutative diagram: 
\begin{eqnarray*}
\xymatrix{ M^\bullet \ar[r]^{T'} & N^\bullet \ar[d] \\
``\lim_i"M_i^\bullet \ar[u]\ar[r]^{T} & ``\lim_i" N_i^\bullet}
\end{eqnarray*}

\begin{lem}\label{lem-potent-compact-factor} Let $``\lim_i"M_i^\bullet \in  Ob(\mathrm{Pro}_{\mathbb{N}}(\mathcal{K}^{proj}(\mathbf{Ban}(A))))$ and let $T$ be a  compact endomorphism of $``\lim_i"M_i^\bullet$.  Then $T$ induces canonically a compact endomorphism $T_i$ of $M^\bullet_i$ for $i$ large enough and there are factorization diagrams: 
\begin{eqnarray*}
\xymatrix{ M^\bullet_{i+1} \ar[d]\ar[r]^{T_{i+1}} & M_{i+1}^\bullet \ar[d] \\
M_i^\bullet \ar[ru]\ar[r]^{T_i} & M_i^\bullet}
\end{eqnarray*}
\end{lem}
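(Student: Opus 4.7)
The plan is to unwind the definition of a compact endomorphism of $``\lim_i"M_i^\bullet$. By hypothesis, $T$ fits into a commutative diagram in the pro-homotopy category factoring through a compact operator $T' : M^\bullet \to N^\bullet$ in $\mathcal{K}^{proj}(\mathbf{Ban}(A))$. The factorization data consists of two morphisms in the pro-category: one from $``\lim_i"M_i^\bullet$ up to the constant pro-object $M^\bullet$, and one from $N^\bullet$ down to $``\lim_i"M_i^\bullet$. Using the standard formulas $\mathrm{Hom}_{\mathrm{Pro}}(``\lim_i"M_i^\bullet, M^\bullet) = \colim_i \mathrm{Hom}(M_i^\bullet, M^\bullet)$ and $\mathrm{Hom}_{\mathrm{Pro}}(N^\bullet, ``\lim_i"M_i^\bullet) = \lim_i \mathrm{Hom}(N^\bullet, M_i^\bullet)$, the first map is represented by some $f : M_{i_0}^\bullet \to M^\bullet$ with $i_0$ sufficiently large, and the second by a compatible family $\{g_i : N^\bullet \to M_i^\bullet\}_{i \in \mathbb{N}}$ satisfying $p_{j,i} \circ g_j = g_i$ in $\mathcal{K}^{proj}(\mathbf{Ban}(A))$ for $j \geq i$, where $p_{j,i} : M_j^\bullet \to M_i^\bullet$ denotes the transition maps.

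For each $i \geq i_0$, I would then define
$$T_i := g_i \circ T' \circ f \circ p_{i, i_0} : M_i^\bullet \to M_i^\bullet.$$
Since $T'$ admits a representative in $\mathcal{C}^{proj}(\mathbf{Ban}(A))$ which is compact in each degree, and since compact operators form a two-sided ideal inside the bounded operators between Banach $A$-modules, composing degreewise with representatives of $f \circ p_{i,i_0}$ and $g_i$ produces a degreewise compact representative of $T_i$. Thus $T_i$ is a compact endomorphism in $\mathcal{K}^{proj}(\mathbf{Ban}(A))$ in the sense of the earlier definition. To realize the factorization diagram, I would introduce the diagonal map
$$\tilde{T}_i := g_{i+1} \circ T' \circ f \circ p_{i, i_0} : M_i^\bullet \to M_{i+1}^\bullet.$$

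The two commutativities are then formal. The identity $T_{i+1} = \tilde{T}_i \circ p_{i+1,i}$ follows immediately from $p_{i, i_0} \circ p_{i+1, i} = p_{i+1, i_0}$, and the identity $T_i = p_{i+1, i} \circ \tilde{T}_i$ reduces to $p_{i+1, i} \circ g_{i+1} = g_i$, which is precisely the compatibility of the family $\{g_j\}$. The only mild subtlety is that all equalities above hold in the homotopy category $\mathcal{K}^{proj}(\mathbf{Ban}(A))$ rather than on the nose, but this is exactly the level at which morphisms in $\mathrm{Pro}_{\mathbb{N}}(\mathcal{K}^{proj}(\mathbf{Ban}(A)))$ are defined, so no extra work is needed. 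I do not anticipate a serious obstacle: the statement is essentially a formal consequence of the pro-category definition of compactness combined with the ideal property of compact operators, and the word ``canonical'' should be read as ``determined, up to the homotopies intrinsic to the pro-homotopy category, by the chosen factorization witnessing compactness''.
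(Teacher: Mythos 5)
Your proof is correct and follows exactly the same approach as the paper. The paper's proof is terser (it ends with ``The lemma follows'' after identifying the factoring map $f: M_{i_0}^\bullet \to M^\bullet$ and the compatible family $g_i : N^\bullet \to M_i^\bullet$), whereas you explicitly write out the formulas $T_i = g_i \circ T' \circ f \circ p_{i,i_0}$ and $\tilde{T}_i = g_{i+1}\circ T' \circ f \circ p_{i,i_0}$ and verify the two triangles and the compactness via the two-sided ideal property — all of which is exactly what the paper leaves implicit.
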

\begin{proof} By definition, the map $``\lim_i"M_i^\bullet \rightarrow M^\bullet$ factors into $``\lim_i"M_i^\bullet \rightarrow M_i ^\bullet\rightarrow M^\bullet$ for some $i$ large enough. 
The map $N^\bullet \rightarrow ``\lim_i"M_i^\bullet$ is given by a collection of compatible maps $N^\bullet \rightarrow M_i^\bullet$.  The lemma follows.
\end{proof}

Let $T : ``\colim_i"M_i^\bullet \rightarrow  ``\colim_i" N_i^\bullet$ be a morphism in  $\mathrm{Ind}_{\mathbb{N}}(\mathcal{K}^{proj}(\mathbf{Ban}(A)))$. 
We say that $T$ is compact if  there exists a compact operator  $T' :  M^\bullet \rightarrow  N^\bullet \in \mathcal{K}^{proj}(\mathbf{Ban}(A))$, and a commutative diagram: 
\begin{eqnarray*}
\xymatrix{ M^\bullet \ar[r]^{T'} & N^\bullet \ar[d] \\
``\colim_i"M_i^\bullet \ar[u]\ar[r]^{T} & ``\colim_i" N_i^\bullet}
\end{eqnarray*}

\begin{lem}\label{lem-potent-compact-factor2} Let $``\colim_i"M_i^\bullet \in  Ob(\mathrm{Ind}_{\mathbb{N}}(\mathcal{K}^{proj}(\mathbf{Ban}(A))))$ and let $T$ be a  compact endomorphism of $``\colim_i"M_i^\bullet$.  Then $T$ induces canonically a compact endomorphism $T_i$ of $M^\bullet_i$ for $i$ large enough and there are factorization diagrams: 
\begin{eqnarray*}
\xymatrix{ M^\bullet_{i+1} \ar[rd]\ar[r]^{T_{i+1}} & M_{i+1}^\bullet  \\
M_i^\bullet \ar[u] \ar[r]^{T_i} & M_i^\bullet \ar[u]}
\end{eqnarray*}
\end{lem}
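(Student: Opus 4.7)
The plan is to mirror the proof of Lemma \ref{lem-potent-compact-factor}, exchanging the roles of the pro- and ind-constructions. Unwinding the definition, compactness of $T$ provides a compact morphism $T' : M^\bullet \to N^\bullet$ in $\mathcal{K}^{proj}(\mathbf{Ban}(A))$ fitting into a commutative square with $T$. A morphism out of an ind-object $``\colim_i"M_i^\bullet$ to a single object $M^\bullet$ is, by the universal property of ind-categories, nothing more than a compatible family $\{s_i : M_i^\bullet \to M^\bullet\}_i$ of morphisms. Dually, a morphism with source a single object $N^\bullet$ and target an ind-object $``\colim_i"M_i^\bullet$ belongs to $\colim_k \mathrm{Hom}(N^\bullet, M_k^\bullet)$, hence is represented by a single morphism $u : N^\bullet \to M_k^\bullet$ for $k$ sufficiently large, followed by the canonical map $M_k^\bullet \to ``\colim_i"M_i^\bullet$.

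Given these factorizations, for every $i \geq k$ I would set
$$ T_i \;:\; M_i^\bullet \xrightarrow{s_i} M^\bullet \xrightarrow{T'} N^\bullet \xrightarrow{u} M_k^\bullet \xrightarrow{\iota_{k,i}} M_i^\bullet, $$
with $\iota_{k,i}$ the structure map of the ind-system. Each $T_i$ is compact because composition of any morphism with a compact morphism is again compact. The compatibilities $s_{i+1}\circ\iota_{i,i+1} = s_i$ and $\iota_{k,i+1} = \iota_{i,i+1}\circ\iota_{k,i}$ ensure that the family $\{T_i\}_{i \geq k}$ assembles into a morphism of ind-objects which recovers $T$ itself.

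For the claimed factorization diagram, I would introduce the diagonal morphism
$$ \sigma \;:\; M_{i+1}^\bullet \xrightarrow{s_{i+1}} M^\bullet \xrightarrow{T'} N^\bullet \xrightarrow{u} M_k^\bullet \xrightarrow{\iota_{k,i}} M_i^\bullet. $$
The two resulting triangular identities $T_i = \sigma\circ \iota_{i,i+1}$ and $T_{i+1} = \iota_{i,i+1}\circ\sigma$ then drop out of the two compatibilities mentioned above. The only genuine (non-formal) input in the whole argument is the factorization of the map $N^\bullet \to ``\colim_i"M_i^\bullet$ through some $M_k^\bullet$; this is the exact ind-analog of the pro-input used in Lemma \ref{lem-potent-compact-factor} (that a map $``\lim_i"M_i^\bullet \to M^\bullet$ factors through some $M_i^\bullet$), and I do not expect it to be an obstacle since it is essentially built into the definition of the category $\mathrm{Ind}_{\mathbb{N}}(\mathcal{K}^{proj}(\mathbf{Ban}(A)))$.
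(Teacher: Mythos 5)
Your proposal is correct and follows the same route as the paper's proof: factor the map $N^\bullet \to ``\colim_i"M_i^\bullet$ through some $M_k^\bullet$ using the ind-category universal property, view $``\colim_i"M_i^\bullet \to M^\bullet$ as a compatible family, and compose through the compact $T'$. The paper states these two observations and immediately concludes; you have simply spelled out the resulting formula for $T_i$ and the diagonal map and verified the two triangular identities, which is exactly what ``the lemma follows'' is compressing.
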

\begin{proof} By definition, the map $N^\bullet \rightarrow ``\colim_i"M_i^\bullet$ factors into $N^\bullet \rightarrow M_i^\bullet \rightarrow  ``\colim_i"M_i^\bullet $ for some $i$ large enough. 
The map $ ``\colim_i"M_i^\bullet \rightarrow M^\bullet$ is given by a collection of compatible maps $ M_i^\bullet \rightarrow M^\bullet$.  The lemma follows.
\end{proof}

\begin{defi}\label{defi-notop-compact} Let  $T : N^\bullet \rightarrow M^\bullet$ be a map in $\mathcal{D}(A)$.  We say that $T$ is compact if it can be represented by a compact map  in $\mathrm{Pro}_{\mathbb{N}}(\mathcal{K}^{proj}(\mathbf{Ban}(A)))$ or  $\mathrm{Ind}_{\mathbb{N}}(\mathcal{K}^{proj}(\mathbf{Ban}(A)))$. 
\end{defi}

\begin{defi} Let $M^\bullet \in  Ob(\mathcal{D}(A))$ and let $T \in \mathrm{End}_{\mathcal{D}(A)}(M^\bullet)$. We say that  $T$ is potent compact if  for some $n \geq 0$,  ${T}^n$ is compact.  
\end{defi}

\subsection{The cohomology of Banach sheaves} 
In this section we explain how we can obtain complexes of Banach modules from the cohomology of rigid analytic varieties. We use the theory of adic spaces  described in  \cite{MR1734903} and \cite{MR1306024} for example. 

\subsubsection{Sheaves of Banach modules over adic spaces} In this section we recall some material from \cite{MR3275848}, appendix A. Let $F$ be a non archimedean field with ring of integers $\ocal_F$. Let $\varpi \in F$ be a topologically nilpotent unit. 
 
 We  recall that there is a good theory of coherent sheaves on finite type adic spaces  over $\Spa (F, \ocal_F)$: If $\mathcal{X} =\Spa (A,A^+)$ is affinoid and $\mathscr{F}$ is a coherent sheaf on $\mathcal{X}$, then $\HH^i(\mathcal{X},\oscr_{\mathcal{X}}) =0$ for $i \neq 0$, $M = \HH^0(\mathcal{X}, \oscr_{\mathcal{X}})$ is an $A$-module of finite type and the canonical map ${M} \otimes_{A} \oscr_{\mathcal{X}} \rightarrow \mathscr{F}$ is an isomorphism (\cite{MR1306024}, thm. 2.5). Moreover,  $M$ is canonically a Banach $A$-module (\cite{MR1306024}, lem. 2.4). 
 It follows that a coherent sheaf $\mathscr{F}$ over   a     finite type  adic space $\mathcal{X}$  is a sheaf of topological $\oscr_{\mathcal{X}}$-modules. In this paper we will have to manipulate topological sheaves which are not coherent.

\begin{defi}\label{defi-banach-sheaf} Let $\mathcal{X}$ be a     finite type  adic space  over $\Spa (F, \ocal_F)$. A sheaf $\mathscr{F}$ of topological $\oscr_{\mathcal{X}}$-modules is called a Banach sheaf if:
 \begin{enumerate}
 \item For any quasi-compact open $\mathcal{U} \hookrightarrow \mathcal{X}$, $\mathscr{F}(\mathcal{U})$ is a Banach $\oscr_{\mathcal{X}}(\mathcal{U})$-module,
 \item There is an affinoid covering $\mathcal{X} = \cup_i \mathcal{U}_i$, such that  for any affinoid $\mathcal{V} \subset \mathcal{U}_i$, the continuous restriction map $\mathscr{F}(\mathcal{U}_i) \rightarrow \mathscr{F}(\mathcal{V})$ induces a topological isomorphism: 
 $\oscr_{\mathcal{X}}(\mathcal{V}) \hat{\otimes}_{\oscr_{\mathcal{X}}(\mathcal{U}_i)} \mathscr{F}(\mathcal{U}_i) \rightarrow \mathscr{F}(\mathcal{V})$,
 \end{enumerate}
 
 A Banach sheaf $\mathscr{F}$ is called locally projective if there is a covering as in $(2)$ such that $ \mathscr{F}(\mathcal{U}_i)$ is a projective Banach $\oscr_{\mathcal{X}}(\mathcal{U}_i)$-module. 
 \end{defi}

 Any coherent sheaf on $\mathcal{X}$ is therefore a Banach sheaf and a coherent sheaf is a locally projective Banach sheaf if and only if it is locally free.
 Banach sheaves over $\mathcal{X}$ form a full subcategory  of the category of topological $\oscr_{\mathcal{X}}$-modules. We introduce compact morphisms in this context. 
 \begin{defi}\label{defi-compact-map}  Let $\mathcal{X}$ be an adic space of finite type over $\Spa(F, \ocal_F)$.  Let $\mathscr{F}$ and $\mathscr{G}$ be two locally projective Banach sheaves. Let $\phi : \mathscr{F} \rightarrow \mathscr{G}$ be a continuous morphism of $\oscr_{\mathcal{X}}$-modules. We say that the map $\phi$ is compact if there is a covering $\mathcal{X} = \cup_i \mathcal{U}_i$ satisfying the point $(2)$ of definition \ref{defi-banach-sheaf} for both $\mathscr{F}$ and $\mathscr{G}$, such that the map $\phi : \mathscr{F}(\mathcal{U}_i) \rightarrow \mathscr{G}(\mathcal{U}_i)$ is a compact map of $\oscr_{\mathcal{X}}(\mathcal{U}_i)$-modules.
\end{defi}

\begin{rem} Note that if $\mathscr{G}$ is coherent, any morphism  to $\mathscr{G}$ is compact. \end{rem}
 
 \subsubsection{Cohomological properties of Banach sheaves}\label{section-Cohomological properties of Banach sheaves}
  We warn the reader that Banach sheaves which are not coherent sheaves are pathological in general. In particular it is not true that on an affinoid $\mathcal{X}$, a Banach sheaf has trivial higher cohomology groups, nor is it true that a Banach sheaf is the sheaf associated with its global sections.  Here is nevertheless a simple situation where this holds. Let $\mathcal{X} = \Spa(A,A^+)$ be an affinoid and let $M$ be a projective Banach $A$-module. Let $\mathscr{F} = M \hat{\otimes}_A \oscr_{\mathcal{X}}$ be the pre-sheaf whose value on an affinoid  open $\mathcal{U} = \Spa(B,B^+)$ of $\mathcal{X}$ is $M \hat{\otimes}_A B$.  
 
 \begin{lem} The pre-sheaf $\mathscr{F}$ is a sheaf and $\HH^i(\mathcal{X}, \mathscr{F})=0$ for all $i >0$.
 \end{lem}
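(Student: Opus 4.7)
The strategy is to reduce to the orthonormalizable case and then exploit Tate's acyclicity theorem for $\oscr_{\mathcal{X}}$ together with the exactness of the functor $V \mapsto V(I) = V \hat\otimes_A A(I)$ on strict exact sequences of Banach modules.

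First, since $M$ is a projective Banach $A$-module, it is by definition a direct summand of an orthonormalizable module $A(I)$ for some set $I$. The formation of the presheaf $\mathscr{F}$, of alternating Čech complexes, and of sheaf cohomology are all additive, so the statement for $A(I)$ implies the statement for $M$. Hence I may assume $M = A(I)$, in which case $\mathscr{F}(\Spa(B,B^+)) = B(I) = B \hat\otimes_A A(I)$.

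Next, I would invoke Tate's acyclicity theorem: for any finite rational covering $\{\mathcal{U}_\alpha = \Spa(B_\alpha, B_\alpha^+)\}$ of any rational subdomain $\mathcal{U} = \Spa(B,B^+) \hookrightarrow \mathcal{X}$, the augmented alternating Čech complex
\[
0 \to B \to \prod_{\alpha_0} B_{\alpha_0} \to \prod_{\alpha_0 < \alpha_1} B_{\alpha_0 \alpha_1} \to \cdots
\]
is exact in $\mathbf{Ban}(B)$, and hence strict exact by the open mapping theorem. I would then claim that applying the functor $(-)(I)$ termwise yields the augmented Čech complex for $\mathscr{F}$ on the same cover, and preserves strict exactness. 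Granting this claim, the sheaf property of $\mathscr{F}$ is the exactness at the two leftmost terms, and the uniform vanishing of $\check{\mathrm{H}}^i(\mathfrak{U},\mathscr{F})$ for $i>0$ across all finite rational covers of all rational subdomains yields $\HH^i(\mathcal{X},\mathscr{F}) = 0$ for $i>0$ via the Čech-to-derived functor comparison (of the same type that, combined with Tate acyclicity, proves the coherent case).

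The main obstacle is the claim that $V \mapsto V(I)$ preserves strict exactness of complexes in $\mathbf{Ban}(A)$. The key input is the open mapping theorem: given a strict surjection $f\colon V \twoheadrightarrow W$ of Banach spaces, there exists a constant $C > 0$ such that every $w \in W$ admits a preimage $v \in V$ with $\|v\| \leq C \|w\|$. Consequently, any null-convergent sequence $(w_\iota)_{\iota \in I} \in W(I)$ lifts coordinatewise to a null-convergent sequence $(v_\iota)_{\iota \in I} \in V(I)$, showing that $V(I) \to W(I)$ is a strict surjection. Combined with the immediate identity $\ker(f_I) = (\ker f)(I)$ (which follows directly from the definition of null-convergence), this yields strict exactness of $(-)(I)$ applied to any strict exact sequence of Banach modules. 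Applying this to the strict exact Čech complex for $\oscr_{\mathcal{X}}$ furnished by Tate, the lemma follows.
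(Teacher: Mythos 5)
Your proof is correct and takes essentially the same approach as the paper's (which is given in one line: reduce to the orthonormalizable case and appeal to the known properties of $\oscr_{\mathcal{X}}$); you have simply filled in the details of why Tate acyclicity for $\oscr_{\mathcal{X}}$ transfers to $A(I)$ via strict-exactness preservation under $(-)(I)$.
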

 \begin{proof} We reduce to the case that $M$ is orthonormalizable, and everything follows from the known properties of $\oscr_{\mathcal{X}}$.
 \end{proof}
 
   We now introduce a certain class of Banach sheaves that have better cohomological properties, following \cite{MR3275848}, appendix A. These are Banach sheaves admitting  formal models which can be controlled in a certain sense. We thus begin by discussing formal Banach sheaves over formal schemes.

\begin{defi} Let $\mathfrak{X} \rightarrow \Spf~\ocal_F$ be a finite type formal scheme over $\Spf(\ocal_F)$. A sheaf $\mathfrak{F}$ of $\oscr_{\mathfrak{X}}$-modules is called a formal Banach sheaf if $\mathfrak{F}$ is flat as an $\ocal_F$-module, $\mathscr{F}_n := \mathfrak{F}/\varpi^n$ is a quasi-coherent sheaf, and   $\mathfrak{F} = \lim_n \mathscr{F}_n$. 
\end{defi}
 
 A formal Banach sheaf is called \emph{flat} if $\mathscr{F}_n$ is a flat $\oscr_{\mathfrak{X}}/\varpi^n$-module for all $n$. It is called \emph{locally projective} if $\mathscr{F}_n$ is a locally projective $\oscr_{\mathfrak{X}}/\varpi^n$-module for all $n$. A formal Banach sheaf is called \emph{small} if there exists a coherent sheaf $\mathscr{G}$ over $\mathfrak{X}$ with the property that $\mathscr{F}_1$ is the inductive limit of coherent  sub sheaves $\mathscr{F}_1 = \mathrm{colim}_{j \in \mathbb{Z}_{\geq 0}} \mathscr{F}_{1,j}$ and  $\mathscr{F}_{1,j}/\mathscr{F}_{1,j-1}$   is a direct summand of $\mathscr{G}$ for all $j \geq 0$. 
 
 The relevance of the smallness assumption is given by the following theorem:
 
 \begin{thm}\label{thm-small-vanish} Let $\mathfrak{X} \rightarrow \Spf~\ocal_F$ be a finite type formal scheme  and let $\mathfrak{F}$ be a small formal Banach sheaf. Assume that $\mathfrak{X}$ has an ample invertible sheaf, and that the generic fiber $\mathcal{X}$ of $\mathfrak{X}$ is affinoid. Then $\HH^i(\mathfrak{X}, \mathfrak{F}) \otimes_{\ocal_F} F = 0 $ for all $i >0$. 
 \end{thm}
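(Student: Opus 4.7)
My plan is to reduce to the vanishing of coherent sheaf cohomology on the affinoid generic fiber, then to propagate the vanishing through the filtration defining $\mathfrak{F}$.

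First I would handle coherent sheaves on $\mathfrak{X}$. The ample invertible sheaf makes $\mathfrak{X}$ projective over $\Spf A$, where $A := \Gamma(\mathfrak{X},\oscr_{\mathfrak{X}})$ is a $\varpi$-adically complete $\ocal_F$-algebra with $A[\tfrac{1}{\varpi}] = \Gamma(\mathcal{X},\oscr_{\mathcal{X}})$. Formal GAGA gives, for any coherent sheaf $\mathcal{H}$ on $\mathfrak{X}$, that $\HH^i(\mathfrak{X}, \mathcal{H})$ is a finitely generated $A$-module and the base change map $\HH^i(\mathfrak{X}, \mathcal{H}) \otimes_{\ocal_F} F \to \HH^i(\mathcal{X}, \mathcal{H}^{rig})$ is an isomorphism. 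Since $\mathcal{X}$ is affinoid and $\mathcal{H}^{rig}$ is coherent, the right-hand side vanishes for $i>0$, so $\HH^i(\mathfrak{X}, \mathcal{H})$ is finitely generated and killed after inverting $\varpi$, hence annihilated by some $\varpi^N$. Applied to $\mathcal{H} = \mathscr{G}$, this gives a uniform $N$ with $\varpi^N \HH^i(\mathfrak{X}, \mathscr{G}) = 0$ for all $i > 0$.

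Next I would propagate this to $\mathscr{F}_1$. Each graded piece $\mathscr{F}_{1,j}/\mathscr{F}_{1,j-1}$ is a direct summand of $\mathscr{G}$, so its cohomology is a direct summand of $\HH^i(\mathfrak{X}, \mathscr{G})$ and is killed by $\varpi^N$. By induction on the filtration (using the long exact sequences from $0 \to \mathscr{F}_{1,j-1} \to \mathscr{F}_{1,j} \to \mathscr{F}_{1,j}/\mathscr{F}_{1,j-1} \to 0$), each $\HH^i(\mathfrak{X}, \mathscr{F}_{1,j})$ is $\varpi^{jN}$-torsion. Since $\mathfrak{X}$ is qcqs, cohomology commutes with filtered colimits, so
$$\HH^i(\mathfrak{X}, \mathscr{F}_1) = \colim_j \HH^i(\mathfrak{X}, \mathscr{F}_{1,j}),$$
and every element of the colimit is $\varpi$-torsion, giving $\HH^i(\mathfrak{X}, \mathscr{F}_1) \otimes_{\ocal_F} F = 0$. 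Using the short exact sequences $0 \to \mathscr{F}_1 \to \mathscr{F}_n \to \mathscr{F}_{n-1} \to 0$ coming from the $\ocal_F$-flatness of $\mathfrak{F}$, induction on $n$ yields $\HH^i(\mathfrak{X}, \mathscr{F}_n) \otimes_{\ocal_F} F = 0$ for every $n$.

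The main obstacle is passing from $\{\mathscr{F}_n\}$ to $\mathfrak{F} = \lim_n \mathscr{F}_n$: the Milnor short exact sequence
$$0 \to R^1\lim_n \HH^{i-1}(\mathfrak{X}, \mathscr{F}_n) \to \HH^i(\mathfrak{X}, \mathfrak{F}) \to \lim_n \HH^i(\mathfrak{X}, \mathscr{F}_n) \to 0$$
does not immediately kill $\HH^i(\mathfrak{X}, \mathfrak{F}) \otimes_{\ocal_F} F$, because inverting $\varpi$ does not commute with $\lim$ or $R^1\lim$ when the $\varpi$-torsion of the terms is unbounded, as it a priori is in the argument above. My plan to circumvent this is to exploit the ample line bundle $\mathscr{L}$: by Serre vanishing, $\HH^i(\mathfrak{X}, \mathscr{G} \otimes \mathscr{L}^m) = 0$ integrally for $m \gg 0$ and $i > 0$. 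Rerunning the filtration argument with $\mathscr{G}$ replaced by $\mathscr{G} \otimes \mathscr{L}^m$ and $\mathfrak{F}$ replaced by $\mathfrak{F} \otimes \mathscr{L}^m$ (which is again a small formal Banach sheaf) then yields $\HH^i(\mathfrak{X}, \mathscr{F}_n \otimes \mathscr{L}^m) = 0$ \emph{integrally} and \emph{uniformly} in $n$ for $i>0$. Taking $R\lim$ and verifying Mittag--Leffler for $\HH^0$ (which holds because the transition maps $\mathscr{F}_{n+1} \otimes \mathscr{L}^m \to \mathscr{F}_n \otimes \mathscr{L}^m$ are surjective on global sections by the vanishing of $\HH^1$) gives $\HH^i(\mathfrak{X}, \mathfrak{F} \otimes \mathscr{L}^m) = 0$ for $i > 0$. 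Finally, to convert this twisted integral vanishing into the untwisted vanishing of $\HH^i(\mathfrak{X}, \mathfrak{F}) \otimes_{\ocal_F} F$, I would choose finitely many global sections of $\mathscr{L}^m$ whose non-vanishing loci cover $\mathfrak{X}$ (these exist for $m \gg 0$ by ampleness) and use them to build a \v{C}ech-type comparison between $\mathfrak{F}$ and $\mathfrak{F} \otimes \mathscr{L}^m$ on an affinoid cover of $\mathcal{X}$, where the chosen sections become invertible.
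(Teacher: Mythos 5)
The overall strategy — establish \emph{integral} vanishing after twisting by a high power of the ample sheaf and then descend — is the right idea and matches the spirit of the argument in \cite{MR3275848} (the source the paper cites for this statement), and you correctly identify the $R^1\lim$ obstruction and why the naive torsion bounds through the filtration are not uniform enough. However, two steps need more care.

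First, the assertion ``the ample invertible sheaf makes $\mathfrak{X}$ projective over $\Spf A$'' is not a consequence of ampleness alone: ampleness of $\mathscr{L}_1$ on $\mathfrak{X}_1$ gives an immersion of $\mathfrak{X}_1$ into some $\mathbb{P}^N_{A/\varpi}$, not a closed immersion, so properness is not automatic — and properness is exactly what you need for both formal GAGA and Serre vanishing. Properness must come from the affinoid hypothesis on $\mathcal{X}$: by Raynaud's theory, $\mathfrak{X}$ and any affine formal model of $\mathcal{X}$ are dominated by a common admissible blow-up, both projections from which are proper; since $\mathfrak{X}$ is separated (which ampleness does give) and one of these projections is proper and surjective, the map from $\mathfrak{X}$ to the affine model is proper. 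This needs to be recorded, since it is the first place the affinoid hypothesis enters your argument.

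Second, the final descent from $\HH^i(\mathfrak{X},\mathfrak{F}\otimes\mathscr{L}^m)=0$ to $\HH^i(\mathfrak{X},\mathfrak{F})\otimes_{\ocal_F}F=0$ is left vague, and the mechanism you sketch (``the chosen sections become invertible on an affinoid cover'') does not by itself furnish a comparison of \v{C}ech complexes: the local trivializations of $\mathscr{L}^m$ over the opens $\mathfrak{U}_l = D_+(s_l)$ do not glue, so there is no map of \v{C}ech complexes coming from them. The clean argument is a direct-summand argument. Since $\HH^i(\mathfrak{X},\mathfrak{F})\otimes_{\ocal_F}F$ is computed by the \v{C}ech complex $\check{C}(\{\mathcal{U}_l\},\mathfrak{F}^{rig})$ on the induced affinoid cover of $\mathcal{X}$ (by the same flatness and limit-commutation considerations you already invoked), and likewise for $\mathfrak{F}\otimes\mathscr{L}^m$, it suffices to compare these. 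Over the affinoid $\mathcal{X}$, the restriction of $\mathscr{L}^{-m}$ is a rank-one projective module over the affinoid algebra, hence a direct summand of $\oscr_{\mathcal{X}}^{\oplus k}$ for some finite $k$; therefore $\mathfrak{F}^{rig}$ is a direct summand of $\bigl((\mathfrak{F}\otimes\mathscr{L}^m)^{rig}\bigr)^{\oplus k}$ as sheaves on $\mathcal{X}$, and the \v{C}ech complexes inherit this splitting. Hence $\HH^i(\mathfrak{X},\mathfrak{F})\otimes_{\ocal_F}F$ is a direct summand of $\bigl(\HH^i(\mathfrak{X},\mathfrak{F}\otimes\mathscr{L}^m)\otimes_{\ocal_F}F\bigr)^{\oplus k}=0$. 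This is the second, and decisive, use of the affinoid hypothesis, and it is the piece your plan leaves unsaid.
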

 
 \begin{proof} This is   \cite{MR3275848}, theorem A.1.2.2. In the reference, the formal scheme $\mathfrak{X}$ is assumed to be normal and quasi-projective, but the only property needed in the proof is the existence of an ample sheaf on $\mathfrak{X}$.
 \end{proof}
 
Let $\mathfrak{X} \rightarrow \Spf~\ocal_F$ be a finite type formal scheme,  and let $\mathcal{X} \rightarrow \Spa (F, \ocal_F)$ be the generic fiber  of $\mathfrak{X}$. Thus $(\mathcal{X}, \oscr_{\mathcal{X}}^+) = \lim_{\mathfrak{X}'} (\mathfrak{X}',  \oscr_{\mathfrak{X}'})$ where the limit runs over all admissible blow-ups of $\mathfrak{X}$.  Let $\mathfrak{F}$ be a Banach sheaf over $\mathfrak{X}$. For any admissible blow-up $ f : \mathfrak{X}' \rightarrow \mathfrak{X}$, we let $\mathfrak{F}_{\mathfrak{X}'} =  \lim_{n}  f^\star \mathscr{F}_n$. We let $\mathscr{F} = \lim_{\mathfrak{X}'} \mathfrak{F}_{\mathfrak{X}'} [1/\varpi]$. This is a sheaf over $\mathcal{X}$ that we call the generic fiber of $\mathfrak{F}$.
 
 \begin{thm}\label{thm-formal-B-sheaves} Let $\mathfrak{X} \rightarrow \Spf~\ocal_F$ be a finite type formal scheme, with generic fiber $\mathcal{X}$.  We have the following properties:
 \begin{enumerate} 
 \item There is a  ``generic fiber'' functor   going from the category of flat formal Banach sheaves over $\mathfrak{X}$ to the category of Banach sheaves over $\mathcal{X}$, described by the procedure $\mathfrak{F} \mapsto \mathscr{F}$.
 \item  If $\mathcal{U} \hookrightarrow \mathcal{X}$ is a quasi-compact open subset and $\mathfrak{U}' \hookrightarrow \mathfrak{X}'$ is a formal model for the map $\mathcal{U} \hookrightarrow \mathcal{X}$, $\mathscr{F}(\mathcal{U}) = \mathfrak{F}_{\mathfrak{X}'}(\mathfrak{U}')[1/\varpi]$. 
 \item The property $(2)$ of definition \ref{defi-banach-sheaf} holds over the generic fiber of any affine covering of $\mathfrak{X}$. 
 \item The generic fiber functor sends locally projective formal Banach sheaves to locally projective Banach sheaves. 
 \item Let $\mathscr{F}$ be a Banach sheaf arising from a flat small formal Banach sheaf.  Then for any affinoid  open subset $\mathcal{U} \hookrightarrow \mathcal{X}$ we have 
 $\HH^i(\mathcal{U}, \mathscr{F}) = 0$ for all $i >0$.  
 \end{enumerate}
 \end{thm}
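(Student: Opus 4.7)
The plan is to use Raynaud's theorem identifying quasi-compact open subsets of $\mathcal{X}$ with open formal subschemes (modulo further admissible blow-ups) of admissible blow-ups of $\mathfrak{X}$, and to reduce each of the five points to an integral computation on a formal model followed by inverting $\varpi$. The five assertions are in logical order, so I would attack them in the order given.

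For an admissible blow-up $f\colon \mathfrak{X}' \to \mathfrak{X}$, define $\mathfrak{F}_{\mathfrak{X}'} = \lim_n f^{\star}\mathscr{F}_n$; flatness of $\mathfrak{F}$ ensures this is again a flat formal Banach sheaf on $\mathfrak{X}'$. The key lemma, from which (1) and (2) both flow, is that if $\mathfrak{U} \subset \mathfrak{X}$ is an affine open with pullback $\mathfrak{U}' \subset \mathfrak{X}'$, then the natural map $\mathfrak{F}(\mathfrak{U})[1/\varpi] \to \mathfrak{F}_{\mathfrak{X}'}(\mathfrak{U}')[1/\varpi]$ is an isomorphism. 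This is because $R^i f_\star \oscr_{\mathfrak{X}'}/\varpi^n$ is annihilated by a power of $\varpi$ independent of $n$ (a standard consequence of admissibility of the blow-up), so applying $R\lim_n$ level by level and inverting $\varpi$ yields the claim. Independence of the formal model then makes the formula $\mathscr{F}(\mathcal{U}) = \mathfrak{F}_{\mathfrak{X}'}(\mathfrak{U}')[1/\varpi]$ unambiguous, and the sheaf property for a finite affinoid cover of a quasi-compact $\mathcal{U}$ follows by finding a common formal model and using the same \v{C}ech-level vanishing after inverting $\varpi$.

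For (3), given an affinoid $\mathcal{V} \subset \mathcal{U}_i$ sitting above an affine formal model $\mathfrak{U}_i$, I would produce a formal model of the inclusion via an admissible blow-up of $\mathfrak{U}_i$; then completion commutes with flat base change modulo $\varpi^n$, and passing to the limit and inverting $\varpi$ delivers the completed tensor product identity of Definition \ref{defi-banach-sheaf}(2). Part (4) is then immediate: if $\mathfrak{F}$ is locally projective, $\mathfrak{F}(\mathfrak{U}_i)$ is, modulo each $\varpi^n$, a direct summand of a free $\oscr_{\mathfrak{X}}/\varpi^n$-module with basis a fixed set $I$, and $\mathscr{F}(\mathcal{U}_i)$ becomes accordingly a direct summand of an orthonormalizable Banach $\oscr_{\mathcal{X}}(\mathcal{U}_i)$-module, hence projective.

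The main obstacle is (5). Given an affinoid $\mathcal{U} \subset \mathcal{X}$, I would choose a formal model $\mathfrak{U}'$ that is projective over an affine formal scheme, hence carries an ample invertible sheaf; such a model exists for every affinoid rigid space (one dominates any given formal model by a suitable admissible blow-up). Theorem \ref{thm-small-vanish} applied to the small, flat formal Banach sheaf $\mathfrak{F}_{\mathfrak{U}'}$ then yields $\HH^i(\mathfrak{U}', \mathfrak{F}_{\mathfrak{U}'}) \otimes_{\ocal_F} F = 0$ for $i > 0$. To transport this to $\mathscr{F}$ on $\mathcal{U}$, pick a finite affine formal cover of $\mathfrak{U}'$; its generic fiber is an affinoid cover of $\mathcal{U}$, and by the compatibilities already established in (2) and (3), the \v{C}ech complex computing $\HH^\bullet(\mathcal{U}, \mathscr{F})$ is obtained by termwise inversion of $\varpi$ from the \v{C}ech complex computing $\HH^\bullet(\mathfrak{U}', \mathfrak{F}_{\mathfrak{U}'})$. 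Exactness of the localization functor $[1/\varpi]$ then transports the vanishing, finishing (5).
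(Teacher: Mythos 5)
Your strategy for (1)--(3) is plausible and is essentially the content of the result the paper cites (\cite{MR3275848}, prop.~A.2.2.3), so I will focus on (4) and (5), both of which have genuine gaps.

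For (4), you assert that because $\mathfrak{F}(\mathfrak{U}_i)$ is, modulo each $\varpi^n$, a direct summand of a free $\oscr_{\mathfrak{X}}(\mathfrak{U}_i)/\varpi^n$-module on a fixed index set $I$, the limit $\mathscr{F}(\mathcal{U}_i)$ is ``accordingly'' a direct summand of an orthonormalizable Banach module. This does not follow: projectivity at each finite level does not pass to the $\varpi$-adic limit unless the splittings can be chosen \emph{compatibly} along the tower. This is exactly the content of the paper's proof of (4): with $A_n = \HH^0(\mathfrak{U},\oscr_{\mathfrak{X}}/\varpi^n)$ and $M_n = \HH^0(\mathfrak{U},\mathscr{F}_n)$, one fixes a surjection $A_1^I \twoheadrightarrow M_1$, lifts it to compatible surjections $A_n^I \twoheadrightarrow M_n$, and then must show a compatible system of sections $s_n\colon M_n \to A_n^I$ exists. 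The crucial step is surjectivity of $\mathrm{Hom}_{A_n}(M_n,A_n^I) \to \mathrm{Hom}_{A_n}(M_n,A_{n-1}^I)$, which the paper extracts from the exact sequence $0 \to \mathrm{Hom}_{A_n}(M_n,A_1^I) \to \mathrm{Hom}_{A_n}(M_n,A_n^I) \to \mathrm{Hom}_{A_n}(M_n,A_{n-1}^I) \to 0$ (using projectivity of $M_n$ and flatness). Your write-up skips this lifting argument entirely.

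For (5), the step ``the \v{C}ech complex computing $\HH^\bullet(\mathcal{U},\mathscr{F})$'' is precisely what cannot be taken for granted. The paper explicitly warns that Banach sheaves that are not coherent may have nonvanishing higher cohomology on affinoids and need not be generated by global sections, so there is no a priori \v{C}ech-to-derived-functor comparison for a single fixed cover. What your argument actually establishes is vanishing of the \v{C}ech cohomology for the one cover determined by your chosen formal model $\mathfrak{U}'$. To reach $\HH^i(\mathcal{U},\mathscr{F})=0$, the paper instead observes that the same formal-model argument works for \emph{every} finite affinoid cover of $\mathcal{U}$ (by dominating the cover with a suitable admissible blow-up), and only then deduces sheaf-cohomology vanishing from quasi-compactness of $\mathcal{U}$ together with the Cartan-type criterion that vanishing of \v{C}ech cohomology for all finite covers in a basis forces derived-functor cohomology to vanish. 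You should make both of these steps explicit; as written, the conclusion does not follow from a single cover.
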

 \begin{proof} The first three points are \cite{MR3275848}, proposition A.2.2.3.  For the fourth point, let $\mathfrak{U}$ be an open affine of $\mathfrak{X}$. Let $M_n = \HH^0(\mathfrak{U}, \mathscr{F}_n)$, $A_n = \HH^0(\mathfrak{U}, \oscr_{\mathfrak{X}}/\varpi^n)$, $M = \lim_n M_n$, and $A = \lim_n A_n$. We claim that $M$ is a direct factor of the completion of a free $A$-module.  Let us pick a surjection $A_1^I \rightarrow M_1$ and for any $n$, we can lift it successively to surjections $A_n^I \rightarrow M_n$. We need to prove that we can find a compatible system of sections $s_n : M_n \rightarrow A_n^I$. It suffices to show that the map $\mathrm{Hom}_A( M_n , A_n^{I}) \rightarrow \mathrm{Hom}_A( M_{n-1}, A_{n-1}^{I})$ is surjective. This follows from the short exact sequence  $0 \rightarrow \mathrm{Hom}_{A_n} (M_n, A^I_{1})  \rightarrow \mathrm{Hom}_{A_n} (M_n, A^I_{n})  \rightarrow \mathrm{Hom}_{A_n} (M_n, A^I_{n-1}) \rightarrow 0$.
 
 For the last point, let $\mathfrak{X}$ be a formal model of $\mathcal{X}$ and $\mathfrak{F}$ be a small flat formal Banach sheaf over $\mathfrak{X}$.  Let $\mathfrak{U}$ be an affine formal model of $\mathcal{U}$.    Let $\cup_i \mathcal{U}_i = \mathcal{U}$ be a finite affinoid cover of $\mathcal{U}$.   Let $\mathfrak{U}' $ be an admissible blow-up of $\mathfrak{U}$ with the property that $\cup_i \mathcal{U}_i = \mathcal{U}$ is the generic fiber of a covering of $\mathfrak{U}'$ and there is a map $\mathfrak{U}' \rightarrow \mathfrak{X}$ inducing the map $\mathcal{U} \rightarrow \mathcal{X}$. Note that $\mathfrak{U}'$ has an ample invertible sheaf, since it is a blow-up of an affine formal scheme.  We can apply  theorem \ref{thm-small-vanish} to $\mathfrak{F}_{\mathfrak{U}'}$, the pull-back to $\mathfrak{U}'$ of $\mathfrak{F}$, which is still small by flatness. This shows that the \v{C}ech cohomology of $\mathcal{U}$ with respect to the  covering $\cup_i \mathcal{U}_i$ vanishes. Since this holds for any finite cover, and $\mathcal{U}$ is quasi-compact, we deduce that $\HH^i(\mathcal{U}, \mathscr{F}) = 0$. 
  \end{proof}

\begin{defi} Let $\mathcal{X}$ be a     finite type  adic space  over $\Spa (F, \ocal_F)$. A Banach sheaf $\mathscr{F}$ is called a small, locally projective Banach sheaf if  it arises as the generic fiber of a small, locally projective formal Banach sheaf.
\end{defi}

\begin{rem} A locally free coherent sheaf over $\mathcal{X}$ is a small, locally projective Banach sheaf by the flattening techniques of \cite{MR308104}.
\end{rem}

\begin{rem} We don't know if, for $\mathcal{X} = \Spa(A,A^+)$ affinoid and $\mathscr{F}$ a small, locally projective Banach sheaf on $\mathcal{X}$, it is true that $\mathscr{F}(\mathcal{X})$ is a projective Banach $A$-module and the map $\mathscr{F}(\mathcal{X}) \hat{\otimes}_A \oscr_{\mathcal{X}} \rightarrow \mathscr{F}$ is an isomorphism. \end{rem} 

\subsubsection{Acyclicity of quasi-Stein spaces} In our arguments, it will often be useful to consider not only affinoid covers of adic spaces, but also some quasi-Stein covers. 

\begin{defi}[\cite{MR210949}, def. 2.3]  We say that an adic space $\mathcal{X} \rightarrow \Spa (F, \ocal_F)$ is quasi-Stein if $\mathcal{X} = \cup_{i \in \ZZ_{\geq 0}} \mathcal{X}_i$  is a countable increasing union  of finite type affinoid adic spaces $\mathcal{X}_i \rightarrow \Spa (F, \ocal_F)$ and $\oscr_{\mathcal{X}_{i+1}} \rightarrow \oscr_{\mathcal{X}_i}$ has dense image.  
\end{defi}

\begin{ex} Here are some examples of quasi-Stein adic spaces: 
\begin{itemize}
\item An affinoid space, like the unit ball $\mathbb{B}(0, 1)$. 
\item A Stein space like the open unit ball:  ${\mathbb{B}}^o(0,1) =  \cup_{n } \mathbb{B}(0, \vert p^{\frac{1}{n}} \vert)$.
\item A ``mixed'' situation like $ \mathbb{B}^o(0,1) \times_{\Spa (F, \ocal_F)} \mathbb{B}(0,1)$. 
\end{itemize}
\end{ex}

We also recall the following classical acyclicity result: 
\begin{thm}[\cite{MR210949}, Satz 2.4]\label{thmKhiel}  Let $\mathcal{X}$ be a quasi-Stein adic space and let $\mathscr{F}$ be a coherent sheaf over $\mathcal{X}$. Then $\mathrm{H}^i(\mathcal{X}, \mathscr{F}) = 0$ for all $i >0$. 
\end{thm}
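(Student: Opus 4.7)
The plan is to follow the classical strategy for Cartan's Theorem B, adapted to the adic setting. Write $\mathcal{X}=\bigcup_{n\in\mathbb{Z}_{\geq 0}}\mathcal{X}_n$ as in the definition. First I would use that each $\mathcal{X}_n$ is an affinoid of finite type, so by Kiehl's theorem on affinoids (\cite{MR1306024}, Theorem 2.5) we have $\HH^i(\mathcal{X}_n,\mathscr{F}|_{\mathcal{X}_n})=0$ for $i>0$. Since the chain $\mathcal{X}_0\subseteq\mathcal{X}_1\subseteq\cdots$ is a cover by acyclic opens with $\mathcal{X}_n\cap\mathcal{X}_m=\mathcal{X}_{\min(n,m)}$, the \v{C}ech-to-sheaf cohomology spectral sequence degenerates and sheaf cohomology is computed by the \v{C}ech complex of the nested cover. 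This \v{C}ech complex is canonically identified with the two-term complex
$$\prod_{n\geq 0}\mathscr{F}(\mathcal{X}_n)\xrightarrow{\ d\ }\prod_{n\geq 0}\mathscr{F}(\mathcal{X}_n),\qquad d((s_n))_n=s_n-s_{n+1}|_{\mathcal{X}_n},$$
whose kernel is $\lim_n\mathscr{F}(\mathcal{X}_n)=\mathscr{F}(\mathcal{X})$ (since $\mathscr{F}$ is a sheaf) and whose cokernel is $R^1\lim_n\mathscr{F}(\mathcal{X}_n)$. This immediately gives $\HH^i(\mathcal{X},\mathscr{F})=0$ for all $i\geq 2$.

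It remains to prove $\HH^1(\mathcal{X},\mathscr{F})=0$, i.e.\ the surjectivity of $d$. Given a target sequence $(t_n)\in\prod_n\mathscr{F}(\mathcal{X}_n)$, I would construct $(s_n)$ with $d((s_n))=(t_n)$ inductively. Each $\mathscr{F}(\mathcal{X}_n)$ is a Banach $\oscr_{\mathcal{X}_n}(\mathcal{X}_n)$-module of finite type, hence carries a canonical topology with respect to which the restriction maps $\mathscr{F}(\mathcal{X}_{n+1})\to\mathscr{F}(\mathcal{X}_n)$ are continuous with dense image (the density for $\oscr$ extends to any coherent sheaf by choosing a finite presentation on an affinoid). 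Fix a sequence $\varepsilon_n>0$ with $\sum\varepsilon_n<\infty$. The idea is to write $s_n=\sum_{k\geq n}u_{k,n}$ where $u_{k,n}\in\mathscr{F}(\mathcal{X}_n)$ is obtained from $u_{k,k}\in\mathscr{F}(\mathcal{X}_k)$ by restriction, and where $u_{k,k}$ is chosen to simultaneously account for $t_k$ and to be a good approximation (within $\varepsilon_k$ on $\mathcal{X}_{k-1}$) of the required correction. Concretely, set $s_0^{(0)}=0$ and proceed inductively: having chosen partial sums, use density to approximate any given element of $\mathscr{F}(\mathcal{X}_n)$ by the restriction of some element of $\mathscr{F}(\mathcal{X}_{n+1})$ modulo an $\varepsilon_n$-error, then telescope.

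The main obstacle is the convergence argument in the final step: one must verify that the approximation errors, chosen small enough in the Banach norm on each $\mathscr{F}(\mathcal{X}_n)$, assemble into genuinely convergent series defining $s_n\in\mathscr{F}(\mathcal{X}_n)$ for every $n$ simultaneously, while still solving $s_n-s_{n+1}|_{\mathcal{X}_n}=t_n$ exactly. This is the analytic heart of the theorem (the topological Mittag-Leffler argument of Runge--Cartan type), and is where the density hypothesis in the definition of a quasi-Stein space is essential: without density we would only obtain $\mathrm{ML}$ for the pro-system in the weak sense, which is not enough to kill $R^1\lim$ in general, but density in the Banach topology upgrades the naive Mittag-Leffler lemma to the genuine vanishing of $R^1\lim$ and hence of $\HH^1$.
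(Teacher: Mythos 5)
The paper does not prove this result; it simply cites Kiehl, and your sketch is exactly Kiehl's original strategy (reduce to vanishing of $\lim^1$ for the inverse system $\bigl(\mathscr{F}(\mathcal{X}_n)\bigr)_n$ and conclude by the topological Mittag--Leffler lemma, using density of the restriction maps). One step as written is incorrect and should be repaired. The \v{C}ech complex of the nested cover $\{\mathcal{X}_n\}_{n\geq0}$ is \emph{not} the two-term telescope complex: in degree $p$ it is $\prod_{n_0<\cdots<n_p}\mathscr{F}(\mathcal{X}_{n_0})$, so it has nonzero terms in every degree, and there is no ``canonical identification'' with the telescope. The clean reduction to $\lim/\lim^1$ is rather via the Milnor-type sequence coming from $\mathrm{R}\Gamma(\mathcal{X},\mathscr{F})\simeq\mathrm{R}\lim_n\mathrm{R}\Gamma(\mathcal{X}_n,\mathscr{F})$ (a general fact for a countable increasing union of opens), which collapses to the telescope once each $\mathrm{R}\Gamma(\mathcal{X}_n,\mathscr{F})$ is concentrated in degree $0$ by Kiehl's acyclicity on affinoids. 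For the density step you are right that one reduces to $\oscr$, but the precise mechanism worth recording is $\mathscr{F}(\mathcal{X}_n)=\mathscr{F}(\mathcal{X}_{n+1})\hat{\otimes}_{\oscr(\mathcal{X}_{n+1})}\oscr(\mathcal{X}_n)$ (Kiehl's theorem applied to the affinoid subdomain $\mathcal{X}_n\subset\mathcal{X}_{n+1}$), combined with density for $\oscr$ and the open mapping theorem; this is Kiehl's Theorem~A entering the proof of Theorem~B. With that correction, your argument is the standard one.
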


We also have:

\begin{prop}  Let $\mathcal{X} = \Spa (A, A^+)$ be an affinoid finite type adic space, let $M$ be a projective Banach $A$-module, and let $\mathscr{F} = M \hat{\otimes}_A \oscr_{\mathcal{X}}$. Let $\mathcal{U} \hookrightarrow \mathcal{X}$  be a quasi-Stein  open subset. Then $\mathrm{H}^i(\mathcal{U}, \mathscr{F}) = 0$ for all $i >0$. 
\end{prop}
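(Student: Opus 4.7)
The plan is to reduce to the orthonormalizable case and then adapt the standard Kiehl-type argument for quasi-Stein exhaustions.

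First, any projective Banach $A$-module $M$ is a direct summand of an orthonormalizable module $A(I)$, and the functor $N \mapsto N\hat{\otimes}_A \oscr_{\mathcal{X}}$ (and hence its sheaf cohomology) is compatible with direct summands. So I may assume $M = A(I)$, in which case for every affinoid $\mathcal{V} = \Spa(B,B^+) \hookrightarrow \mathcal{X}$ one has $\mathscr{F}(\mathcal{V}) = B(I)$.

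Next, write $\mathcal{U} = \bigcup_n \mathcal{U}_n$ with $\mathcal{U}_n = \Spa(B_n, B_n^+)$ affinoid and $B_{n+1} \to B_n$ of dense image. On each $\mathcal{U}_n$, the restriction $\mathscr{F}\vert_{\mathcal{U}_n}$ is of the form $B_n(I) \hat{\otimes}_{B_n} \oscr_{\mathcal{U}_n}$, so by the lemma just preceding the proposition it is acyclic with $\HH^0(\mathcal{U}_n,\mathscr{F}) = B_n(I)$. I would then invoke the Milnor exact sequence for the exhaustion
$$0 \to {\lim_n}{}^{\!1}\, \HH^{i-1}(\mathcal{U}_n, \mathscr{F}) \to \HH^i(\mathcal{U}, \mathscr{F}) \to \lim_n \HH^i(\mathcal{U}_n, \mathscr{F}) \to 0.$$
For $i \geq 2$ both flanking terms vanish by acyclicity on the $\mathcal{U}_n$. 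For $i = 1$ the problem reduces to showing ${\lim_n}^1 B_n(I) = 0$.

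The last step is the topological Mittag-Leffler criterion. Since $B_{n+1} \to B_n$ has dense image, the same is true componentwise for $B_{n+1}(I) \to B_n(I)$: given a null sequence $(x_\alpha)_\alpha \in B_n(I)$, approximate each $x_\alpha$ with a fixed error and use that only finitely many $x_\alpha$ lie outside any chosen neighborhood of $0$. Hence the projective system $\{B_n(I)\}$ of Banach spaces has dense transition maps, and the classical topological Mittag-Leffler theorem (for inverse systems of Fr\'echet spaces with dense transitions) forces ${\lim_n}^1 B_n(I) = 0$, finishing the argument.

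The two points that require some care are (a) justifying the Milnor-type identification $\mathrm{R}\Gamma(\mathcal{U},\mathscr{F}) = \mathrm{R}\lim_n \mathrm{R}\Gamma(\mathcal{U}_n, \mathscr{F})$ in the adic setting, which can either be cited as a standard fact about cohomology on nested unions of quasi-compact opens or be obtained by a direct \v{C}ech argument refining a finite affinoid cover of each $\mathcal{U}_n$, and (b) invoking the correct form of topological Mittag-Leffler. I expect (a) to be the main obstacle: one needs to know that a quasi-compact cover of $\mathcal{U}$ can be refined compatibly through the exhaustion in such a way that \v{C}ech cohomology degenerates, using the acyclicity already established on each $\mathcal{U}_n$. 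Once this reduction is in hand, the density step is essentially formal.
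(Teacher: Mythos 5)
Your proof is correct and follows the same underlying strategy as the paper: reduce to orthonormalizable $M$, then run a Kiehl-style exhaustion argument. The paper's proof is only one sentence, stating that one reduces ``to the case of $\oscr_{\mathcal{X}}$ where we can apply theorem \ref{thmKhiel}''; but since $\mathscr{F}$ is not coherent, Kiehl's theorem does not literally apply, and what the paper really means is exactly what you spell out: acyclicity of $\mathscr{F}$ on each $\mathcal{U}_n$ (from the preceding lemma applied to the projective Banach $B_n$-module $M\hat{\otimes}_A B_n$) plus the $\mathrm{R}\lim$ identification and topological Mittag-Leffler for the system $\{\mathscr{F}(\mathcal{U}_n)\}$, whose transition maps inherit density from those of $\{B_n\}$. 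So your argument is a correct and appropriately detailed version of the intended proof; the only step you explicitly flag as needing justification, namely the Milnor-type $\mathrm{R}\lim$ identification for a nested exhaustion by quasi-compact opens, is indeed the standard fact one must cite, and it holds in the adic setting exactly as in Kiehl's original treatment.
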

\begin{proof} We reduce to the case where $M$ is orthonormalizable, and therefore to the case of $\oscr_{\mathcal{X}}$ where we can apply theorem \ref{thmKhiel}.
 \end{proof}

\subsubsection{Cohomology complexes}
We now illustrate how one can obtain  complexes of Banach modules.  We denote by $\mathcal{D}(F)$ the derived category of the category of $F$-vector spaces. We also let $\mathbf{Ban}(F)$ be the category of Banach $F$-vector spaces. 
For an adic space $\mathcal{X}$ over $\Spa(F, \ocal_F)$, and a sheaf $\mathscr{F}$ of $\oscr_{\mathcal{X}}$-module, the cohomology groups  $\mathrm{R}\Gamma(\mathcal{X}, \mathscr{F})$ are objects of the category $\mathcal{D}(F)$. Nevertheless, they often  carry more structure and can be represented by objects of the category $\mathcal{C}(\mathbf{Ban}(F))$ of  complexes of Banach modules.   We formalize this in   this section.
\begin{rem} Our approach is rather elementary and we content ourselves with statements saying that a given cohomology can be represented by certain complexes of  topological $F$-vector spaces.  This is enough for our purposes. We simply point out in this remark a more conceptual approach yielding stronger results. Let $\mathscr{F}$ be a coherent sheaf over an adic space $\mathcal{X}$ of finite type over $\Spa (F, \oscr_F)$. Since $\mathscr{F}$ carries a canonical topology, we can view $\mathscr{F}$ as sheaf of locally convex $F$-vector spaces over $\mathcal{X}$.
Let $\mathbf{Loc}_F$ be the category of locally convex $F$-vector spaces. This is a quasi-abelian category with enough injectives (\cite{MR1749013}). One can therefore derive the functor $\mathrm{H}^0(\mathcal{X}, \mathscr{F})$ in this category  and  view $\mathrm{R}\Gamma(\mathcal{X}, \mathscr{F})$ as an object of $\mathcal{D}(\mathbf{Loc}_F)$.
\end{rem}

\begin{lem}\label{lem-represent-Banach}
Let $\mathcal{X}$ be a  separated  finite type  adic space  over $\Spa (F, \ocal_F)$. Let $\mathscr{F}$ be a locally projective Banach sheaf over $\mathcal{X}$. Let $\mathcal{U} \subseteq \mathcal{X}$ be a quasi-compact open subset. Let $\mathcal{Z} \subseteq \mathcal{X}$ be a closed subset, with quasi-compact complement. Then one can represent    $\mathrm{R}\Gamma_{\mathcal{Z} \cap \mathcal{U}} ( \mathcal{U}, \mathscr{F})$ by  an object of   $\mathcal{K}^{proj}(\mathbf{Ban}(F))$. 
\end{lem}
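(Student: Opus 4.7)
The plan is to represent $\mathrm{R}\Gamma_{\mathcal{Z}\cap\mathcal{U}}(\mathcal{U},\mathscr{F})$ as the (shifted) cone of a map between two \v{C}ech complexes of projective Banach $F$-modules, computing respectively $\mathrm{R}\Gamma(\mathcal{U},\mathscr{F})$ and $\mathrm{R}\Gamma(\mathcal{U}\setminus\mathcal{Z},\mathscr{F})$. The first observation is that since $\mathcal{X}$ is separated, the intersection of two quasi-compact opens is again quasi-compact; in particular $\mathcal{U}\setminus\mathcal{Z}=\mathcal{U}\cap(\mathcal{X}\setminus\mathcal{Z})$ is quasi-compact, and any finite intersection of affinoid opens in $\mathcal{X}$ is affinoid. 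Using the defining triangle
$$\mathrm{R}\Gamma_{\mathcal{U}\cap\mathcal{Z}}(\mathcal{U},\mathscr{F})\rightarrow \mathrm{R}\Gamma(\mathcal{U},\mathscr{F})\rightarrow\mathrm{R}\Gamma(\mathcal{U}\setminus\mathcal{Z},\mathscr{F})\stackrel{+1}\rightarrow$$
it suffices to construct the two complexes together with a continuous map between them.

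Next, using the local projectivity of $\mathscr{F}$, choose a finite affinoid cover $\{\mathcal{U}_i\}_{i\in I}$ of $\mathcal{U}$ as in definition \ref{defi-banach-sheaf}, namely such that for every affinoid $\mathcal{V}\subseteq\mathcal{U}_i$ the module $\mathscr{F}(\mathcal{V})$ is the completed base change $\oscr_{\mathcal{X}}(\mathcal{V})\hat{\otimes}_{\oscr_{\mathcal{X}}(\mathcal{U}_i)}\mathscr{F}(\mathcal{U}_i)$ of a projective Banach $\oscr_{\mathcal{X}}(\mathcal{U}_i)$-module. For each $i$, the intersection $(\mathcal{U}\setminus\mathcal{Z})\cap\mathcal{U}_i$ is a quasi-compact open of the affinoid $\mathcal{U}_i$, which I cover by finitely many affinoids $\{\mathcal{V}_{i,j}\}_{j\in J_i}$; pooling these over $i$ gives a finite affinoid cover of $\mathcal{U}\setminus\mathcal{Z}$ that refines the trace of $\{\mathcal{U}_i\}$.

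The main point is now that the associated \v{C}ech complexes
$$\check C^\bullet(\{\mathcal{U}_i\},\mathscr{F})\quad\text{and}\quad \check C^\bullet(\{\mathcal{V}_{i,j}\},\mathscr{F})$$
are bounded complexes of projective Banach $F$-modules: every term is a finite product of sections of $\mathscr{F}$ over an intersection of the cover, which is affinoid and contained in some $\mathcal{U}_i$, so by the chosen trivialization and the fact that a completed base change of a projective Banach module along a continuous $F$-algebra map is projective, each term is projective over $F$. The \v{C}ech differentials are obviously continuous. The refinement map produces a continuous map of complexes $\check C^\bullet(\{\mathcal{U}_i\},\mathscr{F})\to \check C^\bullet(\{\mathcal{V}_{i,j}\},\mathscr{F})$; taking the (shifted) cone in $\mathcal{C}^{proj}(\mathbf{Ban}(F))$ gives a bounded complex of projective Banach $F$-modules.

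Finally I must check that these two \v{C}ech complexes really compute the underived derived functors $\mathrm{R}\Gamma$ of the statement, so that the cone represents $\mathrm{R}\Gamma_{\mathcal{U}\cap\mathcal{Z}}(\mathcal{U},\mathscr{F})$. For this one uses the acyclicity statement established just above: on an affinoid $\mathcal{V}\subseteq\mathcal{U}_i$, $\mathscr{F}$ has the form $M\hat\otimes_{\oscr_{\mathcal{X}}(\mathcal{U}_i)}\oscr_{\mathcal{V}}$ for a projective Banach module $M$, so $\mathrm{H}^{>0}(\mathcal{V},\mathscr{F})=0$. Hence $\{\mathcal{U}_i\}$ and $\{\mathcal{V}_{i,j}\}$ are Leray covers and the \v{C}ech-to-derived-functor spectral sequence collapses, identifying the \v{C}ech complexes with $\mathrm{R}\Gamma(\mathcal{U},\mathscr{F})$ and $\mathrm{R}\Gamma(\mathcal{U}\setminus\mathcal{Z},\mathscr{F})$ in $\mathcal{D}(F)$ and the refinement map with the restriction. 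The only real subtlety, and the most delicate step, is confirming that the terms of the \v{C}ech complexes are projective Banach $F$-modules and not merely Banach $F$-modules: this hinges on the trivialization property of $\mathscr{F}$ over $\{\mathcal{U}_i\}$ and the fact that, by definition, being projective (a direct summand of an orthonormalizable Banach module) is preserved under completed base change along a bounded $F$-algebra map.
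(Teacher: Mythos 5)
Your argument takes essentially the same route as the paper: reduce via the excision triangle to $\mathrm{R}\Gamma(\mathcal{U},\mathscr{F})$ and $\mathrm{R}\Gamma(\mathcal{U}\setminus\mathcal{Z},\mathscr{F})$, compute each by a \v{C}ech complex for a finite $\mathscr{F}$-acyclic affinoid cover (using separatedness so intersections stay affinoid and local projectivity so affinoids are $\mathscr{F}$-acyclic), and take the shifted cone.

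The one place you diverge is what you call the ``most delicate step,'' and there the reasoning is both overcomplicated and slightly off. Completed base change along $\oscr_{\mathcal{X}}(\mathcal{U}_i)\to\oscr_{\mathcal{X}}(\mathcal{V})$ gives you that $\mathscr{F}(\mathcal{V})$ is a projective Banach $\oscr_{\mathcal{X}}(\mathcal{V})$-module; it does not, as stated, conclude projectivity over $F$ --- you would still need to observe that a projective Banach $\oscr_{\mathcal{X}}(\mathcal{V})$-module restricts to a projective Banach $F$-module (because $\oscr_{\mathcal{X}}(\mathcal{V})$ is itself orthonormalizable over $F$). But none of this is necessary: since $F$ is a discretely valued complete non-archimedean field, \emph{every} Banach $F$-vector space is orthonormalizable, hence projective, by the classical result of Serre cited in the paper. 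So once you know each term is a Banach $F$-module, you are done --- this is exactly what the paper's proof (implicitly) invokes, and why it never needs the local-projectivity of $\mathscr{F}$ for this part of the argument. Local projectivity is only needed, as you correctly use elsewhere, to ensure the affinoid cover is $\mathscr{F}$-acyclic so that the \v{C}ech complex actually computes $\mathrm{R}\Gamma$.
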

\begin{proof}  We have an exact triangle $\mathrm{R}\Gamma_{\mathcal{Z} \cap \mathcal{U}}(\mathcal{U}, \mathscr{F}) \rightarrow \mathrm{R}\Gamma(\mathcal{U}, \mathscr{F}) \rightarrow \mathrm{R}\Gamma(\mathcal{U} \setminus (\mathcal{Z} \cap \mathcal{U}), \mathscr{F}) \stackrel{+1}\rightarrow $ and therefore, we are reduced to prove the claim for $\mathrm{R}\Gamma(\mathcal{U}, \mathscr{F})$ and $\mathrm{R}\Gamma(\mathcal{U} \setminus (\mathcal{Z} \cap \mathcal{U}), \mathscr{F})$. Finally, it suffices to prove that for a quasi-compact open $\mathcal{U} \subseteq \mathcal{X}$, $\mathrm{R}\Gamma(\mathcal{U}, \mathscr{F}) \in Ob(\mathcal{K}^{proj}(\mathbf{Ban}(F)))$. We can compute the cohomology by considering an  $\mathscr{F}$-acyclic affinoid covering of $\mathcal{U}$ for $\mathscr{F}$, and the associated \v{C}ech complex by \cite{MR1306024}, thm. 2.5. Then each of the terms of the \v{C}ech complex carries a canonical structure of Banach $F$-algebra.
\end{proof}

\bigskip

\begin{lem}\label{lem-complex-frechet}
Let $\mathcal{X}$ be a  separated finite type  adic space  over $\Spa (F, \ocal_F)$. Let $\mathscr{F}$ be a locally projective Banach sheaf over $\mathcal{X}$. Let $\mathcal{U} \subseteq \mathcal{X}$ be an open subset which is a finite union of quasi-Stein spaces. Let $\mathcal{Z} \subseteq \mathcal{X}$ be a closed subset, whose complement is a finite union of quasi-Stein spaces. Then one can represent   $\mathrm{R}\Gamma_{\mathcal{Z} \cap \mathcal{U}} ( \mathcal{U}, \mathscr{F})$  by an object of    $\mathrm{Pro}_{\mathbb{N}}(\mathcal{K}^{proj}(\mathbf{Ban}(F)))$. 
\end{lem}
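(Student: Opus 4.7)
The plan is to reduce to a single quasi-Stein open and then exhibit an explicit pro-system of \v{C}ech complexes.

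First I would use the distinguished triangle
$$\mathrm{R}\Gamma_{\mathcal{Z}\cap \mathcal{U}}(\mathcal{U},\mathscr{F}) \rightarrow \mathrm{R}\Gamma(\mathcal{U},\mathscr{F}) \rightarrow \mathrm{R}\Gamma(\mathcal{U}\setminus(\mathcal{Z}\cap\mathcal{U}),\mathscr{F}) \stackrel{+1}{\rightarrow}$$
exactly as in the proof of lemma \ref{lem-represent-Banach}. Since the shift of a cone of a morphism in $\mathrm{Pro}_{\mathbb{N}}(\mathcal{K}^{proj}(\mathbf{Ban}(F)))$ still lies in this category, it suffices to represent each of $\mathrm{R}\Gamma(\mathcal{U},\mathscr{F})$ and $\mathrm{R}\Gamma(\mathcal{U}\setminus(\mathcal{Z}\cap\mathcal{U}),\mathscr{F})$ by such an object. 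By assumption both $\mathcal{U}$ and $\mathcal{U}\setminus(\mathcal{Z}\cap\mathcal{U})$ are finite unions of quasi-Stein spaces, so we reduce to the following statement: if $\mathcal{V}=\mathcal{V}^{(1)}\cup\cdots\cup\mathcal{V}^{(r)}$ with each $\mathcal{V}^{(i)}$ quasi-Stein, then $\mathrm{R}\Gamma(\mathcal{V},\mathscr{F})$ is representable in $\mathrm{Pro}_{\mathbb{N}}(\mathcal{K}^{proj}(\mathbf{Ban}(F)))$.

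Next I would handle the \v{C}ech/Mayer--Vietoris step. Since $\mathcal{X}$ is separated, any finite intersection $\mathcal{V}^{(i_1)}\cap\cdots\cap \mathcal{V}^{(i_s)}$ of quasi-Stein opens is again quasi-Stein: writing $\mathcal{V}^{(i_j)}=\bigcup_n \mathcal{V}^{(i_j)}_n$ as an increasing union of affinoids with dense transition maps, the intersections $\mathcal{V}^{(i_1)}_n\cap\cdots\cap\mathcal{V}^{(i_s)}_n$ are affinoid (separatedness) and exhaust the intersection with dense restriction maps. So the \v{C}ech complex $\check{C}^\bullet(\{\mathcal{V}^{(i)}\},\mathscr{F})$ built from the $\mathcal{V}^{(i)}$'s and their intersections computes $\mathrm{R}\Gamma(\mathcal{V},\mathscr{F})$, because each entry has vanishing higher cohomology by the acyclicity statement following theorem \ref{thmKhiel} (projective Banach sheaves on a quasi-Stein are acyclic, by the same proof as for coherent sheaves combined with the projection to the orthonormalizable case). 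Thus we reduce further to the case $\mathcal{V}=\bigcup_{n\in\mathbb{N}} \mathcal{V}_n$ a single quasi-Stein open.

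In this final case, I would represent $\mathrm{R}\Gamma(\mathcal{V},\mathscr{F})$ by the pro-system $``\lim_n"\,\mathrm{R}\Gamma(\mathcal{V}_n,\mathscr{F})$. Each $\mathrm{R}\Gamma(\mathcal{V}_n,\mathscr{F})$ lies in $\mathcal{K}^{proj}(\mathbf{Ban}(F))$ by lemma \ref{lem-represent-Banach}, with cohomology concentrated in degrees $[0,\dim\mathcal{X}]$ uniformly in $n$. To see that the pro-system actually computes $\mathrm{R}\Gamma(\mathcal{V},\mathscr{F})$, I would choose a common \v{C}ech cover refining on each $\mathcal{V}_n$: pick an affinoid cover $\mathcal{V}_n = \bigcup_{j\in J_n} \mathcal{W}_{n,j}$ with $J_n\subseteq J_{n+1}$ and $\mathcal{W}_{n,j} = \mathcal{W}_{n+1,j}\cap \mathcal{V}_n$. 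The resulting \v{C}ech double complex represents $\mathrm{R}\Gamma(\mathcal{V}_n,\mathscr{F})$ in $\mathcal{C}^{proj}(\mathbf{Ban}(F))$, the transition maps are strict morphisms of Banach modules, and taking the termwise projective limit gives a \v{C}ech complex on $\mathcal{V}$, whose cohomology equals $\mathrm{H}^\bullet(\mathcal{V},\mathscr{F})$ because the Mittag--Leffler condition holds (transition maps have dense image on each affinoid piece, as follows from the quasi-Stein hypothesis).

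The main obstacle will be the last bookkeeping point: writing down a system of compatible affinoid covers on the $\mathcal{V}_n$ so that the resulting \v{C}ech complexes form an actual inverse system of projective Banach complexes (and not only up to homotopy), and verifying Mittag--Leffler so that the limit remains exact. Everything else is a formal combination of the distinguished triangle, Mayer--Vietoris, and the acyclicity of quasi-Stein opens already recalled in the text.
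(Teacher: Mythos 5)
Your first reduction step (distinguished triangle, then reducing to representing $\mathrm{R}\Gamma(\mathcal{V},\mathscr{F})$ for $\mathcal{V}$ a finite union of quasi-Stein spaces) matches the paper exactly. After that, however, you take a longer route than needed. You insert a Mayer--Vietoris/\v{C}ech reduction to a single quasi-Stein open, which requires the auxiliary claim that finite intersections of quasi-Stein opens in a separated space are quasi-Stein. Separatedness does give that the $\mathcal{V}^{(i_1)}_n\cap\cdots\cap\mathcal{V}^{(i_s)}_n$ are affinoid, but you still have to check the density of the transition maps on these intersections, which takes an extra (if elementary) argument, and then you have to re-assemble the \v{C}ech complex of pro-objects into a single pro-object of complexes --- which is precisely the ``bookkeeping'' you flag at the end.

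The paper bypasses the reduction to a single quasi-Stein entirely. It writes $\mathcal{U}=\cup_k\mathcal{V}_k$ with each $\mathcal{V}_k=\cup_{i\geq 0}\mathcal{V}_{k,i}$ an increasing union of affinoids, and then sets $\mathcal{U}_i=\cup_k\mathcal{V}_{k,i}$. Each $\mathcal{U}_i$ is a \emph{finite union of affinoids}, hence quasi-compact, so $\mathrm{R}\Gamma(\mathcal{U}_i,\mathscr{F})\in\mathcal{K}^{proj}(\mathbf{Ban}(F))$ by lemma \ref{lem-represent-Banach} directly, and the formal system $``\lim_i"\mathrm{R}\Gamma(\mathcal{U}_i,\mathscr{F})$ is the required object of $\mathrm{Pro}_{\mathbb{N}}(\mathcal{K}^{proj}(\mathbf{Ban}(F)))$, computing $\mathrm{R}\Gamma(\mathcal{U},\mathscr{F})$. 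So your Mayer--Vietoris detour is not wrong, but it forces you to prove an intersection-of-quasi-Steins lemma and then to re-do the compatible-covers exhaustion at the end anyway; you would save work by going directly to the exhaustion $\{\mathcal{U}_i\}_i$ of the finite union, as the paper does.
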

\begin{proof} As in the proof of  lemma \ref{lem-represent-Banach}, we are reduced to showing that  for an  open $\mathcal{U} \subseteq \mathcal{X}$ which is a finite union of quasi-Stein spaces, $\mathrm{R}\Gamma(\mathcal{U}, \mathscr{F}) \in Ob(\mathcal{C}(\mathrm{Pro}_{\mathbb{N}}(\mathbf{Ban}(F))))$. We let $\mathcal{U} = \cup_k \mathcal{V}_k$ be a finite covering of $\mathcal{U}$ by $\mathscr{F}$-acyclic  quasi-Stein spaces $\mathcal{V}_k$. We let $\mathcal{V}_k= \cup_{i \geq 0} \mathcal{V}_{k,i}$ where $\mathcal{V}_{k,i}$ is affinoid. We let $\mathcal{U}_i = \cup_{k} \mathcal{V}_{k,i}$. We let $\mathrm{R}\Gamma(\mathcal{U}, \mathscr{F}) = ``\lim"\mathrm{R}\Gamma(\mathcal{U}_i, \mathscr{F})$.
\end{proof}

\subsubsection{Compact morphisms in the cohomology of adic spaces}

We  give some examples of compact morphisms arising from maps between the cohomology of adic spaces. First, let us fix a standard notation.   Let  $T$ be a topological space and let  $S$ be a subset of $T$. Then we denote by  $\overline{S}$  the closure of  $S$ in $T$ and  by  $\overset{\circ}{S}$ the interior of $S$ in $T$.

\begin{lem} Let $\mathcal{X} \rightarrow \Spa(F,\ocal_F)$ be a proper adic space and let $\mathscr{F}$ be a locally free sheaf of finite rank over $\mathcal{X}$. Let $\mathcal{U}' \subseteq \mathcal{U} \subseteq \mathcal{X}$ be quasi-compact open subsets. Assume that $\overline{\mathcal{U}'} \subseteq \mathcal{U}$.  Then the map $$ \mathrm{R}\Gamma (\mathcal{U}, \mathscr{F}) \rightarrow \mathrm{R}\Gamma (\mathcal{U}', \mathscr{F})$$ is compact.
\end{lem}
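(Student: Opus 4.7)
The plan is to compute both sides by \v{C}ech cohomology with respect to carefully chosen affinoid covers. Since $\overline{\mathcal{U}'}$ is quasi-compact and contained in $\mathcal{U}$, I would first choose finite affinoid covers $\mathcal{U} = \bigcup_{i \in I} \mathcal{U}_i$ and $\mathcal{U}' = \bigcup_{i \in I} \mathcal{U}'_i$, indexed by the same finite set $I$, such that $\overline{\mathcal{U}'_i} \subseteq \mathcal{U}_i$ and $\mathscr{F}|_{\mathcal{U}_i}$ is free of finite rank. Such covers exist: start from any trivializing affinoid cover of $\mathcal{U}$, shrink slightly to obtain strictly smaller affinoids which still cover $\overline{\mathcal{U}'}$ (using that $\mathcal{X}$ is proper so topological notions of closure and relative compactness behave well), and then extract a finite subcover.

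Since $\mathscr{F}$ is coherent and every intersection $\mathcal{U}_J = \bigcap_{i \in J} \mathcal{U}_i$ (and similarly $\mathcal{U}'_J$) is affinoid, the alternating \v{C}ech complexes $\check{C}^\bullet(\{\mathcal{U}_i\}_i, \mathscr{F})$ and $\check{C}^\bullet(\{\mathcal{U}'_i\}_i, \mathscr{F})$ compute $\mathrm{R}\Gamma(\mathcal{U}, \mathscr{F})$ and $\mathrm{R}\Gamma(\mathcal{U}', \mathscr{F})$ respectively, and the restriction morphism is given termwise by the maps $\mathscr{F}(\mathcal{U}_J) \to \mathscr{F}(\mathcal{U}'_J)$. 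The crucial input is that each of these termwise restriction maps is a compact morphism of Banach $F$-modules. Using local freeness of $\mathscr{F}$, this reduces to showing that $\oscr(\mathcal{U}_J) \to \oscr(\mathcal{U}'_J)$ is compact whenever $\overline{\mathcal{U}'_J} \subseteq \mathcal{U}_J$, which is the classical Montel-type property for affinoid algebras: when a rational subdomain is relatively compact in another, functions on the larger can be uniformly approximated on the smaller by polynomials (or finite rank combinations of an orthonormal basis), so the restriction is a limit of finite rank operators.

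Finally, to phrase the conclusion in the language of Definition \ref{defi-notop-compact}, I would lift the two \v{C}ech complexes to objects of $\mathcal{C}^{proj}(\mathbf{Ban}(F))$ by replacing each term with an orthonormalizable Banach module surjecting onto it (which is possible in each degree because every Banach $F$-module is a quotient of an orthonormalizable one), and lift the \v{C}ech differentials and the termwise restriction to these projective resolutions. The lifted restriction is still compact in each degree because compactness is preserved under pre- and post-composition with continuous morphisms, and in particular survives the factorization through the chosen orthonormalizable covers.

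The main obstacle is this last bookkeeping step of producing an honest compact representative in $\mathcal{C}^{proj}(\mathbf{Ban}(F))$. A clean way to sidestep it is to insert an intermediate pair $\overline{\mathcal{U}'} \subseteq \mathcal{V}' \subseteq \overline{\mathcal{V}'} \subseteq \mathcal{V} \subseteq \overline{\mathcal{V}} \subseteq \mathcal{U}$ and factor the map as $\mathrm{R}\Gamma(\mathcal{U}, \mathscr{F}) \to \mathrm{R}\Gamma(\mathcal{V}, \mathscr{F}) \to \mathrm{R}\Gamma(\mathcal{V}', \mathscr{F}) \to \mathrm{R}\Gamma(\mathcal{U}', \mathscr{F})$, so that the middle map already carries the compactness; then any projective representative of the outer maps composes with a compact middle map to yield a compact representative of the full restriction, directly fulfilling the definition.
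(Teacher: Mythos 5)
Your strategy is essentially the one in the paper: reduce to \v{C}ech complexes, observe that the termwise restriction maps for relatively compact affinoid inclusions are compact, and factor the global restriction through an intermediate open to get a clean compact representative. Two points are worth tightening.

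First, the projective-resolution detour in your penultimate paragraph is unnecessary. When you choose the cover so that $\mathscr{F}$ is trivialized on each $\mathcal{U}_i$, the terms $\mathscr{F}(\mathcal{U}_J)\cong\oscr_{\mathcal{X}}(\mathcal{U}_J)^{\oplus r}$ are finite free over affinoid $F$-algebras, and affinoid algebras over a discretely valued field are orthonormalizable Banach $F$-spaces. So the alternating \v{C}ech complexes already lie in $\mathcal{C}^{proj}(\mathbf{Ban}(F))$; there is nothing to resolve.

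Second, the real content of the lemma is the compactness of each $\mathscr{F}(\mathcal{U}_J)\to\mathscr{F}(\mathcal{U}'_J)$, which you invoke as a ``classical Montel-type property.'' This is the step that the paper actually proves, and it is not completely immediate: the argument passes through formal models (using Lütkebohmert's theory to find a formal open immersion $\mathfrak{V}'\hookrightarrow\overline{\mathfrak{V}'}\hookrightarrow\mathfrak{V}$ modeling the inclusion) and uses that $\overline{\mathfrak{V}'}$ is a proper formal scheme so that $\mathfrak{F}_n(\overline{\mathfrak{V}'})$ is a finite $\ocal_F$-module, which in turn shows that $\mathfrak{F}(\mathfrak{V})\to\mathfrak{F}(\mathfrak{V}')/\varpi^n$ has finite image for every $n$. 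You should either cite a precise reference for this (e.g.\ the rigid-analytic form of relative compactness implying compact restriction) or reproduce the formal-model argument. Related to this, note that your initial attempt to build parallel covers $\{\mathcal{U}_i\}$ of $\mathcal{U}$ and $\{\mathcal{U}'_i\}$ of $\mathcal{U}'$ over the same index set does not work as stated — shrinking a cover of $\mathcal{U}$ yields a cover of $\overline{\mathcal{U}'}$, not of $\mathcal{U}'$, and $\mathcal{U}_i\cap\mathcal{U}'$ need not be affinoid. The paper instead covers $\mathcal{U}'$ by affinoids $\mathcal{U}'_i$, enlarges each to an affinoid $\mathcal{U}_i\subseteq\mathcal{U}$ with $\overline{\mathcal{U}'_i}\subseteq\mathcal{U}_i$, sets $\mathcal{U}''=\cup_i\mathcal{U}_i$, compares the two \v{C}ech complexes for $\mathcal{U}''$ and $\mathcal{U}'$, and then factors $\mathrm{R}\Gamma(\mathcal{U},\mathscr{F})\to\mathrm{R}\Gamma(\mathcal{U}'',\mathscr{F})\to\mathrm{R}\Gamma(\mathcal{U}',\mathscr{F})$. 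This is exactly the ``intermediate space'' idea you mention at the end, so your instinct is right — make that the first step, not an afterthought.
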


\begin{proof} We claim that there exists a formal model $\mathfrak{X} \rightarrow \Spf~\ocal_F$ of $\mathcal{X}$, and two opens $\mathfrak{U}$ and $\mathfrak{U}'$ of $\mathfrak{X}$ with generic fibers $\mathcal{U}$ and $\mathcal{U}'$, such that $\overline{\mathfrak{U}'} \subseteq \mathfrak{U}$.  Remark that $\overline{\mathfrak{U}'} \rightarrow \Spf~\ocal_F$ is proper because $\mathcal{X}$ was assumed to be proper (\cite{MR1032938}, Th. 3.1). We deduce that $\mathcal{U}'$ is relatively compact in $\mathcal{U}$   (\cite{MR1032938}, lem. 2.5). 
We prove the claim. Recall that the ringed space $(\mathcal{X}, \oscr_{\mathcal{X}}^+)$ is the inverse limit of the ringed spaces $(\mathfrak{X}, \oscr_{\mathfrak{X}})$ where $\mathfrak{X}$ runs over all the formal models of $\mathcal{X}$.   For a cofinal subset of $\mathfrak{X}$,  we have opens $\mathfrak{U}_{\mathfrak{X}}$ and $\mathfrak{U}'_{\mathfrak{X}}$ of $\mathfrak{X}$ with generic fiber $\mathcal{U}$ and $\mathcal{U}'$. We let $\overline{\mathcal{U}'}_{\mathfrak{X}}$ be the generic fiber of $\overline{\mathfrak{U}'}_{\mathfrak{X}}$. Then $\overline{\mathcal{U}'} = \cap_{\mathfrak{X}} \overline{\mathcal{U}'}_{\mathfrak{X}}$. The topological space $\mathcal{X}$ equipped with the constructible topology is compact (in fact this is a profinite set). We have that $(\mathcal{U})^c = \cup_{\mathfrak{X}} (\overline{\mathcal{U}'}_{\mathfrak{X}})^c \cap (\mathcal{U})^c$. Since $(\mathcal{U})^c$ is compact, we deduce that there is a model $\mathfrak{X}$ such that $ \overline{\mathcal{U}'}_{\mathfrak{X}} \subseteq \mathcal{U}$, and therefore $\overline{\mathfrak{U}'}_{\mathfrak{X}} \subseteq \mathfrak{U}_{\mathfrak{X}}$. This finishes the proof of the claim.

Let $\mathcal{U}' = \cup_{i \in I} \mathcal{U}'_i$ be a finite  affinoid cover of $\mathcal{U}'$. By \cite{MR1032938}, thm. 5.1, for each $i$, there exists an affinoid  $\mathcal{U}'_i \subseteq \mathcal{U}_i \subseteq \mathcal{U}$ such that $\mathcal{U}'_i$ is relatively compact in $\mathcal{U}_i$ (equivalently $\overline{\mathcal{U}'_i} \subseteq \mathcal{U}_i$).  Let $\mathcal{U}'' = \cup_{i \in I} \mathcal{U}_i$. We claim that the map $$ \mathrm{R}\Gamma (\mathcal{U}'', \mathscr{F}) \rightarrow \mathrm{R}\Gamma (\mathcal{U}', \mathscr{F})$$ is compact. This will prove that the map of the lemma is compact because it factors over a compact map.   These last  cohomologies  can be represented by  the \v{C}ech complex with respect to $\cup_i \mathcal{U}_i$ and $\cup_i \mathcal{U}_i'$. We  now prove  that the maps $\mathscr{F}(\cap_{J \subseteq I} \mathcal{U}_i) \rightarrow \mathscr{F}(\cap_{J \subseteq I}\mathcal{U}'_i)$ are compact. We reproduce the argument of   \cite{MR2154369}, prop. 2.4.1. We have just seen that we can find a formal model $\mathfrak{V}' \hookrightarrow \mathfrak{V} \hookrightarrow \mathfrak{X}$ (with maps being Zariski open immersions) for the maps $\cap_{J \subseteq I}\mathcal{U}'_i \rightarrow \cap_{J \subseteq I} \mathcal{U}_i \rightarrow \mathcal{X}$ with the property that the map $\mathfrak{V}' \hookrightarrow \mathfrak{V}$ factors into  $\mathfrak{V}' \hookrightarrow \overline{\mathfrak{V}'}  \hookrightarrow  \mathfrak{V}$. We also let $\mathfrak{F}$ be a  flat coherent sheaf over $\mathfrak{X}$ with generic fiber $\mathscr{F}$. Let $\mathfrak{F}_n = \mathfrak{F}/\varpi^n $. We have that $\mathscr{F}(\cap_{J \subseteq I} \mathcal{U}_i) = \mathfrak{F}(\mathfrak{V}) [\frac{1}{\varpi}]$, and the image of $\mathfrak{F}(\mathfrak{V})$ in $ \mathscr{F}(\cap_{J \subseteq I} \mathcal{U}_i)$ is an open and bounded submodule. This also applies to the $'$ situation. We claim that the image of $\mathfrak{F}(\mathfrak{V})$ in $\mathfrak{F}(\mathfrak{V}')/\varpi^n$ is a finite $\ocal_F$-module. This will prove that the map $\mathscr{F}(\cap_{J \subseteq I} \mathcal{U}_i) \rightarrow \mathscr{F}(\cap_{J \subseteq I}\mathcal{U}'_i)$ is compact. 
The image of $\mathfrak{F}(\mathfrak{V})$ in $\mathfrak{F}(\mathfrak{V}')/\varpi^n$ is contained (inside $\mathfrak{F}_n(\mathfrak{V}')$) in the image of $\mathfrak{F}_n(\mathfrak{V})$. But the map $\mathfrak{F}_n(\mathfrak{V}) \rightarrow \mathfrak{F}_n(\mathfrak{V}')$ factors over $\mathfrak{F}_n(\mathfrak{V}) \rightarrow \mathfrak{F}_n(\overline{\mathfrak{V}'}) \rightarrow \mathfrak{F}_n(\mathfrak{V}')$.  The $\ocal_F$-module  $\mathfrak{F}_n(\overline{\mathfrak{V}'})$  is finite since $\overline{\mathfrak{V}'}$ is proper. 

\end{proof}

\begin{lem}\label{lem-criterium-compacity} Let $\mathcal{X} \rightarrow \Spa(F,\ocal_F)$ be a proper adic space and let $\mathscr{F}$ be a locally free sheaf of finite rank over $\mathcal{X}$. Let $\mathcal{U}' \subseteq \mathcal{U}$  be two quasi-compact open, and $\mathcal{Z} \subseteq \mathcal{Z}'$ be two  closed subspaces, with quasi-compact complements.  Assume that $\overline{\mathcal{U}}' \subseteq \mathcal{U}$ and $\mathcal{Z} \subseteq \overset{\circ}{\mathcal{Z}'}$. Then the map 
$$ \mathrm{R}\Gamma_{\mathcal{Z} \cap \mathcal{U} } (\mathcal{U}, \mathscr{F}) \rightarrow \mathrm{R}\Gamma_{\mathcal{Z}' \cap \mathcal{U} '} (\mathcal{U}', \mathscr{F})$$ is compact.
\end{lem}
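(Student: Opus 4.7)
The plan is to use the exact triangles of cohomology with support
$$\mathrm{R}\Gamma_{\mathcal{Z} \cap \mathcal{U}}(\mathcal{U}, \mathscr{F}) \to \mathrm{R}\Gamma(\mathcal{U}, \mathscr{F}) \to \mathrm{R}\Gamma(\mathcal{U} \setminus \mathcal{Z}, \mathscr{F}) \stackrel{+1}{\to}$$
and its counterpart for $\mathcal{U}'$, $\mathcal{Z}'$ in order to reduce to the previous lemma. The map in the statement is obtained by taking fibers of the horizontal arrows in the commutative square
$$\xymatrix{ \mathrm{R}\Gamma(\mathcal{U}, \mathscr{F}) \ar[r] \ar[d]^a & \mathrm{R}\Gamma(\mathcal{U} \setminus \mathcal{Z}, \mathscr{F}) \ar[d]^b \\ \mathrm{R}\Gamma(\mathcal{U}', \mathscr{F}) \ar[r] & \mathrm{R}\Gamma(\mathcal{U}' \setminus \mathcal{Z}', \mathscr{F}) }$$
so it will suffice to show that both $a$ and $b$ are compact and that compactness is inherited by the induced map on fibers.

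Compactness of $a$ is exactly the previous lemma applied to $\overline{\mathcal{U}'} \subseteq \mathcal{U}$. For $b$ I will verify $\overline{\mathcal{U}' \setminus \mathcal{Z}'} \subseteq \mathcal{U} \setminus \mathcal{Z}$: since $\mathcal{Z} \subseteq \overset{\circ}{\mathcal{Z}'}$, the closed set $(\overset{\circ}{\mathcal{Z}'})^c$ is contained in $\mathcal{Z}^c$ and contains $(\mathcal{Z}')^c$, so $\overline{(\mathcal{Z}')^c} \subseteq \mathcal{Z}^c$; combining with $\overline{\mathcal{U}'} \subseteq \mathcal{U}$ gives
$$\overline{\mathcal{U}' \cap (\mathcal{Z}')^c} \subseteq \overline{\mathcal{U}'} \cap \overline{(\mathcal{Z}')^c} \subseteq \mathcal{U} \cap \mathcal{Z}^c.$$
Moreover $\mathcal{U} \setminus \mathcal{Z}$ and $\mathcal{U}' \setminus \mathcal{Z}'$ are quasi-compact opens in $\mathcal{X}$ (finite intersections of quasi-compact opens in a spectral space), so the previous lemma applies and $b$ is compact.

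To pass to fibers I will represent the whole square strictly in $\mathcal{K}^{proj}(\mathbf{Ban}(F))$ by using compatible \v{C}ech covers (Lemma \ref{lem-represent-Banach} guarantees this is possible). Concretely, choose a finite affinoid cover $\{\mathcal{V}_i\}_{i \in I}$ of $\mathcal{U}$ together with a subset $J \subseteq I$ such that $\{\mathcal{V}_j\}_{j \in J}$ covers $\mathcal{U} \setminus \mathcal{Z}$, and then a finite affinoid cover $\{\mathcal{V}'_k\}_{k \in I'}$ of $\mathcal{U}'$ refining $\{\mathcal{V}_i \cap \mathcal{U}'\}_{i \in I}$, equipped with a subset $J' \subseteq I'$ covering $\mathcal{U}' \setminus \mathcal{Z}'$. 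After a small further shrinking, the argument of the previous lemma shows that the resulting \v{C}ech complexes fit into a strictly commutative square of complexes of projective Banach $F$-modules whose vertical arrows are degreewise compact. The fibers of the horizontal arrows compute the two local cohomologies, and the induced map on fibers acts in degree $n$ as $a_n \oplus b_{n-1}$ on $B_n \oplus C_{n-1}$ (where $B,C$ denote the top \v{C}ech complexes); this is a finite direct sum of compact operators, hence compact.

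The main technical point I anticipate is obtaining a \emph{strictly} commutative square of Banach complexes, so that fibers can be formed directly in $\mathcal{K}^{proj}(\mathbf{Ban}(F))$; this is what the compatible \v{C}ech refinement above accomplishes. As an alternative backup, one can argue abstractly by lifting $a$ and $b$ to compact representatives in $\mathcal{C}^{proj}(\mathbf{Ban}(F))$ and exploiting the triangulated structure to form mapping cones of compatible pairs of compact maps.
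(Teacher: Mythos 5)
Your proof is correct and follows essentially the same strategy as the paper: reduce via the exact triangles to compactness of the two restriction maps $\mathrm{R}\Gamma(\mathcal{U},\mathscr{F})\to\mathrm{R}\Gamma(\mathcal{U}',\mathscr{F})$ and $\mathrm{R}\Gamma(\mathcal{U}\setminus\mathcal{Z},\mathscr{F})\to\mathrm{R}\Gamma(\mathcal{U}'\setminus\mathcal{Z}',\mathscr{F})$, apply the previous lemma, and check the closure inclusion $\overline{\mathcal{U}'\setminus\mathcal{Z}'}\subseteq\mathcal{U}\setminus\mathcal{Z}$. The extra care you take in building a strictly commutative square of \v{C}ech complexes so as to form the fiber degreewise is a legitimate technical point that the paper leaves implicit, and your treatment of it is sound.
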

\begin{proof}  It suffices to see that $\mathrm{R}\Gamma (\mathcal{U}, \mathscr{F}) \rightarrow \mathrm{R}\Gamma (\mathcal{U}', \mathscr{F})$ and $\mathrm{R}\Gamma (\mathcal{U}\setminus (\mathcal{Z}\cap \mathcal{U}), \mathscr{F}) \rightarrow \mathrm{R}\Gamma (\mathcal{U}' \setminus (\mathcal{Z}'\cap \mathcal{U}'), \mathscr{F})$ are compact. This follows from the fact that $\overline{\mathcal{U}}' \subseteq \mathcal{U}$ and $\overline{\mathcal{U}' \setminus (\mathcal{Z}'\cap {\mathcal{U}'})} \subseteq \overline{\mathcal{U}}' \setminus (\overset{\circ}{\mathcal{Z}'}\cap \overline{\mathcal{U}}')\subseteq  \mathcal{U} \setminus  (\mathcal{Z} \cap \mathcal{U}) $ and the previous lemma. 
\end{proof}

\bigskip

We now give a stronger form of the lemma.

\begin{lem}\label{lem-criterium-compacity2}  Assume that $\mathcal{X}$ is proper. Let $\mathcal{U}' \subseteq \mathcal{U}$  be two  open subsets which are finite unions of quasi-Stein spaces, and $\mathcal{Z} \subseteq \mathcal{Z}'$ be two  closed subspaces, whose complements are finite unions of quasi-Stein spaces.  Assume that there exists a quasi-compact open $\mathcal{U}''$ such that ${\mathcal{U}}' \cap \mathcal{Z}' \subseteq \mathcal{U}''$ with $\overline{\mathcal{U}''} \subseteq \mathcal{U}$, as well as two closed subset $\mathcal{Z}'' \subseteq \mathcal{Z}'''$ with quasi-compact complement,  such that  $\mathcal{Z}\cap \mathcal{U} \subseteq \mathcal{Z}^{''}$, $\mathcal{Z}^{''} \subseteq  \overset{\circ}{\mathcal{Z}'''}$ and $\mathcal{Z}^{'''} \subseteq {\mathcal{Z}'}$. Then the map 
$$ \mathrm{R}\Gamma_{\mathcal{Z} \cap \mathcal{U} } (\mathcal{U}, \mathscr{F}) \rightarrow \mathrm{R}\Gamma_{\mathcal{Z}' \cap\mathcal{ U} '} (\mathcal{U}', \mathscr{F})$$ is compact. 
\end{lem}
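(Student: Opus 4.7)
The plan is to factor the given map through cohomologies on quasi-compact opens where lemma \ref{lem-criterium-compacity} applies to the middle step. The only new ingredient compared to lemma \ref{lem-criterium-compacity} is an auxiliary quasi-compact open $\mathcal{V}$ with $\overline{\mathcal{U}''} \subseteq \mathcal{V} \subseteq \mathcal{U}$. Such a $\mathcal{V}$ exists: properness of $\mathcal{X}$ implies $\overline{\mathcal{U}''}$ is quasi-compact, and writing $\mathcal{U} = \bigcup_{k=1}^m \mathcal{W}_k$ as a finite union of quasi-Stein spaces with exhaustions $\mathcal{W}_k = \bigcup_{i \geq 0} \mathcal{W}_{k,i}$ by quasi-compact opens, we obtain an increasing exhaustion $\mathcal{U} = \bigcup_n \mathcal{V}_n$ with $\mathcal{V}_n := \bigcup_k \mathcal{W}_{k,n}$ quasi-compact open; by quasi-compactness of $\overline{\mathcal{U}''}$, it sits in some $\mathcal{V}_n$, which we take as $\mathcal{V}$.

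Next, the plan is to factor the given map as
$$\mathrm{R}\Gamma_{\mathcal{Z} \cap \mathcal{U}}(\mathcal{U}, \mathscr{F}) \xrightarrow{A} \mathrm{R}\Gamma_{\mathcal{Z}'' \cap \mathcal{V}}(\mathcal{V}, \mathscr{F}) \xrightarrow{B} \mathrm{R}\Gamma_{\mathcal{Z}''' \cap \mathcal{U}''}(\mathcal{U}'', \mathscr{F}) \xrightarrow{C} \mathrm{R}\Gamma_{\mathcal{Z}' \cap \mathcal{U}'}(\mathcal{U}', \mathscr{F}),$$
where $A$ combines pullback along $\mathcal{V} \hookrightarrow \mathcal{U}$ with corestriction (using $\mathcal{Z} \cap \mathcal{U} \subseteq \mathcal{Z}''$), $B$ is the analogous pullback-corestriction map along $\mathcal{U}'' \subseteq \mathcal{V}$ and $\mathcal{Z}'' \subseteq \mathcal{Z}'''$, and $C$ uses the change-of-ambient-space identification $\mathrm{R}\Gamma_{\mathcal{Z}' \cap \mathcal{U}'}(\mathcal{U}', \mathscr{F}) = \mathrm{R}\Gamma_{\mathcal{Z}' \cap \mathcal{U}'}(\mathcal{U}' \cap \mathcal{U}'', \mathscr{F})$ (valid because $\mathcal{Z}' \cap \mathcal{U}' \subseteq \mathcal{U}''$), combined with pullback to $\mathcal{U}' \cap \mathcal{U}''$ and corestriction along $\mathcal{Z}''' \subseteq \mathcal{Z}'$. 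Functoriality of these operations ensures the composite is the original map.

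Finally, $B$ is compact by lemma \ref{lem-criterium-compacity}, since $\overline{\mathcal{U}''} \subseteq \mathcal{V}$ by construction and $\mathcal{Z}'' \subseteq \overset{\circ}{\mathcal{Z}'''}$ by hypothesis; here all four support sets are quasi-compact opens or closed sets with quasi-compact complement. The two intermediate cohomologies are represented in $\mathcal{K}^{proj}(\mathbf{Ban}(F))$ by lemma \ref{lem-represent-Banach}, while the source and target lie in $\mathrm{Pro}_{\N}(\mathcal{K}^{proj}(\mathbf{Ban}(F)))$ by lemma \ref{lem-complex-frechet}; so the factorization through the compact map $B$ certifies the original map as compact in the sense of definition \ref{defi-notop-compact}. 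The only real obstacle is locating the intermediate quasi-compact open $\mathcal{V}$; once it is in place, everything amounts to carefully tracking support conditions.
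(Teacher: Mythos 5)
Your proof is correct and is essentially the paper's argument. Both proofs locate a quasi-compact open between $\overline{\mathcal{U}''}$ and $\mathcal{U}$ (you call it $\mathcal{V}$; the paper picks $\mathcal{U}_n$ from an exhaustion and justifies the containment via compactness of the constructible topology, while you invoke quasi-compactness of the closed set $\overline{\mathcal{U}''}$ in the quasi-compact space $\mathcal{X}$ — a trivially equivalent observation), and then factor the given map through the compact middle map $\mathrm{R}\Gamma_{\mathcal{Z}'' \cap \mathcal{V}}(\mathcal{V},\mathscr{F}) \to \mathrm{R}\Gamma_{\mathcal{Z}''' \cap \mathcal{U}''}(\mathcal{U}'',\mathscr{F})$ supplied by lemma \ref{lem-criterium-compacity}, using the change-of-ambient-space identification $\mathrm{R}\Gamma_{\mathcal{Z}'\cap\mathcal{U}'}(\mathcal{U}',\mathscr{F}) = \mathrm{R}\Gamma_{\mathcal{Z}'\cap\mathcal{U}'}(\mathcal{U}'\cap\mathcal{U}'',\mathscr{F})$ for the last step exactly as the paper does.
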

\begin{proof}  We can write $\mathcal{U} = \cup_n \mathcal{U}_n$ with $\mathcal{U}_n$ quasi-compact. Since $\overline{\mathcal{U}}'' \subseteq \mathcal{U}$ and $\mathcal{X}$ equipped with the constructible topology is compact, we deduce that $\overline{\mathcal{U}}'' \subseteq \mathcal{U}_n$ for $n$ large enough.   The map  $\mathrm{R}\Gamma_{\mathcal{Z}'' \cap \mathcal{U}_n } (\mathcal{U}_n, \mathscr{F}) \rightarrow  \mathrm{R}\Gamma_{\mathcal{Z}''' \cap  \mathcal{U}'' }( \mathcal{U}'', \mathscr{F})$ is compact by lemma \ref{lem-criterium-compacity}.
There is a restriction-corestriction map  $\mathrm{R}\Gamma_{\mathcal{Z} \cap \mathcal{U} } (\mathcal{U}, \mathscr{F}) \rightarrow \mathrm{R}\Gamma_{\mathcal{Z}'' \cap \mathcal{U}_n } (\mathcal{U}_n, \mathscr{F})$.

We also have  a restriction-corestriction map $$ \mathrm{R}\Gamma_{\mathcal{Z}''' \cap \mathcal{U}'' } (\mathcal{U}'', \mathscr{F}) \rightarrow \mathrm{R}\Gamma_{\mathcal{Z}' \cap \mathcal{U}' } (\mathcal{U}' \cap \mathcal{U}'', \mathscr{F}).$$
On the other hand, $\mathrm{R}\Gamma_{\mathcal{Z}' \cap  \mathcal{U}'}( \mathcal{U}', \mathscr{F}) = \mathrm{R}\Gamma_{\mathcal{Z}' \cap  \mathcal{U}' }( \mathcal{U}' \cap \mathcal{U}'', \mathscr{F})$. 
All together, we deduce that  the map 
$$ \mathrm{R}\Gamma_{\mathcal{Z} \cap\mathcal{ U} } (\mathcal{U}, \mathscr{F}) \rightarrow \mathrm{R}\Gamma_{\mathcal{Z}' \cap \mathcal{U} '} (\mathcal{U}', \mathscr{F})$$
factors through the compact map $\mathrm{R}\Gamma_{\mathcal{Z}'' \cap \mathcal{U}_n } (\mathcal{U}_n, \mathscr{F}) \rightarrow  \mathrm{R}\Gamma_{\mathcal{Z}''' \cap  \mathcal{U}'' }( \mathcal{U}'', \mathscr{F})$ and is compact. 
\end{proof}

We finally conclude this section with a last lemma where we deal with Banach sheaves which are not necessarily coherent.

\begin{lem}\label{last-lemma-compact}  Let $\mathcal{X}$, $\mathcal{U}$, $\mathcal{U}'$, $\mathcal{Z}$, $\mathcal{Z}'$ be as in lemma \ref{lem-criterium-compacity2}. Let $\mathscr{F}$ and $\mathscr{G}$ be two locally projective Banach sheaves and let $\phi : \mathscr{F} \rightarrow \mathscr{G}$ be a compact morphism.  The morphism $$ \mathrm{R}\Gamma_{\mathcal{Z} \cap \mathcal{U} } (\mathcal{U}, \mathscr{F}) \rightarrow \mathrm{R}\Gamma_{\mathcal{Z}' \cap \mathcal{U} '} (\mathcal{U}', \mathscr{G})$$ is compact. 
\end{lem}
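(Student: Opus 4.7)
The plan is to reduce to the compactness of $\phi_\ast$ on a fixed quasi-compact open, by inserting a ``large'' intermediate pair $(\mathcal{U}_n, \mathcal{Z}'' \cap \mathcal{U}_n)$ exactly as in the proof of lemma \ref{lem-criterium-compacity2}. Using the hypotheses, choose $\mathcal{U}_n$ in a quasi-Stein exhaustion $\mathcal{U} = \cup_n \mathcal{U}_n$ with $\overline{\mathcal{U}''} \subseteq \mathcal{U}_n$. The morphism in question then factors as
$$ \mathrm{R}\Gamma_{\mathcal{Z} \cap \mathcal{U}}(\mathcal{U}, \mathscr{F}) \rightarrow \mathrm{R}\Gamma_{\mathcal{Z}'' \cap \mathcal{U}_n}(\mathcal{U}_n, \mathscr{F}) \stackrel{\phi_\ast}{\longrightarrow} \mathrm{R}\Gamma_{\mathcal{Z}'' \cap \mathcal{U}_n}(\mathcal{U}_n, \mathscr{G}) \rightarrow \mathrm{R}\Gamma_{\mathcal{Z}' \cap \mathcal{U}'}(\mathcal{U}', \mathscr{G}), $$
the two outer arrows being the obvious restriction-corestrictions. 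By lemma \ref{lem-complex-frechet}, the source and target live in $\mathrm{Pro}_{\mathbb{N}}(\mathcal{K}^{proj}(\mathbf{Ban}(F)))$, while by lemma \ref{lem-represent-Banach} the two intermediate terms are objects of $\mathcal{K}^{proj}(\mathbf{Ban}(F))$ since $\mathcal{U}_n$ is quasi-compact. Hence, by definition \ref{defi-notop-compact}, it will be enough to prove that the middle map $\phi_\ast$ is a compact morphism in $\mathcal{K}^{proj}(\mathbf{Ban}(F))$.

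For this I would choose a sufficiently fine finite affinoid cover $\mathcal{U}_n = \cup_\alpha \mathcal{V}_\alpha$, and a similar cover of the quasi-compact open complement $\mathcal{U}_n \setminus (\mathcal{Z}'' \cap \mathcal{U}_n)$, such that on each $\mathcal{V}_\alpha$ property (2) of definition \ref{defi-banach-sheaf} holds for both $\mathscr{F}$ and $\mathscr{G}$ with projective Banach sections, and such that $\phi : \mathscr{F}(\mathcal{V}_\alpha) \rightarrow \mathscr{G}(\mathcal{V}_\alpha)$ is compact (possible after further refinement, by definition \ref{defi-compact-map}). The resulting \v{C}ech complexes compute $\mathrm{R}\Gamma(\mathcal{U}_n, \mathscr{F})$ and $\mathrm{R}\Gamma(\mathcal{U}_n \setminus (\mathcal{Z}'' \cap \mathcal{U}_n), \mathscr{F})$ thanks to the vanishing of higher cohomology of small locally projective Banach sheaves on affinoids (theorem \ref{thm-formal-B-sheaves}), and similarly for $\mathscr{G}$; the cohomology with support is represented by the shifted cone of the resulting map. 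The key algebraic point is that compactness of a continuous linear map between projective Banach modules is preserved under base change along the rational localization maps $\oscr_{\mathcal{X}}(\mathcal{V}_\alpha) \rightarrow \oscr_{\mathcal{X}}(\mathcal{V}_{\alpha_0 \ldots \alpha_k})$, since a finite-rank approximation remains finite-rank after tensoring. It then follows that $\phi_\ast$ is compact in every degree of the (support) \v{C}ech complex, producing a compact representative of $\phi_\ast$ in $\mathcal{C}^{proj}(\mathbf{Ban}(F))$.

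The main technical obstacle is to arrange simultaneously on the affinoid cover the property (2) of definition \ref{defi-banach-sheaf} for both sheaves, compactness of $\phi$ on sections, and the acyclicity needed for the \v{C}ech computation; but these are essentially independent local conditions that can be combined by refining the cover enough. Once the compact representative of $\phi_\ast$ is in hand, the compactness of the total composition follows because the class of compact morphisms is a two-sided ideal under pre- and post-composition with arbitrary morphisms, both in $\mathcal{K}^{proj}(\mathbf{Ban}(F))$ and in $\mathrm{Pro}_{\mathbb{N}}(\mathcal{K}^{proj}(\mathbf{Ban}(F)))$, so the argument concludes exactly as in the proof of lemma \ref{lem-criterium-compacity2}.
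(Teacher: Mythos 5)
The paper's proof of this lemma is simply ``Easy and left to the reader,'' and your proposal correctly supplies the intended argument: factor the morphism through a quasi-compact intermediate pair so that the middle map $\phi_\ast$ lands between objects of $\mathcal{K}^{proj}(\mathbf{Ban}(F))$, exhibit a compact representative of that middle map via a \v{C}ech computation using the compactness of $\phi$, and then conclude because a map of pro-systems that factors through a compact morphism between single objects is compact by definition. Two small points are worth tightening. First, the acyclicity that lets the \v{C}ech complex compute cohomology is not given by theorem \ref{thm-formal-B-sheaves} (which concerns \emph{small} formal Banach sheaves, an assumption you do not have for $\mathscr{F}$ and $\mathscr{G}$); the correct reference is the unnumbered lemma following definition \ref{defi-banach-sheaf}, giving $\HH^i(\Spa(A,A^+), M\hat{\otimes}_A\oscr)=0$ for $i>0$ when $M$ is a projective Banach $A$-module, applied on each affinoid where property (2) of that definition holds for both sheaves. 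Second, the last arrow $\mathrm{R}\Gamma_{\mathcal{Z}''\cap\mathcal{U}_n}(\mathcal{U}_n,\mathscr{G})\to\mathrm{R}\Gamma_{\mathcal{Z}'\cap\mathcal{U}'}(\mathcal{U}',\mathscr{G})$ is not quite a single ``obvious'' restriction-corestriction, since $\mathcal{U}'$ need not be contained in $\mathcal{U}_n$; as in the proof of lemma \ref{lem-criterium-compacity2}, one first restricts from $\mathcal{U}_n$ to $\mathcal{U}'\cap\mathcal{U}_n$, corestricts the support from $\mathcal{Z}''$ to $\mathcal{Z}'$, and then uses the change-of-ambient-space identity $\mathrm{R}\Gamma_{\mathcal{Z}'\cap\mathcal{U}'}(\mathcal{U}',\mathscr{G})=\mathrm{R}\Gamma_{\mathcal{Z}'\cap\mathcal{U}'}(\mathcal{U}'\cap\mathcal{U}_n,\mathscr{G})$, which is valid because $\mathcal{Z}'\cap\mathcal{U}'\subseteq\mathcal{U}''\subseteq\mathcal{U}_n$. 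With those two clarifications your argument is complete.
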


\begin{proof} Easy and left to the reader.
\end{proof}

\subsection{Integral structures on Banach sheaves} We now consider integral structures on Banach sheaves, but this time more in the spirit of analytic geometry. 
 We let $\mathcal{X}$ be a  separated  adic space locally of finite type over $\Spa(F, \ocal_F)$. We let $\mathscr{F}$ be a locally projective Banach sheaf  over $\mathcal{X}$. We can view $\mathscr{F}$ as a sheaf on the \'etale site of $\mathcal{X}$ by \cite{MR1603849}.
 
 \begin{defi}\label{defi-integral-structure-sheaf} An integral structure on $\mathscr{F}$ is   a sheaf $\mathscr{F}^+$ of $\oscr_{\mathcal{X}}^+$-modules  on the \'etale site of $X$, such that:
 \begin{enumerate} 
 \item $\mathscr{F}^+ \hookrightarrow \mathscr{F}$ and $\mathscr{F}^+ \otimes_{\ocal_F} F = \mathscr{F}$, 
 \item There is  an \'etale cover  $\coprod U_i \rightarrow \mathcal{X}$ by affinoid spaces such that $\mathscr{F}^+(U_i)$ is the completion of a  free $\oscr_{\mathcal{X}}^+(U_i)$-module, and the canonical map 
  $\mathscr{F}^+(U_i) \hat{\otimes}_{\oscr_{\mathcal{X}}^+(U_i)} \oscr_{U_i}^+ \rightarrow    \mathscr{F}^+\vert_{U_i}$ is an isomorphism. 
  \end{enumerate} 
  \end{defi}
  
  \begin{rem} A stronger property would be to ask that the \'etale cover $\coprod U_i \rightarrow \mathcal{X}$ is in fact an analytic cover. In our applications, this stronger property will not be satisfied. Indeed, we will produce sheaves arising from torsors under various groups, and these torsors are usually only trivial locally for the \'etale topology. However, we will not consider the \'etale cohomology $\HH^i_{et}(\mathcal{X},\mathscr{F}^+)$, but only the analytic cohomology $\HH^i_{an}(\mathcal{X},\mathscr{F}^+)$. 
  \end{rem} 
  
  \begin{lem}\label{lem-bounded-torsion-integral} In the situation of definition \ref{defi-integral-structure-sheaf}, assume moreover that $\mathcal{X}$ is reduced. Let  $U \rightarrow \mathcal{X}$ be an \'etale map. 
Then $\mathscr{F}^+(U)$ is an open and bounded submodule of $\mathscr{F}(U)$. 
  \end{lem}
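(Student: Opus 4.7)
The assertion is local on $U$, so I would first reduce to the case where $U$ is a quasi-compact (even affinoid) adic space by observing that both openness and boundedness are properties that restrict well along open immersions; any element of $\mathscr{F}(U)$ is bounded iff its image in $\mathscr{F}(V)$ is bounded for each quasi-compact open $V\subseteq U$, and similarly for openness. So assume from now on that $U$ is affinoid.

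Next, by refining the étale cover $\coprod U_i\to\mathcal{X}$ of definition \ref{defi-integral-structure-sheaf} with $U\to\mathcal{X}$ and using that $U$ is quasi-compact, I can find a \emph{finite} étale cover $\coprod_{j=1}^n V_j\to U$ by affinoids $V_j=\Spa(B_j,B_j^+)$ with the property that $\mathscr{F}^+(V_j)$ is the $\varpi$-adic completion of a free $B_j^+$-module, say on an index set $I_j$, and so that $\mathscr{F}(V_j)\simeq \mathscr{F}^+(V_j)[1/\varpi]$ is the corresponding Banach $B_j$-module $B_j(I_j)$. Since being reduced is étale local and $\mathcal{X}$ is reduced, each $V_j$ is reduced.

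The key input is then the fundamental fact (Bosch--Güntzer--Remmert, \cite{MR1734903} 4.2) that for a reduced affinoid $V_j=\Spa(B_j,B_j^+)$ the subring $B_j^+\subseteq B_j$ is a ring of definition: it is open and bounded. It follows that $\mathscr{F}^+(V_j)\simeq (B_j^+)^{\wedge I_j}$ is an open bounded $B_j^+$-submodule of $\mathscr{F}(V_j)\simeq B_j(I_j)$, since a completed free module over a ring of definition is itself a module of definition in the orthonormalizable Banach module $B_j(I_j)$.

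Finally I would descend this along the finite cover $\{V_j\to U\}$. By the sheaf property of $\mathscr{F}^+$ in the étale topology, one has
\[
\mathscr{F}^+(U)=\ker\Bigl(\prod_{j}\mathscr{F}^+(V_j)\rightrightarrows \prod_{j,k}\mathscr{F}^+(V_j\times_U V_k)\Bigr),
\]
and the analogous formula for $\mathscr{F}(U)$. Because the cover is finite, the product topology on $\prod_j \mathscr{F}(V_j)$ is a Banach (in particular locally convex) topology with $\prod_j \mathscr{F}^+(V_j)$ open and bounded, and these properties are inherited by the closed submodule $\mathscr{F}^+(U)\subseteq\mathscr{F}(U)$. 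Boundedness is immediate; openness then follows formally, since $\mathscr{F}^+(U)\otimes_{\ocal_F}F=\mathscr{F}(U)$ forces $\mathscr{F}(U)=\bigcup_n\varpi^{-n}\mathscr{F}^+(U)$, and a bounded submodule containing a neighborhood of $0$ in each chart is open.

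The only genuinely subtle point is the descent step: one must know that the natural topology on $\mathscr{F}(U)$ matches the subspace topology induced from $\prod_j\mathscr{F}(V_j)$ via the equalizer description. This is where reducedness is used a second time, implicitly, through the fact that the Banach module structure on $\mathscr{F}(U)$ is independent of choices of integral models (otherwise two different formal models could give inequivalent $\varpi$-adic topologies). Once this comparison is granted, the descent is routine and the lemma follows.
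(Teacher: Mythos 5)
Your argument has the same skeleton as the paper's: reduce to a finite affinoid \'etale cover over which $\mathscr{F}^+$ is completed-free, use that $B_j^+$ is bounded (via reducedness) to get $\prod_j\mathscr{F}^+(V_j)$ open and bounded in $\prod_j\mathscr{F}(V_j)$, and descend along the equalizer. That is correct, and it is what the paper does (the paper just writes "$\mathscr{F}^+(U)=\prod_i\mathscr{F}^+(U_i)\cap\mathscr{F}(U)$ is open and bounded" without spelling out the preliminary reduction to quasi-compact $U$).

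Your closing paragraph, however, misdiagnoses the descent subtlety. Reducedness is used only once, to guarantee that each $\oscr^+_{\mathcal{X}}(V_j)$ is bounded; it has no further role. The reason the intrinsic Banach topology on $\mathscr{F}(U)$ agrees with the subspace topology inherited from $\prod_j\mathscr{F}(V_j)$ is simply the open mapping theorem: the restriction maps are continuous, so $\mathscr{F}(U)\to\prod_j\mathscr{F}(V_j)$ is a continuous injection of Banach modules with closed image (the equalizer), hence a topological isomorphism onto its image. "Independence of the choice of formal model" is not what is being invoked. Relatedly, the sentence "openness then follows formally, since $\mathscr{F}(U)=\bigcup_n\varpi^{-n}\mathscr{F}^+(U)$, and a bounded submodule containing a neighborhood of $0$ in each chart is open" is circular as written: a bounded generating submodule need not be open in general, and what you actually use is that $\mathscr{F}^+(U)$ is the intersection of the \emph{open} set $\prod_j\mathscr{F}^+(V_j)$ with $\mathscr{F}(U)$, combined with the open mapping theorem identification of topologies. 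With that correction the proof is complete and matches the paper's.
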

  \begin{proof} Let $I$ be a finite set and let $\coprod_{i \in I} U_i \rightarrow U$ be an \'etale cover such each $U_i$ is affinoid and  $\mathscr{F}^+\vert_{U_i}$ is the completion of a  free sheaf of $\oscr_{U_i}^+$-modules. By the sheaf property we have an exact sequence:
  $$ 0 \rightarrow  \mathscr{F}(U) \rightarrow \prod_{i} \mathscr{F}(U_i) \rightarrow \prod_{i,j} \mathscr{F}(U_i \times_{U} U_j)$$
and $\mathscr{F}(U)$  is a closed Banach subspace of $ \prod_{i} \mathscr{F}(U_i)$. Since $\prod_{i} \mathscr{F}^+(U_i)$ is open and bounded in $\prod_{i} \mathscr{F}(U_i)$ (using that $\oscr^+_{\mathcal{X}}(U_i)$ is bounded by the reducedness hypothesis),  $\mathscr{F}^+(U) = \prod_{i} \mathscr{F}^+(U_i) \cap \mathscr{F}(U)$ is open and bounded in $\mathscr{F}^+(U)$.
  \end{proof}

  We now will elaborate on a result of Bartenwerfer which we first recall. 
  
  \begin{thm}[\cite{MR488562}]\label{thm-ref-Barten} Let $\mathcal{X}$   be an affinoid  smooth  adic space  over $\Spa(F, \ocal_F)$. There exists $N \in \ZZ_{\geq 0}$ such that $\HH^i_{an}(\mathcal{X}, \oscr_{\mathcal{X}}^+)$ is annihilated by $p^N$  for all $i >0$. 
  \end{thm}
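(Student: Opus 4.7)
The strategy is to reduce to the case of a polydisc $\mathbb{D}^n = \Spa(F\langle T_1,\ldots,T_n\rangle, \ocal_F\langle T_1,\ldots,T_n\rangle)$ via a finite surjective map, handle the polydisc directly by Koszul/\v{C}ech computations, and then transfer bounds via a trace map. The starting point is that by Tate's acyclicity theorem, for any finite rational covering $\mathcal{U} = \{U_i\}$ of an affinoid $\mathcal{X}$, the \v{C}ech complex $\check{C}^\bullet(\mathcal{U}, \oscr_{\mathcal{X}})$ is exact, so the \v{C}ech complex $\check{C}^\bullet(\mathcal{U}, \oscr_{\mathcal{X}}^+)$ is exact after inverting $p$. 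Therefore $\HH^i_{an}(\mathcal{X},\oscr_{\mathcal{X}}^+)$ is a priori $p$-power torsion for each $i>0$; the entire content of the theorem is to produce a uniform bound on the exponent of this torsion.

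First I would treat the case $\mathcal{X} = \mathbb{D}^n$. Here one can use the standard ``Laurent'' rational coverings and the fact that $\ocal_F\langle T_1,\ldots,T_n\rangle$ is Noetherian and regular to check, via an explicit calculation with (convergent) Laurent series, that the \v{C}ech complex of $\oscr^+$ is in fact exact on the nose, hence $\HH^i_{an}(\mathbb{D}^n, \oscr^+_{\mathbb{D}^n}) = 0$ for $i>0$ (so $N=0$ works). One may equivalently obtain this from the Koszul-type computation that $\ocal_F\langle T_1,\ldots,T_n, T_i^{-1}\rangle$ is flat over $\ocal_F\langle T_1,\ldots,T_n\rangle$ together with \v{C}ech-to-derived arguments.

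Next, for a general smooth affinoid $\mathcal{X}$ of dimension $n$, I would invoke the affinoid analog of Noether normalization: after possibly shrinking $\mathcal{X}$ by a rational covering (which does not affect the torsion statement), there exists a finite surjective map $f : \mathcal{X} \to \mathbb{D}^n$, which is moreover generically \'etale (since smooth implies reduced). The trace map $\mathrm{Tr} : f_\star \oscr_{\mathcal{X}} \to \oscr_{\mathbb{D}^n}$ restricted to integral structures satisfies $p^N \cdot \mathrm{Tr}(f_\star \oscr_{\mathcal{X}}^+) \subseteq \oscr_{\mathbb{D}^n}^+$ for some $N$, because the different of the finite map $f$ is a coherent sheaf on $\mathbb{D}^n$ and hence has bounded denominators relative to $\oscr^+_{\mathbb{D}^n}$ in its dual description. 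Combining $\mathrm{Tr}$ with the natural map $\oscr^+_{\mathbb{D}^n} \to f_\star \oscr^+_{\mathcal{X}}$ gives a map whose composite in either order is multiplication by $p^N \cdot [\mathrm{Frac}(\oscr_{\mathcal{X}}):\mathrm{Frac}(\oscr_{\mathbb{D}^n})]$ on a dense open, hence on all of the (reduced) integral structure, up to an additional bounded denominator. Pulling back to cohomology and using the previous step yields that multiplication by some fixed power $p^M$ kills $\HH^i_{an}(\mathcal{X}, \oscr_{\mathcal{X}}^+)$.

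\textbf{Main obstacle.} The hard part is quantifying the denominators in the trace map and in the comparison $\oscr^+_{\mathcal{X}} \leftrightarrows f^\star \oscr^+_{\mathbb{D}^n}$; they depend on the different of $f$, which exists as a coherent sheaf globally and is invertible on the generic \'etale locus, but only has an integral bound after a careful analysis (this is essentially where Bartenwerfer's original argument does real work, by controlling the conductor-type ideal of $f$). A secondary subtlety is that the Noether normalization $f$ might only exist after a rational covering of $\mathcal{X}$, so one must also check that the bound $N$ can be made uniform over the finitely many pieces of such a covering --- which follows from the quasi-compactness of $\mathcal{X}$ once a pointwise bound is established.
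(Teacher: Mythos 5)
The paper does not actually prove this theorem: it simply cites Bartenwerfer's original result and a reference of Pilloni for passing from \v{C}ech cohomology (the form in which Bartenwerfer states his theorem) to analytic sheaf cohomology. Your proposal is an attempt to reconstruct Bartenwerfer's argument from scratch, and the overall architecture (explicit vanishing for the polydisc, Noether normalization, trace map) is indeed the right skeleton. However, the key trace-map step as you have written it contains a real error, and you have the role of the different backwards.

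First, the trace of a power-bounded element is power-bounded, so $\mathrm{Tr}(f_\star\oscr^+_{\mathcal{X}})\subseteq\oscr^+_{\mathbb{D}^n}$ already holds with $N=0$; no bound on the different is needed for that containment. Second, and more seriously, the two composites $\oscr^+_{\mathbb{D}^n}\to f_\star\oscr^+_{\mathcal{X}}\xrightarrow{\mathrm{Tr}}\oscr^+_{\mathbb{D}^n}$ and $f_\star\oscr^+_{\mathcal{X}}\xrightarrow{\mathrm{Tr}}\oscr^+_{\mathbb{D}^n}\to f_\star\oscr^+_{\mathcal{X}}$ are not both multiplication by a scalar, even rationally. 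The first is multiplication by $\deg f$. The second sends $b\mapsto\mathrm{Tr}(b)\cdot 1$, which is not $\lambda\cdot b$ for any $\lambda$; so you cannot conclude that $p^N$ kills $\HH^i(\mathbb{D}^n,f_\star\oscr^+_{\mathcal{X}})$ by sandwiching between copies of $\HH^i(\mathbb{D}^n,\oscr^+_{\mathbb{D}^n})=0$ this way. What the trace pairing actually provides is an embedding $f_\star\oscr^+_{\mathcal{X}}\hookrightarrow\mathrm{Hom}_{\oscr^+_{\mathbb{D}^n}}(f_\star\oscr^+_{\mathcal{X}},\oscr^+_{\mathbb{D}^n})$ whose cokernel is annihilated by the different ideal; one then has to exhibit $f_\star\oscr^+_{\mathcal{X}}$ as a direct factor up to bounded $p$-torsion of a finite free $\oscr^+_{\mathbb{D}^n}$-module \emph{of sheaves}, and the nontrivial point (which you flag but do not resolve) is that the denominator of a rational splitting must be bounded uniformly over all rational subsets of $\mathbb{D}^n$, not just globally. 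Finally, your sketch works with \v{C}ech cohomology throughout and does not address passing to analytic sheaf cohomology $\HH^i_{an}$, which is the one step the paper itself supplies (via the cited result of Pilloni comparing \v{C}ech and sheaf cohomology for $\oscr^+$).
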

  \begin{proof} Bartenwerfer's result is stated for \v{C}ech cohomology. By \cite{pilloniHidacomplexes} prop. 3.1.1, this implies the claim for cohomology. 
  \end{proof}
  
  \begin{lem}\label{lem-Cechversusanalytic} Let $\mathcal{X}$   be an affinoid  smooth  adic space  over $\Spa(F, \ocal_F)$. Let $\mathscr{F}$ be a locally projective Banach sheaf which is assumed to be associated to its global sections (i.e. satisfies  point $(2)$ in definition  \ref{defi-banach-sheaf}).  Let $\mathscr{F}^+$ be an integral structure on $\mathscr{F}$.   There exists $N\in \ZZ_{\geq 0}$ such that  for all $i >0$ the  cohomology groups $\HH^i_{an}(\mathcal{X}, \mathscr{F}^+)$ are annihilated by $p^N$. 
  \end{lem}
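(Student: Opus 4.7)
The strategy is to reduce to Bartenwerfer's theorem \ref{thm-ref-Barten} by a sequence of standard manoeuvres followed by a descent along an étale cover. First I would consider the short exact sequence of analytic sheaves
\begin{equation*}
0 \to \mathscr{F}^+ \to \mathscr{F} \to \mathscr{F}/\mathscr{F}^+ \to 0.
\end{equation*}
Since $\mathscr{F}$ is a locally projective Banach sheaf that is associated to its global sections and $\mathcal{X}$ is affinoid, the higher analytic cohomology of $\mathscr{F}$ vanishes by the discussion in Section~\ref{section-Cohomological properties of Banach sheaves}. The long exact sequence then identifies $\HH^i_{an}(\mathcal{X},\mathscr{F}^+) \simeq \HH^{i-1}_{an}(\mathcal{X},\mathscr{F}/\mathscr{F}^+)$ for $i \geq 2$, and controls $\HH^1_{an}(\mathcal{X},\mathscr{F}^+)$ as a quotient of $\HH^0_{an}(\mathcal{X},\mathscr{F}/\mathscr{F}^+)$. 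So it suffices to bound the higher analytic cohomology of the $p^\infty$-torsion sheaf $\mathscr{F}/\mathscr{F}^+$ by a uniform power of $p$.

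Using the filtration $\mathscr{F}/\mathscr{F}^+ = \varinjlim_n p^{-n}\mathscr{F}^+/\mathscr{F}^+$ together with the isomorphism $p^n : \mathscr{F}^+ \xrightarrow{\sim} p^{-n}\mathscr{F}^+$, one obtains $\mathscr{F}/\mathscr{F}^+ \simeq \varinjlim_n \mathscr{F}^+/p^n \mathscr{F}^+$. Since $\mathcal{X}$ is quasi-compact and quasi-separated, \v{C}ech cohomology commutes with this filtered colimit, so the problem reduces to producing an $N$, independent of $n$, killing $\HH^i_{an}(\mathcal{X},\mathscr{F}^+/p^n\mathscr{F}^+)$ for all $i>0$ and all $n$.

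To obtain this uniform bound I would exploit the étale-local triviality of $\mathscr{F}^+$: by the definition of an integral structure and the quasi-compactness of $\mathcal{X}$, one can find a finite étale cover $\coprod_{i=1}^{r} U_i \to \mathcal{X}$ by smooth affinoids on which $\mathscr{F}^+|_{U_i}$ is the $\varpi$-adic completion of a free $\oscr_{U_i}^+$-module, so that $\mathscr{F}^+/p^n\mathscr{F}^+|_{U_i} \simeq (\oscr_{U_i}^+/p^n)^{(I_i)}$. Bartenwerfer applied to $\oscr_{U_i}^+$ combined with the multiplication-by-$p^n$ exact sequence gives an $n$-independent annihilator of each $\HH^j_{an}(U_i,\oscr_{U_i}^+/p^n)$, and since a finite \v{C}ech complex commutes with direct sums, this bound passes to $\mathscr{F}^+/p^n\mathscr{F}^+|_{U_i}$.

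The main obstacle is the final descent from the étale cover back to $\mathcal{X}$, since an étale cover does not in general refine to an analytic one. I would handle this by combining the \v{C}ech--Leray spectral sequence for $\coprod U_i \to \mathcal{X}$ in the étale topology with the Leray spectral sequence for the change of topology $\nu : \mathcal{X}_{\mathrm{et}} \to \mathcal{X}_{\mathrm{an}}$: the intersections $U_{i_1} \times_\mathcal{X} \cdots \times_\mathcal{X} U_{i_k}$ remain smooth affinoids to which the same argument applies, and at each stage one loses only a bounded power of $p$. Since the cohomological dimension of $\mathcal{X}$ is bounded by its dimension and the cover is finite, finitely many such losses yield a universal $N$ as required. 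Controlling these losses uniformly in $n$, while only ever introducing bounded $p$-torsion and never true divisibility, is the delicate heart of the argument.
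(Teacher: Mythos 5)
Your general strategy — reduce to an \'etale cover where $\mathscr{F}^+$ trivializes and invoke Bartenwerfer's theorem there — does broadly resemble the paper's, and your opening manoeuvres (acyclicity of $\mathscr{F}$ in the analytic topology, the colimit $\mathscr{F}/\mathscr{F}^+ = \varinjlim_n \mathscr{F}^+/p^n\mathscr{F}^+$) are reasonable reductions. But the step you yourself flag as ``the delicate heart of the argument'' is a genuine gap, not merely a missing detail, and the mechanism you propose for it does not close.

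The problem is the direction of the change-of-topology Leray spectral sequence. For $\nu:\mathcal{X}_{\mathrm{et}}\to\mathcal{X}_{\mathrm{an}}$ one has
$E_2^{p,q}=\HH^p_{\mathrm{an}}(\mathcal{X},R^q\nu_\star\mathscr{G})\Rightarrow\HH^{p+q}_{\mathrm{et}}(\mathcal{X},\mathscr{G})$, so the quantity you need to control, $\HH^i_{\mathrm{an}}(\mathcal{X},\mathscr{F}^+)=E_2^{i,0}$, is an \emph{input} to the spectral sequence rather than a subquotient of its abutment. The edge map $E_2^{i,0}\to\HH^i_{\mathrm{et}}$ can have kernel coming from differentials $d_r:E_r^{i-r,r-1}\to E_r^{i,0}$, which involve the higher pushforwards $R^q\nu_\star\mathscr{F}^+$; nothing you have said bounds those, so knowing that the \'etale cohomology is killed by $p^N$ (which your \v{C}ech--Bartenwerfer argument would give) does not transfer back to the analytic side. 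This is precisely why the ``\'etale cover does not refine to an analytic one'' obstruction is hard: you cannot simply compute in the \'etale topology and then descend by abstract nonsense.

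The paper's proof sidesteps the problem and never computes \'etale cohomology of $\mathscr{F}^+$ at all. Writing $\mathcal{X}=\Spa(A,A^+)$ and $\mathscr{F}=M\hat\otimes_A\oscr_{\mathcal{X}}$ with $M$ a direct summand of an orthonormalizable module $A(I)$, one sets $M^+=A^+(I)\cap M$ and forms the auxiliary analytic subsheaf $\mathscr{M}^+\subseteq\mathscr{F}$ generated by $\oscr_{\mathcal{X}}^+\hat\otimes_{A^+}M^+$. Because multiplication by a fixed $p^N$ on $M^+$ factors through $A^+(I)$ (by the direct-summand structure and a bounded-torsion comparison of lattices), the analytic cohomology of $\mathscr{M}^+$ inherits bounded torsion directly from Bartenwerfer applied to $\oscr_{\mathcal{X}}^+\hat\otimes_{A^+}A^+(I)$ — this argument lives entirely on the analytic site. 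The \'etale cover enters only at the very end, and only to verify the purely local, $H^0$-level claim that the injection $\mathscr{M}^+\hookrightarrow\mathscr{F}^+$ has cokernel of bounded torsion (which is checked after pulling back to the $U_i$ where $\mathscr{F}^+$ is explicitly free). That is a far lighter use of the \'etale trivialization than what your descent step would require, and it is what makes the argument go through. If you want to salvage your approach you would need, at minimum, to prove vanishing or bounded torsion of $R^q\nu_\star\mathscr{F}^+$ for $q>0$, and that is not obviously easier than the problem you started with.
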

  \begin{proof}  Let $\mathcal{X} = \Spa(A,A^+)$.  By assumption, $\mathscr{F} = M \hat{\otimes}_{A} \oscr_\mathcal{\mathcal{X}}$ is associated to a projective Banach $A$-module $M$. Let $I$ be a set such that $A(I) = M \oplus N$.  Let $M^+ = A^+(I) \cap M$ and $N^+= A^+(I) \cap N$. The injective map $M^+ \oplus N^+ \rightarrow A^+(I)$ has cokernel of bounded torsion.  Moreover, if we let $M_+$ be the image of $A^+(I)$ in $M$ under the projection orthogonal to $N$, then $M^+ \hookrightarrow M_+$ has cokernel of bounded torsion. We deduce that there exists an integer $N$ such that the multiplication $p^N : M^+ \rightarrow M^+$ factors through:  
  $ M^+ \rightarrow A^+(I) \rightarrow M^+$ (where the first map is the inclusion, the second map is the orthogonal projection with respect to $N$ composed with multiplication by $p^N$ and followed by the inclusion $p^N M_+ \subseteq M^+$).  
  It follows from theorem \ref{thm-ref-Barten}  that $\HH^i (\mathcal{X}, \oscr_{\mathcal{X}}^+ \hat{\otimes}_{A^+} A^+(I))$ is of bounded torsion for all $i>0$. We let $\mathscr{M}^+$ be the subsheaf of $\oscr_{\mathcal{X}}^+ \hat{\otimes}_{A^+} A^+(I) \cap \mathscr{F}$ equal to the image of the sheaf associated to the presheaf  $\oscr_{\mathcal{X}}^+ \hat{\otimes}_{A^+} M^+$ (in other words, it is the subsheaf of $\mathscr{F}$ of sections which can locally be written as tensors   in $\oscr_{\mathcal{X}}^+ \hat{\otimes}_{A^+} M^+$).
  
We see that  multiplication by $p^N : \mathscr{M}^+ \rightarrow \mathscr{M}^+$ factors through   $ \mathscr{M}^+ \rightarrow \oscr_{\mathcal{X}}^+ \hat{\otimes}_{A^+}A^+(I) \rightarrow \mathscr{M}^+$. We deduce that 
  $\HH^i (\mathcal{X}, \mathscr{M}^+)$ is of bounded torsion for all $i>0$.
  After rescaling, we may assume that $M^+ \subseteq \mathscr{F}^+(\mathcal{X})$, with cokernel of bounded torsion by lemma \ref{lem-bounded-torsion-integral}. We therefore get a morphism: 
  $\mathscr{M}^+  \rightarrow \mathscr{F}^+$. We claim that this morphism has cokernel of bounded torsion. 
Let $\cup_{i \in I} U_i \rightarrow \mathcal{X}$ be an affinoid  \'etale covering with the property that $\mathscr{F}^+\vert_{U_i}$ is associated with its global sections. We may assume that the set $I$ is finite. It suffices to show that $\oscr_{\mathcal{X}}^+(U_i) \hat{\otimes}_{A^+} M^+  \rightarrow \mathscr{F}^+(U_i)$ has cokernel of bounded torsion. This follows since the image of $\oscr_{\mathcal{X}}^+(U_i) \hat{\otimes}_{A^+} M^+$ is open. 
  We finally deduce that $\HH^i_{an}(\mathcal{X}, \mathscr{F}^+)$ is of bounded torsion for all $i >0$. 
 \end{proof}
  
\subsection{Duality for analytic adic spaces}
 In this section we fix $\mathcal{X}$ a proper smooth adic space over $\Spa(F, \ocal_F)$ of pure dimension $d$. Let $\Omega^{d}_{\mathcal{X}/F}$ be the canonical sheaf, equal to $\Lambda^d \Omega^1_{\mathcal{X}/F}$.  Let $\iota  : \mathcal{Z} \hookrightarrow \mathcal{X}$ be a closed subset, equal to the closure of a quasi-compact open subset $\mathcal{U}$ of $\mathcal{X}$.
 One can consider the cohomology group $\HH^d_{\mathcal{Z}}(\mathcal{X}, \Omega^{d}_{\mathcal{X}/F})$.

 \begin{thm}[\cite{MR1739729}, \cite{MR1464367}]\label{thm-duality-GK} \begin{enumerate}
 \item There is a trace map $\mathrm{tr}_{\mathcal{Z}} : \HH^d_{\mathcal{Z}}(\mathcal{X}, \Omega^{d}_{\mathcal{X}/F}) \rightarrow F$. 
  \item If $\mathcal{Z} \subseteq \mathcal{Z}'$, there is a factorization $$\mathrm{tr}_{\mathcal{Z}} : \HH^d_{\mathcal{Z}}(\mathcal{X}, \Omega^{d}_{\mathcal{X}/K})  \rightarrow \HH^d_{\mathcal{Z}'}(\mathcal{X}, \Omega^{d}_{\mathcal{X}/F}) \stackrel{\mathrm{tr}_{\mathcal{Z}'}}\rightarrow F.$$
  \item For any coherent sheaf $\mathscr{F}$ defined on a neighborhood of $\mathcal{Z}$ the map $\mathrm{tr}_{\mathcal{Z}}$ induces  a pairing:
  $$\mathrm{Ext}^i_{\iota^{-1} \oscr_{\mathcal{X}}} (\iota^{-1} \mathscr{F}, \iota^{-1} \Omega^d_{\mathcal{X}/F}) \times \HH^{d-i}_{\mathcal{Z}}(\mathcal{X}, \mathscr{F}) \rightarrow F$$
 \item When $\mathcal{Z}=\mathcal{X}$ the cohomology groups are finite dimensional $F$-vector spaces,  and the pairing is perfect.
 \item When $\mathcal{U}$ is affinoid and $\mathscr{F}$ is a locally free sheaf, $\mathrm{Ext}^0_{\iota^{-1} \oscr_{\mathcal{X}}}(\iota^{-1} \mathscr{F}, \iota^{-1} \Omega^d_{\mathcal{X}/F})$  is  a compact  inductive limit of $F$-Banach spaces, $\HH^d_{\mathcal{Z}}(\mathcal{X}, \mathscr{F})$  is a compact projective limit of $F$-Banach  spaces, and the pairing is a topological duality between  locally convex topological $F$-vector spaces spaces identifying each space with the strong dual of the other.  Moreover, $\mathrm{Ext}^i_{\iota^{-1} \oscr_{\mathcal{X}}} (\iota^{-1} \mathscr{F}, \iota^{-1} \Omega^d_{\mathcal{X}/F})$ and $\HH^{d-i}_{\mathcal{Z}}(\mathcal{X}, \mathscr{F})$ vanish for $i \neq 0$.  \end{enumerate}
  \end{thm}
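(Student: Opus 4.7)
The plan is to break the theorem into its formal parts and its analytic content. Parts (1), (2), (3) are formal constructions that follow from the existence of a trace on $\HH^d(\mathcal{X}, \Omega^d_{\mathcal{X}/F})$ (the global part of (4)) together with the corestriction maps of local cohomology. Specifically, I define $\mathrm{tr}_{\mathcal{Z}}$ as the composition $\HH^d_{\mathcal{Z}}(\mathcal{X}, \Omega^d_{\mathcal{X}/F}) \to \HH^d(\mathcal{X}, \Omega^d_{\mathcal{X}/F}) \to F$; the compatibility in (2) is then transitivity of corestriction, and the pairing in (3) is the composition of the cup product of Proposition \ref{prop-construction-cuprod}, promoted to a Yoneda-type pairing
$$\mathrm{Ext}^i(\iota^{-1}\mathscr{F}, \iota^{-1}\Omega^d_{\mathcal{X}/F}) \otimes \HH^{d-i}_{\mathcal{Z}}(\mathcal{X}, \mathscr{F}) \to \HH^d_{\mathcal{Z}}(\mathcal{X}, \Omega^d_{\mathcal{X}/F})$$
via $K$-injective resolutions of $\iota^{-1}\Omega^d_{\mathcal{X}/F}$, followed by $\mathrm{tr}_{\mathcal{Z}}$.

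The analytic content is in parts (4) and (5). Part (4) is classical Serre duality for proper smooth rigid spaces, which is exactly what is proved in \cite{MR1739729} and \cite{MR1464367}: the strategy is to set up the trace via explicit residues on projective space, reduce the general proper smooth case via Chow's lemma and cohomological descent, and use Kiehl's finiteness theorem to ensure that the Ext groups and cohomologies involved are finite-dimensional, after which an explicit pairing on the projective model is shown to be independent of the chosen projective approximation.

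For part (5), my plan is to choose a cofinal decreasing system $\mathcal{Z} \subseteq \cdots \subseteq \mathcal{U}_{n+1} \subseteq \mathcal{U}_n$ of smooth affinoid neighborhoods with $\overline{\mathcal{U}}_{n+1} \subseteq \mathcal{U}_n$. Then $\mathrm{R}\Gamma_{\mathcal{Z}}(\mathcal{X}, \mathscr{F}) = \lim_n \mathrm{R}\Gamma_{\mathcal{Z}}(\mathcal{U}_n, \mathscr{F})$ as a projective system of Banach complexes whose transition maps are compact by Lemma \ref{lem-criterium-compacity}, and dually
$$\mathrm{Ext}^0_{\iota^{-1}\oscr_{\mathcal{X}}}(\iota^{-1}\mathscr{F}, \iota^{-1}\Omega^d_{\mathcal{X}/F}) = \colim_n \mathrm{Hom}_{\oscr_{\mathcal{U}_n}}(\mathscr{F}\vert_{\mathcal{U}_n}, \Omega^d_{\mathcal{U}_n/F})(\mathcal{U}_n)$$
is an ind-Banach object, again with compact transition maps. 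The main obstacle will be to prove the concentration of $\HH^\star_{\mathcal{Z}}(\mathcal{U}_n, \mathscr{F})$ in degree $d$. Combining the triangle
$$\mathrm{R}\Gamma_{\mathcal{Z}}(\mathcal{U}_n, \mathscr{F}) \to \mathrm{R}\Gamma(\mathcal{U}_n, \mathscr{F}) \to \mathrm{R}\Gamma(\mathcal{U}_n\setminus\mathcal{Z}, \mathscr{F}) \to$$
with Tate acyclicity on the affinoid $\mathcal{U}_n$ reduces the problem to the assertion that $\HH^j(\mathcal{U}_n\setminus\mathcal{Z}, \mathscr{F})$ is concentrated in degree $d-1$, which is a form of local duality for the complement of a Stein-compact subset and is the content extracted from \cite{MR1739729}, \cite{MR1464367} in our setting (their residue calculations amount to constructing the duality isomorphism at each finite level directly). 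Once this vanishing is granted, the duality pairing at each level is a variant of affinoid Serre duality, and the topological perfectness in the (co)limit --- the identification of each side with the strong dual of the other --- follows formally from the compactness of the transition maps via the reflexivity of nuclear Fr\'echet/DFN spaces.
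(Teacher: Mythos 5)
The paper does not prove Theorem \ref{thm-duality-GK}; it quotes it from \cite{MR1739729} and \cite{MR1464367} and attaches only Remark \ref{rem-dagger}, which translates the statement into the dagger-space notation of Grosse-Kl\"onne ($\HH^i(\mathcal{U}^\dag,\mathscr{F})=\HH^i(\mathcal{Z},\iota^{-1}\mathscr{F})$ and $\HH^i_c(\mathcal{U}^\dag,\mathscr{F})=\HH^i_{\mathcal{Z}}(\mathcal{X},\mathscr{F})$). So there is no paper proof to compare against, and your proposal is a reconstruction of the argument in the cited references. As such it is broadly reasonable: parts (1)--(3) are formal consequences of the cup product plus a global trace on $\HH^d(\mathcal{X},\Omega^d_{\mathcal{X}/F})$, and part (4) is classical rigid Serre duality, exactly what the cited works establish.

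There is, however, a genuine glitch in your treatment of part (5). You write $\mathrm{R}\Gamma_{\mathcal{Z}}(\mathcal{X},\mathscr{F})=\lim_n\mathrm{R}\Gamma_{\mathcal{Z}}(\mathcal{U}_n,\mathscr{F})$ as a projective system with compact transition maps coming from $\overline{\mathcal{U}}_{n+1}\subseteq\mathcal{U}_n$. But by the change-of-ambient-space property recalled in the paper (\cite{MR2171939}, I, Prop.\ 2.2), all of those complexes are canonically quasi-isomorphic to one another, so the limit is degenerate and the compactness you invoke from Lemma \ref{lem-criterium-compacity} is attached to a system whose maps are already isomorphisms. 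The actual pro-Banach structure on $\mathrm{R}\Gamma_{\mathcal{Z}}(\mathcal{X},\mathscr{F})$ comes from exhausting the complement $\mathcal{X}\setminus\mathcal{Z}$ (a union of quasi-Stein spaces) by quasi-compact opens, as in Lemma \ref{lem-complex-frechet}: the cohomology is a cone of $\mathrm{R}\Gamma(\mathcal{X},\mathscr{F})\to\mathrm{R}\Gamma(\mathcal{X}\setminus\mathcal{Z},\mathscr{F})$, the first term is finite, and the second is a pro-Banach object via the quasi-Stein exhaustion. Dually, the ind-Banach structure on $\mathrm{Ext}^0_{\iota^{-1}\oscr_{\mathcal{X}}}(\iota^{-1}\mathscr{F},\iota^{-1}\Omega^d_{\mathcal{X}/F})\simeq\HH^0(\mathcal{U}^\dag,\mathscr{F}^\vee\otimes\Omega^d_{\mathcal{X}/F})$ does come from shrinking affinoid neighborhoods of $\mathcal{Z}$, and there the compact transition maps are genuine. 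Once the two sides are represented correctly, the topological-duality conclusion from compact transition maps is the right finishing move, and the concentration statement is indeed the nontrivial input deferred to the references.
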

  
  \begin{rem}\label{rem-dagger} We give some translations to the  language of \cite{MR1739729}. Let $\mathcal{U}^\dag$ be the dagger space attached to $\mathcal{U} \hookrightarrow \mathcal{X}$. Let $\mathscr{F}$ be a coherent sheaf defined on a neighborhood of $\mathcal{Z}$ in $\mathcal{X}$. Then $\HH^i(\mathcal{U}^\dag, \mathscr{F}) = \HH^i(\mathcal{Z}, \iota^{-1} \mathscr{F})$ and  
  $\HH^i_c(\mathcal{U}^\dag,  \mathscr{F}) = \HH^i_{\mathcal{Z}}(\mathcal{X}, \mathscr{F})$.
 \end{rem}
\section{Flag variety}\label{section-Flagvar}

\subsection{Bruhat decomposition} Let $F$ be a non archimedean local field of mixed characteristic with residue field $k$ of characteristic $p$ and discrete valuation $v$
normalized so that $v(p)=1$.
 Let $G \rightarrow \ocal_F$ be a split reductive group with maximal torus $T$ contained in a Borel $B$ with unipotent radical $U$. Let $B \subseteq P$ be a parabolic with Levi $M \subseteq P$ containing $T$.
  \begin{rem} In our application to Shimura varieties, the unipotent radical of $P$ will be abelian. Nevertheless, this assumption is not relevant for the moment.  \end{rem}

  We let $\Phi$ be the set of roots, $\Delta$ and $\Phi^+$ the subsets of simple and positive roots corresponding to our choice of $B$, and $\Phi^- = - \Phi^+$. We let $\Phi^+_M$ be the subset of positive roots which lie in the Lie algebra of $M$ and $\Phi^{+,M}  = \Phi^+ \setminus \Phi^+_M$. We let $\Phi^-_M = - \Phi^+_M$ and $\Phi^{-,M} = - \Phi^{+,M}$.

 We let $W$ be the Weyl group of $G$. We denote by $\ell : W \rightarrow \ZZ_{\geq 0}$ the length function.  We denote by $w_0$ the longest element of $W$.  For each $w \in W$, we choose a representative in $N(T)$  that we still denote $w$. The group $W$ acts on the left on the cocharacter group $X_{\star}(T)$ and on the character group $X^\star(T)$ on the left as well via the formula $w\kappa(t) = \kappa(w^{-1} t w)$ for $\kappa \in X^\star(T)$ and $w \in W$. Let $W_M$ be the Weyl group of $M$. The quotient $W_M \backslash W$ has a set of coset representatives of minimal length (the Kostant representatives) called $\WM$.  This is the subset of $W$ of elements  $w$ that satisfy $\Phi_M^+ \subseteq w \Phi^+$.

  Let $FL = P\backslash G \rightarrow \Spec~\ocal_F$ be the flag variety associated with $P$.  We let $d=\#\Phi^{+,M}$ be the (relative) dimension of $FL$.  The group $G$ acts on the right on $FL$. 
 
For any $w \in\WM$, we let  $C_w = P \backslash P w B$ be the Bruhat cell corresponding to $w$. We have the decomposition into $B$-orbits $FL = \coprod_{w \in\WM} C_w$.  We can also consider the opposite Bruhat cell: $C^w = P \backslash P w \overline{B}$ for the opposite Borel $\overline{B}$. 

We let $X_w $ be the Schubert variety equal to the Zariski closure of $C_w$ in $FL$. We also let $X^w$ be the opposite Schubert variety, equal to the Zariski closure of $C^w$ in $FL$. 
There is a partial order $\leq $ on $\WM$   for which $X_w = \cup_{w' \leq w} C_{w'}$ and $X^w = \cup_{w' \geq w} C^{w'}$. For the length function $\ell : \WM \rightarrow [0, \mathrm{dim} FL]$, we have  $\ell(w) = \dim C_w$ (here dimensions are relative dimensions over $\Spec~\ocal_F$). 

We also  define $Y_w = \cup_{w' \geq w} C_{w}$ for all $w\in\WM$. This is an open subscheme of $FL$ containing $C_{w}$. 

\begin{lem}\label{lem-oppositeSchubert} We have an inclusion $X^w \hookrightarrow Y_w$.
\end{lem}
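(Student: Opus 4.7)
The plan is to show $X^w \subseteq Y_w$ cell by cell. Using the given decomposition $X^w = \bigcup_{w' \geq w} C^{w'}$, it suffices to verify that each opposite Bruhat cell $C^{w'}$ with $w' \geq w$ lies in $Y_w = \bigcup_{w'' \geq w} C_{w''}$. But $w' \geq w$ implies $Y_{w'} \subseteq Y_w$ (the indexing set of the union shrinks), so the problem reduces to proving the single statement $C^{w'} \subseteq Y_{w'}$ for every $w' \in \WM$.

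The content of $C^{w'} \subseteq Y_{w'}$ is the classical intersection theorem for opposite Bruhat cells: if a point lies simultaneously in $C^{w_1}$ and $C_{w_2}$, then $w_1 \leq w_2$ in the Bruhat order. I would argue this dynamically. Fix a cocharacter $\lambda : \mathbb{G}_m \to T$ pairing strictly positively with every positive root, and consider the right $\mathbb{G}_m$-action on $FL$ through $\lambda$. The $T$-fixed points are exactly $\{Pw : w \in \WM\}$, and the Bialynicki--Birula attracting and repelling cells of $Pw$ identify respectively with $C_w$ and $C^w$ (for appropriate sign conventions on $\lambda$: the Bruhat cell $C_w = P\backslash PwU$ contracts to $Pw$ as conjugation by $\lambda(t)$ shrinks $U$, while $C^w = P\backslash Pw\overline{U}$ repels from $Pw$). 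The standard order--closure relation for Bialynicki--Birula cells then says that the repelling cell of $Pw'$ is contained in the union of attracting cells of fixed points $\geq w'$, which is exactly $C^{w'} \subseteq Y_{w'}$. Alternatively, in the case $P=B$ this is Richardson's theorem, and the general parabolic case follows by pulling back along $G/B \to G/P$ and using that $\WM$ is the set of minimal-length coset representatives for $W_M \backslash W$.

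Once this is granted the proof assembles immediately:
\begin{equation*}
X^w \;=\; \bigcup_{w' \geq w} C^{w'} \;\subseteq\; \bigcup_{w' \geq w} Y_{w'} \;\subseteq\; Y_w.
\end{equation*}
The principal bookkeeping step, which I would verify carefully in a full write-up, is the compatibility of the closure order on $\WM$ defined here by $X_w = \bigcup_{w' \leq w} C_{w'}$ with the restriction to $\WM$ of the Bruhat order on $W$. This is a standard but not entirely trivial property of Kostant representatives, and it is what licenses the parabolic reduction step and the interpretation of the Bialynicki--Birula order above. Everything else in the argument is formal.
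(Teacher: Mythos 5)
Your proof is correct and rests on the same underlying fact as the paper's, namely the intersection theorem for opposite Bruhat/Schubert cells. The difference is in how that fact is established: the paper simply cites \cite{MR2017071}, I, lemma~1 for $X^w \cap X_v \neq \emptyset \Leftrightarrow v \geq w$ and concludes in one line ($x\in X^w$ lies in $C_v$ for a unique $v$, and $C_v\subset X_v$ forces $v\ge w$), whereas you reduce to the cell-by-cell statement $C^{w'}\subseteq Y_{w'}$ and then offer a self-contained Bialynicki--Birula sketch. The dynamical argument is sound, and one can make it crisp without invoking the full BB order machinery: for $x\in C^{w'}\cap C_{w''}$, the $\lambda$-orbit of $x$ stays in the $T$-invariant cell $C_{w''}$, and its limit $Pw'$ therefore lies in $\overline{C_{w''}}=X_{w''}$, giving $w'\leq w''$ directly from the Schubert closure relation. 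Stated this way the argument only uses $\overline{C_{w''}}=X_{w''}$ and $T$-invariance, which sidesteps the convention-dependent bookkeeping (attracting vs.\ repelling, $t\to0$ vs.\ $t\to\infty$, whether the BB flow order agrees with the Bruhat order) that you correctly flag as needing care. The reduction to single cells is also not strictly necessary --- the paper's version avoids it --- but it does no harm. Finally, the compatibility of the closure order on $\WM$ with the restricted Bruhat order, which you highlight as a bookkeeping point, is indeed implicitly assumed in the paper's definitions ($X_w=\cup_{w'\le w}C_{w'}$ and $X^w=\cup_{w'\ge w}C^{w'}$ for the \emph{same} order) and is standard for Kostant representatives; it is good that you notice it needs to be invoked.
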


\begin{proof} By \cite{MR2017071}, I, lemma 1, we know that $X^w \cap X_v \neq \emptyset \Leftrightarrow v \geq w$. We deduce that $X^w \cap C_v \neq \emptyset  \Rightarrow  v \geq w $, so that $X^w \subseteq \cup_{v \geq w} C_v$.  
\end{proof}

\bigskip

The following lemma gives a description of the Bruhat cells. For all $\alpha \in \Phi$, we let $U_{\alpha}$ be the one parameter subgroup corresponding to $\alpha$. 

\begin{lem}\label{lem-cell-product} The product map (for any ordering of the factors)
\begin{eqnarray*}
\prod_{\alpha \in  (w^{-1} \Phi^{-,M}) \cap \Phi^+} U_\alpha  & \rightarrow & C_w \\
(x_\alpha) &\mapsto &w \prod_{\alpha} x_\alpha
\end{eqnarray*}
is an isomorphism of schemes.
\end{lem}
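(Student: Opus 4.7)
The plan is to realize $C_w$ as a homogeneous space for the right action of $B$ on $FL$, describe the stabilizer at the base point $Pw$, and exhibit an explicit unipotent complement inside $U$ via the structure theory of unipotent groups associated with closed subsets of positive roots.

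Since $C_w$ is the $B$-orbit of $Pw \in P\backslash G$ under the right action, the orbit map $b \mapsto Pwb$ identifies $C_w$ with the coset space $(w^{-1}Pw \cap B)\backslash B$. Because $T$ is contained in both $P$ and $B$, and $B = T \ltimes U$, the stabilizer factors as $T \cdot (w^{-1}Pw \cap U)$, so $C_w \simeq (w^{-1}Pw \cap U) \backslash U$. The roots of $P$ form the closed set $\Phi^+ \cup \Phi_M^-$, so $w^{-1}Pw \cap U$ is the unipotent subgroup whose root set is $w^{-1}(\Phi^+ \cup \Phi_M^-) \cap \Phi^+$. Using the partition $\Phi = (\Phi^+ \cup \Phi_M^-) \sqcup \Phi^{-,M}$, the complement of this set inside $\Phi^+$ is exactly the indexing set $\Psi := (w^{-1}\Phi^{-,M}) \cap \Phi^+$ appearing in the lemma. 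Letting $U_w \subseteq U$ denote the subgroup with roots $\Psi$, both $\Psi$ and $\Phi^+ \setminus \Psi$ are closed under addition in $\Phi$: the former because $\Phi^{-,M}$ is the root system of the unipotent radical of the parabolic opposite to $P$ and hence closed under addition, the latter because $\Phi^+ \cup \Phi_M^-$ is the root system of $P$.

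By the standard theory of unipotent groups, the multiplication map $U_w \times (w^{-1}Pw \cap U) \to U$ is an isomorphism of schemes, and for any chosen ordering of $\Psi$ the product map $\prod_{\alpha \in \Psi} U_\alpha \to U_w$ is also an isomorphism (proved by induction via the Chevalley commutator formulas, where closedness of $\Psi$ ensures all correction terms arising from reorderings remain inside $U_w$). Composing $\prod_{\alpha \in \Psi} U_\alpha \simeq U_w \hookrightarrow U \twoheadrightarrow (w^{-1}Pw \cap U)\backslash U \simeq C_w$ produces precisely the map $(x_\alpha) \mapsto Pw\prod_\alpha x_\alpha$ of the lemma and shows it to be an isomorphism. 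The main obstacle is the closedness check for $\Psi$; once that is secured, everything reduces to standard material on unipotent groups in split reductive groups over $\ocal_F$. As a sanity check on dimensions: the hypothesis $w \in \WM$ implies $\Phi_M^+ \subseteq w\Phi^+$, forcing $w\Phi^+ \cap \Phi_M^- = \emptyset$, so the set of positive roots sent by $w$ into $\Phi^-$ coincides with $\Psi$ and has cardinality $\ell(w) = \dim C_w$.
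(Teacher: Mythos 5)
Your proof is correct and follows essentially the same route as the paper's: both identify $C_w$ with the coset space $(w^{-1}Pw \cap B)\backslash B$ via the orbit map, compute the stabilizer using the Iwahori-type factorization $B = T \ltimes U$, and then invoke the product decomposition of $U$ over the partition of $\Phi^+$ into the roots of $w^{-1}Pw \cap U$ and their complement $\Psi = (w^{-1}\Phi^{-,M}) \cap \Phi^+$. The paper's proof states the same three facts more tersely; you add the explicit closedness check for $\Psi$ and $\Phi^+ \setminus \Psi$, which is the detail implicitly needed to justify the ``any ordering'' clause, so your write-up is a mild elaboration rather than a different argument.
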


\begin{proof} We need to prove that the map $\prod_{\alpha \in  (w^{-1} \Phi^{-,M}) \cap \Phi^+} U_\alpha  \rightarrow (B\cap w^{-1} P w) \backslash B$ is an isomorphism. This follows easily from the following facts:
\begin{itemize}
\item $B = T \times \prod_{\alpha \in \Phi^+} U_\alpha$ (in any order),
\item  $\Phi^+ = (w^{-1} \Phi^{-,M}) \cap \Phi^+ \coprod (w^{-1} \Phi^+) \cap \Phi^+ \coprod (w^{-1} \Phi^-_M) \cap \Phi^+$,
\item $B\cap w^{-1} P w = T \times \prod_{\alpha \in  (w^{-1} \Phi^+) \cap \Phi^+ \coprod (w^{-1} \Phi^-_M) \cap \Phi^+} U_\alpha$.
\end{itemize}
\end{proof}

We also introduce the translated open cell $U_w:=C_{w_0}\cdot w_0^{-1}w$, so that by the lemma we have an isomorphism (for any ordering of the roots):  
\begin{eqnarray*}
\prod_{\alpha \in  w^{-1} \Phi^{-,M}} U_\alpha  & \rightarrow & U_w \\
(x_\alpha) &\mapsto &w \prod_{\alpha} x_\alpha.
\end{eqnarray*}
We see that $U_w$ contains the Bruhat cell $C_w$ as a closed subset.

\subsection{Interlude: the cohomology of the flag variety}\label{subsec-kempf}

In this section, which is independent of the rest of the paper, we discuss the coherent cohomology of the flag variety following \cite{MR509802}, section 12. We assume here that $P=B$ is a Borel subgroup. We consider the stratification 
$ Z_0 = FL \supseteq Z_1 \supseteq \cdots \supseteq Z_d \supseteq Z_{d+1} = \emptyset$
where $d = \mathrm{dim} FL$ and $Z_i = \bigcup_{ w \in W, \ell(w) = d-i} X_w$. 
Let $\kappa \in X^\star(T)$. We associate to $\kappa$ a $G$-equivariant line bundle $\mathcal{L}_\kappa$ on $FL$ as follows: if $\pi : G \rightarrow FL$ is the projection map, then for any open $U \hookrightarrow FL$, $$\mathcal{L}_\kappa(U) = \{ f : \pi^{-1}(U) \rightarrow \mathbb{A}^1\mid f(bu) = (w_0\kappa)(b) f(u),~\forall (b,u) \in B\times \pi^{-1}(U) \}.$$ 

We consider the spectral sequence associated to the filtration (section \ref{section-spectral-sequence}) $$E_1^{p,q} = \HH^{p+q}_{Z_p/Z_{p+1}} (FL, \mathcal{L}_\kappa) \Rightarrow \HH^{p+q}(FL, \mathcal{L}_\kappa).$$
The abutment of this spectral sequence carries an action of $G$.  The terms on the left do not carry an action of $G$, but then do carry an action of $B$, as well as the Lie algebra $\mathfrak{g}$ (see \cite{MR509802}, lemma 12.8), and the spectral sequence is equivariant for these actions.

In order to study this spectral sequence, we need the following basic result: 
\begin{lem}\label{lem-computation-coho-affine-space} Let $m \geq n$. Let $\mathbb{A}^m = \Spec~\ocal_F[x_1, \cdots, x_m]$ and $\mathbb{A}^n = \Spec~\ocal_F[x_1, \cdots, x_n]$. Let $\mathbb{A}^n \hookrightarrow \mathbb{A}^m$ be the closed immersion given by $x_{n+1} = \cdots = x_m=0$. We have $\HH^i_{\mathbb{A}^n}(\mathbb{A}^m) = 0$ if $i\neq m-n$, and $$\HH^{m-n}_{\mathbb{A}^n}(\mathbb{A}^m)  = \bigoplus_{k_1, \cdots, k_n \geq 0, \\~ k_{n+1}, \cdots, k_m < 0} \ocal_F \cdot \prod_{i=1}^m x_i^{k_i}$$
\end{lem}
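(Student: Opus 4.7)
The plan is to compute this standard local cohomology by identifying it with the Čech cohomology of the open complement and decomposing monomial by monomial. The only slightly delicate bookkeeping is identifying the subcomplex spanned by a given Laurent monomial as the (augmented) cochain complex of a simplex.

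First I would use the exact triangle $\mathrm{R}\Gamma_{\mathbb{A}^n}(\mathbb{A}^m,\mathcal{O}) \to \mathrm{R}\Gamma(\mathbb{A}^m,\mathcal{O}) \to \mathrm{R}\Gamma(U,\mathcal{O})$, where $U = \mathbb{A}^m\setminus\mathbb{A}^n = \bigcup_{j=n+1}^{m} D(x_j)$. Since $\mathbb{A}^m$ is affine and $\{D(x_j)\}_{j=n+1}^m$ is an affine cover of $U$ with affine intersections, the middle term is just $R := \mathcal{O}_F[x_1,\ldots,x_m]$ in degree $0$ and the right term is computed by the Čech complex for this cover. Splicing these together identifies $\mathrm{R}\Gamma_{\mathbb{A}^n}(\mathbb{A}^m,\mathcal{O})$ with the extended Čech complex
$$ 0 \to R \to \bigoplus_{j} R_{x_j} \to \bigoplus_{j<k} R_{x_j x_k} \to \cdots \to R_{x_{n+1}\cdots x_m} \to 0, $$
where all indices run in $\{n+1,\ldots,m\}$ and $R$ sits in degree $0$ (so the complex sits in degrees $0,\ldots,m-n$).

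Next I would decompose this complex as a direct sum over Laurent monomials $\prod_{i=1}^m x_i^{k_i}$ with $(k_1,\ldots,k_n)\in\mathbb{Z}_{\geq 0}^n$ and $(k_{n+1},\ldots,k_m)\in\mathbb{Z}^{m-n}$. For a fixed exponent vector, set $S = \{j\in\{n+1,\ldots,m\} : k_j < 0\}$; the monomial appears in the summand indexed by a subset $I \subseteq \{n+1,\ldots,m\}$ of the Čech complex if and only if $S\subseteq I$. Writing $t := (m-n)-|S|$, the corresponding subcomplex is, up to a shift by $|S|$, the augmented simplicial cochain complex of the $(t-1)$-simplex on the vertex set $\{n+1,\ldots,m\}\setminus S$ (with its standard alternating-sign differential).

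Finally I would conclude using the classical fact that the augmented cochain complex of a non-empty simplex is exact (the simplex is contractible), so the subcomplex contributes nothing whenever $t\geq 1$. The only surviving monomials are those with $S=\{n+1,\ldots,m\}$, i.e.\ with all $k_{n+1},\ldots,k_m<0$, and for each such monomial the subcomplex consists of a single copy of $\mathcal{O}_F$ sitting in degree $m-n$. Assembling these pieces gives $\mathrm{H}^i_{\mathbb{A}^n}(\mathbb{A}^m,\mathcal{O})=0$ for $i\neq m-n$ together with the explicit description of $\mathrm{H}^{m-n}_{\mathbb{A}^n}(\mathbb{A}^m,\mathcal{O})$ claimed in the lemma. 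There is no real obstacle; the only thing to be careful about is keeping track of degree shifts so that the simplex in question is identified correctly.
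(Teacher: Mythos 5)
Your proof is correct and complete. The paper itself does not give an argument here; it merely cites SGA~2, Expos\'e~II, Proposition~5 with the one-line hint ``one can use a Koszul complex.'' Your route is the Čech--complex computation rather than the Koszul one: you identify $\mathrm{R}\Gamma_{\mathbb{A}^n}(\mathbb{A}^m,\mathcal{O})$ with the extended Čech complex for the cover $\{D(x_j)\}_{j=n+1}^m$ of the complement and then decompose it as a direct sum over Laurent monomials, reducing each summand to the (contractible, hence acyclic) augmented simplicial cochain complex of a simplex. This is a standard alternative to the colimit-of-Koszul-complexes computation in SGA~2, and it is somewhat more self-contained and elementary, at the cost of the simplex bookkeeping you flag. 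The bookkeeping checks out: for a monomial with negative-exponent set $S\subseteq\{n+1,\ldots,m\}$, the summand is the augmented cochain complex of the simplex on $\{n+1,\ldots,m\}\setminus S$, shifted to start in degree $|S|$; it is exact unless $S=\{n+1,\ldots,m\}$, in which case it is a single copy of $\mathcal{O}_F$ in degree $m-n$. One small point worth stating explicitly is that monomials with some $k_i<0$ for $i\leq n$ never occur in any localization $R_{x_I}$ with $I\subseteq\{n+1,\ldots,m\}$, which is why the direct sum is restricted to $k_1,\ldots,k_n\geq 0$; you use this implicitly by only considering those exponent vectors.
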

\begin{proof} One can use a Koszul complex. See \cite{MR2171939}, expos\'e II, proposition 5.
\end{proof} 

Let $\rho =  \frac{1}{2} \sum_{\alpha \in \Phi^+} \alpha$. We define the dotted action of $W$ on $X^\star(T)$ by $w\cdot \kappa = w (\kappa + \rho)-\rho$. We also denote $\HH^\star_w(FL, \mathcal{L}_\kappa) = \HH^\star_{C_w}(U_w, \mathcal{L}_\kappa)$. 

\begin{lem}\label{lem-BGG-character} We have a decomposition $$\HH^{p+q}_{Z_p/Z_{p+1}} (FL, \mathcal{L}_\kappa) = \bigoplus_{w\in W, \ell(w)=d-p} \HH^{p+q}_w( FL, \mathcal{L}_\kappa)$$ and these groups vanish when $q\not=0$.  Moreover, $T$ acts on $\HH^{d-\ell(w)}_w( FL, \mathcal{L}_\kappa)$ with character:
$$ \frac{ [(w^{-1}w_0)\cdot \kappa]} {\prod_{\alpha \in \Phi^-} ([1]-[\alpha])}  $$
\end{lem}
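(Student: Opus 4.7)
The plan is to prove all three claims (the direct-sum decomposition, the vanishing outside degree $d-\ell(w)$, and the character formula) by reducing each local cohomology group to the explicit model provided by Lemma \ref{lem-computation-coho-affine-space}. The geometric inputs are Lemma \ref{lem-cell-product}, Lemma \ref{lem-direct-sum-complex-support}, and the change-of-ambient-space property (3) from Section 2.1.

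First, for the decomposition. The locally closed stratum $Z_p\setminus Z_{p+1}$ is the disjoint union $\bigsqcup_{\ell(w)=d-p}C_w$, and each $C_w$ appearing is closed in the open subspace $FL\setminus Z_{p+1}$: indeed $X_w\cap(FL\setminus Z_{p+1})$ consists of those $C_{w'}\subseteq X_w$ with $\ell(w')\geq d-p$, but $w'\leq w$ together with $\ell(w')=\ell(w)$ forces $w'=w$. Lemma \ref{lem-direct-sum-complex-support} therefore gives
$$\HH^{p+q}_{Z_p/Z_{p+1}}(FL,\mathcal{L}_\kappa)=\bigoplus_{\ell(w)=d-p}\HH^{p+q}_{C_w}(FL\setminus Z_{p+1},\mathcal{L}_\kappa).$$
The open cell $U_w$ also contains $C_w$ as a closed subspace with $C_w\subseteq U_w\cap(FL\setminus Z_{p+1})$, so property (3) identifies each summand with $\HH^{p+q}_{C_w}(U_w,\mathcal{L}_\kappa)=\HH^{p+q}_w(FL,\mathcal{L}_\kappa)$.

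Next I compute each $\HH^\star_w(FL,\mathcal{L}_\kappa)=\HH^\star_{C_w}(U_w,\mathcal{L}_\kappa)$. The product map of Lemma \ref{lem-cell-product} (combined with the paragraph defining $U_w$) gives a $T$-equivariant isomorphism $U_w\simeq\mathbb{A}^d$ with coordinates $x_\alpha$ indexed by $\alpha\in w^{-1}\Phi^-$, and $C_w$ becomes the coordinate subspace $\{x_\alpha=0:\alpha\in w^{-1}\Phi^-\cap\Phi^-\}$ of dimension $\ell(w)$. Since $\mathrm{Pic}(\mathbb{A}^d)=0$, the $T$-equivariant line bundle $\mathcal{L}_\kappa|_{U_w}$ becomes trivial after twisting by its fiber at the $T$-fixed point $w$; a short calculation from the defining rule $f(bu)=(w_0\kappa)(b)f(u)$, using $Bwt=Bw$ for $t\in T$ and $(w_0\kappa)(wtw^{-1})=(w^{-1}w_0\kappa)(t)$, identifies this fiber character as $w^{-1}w_0\kappa$. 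Applying Lemma \ref{lem-computation-coho-affine-space} to the now-trivial bundle yields $\HH^i_{C_w}(U_w,\mathcal{L}_\kappa)=0$ for $i\neq d-\ell(w)$, which settles the vanishing for $q\neq 0$, and in degree $d-\ell(w)$ exhibits an $\ocal_F$-basis consisting of monomials $\prod_\alpha x_\alpha^{k_\alpha}$ with $k_\alpha\geq 0$ on $w^{-1}\Phi^-\cap\Phi^+$ and $k_\alpha\leq-1$ on $w^{-1}\Phi^-\cap\Phi^-$.

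Finally I read off the character. A direct calculation shows that the right action of $t\in T$ on $U_w$ is given in coordinates by $(x_\alpha)\mapsto(\alpha(t)^{-1}x_\alpha)$, so each $x_\alpha$ has $T$-weight $-\alpha$. Summing the twisted monomial weights gives
$$\mathrm{ch}\,\HH^{d-\ell(w)}_w(FL,\mathcal{L}_\kappa)=[w^{-1}w_0\kappa]\cdot\prod_{\alpha\in w^{-1}\Phi^-\cap\Phi^+}\frac{1}{1-[-\alpha]}\cdot\prod_{\alpha\in w^{-1}\Phi^-\cap\Phi^-}\frac{[\alpha]}{1-[\alpha]}.$$
Substituting $\beta=-\alpha$ in the first product and using the partition $\Phi^-=(w^{-1}\Phi^+\cap\Phi^-)\sqcup(w^{-1}\Phi^-\cap\Phi^-)$ collapses this to $[w^{-1}w_0\kappa]\cdot[\sigma_w]\cdot\prod_{\alpha\in\Phi^-}(1-[\alpha])^{-1}$, where $\sigma_w=\sum_{\alpha\in w^{-1}\Phi^-\cap\Phi^-}\alpha$. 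The identity $\sigma_w=-\rho-w^{-1}\rho$ follows from $\sum_{\Phi^-}\alpha=-2\rho$ combined with the standard formula $\sum_{\alpha\in w^{-1}\Phi^+\cap\Phi^-}\alpha=w^{-1}\rho-\rho$ (itself a consequence of $\rho-w\rho=\sum_{\Phi^+\cap w\Phi^-}\alpha$). Together with $w_0\rho=-\rho$, this rewrites $[w^{-1}w_0\kappa]\cdot[-\rho-w^{-1}\rho]$ as $[(w^{-1}w_0)\cdot\kappa]$, matching the claim. I expect the main source of error to be sign and convention bookkeeping around the left/right actions and the Weyl action on $X^\star(T)$, but once the fiber character of $\mathcal{L}_\kappa$ at $Bw$ is correctly identified, everything reduces to the $\rho$-identity above.
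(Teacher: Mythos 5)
Your proof is correct and follows essentially the same approach as the paper: decompose via the disjoint union of cells (using Lemma \ref{lem-direct-sum-complex-support} and change of ambient space), reduce to the affine space computation of Lemma \ref{lem-computation-coho-affine-space} in the coordinates supplied by Lemma \ref{lem-cell-product}, identify the $T$-weight $w^{-1}w_0\kappa$ of a trivializing section, and read off the character. Your organization of the final computation as a product of geometric series followed by the $\rho$-identity $\sigma_w=-\rho-w^{-1}\rho$ is just a repackaging of the paper's manipulation of the formal character sum, and the bookkeeping checks out.
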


Here the character is viewed as a function $X^\star(T)\to\ZZ$ and for $\lambda\in X^\star(T)$, we write $[\lambda]$ for the function which is 1 on $\lambda$ and 0 otherwise.  The multiplication is convolution.  We note that this says that the character of $\HH^{d-\ell(w)}_w(FL,\mathcal{L}_\kappa)$ is the same as that of the Verma module with highest weight $(w^{-1}w_0)\cdot\kappa$.

\begin{proof} Since $Z_p \setminus Z_{p+1} = \coprod_{w \in W, \ell(w) = d-p} C_w$ and $U_w$ is a neighborhood of $C_w$ in $FL$, we deduce from lemma \ref{lem-direct-sum-complex-support} that 
$$\HH^{p+q}_{Z_p/Z_{p+1}} (FL, \mathcal{L}_\kappa) = \oplus_{w \in W, \ell(w) =d-p} \HH^{p+q}_{C_w}(U_w, \mathcal{L}_\kappa).$$
By lemma \ref{lem-cell-product}, after choosing an ordering of the roots, we have $U_w\simeq \mathbb{A}^d$ with coordinates $x_\alpha$ for $\alpha\in w^{-1}\Phi^-$, and $C_w\simeq\mathbb{A}^{\ell(w)}$ with coordinates $x_\alpha$ for $\alpha\in w^{-1}\Phi^-\cap\Phi^+$.  Moreover the inclusion $C_w\subseteq U_w$ is given by the vanishing of $x_\alpha$ for $\alpha\in w^{-1}\Phi^-\cap \Phi^-$.  The torus $T$ acts on $U_w$ and $C_w$, and the action on the coordinates is given by $t\cdot x_\alpha=(-\alpha)(t)x_\alpha$.

Over $U_w$, the line bundle $\mathcal{L}_\kappa$ has a non-vanishing section $s$ given by the function $bw_0uw_0^{-1}w\mapsto (w_0\kappa)(b)$ on $\pi^{-1}(U_w)=Bw_0 Uw_0^{-1}w$.  We compute that the action of $T$ on $s$ is given by $t\cdot s=(w^{-1}w_0\kappa)(t)s$.

We now deduce from lemma \ref{lem-computation-coho-affine-space} that $\HH^{p+q}_{C_w}(U_w, \mathcal{L}_\kappa)$ is concentrated in degree $p$, and the cohomology is isomorphic to the free $\ocal_F$-module: $$\bigoplus_{k_\alpha \geq 0 ~\forall\alpha \in  w^{-1}\Phi^- \cap  \Phi^+ ,~ \\ k_{\alpha} < 0 ~\forall\alpha \in  w^{-1}\Phi^-\cap  \Phi^-} \ocal_F \cdot\prod x_\alpha^{k_\alpha}s$$
from which we can read off the  character of the $T$-action:
\begin{eqnarray*}
\mathrm{ch}\big(\HH^{p}_{C_w}(U_w, \mathcal{L}_\kappa)\big )&=&\sum_{ k_\alpha \geq 0 ~\forall\alpha \in  w^{-1}\Phi^+ \cap  \Phi^-, k_{\alpha} > 0 ~\forall\alpha \in  w^{-1}\Phi^- \cap  \Phi^-}[w^{-1}w_0\kappa+\sum k_\alpha \alpha]\\
&=&[w^{-1}w_0\kappa+\sum_{\alpha\in w^{-1}\Phi^-\cap\Phi^-}\alpha]\frac{1}{\prod_{\alpha\in\Phi^-}[1]-[\alpha]}\\
\end{eqnarray*}
Finally we note $$(w^{-1}w_0)\cdot\kappa=w^{-1}w_0(\kappa+\rho)-\rho=w^{-1}w_0\kappa-w^{-1}\rho-\rho=w^{-1}w_0\kappa+\sum_{\alpha\in w^{-1}\Phi^-\cap\Phi^-}\alpha.$$

\end{proof}

 It follows from the lemma that the complex $E_1^{\bullet, 0}$ from the spectral sequence (the Grothendieck-Cousin complex):
 $$  \mathcal{C}ous(\kappa) :  0 \rightarrow  \HH^0_{Z_0/Z_1}(FL, \mathcal{L}_\kappa) \rightarrow \cdots  \rightarrow \HH^d_{Z_d/Z_{d+1}}(FL, \mathcal{L}_\kappa) \rightarrow 0$$ computes $\mathrm{R}\Gamma(FL, \mathcal{L}_\kappa)$. Each group decomposes   $$\HH^p_{Z_p/Z_{p+1}}(FL, \mathcal{L}_\kappa) =  \bigoplus_{w \in W, \ell(w) = d-p} \HH^p_{w} (FL, \mathcal{L}_\kappa)$$ and we have established a precise formula for the weights of $T$ on each module.

There is a partial order on $X^\star(T)$ where $\lambda\geq 0$ if and only if $\lambda$ is a sum of positive roots.  Lemma \ref{lem-BGG-character} tells us that the weights occurring in $\HH^{d-\ell(w)}_w(FL,\mathcal{L}_\kappa)$ are exactly those which are $\leq(w^{-1}w_0)\cdot\kappa$.  In particular certain ``big weights'' will occur in as few of the terms of the Cousin complex as possible.  We begin with the following lemma:
\begin{lem}\label{lem-bw-definition}
Let $\nu\in X^\star(T)$ be such that $\nu+\rho$ is dominant.  Then the following conditions on a weight $\lambda\in X^\star(T)$ are equivalent:
\begin{enumerate}
\item $\lambda\not\leq w\cdot \nu$ for all $w\in W$ with $w\cdot\nu\not=\nu$.
\item $\lambda\not\leq s_\alpha\cdot \nu$ for all $\alpha\in\Delta$ with $s_\alpha\cdot\nu\not=\nu$.
\end{enumerate}
Moreover if we additionally assume that $\lambda\leq\nu$ then we have the further equivalent condition:
\begin{enumerate}
\setcounter{enumi}{2}
\item $\lambda=\nu-\sum_{\alpha\in\Delta}n_\alpha\alpha$ with $n_\alpha<\langle \alpha^\vee,\nu\rangle+1$ for all $\alpha\in\Delta$ with $\langle\alpha^\vee,\nu\rangle+1>0$.
\end{enumerate}
\end{lem}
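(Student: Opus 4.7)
My plan is to reduce the two equivalences to separate arguments. The implication $(1)\Rightarrow(2)$ is immediate, since each $s_\alpha\cdot\nu$ with $\alpha\in\Delta$ is a special case of $w\cdot\nu$ with $w\in W$. For $(2)\Leftrightarrow(3)$ under the hypothesis $\lambda\leq\nu$, I will do a direct calculation. The substantive content is $(2)\Rightarrow(1)$, which I will deduce from an auxiliary combinatorial lemma about the dot-orbit $W\cdot\nu$.

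For the equivalence $(2)\Leftrightarrow(3)$ under $\lambda\leq\nu$, write $\lambda=\nu-\sum_{\beta\in\Delta}n_\beta\beta$ with $n_\beta\in\mathbb{Z}_{\geq 0}$. Using $s_\alpha\cdot\nu=\nu-\langle\alpha^\vee,\nu+\rho\rangle\alpha$ and the identity $\langle\alpha^\vee,\rho\rangle=1$ for $\alpha\in\Delta$, one computes
$$s_\alpha\cdot\nu-\lambda=\sum_{\beta\neq\alpha}n_\beta\beta+\bigl(n_\alpha-\langle\alpha^\vee,\nu\rangle-1\bigr)\alpha,$$
which is a non-negative integer combination of simple roots if and only if $n_\alpha\geq\langle\alpha^\vee,\nu\rangle+1$. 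So $\lambda\leq s_\alpha\cdot\nu$ is equivalent to $n_\alpha\geq\langle\alpha^\vee,\nu\rangle+1$. Since $\nu+\rho$ is dominant, the condition $s_\alpha\cdot\nu\neq\nu$ amounts to $\langle\alpha^\vee,\nu+\rho\rangle>0$, i.e. $\langle\alpha^\vee,\nu\rangle+1>0$; with this, $(2)$ reads precisely as $(3)$.

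For $(2)\Rightarrow(1)$, the key ingredient is the following lemma: for $\mu=\nu+\rho$ dominant and $w\in W$ with $w\mu\neq\mu$, there exists $\alpha\in\Delta$ such that $\langle\alpha^\vee,\mu\rangle>0$ and $w\mu\leq s_\alpha\mu$ (equivalently, for $w\cdot\nu\neq\nu$ there is a simple $\alpha$ with $s_\alpha\cdot\nu\neq\nu$ and $w\cdot\nu\leq s_\alpha\cdot\nu$). Granting the lemma, $(2)\Rightarrow(1)$ is immediate: if $\lambda\leq w\cdot\nu$ for some $w\cdot\nu\neq\nu$, the lemma produces a simple $\alpha$ with $\lambda\leq s_\alpha\cdot\nu$, contradicting $(2)$. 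I will prove the lemma by induction on $\ell(w)$. The base case $\ell(w)=1$ is trivial. For $\ell(w)\geq 2$, I pick a reduced expression $w=s_\beta w'$ with $\ell(w')=\ell(w)-1$; the length condition forces $(w')^{-1}\beta\in\Phi^+$, so
$$\langle\beta^\vee,w'\mu\rangle=\langle((w')^{-1}\beta)^\vee,\mu\rangle\geq 0$$
by dominance of $\mu$. This is the crucial technical point, coupling the length condition for a reduced expression with dominance of $\mu=\nu+\rho$; with it, $w\mu=w'\mu-\langle\beta^\vee,w'\mu\rangle\beta\leq w'\mu$. If $w'\mu=\mu$ then $w\mu\neq\mu$ forces $\langle\beta^\vee,\mu\rangle>0$ and $w\mu=s_\beta\mu$, so I take $\alpha=\beta$. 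Otherwise $w'\mu\neq\mu$ and the inductive hypothesis applied to $w'$ yields $\alpha\in\Delta$ with $s_\alpha\mu\neq\mu$ and $w'\mu\leq s_\alpha\mu$, whence $w\mu\leq w'\mu\leq s_\alpha\mu$.
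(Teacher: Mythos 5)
Your proposal is correct and takes essentially the same route as the paper: the substantive direction $(2)\Rightarrow(1)$ rests on the monotonicity of the dotted action along reduced galleries, which follows from dominance of $\nu+\rho$ and the standard length/positivity fact, and the $(2)\Leftrightarrow(3)$ equivalence is the same direct computation. The only difference is packaging — the paper factors through a general Bruhat-order comparison (its lemma on $w\leq w'\Rightarrow w'\cdot\nu\preceq w\cdot\nu$, applied to a simple reflection occurring in a reduced word for $w$ and not fixing $\nu$), while you re-prove the needed special case inline by induction on $\ell(w)$.
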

\begin{proof}
Clearly the first condition implies the second.  For the converse, writing $w$ as a reduced product of simple reflections, there must be at least one factor $s_\alpha$ with $s_\alpha\cdot\nu\not=\nu$.  Then $w\geq s_\alpha$, and so $w\cdot\nu\leq s_\alpha\cdot\nu$ (see lemma \ref{lem-bruhat-inequality}) and hence $\lambda\not\leq s_\alpha\cdot\nu$ implies $\lambda\not\leq w\cdot\nu$.

The equivalence of the second and third points follows from the formula $s_\alpha\cdot\nu=\nu-(\langle\alpha^\vee,\nu\rangle+1)\alpha$.
\end{proof}
We say that a weight $\lambda$ satisfying the conditions of the proposition has big weight (with respect to $\nu$) and for a $T$-module $M$ which is a direct sum of its weight spaces we denote by $M^{bw(\nu)}$ the direct sum of its weight spaces corresponding to big weights.  We note that if $\nu+\rho$ is regular the last condition may be expressed as $\lambda>\nu-\sum_{\alpha\in\Delta}\langle(\alpha^\vee,\nu\rangle+1)\alpha$.

The set $X^\star(T)_\qq^+-\rho$ is a fundamental domain for the dotted action of $W$ on $X^\star(T)_\qq$, and so there is a unique $\nu\in X^\star(T)_\qq^+-\rho$ with $\nu\in W\cdot\kappa$.  We let $C(\kappa) = \{ w \in W\mid (w^{-1}w_0)\cdot\kappa =\nu\}$.  This set is a right torsor under $W_\nu=\{w\in W\mid w\cdot\nu=\nu\}$, and hence it is always nonempty and consists of a single element if and only if $\kappa+\rho$ is regular. For example, if $\kappa$ is dominant, $C(\kappa) = \{w_0\}$.

\begin{prop}\label{prop-miniBWB} The cohomology complex $\mathrm{R}\Gamma( FL, \mathcal{L}_\kappa)^{bw(\nu)}$ is a perfect complex of $\ocal_F$-modules of amplitude  $[  \mathrm{min}_{w \in C(\kappa)} (d-\ell(w)), \mathrm{max}_{w \in C(\kappa)} (d-\ell(w))]$. 
\end{prop}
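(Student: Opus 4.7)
The plan is to exploit the Grothendieck-Cousin complex $\mathcal{C}ous(\kappa)$ from lemma \ref{lem-BGG-character}, which computes $\mathrm{R}\Gamma(FL,\mathcal{L}_\kappa)$ and whose degree $p$ term is $\bigoplus_{w\in W,\,\ell(w)=d-p}\HH^p_w(FL,\mathcal{L}_\kappa)$. Since the Bruhat filtration of $FL$ is $T$-stable, this complex is $T$-equivariant, so taking the big weight part yields a subcomplex $\mathcal{C}ous(\kappa)^{bw(\nu)}$ that computes $\mathrm{R}\Gamma(FL,\mathcal{L}_\kappa)^{bw(\nu)}$. The amplitude statement will follow as soon as I identify which of its terms are nonzero.

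The core step is to show $\HH^{d-\ell(w)}_w(FL,\mathcal{L}_\kappa)^{bw(\nu)}=0$ whenever $w\notin C(\kappa)$. Starting from the character formula in lemma \ref{lem-BGG-character}, every weight appearing in $\HH^{d-\ell(w)}_w(FL,\mathcal{L}_\kappa)$ is $\leq (w^{-1}w_0)\cdot\kappa$. This highest weight lies in the dotted $W$-orbit of $\nu$, so write $(w^{-1}w_0)\cdot\kappa = w'\cdot\nu$; the hypothesis that $\nu+\rho$ is dominant already gives $w'\cdot\nu\leq\nu$. If $w\notin C(\kappa)$ then $w'\cdot\nu\neq\nu$, so $w'\notin W_\nu$, and any reduced expression for $w'$ must contain at least one simple reflection $s_\alpha$ with $s_\alpha\cdot\nu\neq\nu$ (otherwise all factors, and hence $w'$, would lie in the stabilizer $W_\nu$). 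This implies $w'\geq s_\alpha$ in Bruhat order, and the monotonicity of the dotted action along Bruhat order (lemma \ref{lem-bruhat-inequality}) yields $w'\cdot\nu\leq s_\alpha\cdot\nu$. Combined with the criterion in lemma \ref{lem-bw-definition}(2), this rules out any big weight appearing in $\HH^{d-\ell(w)}_w(FL,\mathcal{L}_\kappa)$.

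Consequently $\mathcal{C}ous(\kappa)^{bw(\nu)}$ is concentrated in degrees $\{d-\ell(w):w\in C(\kappa)\}$, producing the claimed amplitude for the cohomology. For perfectness of $\mathrm{R}\Gamma(FL,\mathcal{L}_\kappa)^{bw(\nu)}$ as a complex of $\ocal_F$-modules, I would observe that $\mathrm{R}\Gamma(FL,\mathcal{L}_\kappa)$ is already a perfect complex of $\ocal_F$-modules (since $FL\to\Spec~\ocal_F$ is proper and smooth and $\mathcal{L}_\kappa$ is a line bundle), and may be represented by a $T$-equivariant bounded complex of finitely generated free $\ocal_F$-modules, for instance via the \v{C}ech complex of a $T$-stable finite affine cover of $FL$. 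Each such term decomposes into finitely many $T$-weight spaces, so the big weight part is a direct summand consisting of finitely generated free $\ocal_F$-modules and thus itself a perfect complex.

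The main subtlety I anticipate is precisely this perfectness step: the Cousin complex $\mathcal{C}ous(\kappa)^{bw(\nu)}$ is \emph{not} itself a perfect representative when $\nu+\rho$ is singular, because any simple reflection $s_\alpha$ with $s_\alpha\cdot\nu=\nu$ places no upper bound on the corresponding coefficient $n_\alpha$ in lemma \ref{lem-bw-definition}(3), so individual terms of the subcomplex can carry infinitely many $T$-weights. The resolution is to separate the two tasks: extract the amplitude bound from the Cousin complex (which is exactly suited to that purpose), and extract perfectness from a distinct $T$-equivariant perfect model of $\mathrm{R}\Gamma(FL,\mathcal{L}_\kappa)$, passing to the big weight direct summand only after this second model has been chosen.
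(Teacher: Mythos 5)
Your amplitude argument is precisely the paper's: use Lemma \ref{lem-BGG-character} to bound all weights of $\HH^{d-\ell(w)}_w(FL,\mathcal{L}_\kappa)$ above by $(w^{-1}w_0)\cdot\kappa$, observe that for $w\notin C(\kappa)$ this dominating weight is $w'\cdot\nu$ with $w'\notin W_\nu$, and conclude directly from the definition of the big weight condition (your detour through part (2) of Lemma \ref{lem-bw-definition} and through Lemma \ref{lem-bruhat-inequality} is correct but unnecessary, since condition (1) already says no big weight can be $\leq w'\cdot\nu$).

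The extra care you take over perfectness is reasonable, and your observation that the big weight part of individual Cousin terms can be infinitely generated when $\nu+\rho$ is singular is a genuine subtlety that the paper passes over. However, the specific fix you propose does not quite work: the \v{C}ech complex of a $T$-stable finite affine cover has terms $\Gamma(U_I,\mathcal{L}_\kappa)$ which are free but \emph{not} finitely generated $\ocal_F$-modules, and the same unboundedness of $T$-weights can reappear there. Moreover, abandoning the Cousin complex would throw away the cleanest way to pin down the amplitude. The shortest correct route is to keep the Cousin complex: $\mathcal{C}ous(\kappa)^{bw(\nu)}$ is a bounded complex of free (possibly infinite rank) $\ocal_F$-modules concentrated in degrees $[\min_{w\in C(\kappa)}(d-\ell(w)),\max_{w\in C(\kappa)}(d-\ell(w))]$, so its Tor-amplitude lies in this interval; and its cohomology groups, being direct summands of the finitely generated $\ocal_F$-modules $\HH^i(FL,\mathcal{L}_\kappa)$, are finitely generated. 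Since $\ocal_F$ is a regular Noetherian ring of dimension one, a bounded pseudo-coherent complex of finite Tor-amplitude is perfect, with amplitude equal to its Tor-amplitude; this gives the claim with no need for a separate $T$-equivariant perfect model.
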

\begin{proof} 
We will show that for $w\in W\setminus C(\kappa)$, the big weight part of the term in the Grothendieck-Cousin complex corresponding to $w$ vanishes.  Indeed, by lemma \ref{lem-BGG-character}, all the weights of $\HH^{d-\ell(w)}_w(FL,\mathcal{L}_\kappa)$ are $\leq(w^{-1}w_0)\cdot\kappa$, but $(w^{-1}w_0)\cdot\kappa\not=w'\cdot\nu\not=\nu$ and so $\HH^{d-\ell(w)}_w(FL,\mathcal{L}_\kappa)^{bw(\nu)}=0$.
\end{proof}

\begin{rem}
In particular when $\kappa+\rho$ is regular so that $C(\kappa)=\{w\}$ consists of a single element, we have that $\HH^\star(FL,\mathcal{L}_\kappa)^{bw(\nu)}=\HH_w^\star(FL,\mathcal{L}_\kappa)^{bw(\nu)}$.  We view this statement as a sort of analog of Coleman's classicality theorem, where the algebraic local cohomology groups $\HH_w^\star$ play the role of the overconvergent cohomology groups introduced in this paper, and the big weight condition plays the role of the small slope condition.
\end{rem}

We emphasize that the vanishing of Proposition \ref{prop-miniBWB} is characteristic independent.  Of course in characteristic zero, the classical Borel-Weil-Bott theorem gives a precise description of $\HH^\star(FL,\mathcal{L}_\kappa)$.  For the sake of completeness, we explain how the Borel-Weil-Bott theorem may be deduced from the computation of $\HH^\star(FL,\mathcal{L}_\kappa)$ via the Cousin complex and basic properties of the BGG category $\mathcal{O}$.
\begin{thm}[\cite{MR2015057}, 5.5, corollary]\label{thmBWB-proof} Let $\kappa \in X^\star(T)$.
\begin{enumerate}
\item If there exists no $w \in W$ such that $w \cdot \kappa $ is  dominant then $\HH^i(FL, \mathcal{L}_\kappa) \otimes F =0$ for all $i $.
\item If there exists $w \in W$ such that $w\cdot \kappa$ is dominant, then  there is a unique such $w$, and $\HH^i(FL, \mathcal{L}_\kappa) \otimes F=0$ if $\ell(w) \neq i$, while  $\HH^{\ell(w)}(FL, \mathcal{L}_\kappa) \otimes F$ is a highest weight $w\cdot\kappa$ representation.
\end{enumerate}
\end{thm}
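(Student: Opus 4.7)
The plan is to combine proposition \ref{prop-miniBWB} with some basic structural results from the BGG category $\mathcal{O}$ for the complex reductive Lie algebra $\mathfrak{g} = \mathrm{Lie}(G) \otimes_{\ocal_F} F$; I write $L(\mu)$, $M(\mu)$ for the irreducible and Verma modules with highest weight $\mu$, and $\chi_\mu$ for the associated Harish-Chandra central character. The first step is to observe that $\mathcal{C}ous(\kappa) \otimes F$ is a complex in category $\mathcal{O}$: the $\mathfrak{b}$-action on each term $\HH^{d-\ell(w)}_w(FL, \mathcal{L}_\kappa) \otimes F$ comes directly from the local model in lemma \ref{lem-cell-product}, while the full $\mathfrak{g}$-action is inherited from the $G$-equivariant structure on $\mathcal{L}_\kappa$ over $FL$ (cf. lemma 12.8 of \cite{MR509802}). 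By lemma \ref{lem-BGG-character}, each term has the same $T$-character as $M((w^{-1} w_0) \cdot \kappa)$; moreover $Z(U(\mathfrak{g}))$ acts on $\mathcal{L}_\kappa$ by $\chi_\kappa$, so every term of $\mathcal{C}ous(\kappa) \otimes F$ has central character $\chi_\kappa$. The linkage principle then forces every composition factor of $\HH^\star(FL, \mathcal{L}_\kappa) \otimes F$ to be of the form $L(\mu)$ with $\mu \in W \cdot \kappa$.

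Second, since $FL$ is proper, $\HH^\star(FL, \mathcal{L}_\kappa) \otimes F$ is a finite-dimensional $G$-representation; by Weyl's semisimplicity theorem it is a direct sum of finite-dimensional irreducibles $L(\nu_i)$ with each $\nu_i$ dominant. Combined with the linkage observation, each $\nu_i$ must be a dominant weight in $W \cdot \kappa$. If no such weight exists, the cohomology vanishes, proving (1). Otherwise, pick $\nu \in W \cdot \kappa$ dominant; then $\nu + \rho$ cannot lie on any wall $\langle \alpha^\vee, \nu + \rho \rangle = 0$ (else $\langle \alpha^\vee, \nu \rangle = -1$, contradicting dominance), so $\nu + \rho$ and hence $\kappa + \rho$ is regular. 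Consequently there is a unique $w_* \in W$ with $w_* \cdot \kappa = \nu$, and $\nu$ itself is the unique dominant weight in $W \cdot \kappa$; this yields the uniqueness claimed in (2).

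Third, to finish (2), I would invoke proposition \ref{prop-miniBWB}. Since $\nu + \rho$ is regular dominant, $w \cdot \nu < \nu$ for all $w \neq 1$, so $\nu$ is itself a big weight with respect to $\nu$, and $L(\nu)^{bw(\nu)} \neq 0$. Therefore a multiplicity of $L(\nu)$ in $\HH^i(FL, \mathcal{L}_\kappa) \otimes F$ is nonzero iff $\HH^i(FL, \mathcal{L}_\kappa)^{bw(\nu)} \neq 0$. By proposition \ref{prop-miniBWB} with $C(\kappa) = \{w_0 w_*^{-1}\}$, this big weight part is concentrated in the single degree $d - \ell(w_0 w_*^{-1}) = \ell(w_*)$. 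Substituting $v = w^{-1} w_0$, the Euler characteristic of $\mathcal{C}ous(\kappa) \otimes F$ equals $\sum_v (-1)^{\ell(v)} \mathrm{ch}(M(v \cdot \kappa))$, which by the Weyl character formula (using that $v \mapsto (-1)^{\ell(v)}$ is a group homomorphism) equals $(-1)^{\ell(w_*)} \mathrm{ch}(L(\nu))$. Comparing, we obtain $\HH^{\ell(w_*)}(FL, \mathcal{L}_\kappa) \otimes F \simeq L(\nu)$ with multiplicity one.

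The main obstacle is the first step: one must verify that the natural $\mathfrak{g}$-action on each local cohomology $\HH^{d-\ell(w)}_w(FL, \mathcal{L}_\kappa) \otimes F$ makes it an object of $\mathcal{O}$ with central character $\chi_\kappa$. Kempf in fact identifies each term as a dual Verma module $M((w^{-1} w_0) \cdot \kappa)^\vee$, but only the central character is needed for the argument above; once this is established, the linkage principle combined with the big weight concentration of proposition \ref{prop-miniBWB} does all the remaining work, and the Euler characteristic computation is a formal consequence of the Weyl character formula.
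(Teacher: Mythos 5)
Your proof is correct, and it reaches the conclusion by a genuinely different route than the paper.  You establish concentration of $\HH^\star(FL,\mathcal{L}_\kappa)\otimes F$ in a single degree by invoking Proposition~\ref{prop-miniBWB} (the big-weight vanishing), observing that the highest-weight vector of $L(\nu)$ is a big weight, and so $\HH^i(FL,\mathcal{L}_\kappa)^{bw(\nu)}$ detects the multiplicity of $L(\nu)$ in $\HH^i$; you then pin down that multiplicity to exactly one by an Euler-characteristic computation with the Weyl character formula.  The paper's proof bypasses Proposition~\ref{prop-miniBWB} entirely: it argues in the Grothendieck group that, for $i\ne\ell(w_*)$, the $i$-th term of the Cousin complex has no finite-dimensional constituent at all (because, once $\kappa+\rho$ is regular, $\nu$ is the unique $\preceq$-maximal element of $W\cdot\kappa$, so $L_\nu$ can appear in $M_{(w'^{-1}w_0)\cdot\kappa}$ only when $(w'^{-1}w_0)\cdot\kappa=\nu$), while the $\ell(w_*)$-th term contains $L_\nu$ with multiplicity exactly one.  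Since properness and Weyl semisimplicity force each $\HH^i$ to be a finite direct sum of $L_\nu$'s, the cohomology in the wrong degrees vanishes and $\HH^{\ell(w_*)}=L_\nu$.  The paper's version is a bit more economical, since it uses only lemma~\ref{lem-BGG-character} and the structure of Verma characters; yours shows that Proposition~\ref{prop-miniBWB} by itself is strong enough to give the degree concentration, which is a nice illustration of the classicality philosophy, at the cost of the extra Weyl-character-formula bookkeeping.  Both variants share the same delicate foundational step, which you correctly flag: lemma~12.8 of Kempf is needed to make the Cousin complex a complex of $\mathcal{U}(\mathfrak{g})$-modules lying in category $\mathcal{O}$ (the paper uses the character criterion for membership in $\mathcal{O}$, your version invokes the central character; either suffices).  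One small remark: your Euler-characteristic step in fact fills in a detail the paper leaves implicit, namely that the multiplicity of $L_\nu$ in $\HH^{\ell(w_*)}$ is exactly one and not merely at most one.
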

\begin{proof} It suffices to prove the theorem after tensoring  with $\bar{F}$, an algebraic closure of $F$. For simplicity, we slightly change our notation and we assume that $F$ is algebraically closed for this proof.  There is a famous sub-category of the category of $\mathcal{U}(\mathfrak{g})$-modules, called the category $\ocal$. See \cite{MR2428237}, chapter 1 for the  definition and properties of the category $\ocal$. We recall a number of basic results concerning the category $\mathcal{O}$ that will be used in the argument.   The category $\ocal$ is abelian,  Artinian and Noetherian. The simple objects are indexed by weights $\lambda \in X^\star(T) \otimes F$ and denoted by $L_\lambda$.   If the module $L_\lambda$ is finite dimensional then $\lambda \in X^\star(T)^+$ is dominant, and  moreover  $L_\lambda$ arises from the highest weight $\lambda$ representation of $G$.  For all $\lambda \in X^\star(T) \otimes F$ we also denote by $M_\lambda = \mathcal{U}(\mathfrak{g}) \otimes_{\mathcal{U}(\mathfrak{b})} F(\lambda)$ the Verma module of weight $\lambda$. The simple module $L_\lambda$ is the unique simple quotient of $M_\lambda$. The Grothendieck group of $\ocal$, denoted by $K(\ocal)$, is the free module on the $[L_\lambda]$. We denote by $M \mapsto [M]$ the semi-simplification map from $\ocal$ to $K(\ocal)$.  In $K(\ocal)$ we have $[M_\lambda] = \oplus_{w\cdot \lambda \leq \lambda} a(w\cdot \lambda, \lambda) [L_{w\cdot \lambda}]$ with $a(\lambda, \lambda) =1$.  Since any element $M \in \ocal$ has diagonalizable $\mathfrak{t}$-action, we can associate to $M$ its formal character $\mathrm{ch} M$ which is an element of the group $\mathcal{X}$ of functions $X^\star(T) \otimes F \rightarrow \ZZ$.   The character is additive on short exact sequence and we get a map $K(\mathcal{O}) \rightarrow \mathcal{X}$, $[M] \mapsto \mathrm{ch}[M]$. Moreover, this last map is a group injection. We denote by $\mathcal{X}_{\ocal}$ its image.  Finally, any $\mathcal{U}(\mathfrak{g})$-module with diagonalizable $\mathfrak{t}$-action, and whose formal character  belongs to $\mathcal{X}_\ocal$ is an object of $\ocal$.

 It follows from lemma \ref{lem-BGG-character} that $\mathrm{ch}(\HH^p_{Z_p/Z_{p+1}}(FL, \mathcal{L}_\kappa) \otimes_{\ocal_F } F) = \oplus_{w, \ell(w) = p} \mathrm{ch}(M_{w \cdot \kappa})$.  The Grothendieck-Cousin complex $$0 \rightarrow  \HH^0_{Z_0/Z_1}(FL, \mathcal{L}_\kappa) \otimes F \rightarrow \cdots  \rightarrow \HH^d_{Z_d/Z_{d+1}}(FL, \mathcal{L}_\kappa) \otimes F \rightarrow 0$$  carries an action $\mathcal{U}(\mathfrak{g})$ by \cite{MR509802}, lemma 12.8, and  is therefore  a complex in the category $\ocal$.  
At this stage, we see that if none of the elements in the set $\{ w\cdot \kappa\}$ is dominant, then none of the  $\HH^p_{Z_p/Z_{p+1}}(FL, \mathcal{L}_\kappa) \otimes_{\ocal_F } F$ contains a finite dimensional subquotient. Otherwise there is a unique $w$ such that $w\cdot \kappa$ is dominant.  We see that for $p=\ell(w)$, $[\HH^p_{Z_p/Z_{p+1}}(FL, \mathcal{L}_\kappa) \otimes_{\ocal_F } F]$ has a unique finite dimensional constituent (with multiplicity one) equal to $[L_{w\cdot \kappa}]$. On the other hand $[\HH^i_{Z_i/Z_{i+1}}(FL, \mathcal{L}_\kappa) \otimes_{\ocal_F } F]$ has no finite dimensional constituent for $i \neq \ell(w)$. 
 The cohomology groups $\HH^i(FL, \mathcal{L}_\kappa) \otimes_{\ocal_F } F$ are finite dimensional because $FL$ is proper. If none of the elements in the set $\{ w\cdot \kappa\}$ is dominant, the cohomology is therefore trivial.  If there is a unique $w$ such that $w\cdot \kappa$ is dominant, we see that $\HH^i(FL, \mathcal{L}_\kappa) \otimes_{\ocal_F } F =0$ if $i \neq \ell(w)$, and $\HH^{\ell(w)}(FL, \mathcal{L}_\kappa) \otimes_{\ocal_F } F = L_{w\cdot \kappa}$. 
\end{proof}

\subsection{Analytic geometry}\label{section-analytic-geometry}
If $S \rightarrow \Spec~\ocal_F$ is a finite type morphism of schemes, we let $\mathcal{S} = S  \times_{\Spec~\ocal_F} \Spa (F, \ocal_F)$ be the associated analytic adic space and ${S}_k = S \times_{\Spec~\ocal_F} \Spec~k
$ be the special fiber. One can also consider $\mathcal{S}^{an}$, the analytification of the scheme $S \times_{\Spec~\ocal_F} \Spec~F$, and there is a map $\mathcal{S} \rightarrow \mathcal{S}^{an}$ which is an isomorphism when $S$ is proper (see \cite{MR1306024}, section 4). There is a  continuous specialization map  $\mathrm{sp}_{\mathcal{S}} : \mathcal{S} \rightarrow {S}_k$ and the preimage of a subset $U \subset {S}_k$ is denoted by $\mathrm{sp}_{\mathcal{S}}^{-1}(U)$.   If $U$ is a locally closed subset of $S_k$, we let $]U[_{\mathcal{S}}$ be the interior of $\mathrm{sp}^{-1}(U)$. This is an adic space, called the tube of $U$ (see \cite{Berthelot}). The difference between $\mathrm{sp}_{\mathcal{S}}^{-1}(U)$ and $]U[_{\mathcal{S}}$ consists only of certain higher rank points. The tube $]U[_{\mathcal{S}}$ is the adic space associated to a  ``classical'' rigid space, while $\mathrm{sp}_{\mathcal{S}}^{-1}(U)$ is not in general. 

\subsubsection{The Iwahori decomposition}  Let $\mathcal{G}$ be the quasi-compact adic space associated to $G$ and let $\mathrm{Iw}  = ] {B}_k[_{\mathcal{G}}$ be the Iwahori subgroup of $\mathcal{G}$.

For any root $\alpha \in \Phi$, we have an algebraic root space $U_\alpha \rightarrow \Spec~\ocal_F$. We let $\mathcal{U}_\alpha$ be the corresponding quasi-compact  adic space (isomorphic to a unit ball) and we let $\mathcal{U}_\alpha^o = ] \{1\}[_{\mathcal{U}_\alpha}$ be the tube of  the identity element (isomorphic to an increasing union of balls of radii $r<1$).   We also let $\mathcal{U}_\alpha^{an}$ be the analytification of $U_\alpha$ (isomorphic to the affine line). 

 The following result gives a strong form of the Iwahori decomposition.

\begin{prop}\label{prop-superIwahoridecomposition} Let $\alpha_1, \cdots, \alpha_n$ be an enumeration of the roots in $\Phi$. 
The product map
$$\mathcal{T} \times \prod \mathcal{U}_{\alpha_i}^{\star_i} \rightarrow \mathrm{Iw}$$
is an isomorphism of analytic adic spaces,
where $\mathcal{U}_{\alpha_i}^{\star_i} = \mathcal{U}_{\alpha_i}$ if $\alpha_i \in \Phi^+$ and   $\mathcal{U}_{\alpha_i}^{\star_i} = \mathcal{U}_{\alpha_i}^o$ if $\alpha_i \in \Phi^-$. 
\end{prop}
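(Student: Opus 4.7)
The strategy is to deduce this from the scheme-theoretic Bruhat decomposition and the compatibility of the tube construction with products.

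First, since $B$ is contained in the open big Bruhat cell $\overline{U}TU$ of $G$ (take $\overline{u} = 1$ in the decomposition), the Iwahori $\mathrm{Iw} = ]B_k[_{\mathcal{G}}$ sits inside the analytic adic space $\mathcal{V}$ associated to the open subscheme $\overline{U}TU \subseteq G$. The scheme-theoretic multiplication map $\overline{U} \times T \times U \xrightarrow{\sim} \overline{U}TU$ is an isomorphism, which passes to an isomorphism $\mathcal{V} \cong \overline{\mathcal{U}} \times \mathcal{T} \times \mathcal{U}$. Under this identification, $B \hookrightarrow \overline{U}TU$ corresponds to the closed immersion $\{1\}_{\overline{U}} \times T \times U$. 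Since the specialization map of a finite product of finite type schemes is the product of specialization maps, and the interior of a product of open subsets is the product of interiors, the tube functor commutes with finite products. We therefore obtain
$$\mathrm{Iw} \;\cong\; ]\{1\}[_{\overline{\mathcal{U}}} \times \mathcal{T} \times \mathcal{U}.$$

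Next, the unipotent radicals admit product decompositions as schemes: $U \cong \prod_{\alpha \in \Phi^+} U_\alpha$ and $\overline{U} \cong \prod_{\alpha \in \Phi^-} U_\alpha$, valid for any chosen ordering of the respective indexing sets (this is standard for the unipotent radical of a Borel in a split reductive group). Applying this and the compatibility of tubes with products a second time, we deduce
$$\mathrm{Iw} \;\cong\; \mathcal{T} \times \prod_{\alpha \in \Phi^-} \mathcal{U}_\alpha^o \times \prod_{\alpha \in \Phi^+} \mathcal{U}_\alpha,$$
which proves the proposition for the specific orderings of $\Phi$ in which all negative roots precede all positive roots (or vice versa).

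To handle an arbitrary enumeration $\alpha_1, \ldots, \alpha_n$ of $\Phi$, the key tool is the Chevalley commutator formula: for any $\alpha, \beta \in \Phi$ with $\alpha + \beta \neq 0$,
$$U_\alpha(x) \cdot U_\beta(y) \;=\; U_\beta(y) \cdot U_\alpha(x) \cdot \prod_{i,j \geq 1,\; i\alpha + j\beta \in \Phi} U_{i\alpha + j\beta}(c_{ij} x^i y^j).$$
Applying this to adjacent factors allows one to swap them at the cost of extra terms in the root subgroups $U_{i\alpha + j\beta}$, which may be absorbed into the remaining factors of the product by an inductive argument. I expect this combinatorial rearrangement to be the main technical step, but it reduces the case of an arbitrary ordering to the standard one handled above. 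More pragmatically, one can also argue that for any ordering the resulting multiplication morphism $\mathcal{T} \times \prod_i \mathcal{U}_{\alpha_i}^{\star_i} \to \mathcal{G}$ lands in $\mathrm{Iw}$ (since each factor specializes into $B_k$), is a monomorphism inducing an isomorphism on tangent spaces at the identity (as the Lie algebra decomposes as $\mathfrak{t} \oplus \bigoplus_\alpha \mathfrak{g}_\alpha$), and has source and target of the same dimension, so it is an isomorphism onto its image, which fills out all of $\mathrm{Iw}$ by the already-established case.
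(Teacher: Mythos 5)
Your proposal takes a genuinely different route from the paper, and it contains a real gap at the crucial step.

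Your first part (the ``good'' ordering where all negative roots precede $T$ and all positive roots) is essentially sound in spirit: pass to the big cell, identify $B_k$ with $\{1\}\times T_k\times U_k$, and invoke compatibility of tubes with products. One quibble: the justification you give (``interior of a product of open subsets is the product of interiors'') is not the right statement, since the underlying topological space of a fiber product of adic spaces is not the product topology; the correct justification is that formal completion along a closed subscheme commutes with products, and generic fibers of formal schemes commute with fiber products. This is a fixable gap.

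The real gap is the reduction from an arbitrary enumeration to the good ordering, which you leave as a sketch and explicitly flag (``I expect this combinatorial rearrangement to be the main technical step''). The Chevalley-commutator reshuffling is plausible in principle but nontrivial: when you swap a positive-root factor past a negative-root factor, the correction terms $U_{i\alpha+j\beta}$ land in both $\mathcal{U}_\gamma$ and $\mathcal{U}_\gamma^o$ factors depending on the sign of $\gamma$, and one must verify that the resulting coordinate change is an isomorphism of the constrained domain $\prod\mathcal{U}_{\alpha_i}^{\star_i}$ onto the reordered one, not just of the ambient analytic group. Your ``pragmatic'' alternative is not sufficient as stated: the assertion that the product map is a monomorphism for an arbitrary interleaving of positive and negative roots is exactly the content of what needs to be proved, and the fact that the differential is an isomorphism at the identity does not propagate to other points without a further argument. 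In fact, that propagation is precisely where the work lies.

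The paper's proof is more uniform and avoids the reordering problem entirely. It passes to formal models, reduces to showing the product map $\mathfrak{T}\times\prod\mathfrak{U}_{\alpha_i}^{\star_i}\to\mathfrak{Iw}$ is an isomorphism, observes that on underlying reduced schemes this is the standard isomorphism $T_k\times\prod_{\alpha>0}U_{\alpha,k}\to B_k$ (because the negative-root factors collapse to $\{1\}$ on the special fiber), and then proves the map is formally \'etale by computing the tangent map at an arbitrary $k'$-point. The decisive observation, which your proof lacks, is that since all negative-root coordinates vanish in the residue field, the conjugations appearing in the differential are always by elements of $U(k')$, and the Chevalley relations then show the tangent map is unitriangular with respect to the root grading, hence invertible at every point of the special fiber, not just at the identity.
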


\begin{rem} The existence of a product decomposition $\mathcal{T}(K) \times \prod \mathcal{U}_{\alpha_i}^{\star_i}(K) \rightarrow \mathrm{Iw}(K)$  for $K$ a discretely valued field is a consequence of Bruhat-Tits theory \cite{MR546588}, sect. 3.1.1.
\end{rem}
\begin{proof} We let $\mathfrak{Iw}$ be the formal group scheme obtained by completing the group $G$ along the closed subscheme $B_{k}$. For each root $\alpha$, we let $\mathfrak{U}_\alpha = \Spf(\ocal_F \langle T \rangle)$ be the formal completion of $U_\alpha$ along its special fiber, and we let $\mathfrak{U}^o_\alpha = \Spf(\ocal_F[[T]])$ be the formal completion of $\mathfrak{U}_{\alpha}$ at the identity. We also let $\mathfrak{T}$ be the formal completion of $T$ along its special fiber. 

We consider the map of formal schemes: $\mathfrak{T} \times \prod \mathfrak{U}_{\alpha_i}^{\star_i} \rightarrow \mathfrak{Iw}$ which yields the map of the proposition after passing to the generic fiber.  We will show that this map of formal schemes is an isomorphism.  Passing to underlying reduced schemes, we obtain the isomorphism $T_k \times \prod_{ \alpha \in \Phi^+} U_{\alpha,k} \rightarrow B_k$.  Hence it suffices to show that it is formally \'etale.

We claim that this map is formally \'etale. Since it is an isomorphism on the associated reduced schemes because $T_k \times \prod_{ \alpha \in \Phi^+} U_{\alpha,k} \rightarrow B_k$ is an isomorphism, we deduce that  the map is  an isomorphism. The associated map on the generic fiber (which is the map of the lemma) is an isomorphism. 
We are left to prove that the map is formally \'etale. Let $k'$ be a finite field extension of $k$. Let $(t, u_{\alpha_i} ) \in \mathfrak{T} \times \prod \mathfrak{U}_{\alpha_i}^{\star_i} (k')$. Note that  $u_{\alpha_i} = 1$ if $\alpha_i \in \Phi^-$. The map on Zariski tangent spaces is given by: 
$$ (t(1+ \epsilon T) , u_{\alpha_i}(1 + \epsilon U_{\alpha_i})_{1 \leq i \leq  n}) \mapsto t(1+\epsilon T) \prod_i u_{\alpha_i}(1 + \epsilon U_{\alpha_i})$$
 where $(T, (U_{\alpha_i})_{1 \leq i \leq  n}) \in \mathrm{Lie} (T_k) \otimes_k k' \oplus_{i} \mathrm{Lie} (U_{\alpha_i}) \otimes_k k'$, and there is an equality: 
$$ t(1+\epsilon T) \prod_i u_{\alpha_i}(1 + \epsilon U_{\alpha_i}) = t\prod_i u_{\alpha_i} (1 + \epsilon \mathrm{Ad}((\prod_i u_{\alpha_i})^{-1}) ( T)  + \sum_{i= 2}^{n+1}\mathrm{Ad} ( (\prod_{k=i}^n u_{\alpha_k})^{-1}) U_{\alpha_{i-1}}).$$ 

Therefore, if we identify the tangent space of $\mathfrak{T} \times \prod \mathfrak{U}_{\alpha_i}^{\star_i} $ at $(t, u_{\alpha_i} )$  and the tangent space  of $\mathfrak{Iw}$ at $t \prod  u_{\alpha_i}$ with $$\mathrm{Lie} (G_k) \otimes k' = \mathrm{Lie} (T_k) \otimes_k k' \oplus_{i} \mathrm{Lie} (U_{\alpha_i}) \otimes_k k',$$ the map on tangent spaces is given by the endomorphism: 
$$ (T, (U_{\alpha_i})_{1 \leq i \leq  n}) \mapsto \mathrm{Ad}((\prod_i u_{\alpha_k})^{-1}) ( T) +  \sum_{i= 2}^{n+1}\mathrm{Ad} ( (\prod_{k=i}^n u_{\alpha_k})^{-1})(U_{\alpha_{i-1}}).$$
Observe that  $(\prod_{k=i}^n u_{\alpha_k})^{-1} \in U(k')$ for all $1 \leq i \leq n$. 
By Chevalley's commutativity relations (\cite{MR3616493}, chapter 6), we know that for any $u \in U(k')$, any $\alpha \in \Phi \cup \{0\}$ and any $v \in \mathrm{Lie}( G_k)_{\alpha} \otimes k'$, $Ad(u).v = v + w$ where $w \in \bigoplus_{\alpha' > \alpha} \mathrm{Lie} (G_k)_{\alpha'}\otimes k'$. A simple inductive argument proves that the map on tangent spaces is an isomorphism.
\end{proof}

\subsubsection{Tubes of Bruhat cells}

For any $w\in\WM$, we now consider the tube of the Bruhat cell  $]C_{w,k}[_{\mathcal{FL}} = \mathcal{P} \backslash \mathcal{P}w \mathrm{Iw}$, as well as the tubes $]X_{w,k}[_{\mathcal{FL}}$ and $]Y_{w,k}[_{\mathcal{FL}}$. It follows from lemma \ref{lem-oppositeSchubert} that $\mathcal{X}^w \hookrightarrow ]Y_{w,k}[_{\mathcal{FL}}$. 

We can now give a very precise description of the tubes of Bruhat cells.

 \begin{coro}\label{coro-description-bruhat-cell} For any $w \in \WM$,  and for any ordering of the roots in $\Phi$, we have an isomorphism of analytic spaces: 
 \begin{eqnarray*}
 \prod_{\alpha \in  (w^{-1} \Phi^{-,M}) \cap \Phi^+} \mathcal{U}_{\alpha} \times \prod_{ \alpha \in  (w^{-1} \Phi^{-,M}) \cap \Phi^-} \mathcal{U}_{\alpha}^o &\rightarrow&   ]C_{w,k}[_{\mathcal{FL}}  \\
 (u_{\alpha})_{\alpha \in  w^{-1}\Phi^{-,M}} & \mapsto & w\prod_\alpha u_{\alpha}
 \end{eqnarray*}
 where the product $\prod_\alpha u_{\alpha}$ is taken according to our fixed ordering. 
\end{coro}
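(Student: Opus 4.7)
The plan is to deduce the corollary by combining the identification $]C_{w,k}[_{\mathcal{FL}}=\mathcal{P}\backslash\mathcal{P}w\mathrm{Iw}$ (recalled in the text) with the strong Iwahori factorization of Proposition \ref{prop-superIwahoridecomposition}, using Lemma \ref{lem-cell-product} to pin down the analytic structure.

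First I will organize the roots. Since the roots of $P$ are $\Phi^+\cup\Phi^-_M$, those of $w^{-1}Pw$ are $w^{-1}(\Phi^+\cup\Phi^-_M)$ and its complement in $\Phi$ is exactly $w^{-1}\Phi^{-,M}$. Choose any ordering of $\Phi$ which lists the roots of $w^{-1}(\Phi^+\cup\Phi^-_M)$ first and those of $w^{-1}\Phi^{-,M}$ (in the ordering specified in the corollary) last. Proposition \ref{prop-superIwahoridecomposition} then gives
$$\mathrm{Iw}\;=\;\mathrm{Iw}_{w^{-1}P}\cdot\prod_{\alpha\in w^{-1}\Phi^{-,M}}\mathcal{U}_\alpha^{\star_\alpha},\qquad \mathrm{Iw}_{w^{-1}P}:=\mathcal{T}\cdot\prod_{\alpha\in w^{-1}(\Phi^+\cup\Phi^-_M)}\mathcal{U}_\alpha^{\star_\alpha},$$
where $\star_\alpha=\emptyset$ if $\alpha\in\Phi^+$ and $\star_\alpha=o$ if $\alpha\in\Phi^-$.

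Second, I will check that $w\cdot\mathrm{Iw}_{w^{-1}P}\cdot w^{-1}\subseteq\mathcal{P}$. For each factor, $w\mathcal{U}_\alpha^{\star_\alpha} w^{-1}=\mathcal{U}_{w\alpha}^{\star_\alpha}$ (conjugation by the integral representative $w$ preserves both $\mathcal{U}$ and $\mathcal{U}^o$), and $w\alpha\in\Phi^+\cup\Phi^-_M$, so $\mathcal{U}_{w\alpha}^{\star_\alpha}\subseteq\mathcal{U}_{w\alpha}\subseteq\mathcal{P}$. Consequently
$$\mathcal{P}w\mathrm{Iw}\;=\;\mathcal{P}\cdot(w\mathrm{Iw}_{w^{-1}P}w^{-1})\cdot w\cdot\!\!\!\prod_{\alpha\in w^{-1}\Phi^{-,M}}\!\!\!\mathcal{U}_\alpha^{\star_\alpha}\;=\;\mathcal{P}\cdot w\cdot\!\!\!\prod_{\alpha\in w^{-1}\Phi^{-,M}}\!\!\!\mathcal{U}_\alpha^{\star_\alpha},$$
so the map of the corollary is at least a continuous surjection onto $]C_{w,k}[_{\mathcal{FL}}$.

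Third, I will promote this to an isomorphism of analytic adic spaces. The key point is that the algebraic map
$$\prod_{\alpha\in w^{-1}\Phi^{-,M}} U_\alpha\;\longrightarrow\; FL,\qquad (u_\alpha)\mapsto Pw\prod_\alpha u_\alpha$$
is an open immersion onto the translated open cell $U_w$ (the opposite-unipotent-times-$w$ chart around $w$), by the standard big-cell argument: the source equals $w^{-1}\overline{N^M}w$, and its algebraic intersection with $w^{-1}Pw$ is trivial. Analytifying, I obtain an open immersion $\prod_{\alpha\in w^{-1}\Phi^{-,M}}\mathcal{U}_\alpha^{an}\hookrightarrow\mathcal{FL}$ onto $\mathcal{U}_w$. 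The map of the corollary is then the restriction of this open immersion to the open subspace $\prod_{(w^{-1}\Phi^{-,M})\cap\Phi^+}\mathcal{U}_\alpha\times\prod_{(w^{-1}\Phi^{-,M})\cap\Phi^-}\mathcal{U}_\alpha^o$, so it is automatically an open immersion into $\mathcal{FL}$. Combined with the surjectivity onto $]C_{w,k}[_{\mathcal{FL}}$ established in the previous step (noting that the open immersion and the surjection land in the same image), we conclude that the map is an isomorphism. One can also see directly why the target is $]C_{w,k}[_{\mathcal{FL}}$: via the coordinate system of Lemma \ref{lem-cell-product}, $C_w\subset U_w$ is cut out of affine space by setting $x_\alpha=0$ for $\alpha\in(w^{-1}\Phi^{-,M})\cap\Phi^-$, and the tube of that special fibre is precisely the locus where those $x_\alpha$ lie in $\mathcal{U}_\alpha^o$ while the remaining coordinates are free in $\mathcal{U}_\alpha$.

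The bulk of the work is really in the first two steps; the only subtle point is confirming that $w$-conjugation carries each open-disc factor $\mathcal{U}_\alpha^o$ appearing in $\mathrm{Iw}_{w^{-1}P}$ into the integral structure of $\mathcal{P}$, which follows from the Chevalley normalization of the root subgroups and the choice of $w$ as an integral representative. Everything else is bookkeeping consolidated by Proposition \ref{prop-superIwahoridecomposition} and Lemma \ref{lem-cell-product}.
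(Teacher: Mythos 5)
Your proof is correct, and it rests on the same key ingredient as the paper's one-line justification, namely the strong Iwahori decomposition (Proposition \ref{prop-superIwahoridecomposition}) applied with an ordering that places the roots of $w^{-1}P_\mu w$ first.  Where you differ slightly is in how you extract the isomorphism of adic spaces from that decomposition: the Iwahori factorization gives you surjectivity of the map onto $\mathcal{P}\backslash \mathcal{P}w\mathrm{Iw}$, and then you obtain injectivity and the analytic-space structure by identifying the map with the restriction of the analytified big-cell chart $\prod_{\alpha\in w^{-1}\Phi^{-,M}}\mathcal{U}_\alpha^{an}\hookrightarrow \mathcal{FL}$ onto $\mathcal{U}_w^{an}$ (which is the analytification of the open immersion coming from Lemma \ref{lem-cell-product}).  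An alternative, staying entirely on the group-theoretic side and presumably what the authors have in mind, is to further observe that $\mathrm{Iw}\cap w^{-1}\mathcal{P}w=\mathrm{Iw}_{w^{-1}P}$ (the reverse inclusion following from the disjointness of the root data for $w^{-1}\Phi^{-,M}$ and $w^{-1}(\Phi^+\cup\Phi_M^-)$ together with uniqueness in the Iwahori decomposition), so that $]C_{w,k}[\;=(\mathrm{Iw}\cap w^{-1}\mathcal{P}w)\backslash\mathrm{Iw}$ is identified with the last block of the product.  Your open-cell detour is a perfectly valid shortcut that sidesteps this intersection computation and makes the adic isomorphism manifest; the pure Iwahori version avoids bringing in the analytification of the algebraic chart.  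Both are fine, and both depend essentially on the integrality of the Weyl representative so that conjugation by $w$ preserves the unit-ball and open-ball root subgroups, a point you rightly flag.
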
 
\begin{proof} This follows easily from proposition \ref{prop-superIwahoridecomposition}.
\end{proof}
 
 \subsubsection{Analytic subspaces of $ \mathcal{U}_w^{an}$}
We now consider $\mathcal{U}_w^{an}$, the analytification of $U_w$.  This open subset of $\mathcal{FL}$ contains $]C_{w,k}[_{\mathcal{FL}}$,  and we have an isomorphism  of analytic spaces (for any ordering of the roots): 
 \begin{eqnarray*}
 \prod_{\alpha \in  (w^{-1} \Phi^{-,M}) } \mathcal{U}^{an}_{\alpha} &\rightarrow&  \mathcal{U}_w^{an}  \\
 (u_{\alpha})_{\alpha \in  w^{-1}\Phi^{-,M}} & \mapsto & w\prod_\alpha u_{\alpha}
 \end{eqnarray*}
 
 We now introduce certain quasi-Stein subspaces of $\mathcal{U}_w^{an}$ as well as some partial closures of them. In order to do this we introduce a little bit of language. 
 
 If $T$ is a topological space and $S \subseteq T$ is a subset, we let as usual $\overline{S}$ be the closure of $S$ in $T$.  We will use repeatedly the property that in a spectral space, the closure of a pro-constructible set is the set of all its specializations (see \cite{scholze2017etale}, lemma 2.4).   
 
 Let $\mathcal{X}$ be an adic space. Using the Yoneda lemma,  we identify $\mathcal{X}$ with a functor on complete Huber pairs $(A,A^+)$.  Let   $T$ be a subset of $\mathcal{X}$. In general $T$ is not an adic space but using the Yoneda point of view,   we can attach to $T$ a subfunctor of $\mathcal{X}$:  we let $T((A,A^+))$ be the subset of  $\mathcal{X}((A,A^+))$ of morphisms $\Spa (A,A^+) \rightarrow \mathcal{X}$ whose image factors through $T$. We observe that this functor is completely determined by its value on fields  $(K,K^+)$.   In general, we say that a subfunctor $F$ of $\mathcal{X}$ is a subset of $\mathcal{X}$ if it is the functor attached to a subset of $\mathcal{X}$. 
 
 \begin{ex} Let $\mathbb{B}(0,1)$ and $\mathbb{B}^o(0,1)$ be respectively the quasi-compact unit ball and the open unit ball of radius $1$  over $\Spa(F,\ocal_F)$. These are open subsets of the affine line $(\mathbb{A}^1)^{an}$.  For any $(F,\ocal_F)$-complete Huber pair $(A,A^+)$, we have $(\mathbb{A}^1)^{an}((A,A^+)) = A $, $\mathbb{B}(0,1)((A,A^+))= A^+$, $\mathbb{B}^o(0,1)((A,A^+)) = \{ a \in A^+,~\exists n \in \ZZ_{\geq 1}, a^n \in pA^+\}$. The closed subset  $\overline{\mathbb{B}(0,1)}$ is the union of $\mathbb{B}(0,1)$ together with a rank two point whose maximal generalization is the Gauss point and which points towards infinity. This is still an adic space equal to $\Spa ( F\langle X \rangle, \ocal_F + \mathfrak{m}_{\ocal_F} X + \mathfrak{m}_{\ocal_F}X^2 + \cdots )$.  We have $\overline{\mathbb{B}(0,1)}((A,A^+)) = \{ a \in A, \forall n \in \ZZ_{\geq 1}, pa^n \in A^+\}$. The closed subset  $\overline{\mathbb{B}^o(0,1)}$ is the union of $\mathbb{B}^o(0,1)$ together with a rank two point whose maximal generalization is the Gauss point and which points towards the center of the disc. This is not adic space.  We have $\overline{\mathbb{B}^o(0,1)}((K,K^+)) = K^{++} = \{ a \in K, \vert a \vert < 1\}$. 
 \end{ex}

   We identify each $\mathcal{U}_\alpha$ with the  unit ball of center $0$ and coordinate $u_\alpha$ with its additive group law (the coordinate $u_\alpha$ is well defined up to multiplication by a unit). For all  $m \in \qq \cup \{-\infty\} $ and all $\alpha \in \Phi$, we let $\mathcal{U}_{\alpha, m} = \{ \vert. \vert  \in \mathcal{U}^{an}_{\alpha}, ~\vert u_\alpha \vert \leq \vert p^m \vert \}$ and $\mathcal{U}^o_{\alpha, m} = \cup_{m' >m} \mathcal{U}_{\alpha,m'}$.  For all $m,n \in \qq \cup \{-\infty\}$, we let $]C_{w,k}[_{m,n}$ be the image of $$ w\prod_{\alpha \in  (w^{-1} \Phi^{-,M}) \cap \Phi^-} \mathcal{U}_{\alpha,m}^o \times \prod_{\alpha \in  (w^{-1} \Phi^{-,M}) \cap \Phi^+} \mathcal{U}_{\alpha,n} \rightarrow \mathcal{U}_w^{an},$$
 For all $m,n \in \qq$, we let $]C_{w,k}[_{\overline{m},n}$ be the image of $$ w\prod_{\alpha \in  (w^{-1} \Phi^{-,M}) \cap \Phi^-} \overline{\mathcal{U}_{\alpha,m}^o} \times \prod_{\alpha \in  (w^{-1} \Phi^{-,M}) \cap \Phi^+} \mathcal{U}_{\alpha,n} \rightarrow \mathcal{U}_w^{an},$$
 we let $]C_{w,k}[_{m,\overline{n}}$ be the image of $$ w\prod_{\alpha \in  (w^{-1} \Phi^{-,M}) \cap \Phi^-} \mathcal{U}_{\alpha,m}^o \times \prod_{\alpha \in  (w^{-1} \Phi^{-,M}) \cap \Phi^+} \overline{\mathcal{U}_{\alpha,n}} \rightarrow \mathcal{U}_w^{an},$$
 and  we let $]C_{w,k}[_{\overline{m},\overline{n}}$ be the image of $$ w\prod_{\alpha \in  (w^{-1} \Phi^{-,M}) \cap \Phi^-}\overline{ \mathcal{U}^o_{\alpha,m}} \times \prod_{\alpha \in  (w^{-1} \Phi^{-,M}) \cap \Phi^+} \overline{\mathcal{U}_{\alpha,n}} \rightarrow \mathcal{U}_w^{an}.$$

Clearly, $]C_{w,k}[ = ]C_{w,k}[_{0,0}$, $]C_{w,k}[_{m,n} \subseteq ]C_{w,k}[$ if  $m, n \geq 0$, and moreover then we have $]C_{w,k}[_{m,n} = ]C_{w,k}[_{m,0} \cap ]C_{w,k}[_{0,n}$, $]C_{w,k}[_{\overline{m},n} = ]C_{w,k}[_{\overline{m},0} \cap ]C_{w,k}[_{0,n}$   and $]C_{w,k}[_{m,\overline{n}} = ]C_{w,k}[_{m,0} \cap ]C_{w,k}[_{0,\overline{n}}$.

\begin{rem} In the formulas defining the sets $]C_{w,k}[_{m,n}$ and their partial closures,  the image is taken in the sheaf theoretic sense. If the spaces at hand are adic spaces, this is also the images of the corresponding morphisms of adic spaces. 
\end{rem} 

\begin{rem} In the above formulas, we make the product for any ordering of the roots in $(w^{-1} \Phi^{-,M}) \cap \Phi^+$ or  $(w^{-1} \Phi^{-,M}) \cap \Phi^-$. See \cite{MR3616493}, lemma 17 for a justification that the order doesn't matter. We also point out that in our applications to Shimura varieties the unipotent radical of $P$ is abelian so that the root groups $U_\alpha$ for $\alpha \in w^{-1} \Phi^{-,M}$ commute with each other.
\end{rem}

\subsubsection{Orbits of Cells}\label{section-orbitsofcells}
 We can give a more group theoretic description of certain of  the above sets.  We introduce some subgroups of $\mathcal{G}$.
 
 For $m\in\qq_{\geq 0}$ we let $\mathcal{G}_{B,m}$ (resp. $\mathcal{G}_{U,m}$, $\mathcal{G}_{\overline{B},m}$, $\mathcal{G}_{\overline{U},m}$) for the affinoid subgroup of $\mathcal{G}$ of elements reducing to $B$ (resp. $U$, $\overline{B}$, $\overline{U}$) mod $p^m$.  We also define $\mathcal{G}_{B,m}^\circ=\cup_{m'>m}\mathcal{G}_{B,m'}$, $\mathcal{G}_{U,m}^\circ=\cup_{m'>m}\mathcal{G}_{U,m'}$.
 
 Then for all $m,n\in\qq_{\geq 0}$ we let $\mathcal{G}_{m,n}=\mathcal{G}_{B,m}^\circ\cap\mathcal{G}_{\overline{B},n}$.  In particular we note that $\mathcal{G}_{0,0}=\mathrm{Iw}$.  We also let $\mathcal{G}^1_{m,n}=\mathcal{G}_{U,m}^\circ\cap\mathcal{G}_{\overline{U},n}$.
 
 We would also like to introduce some ``partial closures'' of these groups. In general these won't be adic spaces, but simply certain subsets of  $\mathcal{G}^{an}$.  For all $m,n\in\qq_{\geq 0}$ we let $\mathcal{G}_{\overline{m},n}=\overline{\mathcal{G}_{B,m}^\circ}\cap\mathcal{G}_{\overline{B},n}$ and $\mathcal{G}^1_{\overline{m},n}=\overline{\mathcal{G}_{U,m}^\circ}\cap\mathcal{G}_{\overline{U},n}$, where the closures are taken inside $\mathcal{G}$.  We note that $\mathcal{G}_{\overline{0},0}=\mathrm{sp}^{-1}(B_k)$, the closure of $\mathrm{Iw}$ in $\mathcal{G}$.  For all $m, n \in \qq_{\geq 0}$, we let $\mathcal{G}_{m, \overline{n}}$ and $\mathcal{G}^1_{m, \overline{n}}$ be set of all specializations of $\mathcal{G}_{m,n}$ and $\mathcal{G}^1_{m,n}$ respectively in $\mathcal{G}^{an}$. For all $m,n\in\qq_{\geq0}$ with $n>0$ we also observe that  $\mathcal{G}_{m,\overline{n}}=\mathcal{G}_{B,m}^\circ\cap\overline{\mathcal{G}_{\overline{B},n}}$ and $\mathcal{G}^1_{m,\overline{n}}=\mathcal{G}_{U,m}^\circ\cap\overline{\mathcal{G}_{\overline{U},n}}$ where  the closures are taken inside $\mathcal{G}$. 
 
 \begin{lem} The subsets $\mathcal{G}_{m,n}$, $\mathcal{G}_{\overline{m}, n}$ and $\mathcal{G}_{m, \overline{n}}$, $\mathcal{G}^1_{m,n}$, $\mathcal{G}^1_{\overline{m}, n}$ and $\mathcal{G}^1_{m, \overline{n}}$ of $\mathcal{G}^{an}$ are subgroups (in the sense that their functor of points is a subgroup of $\mathcal{G}^{an}$).
 \end{lem}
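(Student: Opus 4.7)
The proof divides naturally into two flavors of arguments: a direct ``intersection of subgroups'' argument for the unbarred sets, and a ``closure of a subgroup in a topological group'' argument for the barred ones.

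First, I would handle $\mathcal{G}_{m,n}$ and $\mathcal{G}^1_{m,n}$. Here $\mathcal{G}_{B,m}$, $\mathcal{G}_{\overline{B},n}$, $\mathcal{G}_{U,m}$, $\mathcal{G}_{\overline{U},n}$ are honest affinoid subgroups of $\mathcal{G}$, being preimages under the reduction map $\mathcal{G} \to G_k$ (applied to the appropriate level of $p$-adic thickening) of closed subgroups of $G$. The ascending unions $\mathcal{G}_{B,m}^\circ = \bigcup_{m'>m} \mathcal{G}_{B,m'}$ and $\mathcal{G}_{U,m}^\circ$ are then subfunctors closed under the group law. Since intersections of subgroup subfunctors are subgroup subfunctors, $\mathcal{G}_{m,n} = \mathcal{G}_{B,m}^\circ \cap \mathcal{G}_{\overline{B},n}$ and $\mathcal{G}^1_{m,n} = \mathcal{G}_{U,m}^\circ \cap \mathcal{G}_{\overline{U},n}$ are subgroups.

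Next, for the fully barred sets $\mathcal{G}_{\overline{m},n}$ and $\mathcal{G}^1_{\overline{m},n}$, the key point is that $\mathcal{G}$ is a topological group in the sense that its multiplication $\mu$ and inversion $\iota$ are morphisms of adic spaces, hence continuous on underlying spectral topological spaces. The standard topological argument that the closure of a subgroup in a topological group is a subgroup then applies: since $\overline{H\times H}=\overline{H}\times\overline{H}$ in a product topological space, continuity of $\mu$ gives $\mu(\overline{H}\times\overline{H})\subseteq \overline{\mu(H\times H)}\subseteq\overline{H}$, and likewise $\iota(\overline{H})\subseteq\overline{H}$, applied to $H=\mathcal{G}_{B,m}^\circ$ or $\mathcal{G}_{U,m}^\circ$. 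Intersecting with the subgroup $\mathcal{G}_{\overline{B},n}$ (resp.\ $\mathcal{G}_{\overline{U},n}$) preserves the subgroup property. To lift this topological statement to the assertion on subfunctors, I would unwind the definition: a map $x\colon\Spa(A,A^+)\to\mathcal{G}^{an}$ lies in $\overline{H}((A,A^+))$ exactly when its set-theoretic image lies in $\overline{H}$, so given two such $x,y$, the image of $\mu\circ(x,y)$ is contained in $\mu(\overline{H}\times\overline{H})\subseteq\overline{H}$, and analogously for inversion.

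Finally, the asymmetric sets $\mathcal{G}_{m,\overline{n}}$ and $\mathcal{G}^1_{m,\overline{n}}$ are the sets of specializations of $\mathcal{G}_{m,n}$ and $\mathcal{G}^1_{m,n}$ in $\mathcal{G}^{an}$. Using the fact recalled in the paper (specialization = closure for pro-constructible subsets of a spectral space), these are closures of subgroups in $\mathcal{G}^{an}$, so the same ``closure of a subgroup'' argument as above applies; for $n>0$ the identification $\mathcal{G}_{m,\overline{n}}=\mathcal{G}_{B,m}^\circ\cap\overline{\mathcal{G}_{\overline{B},n}}$ noted in the text simply writes this more explicitly.

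The only mild obstacle is bookkeeping: one must keep track of whether a given closure is taken inside $\mathcal{G}$ or inside $\mathcal{G}^{an}$, and check that the subfunctor description coincides with the topological one in each case. But everything ultimately rests on continuity of $\mu$ and $\iota$, together with the product-topology identity $\overline{H\times H}=\overline{H}\times\overline{H}$, so no substantive new input is needed beyond what has already been set up.
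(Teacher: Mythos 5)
Your treatment of $\mathcal{G}_{m,n}$ and $\mathcal{G}^1_{m,n}$ as intersections of subgroup subfunctors matches the paper's first step.  The idea behind your treatment of the barred sets --- ``the closure of a subgroup in a topological group is a subgroup'' --- is morally the right one, but the justification you give for it does not apply as written.  The step $\overline{H\times H}=\overline{H}\times\overline{H}$ ``in a product topological space'' is a fact about products of topological spaces, whereas the underlying topological space of the fiber product $\mathcal{G}^{an}\times_{\Spa(F,\ocal_F)}\mathcal{G}^{an}$ is \emph{not} the fiber product of the underlying topological spaces (exactly as for schemes), so the multiplication map $\mu$ lives on a genuinely different space from $|\mathcal{G}^{an}|\times|\mathcal{G}^{an}|$.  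Concretely, two $(A,A^+)$-points $x,y$ with images in $\overline{H}$ give a point of $\mathcal{G}^{an}\times\mathcal{G}^{an}$ lying in $\pi^{-1}(\overline{H}\times\overline{H})$ (where $\pi$ is the projection to the product of underlying spaces), and you would need $\mu\big(\pi^{-1}(\overline{H}\times\overline{H})\big)\subseteq\overline{H}$; continuity of $\mu$ only gives $\mu\big(\overline{\pi^{-1}(H\times H)}\big)\subseteq\overline{H}$, and the inclusion $\pi^{-1}(\overline{H}\times\overline{H})\subseteq\overline{\pi^{-1}(H\times H)}$ is not automatic.

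What actually makes the argument go through is the reduction to field-valued points together with the behavior of maximal generalizations, and this is exactly what the paper does.  Since a subfunctor-as-subset is determined by its $(K,K^+)$-valued points, and since $H=\mathcal{G}_{m,n}$ is stable under generalization with $\mathcal{G}_{m,\overline{n}}$ its set of specializations in $\mathcal{G}^{an}$, one has the clean identity $\mathcal{G}_{m,\overline{n}}((K,K^+))=\mathcal{G}_{m,n}((K,K^0))$, which immediately shows $\mathcal{G}_{m,\overline{n}}$ is a subgroup because $\mathcal{G}_{m,n}$ is.  For $\mathcal{G}_{\overline{m},n}$ (which is a closure taken inside $\mathcal{G}$, a different construction) the paper instead gives an explicit, manifestly group-theoretic description of $\mathcal{G}_{\overline{m},0}((K,K^+))$ and then intersects with $\mathcal{G}_{\overline{B},n}$.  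Your write-up folds both barred cases into a single ``closure'' argument, and while the final observation you make (``a map $x$ lies in $\overline{H}((A,A^+))$ exactly when its image does'') is the right starting point, you then fall back on the product-topology step instead of pushing the computation down to $(K,K^+)$- and $(K,K^0)$-points.  If you replace the product-topology step by the $(K,K^0)$-point identification (and handle $\mathcal{G}_{\overline{m},n}$ separately, since it is not literally a set of specializations of a previously-established subgroup), the argument is complete and is essentially the paper's.
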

 \begin{proof} It is clear that  $\mathcal{G}_{B,m}$, $\mathcal{G}_{U,m}$, $\mathcal{G}_{\overline{B},n}$, $\mathcal{G}_{\overline{U},n}$ are groups. Therefore, the $\mathcal{G}_{m,n}$ are groups.  We deduce that the $\mathcal{G}_{m, \overline{n}}$ are groups. Indeed, $\mathcal{G}_{m, \overline{n}}((K,K^+)) = \mathcal{G}_{m, {n}}(K, K^0)$ for any field (where $K^0$ is the ring of power bounded elements in $K$). We also check that  $\mathcal{G}_{\overline{m}, 0}$ are groups. Indeed, $\mathcal{G}_{\overline{m}, 0}((K,K^+)) = \{ g \in G(K^+), g \mod p^m K^{++} \in B(K^+/p^mK^{++})\}$. We finally deduce that $\mathcal{G}_{\overline{m}, n}$ is a subgroup.
 \end{proof}
 
Note that the groups $\mathcal{G}_{m,n}$, $\mathcal{G}_{\overline{m},n}$, and $\mathcal{G}_{m,\overline{n}}$ all have the same rank 1 points.  Moreover they all have Iwahori decompositions:
\begin{prop}\label{prop-superIwahoridecomposition2} Let $w \in W$.  Then for $m,n\in\qq_{\geq 0}$ and for $\mathcal{G}'$ one of $\mathcal{G}^\star_{m,n}$, $\mathcal{G}^\star_{\overline{m},n}$, $\mathcal{G}^\star_{m,\overline{n}}$ with $\star\in\{\emptyset,1\}$ the product map 
$$\mathcal{T}\cap\mathcal{G}' \times \prod_{ \alpha_i \in w^{-1} \Phi^+} \mathcal{U}_{\alpha_i}^{\star_i}  \prod_{ \alpha_i \in w^{-1} \Phi^-} \mathcal{U}_{\alpha_i}^{\star_i} \rightarrow \mathcal{G}'$$
is an isomorphism, where if $\alpha_i\in \Phi^+$ then $\mathcal{U}_{\alpha_i}^{\star_i}$ is $\mathcal{U}_{\alpha_i,n}$ (resp. $\overline{\mathcal{U}_{\alpha_i,n}}$) if $\mathcal{G}'=\mathcal{G}_{m,n}^\star,\mathcal{G}_{\overline{m},n}^\star$ (resp. $\mathcal{G}'=\mathcal{G}_{m,\overline{n}}^\star$) while if $\alpha_i\in\Phi^-$ then $\mathcal{U}_{\alpha_i}^{\star_i}$ is $\mathcal{U}^o_{\alpha_i,m}$ (resp. $\overline{\mathcal{U}^o_{\alpha_i,m}}$) if $\mathcal{G}'=\mathcal{G}_{m,n}^\star,\mathcal{G}_{m,\overline{n}}^\star$. (resp. $\mathcal{G}'=\mathcal{G}_{\overline{m},n}^\star$.)
\end{prop}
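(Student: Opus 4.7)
The plan is to adapt the formal scheme argument of Proposition \ref{prop-superIwahoridecomposition}, handling the six cases in parallel. First, I would reduce to the unipotent versions ($\star=1$). Since $T \subseteq B \cap \overline{B}$, the torus $\mathcal{T}$ is contained in $\mathcal{G}_{B,m}^\circ$ and $\mathcal{G}_{\overline{B},n}$ for every $m,n \in \qq_{\geq 0}$, so $\mathcal{T} \cap \mathcal{G}' = \mathcal{T}$ in each of the three ambient cases. Moreover, $\mathcal{T}$ normalizes each root group $\mathcal{U}_{\alpha}$ and preserves each of the congruence conditions $\mathcal{U}_{\alpha_i,n}$, $\mathcal{U}_{\alpha_i,m}^\circ$, and their partial closures, so a product decomposition for the unipotent subgroup $\mathcal{G}^1{}'$ lifts to one for $\mathcal{G}'$ by combining with the torus factor.

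For the unipotent case $\mathcal{G}^1_{m,n}$ with $m,n \in \qq_{\geq 0}$, after a finite ramified base extension of $\ocal_F$ where $p^m$ and $p^n$ are defined, I would construct a flat formal model $\mathfrak{G}^1_{m,n}$ of $\mathcal{G}^1_{m,n}$ (obtained by completing $G$ along the appropriate subscheme of its special fiber cut out by the congruence conditions), together with analogous formal models $\mathfrak{U}^{\star_i}_{\alpha_i}$ for each root group factor. The product map of formal schemes should be formally \'etale by the same Chevalley commutativity computation used in Proposition \ref{prop-superIwahoridecomposition}: for the chosen ordering, which lists the roots of $w^{-1}\Phi^+$ before those of $w^{-1}\Phi^-$ (in any order within each), the induced linear map on tangent spaces is upper triangular with unit diagonal entries, since $[U_\alpha, U_\beta]$ is supported on strictly positive integer combinations $i\alpha+j\beta$ whose tangent contributions can be arranged to appear later in the ordering. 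Combined with bijectivity on reduced special fibers, which follows from the classical open cell description, this gives an isomorphism of formal schemes, hence of their generic fibers.

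For the partial closure versions $\mathcal{G}^1_{\overline{m},n}$ and $\mathcal{G}^1_{m,\overline{n}}$, I would argue on functors of points over complete Huber pairs $(K,K^+)$, using that by definition these are obtained by passing to (specific) specializations inside $\mathcal{G}^{an}$. Under the product decomposition already established for the ``open'' version, the closure operations can be computed factor by factor on root groups: the partial closure on $\mathcal{G}^1_{m,n}$ corresponds to replacing each $\mathcal{U}_{\alpha,m}^\circ$ by $\overline{\mathcal{U}_{\alpha,m}^\circ}$ (resp. each $\mathcal{U}_{\alpha,n}$ by $\overline{\mathcal{U}_{\alpha,n}}$), while leaving the other factors unchanged. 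Since multiplication of root group elements is continuous and the decomposition on the ``open'' version is an isomorphism, the partial closure versions inherit a product decomposition as well.

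The main obstacle is verifying the formally \'etale property for the chosen ordering, which in general mixes roots from $\Phi^+$ and $\Phi^-$: one needs to check that the triangularity of the tangent map is preserved when the ordering is refined from ``first $w^{-1}\Phi^+$, then $w^{-1}\Phi^-$'' to a specific enumeration, and that the Chevalley commutators respect the relevant congruence filtrations. Once this combinatorial bookkeeping is in place, the tangent space computation of Proposition \ref{prop-superIwahoridecomposition} carries over verbatim, and the remainder of the argument is routine.
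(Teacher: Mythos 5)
The overall shape of your plan — reduce to the unipotent subgroups, handle the ``open'' cases, then deduce the barred cases by a functor-of-points argument — is not unreasonable, but there is a genuine gap at the step where you dispose of the partial closures, and that step is the real content of the proposition.

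You write that, having the product decomposition of $\mathcal{G}^1_{m,n}$, ``the closure operations can be computed factor by factor on root groups: the partial closure on $\mathcal{G}^1_{m,n}$ corresponds to replacing each $\mathcal{U}^\circ_{\alpha,m}$ by $\overline{\mathcal{U}^\circ_{\alpha,m}}$, \ldots since multiplication of root group elements is continuous.'' This is precisely the \emph{conclusion} of the proposition, not an argument for it. Continuity of multiplication gives you $\overline{\mathcal{U}^\circ_{\alpha_1,m}}\cdots\overline{\mathcal{U}^\circ_{\alpha_k,m}}\subseteq \overline{\mathcal{G}^1_{m,n}}$, but you still have to show that every point of $\overline{\mathcal{G}^1_{m,n}}\cap\mathcal{G}_{\overline{U},n}$ factors with the $\Phi^-$-components landing in the closed tubes and the $\Phi^+$-components landing in the affinoid tubes — and with a \emph{unique} such factorization. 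The paper's argument for $\mathcal{G}_{\overline{0},0}$ (the case it does in detail) does not proceed by taking closures of factors. It uses that $\mathcal{G}_{\overline{0},0}=\mathrm{sp}^{-1}(B_k)$ is contained in the preimage of the translated big cell $w^{-1}\overline{B_k}B_kw$, so any $g\in\mathcal{G}_{\overline{0},0}(K,K^+)$ already has a unique decomposition $g = t\,\prod u_\alpha$ with $t\in T(K^+)$, $u_\alpha\in U_\alpha(K^+)$. One then reduces modulo the maximal ideal $K^{++}$: the image $\overline g$ lies in $B(K^+/K^{++})$, and the uniqueness of the big-cell decomposition on the \emph{special fiber} forces $\overline{u_\alpha}=1$ for $\alpha\in\Phi^-$, i.e.\ $u_\alpha\in\overline{\mathcal{U}^\circ_\alpha}(K,K^+)$. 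The force of the argument is this residue-field computation; ``continuity'' is not enough and, without it, you would also be unable to get uniqueness for the closed factors.

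Two smaller issues. First, the reduction ``from $\star=\emptyset$ to $\star=1$'' is stated in the wrong direction to be easy: the torus factor in the $\star=1$ decomposition is $\mathcal{T}\cap\mathcal{G}^1_{m,n}$, a deeper congruence subgroup, not all of $\mathcal{T}$, so to pass from $\mathcal{G}^1$ to $\mathcal{G}$ you would need $\mathcal{G}_{m,n}=\mathcal{T}\cdot\mathcal{G}^1_{m,n}$, which is not obvious and would itself require essentially the same kind of residue argument. It is cleaner to establish the $\star=\emptyset$ cases (where the torus factor is all of $\mathcal{T}$) and then cut down by the congruence condition to get the $\star=1$ cases, using uniqueness of the decomposition to see that the torus factor inherits the condition. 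Second, for the open cases $\mathcal{G}^1_{m,n}$ your formal-model/formally-étale argument needs careful bookkeeping with mixed congruence depths and a \emph{fixed} ordering (first $w^{-1}\Phi^+$, then $w^{-1}\Phi^-$), as you note. That this is not harmless is signalled by the paper's own remark that while the $\mathcal{G}_{0,0}$ decomposition holds for \emph{any} ordering of the roots, it is not known whether the same is true for $\mathcal{G}_{\overline{0},0}$. I would not describe the verification as ``routine.''
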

\begin{proof} We first give the argument for $\mathcal{G}_{\overline{0}, 0}$. Since $\mathcal{G}_{\overline{0}, 0} = \mathrm{sp}^{-1}(B_k)$, and $B_k \hookrightarrow w^{-1} \overline{B_k} B_k w$, it follows that  $\mathcal{G}_{\overline{0},0} \hookrightarrow \mathcal{T} \times \prod_{\alpha \in w^{-1} \Phi^-} \mathcal{U}_\alpha \prod_{\alpha \in w^{-1} \Phi^+} \mathcal{U}_\alpha = \mathrm{sp}^{-1}(w^{-1} \overline{B_k} B_k w)$. We now let $g \in \mathcal{G}_{\overline{0},0}(K,K^+)$ for a field $K$ and valuation ring $K^+ \subseteq K$.  Let $K^{++}$ be the maximal ideal of $K^+$.  By definition,  $g \in G(K^+)$ and its image $\overline{g}$ in $G(K^+/K^{++})$ lies in $B$. Let $g = t \prod_{\alpha \in w^{-1} \Phi^+} u_\alpha \prod_{\alpha \in w^{-1}} u_\alpha$. We have $\overline{g} =\overline{ t} \prod_{\alpha \in w^{-1} \Phi^+}\overline{ u_\alpha} \prod_{\alpha \in w^{-1}} \overline{u_\alpha}$ and we find that $\overline{u_\alpha} = 1$ if $\alpha \in \Phi^-$ by unicity of the decomposition. The case of $\mathcal{G}_{0,0}$ has already been treated in proposition \ref{prop-superIwahoridecomposition}. The case of $\mathcal{G}_{0, \overline{0}}$ follows. The general case is left to the reader. 
\end{proof}
\begin{rem} For the group $\mathcal{G}_{0,0} = \mathrm{Iw}$, the decomposition holds for any ordering of the roots $\alpha \in \Phi$ by proposition \ref{prop-superIwahoridecomposition}. We don't know if this property holds for $\mathcal{G}_{\overline{0},0}$ for example. 
\end{rem}

\begin{coro} We have that for all $m,n\in\qq_{\geq 0}$, $]C_{w,k}[_{m,n} = \mathcal{P} \backslash \mathcal{P}w \mathcal{G}_{m,n}=\mathcal{P} \backslash \mathcal{P}w \mathcal{G}_{m,n}^1$, $]C_{w,k}[_{\overline{m},n} = \mathcal{P} \backslash \mathcal{P}w \mathcal{G}_{\overline{m},n}=\mathcal{P} \backslash \mathcal{P}w \mathcal{G}_{\overline{m},n}^1$ and , $]C_{w,k}[_{m,\overline{n}} = \mathcal{P} \backslash \mathcal{P}w \mathcal{G}_{m,\overline{n}}=\mathcal{P} \backslash \mathcal{P}w \mathcal{G}_{m,\overline{n}}^1$.
\end{coro}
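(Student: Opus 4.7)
My plan is to reduce all three claims to a single identity about $\mathcal{G}^1_{m,n}$, then prove that identity by a suitably refined Iwahori decomposition; the partial closure cases $\overline{m},n$ and $m,\overline{n}$ go through identically after substituting $\mathcal{G}_{\overline{m},n}$ or $\mathcal{G}_{m,\overline{n}}$ and invoking the corresponding Iwahori decompositions from Prop.~\ref{prop-superIwahoridecomposition2}.

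First I reduce to $\mathcal{G}^1_{m,n}$: since the reduction modulo any $p^{m'}$ of a torus point lies in $T\subset B\cap\overline{B}$, one has $\mathcal{T}\subset\mathcal{G}_{m,n}$, and comparing the Iwahori decompositions of $\mathcal{G}_{m,n}$ and $\mathcal{G}^1_{m,n}$ (which share the same root-group factors and differ only in the torus factor) yields $\mathcal{G}_{m,n}=\mathcal{T}\cdot\mathcal{G}^1_{m,n}$. Because $w\in N(T)$ and $\mathcal{T}\subset\mathcal{P}$, one has $w\mathcal{T}=\mathcal{T}w\subset\mathcal{P}w$, hence $\mathcal{P}w\mathcal{G}_{m,n}=\mathcal{P}w\mathcal{G}^1_{m,n}$, so both claimed equalities reduce to $\mathcal{P}\backslash\mathcal{P}w\mathcal{G}^1_{m,n}=\,]C_{w,k}[_{m,n}$.

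Next I identify which root groups in the Iwahori decomposition of $\mathcal{G}^1_{m,n}$ get absorbed by $\mathcal{P}w$. A factor $\mathcal{U}_\alpha$ satisfies $w\mathcal{U}_\alpha w^{-1}=\mathcal{U}_{w\alpha}\subset\mathcal{P}$ exactly when $w\alpha\in\Phi_M\cup\Phi^{+,M}$, i.e. when $\alpha\notin w^{-1}\Phi^{-,M}$. Since $\Phi^{-,M}\subset\Phi^-$ we have $w^{-1}\Phi^+\subset w^{-1}(\Phi_M\cup\Phi^{+,M})$, and the Kostant property $\Phi^+_M\subset w\Phi^+$ gives $w^{-1}\Phi^-_M\subset\Phi^-$, so $w^{-1}\Phi^-=w^{-1}\Phi^-_M\sqcup w^{-1}\Phi^{-,M}$. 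Thus the absorbable roots are $w^{-1}\Phi^+\cup w^{-1}\Phi^-_M$ and the essential ones are $w^{-1}\Phi^{-,M}$. Choose an ordering of $\Phi$ placing the block $w^{-1}\Phi^+$ first, then $w^{-1}\Phi^-_M$, then $w^{-1}\Phi^{-,M}$ (admissible because within each of the unipotent subgroups $w^{-1}\mathcal{U}w$ and $w^{-1}\overline{\mathcal{U}}w$ every ordering of the root factors is allowed, by the same formal-scheme/tangent-space argument as in Prop.~\ref{prop-superIwahoridecomposition}). Writing any $g\in\mathcal{G}^1_{m,n}$ as a product $g=u_1\cdots u_{\#\Phi}$ along this ordering and iteratively pushing each leftmost absorbable factor across $w$ into $\mathcal{P}$ produces
\[
\mathcal{P}w\mathcal{G}^1_{m,n}=\mathcal{P}\cdot w\!\!\prod_{\alpha\in w^{-1}\Phi^{-,M}\cap\Phi^+}\!\!\mathcal{U}_{\alpha,n}\;\cdot\!\!\prod_{\alpha\in w^{-1}\Phi^{-,M}\cap\Phi^-}\!\!\mathcal{U}^{\circ}_{\alpha,m},
\]
where I use $\mathcal{U}_\alpha\cap\mathcal{G}^1_{m,n}=\mathcal{U}_{\alpha,n}$ for $\alpha\in\Phi^+$ and $=\mathcal{U}^\circ_{\alpha,m}$ for $\alpha\in\Phi^-$ (each being forced by $U\cap\overline{U}=1$). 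Projecting to $\mathcal{P}\backslash\mathcal{G}$ the right-hand side is exactly $]C_{w,k}[_{m,n}$ by its defining product formula; the reverse inclusion $]C_{w,k}[_{m,n}\subset\mathcal{P}\backslash\mathcal{P}w\mathcal{G}^1_{m,n}$ is built into the same identification of root-group intersections.

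The main technical obstacle is precisely this ordering flexibility. Prop.~\ref{prop-superIwahoridecomposition2} is stated only for the ordering ``$w^{-1}\Phi^+$ then $w^{-1}\Phi^-$'', and its following remark explicitly warns that arbitrary orderings are not known outside of the Iwahori case. My plan sidesteps this because I only need to reorder inside each of the two unipotent blocks $w^{-1}\mathcal{U}w$ and $w^{-1}\overline{\mathcal{U}}w$, never across them; inside a connected unipotent group the tangent-space/formal-scheme computation of Prop.~\ref{prop-superIwahoridecomposition} applies verbatim for every ordering and supplies the flexibility needed for the absorption step.
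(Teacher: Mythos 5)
Your proof is correct and follows essentially the same route as the paper: apply the Iwahori decomposition of Prop.~\ref{prop-superIwahoridecomposition2} to $\mathcal{G}_{m,n}$ (or $\mathcal{G}^1_{m,n}$) and to $w^{-1}\mathcal{P}w\cap\mathcal{G}_{m,n}$, factor out the root groups with $\alpha\in w^{-1}\Phi^+\cup w^{-1}\Phi^-_M$ together with the torus, and recognize the remaining product over $w^{-1}\Phi^{-,M}$ as the defining product for $]C_{w,k}[_{m,n}$. The paper's own proof is more compressed and does not comment on the block-reordering inside the negative-root block $w^{-1}\Phi^-$; your explicit remark that the reordering only happens inside each of the two unipotent blocks (and hence is covered by the tangent-space argument of Prop.~\ref{prop-superIwahoridecomposition}, not obstructed by the remark following Prop.~\ref{prop-superIwahoridecomposition2}) is a genuine clarification, not a deviation. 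Your preliminary reduction from $\mathcal{G}_{m,n}$ to $\mathcal{G}^1_{m,n}$ via $\mathcal{G}_{m,n}=\mathcal{T}\cdot\mathcal{G}^1_{m,n}$ is likewise in the spirit of the paper (which handles both equalities together) and is correct.
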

\begin{proof} We only prove the identity  $]C_{w,k}[_{m,n} = \mathcal{P} \backslash \mathcal{P}w \mathcal{G}_{m,n}$, the other identities are proved similarly. 
By proposition \ref{prop-superIwahoridecomposition2}, we have that 
\begin{eqnarray*}
\mathcal{P} \backslash \mathcal{P}w \mathcal{G}_{m,n} & =&  w . w^{-1} \mathcal{P} w \cap \mathcal{G}_{m,n} \backslash \mathcal{G}_{m,n} \\
&=& w.  \prod_{\alpha_i \in w^{-1}\Phi^{-}_M} \mathcal{U}^{\star_i}_{\alpha_i} \prod_{\alpha_i \in w^{-1}\Phi^+}   \mathcal{U}_{\alpha_i}^{\star_i}  \backslash  \prod_{ \alpha_i \in w^{-1} \Phi^+} \mathcal{U}_{\alpha_i}^{\star_i}  \prod_{ \alpha_i \in w^{-1} \Phi^-} \mathcal{U}_{\alpha_i}^{\star_i}  \\
&=& w \prod_{ \alpha \in w^{-1} \Phi^{-,M}}  \mathcal{U}_{\alpha_i}^{\star_i}
\end{eqnarray*}
where if $\alpha_i\in \Phi^+$ then $\mathcal{U}_{\alpha_i}^{\star_i}$ is $\mathcal{U}_{\alpha_i,n}$ and if $\alpha_i\in\Phi^-$ then $\mathcal{U}_{\alpha_i}^{\star_i}$ is $\mathcal{U}^o_{\alpha_i,m}$.
\end{proof}

\begin{lem}\label{lem-normalsubgroups} For all $m\in\ZZ_{\geq0}$ and $k\in\ZZ_{\geq 0}$, the groups  $\mathcal{G}^1_{m+k,k}$, $\mathcal{G}^1_{\overline{m+k},k}$ and $\mathcal{G}^1_{m+k, \overline{k}}$   are normalized by $\mathcal{G}_{m,0}$.
\end{lem}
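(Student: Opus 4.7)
The proof will proceed by verifying the normalization factor-by-factor via the Iwahori decomposition, reducing to a computation with root subgroups governed by Chevalley's commutator relations. By Proposition \ref{prop-superIwahoridecomposition2} applied with $w=1$, every element of $\mathcal{G}_{m,0}$ can be written as a product $t\cdot\prod_{\alpha\in\Phi^+}u_\alpha(x_\alpha)\cdot\prod_{\alpha\in\Phi^-}u_\alpha(x_\alpha)$ with $t\in\mathcal{T}$, $|x_\alpha|\leq 1$ for $\alpha\in\Phi^+$, and $|x_\alpha|<p^{-m}$ for $\alpha\in\Phi^-$. Since the normalizer of any subgroup is itself a subgroup, it suffices to show that each such factor normalizes each of $\mathcal{G}^1_{m+k,k}$, $\mathcal{G}^1_{\overline{m+k},k}$, and $\mathcal{G}^1_{m+k,\overline{k}}$.

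For the torus factor $t\in\mathcal{T}$, conjugation preserves each root subgroup $\mathcal{U}_\alpha$, acting by multiplication by $\alpha(t)\in\ocal_F^\times$ (a unit since $\mathcal{T}$ is the compact torus). This preserves every open, closed, or partially-closed ball centered at the identity, and the action on the torus component is trivial, so the torus case is clear. For a unipotent factor $u_\beta(x)$, I will apply Chevalley's commutator relation
$$[u_\beta(x),u_\alpha(y)] = \prod_{\substack{i,j\geq 1\\ i\alpha+j\beta\in\Phi}} u_{i\alpha+j\beta}(c_{ij}\,x^jy^i),$$
valid when $\alpha+\beta\neq 0$ for integer constants $c_{ij}$, together with the $\mathrm{SL}_2$-identity
$$u_\beta(x)u_{-\beta}(y)u_\beta(-x) = u_{-\beta}\!\left(\frac{y}{1+xy}\right)\cdot h_\beta(1+xy)\cdot u_\beta\!\left(\frac{-x^2y}{1+xy}\right)$$
(and its mirror with the roles of $\beta$ and $-\beta$ reversed) in the case $\alpha=-\beta$.

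A case analysis on the signs of $\alpha$, $\beta$ and of $i\alpha+j\beta$ then shows that every commutator term lies in the appropriate ball for $\mathcal{G}^1_{m+k,k}$: the required inequalities are $|c_{ij}\,x^jy^i|\leq p^{-k}$ when $i\alpha+j\beta\in\Phi^+$ and $|c_{ij}\,x^jy^i|<p^{-(m+k)}$ when $i\alpha+j\beta\in\Phi^-$, and in each case these follow immediately from $i,j\geq 1$ together with the bounds on $x$ and $y$. For instance, when $\beta\in\Phi^-$ and $\alpha\in\Phi^-$, one has $|c_{ij}\,x^jy^i|<p^{-mj-(m+k)i}\leq p^{-(m+k)}$, and $i\alpha+j\beta\in\Phi^-$; the other sign combinations are analogous, with strict inequalities propagating from either $|x|<p^{-m}$ (if $\beta\in\Phi^-$) or $|y|<p^{-(m+k)}$ (if $\alpha\in\Phi^-$). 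In the $\mathrm{SL}_2$ case the estimate $|xy|<p^{-(m+k)}$ (automatic because either $x$ or $y$ corresponds to a negative root) places $h_\beta(1\pm xy)$ in $\mathcal{T}\cap\mathcal{G}^1_{m+k,k}$ and the outer factors in $\mathcal{U}_\beta$ and $\mathcal{U}_{-\beta}^o$ of the correct radius.

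For the partial-closure variants $\mathcal{G}^1_{\overline{m+k},k}$ and $\mathcal{G}^1_{m+k,\overline{k}}$, conjugation by any fixed element of $\mathcal{G}_{m,0}$ is a homeomorphism of $\mathcal{G}^{an}$ (in fact an automorphism of adic spaces), so it sends closures to closures and specializations to specializations; alternatively, the same Chevalley computation carries through with closed or partially-closed balls replacing the open ones, since only the non-strict versions of the valuation inequalities are required. The main obstacle is the systematic bookkeeping of the case analysis in the Chevalley relations, particularly the torus contribution appearing when $\alpha+\beta=0$; once this is controlled by the explicit $\mathrm{SL}_2$ identity recorded above, every verification reduces to elementary comparisons of $p$-adic valuations.
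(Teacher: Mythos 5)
Your argument is correct but takes a genuinely different route from the paper's. The paper embeds $G$ into $\mathrm{GL}_r$ with $B = G \cap B_{\mathrm{GL}_r}$, observes that $\mathcal{G}^1_{m+k,k}$ is parametrized as $\{1 + p^k g\}$ for $g$ in an explicitly described subspace $\mathrm{Lie}_{m,0}$ of $\mathcal{M}^{an}_r$, and then linearizes the problem: $h(1+p^kg)h^{-1} = 1 + p^k\mathrm{Ad}(h)g$, so it suffices to check that the adjoint action of $\mathcal{G}_{m,0}$ preserves $\mathrm{Lie}_{m,0}$ and its partial closures, which is an elementary matrix computation. You instead stay inside the reductive group, feed the Iwahori decomposition of both $\mathcal{G}_{m,0}$ and $\mathcal{G}^1_{m+k,k}$ into Chevalley's commutator relations plus the $\mathrm{SL}_2$ identity for the opposite-root case, and run the valuation bookkeeping factor by factor. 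Both work; the paper's linearization buys a single uniform computation at the cost of an ambient embedding, while your argument is intrinsic to $G$ at the cost of the sign case analysis. Your numerical estimates check out (I verified the Chevalley bounds in all four sign cases and the $\mathrm{SL}_2$ identity, which is correct as written, including that $|xy| < p^{-(m+k)}$ always holds because one of the two roots involved is negative), and your observation that conjugation by a fixed element is a homeomorphism correctly disposes of the two partial-closure variants once the open case is done.

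One small omission: when conjugating by a unipotent factor $u_\beta(x)$ of $\mathcal{G}_{m,0}$, you treat the root-group factors $u_\alpha(y)$ of $\mathcal{G}^1_{m+k,k}$ but never the \emph{torus} factor $t$ of $\mathcal{G}^1_{m+k,k}$ coming from its own Iwahori decomposition. This requires a separate (easy) line: $u_\beta(x)\,t\,u_\beta(-x) = t\cdot u_\beta\bigl((\beta(t^{-1})-1)x\bigr)$, and since $t \equiv 1 \bmod p^{m+k+\epsilon}$ one gets $|(\beta(t^{-1})-1)x| < p^{-(m+k)}\cdot|x|$, which lands in the right ball in either sign case. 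Worth adding to make the factor-by-factor verification complete.
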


\begin{proof} The case $k=0$ is trivial. We assume that $k \geq 1$. We can find a closed embedding $G \hookrightarrow \mathrm{GL}_r$ and a Borel $B_{\mathrm{GL_r}}$ of $\mathrm{GL}_r$ with the property that $B = G \cap B_{GL_n}$. Indeed, first consider a faithful representation of $G$ into $\mathrm{GL}_r$, then consider the action of $B$ on the flag variety of Borels of $\mathrm{GL}_r$.  This action of a solvable group on a proper scheme must have a fixed point $B_{GL_r}$, and then $B\subseteq G\cap B_{GL_r}$. 
But then we must have $B=G\cap B_{GL_r}$ as the later is a solvable subgroup of $G$ containing $B$. Therefore, the problem is reduced to the case of the group $\mathrm{GL}_r$. 
We now consider certain sub-algebras of the  algebra of $r \times r$ matrices  $\mathcal{M}^{an}_r$. For all $s \in \qq$, we let $\mathbb{B}(0,s)$ the (quasi-compact) ball of center $0$ and radius $s$, and $\mathbb{B}^{o}(0,s) = \cup_{s' < s} \mathbb{B}(0,s')$ be the ``open'' ball of radius $s$ (which is a Stein space). For any $m \geq 0$, we let: 
$$\mathrm{Lie}_{m,0}(S, S^+) = $$ $$ \{ (a_{i,j}) \in \mathcal{M}^{an}_r(S,S^+),~a_{i,j} \in \mathbb{B}^{o}(0,\vert p^m\vert )(S,S^+)~\textrm{if}~i\geq j,  a_{i,j} \in {\mathbb{B}(0,1)}(S,S^+)~\textrm{if}~i\leq j\},$$ $$\mathrm{Lie}_{\overline{m},0}(S, S^+) = $$ $$ \{ (a_{i,j}) \in \mathcal{M}^{an}_r(S,S^+), a_{i,j} \in \overline{\mathbb{B}^{o}(0,\vert p^m\vert)}(S,S^+)~\textrm{if}~i \geq j,   a_{i,j} \in {\mathbb{B}(0,1)}(S,S^+)~\textrm{if}~i\leq j\},$$
$$ \mathrm{Lie}_{{m},\overline{0}}(S, S^+) = $$ $$ \{ (a_{i,j}) \in  \mathcal{M}^{an}_r(S,S^+),~a_{i,j} \in \mathbb{B}^{o}(0,\vert p^m\vert)(S,S^+)~\textrm{if}~i \geq j\,  a_{i,j} \in \overline{\mathbb{B}(0,1)}(S,S^+)~\textrm{if}~i\leq j\}.$$
We claim that these algebras are stable under the adjoint action of $\mathcal{G}_{m,0}$. Since $\mathrm{Lie}_{m,0}$ is the Lie algebra of $\mathcal{G}^1_{m,0}$, and $\mathcal{G}_{m,0}$ normalizes $\mathcal{G}^1_{m,0}$, this case follows easily. The other cases are elementary to check by hand. 
A typical element of $\mathcal{G}^1_{m+k,k}$ (resp. $\mathcal{G}^1_{\overline{m+k},k}$, resp. $\mathcal{G}^1_{{m+k},\overline{k}}$) writes $(1+p^k g )$ where $g \in \mathrm{Lie}_{m,0}$ (resp. $\mathrm{Lie}_{\overline{m},0}$, resp. $\mathrm{Lie}_{m,\overline{0}}$).  For $h \in \mathcal{G}_{m,0}$, we have $h(1+p^kg)h^{-1} = 1 + p^k \mathrm{Ad}(h).g$. 
\end{proof}

\begin{lem}\label{lem-nice-disjoint-union}  Let $w \in \WM$.  Let $m \geq 0$, and let $K_{p} \subset \mathcal{G}_{m,0}$ be a profinite subgroup. For all $k \geq 0$,   the  sets  $$\textrm{$]C_{w,k}[_{m+k,k}K_{p}$,  $]C_{w,k}[_{\overline{m+k},k}K_{p} $ and $]C_{w,k}[_{{m+k}, \overline{k}}K_{p} $}$$ are a finite disjoint union of translates of the form  $$\textrm{$]C_{w,k}[_{m+k,k}h$, $]C_{w,k}[_{\overline{m+k},k}h $, and respectively $]C_{w,k}[_{{m+k}, \overline{k}}h $ for $h \in  K_p$. }$$ 
\end{lem}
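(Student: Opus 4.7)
The plan is to handle the three cases uniformly by writing $G'$ for any of $\mathcal{G}^1_{m+k,k}$, $\mathcal{G}^1_{\overline{m+k},k}$, $\mathcal{G}^1_{m+k,\overline{k}}$. The corollary following Proposition \ref{prop-superIwahoridecomposition2} identifies the corresponding cell neighborhood as $\mathcal{P}\backslash \mathcal{P}wG'$, and Lemma \ref{lem-normalsubgroups} tells us that $\mathcal{G}_{m,0} \supseteq K_p$ normalizes $G'$. The key structural observation is then that right translation by $h \in K_p$ descends to a well-defined action on the double coset space $\mathcal{P}\backslash \mathcal{G}/G'$: normality gives $hG' = G'h$, so $\mathcal{P}xhG' = \mathcal{P}xG'h$ depends only on the double coset of $x$. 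Since $]C_{w,k}[$ is the preimage in $\mathcal{P}\backslash\mathcal{G}$ of the single double coset $\mathcal{P}wG'$, each translate $]C_{w,k}[h$ is the preimage of $\mathcal{P}whG'$, and preimages of distinct double cosets are automatically disjoint. This immediately yields the dichotomy: any two translates are either equal or disjoint.

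Next I will compute the stabilizer. Unwinding the definitions, $]C_{w,k}[h = ]C_{w,k}[h'$ iff $\mathcal{P}whG' = \mathcal{P}wh'G'$, iff $wh \in \mathcal{P}wh'G'$, iff (using $h'G'(h')^{-1} = G'$ from normality) $h(h')^{-1} \in w^{-1}\mathcal{P}w \cdot G'$. Setting $K_p^w := K_p \cap (w^{-1}\mathcal{P}w \cdot G')$, a subgroup of $K_p$, the distinct translates $]C_{w,k}[h$ are indexed by the coset space $K_p^w\backslash K_p$. To finish it suffices to show this is finite, which reduces to showing $K_p^w$ is open in $K_p$. Since $K_p^w \supseteq K_p \cap G'$, this follows once we know $K_p \cap G'$ is open in the profinite group $K_p$; and this in turn is a standard fact, as the subgroups $\mathcal{G}^1_{m+k,k}(F)$ (which have the same $F$-points as their closure variants) are defined by $p$-adic congruence conditions and hence are open neighborhoods of the identity in $G(F)$, so their intersections with the compact subgroup $K_p$ are open in $K_p$.

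The main obstacle is really the verification in the second paragraph that the equality of translates is controlled exactly by the coset relation $h(h')^{-1} \in w^{-1}\mathcal{P}wG'$: one has to carry out the argument inside $\mathcal{P}\backslash\mathcal{G}/G'$ and use normality of $G'$ under $K_p$ at just the right moment to move $h$ and $h'$ past $G'$. Once this bookkeeping is done, the disjointness and finiteness statements both fall out: disjointness is formal from the double coset picture, and finiteness follows from compactness of $K_p$ together with openness of $K_p \cap G'$.
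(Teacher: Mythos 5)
Your proof is correct and follows essentially the same route as the paper: write $]C_{w,k}[_{\bullet}=\mathcal{P}\backslash\mathcal{P}wG'$, use Lemma \ref{lem-normalsubgroups} to move $h$ past $G'$, deduce disjointness from the double-coset picture, and get finiteness from an openness/finite-index argument. The only difference is cosmetic — you compute the stabilizer $K_p^w = K_p\cap w^{-1}\mathcal{P}wG'$ explicitly, whereas the paper simply observes that $\mathcal{G}_{m+k,k}\cap K_p$ (which sits inside your $K_p^w$ after applying the Iwahori decomposition of $\mathcal{G}_{m+k,k}$) already has finite index in $K_p$.
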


 \begin{proof} We prove the first statement, the others are identical.  For $h\in K_p$ we have $]C_{w,k}[_{m+k,k}h=\mathcal{P}\backslash\mathcal{P}w\mathcal{G}_{m+k,k}^1h=\mathcal{P}\backslash\mathcal{P}wh\mathcal{G}_{m+k,k}^1$ by Lemma \ref{lem-normalsubgroups}.
This proves that if  for $h,h'\in K_p$, $]C_{w,k}[_{m+k,k}h  \cap  ]C_{w,k}[_{m+k,k} h' \neq \emptyset$, then $]C_{w,k}[_{m+k,k}h  =  ]C_{w,k}[_{m+k,k} h'$.  Therefore, $]C_{w,k}[_{m+k,k} K_p$ is a disjoint union of translates. This disjoint union is finite because $\mathcal{G}_{m+k,k} \cap K_{p}$ is of finite index in $K_{p}$.  
 \end{proof}
 
 \bigskip

 We now consider certain  intersections. 
 
 \begin{lem}\label{lem-nice-intersections} Let $w \in \WM$.  Let $m, n\in \ZZ_{\geq 0}$ and let $K_p$ be a subgroup of $\mathcal{G}_{m,0}$.  Then we have:
 \begin{eqnarray*}
 ]C_{w,k}[_{m,0} K_{p} \cap ]C_{w,k}[_{0,n} K_{p} &= &]C_{w,k}[_{m,n} K_p,\\
 ]C_{w,k}[_{m, \overline{0}} K_{p} \cap ]C_{w,k}[_{0,\overline{n}} K_{p} &=& ]C_{w,k}[_{m,\overline{n}} K_p,\\  
 ]C_{w,k}[_{\overline{m},0} K_{p} \cap ]C_{w,k}[_{\overline{0},n} K_{p} & = & ]C_{w,k}[_{\overline{m},n} K_p.
 \end{eqnarray*}
 \end{lem}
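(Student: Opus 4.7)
The plan is to handle all three identities in parallel by reducing each to a group-theoretic statement inside $\mathcal{G}^{an}$. The inclusions $\supseteq$ are immediate: from the explicit coordinate description on the open cell $\mathcal{U}_w^{an}$ one reads off $]C_{w,k}[_{m,n}\subseteq\ ]C_{w,k}[_{m,0}\cap\ ]C_{w,k}[_{0,n}$ (noted in the text), together with its two partial-closure analogues, so applying $\cdot K_p$ gives $\supseteq$ in all three cases.

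For the nontrivial direction, my strategy is to lift the problem to $\mathcal{G}^{an}$ via the isomorphism $\mathcal{P}_w\backslash\mathcal{G}\xrightarrow{\sim}\mathcal{P}\backslash\mathcal{G}$, $g\mapsto wg$, where $\mathcal{P}_w:=w^{-1}\mathcal{P}w$. Under this identification $]C_{w,k}[_{m,n}K_p$ becomes the image of $\mathcal{P}_w\mathcal{G}^1_{m,n}K_p$, so it will suffice to prove
\[
\mathcal{P}_w\mathcal{G}^1_{m,0}K_p\cap\mathcal{P}_w\mathcal{G}^1_{0,n}K_p=\mathcal{P}_w\mathcal{G}^1_{m,n}K_p
\]
inside $\mathcal{G}^{an}$, together with the analogous identities for $\overline{m},n$ and $m,\overline{n}$. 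I will split the argument into two steps. The first is a $K_p$-invariance claim, $\mathcal{P}_w\mathcal{G}^1_{m,0}K_p=\mathcal{P}_w\mathcal{G}^1_{m,0}$: Proposition~\ref{prop-superIwahoridecomposition2} gives the Iwahori factorization $\mathcal{G}_{m,0}=\mathcal{T}\cdot\mathcal{G}^1_{m,0}$ with $\mathcal{T}\subseteq\mathcal{P}_w$, so $K_p\subseteq\mathcal{G}_{m,0}\subseteq\mathcal{P}_w\mathcal{G}^1_{m,0}$; combining with normality of $\mathcal{G}^1_{m,0}$ in $\mathcal{G}_{m,0}$ (Lemma~\ref{lem-normalsubgroups} with $k=0$), one deduces $\mathcal{P}_w\mathcal{G}^1_{m,0}\cdot K_p\subseteq \mathcal{P}_w\mathcal{G}^1_{m,0}\cdot\mathcal{G}_{m,0}=\mathcal{P}_w\mathcal{G}^1_{m,0}$. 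The second step is the purely group-theoretic identity $\mathcal{P}_w\mathcal{G}^1_{m,0}\cap\mathcal{P}_w\mathcal{G}^1_{0,n}=\mathcal{P}_w\mathcal{G}^1_{m,n}$: reordering the Iwahori factors of Proposition~\ref{prop-superIwahoridecomposition2} so that the root factors lying in $\mathcal{P}_w$ are grouped first and those in the opposite unipotent $\overline{U}_{P_w}$ last, each side rewrites as $\mathcal{P}_w\cdot(\overline{U}_{P_w}\cap\mathcal{G}^1_{\ast,\ast})$, and uniqueness of the big-cell product $\mathcal{P}_w\times\overline{U}_{P_w}\hookrightarrow\mathcal{G}$ reduces the claim to the trivial identity $\mathcal{G}^1_{m,0}\cap\mathcal{G}^1_{0,n}=\mathcal{G}^1_{m,n}$ (immediate from $\mathcal{G}^1_{\ast,\ast}=\mathcal{G}_{U,\ast}^\circ\cap\mathcal{G}_{\overline{U},\ast}$ together with monotonicity). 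Combining: for $x$ in the left-hand intersection, write $x=yk$ with $y\in\mathcal{P}_w\mathcal{G}^1_{0,n}$, $k\in K_p$; the first step yields $y=xk^{-1}\in\mathcal{P}_w\mathcal{G}^1_{m,0}K_p=\mathcal{P}_w\mathcal{G}^1_{m,0}$, so the second step places $y\in\mathcal{P}_w\mathcal{G}^1_{m,n}$, whence $x\in\mathcal{P}_w\mathcal{G}^1_{m,n}K_p$.

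The two partial-closure identities I expect to go through by the same two-step scheme. Proposition~\ref{prop-superIwahoridecomposition2} supplies the analogous Iwahori factorizations of $\mathcal{G}^1_{\overline{m},n}$ and $\mathcal{G}^1_{m,\overline{n}}$; Lemma~\ref{lem-normalsubgroups} with $k=0$ still gives the required normality under $\mathcal{G}_{m,0}$; and the replacement intersection identities $\mathcal{G}^1_{m,\overline{0}}\cap\mathcal{G}^1_{0,\overline{n}}=\mathcal{G}^1_{m,\overline{n}}$ and $\mathcal{G}^1_{\overline{m},0}\cap\mathcal{G}^1_{\overline{0},n}=\mathcal{G}^1_{\overline{m},n}$ can be read off factor by factor using $\overline{\mathcal{U}_{\alpha,0}}\cap\overline{\mathcal{U}_{\alpha,n}}=\overline{\mathcal{U}_{\alpha,n}}$ and its negative-root analogue. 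The main technical point to watch, and what I expect to be the real obstacle, is the reordering of the Iwahori product in the second step: Proposition~\ref{prop-superIwahoridecomposition2} fixes the ordering "$w^{-1}\Phi^+$ first then $w^{-1}\Phi^-$", while the $\mathcal{P}_w$/$\overline{U}_{P_w}$ split requires further reordering inside the $w^{-1}\Phi^-$ block. For the $\mathcal{G}^1_{\cdot,\cdot}$ this is fine because the $\Phi^-$ factors lie in the unipotent $\overline{U}$, where any ordering of the root subgroups yields an isomorphism onto the same subgroup; once this point is verified, big-cell uniqueness continues to apply to the partially-closed groups because they all still lie inside $\mathrm{Iw}$, hence inside the big Bruhat cell $\mathcal{P}_w\overline{U}_{P_w}\subseteq\mathcal{G}$.
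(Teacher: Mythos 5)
Your proof is correct and its central observation agrees with the paper's: absorb the $K_p$ on one side using $K_p$-invariance of $]C_{w,k}[_{m,0}$ (and the $\overline{\,\cdot\,}$-variants), then reduce to the intersection identity with the $K_p$'s removed. The execution, however, is noticeably heavier than the paper's, and in one place carries real risk. The paper gets the invariance in a single line from $]C_{w,k}[_{m,0}=\mathcal{P}\backslash\mathcal{P}w\mathcal{G}_{m,0}$ together with the hypothesis $K_p\subseteq\mathcal{G}_{m,0}$; you instead work with $\mathcal{G}^1_{m,0}$, which does not contain $K_p$, and so must route through Lemma~\ref{lem-normalsubgroups} and a torus-commutation step. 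More significantly, the paper treats $]C_{w,k}[_{m,0}\cap]C_{w,k}[_{0,n}=]C_{w,k}[_{m,n}$ and its two partial-closure variants as immediate consequences of the explicit coordinate product descriptions recorded just after the definitions, read off factor by factor in $\mathcal{U}_w^{an}$. Your Step~2 re-proves this by lifting to $\mathcal{G}^{an}$, splitting the Iwahori product of $\mathcal{G}^1_{\star,\star}$ into $\mathcal{P}_w$- and $\overline{U}_{P_w}$-parts, and invoking big-cell uniqueness; this requires a reordering of factors inside the $w^{-1}\Phi^-$ block. You correctly flag this as the delicate point, but you should be aware that the remark following Proposition~\ref{prop-superIwahoridecomposition2} explicitly cautions that the authors do \emph{not} know whether that decomposition persists under arbitrary reorderings once partial closures enter (e.g.\ for $\mathcal{G}_{\overline{0},0}$). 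Your observation that the relevant factors all lie in a single unipotent group is a reasonable fix and is likely salvageable via the Chevalley commutation relations, but it is precisely the technicality that the paper's coordinate-level argument never has to confront.
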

 
\begin{proof} We only check the first statement as the others are identical.  As $K_p\subset\mathcal{G}_{m,0}$, $]C_{w,k}[_{m,0} K_{p} = ]C_{w,k}[_{m,0}$. Let $x \in  ]C_{w,k}[_{m,0} K_{p} \cap ]C_{w,k}[_{0,n} K_{p}$. Then, there exists $h \in K_p$, $xh \in ]C_{w,k}[_{m,0} \cap ]C_{w,k}[_{0,n} = ]C_{w,k}[_{m,n}$. The reverse inclusion is trivial.
\end{proof}

 \subsubsection{Intersections  and unions of  cells} We prove a few more results concerning the intersections and unions of various subsets of $\mathcal{FL}$ we have introduced so far. 
  \begin{lem}\label{lem-writing-intersections} For any integer $r$, we have:
 \begin{eqnarray*}
 \mathcal{FL} & =& ] \cup_{w, \ell(w)  \geq r} C_{w,k} [ \coprod \overline{] \cup_{w, \ell(w) \leq r-1} C_{w,k}[} \\
  &=& \overline{] \cup_{w, \ell(w)  \geq r} C_{w,k} [ }\coprod {] \cup_{w, \ell(w) \leq r-1} C_{w,k}[}
  \end{eqnarray*}
  \end{lem}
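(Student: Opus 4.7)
The plan is to deduce both disjoint decompositions from the corresponding stratification $FL_k = U_r \coprod Z_r$ of the special fiber, by pulling back via the specialization map and then invoking elementary topological identities between closures and interiors of complements.

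Setting $U_r := \bigcup_{w, \ell(w) \geq r} C_{w,k}$ and $Z_r := \bigcup_{w, \ell(w) \leq r-1} C_{w,k}$, I would first check that $U_r$ is open in $FL_k$ with closed complement $Z_r$. This follows from $\overline{C_w} = X_w = \bigcup_{w' \leq w} C_{w'}$ together with the elementary fact that $w' \leq w$ in the Bruhat order forces $\ell(w') \leq \ell(w)$, so that $Z_r = \bigcup_{w, \ell(w) \leq r-1} X_{w,k}$ is a finite union of Schubert varieties and hence closed.

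By continuity of $\mathrm{sp}_{\mathcal{FL}} : \mathcal{FL} \to FL_k$, the preimage $\mathrm{sp}^{-1}(U_r)$ is open and $\mathrm{sp}^{-1}(Z_r)$ is closed; since an open set coincides with its own interior, $]U_r[_{\mathcal{FL}} = \mathrm{sp}^{-1}(U_r)$, and we have the disjoint decomposition $\mathcal{FL} = \, ]U_r[ \, \coprod \mathrm{sp}^{-1}(Z_r)$. The second equality of the lemma then drops out from the topological identity $X \setminus \overline{A} = \mathrm{int}(X \setminus A)$ applied to $A = \, ]U_r[$: one obtains
\[
\mathcal{FL} \setminus \overline{\, ]U_r[\,} \; = \; \mathrm{int}(\mathrm{sp}^{-1}(Z_r)) \; = \; ]Z_r[
\]
directly from the definition of the tube, and hence $\mathcal{FL} = \overline{\, ]U_r[\,} \coprod \, ]Z_r[$.

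For the first equality it remains to identify the closed set $\mathrm{sp}^{-1}(Z_r) = \mathcal{FL} \setminus \, ]U_r[$ with $\overline{\, ]Z_r[\,}$. One inclusion, $\overline{\, ]Z_r[\,} \subseteq \mathrm{sp}^{-1}(Z_r)$, is immediate since $\mathrm{sp}^{-1}(Z_r)$ is closed and contains $\, ]Z_r[$. The main obstacle is the reverse inclusion: a (necessarily higher-rank) point $y \in \mathrm{sp}^{-1}(Z_r) \setminus \, ]Z_r[$ must be exhibited as a limit point of $\, ]Z_r[$. I would argue this locally: since $Z_r$ is Zariski cut out on $FL_k$ by the vanishing of regular functions whose lifts $\tilde{f}_i$ to a formal model satisfy $|\tilde{f}_i(y)| < 1$ in the valuation of $y$, every rational neighborhood of $y$ contains rank-one points at which all $\tilde{f}_i$ take values strictly less than $1$, and such points lie in $\, ]Z_r[$. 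This density statement, reflecting the spectral nature of $\mathcal{FL}$ and the concrete local description of the tubes of closed subschemes, is where the technical content of the lemma is concentrated.
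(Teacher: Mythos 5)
Your proof of the second equality is clean and in fact more elegant than the paper's: the paper argues it by observing that $\overline{\,]U_r[\,}$ and $]Z_r[$ are both stable under specialization and generalization, that together they contain all rank-one points, and that their intersection contains none; your one-line deduction from the general topological identity $X \setminus \overline{A} = \mathrm{int}(X \setminus A)$ together with $]Z_r[ := \mathrm{int}(\mathrm{sp}^{-1}(Z_r))$ gets the same conclusion faster. For the first equality, the paper simply says ``apply $\mathrm{sp}^{-1}$'' to $FL_k = U_r \coprod Z_r$, which, after identifying $]U_r[ = \mathrm{sp}^{-1}(U_r)$, silently invokes the standard fact $\mathrm{sp}^{-1}(Z_r) = \overline{\,]Z_r[\,}$ for a closed subscheme $Z_r$ of the special fiber (a fact the paper uses repeatedly elsewhere without comment, e.g.\ in the proofs of Lemmas~\ref{topological-lemma-easy} and~\ref{lem-topological-description}). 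You correctly identify this as the real content and try to prove it, which is the right instinct.

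However, your argument for $\mathrm{sp}^{-1}(Z_r) \subseteq \overline{\,]Z_r[\,}$ has a gap. The claim that ``every rational neighborhood of $y$ contains rank-one points at which all $\tilde f_i$ take values strictly less than $1$'' is plausible but is exactly what needs to be shown, and the appeal to ``the spectral nature of $\mathcal{FL}$'' is not an argument. The clean way to close the gap is to use the unique maximal (rank-one) generalization $\tilde y$ of $y$: the preimage $\mathrm{sp}^{-1}(Z_r)$ is stable under generalization, so $\tilde y \in \mathrm{sp}^{-1}(Z_r)$; being rank one, $\tilde y$ already lies in the tube $]Z_r[$ (for a rank-one valuation, $|\tilde f_i(\tilde y)| < 1$ forces $|\tilde f_i(\tilde y)| \leq |p|^\epsilon$ for some $\epsilon > 0$); and since opens in an adic space are stable under generalization, every open neighborhood of $y$ contains $\tilde y$, hence meets $]Z_r[$, hence $y \in \overline{\,]Z_r[\,}$. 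You should replace the hand-waving about rational neighborhoods with this specialization/generalization argument to make the step rigorous.
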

  
\begin{proof} The first equality is obtained by applying $\mathrm{sp}^{-1}$ to the decomposition of the special fiber:
$$FL_k = \cup_{w, \ell(w)  \geq r} C_{w,k}  \coprod  \cup_{w, \ell(w) \leq r-1} C_{w,k}$$
For the second equality, first observe that both $ \overline{] \cup_{w, \ell(w)  \geq r} C_{w,k} [ }$ and ${] \cup_{w, \ell(w) \leq r-1} C_{w,k}[}$ are stable under specialization and generalization. 
Since the disjoint union contains all rank one points of $\mathcal{FL}$, it contains all of them.  Since the intersection contains no rank one points, it is empty.
\end{proof} 

\begin{lem} \label{topological-lemma-easy}
Let $w, w' \in \WM$. Assume that $\ell(w) \leq \ell(w')$ and $w \neq w'$.
Then:
\begin{enumerate}
\item $\overline{]X_{w,k}[} \cap ]Y_{w',k}[ = \emptyset$,
\item $]X_{w,k}[\cap\overline{]Y_{w',k}[}=\emptyset$.
\end{enumerate}
\end{lem}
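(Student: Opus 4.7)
My plan is to reduce both parts of the lemma to a single topological principle, applied to the disjointness of $X_{w,k}$ and $Y_{w',k}$ in the special fiber $FL_k$.

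First I will verify the combinatorial disjointness $X_{w,k}\cap Y_{w',k}=\emptyset$. Intersecting the defining Bruhat-cell decompositions gives
$$X_{w,k}\cap Y_{w',k}=\bigcup_{w'\leq v\leq w}C_{v,k},$$
and this is nonempty only if $w'\leq w$. But $w'\leq w$ implies $\ell(w')\leq\ell(w)$, which combined with the hypothesis $\ell(w)\leq\ell(w')$ forces $\ell(w)=\ell(w')$; and the only element $\leq w$ of the same length as $w$ is $w$ itself, so $w=w'$, contradicting the other hypothesis.

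Next I will record the following general topological fact: for any two disjoint subsets $A,B\subseteq FL_k$, one has $]A[_{\mathcal{FL}}\cap\overline{]B[_{\mathcal{FL}}}=\emptyset$. Indeed, suppose $x$ lay in both. Since $x\in {}]A[_{\mathcal{FL}}$, which is by definition the interior of $\mathrm{sp}_{\mathcal{FL}}^{-1}(A)$, there is an open neighborhood $V$ of $x$ with $V\subseteq\mathrm{sp}_{\mathcal{FL}}^{-1}(A)$; and since $x\in\overline{]B[_{\mathcal{FL}}}$, the open $V$ must meet $]B[_{\mathcal{FL}}\subseteq\mathrm{sp}_{\mathcal{FL}}^{-1}(B)$. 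Any point $y$ in the intersection satisfies $\mathrm{sp}_{\mathcal{FL}}(y)\in A\cap B=\emptyset$, a contradiction.

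Applying this principle with $(A,B)=(Y_{w',k},X_{w,k})$ yields part (1), and with $(A,B)=(X_{w,k},Y_{w',k})$ yields part (2). I do not expect any significant obstacle: the two parts are symmetric under exchanging the roles of interior and closure, and this symmetry is exactly what the abstract tube principle captures; only the short combinatorial disjointness in $FL_k$ uses the length hypothesis. In particular there is no need to analyze separately the cases $\ell(w)<\ell(w')$ and $\ell(w)=\ell(w')$.
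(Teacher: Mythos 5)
Your proof is correct, and it takes a genuinely more uniform and more elementary route than the paper's. The paper treats the two statements differently: for part (1) it uses the exact identities $\overline{]X_{w,k}[}=\mathrm{sp}^{-1}(X_{w,k})$ (which needs $X_{w,k}$ closed) and $]Y_{w',k}[=\mathrm{sp}^{-1}(Y_{w',k})$ (which needs $Y_{w',k}$ open) and intersects the preimages; for part (2) it invokes that the closure of a quasi-compact open is the set of all its specializations, observes that both $]X_{w,k}[$ and $\overline{]Y_{w',k}[}$ are stable under generalization, and then checks disjointness on rank-one points. Your abstract tube principle replaces both of these with one direct argument from the definitions of interior and closure: $x\in{}]A[$ gives an open neighborhood landing in $\mathrm{sp}^{-1}(A)$, $x\in\overline{]B[}$ forces that neighborhood to meet $]B[\subseteq\mathrm{sp}^{-1}(B)$, and any common point maps under $\mathrm{sp}$ into $A\cap B=\emptyset$. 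This needs neither the closedness or openness of the stratum, nor the specialization/rank-one-point machinery, only that $]U[$ is contained in $\mathrm{sp}^{-1}(U)$ and is open, so it is both stronger (works for any disjoint subsets) and cleaner. You also supply the combinatorial disjointness $X_{w,k}\cap Y_{w',k}=\emptyset$ explicitly via the Bruhat order, which the paper simply cites as known; your use of the fact that $w'\leq w$ with $\ell(w')=\ell(w)$ forces $w'=w$ is correct.
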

\begin{proof}  We prove the first point. Note that $ \overline{]X_{w,k}[} = \mathrm{sp}^{-1}(X_{w,k})$ and $]Y_{w',k}[ = \mathrm{sp}^{-1}(Y_{w',k})$. Thus the first point follows from the fact that $X_{w,k} \cap Y_{w',k} = \emptyset$. We prove the second point. Since $]Y_{w',k}[$ is quasi-compact open,  $\overline{]Y_{w',k}[}$ is the set of all its specializations. Therefore, both $\overline{]Y_{w',k}[}$ and $]X_{w,k}[$ are stable under generalization. But they have no common rank one point, again as $X_{w,k} \cap Y_{w',k} = \emptyset$.
\end{proof}

\begin{lem}\label{lem-topological-description}
\begin{enumerate}
\item For all $m, n \in \qq$,  we have $\overline{]C_{w,k}[_{m,n}} = ]C_{w,k}[_{\overline{m}, \overline{n}} \subseteq \mathcal{U}_w^{an}$,
\item We have $]C_{w,k}[_{0, - \infty} \subseteq ]X_{w,k}[.$
\item We have $]C_{w,k}[_{-\infty, 0} \subseteq ]Y_{w,k}[.$
\item We have $\overline{]Y_{w,k}[} \cap ] X_{w, k}[  = ]C_{w,k}[_{0, \overline{0}}\subseteq \overline{]C_{w,k}[}$. 
\item We have ${]Y_{w,k}[} \cap \overline{ ] X_{w, k}[}  = ]C_{w,k}[_{\overline{0}, {0}} \subseteq \overline{]C_{w,k}[}$. 
\end{enumerate}
\end{lem}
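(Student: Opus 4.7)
The plan is to verify each of the five topological claims by direct coordinate analysis using the product decomposition $\mathcal{U}_w^{an}\cong\prod_{\alpha\in w^{-1}\Phi^{-,M}}\mathcal{U}_\alpha^{an}$, which is an isomorphism of adic spaces and hence a homeomorphism of underlying spectral spaces.

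For (1), since closure in a finite product (with the product topology) equals the product of closures, I would reduce the identification $\overline{]C_{w,k}[_{m,n}}=]C_{w,k}[_{\overline{m},\overline{n}}$ to showing that the closures of $\mathcal{U}_{\alpha,n}$ and $\mathcal{U}^o_{\alpha,m}$ inside $\mathcal{U}_\alpha^{an}$ are exactly the partial closures described earlier in the example of the affine line. A separate check will confirm that each of these partial closures is already closed in the projective closure of $\mathcal{U}_\alpha^{an}$, which ensures that the closure computed in $\mathcal{U}_w^{an}$ does not grow when passed to the ambient $\mathcal{FL}$.

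For (2) and (3), I would exploit the fact that both sets are open in $\mathcal{U}_w^{an}$ (hence in $\mathcal{FL}$), being products of opens in each factor. For (2), every rank $1$ point of $]C_{w,k}[_{0,-\infty}$ specializes coordinatewise to a point with vanishing $\Phi^-$-coordinates, i.e. to a point of $C_{w,k}\subseteq X_{w,k}$; hence the whole open set lies in $\mathrm{sp}^{-1}(X_{w,k})$, and therefore in its interior $]X_{w,k}[$. The symmetric argument handles (3), using Lemma~\ref{lem-oppositeSchubert} to place the rank $1$ specializations in $X^w\subseteq Y_w$.

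For (4) and (5), I would prove (4) and deduce (5) by symmetry. The inclusion into $\overline{]C_{w,k}[}$ is immediate from (1). The containment $]C_{w,k}[_{0,\overline{0}}\subseteq\overline{]Y_{w,k}[}\cap]X_{w,k}[$ combines a variant of (2) (to see $]C_{w,k}[_{0,\overline{0}}\subseteq]X_{w,k}[$) with the fact that every point of $]C_{w,k}[_{0,\overline{0}}$ is a specialization of a rank $1$ point of $]C_{w,k}[\subseteq]Y_{w,k}[$. Conversely, any $x\in\overline{]Y_{w,k}[}\cap]X_{w,k}[$ admits a rank $1$ generalization $x'\in]Y_{w,k}[$; since $]X_{w,k}[$ is open and $x\in]X_{w,k}[$, also $x'\in]X_{w,k}[$, so that $x'\in]Y_{w,k}[\cap]X_{w,k}[$ specializes into $Y_{w,k}\cap X_{w,k}=C_{w,k}$ and hence lies in $]C_{w,k}[$. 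Translating the fact that $x$ is a specialization of such an $x'$ into the coordinate decomposition then yields the desired partial-closure conditions. The main obstacle is precisely this last step: carefully tracking which rank $2$ specializations appear and how they interact with the openness of $]X_{w,k}[$, so that one obtains the asymmetric partial closures $(\mathcal{U}^o_{\alpha,0},\overline{\mathcal{U}_{\alpha,0}})$ in (4) rather than, say, $(\overline{\mathcal{U}^o_{\alpha,0}},\mathcal{U}_{\alpha,0})$.
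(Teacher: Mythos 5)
Your plan has several real gaps. The most basic one is in (1): the isomorphism $\mathcal{U}_w^{an}\cong\prod_\alpha\mathcal{U}_\alpha^{an}$ is an isomorphism of \emph{adic} spaces, but the underlying topological space of an adic fiber product is \emph{not} the product of the underlying spaces (there are many more points), so you cannot invoke ``closure in a finite product equals the product of closures.'' The paper avoids this entirely: it proves $\overline{]C_{w,k}[_{m,n}}\subseteq\mathcal{U}_w^{an}$ by observing that the complement $\mathcal{Z}$ of $\mathcal{U}_w^{an}$ in $\mathcal{FL}$ is the analytification of the proper scheme $FL\setminus U_w$ and is therefore stable under specialization, sandwiches $]C_{w,k}[_{m,n}$ in a quasi-compact open $\mathcal{U}\subseteq\mathcal{U}_w^{an}$ whose closure is its set of specializations, and only then reads off the explicit partial closure. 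Your argument for (2) also breaks: a rank~$1$ point of $]C_{w,k}[_{0,-\infty}$ with some $\Phi^+$-coordinate of absolute value $>1$ does \emph{not} specialize ``coordinatewise'' into $C_{w,k}$; that point leaves the formal chart $U_{w,k}$ and its reduction lies in $X_{w,k}\setminus C_{w,k}$ (or a priori could lie elsewhere), so your claimed identification of the specialization is wrong. The paper sidesteps this by first noting $C_w^{an}\subseteq\mathcal{X}_w\subseteq]X_{w,k}[$ (using the integral model of the Schubert variety) and then using that $]X_{w,k}[$ is invariant under right multiplication by the Iwahori $\mathrm{Iw}$, whereas the extra factors $\mathcal{U}_\alpha^o$ ($\alpha\in\Phi^-$) lie in $\mathrm{Iw}$; (3) is handled in the same way via Lemma~\ref{lem-oppositeSchubert}.

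Finally, (5) is \emph{not} deducible from (4) by symmetry. The sets $X_{w,k}$ and $Y_{w,k}$ play asymmetric roles — the former is Zariski closed, the latter open — so their tubes behave very differently: $\overline{]X_{w,k}[}=\mathrm{sp}^{-1}(X_{w,k})$ and $]Y_{w,k}[=\mathrm{sp}^{-1}(Y_{w,k})$, while neither $\overline{]Y_{w,k}[}$ nor $]X_{w,k}[$ admits such a clean description. The paper exploits precisely this asymmetry: (5) is proved directly by writing $]Y_{w,k}[\cap\overline{]X_{w,k}[}=\mathrm{sp}^{-1}(Y_{w,k}\cap X_{w,k})=\mathrm{sp}^{-1}(C_{w,k})$, which is much simpler than the maximal-generalization argument needed for (4). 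Your treatment of (4) itself is conceptually in line with the paper's (both compare a point in the intersection with its maximal generalization, which lies in $]C_{w,k}[$, and both leave the final coordinate check implicit), but you should be aware that the asymmetric partial closure $]C_{w,k}[_{0,\overline0}$ falls out because $]X_{w,k}[$ is an interior of a preimage of a closed set, whereas $]Y_{w,k}[$ is a preimage of an open set — exactly the asymmetry that blocks the symmetry argument for (5).
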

\begin{proof} We begin with the first point.  Let $Z =FL \setminus U_w$, a closed subscheme. We let $\mathcal{Z}$ be the associated analytic space which is proper. 
We have $\mathcal{FL} = \mathcal{U}_w^{an} \coprod \mathcal{Z}$.  Since  $\mathcal{Z}$ is proper, we deduce that $\mathcal{U}_{w}^{an}$ is stable under specializations.  Let us consider $]C_{w,k}[_{m,n}$. We claim that $\overline{]C_{w,k}[_{m,n}} \subseteq \mathcal{U}_{w}^{an}$. To prove this, we can first find a quasi-compact open $\mathcal{U}$ such that $]C_{w,k}[_{m,n}\subseteq \mathcal{U} \subseteq \mathcal{U}_w^{an}$ and we see that $\overline{\mathcal{U}}$ is the set of all its specializations and is therefore included in $\mathcal{U}_w^{an}$. Now it is clear that $\overline{]C_{w,k}[_{m,n}} = ]C_{w,k}[_{\overline{m}, \overline{n}}$. 

 We now prove the second point. We observe that $w \prod_{\alpha \in w^{-1}\Phi^{-,M} \cap \Phi^+} \mathcal{U}_\alpha^{an} = \mathcal{C}_w^{an} \subseteq \mathcal{X}_w \subseteq ]X_{w,k}[$ and since $]X_{w,k}[$ is invariant under multiplication by the Iwahori subgroup,  $$w \prod_{\alpha \in  (w^{-1} \Phi^{-,M}) \cap \Phi^+} {\mathcal{U}^{an}_{\alpha}} \times \prod_{ \alpha \in  (w^{-1} \Phi^{-,M}) \cap \Phi^-} {\mathcal{U}_{\alpha}^o } \subseteq ]X_{w,k}[.$$  
We check the third point. We observe that  $w \prod_{\alpha \in w^{-1}\Phi^{-,M} \cap \Phi^-} \mathcal{U}_\alpha^{an} = \mathcal{C}^{w,an} \subseteq \mathcal{X}^w \subseteq ]Y_{w,k}[$ and again the conclusion follows because $]Y_{w,k}[$ is invariant under multiplication by the Iwahori subgroup. 

We prove the fourth point. Since $]Y_{w,k}[$ is quasi-compact, it is constructible, and therefore $\overline{]Y_{w,k}[}$ is the set of all specializations of   $]Y_{w,k}[$ in $\mathcal{FL}$. Let $x \in \overline{]Y_{w,k}[} \cap ] X_{w, k}[$ and let $y$ be the maximal generalization of $x$ in $\mathcal{FL}$. Then $y \in ]Y_{w,k}[ \cap ]X_{w,k}[ = ] C_{w,k}[$.   The subset of $\overline{]C_{w,k}[}$ consisting of points whose maximal generalization is in $]C_{w,k}[$ is exactly $w \prod_{\alpha \in  (w^{-1} \Phi^{-,M}) \cap \Phi^+} \overline{\mathcal{U}_{\alpha}} \times \prod_{ \alpha \in  (w^{-1} \Phi^{-,M}) \cap \Phi^-} {\mathcal{U}_{\alpha}^o }$. This proves that $\overline{]Y_{w,k}[} \cap ] X_{w, k}[$ is included in that set.  The converse inclusion follows easily from the second and third points. 
  We prove the last point. Since $\overline{ ] X_{w, k}[} = \mathrm{sp}^{-1}(X_{w,k})$ and $]Y_{w,k}[ = \mathrm{sp}^{-1}(Y_{w,k})$, we deduce that ${]Y_{w,k}[} \cap \overline{ ] X_{w, k}[}  = \mathrm{sp}^{-1}(C_{w,k}) \subseteq \overline{] C_{w, k}[}$ has exactly the announced description.  
\end{proof}

\begin{lem}\label{lem-decomposition-cells} \begin{enumerate}
\item We have $]X_{w,k}[ = ]C_{w,k}[_{0, \overline{0}}  \coprod \cup_{w' < w} ]X_{w',k}[$.
\item  We have $]Y_{w,k}[ = ]C_{w,k}[_{\overline{0}, {0}}  \coprod \cup_{w' > w} ]Y_{w',k}[$.
\item We have $\overline{]X_{w,k}[}= ]C_{w,k}[_{\overline{0}, {0}}  \coprod \cup_{w' < w}\overline{ ]X_{w',k}[}$.
\item  We have $\overline{]Y_{w,k}[}= ]C_{w,k}[_{{0}, \overline{0}}  \coprod \cup_{w' > w} \overline{]Y_{w',k}[}$.
\end{enumerate}
\end{lem}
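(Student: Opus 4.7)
The plan is to prove parts (2) and (3) first by direct set-theoretic manipulation, then to deduce (4) and (1) by topological closure arguments.

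For (2), I will exploit the fact that $Y_{w,k}$ and each $Y_{w',k}$ ($w' \in \WM$) are \emph{open} subsets of $FL_k$, so their tubes coincide with their preimages: $]Y_{*,k}[ = \mathrm{sp}^{-1}(Y_{*,k})$. The cellular decomposition $Y_{w,k} = C_{w,k}\sqcup\bigsqcup_{w'>w}C_{w',k}$ in the special fiber (disjoint since Bruhat cells are disjoint) then yields, upon applying $\mathrm{sp}^{-1}$, the disjoint decomposition
$ ]Y_{w,k}[ = \mathrm{sp}^{-1}(C_{w,k})\sqcup\bigcup_{w'>w}]Y_{w',k}[.$
Finally, $C_{w,k} = X_{w,k}\cap Y_{w,k}$ gives $\mathrm{sp}^{-1}(C_{w,k}) = \overline{]X_{w,k}[}\cap ]Y_{w,k}[$, which equals $]C_{w,k}[_{\overline{0},0}$ by Lemma~\ref{lem-topological-description}(5). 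For (3), I proceed analogously, this time using that $X_{w,k}$ and each $X_{w',k}$ are \emph{closed} in $FL_k$ (so $\mathrm{sp}^{-1}(X_{*,k}) = \overline{]X_{*,k}[}$), together with the decomposition $X_{w,k} = C_{w,k}\sqcup\bigsqcup_{w'<w}C_{w',k}$ and the same identification of $\mathrm{sp}^{-1}(C_{w,k})$.

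To prove (4), the containment $]C_{w,k}[_{0,\overline{0}}\sqcup\bigcup_{w'>w}\overline{]Y_{w',k}[}\subseteq \overline{]Y_{w,k}[}$ is immediate from Lemma~\ref{lem-topological-description}(4) and $Y_{w',k}\subseteq Y_{w,k}$, while disjointness follows from $]C_{w,k}[_{0,\overline{0}}\subseteq ]X_{w,k}[$ combined with Lemma~\ref{topological-lemma-easy}(2). For the reverse containment, I take the topological closure of (2); since closure commutes with finite unions, this yields $\overline{]Y_{w,k}[} = \overline{]C_{w,k}[_{\overline{0},0}}\cup\bigcup_{w'>w}\overline{]Y_{w',k}[}$. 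The remaining content is thus to show $\overline{]C_{w,k}[_{\overline{0},0}}\subseteq ]C_{w,k}[_{0,\overline{0}}\cup\bigcup_{w'>w}\overline{]Y_{w',k}[}$. I split along $\mathcal{FL} = ]X_{w,k}[\sqcup(\mathcal{FL}\setminus ]X_{w,k}[)$: the intersection with the open $]X_{w,k}[$ lies in $\overline{]Y_{w,k}[}\cap ]X_{w,k}[ = ]C_{w,k}[_{0,\overline{0}}$ by Lemma~\ref{lem-topological-description}(4), while the intersection with the complement consists of the ``new'' rank-$2$ boundary points on the factors indexed by $\alpha\in w^{-1}\Phi^{-,M}\cap\Phi^-$ in the Iwahori decomposition of Corollary~\ref{coro-description-bruhat-cell}.

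The main obstacle is controlling these boundary points, i.e.\ showing they lie in $\overline{]Y_{w',k}[}$ for some $w'>w$. I expect this to follow by unwinding the Iwahori coordinates of Proposition~\ref{prop-superIwahoridecomposition2}: passing a coordinate $u_\alpha$ ($\alpha\in w^{-1}\Phi^{-,M}\cap\Phi^-$) from the open ball $\mathcal{U}^o_{\alpha,0}$ to its rank-$2$ boundary corresponds, at the level of specializations in $FL_k$, to crossing the root-hyperplane $U_\alpha=0$ and landing in a strictly larger Bruhat cell $C_{w',k}$ with $w'>w$, so the boundary point lies in $\mathrm{sp}^{-1}(C_{w',k}) = ]C_{w',k}[_{\overline{0},0}\subseteq \overline{]Y_{w',k}[}$. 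Finally, (1) is deduced from (3) by the same closure-to-interior strategy, subtracting the topological boundary $\overline{]X_{w,k}[}\setminus ]X_{w,k}[$ from (3) and absorbing it into the $\overline{]X_{w',k}[}$ terms with $w'<w$ (the residual being exactly $]C_{w,k}[_{0,\overline{0}}$); the required disjointness $]C_{w,k}[_{0,\overline{0}}\cap ]X_{w',k}[=\emptyset$ for $w'<w$ uses that $]C_{w,k}[_{0,\overline{0}}$ is contained in the coordinate chart $\mathcal{U}_w^{an}$, which misses $]X_{w',k}[$ at the level of rank-$1$ points by Lemma~\ref{topological-lemma-easy}(1).
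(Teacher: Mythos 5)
Your treatment of (2) and (3) is correct and essentially coincides with the paper's: write the cell decomposition in the special fiber, apply $\mathrm{sp}^{-1}$, and identify $\mathrm{sp}^{-1}(C_{w,k})$ via Lemma~\ref{lem-topological-description}. The divergence is in (4) and (1), and there are problems with both.

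For (4), the paper's argument is a one-liner: all three sets, $\overline{]Y_{w,k}[}$, $]C_{w,k}[_{0,\overline{0}}$, and $\bigcup_{w'>w}\overline{]Y_{w',k}[}$, are stable under both specialization and generalization, so the identity can be checked on rank-one points, where it is the special-fiber cell decomposition. Your route (take closures of (2) and analyze $\overline{]C_{w,k}[_{\overline{0},0}}=]C_{w,k}[_{\overline{0},\overline{0}}$ directly) can probably be made to work, but the crucial step is exactly what you flag as ``the main obstacle'' and then dispatch with ``I expect this to follow.'' What actually has to be proved is that a higher-rank point of $]C_{w,k}[_{\overline{0},\overline{0}}\setminus ]C_{w,k}[_{0,\overline{0}}$ has its maximal rank-one generalization landing in some $]Y_{w',k}[$ with $w'>w$. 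This rests on $U_{w,k}\setminus C_{w,k}\subseteq\bigcup_{w'>w}C_{w',k}$, i.e.\ on Lemma~\ref{lem-oppositeSchubert} / Lemma~\ref{lem-topological-description}(3), not on an abstract ``crossing a root-hyperplane'' heuristic. Once you import that input and argue via maximal generalizations you are essentially rediscovering the paper's stability-under-generalization argument, but with more moving parts.

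For (1) the argument is genuinely wrong as sketched. You propose to deduce (1) by ``subtracting the topological boundary $\overline{]X_{w,k}[}\setminus]X_{w,k}[$ from (3).'' But $]C_{w,k}[_{0,\overline{0}}$ is \emph{not} a subset of $]C_{w,k}[_{\overline{0},0}$: they have the same rank-one points, and each contains higher-rank points the other misses (one bars the $\Phi^-$ factors, the other the $\Phi^+$ factors). Likewise $\overline{]X_{w',k}[}\cap]X_{w,k}[$ is strictly larger than $]X_{w',k}[$ in general. So going from the partition in (3) to the one in (1) is not a set subtraction: boundary points have to be \emph{reassigned} between the cell piece and the $X_{w'}$ pieces, not deleted. (Already for $\mathrm{GL}_2$ with $w=w_0$, the piece $]C_{w_0,k}[_{\overline{0},0}$ in (3) excludes the rank-two boundary of the unit disc while $]C_{w_0,k}[_{0,\overline{0}}$ in (1) includes it; the same point moves from $\overline{]X_{1,k}[}$ to the cell piece, which is not a subtraction.) The paper's proof of (1) is instead direct: for $x\in]X_{w,k}[$, either its maximal generalization lies in some $]X_{w',k}[$ with $w'<w$ (in which case $x\in]X_{w',k}[$ since this set is stable under specialization, being a finite union of Stein spaces), or not, in which case $x\in\overline{]C_{w,k}[}\cap]X_{w,k}[=]C_{w,k}[_{0,\overline{0}}$ by Lemma~\ref{lem-topological-description}. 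You should replace your closing paragraph with an argument of this type.
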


\begin{proof} We prove the first point and the direct inclusion.  We first observe that since $X_{w',k}$ is closed, $]X_{w',k}[$ is a finite union of Stein spaces. Therefore $X_{w',k}$ is stable under specialization. Let $x \in  ]X_{w,k}[$. If its maximal generalization is in $]X_{w',k}[$ for some $w' < w$, then actually $x \in ]X_{w',k}[$. Otherwise, $x \in \overline{]C_{w,k}[}\cap ]X_{w,k}[ = ]C_{w,k}[_{0, \overline{0}}$ by lemma \ref{lem-topological-description}. The converse  inclusion follows from the same lemma. The union is disjoint by lemma \ref{topological-lemma-easy}. 
We prove the second point. We note that $]Y_{w,k}[ = \mathrm{sp}^{-1}( Y_{w,k})$ and therefore, $$]Y_{w,k}[ =  \mathrm{sp}^{-1}(C_{w,k}) \cap ]Y_{w,k}[ \bigcup\cup_{w' > w} ]Y_{w',k}[.$$ By lemma  \ref{lem-topological-description} $ \mathrm{sp}^{-1}(C_{w,k}) \cap ]Y_{w,k}[ \subset ]C_{w,k}[_{\overline{0}, 0}$. The reverse inclusion follows also from the lemma.  The union is disjoint by lemma \ref{topological-lemma-easy}.
We prove the third point. We have $X_{w,k} = C_{w,k} \coprod  \cup_{w'<w}X_{w',k}$. Taking $\mathrm{sp}^{-1}$ gives the identity. We prove the last point. All three spaces $\overline{]Y_{w,k}[}$, $]C_{w,k}[_{{0}, \overline{0}} = \overline{]Y_{w,k}[} \cap \overline{]X_{w,k}[}$ and $\cup_{w' > w} \overline{]Y_{w',k}[}$ are stable under specialization and generalization. It suffices therefore to prove the identity on rank one points, and it is clear. 
\end{proof}

\subsection{Dynamics}\label{section-dynamics}

Let $v : F \rightarrow \R \cup \{ + \infty\}$ be the $p$-adic valuation normalized by $v(p) =1$.  We consider certain sub-semigroups of $T(F)$.  We let $T^+(F) = \{ t \in T(F), v( \alpha(t)) \geq 0, ~\forall \alpha \in \Phi^+\}$, $T^{++}(F) =  \{ t \in T(F), v( \alpha(t)) > 0, ~\forall \alpha \in \Phi^+\}$, $T^-(F) = \{ t \in T(F), v( \alpha(t)) \leq 0, ~\forall \alpha \in \Phi^+\}$, $T^{--}(F) = \{ t \in T(F), v( \alpha(t)) < 0, ~\forall \alpha \in \Phi^+\}$.

\begin{lem}\label{lemma-dynamical1} \begin{enumerate}
\item If $t \in T^+(F)$, $]X_{w,k}[ . t \subseteq ]X_{w,k}[$ for all $w \in ~\WM$. 
\item If $t \in T^{-}(F)$, $]Y_{w,k}[ . t \subseteq ]Y_{w,k}[$ for all $w \in ~\WM$.
\end{enumerate}
\end{lem}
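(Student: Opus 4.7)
Parts (1) and (2) are related by symmetry (replacing $X$ by $Y$, $\Phi^+$ by $\Phi^-$, and $T^+$ by $T^-$), so I focus on (1) and proceed by induction on $w \in \WM$ in the Bruhat order, using the decomposition
$$]X_{w,k}[ = ]C_{w,k}[_{0,\overline{0}} \sqcup \bigcup_{w' < w}]X_{w',k}[$$
from Lemma \ref{lem-decomposition-cells}(1). For the base case $w = 1$, $]X_{1,k}[ = ]C_{1,k}[_{0,\overline{0}}$ is the open polydisc in $\mathcal{U}_1^{an}$ with coordinates $x_\alpha$ indexed by $\Phi^{-,M} \subseteq \Phi^-$; right multiplication by $t \in T^+(F)$ acts by $x_\alpha \mapsto \alpha(t)^{-1} x_\alpha$, and since $v(\alpha(t)^{-1}) \geq 0$ for $\alpha \in \Phi^-$, the polydisc is preserved.

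For the inductive step the induction hypothesis handles $\bigcup_{w' < w}]X_{w',k}[$, so it suffices to show $]C_{w,k}[_{0,\overline{0}} \cdot t \subseteq ]X_{w,k}[$. Writing a point $x \in ]C_{w,k}[_{0,\overline{0}}$ in coordinates on $\mathcal{U}_w^{an}$ as $x = w \prod_\alpha u_\alpha(x_\alpha)$, the point $x \cdot t$ has coordinates $y_\alpha = \alpha(t)^{-1} x_\alpha$. For $\alpha \in w^{-1}\Phi^{-,M} \cap \Phi^-$ the bound $|y_\alpha| < 1$ is preserved, while for $\alpha \in w^{-1}\Phi^{-,M} \cap \Phi^+$ the value $|y_\alpha|$ may exceed $1$. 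If no such coordinate leaves the unit ball then $x \cdot t \in ]C_{w,k}[_{0,\overline{0}}$ and we are done; otherwise the task is to show that $\mathrm{sp}(x \cdot t) \in C_{w'',k}$ for some $w'' < w$.

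For each $\alpha_0$ with $|y_{\alpha_0}| > 1$ the key tool is the $\mathrm{SL}_2$ identity $u_{\alpha_0}(y) = u_{-\alpha_0}(y^{-1}) \alpha_0^\vee(y) s_{\alpha_0} u_{-\alpha_0}(y^{-1})$, valid for $y \neq 0$. Since $\alpha_0 \in w^{-1}\Phi^{-,M} \cap \Phi^+$ we have $-\alpha_0 \in w^{-1}\Phi^{+,M}$, so $w u_{-\alpha_0}(y^{-1}) w^{-1} \in U_{-w\alpha_0} \subseteq P$ and $w\alpha_0^\vee(y) w^{-1} = (w\alpha_0)^\vee(y) \in T \subseteq P$; hence modulo $P$ on the left, applying the $\mathrm{SL}_2$ identity shifts the Weyl component from $w$ to $w s_{\alpha_0}$ while the remaining factor $u_{-\alpha_0}(y^{-1})$ lies in $\mathrm{Iw}$. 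Since $\alpha_0 \in \Phi^+$ with $w\alpha_0 \in \Phi^-$, the strong exchange property gives $w s_{\alpha_0} < w$ in $W$ and hence $w^M(w s_{\alpha_0}) \leq w s_{\alpha_0} < w$ in the Bruhat order on $\WM$. For a single blow-up this, together with the invariance of $]X_{w',k}[$ under right multiplication by $\mathrm{Iw}$ (which is immediate from the $B$-invariance of the schemes $X_{w'}$), shows that $x \cdot t$ specializes into $C_{w^M(ws_{\alpha_0}),k} \subseteq X_{w,k}$.

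The hard part is the case of simultaneous blow-ups of several coordinates, especially in the non-abelian unipotent radical setting where the root groups $\{U_\alpha\}_{\alpha \in w^{-1}\Phi^{-,M}}$ do not commute. My plan here is to iterate the $\mathrm{SL}_2$ rewriting, picking off one blowing-up factor at a time, and use Chevalley's commutation relations to move the resulting ``small'' factors $u_{-\alpha_0}(y^{-1})$ (with $|y^{-1}|<1$) past the remaining factors at the cost of commutator terms which can be absorbed into the Iwahori. Each such step strictly decreases the Weyl component in the Bruhat order on $\WM$ by the argument above, so the process terminates with $x \cdot t$ lying in a tube $]C_{w'',k}[$ for some $w'' < w$, completing the induction.
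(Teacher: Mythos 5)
Your proposal takes a genuinely different route from the paper, and the core inductive step is incomplete.

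The paper's proof avoids all explicit Weyl-group and blow-up combinatorics. It first reduces to rank-$1$ points via \cite{MR1207303}, corollary~4.2; at rank $1$, $]X_{w,k}[$ is the disjoint union of the tubes $]C_{w',k}[$ for $w'\leq w$, so it suffices to show $]C_{w,k}[\cdot t\subseteq]X_{w,k}[$ for each $w$. The trick is to use not the coordinate parametrization of corollary~\ref{coro-description-bruhat-cell} (indexed by $w^{-1}\Phi^{-,M}$) but the description $]C_{w,k}[=\mathcal{P}\backslash\mathcal{P}w\,\mathrm{Iw}$ together with the factorization $\mathrm{Iw}=\mathcal{B}\cdot\prod_{\alpha\in\Phi^-}\mathcal{U}_\alpha^o$ supplied by proposition~\ref{prop-superIwahoridecomposition}. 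Writing a rank-$1$ point as $wbu$ with $b\in\mathcal{B}$ and $u\in\prod_{\alpha\in\Phi^-}\mathcal{U}_\alpha^o$, one has $wbut=w(t^{-1}bt)(t^{-1}ut)$. For $t\in T^+(F)$, conjugation by $t^{-1}$ preserves $\prod_{\alpha\in\Phi^-}\mathcal{U}_\alpha^o\subseteq\mathrm{Iw}$ (your ``good'' direction), while $t^{-1}bt$ remains in the analytic Borel $\mathcal{B}^{an}$. Now $w\mathcal{B}^{an}\subseteq\mathcal{X}_w$ (the analytic Schubert variety), which being proper lies inside $]X_{w,k}[$; and $]X_{w,k}[$ is $\mathrm{Iw}$-invariant on the right. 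Done. The positive-root coordinates you are tracking may blow up arbitrarily, but the argument sees them only through $b\in\mathcal{B}^{an}$, and the properness of $\mathcal{X}_w$ absorbs the whole problem in one stroke: growing those coordinates pushes the point deeper into the analytic Schubert variety, never out of its tube.

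Your single-blow-up step is essentially correct, but the simultaneous blow-up case is a plan, not a proof, and the plan has real obstructions. After one $\mathrm{SL}_2$ exchange the Weyl component becomes $ws_{\alpha_0}$ (with $\WM$-representative $w''$), but the remaining factors $u_\alpha(y_\alpha)$ are still in the chart for $w$, not for $w''$; the torus factor $\alpha_0^\vee(y_{\alpha_0}^{-1})$ that the identity produces need not be integral; and conjugating the remaining factors through this torus element and through $s_{\alpha_0}$ (which moves some of their roots outside $(w'')^{-1}\Phi^{-,M}$) is precisely what ``iterate \ldots absorbed into the Iwahori'' leaves unjustified --- these are the steps where a chart change and genuine root-group bookkeeping are required. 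The termination statement (strict decrease in the $\WM$-Bruhat order) also has to be re-established in the new chart at each stage. Finally, you should make the reduction to rank-$1$ points explicit before treating $]C_{w,k}[_{0,\overline{0}}$ as a polydisc: that set contains higher-rank points, and your manipulations with $|y_\alpha|$ are literal only at rank $1$.
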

\begin{proof} By \cite{MR1207303}, corollary 4.2, we reduce to check the inclusions on rank $1$ points. 
 For the first point, it suffices therefore to prove that $]C_{w,k}[.t \subseteq ]X_{w,k}[$. Let $w b u \in ]C_{w,k}[$ with $b \in \mathcal{B}$ and $u \in \prod_{\alpha \in \Phi^-} \mathcal{U}_\alpha^o$. We find that $w b u t = w t^{-1} b t t^{-1} u t$. Now, $\mathrm{Ad}(t^{-1}) u \in \prod_{\alpha \in \Phi^-} \mathcal{U}_\alpha^o$, while $\mathrm{Ad}(t^{-1})b \in \mathcal{B}^{an}$. In particular  $w \mathrm{Ad}(t^{-1}) b \in \mathcal{X}_{w} \subseteq  ]X_{w,k}[_{\mathcal{FL}}$.   Therefore $w \mathrm{Ad}(t^{-1}) b \mathrm{Ad}(t^{-1}) u \in ]X_{w,k}[$. 
For the second point,  it suffices to prove that $]C_{w,k}[.t \subseteq ]Y_{w,k}[$. Let $w  u b \in ]C_{w,k}[$ with $b \in \mathcal{B}$ and $u \in \overline{\mathcal{U}}^o$. We find that $w  u b t = w t^{-1} u t t^{-1} b t$. Now, $\mathrm{Ad}(t^{-1}) b \in {\mathcal{B}}$, while $\mathrm{Ad}(t^{-1}) u  \in \overline{\mathcal{B}}^{an}$. In particular, $w \mathrm{Ad}(t^{-1}) u \in \mathcal{X}^{w} \subseteq  ]Y_{w,k}[_{\mathcal{FL}}$.   Therefore $w \mathrm{Ad}(t^{-1}) u \mathrm{Ad}(t^{-1}) b \in ]Y_{w,k}[$.
\end{proof}

\bigskip

For $t \in T^{++}(F)$,   we define $\min(t) = \min_{\alpha \in \Phi^+} v(\alpha(t))$  and $\max(t) = \max_{\alpha \in \Phi^+} v(\alpha(t))$. For $t \in T^{--}(F)$, we let  $\min(t) = \min_{\alpha \in \Phi^-} v(\alpha(t))$  and $\max(t) = \max_{\alpha \in \Phi^-} v(\alpha(t))$. We note that $\min(t) > 0$ and that $\min(t) = \min(t^{-1})$ and $\max(t) = \max(t^{-1})$ for $t \in T^{++}(F)$. 

\begin{lem}\label{lemma-strongcontraction}   \begin{enumerate}
\item Let $t \in T^{++}(F)$.   For all $w \in \WM$, $m,n \in \qq$,  we have 
\begin{eqnarray*}
]C_{w,k}[_{m+\max(t),n-\min (t)} &\subseteq& ]C_{w,k}[_{m,n} . t \subseteq ]C_{w,k}[_{m+\min(t),n-\max(t)}\\
]C_{w,k}[_{\overline{m+\max(t)},n-\min (t)} & \subseteq & ]C_{w,k}[_{\overline{m},n} . t \subseteq ]C_{w,k}[_{\overline{m+\min(t)},n-\max(t)},\\
]C_{w,k}[_{m+\max(t),\overline{n-\min (t)}} & \subseteq & ]C_{w,k}[_{m,\overline{n}} . t \subseteq ]C_{w,k}[_{m+\min(t),\overline{n-\max(t)}}.
\end{eqnarray*}
\item Let  $t \in T^{--}(F)$.  For all $w \in \WM$ and $m,n \in \qq$, we have
\begin{eqnarray*}
]C_{w,k}[_{m-\min(t),n+\max(t)} &\subseteq& ]C_{w,k}[_{m,n} . t \subseteq ]C_{w,k}[_{m-\max(t),n+\min(t)}\\
]C_{w,k}[_{\overline{m-\min(t)},n+\max (t)} & \subseteq & ]C_{w,k}[_{\overline{m},n} . t \subseteq ]C_{w,k}[_{\overline{m-\max(t)},n+\min(t)},\\
]C_{w,k}[_{m-\min(t),\overline{n+\max (t)}} & \subseteq & ]C_{w,k}[_{m,\overline{n}} . t \subseteq ]C_{w,k}[_{m-\max(t),\overline{n+\min(t)}}.
\end{eqnarray*}
\end{enumerate}
\end{lem}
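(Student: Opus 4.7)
The plan is to reduce everything to an explicit bookkeeping exercise on root coordinates, using the product parameterization of corollary \ref{coro-description-bruhat-cell} and the partial closure variants defined analogously. First, I observe that for any $t \in T(F)$, right multiplication on $\mathcal{FL} = \mathcal{P}\backslash\mathcal{G}$ sends a point $\mathcal{P}\cdot wu$ to $\mathcal{P}\cdot w(t^{-1}ut)$: indeed, $wt = (wtw^{-1})\, w$ and $wtw^{-1} \in T \subseteq P$. Combined with the standard root-group identity $t^{-1} u_\alpha(x) t = u_\alpha(\alpha(t)^{-1} x)$, this means that under $\cdot t$ each coordinate $x_\alpha$ (for $\alpha \in w^{-1}\Phi^{-,M}$) is rescaled to $\alpha(t)^{-1} x_\alpha$, whose valuation is $v(x_\alpha) - v(\alpha(t))$.

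Next I translate the desired inclusions into inequalities. By definition a point $w\prod_\alpha u_\alpha(x_\alpha)$ lies in $]C_{w,k}[_{m,n}$ if and only if $v(x_\alpha) > m$ for $\alpha \in w^{-1}\Phi^{-,M}\cap\Phi^-$ and $v(x_\alpha) \geq n$ for $\alpha \in w^{-1}\Phi^{-,M}\cap\Phi^+$. After translation by $t$ the new valuations of the coordinates are $v(x_\alpha) - v(\alpha(t))$. For $t \in T^{++}(F)$ one has $v(\alpha(t)) \in [\min(t), \max(t)]$ when $\alpha \in \Phi^+$ and $-v(\alpha(t)) = v((-\alpha)(t)) \in [\min(t), \max(t)]$ when $\alpha \in \Phi^-$. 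Taking the extreme values over $\alpha$ immediately gives
\begin{align*}
]C_{w,k}[_{m,n}\cdot t &\subseteq ]C_{w,k}[_{m+\min(t),\,n-\max(t)}, \\
]C_{w,k}[_{m+\max(t),\,n-\min(t)} &\subseteq ]C_{w,k}[_{m,n}\cdot t,
\end{align*}
which is the first inclusion of (1). The case $t \in T^{--}(F)$ is handled by the same computation, replacing $\Phi^+$ with $\Phi^-$ in the definitions of $\min(t)$ and $\max(t)$; this yields (2).

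To extend this to the partial closures $]C_{w,k}[_{\overline{m},n}$ and $]C_{w,k}[_{m,\overline{n}}$, I argue that all the sets in sight are determined by their sets of rank one points together with their behavior under specialization/generalization. The coordinate rescaling $x_\alpha \mapsto \alpha(t)^{-1} x_\alpha$ is a topological isomorphism of $\mathcal{U}_w^{an}$ which sends the closure $\overline{\mathcal{U}^o_{\alpha,m}}$ into $\overline{\mathcal{U}^o_{\alpha,m-v(\alpha(t))}}$ (and similarly for $\overline{\mathcal{U}_{\alpha,n}}$). Consequently the same valuation bookkeeping, combined with the description of partial closures given before lemma \ref{lem-topological-description}, produces the same sharp lower and upper bounds. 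Alternatively, since both sides of each inclusion are invariant under the appropriate specializations, by \cite{MR1207303} (Corollary 4.2) it suffices to check the inclusions on rank one points, where they reduce to the open case already treated.

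The only technical point is to track the strict versus non-strict inequalities carefully under the rescaling, together with whether closures are taken in $\Phi^+$ or $\Phi^-$ coordinates; this is pure bookkeeping and presents no conceptual obstacle.
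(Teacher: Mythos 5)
Your proof is correct, and it is surely the argument the authors have in mind when they write ``Easy and left to the reader.'' The essential points --- that $wut = (wtw^{-1})\,w(t^{-1}ut)$ with $wtw^{-1}\in T\subseteq P_\mu$, that conjugation rescales the root coordinate $x_\alpha$ by $\alpha(t)^{-1}$, and that comparing $v(\alpha(t))$ for $\alpha\in w^{-1}\Phi^{-,M}$ against the uniform bounds $\min(t)$ and $\max(t)$ gives both inclusions --- are all present, and your bookkeeping of the strict/non-strict inequalities and of the signs for $\Phi^+$ versus $\Phi^-$ matches the statement.

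One caveat concerning your ``alternative'' argument for the partial closures: the reduction to rank-one points via \cite{MR1207303}, Cor.~4.2, which the paper itself uses in the proof of the preceding lemma \ref{lemma-dynamical1}, works for the sets $]C_{w,k}[_{m,n}$ because these are stable under generalization, but it cannot by itself distinguish $]C_{w,k}[_{\overline{m},n}$ from $]C_{w,k}[_{m,n}$ or from $]C_{w,k}[_{m,\overline{n}}$, since all three have exactly the same rank-one points. So ``check on rank-one points'' is not a complete substitute for the direct coordinate argument in the barred cases; you would additionally need to match up the higher-rank specializations, which is precisely what your first argument does by observing that the rescaling is a homeomorphism of each $\mathcal{U}_\alpha^{an}$ sending $\overline{\mathcal{U}^o_{\alpha,m}}$ onto $\overline{\mathcal{U}^o_{\alpha,m-v(\alpha(t))}}$. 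Since that first argument is complete and correct, the lemma is proved; I would simply drop or qualify the ``alternatively'' sentence.
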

\begin{proof} 
Easy and left to the reader. 
\end{proof}

\subsection{Dynamics of correspondences}
\subsubsection{Certain compact open subgroups}\label{section-compact-open-subgroups} We now assume that the group $G_F = G \times_{\Spec~\ocal_F} \Spec~F$ is defined and quasi-split over $\qq_p$. Therefore,  we have a reductive group $G_{\qq_p}$ with Borel $B_{\qq_p}$ and moreover the group $G_{\qq_p}$ splits over the extension $F$ of $\qq_p$. We have a reductive model $G$ of $G_{F}$ over $\Spec~\ocal_F$.  The Borel $B_{\qq_p}$ base changes to a Borel $B_F$ of $G_F$ which extends to a Borel $B$ of $G$. Similarly, we have a maximal torus $T_{\qq_p} \subseteq B_{\qq_p} $ of $G_{\qq_p}$, and its base change $T_F$ extends to a maximal (split) torus of $G$.  We will often drop the subscripts $\qq_p$ or $F$ when the context is clear. 
\begin{rem} Starting from the next section, we will consider a Shimura datum $(G,X)$, where $G$ is a reductive group over $\qq$. The group $G_{\qq_p}$ that we consider here will be the base change to $\qq_p$ of the group $G$ which is part of the Shimura datum. We apologize for this slightly inconsistent notation.
\end{rem} 
 
 We let $T(\ZZ_p)$ be the maximal compact subgroup of $T(\qq_p)$. Note that $T(\ZZ_p) = T(\ocal_F) \cap T(\qq_p)$. 
 We have an exact sequence $0 \rightarrow \ocal_F^\times \rightarrow F^\times \stackrel{v}\rightarrow \qq$ and tensoring with $X_\star(T)$ and taking Galois invariants, 
 we obtain the sequence: $0 \rightarrow T(\ZZ_p) \rightarrow T(\qq_p) \stackrel{v}\rightarrow X_\star(T^d) \otimes \qq$ where $T^d$ stands for the maximal split torus inside $T$ and $X_\star(T^d)$ for its cocharacter group. The image of $T(\qq_p)$ in $X_\star(T^d) \otimes \qq$ is easily seen to be a $\ZZ$-lattice. 
 
 We let $T^+ = T(\qq_p) \cap T^+(F)$, $T^{++} = T(\qq_p) \cap T^{++}(F)$, $T^-=T(\qq_p)\cap T^-(F)$, and $T^{--}=T(\qq_p)\cap T^{--}(F)$. These are semigroups in $T(\qq_p)$ and one proves easily that they generate   $T(\qq_p)$ (since there are regular elements in the maximal split torus $T^d$). 
 
 We will now consider certain compact open subgroups of $G(\ocal_F)$.   For all $m\in\ZZ_{\geq 0}$, we let $\tilde{K}_{p,m}\subseteq G(\ocal_F)$ be the preimage of $B(\ocal_F/p^m)$ under the map $G(\ocal_F)\to G(\ocal_F/p^m)$. We observe that  $\tilde{K}_{p,0}=G(\ocal_F)$ and $\tilde{K}_{p,1}$ is the Iwahori subgroup of $G(\ocal_F)$ with respect to the Borel $B(\ocal_F)$.  For $b\in\ZZ_{\geq 0}$ we let $\tilde{K}_{p,b,b} \subseteq G(\ocal_F)$ be the preimage of $U(\ocal_F/p^b)$ under the map $G(\ocal_F)\to G(\ocal_F/p^b)$.  Finally for $m\geq b\geq 0$ we let $\tilde{K}_{p,m,b}=\tilde{K}_{p,m}\cap \tilde{K}_{p,b,b}$.  In other words $\tilde{K}_{p,m,b}$ is the subgroup of $G(\ocal_F)$ of elements whose reduction mod $p^m$ lies in $B$, and whose reduction mod $p^b$ lies in $U$.  We note that we have $\tilde{K}_{p,m,b}\subset\mathcal{G}_{m-1,0}$.

For $m\geq b\geq 0$ and $m>0$, the groups $\tilde{K}_{p,m,b}$ have an Iwahori decomposition, in the sense that the product map
$$\overline{\tilde{U}}_m\times \tilde{T}_b\times {U}(\ocal_F)\to K_{p,m,b}$$
is a bijection, where $\tilde{T}_b=\ker(T(\ocal_F)\to T(\ocal_F/p^b))$ and $\overline{\tilde{U}}_m=\ker(\overline{U}(\ocal_F)\to\overline{U}(\ocal_F/p^{m}))$.

We now let $K_{p,m,b} = G(\qq_p) \cap \tilde{K}_{p,m,b}$. This is a compact open subgroup of $G(\qq_p)$. 

For $m\geq b\geq 0$ and $m>0$, the groups $K_{p,m,b}$ have an Iwahori decomposition, in the sense that the product map
$$\overline{U}_m\times T_b\times U(\ZZ_p)\to K_{p,m,b}$$
is a bijection, where $T_b= \tilde{T}_b \cap T(\qq_p)$ and $\overline{U}_m= \overline{\tilde{U}}_m \cap\overline{U}(\qq_p)$, $U(\ZZ_p) = U(\ocal_F) \cap U(\qq_p)$. 

\begin{rem}
We note that the groups $K_{p,m,b}$ depend (even up to conjugation in general), on the choice of the reductive model $G$ over $\ocal_F$.
\end{rem}

\begin{rem}\label{rem-Weil-restriction}If the group $G_{\qq_p}$ is unramified, then we can choose a reductive model $G_{\ZZ_p}$ of $G_{\qq_p}$ with Borel and maximal torus $B_{\ZZ_p}$ and $T_{\ZZ_p}$.  In this case, if we take $G_{\ocal_F}=G_{\ZZ_p}\times_{\ZZ_p}\Spec\ocal_F$, then we have that $K_{p,m}$ is the preimage of $B(\ZZ_p/p^m)$ under $G(\ZZ_p)\to G(\ZZ_p/p^m)$, and $K_{p,b,b}$ is the preimage of $U(\ZZ_p/p^b)$ under $G(\ZZ_p)\to G(\ZZ_p/p^b)$.  In particular, $K_{p,1,0}$ is an Iwahori subgroup, and $K_{p,1,1}$ is a pro-$p$ Iwahori subgroup.

In the ramified case, there is still a notion of an Iwahori subgroup of $G(\qq_p)$ defined using Bruhat-Tits theory.  However, our subgroup $K_{p,1,0}$ will usually not be an Iwahori subgroup.  In the general case, we found it convenient to work with the groups $K_{p,m,b}$ constructed as above, and it makes no difference for the purposes of this paper whether $K_{p,1,0}$ is an Iwahori.
\end{rem}
 
\subsubsection{Change of group}\label{subsecchangegroup}
In this paragraph we make a short digression that will be useful when we deal with abelian type Shimura varieties. Assume that we have an epimorphism of reductive groups $G_{\qq_p} \rightarrow G'_{\qq_p}$ with central kernel. This implies that  $G$ and $G'$ have the same adjoint group $G^{ad}$. We assume that these group split over $F$ and we fix models over $\ocal_F$, that we denote $G$ and $G'$ together with a map $G \rightarrow G'$ over $\Spec~\ocal_F$, extending the map $G_F \rightarrow G'_F$. We also assume that $G_{\qq_p}$ and $G'_{\qq_p}$ are quasi-split and pick Borels $B$ and $B'$ defined over $\qq_p$ such that $B \rightarrow B'$.  We can therefore define compact open subgroups $K_{p,m,b} \subseteq G(\qq_p)$ and $K'_{p,m,b} \subseteq G'(\qq_p)$ as in section \ref{section-compact-open-subgroups}.  

\begin{lem}\label{lem-compact-abelian} The groups $K_{p,m,b}$ and $K'_{p,m,b}$ have the same image in $G^{ad}(\qq_p)$.
\end{lem}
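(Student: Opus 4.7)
I would prove the equality of images in $G^{ad}(\qq_p)$ by verifying both inclusions, using the Iwahori decomposition of $K_{p,m,b}$ and $K'_{p,m,b}$.

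One inclusion is immediate: the map $\phi:G\to G'$ is defined over $\ocal_F$ and sends $B$ to $B'$ (hence $U$ to $U'$), so it sends $\tilde{K}_{p,m,b}$ into $\tilde{K}'_{p,m,b}$, and therefore $K_{p,m,b}$ into $K'_{p,m,b}$. Composing with $G'\to G^{ad}$ shows that the image of $K_{p,m,b}$ in $G^{ad}(\qq_p)$ is contained in that of $K'_{p,m,b}$.

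For the reverse inclusion, I would argue factor by factor in the Iwahori decomposition $K_{p,m,b}=\overline{U}_m\cdot T_b\cdot U(\ZZ_p)$ (and similarly for $K'_{p,m,b}$). Since $\ker(\phi)$ is central in $G$, it lies in every maximal torus and thus meets each root subgroup trivially; consequently $\phi$ restricts to isomorphisms $U\simeq U'$ and $\overline{U}\simeq \overline{U}'$, and by compatibility of the fixed $\ocal_F$-models these identify $U(\ZZ_p)=U'(\ZZ_p)$ and $\overline{U}_m=\overline{U}'_m$. Hence the unipotent factors have identical images in $G^{ad}(\qq_p)$. The remaining comparison, and the main technical point, concerns the tori: given $t'\in T'_b$, I need to find $t\in T_b$ with the same image in $T^{ad}(\qq_p)$. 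I would first lift $t'$ to some $\tilde{t}\in T(\ocal_F)$ with $\tilde{t}\equiv 1\pmod{p^b}$ (the lift exists because $T(\ocal_F)\to T'(\ocal_F)$ is surjective: the obstruction lies in $H^1(\ocal_F,\ker\phi)$, which vanishes for a group of multiplicative type over the Henselian local ring $\ocal_F$). The lift $\tilde{t}$ need not be $\mathrm{Gal}(F/\qq_p)$-invariant, but any two lifts differ by an element of $\ker(\phi)\subseteq Z(G)$; hence the Galois cocycle $\sigma\mapsto \sigma(\tilde{t})\tilde{t}^{-1}$ is central in $G$ and maps to zero in $G^{ad}$, so after modifying $\tilde{t}$ by a suitable central element — which does not alter its image in $G^{ad}(\qq_p)$ — we obtain an element $t\in T(\qq_p)$ still reducing trivially mod $p^b$ and with the desired image.

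The main obstacle is this last descent step: ensuring that the Galois obstruction to descending the torus lift can be absorbed into the center without disrupting the congruence condition mod $p^b$. Once this is established, combining the unipotent and torus comparisons via the Iwahori decomposition yields the reverse inclusion and completes the proof.
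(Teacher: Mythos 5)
Your strategy matches the paper's: reduce via the Iwahori decomposition and use that $\ker\phi$ is central to get isomorphisms on root subgroups. The forward inclusion and the unipotent comparison are fine, and they are essentially the entire content of the paper's (terse) proof — the toral factors are left implicit there, and you are right to single them out as the substantive point.

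Your toral argument, however, has two real gaps that the formulation you give does not repair. First, $\HH^1(\ocal_F,\ker\phi)$ does not vanish for an arbitrary multiplicative-type kernel: for a \emph{finite} kernel one has, e.g., $\HH^1_{fppf}(\ZZ_p,\mu_n)\cong\ZZ_p^\times/(\ZZ_p^\times)^n$ by the Kummer sequence, which is nontrivial whenever $\gcd(n,p-1)>1$ (or whenever $2\mid n$ if $p=2$), so the lift $\tilde t\in T(\ocal_F)$ may simply not exist. Second, even granting a lift, centrality of the cocycle $\sigma\mapsto\sigma(\tilde t)\tilde t^{-1}$ only tells you that the image of $\tilde t$ in $T^{ad}$ is Galois-fixed; it does not trivialize the cocycle as a coboundary in $\ker\phi(\ocal_F)$, and even if such a coboundary existed, the modifying central element would not in general respect the congruence mod $p^b$. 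These obstructions are not formal: for the central isogeny $\mathrm{SL}_2\to\mathrm{PGL}_2$ over $\ZZ_p$ the image of the $\mathrm{SL}_2(\ZZ_p)$-Iwahori torus in $T^{ad}(\qq_p)$ is $(\ZZ_p^\times)^2$, a proper subgroup of the image $\ZZ_p^\times$ one gets from $\mathrm{PGL}_2(\ZZ_p)$. Both difficulties disappear when $\ker\phi$ is a \emph{torus}: then $\HH^1(\ocal_F,\ker\phi)$ vanishes (Lang's theorem applied to the special fibre), so lifting works at the level of $\ocal_F$-points and at each finite level mod $p^b$, and smoothness of the toral exact sequence lets one track the congruence. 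That is the situation relevant to the abelian-type Shimura data; making such a hypothesis on $\ker\phi$ explicit is the missing ingredient — both in your sketch and, one should say, in the paper's one-line proof, which addresses only the root-group factors.
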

\begin{proof} This follows from the Iwahori decomposition of these groups. Since $G_{\qq_p} \rightarrow G'_{\qq_p}$ is an epimorphism with central kernel, the induced map on root groups are isomorphisms. 
\end{proof}

\subsubsection{Hecke algebras}\label{sect-Hecke-algebras} 

We let $\mathcal{H}_{p,m,b}$ be the Hecke algebras $\ZZ[ K_{p,m,b} \backslash G(\qq_p) / K_{p,m,b}]$.  We denote by  $\mathcal{H}_{p,m,b}^+$ the sub-algebra generated by the double cosets $[K_{p,m,b} t K_{p,m,b}]$ with $t \in T^+$ and by $\mathcal{H}_{p,m,b}^{++}$ the ideal generated by $[K_{p,m,b} t K_{p,m,b}]$ with $t \in T^{++}$. We define similarly $\mathcal{H}_{p,m,b}^-$ and $\mathcal{H}_{p,m,b}^{--}$. 

\begin{lem}\label{lemma-casselman} For all $m\geq b\geq 0$ with $m>0$, the map $t \mapsto  [K_{p,m,b} t K_{p,m,b}]$ induces isomorphisms
$\ZZ[T^+/T_b]  \rightarrow \mathcal{H}_{p,m,b}^+$ and $\ZZ[T^-/T_b]  \rightarrow \mathcal{H}_{p,m,b}^-$.
\end{lem}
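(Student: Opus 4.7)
The plan is to establish both multiplicativity and injectivity of $t\mapsto [K_0 tK_0]$ (writing $K_0=K_{p,m,b}$); surjectivity onto $\mathcal{H}_{p,m,b}^+$ is built into the definition, and the $-$ case is strictly symmetric via swapping the roles of $U(\ZZ_p)$ and $\overline{U}_m$.

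The starting point is the Iwahori decomposition $K_0=\overline{U}_m\cdot T_b\cdot U(\ZZ_p)$ from section \ref{section-compact-open-subgroups}, together with the dynamical input from $t\in T^+$: the pair of contractions $tU(\ZZ_p)t^{-1}\subseteq U(\ZZ_p)$ and $t^{-1}\overline{U}_m t\subseteq \overline{U}_m$, both immediate from the fact that $t$-conjugation scales the $U_\alpha$-coordinate by $\alpha(t)$, whose $p$-adic valuation is non-negative for $\alpha\in\Phi^+$. Combining these with the Iwahori decomposition one derives
$$K_0\, t\, K_0 \;=\; U(\ZZ_p)\cdot t\cdot K_0 \;=\; \bigsqcup_{u\in U(\ZZ_p)/tU(\ZZ_p)t^{-1}} u\,t\, K_0.$$
Multiplicativity $[K_0 t K_0]\cdot[K_0 t' K_0]=[K_0 tt' K_0]$ then follows from a direct double-sum computation: cosets in the product are parameterized by pairs $(u,u')\in U(\ZZ_p)/tU(\ZZ_p)t^{-1}\times U(\ZZ_p)/t'U(\ZZ_p)t'^{-1}$, and the assignment $(u,u')\mapsto u\cdot(tu't^{-1})$ is a bijection onto $U(\ZZ_p)/tt'U(\ZZ_p)(tt')^{-1}$. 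Since $T_b\subseteq K_0$, the map factors through $T^+/T_b$.

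For injectivity, suppose $K_0 tK_0 = K_0 t'K_0$ with $t,t'\in T^+$. The coset formula lets us write $t'=u_1\, t\,\overline{u}_2\,\tau \,u_3$ with $u_1,u_3\in U(\ZZ_p)$, $\overline{u}_2\in\overline{U}_m$, $\tau\in T_b$. Rearranging,
$$u_1^{-1}\,t'\,u_3^{-1}\;=\;t\,\overline{u}_2\,\tau\;=\;(t\tau)\cdot(\tau^{-1}\overline{u}_2\tau),$$
which exhibits the right-hand side as an element of $\{1\}\cdot T(\qq_p)\cdot \overline{U}(\qq_p)$. On the left, using $t'U(\ZZ_p)t'^{-1}\subseteq U(\ZZ_p)$ we may write $u_1^{-1}t'u_3^{-1}=\bigl(u_1^{-1}\cdot t'u_3^{-1}t'^{-1}\bigr)\cdot t'$, exhibiting the same element as lying in $U(\qq_p)\cdot T(\qq_p)\cdot\{1\}$. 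Both decompositions sit inside the open Bruhat cell $U(\qq_p)\cdot T(\qq_p)\cdot\overline{U}(\qq_p)$, where the factorization is unique; comparing forces $\overline{u}_2=1$ and $t^{-1}t'=\tau\in T_b$, as desired.

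The main obstacle I foresee is bookkeeping: the Iwahori decomposition, the pair of contractions (which go in opposite directions on $U(\ZZ_p)$ and $\overline{U}_m$), and the coset reindexing in the multiplicativity calculation must all be tracked carefully, and one must verify that the elements produced really land in the open cell so that Bruhat uniqueness applies. Once these routine checks are in place, the two cases ($+$ and $-$) run entirely in parallel.
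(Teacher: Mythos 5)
Your proof is correct and is essentially a self-contained writeup of the argument behind the cited Casselman lemma, which is all the paper offers as a proof (a citation, plus a remark that multiplicativity can also be deduced from lemma \ref{lem-iwahori-comp}). The three pillars you identify --- the Iwahori decomposition of $K_{p,m,b}$, the contraction properties of $t\in T^+$ on $U(\ZZ_p)$ and $\overline{U}_m$ giving the single-coset decomposition $K_0 t K_0 = \bigsqcup u\,t\,K_0$, and Bruhat uniqueness in the big cell for injectivity --- are exactly the standard ingredients, and your verifications of them check out.
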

\begin{proof} This is \cite{CasselmanNotes}, lem. 4.1.5.  (Alternatively it can be deduced from lemma \ref{lem-iwahori-comp} below.)
\end{proof}

\subsubsection{Action of correspondences on the flag variety}

 Let $K_{p} \subseteq G_0(\qq_p)$ be a compact open subgroup. 
 
 We consider the quotient space $\mathcal{FL}/K_{p}$. This space carries an action by correspondences  of double cosets $K_{p} gK_{p}$ for $g \in G_0(\qq_p)$. 
Namely, given $x K_{p} \in \mathcal{FL}/K_{p}$, we let $xK_{p} . K_{p} gK_{p} = \{ x\tilde{u}g K_p,~u \in K_p/ (gK_pg^{-1} \cap K_p),~\tilde{u} \in K_p~\textrm{lifts}~u\}$. 

We now consider a compact open  $K_p=K_{p,m',b}$ for $m'\geq b\geq 0$ and $m'>0$.

\begin{lem}\label{lem-dynamic-corres} Let $w \in \WM$. \begin{enumerate}
\item Let $t \in T^{+}$. The sequence $\{]X_{w,k}[ . (K_p t K_p)^m \}_{m\geq 0}$ is nested.
\item Let $t \in T^{-}$. The sequence $\{]Y_{w,k}[ . (K_p t K_p)^n\}_{n\geq 0}$ is nested. 
\item Let $t \in T^{++}$ and $m \geq 0$. We have   $$ ]C_{w,k}[_{ m \max(t),\overline{- m \min(t)}}. K_p \subseteq $$ $$]X_{w,k}[ . (K_p t K_p)^m \subseteq ]C_{w,k}[_{ m \min(t),\overline{- m \max(t)}}.K_p  \bigcup    \cup_{w' < w} ]X_{w',k}[. $$ 
\item  Let $t \in T^{--}$ and $n \geq 0$. We have  $$ ]C_{w,k}[_{\overline{- n\min(t)},  n\max(t)}.K_p \subseteq $$ $$ ]Y_{w,k}[. (K_p t K_p)^n \subseteq ]C_{w,k}[_{\overline{- n\max(t)},  n\min(t)}.K_p \bigcup \cup_{w'>w} ]Y_{w',k}[.$$

\item  Let $t \in T^{++}$. For all $m, n \in \ZZ_{\geq 0}$, 
$$ ]X_{w,k}[ . (K_p t K_p)^m  \cap \overline{]Y_{w,k}[} . (K_p t^{-1} K_p)^n \subseteq ]C_{w,k}[_{m \min(t), \overline{0}} K_p \cap ]C_{w,k}[_{0, \overline{n \min(t)}} K_p.$$

\item  Let $t \in T^{--}$.  For all $m, n \in \ZZ_{\geq 0}$, 
$$ \overline{]X_{w,k}[ }. (K_p t^{-1} K_p)^m  \cap {]Y_{w,k}[} . (K_p t K_p)^n \subseteq ]C_{w,k}[_{\overline{m\min(t)}, {0}} K_p \cap ]C_{w,k}[_{\overline{0},{n}\min(t)} K_p.$$

\item Let $t \in T^{--}$. For all $n \geq d$, $\overline{]Y_{w,k}[} . (K_p t K_p)^n \subseteq ]Y_{w,k}[$.

\end{enumerate}
\end{lem}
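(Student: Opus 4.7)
The plan rests on two uniform reductions that apply to all seven parts. First, the Iwahori--Hecke identity of Lemma \ref{lemma-casselman} gives $(K_p t K_p)^m = K_p t^m K_p$ for $t \in T^\pm$, and since $K_p \subseteq \mathrm{Iw}$ preserves every tube of a Bruhat-closed (or -open) subscheme, right multiplication by $K_p$ absorbs into $]X_{w,k}[$, $]Y_{w,k}[$ and their closures. Thus each correspondence action reduces to a single translate by $t^m$ followed by a right $K_p$-orbit, e.g.\ $]X_{w,k}[ \cdot (K_p t K_p)^m = ]X_{w,k}[ \cdot t^m K_p$. Second, Lemma \ref{lem-decomposition-cells} lets us split $]X_{w,k}[$ (resp.\ $]Y_{w,k}[$, $\overline{]Y_{w,k}[}$) into a ``cell'' piece $]C_{w,k}[_{0,\overline{0}}$ (resp.\ $]C_{w,k}[_{\overline{0},0}$, $]C_{w,k}[_{0,\overline{0}}$) plus terms indexed by smaller (resp.\ larger) elements of $\WM$, on which we run an induction in the Bruhat order.

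Parts (1) and (2) follow by combining this reduction with the invariance statements of Lemma \ref{lemma-dynamical1}: since $]X_{w,k}[\cdot t \subseteq ]X_{w,k}[$ for $t\in T^+$ and $K_p$ preserves $]X_{w,k}[$, one gets $A_{m+1}\subseteq A_m$ by induction, and symmetrically for $]Y_{w,k}[$. For parts (3) and (4) I would decompose via Lemma \ref{lem-decomposition-cells}, apply the iterated strong contraction of Lemma \ref{lemma-strongcontraction} to the cell component to obtain the upper bound $]C_{w,k}[_{m\min(t),\overline{-m\max(t)}}K_p$, and apply Lemma \ref{lemma-dynamical1} to the remaining $]X_{w',k}[$-terms (which absorb $K_p$). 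The lower bound comes from the trivial observation that $t^m \in (K_p t K_p)^m$ combined with the opposite inclusion in Lemma \ref{lemma-strongcontraction}, namely $]C_{w,k}[_{m\max(t),\overline{-m\min(t)}}\subseteq ]C_{w,k}[_{0,\overline{0}}\cdot t^m$.

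Parts (5) and (6) reduce to intersecting the upper bounds from (3) and (4). The cross terms $]X_{w',k}[\cap\overline{]Y_{w'',k}[}$ with $w'<w<w''$ vanish by Lemma \ref{topological-lemma-easy}, and any residual intersection with $]X_{w,k}[\cap\overline{]Y_{w,k}[}$ is confined to $]C_{w,k}[_{0,\overline{0}}$ by Lemma \ref{lem-topological-description}(4),(5); an application of Lemma \ref{lem-nice-intersections} then rewrites the resulting refined intersection in the form $]C_{w,k}[_{m\min(t),\overline{0}}K_p\cap]C_{w,k}[_{0,\overline{n\min(t)}}K_p$ claimed.

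The main obstacle is part (7), which I would prove by descending induction on $w$ in the Bruhat order on $\WM$, the base case being $w=w_0^M$ where $\overline{]Y_{w_0^M,k}[}=]C_{w_0^M,k}[_{0,\overline{0}}$ and $]Y_{w_0^M,k}[=]C_{w_0^M,k}[_{\overline{0},0}$. Using the decomposition $\overline{]Y_{w,k}[}=]C_{w,k}[_{0,\overline{0}}\sqcup\bigcup_{w'>w}\overline{]Y_{w',k}[}$, the inductive hypothesis applied to each $w'>w$ (for which fewer iterations suffice, as the chains above $w'$ in the Bruhat order are strictly shorter) sends $\overline{]Y_{w',k}[}\cdot(K_p t K_p)^n$ inside $]Y_{w',k}[\subseteq ]Y_{w,k}[$. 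The remaining task is to show $]C_{w,k}[_{0,\overline{0}}\cdot t^n K_p\subseteq ]Y_{w,k}[$, for which Lemma \ref{lemma-strongcontraction}(2) pushes the cell into $]C_{w,k}[_{-n\max(t),\overline{n\min(t)}}$, and one concludes by exploiting Lemma \ref{lem-topological-description}(3) ($]C_{w,k}[_{-\infty,0}\subseteq ]Y_{w,k}[$) together with the fact that boundary rank-$2$ points of $\overline{]Y_{w,k}[}$ specialize, under the contraction of $t$, to points strictly inside $]Y_{w,k}[$. Since chains in the Bruhat order on $\WM$ have length at most $d$, the induction terminates after at most $d$ iterations, giving the bound $n\geq d$; verifying that a single application of $t$ suffices at each inductive step (so that $d$ steps suffice overall) is the delicate point and will require carefully tracking the specialization behaviour on the ``boundary'' $\overline{]Y_{w,k}[}\setminus ]Y_{w,k}[$ using the explicit coordinates of Section \ref{section-analytic-geometry}.
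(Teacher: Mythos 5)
Your proposal is correct and follows essentially the same route as the paper's proof: parts (1)--(2) from $(K_ptK_p)^m=K_pt^mK_p$ and Lemma \ref{lemma-dynamical1}, parts (3)--(4) from the decomposition of Lemma \ref{lem-decomposition-cells} combined with Lemma \ref{lemma-strongcontraction}, parts (5)--(6) by intersecting those bounds and using the topological disjointness statements, and part (7) by one application of the correspondence followed by induction on the Bruhat-order codimension $d-\ell(w)$. The one point you leave open in (7) — that a single application of $t$ suffices at each inductive step — is not actually delicate: Lemma \ref{lemma-strongcontraction}(2) is stated precisely for the partially closed tubes $]C_{w,k}[_{m,\overline{n}}$, so one application already sends $]C_{w,k}[_{0,\overline{0}}$ into $]C_{w,k}[_{-\max(t),\overline{\min(t)}}$, which lies in $]C_{w,k}[_{-\infty,0}\subseteq\,]Y_{w,k}[$ since $\min(t)>0$ (Lemma \ref{lem-topological-description}(3)), and thereafter part (2) keeps the orbit inside $]Y_{w,k}[$; this is exactly how the paper argues, by taking closures in part (4) with a single iteration and then inducting on $d-\ell(w)$.
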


\begin{proof}  We observe that $]X_{w,k}[ . (K_p t K_p)^m = ]X_{w,k}[ t^m K_p$ and $]Y_{w,k}[ . (K_p t K_p)^m = ]Y_{w,k}[ t^m K_p$, using the very definition of the action, and also noting that $(K_p t K_p)^m = K_p t^m K_p$ by lemma \ref{lemma-casselman}. Therefore, the first and second points follow from lemma \ref{lemma-dynamical1}.   

We note that $]X_{w,k}[ =  ]C_{w,k}[_{0, \overline{0}} \bigcup \cup_{w'<w} ]X_{w',k}[$ and $]Y_{w,k}[ = ]C_{w,k}[_{\overline{0}, 0} \bigcup \cup_{w'>w} ]Y_{w',k}[$ by lemma \ref{lem-decomposition-cells}. 
The third and fourth point follow from this, the first two points, and lemma \ref{lemma-strongcontraction}.

We know that $]X_{w,k}[ \cap \overline{]Y_{w,k}[} = ]C_{w,k}[_{0, \overline{0}}$ by lemma \ref{lem-topological-description}. Moreover, $]C_{w,k}[_{0, \overline{0}} \cap ]X_{w',k}[ = \emptyset$ if $w' \neq w$ and $]C_{w,k}[_{0, \overline{0}} \cap ]Y_{w',k}[ =  \emptyset$ if $w' \neq w$ (because $]C_{w,k}[_{0, \overline{0}} $, $ ]X_{w',k}[ $ and $]Y_{w,k}[$ are all stable under generalization and they have no common rank one point). Therefore, 

$$ ]X_{w,k}[ . (K_p t K_p)^m  \cap \overline{]Y_{w,k}[} . (K_p t^{-1} K_p)^n \subseteq $$ $$ ]C_{w,k}[_{0, \overline{0}} \cap ]C_{w,k}[_{ m\min(t),\overline{-m\max(t)} } K_p \cap  ]C_{w,k}[_{\overline{- n\max(t^{-1})},\overline{n\min(t^{-1})}}.K_p 
\subseteq $$ $$ ]C_{w,k}[_{m \min(t), \overline{0}} K_p \cap ]C_{w,k}[_{0, \overline{n\min(t)}} K_p.$$

The proof of the sixth  point is similar.  We pass to the last point. By taking closures, we find that $\overline{]Y_{w,k}[}. (K_p t K_p) \subseteq  ]C_{w,k}[_{\overline{-\max(t)},  \overline{\min(t)}}.K_p \bigcup \cup_{w'>w}\overline{ ]Y_{w',k}[}$.  Since  $]C_{w,k}[_{\overline{-\max(t)},  \overline{\min(t)}}.K_p \subseteq ]Y_{w,k}[$, we deduce that for any $x \in \overline{]Y_{w,k}[}$, and any $y \in x. K_ptK_p$, then either $y \in ]Y_{w,k}[$ or $y \in \overline{]Y_{w',k}[}$ for some $w' > w$. We conclude by induction on $d-\ell(w)$.

\end{proof}

\section{Shimura varieties}

Let $(G,X)$ be a Shimura datum.  Thus $X$ is a $G(\R)$-conjugacy class of homomorphism $h : \mathrm{Res}_{\C/\R} \mathbb{G}_m \rightarrow G_{\R}$ satisfying a list of familiar axioms (\cite{MR546620}, section 2.1):

\begin{enumerate}
\item For all $h \in X$, the Hodge structure on $\mathfrak{g}_{\R}$ has weight $(1,-1)$, $(0,0)$ and $(-1,1)$. 
\item The involution $\mathrm{Ad}(h(i))$ is a Cartan involution on $G^{ad}_{\R}$.
\item The group $G^{ad}$ has no simple $\qq$-factor whose real points are compact.
\end{enumerate}

\begin{rem} The condition $(3)$ is mostly  irrelevant for this paper.  This axiom is used in order to apply the strong approximation theorem, which is important for the description of connected components of Shimura varieties, and therefore the theory of canonical models over the reflex field.  This is important for certain rationality questions, but will not play much of a role in this paper.  Groups $G$ whose real points are compact modulo the center give Shimura sets. Automorphic forms on these groups are called algebraic automorphic forms, and all our results are  well known   for Shimura sets:  all the vanishing theorems are trivial, whereas the theory of overconvergent modular forms has been extensively developed in this setting (\cite{MR2392353}, \cite{MR2075765}, \cite{MR2769113}). 
We  keep this third axiom for convenience, as it is used in many references, but believe all our results  hold without this assumption, with minor modifications of the proofs.
\end{rem}

  Via base change to $\C$, we have $(\mathrm{Res}_{\C/\R} \mathbb{G}_m )_{\C} =  \mathbb{G}_m \times \mathbb{G}_m$ (given by $z \mapsto (z, \bar{z})$) and via projection to the first factor, $X$ induces a conjugacy class of cocharacters $\mathbb{G}_m \rightarrow G_{\C}$.  We fix one such cocharacter $\mu$.  Associated to $\mu$ we have two opposite parabolic subgroups $P^{std}_\mu = \{ g \in G_\C, \lim_{t \rightarrow \infty} \mathrm{Ad}(\mu(t)) g ~\textrm{exists}\}$ and $P_{\mu} = \{ g \in G_\C, \lim_{t \rightarrow 0} \mathrm{Ad}(\mu(t)) g ~\textrm{exists}\}$.  We also let $M_\mu$ be the Levi quotient of $P_\mu$ and $P_\mu^{std}$. We let $FL^{std}_{G,\mu} = G_\C/P^{std}_\mu$ and $FL_{G,\mu} = P_{\mu} \backslash G_\C$ be the Flag varieties.
Let $E$ be the reflex field, which is the field of definition of the conjugacy class of $\mu$.  The two flag varieties are defined over $\Spec~E$. 

 Let $K \subset G(\mathbb{A}_f)$ be a  neat compact open subgroup, and let $S_K\to\Spec~E$ be the canonical model of the corresponding Shimura variety \cite{MR1044823}.  We have $S_K(\C) = G(\qq) \backslash X \times G(\mathbb{A}_f)/K$. In the rest of this paper, all the compact open subgroups $K \subset G(\mathbb{A}_f)$ are assumed to be neat, so  we do not always repeat this assumption.

The most fundamental Shimura data are  the Siegel data  $(\mathrm{GSp}_{2g}, \mathcal{H}_g)$ for all $g \in \ZZ_{\geq 1}$, where $\mathcal{H}_g$ is the Siegel (upper and lower) half space of matrices $M \in \mathrm{M}_{g}(\C)$ such that $~^tM = M$ and $\mathrm{Im}(M)$ is definite. The corresponding Shimura varieties parametrize abelian varieties of dimension $g$, with a polarization and level structure prescribed by $K$. 
A Shimura datum $(G,X)$ is of Hodge type if it admits an embedding in a Siegel datum. All PEL Shimura data are of Hodge type.  A Shimura datum $(G,X)$ is of abelian type if there exists a datum $(G_1,X_1)$ of Hodge type and a central isogeny $G_1^{der} \rightarrow G^{der}$ which induces an isomorphism of connected Shimura data $(G^{ad}, X^{\mathrm{+}}) = (G_1^{ad}, X_1^{+})$  where $X^{+}$ is  a connected component of $X$ (and similarly $X_1^+$ is a connected component of  $X_1$). 
\begin{ex} Let $L$ be a totally real number field. The datum $(\mathrm{Res}_{L/\qq} \mathrm{GSp}_{2g}, \mathcal{H}_g^{[L:\qq]})$ is of abelian type. We call it  a symplectic datum. In the case $g=1$, the corresponding Shimura varieties are the Hilbert modular varieties.
\end{ex}

We let $S^\star_K \rightarrow \Spec~E$ be the minimal compactification of $S_K$. Depending on the auxiliary choice  of a projective cone decomposition $\Sigma$, we let $S^{tor}_{K, \Sigma} \rightarrow \Spec~E$ be the toroidal compactification of $S_K$ corresponding to $\Sigma$. The (reduced) boundary $D_{K,\Sigma} = S^{tor}_{K, \Sigma} \setminus S_{K}$ is a Cartier divisor. The cone decompositions $\Sigma$ are partially ordered by inclusion, and any two cone decompositions admit a  common refinement.  The cone decompositions $\Sigma$ which are such that  $S^{tor}_{K, \Sigma}\to\Spec~E$ is smooth and projective are cofinal among all cone decompositions, and we usually choose them this way.  We refer to \cite{PINK}  for the construction of these compactifications.

\subsection{Automorphic vector bundles and their cohomology}
\subsubsection{Automorphic vector bundles}\label{section-automorphicvectorbundles}
We choose a field extension $F$ of $E$ which splits $G$ and we work over $F$ in this section.  In most of this paper, $F$ will be a finite extension of $\qq_p$, but this is not necessary for the moment.   We can choose  $\mu$ so that it is defined over $F$, and choose a maximal split torus $T$ with $\mu(\mathbb{G}_m)\subseteq T\subseteq M_\mu$.

We let $Z_s(G)$ be the largest subtorus of the center $Z(G)$ which is $\mathbb{R}$-split but contains no $\qq$-split subtorus.  We let $G^c = G/Z_s(G)$, and define $M_\mu^c$, $T^c$, $P_\mu^c$, and $P_\mu^{c,std}$ similarly.

\begin{rem} In the Hodge type case, $Z_s(G) = \{1\}$. For the symplectic datum  $(\mathrm{Res}_{L/\qq} \mathrm{GSp}_{2g}, \mathcal{H}_g^{[L:\qq]})$, $Z_s(G)$ is the kernel of the norm $\mathrm{Res}_{L/\qq} \mathbb{G}_m \rightarrow \mathbb{G}_m$. 
\end{rem}

 Let  $\mathrm{Rep}(M^c_\mu)$ be the category of finite dimensional algebraic representations of the reductive group $M^c_\mu$ on $F$-vector spaces.   

By \cite{MR1044823}, thm. 5.1 and \cite{MR997249}, thm. 4.2 we have a right $M^c_\mu$-torsor $M_{dR}$ over $S^{tor}_{K, \Sigma}$, and it corresponds to  a functor:  
\begin{eqnarray*}
\mathrm{Rep}(M^c_\mu) &\rightarrow& VB(S_{K,\Sigma}^{tor}) \\
V & \mapsto & \mathcal{V}_{K,\Sigma}
\end{eqnarray*}
where $VB(S_{K,\Sigma}^{tor})$ is the category of locally free sheaves of finite rank over $S_{K,\Sigma}^{tor}$.  This functor is compatible in a natural way with change of level $K$ and of cone decompositions $\Sigma$.  The locally free sheaves in the essential image of this functor are called (totally decomposed) automorphic vector bundles.
 They carry an equivariant action of $G(\mathbb{A}_f)$.
\begin{rem} We recall the description of $M_{dR} \times_{S^{tor}_{K,\Sigma}} S_K(\C)$ as a complex analytic space. First, we have the Borel embedding  $\beta : X \hookrightarrow FL_{G,\mu}^{std}(\C) =G^c(\C)/P_\mu^{c,std}(\C) $ which sends $h \in X$ to the parabolic stabilizing the Hodge filtration. The Borel embedding is equivariant for the left action of $G(\R)$.  We have  the canonical map $G^c(\C) \rightarrow FL_{G,\mu}^{std}(\C)$. This map is a  right $P_\mu^{c,std}(\C)$-torsor and is $G(\C)$-equivariant for the left action.  Similarly, the canonical map $G^c(\C)/U_{P_\mu^{c,std}}(\C) \rightarrow   FL_{G,\mu}^{std}(\C)$ is a right $M_\mu^c(\C)$-torsor (where $U_{P_\mu^{c,std}}$ is the unipotent radical of $P_\mu^{c,std}$). 
Then we have:   $$M_{dR} \times_{{S^{tor}_{K,\Sigma}}}  S_K(\C) = G(\qq) \backslash \beta^{-1} (G^c(\C)/U_{P_\mu^{c,std}}(\C)) \times {G}(\mathbb{A}_f) / K.$$
\end{rem}

\begin{rem} We give a description of $M_{dR}$ for the Siegel Shimura datum $(\mathrm{GSp}_{2g}, \mathcal{H}_g)$. Let us first  work over $S_K$.  Let $\omega_A$ be the conormal sheaf of the universal abelian scheme $A \rightarrow {S}_K$ along its unit section and let $\mathrm{Lie}(A)$ be its dual. The torsor ${M}_{dR}$ parametrizes  trivializations   $ \psi_1 \oplus \psi_2 :  \oscr_{S_K}^g \oplus \oscr_{S_K}^g \rightarrow \mathrm{Lie}(A) \oplus \omega_{A^t}$, such that under the isomorphism $\mathrm{Lie}(A)^\vee = \omega_{A^t}$ given by the polarization, we have $\psi_1 = c (\psi_2^{-1})^t$ for a unit $c \in  \oscr_{S_K}^\times$. 
The extension of $M_{dR}$ over $S_{K,\Sigma}^{tor}$ has a similar description. Indeed, $A$ can be extended to a semi-abelian scheme $A_\Sigma$ over ${S}_{K,\Sigma}^{tor}$, and we can let ${M}_{dR}$ be the torsor of trivializations  of $\mathrm{Lie}(A_\Sigma) \oplus \omega_{A_\Sigma^t} $, compatible, up to a unit, with the polarization.
\end{rem}

We now make the construction of  automorphic vector bundles explicit, and label them using weights.  We begin by making a choice of positive roots of $T$: we first choose a set of compact positive roots $\Phi^+_c$ which lie in $\mathfrak{m}_{\mu}$ the Lie algebra of $M_\mu$. We then choose the non-compact positive roots $\Phi^+_{nc}$ to lie in  $\mathfrak{g}/\mathfrak{p}_{\mu}^{std}$ where $\mathfrak{p}_{\mu}^{std}$ is the Lie algebra of $P_\mu^{std}$. We let $\Phi^+ = \Phi^+_c  \coprod \Phi^+_{nc}$. 
\begin{rem} This choice implies  that the Borel corresponding to $\Phi^+$ is included in $P_\mu$, or equivalently that the cocharacter $\mu$ of the Shimura datum is dominant. In  section \ref{section-Flagvar} we fixed a parabolic $P$ of $G$ containing a Borel $B$. In the applications to Shimura varieties, $P$ will be $P_\mu$. Therefore our convention is also compatible with the choice made in section \ref{section-Flagvar}.
\end{rem}

We let $X^\star(T)^{M_\mu,+}$ be the cone of characters of $T$ which are dominant for $\Phi^+_c$. We label irreducible representations of $M_{\mu}$ by their highest weight  $\kappa \in X^\star(T)^{M_\mu,+}$. An explicit construction of the highest weight $\kappa$ representation $V_\kappa$ is as follows. Let $w_{0,M}$ be the longest element of the Weyl group of $M_\mu$. For any $\kappa \in X^\star(T)^{M_\mu,+}$ we consider the space $V_\kappa$ of functions $f : M_\mu \rightarrow \mathbb{A}^1$ such that $f(mb) = (w_{0,M} \kappa) (b^{-1}) f(m)$ for all $m \in M_\mu$ and $b \in B \cap M_\mu$.  The action of $M_\mu$ on itself via left translation induces a left action on $V_\kappa$, i.e. $(m'\cdot f)(m)=f({m'}^{-1}m)$. 
The irreducible representations of $M^c_{\mu}$ are the irreducible representations of $M_\mu$ labelled by  dominant characters $\kappa$  of $T^c$.  We let $X^\star(T^c)^{M_\mu,+}$ be the cone of these characters. 

We denote by $\mathcal{V}_{\kappa, K, \Sigma}$ the locally free sheaf associated to  the irreducible representation of highest weight $\kappa$ of $M^c_\mu$.   Concretely, we consider the right torsor $g : M_{dR} \rightarrow {S}^{tor}_{K, \Sigma}$ and we let $\mathcal{V}_{\kappa, K, \Sigma}$ be the subsheaf of $g_\star \oscr_{M_{dR}}$ of sections $f(m) $ such that $f(m b) = (w_{0,M} \kappa) (b^{-1})f(m)$ for all $b \in B \cap M_\mu$. We will often abbreviate $\mathcal{V}_{\kappa, K, \Sigma}$ to $\mathcal{V}_{\kappa}$. 

We also introduce the cuspidal subsheaf $\mathcal{V}_{\kappa, K, \Sigma}(-D_{K,\Sigma})$ of sections vanishing on the boundary divisor $D_{K,\Sigma} \hookrightarrow S_{K,\Sigma}^{tor}$. Again, we often  abbreviate this sheaf to $\mathcal{V}_{\kappa}(-D)$. 

\subsubsection{The cohomology of automorphic vector bundles}

We let $\pi_{K, \Sigma} : S_{K,\Sigma}^{tor} \rightarrow S_{K}^\star$ be the projection from the toroidal  to the minimal compactification. 

\begin{thm}\label{vanishingtominimal}  We have $\mathrm{R}^i(\pi_{K,\Sigma})_\star \mathcal{V}_{\kappa, K, \Sigma}(-nD_{K,\Sigma}) =0$ for all $i >0$ and all $n \geq 1$.
\end{thm}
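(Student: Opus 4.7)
The plan is to reduce the statement, via the theorem on formal functions (equivalently, proper base change), to a fiber-wise cohomological computation on the formal completions of $S_{K,\Sigma}^{tor}$ along the strata of $\pi_{K,\Sigma}^{-1}(Z)$ for boundary strata $Z$ of $S_K^\star$. Since $\pi_{K,\Sigma}$ is an isomorphism over the open part $S_K \subseteq S_K^\star$, the vanishing is trivial there, so only boundary points of $S_K^\star$ need to be considered. I would first treat the Hodge type (or even PEL type) case; for general abelian type the statement can be deduced by descent along the standard construction of abelian type Shimura varieties from Hodge type ones, since the map $\pi_{K,\Sigma}$, the divisor $D_{K,\Sigma}$, and the automorphic vector bundles $\mathcal{V}_{\kappa,\Sigma}$ are all compatible with this construction.

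Next, I would invoke the explicit description of the formal completion of $S_{K,\Sigma}^{tor}$ along the toroidal boundary strata lying over a given boundary stratum $Z_{[\Phi]}$ of $S_K^\star$ indexed by a cusp label $[\Phi]$ (due to Mumford and Faltings--Chai in the Siegel case, Pink in general, and Lan in the PEL and Hodge type settings, see e.g.\ \cite{MR1044823}). After passing to an \'etale (or formally \'etale) chart, this formal completion is identified with a relative torus embedding: there is a mixed Shimura variety $M_{[\Phi]}$, an abelian scheme $A_{[\Phi]} \to M_{[\Phi]}$, a split torus $E_{[\Phi]}$ acting, and a cone decomposition $\Sigma_{[\Phi]}$ refining a rational polyhedral cone $\sigma$ in $X_\star(E_{[\Phi]})_\R$, so that the completion of $S_{K,\Sigma}^{tor}$ is (locally) the formal completion of the corresponding torus embedding $\Xi_{\Sigma_{[\Phi]}} \to M_{[\Phi]}$ along its deepest stratum, while $S_K^\star$ completes along the affine cone $\Spec \oplus_{\chi \in \sigma^\vee \cap X^\star(E_{[\Phi]})} \mathcal{O}_{A_{[\Phi]}}(\chi)$. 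The boundary divisor $D_{K,\Sigma}$ corresponds to the toric boundary of $\Xi_{\Sigma_{[\Phi]}}$.

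The main remaining step is to read off the restriction of $\mathcal{V}_{\kappa, \Sigma}(-nD_{K,\Sigma})$ to this toric fibration. The $M_\mu^c$-torsor $M_{dR}$ extends to the completion in a manner compatible with the torus action, and hence, decomposing under $E_{[\Phi]}$, the pushforward of $\mathcal{V}_{\kappa,\Sigma}$ to the formal affine cone decomposes as a direct sum over characters $\chi \in \sigma^\vee \cap X^\star(E_{[\Phi]})$ of locally free sheaves on $M_{[\Phi]}$ (this is essentially the Fourier--Jacobi expansion). The cusp condition $(-nD_{K,\Sigma})$ cuts out exactly the characters $\chi$ lying in the strict interior of $n\sigma^\vee$. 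Now the required vanishing follows from a standard toric computation: for a rational polyhedral cone $\sigma \subseteq X_\star(E_{[\Phi]})_\R$ and the associated torus embedding $\Xi_{\Sigma_{[\Phi]}} \to \Spec \oplus_{\chi\in\sigma^\vee}\mathbb{Z}(\chi)$, the higher direct images of the line bundle $\mathcal{O}(\chi)$ vanish whenever $\chi$ lies in the interior of $\sigma^\vee$ (equivalently, is a positive linear combination of generators), since such a character takes positive values on every ray of $\Sigma_{[\Phi]}$, and one can compute the higher direct images as a $\chi$-graded piece of a \v{C}ech complex that becomes acyclic for interior characters.

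The main obstacle is purely bookkeeping: the explicit identification of the restriction of $M_{dR}$ to the toroidal boundary chart, and the verification that the subcanonical condition is exactly the interior-character condition. Both are by now well-documented in the literature cited above, and once in hand the conclusion is immediate from the toric vanishing.
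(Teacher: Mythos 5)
Your proposal follows the same first step as the paper (reduce to formal completions along fibres of $\pi_{K,\Sigma}$ over boundary points, via the theorem on formal functions, and use the local-chart description of the toroidal boundary), but then diverges: you propose to conclude by Fourier--Jacobi decomposition and what you call a ``standard toric computation'', whereas the paper's actual argument is a different and rather clever device. It shows that for any $C$ there exist $s\geq C$ and a finite self-map $\psi$ of the formal completion (built from multiplication-by-$\ell$ on the torus $\mathbf{E}_K(\Phi)$), a divisor $D'$ with the same support as $D$ for which $\mathcal{O}(-D')$ is relatively ample over the minimal compactification, and a split injection $\mathcal{O}(-nD)\hookrightarrow\psi_\star\mathcal{O}(-sD')$ obtained via the round-down lemma of \cite{MR2810322}. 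Relative Serre vanishing for $-sD'$ with $s$ large then kills the cohomology of the larger sheaf and hence of its direct summand. So the two proofs are genuinely different, and the comparison is not neutral.

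The issue with your route is that the ``standard toric computation'' you invoke is not true as stated, and the gap is precisely what the paper's split-injection trick is designed to close. Concretely: take $X=\mathrm{Bl}_0\,\mathbb{A}^2$ with its fan of two maximal cones, $f\colon X\to\mathbb{A}^2$ the blowdown, and $D=D_{e_1}+D_{e_2}+E$ the full toric boundary (the analogue of the reduced boundary divisor). The character $\chi=(1,1)$ lies in the strict interior of $\sigma^\vee$, yet the $\chi$-graded piece of $R^1f_\star\mathcal{O}(-2D)$ is one-dimensional: in the two standard charts $\mathcal{O}(-2D)$ is generated by $u^2v^2$ and $s^2t^2$ respectively, while on the overlap the monomial $u^2v$ is a \v{C}ech $1$-cocycle not hit by either chart. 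So ``$\chi$ interior to $\sigma^\vee$ $\Rightarrow$ acyclic'' fails already for $n=2$. What is true is that the $\chi$-graded piece of $H^i$ is controlled by the topology of the region where $\langle\chi,\cdot\rangle < n\,\phi_{-D}(\cdot)$, where $\phi_{-D}$ is the support function of $-D$; this region is automatically convex (and the cohomology vanishes) if $\phi_{-D}$ is \emph{concave}, i.e.\ if $-D$ is relatively nef. For the reduced boundary $D$ there is no reason for this to hold (it fails in the example above, where $\phi_{-D}=\max(v_1,v_2)$ is convex). The projectivity hypothesis on $\Sigma$ only produces some $D'$ with the same support whose negative is relatively ample; it does not make $-D$ itself nef. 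This is exactly what the paper's argument handles: the multiplication map and the round-down lemma of \cite{MR2810322} transfer the problem from $\mathcal{O}(-nD)$ to $\mathcal{O}(-snD')$ with $-D'$ relatively ample, after which the vanishing is honest Serre vanishing. So this step is not bookkeeping, it is the content of the proof; and a direct graded/Čech argument would have to establish the relevant convexity/concavity, which your proposal neither does nor flags as an issue. (The reduction from abelian type to Hodge type is fine and is what the paper does later for related statements.)
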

\begin{proof} In the PEL case (and for $n=1$), this is \cite{Lan2016}, thm. 8.6. We give an argument which follows closely \cite{MR3275848} and which is also similar to \emph{loc. cit}. Let $x \in {S}^{\star}_{K}$.  We write $\widehat{{S}^{tor}_{K, \Sigma}}^x $ for the formal completion of $S^{tor}_{K,\Sigma}$ along $\pi_{K,\Sigma}^{-1}(x)$. By the theorem on formal functions (\cite{stacks-project}, Tag 02O7) it suffices to prove that $\HH^i(\widehat{{S}^{tor}_{K, \Sigma}}^x,  \mathcal{V}_{\kappa, K, \Sigma}(-nD_{K,\Sigma})) = 0$ for all $i > 0$ and $n \geq 1$. 

 We may now use the description of $\widehat{{S}^{tor}_{K, \Sigma}}^x$ in terms of the local charts, following the notations of \cite{MR3948111}. The argument is mostly about torus embedding, so we do not need to explain in detail the structure of the toroidal compactification, but just recall what is strictly necessary.   Suppose that $x$ belongs to a boundary component indexed by a cusp label representative $\Phi$. 
There is a tower of spaces:
$$ \mathbf{S}_{K_\Phi}(Q_\Phi, D_\Phi) \stackrel{g}\rightarrow \mathbf{S}_{K_\Phi}( \overline{Q}_\Phi, \overline{D}_\Phi) \rightarrow \mathbf{S}_{K_\Phi}(G_{\Phi,h}, D_{\Phi,h})$$
where  $ \mathbf{S}_{K_\Phi}(Q_\Phi, D_\Phi) \rightarrow \mathbf{S}_{K_\Phi}( \overline{Q}_\Phi, \overline{D}_\Phi)$ is a torsor under a torus  $\mathbf{E}_K(\Phi)$. There is a locally free sheaf $\mathcal{V}_{\kappa,K}$ over $\mathbf{S}_{K_\Phi}( \overline{Q}_\Phi, \overline{D}_\Phi)$. 
There is a twisted torus embedding  $\mathbf{S}_{K_\Phi}(Q_\Phi, D_\Phi) \rightarrow \mathbf{S}_{K_\Phi}(Q_\Phi, D_\Phi,\Sigma(\Phi))$ which depends on the choice of $\Sigma$. There is an arithmetic group $\Delta_{K}(\Phi)$ acting on $X^\star(\mathbf{E}_K(\Phi))$ and on $\mathbf{S}_{K_\Phi}(Q_\Phi, D_\Phi) \hookrightarrow \mathbf{S}_{K_\Phi}(Q_\Phi, D_\Phi,\Sigma(\Phi))$. Let $\overline{g} : \mathbf{S}_{K_\Phi}(Q_\Phi, D_\Phi, \Sigma(\Phi)) \rightarrow \mathbf{S}_{K_\Phi}( \overline{Q}_\Phi, \overline{D}_\Phi)$.  The arithmetic group also acts on  $\overline{g}^\star \mathcal{V}_{\kappa,K}$. 
We have a $\Delta_K(\Phi)$-invariant closed   subscheme $\mathbf{Z}_{K_\Phi}(Q_\Phi, D_\Phi,\Sigma(\Phi)) \hookrightarrow \mathbf{S}_{K_\Phi}(Q_\Phi, D_\Phi, \Sigma(\Phi))$.
There is a finite morphism  $\mathbf{S}_{K_\Phi}(G_{\Phi,h}, D_{\Phi,h}) \rightarrow  {S}^{\star}_{K}$  whose image contains $x$. 
There is a series of morphisms: 
$$ \mathbf{Z}_{K_\Phi}(Q_\Phi, D_\Phi,\Sigma(\Phi)) \rightarrow \mathbf{S}_{K_\Phi}( \overline{Q}_\Phi, \overline{D}_\Phi) \rightarrow \mathbf{S}_{K_\Phi}(G_{\Phi,h}, D_{\Phi,h}).$$
We let $\mathbf{Z}_{K_\Phi}(Q_\Phi, D_\Phi,\Sigma(\Phi))_x$ be the closed subspace equal to the inverse image of $x$. The main result on the description of toroidal compactifications states that: 

$$\widehat{{S}^{tor}_{K, \Sigma}}^x \simeq \Delta_K(\Phi) \backslash \big( \widehat{ \mathbf{S}_{K_\Phi}(Q_\Phi, D_\Phi,\Sigma(\Phi))}^{\mathbf{Z}_{K_\Phi}(Q_\Phi, D_\Phi,\Sigma(\Phi))_x}\big)$$
Moreover, the sheaf $\overline{g}^\star \mathcal{V}_{\kappa,K}$ on $\mathbf{S}_{K_\Phi}(Q_\Phi, D_\Phi,\Sigma(\Phi))$ descends to $$\Delta_K(\Phi) \backslash \big( \widehat{ \mathbf{S}_{K_\Phi}(Q_\Phi, D_\Phi,\Sigma(\Phi))}^{\mathbf{Z}_{K_\Phi}(Q_\Phi, D_\Phi,\Sigma(\Phi))_x}\big)$$ and identifies with $\mathcal{V}_{\kappa, K, \Sigma}\vert_{\widehat{{S}^{tor}_{K, \Sigma}}^x}$.
We let $D$ be the boundary divisor ${S}^{tor}_{K,\Sigma} \setminus {S}_{K}$. Under the isomorphism above, it corresponds to the divisor $D_0 =  \mathbf{S}_{K_\Phi}(Q_\Phi, D_\Phi,\Sigma(\Phi)) \setminus \mathbf{S}_{K_\Phi}(Q_\Phi, D_\Phi)$. 

There is a divisor $D'$ on $ {S}^{tor}_{K,\Sigma}$ which  has exactly the same support as $D$ and such that $\oscr_{{S}^{tor}_{K,\Sigma}}(-D')$ is ample relatively to the minimal compactification. It corresponds to a divisor $D'_0$ on  $\mathbf{S}_{K_\Phi}(Q_\Phi, D_\Phi,\Sigma(\Phi))$.

We will prove the following statement: for any $C \in \ZZ_{\geq 0}$, there exists $s \geq C$ and a finite morphism
$$ \psi : \widehat{{S}^{tor}_{K, \Sigma}}^x  \rightarrow \widehat{{S}^{tor}_{K, \Sigma}}^x $$ such that $\psi^\star \mathcal{V}_{\kappa,K,\Sigma} = \mathcal{V}_{\kappa,K,\Sigma}$ and $\oscr(-nD) \rightarrow \psi_\star \oscr(-sD')$ is a split injection.
This implies the theorem, as we deduce that for all $i \geq 0$, $\HH^i(\widehat{{S}^{tor}_{K, \Sigma}}^x, \mathcal{V}_{\kappa,K,\Sigma} (-nD))$ is a direct factor of $\HH^i(\widehat{{S}^{tor}_{K, \Sigma}}^x, \mathcal{V}_{\kappa,K,\Sigma} (-sD'))$. Taking $C$ large enough, this last group vanishes for $i>0$. 

We now prove the claim about the existence of $\psi$. Our proof follows \cite{MR3275848}, p. 679.  We will construct everything on $\mathbf{S}_{K_\Phi}(Q_\Phi, D_\Phi,\Sigma(\Phi))$.
For any integer $\ell$, the multiplication by $\ell$-map on the torus $\mathbf{E}_K(\Phi)$ induces a finite morphism $\psi_\ell  : \mathbf{S}_{K_\Phi}(Q_\Phi, D_\Phi,\Sigma(\Phi)) \rightarrow \mathbf{S}_{K_\Phi}(Q_\Phi, D_\Phi,\Sigma(\Phi))$ which is $\Delta_K(\Phi)$ equivariant and for which $\psi_\ell^\star \overline{g}^\star \mathcal{V}_{\kappa,K} = \overline{g}^\star \mathcal{V}_{\kappa,K}$. The morphisms $\psi_\ell$ induces a finite morphism  $ \widehat{{S}^{tor}_{K, \Sigma}}^x \rightarrow  \widehat{{S}^{tor}_{K, \Sigma}}^x$. 
We have $D'_0 = \sum_{\sigma \in \Sigma(\Phi)(1)}  a_\rho D_\rho$ where $\Sigma(\Phi)(1)$ is the set of one dimensional faces in $\Sigma(\Phi)$ and $D_0 = \sum_{\sigma \in \Sigma(\Phi)(1)}  D_\rho$. Since $D'_0$ is $\Delta_K(\Phi)$-equivariant and $\Sigma(\Phi)(1)/\Delta_K(\Phi)$ is finite, we deduce that  for any $s \geq 1$, there exists $\ell$ such that $0  < s a_\rho \leq \ell$. It follows that the round-down of the $\qq$-divisor  $- \ell^{-1} D'_0$ is $-D_0$. 
We deduce from \cite{MR2810322}, lem. 9.3.4 that $ \oscr(-D_0) \hookrightarrow (\psi_{\ell})_\star  \oscr(-sD'_0) $ is a split injection. Pulling back this morphism by $\psi_n$, we deduce that $ \oscr(-nD_0) \hookrightarrow (\psi_{\ell})_\star  \oscr(-snD'_0) $ is a split injection.
\end{proof} 

We let $\pi_{K,K',\Sigma,\Sigma'} : S_{K,\Sigma}^{tor} \rightarrow S_{K',\Sigma'}^{tor}$ be the map associated to a change of level (for $K\subseteq K'$) and cone decomposition.
\begin{thm}\label{thm-van-between-toroidal}   
We have $\mathrm{R}^i(\pi_{K,K',\Sigma,\Sigma'})_\star\mathcal{V}_{\kappa,K,\Sigma}=0$ and $\mathrm{R}^i(\pi_{K,K',\Sigma,\Sigma'})_\star\mathcal{V}_{\kappa,K,\Sigma}(-D_{K,\Sigma})=0$ for all $i>0$.  Moreover if $K\subseteq K'$ is normal then we have
$$((\pi_{K,K',\Sigma, \Sigma'})_\star \mathcal{V}_{\kappa, K, \Sigma})^{K'/K} =  \mathcal{V}_{\kappa, K', \Sigma'}$$  and $$((\pi_{K,K',\Sigma, \Sigma'})_\star \mathcal{V}_{\kappa, K, \Sigma}(-D_{K,\Sigma}))^{K'/K} =  \mathcal{V}_{\kappa, K', \Sigma'}(-D_{K', \Sigma'}).$$ 
\end{thm}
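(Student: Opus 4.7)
I plan to deduce the theorem from the vanishing along the minimal compactification (Theorem \ref{vanishingtominimal}) via a reduction to two simpler cases, combined with a Leray spectral sequence argument. Set $\pi=\pi_{K,K',\Sigma,\Sigma'}$, and write $\pi_K:S^{tor}_{K,\Sigma}\to S_K^\star$, $\pi_{K'}:S^{tor}_{K',\Sigma'}\to S_{K'}^\star$, and $p:S_K^\star\to S_{K'}^\star$ (finite), so that $\pi_{K'}\circ\pi=p\circ\pi_K$.

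\textbf{Reduction.} Choose a cone decomposition $\tilde\Sigma$ for $K$ refining $\Sigma$ and compatible with $\Sigma'$ (so that $S^{tor}_{K,\tilde\Sigma}\to S^{tor}_{K',\Sigma'}$ exists); this is always possible by general toric theory. Factor $a:S^{tor}_{K,\tilde\Sigma}\to S^{tor}_{K,\Sigma}$ and $b=\pi\circ a:S^{tor}_{K,\tilde\Sigma}\to S^{tor}_{K',\Sigma'}$. If I prove the theorem separately for $a$ (change of cone decomposition at the same level) and for $b$ (change of level with compatible cone decompositions), then the Leray spectral sequence for $b=\pi\circ a$ combined with the acyclicity of $a$ shows $R^j\pi_*\mathcal{V}_{\kappa,K,\Sigma}(-\epsilon D_{K,\Sigma})=R^jb_*\mathcal{V}_{\kappa,K,\tilde\Sigma}(-\epsilon D_{K,\tilde\Sigma})$ for $\epsilon\in\{0,1\}$, and both sides vanish for $j>0$. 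The identification of $\pi_*$ in the normal case follows from the corresponding identifications for $a_*$ and $b_*$.

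\textbf{Case (b): change of level with compatible cone decompositions.} Here $\pi$ is in fact \emph{finite}: on the interior it is the finite étale cover $S_K\to S_{K'}$, and on the boundary one uses the local description of toroidal compactifications, where in the normal case $\pi$ becomes the étale quotient by $K'/K$ of a twisted torus embedding (and similarly a finite quotient in the general case). Thus $\mathrm{R}\pi_*=\pi_*$ and $\mathrm{R}^i\pi_*=0$ for $i>0$ is immediate. For the identification of $\pi_*$ in the normal case: Galois descent gives $(\pi_*\mathcal{V}_{\kappa,K,\Sigma}|_{S_{K'}})^{K'/K}=\mathcal{V}_{\kappa,K',\Sigma'}|_{S_{K'}}$, and since both sheaves are locally free (hence reflexive) on the normal scheme $S^{tor}_{K',\Sigma'}$ and agree on the dense open $S_{K'}$, they agree globally.

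\textbf{Case (a): change of cone decomposition.} Here $\pi=a$ is proper birational and an isomorphism over $S_K$, so $R^i\pi_*$ for $i>0$ is supported on the boundary. Using the local description of toroidal compactifications recalled in the proof of Theorem \ref{vanishingtominimal}, both $\mathcal{V}_{\kappa,K,\tilde\Sigma}$ and $\mathcal{V}_{\kappa,K,\Sigma}$ are, locally on the minimal compactification $S_K^\star$, pulled back from the \emph{same} sheaf $\mathcal{V}_{\kappa,K}$ on $\mathbf{S}_{K_\Phi}(\overline{Q}_\Phi,\overline{D}_\Phi)$; the boundary divisor also pulls back correctly. By the projection formula the problem reduces to the toric computations $\mathrm{R}\pi_*\oscr_{S^{tor}_{K,\tilde\Sigma}}=\oscr_{S^{tor}_{K,\Sigma}}$ and $\mathrm{R}\pi_*\oscr(-D_{K,\tilde\Sigma})=\oscr(-D_{K,\Sigma})$, which are standard facts about proper birational refinements of smooth twisted torus embeddings (toric varieties are Cohen-Macaulay with rational singularities, and the analogous rational-singularity statement for the boundary ideal follows by a torus-invariant Koszul resolution).

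\textbf{Main obstacle.} The most delicate point is Case (a), specifically the toric computation $\mathrm{R}\pi_*\oscr(-D_{K,\tilde\Sigma})=\oscr(-D_{K,\Sigma})$ with the correct boundary divisor twist: the refinement $\tilde\Sigma\geq\Sigma$ introduces new exceptional divisors and one must check that the pullback of the boundary behaves compatibly with the identifications of sheaves. This requires combining the local toric model (affine charts indexed by cones of $\tilde\Sigma$ refining cones of $\Sigma$) with the global descent via the arithmetic group $\Delta_K(\Phi)$, exactly as in the proof of Theorem \ref{vanishingtominimal}.
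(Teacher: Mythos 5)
Your overall strategy (factoring $\pi_{K,K',\Sigma,\Sigma'}$ through a common refinement, then treating separately the change of cone decomposition at fixed level and the change of level with compatible cone decompositions) is the standard route, and the vanishing of $\mathrm{R}^i\pi_*$ for $i>0$ is argued essentially correctly in both cases. The paper itself simply cites \cite{MR1064864}, Prop.\ 2.4 and 2.6 (and \cite{Lan2016}, Prop.\ 7.5 for the PEL case), so your proof is more explicit than what the paper records.

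However, there is a genuine gap in your identification $(\pi_*\mathcal{V}_{\kappa,K,\Sigma})^{K'/K}=\mathcal{V}_{\kappa,K',\Sigma'}$. Your argument for extending the Galois-descent identification over $S_{K'}$ to all of $S^{tor}_{K',\Sigma'}$ asserts that two locally free sheaves on a normal scheme that agree on a dense open must agree globally. This is false: $\oscr$ and $\oscr(-D)$ are both locally free and agree away from $D$. Reflexivity determines a sheaf from its restriction to an open whose complement has codimension $\geq 2$, but here the boundary $D_{K',\Sigma'}$ is a divisor, so codimension $1$. The identification over the boundary is precisely the nontrivial content: one must use the \emph{construction} of the canonical extension, i.e.\ that by functoriality of the extended torsor $M_{dR}$ over the tower of toroidal compactifications one has $\pi^*\mathcal{V}_{\kappa,K',\Sigma'}=\mathcal{V}_{\kappa,K,\tilde\Sigma}$ (and similarly for the subcanonical extension), after which the identification of $(\pi_*\pi^*)^{K'/K}$ with the identity reduces to $(\pi_*\oscr)^{K'/K}=\oscr$ for the finite quotient map $\pi$. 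As written, your argument does not rule out a discrepancy supported along the boundary.
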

\begin{proof} This is easily extracted from \cite{MR1064864}, section 2 (look in particular at proposition 2.4 and proposition 2.6 and their proofs). In the PEL case, this is explicitly  \cite{Lan2016}, prop. 7.5.
\end{proof}

In particular from the case that $K=K'$ we deduce that the cohomologies $\mathrm{R}\Gamma(S_{K,\Sigma}^{tor}, \mathcal{V}_{\kappa, K, \Sigma})$ and  $\mathrm{R}\Gamma(S_{K,\Sigma}^{tor}, \mathcal{V}_{\kappa, K, \Sigma}(-D_{K,\Sigma}))$ do not depend on $\Sigma$. 

We also recall that the maps $\pi_{K,K',\Sigma,\Sigma'}$ have fundamental classes in the sense of \cite{F-Pilloni} section 2.3.  Namely we have maps $\oscr_{S_{K,\Sigma}^{tor}}\to \pi_{K,K',\Sigma,\Sigma'}^!\oscr_{S_{K',\Sigma'}^{tor}}$ and $\oscr_{S_{K,\Sigma}^{tor}}(-D_{K,\Sigma})\to \pi_{K,K',\Sigma,\Sigma}^!\oscr_{S_{K',\Sigma'}^{tor}}(-D_{K',\Sigma'})$ or equivalently, by adjunction, trace maps $\mathrm{R}(\pi_{K,K',\Sigma,\Sigma'})_\star\oscr_{S_{K,\Sigma}^{tor}}=(\pi_{K,K',\Sigma,\Sigma'})_\star\oscr_{S_{K,\Sigma}^{tor}}\to\oscr_{S_{K',\Sigma'}^{tor}}$ and $\mathrm{R}(\pi_{K,K',\Sigma,\Sigma'})_\star(\oscr_{S_{K,\Sigma}^{tor}}(-D_{K,\Sigma}))=(\pi_{K,K',\Sigma,\Sigma'})_\star(\oscr_{S_{K,\Sigma}^{tor}}(-D_{K,\Sigma}))\to\oscr_{S_{K',\Sigma'}^{tor}}(-D_{K',\Sigma'})$, both of which extend the trace map for the finite \'{e}tale morphism $\pi_{K,K'}:S_K\to S_{K'}$.

\subsection{Action of the Hecke algebra}

\subsubsection{Action of Hecke correspondences on cohomology}\label{section-hecke-construction}
Let $K_1,K_2\subseteq G(\mathbb{A}_f)$ be open compact subgroups, and let $g\in G(\mathbb{A}_f)$.  To this data we associate a Hecke correspondence
\begin{eqnarray*}
\xymatrix{ & S_{K_1 \cap g K_2 g^{-1}} \ar[rd]^{p_1} \ar[ld]_{p_2} & \\
S_{K_2} & & S_{K_1}}
\end{eqnarray*}
where $p_1$ is the forgetful map corresponding to the inclusion $K_1\cap gK_2g^{-1}\subseteq K_1$, and $p_2$ is the composition of the action map $[g]:S_{K_1\cap gK_2g^{-1}}\to S_{g^{-1}K_1g\cap K_2}$ and the forgetful map corresponding to the inclusion $g^{-1}K_1g\cap K_2\subseteq K_2$.

For any weight $\kappa\in X^\star(T^c)^{M_\mu,+}$ we have a cohomological correspondence $(p_1)_\star p_2^\star\mathcal{V}_{\kappa,K_2}\to\mathcal{V}_{\kappa,K_1}$.  This is obtained by combining the trace map $\mathrm{tr}_{p_1}:(p_1)_\star\ocal_{S_{K_1\cap gK_2g^{-1}}}\to\ocal_{S_{K_1}}$ for the finite \'etale map $p_1$ with the isomorphism $p_2^\star\mathcal{V}_{\kappa,K_2}\simeq p_1^\star\mathcal{V}_{\kappa,K_1}$, which is itself composed of the isomorphism $p_1^\star\mathcal{V}_{\kappa,K_1}\simeq\mathcal{V}_{\kappa,K_1\cap gK_2g^{-1}}$ and the similar isomorphism for the forgetful map $S_{g^{-1}K_1g\cap K_2}\to S_{K_2}$, as well as the action map $[g]^\star\mathcal{V}_{\kappa,g^{-1}K_1g\cap K_2}\simeq\mathcal{V}_{\kappa,K_1\cap gK_2g^{-1}}$.

One readily verifies that the cohomological correspondence $((S_{K_1\cap gK_2g^{-1}},p_1,p_2),(p_1)_\star p_2^\star\mathcal{V}_{\kappa,K_2}\to\mathcal{V}_{\kappa,K_1})$ only depends on the double coset $K_1gK_2$, up to canonical isomorphism.

Now for suitable choices of cone decomposition we have a diagram
\begin{eqnarray*}
\xymatrix{ & S^{tor}_{K_1 \cap g K_2 g^{-1},\Sigma''} \ar[rd]^{p_1} \ar[ld]_{p_2} & \\
S^{tor}_{K_2,\Sigma'} & & S^{tor}_{K_1,\Sigma}}
\end{eqnarray*}
and we claim that our cohomological correspondence on the interior extends to a cohomological correspondence $\mathrm{R}(p_1)_\star p_2^\star\mathcal{V}_{\kappa,K_2,\Sigma'}=(p_1)_\star p_2^\star\mathcal{V}_{\kappa,K_2,\Sigma'}\to\mathcal{V}_{\kappa,K_1,\Sigma''}$ as well as a cuspidal version  $\mathrm{R}(p_1)_\star p_2^\star\mathcal{V}_{\kappa,K_2,\Sigma'}(-D_{K_2,\Sigma'})=(p_1)_\star p_2^\star\mathcal{V}_{\kappa,K_2,\Sigma'}(-D_{K_2,\Sigma'})\to\mathcal{V}_{\kappa,K_1,\Sigma''}(-D_{K_1,\Sigma})$.  We have already discussed the extensions of the trace map in the previous section.  As for the action map, it also induces an isomorphism of canonical extensions $p_2^\star\mathcal{V}_{\kappa,K_2,\Sigma}\to p_1^\star\mathcal{V}_{\kappa,K_1,\Sigma}$ as well as a morphism of subsheaves $p_2^\star(\mathcal{V}_{\kappa,K_2,\Sigma}(-D_{K_2,\Sigma'}))\to (p_1^\star\mathcal{V}_{\kappa,K_1,\Sigma})(-D_{K_1\cap gK_2g^{-1},\Sigma''})$.

Finally these cohomological correspondences induces maps on cohomology in the usual way, namely we denote by $[K_1gK_2]$ the composition
$$\mathrm{R}\Gamma(S_{K_2,\Sigma'}^{tor},\mathcal{V}_{\kappa,K_2,\Sigma'})\to\mathrm{R}\Gamma(S_{K_1\cap gK_2g^{-1},\Sigma''}^{tor},p_2^*\mathcal{V}_{\kappa,K_2,\Sigma'})=$$
$$\mathrm{R} \Gamma(S_{K_1,\Sigma}^{tor},\mathrm{R}(p_1)_*p_2^*\mathcal{V}_{\kappa,K_2,\Sigma'})\to \Gamma(S_{K_1,\Sigma}^{tor},\mathcal{V}_{\kappa,K_1,\Sigma})$$
where the first map is $p_2^*$ and the last map uses the cohomological correspondence.  We have a similar definition for cuspidal cohomology.

\subsubsection{Composition of Hecke correspondences}\label{section-compos-Hecke-op}
Let us briefly explain the point of this section.  We have defined an action of individual Hecke operators on coherent cohomology, and we would like to show that this actually gives an action of the Hecke algebra.  The standard proof (see \cite[Prop. 2.6]{MR1064864}) is to pass to a limit over all $K$ to obtain a representation of $G(\mathbb{A}_f)$, and then use the purely group theoretic fact that Hecke algebras act on its invariants.  However in this paper we will also study coherent cohomology with support conditions which are not $G(\qq_p)$ invariant, and so we no longer expect an action of the full Hecke algebra, but only of certain Hecke operators which preserve the support conditions in a suitable sense.  We would still like to know that these Hecke operators compose according to relations in the abstract Hecke algebra.

For this reason we develop a different approach to composition of Hecke operators, by directly studying the geometric composition of Hecke correspondences.  This material is presumably well known but we lack a reference (see \cite[VII \S3]{MR1083353} for a closely related discussion.)

We recall the formalism of double coset multiplication.  Fix a Haar measure on $G(\mathbb{A}_f)$ and let $C_c^\infty(G(\mathbb{A}_f),\mathbb{R})$ be the Hecke algebra with its convolution product.  For $K_1,K_2\subset G(\mathbb{A}_f)$ we consider the free group $\ZZ[K_1\backslash G(\mathbb{A}_f)/K_2]$ and we write the basis elements as $[K_1gK_2]$.  We have an embedding $i_{K_1,K_2}:\ZZ[K_1\backslash G(\mathbb{A}_f)/K_2]\to C_c^\infty(G(\mathbb{A}_f),\mathbb{R})$ which sends $[K_1gK_2]$ to $\frac{1}{\sqrt{\mathrm{vol}(K_1)\mathrm{vol}(K_2)}}\mathbf{1}_{K_1gK_2}$, where $\mathbf{1}_{K_1gK_2}$ denotes the characteristic function of the double coset $K_1gK_2$.

For $K_1,K_2,K_3\subset G(\mathbb{A}_f)$ open compact, there is a product map
$$\ZZ[K_1\backslash G(\mathbb{A}_f)/K_2]\times\ZZ[K_1\backslash G(\mathbb{A}_f)/K_2]\to\ZZ[K_1\backslash G(\mathbb{A}_f)/K_3]$$
which can be defined by
$$i_{K_1,K_3}([K_1gK_2][K_2hK_3])=i_{K_1,K_2}([K_1gK_2])\star i_{K_2,K_3}([K_2hK_3])$$
To see that the right hand side is in the image of $i_{K_1,K_3}$ note that
$$(\mathbf{1}_{K_1gK_2}\star\mathbf{1}_{K_2hK_3})(x)=\mathrm{vol}(K_1gK_2\cap xK_3h^{-1}K_2)$$
is an integer multiple of $\mathrm{vol}(K_2)$.  We also note that this definition is independent of the choice of Haar measure.

It follows from the definition that double coset multiplication is associative and satisfies 
$$([K_1gK_2][K_2hK_3])^t=[K_2hK_3]^t[K_1gK_2]^t$$
where the transpose map $(-)^t:\ZZ[K_1\backslash G(\mathbb{A}_f)/K_2]\to\ZZ[K_2\backslash G(\mathbb{A}_f)/K_1]$ is defined by $[K_1gK_2]^t=[K_2g^{-1}K_1]$.  This corresponds to transposes of Hecke correspondences.

We now give a formula for double coset multiplication which is closely related to geometric composition of Hecke correspondences.
\begin{prop}\label{prop-conv-doublecoset}
Let $k_1,\ldots,k_n\in K_2$ be a set of representatives for the double cosets $(K_2\cap g^{-1}K_1g)\backslash K_2/(K_2\cap hK_3h^{-1})$.  Then we have
$$[K_1gK_2][K_2hK_3]=\sum_{i=1}^nc(k_i)[K_1gk_ihK_3]$$
where $c(k_i)=[g^{-1}K_1g\cap(k_ih)K_3(k_ih)^{-1}:g^{-1}K_1g\cap K_2\cap(k_ih)K_3(k_ih)^{-1}]$
\end{prop}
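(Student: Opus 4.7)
The plan is to directly compute the defining convolution. Unwinding the embedding $i_{K,K'}$ and its normalization, the assertion reduces to the identity
\[
\mathbf{1}_{K_1gK_2}\star\mathbf{1}_{K_2hK_3}=\mathrm{vol}(K_2)\sum_i c(k_i)\,\mathbf{1}_{K_1gk_ihK_3}
\]
in $C_c^\infty(G(\mathbb{A}_f),\mathbb{R})$. Since the double cosets $K_1gk_ihK_3$ are pairwise disjoint by choice of the $k_i$, it suffices to verify this identity after evaluation at $x=gk_ih$ for each $i$, so the task reduces to showing $(\mathbf{1}_{K_1gK_2}\star\mathbf{1}_{K_2hK_3})(gk_ih)=\mathrm{vol}(K_2)\,c(k_i)$.

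Next, I would expand the left hand side as $\mathrm{vol}\{y\in K_1gK_2:y^{-1}gk_ih\in K_2hK_3\}$ and parameterize $y=k_1gk_2$ via the $\mathrm{vol}(K_1\cap gK_2g^{-1})$-to-one surjection $K_1\times K_2\to K_1gK_2$. The condition then depends only on $k_1$, and setting $L=g^{-1}K_1g$, $N=K_2$, $M=hK_3h^{-1}$, and $M'=(k_ih)K_3(k_ih)^{-1}=k_iMk_i^{-1}$ (using $k_i\in N$), it rewrites as $a:=g^{-1}k_1^{-1}g\in L\cap NMk_i^{-1}$. The set $L\cap NMk_i^{-1}$ is stable under left multiplication by $L\cap N$ and hence is a union of left $(L\cap N)$-cosets in $L$. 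Combined with the Haar-measure bookkeeping (using unimodularity of $G(\mathbb{A}_f)$ so that $k_1\mapsto g^{-1}k_1^{-1}g$ is measure-preserving from $K_1$ to $L$), this yields
\[
c(k_i)=[L\cap NMk_i^{-1}:L\cap N].
\]

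Finally, I would match this count with the stated index $[L\cap M':L\cap M'\cap N]$. Any $m'\in L\cap M'$ writes as $k_i\cdot(k_i^{-1}m'k_i)\cdot k_i^{-1}\in NMk_i^{-1}$, giving an inclusion $L\cap M'\hookrightarrow L\cap NMk_i^{-1}$ that descends to an injection of coset spaces $(L\cap M'\cap N)\backslash(L\cap M')\hookrightarrow(L\cap N)\backslash(L\cap NMk_i^{-1})$. The main obstacle, and the technical heart of the argument, is to show that this injection is in fact surjective: given $l\in L\cap NMk_i^{-1}$, one must find $m'\in L\cap M'$ such that $l$ and $m'$ lie in the same $(L\cap N)$-coset. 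This is established by a direct coset manipulation exploiting the defining double coset decomposition $N=\bigsqcup_j(L\cap N)k_j(N\cap M)$, which ensures that the ``obstruction'' to adjusting $l$ by an element of $L\cap N$ into $L\cap M'$ vanishes precisely because $l$ lies in the class indexed by $k_i$.
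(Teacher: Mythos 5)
Your reduction fails at the very first step: the claim that ``the double cosets $K_1gk_ihK_3$ are pairwise disjoint by choice of the $k_i$'' is false. The $k_i$ are chosen to represent distinct double cosets of $(K_2\cap g^{-1}K_1g)\backslash K_2/(K_2\cap hK_3h^{-1})$, but nothing forces the resulting $K_1gk_ihK_3$ to be distinct subsets of $G$. For a concrete counterexample take $G=S_4$, $K_1=\langle(12)\rangle$, $K_2=\langle(12)(34)\rangle$, $K_3=\langle(34)\rangle$, $g=h=1$. Then $K_2\cap K_1=K_2\cap K_3=\{1\}$, so $n=2$ with $k_1=1$ and $k_2=(12)(34)$, yet $K_1\cdot(12)(34)\cdot K_3=K_1K_3$, so the two double cosets coincide. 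In your notation $L=K_1$, $N=K_2$, $M=M'=K_3$, and $L\cap NMk_1^{-1}=L\cap NM=L$ has two left $(L\cap N)=\{1\}$-cosets, whereas $[L\cap M':L\cap M'\cap N]=[\{1\}:\{1\}]=1$. So the identity you aim for in the final step is false, and indeed the surjectivity you flag as the technical heart of the argument cannot be established: $(12)\in L\cap NM$ has no preimage in $L\cap M'=\{1\}$.

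What your computation actually shows is that $(\mathbf{1}_{K_1gK_2}\star\mathbf{1}_{K_2hK_3})(gk_ih)=\mathrm{vol}(K_2)\cdot[L\cap NMk_i^{-1}:L\cap N]$, and this equals $\mathrm{vol}(K_2)\sum_j c(k_j)$, the sum running over all $j$ with $K_1gk_jhK_3=K_1gk_ihK_3$; pointwise evaluation cannot separate coinciding double cosets. The paper's proof avoids this entirely: it decomposes $\mathbf{1}_{K_1gK_2}$ into right $K_1$-cosets and $\mathbf{1}_{K_2hK_3}$ into left $K_3$-cosets, applies the elementary identity $\mathbf{1}_{K_1x}\star\mathbf{1}_{yK_3}=\mathrm{vol}(x^{-1}K_1x\cap yK_3y^{-1})\,\mathbf{1}_{K_1xyK_3}$, and then groups the pairs $(K_1x,yK_3)$ into orbits under the $K_2$-action $k\cdot(K_1x,yK_3)=(K_1xk^{-1},kyK_3)$. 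These orbits are in bijection with the double cosets represented by the $k_i$, the stabilizer of the orbit of $(K_1g,k_ihK_3)$ is $K_2\cap g^{-1}K_1g\cap(k_ih)K_3(k_ih)^{-1}$, and this orbit count produces exactly $c(k_i)$ as a separate coefficient for each $i$, whether or not the underlying double cosets $K_1gk_ihK_3$ happen to coincide. To salvage your approach you would need to refine $L\cap NMk_i^{-1}$ into pieces indexed by the $k_j$ contributing to the same double coset, which amounts to redoing this orbit analysis.
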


\begin{proof}
We first note that for $x,y\in G(\mathbb{A}_f)$ we have $\mathbf{1}_{K_1x}\star\mathbf{1}_{yK_3}=\mathrm{vol}(x^{-1}K_1x\cap yK_3y^{-1})\mathbf{1}_{K_1xyK_3}$.
Decomposing $K_1gK_2$ into right $K_1$ cosets and $K_2hK_3$ into left $K_3$ cosets we have
$$\mathbf{1}_{K_1gK_2}\star\mathbf{1}_{K_2hK_3}=\sum_{\substack{x\in K_1\backslash K_1gK_2\\y\in K_2hK_3/K_3}}\mathrm{vol}(x^{-1}K_1x\cap yK_3y^{-1})\mathbf{1}_{K_1xyK_3}.$$
Now note that the terms of this sum are constant in $K_2$ orbits, for the action $k\cdot (K_1x,yK_3)=(K_1xk^{-1},kyK_3)$.  We have a bijection
$$(K_2\cap g^{-1}K_1g)\backslash K_2/(K_2\cap hK_3h^{-1})\to K_2\backslash((K_1\backslash K_1gK_2)\times (K_2hK_3/K_3))$$
sending the double coset of $k$ to the orbit of $(K_1g,khK_3)$.  Moreover the stabilizer of this orbit is $K_2\cap g^{-1}K_1g\cap(kh)K_3(kh)^{-1}$.  It follows that
$$\mathbf{1}_{K_1gK_2}\star\mathbf{1}_{K_2hK_3}=\sum_{i=1}^n\frac{\mathrm{vol}(K_2)\mathrm{vol}(g^{-1}K_1g\cap (k_ih)K_3(k_ih)^{-1})}{\text{vol}(K_2\cap g^{-1}K_1g\cap (k_ih)K_3(k_ih)^{-1})}\mathbf{1}_{K_1gk_ihK_3}.$$
which translated back into double coset multiplication gives the proposition.
\end{proof}

Suppose we have $K_1,K_2,K_3\subset G(\mathbb{A}_f)$ open compact subgroups and $g,h\in G(\mathbb{A}_f)$.  We have a diagram of correspondences
\begin{eqnarray*}
\xymatrix{ & & C  \ar[rd]^{r_1} \ar[ld]_{r_2} & &\\ & S_{K_2\cap hK_3h^{-1}} \ar[rd]^{q_1} \ar[ld]_{q_2} &  & S_{K_1 \cap g K_2 g^{-1}} \ar[rd]^{p_1} \ar[ld]_{p_2} & \\S_{K_3} & &
S_{K_2} & & S_{K_1}}
\end{eqnarray*}
where the middle diamond is cartesian.  We denote $s_1=p_1r_1$, $s_2=q_2r_2$.

It is not true in general that the correspondence $(C,s_1,s_2)$ is isomorphic to a disjoint union of Hecke correspondences.  However this is almost the case in a way we now explain.  

We first construct another correspondence $(C',s_1',s_2')$ between $S_{K_1}$ and $S_{K_3}$.  Let $k_1,\ldots,k_n$ be as in proposition \ref{prop-conv-doublecoset}.   Now let $C'=\coprod_i S_{K_1\cap gK_2g^{-1}\cap (gk_ih)K_3(gk_ih)^{-1}}$, let $s_1':C'\to S_{K_1}$ be the forgetful map on each component, and let $s_2':C'\to S_{K_3}$ be the action map $[gk_ih]$ followed by the forgetful map.  Then we note that while the components of $C'$ are not necessarily Hecke correspondences themselves, we nonetheless have commutative diagrams
\begin{eqnarray*}
\xymatrix{&S_{K_1\cap gK_2g^{-1}\cap (gk_ih)K_3(gk_ih)^{-1}} \ar[d]\ar[ddl]\ar[ddr]&\\
&S_{K_1\cap(gk_ih)K_3(gk_ih)^{-1}} \ar[dl]\ar[dr]\\
S_{K_3}& &S_{K_1}}
\end{eqnarray*}
where the vertical arrow and rightward arrows are forgetful maps and the leftward arrows are the action maps for $[gk_ih]$ followed by forgetful maps.  In particular the bottom correspondence is exactly the Hecke correspondence corresponding to the double coset $K_1gk_ihK_3$.

\begin{prop}\label{prop-geometric-comp}
There is an isomorphism of correspondences $(C,s_1,s_2)\simeq (C',s_1',s_2')$.
\end{prop}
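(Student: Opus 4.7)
The plan is to show directly that the fiber product $C$ decomposes as a disjoint union indexed by the double cosets in proposition~\ref{prop-conv-doublecoset}, matching $C'$ component by component together with its maps $s_1',s_2'$.

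First, since $[g]:S_{K_1\cap gK_2g^{-1}}\to S_{g^{-1}K_1g\cap K_2}$ is an isomorphism, I would rewrite
$$C\simeq S_{g^{-1}K_1g\cap K_2}\times_{S_{K_2}}S_{K_2\cap hK_3h^{-1}}$$
where both maps to $S_{K_2}$ are now the \emph{forgetful} maps. The central step is then a lemma saying that for open compact subgroups $K,K'\subseteq K''\subseteq G(\mathbb{A}_f)$, the fiber product of forgetful maps admits a natural decomposition
$$S_K\times_{S_{K''}}S_{K'}\simeq\coprod_{[k'']\in K\backslash K''/K'}S_{K\cap k''K'k''^{-1}},$$
where the component indexed by $[k'']$ maps to $S_K$ by the forgetful map and to $S_{K'}$ by $[k'']$ followed by the forgetful map. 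I would prove this lemma by the usual adelic bookkeeping on complex points: given representatives $[(x_i,a_i)]_K$ and $[(y_i,b_i)]_{K'}$ whose images agree in $S_{K''}$, normalize $\gamma\in G(\qq)$ so that $y_i=x_i$, write $b_i=a_ik''$ with $k''\in K''$, and check that the remaining equivalence is precisely $(k_1\cdot k''\cdot k_2)\sim k''$ for $k_1\in K,k_2\in K'$, so the invariant is exactly the class $[k'']\in K\backslash K''/K'$.

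Applying this lemma with $K=g^{-1}K_1g\cap K_2$, $K'=K_2\cap hK_3h^{-1}$, $K''=K_2$, and using that $k\in K_2$ implies $kK_2k^{-1}=K_2$, I would compute
$$K\cap k K' k^{-1}=g^{-1}K_1g\cap K_2\cap(kh)K_3(kh)^{-1},$$
so that
$$C\simeq\coprod_{i=1}^n S_{g^{-1}K_1g\cap K_2\cap(k_ih)K_3(k_ih)^{-1}}$$
with $k_1,\dots,k_n$ as in the statement. Transporting each component back to $S_{K_1}$-side via the isomorphism $[g]^{-1}$ converts it into $S_{K_1\cap gK_2g^{-1}\cap(gk_ih)K_3(gk_ih)^{-1}}$, which is precisely the $i$th component of $C'$. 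Finally I would trace through the definitions to check that on this component, $s_1=p_1\circ r_1$ is the forgetful map to $S_{K_1}$ and $s_2=q_2\circ r_2$ is the action $[gk_ih]$ followed by the forgetful map to $S_{K_3}$, matching $s_1'$ and $s_2'$.

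The substance of the argument is the general fiber-product lemma in the middle paragraph; all remaining work is the careful accounting to absorb the $[g]$ and $[h]$ twists on the two sides and identify the indexing set with that of proposition~\ref{prop-conv-doublecoset}. I expect no mathematical obstacle; the only real risk is sign-and-side errors (left vs.\ right multiplication, whether $[g]$ acts by $a\mapsto ag$ or $a\mapsto g^{-1}a$), which must be kept consistent with the conventions of section~\ref{section-hecke-construction}.
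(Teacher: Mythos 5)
Your proposal is correct and follows essentially the same route as the paper: conjugate $C$ by $[g]$ into the fiber product $S_{g^{-1}K_1g\cap K_2}\times_{S_{K_2}}S_{K_2\cap hK_3h^{-1}}$ of two forgetful maps (this is exactly the $C''$ appearing in the paper's expanded diagram), decompose that fiber product along the double cosets $(g^{-1}K_1g\cap K_2)\backslash K_2/(K_2\cap hK_3h^{-1})$ of proposition \ref{prop-conv-doublecoset}, and conjugate back by $[g]^{-1}$. The only difference is in how the intermediate decomposition is justified: the paper abstracts it to a group-theoretic statement about orbits in a right torsor (lemma \ref{lem-double-orbit}) and applies it via the functor-of-points description of Shimura varieties as a moduli datum together with a $K$-orbit in a $G(\mathbb{A}_f)$-torsor of level structures, valid over any connected locally Noetherian base; you instead propose to check the same decomposition by adelic bookkeeping on $\C$-points, which also suffices since both sides are finite \'etale over $S_{K_2}$, though it requires a bit more care with the residual $G(\qq)$-stabilizer ambiguity (neatness, center) that the torsor formulation renders invisible.
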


We will use the following group theoretic lemma.
\begin{lem}\label{lem-double-orbit}
Let $K$ be a group and $H_1,H_2\subseteq K$ subgroups.  Let $X$ be a right $K$-torsor.  Then there is a bijection
\begin{eqnarray*}
\coprod_{H_1gH_2\in H_1\backslash K/H_2}X/gH_1g^{-1}\cap H_2&\to& X/H_1\times X/H_2\\
x(gH_1g^{-1}\cap H_2)&\mapsto&(xgH_1,xH_2)
\end{eqnarray*}
\end{lem}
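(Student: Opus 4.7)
The plan is to realize the stated bijection as the orbit decomposition of the natural right action of $H_1 \times H_2$ on $X \times K$ given by
\[ (x,g) \cdot (h_1, h_2) = (xh_2,\; h_2^{-1} g h_1). \]
This action is obtained by transporting the diagonal right action of $H_1 \times H_2$ on $X \times X$ through the torsor-induced isomorphism $X \times K \to X \times X$, $(x,g) \mapsto (xg, x)$; this is the geometric source of the lemma.

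First I would check that the map $\Phi \colon X \times K \to X/H_1 \times X/H_2$ sending $(x, g) \mapsto (xgH_1, xH_2)$ is invariant under this action and induces a bijection $(X \times K)/(H_1 \times H_2) \simeq X/H_1 \times X/H_2$. Invariance is a direct computation. Surjectivity is immediate from the torsor property: any pair $(y_1 H_1, y_2 H_2)$ is hit by $(y_2, k)$, where $k \in K$ is the unique element with $y_1 = y_2 k$. Injectivity is equally short: $\Phi(x,g) = \Phi(x',g')$ forces $x' = xh_2$ for some $h_2 \in H_2$ and $g = h_2 g' h_1$ for some $h_1 \in H_1$, which is precisely the relation defining the action.

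Next I would decompose $(X \times K)/(H_1 \times H_2)$ according to the class of $g$ in the double coset space. Since under the action $g$ transforms as $g \mapsto h_2^{-1} g h_1$, its double coset class is preserved, giving a disjoint-union decomposition indexed by a chosen set of representatives of the corresponding double coset space in $K$. On the slice $X \times \{g\}$, two elements $(x, g)$ and $(x', g)$ lie in the same orbit iff $x' = xh_2$ with $h_2 \in H_2$ and $g = h_2 g h_1$ for some $h_1 \in H_1$; the latter says $h_2 = g h_1^{-1} g^{-1} \in gH_1 g^{-1}$. Thus the stabilizer of $g$ projects isomorphically onto $gH_1 g^{-1} \cap H_2$ via the second coordinate, the $g$-component of the orbit space is $X/(gH_1 g^{-1} \cap H_2)$, and the induced map is $x \mapsto (xgH_1, xH_2)$ as claimed.

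The whole argument is formal manipulation of orbits and stabilizers and no deep input beyond the torsor property is needed. The main obstacle is bookkeeping: one must verify that $gH_1 g^{-1} \cap H_2$ (rather than the variant $g^{-1} H_2 g \cap H_1$) appears with the correct orientation, and that the indexing set is the one stated in the lemma --- noting in particular that the orbits of $g \mapsto h_2^{-1}g h_1$ on $K$ are naturally $H_2\backslash K/H_1$, which is canonically in bijection with $H_1\backslash K/H_2$ via $g \mapsto g^{-1}$, so the two indexings describe the same decomposition.
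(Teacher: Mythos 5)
The paper states Lemma~\ref{lem-double-orbit} without proof (it is applied directly in the proof of Proposition~\ref{prop-geometric-comp}), so there is no authorial argument to compare against. Your derivation --- transporting the diagonal $H_1\times H_2$ action on $X\times X$ through the torsor isomorphism $X\times K\to X\times X$, checking that $\Phi(x,g)=(xgH_1,xH_2)$ identifies the orbit space, and then slicing by the $K$-component --- is correct and is exactly the right way to prove this.

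However, the final bookkeeping remark is not quite tight, and it is worth being precise because it reveals a small defect in the lemma as printed. Your slice analysis correctly produces the decomposition
\[
\coprod_{H_2gH_1\in H_2\backslash K/H_1} X/(gH_1g^{-1}\cap H_2)\;\longrightarrow\;X/H_1\times X/H_2,\qquad x\mapsto(xgH_1,xH_2),
\]
indexed by $H_2\backslash K/H_1$. You then assert that $g\mapsto g^{-1}$ identifies this with the $H_1\backslash K/H_2$ indexing of the lemma, ``so the two indexings describe the same decomposition.'' This is too quick: under $g\mapsto g^{-1}$ the stabilizer $gH_1g^{-1}\cap H_2$ becomes $g^{-1}H_1g\cap H_2$, not $gH_1g^{-1}\cap H_2$ for the new representative. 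Indeed one can check that, with the lemma's literal pairing ``index $H_1\backslash K/H_2$, stabilizer $gH_1g^{-1}\cap H_2$,'' the group $gH_1g^{-1}\cap H_2$ is \emph{not} well defined up to $H_2$-conjugacy as $g$ ranges over a fixed double coset $H_1gH_2$ (whereas $g^{-1}H_1g\cap H_2$ is), and for unlucky choices of representatives the asserted bijection fails even numerically. The consistent pairings are: index $H_2\backslash K/H_1$ with $gH_1g^{-1}\cap H_2$ (your version, and the one actually used a few lines later in the paper, where the $k_i$ are chosen as representatives of $(K_2\cap g^{-1}K_1g)\backslash K_2/(K_2\cap hK_3h^{-1})=H_2\backslash K/H_1$), or index $H_1\backslash K/H_2$ with $g^{-1}H_1g\cap H_2$. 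So rather than trying to reconcile your output with the lemma's displayed index set, you should simply note that the printed index set has the two subgroups transposed, and that your proof (and the subsequent application) supplies the corrected statement.
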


\begin{proof}[Proof of proposition \ref{prop-geometric-comp}]
We recall that a point of the Shimura variety $S_K$ over a connected locally Noetherian base is some data which is independent of $K$ (an isogeny class of abelian varieties with Hodge tensors) to which one associates a right $G(\mathbb{A}_f)$-torsor, and a $K$ level structure is just a $K$ orbit in this $G(\mathbb{A}_f)$-torsor.  Moreover for any $K'\subset K$ the forgetful map $S_{K'}\to S_K$ at the level of points sends the $K'$-orbit to the $K$-orbit generated by it, and the action map $[g]:S_{K}\to S_{g^{-1}Kg}$ is given by right multiplication by $g$.

With this observation the proof of the proposition proceeds formally.  We can consider a slightly expanded diagram.
\begin{eqnarray*}
\xymatrix{ & & C''  \ar[rd] \ar[ld] & C\ar[l]_{[g]}\ar[dr]&\\
 & S_{K_2\cap hK_3h^{-1}} \ar[rd] \ar[ld] &  & S_{g^{-1}K_1g \cap K_2 } \ar[ld] & S_{K_1 \cap g K_2 g^{-1}} \ar[d]\ar[l]_{[g]}\\
S_{K_3} & & S_{K_2} & & S_{K_1}}
\end{eqnarray*}
Here all three quadrilaterals are Cartesian.  We can compute $C''$ using lemma \ref{lem-double-orbit}.  Indeed giving a point of $C''$ lying over a point of $S_{K_2}$ is the same as giving a pair of an $H_1=K_2\cap hK_3h^{-1}$ and $H_2=g^{-1}K_1g\cap K_2$ orbit inside a $K=K_2$-torsor $X$.  We deduce from lemma \ref{lem-double-orbit} an isomorphism
$$C''\simeq\coprod_{i=1}^n S_{g^{-1}K_1g\cap K_2\cap (k_ih)K_3(k_ih)^{-1}}$$
where on each factor, the map to $S_{g^{-1}K_1g\cap K_2}$ is the forgetful map, while the map to $S_{K_2\cap hK_3h^{-1}}$ is the action map $[k_i$] followed by the forgetful map.  Then the isomorphism $C\simeq C'$ is obtained as the composition
$$C\overset{[g]}{\to} C''\to\coprod_{i=1}^n S_{g^{-1}K_1g\cap K_2\cap (k_ih)K_3(k_ih)^{-1}}\overset{[g]^{-1}}{\to}C'.$$
Moreover from the descriptions of the maps out of $C''$ above one immediately deduces the compatibility between $s_1,s_1'$ and $s_2,s_2'$.
\end{proof}

The following proposition is readily deduced from \cite[Prop. 2.6]{MR1064864}, but we give an alternative proof which will also work for cohomology with support.
\begin{prop}\label{prop-comp-classical-coh}
We have $[K_1gK_2]\circ [K_2hK_3]=[K_1gK_2][K_2hK_3]$ as maps $$\mathrm{R}\Gamma(S_{K_3,\Sigma''}^{tor},\mathcal{V}_{\kappa,K_3,\Sigma''})\to\mathrm{R}\Gamma(S_{K_1,\Sigma}^{tor},\mathcal{V}_{\kappa,K_1,\Sigma}).$$
\end{prop}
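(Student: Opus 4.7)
The plan is to deduce the composition formula from the geometric description of composition of Hecke correspondences in Proposition \ref{prop-geometric-comp}, combined with the double coset multiplication formula in Proposition \ref{prop-conv-doublecoset}.

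First I would choose cone decompositions $\Sigma, \Sigma', \Sigma''$, together with a cone decomposition $\tilde{\Sigma}$ on $S_{K_1\cap gK_2g^{-1}\cap g k_i h K_3 (gk_ih)^{-1}}$ (for each $i$), sufficiently refined so that all the forgetful and action maps extend to morphisms between the toroidal compactifications and all the trace maps and cohomological correspondences constructed in section \ref{section-hecke-construction} are available. The composition of Hecke operators $[K_1gK_2]\circ[K_2hK_3]$ on $\mathrm{R}\Gamma(S_{K_3,\Sigma''}^{tor},\mathcal{V}_{\kappa,K_3,\Sigma''})$ is by construction the cohomological map associated to the composed correspondence $(C,s_1,s_2)$ (on toroidal compactifications), where the cohomological correspondence is obtained by composing the trace maps and pullback-isomorphisms coming from each of the two diamonds in the diagram preceding Proposition \ref{prop-geometric-comp}.

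Next I would use the isomorphism of correspondences $(C,s_1,s_2)\simeq(C',s_1',s_2')$ from Proposition \ref{prop-geometric-comp} (which extends to the toroidal compactifications, since the action maps and forgetful maps defining it all extend). This reduces the problem to computing the cohomological correspondence on each component of $C'$, which is $S_{K_1\cap gK_2g^{-1}\cap(gk_ih)K_3(gk_ih)^{-1},\tilde{\Sigma}}^{tor}$, with the maps $s_1'$ and $s_2'$ being (extensions of) the forgetful map and the composition of $[gk_ih]$ with a forgetful map. Using the commutative diagram in the construction of $(C',s_1',s_2')$, the forgetful map on this component factors through $S_{K_1\cap(gk_ih)K_3(gk_ih)^{-1}}$, and the action map factors similarly, so the cohomological correspondence on the $i$th component factors through the Hecke correspondence $[K_1gk_ihK_3]$ composed with the trace map for the finite étale morphism $S_{K_1\cap gK_2g^{-1}\cap(gk_ih)K_3(gk_ih)^{-1}}\to S_{K_1\cap(gk_ih)K_3(gk_ih)^{-1}}$. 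This morphism has degree $c(k_i)=[g^{-1}K_1g\cap(k_ih)K_3(k_ih)^{-1}:g^{-1}K_1g\cap K_2\cap(k_ih)K_3(k_ih)^{-1}]$, so its trace map is multiplication by $c(k_i)$ on the structure sheaf, and hence the cohomological correspondence on the $i$th component is $c(k_i)\cdot[K_1gk_ihK_3]$.

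Summing over $i$ and applying Proposition \ref{prop-conv-doublecoset} then yields
\[
[K_1gK_2]\circ[K_2hK_3]=\sum_{i=1}^n c(k_i)[K_1gk_ihK_3]=[K_1gK_2][K_2hK_3]
\]
as desired, with the same proof for cuspidal cohomology (using that the extensions of the action maps and the trace maps constructed in section \ref{section-hecke-construction} respect the subsheaves $\mathcal{V}_{\kappa}(-D)$). The main technical obstacle I anticipate is the verification that the geometric isomorphism of Proposition \ref{prop-geometric-comp} extends naturally to toroidal compactifications (for suitably refined cone decompositions) and is compatible with the cohomological correspondences, in particular with the two trace maps $\mathrm{tr}_{q_1}$ and $\mathrm{tr}_{p_1}$ composing to the trace map on each component of $C'$; this is a bookkeeping exercise that uses the compatibility of toroidal compactifications with the forgetful maps and the action maps, and the transitivity of trace maps for compositions of finite \'etale morphisms.
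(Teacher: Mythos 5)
Your proof correctly identifies the first half of the paper's argument: decompose $C'$ into components, factor $s_1'$ and $s_2'$ on the $i$th component through the Hecke correspondence for $[K_1gk_ihK_3]$ via the finite morphism $c$ of generic degree $c(k_i)$, use that $tr_c\circ c^\star$ is multiplication by $c(k_i)$, sum, and apply Proposition~\ref{prop-conv-doublecoset}. (The wording ``its trace map is multiplication by $c(k_i)$'' is loose — the trace map $c_\star\oscr\to\oscr$ is not a scalar, only its precomposition with $c^\star$ is — but the intent is clear.)

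However there is a genuine gap at the very first step. You assert that the composition $[K_1gK_2]\circ[K_2hK_3]$ is ``by construction'' the cohomological map associated to the composed correspondence $(C,s_1,s_2)$. This is not a tautology; it is the substantive second half of the paper's proof. The composition of two Hecke operators on cohomology is by definition
\[
\mathrm{R}\Gamma(\mathcal{V}_{K_3})\xrightarrow{q_2^\star}\cdots\xrightarrow{tr_{q_1}}\mathrm{R}\Gamma(\mathcal{V}_{K_2})\xrightarrow{p_2^\star}\cdots\xrightarrow{tr_{p_1}}\mathrm{R}\Gamma(\mathcal{V}_{K_1}),
\]
while the map induced by the composed correspondence is $tr_{s_1}\circ(\text{sheaf map})\circ s_2^\star$. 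To identify these you must commute the inner $tr_{q_1}$ past the inner $p_2^\star$, i.e.\ prove a base change identity of the form $p_2^\star\circ tr_{q_1}=tr_{r_1}\circ r_2^\star$ in the middle diamond. Over the toroidal compactifications this diamond is only cartesian away from the boundary, so this identity is not automatic; the paper proves it by observing (via Theorem~\ref{thm-van-between-toroidal}) that the relevant sheaves and pushforwards are locally free, so the commutativity is a diagram of morphisms of locally free sheaves on an irreducible base and can be checked on the dense open locus where the diamond is genuinely cartesian. Your ``main technical obstacle'' paragraph worries about the wrong thing — the extension of $(C,s_1,s_2)\simeq(C',s_1',s_2')$ to compactifications and the transitivity of traces for the composite $tr_{p_1}\circ tr_{r_1}$ — while the actual delicate point is this base change step, which you have not addressed. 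The same ``locally free, check off the boundary'' mechanism also underlies your computation in the first half (the diagram relating $tr_{p'}$ to $c(k_i)\,tr_p$), which you present as formal but which the paper takes care to justify by the same device.
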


\begin{proof}
We can choose toroidal compactifications of all the Shimura varieties occurring in this section: $S_{K_i}$, $i=1,2,3$, $S_{K_1\cap gK_2g^{-1}}$,$S_{K_2\cap hK_3h^{-1}}$, $S_{K_1\cap (gk_ih)K_3(gk_ih)^{-1}}$, $S_{K_1\cap gK_2g^{-1}\cap (gk_ih)K_3(gk_ih)^{-1}}$ so that all the diagrams appearing above extend to the compactifications (we will not be particularly consistent or careful with our labelling of the various cone decompositions in this argument, and they will be denoted by $\Sigma_{\ast}$ where $\ast$ is an index). In particular we obtain a compactification $C^{tor}$ of the correspondence $C$ which is isomorphic to a disjoint union of toroidal compactifications of Shimura varieties.  Moreover using this description, we can construct cohomological correspondences over $C^{tor}$ as in section \ref{section-hecke-construction} using the action maps for $gk_ih$.   We first claim that this cohomological correspondence acts on cohomology by the linear combination of Hecke operators on the right hand side of the formula in proposition \ref{prop-conv-doublecoset}.

Consider a commutative diagram \begin{eqnarray*}
\xymatrix{&S^{tor}_{K_1\cap gK_2g^{-1}\cap (gk_ih)K_3(gk_ih)^{-1}, \Sigma_1} \ar[d]^c\ar[ddl]_{q'}\ar[ddr]^{p'}&\\
&S^{tor}_{K_1\cap(gk_ih)K_3(gk_ih)^{-1}, \Sigma_2} \ar[dl]_{q}\ar[dr]^{p}\\
S^{tor}_{K_3, \Sigma_3}& &S^{tor}_{K_1, \Sigma_4}}
\end{eqnarray*}

We claim that the following diagram commutes (where the vertical map is induced by the adjunction $\mathrm{Id} \Rightarrow c_\star c^\star$, the other maps are given by the cohomological correspondence, and $c(k_i)$ is the generic degree of the generically finite flat map $c$):   

\begin{eqnarray*}
\xymatrix{ \mathrm{R}p'_\star (q')^\star \mathcal{V}_{\kappa, K_3, \Sigma_3} \ar[r]^{tr_{p'}} & \mathcal{V}_{\kappa, K_1, \Sigma_1} \\
 \mathrm{R}p_\star (q)^\star \mathcal{V}_{\kappa, K_3, \Sigma_3} \ar[ur]^{ c(k_i) tr_p} \ar[u]& }
\end{eqnarray*}

These are maps of locally free sheaves by theorem \ref{thm-van-between-toroidal}, and the commutativity of the diagram follows from its commutativity away from the boundary which  is clear.

Hence to prove the proposition, we need to see that the action of the cohomological correspondence $C^{tor}$ is equal to $[K_1gK_2]\circ[K_2hK_3]$. 

Consider the diamond from the proof of proposition \ref{prop-geometric-comp}
\begin{eqnarray*}
\xymatrix{&{C''}^{tor}\ar[dr]^{q'}\ar[dl]_{p'}&\\ S_{K_2\cap hK_3h^{-1},\Sigma_a}^{tor}\ar[dr]^q& &S_{g^{-1}K_1g\cap K_2,\Sigma_b}^{tor}\ar[dl]_p\\& S_{K_2,\Sigma'}^{tor}&}
\end{eqnarray*}
which is cartesian away from the boundary.  We would like to know that
\begin{tiny} \begin{eqnarray*}
\xymatrix{&\mathrm{R}\Gamma({C''}^{tor},(q')^\star\mathcal{V}_{\kappa,g^{-1}K_1g\cap K_2,\Sigma_b})\ar[dr]^{\mathrm{tr}_{q'}}&\\
\mathrm{R}\Gamma(S_{K_2\cap hK_3h^{-1},\Sigma_a}^{tor},\mathcal{V}_{\kappa,K_2\cap hK_3h^{-1},\Sigma_a})\ar[dr]^{\mathrm{tr}_q}\ar[ur]^{(p')^\star}&&\mathrm{R}\Gamma(S_{g^{-1}K_1g\cap K_2,\Sigma_b}^{tor},\mathcal{V}_{\kappa,g^{-1}K_1g\cap K_2,\Sigma_b})\\
&\mathrm{R}\Gamma(S_{K_2,\Sigma'}^{tor},\mathcal{V}_{\kappa,K_2,\Sigma'})\ar[ur]^{p^\star}&}
\end{eqnarray*}
\end{tiny}
commutes, and similarly for cuspidal cohomology.  Note that here what we write as $(p')^\star$ is really the composition
$$\mathrm{R}\Gamma(S_{K_2\cap hK_3h^{-1},\Sigma_a}^{tor},\mathcal{V}_{\kappa,K_2\cap hK_3h^{-1},\Sigma_a})\to\mathrm{R}\Gamma({C''}^{tor},(p')^\star\mathcal{V}_{\kappa,K_2\cap hK_3h^{-1},\Sigma_a})\to$$
$$\mathrm{R}\Gamma({C''}^{tor},(q')^\star\mathcal{V}_{\kappa,g^{-1}K_1g\cap K_2,\Sigma_b})$$
where the first map is literally $(p')^\star$ and the second map is described on each component of ${C''}^{tor}$ as the action map $[k_i]$ (see the description of $p'$,$q'$ given in the proof of Proposition \ref{prop-geometric-comp}.)

We claim  that the following diagram commutes: 
\begin{eqnarray*}
\xymatrix{ \mathrm{R}(q')_\star (p')^\star q^\star \mathcal{V}_{\kappa, K_2\cap hK_3h^{-1},\Sigma_a} = \mathrm{R}(q')_\star (q')^\star p^\star \mathcal{V}_{\kappa, K_2\cap hK_3h^{-1},\Sigma_a}  \ar[r] & p^\star \mathcal{V}_{\kappa, g^{-1}K_1g\cap K_2,\Sigma_b} \\
 p^\star \mathrm{R} q_\star q^\star \mathcal{V}_{\kappa K_2\cap hK_3h^{-1},\Sigma_a} \ar[ur] \ar[u]& }
\end{eqnarray*}

This is a diagram  of locally free sheaves by theorem \ref{thm-van-between-toroidal},  and therefore the commutativity  can be checked away from the boundary, where this is clear.  
\end{proof}

\begin{rem}
We remark that for $K'\subseteq K\subset G(\mathbb{A}_f)$ open compact subgroups, the Hecke operator $[K'1K]$ and $[K1K']$ are the pullback and trace for the forgetful map $S_{K',\Sigma'}\to S_{K,\Sigma}$.  Moreover if $g\in G(\mathbb{A}_f)$ then $[Kg(g^{-1}Kg)]$ is simply the action map $[g]^\star$.  Finally for $K_1,K_2\subset G(\mathbb{A}_f)$ open compact subgroups and $g\in G(\mathbb{A}_f)$, we have a factorization $$[K_1gK_2]=[K_11(K_1\cap gK_2g^{-1})][(K_1\cap gK_2g^{-1})g(g^{-1}K_1g\cap K_2)][(g^{-1}K_1g\cap K_2)1K_2]$$
of $[K_1gK_2]$ into a pullback map, an action map, and a trace, which is essentially the definition of $[K_1gK_2]$.
\end{rem}

\subsubsection{Serre duality}
The dualizing sheaf of $S_{K,\Sigma}$ is   $\mathcal{V}_{-2\rho_{nc}, K, \Sigma}(-D_{K,\Sigma})$ where $\rho_{nc}$ is half the sum of the non-compact positive roots. Indeed, recall  our choice of non-compact positive roots being in $\mathfrak{g}/\mathfrak{p}_\mu^{std}$ and that $\mathfrak{g}/\mathfrak{p}_\mu^{std}$ is the tangent space at the identity of $FL_{G,\mu}^{std}$ and then apply \cite{MR1064864}, proposition 2.2.6. 

The Serre dual of the automorphic sheaf $\mathcal{V}_{\kappa, K, \Sigma}$ is therefore $\mathcal{V}_{-2\rho_{nc}- w_{0,M}\kappa, K, \Sigma}(-D_{K,\Sigma})$ where $w_{0,M}$ is the longest element of the Weyl group of $M_\mu$.  Serre duality is:

\begin{prop}\label{prop-classical-duality}
There is a Serre duality isomorphism
$$D_{F}(\mathrm{R}\Gamma(S_{K,\Sigma}^{tor}, \mathcal{V}_{\kappa, K, \Sigma}))[-d]\simeq\mathrm{R}\Gamma(S_{K,\Sigma}^{tor}, \mathcal{V}_{-2\rho_{nc}- w_{0,M_\mu}\kappa, K, \Sigma}(-D_{K,\Sigma}))$$
where $D_{F}(-)=\mathrm{RHom}_{F}(-,F)$ is the dualizing functor for $F$-vector spaces and $d$ is the dimension of $S_{K}$.  Moreover  this isomorphism, is compatible with the Hecke action in the sense the action of $[KgK]$ on the left matches the action of $[KgK]^t=[Kg^{-1}K]$ on the right.
\end{prop}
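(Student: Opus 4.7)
My plan is to first establish the existence of the duality isomorphism via Grothendieck--Serre duality on the smooth projective scheme $S_{K,\Sigma}^{tor}/F$, and then to verify Hecke equivariance by unwinding the geometric definition of the Hecke correspondences and using that pullback and trace are adjoint under Serre duality.

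For the existence, I would apply Grothendieck--Serre duality to obtain
$$D_{F}(\mathrm{R}\Gamma(S_{K,\Sigma}^{tor}, \mathcal{V}_{\kappa, K, \Sigma}))[-d] \simeq \mathrm{R}\Gamma(S_{K,\Sigma}^{tor}, \mathcal{V}_{\kappa, K, \Sigma}^\vee \otimes \omega_{S_{K,\Sigma}^{tor}/F}).$$
The identification of the dualizing sheaf as $\omega_{S_{K,\Sigma}^{tor}/F} = \mathcal{V}_{-2\rho_{nc}, K, \Sigma}(-D_{K,\Sigma})$ is exactly what was just recalled before the statement. For the dual of $\mathcal{V}_{\kappa, K, \Sigma}$, I would use that the functor $V \mapsto \mathcal{V}_{K,\Sigma}$ attached to the $M_\mu^c$-torsor $M_{dR}$ is tensorial, so the dual of an automorphic vector bundle corresponds to the contragredient representation. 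Since the contragredient of the irreducible representation of $M_\mu^c$ of highest weight $\kappa$ has highest weight $-w_{0,M_\mu}\kappa$, we obtain $\mathcal{V}_{\kappa, K, \Sigma}^\vee = \mathcal{V}_{-w_{0,M_\mu}\kappa, K, \Sigma}$, and tensoring yields the sheaf $\mathcal{V}_{-2\rho_{nc} - w_{0,M_\mu}\kappa, K, \Sigma}(-D_{K,\Sigma})$ appearing in the statement.

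For the Hecke compatibility, I would unwind the action of $[KgK]$ as the composition
$$\mathrm{R}\Gamma(S_{K,\Sigma}^{tor},\mathcal{V}_{\kappa}) \xrightarrow{p_2^\star} \mathrm{R}\Gamma(S_{K\cap gKg^{-1},\Sigma''}^{tor}, p_2^\star \mathcal{V}_{\kappa}) \xrightarrow{\sim} \mathrm{R}\Gamma(S_{K\cap gKg^{-1},\Sigma''}^{tor}, p_1^\star \mathcal{V}_{\kappa}) \xrightarrow{\mathrm{tr}_{p_1}} \mathrm{R}\Gamma(S_{K,\Sigma}^{tor},\mathcal{V}_{\kappa})$$
along the Hecke correspondence $S^{tor}_{K,\Sigma} \xleftarrow{p_2} S^{tor}_{K \cap gKg^{-1}, \Sigma''} \xrightarrow{p_1} S^{tor}_{K,\Sigma}$, where the middle isomorphism comes from the action of $g$. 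The transposed correspondence, obtained by swapping $p_1$ and $p_2$, gives $[Kg^{-1}K]$. The crucial input is that for a finite flat morphism $f$ of smooth proper schemes equipped with a fundamental class, the pullback $f^\star$ and the trace $\mathrm{tr}_f$ are mutually adjoint under Serre duality. Applied to $p_1$ and $p_2$ and combined with the fact that the $g$-action on automorphic bundles is compatible with duality, this dualizes the composition above into the corresponding composition for $[Kg^{-1}K]$ acting on the Serre dual.

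The main obstacle is that $p_1$ and $p_2$ are only generically \'etale on the toroidal compactification: they may ramify along the boundary divisor, so the adjointness of $\mathrm{tr}$ and $f^\star$ under Serre duality has to be checked at the compactified level and must correctly exchange the subcanonical extension $\mathcal{V}_{\kappa}$ with the canonical extension $\mathcal{V}_{-2\rho_{nc}-w_{0,M_\mu}\kappa}(-D_{K,\Sigma})$. The needed ingredients are theorem \ref{thm-van-between-toroidal} (vanishing of higher direct images, so the trace maps live in the expected degree), together with the fundamental classes and trace maps for the forgetful maps recalled immediately after theorem \ref{thm-van-between-toroidal}. Assembling these into the commutative square expressing the equality of $[KgK]$ and the transpose of $[Kg^{-1}K]$ under Serre duality is essentially a diagrammatic computation, but requires careful bookkeeping at the boundary.
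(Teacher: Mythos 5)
Your proposal follows essentially the same route as the paper's proof: identify the dualizing sheaf and the contragredient automorphic bundle, then obtain Hecke compatibility by dualizing the cohomological correspondence and showing pullback and trace are exchanged. The one place where you flag a difficulty — "careful bookkeeping at the boundary" — is where the paper invokes a short but decisive observation you should make explicit: the maps whose commutativity must be verified (in particular the square relating $\mathrm{tr}_{p_1}$, $\mathrm{tr}_{p_2}$ and the $g$-action on $\mathcal{V}_{-2\rho_{nc}}(-D)$) are maps between locally free sheaves on the toroidal compactification, so it suffices to check commutativity on the dense open Shimura variety, where $p_1,p_2$ are finite \'etale and the identifications are the standard ones; the needed source of locally freeness is theorem \ref{thm-van-between-toroidal}. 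With that reduction spelled out, your argument matches the paper's.
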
 
\begin{proof}  For the existence of the duality pairing we refer to \cite{Hartshorne}. We simply prove the formula for the adjoint.  We denote by $$D_{K'}(-) = \underline{\mathrm{RHom}}(-,\mathcal{V}_{-2\rho_{nc}, K', \Sigma'''}(-D_{K',\Sigma'''}))$$ the dualizing functor on $S_{K',\Sigma'''}^{tor}$ for any compact open $K'$ (and cone decomposition $\Sigma'''$). Let $f = [K g K]$ be a characteristic function to which we associate a Hecke correspondence $ S^{tor}_{K, \Sigma} \stackrel{p_1}\leftarrow  S^{tor}_{K \cap g K g^{-1}, \Sigma''} \stackrel{p_2}\rightarrow S^{tor}_{K, \Sigma'}$. 
The action of $f$ on the cohomology arises from a cohomological correspondence (see section \ref{section-hecke-construction}): 
$$ f : p_2^\star \mathcal{V}_{\kappa, K, \Sigma} \stackrel{f_\kappa}\rightarrow p_1^\star\mathcal{V}_{\kappa, K, \Sigma} \stackrel{Id \otimes \mathrm{tr}_{p_1}}\rightarrow p_1^! \mathcal{V}_{\kappa, K, \Sigma}.$$
We find  (since duality switches $^\star$ and $^!$) that $$D_{K \cap g Kg^{-1} }(f) : p_1^\star D_K(\mathcal{V}_{\kappa, K, \Sigma}) \stackrel{Id \otimes \mathrm{tr}_{p_1}}\rightarrow p_1^! D_K(\mathcal{V}_{\kappa, K, \Sigma}) \rightarrow p_2^! D_K(\mathcal{V}_{\kappa, K, \Sigma}).$$
Remark that 
\begin{eqnarray*}
p_1^! D_K(\mathcal{V}_{\kappa, K, \Sigma}) &=& p_1^\star \mathcal{V}_{\kappa, K, \Sigma}^\vee \otimes p_1^! \mathcal{V}_{-2\rho_{nc}, K, \Sigma}(-D_{K,\Sigma}) \\
 p_2^! D_K(\mathcal{V}_{\kappa, K, \Sigma'}) &=& p_2^\star \mathcal{V}_{\kappa, K, \Sigma'}^\vee \otimes p_2^! \mathcal{V}_{-2\rho_{nc}, K, \Sigma}(-D_{K,\Sigma'})
 \end{eqnarray*}
 
 We have a canonical isomorphism $$\mathrm{Id} : p_1^! \mathcal{V}_{-2\rho_{nc}, \Sigma}(-D_{K,\Sigma})  = p_2^! \mathcal{V}_{-2\rho_{nc}, K, \Sigma'}(-D_{K,\Sigma'}),$$
 as both sheaves identify with the canonical sheaf of $ S^{tor}_{K \cap g K g^{-1}, \Sigma''}$. The map $p_1^! D_K(\mathcal{V}_{\kappa, K, \Sigma}) \rightarrow p_2^! D_K(\mathcal{V}_{\kappa, K, \Sigma})$ therefore writes  $ f_{\kappa}^\vee  \otimes \mathrm{Id}$. Let $f^t = [K g^{-1} K]$.  We observe that $f_\kappa^\vee = (f^t)_{-w_{0,M} \kappa}$ by definition (this identity boils down to $A^t = ((A^{-1})^{-1})^t$ for a matrix in $\mathrm{GL}_n$).
 We now claim that  we have a commutative diagram:
 
 \begin{eqnarray*}
 \xymatrix{  p_1^\star D_K(\mathcal{V}_{\kappa, K, \Sigma}) \ar[r]^{\mathrm{Id} \otimes \mathrm{tr}_{p_1}}& p_1^! D_K(\mathcal{V}_{\kappa, K, \Sigma})\ar[rr]^{ f_\kappa^{\vee} \otimes \mathrm{Id}} && p_2^! D_K(\mathcal{V}_{\kappa, K, \Sigma}) \\
   p_1^\star D_K(\mathcal{V}_{\kappa, K, \Sigma}) \ar[r] \ar[u]^{\mathrm{Id}} & p_1^\star D_K(\mathcal{V}_{\kappa, K, \Sigma}) \ar[u]^{\mathrm{Id} \otimes \mathrm{tr}_{p_1}} \ar[rr]^{\small{f^t_{-w_{0,M} \kappa - 2\rho_{nc}}}} & &p_2^\star D_K(\mathcal{V}_{\kappa, K, \Sigma}) \ar[u]^{\mathrm{Id} \otimes \mathrm{tr}_{p_2}}}
 \end{eqnarray*}
 This implies that $D_{K \cap g Kg^{-1} }(f) = f^t$. 
 The commutativity of the diagram now boils down to the commutativity of:
 \begin{eqnarray*}
 \xymatrix{  p_1^! \mathcal{V}_{-2\rho_{nc}, K, \Sigma}(-D_{K,\Sigma})  \ar[rr]^{\mathrm{Id}} && p_2^! \mathcal{V}_{-2\rho_{nc}, K, \Sigma}(-D_{K,\Sigma}) \\
 p_1^\star \mathcal{V}_{-2\rho_{nc}, K, \Sigma}(-D_{K,\Sigma}) \ar[rr]^{ f^t_{-2\rho_{nc}}} \ar[u]^{\mathrm{tr}_{p_1}} & &p_2^\star \mathcal{V}_{-2\rho_{nc}, K, \Sigma}(-D_{K,\Sigma})  \ar[u]^{\mathrm{tr}_{p_2}}}
 \end{eqnarray*}
 It is sufficient to prove the commutativity outside of the boundary. 
For any level $K$, the identification of the canonical sheaf of $S_{K}$ with    $\mathcal{V}_{-2\rho_{nc}, K}$  is functorial in the tower of Shimura varieties. 
Therefore, for the action map $[g] : S_{K \cap g K g^{-1}} \rightarrow S_{K \cap g^{-1} K g}$, we find that the map $[g]^\star  \mathcal{V}_{-2\rho_{nc}, K\cap g^-1 K g} \rightarrow  \mathcal{V}_{-2\rho_{nc}, K\cap g K g^{-1}}$ is the canonical isomorphism between canonical sheaves. We finally deduce that the map $(f^t)_{-2\rho_{nc}} : p_1^\star  \mathcal{V}_{-2\rho_{nc}, K} \rightarrow  p_2^\star \mathcal{V}_{-2\rho_{nc}, K}$ decomposes as: $$p_1^\star  \mathcal{V}_{-2\rho_{nc}, K} \rightarrow \mathcal{V}_{-2\rho_{nc}, K\cap g K g^{-1}} \rightarrow g^\star \mathcal{V}_{-2\rho_{nc}, K\cap g^{-1} K g} \rightarrow  p_2^\star  \mathcal{V}_{-2\rho_{nc}, K}$$ where the first map is induced by $\mathrm{tr}_{p_1}$, the second map is the canonical isomorphism, and the last map is induced by $g^\star( \mathrm{tr}_{p'_2})^{-1}$ where $p'_2 : S_{ K\cap g^{-1} K g} \rightarrow S_{K}$. This is telling us that the diagram commutes.
 \end{proof}

\begin{rem} One proves more generally that the adjoint of an Hecke operator $[K_1gK_2]$ for two (not necessarily equal) compact open  subgroup $K_1$ and $K_2$ is $[K_1gK_2]^t=[K_2g^{-1}K_1]$. Details are left to the reader.
\end{rem}

\subsubsection{The finite slope part of classical cohomology}\label{sect-finite-slope-classical-coho} We now assume that $G_{\qq_p}$ is quasi-split.  We assume that $K=K^p\times K_p$ where $K_p=K_{p,m,b}$ for $m\geq b\geq0$, $m>0$ is one of the subgroups with an Iwahori decomposition introduced in section \ref{section-compact-open-subgroups}.  We recall that for a choice of $+$ or $-$ we have commutative sub-algebras $\mathcal{H}_{p,m,b}^\pm$ of $\mathbb{Z}[K_{p,m,b} \backslash G(\qq_p) /K_{p,m,b}]$. The subalgebra $\mathcal{H}_{p,m,b}^\pm$ is generated by the double cosets $[K_{p,m,b} t K_{p,m,b}]$ with $t \in T^\pm$.  We have isomorphisms $\mathcal{H}_{p,m,b}^\pm = \ZZ[T^\pm/T_b]$. We also have the ideals $\mathcal{H}_{p,m,b}^{\pm\pm}$ generated by the double cosets $[K_{p,m,b} t K_{p,m,b}]$ with $t \in T^{\pm\pm}$.  The anti-involution of $\ZZ[K_{p,m,b} \backslash G(\qq_p) /K_{p,m,b}]$ defined by inversion exchanges  $\mathcal{H}_{p,m,b}^+$ with $\mathcal{H}_{p,m,b}^-$ and $\mathcal{H}_{p,m,b}^{++}$ with $\mathcal{H}_{p,m,b}^{--}$.

We let $$\mathrm{R}\Gamma(S_{K^pK_{p,m,b},\Sigma}^{tor}, \mathcal{V}_{\kappa, K^pK_{p,m,b}, \Sigma})^{\pm,fs} =$$ $$ \mathrm{R}\Gamma(S_{K^pK_{p,m,b},\Sigma}^{tor}, \mathcal{V}_{\kappa, K^pK_{p,m,b}, \Sigma}) \otimes^L_{\qq[T^\pm/T_b]}\qq[T(\qq_p)/T_b]$$ be the finite slope direct factor of $\mathrm{R}\Gamma(S_{K^pK_{p,m,b},\Sigma}^{tor}, \mathcal{V}_{\kappa, K^pK_{p,m,b}, \Sigma})$ for the operators in $[K_{p,m,b}tK_{p,m,b}]$ for $t\in T^\pm$.  We have a similar definition for cuspidal cohomology.  We note that the monoids $T^\pm$ have the property that for any $t\in T^{\pm\pm}$ and $s\in T^\pm$ there is an $s'\in T^\pm$ and an $n>0$ such that $ss'=t^n$.  It follows that the finite slope part can also be described as the finite slope part fo the single operator $[K_{p,m,b}tK_{p,m,b}]$ for any $t\in T^{\pm\pm}$.

We note that the Serre duality pairing of Proposition \ref{prop-classical-duality} restricts to a duality $$D_{F}(\mathrm{R}\Gamma(S_{K^pK_{p,m,b},\Sigma}^{tor}, \mathcal{V}_{\kappa, K^pK_{p,m,b}, \Sigma})^{\pm,fs})[-d]\simeq\mathrm{R}\Gamma(S_{K^pK_{p,m,b},\Sigma}^{tor}, \mathcal{V}_{-2\rho_{nc}- w_{0,M}\kappa, K^pK_{p,m,b}, \Sigma}(-D)^{\mp,fs})$$ on finite slope parts.

\begin{rem} Assume that the group $G_{\qq_p}$ is unramified. Then, as explained in remark \ref{rem-Weil-restriction} we can arrange so that the compact $K_{p,1,0}$ is an Iwahori, and the compact $K_{p,1,1}$ is a pro-$p$ Iwahori. Let $K_p = K_{p,1,1}$ or $K_{p,1,0}$.  All the Hecke operators $[K_p t K_p]$  for $t \in T^+$ or $t \in T^-$ are already invertible in $\qq[ K_p \backslash G(\qq_p) /K_p ]$ (see \cite[Cor. 1]{MR2122539})
 and so $$\mathrm{R}\Gamma(S_{K^pK_p,\Sigma}^{tor}, \mathcal{V}_{\kappa, K^pK_p, \Sigma})^{\pm, fs} = \mathrm{R}\Gamma(S_{K^pK_p,\Sigma}^{tor}, \mathcal{V}_{\kappa, K^pK_p, \Sigma})$$ and similarly for cuspidal cohomology.
\end{rem}
 
Next we recall how the finite slope part behaves under certain changes of level.  We first recall some classical relations in these Hecke algebras.

\begin{lem}\label{lem-iwahori-comp}
Let $K_1,K_2,K_3\subseteq G(\qq_p)$ be open compact subgroups with Iwahori decompositions $K_i=K_i^-K_i^0K_i^+$.  Let $t_1,t_2\in T(\qq_p)$.  Suppose that $t_1^{-1}K_1^-t_1\cap t_2K_3^-t_2^{-1}\subseteq K_2^-\subseteq t_1^{-1}K_1^-t_1$, $t_1^{-1}K_1^+t_1\cap t_2K_3^+t_2^{-1}K_2^+\subseteq t_2K_3^+t_3$ and $K_1^0\cap K_3^0\subseteq K_2^0\subseteq K_1^0K_3^0$.  Then $[K_1t_1K_2][K_2t_2K_3]=[K_1t_1t_2K_3]$
\end{lem}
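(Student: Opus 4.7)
The plan is to apply Proposition \ref{prop-conv-doublecoset} with $g=t_1$, $h=t_2$, which expands
$$[K_1t_1K_2][K_2t_2K_3]=\sum_{i=1}^n c(k_i)[K_1t_1k_it_2K_3]$$
where the $k_i\in K_2$ represent the double cosets in $(K_2\cap t_1^{-1}K_1t_1)\backslash K_2/(K_2\cap t_2K_3t_2^{-1})$, and $c(k_i)=[t_1^{-1}K_1t_1\cap(k_it_2)K_3(k_it_2)^{-1}:t_1^{-1}K_1t_1\cap K_2\cap(k_it_2)K_3(k_it_2)^{-1}]$. I will show that this double quotient is a singleton, with representative $k_1=1$, and that $c(1)=1$.

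The key technical input is uniqueness of Iwahori factorization in $G(\qq_p)$ (which follows from $\overline{U}\cap B=\{1\}$ and $B=T\ltimes U$). It implies that whenever $H_1,H_2\subseteq G(\qq_p)$ are subgroups admitting Iwahori decompositions, one has $H_1\cap H_2=(H_1^-\cap H_2^-)(H_1^0\cap H_2^0)(H_1^+\cap H_2^+)$. Since $t_i\in T(\qq_p)$ centralizes the torus parts, $t_1^{-1}K_1t_1$ and $t_2K_3t_2^{-1}$ inherit Iwahori decompositions, and combining this with the inclusions $K_2^-\subseteq t_1^{-1}K_1^-t_1$ and $K_2^+\subseteq t_2K_3^+t_2^{-1}$ yields the explicit factorizations
$$K_2\cap t_1^{-1}K_1t_1=K_2^-\cdot(K_2^0\cap K_1^0)\cdot(K_2^+\cap t_1^{-1}K_1^+t_1),$$
$$K_2\cap t_2K_3t_2^{-1}=(K_2^-\cap t_2K_3^-t_2^{-1})\cdot(K_2^0\cap K_3^0)\cdot K_2^+.$$

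To see the double quotient is trivial, I will write an arbitrary $k=k^-k^0k^+\in K_2$ as $k=ab$ with $a\in K_2\cap t_1^{-1}K_1t_1$ and $b\in K_2\cap t_2K_3t_2^{-1}$. Using $K_2^0\subseteq K_1^0K_3^0$ I factor $k^0=k^0_1k^0_3$ with $k^0_1\in K_2^0\cap K_1^0$, $k^0_3\in K_2^0\cap K_3^0$; in the applications of this lemma the $K_i^0$ are successive kernels $T_{b_i}\subseteq T(\ZZ_p)$ and hence totally ordered by inclusion, so that this factorization is immediate (one factor is $1$ and the other is $k^0$). Setting $a=k^-k^0_1$ and $b=k^0_3k^+$, the commutativity of $T(\qq_p)$ gives $ab=k$, and the Iwahori components of $a$ and $b$ lie in the required sets by the displayed factorizations above.

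For the coefficient, $c(1)=1$ is equivalent to $t_1^{-1}K_1t_1\cap t_2K_3t_2^{-1}\subseteq K_2$, which follows by applying the intersection formula once more:
$$t_1^{-1}K_1t_1\cap t_2K_3t_2^{-1}=(t_1^{-1}K_1^-t_1\cap t_2K_3^-t_2^{-1})(K_1^0\cap K_3^0)(t_1^{-1}K_1^+t_1\cap t_2K_3^+t_2^{-1}),$$
and invoking the first inclusion in each of the three hypotheses to place each factor in $K_2^-$, $K_2^0$, $K_2^+$ respectively. I expect the main subtlety to be the torus factorization $k^0=k^0_1k^0_3$ with factors in the appropriate intersections: abstractly the bare inclusion $K_2^0\subseteq K_1^0K_3^0$ is weaker than $K_2^0=(K_2^0\cap K_1^0)(K_2^0\cap K_3^0)$, but the gap vanishes as soon as the $K_i^0$ form a chain, which is the situation of interest.
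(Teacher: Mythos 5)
Your proof takes the same route as the paper: apply Proposition \ref{prop-conv-doublecoset} with $g=t_1$, $h=t_2$, and verify that the double quotient is a singleton (equivalently $K_2=(K_2\cap t_1^{-1}K_1t_1)(K_2\cap t_2K_3t_2^{-1})$) and that $c(1)=1$ (equivalently $t_1^{-1}K_1t_1\cap t_2K_3t_2^{-1}\subseteq K_2$), which is exactly what the paper's one-line proof asserts without detail. Your observation that the stated torus hypothesis $K_2^0\subseteq K_1^0K_3^0$ is formally weaker than the factorization $K_2^0=(K_2^0\cap K_1^0)(K_2^0\cap K_3^0)$ the argument actually needs is a fair point, but it is indeed harmless because every application in the paper has $K_i^0=T_{b_i}$ totally ordered by inclusion.
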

\begin{proof}
This is an immediate consequence of proposition \ref{prop-conv-doublecoset} upon noting that our hypotheses imply $K_2=(K_2\cap t_1^{-1}K_1t_1)(K_2\cap t_2K_3t_2^{-1})$ and $t_1^{-1}K_1t_1\cap t_2K_3t_2^{-1}\subseteq K_2$.
\end{proof}

\begin{lem}\label{lemma-relations-Hecke-algebra}
Let $m'\geq b'\geq 0$ and $m\geq b\geq 0$ satisfy $m'\geq m>0$ and $b'\geq b$.
\begin{enumerate}
\item For all $t\in T^+$ we have $$[K_{p,m',b'}tK_{p,m',b'}][K_{p,m',b'}1K_{p,m,b}]=[K_{p,m',b'}1K_{p,m,b}][K_{p,m,b}tK_{p,m,b}].$$
\item For all $t\in T^-$ we have $$[K_{p,m,b}tK_{p,m,b}][K_{p,m,b}1K_{p,m',b'}]=[K_{p,m,b}1K_{p,m',b'}][K_{p,m',b'}tK_{p,m',b'}].$$
\item For all $t\in T^{++}$ with $\min(t)\geq1$ (see \ref{section-dynamics}) we have factorizations: 
$$[K_{p,m,b}tK_{p,m,b}]=[K_{p,m,b}tK_{p,m+1,b}][K_{p,m+1,b}1K_{p,m,b}]$$ and $$[K_{p,m+1,b}tK_{p,m+1,b}]=[K_{p,m+1,b}1K_{p,m,b}][K_{p,m,b}tK_{p,m+1,b}].$$
\item For all $t\in T^{--}$ with $\min(t)\geq 1$ we have factorizations: $$[K_{p,m,b}tK_{p,m,b}]=[K_{p,m,b}1K_{p,m+1,b}][K_{p,m+1,b}tK_{p,m,b}]$$ and $$[K_{p,m+1,b}tK_{p,m+1,b}]=[K_{p,m+1,b}tK_{p,m,b}][K_{p,m,b}1K_{p,m+1,b}].$$
\end{enumerate}
\end{lem}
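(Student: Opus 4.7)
The plan is to deduce each of the four identities from Proposition \ref{prop-conv-doublecoset}: for $[K_1 g K_2][K_2 h K_3]$ to reduce to the single term $[K_1 g h K_3]$, one must check (a) that $K_2 = (K_2 \cap g^{-1}K_1g)(K_2 \cap hK_3h^{-1})$ so that the set of double coset representatives is a single point, and (b) that $g^{-1}K_1g \cap hK_3h^{-1}\subseteq K_2$ so that the multiplicity coefficient $c(1)$ equals one.  This is essentially Lemma \ref{lem-iwahori-comp} rephrased.

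The essential inputs are the Iwahori decomposition $K_{p,m,b}=\overline{U}_m\cdot T_b\cdot U(\ZZ_p)$ from section \ref{section-compact-open-subgroups} and the contracting/expanding behavior of conjugation by $T^\pm$.  Concretely, for $t\in T^+$ one has $t^{-1}\overline{U}_m t\subseteq \overline{U}_m$ and $t^{-1}U(\ZZ_p)t\supseteq U(\ZZ_p)$; the symmetric statements hold for $T^-$.  The sharper input needed for (3) and (4) is that for $t\in T^{++}$ with $\min(t)\geq 1$, conjugation sends $\overline{U}_m$ strictly into $\overline{U}_{m+1}$, equivalently $\overline{U}_m\subseteq t\overline{U}_{m+1}t^{-1}$.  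From these dynamical facts, one can compute all the relevant intersections $g^{-1}K_1g\cap hK_3h^{-1}$ factor by factor in the Iwahori decomposition, and verify the required containments.

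For (1), both sides should equal $[K_{p,m',b'}tK_{p,m,b}]$.  For the left hand side, condition (a) is automatic because $K_{p,m',b'}\subseteq K_{p,m,b}$ gives $K_{p,m',b'}\cap K_{p,m,b}=K_{p,m',b'}$, and condition (b) reduces to the inclusion $t^{-1}\overline{U}_{m'}t\subseteq \overline{U}_{m'}$, which holds for $t\in T^+$.  For the right hand side, (b) follows from the same dynamics applied to $tK_{p,m,b}t^{-1}$, and (a) can be checked by comparing the cardinalities $[K_{p,m,b}:K_{p,m',b'}]=[\overline{U}_m:\overline{U}_{m'}]\cdot[T_b:T_{b'}]$ and $[K_{p,m,b}\cap tK_{p,m,b}t^{-1}:K_{p,m',b'}\cap tK_{p,m,b}t^{-1}]$, which coincide by the Iwahori decomposition.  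Identity (2) is proved by the symmetric argument using $T^-$ dynamics.

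For (3), the first equality follows by applying the criteria to $K_1=K_3=K_{p,m,b}$, $K_2=K_{p,m+1,b}$, $g=t$, $h=1$, where the crucial use of $\min(t)\geq 1$ is to ensure $t^{-1}\overline{U}_mt\subseteq \overline{U}_{m+1}$, hence $t^{-1}K_{p,m,b}t\cap K_{p,m,b}\subseteq K_{p,m+1,b}$, which gives (b), while (a) is again an index count.  The second equality of (3) is handled analogously by taking $K_1=K_3=K_{p,m+1,b}$, $K_2=K_{p,m,b}$, $g=1$, $h=t$, and using the equivalent containment $\overline{U}_m\subseteq t\overline{U}_{m+1}t^{-1}$.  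Point (4) is entirely symmetric, interchanging the roles of $\overline{U}_m$ and $U(\ZZ_p)$ and using the dynamics of $T^{--}$.  The only real obstacle is the systematic bookkeeping of the Iwahori factorizations through all the intersections; no new ideas are required.
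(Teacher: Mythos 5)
Your proposal is correct and follows essentially the same route as the paper: the paper proves (1) and (3) by direct appeal to Lemma \ref{lem-iwahori-comp}, which is exactly Proposition \ref{prop-conv-doublecoset} specialized via the Iwahori decompositions as you describe, and the dynamical facts about conjugation by $T^{\pm}$ that you invoke are precisely the hypotheses of that lemma. The one small shortcut the paper takes that you don't is to deduce (2) and (4) from (1) and (3) by applying the transpose anti-involution $[K_1 g K_2]^t = [K_2 g^{-1} K_1]$, which replaces your ``symmetric argument'' by a one-line formal step; your description of (4) as ``interchanging the roles of $\overline{U}_m$ and $U(\ZZ_p)$'' is also slightly imprecise, since both cases contract the $\overline{U}_m$-factor and the actual symmetry is $t\mapsto t^{-1}$.
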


\begin{proof}
We note that the second and fourth points are just the transposes of the first and third.  The first and third points are immediate consequences of lemma \ref{lem-iwahori-comp}.
\end{proof}

Here are the consequences on cohomology:
\begin{coro}\label{cor-classical-coh-diagrams}
Let $m'\geq b'\geq 0$ and $m\geq b\geq 0$ satisfy $m'\geq m>0$ and $b'\geq b$.
\begin{enumerate}
\item For all $t \in T^{+}$, the following diagram commutes:
\begin{eqnarray*}
\xymatrix{ \mathrm{R}\Gamma(S_{K^p K_{p,m',b'},\Sigma}^{tor}, \mathcal{V}_{\kappa, K^p K_{p,m',b'}, \Sigma}) \ar[rr]^{[K_{p,m',b'} t K_{p,m',b'}] } & &\mathrm{R}\Gamma(S_{K^p K_{p,m',b'},\Sigma}^{tor}, \mathcal{V}_{\kappa, K^p K_{p,m',b'}, \Sigma}) \\
\mathrm{R}\Gamma(S_{K^p K_{p,m,b},\Sigma}^{tor}, \mathcal{V}_{\kappa, K^p K_{p,m,b}, \Sigma}) \ar[rr]^{[K_{p,m,b} t K_{p,m,b}] } \ar[u] && \mathrm{R}\Gamma(S_{K^p K_{p,m,b},\Sigma}^{tor}, \mathcal{V}_{\kappa, K^p K_{p,m,b}, \Sigma})  \ar[u]}
\end{eqnarray*}
\item For all $m' \geq m$ and $t \in T^{-}$, the following diagram commutes:
\begin{eqnarray*}
\xymatrix{ \mathrm{R}\Gamma(S_{K^p K_{p,m',b'},\Sigma}^{tor}, \mathcal{V}_{\kappa, K^p K_{p,m',b'}, \Sigma}) \ar[rr]^{[K_{p,m',b'} t K_{p,m',b'}] }\ar[d]^{tr} & &\mathrm{R}\Gamma(S_{K^p K_{p,m',b'},\Sigma}^{tor}, \mathcal{V}_{\kappa, K^p K_{p,m',b'}, \Sigma})\ar[d]^{tr} \\
\mathrm{R}\Gamma(S_{K^p K_{p,m,b},\Sigma}^{tor}, \mathcal{V}_{\kappa, K^p K_{p,m,b}, \Sigma}) \ar[rr]^{[K_{p,m,b} t K_{p,m,b}] } && \mathrm{R}\Gamma(S_{K^p K_{p,m,b},\Sigma}^{tor}, \mathcal{V}_{\kappa, K^p K_{p,m,b}, \Sigma}) }
\end{eqnarray*}
\item For all $m$ and $t \in T^{++}$ with $\min(t)\geq 1$, there is a factorization:
\begin{eqnarray*}
\xymatrix{ \mathrm{R}\Gamma(S_{K^p K_{p,m+1,b},\Sigma}^{tor}, \mathcal{V}_{\kappa, K^p K_{p,m+1,b}, \Sigma}) \ar[rrd]\ar[rr]^{[K_{p,m+1,b} t K_{p,m+1,b}] } & &\mathrm{R}\Gamma(S_{K^p K_{p,m+1,b},\Sigma}^{tor}, \mathcal{V}_{\kappa, K^p K_{p,m+1,b}, \Sigma}) \\
\mathrm{R}\Gamma(S_{K^p K_{p,m,b},\Sigma}^{tor}, \mathcal{V}_{\kappa, K^p K_{p,m,b}, \Sigma}) \ar[rr]^{[K_{p,m,b} t K_{p,m,b]} } \ar[u] && \mathrm{R}\Gamma(S_{K^p K_{p,m,b},\Sigma}^{tor}, \mathcal{V}_{\kappa, K^p K_{p,m,b}, \Sigma})  \ar[u]}
\end{eqnarray*}
\item For all $m$ and $t \in T^{--}$ with $\min(t)\geq 1$, there is a factorization:
\begin{eqnarray*}
\xymatrix{ \mathrm{R}\Gamma(S_{K^p K_{p,m+1,b},\Sigma}^{tor}, \mathcal{V}_{\kappa, K^p K_{p,m+1,b}, \Sigma}) \ar[rr]^{[K_{p,m+1,b} t K_{p,m+1,b}] }\ar[d]^{tr} & &\mathrm{R}\Gamma(S_{K^p K_{p,m+1,b},\Sigma}^{tor}, \mathcal{V}_{\kappa, K^p K_{p,m+1,b}, \Sigma}) \ar[d]^{tr} \\
\mathrm{R}\Gamma(S_{K^p K_{p,m,b},\Sigma}^{tor}, \mathcal{V}_{\kappa, K^p K_{p,m,b}, \Sigma}) \ar[rr]^{[K_{p,m,b} t K_{p,m,b}] }\ar[rru]  && \mathrm{R}\Gamma(S_{K^p K_{p,m,b},\Sigma}^{tor}, \mathcal{V}_{\kappa, K^p K_{p,m,b}, \Sigma}) }
\end{eqnarray*}

\item We have the same results for cuspidal cohomology. 

\end{enumerate}
\end{coro}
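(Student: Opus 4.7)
My plan is to deduce all five parts of Corollary \ref{cor-classical-coh-diagrams} formally from the Hecke algebra identities of Lemma \ref{lemma-relations-Hecke-algebra} together with Proposition \ref{prop-comp-classical-coh}, which says that abstract double coset multiplication agrees with composition of Hecke operators on coherent cohomology. No geometric input beyond what is already collected in these two results will be needed.

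The first step is to match the vertical and diagonal arrows in each diagram with specific double cosets, using the factorization observation recalled after Proposition \ref{prop-comp-classical-coh}. Since $m'\geq m$ and $b'\geq b$ imply $K_{p,m',b'}\subseteq K_{p,m,b}$, the forgetful map $S_{K^pK_{p,m',b'},\Sigma}^{tor}\to S_{K^pK_{p,m,b},\Sigma}^{tor}$ is finite \'etale; its pullback on cohomology is $[K_{p,m',b'}\,1\,K_{p,m,b}]$ (this is the upward arrow in diagrams (1), (3), (4)), while its trace is $[K_{p,m,b}\,1\,K_{p,m',b'}]$ (the downward arrow in (2)). The diagonal arrows in (3) and (4) I read as the Hecke operators $[K_{p,m,b}\,t\,K_{p,m+1,b}]:\mathrm{R}\Gamma(S^{tor}_{K^pK_{p,m+1,b}},\mathcal{V}_\kappa)\to\mathrm{R}\Gamma(S^{tor}_{K^pK_{p,m,b}},\mathcal{V}_\kappa)$ and $[K_{p,m+1,b}\,t\,K_{p,m,b}]$ respectively.

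With these identifications, parts (1)--(4) are exactly the images under Proposition \ref{prop-comp-classical-coh} of the four double-coset identities in Lemma \ref{lemma-relations-Hecke-algebra}(1)--(4). For instance, part (3) records that $[K_{p,m,b}tK_{p,m,b}]$ equals the composition of the upward pullback followed by the diagonal, while $[K_{p,m+1,b}tK_{p,m+1,b}]$ equals the composition of the diagonal followed by the upward pullback on the right; both assertions are simply the two displayed factorizations of Lemma \ref{lemma-relations-Hecke-algebra}(3) transported to cohomology. Part (5) follows by the same argument: the cohomological correspondences constructed in section \ref{section-hecke-construction}, the vanishing results of Theorems \ref{vanishingtominimal} and \ref{thm-van-between-toroidal}, and Proposition \ref{prop-comp-classical-coh} were all stated uniformly for $\mathcal{V}_{\kappa,\Sigma}$ and $\mathcal{V}_{\kappa,\Sigma}(-D_{K,\Sigma})$, so the cuspidal case runs identically.

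There is no genuine obstacle here; the only care needed is bookkeeping to ensure the arrows point in the correct direction and correspond to the right double coset, so that the Hecke algebra identities of Lemma \ref{lemma-relations-Hecke-algebra} match the commutative squares and triangles being asserted. In particular the semigroup conditions $t\in T^{\pm}$ or $t\in T^{\pm\pm}$ with $\min(t)\geq 1$ in Lemma \ref{lemma-relations-Hecke-algebra} are exactly what ensures that the Iwahori decomposition hypotheses of Lemma \ref{lem-iwahori-comp} are satisfied in each case, so these integrality conditions simply propagate unchanged from the Hecke algebra statement to the cohomological one.
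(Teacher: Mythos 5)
Your proposal is correct and is exactly the paper's intended argument: the paper gives no explicit proof for this corollary, introducing it simply with "Here are the consequences on cohomology:" after Lemma \ref{lemma-relations-Hecke-algebra}, so the translation of those double-coset identities via Proposition \ref{prop-comp-classical-coh} is precisely what is meant. One small bookkeeping slip: in your list, diagram (4) has downward \emph{trace} arrows (not upward pullback arrows), matching the downward trace arrows of diagram (2); this does not affect the argument since the diagonal in (4) is $[K_{p,m+1,b}tK_{p,m,b}]$ and both displayed equalities of Lemma \ref{lemma-relations-Hecke-algebra}(4) transport directly.
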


We deduce the following classical corollary 

\begin{coro} 
\begin{enumerate}
\item For all $m' \geq m\geq b$ with $m>0$, the pullback map 
$$ \mathrm{R}\Gamma(S_{K^pK_{p,m,b},\Sigma}^{tor}, \mathcal{V}_{\kappa, K^pK_{p,m,b}, \Sigma})^{+, fs}  \rightarrow  \mathrm{R}\Gamma(S_{K^pK_{p,m',b},\Sigma}^{tor}, \mathcal{V}_{\kappa, K^pK_{p,m',b}, \Sigma})^{+, fs}$$ 
and the trace map
$$ \mathrm{R}\Gamma(S_{K^pK_{p,m',b},\Sigma}^{tor}, \mathcal{V}_{\kappa, K^pK_{p,m',b}, \Sigma})^{-, fs}  \rightarrow  \mathrm{R}\Gamma(S_{K^pK_{p,m,b},\Sigma}^{tor}, \mathcal{V}_{\kappa, K^pK_{p,m,b}, \Sigma})^{-, fs}$$ are quasi-isomorphisms, compatible with the action of $\qq[T(\qq_p)/T_b]$, and the same statements are true for cuspidal cohomology.  Moreover these isomorphisms are compatible with Serre duality.
\item For all $m\geq b'\geq b$ with $m>0$, the pullback map
$$ \mathrm{R}\Gamma(S_{K^pK_{p,m,b},\Sigma}^{tor}, \mathcal{V}_{\kappa, K^pK_{p,m,b}, \Sigma})^{+, fs}  \rightarrow  (\mathrm{R}\Gamma(S_{K^pK_{p,m,b'},\Sigma}^{tor}, \mathcal{V}_{\kappa, K^pK_{p,m,b'}, \Sigma})^{+, fs})^{T_b/T_{b'}}$$
and the trace map
$$ (\mathrm{R}\Gamma(S_{K^pK_{p,m,b'},\Sigma}^{tor}, \mathcal{V}_{\kappa, K^pK_{p,m,b'}, \Sigma})^{-, fs})^{T_b/T_{b'}}  \rightarrow  \mathrm{R}\Gamma(S_{K^pK_{p,m,b},\Sigma}^{tor}, \mathcal{V}_{\kappa, K^pK_{p,m,b}, \Sigma})^{-, fs}$$ are quasi-isomorphisms, compatible with the action of $\qq[T(\qq_p)/T_b]$, and the same statements are true for cuspidal cohomology.  Moreover these isomorphisms are compatible with Serre duality.
\end{enumerate}
\end{coro}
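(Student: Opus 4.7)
Plan. Both parts of this corollary are essentially formal consequences of the factorizations developed in Lemma \ref{lemma-relations-Hecke-algebra} and Corollary \ref{cor-classical-coh-diagrams}, together with Theorem \ref{thm-van-between-toroidal} on change of level. I would address the two parts separately.

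For part (2), the key observation is that when $m \geq b' \geq b$, the subgroup $K_{p,m,b'}$ is normal in $K_{p,m,b}$ with quotient isomorphic, via the Iwahori decomposition, to the abelian group $T_b/T_{b'}$. Normality holds because for $h \in K_{p,m,b}$ and $k \in K_{p,m,b'}$, the element $hkh^{-1}$ reduces to an element of $B$ mod $p^m$ (as $K_{p,m,b}$ does), and to an element of $U$ mod $p^{b'}$ (since $h$ reduces to $B$ mod $p^{b'}$ because $m \geq b'$, $k$ reduces to $U$ mod $p^{b'}$, and $U$ is normal in $B$). Theorem \ref{thm-van-between-toroidal} then applies to the forgetful morphism $\pi_{K^pK_{p,m,b'},K^pK_{p,m,b}}$ and yields a quasi-isomorphism
$$\mathrm{R}\Gamma(S_{K^pK_{p,m,b},\Sigma}^{tor}, \mathcal{V}_{\kappa, K^pK_{p,m,b}, \Sigma}) \simeq \mathrm{R}\Gamma(S_{K^pK_{p,m,b'},\Sigma}^{tor}, \mathcal{V}_{\kappa, K^pK_{p,m,b'}, \Sigma})^{T_b/T_{b'}}$$
induced by pullback, already at the level of full cohomology. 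The analogous statement for the trace follows because the composite pullback $\circ$ trace, restricted to the $T_b/T_{b'}$-invariants, is multiplication by $\lvert T_b/T_{b'}\rvert$, a unit in characteristic zero. Passing to finite slope parts is harmless; Hecke equivariance for operators in $\qq[T(\qq_p)/T_b]$ follows from Lemma \ref{lemma-relations-Hecke-algebra}(1)-(2) applied with $m' = m$; and compatibility with Serre duality follows from Proposition \ref{prop-classical-duality} since $[K_{p,m,b'}1K_{p,m,b}]$ and $[K_{p,m,b}1K_{p,m,b'}]$ are transposes. The cuspidal statement is identical.

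For part (1), I would fix an element $t \in T^{++}$ with $\min(t) \geq 1$, obtained by replacing any element of $T^{++}$ by a sufficiently high power. Lemma \ref{lemma-relations-Hecke-algebra}(3) then gives the two factorizations
$$[K_{p,m,b}tK_{p,m,b}] = \beta \circ \pi_m \quad \text{and} \quad [K_{p,m+1,b}tK_{p,m+1,b}] = \pi_m \circ \beta,$$
where $\pi_m := [K_{p,m+1,b}1K_{p,m,b}]$ is the pullback and $\beta := [K_{p,m,b}tK_{p,m+1,b}]$. Restricting to the $+,fs$ parts, both square-bracket operators become invertible by the definition of the finite slope direct factor, so $\pi_m$ admits a two-sided inverse
$$[K_{p,m,b}tK_{p,m,b}]^{-1}\circ\beta \,=\, \beta\circ[K_{p,m+1,b}tK_{p,m+1,b}]^{-1}$$
and is thus a quasi-isomorphism. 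Iterating $\pi_{m'-1}\circ\cdots\circ\pi_m$ yields the statement for arbitrary $m' \geq m$. The $-,fs$ statement follows by the same argument using Lemma \ref{lemma-relations-Hecke-algebra}(4) and Corollary \ref{cor-classical-coh-diagrams}(4), with the trace playing the role of the pullback. The cuspidal versions are identical, Hecke compatibility is immediate from Lemma \ref{lemma-relations-Hecke-algebra}(1)-(2), and Serre duality compatibility is again a direct application of Proposition \ref{prop-classical-duality}.

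The argument is essentially formal given the preparatory results, and I do not anticipate any real obstacle. The only points worth keeping track of are the direction of the various Hecke maps (pullback versus trace) at each step, the verification of normality of $K_{p,m,b'}$ in part (2), and the choice of an element $t$ of $T^{++}$ (resp. $T^{--}$) with $\min(t) \geq 1$ to activate the correct factorization.
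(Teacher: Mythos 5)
Your proof is correct and matches the paper's intended approach: the paper's own proof reads simply ``Immediate from corollary \ref{cor-classical-coh-diagrams} and theorem \ref{thm-van-between-toroidal}'', and your argument supplies exactly the expected details — the two-sided inverse of the pullback (resp.\ trace) on finite slope parts via the factorizations of Lemma \ref{lemma-relations-Hecke-algebra}(3)--(4) for part (1), and normality of $K_{p,m,b'}$ in $K_{p,m,b}$ together with the invariants identification and averaging in characteristic zero for part (2).
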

\begin{proof}
Immediate from corollary \ref{cor-classical-coh-diagrams} and theorem \ref{thm-van-between-toroidal}.
\end{proof}

Now let $\chi:T(\ZZ_p)\to \overline{F}^\times$ be a finite order character.  For all $m\geq b\geq\mathrm{cond}(\chi)$ with $m>0$, the spaces $\mathrm{R}\Gamma(S_{K^pK_{p,m,b},\Sigma}^{tor}, \mathcal{V}_{\kappa, K^pK_{p,m,b}, \Sigma})^{+, fs}[\chi]$ are canonically isomorphic.  We denote this space by $\mathrm{R}\Gamma(K^p,\kappa,\chi)^{+,fs}$.  We define in the same way $\mathrm{R}\Gamma(K^p,\kappa,\chi)^{-,fs}$, and the cuspidal versions $\mathrm{R}\Gamma(K^p,\kappa,\chi,cusp)^{+,fs}$ and $\mathrm{R}\Gamma(K^p,\kappa,\chi,cusp)^{-,fs}$

These are  the classical finite slope cohomologies, for the tame level $K^p$, $M_\mu$-dominant  algebraic weight $\kappa$, and nebentypus $\chi$.  They satisfy a Serre duality: $$D_{F}(\mathrm{R}\Gamma(K^p,\kappa,\chi)^{\pm,fs})[-d]\simeq \mathrm{R}\Gamma(K^p,-2\rho_{nc}- w_{0,M_\mu}\kappa,\chi^{-1},cusp)^{\mp,fs}.$$

\begin{rem}\label{rem-Nebentypus in the abelian case} We explain some  constraints on the possible nebentypus $\chi$ for which we  have non trivial cohomology in the case that the subgroup $Z_s(G)$ of the center $Z(G)$ of $G$ is non-trivial (a case that may only occur for  non Hodge type Shimura datum). For simplicity we put $Z=Z(G)$ and $Z_s = Z_s(G)$. 
Recall that $S_K(\C) = G(\qq) \backslash X \times G(\mathbb{A}_f)/ K$. Passing to the limit over $K$, we find that $S(\C) = \lim_K S_K(\C) =  G(\qq) \backslash X \times G(\mathbb{A}_f)/ \overline{Z(\qq)}$ where $\overline{Z(\qq)}$ is the closure of $Z(\qq)$ in $G(\mathbb{A}_f)$ (see \cite{MR546620}, sect. 2.19). 
Let $K' \subseteq K$ be two compact open subgroups, with $K'$ normal in $K$. The map $S_{K'}(\C)  \rightarrow S_{K}(\C)$ is a covering with Galois group $K/ (K'. \overline{Z(\qq)} \cap K)$.
If $K$ is neat, $\overline{Z(\qq)} \cap K$ is a subgroup of $\overline{Z_s(\qq)}$. In the case that $Z_s= \{1\}$, we deduce that 
$S_{K'}(\C)  \rightarrow S_{K}(\C)$ is  a Galois covering with group $K/K'$. 
In general, it follows  that the action of   $T(\ZZ_p)$ on $\mathrm{R}\Gamma(S_{K^pK_{p,m,b},\Sigma}^{tor}, \mathcal{V}_{\kappa, K^pK_{p,m,b}, \Sigma})^{+, fs}$ factors through an action of $T(\ZZ_p)/  Z'(K^p)$, where  $Z'(K^p) = \mathrm{Im} ((\overline{Z_s(\qq)} \cap K^pK_{p,m,0}) \rightarrow T(\ZZ_p))$.  
We deduce  that when $Z_s \neq 1$, if $\chi$ does not factor through $T(\ZZ_p)/  Z'(K^p)$, then  $\mathrm{R}\Gamma(K^p,\kappa,\chi)^{\pm,fs} =0$ for all $\kappa$. To avoid this situation, we will often impose that $\chi$ factors through a character of $T^c(\ZZ_p)$.
\end{rem}

\subsection{Jacquet Modules}\label{subsection-jacquet}  In this section we translate the finite slope condition into more representation theoretic terms.  We keep assuming that $G_{\qq_p}$ is quasi-split with Borel $B$. We let $U$ be the unipotent radical of $B$. 
Let $\pi$ be a smooth admissible representation of $G(\qq_p)$ with coefficient in a field of characteristic $0$. We let $\pi(U) \subseteq \pi$ be the submodule generated by the elements $n\cdot v-v$ for $n \in U(\qq_p)$ and $v \in \pi$.  We let $\pi_U = \pi / \pi(U)$ be the Jacquet module of $\pi$ (with respect to $U$).  This is a smooth admissible representation of $T(\qq_p)$ by \cite{CasselmanNotes}, thm. 3.3.1. Moreover, the functor $\pi \mapsto \pi_U$ is an exact functor by \cite{CasselmanNotes}, prop. 3.3.2.  We can define similarly the Jacquet module $\pi_{\overline{U}}$ with respect to $\overline{U}$. Note that conjugation by the longest element $w_0$ of the Weyl group realizes an isomorphism from $\pi_U$ to $\pi_{\overline{U}}$. 
Let $\psi : T(\qq_p) \rightarrow \C^\times$ be a continuous character. We let $\iota_B^G (\psi) = \{ f : G(\qq_p) \rightarrow \C,~\textrm{smooth},~f( b g) = \psi(b)f(g) \}$, equipped with the left action induced by right translation of $G(\qq_p)$ on itself.  We define similarly $\iota_{\overline{B}}^G (\psi)$. 

The adjunction formula of  \cite{CasselmanNotes}, thm. 3.2.4 states that $$\mathrm{Hom}_{G(\qq_p)} ( \pi, \iota_B^G(\psi)) = \mathrm{Hom}_{T(\qq_p)}( \pi_U, \psi)$$ and $$\mathrm{Hom}_{G(\qq_p)} ( \pi, \iota_{\overline{B}}^G(\psi)) = \mathrm{Hom}_{T(\qq_p)}( \pi_{\overline{U}}, \psi).$$

Let $K = K_{p,m,b}$. The algebra $\mathcal{H}_{p,m,b}^{\pm} = \ZZ[ T^{\pm}/T_b]$ (by lemma \ref{lemma-casselman}) acts  on $\pi^{K_{p,m,b}}$, and we can define $\pi^{K_{p,m,b}, \pm,fs} \subseteq \pi^{K_{p,m,b}}$ as the sub-vector space where the operators $[K_{p,m,b} t K_{p,m,b}]$ for $t \in T^\pm$ act invertibly. 

\begin{prop}\label{prop-casell-JM} The natural map $\pi^{K_{p,m,b}, +,fs} \rightarrow  \pi_U^{T_b}$  is an isomorphism which is $T^+/T_b$  equivariant, and  the natural map $\pi^{K_{p,m,b}, -,fs} \rightarrow  \pi_{\overline{U}}^{T_b}$ is an isomorphism which is $T^{-}/T_b$ equivariant. 
\end{prop}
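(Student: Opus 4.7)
The statement is a standard result in the Bernstein--Casselman theory of Jacquet modules; essentially \cite{CasselmanNotes}, Prop.~4.1.4, adapted to the Iwahori factorization $K_{p,m,b} = \overline{U}_m \cdot T_b \cdot U(\ZZ_p)$. I outline the strategy in the $+$ case; the $-$ case is symmetric, exchanging the roles of $U$ and $\overline{U}$ (or equivalently, conjugating by $w_0$).

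First I would define the map and check it is well-posed and equivariant. The map is the composition of the inclusion $\pi^{K_{p,m,b}}\hookrightarrow\pi$ with the canonical surjection $\pi\twoheadrightarrow\pi_U$; it lands in $\pi_U^{T_b}$ because the Jacquet projection is $T$-equivariant and $T_b\subseteq K_{p,m,b}$. For $T^+/T_b$-equivariance, I use the Iwahori factorization together with the fact that for $t\in T^+$ one has $t\overline{U}_m t^{-1}\subseteq\overline{U}_m$ and $t^{-1}U(\ZZ_p)t\subseteq U(\ZZ_p)$. This yields
\[
K_{p,m,b}\,t\,K_{p,m,b}\;=\;\coprod_{u\in\overline{U}_m/t\overline{U}_m t^{-1}} u t\,K_{p,m,b},
\]
so $[K_{p,m,b}tK_{p,m,b}]\cdot v=\sum_{u} utv$ for $v\in\pi^{K_{p,m,b}}$. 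The technical heart is then to verify that each $utv$ has the same image as $tv$ in $\pi_U$: this is the classical computation (\emph{loc.\ cit.}), which uses that for $t$ sufficiently regular the coset representatives $u$ can be replaced, modulo $\pi(U)$, by elements of $U(\qq_p)\cap K_{p,m,b}$ which fix $v$. One obtains $p\circ[K_{p,m,b}tK_{p,m,b}]=[\overline{U}_m:t\overline{U}_m t^{-1}]\cdot t\circ p$, and in particular the action of $[K_{p,m,b}tK_{p,m,b}]$ on the image of $p$ is by an invertible element, so $p$ factors through $\pi^{K_{p,m,b},+,fs}$.

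Next, for injectivity on the finite-slope part: if $v\in\pi^{K_{p,m,b}}$ projects to zero in $\pi_U$, then $v\in\pi(U)$, i.e.\ there exists a compact open $U'\subseteq U(\qq_p)$ with $\int_{U'}uv\,du=0$. Choosing $t\in T^{++}$ regular enough that $tU(\ZZ_p)t^{-1}\supseteq U'$, the formula above shows that $[K_{p,m,b}tK_{p,m,b}]v$ is annihilated by averaging over a smaller compact open subgroup of $U$, and iterating, $v$ is killed by a high power of $[K_{p,m,b}tK_{p,m,b}]$; this contradicts the finite-slope hypothesis, so $v=0$.

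For surjectivity: given $\bar v\in\pi_U^{T_b}$, choose a lift $w\in\pi$; by admissibility and exactness of the Jacquet functor we may arrange $w\in\pi^{U(\ZZ_p)T_b}$. Averaging over $\overline{U}_m$ alone need not produce a $K_{p,m,b}$-fixed lift, but applying the Casselman canonical-lifting procedure, one forms $v = \lim_{n\to\infty}[K_{p,m,b}tK_{p,m,b}]^{-n}\cdot e_{K_{p,m,b}}(t^n w)$ for a sufficiently regular $t\in T^{++}$, interpreted in the finite-slope completion; the contracting dynamics of $t$ on $\overline{U}_m$ forces this sequence to stabilize in $\pi^{K_{p,m,b},+,fs}$, and by construction its image under $p$ is $\bar v$.

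The main obstacle is the computation in the first paragraph — verifying that the $u\in\overline{U}_m/t\overline{U}_m t^{-1}$ collapse modulo $\pi(U)$ in the Jacquet module — since the projection $\pi\twoheadrightarrow\pi_U$ is not itself $\overline{U}$-equivariant. The rest is essentially formal spectral theory of the commutative family of operators $\{[K_{p,m,b}tK_{p,m,b}]\}_{t\in T^+}$ acting on the admissible $T^+/T_b$-module $\pi^{K_{p,m,b}}$.
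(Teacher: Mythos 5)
Your proposal identifies the right reference and the right overall architecture (equivariance via the Iwahori factorization, injectivity via Jacquet's lemma, surjectivity via Casselman's canonical lifting). The paper's own proof is simply a citation to \cite{CasselmanNotes} (Lemma 4.1.1 and Prop.~4.1.4), so you are trying to reconstruct exactly the argument the paper is invoking. However, the reconstruction contains a systematic error in the direction of conjugation that propagates through all three steps.

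For $t\in T^+$ one has $v(\alpha(t))\geq 0$ for $\alpha\in\Phi^+$, so conjugation by $t$ \emph{contracts} the positive unipotent and \emph{expands} the negative one: $tU(\ZZ_p)t^{-1}\subseteq U(\ZZ_p)$ and $t\overline{U}_m t^{-1}\supseteq\overline{U}_m$. You assert the reverse inclusions. Consequently $K\cap tKt^{-1}=\overline{U}_m\,T_b\,(tU(\ZZ_p)t^{-1})$, and the single-coset decomposition is
\[
K_{p,m,b}\,t\,K_{p,m,b}=\coprod_{u\in U(\ZZ_p)/tU(\ZZ_p)t^{-1}} u\,t\,K_{p,m,b},
\]
with representatives $u$ in the \emph{positive} unipotent, not in $\overline{U}_m/t\overline{U}_m t^{-1}$ (which does not even make sense, since $t\overline{U}_m t^{-1}$ contains $\overline{U}_m$). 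This reverses the nature of the argument: because $u\in U(\qq_p)$ and $p:\pi\to\pi_U$ is precisely the $U$-coinvariant quotient, the identity $p(utv)=p(tv)$ is \emph{immediate}. The ``technical heart'' you describe --- replacing $u\in\overline{U}_m$ modulo $\pi(U)$ by elements of $U$ fixing $v$ --- is a fiction created by the sign error; there is no such obstacle. The genuine content of the proposition is entirely in the injectivity and surjectivity parts.

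The same reversal recurs in your injectivity argument: to annihilate $\int_{U'}uv\,du$ with $U'\supseteq U(\ZZ_p)$ large, you need $t^{-n}U(\ZZ_p)t^{n}\supseteq U'$ for $n\gg 0$ (conjugation by $t^{-1}\in T^-$ expands $U$), not $tU(\ZZ_p)t^{-1}\supseteq U'$, which is impossible since $tU(\ZZ_p)t^{-1}\subseteq U(\ZZ_p)$. Likewise, in the surjectivity step, $t$ \emph{expands} $\overline{U}_m$, it does not contract it. With the directions corrected, the dynamical pictures you invoke do make sense and match Casselman's argument. Finally, your formula $p\circ[KtK]=c(t)\cdot t\circ p$ is structurally correct (with $c(t)=[U(\ZZ_p):tU(\ZZ_p)t^{-1}]$), but note this scalar is a nontrivial character of $T^+$, so the strict $T^+/T_b$-equivariance claimed in the proposition implicitly involves either a normalization of the Hecke operator or of the Jacquet action; this subtlety is glossed over both in the paper and in your proposal and is worth flagging explicitly when you write this up.
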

\begin{proof} See \cite{CasselmanNotes}, Lemma 4.1.1 and proposition 4.1.4.
\end{proof}
\begin{prop} For a smooth irreducible  representation $\pi$ of $G(\qq_p)$, the following   properties are equivalent:
\begin{enumerate}
\item  There exists $m\geq  b \geq 0$  such that $\pi^{K_{p,m,b}, +,fs} \neq 0$,  
\item There exists $m\geq  b \geq 0$  such that $\pi^{K_{p,m,b}, -,fs} \neq 0$,
\item There exists a character $\psi$ of $T(\qq_p)$ such that  $\pi \hookrightarrow \iota_B^G \psi$,
\item  There exists a character $\psi'$ of $T(\qq_p)$ such that $\pi \hookrightarrow \iota_{\overline{B}}^G \psi'$.
\end{enumerate}
\end{prop}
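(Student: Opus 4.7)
The plan is to reduce everything to statements about the (opposite) Jacquet modules $\pi_U$ and $\pi_{\overline{U}}$, which are controlled by the previous proposition and Casselman's adjunction.

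First I would establish the equivalences $(1) \Leftrightarrow \pi_U \neq 0$ and $(2) \Leftrightarrow \pi_{\overline{U}} \neq 0$. Proposition~\ref{prop-casell-JM} gives canonical isomorphisms $\pi^{K_{p,m,b},+,fs} \cong \pi_U^{T_b}$ and $\pi^{K_{p,m,b},-,fs} \cong \pi_{\overline{U}}^{T_b}$. Since $\pi_U$ and $\pi_{\overline{U}}$ are smooth representations of $T(\qq_p)$, they are the union of their $T_b$-fixed vectors as $b$ grows. Thus $(1)$ is equivalent to $\pi_U^{T_b} \neq 0$ for some $b$, which is equivalent to $\pi_U \neq 0$, and similarly for $(2)$.

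Next I would prove $(3) \Leftrightarrow \pi_U \neq 0$ (and symmetrically $(4) \Leftrightarrow \pi_{\overline{U}} \neq 0$). One direction is immediate from the adjunction formula: if $\pi \hookrightarrow \iota_B^G(\psi)$ is a nonzero map, the adjunction supplies a nonzero $T(\qq_p)$-equivariant map $\pi_U \to \psi$, so $\pi_U \neq 0$. For the converse, I would invoke Casselman's theorem that the Jacquet module of a finite length smooth admissible representation is again of finite length. Since $\pi$ is irreducible, $\pi_U$ has finite length as a smooth admissible $T(\qq_p)$-module, so after choosing a Jordan--H\"older filtration I can extract a character $\psi$ as a quotient $\pi_U \twoheadrightarrow \psi$. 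Adjunction then produces a nonzero map $\pi \to \iota_B^G(\psi)$, which is an embedding since $\pi$ is irreducible.

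Finally I must establish $(3) \Leftrightarrow (4)$, equivalently $\pi_U \neq 0 \Leftrightarrow \pi_{\overline{U}} \neq 0$. This is the main subtle point, and is where I would appeal to Casselman's structure theory: for an irreducible smooth admissible representation of $G(\qq_p)$, the vanishing of any one proper parabolic Jacquet module is equivalent to supercuspidality, which is a property intrinsic to $\pi$ and independent of the choice of opposite Borel. Hence $\pi_U = 0 \Leftrightarrow \pi$ is supercuspidal $\Leftrightarrow \pi_{\overline{U}} = 0$.

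The main obstacle is the last step, since one needs the symmetry between $B$ and $\overline{B}$ for the vanishing of Jacquet modules; this is classical for split reductive groups but in the quasi-split setting one should verify that Casselman's criterion (vanishing of Jacquet modules equivalent to compact support of matrix coefficients modulo the center) still applies uniformly. Assuming this standard input, the equivalences $(1)\Leftrightarrow(2)\Leftrightarrow(3)\Leftrightarrow(4)$ follow by combining the three steps above.
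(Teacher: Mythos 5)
Your reductions $(1)\Leftrightarrow \pi_U\neq 0$, $(2)\Leftrightarrow\pi_{\overline{U}}\neq 0$, $(3)\Leftrightarrow\pi_U\neq 0$, $(4)\Leftrightarrow\pi_{\overline{U}}\neq 0$ are all correct and match the paper in spirit, including the use of finite length of $\pi_U$ to extract a character quotient.

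The gap is in the final step, where you invoke ``vanishing of any one proper parabolic Jacquet module is equivalent to supercuspidality.'' This criterion is \emph{false}, even for split groups: supercuspidality requires vanishing of the Jacquet module for \emph{every} proper parabolic, and vanishing of $\pi_U$ for the Borel alone is a strictly weaker condition. Concretely, take $G=\mathrm{GL}_4(\qq_p)$, $P$ the $(2,2)$-parabolic with Levi $M=\mathrm{GL}_2\times\mathrm{GL}_2$, and $\sigma=\sigma_1\boxtimes\sigma_2$ a supercuspidal representation of $M$ with $\sigma_1\not\simeq\sigma_2|\cdot|^{\pm 1}$, so that $\pi=\iota_P^G\sigma$ is irreducible. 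By the Bernstein--Zelevinsky geometric lemma, $\pi_{N_P}$ has a filtration whose graded pieces are conjugates $w\sigma$ (for $w$ normalizing $M$), all supercuspidal on $M$; hence $\pi_U=(\pi_{N_P})_{U\cap M}=0$. But $\pi$ is not supercuspidal since $\pi_{N_P}\neq 0$. So the implication ``$\pi_U=0\Rightarrow\pi$ supercuspidal'' fails, and your argument does not establish $(3)\Leftrightarrow(4)$.

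The paper's proof of $(1)\Leftrightarrow(2)$ and $(3)\Leftrightarrow(4)$ is much more elementary and avoids this issue: conjugation by (a rational representative of) the longest Weyl element $w_0$ sends $U$ to $\overline{U}$ and normalizes $T$, hence induces an isomorphism $\pi_U\simeq\pi_{\overline{U}}$. Rationality of $w_0$ uses quasi-splitness: $w_0\in W^\Gamma$ since it swaps $B$ and $\overline{B}$, both defined over $\qq_p$, and for quasi-split $G$ one has $W^\Gamma\simeq N_G(T)(\qq_p)/T(\qq_p)$ because $T=Z_G(T^d)$. This is exactly the content of the remark ``conjugation by $w_0$ realizes an isomorphism from $\pi_U$ to $\pi_{\overline{U}}$'' in section~\ref{subsection-jacquet}. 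Your instinct that something needs checking in the quasi-split case is right, but the thing to check is rationality of $w_0$, not a supercuspidality criterion, and the criterion you used is wrong independently of the quasi-split subtlety.
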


\begin{proof} The points $(1)$ and $(2)$ are equivalent because the non-vanishing of $\pi^{K_{p,m,b}, \pm,fs} $ is equivalent to the non-vanishing of      $\pi_U^{T_b} $ and $\pi_{\overline{U}}^{T_b}$ respectively by proposition \ref{prop-casell-JM}. But conjugation by $w_0$ realizes an isomorphism between  these spaces. Similarly $(3)$ is equivalent to $(4)$. If we assume $(3)$, the adjunction formula shows that $\pi_U \neq 0$, hence there exists $b$ such that $\pi_U^{T_b} \neq 0$ and $\pi^{K_{p,m,b}, +,fs} \neq 0$ for any $m \geq b$ by proposition \ref{prop-casell-JM}. Conversely, if $\pi^{K_{p,m,b}, +,fs} \neq 0$, then $\pi_U^{T_b} \neq 0$ and by adjunction, there is a non zero map: $\pi \rightarrow \iota_B^G \psi$ for a character $\psi$. Since $\pi$ is irreducible, this map is injective. 
\end{proof}

If one of the equivalent properties of the proposition is satisfied, we say that an irreducible smooth representation $\pi$ is a \emph{finite slope} representation. 

Let $\pi$ be an admissible representation of $G(\qq_p)$. By adjunction, we have a morphism $\pi \rightarrow \iota_B^G \pi_U$ and we let $\pi^{fs}$ be the image of this morphism.  We call $\pi^{fs}$ the finite slope part of $\pi$. 

\begin{prop}The following properties are satisfied:

\begin{enumerate} 
\item The $G(\qq_p)$-representations $\pi^{fs}$ is a direct summand of $\pi$.
\item Any irreducible factor of $\pi^{fs}$ is   a finite slope representation.
\item Any irreducible factor of $\pi$ which is a finite slope representation lies in $\pi^{fs}$. 
\item $\pi^{fs}$ is the  sub-representation  of $\pi$ generated by the $\pi^{K_{p,m,b}, \pm,fs}$ for all $m \geq b \geq 0$.
\end{enumerate}
\end{prop}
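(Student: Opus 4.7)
The plan is to analyze the adjunction map $\phi : \pi \to \iota_B^G \pi_U$, whose image is $\pi^{fs}$ by definition. The key preliminary observation is that the induced map on Jacquet modules $\phi_U : \pi_U \to (\iota_B^G \pi_U)_U$ is split injective: composing $\phi_U$ with the counit $(\iota_B^G \pi_U)_U \to \pi_U$ of the adjunction recovers the identity on $\pi_U$. By exactness of the Jacquet functor this yields $(\ker \phi)_U = 0$, and the parallel argument with the opposite Borel gives $(\ker \phi)_{\overline{U}} = 0$.

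For (2) and (3), every irreducible subquotient $\sigma$ of $\pi^{fs} \subseteq \iota_B^G \pi_U$ is an irreducible subquotient of some $\iota_B^G \chi$ with $\chi$ a smooth character of $T(\qq_p)$ appearing in the admissible $T$-representation $\pi_U$. The standard fact that parabolic inductions have no supercuspidal composition factors gives $\sigma_U \neq 0$ (or $\sigma_{\overline{U}} \neq 0$), so $\sigma$ is finite slope by the previous proposition. Conversely, a finite slope irreducible subquotient of $\pi$ has nonzero Jacquet module, hence cannot occur in $\ker\phi$ and must appear in $\pi^{fs} \simeq \pi/\ker\phi$.

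The main obstacle is (1), which demands a $G(\qq_p)$-equivariant complement to $\ker\phi$ in $\pi$. I would argue level by level. For $K = K_{p,m,b}$, a direct calculation using the Bruhat/Iwahori decomposition of $K \backslash G(\qq_p)/B(\qq_p)$ shows that for any $t \in T^+$ the Hecke operator $[KtK]$ acts invertibly on the entire finite-dimensional space $(\iota_B^G \pi_U)^K$, essentially because $t$ acts invertibly on the smooth admissible $T$-representation $\pi_U$ and the action of $[KtK]$ is ``upper triangular'' with respect to the Bruhat filtration with diagonal entries given by Weyl translates of $t$. Consequently $\phi$ annihilates the generalized zero-eigenspace $\pi^{K,+,nfs}$ complementary to $\pi^{K,+,fs}$ in $\pi^K$, so $\pi^{K,+,nfs} \subseteq (\ker\phi)^K$. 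On the other hand, Proposition \ref{prop-casell-JM} applied to $\ker\phi$ gives $(\ker\phi)^{K,+,fs} \cong (\ker\phi)_U^{T_b} = 0$, so $(\ker\phi)^K \subseteq \pi^{K,+,nfs}$; hence $(\ker\phi)^K = \pi^{K,+,nfs}$ and $\pi^K = \pi^{K,+,fs} \oplus (\ker\phi)^K$. Since a smooth representation is determined by its invariants under arbitrarily small compact opens, taking the colimit over shrinking $K$ assembles these compatible decompositions into $\pi = \pi^{fs} \oplus \ker\phi$. Part (4) then falls out: the splitting identifies $(\pi^{fs})^K$ with $\pi^{K,+,fs}$, and $\pi^{fs}$, being smooth, is generated by its $K$-invariants as $K$ shrinks.
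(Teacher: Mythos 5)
Your treatment of (2), (3), (4) is essentially sound; in particular the observation that the triangle identity of the Casselman adjunction forces $(\ker\phi)_U=0$ (and hence $(\ker\phi)_{\overline{U}}=0$) is exactly what underlies the paper's remark that $\pi_U=\pi'_U$, and (4) does fall out of (1) as you note. The genuine gap is in (1). You correctly identify $(\ker\phi)^K$, for $K=K_{p,m,b}$, with the generalized kernel of $[KtK]$ inside $\pi^K$, giving $\pi^K=\pi^{K,+,fs}\oplus(\ker\phi)^K$. But the claim that these ``assemble'' into $\pi=\pi^{fs}\oplus\ker\phi$ because $\pi$ is smooth is precisely what needs proof: what must be shown is that $V:=\bigcup_K\pi^{K,+,fs}$ is a $G(\qq_p)$-stable subspace, and this does not follow from the level-wise Fitting decompositions. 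Each $\pi^{K,+,fs}$ is the invertible Fitting component for the single operator $[KtK]$, which is not central in the full Hecke algebra $\ZZ[K\backslash G(\qq_p)/K]$, so there is no a priori reason for $\pi^{K,+,fs}$ to be a submodule under that algebra; only its complement $(\ker\phi)^K$ is manifestly stable, being carved out by the $G$-subrepresentation $\ker\phi$. Concretely, for $g\in G(\qq_p)$ one has $g\cdot\pi^{K,+,fs}\subseteq\pi^{gKg^{-1}}$, and $gKg^{-1}$ is in general not of the form $K_{p,m',b'}$, so nothing in your argument forces $g\cdot\pi^{K,+,fs}$ back into $V$.

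Your $V$ is in fact the unique candidate: if a $G$-equivariant complement $W$ to $\ker\phi$ exists, then $\phi$ restricts to an isomorphism $W\to\pi^{fs}\subseteq\iota_B^G\pi_U$, so $[KtK]$ acts invertibly on $W^K$ and a dimension count gives $W^K=\pi^{K,+,fs}$; but this is circular as a proof of (1). The paper avoids the difficulty by invoking the Bernstein decomposition of the category of smooth representations, which supplies a canonical direct summand $\pi'\subseteq\pi$ supported on the unramified Bernstein blocks and already satisfying (1), (2), (3); identifying $\pi'=\pi^{fs}$ is then a short Jacquet-module argument close to yours. To salvage your approach you would need a global input of comparable strength, such as vanishing of $\mathrm{Ext}^1_G(\pi^{fs},\ker\phi)$, or second adjointness combined with $(\ker\phi)_{\overline{U}}=0$, at which point you are essentially re-deriving the relevant case of Bernstein's theorem. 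A smaller point: for (2) and (3) the fact you actually need is that every irreducible subquotient of $\iota_B^G\chi$ has nonzero Jacquet module \emph{for $B$ itself}, a consequence of uniqueness of cuspidal support; ``parabolic inductions have no supercuspidal composition factors'' only rules out $\sigma_P=0$ for \emph{all} proper $P$, which is a priori weaker.
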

\begin{proof} By the  Bernstein decomposition of the category of smooth  representations (\cite{renard},  VI.7.2)  we find that  $\pi = \pi' \oplus \pi''$  where $\pi'$ satisfies the properties $(1)$, $(2)$ and $(3)$ of the representation $\pi^{fs}$ of the proposition.
We have that $\pi''_U = 0$ because the Jacquet functor is exact  and  $\pi''$ has no finite slope irreducible sub-quotient. We deduce that $\pi_U = \pi'_U$. Moreover the morphism $\pi \rightarrow  \iota_B^G \pi_U$ factors into $\pi' \rightarrow \iota_B^G \pi_U$ and it follows again from the exactness of the Jacquet functor  that the map $\pi'  \rightarrow \iota_B^G \pi_U$ is injective. Therefore $\pi' = \pi^{fs}$. Let $\pi'''$ be the sub-representation of $\pi$ generated by $\pi^{K_{p,m,b}, \pm,fs}$ for all $m \geq b \geq 0$. We see that $\pi''' \subseteq \pi^{fs}$. But it follows from proposition \ref{prop-casell-JM} that $\pi'''_U = \pi^{fs}_U$. Therefore $\pi^{fs}/\pi'''$ has trivial Jacquet module, hence contains no finite slope sub-quotient and has to be trivial. 
\end{proof}

Let us denote by $$\HH^i(K^p, \kappa) = \colim_{K_p} \HH^i(S_{K^pK_p,\Sigma}^{tor}, \mathcal{V}_{\kappa, K^pK_p, \Sigma}),$$ $$\HH^i(K^p, \kappa,cusp) = \colim_{K_p} \HH^i(S_{K^pK_p,\Sigma}^{tor}, \mathcal{V}_{\kappa, K^pK_p, \Sigma}(-D_{K^pK_p, \Sigma}))$$ and $$\overline{\HH}^i(K^p, \kappa) = \mathrm{Im}(\HH^i(K^p, \kappa,cusp) \rightarrow \HH^i(K^p, \kappa)).$$

These are smooth admissible $G(\qq_p)$-representations. We can consider their finite slope parts $\HH^i(K^p, \kappa)^{fs}$, $\HH^i(K^p, \kappa, cusp)^{fs}$ and  $\overline{\HH}^i(K^p, \kappa)^{fs}$. These are direct summands of $\HH^i(K^p, \kappa)$, $\HH^i(K^p, \kappa, cusp)$ and  $\overline{\HH}^i(K^p, \kappa)$ respectively, and are   generated respectively as $G(\qq_p)$-representations  by the vector spaces $\HH^i(K^p, \kappa, \chi)^{\pm,fs}$, $\HH^i(K^p, \kappa, \chi, cusp)^{\pm,fs}$ and  $\overline{\HH}^i(K^p, \kappa, \chi)^{\pm,fs} = \mathrm{Im}(\HH^i(K^p, \kappa, \chi, cusp)^{\pm,fs} \rightarrow \HH^i(K^p, \kappa, \chi)^{\pm,fs})$ for all characters $\chi : T(\ZZ_p) \rightarrow \overline{F}^\times$.

\subsection{The Hodge-Tate period morphism} In this section we recall a number of results concerning the Hodge-Tate period morphism and infinite level Shimura varieties.  We now assume (unless explicitly mentioned) that $F$ is a finite extension of $\qq_p$ such that we have an embedding $E \hookrightarrow F$ and such that $G$ splits over $F$.  In this paper, the rationality questions with respect to $E$ are not very important. We will frequently allow ourselves to enlarge $F$ if necessary. Let $\mathcal{S}^{an}_K = (S_K \times \Spec~F)^{an}$, $\mathcal{S}^\star_K = (S^\star_K \times \Spec~F)^{{an}}$, $\mathcal{S}^{tor}_{K,\Sigma} = (S^{tor}_{K,\Sigma} \times \Spec~F)^{{an}}$, $ \mathcal{FL}_{G,\mu} = (FL_{G,\mu} \times \Spec~F)^{{an}}$ (see section \ref{section-analytic-geometry} for the meaning of the superscript $an$).   The first of these spaces is not quasi-compact if the Shimura variety is not proper, the  other three spaces are quasi-compact. 
We will also consider the groups $\mathcal{G}^{an} = (G \times \Spec~\qq_p)^{an}$, $\mathcal{P}_\mu^{an} = (P_\mu \times \Spec~F)^{an}$, $\mathcal{M}_\mu^{an}  =  (M_\mu \times \Spec~F)^{an}$.  

\subsubsection{Inverse limit of adic spaces} 
We start by a definition following  \cite{MR3272049}, sect. 2.4. Our definition is slightly more restrictive but fits in our setting. 

\begin{defi}\label{defi-tower} Let $\{ \mathcal{X}_i\}_{i \in I}$ be a  cofiltered inverse system of locally of finite type adic spaces over $\Spa(F, \ocal_F)$, with finite transition maps. Let $\mathcal{X}$ be a perfectoid space  with compatible maps $\mathcal{X} \rightarrow \mathcal{X}_i$. 

We say that $\mathcal{X} \sim \lim_{i\in I} \mathcal{X}_i$ if:
\begin{enumerate}
\item The maps $\mathcal{X} \rightarrow \mathcal{X}_i$ induces an homeomorphism of topological spaces $\vert \mathcal{X} \vert = \lim_i \vert \mathcal{X}_i\vert$.
\item There is a covering of $\mathcal{X}$ by open affinoids $U = \Spa(A, A^+)$ such  that  $U$ is the preimage of an affinoid $U_i = \Spa(A_i, A_i^+) \subseteq \mathcal{X}_i$   for a cofinal subset of $I$ and the map $\colim_i A_i \rightarrow A$ has dense image. 
\end{enumerate}
\end{defi}

If $\mathcal{X}  \sim \lim_{i\in I} \mathcal{X}_i$, then the diamond $ \lim_i \mathcal{X}_i^{\Diamond}$ is representable by the perfectoid space $\mathcal{X}$ by \cite{MR3272049}, prop. 2.4.5.  In particular, $\mathcal{X}$ is unique up to a unique isomorphism. 

In the notation of point $(2)$, we see that $A^0$ is a ring of definition of $A$ (because $A$ is uniform) and $A^0$ is the completion of $\colim_i A^0_i$ with respect to the $p$-adic topology.

\begin{defi}\label{defi-of-good} Let $\{ \mathcal{X}_i\}_{i \in I}$ be a  cofiltered inverse system of locally of finite type adic spaces over $\Spa(F, \ocal_F)$ with finite transition maps. Let $\mathcal{X}$ be a perfectoid space and assume that $\mathcal{X} \sim \lim_i \mathcal{X}_i$. 
We say that an open affinoid subset $U \hookrightarrow \mathcal{X}$ is good if  it satisfies the second property of definition \ref{defi-tower}. 
We say that an open affinoid $U_i  \hookrightarrow \mathcal{X}_i$ is pregood  if the open subset   $\mathcal{X} \times_{\mathcal{X}_i} \mathcal{U}_i$ of $\mathcal{X}$ is good.
\end{defi}

\begin{rem} We remark that a rational subset of a good open affinoid is also good by \cite{MR3204346}, proposition 2.22.  
\end{rem} 
\begin{rem} It is conjectured in   \cite{MR3204346}, conjecture 2.24 and proposition 2.26 that any open affinoid in $\mathcal{X}_i$ is pregood.
\end{rem}

We end this paragraph with  two  useful lemmas. 

\begin{lem}\label{lem-connected-perfectoid} Let $\{ \mathcal{X}_i\}_{i \in I}$ be a  cofiltered inverse system of locally of finite type adic spaces over $\Spa(F, \ocal_F)$ with finite transition maps.  For each $i$ let $\Pi_i$ be the set of connected components of $\mathcal{X}_i$ which we assume to be finite. Let $\Pi = \lim_i \Pi_i$. For any $e \in \Pi$ we get a cofiltered inverse system $\{ \mathcal{X}_{i,e} \}$.  If  there is a perfectoid space $\mathcal{X}$ such that $\mathcal{X} \sim \lim_i \mathcal{X}_i$ then for  all $e \in \Pi$, there is a perfectoid space $\mathcal{X}_e \sim \lim_i \mathcal{X}_{i,e}$.

\end{lem}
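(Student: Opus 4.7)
The plan is to construct $\mathcal{X}_e$ locally on $\mathcal{X}$ via a cover by good affinoids (in the sense of definition \ref{defi-of-good}) and glue. For each $i\in I$, since $\Pi_i$ is finite, $\mathcal{X}_{i,e_i}$ is a clopen subspace of $\mathcal{X}_i$, so the pullback $\mathcal{Y}_i:=\mathcal{X}\times_{\mathcal{X}_i}\mathcal{X}_{i,e_i}$ is a clopen (hence perfectoid) subspace of $\mathcal{X}$. For $j\geq i$ the compatibility $e_j\mapsto e_i$ gives $\mathcal{Y}_j\subseteq\mathcal{Y}_i$, so $\{\mathcal{Y}_i\}$ is a filtered decreasing family, and $\mathcal{X}_e$ will be realized as their ``intersection'' equipped with the structure sheaf coming from a completed filtered colimit of perfectoid Tate rings.

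Concretely, I would cover $\mathcal{X}$ by good affinoids $U=\Spa(A,A^+)$, each arising from some $U_{i_0}=\Spa(A_{i_0},A_{i_0}^+)\subseteq\mathcal{X}_{i_0}$ with $\colim_{i\geq i_0}A_i$ dense in $A$. For $i\geq i_0$, the clopen subset $U_i\cap\mathcal{X}_{i,e_i}$ of the finite-type affinoid $U_i$ is cut out by an idempotent $\epsilon_i\in A_i$, and its pullback $\tilde{\epsilon}_i\in A$ satisfies $\tilde{\epsilon}_j\tilde{\epsilon}_i=\tilde{\epsilon}_j$ for $j\geq i$ by functoriality of the connected component structure. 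Each direct factor $\tilde{\epsilon}_iA$ inherits a perfectoid Tate $F$-algebra structure from $A$, and the maps $\tilde{\epsilon}_iA\to\tilde{\epsilon}_jA$ given by multiplication by the smaller idempotent make $\{\tilde{\epsilon}_iA\}$ a filtered system of unital perfectoid Tate rings. Setting $B_U:=\widehat{\colim_{i\geq i_0}\tilde{\epsilon}_iA}$ and $B_U^+:=\widehat{\colim_{i\geq i_0}\tilde{\epsilon}_iA^+}$ with respect to the $p$-adic topology, $B_U$ is perfectoid as a completed filtered colimit of perfectoid Tate rings, so $U_e:=\Spa(B_U,B_U^+)$ is affinoid perfectoid with $|U_e|=\lim_i|U_i\cap\mathcal{X}_{i,e_i}|$ topologically.

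The local pieces satisfy $U_e\sim\lim_{i\geq i_0}(U_i\cap\mathcal{X}_{i,e_i})$ by construction: the topological identification handles point (1) of definition \ref{defi-tower}, and $U_e$ itself is good, handling point (2). For overlapping good affinoids the idempotent construction restricts correctly under rational localization, so the $U_e$ glue to a perfectoid space $\mathcal{X}_e$ with compatible maps to each $\mathcal{X}_{i,e_i}$, and the global tilde-limit property $\mathcal{X}_e\sim\lim_i\mathcal{X}_{i,e_i}$ follows by checking the two conditions locally. The main technical point will be verifying that $B_U$ is perfectoid with the expected spectrum; this rests on the fact that a direct factor of a perfectoid Tate ring remains perfectoid (inheriting a topologically nilpotent unit and surjectivity of Frobenius modulo a pseudo-uniformizer) and that completed filtered colimits preserve perfectoidness. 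I should note that $\mathcal{X}_e$ will generally fail to be an open subspace of $\mathcal{X}$, as $\Pi$ is typically not discrete, but the construction above nevertheless endows it with its own perfectoid structure and a natural map $\mathcal{X}_e\to\mathcal{X}$.
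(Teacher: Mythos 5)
Your argument is correct and reaches the same conclusion as the paper's, though by a somewhat different route. The paper reduces immediately to the affinoid case $\mathcal{X} = \Spa(A, A^+)$, $\mathcal{X}_i = \Spa(A_i, A_i^+)$, normalizes to reduced rings with injective transition maps, observes that $A^0$ is the $p$-adic completion of $\colim_i A_i^0$, and then verifies by hand that $\widehat{\colim_i A_{i,e}^0}$ is perfectoid: it chooses a pseudo-uniformizer $\varpi$ already living in some $A_{i,e}^0$ and checks Frobenius surjectivity on $\colim_i A_{i,e}^0/\varpi^p$ by lifting $p$-th roots from $A^0/\varpi^p$ back to a finite level and projecting. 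You instead work with the pulled-back idempotents $\tilde{\epsilon}_i \in A$ and the direct factors $\tilde{\epsilon}_i A$ of the ambient perfectoid ring, and form $B_U = \widehat{\colim_i \tilde{\epsilon}_i A}$, which is a quotient of $A$ by the closure of the ideal generated by the $1-\tilde{\epsilon}_i$. Perfectoidness of $B_U$ then comes from structural facts --- direct factors of perfectoid Tate rings are perfectoid, and so are completed filtered colimits --- rather than an explicit Frobenius check. Those facts, once unwound, encode essentially the same computation the paper carries out, so your route is not shorter but is conceptually cleaner; you should cite a reference for the filtered-colimit lemma since it is doing most of the work, and note that the transition maps $\tilde{\epsilon}_i A \to \tilde{\epsilon}_j A$ are not injective (they are quotient maps), so the statement being invoked is for an arbitrary filtered system, not a nested union. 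Two details you should spell out: (a) density of $\colim_i (A_i)_{e_i}$ in $B_U$, needed for condition (2) of definition \ref{defi-tower}, which follows from density of $\colim_i A_i$ in $A$ by observing that the composite $A_i \to A \to B_U$ kills $(1-\epsilon_{i,e_i})A_i$ and hence factors through $(A_i)_{e_i}$; and (b) that the local pieces $U_e$ glue, which the paper sidesteps by reducing to a single affinoid at the start. Your closing remark that $\mathcal{X}_e$ is in general only a Zariski closed, not open, subspace of $\mathcal{X}$ is correct and worth keeping.
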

\begin{proof} We reduce to the affine case. Let $\mathcal{X}_i = \Spa(A_i,A_i^+)$. For each $e \in \Pi_i$, we have $\mathcal{X}_{i,e} = \Spa(A_{i,e},A_{i,e}^+)$ and $A_{i} = \prod_{e \in \Pi_i} A_{i,e}$. We may assume that   all rings $A_i$ are reduced (by taking the reduction). In particular $A_i^0$ is open and bounded.  This does not affect the $\sim$-limit because perfectoid spaces are reduced.   We may also assume that all maps $A_i \rightarrow A_j$ are injective for $i, j \in I$ and $j \mapsto i$ (replacing $A_i$ by its image in $A_j$).

We assume that there is a perfectoid space $\mathcal{X} = \Spa(A,A^+) \sim \lim_i \mathcal{X}_i$.  Then $A^0$  is the $p$-adic completion of $\colim_i A_i^0$ and $A^0$ is a perfectoid $\ocal_F$-algebra: there is $\varpi \in A^0$ with $\varpi^p \mid p$ and the Frobenius morphism  $\phi : A^0/\varpi^p \rightarrow A^0/\varpi^p$ is surjective.  Moreover, $A = A^0[1/p]$ and $A^+$ is the closure of $\colim A_i^+$ in $A$.  By approximation, we may assume that $\varpi \in A^0_{i}$ and by projection we get an element $\varpi \in A_{i,e}$. 

We need to see that the map  $ \phi : \colim_{i} A_{i,e}^0/\varpi^p \rightarrow \colim_{i}  A_{i,e}^0/\varpi^p$ is surjective. Let $x_{i,e} \in A_{i,e}/\varpi^p$. Since $A^0$ is perfectoid, we see that there exists $j \mapsto i$ and $y_{j,e} \in A_{j,e}/\varpi^p$ such that $y_{j,e}^p = x_{i,e}$.
\end{proof}

\begin{lem}\label{lem-group-perfectoid} Let $\{ \mathcal{X}_i\}_{i \in I}$ be a  cofiltered inverse system of locally of finite type separated adic spaces over a perfectoid field $\Spa(F, \ocal_F)$ with finite transition maps. Let $G$ be a finite group acting on the inverse system via $\Spa(F,\ocal_F)$-morphisms. Let $\mathcal{X}$ be a perfectoid space such that $\mathcal{X} \sim \lim_i \mathcal{X}_i$. Assume that   for some index $i$, we have a $G$-invariant covering of $\mathcal{X}_i$ by pregood affinoids.  Then the categorical quotient $\mathcal{Y}_i = \mathcal{X}_i/G$ is representable by an adic space for a cofinal subset of $I$, the categorial quotient $\mathcal{Y} = \mathcal{X}/G$ is representable by a perfectoid space, and $\mathcal{Y} \sim \lim_i \mathcal{Y}_i$.
\end{lem}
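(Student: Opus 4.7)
The plan is to work locally on a $G$-invariant pregood affinoid cover and pass to invariants at each finite level, then verify that the resulting tower of quotients has $\mathcal{X}/G$ as its perfectoid $\sim$-limit. After replacing $I$ by a cofinal subset, I may assume that the $G$-invariant pregood cover $\{\mathcal{U}_i\}$ of $\mathcal{X}_i$ pulls back to a $G$-invariant cover of every $\mathcal{X}_j$ for $j\mapsto i$, with affinoid pieces $\mathcal{U}_j = \Spa(A_j,A_j^+)$, and with $\mathcal{U} := \mathcal{X}\times_{\mathcal{X}_i}\mathcal{U}_i = \Spa(A,A^+)$ good. At each finite level, since $G$ is finite, every $a\in A_j$ is integral over $A_j^G$ (satisfying $\prod_{g\in G}(X-g(a))=0$), and $A_j$ is finite as an $A_j^G$-module. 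Standard arguments for Tate affinoids then show that $\Spa(A_j^G,(A_j^+)^G)$ is an affinoid adic space representing the categorical quotient $\mathcal{U}_j/G$, and the $G$-invariance of the cover lets these glue to an adic space $\mathcal{Y}_j = \mathcal{X}_j/G$ for all such $j$.

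For the infinite level, the essential point is to show that $A^G$ is perfectoid, so that $\Spa(A^G,(A^+)^G)$ represents $\mathcal{U}/G$ and these affine quotients glue to a perfectoid space $\mathcal{Y}$. The cleanest route is via tilting: the $G$-action on $A$ induces a $G$-action on the tilt $A^\flat$, and $(A^G)^\flat = (A^\flat)^G$ since tilting is compatible with the finite limit defining invariants. In characteristic $p$, the Frobenius on $A^\flat$ is a $G$-equivariant bijection, so it restricts to a bijection on $(A^\flat)^G$; thus $(A^\flat)^G$ is perfect, and untilting yields that $A^G$ is a perfectoid $F$-algebra. Finally, I would verify $\mathcal{Y}\sim\lim_j\mathcal{Y}_j$ by checking the two conditions of Definition~\ref{defi-tower}: the topological identification $|\mathcal{Y}| = \lim_j|\mathcal{Y}_j|$ reduces to the fact that quotients by a finite group commute with cofiltered limits of spectral spaces, combined with $|\mathcal{X}|=\lim|\mathcal{X}_j|$; the density condition holds because filtered colimits commute with the finite limit $(-)^G$, so $\colim_j A_j^G = (\colim_j A_j)^G$, and density of $\colim_j A_j^0$ in $A^0$ passes to density on $G$-invariants.

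The main obstacle is establishing that $A^G$ is perfectoid: the naive averaging trick $a\mapsto|G|^{-1}\sum_g g(a)$ is unavailable whenever $p\mid|G|$, and one must find a characteristic-free argument that Frobenius is surjective on $(A^G)^0/\varpi$. Tilting resolves this cleanly, because in characteristic $p$ the uniqueness of $p$-th roots forces $G$-invariance of any Frobenius preimage of a $G$-invariant element: $g(y)^p = g(y^p) = y^p$ implies $g(y)=y$ in a perfect ring. This gives a characteristic-free proof that $G$-invariants of a perfect ring are perfect, which is exactly what is needed to conclude.
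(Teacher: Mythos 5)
Your architecture matches the paper's: reduce to a $G$-invariant (pre)good affinoid $\Spa(A,A^+)$, take invariants at each finite level to get $\mathcal{Y}_j$, show $A^G$ is perfectoid, and verify the $\sim$-limit. The paper, however, outsources the two hard steps to \cite{DHansen}: theorem 1.3 there gives existence of the finite-level quotients, and theorem 1.4 gives perfectoidness of $A^G$. Your attempt to reprove both from first principles is where the gaps appear.

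On perfectoidness, your tilting argument is circular as written. You assert ``$(A^G)^\flat = (A^\flat)^G$'' and then ``untilt,'' but $(A^G)^\flat$ is not defined until you already know $A^G$ is perfectoid; that is the thing to be proved. The correct phrasing is: $(A^\flat)^G$ is perfect (your Frobenius observation is fine), hence a perfectoid $F^\flat$-algebra, and one can untilt it to a perfectoid $F$-algebra $B$ mapping to $A^G$. But showing $B = A^G$ is not automatic: for instance, $A^\circ = W(A^{\flat,\circ})/(\xi)$ via Fontaine's $\theta$, and taking $G$-invariants does not obviously commute with the quotient by $(\xi)$ (there is a potential $H^1(G, W(A^{\flat,\circ}))[\xi]$ obstruction). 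This is exactly the sort of point Hansen's theorem 1.4 handles carefully; you cannot wave it away.

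On density, you assert that density of $\colim_j A_j$ in $A$ ``passes to $G$-invariants'' without giving a mechanism. It does not pass automatically: density of a subspace is not inherited by a closed subspace via intersection. The paper's argument is the averaging projector $e = |G|^{-1}\sum_{g\in G} g \colon A \to A^G$, which is a continuous $A^G$-linear retraction; then $e$ carries a dense subset onto a dense subset of the image, giving $\colim_j A_j^G = e(\colim_j A_j)$ dense in $A^G = e(A)$. Your stated objection --- that the averaging trick is ``unavailable whenever $p \mid |G|$'' --- is misplaced here: $A$ is an algebra over a characteristic-zero perfectoid field (in the paper's application, $F$ is a completed algebraic closure of $\qq_p$), so $|G|$ is a nonzero integer and hence a unit in $A$. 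The averaging trick is perfectly available for the density argument. Your concern about $p \mid |G|$ is genuine only for the mod-$\varpi$ reduction needed for perfectoidness, which is precisely why the paper cites Hansen rather than averaging there.
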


\begin{proof} The quotient $\mathcal{X}_i/G$ exist by  \cite{DHansen}, thm. 1.3. We may reduce to the affine case with $\mathcal{X} = \Spa(A,A^+)$ and $\mathcal{X}_i = \Spa(A_i, A_i^+)$.  By \cite{DHansen}, thm 1.4, $A^G$ is perfectoid. It is clear that $\colim A_i^{G}$ is dense in $A^G$ since we have a projector $A \rightarrow A^G$, $a \mapsto \frac{1}{\vert G \vert} \sum_{g \in G} g.a$. 
\end{proof}

\subsubsection{Siegel Shimura varieties}\label{section-torsors-Siegel}
We assume in this paragraph that $(G,X)$ is the Siegel Shimura datum $(\mathrm{GSp}_{2g}, \mathcal{H}_g)$. Let $K = K^p K_p \subseteq G(\mathbb{A}_f)$ be a compact open subgroup. The reflex field is $\qq$ and the Shimura variety $S_K$ is a moduli space of  abelian varieties $A$, with a level structure and polarization (prescribed by $K$). 

The Shimura variety ${S}_K$ carries a right pro-\'etale  $G(\qq_p)$-torsor. Namely, equip $\qq_p^{2g}$ with the standard symplectic form and consider   the torsor of isomorphisms $\qq_p^{2g} \rightarrow \HH_{1}(A, \qq_p)$, respecting the symplectic forms up to a similitude factor, where $A$ is the universal abelian scheme (defined up to isogeny) and $\HH_{1}(A, \qq_p) = V_p(A)$ is the rational Tate module of $A$. After choosing a geometric point $\overline{x} \rightarrow S_K$,  this torsor corresponds to a representation of the algebraic fundamental group $\pi_1(S_K, \overline{x}) \rightarrow G(\qq_p)$. The image of this morphism lies in the compact open subgroup $K_p \subset G(\qq_p)$ and the corresponding $K_p$-torsor is realized geometrically by the tower of Shimura varieties $\lim_{K'_p \subseteq K_p} S_{K^pK_p'}$.   
By pullback to the adic space $\mathcal{S}^{an}_K$, we get a pro-\'etale $G(\qq_p)$-torsor $\mathcal{G}^{an}_{pet,p}$.  If $K_p \subseteq G(\ZZ_p)$, this torsor has a $G(\ZZ_p)$-reduction of group structure that we denote by $\mathcal{G}_{pet,p}$.

The (relative) Hodge-Tate filtration  is the exact sequence of pro-\'etale sheaves over $\mathcal{S}^{an}_K$: 
$$ 0 \rightarrow \mathrm{Lie}(A) \otimes_{\oscr_{\mathcal{S}^{an}_K}} \hat{\oscr}_{\mathcal{S}^{an}_K} \rightarrow  \HH_1(A, \qq_p) \otimes_{\qq_p} \hat{\oscr}_{\mathcal{S}_K} \rightarrow \omega_{A^t}  \otimes_{\oscr_{\mathcal{S}^{an}_K}} \hat{\oscr}_{\mathcal{S}^{an}_K}  \rightarrow 0$$

 The groups $\mathcal{P}^{an}_{\mu}$, $\mathcal{G}^{an}$ and $\mathcal{M}^{an}_{\mu}$  are naturally sheaves over the \'etale site of $\mathcal{S}^{an}_K$. We extend them to  sheaves on the pro-\'etale site of $\mathcal{S}^{an}_K$ as follows: if $H$ is any of these groups, and  if $U \rightarrow \mathcal{S}^{an}_K $ is an object of the pro-\'etale site, we let $$H(U) = H(\hat{\oscr}_{\mathcal{S}^{an}_K}(U), \hat{\oscr}^+_{\mathcal{S}^{an}_K}(U)).$$ 
\begin{rem} There is also an extension to the pro-\'etale site of  $\mathcal{P}^{an}_{\mu}$, $\mathcal{G}^{an}$ and $\mathcal{M}^{an}_{\mu}$   where one takes sections with values in the uncompleted structure sheaf, but we will not need to consider this extension.
\end{rem}

The Hodge-Tate filtration gives a $\mathcal{P}^{an}_{\mu}$-reduction of structure group $\mathcal{P}^{an}_{HT}$ of the  $\mathcal{G}^{an}$-torsor $\mathcal{G}^{an}_{pet, p} \times^{\mathcal{G}^{an}(\qq_p)} \mathcal{G}^{an}$. Namely, we consider trivializations of $ \HH_1(A, \qq_p) \otimes_{\qq_p} \hat{\oscr}_{\mathcal{S}_K}$ which respect the filtration. This is a right $\mathcal{P}^{an}_{\mu}$-torsor.

We can consider the pushout $\mathcal{P}^{an}_{HT} \times^{\mathcal{P}^{an}_{\mu}} \mathcal{M}^{an}_\mu := \mathcal{M}^{an}_{HT}$. This pro-\'etale torsor actually identifies canonically with (the pull back to the pro-\'etale site of) the analytic  torsor  $\mathcal{M}^{an}_{dR}$, which is the analytification of $M_{dR}$ (see section \ref{section-automorphicvectorbundles}).

The  pro-\'etale $K_p$-torsor $\lim_{K'_p\subseteq K_p} S_{K^pK'_p}$ over $S_{K}$ extends to a pro-Kummer \'etale $K_p$-torsor $\lim_{K'_p\subseteq K_p} S^{tor}_{K^pK'_p,\Sigma}$ over $S^{tor}_{K,\Sigma}$. We can pull it back to the analytic space $\mathcal{S}_{K,\Sigma}^{tor}$ (see \cite{diao2019logarithmic} for the definition of the pro-Kummer \'etale site). By pushout along $K_p \rightarrow G(\qq_p)$, we get a pro-Kummer \'etale  $G(\qq_p)$-torsor over $\mathcal{S}^{tor}_{K,\Sigma}$ extending $\mathcal{G}^{an}_{pet,p}$, which we also denote by $\mathcal{G}^{an}_{pet,p}$. If $K_p \subseteq G(\ZZ_p)$ we also have the pro-Kummer \'etale $G(\ZZ_p)$-torsor $\mathcal{G}_{pet,p}$ over $\mathcal{S}^{tor}_{K,\Sigma}$.

Let $A_\Sigma$ be the semi-abelian scheme over $\mathcal{S}^{tor}_{K,\Sigma}$. The Hodge-Tate exact sequence extends to a sequence over the pro-Kummer \'etale site  $$ 0 \rightarrow \mathrm{Lie}(A_\Sigma) \otimes_{\oscr_{\mathcal{S}^{tor}_{K,\Sigma}}} \hat{\oscr}_{\mathcal{S}^{tor}_{K,\Sigma}} \rightarrow  \HH_1(A_\Sigma, \qq_p) \otimes_{\qq_p} \hat{\oscr}_{\mathcal{S}^{tor}_{K,\Sigma}} \rightarrow \omega_{A_\Sigma^t}  \otimes_{\oscr_{\mathcal{S}^{tor}_{K,\Sigma}}} \hat{\oscr}_{\mathcal{S}^{an}_K}  \rightarrow 0$$

 Therefore, the torsors $\mathcal{P}^{an}_{HT}$ and $\mathcal{M}^{an}_{HT}$ extend over $\mathcal{S}^{tor}_{K,\Sigma}$, and as usual, we continue to use the same notation for the extensions.  
Moreover, again  by construction, the torsors $\mathcal{M}^{an}_{HT}$ and $\mathcal{M}^{an}_{dR}$ are canonically identified. 

\subsubsection{Perfectoid Siegel Shimura varieties}\label{section-perfectoid-siegel}
By \cite{scholze-torsion}, thm. III.3.17 there is a  perfectoid space $\mathcal{S}^\star_{{K^p}}  \sim \lim_{K_p} \mathcal{S}^\star_{{K^pK_p}}$.

By \cite{scholze-torsion}, we have a $G(\qq_p)$-equivariant map  $\pi_{HT} : \mathcal{S}^\star_{{K^p}} \rightarrow \mathcal{FL}_{G,\mu}$.   Moreover,  there exists an affinoid covering $\mathcal{FL}_{G,\mu} = \cup_i V_i$ such that for each $i$, $\pi_{HT}^{-1}(V_i)$ is a good affinoid perfectoid open subset of $\mathcal{S}^\star_{{K^p}}$ (see definition \ref{defi-of-good}).

The construction of the map $\pi_{HT} : \mathcal{S}^\star_{{K^p}} \rightarrow \mathcal{FL}_{G,\mu}$ is delicate at the boundary, but over the complement of the boundary $
\mathcal{S}^{an}_{{K^p}} \sim \lim_{K_p} \mathcal{S}^{an}_{K^pK_p}$ it has a simple description  which is given below.

 By \cite{MR3512528}, thm. 0.4,  for any cone decomposition $\Sigma$, there is also  a  perfectoid space $\mathcal{S}^{tor}_{{K^p},\Sigma}  \sim \lim_{K_p} \mathcal{S}^{tor}_{{K^pK_p},\Sigma}$ (it is important that the cone decomposition does not depend on $K_p$ for the limit to be perfectoid) and  we have a map $\mathcal{S}^{tor}_{{K^p},\Sigma}  \rightarrow \mathcal{S}^\star_{{K^p}} $ of perfectoid spaces induced by the maps at finite level $K_p$.

The torsor $\mathcal{G}^{an}_{pet, p}$ becomes trivial over $\mathcal{S}^{tor}_{{K^p},\Sigma}$ and we therefore get a Hodge-Tate period map $\pi^{tor}_{HT} : \mathcal{S}^{tor}_{{K^p},\Sigma} \rightarrow \mathcal{FL}_{G,\mu}$.  Let us explain very concretely how this map is defined. Let $\Spa(R,R^+)$ be a perfectoid affinoid open subset of $\mathcal{S}^{tor}_{{K^p},\Sigma}$. We can evaluate the sequence $$ 0 \rightarrow \mathrm{Lie}(A_\Sigma) \otimes_{\oscr_{\mathcal{S}^{tor}_{K,\Sigma}}} \hat{\oscr}_{\mathcal{S}^{tor}_{K,\Sigma}} \rightarrow  \HH_1(A_\Sigma, \qq_p) \otimes_{\qq_p} \hat{\oscr}_{\mathcal{S}^{tor}_{K,\Sigma}} \rightarrow \omega_{A_\Sigma^t}  \otimes_{\oscr_{\mathcal{S}^{tor}_{K,\Sigma}}} \hat{\oscr}_{\mathcal{S}^{an}_K}  \rightarrow 0$$
on $(R,R^+)$ (viewed as an object of the pro-Kummer-\'etale site of $\mathcal{S}^{tor}_{K,\Sigma}$) and use the trivialization $ \qq_p^{2g} \simeq \HH_1(A_\Sigma, \qq_p)$ to get an exact sequence:
$$0 \rightarrow   \mathrm{Lie}(A_\Sigma)\otimes R \rightarrow R^{2g} \rightarrow  \omega_{A_\Sigma^t} \otimes R \rightarrow 0$$
After localizing, we may even assume that $ \mathrm{Lie}(A_\Sigma)\otimes R$ and $\omega_{A_\Sigma^t} \otimes R$ are free $R$-modules. 
Now let $0 \rightarrow R^g \rightarrow R^{2g} \rightarrow R^g \rightarrow 0$ be the (polarized) chain with automorphism group  $P_\mu(R)$. 
We have that $\mathcal{P}_{HT}^{an}(R,R^+) = $ $$\mathrm{Isom}_{\mathrm{symp}} ( 0 \rightarrow R^g \rightarrow R^{2g} \rightarrow R^g \rightarrow 0, 0 \rightarrow   \mathrm{Lie}(A_\Sigma)\otimes R \rightarrow R^{2g} \rightarrow  \omega_{A_\Sigma^t} \otimes R \rightarrow 0) $$
and  $\mathcal{P}_{HT}^{an}(R,R^+) \subseteq  \mathrm{Isom}_{\mathrm{symp}}(R^{2g}) = \mathrm{GSp}_{2g}(R).$ 
This is a right $P_\mu(R)$-torsor and there is an element $x \in G(R)$ such that $\mathcal{P}_{HT}^{an}(R,R^+) = x P_\mu(R)$. The automorphism group of $0 \rightarrow   \mathrm{Lie}(A_\Sigma)\otimes R \rightarrow R^{2g} \rightarrow  \omega_{A_\Sigma^t} \otimes R \rightarrow 0$ is $x P_\mu(R) x^{-1}$.  Finally  we let $\pi_{HT}^{tor} (\Spa(R,R^+)) = x^{-1} \in FL_{G,\mu}(R)$. 
 
 \begin{rem} We are forced to use $x^{-1}$ above because $ FL_{G,\mu} = P_\mu \backslash G$. Note that taking the quotient by the left action of $P_\mu$ is natural because right translation on $G$ defines a right $G$-action of $FL_{G,\mu}$ and  the map $\pi_{HT}  : \mathcal{S}^\star_{{K^p}} \rightarrow \mathcal{FL}_{G,\mu}$ is equivariant for the right $G(\qq_p)$-action.  We chose to  define $\mathcal{P}_{HT}^{an}$ as a right $\mathcal{P}_\mu^{an}$-torsor, because we want to identify the torsors  $\mathcal{M}_{HT}^{an}$  and $\mathcal{M}_{dR}^{an}$. But  in the classical theory, $M_{dR}$ is a right torsor.  It means that in our convention, the torsor $\mathcal{P}_{HT}^{an}$ is pulled back via $\pi_{HT}^{tor}$ from the torsor $\mathcal{G}^{an} \rightarrow \mathcal{FL}_{G,\mu}$, $x \mapsto x^{-1}$. 
 \end{rem}

The maps $\pi^{tor}_{HT} $ and $\pi_{HT}$ coincide by construction on the open subset  $\mathcal{S}^{an}_{{K^p}}$. We deduce that we have a commutative diagram:
\begin{eqnarray*}
\xymatrix{ \mathcal{S}^{tor}_{{K^p},\Sigma} \ar[rd]^{\pi_{HT}^{tor}} \ar[d] & \\
\mathcal{S}^{\star}_{{K^p}} \ar[r]^{\pi_{HT}}& \mathcal{FL}_{G,\mu}}
\end{eqnarray*}

The key properties of this diagram that we will  use are:
\begin{itemize}
 \item The pull back of the torsor $\mathcal{G}^{an}/U_{\mathcal{P}_\mu^{an}} \rightarrow  \mathcal{FL}_{G,\mu}$ via $\pi_{HT}^{tor}$ is $\mathcal{M}_{HT}^{an}$ and this is canonically identified with the pull back via $\mathcal{S}^{tor}_{{K^p},\Sigma} \rightarrow \mathcal{S}^{tor}_{{K^pK_p},\Sigma}$ of $\mathcal{M}_{dR}^{an}$. 
\item The map $\pi_{HT}$ is affine.
\end{itemize}

\subsubsection{Formal models of perfectoid Siegel Shimura varieties}\label{sect-formal-perf-Siegel} We need to consider formal models of the perfectoid Siegel Shimura varieties in order to be able  to use the vanishing result below (theorem \ref{thm-formal-vanishingtominimal}). We first recall a number of statements from \cite{MR3512528}.
Let $K = K^p K_p$.   For $K_p = \mathrm{GSp}_{2g}(\ZZ_p)$, we have natural models over $\Spec~\ZZ_p$ for $S_{K^pK_p}$, $S^\star_{K^pK_p}$, and $S^{tor}_{K^pK_p,\Sigma}$ that we denote $\mathbf{S}_{K^pK_p}$, $\mathbf{S}^\star_{K^pK_p}$, and $\mathbf{S}^{tor}_{K^pK_p,\Sigma}$. We can also consider the corresponding $p$-adic formal schemes $\mathfrak{S}_{K^pK_p}$, $\mathfrak{S}^\star_{K^pK_p}$, and $\mathfrak{S}^{tor}_{K^pK_p,\Sigma}$.  We denote by $A$ the semi-abelian scheme over $\mathfrak{S}^{tor}_{K^pK_p,\Sigma}$  (it is defined up to prime-to-$p$ isogeny by our choice of level structure).  By the construction of the minimal compactification, the line bundle $\det\omega_A$ on $\mathfrak{S}_{K^pK_p}$ extends to a line bundle that we also denote by $\det\omega_A$ on $\mathfrak{S}^\star_{K^pK_p}$, and whose pullback to $\mathfrak{S}^{tor}_{K^pK_p,\Sigma}$ agrees with the extension defined by the semi-abelian scheme $A$.

Now let $K_p \subseteq \mathrm{GSp}_{2g}(\ZZ_p)$ be an open subgroup. We can define $\mathfrak{S}_{K^pK_p}$, $\mathfrak{S}^\star_{K^pK_p}$, and $\mathfrak{S}^{tor}_{K^pK_p,\Sigma}$ as the normalizations of $\mathfrak{S}_{K^p\mathrm{GSp}_{2g}(\ZZ_p)}$, $\mathfrak{S}^\star_{K^p\mathrm{GSp}_{2g}(\ZZ_p)}$, and $\mathfrak{S}^{tor}_{K^p\mathrm{GSp}_{2g}(\ZZ_p),\Sigma}$ in $\mathcal{S}_{K^pK_p}$, $\mathcal{S}^\star_{K^pK_p}$, and $\mathcal{S}^{tor}_{K^pK_p,\Sigma}$ respectively. 

Let us denote by $K_{p,n} = \{ M \in  \mathrm{GSp}_{2g}(\ZZ_p),~M = 1 ~\mod p^n\}$. This is the principal level $p^n$ subgroup. Consider the module $\mathbb{Z}^{2g}$
with canonical basis $e_1, \cdots,e_{2g}$, equipped with the standard symplectic form $\langle, \rangle$ given by $\langle e_{i}, e_{2g-i+1} \rangle =1$ for $1\leq i\leq g$, and  $\langle e_{i}, e_{j} \rangle =0$ if $i+j \neq 2g+1$. Over $\mathcal{S}_{K^pK_{p,n}}$ we have a symplectic isomorphism $(\ZZ/p^n\ZZ)^{2g} \rightarrow A[p^n]$ (up to a similitude factor), and it extends to a morphism of group schemes  over  $\mathfrak{S}_{K^pK_{p,n}}$, $(\ZZ/p^n\ZZ)^{2g} \rightarrow A[p^n]$. 
We have the Hodge-Tate morphism $\mathrm{HT}_n : A[p^n] \rightarrow \omega_{A^t}/p^n$. Using the prime-to-$p$ polarization, we can identify $\omega_{A^t}$ and $\omega_{A}$. We therefore have sections $\mathrm{HT}_n(e_i) \in \HH^0( \mathfrak{S}_{K^pK_{p,n}}, \omega_{A}/p^n)$. We also have a map $\Lambda^g  \mathrm{HT}_n : \Lambda^g(\ZZ/p^n\ZZ)^{2g} \rightarrow \det \omega_A/p^n$. Let  $r=$${g}\choose{2g}$. Let  $f_1, \cdots, f_r$ be a basis of $\Lambda^g \ZZ^{2g}$ obtained by taking exterior products of $e_1, \cdots, e_{2g}$. We get sections $\Lambda^g \mathrm{HT}_n(f_i) \in  \HH^0( \mathfrak{S}_{K^pK_{p,n}}, \det \omega_{A}/p^n)$.

By \cite{MR3512528},  proposition 1.7 and corollary 1.7, the sections $\mathrm{HT}_n(e_i)$ extend to sections $\mathrm{HT}_n(e_i) \in \HH^0( \mathfrak{S}^{tor}_{K^pK_{p,n},\Sigma}, \omega_{A}/p^n)$ and the sections $\Lambda^g \mathrm{HT}_n(f_i)$ extend to sections $\Lambda^g \mathrm{HT}_n(f_i) \in  \HH^0( \mathfrak{S}^{\star}_{K^pK_{p,n}}, \det \omega_{A}/p^n)$. 

It follows that over $\mathfrak{S}^{\star}_{K^pK_{p,n}}$ we have a morphism $\Lambda^g  \mathrm{HT}_n : \Lambda^g (\ZZ/p^n\ZZ)^{2g} \rightarrow \det \omega_A/p^n$. These morphisms satisfy the natural compatibilities as $n$ varies. The cokernel of this morphism is killed by $p^{\frac{g}{p-1}}$ (resp. $4^{g}$ if $p=2$) by \cite{MR2673421}, theorem 7.
We let $\det \omega_A^{mod,n}$ be the subsheaf of $\det \omega_A$ which is the inverse image of $$ \mathrm{Im}(\Lambda^g  \mathrm{HT}_n  (  \Lambda^g (\ZZ/p^n\ZZ)^{2g}) \otimes \oscr_{\mathfrak{S}^{\star}_{K_{p,n}K^p}} \rightarrow \det \omega_A/p^n)$$ in $\det \omega_{A}$.   We have $p^{\frac{g}{p-1}}  \det \omega_A \subseteq \det \omega_A^{mod,n} \subseteq \det \omega_A$
(resp. $4^{{g}}  \det \omega_A \subseteq \det \omega_A^{mod,n} \subseteq \det \omega_A$ if $p=2$) .

 Let $f : \mathfrak{S}^\star_{K^pK_{p,n+1}} \rightarrow \mathfrak{S}^\star_{K^pK_{p,n}}$ be the projection. We clearly have a map $\det \omega_A^{mod,n+1} \rightarrow \mathrm{Im} (f^\star \det \omega_A^{mod,{n}} \rightarrow \det \omega_A)$. This map is an isomorphism    if $n \geq \frac{g}{p-1}$ (resp. $n \geq 2g$ if $p=2$). We now assume that $n$ is larger than $n_0 = \frac{g}{p-1}$ (resp. $n_0=2g$ if $p=2$).   We simply denote $\det \omega_A^{mod,n}$ by $\det \omega_A^{mod}$, this is a  subsheaf of $\det \omega_A$.  The sheaf $\det \omega_A^{mod}$ is not locally free. We can perform a blow up to make it locally free. Let us start with a definition: 

\begin{defi} Let $\mathfrak{X}$ be a $p$-adic formal scheme, locally of finite type  over $\Spf~\ZZ_p$,  and let $\mathcal{I}$ be a coherent sheaf of ideals such that $p \in \sqrt{\mathcal{I}}$. We let $\mathrm{BL}_{\mathcal{I}}(\mathfrak{X})$ be the $p$-adic formal scheme obtained by taking the admissible blow-up of $\mathfrak{X}$ at $\mathcal{I}$.  We let $\mathrm{NBL}_{\mathcal{I}}(\mathfrak{X})$ be the normalization of $\mathrm{BL}_{\mathcal{I}}(\mathfrak{X})$, this is the normalized blow-up. 
\end{defi}

\begin{rem} The normalization of a formal scheme is well defined in our context by \cite{MR1697371}, coro. 1.2.3.
\end{rem}

Let $\mathcal{I}_n$ be the sheaf of ideals of $\oscr_{\mathfrak{S}^{\star}_{K^pK_{p,n}}}$ given by $\mathcal{I}_n= \{ a \in \oscr_{\mathfrak{S}^{\star}_{K^pK_{p,n}}}, a \det \omega_A \subseteq \det \omega_{A}^{mod} \}$.  
We let ${ \mathfrak{S}}^{\star,mod}_{K^pK_{p,n}} = \mathrm{NBL}_{\mathcal{I}_n}( \mathfrak{S}^{\star}_{K^pK_{p,n}})$.
Over ${ \mathfrak{S}}^{\star,mod}_{K^pK_{p,n}}$, the sheaf $\det \omega_{A}^{mod}$ is locally free and  moreover the map  $$\Lambda^g \mathrm{HT}_n : \Lambda^g (\ZZ/p^n\ZZ)^{2g} \rightarrow \det \omega_{A}/p^n$$ induces a surjective map   $$\Lambda^g \mathrm{HT}'_n : \Lambda^g (\ZZ/p^n\ZZ)^{2g} \rightarrow \det \omega^{mod}_{A}/p^{n-\frac{g}{p-1}}$$ (resp. $ \Lambda^g \mathrm{HT}'_n : \Lambda^g (\ZZ/p^n\ZZ)^{2g} \rightarrow \det \omega^{mod}_{A}/p^{n-2{g}}$ if $p=2$). 

\begin{lem} Let $n \geq n_0$. We have a commutative diagram 
\begin{eqnarray*}
\xymatrix{ \mathfrak{S}^{\star,mod}_{K^pK_{p,n+1}} \ar[r] \ar[d]^{h} & \mathfrak{S}^{\star}_{K^pK_{p,n+1}} \ar[d]^{f} \\
\mathfrak{S}^{\star,mod}_{K^pK_{p,n}} \ar[r] & \mathfrak{S}^{\star}_{K^pK_{p,n}}}
\end{eqnarray*}
where the vertical maps are finite.
Moreover, $h^\star \det \omega_{A}^{mod} = \det \omega_A^{mod}$ and $h^\star \Lambda^g \mathrm{HT}'_n(f_i) = \Lambda^g \mathrm{HT}'_{n+1}(f_i)  \in \HH^0( \mathfrak{S}^{\star,mod}_{K^pK_{p,n+1}}, \det \omega_{A}^{mod}/p^{n-\frac{g}{p-1}})$ (resp. $\in \HH^0( \mathfrak{S}^{\star,mod}_{K^pK_{p,n+1}}, \det \omega_{A}^{mod}/p^{n-2g})$ if $p=2$). 
\end{lem}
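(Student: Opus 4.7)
The plan is to bootstrap everything from the already-established identification of $\det\omega_A^{mod,n+1}$ with the image of $f^\star\det\omega_A^{mod,n}$ inside $\det\omega_A$ for $n\geq n_0$. The key observation is that the ideal sheaf $\mathcal{I}_n\subseteq\oscr_{\mathfrak{S}^\star_{K^pK_{p,n}}}$ is defined so that $\mathcal{I}_n\cdot\det\omega_A=\det\omega_A^{mod}$ as submodules of the line bundle $\det\omega_A$. Pulling back along $f:\mathfrak{S}^\star_{K^pK_{p,n+1}}\to\mathfrak{S}^\star_{K^pK_{p,n}}$ and using that $\det\omega_A$ is a line bundle (so that submodules of $\det\omega_A$ correspond bijectively to ideals of $\oscr$ via tensoring with the inverse), I will show that $f^{-1}\mathcal{I}_n\cdot\oscr_{\mathfrak{S}^\star_{K^pK_{p,n+1}}}=\mathcal{I}_{n+1}$.

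Once this equality of ideals is in hand, the existence of $h$ fitting in the commutative square follows from the universal property of blow-ups: the pulled back ideal $\mathcal{I}_{n+1}$ becomes invertible on $\mathfrak{S}^{\star,mod}_{K^pK_{p,n+1}}$, so the composition $\mathfrak{S}^{\star,mod}_{K^pK_{p,n+1}}\to\mathfrak{S}^\star_{K^pK_{p,n+1}}\to\mathfrak{S}^\star_{K^pK_{p,n}}$ factors through $\mathrm{BL}_{\mathcal{I}_n}(\mathfrak{S}^\star_{K^pK_{p,n}})$, and since the source is normal this factorization lifts further through the normalization $\mathfrak{S}^{\star,mod}_{K^pK_{p,n}}$.

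For the finiteness of $h$, I will factor it as $\mathfrak{S}^{\star,mod}_{K^pK_{p,n+1}}\to X\to\mathfrak{S}^{\star,mod}_{K^pK_{p,n}}$, where $X$ denotes the base change $\mathfrak{S}^{\star,mod}_{K^pK_{p,n}}\times_{\mathfrak{S}^\star_{K^pK_{p,n}}}\mathfrak{S}^\star_{K^pK_{p,n+1}}$. The second arrow is finite because it is the base change of the finite map $f$. Using the ideal identity $f^{-1}\mathcal{I}_n\cdot\oscr=\mathcal{I}_{n+1}$ from the previous step, the first arrow identifies $\mathfrak{S}^{\star,mod}_{K^pK_{p,n+1}}$ with the normalization of $X$; since $X$ is of finite type over $\Spf\ZZ_p$ and hence excellent, this normalization is finite, and composing two finite maps gives the finiteness of $h$.

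Finally, the equality $h^\star\det\omega_A^{mod}=\det\omega_A^{mod}$ is immediate from the previous steps: pulling back the inclusion $\mathcal{I}_n\det\omega_A\hookrightarrow\det\omega_A$ along $h$ and using $f^{-1}\mathcal{I}_n\cdot\oscr=\mathcal{I}_{n+1}$ gives $\mathcal{I}_{n+1}\det\omega_A\hookrightarrow\det\omega_A$ on $\mathfrak{S}^{\star,mod}_{K^pK_{p,n+1}}$, which is the defining inclusion for $\det\omega_A^{mod}$ at level $n+1$. The compatibility $h^\star\Lambda^g\mathrm{HT}'_n(f_i)=\Lambda^g\mathrm{HT}'_{n+1}(f_i)$ then reduces to the corresponding compatibility of $\Lambda^g\mathrm{HT}_n$ and $\Lambda^g\mathrm{HT}_{n+1}$ at the level of $p$-divisible groups, namely that the Hodge--Tate map at level $n+1$ reduces modulo $p^n$ to the Hodge--Tate map at level $n$ applied to the fixed basis elements $f_i\in\Lambda^g\ZZ^{2g}$, combined with the surjections defining $\mathrm{HT}'$. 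I expect the only substantive point to be the ideal identity in the first step, where some care is needed to confirm that $f^{-1}\mathcal{I}_n\cdot\oscr$ is exactly equal to $\mathcal{I}_{n+1}$ rather than merely contained in it.
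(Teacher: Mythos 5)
The key ideal identity $f^{-1}\mathcal{I}_n\cdot\oscr_{\mathfrak{S}^\star_{K^pK_{p,n+1}}}=\mathcal{I}_{n+1}$ and the deduction of the remaining compatibilities from it are the same as in the paper, so the overall plan is on the right track. But your step establishing finiteness of $h$ has an unjustified, and in general doubtful, claim: you assert that the map $\mathfrak{S}^{\star,mod}_{K^pK_{p,n+1}}\to X:=\mathfrak{S}^{\star,mod}_{K^pK_{p,n}}\times_{\mathfrak{S}^\star_{K^pK_{p,n}}}\mathfrak{S}^\star_{K^pK_{p,n+1}}$ \emph{identifies} the source with the normalization of $X$. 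Blow-up does not commute with non-flat base change, and $f$ is finite but not flat in general; so $X$ can acquire extra irreducible components supported over $V(\mathcal{I}_{n+1})$. Since $\mathfrak{S}^{\star,mod}_{K^pK_{p,n+1}}$ is irreducible, it cannot be the (disjoint-union) normalization of a reducible $X$, and even for the proper birational map onto the ``main'' component, quasi-finiteness is not automatic from properness plus birationality, so you cannot immediately invoke Zariski's main theorem or excellence-finiteness of normalization.

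The paper sidesteps this by a cleaner decomposition: the surjection of Rees algebras induced by $f^{-1}\mathcal{I}_n\cdot\oscr=\mathcal{I}_{n+1}$ gives a closed immersion $\mathrm{BL}_{\mathcal{I}_{n+1}}(\mathfrak{S}^\star_{K^pK_{p,n+1}})\hookrightarrow\mathrm{BL}_{\mathcal{I}_{n}}(\mathfrak{S}^\star_{K^pK_{p,n}})\times_{\mathfrak{S}^\star_{K^pK_{p,n}}}\mathfrak{S}^\star_{K^pK_{p,n+1}}$, and the latter is finite over $\mathrm{BL}_{\mathcal{I}_n}$; hence $\mathrm{BL}_{\mathcal{I}_{n+1}}\to\mathrm{BL}_{\mathcal{I}_n}$ is finite, and adding the (finite) normalization maps on both sides together with cancellation for finite morphisms gives finiteness of $h$. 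Your argument can in fact be repaired along these lines: the composite $\mathfrak{S}^{\star,mod}_{K^pK_{p,n+1}}\to X\to\mathrm{BL}_{\mathcal{I}_n}\times_{\mathfrak{S}^\star_{K^pK_{p,n}}}\mathfrak{S}^\star_{K^pK_{p,n+1}}$ factors through the finite map $\mathfrak{S}^{\star,mod}_{K^pK_{p,n+1}}\to\mathrm{BL}_{\mathcal{I}_{n+1}}$ followed by the closed immersion, so it is finite; since $X\to\mathrm{BL}_{\mathcal{I}_n}\times\mathfrak{S}^\star_{K^pK_{p,n+1}}$ is separated, cancellation shows $\mathfrak{S}^{\star,mod}_{K^pK_{p,n+1}}\to X$ is finite, which is all you need. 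Your remaining steps (existence of $h$, $h^\star\det\omega_A^{mod}$ as a surjection of line bundles hence an isomorphism, and the Hodge--Tate compatibility pulled back from $\mathfrak{S}^\star_{K^pK_{p,n+1}}$) match the paper and are fine.
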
 

\begin{proof}   We first observe that $\mathrm{Im}( f^\star \mathcal{I}_n \rightarrow \oscr_{\mathfrak{S}^{\star}_{K^pK_{p,n+1}}}) = \mathcal{I}_{n+1}$. 
It follows that we have maps $\mathrm{BL}_{\mathcal{I}_n}{\mathfrak{S}}^{\star}_{K^pK_{p,n}}  \times_{ \mathfrak{S}^{\star,mod}_{K^pK_{p,n}}}  \mathfrak{S}^{\star}_{K^pK_{p,n+1}} \rightarrow \mathrm{BL}_{\mathcal{I}_{n+1}}{\mathfrak{S}}^{\star}_{K^pK_{p,n+1}} \rightarrow  \mathrm{BL}_{\mathcal{I}_{n}}{\mathfrak{S}}^{\star}_{K^pK_{p,n}}$. 
The map $\mathrm{BL}_{\mathcal{I}_{n+1}}{\mathfrak{S}}^{\star}_{K^pK_{p,n+1}} \rightarrow  \mathrm{BL}_{\mathcal{I}_{n}}{\mathfrak{S}}^{\star}_{K^pK_{p,n}}$ is therefore finite, and so is the map 
$h : {\mathfrak{S}}^{\star,mod}_{K^pK_{p,n+1}} \rightarrow  {\mathfrak{S}}^{\star,mod}_{K^pK_{p,n}}$.
We have a surjective map  $ h^\star \det \omega_{A}^{mod}  \rightarrow \det \omega_{A}^{mod}$ of invertible sheaves. This map is therefore an isomorphism. The last compatibility follows from the property that $f^\star \Lambda^g \mathrm{HT}_n(f_i) = \Lambda^g \mathrm{HT}_{n+1}(f_i)  \in \HH^0( \mathfrak{S}^{\star}_{K^pK_{p,n+1}}, \det \omega_{A}/p^{n})$. 
\end{proof}

We can similarly define $\mathfrak{S}^{\star, mod}_{K^pK_p}$ for any open $K_p \subseteq K_{p,n_0}$. The ideal $\mathcal{I}_{n_0}$ pulls back to an ideal $\mathcal{I}_{K_p}$ of $\mathfrak{S}^{\star}_{K^pK_p}$ and we let $\mathfrak{S}^{\star,mod}_{K^pK_p} = \mathrm{NBL}_{\mathcal{I}_{K_p}}(\mathfrak{S}^{\star}_{K^pK_p})$. If $K_p = K_{p,n}$ for $n\geq n_0$, we recover the previous definition.  

We finally let ${ \mathfrak{S}}^{\star,mod}_{K^p} = \lim_n { \mathfrak{S}}^{\star,mod}_{K^pK_{p,n}}$ where the inverse limit is taken in the category of $p$-adic formal schemes.  This inverse limit exists because the transition morphisms are affine. In the limit we have a  map $\Lambda^g \mathrm{HT} : \Lambda^g \ZZ_p^{2g}  \rightarrow \det \omega_A^{mod}$ whose linearization is surjective. It follows that  we have a morphism $\pi_{HT} : { \mathfrak{S}}^{\star,mod}_{K^p} \rightarrow \mathbb{P}^{r-1}$. Let $X_1, \cdots, X_r$ be the homogeneous coordinates on $\mathbb{P}^{r-1}$. For all $1 \leq i \leq r$, let $\mathfrak{U}_i$ be the formal open subscheme defined by the condition $X_i \neq 0$.  Let $\mathcal{U}_i$ be its generic fiber. 

\begin{prop}\label{prop-bretagne} \begin{enumerate}
\item The formal scheme $ { \mathfrak{S}}^{\star,mod}_{K^p}$ is integral perfectoid and its generic fiber is the perfectoid space $\mathcal{S}^\star_{K^p}$. 
 \item The Hodge-Tate map factors through a map $\pi_{HT} : \mathfrak{S}^{\star,mod}_{K^p} \rightarrow \mathfrak{FL}_{G, \mu}$ and it induces the Hodge-Tate period map 
 $\pi_{HT} : \mathcal{S}^{\star}_{K^p} \rightarrow \mathcal{FL}_{G, \mu}$ of section \ref{section-perfectoid-siegel} on the generic fiber. 
\item For all $1 \leq i \leq r$, we have that $\pi_{HT}^{-1}(\mathcal{U}_i)$ is a good open affinoid subset of $\mathcal{S}^\star_{K^p}$.  
\end{enumerate}
\end{prop}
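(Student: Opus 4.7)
The plan is to establish the three claims by following the strategy of \cite{MR3512528}, adapted to the slight variant of the normalized blow-up construction used here. The key reduction is to work locally over each coordinate chart $\mathfrak{U}_i \subseteq \mathbb{P}^{r-1}$: on the preimage $\pi_{HT}^{-1}(\mathfrak{U}_i) \subseteq \mathfrak{S}^{\star,mod}_{K^p}$ the invertible sheaf $\det \omega_A^{mod}$ has a canonical generator, up to a bounded power of $p$, given by the compatible system $\Lambda^g \mathrm{HT}(f_i) = \lim_n \Lambda^g \mathrm{HT}_n(f_i)$, so the ideal defining the normalized blow-up becomes principal and the relevant formal affinoid has an explicit description.

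For part (1), I would first show that each $\mathfrak{S}^{\star,mod}_{K^pK_{p,n}}$ restricted to $\pi_{HT}^{-1}(\mathfrak{U}_i)$ is an affine formal scheme $\Spf(A_{n,i}^+)$, where $A_{n,i}^+$ is (the normalization of) the algebra generated over $\oscr_{\mathfrak{S}^{\star,mod}_{K^p K_{p,n_0}}}(\pi_{HT}^{-1}(\mathfrak{U}_i))$ by the ratios $\Lambda^g \mathrm{HT}_n(f_j)/\Lambda^g \mathrm{HT}_n(f_i)$ together with the rescaled Hodge-Tate sections $\mathrm{HT}_n(e_k)/\alpha_i$, for a chosen local generator $\alpha_i$ of $\omega_A^{mod}$. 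The transition from level $n$ to level $n+1$ adjoins $p$-th roots of these generators, modulo a bounded (in $n$) power of $p$ dictated by the Hodge-Tate cokernel bound $p^{g/(p-1)}$. The main technical obstacle will be verifying that the $p$-adic completion $A_i^+ = \bigl(\colim_n A_{n,i}^+\bigr)^\wedge$ satisfies the perfectoid axiom, i.e.\ that Frobenius is surjective on $A_i^+/p$ up to a fixed bounded power: one must carefully combine approximation of sections modulo $p^n$, the symplectic compatibility of the Hodge-Tate sections, and faithfully flat descent along each normalized blow-up. This is essentially the content of the main perfectoidness theorem in \cite{MR3512528}.

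Once integral perfectoidness is in place, the identification of the generic fiber of $\mathfrak{S}^{\star,mod}_{K^p}$ with $\mathcal{S}^\star_{K^p}$ follows from uniqueness of $\sim$-limits (\cite{MR3272049}, Proposition~2.4.5): both sides are perfectoid spaces with compatible maps to every finite-level $\mathcal{S}^\star_{K^pK_{p,n}}$, and on the explicit charts $\pi_{HT}^{-1}(\mathcal{U}_i)$ the global ring of functions is visibly $A_i^+[1/p]$, which is the $p$-adic completion of the colimit of the finite-level rings, matching condition (2) of Definition \ref{defi-tower}.

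For part (2), the map $\Lambda^g \mathrm{HT}$ has surjective linearization over $\mathfrak{S}^{\star,mod}_{K^p}$ by construction, so it produces a map to $\mathbb{P}^{r-1}$, and the symplectic constraints on the Hodge-Tate filtration force this map to factor through the Pl\"ucker embedding of $\mathfrak{FL}_{G,\mu}$; compatibility with the analytic map of Section~\ref{section-perfectoid-siegel} on the generic fiber holds tautologically from the defining property of $\pi_{HT}$. Finally, part (3) is immediate from parts (1) and (2): $\pi_{HT}^{-1}(\mathcal{U}_i) = \Spa(A_i^+[1/p], A_i^+)$ is an affinoid perfectoid open, and goodness with respect to the tower $\{\mathcal{S}^\star_{K^pK_{p,n}}\}$ is exactly the density of $\colim_n A_{n,i}^+[1/p]$ in $A_i^+[1/p]$ that fell out of the perfectoidness argument above.
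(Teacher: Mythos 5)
The paper's own proof of this proposition is a one-line citation of \cite{MR3512528}, Theorem 1.18 (which in turn rests on the perfectoidness results of \cite{scholze-torsion}), and your proposal correctly identifies that reference and attempts to sketch its contents, so you are on the same route.  Two small imprecisions in your sketch are worth flagging.  First, the passage from level $n$ to level $n+1$ in the tower $\{\mathfrak{S}^{\star,mod}_{K^pK_{p,n}}\}$ does not literally adjoin $p$-th roots of your chosen local generators; the transition maps are finite covers determined by full level-$p^{n+1}$ structures, and Frobenius surjectivity mod $p$ is established by comparing the normalized blow-up model to Scholze's already-perfectoid $\mathfrak{S}^{\star-\mathrm{HT}}$ model via almost-purity type arguments, not by tracking roots of the Hodge--Tate sections.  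Second, you invoke a local generator $\alpha_i$ of $\omega_A^{mod}$, but for the minimal compactification only $\det\omega_A^{mod}$ is made locally free by the blow-up $\mathrm{NBL}_{\mathcal{I}_n}$; the sheaf $\omega_A^{mod}$ appears only in the toroidal picture in \cite{MR3512528} (cf.\ Remark~\ref{rem-minor-difference}), so it should not enter the description of $A^+_{n,i}$ here.  Neither imprecision affects the overall strategy, which (like the paper's) ultimately just defers the perfectoidness to the cited theorem.
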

\begin{proof} See \cite{MR3512528}, thm 1.18.  This relies  on the main theorems of \cite{scholze-torsion}.
\end{proof}

\begin{rem} For $n$ large enough,  the open subsets $\pi_{HT}^{-1}(\mathcal{U}_i)$ come from open subsets $\pi_{HT}^{-1}(\mathcal{U}_i)_{K_{p,n}} \hookrightarrow \mathcal{S}_{K^pK_{p,n}}^{\star}$. One can define (\cite{scholze-torsion}, p. 72) a formal model $\mathfrak{S}_{K^pK_{p,n}}^{\star-HT}$ by gluing the  formal schemes $\Spf~\HH^0(\pi_{HT}^{-1}(\mathcal{U}_i)_{K_{p,n}}, \oscr_{\mathcal{S}_{K^pK_{p,n}}}^+)$.  We will not use this formal model.
\end{rem}

We can perform  similar  constructions with the toroidal compactification at level $K_p \subseteq K_{p,n_0}$. Namely, the ideal $\mathcal{I}_{K_p}$ pulls back to an ideal $\mathcal{J}_{K_p}$ of $\mathfrak{S}^{tor}_{K^pK_p,\Sigma}$. 
We denote by $\mathfrak{S}^{tor,mod}_{K^pK_p,\Sigma} = \mathrm{NBL}_{\mathcal{J}_{K_p}}({\mathfrak{S}}^{tor}_{K^pK_p,\Sigma})$. We have  natural morphisms $\mathfrak{S}^{tor,mod}_{K^pK_p,\Sigma} \rightarrow \mathfrak{S}^{\star,mod}_{K^pK_p}$. 

\begin{rem}\label{rem-minor-difference} The space $\mathfrak{S}^{tor,mod}_{K^pK_p,\Sigma}$ is not exactly the space considered in \cite{MR3512528}. Namely, in that reference, we considered further  blow ups in order to make the sheaf denoted $\omega_A^{mod}$ in \emph{loc. cit.} (the subsheaf of $\omega_A$ generated by the image of the Hodge-Tate period map) locally free. 
\end{rem}

We  let ${ \mathfrak{S}}^{tor,mod}_{K^p,\Sigma} = \lim_n { \mathfrak{S}}^{tor,mod}_{K^pK_{p,n},\Sigma}$ where the inverse limit is taken in the category of $p$-adic formal schemes.  This inverse limit exists because the transition morphisms are affine.  

\begin{prop} The formal scheme ${ \mathfrak{S}}^{tor,mod}_{K^p,\Sigma}$ is integral perfectoid and its generic fiber is $\mathcal{S}_{K^p, \Sigma}^{tor}$. 
\end{prop}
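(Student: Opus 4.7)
My plan is to mimic the proof of the analogous statement for the minimal compactification (Proposition \ref{prop-bretagne}, from \cite{MR3512528}), adapting it to the toroidal setting.

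First I would verify the claim on the generic fiber. Since $p^{g/(p-1)}\cdot\det\omega_A \subseteq \det\omega_A^{mod}\subseteq\det\omega_A$ (respectively $4^g$ if $p=2$), the ideal $\mathcal{I}_{K_p}$ contains a power of $p$, and hence so does its pullback $\mathcal{J}_{K_p}$. Thus $\mathrm{BL}_{\mathcal{J}_{K_p}}$ is an admissible blow-up and does not alter the generic fiber, and neither does the subsequent normalization (since $K^p$ is neat, $\mathcal{S}^{tor}_{K^pK_p,\Sigma}$ is already normal). So the generic fiber of $\mathfrak{S}^{tor,mod}_{K^pK_p,\Sigma}$ is $\mathcal{S}^{tor}_{K^pK_p,\Sigma}$. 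The transition maps $\mathfrak{S}^{tor,mod}_{K^pK_{p,n+1},\Sigma}\to \mathfrak{S}^{tor,mod}_{K^pK_{p,n},\Sigma}$ are finite (by the same argument as in the analogous lemma preceding Proposition \ref{prop-bretagne}), hence affine, so generic fibers commute with the inverse limit; combined with the $\sim$-limit statement from \cite{MR3512528} Theorem 0.4, this identifies the generic fiber of $\mathfrak{S}^{tor,mod}_{K^p,\Sigma}$ with the perfectoid toroidal Shimura variety $\mathcal{S}^{tor}_{K^p,\Sigma}$.

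For integral perfectoidness, I would use the natural map $f:\mathfrak{S}^{tor,mod}_{K^p,\Sigma}\to\mathfrak{S}^{\star,mod}_{K^p}$ together with the affine cover $\mathfrak{S}^{\star,mod}_{K^p}=\bigcup_i \mathfrak{U}_i$ from Proposition \ref{prop-bretagne}. Since the property is local, it is enough to exhibit, for each $i$, an affine cover of $\mathfrak{V}_i:=f^{-1}(\mathfrak{U}_i)$ by opens $\Spf R$ with $R$ integral perfectoid. At finite level $K_p$ the map $f_{K_p}:\mathfrak{S}^{tor,mod}_{K^pK_p,\Sigma}\to \mathfrak{S}^{\star,mod}_{K^pK_p}$ is proper, so $\mathfrak{V}_{i,K_p}$ is not affine, but the Hodge-Tate section $\Lambda^g\mathrm{HT}(f_i)$ already generates $\det\omega_A^{mod}$ on $\mathfrak{V}_{i,K_p}$, and the generic fibers $\mathcal{V}_{i,K_p}\subset\mathcal{S}^{tor}_{K^pK_p,\Sigma}$ coincide with the preimages of $\mathcal{U}_i$ under $\pi_{HT,K_p}^{tor}$. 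Passing to the inverse limit in $K_p$, I would argue that $\mathfrak{V}_i=\lim_{K_p}\mathfrak{V}_{i,K_p}$ is affine, and that its generic fiber is the good affinoid perfectoid open $\pi_{HT,\infty}^{tor,-1}(V_i)\subset\mathcal{S}^{tor}_{K^p,\Sigma}$ guaranteed by the toroidal analog of Scholze's theorem (\cite{MR3512528}, Thm 1.18).

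Finally, to check that $\mathcal{O}(\mathfrak{V}_i)$ is integral perfectoid, I would compare to the (integral perfectoid) ring $\mathcal{O}(\mathfrak{U}_i)$: the map $\mathfrak{V}_i\to\mathfrak{U}_i$ is an isomorphism on the locus away from the boundary, and along the tower the contribution of the boundary becomes almost zero modulo $p$, by a control argument analogous to the one used in the construction of the perfectoid toroidal Shimura variety in \cite{MR3512528} (using affineness of $\pi_{HT}$ and the almost vanishing of higher cohomology of the boundary divisor at infinite level). The main obstacle I anticipate is this last step: one must check that the specific normalized blow-up used here, which is coarser than the one in \cite{MR3512528} (see Remark \ref{rem-minor-difference}), is already integral perfectoid in the limit—equivalently, that the further admissible blow-ups making $\omega_A^{mod}$ itself locally free become trivial in the perfectoid limit. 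This boils down to a careful analysis of the tower of normalizations along the boundary strata and reduces, \emph{via} the combinatorial description of $\mathfrak{S}^{tor}_{K^pK_{p,n},\Sigma}$ by local charts recalled in the proof of Theorem \ref{vanishingtominimal}, to a computation on toric varieties, where the perfectoidness in the limit follows from standard arguments.
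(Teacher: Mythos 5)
Your generic-fiber argument is fine, and your closing sentence correctly names the strategy that actually carries the proof: reduce via the combinatorial description of the boundary by formal local charts (abelian part $\times$ torus torsor $\times$ torus embedding) to a chart-by-chart computation, which is exactly what \S A.12 of \cite{MR3512528} does, and which the paper invokes ``almost verbatim, taking into account remark \ref{rem-minor-difference}.'' However, the middle of your proposal contains a gap that would sink it: the assertion that $\mathfrak{V}_i=\lim_{K_p}\mathfrak{V}_{i,K_p}$ is affine in the limit is unjustified and, I believe, false. At every finite level the map $\mathfrak{V}_{i,K_p}\to\mathfrak{U}_{i,K_p}$ is proper and \emph{not} finite; its fibers over boundary points of the minimal compactification are positive-dimensional toric formal schemes determined by $\Sigma$. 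Passing to the inverse limit over the tower yields, over such a point, a tower of Frobenius-like covers of toric varieties whose limit is a perfectoid toric object, not a point. So $\mathfrak{V}_i\to\mathfrak{U}_i$ does not become affine, and $\mathfrak{V}_i$ is not affine. This is where the toroidal setting genuinely differs from the minimal one: for the minimal compactification, $\pi_{HT}$ is affinoid, which is what makes $\mathfrak{U}_i$ affine; no analogue holds for $\pi_{HT}^{tor}$.

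Because $\mathfrak{V}_i$ is not affine, comparing $\mathcal{O}(\mathfrak{V}_i)$ to $\mathcal{O}(\mathfrak{U}_i)$ cannot establish that $\mathfrak{V}_i$ is integral perfectoid --- that is a local condition on $\mathfrak{V}_i$ and cannot be read off from the pushforward to $\mathfrak{U}_i$. The ``almost zero boundary contribution'' heuristic you invoke is borrowed from the minimal compactification (where the boundary is a proper Zariski-closed subset of small codimension); it does not transfer to the toroidal boundary, which is a divisor and is genuinely seen by the structure sheaf. The intermediate reduction to affineness over $\mathfrak{S}^{\star,mod}_{K^p}$ should be cut entirely: work directly with the boundary charts from theorem \ref{thm-description-toro-compactificiation}, show the tower of each chart is perfectoid (this is the content of \cite{MR3512528} \S A.12), and separately observe --- as you correctly anticipate --- that the coarser normalized blow-up used here differs from the one in \emph{loc.~cit.} only by further admissible blow-ups that become trivial in the limit, which is the content of remark \ref{rem-minor-difference}.
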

\begin{proof} This follows from almost verbatim from \cite{MR3512528}, section A.12, taking into account remark \ref{rem-minor-difference}. 
\end{proof}

We now let $\mathcal{U} \hookrightarrow \mathcal{FL}_{G,\mu}$ be a quasi-compact open subset. Our goal is to define formal models for $\pi_{HT}^{-1}(\mathcal{U})$ and $(\pi^{tor}_{HT})^{-1}(\mathcal{U})$. 

We first need a formal model for $\mathcal{U}$. By \cite{MR1032938}, thm. 1.6,  there exists an ideal $\mathcal{I}$ of $\oscr_{\mathfrak{FL}_{G,\mu}}$ and an open subscheme $\mathfrak{U}$ of   $\mathrm{NBL}_{\mathcal{I}}(\mathfrak{FL}_{G,\mu})$ such that the generic fiber of $\mathfrak{U}$ is $\mathcal{U}$. 

We can define $\mathfrak{S}_{K^p, \Sigma,\mathfrak{U}}^{tor,mod} \rightarrow \mathfrak{S}^{\star,mod}_{K^p,\mathfrak{U}}$ which is a formal model for $(\pi^{tor}_{HT})^{-1}(\mathcal{U}) \rightarrow \pi_{HT}^{-1}(\mathcal{U})$ as follows.  The ideal $\mathcal{I}$ pulls back to ideals $\mathcal{I}_1$ and $\mathcal{I}_2$ of $\mathfrak{S}_{K^p, \Sigma}^{tor,mod}$ and $\mathfrak{S}^{\star,mod}_{K^p}$ respectively.   We now wish to consider the normalized  blow up of 
$\mathfrak{S}_{K^p, \Sigma}^{tor,mod}$ and $\mathfrak{S}^{\star,mod}_{K^p}$ at $\mathcal{I}_1$ and $\mathcal{I}_2$ respectively. We actually show that the ideals come from finite level, perform the normalized blow-up at a finite level,  and then pass to the limit.

\begin{lem}\label{lem-approximate-ideal-finitelevel} The ideal $\mathcal{I}_1$ and $\mathcal{I}_2$ are pull backs of ideals $\mathcal{I}_{1,K_p}$ and $\mathcal{I}_{2,K_p}$ of $\mathfrak{S}_{K^pK_{p}, \Sigma}^{tor,mod}$ and $\mathfrak{S}^{\star,mod}_{K^pK_{p,n}}$ for $K_p$ small enough. 
 \end{lem}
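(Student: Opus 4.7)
My plan is to exploit the perfectoid $\sim$-limit structure together with the crucial fact that an admissible ideal $\mathcal{I}$ on $\mathfrak{FL}_{G,\mu}$ locally contains a power of $p$. This ``cushion'' will absorb the $p$-adic approximation errors that arise when trying to descend sections from infinite to finite level.

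First I would reduce to a local computation: cover $\mathfrak{FL}_{G,\mu}$ (which is quasi-compact) by finitely many open affine formal subschemes $\mathfrak{V}_\alpha = \Spf A_\alpha$ on which $\mathcal{I}|_{\mathfrak{V}_\alpha} = (f_{\alpha,1},\ldots,f_{\alpha,r_\alpha})$ is finitely generated. Since $\mathfrak{U}$ is an open subscheme of $\mathrm{NBL}_{\mathcal{I}}(\mathfrak{FL}_{G,\mu})$ with generic fiber $\mathcal{U}$, the blow-up is admissible, so $p\in \sqrt{\mathcal{I}}$. Shrinking the $\mathfrak{V}_\alpha$ if necessary, I may assume $p^{N_\alpha}\in \mathcal{I}|_{\mathfrak{V}_\alpha}$ for some $N_\alpha\in \ZZ_{\geq 0}$. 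Pulling back, the ideal $\mathcal{I}_2$ is generated on $\pi_{HT}^{-1}(\mathfrak{V}_\alpha)$ by $\pi_{HT}^\star f_{\alpha,i}$ and also contains $p^{N_\alpha}$.

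Next I would use the perfectoid limit structure of $\mathfrak{S}^{\star,mod}_{K^p}=\lim_n \mathfrak{S}^{\star,mod}_{K^pK_{p,n}}$ (with affine transition maps). The quasi-compact open $\pi_{HT}^{-1}(\mathfrak{V}_\alpha)=\Spf R_\alpha$ comes by pullback from a quasi-compact open $\mathfrak{V}_{\alpha,n}=\Spf R_{\alpha,n}$ at some finite level $K_{p,n}$, by the identification of underlying topological spaces. Moreover $R_\alpha$ is the $p$-adic completion of $\colim_n R_{\alpha,n}$. Hence for each $\alpha,i$ and each target precision, in particular modulo $p^{N_\alpha}$, I can find a section $f_{\alpha,i}^{(n)}\in R_{\alpha,n}$ at finite level $K_{p,n}$ (for $n$ large enough) such that $\pi_{HT}^\star f_{\alpha,i}-f_{\alpha,i}^{(n)}\in p^{N_\alpha}R_\alpha$.

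Now choose $K_p=K_{p,n}$ small enough (and $n$ large enough) that the $\mathfrak{V}_{\alpha,n}$ and the sections $f_{\alpha,i}^{(n)}$ are all defined at level $K_p$. On each $\mathfrak{V}_{\alpha,n}$ I declare
$$\mathcal{I}_{2,K_p}|_{\mathfrak{V}_{\alpha,n}} := (f_{\alpha,1}^{(n)},\ldots,f_{\alpha,r_\alpha}^{(n)}, p^{N_\alpha}).$$
Pulling back to infinite level and using $\pi_{HT}^\star f_{\alpha,i}\equiv f_{\alpha,i}^{(n)}\bmod p^{N_\alpha}$ together with $p^{N_\alpha}\in \mathcal{I}_2$, one gets the \emph{literal} equality of ideals
$$(f_{\alpha,1}^{(n)},\ldots,f_{\alpha,r_\alpha}^{(n)}, p^{N_\alpha})\cdot R_\alpha = (\pi_{HT}^\star f_{\alpha,1},\ldots,\pi_{HT}^\star f_{\alpha,r_\alpha}, p^{N_\alpha}) = \mathcal{I}_2|_{\pi_{HT}^{-1}(\mathfrak{V}_\alpha)}.$$
Since the local ideals $\mathcal{I}_{2,K_p}|_{\mathfrak{V}_{\alpha,n}}$ pull back to the restriction of a single ideal $\mathcal{I}_2$, they agree on overlaps after pullback; arguing again with the cushion $p^{N_\alpha}$ and possibly enlarging $n$ slightly, one checks they agree on overlaps already at finite level, and glue to a global ideal $\mathcal{I}_{2,K_p}$ on $\mathfrak{S}^{\star,mod}_{K^pK_p}$. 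The same argument, applied to the toroidal side using that $\pi_{HT}^{tor}$ factors through $\mathfrak{S}^{\star,mod}_{K^p}$ and that $\mathfrak{S}^{tor,mod}_{K^p,\Sigma}=\lim_n \mathfrak{S}^{tor,mod}_{K^pK_{p,n},\Sigma}$, produces $\mathcal{I}_{1,K_p}$.

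The main technical subtlety is the gluing step: checking that the finite level approximations $\mathcal{I}_{2,K_p}|_{\mathfrak{V}_{\alpha,n}}$ agree on intersections $\mathfrak{V}_{\alpha,n}\cap\mathfrak{V}_{\beta,n}$ at finite level, not merely after pullback. This is handled by noting that on the target $\mathfrak{FL}_{G,\mu}$ the local generators $f_{\alpha,i}$ and $f_{\beta,j}$ generate the same ideal on $\mathfrak{V}_\alpha\cap\mathfrak{V}_\beta$, so the transition relations also hold modulo $p^{\max(N_\alpha,N_\beta)}$ for the approximations $f^{(n)}$ at a large enough finite level, and once again the $p^{N}$ in the ideal absorbs the discrepancies.
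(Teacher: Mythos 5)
Your proof hits the same central idea as the paper's: use that the admissible ideal $\mathcal{I}$ on $\mathfrak{FL}_{G,\mu}$ locally contains a power $p^s$, cover the flag variety by affines where $\mathcal{I}$ is finitely generated, approximate the pulled-back generators modulo $p^s$ by finite-level sections using the good-affinoid/perfectoid-limit property (proposition \ref{prop-bretagne}(3)), and observe that the $p^s$ ``cushion'' guarantees the approximated ideal equals $\mathcal{I}_2$. This is the argument.

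The one place where you diverge from the paper --- and where your proposal is weaker --- is the final gluing step. You directly construct the candidate ideal $\mathcal{I}_{2,K_p}$ locally on each $\mathfrak{V}_{\alpha,n}$ and then try to check agreement on overlaps at finite level, acknowledging that this is ``a subtlety'' and invoking the cushion again to absorb discrepancies. But what you actually need is that agreement of the pullbacks at infinite level implies agreement at finite level, which requires an injectivity statement $R_{\alpha,n}/p^{N} \hookrightarrow R_\alpha/p^{N}$ on overlaps that you don't address. The paper sidesteps this entirely by reframing the goal: one does not need to \emph{build} a global ideal at level $K_{p,n}$ and check cocycle conditions; it suffices to show that the already-global, already-$K_{p,n}$-invariant sheaf $\mathcal{I}_2$ (invariance being automatic from the $G(\qq_p)$-equivariance of $\pi_{HT}$) is \emph{locally generated by $K_{p,n}$-invariant sections}. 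Once that is established, descent to $\mathfrak{S}^{\star,mod}_{K^pK_{p,n}}$ is formal. Your computation of local generators $(f_{\alpha,i}^{(n)}, p^{N_\alpha})$ is exactly the input to this statement; you should just observe that these generators come from level $K_{p,n}$ and hence are $K_{p,n}$-invariant, and stop there, rather than trying to re-glue them explicitly.
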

 \begin{proof} It suffices to prove the claim for $\mathcal{I}_{2}$. We need to prove that $\mathcal{I}_{2}$ is locally  generated by $K_{p,n}$-invariant sections for $n$ large enough. 
 First observe that the ideal $\mathcal{I}$ contains $p^s$ for some integer $s$. Over each standard affine $\mathfrak{U}_i$, we have $\mathcal{I}(\mathfrak{U}_i) = ( s_{1,i}, \cdots, s_{k,i})$. We can find sections $s'_{1,i}, \cdots, s'_{k,i} \in \HH^0( \pi_{HT}^{-1}(\mathcal{U}_i), \oscr_{\mathcal{S}_{K^p}^{\star}}^+)$ such that $s'_{j,i} = s_{j,i}~\mod p^s$, and $s'_{j,i}$ comes from some finite level $K_{p,n}$ by proposition \ref{prop-bretagne}, (3). Thus,  the $s'_{j,i}$ and $p^s$ generate $\mathcal{I}_{2}$ over the image of $\pi_{HT}^{-1}(\mathcal{U}_i)$ in $\mathcal{S}^{\star}_{K^pK_{p,n}}$. 
 \end{proof}

We can therefore consider $$\mathrm{NBL}_{\mathcal{I}_{1,K_p}}(\mathfrak{S}_{K^pK_p, \Sigma}^{tor,mod})$$ and $$\mathrm{NBL}_{\mathcal{I}_{2,K_p}}(\mathfrak{S}_{K^pK_p}^{\star,mod})$$ for $K_p$ small enough. 

We let $\mathrm{NBL}_{\mathcal{I}_1}(\mathfrak{S}_{K^p, \Sigma}^{tor,mod}) = \lim_{K_p} \mathrm{NBL}_{\mathcal{I}_{1,K_p}}(\mathfrak{S}_{K^pK_p, \Sigma}^{tor,mod})$ and $\mathrm{NBL}_{\mathcal{I}_2}(\mathfrak{S}_{K^p, \Sigma}^{\star,mod}) = \lim_{K_p} \mathrm{NBL}_{\mathcal{I}_{2,K_p}}(\mathfrak{S}_{K^pK_p, \Sigma}^{\star,mod})$  (the limits are taken in the category of $p$-adic formal schemes).

We have maps $$\mathrm{NBL}_{\mathcal{I}_1}(\mathfrak{S}_{K^p, \Sigma}^{tor,mod}) \rightarrow \mathrm{NBL}_{\mathcal{I}_2}(\mathfrak{S}_{K^p}^{\star,mod}) \rightarrow \mathrm{NBL}_{\mathcal{I}}(\mathfrak{FL}_{G,\mu}).$$ We let  $\mathfrak{S}_{K^p, \Sigma,\mathfrak{U}}^{tor,mod}$ and $\mathfrak{S}^{\star,mod}_{K^p,\mathfrak{U}}$ be the preimages of $\mathfrak{U}$. 
These open formal subschemes come from open formal subschemes $\mathfrak{S}_{K^pK_p, \Sigma,\mathfrak{U}}^{tor,mod}$ and $\mathfrak{S}^{\star,mod}_{K^pK_p,\mathfrak{U}}$  of  $\mathrm{NBL}_{\mathcal{I}_{1,K_p}}(\mathfrak{S}_{K^pK_p, \Sigma}^{tor,mod})$ and $\mathrm{NBL}_{\mathcal{I}_{2,K_p}}(\mathfrak{S}_{K^pK_p}^{\star,mod})$ for  large enough $K_p$ (the equations defining $\mathfrak{U}$ are defined at finite level).
 
 \medskip
 
 The following theorem shows that the spaces $\mathfrak{S}^{tor}_{K^pK_p,\Sigma}$, $\mathfrak{S}^{tor,mod}_{K^pK_p,\Sigma} $ and $\mathfrak{S}^{tor,mod}_{K^pK_p,\Sigma,\mathfrak{U}}$ admit the usual description at the boundary in terms of  certain formal charts.   The case of $\mathfrak{S}^{tor}_{K^pK_p,\Sigma}$ is available in the literature. The other cases are deduced from the case of $\mathfrak{S}^{tor}_{K^pK_p,\Sigma}$, by tracing down what happens with the various normalized blow ups at the level of formal charts. This is possible because  the Hodge-Tate period map behaves nicely for  degenerations of abelian varieties, since the Hodge-Tate period morphism for \'etale and multiplicative $p$-divisible group is trivial.

\begin{thm} \label{thm-description-toro-compactificiation}
\begin{enumerate}

\item Let $K = K^pK_p$ with $K_p \subseteq \mathrm{GSp}_{2g}(\Z_p)$. We have a decomposition $\mathfrak{S}^{tor}_{K^pK_p,\Sigma} = \coprod_{\Phi} Z_K(\Phi)$ into locally closed formal subschemes,  indexed by certain cusp label representatives $\Phi$. 

\item The formal completion $\widehat{\mathfrak{S}^{tor}_{K^pK_p,\Sigma}}^{Z_K(\Phi)}$ admits the following canonical  description:
\begin{itemize}
\item  There is a tower of $p$-adic formal schemes:
$$ \mathbf{S}_{K_\Phi}(Q_\Phi, D_\Phi) \rightarrow \mathbf{S}_{K_\Phi}( \overline{Q}_\Phi, \overline{D}_\Phi) \rightarrow \mathbf{S}_{K_\Phi}(G_{\Phi,h}, D_{\Phi,h})$$
where  $ \mathbf{S}_{K_\Phi}(Q_\Phi, D_\Phi) \rightarrow \mathbf{S}_{K_\Phi}( \overline{Q}_\Phi, \overline{D}_\Phi)$ is a torsor under a torus  $\mathbf{E}_K(\Phi)$, and $\mathbf{S}_{K_\Phi}(G_{\Phi,h}, D_{\Phi,h})$ is an integral model of a lower dimensional Siegel variety. 

In the situation that $K$  is the principal level $N$ congruence subgroup, $\mathbf{S}_{K_\Phi}(G_{\Phi,h}, D_{\Phi,h})$ carries a full level $N$ structure, and $ \mathbf{S}_{K_\Phi}( \overline{Q}_\Phi, \overline{D}_\Phi) \rightarrow \mathbf{S}_{K_\Phi}(G_{\Phi,h}, D_{\Phi,h})$ is an abelian scheme torsor. It parametrizes semi-abelian schemes $0 \rightarrow T \rightarrow \mathcal{G} \rightarrow A \rightarrow 0$ (together with certain level structure) where $A$ is the universal abelian scheme over $ \mathbf{S}_{K_\Phi}(G_{\Phi,h}, D_{\Phi,h})$ and $T$ is a split torus of rank $g-\dim A$. 

\item There is a twisted torus embedding  $\mathbf{S}_{K_\Phi}(Q_\Phi, D_\Phi) \rightarrow \mathbf{S}_{K_\Phi}(Q_\Phi, D_\Phi,\Sigma(\Phi))$ depending on $\Sigma$. 

\item There is an arithmetic group $\Delta_{K}(\Phi)$ acting on $X^\star(\mathbf{E}_K(\Phi))$ and on $\mathbf{S}_{K_\Phi}(Q_\Phi, D_\Phi) \hookrightarrow \mathbf{S}_{K_\Phi}(Q_\Phi, D_\Phi,\Sigma(\Phi))$. 
\item  We have a $\Delta_K(\Phi)$-invariant closed   subscheme $\mathbf{Z}_{K_\Phi}(Q_\Phi, D_\Phi,\Sigma(\Phi)) \hookrightarrow \mathbf{S}_{K_\Phi}(Q_\Phi, D_\Phi, \Sigma(\Phi))$.
\item There is a finite morphism  $\mathbf{S}_{K_\Phi}(G_{\Phi,h}, D_{\Phi,h}) \rightarrow  \mathfrak{S}^{\star}_{K}$. 
\item There is a series of morphisms: 
$$ \mathbf{Z}_{K_\Phi}(Q_\Phi, D_\Phi,\Sigma(\Phi)) \rightarrow \mathbf{S}_{K_\Phi}( \overline{Q}_\Phi, \overline{D}_\Phi) \rightarrow \mathbf{S}_{K_\Phi}(G_{\Phi,h}, D_{\Phi,h}).$$
\item  We have a canonical isomorphism $Z_K(\Phi) =\Delta_{K}(\Phi) \backslash \mathbf{Z}_{K_\Phi}(Q_\Phi, D_\Phi,\Sigma(\Phi))$. 

\item We have a canonical isomorphism  $$\widehat{\mathfrak{S}^{tor}_{K, \Sigma}}^{{Z}_{K}(\Phi)}  \simeq \Delta_K(\Phi) \backslash \big( \widehat{ \mathbf{S}_{K_\Phi}(Q_\Phi, D_\Phi,\Sigma(\Phi))}^{\mathbf{Z}_{K_\Phi}(Q_\Phi, D_\Phi,\Sigma(\Phi))}\big).$$
 
 \end{itemize}
 
 \item  For small enough $K_p$:
 
 \begin{itemize}
 \item The ideal $\mathcal{J}_{K_p}$ restricted to $\widehat{\mathfrak{S}^{tor}_{K, \Sigma}}^{{Z}_{K}(\Phi)}$ is the pull back of an ideal $\mathcal{J}_{K_p,\Phi}$ of $\mathbf{S}_{K_\Phi}(G_{\Phi,h}, D_{\Phi,h})$.
 \item Let $\mathbf{Z}^{mod}_{K_\Phi}(Q_\Phi, D_\Phi,\Sigma(\Phi))=  \mathrm{NBL}_{\mathcal{J}_{K_p,\Phi}}\mathbf{Z}_{K_\Phi}(Q_\Phi, D_\Phi,\Sigma(\Phi))$ and $Z^{mod}_K(\Phi)= \Delta_K(\Phi)\backslash \mathbf{Z}^{mod}_{K_\Phi}(Q_\Phi, D_\Phi,\Sigma(\Phi))$. 
  We have a decomposition  $\mathfrak{S}^{tor,mod}_{K^pK_p,\Sigma} =  \coprod_{\Phi} Z^{mod}_K(\Phi)$. 
  
  \item Let $\mathbf{S}^{mod}_{K_\Phi}(G_{\Phi,h}, D_{\Phi,h}) = \mathrm{NBL}_{\mathcal{J}_{K_p,\Phi}}(\mathbf{S}_{K_\Phi}(G_{\Phi,h}, D_{\Phi,h}))$. We have a finite morphism $\mathbf{S}^{mod}_{K_\Phi}(G_{\Phi,h}, D_{\Phi,h}) \rightarrow \mathfrak{S}^{\star,mod}_{K^pK_p}$. 
  
   \item Let $\mathbf{S}^{mod}_{K_\Phi}(Q_\Phi, D_\Phi,\Sigma(\Phi)) = \mathrm{NBL}_{\mathcal{J}_{K_p,\Phi}}\mathbf{S}_{K_\Phi}(Q_\Phi, D_\Phi,\Sigma(\Phi))$.  We have a canonical isomorphism  $$\widehat{\mathfrak{S}^{tor,mod}_{K, \Sigma}}^{{Z}^{mod}_{K}(\Phi)}  \simeq \Delta_K(\Phi) \backslash \big( \widehat{ \mathbf{S}^{mod}_{K_\Phi}(Q_\Phi, D_\Phi,\Sigma(\Phi))}^{\mathbf{Z}^{mod}_{K_\Phi}(Q_\Phi, D_\Phi,\Sigma(\Phi))}\big).$$ 
\end{itemize}

 \item For all small enough $K_p$:
 \begin{itemize}
 \item The ideal $\mathcal{I}_{1,K_p}$ restricted to $\widehat{\mathfrak{S}^{tor,mod}_{K, \Sigma}}^{{Z}^{mod}_{K}(\Phi)}$ is the pull back of an ideal $\mathcal{I}_{1,K_p,\Phi}$ of $\mathbf{S}^{mod}_{K_\Phi}(G_{\Phi,h}, D_{\Phi,h})$.
 \item We have a decomposition  $\mathfrak{S}^{tor,mod}_{K^pK_p,\Sigma,\mathfrak{U}} =  \coprod_{\Phi} Z^{mod}_K(\Phi)_{\mathfrak{U}}$  where $Z^{mod}_K(\Phi)_{\mathfrak{U}}$ is an open subset of $\mathrm{NBL}_{\mathcal{I}_{1,K_p}}(Z^{mod}_K(\Phi))$. 
 
 \item  There is an open subset  $\mathbf{S}^{mod}_{K_\Phi}(G_{\Phi,h}, D_{\Phi,h})_{\mathfrak{U}}$ of $\mathrm{NBL}_{\mathcal{I}_{1,K_p,\Phi}}(\mathbf{S}^{mod}_{K_\Phi}(G_{\Phi,h}, D_{\Phi,h}))$ such that we  have a finite morphism $\mathbf{S}^{mod}_{K_\Phi}(G_{\Phi,h}, D_{\Phi,h})_{\mathfrak{U}} \rightarrow \mathfrak{S}^{\star,mod}_{K^pK_p,\mathfrak{U}}$.
 \item There exists an open subset  $\mathbf{S}^{mod}_{K_\Phi}(Q_\Phi, D_\Phi,\Sigma(\Phi))_{\mathfrak{U}}$ of  $\mathrm{NBL}_{\mathcal{I}_{1, K_p,\Phi}}(\mathbf{S}^{mod}_{K_\Phi}(Q_\Phi, D_\Phi,\Sigma(\Phi)))$ and an open subset $ \mathbf{Z}^{mod}_{K_\Phi}(Q_\Phi, D_\Phi,\Sigma(\Phi))_{\mathfrak{U}}$ of $ \mathrm{NBL}_{\mathcal{I}_{1, K_p,\Phi}}(\mathbf{Z}^{mod}_{K_\Phi}(Q_\Phi, D_\Phi,\Sigma(\Phi)))$ such that 
  we have a canonical isomorphism  $$\widehat{\mathfrak{S}^{tor,mod}_{K, \Sigma,\mathfrak{U}}}^{{Z}^{mod}_{K}(\Phi)_{\mathfrak{U}}} \simeq \Delta_K(\Phi) \backslash \big( \widehat{\mathbf{S}^{mod}_{K_\Phi}(Q_\Phi, D_\Phi,\Sigma(\Phi))_{\mathfrak{U}}}^{\mathbf{Z}^{mod}_{K_\Phi}(Q_\Phi, D_\Phi,\Sigma(\Phi))_{\mathfrak{U}}}\big).$$ 
\end{itemize}

 \end{enumerate}
 \end{thm}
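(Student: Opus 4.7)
The plan is as follows. Parts (1) and (2) describing the boundary stratification of $\mathfrak{S}^{tor}_{K^pK_p,\Sigma}$ and the canonical form of the formal completion along each boundary stratum $Z_K(\Phi)$ are the integral $p$-adic version of the standard toroidal compactification theory for Siegel varieties. I would invoke the results of Faltings--Chai, in the refined form given by Lan, which at principal level produce exactly the tower $\mathbf{S}_{K_\Phi}(Q_\Phi,D_\Phi)\to \mathbf{S}_{K_\Phi}(\overline{Q}_\Phi,\overline{D}_\Phi)\to \mathbf{S}_{K_\Phi}(G_{\Phi,h},D_{\Phi,h})$, the twisted torus embedding $\mathbf{S}_{K_\Phi}(Q_\Phi,D_\Phi,\Sigma(\Phi))$ and the isomorphism of formal completions modulo the arithmetic group action. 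At level $K_p$ one uses that $\mathfrak{S}^{tor}_{K^pK_p,\Sigma}$ is the normalization of $\mathfrak{S}^{tor}_{K^p\mathrm{GSp}_{2g}(\ZZ_p),\Sigma}$ in the generic fiber and that normalization commutes with formal completion along a closed subscheme, so the local description at the boundary passes through normalization. Taking $p$-adic completion then yields parts (1) and (2).

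For parts (3) and (4) the crucial input is the behaviour of the Hodge--Tate period map along the boundary. On a formal neighborhood of $Z_K(\Phi)$, the universal semi-abelian scheme $\mathcal{G}$ sits in a canonical extension $0\to T\to \mathcal{G}\to A\to 0$ with $T$ a split torus of rank $r=g-\dim A$ and $A$ the universal abelian scheme pulled back from $\mathbf{S}_{K_\Phi}(G_{\Phi,h},D_{\Phi,h})$. Consequently $\omega_{\mathcal{G}}=\omega_T\oplus\omega_A$ canonically, $\omega_T$ is canonically trivial, and $\det\omega_{\mathcal{G}}=\det\omega_T\otimes\det\omega_A$ with the first factor trivialised. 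The $p$-adic Tate module fits in $0\to T_p(T)\to T_p(\mathcal{G})\to T_p(A)\to 0$, and $\Lambda^g\mathrm{HT}_n$ factors as the wedge of the two component maps: on $T_p(T)$ it is the tautological isomorphism $X_*(T)\otimes\oscr\xrightarrow{\sim}\omega_T$, while on $T_p(A)$ it is $\Lambda^{\dim A}\mathrm{HT}_n$ for $A$. Therefore the sheaf of ideals $\mathcal{I}_{K_p}$ cutting out $\det\omega_A^{mod}$ inside $\det\omega_A$, when restricted to $\widehat{\mathfrak{S}^{tor}_{K,\Sigma}}^{Z_K(\Phi)}$, pulls back along the chart from the analogous ideal on $\mathbf{S}_{K_\Phi}(G_{\Phi,h},D_{\Phi,h})$. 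This gives the two first bullets of (3); the isomorphism of formal completions in the last bullet follows because normalized blow-up commutes with flat base change and with formal completion along a closed subscheme containing the centre of the blow-up.

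For part (4) the same strategy applies to the ideal $\mathcal{I}_{1,K_p}$ arising from a quasi-compact open $\mathcal{U}\hookrightarrow\mathcal{FL}_{G,\mu}$. The Hodge--Tate filtration of $\mathcal{G}$ necessarily preserves the weight filtration, so $\pi_{HT}$ lands in the Schubert-type stratum of $\mathfrak{FL}_{G,\mu}$ corresponding to the parabolic $P_\Phi$ stabilising the weight filtration, and within that stratum the multiplicative/étale components are rigidly determined, so the map factors through the pullback of $\pi_{HT}$ for the abelian part $A$ composed with a canonical closed immersion $\mathfrak{FL}_{G_{\Phi,h},\mu_h}\hookrightarrow\mathfrak{FL}_{G,\mu}$. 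Pulling back $\mathcal{I}$ along this diagram shows that $\mathcal{I}_{1,K_p}$ restricted to $\widehat{\mathfrak{S}^{tor,mod}_{K,\Sigma}}^{Z_K^{mod}(\Phi)}$ descends to an ideal $\mathcal{I}_{1,K_p,\Phi}$ on $\mathbf{S}^{mod}_{K_\Phi}(G_{\Phi,h},D_{\Phi,h})$, after which the description of the formal completion follows formally, as in (3), from the compatibility of normalized blow-up with flat base change and formal completion, and from the fact (Lemma \ref{lem-approximate-ideal-finitelevel}) that all the relevant ideals are already defined at some finite $K_p$.

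The main obstacle I anticipate is the precise verification in part (4) of the factorization of the Hodge--Tate period map through the chart, namely identifying, in a canonical and moduli-theoretic way, the Hodge--Tate period of $\mathcal{G}$ on the formal neighborhood of $Z_K(\Phi)$ with that of the abelian part $A$ up to the canonical torus data. This requires reconciling two descriptions of the torsor $\mathcal{P}_{HT}^{an}$ — the Hodge--Tate filtration on $T_p(\mathcal{G})\otimes\hat{\oscr}$ and the moduli description via extensions of $A$ by $T$ with prescribed level structures — and checking that the resulting ideals match at the formal completion. The remaining parts, including passage to the limit over $K_p$ and the compatibility of all the various normalized blow-ups, are essentially formal and should go through without trouble.
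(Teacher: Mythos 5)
Your overall strategy mirrors the paper's: parts (1) and (2) from the Faltings--Chai/Lan theory (the paper cites \cite{Lan2016}, \cite{MR3948111}, \cite{MR3512528}), part (4) via the factorization of the Hodge--Tate period map through the boundary chart at infinite level (the paper cites \cite{caraiani2019generic}, Cor.~4.2.2 for this) followed by descent to finite level $K_p$ using Lemma~\ref{lem-approximate-ideal-finitelevel}. The paper also emphasizes the same underlying principle that you identify: the Hodge--Tate period map is trivial on multiplicative and \'etale $p$-divisible groups, so everything descends to the abelian part of the Raynaud extension.

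Two imprecisions in your argument for part (3) deserve correction, though they do not affect the strategy. First, the exact sequence
$0 \rightarrow T_p(T) \rightarrow T_p(\mathcal{G}) \rightarrow T_p(A) \rightarrow 0$
cannot be right as stated: if $\mathcal{G}$ denotes the degenerating abelian scheme then $T_p(\mathcal{G})$ has rank $2g$ at the generic fibre, whereas $T_p(T)$ has rank $r=g-\dim A$ and $T_p(A)$ has rank $2(g-r)$, which add up to $2g-r$. The correct structure is a three-step weight filtration on $T_p(\mathcal{G})$ (the Raynaud extension gives the subobject $T_p(\tilde{\mathcal{G}})$ with an \'etale quotient of rank $r$ on top, and $T_p(\tilde{\mathcal{G}})$ in turn is an extension of $T_p(A)$ by $T_p(T)$). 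The claim that $\Lambda^g\mathrm{HT}_n$ factors then has to be phrased with respect to this full filtration: the \'etale graded piece is killed, the multiplicative piece maps isomorphically, and the abelian piece carries the nontrivial part. Second, $\omega_{\mathcal{G}}=\omega_T\oplus\omega_A$ is not a canonical direct sum; only $0\to\omega_A\to\omega_{\mathcal{G}}\to\omega_T\to 0$ and hence $\det\omega_{\mathcal{G}}\simeq\det\omega_T\otimes\det\omega_A$ are canonical, which is all the argument actually requires. With these corrections, your descent of $\mathcal{J}_{K_p}$ to $\mathbf{S}_{K_\Phi}(G_{\Phi,h},D_{\Phi,h})$ goes through. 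Note that for this point the paper simply cites \cite{MR3512528}, Section~A.12, so you are supplying an argument where the paper refers out; it is the right argument, but its details are sharper than your sketch indicates. For part (4), you should make explicit that the factorization diagram for $\pi_{HT}$ is constructed at infinite level (over $\widehat{\mathfrak{S}^{tor,mod}_{K^p,\Sigma}}^{Z^{mod}_{K^p}(\Phi_\infty)}$), since $\pi_{HT}$ itself is only defined there, and then the descent to finite $K_p$ is exactly the approximation argument of Lemma~\ref{lem-approximate-ideal-finitelevel}; you do gesture at this but the logical order matters.
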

 
 \begin{rem} The main observation needed to prove this theorem is the property that the ideals $\mathcal{J}_{K_p}$ and  $\mathcal{I}_{1,K_p}$ come from ideals $\mathcal{J}_{K_p, \Phi}$ and $\mathcal{I}_{1, K_p,\Phi}$ on each formal chart (first item in $(3)$ and $(4)$). This is reminiscent of the concept of well positioned  subset or subscheme of \cite{MR3859178}, def. 2.2.1.
 \end{rem}

\begin{proof} The first two items follow from \cite{Lan2016}, \cite{MR3948111} or \cite{MR3512528} (for principal level structures). The point  $(3)$,  follows from \cite{MR3512528}, section A.12. It remains to check the point $(4)$. The key property is  that the  ideal $\mathcal{I}_{1,K_p}$ restricted to $\widehat{\mathfrak{S}^{tor,mod}_{K, \Sigma}}^{{Z}^{mod}_{K}(\Phi)}$ is the pull back of an ideal $\mathcal{I}_{1,K_p,\Phi}$ of $\mathbf{S}^{mod}_{K_\Phi}(G_{\Phi,h}, D_{\Phi,h})$. The rest  follows easily. 
We argue as follows. Let $\Phi$ be a cusp label at some finite level $K^pK_p$. For all $n$ large enough such that $K_{p,n} \subseteq K_p$, we let $\{\Phi_n\}_n$ be a compatible sequence of cusp label of level $K^pK_{p,n}$ mapping to $\Phi$. We let $Z^{mod}_{K^p}(\Phi_\infty) = \lim_n Z^{mod}_{K^pK_{p,n}}(\Phi_n)$.  We can consider the completion of $\mathfrak{S}^{tor,mod}_{K^p,\Sigma}$ at $Z^{mod}_{K^p}(\Phi_\infty)$. 
Then we have a map $\widehat{\mathfrak{S}^{tor,mod}_{K^p,\Sigma}}^{Z^{mod}_{K^p}(\Phi_\infty)} \rightarrow \mathbf{S}^{mod}_{K^p_{\Phi_\infty}}(G_{\Phi,h}, D_{\Phi,h})$ where 
$\mathbf{S}^{mod}_{K^p_{\Phi_\infty}}(G_{\Phi,h}, D_{\Phi,h})$ is a perfectoid formal scheme attached to a Siegel Shimura datum of lower dimension. Moreover, we have a factorization of the Hodge-Tate period map as follows (compare with \cite{caraiani2019generic}, coro. 4.2.2):
\begin{eqnarray*}
\xymatrix{\widehat{\mathfrak{S}^{tor,mod}_{K^p,\Sigma}}^{Z^{mod}_{K^p}(\Phi_\infty)} \ar[r]^{\pi^{tor}_{HT}} \ar[d] & \mathfrak{FL}_{G,\mu} \\
\mathbf{S}^{mod}_{K^p_{\Phi_\infty}}(G_{\Phi,h}, D_{\Phi,h}) \ar[r]^{\pi_{HT}} & \mathfrak{FL}_{G_{\Phi,h}, \mu_{G_{\Phi,h}}}\ar[u] }
\end{eqnarray*}

The sheaf of  ideals $\mathcal{I}$ of $\mathfrak{FL}$ restricts to a sheaf of  ideals $\mathcal{I}_{\Phi}$ of $\mathfrak{FL}_{G_{\Phi,h}, \mu_{G_{\Phi,h}}}$. We deduce that at infinite level, the sheaf $\mathcal{I}_{1}$ restricted  to $\widehat{\mathfrak{S}^{tor,mod}_{K^p,\Sigma}}^{Z^{mod}_{K^p}(\Phi_\infty)}$ is the pull back of a sheaf of ideals $\mathcal{I}_{1,\Phi}$ on $\mathbf{S}^{mod}_{K^p_{\Phi_\infty}}(G_{\Phi,h}, D_{\Phi,h})$. We can then prove that $\mathcal{I}_{1,\Phi}$ comes from finite level $K_p$ as in lemma \ref{lem-approximate-ideal-finitelevel}. 
\end{proof}

\begin{thm}\label{thm-formal-vanishingtominimal} Consider the map $\pi : \mathfrak{S}_{K^pK_p, \Sigma,\mathfrak{U}}^{tor,mod} \rightarrow \mathfrak{S}^{\star,mod}_{K^pK_p,\mathfrak{U}}$. Then we have that $\mathrm{R}^i\pi_\star \oscr_{ \mathfrak{S}_{K^pK_p, \Sigma,\mathfrak{U}}^{tor,mod}}(-nD) = 0$ for all $i >0$ and $n \geq 1$.
\end{thm}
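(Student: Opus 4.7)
The plan is to follow the strategy of the proof of Theorem \ref{vanishingtominimal}, adapted to the ``mod'' situation. First, by the theorem on formal functions, it suffices to show that for every closed point $x \in \mathfrak{S}^{\star,mod}_{K^pK_p,\mathfrak{U}}$, one has $\HH^i(\widehat{\mathfrak{S}^{tor,mod}_{K^pK_p,\Sigma,\mathfrak{U}}}^{\pi^{-1}(x)}, \oscr(-nD)) = 0$ for $i>0$ and $n \geq 1$. (Here $\mathfrak{S}^{\star,mod}_{K^pK_p,\mathfrak{U}}$ is of finite type over $\Spf~\ocal_F$ by construction, so formal functions applies.)

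Now I would use the boundary description supplied by point (4) of Theorem \ref{thm-description-toro-compactificiation}. Suppose $x$ lies in the image of $\mathbf{S}^{mod}_{K_\Phi}(G_{\Phi,h},D_{\Phi,h})_\mathfrak{U} \to \mathfrak{S}^{\star,mod}_{K^pK_p,\mathfrak{U}}$. Then $\pi^{-1}(x)$ is cut out inside $\mathbf{Z}^{mod}_{K_\Phi}(Q_\Phi,D_\Phi,\Sigma(\Phi))_\mathfrak{U}$ by the fiber of the composite map to $\mathbf{S}^{mod}_{K_\Phi}(G_{\Phi,h},D_{\Phi,h})_\mathfrak{U}$, so the formal completion takes the form
$$\widehat{\mathfrak{S}^{tor,mod}_{K^pK_p,\Sigma,\mathfrak{U}}}^{\pi^{-1}(x)} \simeq \Delta_K(\Phi)\backslash\bigl(\widehat{\mathbf{S}^{mod}_{K_\Phi}(Q_\Phi,D_\Phi,\Sigma(\Phi))_\mathfrak{U}}^{\mathbf{Z}^{mod}_{K_\Phi}(Q_\Phi,D_\Phi,\Sigma(\Phi))_{\mathfrak{U},x}}\bigr),$$
where the subscript $x$ denotes the fiber. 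Since $\Delta_K(\Phi)$ acts through a finite group on any such formal completion and we are working in characteristic zero on the generic fiber (the target sheaf $\oscr(-nD)$ is $\ocal_F$-flat), descent along the $\Delta_K(\Phi)$-quotient is harmless; it suffices to treat the non-quotiented completion.

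The main point is then exactly as in the proof of Theorem \ref{vanishingtominimal}: the ``multiplication by $\ell$'' map on the torus $\mathbf{E}_K(\Phi)$ induces a finite endomorphism
$$\psi_\ell : \mathbf{S}_{K_\Phi}(Q_\Phi,D_\Phi,\Sigma(\Phi)) \to \mathbf{S}_{K_\Phi}(Q_\Phi,D_\Phi,\Sigma(\Phi))$$
which is the identity on $\mathbf{S}_{K_\Phi}(G_{\Phi,h},D_{\Phi,h})$. Because the ideals $\mathcal{J}_{K_p,\Phi}$ and $\mathcal{I}_{1,K_p,\Phi}$ governing the two normalized blow-ups are pulled back from $\mathbf{S}_{K_\Phi}(G_{\Phi,h},D_{\Phi,h})$ (this is the content of the first bullets in (3) and (4) of Theorem \ref{thm-description-toro-compactificiation}), the map $\psi_\ell$ lifts canonically through both blow-ups to a finite endomorphism of $\mathbf{S}^{mod}_{K_\Phi}(Q_\Phi,D_\Phi,\Sigma(\Phi))_\mathfrak{U}$ which preserves the ``mod'' structure. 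Choose a divisor $D'$ on $\mathbf{S}^{tor,mod}_{K^pK_p,\Sigma,\mathfrak{U}}$ supported on $D$ and relatively ample over $\mathfrak{S}^{\star,mod}_{K^pK_p,\mathfrak{U}}$ (such a $D'$ exists because $\pi$ is projective, being the restriction of a projective map); then the inclusion $\oscr(-nD)\hookrightarrow (\psi_\ell)_\star\oscr(-sD')$ is split for $s\gg0$ by the round-down argument of \cite{MR2810322}, Lem.\ 9.3.4, exactly as in Theorem \ref{vanishingtominimal}. This reduces the vanishing to the vanishing of $\HH^i$ of $\oscr(-sD')$ for $s$ arbitrarily large, which holds by relative ampleness. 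The only subtlety I anticipate is checking that the blow-ups do not disturb the round-down argument; this is fine because the $\qq$-divisor $-\ell^{-1}D'_0$ and its round-down $-D_0$ pull back compatibly through the normalized blow-ups (the ideals blown up are disjoint in support from $D$), so Lemma 9.3.4 of \cite{MR2810322} applies verbatim on the modified charts.
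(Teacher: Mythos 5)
Your proof follows exactly the strategy the paper has in mind — the paper's own proof is just the one-line remark ``the proof of Theorem \ref{vanishingtominimal} transposes verbatim, given Theorem \ref{thm-description-toro-compactificiation}'' — and you have correctly identified the two points where the transposition needs to be checked: (i) that $\psi_\ell$ lifts through the normalized blow-ups, and (ii) that the round-down argument survives on the modified charts. For (i), your justification (the ideals $\mathcal{J}_{K_p,\Phi}$ and $\mathcal{I}_{1,K_p,\Phi}$ are pulled back from $\mathbf{S}_{K_\Phi}(G_{\Phi,h},D_{\Phi,h})$, on which $\psi_\ell$ is the identity) is precisely what the bullets in parts (3) and (4) of Theorem \ref{thm-description-toro-compactificiation} provide, and is the key content. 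Two small imprecisions to be aware of: the claim that ``the ideals blown up are disjoint in support from $D$'' is not really correct — $\mathcal{J}_{K_p}$ is supported on the special fiber and can meet $D$ — but this is also not the relevant point; what matters is that the ideals are pulled back from the base, so the blow-up acts only in the ``base direction'' and leaves the toric $\Sigma(\Phi)$-structure (and hence the coefficients $a_\rho$ entering the round-down computation) untouched. And the existence of a boundary-supported, relatively ample $D'$ does not follow from projectivity of $\pi$ alone; it is the standard nontrivial fact about projective cone decompositions that the paper already invokes in the proof of Theorem \ref{vanishingtominimal}, which then pulls back through the (base-directional) blow-ups. Neither issue affects the validity of your argument.
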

\begin{proof} The proof of theorem \ref{vanishingtominimal} transposes  verbatim, given theorem \ref{thm-description-toro-compactificiation}.
\end{proof}

\subsubsection{The Hodge type case}\label{section-Hodge-case}

We fix an embedding $(G,X) \hookrightarrow (\mathrm{GSp}_{2g}, \mathcal{H}^{\pm}_g)$ of a Hodge type  Shimura datum into a Siegel Shimura datum. 
Let $V$ be a $2g$-dimensional vector space over $\qq$, equipped with a symplectic pairing so that $G \hookrightarrow \mathrm{GSp}_{2g}\hookrightarrow \mathrm{GL}(V)$.  We can view $G \hookrightarrow \mathrm{GL}(V)$ as the  group stabilizing a finite number of tensors $\{s_\alpha\} \in V^{\otimes}$ (where $V^\otimes = \oplus_{m,n \in \ZZ_{\geq 0}} V^{\otimes m} \otimes (V^\vee)^{\otimes n}$).  
The group $\mathrm{GSp}_{2g}$ will be denoted by $\tilde{G}$ in this section. All the objects corresponding to this group will carry a $\sim$.

Over $S_{K^p K_p}$ we have  a pro-\'etale $K_p$-torsor represented by the tower of Shimura varieties $\lim_{K'_p\subseteq K_p} S_{K^pK_p'}$. By pushout along the map $K_p \rightarrow G(\qq_p)$ we get a $G(\qq_p)$-torsor.

 One can give a more ``modular'' description of this torsor using the closed embedding  ${S}_K \hookrightarrow \tilde{{S}}_{\tilde{K}}$,  where $\tilde{{S}}_{\tilde{K}}$ is a Siegel Shimura variety (over $E$) and the embedding is the one induced  by  $(G,X) \hookrightarrow (\mathrm{GSp}_{2g}, \mathcal{H}^{\pm}_g)$ for a suitable compact open subgroup $\tilde{K}$. 
The tensors $\{s_\alpha\}$ can be used to produce sections $\{s_{\alpha,p} \in \HH^0({S}_K, \HH_1(A, \qq_p)^\otimes)\}$. More precisely, one first produces tensors  $\{s_{\alpha,B} \in \HH^0(S_K(\C), \HH_1(A, \qq)^\otimes)\}$ using the complex uniformization of $S_K(\C)$. They  give  tensors $\{s_{\alpha,p} \in \HH^0(S_K(\C), \HH_1(A, \qq_p)^\otimes)\}$.   Lemma 2.3.2 of \cite{MR3702677} proves that these tensors are defined over $E$.  Therefore, we can consider the torsor of isomorphisms $V \otimes_\qq \qq_p \rightarrow \HH_1(A, \qq_p)$, preserving all the tensors $s_{\alpha,p}$. 

There is also the  $M_\mu$-torsor $M_{dR}$ over $S_K$.  We  recall its description. The tensors $\{s_\alpha\}$ can be used to produce sections $\{s_{\alpha,dR} \in \HH^0({S}_K, \HH_{1,dR}(A)^\otimes)\}$. One first define the $P_{\mu}^{std}$-torsor $P_{dR}$, to be the the torsor of isomorphisms $V \otimes_\qq \oscr_{S_K} \rightarrow \HH_{1,dR}(A)$ matching the filtration on $V$ corresponding to $P_{\mu}^{std}$ with the Hodge filtration, and preserving the tensors $s_{\alpha,dR}$. 
By pushout along  $P_{\mu}^{std} \rightarrow M_\mu$, we have $M_{dR} = P_{dR} \times^{P_{\mu}^{std}} M_\mu$. 

\begin{rem} The closed immersion ${S}_K \hookrightarrow \tilde{{S}}_{\tilde{K}}$ extends to a closed immersion $M_{dR} \hookrightarrow \tilde{M}_{dR}$ where $\tilde{M}_{dR}$ is the $M_{\tilde{\mu}}$- ``de Rham'' torsor over $\tilde{{S}}_{\tilde{K}}$. 
\end{rem}

By pull back to the analytic space $\mathcal{S}^{an}_{K}$  we get a pro-\'etale $G(\qq_p)$-torsor $\mathcal{G}^{an}_{pet, p}$, as well as a $\mathcal{M}_\mu^{an}$-torsor $\mathcal{M}_{dR}^{an}$.

In section 2.3 of \cite{MR3702677} the authors define two other  pro-\'etale torsors  over $\mathcal{S}^{an}_K$: $\mathcal{P}^{an}_{HT}$ and $\mathcal{M}^{an}_{HT}$, under the groups  $\mathcal{P}^{an}_\mu$ and  $\mathcal{M}_\mu^{an}$. These definitions extend those given in the Siegel case  (see section \ref{section-torsors-Siegel}) and use the tensors $s_{\alpha,p}$. 
 Moreover, $\mathcal{P}^{an}_{HT} \times^{\mathcal{P}^{an}_\mu} \mathcal{G}^{an} = \mathcal{G}^{an}_{et,p}\times^{G(\qq_p)} \mathcal{G}^{an}$ and  $\mathcal{M}^{an}_{HT} = \mathcal{P}^{an}_{HT} \times^{\mathcal{P}^{an}_\mu} \mathcal{M}^{an}_\mu$. By \cite{MR3702677}, prop. 2.3.9, there is a canonical identification of torsors    $\mathcal{M}^{an}_{dR}$ and $\mathcal{M}^{an}_{HT}$ 

\subsubsection{Perfectoid Hodge type Shimura varieties}
By \cite{scholze-torsion}, there is a perfectoid space $\mathcal{S}^{an}_{K^p} \sim \lim_{K_p} \mathcal{S}^{an}_{K^p K_p}$. Since the torsor $\mathcal{G}^{an}_{pet, p}$ becomes trivial over $\mathcal{S}^{an}_{K^p}$, we obtain a Hodge-Tate period map  (\cite{MR3702677}, thm 2.1.3):   $\pi_{HT} : \mathcal{S}^{an}_{{K^p}} \rightarrow \mathcal{FL}_{G,\mu}$ which is $G(\qq_p)$-equivariant.

A key property is that the pull back of the $\mathcal{M}_\mu^{an}$- torsor $\mathcal{G}^{an}/U_{\mathcal{P}_\mu^{an}} \rightarrow  \mathcal{FL}_{G,\mu}$ via $\pi_{HT}$ is $\mathcal{M}_{HT}^{an}$ and this is canonically identified with the pull back via $\mathcal{S}^{an}_{{K^p}} \rightarrow \mathcal{S}^{an}_{{K^pK_p}}$ of $\mathcal{M}_{dR}^{an}$.

Moreover, the relation with the Siegel datum is expressed by the following diagram, where the horizontal maps are closed immersions: 

\begin{eqnarray*}
\xymatrix{ \mathcal{S}^{an}_{K^p} \ar[r] \ar[d] & \tilde{\mathcal{S}}^{an}_{\tilde{K}^p} \ar[d] \\
\mathcal{FL}_{G,\mu} \ar[r] & {\mathcal{FL}}_{\tilde{G},\tilde{\mu}}}
\end{eqnarray*}

\subsubsection{Compactifications in the Hodge case}\label{Section-Hodge-perfectoid}
For any compact open $K \subset G(\mathbb{A}_f)$, we have the minimal compactification $\mathcal{S}^{\star}_K$ and  there is a finite surjective map  
$ \mathcal{S}^\star_{K} \rightarrow \mathcal{S}^{\star}_{\overline{K}}$ where $\mathcal{S}^{\star}_{\overline{K}}$ is defined before \cite{scholze-torsion}, thm. IV. 1.1.    This is  the schematic image of the morphism $ \mathcal{S}^\star_{K} \rightarrow \tilde{\mathcal{S}}^{\star}_{\tilde{ K}}$ where $\tilde{\mathcal{S}}^{\star}_{\tilde{K}}$ is the minimal compactification of the Shimura variety for $\tilde{G}$, and $\tilde{K}$ is a small enough compact open subgroup of $\tilde{G}(\mathbb{A}_f)$ such that $\tilde{K} \cap G(\mathbb{A}_f) = K$. 
  By \cite{scholze-torsion},  thm. IV. 1.1 there is a  perfectoid space 
 $\mathcal{S}^\star_{\overline{K^p}} \sim \lim_{K_p} \mathcal{S}^\star_{\overline{K^pK_p}}$  
and there is a   map $\pi_{HT} : \mathcal{S}^\star_{\overline{K^p}} \rightarrow \mathcal{FL}_{{G},{\mu}}$.  
Strictly speaking the map constructed  in  \cite{scholze-torsion},  took values in $\mathcal{FL}_{ \tilde{G}, \tilde{\mu}}$. Since  $\mathcal{FL}_{G, \mu} \hookrightarrow \mathcal{FL}_{ \tilde{G}, \tilde{\mu}}$ is Zariski closed and the map  is known to factors through $ \mathcal{FL}_{G, \mu} $ over the Zariski dense open subset $\mathcal{S}^{an}_{{K^p}}$ by \cite{MR3702677}, thm 2.1.3, we deduce that we have a map $\pi_{HT} : \mathcal{S}^\star_{\overline{K^p}} \rightarrow \mathcal{FL}_{G,\mu}$. 

 Since the tower $\{\mathcal{S}^\star_{\overline{K^pK_p}}\}_{K_p}$ carries a $G(\qq_p)$-action, the space $\mathcal{S}^\star_{\overline{K^p}} $ inherits a $G(\qq_p)$-action, and the map $\pi_{HT}$ is $G(\qq_p)$-equivariant. Moreover, the map $\pi_{HT}$ is affinoid in the sense  that there exists an affinoid covering $\mathcal{FL}_{G,\mu} = \cup_i V_i$ such each $\pi_{HT}^{-1}(V_i)$ is a good affinoid perfectoid open subset of  $\mathcal{S}^\star_{\overline{K^p}}$ (see definition \ref{defi-of-good}).

We also know that there is  a perfectoid space  $\mathcal{S}^\star_{K^p} = \lim_{K_p} \mathcal{S}^{\star,\Diamond}_{K^pK_p}$ (this last inverse limit is taken in the category of diamonds) by \cite{bhatt2019prisms}, thm. 1.16. We therefore have a  $G(\qq_p)$-equivariant map $\mathcal{S}^\star_{K^p} \rightarrow \mathcal{S}^\star_{\overline{K^p}}$. It is not known whether $\mathcal{S}^\star_{K^p} \sim \lim \mathcal{S}^{\star}_{K^pK_p}$ in general.

By \cite{KWLan}, for a cofinal subset of cone decompositions $\Sigma$, we have a perfectoid space   $\mathcal{S}^{tor}_{K^p,\Sigma} \sim \lim \mathcal{S}^{tor}_{K^pK_p,\Sigma}$.    More precisely, for each such cone decomposition $\Sigma$, there exists a cone decomposition $\tilde{\Sigma}$ and a closed immersion of perfectoid spaces 
$\mathcal{S}^{tor}_{K^p,\Sigma} \hookrightarrow \mathcal{S}^{tor}_{\tilde{K}^p, \tilde{\Sigma}}$.   Let us call these cone decompositions \emph{perfect} cone decompositions because they give rise to perfectoid toroidal compactifications. 

\begin{rem} We did not prove that    there is a perfectoid space  $\mathcal{S}^{tor}_{K^p,\Sigma} \sim \lim \mathcal{S}^{tor}_{K^pK_p,\Sigma}$ for any cone decomposition $\Sigma$. It seems likely that one could reproduce the argument of \cite{MR3512528} in the Hodge case, using the explicit description of the boundary of the integral toroidal compactifications given in \cite{MR3948111}.  
\end{rem} 

For a perfect cone decomposition $\Sigma$, we have a series of maps $\mathcal{S}^{tor}_{K^p,\Sigma} \rightarrow \mathcal{S}^\star_{K^p}  \rightarrow \mathcal{S}^\star_{\overline{K^p}}$ and we therefore get a map $\pi_{HT}^{tor} : \mathcal{S}^{tor}_{K^p,\Sigma} \rightarrow \mathcal{FL}_{G,\mu}$. 

The relation to the Siegel Shimura varieties is given by the following diagram where all horizontal maps are closed immersions: 

\begin{eqnarray*}
\xymatrix{ \mathcal{S}^{tor}_{{K^p},\Sigma} \ar[r] \ar[d] & \tilde{\mathcal{S}}^{tor}_{\tilde{K}^p, \tilde{\Sigma}} \ar[dd]\\
\mathcal{S}^{\star}_{{K^p}}  \ar[d] &  \\
\mathcal{S}^{\star}_{\overline{K^p}} \ar[r] \ar[d] & \tilde{\mathcal{S}}^{\star}_{\tilde{K}^p} \ar[d] \\
\mathcal{FL}_{G,\mu} \ar[r] & {\mathcal{FL}}_{\tilde{G},\tilde{\mu}}}
\end{eqnarray*}

Using the map $\pi_{HT}^{tor}$ we can define a canonical extension of the torsors $\mathcal{P}_{HT}^{an}$ and $\mathcal{M}^{an}_{HT}$ to $\mathcal{S}^{tor}_{K^p,\Sigma}$, by simply by pulling back the universal $\mathcal{P}_\mu^{an}$-torsor over $\mathcal{FL}_{G,\mu}$ and pushing out along the map $\mathcal{P}_\mu^{an} \rightarrow \mathcal{M}_\mu^{an}$. There is also the torsor $\mathcal{M}^{an}_{dR}$  over $\mathcal{S}_{K,\Sigma}^{tor}$ which is pulled back from the map  $\mathcal{S}_{K^p,\Sigma}^{tor} \rightarrow \mathcal{S}_{K^pK_p,\Sigma}^{tor}$. The following proposition is corollary 5.2 of \cite{epiga:5865}. 

\begin{prop}[\cite{epiga:5865}] The torsors $\mathcal{M}^{an}_{HT}$  and $\mathcal{M}^{an}_{dR} \times_{\mathcal{S}_{K^pK_p,\Sigma}^{tor}} \mathcal{S}^{tor}_{K^p,\Sigma} $ are canonically isomorphic.
\end{prop}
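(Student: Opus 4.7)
The strategy is to first reduce the proposition to its Siegel analogue, and then to establish the Siegel case by extending the canonical isomorphism on the interior (from \cite{MR3702677}, prop. 2.3.9) to the entire toroidal compactification using the formal charts of theorem \ref{thm-description-toro-compactificiation}.

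For the reduction, one uses the compatibility diagram at the end of section \ref{Section-Hodge-perfectoid}, namely the closed immersions $\mathcal{S}^{tor}_{K^p,\Sigma}\hookrightarrow\tilde{\mathcal{S}}^{tor}_{\tilde K^p,\tilde\Sigma}$ and $\mathcal{FL}_{G,\mu}\hookrightarrow\mathcal{FL}_{\tilde G,\tilde\mu}$ together with the equivariance of the Hodge-Tate period maps. By construction $\mathcal{P}^{an}_{HT}$ is the $\mathcal{P}^{an}_\mu$-reduction of the Siegel Hodge-Tate torsor cut out by the tensors $\{s_{\alpha,p}\}$, while $\mathcal{M}^{an}_{dR}$ is the $\mathcal{M}^{an}_\mu$-reduction of $\tilde{\mathcal{M}}^{an}_{dR}$ cut out by $\{s_{\alpha,dR}\}$. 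Since the tensors $s_{\alpha,p}$ and $s_{\alpha,dR}$ match under the Hodge-Tate comparison on the interior (this is how they were produced from the algebraic tensors $\{s_\alpha\}$), a canonical isomorphism in the Siegel case on the toroidal compactification will automatically restrict to one in the Hodge type case.

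For the Siegel case, on $\mathcal{S}^{an}_{K^p}\subset\mathcal{S}^{tor}_{K^p,\Sigma}$ the canonical isomorphism arises from the relative Hodge-Tate exact sequence for the universal abelian scheme $A$, identifying $\omega_{A^t}\otimes\hat{\mathcal{O}}$ with the appropriate graded piece of $\HH_1(A,\qq_p)\otimes\hat{\mathcal{O}}$. To extend this globally, I would cover the boundary by the local charts of theorem \ref{thm-description-toro-compactificiation}: near a cusp label $\Phi$ the universal semi-abelian scheme $A_\Sigma$ fits into an extension $0\to T\to A_\Sigma\to A_0\to 0$ with $T$ a split torus and $A_0$ the universal abelian scheme on the smaller Siegel variety $\mathbf{S}_{K_\Phi}(G_{\Phi,h},D_{\Phi,h})$, and $\pi^{tor}_{HT}$ factors through the Hodge-Tate map of that smaller variety as in the commutative diagram appearing in the proof of theorem \ref{thm-description-toro-compactificiation}. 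On these formal charts, the two torsors decompose according to the toric/abelian filtration: the toric piece corresponds to a pure Hodge-Tate weight zero situation where both torsors are canonically trivial (the Tate module of $T$ is its cocharacter lattice), while the abelian piece is the analogous comparison on $A_0$, handled by induction on $g=\dim S_K$.

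The main obstacle will be checking that the canonical section of the $\mathrm{Isom}$-torsor constructed on the interior extends continuously over the boundary strata and that this extension coincides, on the local charts, with the one produced by combining the trivial toric comparison with the inductive hypothesis for $A_0$. Concretely, this amounts to verifying that the Hodge-Tate morphism $\HH_1(A_\Sigma,\qq_p)\otimes\hat{\mathcal{O}}\to\omega_{A_\Sigma^t}\otimes\hat{\mathcal{O}}$ extends the interior one compatibly with the degeneration $A\rightsquigarrow A_\Sigma$, and that this extension is recorded by $\pi^{tor}_{HT}$ landing in the appropriate Schubert-type stratum of $\mathcal{FL}_{\tilde G,\tilde\mu}$. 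Once this identification is made explicit on the formal models $\mathfrak{S}^{tor,mod}_{K^p,\Sigma}$ from section \ref{sect-formal-perf-Siegel}, the proposition follows by gluing and by the fact that the perfectoid space $\mathcal{S}^{tor}_{K^p,\Sigma}$ is reduced, so that the two sections of the $\mathrm{Isom}$-torsor agreeing on a dense open must agree globally.
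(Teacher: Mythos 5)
Your high-level strategy — restrict the Siegel isomorphism $\tilde{\mathcal{M}}^{an}_{HT}\simeq\tilde{\mathcal{M}}^{an}_{dR}$ to the Hodge-type subvariety, use the interior identification from \cite{MR3702677}, and conclude by a density argument — matches the paper's. But two points go wrong or are substantially overworked. First, the claim that ``a canonical isomorphism in the Siegel case on the toroidal compactification will automatically restrict to one in the Hodge type case'' is not correct as stated: the Siegel isomorphism, restricted to $\mathcal{S}^{tor}_{K^p,\Sigma}$, gives only the identity on $\tilde{M}:=\tilde{\mathcal{M}}^{an}_{dR}\vert_{\mathcal{S}^{tor}_{K^p,\Sigma}}$, which is an $\mathcal{M}^{an}_{\tilde\mu}$-torsor. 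The objects $\mathcal{M}^{an}_{HT}$ and $\mathcal{M}^{an}_{dR}$ are two a priori different $\mathcal{M}^{an}_{\mu}$-\emph{reductions} of $\tilde{M}$, and the content of the proposition is precisely that these two sub-torsors agree; the Siegel identity alone does not give this. Your density argument at the end is the right ingredient to close this gap, and it is essentially the paper's argument: the paper observes that each of $\mathcal{M}^{an}_{HT}$, $\mathcal{M}^{an}_{dR}$ equals the Zariski closure in $\tilde{M}$ of its restriction to the dense open $\mathcal{S}^{an}_{K^p}$, and since those restrictions coincide, so do the closures.

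Second, the long middle portion — re-deriving the Siegel boundary comparison via the formal charts of theorem \ref{thm-description-toro-compactificiation}, the toric/abelian filtration of $A_\Sigma$, and induction on $g$ — is unnecessary and not what the paper does. In the Siegel case the identification of $\tilde{\mathcal{M}}^{an}_{HT}$ with $\tilde{\mathcal{M}}^{an}_{dR}$ over the full toroidal compactification is already recorded in section 4.4.2: the relative Hodge--Tate exact sequence for the semi-abelian scheme $A_\Sigma$ extends to the pro-Kummer-\'etale site of $\mathcal{S}^{tor}_{K,\Sigma}$ (by \cite{diao2019logarithmic}), and both torsors are defined from that same extended sequence, so they coincide ``by construction.'' Your boundary-chart induction is a route one could imagine taking if one had to establish this extension from scratch, but it is not the argument the paper uses and it introduces delicate verifications (continuity of the canonical section over the strata, compatibility with the degeneration $A\rightsquigarrow A_\Sigma$) that the cited results already take care of. In short: the reduction-plus-density skeleton is right, but the reduction step needs to be phrased correctly, and the heavy lifting in the Siegel case should be replaced by a citation to the extended Hodge--Tate filtration.
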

\begin{proof} Let us denote by $\tilde{M}$ the restriction to $\mathcal{S}^{tor}_{K^p,\Sigma}$ of the torsor $\tilde{\mathcal{M}}^{an}_{dR} = \tilde{\mathcal{M}}^{an}_{HT}$ defined over the Siegel perfectoid Shimura variety $\tilde{\mathcal{S}}_{\tilde{K}^p, \tilde{\Sigma}}$.   By construction, $\tilde{M}$ is  a $\mathcal{M}^{an}_{\tilde{\mu}}$-torsor. The two torsors $\mathcal{M}^{an}_{HT}$ and $\mathcal{M}^{an}_{dR}$ are $\mathcal{M}_{\mu}^{an}$-reductions of this torsor.  They coincide over $\mathcal{S}^{an}_{K^p}$ by \cite{MR3702677}, prop. 2.3.9. But $\mathcal{M}^{an}_{HT}$ (resp. $\mathcal{M}^{an}_{dR}$) is equal to the Zariski closure of $\mathcal{M}^{an}_{HT}\vert_{\mathcal{S}^{an}_{K^p}}$ (resp. $\mathcal{M}^{an}_{dR}\vert_{\mathcal{S}^{an}_{K^p}}$) in $\tilde{M}$. Therefore, these torsors have to coincide everywhere.
\end{proof}

\begin{rem}  The isomorphism of torsors is Hecke equivariant.
\end{rem} 

\subsubsection{Integral models in the Hodge case}\label{subsec-integral-hodge} We need to consider formal models. This section is entirely parallel to section \ref{sect-formal-perf-Siegel}.  Let $K = K^p K_p$ with $K = \tilde{K} \cap G(\mathbb{A}_f)$.   Suppose that  $ \tilde{K}_p \subseteq  \tilde{G}(\ZZ_p)$. We can define $\mathfrak{S}_{K^pK_p}$, $\mathfrak{S}^\star_{K^pK_p}$, and $\mathfrak{S}^{tor}_{K^pK_p,\Sigma}$ as the normalizations of $\tilde{\mathfrak{S}}_{\tilde{K}}$, $\tilde{\mathfrak{S}}^\star_{\tilde{K}}$, and $\tilde{\mathfrak{S}}^{tor}_{\tilde{K},\tilde{\Sigma}}$ in $\mathcal{S}_{K^pK_p}$, $\mathcal{S}^\star_{K^pK_p}$, and $\mathcal{S}^{tor}_{K^pK_p,\Sigma}$ respectively. 

Similarly, for $K_p$ small enough, we define  $\mathfrak{S}^{\star,mod}_{K^pK_p}$, $\mathfrak{S}^{tor,mod}_{K^pK_p,\Sigma}$ as the normalizations of $\tilde{\mathfrak{S}}^{\star,mod}_{\tilde{K}}$ and $\tilde{\mathfrak{S}}^{tor,mod}_{\tilde{K},\Sigma}$ in  $\mathcal{S}^\star_{K^pK_p}$ and $\mathcal{S}^{tor}_{K^pK_p,\Sigma}$ respectively. 
Alternatively, these can also be constructed as normalized blow ups of  $\mathfrak{S}^\star_{K^pK_p}$ and $\mathfrak{S}^{tor}_{K^pK_p,\Sigma}$ for the ideals $\mathcal{I}_{K_p}$ and $\mathcal{J}_{K_p}$ which are the pull back of the ideals $\mathcal{I}_{\tilde{K}_p}$ and $\mathcal{J}_{\tilde{K}_p}$ from  $\tilde{\mathfrak{S}}^\star_{\tilde{K}}$ and $\tilde{\mathfrak{S}}^{tor}_{\tilde{K},\Sigma}$.

We  let ${ \mathfrak{S}}^{\star,mod}_{K^p} = \lim_{K_p} { \mathfrak{S}}^{\star,mod}_{K^pK_p}$, where the inverse limit is taken in the category of $p$-adic formal schemes.  This inverse limit exists because the transition morphisms are affine. In the limit we have a  map ${ \mathfrak{S}}^{\star,mod}_{K^p} \rightarrow \tilde{ \mathfrak{S}}^{\star,mod}_{\tilde{K}^p}$ and therefore a map $\pi_{HT} : \mathfrak{S}^{\star,mod}_{K^p} \rightarrow \mathfrak{FL}_{\tilde{G}, \tilde{\mu}}$. 

\begin{prop}\label{prop-bretagne2}
 The Hodge-Tate map factors through a map $\pi_{HT} : \mathfrak{S}^{\star,mod}_{K^p} \rightarrow \mathfrak{FL}_{G, \mu}$.
\end{prop}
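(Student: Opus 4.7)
The plan is to reduce the factorization to a question on the generic fiber using flatness of $\mathfrak{S}^{\star,mod}_{K^p}$ over $\Spf~\ocal_F$. First, I would note that the embedding $G \hookrightarrow \tilde{G}$ together with the compatibility of the parabolics $P_\mu \subseteq G \cap \tilde{P}_{\tilde{\mu}}$ induces a closed immersion of integral flag varieties $\mathfrak{FL}_{G,\mu} \hookrightarrow \mathfrak{FL}_{\tilde{G},\tilde{\mu}}$, locally defined by an ideal sheaf $\mathcal{I}$. The statement that $\pi_{HT}$ factors through $\mathfrak{FL}_{G,\mu}$ is equivalent to the vanishing of the pullback $\pi_{HT}^{-1}\mathcal{I} \cdot \oscr_{\mathfrak{S}^{\star,mod}_{K^p}}$, a local question on both source and target.

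Second, I would verify that $\mathfrak{S}^{\star,mod}_{K^p}$ is flat over $\ocal_F$, so that locally on affine opens $\Spf B \subseteq \mathfrak{S}^{\star,mod}_{K^p}$ the ring $B$ is $p$-torsion free and the natural map $B \hookrightarrow B[1/p]$ is injective. This follows from the construction in section \ref{subsec-integral-hodge}: each $\mathfrak{S}^{\star,mod}_{K^pK_p}$ is defined as the normalization of the Siegel side $\tilde{\mathfrak{S}}^{\star,mod}_{\tilde{K}}$ (which is flat, even integral perfectoid, by proposition \ref{prop-bretagne}) inside the $F$-analytic space $\mathcal{S}^\star_{K^pK_p}$; such a normalization is automatically $p$-torsion free, hence flat over $\ocal_F$ because $\ocal_F$ is a DVR, and the property is preserved in the cofiltered limit (with affine transition maps) defining $\mathfrak{S}^{\star,mod}_{K^p}$.

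Third, I would invoke the analytic Hodge-type factorization recalled in section \ref{Section-Hodge-perfectoid}: by \cite{MR3702677} (and its extension to the minimal compactification at infinite level) the analytic Hodge-Tate morphism on the generic fiber $\mathcal{S}^\star_{\overline{K^p}}$ of the Siegel model factors through $\mathcal{FL}_{G,\mu} \hookrightarrow \mathcal{FL}_{\tilde{G},\tilde{\mu}}$. Composing with the natural morphism from the generic fiber of $\mathfrak{S}^{\star,mod}_{K^p}$ to $\mathcal{S}^\star_{\overline{K^p}}$ (induced by the finite surjection $\mathcal{S}^\star_{K^pK_p}\to\mathcal{S}^\star_{\overline{K^pK_p}}$ at every finite level), we conclude that the image of $\mathcal{I}$ in the localization $\oscr_{\mathfrak{S}^{\star,mod}_{K^p}}[1/p]$ is zero. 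Combining with the flatness established in step two, the image of $\mathcal{I}$ in $\oscr_{\mathfrak{S}^{\star,mod}_{K^p}}$ itself is zero, yielding the desired factorization.

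The main technical obstacle will be the flatness statement for $\mathfrak{S}^{\star,mod}_{K^p}$, and more precisely the interaction between the normalized blow-up $\tilde{\mathfrak{S}}^{\star,mod}_{\tilde{K}^p}\to\tilde{\mathfrak{S}}^\star_{\tilde{K}^p}$ used in the Siegel construction and the subsequent normalization along the Hodge-type closed immersion: one must check that these operations commute sufficiently well for the resulting inverse limit to remain $p$-torsion free and to have its generic fiber compatibly mapping to $\mathcal{S}^\star_{\overline{K^p}}$. Once this compatibility is established (following the lines of \cite{MR3512528}, section A.12, which is already invoked for the toroidal case), the argument above goes through verbatim.
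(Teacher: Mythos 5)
Your proof is correct and reaches the same conclusion, but it runs the flatness reduction along a slightly different object than the paper does. The paper's argument first shows that the interior (boundary-complement) $\mathfrak{S}^{mod}_{K^p}$ of the formal model is integral perfectoid with generic fiber $\mathcal{S}_{K^p}$ (using that $\mathcal{S}_{K^p}\hookrightarrow\tilde{\mathcal{S}}_{\tilde{K}^p}$ is Zariski-closed so the Siegel case passes down), deduces the factorization of $\pi_{HT}$ over this Zariski-dense open from the clean analytic factorization on $\mathcal{S}_{K^p}$ in \cite{MR3702677}, and then extends to all of $\mathfrak{S}^{\star,mod}_{K^p}$ by Zariski density. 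You instead apply the $\ocal_F$-flatness reduction directly to the full compactified formal model $\mathfrak{S}^{\star,mod}_{K^p}$ and quote the already-established factorization on the generic fiber through $\mathcal{S}^\star_{\overline{K^p}}$ recalled in section \ref{Section-Hodge-perfectoid}. Both routes are sound and both ultimately rest on the factorization over the open Shimura variety from \cite{MR3702677}; yours demands less of the formal model ($p$-torsion-freeness rather than integral perfectoid, which the paper explicitly disclaims for $\mathfrak{S}^{\star,mod}_{K^p}$), but it does require the compatibility you flag as the main technical obstacle --- that the generic fiber of the inverse limit $\lim_{K_p}\mathfrak{S}^{\star,mod}_{K^pK_p}$ maps into $\mathcal{S}^\star_{\overline{K^p}}$ compatibly with $\pi_{HT}$ --- whereas the paper's route sidesteps this entirely by working only over the interior, whose generic fiber is unambiguously the perfectoid Shimura variety. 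One small imprecision in your write-up: $\mathcal{S}^\star_{\overline{K^p}}$ is not ``the generic fiber of the Siegel model''; it is a $G$-side perfectoid space, the limit of the schematic images of the Hodge-type minimal compactification inside the Siegel one.
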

\begin{proof} One first checks that the space $\mathfrak{S}^{mod}_{K^p}$ (the complement of the boundary in $\mathfrak{S}^{\star,mod}_{K^p}$) is integral perfectoid and its generic fiber is the quasi-compact open perfectoid Shimura variety $\mathcal{S}_{K^p}$. This follows from the Siegel case, using that $\mathcal{S}_{K^p} \hookrightarrow \tilde{S}_{\tilde{K}^p}$ is a Zariski closed immersion. Therefore the factorization of the period morphism through $\mathfrak{FL}_{G, \mu}$ holds over the Zariski dense subspace  $\mathfrak{S}^{mod}_{K^p}$, and thus everywhere. 
\end{proof}

\begin{rem} We do not know if $\mathfrak{S}^{\star,mod}_{K^p}$ is integral perfectoid. 
\end{rem}

We  let ${ \mathfrak{S}}^{tor,mod}_{K^p,\Sigma} = \lim_{K_p} { \mathfrak{S}}^{tor,mod}_{K^pK_p,\Sigma}$ where the inverse limit is taken in the category of $p$-adic formal schemes.  This inverse limit exists because the transition morphisms are affine.  

\begin{prop} Assume that $\Sigma$ is perfect. The formal scheme ${ \mathfrak{S}}^{tor,mod}_{K^p,\Sigma}$ is integral perfectoid and its generic fiber is $\mathcal{S}_{K^p, \Sigma}^{tor}$. 
\end{prop}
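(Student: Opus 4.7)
The plan is to reduce this to the already-established Siegel case. By the construction in section \ref{subsec-integral-hodge}, at each finite level $K_p$ the formal scheme $\mathfrak{S}^{tor,mod}_{K^pK_p,\Sigma}$ is by definition the normalization of $\tilde{\mathfrak{S}}^{tor,mod}_{\tilde{K}^p\tilde{K}_p,\tilde{\Sigma}}$ inside the rigid space $\mathcal{S}^{tor}_{K^pK_p,\Sigma}$. Since the transition maps are affine, passing to the $p$-adic limit identifies $\mathfrak{S}^{tor,mod}_{K^p,\Sigma}$ with the normalization of $\tilde{\mathfrak{S}}^{tor,mod}_{\tilde{K}^p,\tilde{\Sigma}}$ inside the Zariski closed subspace $\mathcal{S}^{tor}_{K^p,\Sigma}\hookrightarrow\tilde{\mathcal{S}}^{tor}_{\tilde{K}^p,\tilde{\Sigma}}$ (using that $\Sigma$ is perfect, so that both sides of this closed immersion are perfectoid, as recalled in section \ref{Section-Hodge-perfectoid}).

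First I would check the generic fiber claim. For this it suffices to show that the generic fiber commutes with the normalized limit. This follows from the identification above, the fact that taking the generic fiber commutes with normalization and with $p$-adic affine limits, and the property $\tilde{\mathcal{S}}^{tor}_{\tilde{K}^p,\tilde{\Sigma}}\sim\lim_{\tilde{K}_p}\tilde{\mathcal{S}}^{tor}_{\tilde{K}^p\tilde{K}_p,\tilde{\Sigma}}$ from \cite{MR3512528}. This yields an affinoid cover of $\mathcal{S}^{tor}_{K^p,\Sigma}$ by preimages of affinoids in $\mathcal{S}^{\star,mod}_{K^p,\mathfrak{U}}$ on which one can read off that the generic fiber of $\mathfrak{S}^{tor,mod}_{K^p,\Sigma}$ is $\mathcal{S}^{tor}_{K^p,\Sigma}$.

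The main obstacle is to establish the integral perfectoid property. Working locally on an affinoid open $\Spf A\subseteq \tilde{\mathfrak{S}}^{tor,mod}_{\tilde{K}^p,\tilde{\Sigma}}$, with $A$ integral perfectoid by the Siegel case (proposition just preceding this one, proved via \cite{MR3512528}), the corresponding piece of $\mathfrak{S}^{tor,mod}_{K^p,\Sigma}$ is $\Spf B$ where $B$ is the integral closure of $A$ in the perfectoid Tate ring $R=\HH^0(\mathcal{S}^{tor}_{K^p,\Sigma}\times_{\tilde{\mathcal{S}}^{tor}_{\tilde{K}^p,\tilde{\Sigma}}}\Spa(A[1/p],A),\mathcal{O}^+)$. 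The plan here is to invoke the standard fact (cf. \cite{scholze-torsion}, or \cite{MR3272049}) that for an integral perfectoid ring $A$ and a perfectoid Tate $A[1/p]$-algebra $S$ equipped with an integral structure, the ring $S^\circ$ of power-bounded elements is integral perfectoid; combined with the observation that the integral closure of $A$ in $R$ coincides with $R^\circ$ (since $A\to R^\circ$ is already integral, as $A/\varpi\to R^\circ/\varpi$ factors through the reduction of $R^{\circ,\flat}$, which is perfect hence reduced and integral over $A/\varpi$).

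Finally, to conclude the two claims at once, I would verify using theorem \ref{thm-description-toro-compactificiation} (applied to the Siegel auxiliary datum) and the closed immersion of Shimura data that everything is compatible with the boundary stratification, so that the affine-local argument above globalizes. The passage from finite level to the infinite level limit uses that the tilting operation, and hence the perfectoid property, is preserved under the $p$-adic completion of filtered colimits of $\varpi$-torsion-free rings—exactly the situation given by the tower $\{\mathfrak{S}^{tor,mod}_{K^pK_p,\Sigma}\}_{K_p}$.
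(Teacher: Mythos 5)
Your reduction to the Siegel case matches the paper's approach (the paper's own proof is literally ``This follows from the Siegel case''), but your unpacking has two genuine gaps. The first is the assertion that the $p$-adic limit $\mathfrak{S}^{tor,mod}_{K^p,\Sigma}$ coincides with the normalization of $\tilde{\mathfrak{S}}^{tor,mod}_{\tilde{K}^p,\tilde{\Sigma}}$ inside $\mathcal{S}^{tor}_{K^p,\Sigma}$, together with the parallel claim that the generic fiber commutes with the normalized limit. The $p$-adic completion of a filtered colimit of integrally closed rings need not be integrally closed; the natural way to verify it here is to first show the completion is integral perfectoid---but that is precisely what the proposition claims, so as written your argument is circular. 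One must run a Frobenius-surjectivity argument along the tower directly (using the Siegel case), and only afterwards deduce that the limit computes the correct ring of integral elements in the generic fiber.

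The second gap is the parenthetical claiming that the integral closure of $A$ in $R$ equals $R^\circ$ because ``$A/\varpi\to R^\circ/\varpi$ factors through the reduction of $R^{\circ,\flat}$, which is perfect hence reduced and integral over $A/\varpi$.'' This does not hold up: ``the reduction of $R^{\circ,\flat}$'' is not a well-defined ring in this sentence (presumably $R^{\circ,\flat}/\varpi^\flat$ is meant, but that is a quotient of a perfect ring, hence not automatically reduced), and in any case perfectness of a characteristic-$p$ ring yields neither reducedness of a quotient nor integrality over a subring. Moreover the claimed equality is unnecessary and not generally true: the integral closure gives \emph{some} open integrally closed subring $R^+\subseteq R^\circ$, possibly strictly smaller than $R^\circ$. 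The statement you actually need is that \emph{any} ring of integral elements in a perfectoid Tate ring is integral perfectoid (cf.\ \cite{MR3272049}); the remaining work---which your proof does not carry out---is to check that $B$, the completed colimit of normalizations, really is such a ring, and that is exactly where the Frobenius surjectivity inherited from the Siegel tower must be made explicit.
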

\begin{proof} This follows from the Siegel case. 
\end{proof}

\begin{rem} We do not know if $\mathfrak{S}^{tor,mod}_{K^p,\Sigma}$ is integral perfectoid for all $\Sigma$. 
\end{rem}

We now let $\mathcal{U} \hookrightarrow \mathcal{FL}_{G,\mu}$ be a quasi-compact open subset. It is induced by a quasi-compact open subset $\tilde{\mathcal{U}}$ of $\mathcal{FL}_{\tilde{G}, \tilde{\mu}}$. Our goal is to define formal models for $\pi_{HT}^{-1}(\mathcal{U})$ and $(\pi^{tor}_{HT})^{-1}(\mathcal{U})$.

 There exists an ideal $\tilde{\mathcal{I}}$ of $\oscr_{\mathfrak{FL}_{\tilde{G},\tilde{\mu}}}$ and an open subscheme $\tilde{\mathfrak{U}}$ of   $\mathrm{NBL}_{\tilde{\mathcal{I}}}(\mathfrak{FL}_{\tilde{G},\mu_{\tilde{G}}})$ such that the generic fiber of $\tilde{\mathfrak{U}}$ is $\tilde{\mathcal{U}}$.

For $K_p$ small enough, we define  $\mathfrak{S}^{\star,mod}_{K^pK_p, \mathfrak{U}}$, $\mathfrak{S}^{tor,mod}_{K^pK_p,\Sigma, \mathfrak{U}}$ as the normalizations of $\tilde{\mathfrak{S}}^{\star,mod}_{\tilde{K}, \tilde{\mathfrak{U}}}$ and $\tilde{\mathfrak{S}}^{tor,mod}_{\tilde{K}, \tilde{\Sigma}, \tilde{\mathfrak{U}}}$ in  $\pi_{HT}^{-1}(\mathcal{U})$ and $(\pi^{tor}_{HT})^{-1}(\mathcal{U})$ respectively. 
Alternatively, these can also be constructed as  suitable opens in normalized blow ups of  $\mathfrak{S}^{\star,mod}_{K^pK_p}$ and $\mathfrak{S}^{tor,mod}_{K^pK_p,\Sigma}$ at the ideals $\mathcal{I}_{1, {K}_p}$ and $\mathcal{I}_{2, {K}_p}$ which are  the pull back of the ideals $\mathcal{I}_{1, \tilde{K}_p}$ and $\mathcal{I}_{2, \tilde{K}_p}$ from  $\tilde{\mathfrak{S}}^{\star,mod}_{\tilde{K}}$ and $\tilde{\mathfrak{S}}^{tor,mod}_{\tilde{K},\tilde{\Sigma}}$.

\begin{thm} \label{thm-description-toro-compactificiation2}
\begin{enumerate}

\item Let $K = K^pK_p$ with $K_p \subseteq \tilde{G}(\Z_p)$. We have a decomposition $\mathfrak{S}^{tor}_{K^pK_p,\Sigma} = \coprod_{\Phi} Z_K(\Phi)$ into locally closed formal subschemes,  indexed by certain cusp label representatives $\Phi$. 

\item The formal completion $\widehat{\mathfrak{S}^{tor}_{K^pK_p,\Sigma}}^{Z_K(\Phi)}$ admits the following canonical  description:
\begin{itemize}
\item  There is a tower of $p$-adic formal schemes:
$$ \mathbf{S}_{K_\Phi}(Q_\Phi, D_\Phi) \rightarrow \mathbf{S}_{K_\Phi}( \overline{Q}_\Phi, \overline{D}_\Phi) \rightarrow \mathbf{S}_{K_\Phi}(G_{\Phi,h}, D_{\Phi,h})$$
where  $ \mathbf{S}_{K_\Phi}(Q_\Phi, D_\Phi) \rightarrow \mathbf{S}_{K_\Phi}( \overline{Q}_\Phi, \overline{D}_\Phi)$ is a torsor under a torus  $\mathbf{E}_K(\Phi)$, and $\mathbf{S}_{K_\Phi}(G_{\Phi,h}, D_{\Phi,h})$ is an integral model of a lower dimensional Shimura variety.

\item There is a twisted torus embedding  $\mathbf{S}_{K_\Phi}(Q_\Phi, D_\Phi) \rightarrow \mathbf{S}_{K_\Phi}(Q_\Phi, D_\Phi,\Sigma(\Phi))$ depending on $\Sigma$. 

\item There is an arithmetic group $\Delta_{K}(\Phi)$ acting on $X^\star(\mathbf{E}_K(\Phi))$ and on $\mathbf{S}_{K_\Phi}(Q_\Phi, D_\Phi) \hookrightarrow \mathbf{S}_{K_\Phi}(Q_\Phi, D_\Phi,\Sigma(\Phi))$. 
\item  We have a $\Delta_K(\Phi)$-invariant closed   subscheme $\mathbf{Z}_{K_\Phi}(Q_\Phi, D_\Phi,\Sigma(\Phi)) \hookrightarrow \mathbf{S}_{K_\Phi}(Q_\Phi, D_\Phi, \Sigma(\Phi))$.
\item There is a finite morphism  $\mathbf{S}_{K_\Phi}(G_{\Phi,h}, D_{\Phi,h}) \rightarrow  \mathfrak{S}^{\star}_{K}$. 
\item There is a series of morphisms: 
$$ \mathbf{Z}_{K_\Phi}(Q_\Phi, D_\Phi,\Sigma(\Phi)) \rightarrow \mathbf{S}_{K_\Phi}( \overline{Q}_\Phi, \overline{D}_\Phi) \rightarrow \mathbf{S}_{K_\Phi}(G_{\Phi,h}, D_{\Phi,h}).$$
\item  We have a canonical isomorphism $Z_K(\Phi) =\Delta_{K}(\Phi) \backslash \mathbf{Z}_{K_\Phi}(Q_\Phi, D_\Phi,\Sigma(\Phi))$. 

\item We have a canonical isomorphism  $$\widehat{\mathfrak{S}^{tor}_{K, \Sigma}}^{{Z}_{K}(\Phi)}  \simeq \Delta_K(\Phi) \backslash \big( \widehat{ \mathbf{S}_{K_\Phi}(Q_\Phi, D_\Phi,\Sigma(\Phi))}^{\mathbf{Z}_{K_\Phi}(Q_\Phi, D_\Phi,\Sigma(\Phi))}\big).$$
 
 \end{itemize}
 
 \item  For small enough $K_p$:
 
 \begin{itemize}
 \item The ideal $\mathcal{J}_{K_p}$ restricted to $\widehat{\mathfrak{S}^{tor}_{K, \Sigma}}^{{Z}_{K}(\Phi)}$ is the pull back of an ideal $\mathcal{J}_{K_p,\Phi}$ of $\mathbf{S}_{K_\Phi}(G_{\Phi,h}, D_{\Phi,h})$.
 \item Let $\mathbf{Z}^{mod}_{K_\Phi}(Q_\Phi, D_\Phi,\Sigma(\Phi))=  \mathrm{NBL}_{\mathcal{J}_{K_p,\Phi}}\mathbf{Z}_{K_\Phi}(Q_\Phi, D_\Phi,\Sigma(\Phi))$ and $Z^{mod}_K(\Phi)= \Delta_K(\Phi)\backslash \mathbf{Z}^{mod}_{K_\Phi}(Q_\Phi, D_\Phi,\Sigma(\Phi))$. 
  We have a decomposition  $\mathfrak{S}^{tor,mod}_{K^pK_p,\Sigma} =  \coprod_{\Phi} Z^{mod}_K(\Phi)$. 
  
  \item Let $\mathbf{S}^{mod}_{K_\Phi}(G_{\Phi,h}, D_{\Phi,h}) = \mathrm{NBL}_{\mathcal{J}_{K_p,\Phi}}(\mathbf{S}_{K_\Phi}(G_{\Phi,h}, D_{\Phi,h}))$. We have a finite morphism $\mathbf{S}^{mod}_{K_\Phi}(G_{\Phi,h}, D_{\Phi,h}) \rightarrow \mathfrak{S}^{\star,mod}_{K^pK_p}$. 
  
   \item Let $\mathbf{S}^{mod}_{K_\Phi}(Q_\Phi, D_\Phi,\Sigma(\Phi)) = \mathrm{NBL}_{\mathcal{J}_{K_p,\Phi}}\mathbf{S}_{K_\Phi}(Q_\Phi, D_\Phi,\Sigma(\Phi))$.  We have a canonical isomorphism  $$\widehat{\mathfrak{S}^{tor,mod}_{K, \Sigma}}^{{Z}^{mod}_{K}(\Phi)}  \simeq \Delta_K(\Phi) \backslash \big( \widehat{ \mathbf{S}^{mod}_{K_\Phi}(Q_\Phi, D_\Phi,\Sigma(\Phi))}^{\mathbf{Z}^{mod}_{K_\Phi}(Q_\Phi, D_\Phi,\Sigma(\Phi))}\big).$$ 
\end{itemize}

 \item For all small enough $K_p$,
 \begin{itemize}
 \item The ideal $\mathcal{I}_{1,K_p}$ restricted to $\widehat{\mathfrak{S}^{tor,mod}_{K, \Sigma}}^{{Z}^{mod}_{K}(\Phi)}$ is the pull back of an ideal $\mathcal{I}_{1,K_p,\Phi}$ of $\mathbf{S}^{mod}_{K_\Phi}(G_{\Phi,h}, D_{\Phi,h})$.
 \item We have a decomposition  $\mathfrak{S}^{tor,mod}_{K^pK_p,\Sigma,\mathfrak{U}} =  \coprod_{\Phi} Z^{mod}_K(\Phi)_{\mathfrak{U}}$  where $Z^{mod}_K(\Phi)_{\mathfrak{U}}$ is an open subset of $\mathrm{NBL}_{\mathcal{I}_{1,K_p}}(Z^{mod}_K(\Phi))$. 
 
 \item  There is an open subset  $\mathbf{S}^{mod}_{K_\Phi}(G_{\Phi,h}, D_{\Phi,h})_{\mathfrak{U}}$ of $\mathrm{NBL}_{\mathcal{I}_{1,K_p,\Phi}}(\mathbf{S}^{mod}_{K_\Phi}(G_{\Phi,h}, D_{\Phi,h}))$ such that we  have a finite morphism $\mathbf{S}^{mod}_{K_\Phi}(G_{\Phi,h}, D_{\Phi,h})_{\mathfrak{U}} \rightarrow \mathfrak{S}^{\star,mod}_{K^pK_p,\mathfrak{U}}$.
 \item There exists an open subset  $\mathbf{S}^{mod}_{K_\Phi}(Q_\Phi, D_\Phi,\Sigma(\Phi))_{\mathfrak{U}}$ of  $\mathrm{NBL}_{\mathcal{I}_{1, K_p,\Phi}}(\mathbf{S}^{mod}_{K_\Phi}(Q_\Phi, D_\Phi,\Sigma(\Phi)))$ and an open subset $ \mathbf{Z}^{mod}_{K_\Phi}(Q_\Phi, D_\Phi,\Sigma(\Phi))_{\mathfrak{U}}$ of $ \mathrm{NBL}_{\mathcal{I}_{1, K_p,\Phi}}(\mathbf{Z}^{mod}_{K_\Phi}(Q_\Phi, D_\Phi,\Sigma(\Phi)))$ such that 
  we have a canonical isomorphism  $$\widehat{\mathfrak{S}^{tor,mod}_{K, \Sigma,\mathfrak{U}}}^{{Z}^{mod}_{K}(\Phi)_{\mathfrak{U}}} \simeq \Delta_K(\Phi) \backslash \big( \widehat{\mathbf{S}^{mod}_{K_\Phi}(Q_\Phi, D_\Phi,\Sigma(\Phi))_{\mathfrak{U}}}^{\mathbf{Z}^{mod}_{K_\Phi}(Q_\Phi, D_\Phi,\Sigma(\Phi))_{\mathfrak{U}}}\big).$$ 
\end{itemize}

 \end{enumerate}
 \end{thm}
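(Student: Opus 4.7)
My plan is to reduce the theorem to its Siegel type analog, namely Theorem \ref{thm-description-toro-compactificiation}, by systematically exploiting the fact that, as explained in section \ref{subsec-integral-hodge}, the integral models $\mathfrak{S}^{tor}_{K^pK_p,\Sigma}$, $\mathfrak{S}^{\star,mod}_{K^pK_p}$, $\mathfrak{S}^{tor,mod}_{K^pK_p,\Sigma}$ and $\mathfrak{S}^{tor,mod}_{K^pK_p,\Sigma,\mathfrak{U}}$ are by definition normalizations of the corresponding Siegel integral models in their generic fibers, and that the modification ideals $\mathcal{J}_{K_p}$, $\mathcal{I}_{1,K_p}$ are pullbacks of the Siegel ideals $\mathcal{J}_{\tilde{K}_p}$, $\mathcal{I}_{1,\tilde{K}_p}$.

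For parts (1) and (2), the description of the toroidal compactification $\mathfrak{S}^{tor}_{K^pK_p,\Sigma}$ together with the boundary charts is precisely the content of the work of Madapusi-Pera; it can also be deduced from the Siegel case by taking the normalization. Concretely, each cusp label representative $\Phi$ for $(G,X,K)$ corresponds functorially to a cusp label representative $\tilde{\Phi}$ for $(\tilde{G},\tilde{X},\tilde{K})$, and one defines each of the pieces of the local chart tower $\mathbf{S}_{K_\Phi}(Q_\Phi,D_\Phi)\to\mathbf{S}_{K_\Phi}(\overline{Q}_\Phi,\overline{D}_\Phi)\to\mathbf{S}_{K_\Phi}(G_{\Phi,h},D_{\Phi,h})$ as the normalization inside the generic fiber of the corresponding Siegel piece. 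The canonical isomorphism for $\widehat{\mathfrak{S}^{tor}_{K,\Sigma}}^{Z_K(\Phi)}$ then follows because normalization commutes with formal completion along a closed subscheme for excellent noetherian schemes.

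For part (3), by Theorem \ref{thm-description-toro-compactificiation}(3) applied to the Siegel case, for small enough $\tilde{K}_p$ the ideal $\mathcal{J}_{\tilde{K}_p}$ restricted to $\widehat{\tilde{\mathfrak{S}}^{tor}_{\tilde{K},\tilde{\Sigma}}}^{Z_{\tilde{K}}(\tilde{\Phi})}$ is the pullback of an ideal on $\tilde{\mathbf{S}}_{\tilde{K}_{\tilde{\Phi}}}(\tilde{G}_{\tilde{\Phi},h},\tilde{D}_{\tilde{\Phi},h})$. Pulling back along the finite morphism $\mathbf{S}_{K_\Phi}(G_{\Phi,h},D_{\Phi,h})\to\tilde{\mathbf{S}}_{\tilde{K}_{\tilde{\Phi}}}(\tilde{G}_{\tilde{\Phi},h},\tilde{D}_{\tilde{\Phi},h})$ (coming from the embedding of lower-dimensional Shimura data associated to the cusp) gives the desired ideal $\mathcal{J}_{K_p,\Phi}$. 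The description of $\widehat{\mathfrak{S}^{tor,mod}_{K,\Sigma}}^{Z^{mod}_K(\Phi)}$ then follows from the analogous Siegel description together with the compatibility of normalized blow-ups with both normalization and formal completion along a closed subscheme, applied to the pullback ideal. Part (4) is proved by the same argument, starting from Theorem \ref{thm-description-toro-compactificiation}(4) and pulling back the Siegel statements for $\mathcal{I}_{1,\tilde{K}_p}$ along the Hodge--Siegel comparison.

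The main obstacle is the careful bookkeeping of three operations that must be performed compatibly and in the right order: normalization (from Hodge to Siegel integral models), normalized blow-up (at the modification ideals $\mathcal{J}$ and $\mathcal{I}_1$), and formal completion along the boundary strata. The key technical fact underlying everything is that in the category of excellent noetherian schemes all three operations commute in the appropriate sense, so that normalization of a normalized blow-up equals a normalized blow-up of the normalization at the pullback ideal, and completion commutes with normalization. Granting these formal facts, the theorem reduces to reading off the statements directly from the Siegel theorem \ref{thm-description-toro-compactificiation}; the real content is the verification in section \ref{subsec-integral-hodge} that everything is indeed defined as a pullback/normalization from the Siegel case.
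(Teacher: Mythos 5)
Your proposal matches the paper's proof: items (1) and (2) are delegated to Madapusi--Pera (\cite{MR3948111}), and items (3) and (4) are deduced by pulling back the Siegel-case statements of Theorem \ref{thm-description-toro-compactificiation} along the Hodge--Siegel comparison, using that the Hodge-type integral models and their modifications are, by construction in section \ref{subsec-integral-hodge}, normalizations and normalized blow-ups of the Siegel ones at pulled-back ideals. The paper's actual proof is even terser than yours, but the reduction mechanism --- normalization/normalized blow-up/formal completion all commuting in the excellent noetherian setting, so that the boundary-chart description descends from Siegel --- is exactly what it is invoking. One small caution on your formulation: the claim that ``normalization of a normalized blow-up equals a normalized blow-up of the normalization at the pullback ideal'' should be phrased more carefully (blow-ups do not commute with non-flat base change, and one really uses strict transforms and the universal property of normalization in a fixed fraction field), but in the excellent, reduced setting here with ideals pulled back from the base this does work out, and the paper uses the same implicit facts.
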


\begin{proof} The first two items follow from  \cite{MR3948111}. The point  $(3)$ and the point $(4)$ follow from the analogous statement in  theorem \ref{thm-description-toro-compactificiation}.\end{proof}
\begin{thm}\label{thm-formal-vanishingtominimal2} Consider the map $\pi : \mathfrak{S}_{K^pK_p, \Sigma,\mathfrak{U}}^{tor,mod} \rightarrow \mathfrak{S}^{\star,mod}_{K^pK_p,\mathfrak{U}}$. Then we have that $\mathrm{R}^i\pi_\star \oscr_{ \mathfrak{S}_{K^pK_p, \Sigma,\mathfrak{U}}^{tor,mod}}(-nD) = 0$ for all $i >0$ and $n \geq 1$.
\end{thm}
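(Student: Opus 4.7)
The plan is to reduce to Theorem \ref{thm-formal-vanishingtominimal} in the Siegel case via the explicit description of the boundary provided by Theorem \ref{thm-description-toro-compactificiation2}, following the same strategy used to prove Theorems \ref{vanishingtominimal} and \ref{thm-formal-vanishingtominimal}. Indeed, the argument is formally local at the boundary, and all the relevant toroidal structure needed is already encoded in Theorem \ref{thm-description-toro-compactificiation2}, so the argument of Theorem \ref{thm-formal-vanishingtominimal} transposes essentially verbatim.

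More precisely, first I would apply the theorem on formal functions (\cite{stacks-project}, Tag 02O7) to reduce to showing $\HH^i(\widehat{\mathfrak{S}^{tor,mod}_{K^pK_p,\Sigma,\mathfrak{U}}}^{\pi^{-1}(x)}, \oscr(-nD)) = 0$ for every point $x$ of $\mathfrak{S}^{\star,mod}_{K^pK_p,\mathfrak{U}}$, for all $i>0$ and $n\geq 1$. By Theorem \ref{thm-description-toro-compactificiation2}(4), the fiber $\pi^{-1}(x)$ is contained in some boundary stratum $Z^{mod}_K(\Phi)_{\mathfrak{U}}$, and the formal completion along this fiber has the form
$$\Delta_K(\Phi) \backslash \bigl( \widehat{\mathbf{S}^{mod}_{K_\Phi}(Q_\Phi, D_\Phi,\Sigma(\Phi))_{\mathfrak{U}}}^{\mathbf{Z}^{mod}_{K_\Phi}(Q_\Phi, D_\Phi,\Sigma(\Phi))_{\mathfrak{U}},x}\bigr),$$
i.e.\ is a quotient by the arithmetic group $\Delta_K(\Phi)$ of a completion of a twisted torus embedding. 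The boundary divisor $D$ pulls back to the divisor $D_0 = \mathbf{S}^{mod}_{K_\Phi}(Q_\Phi,D_\Phi,\Sigma(\Phi))_{\mathfrak{U}} \setminus \mathbf{S}^{mod}_{K_\Phi}(Q_\Phi,D_\Phi)_{\mathfrak{U}}$ supported on this torus embedding.

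Second, I would choose a divisor $D'$ on $\mathfrak{S}^{tor,mod}_{K^pK_p,\Sigma,\mathfrak{U}}$ with the same support as $D$ and such that $\oscr(-D')$ is ample relatively to $\mathfrak{S}^{\star,mod}_{K^pK_p,\mathfrak{U}}$; this is standard and produces a $\Delta_K(\Phi)$-invariant $\qq$-divisor $D'_0 = \sum_{\rho \in \Sigma(\Phi)(1)} a_\rho D_\rho$ on the local chart, while $D_0 = \sum_\rho D_\rho$. Then, for any $C \in \ZZ_{\geq 0}$, by choosing $\ell$ large enough relative to the finitely many $\Delta_K(\Phi)$-orbits of $a_\rho$, the multiplication-by-$\ell$ map on the torus $\mathbf{E}_K(\Phi)$ induces a $\Delta_K(\Phi)$-equivariant finite morphism $\psi_\ell$ on $\mathbf{S}^{mod}_{K_\Phi}(Q_\Phi, D_\Phi,\Sigma(\Phi))_{\mathfrak{U}}$, and hence on $\widehat{\mathfrak{S}^{tor,mod}_{K^pK_p,\Sigma,\mathfrak{U}}}^{\pi^{-1}(x)}$, for which the round-down of $-\ell^{-1}D'_0$ equals $-D_0$. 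By \cite{MR2810322}, lem.\ 9.3.4, the inclusion $\oscr(-D_0) \hookrightarrow (\psi_\ell)_\star \oscr(-sD'_0)$ is a split injection for some $s\geq C$, and pulling back by $\psi_n$ yields a split injection $\oscr(-nD_0) \hookrightarrow (\psi_\ell)_\star \oscr(-snD'_0)$.

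Finally, since splitting passes to cohomology, $\HH^i(\widehat{\mathfrak{S}^{tor,mod}_{K^pK_p,\Sigma,\mathfrak{U}}}^{\pi^{-1}(x)}, \oscr(-nD))$ is a direct summand of $\HH^i(\widehat{\mathfrak{S}^{tor,mod}_{K^pK_p,\Sigma,\mathfrak{U}}}^{\pi^{-1}(x)}, \oscr(-snD'))$, and taking $C$ (hence $s$) sufficiently large, the latter vanishes for $i>0$ by the relative ampleness of $\oscr(-D')$ over $\mathfrak{S}^{\star,mod}_{K^pK_p,\mathfrak{U}}$. This gives the required vanishing. The only potential obstacle is verifying that the existence of the relatively ample $D'$ and the compatibility of $\psi_\ell$ with the quotient by $\Delta_K(\Phi)$ survive passage through the normalized blow-ups used to define $\mathfrak{S}^{tor,mod}$ and $\mathfrak{S}^{\star,mod}$; but this is ensured by the functoriality of the normalized blow-up construction and by Theorem \ref{thm-description-toro-compactificiation2}(3)--(4), which tells us exactly that the blow-up ideals are pulled back from the local charts.
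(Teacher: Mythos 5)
Your proposal is correct and takes essentially the same approach as the paper: the paper's proof of this theorem is the single sentence ``The proof of theorem \ref{vanishingtominimal} transposes verbatim, given theorem \ref{thm-description-toro-compactificiation2},'' and you have spelled out exactly the steps of that transposed argument (formal functions, boundary charts from Theorem \ref{thm-description-toro-compactificiation2}(3)--(4), the relatively ample $D'$, the multiplication-by-$\ell$ maps $\psi_\ell$, and the splitting from \cite{MR2810322}, lem.\ 9.3.4). One minor wording slip: your opening sentence frames this as a ``reduction to the Siegel case,'' but what you actually (correctly) do, and what the paper intends, is to re-run the argument directly on the Hodge-type charts supplied by Theorem \ref{thm-description-toro-compactificiation2}, not to deduce it from the Siegel statement.
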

\begin{proof} The proof of theorem \ref{vanishingtominimal} transposes  verbatim, given theorem \ref{thm-description-toro-compactificiation2}.
\end{proof}

\subsubsection{General Shimura varieties}\label{section-Abelian-type1} We now extend part of the preceding  to general Shimura varieties.  For the moment, let $(G,X)$ be an arbitrary Shimura datum and let $K = K^p K_p \subseteq G(\mathbb{A}_f)$ be a compact open subgroup. We consider the compactified Shimura variety $\mathcal{S}_{K,\Sigma}^{tor}$.
 For any algebraic representation $W$ of $G^c$, we have a  local system on the pro-kummer-\'etale site $\mathcal{W}_p$ as 
well as a filtered vector bundle with integrable log-connection $\mathcal{W}_{dR}$. 
We need some period sheaves relative to $\mathcal{S}_{K,\Sigma}^{tor}$ (see \cite{MR3090230}, sect. 6 and \cite{diao2019logarithmic}, sect. 2): $\mathbb{B}_{dR}^+$, $\mathbb{B}_{dR}$, $\oscr \mathbb{B}^+_{dR,log}$ and $\oscr \mathbb{B}_{dR,log}$.
The following property is a consequence of  \cite{diao2019logarithmic}, thm. 5.3.1. The local system $\mathcal{W}_p$ and the filtered vector bundle with integrable log-connection $\mathcal{W}_{dR}$ are associated (\cite{MR3090230}, def 7.5) in the sense  that there is a canonical isomorphism compatible with filtrations, connection and Hecke action:

$$ \mathcal{W}_p \otimes_{\qq_p} \oscr \mathbb{B}_{dR,log} =  \mathcal{W}_{dR} \otimes_{\oscr_{\mathcal{S}_{K,\Sigma}^{tor}}} \oscr\mathbb{B}_{dR,log}$$

\begin{rem} This identity is a consequence of the comparison theorems  for (semi)-abelian varieties in the Siegel case. In the Hodge case and outside of the boundary, it is proved in \cite{MR3702677} section 2.2 and 2.3, as a consequence of the comparison theorems for abelian varieties, together with results  on Hodge tensors under comparison isomorphisms due to \cite{MR1265557}. 
\end{rem}

Using this canonical isomorphism, we define the Hodge-Tate filtration on $\mathcal{W}_p \otimes \hat{\oscr}_{\mathcal{S}_{K,\Sigma}^{tor}}$ as follows. Let $\mathbb{M} = \mathcal{W}_p \otimes_{\qq_p} \mathbb{B}^{+}_{dR,log}$ and $\mathbb{M}_0 = (\mathcal{W}_{dR} \otimes_{\oscr_{\mathcal{S}_{K,\Sigma}^{tor}}} \oscr \mathbb{B}^+_{dR,log})^{\nabla =0}$.

These are two $\mathbb{B}^+_{dR}$-local systems and are  lattices inside $\mathbb{M} \otimes_{\mathbb{B}^+_{dR}} \mathbb{B}_{dR} = \mathbb{M}_0 \otimes_{\mathbb{B}^+_{dR}} \mathbb{B}_{dR} $. There are therefore two  filtrations on this $\mathbb{B}_{dR}$-local system $\mathrm{Fil}^i \mathbb{M}$ and $\mathrm{Fil}^i \mathbb{M}_0$. 
One defines an ascending filtration by: $$\mathrm{Fil}_{-j} \mathcal{W}_p \otimes_{\qq_p} \hat{\oscr}_{\mathcal{S}_{K,\Sigma}^{tor}} = (\mathbb{M} \cap \mathrm{Fil}^j \mathbb{M}^0)/ (\mathrm{Fil}^1 \mathbb{M} \cap  \mathrm{Fil}^j \mathbb{M}^0).$$ 
and we have the relation  $$\mathrm{Gr}_{j} \mathcal{W}_p \otimes_{\qq_p} \hat{\oscr}_{\mathcal{S}_{K,\Sigma}^{tor}} = \mathrm{Gr}^j \mathcal{W}_{dR} \otimes_{\oscr_{\mathcal{S}^{tor}_{K,\Sigma}}} \hat{\oscr}_{\mathcal{S}_{K,\Sigma}^{tor}}.$$

This construction is functorial with respect to the Hecke action and compatible with the Tannakian formalism. One can therefore derive consequences at the level of torsors.
Over $\mathcal{S}_{K,\Sigma}^{tor}$, we have a pro-kummer-\'etale  $G^c(\qq_p)$-torsor $\mathcal{G}_{pet,p}^{an}$ as well as a principal $\mathcal{G}^{c,an}$-torsor $\mathcal{G}^{an}_{dR}$ (defined over the \'etale site). The torsor $\mathcal{G}^{an}_{dR}$ has a reduction of structure group to a $\mathcal{P}_\mu^{std,c,an}$-torsor (corresponding to the filtration on $W_{dR}$) that we denote by $\mathcal{P}^{an}_{dR}$, and we have $$\mathcal{M}^{an}_{dR} = \mathcal{P}^{an}_{dR} \times^{\mathcal{P}_\mu^{std,c,an}} \mathcal{M}_\mu^{c,an}.$$

On the other hand, the torsor $\mathcal{G}_{pet,p}^{an} \times^{G^c(\qq_p)} \mathcal{G}^{c,an}$ has a reduction of structure group to a $\mathcal{P}_\mu^{c,an}$-torsor (corresponding to the filtration on $W_{p}  \otimes_{\qq_p} \hat{\oscr}_{\mathcal{S}_{K,\Sigma}^{tor}} $) that we denote by $\mathcal{P}^{an}_{HT}$, and we define $$\mathcal{M}^{an}_{HT} = \mathcal{P}^{an}_{HT} \times^{\mathcal{P}_\mu^{c,an}} \mathcal{M}_\mu^{c,an}.$$
We have the identification of torsors on the pro-\'etale site $\mathcal{M}^{an}_{HT} = \mathcal{M}^{an}_{dR}$, compatible with the Hecke action. This implies that $\mathcal{M}^{an}_{HT}$ is already defined on the \'etale site.

We can consider the inverse limit in the category of diamonds $\mathcal{S}_{K^p,\Sigma}^{tor, \Diamond} = \lim_{K_p} \mathcal{S}_{K^pK_p,\Sigma}^{tor, \Diamond}$.

To summarize the situation, we have the following theorem, which is really a consequence of \cite{diao2019logarithmic}, thm. 5.3.1.

\begin{thm}\label{thm-diamond-general-Shimura}  The torsor $\mathcal{G}_{pet,p}^{an}$ is trivial over $\mathcal{S}_{K^p,\Sigma}^{tor, \Diamond}$, and we therefore obtain a morphism: 
$$\pi_{HT}^{tor} : \mathcal{S}_{K^p,\Sigma}^{tor, \Diamond} \rightarrow \mathcal{FL}_{G,\mu}.$$
Moreover, the pull back of the torsor $\mathcal{G}^{c,an}/U_{\mathcal{P}_\mu^{an}} \rightarrow  \mathcal{FL}_{G,\mu}$ via  $ \pi_{HT}^{tor} : \mathcal{S}_{K^p,\Sigma}^{tor, \Diamond} \rightarrow \mathcal{FL}_{G,\mu}$ is $\mathcal{M}^{an}_{HT}$, and   this is also  the pull back of $\mathcal{M}_{dR}^{an}$ via the map $ \mathcal{S}_{K^p,\Sigma}^{tor, \Diamond} \rightarrow \mathcal{S}_{K^pK_p,\Sigma}^{tor}$. 
\end{thm}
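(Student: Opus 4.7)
The theorem has three parts: (i) trivialization of $\mathcal{G}_{pet,p}^{an}$ on the diamond inverse limit and the resulting period map; (ii) computation of the pullback of the universal torsor as $\mathcal{M}^{an}_{HT}$; (iii) identification of $\mathcal{M}^{an}_{HT}$ with $\mathcal{M}^{an}_{dR}$. The plan is to use the Tannakian formalism to transport the sheaf-level comparison (coming from \cite{diao2019logarithmic}, Thm.~5.3.1, which is already stated in the excerpt) to a comparison of the associated torsors.

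First I would address the trivialization. By construction the pro-Kummer-\'etale cover $\mathcal{S}_{K^p,\Sigma}^{tor,\Diamond}=\lim_{K_p}\mathcal{S}_{K^pK_p,\Sigma}^{tor,\Diamond}$ is built from the compatible system of level structures. Concretely, for each $K_p$ one has a $K_p$-torsor over $\mathcal{S}_{K^pK_p,\Sigma}^{tor}$ coming from the tower $\lim_{K_p'\subseteq K_p}\mathcal{S}_{K^pK_p',\Sigma}^{tor}$, and passing to the limit provides a canonical section of the pro-Kummer-\'etale $K_p$-torsor, hence, by pushout along $K_p\hookrightarrow G^c(\qq_p)$, a trivialization of $\mathcal{G}_{pet,p}^{an}$ on $\mathcal{S}_{K^p,\Sigma}^{tor,\Diamond}$. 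The Hodge-Tate filtration defines a $\mathcal{P}_\mu^{c,an}$-reduction $\mathcal{P}_{HT}^{an}$ of $\mathcal{G}_{pet,p}^{an}\times^{G^c(\qq_p)}\mathcal{G}^{c,an}$; once the ambient $\mathcal{G}^{c,an}$-torsor is trivialized, such a reduction is the same as a $\mathcal{P}_\mu^{c,an}$-coset in $\mathcal{G}^{c,an}$, i.e.\ a morphism $\pi_{HT}^{tor}:\mathcal{S}_{K^p,\Sigma}^{tor,\Diamond}\to\mathcal{P}_\mu^{c,an}\backslash\mathcal{G}^{c,an}=\mathcal{FL}_{G,\mu}$ (using that $Z_s$ is central, so modding out does not change the flag variety).

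For the first pullback identification, this is essentially tautological from the construction of $\pi_{HT}^{tor}$: the universal $\mathcal{M}_\mu^{c,an}$-torsor on $\mathcal{FL}_{G,\mu}$ is $\mathcal{G}^{c,an}/U_{\mathcal{P}_\mu^{c,an}}\to\mathcal{P}_\mu^{c,an}\backslash\mathcal{G}^{c,an}$, and by definition the pullback along the $\mathcal{P}_\mu^{c,an}$-reduction is $\mathcal{P}_{HT}^{an}\times^{\mathcal{P}_\mu^{c,an}}\mathcal{M}_\mu^{c,an}=\mathcal{M}_{HT}^{an}$.

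For the identification $\mathcal{M}^{an}_{HT}=\mathcal{M}^{an}_{dR}$, I would argue Tannakianly. For every algebraic representation $W$ of $G^c$ with filtered (logarithmic) de Rham realization $\mathcal{W}_{dR}$ and pro-Kummer-\'etale realization $\mathcal{W}_p$, Theorem~5.3.1 of \cite{diao2019logarithmic}, recalled above, yields a canonical filtered, connection-equivariant and Hecke-equivariant isomorphism $\mathcal{W}_p\otimes_{\qq_p}\oscr\mathbb{B}_{dR,log}=\mathcal{W}_{dR}\otimes_{\oscr_{\mathcal{S}_{K,\Sigma}^{tor}}}\oscr\mathbb{B}_{dR,log}$. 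Taking $\mathrm{gr}^0$ of the Hodge-Tate filtration on the left and of the Hodge filtration on the right produces a functor from $\mathrm{Rep}(M_\mu^c)$ to vector bundles on $\mathcal{S}_{K^p,\Sigma}^{tor,\Diamond}$ with two descriptions, one giving $\mathcal{W}_{HT}$ and one giving $\mathcal{W}_{dR}$. By the Tannakian reconstruction of torsors from fibre functors, this promotes to an isomorphism of $\mathcal{M}_\mu^{c,an}$-torsors $\mathcal{M}_{HT}^{an}\simeq\mathcal{M}_{dR}^{an}$, compatible with Hecke action.

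The main obstacle is the last step: one must check that the comparison of \cite{diao2019logarithmic} is functorial enough in $W$ to yield a comparison of the underlying $\mathcal{M}_\mu^{c,an}$-torsors, not merely of the associated vector bundles on a case-by-case basis. Concretely, one needs compatibility with tensor products, duals and subquotients of representations of $M_\mu^c$, and one needs the comparison to descend along the affine quotient $\mathcal{G}^{c,an}\to\mathcal{M}_\mu^{c,an}$ in the presence of the $Z_s$-twist. In the Hodge type case this was carried out in section \ref{Section-Hodge-perfectoid} by reducing to the Siegel case, where one has the genuine $p$-adic comparison for (semi)abelian varieties; in the general case one replaces that input by the analogous functorial statement of \cite{diao2019logarithmic} Thm.~5.3.1, and the bookkeeping is what makes the argument.
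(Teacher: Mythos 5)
Your proposal matches the paper's approach: the paper likewise treats this as a consequence of \cite{diao2019logarithmic}, Theorem~5.3.1, by introducing the two $\mathbb{B}_{dR}^+$-lattices $\mathbb{M}$ and $\mathbb{M}_0$ in the comparison, deducing the identification of graded pieces $\mathrm{Gr}_j(\mathcal{W}_p\otimes\hat{\oscr})\simeq\mathrm{Gr}^j\mathcal{W}_{dR}\otimes\hat{\oscr}$, and then appealing to functoriality and the Tannakian formalism to upgrade this to the isomorphism of $\mathcal{M}_\mu^{c,an}$-torsors $\mathcal{M}_{HT}^{an}\simeq\mathcal{M}_{dR}^{an}$. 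The only imprecision in your write-up is that the Tannakian reconstruction of the $\mathcal{M}_\mu^{c,an}$-torsor uses the full associated graded (all $\mathrm{gr}^j$, together with the tensor structure), not merely $\mathrm{gr}^0$; once that is adjusted, your argument is exactly the one the paper intends.
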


We have thus extended the picture to the case of general Shimura varieties at the expense of working with diamonds. This is unfortunately not enough information for our purposes. In particular, we need to address the question of affineness of the Hodge-Tate period map. For this reason, we restrict to abelian type Shimura data and work out the connection between abelian and Hodge type Shimura varieties in the next section. 

\subsubsection{Abelian type Shimura varieties}\label{section-Abelian-type2}

We now consider an abelian type Shimura datum $(G,X)$. This means that there is a Hodge type Shimura datum $(G_1,X_1)$ such that there is a central isogeny $G_1^{der}\rightarrow G^{der}$ inducing an isomorphism of the associated connected Shimura data $(G^{ad}, X^+)$ and $(G_1^{ad}, X_1^+)$ (see \cite{MR2192012}, definition 4.4).

We recall a nice way to connect the Shimura data $(G,X)$ and $(G_1,X_1)$ from \cite{MR3720933}, section 4.6. Let  $E$ be the composite of the reflex fields of both Shimura data.
One can construct  a diagram of  Shimura data 
\begin{eqnarray*}
\xymatrix{ (B_1,X_{B_1}) \ar[r] \ar[d] & (B, X_B) \ar[d] \\
(G_1,X_1) &  (G,X)}
\end{eqnarray*}
where $B_1 \rightarrow G_1$ and $B \rightarrow G$ induce isomorphisms $B_1^{der} \simeq G_1^{der}$ and $B^{der} \simeq G^{der}$, and
the map $B_1 \rightarrow B$ is a central isogeny. 

Actually $B_1 = G_1 \times_{G_{1}^{ab}} \mathrm{Res}_{E/\qq} \mathbb{G}_m$ for a suitable map $ \mathrm{Res}_{E/\qq} \mathbb{G}_m \rightarrow  G_{1}^{ab}$ induced by the cocharacter $\mu_{G_1}$. We let $T = \mathrm{Res}_{E/\qq} \mathbb{G}_m$. 

Since we are going to deal with several Shimura varieties at the same time in this section,  we change our notation, and denote by $S(G,X)_K$ the Shimura variety attached to a datum $(G,X)$ and compact open $K$.  We will similarly write $M_{dR}(G,X)$ rather than $M_{dR}$, etc.  We also adopt the standard notation that for $H/\qq$ a reductive group, $H^{ad}(\qq)^+$ is the intersection of $H^{ad}(\qq)$ with the identity component of $H^{ad}(\mathbb{R})$, while $H(\qq)_+$ is the preimage of $H^{ad}(\qq)^+$ under $H(\qq)\to H^{ad}(\qq)$.

Let us first recall some classical results. We choose connected components $X_{B_1}^+$, $X_B^+$, $X_1^+$ and $X^+$ compatibly with our morphisms of Shimura data. This allows us to identify the set of geometric connected component of a Shimura variety $S(H,X_H)_K$ with ${H(\qq)_+} \backslash H(\mathbb{A}_f)/K$.  We let $S^0(H,X_H)_K$ be the connected component corresponding to the class of $1$ (we allow ourselves to extend the base field). We have $S^0(H,X_H)_K(\C) = \Gamma_K \backslash X_H^+$ where $\Gamma_K =  H(\qq)_+ \cap K $. 
We adopt the same notation of adding a supscript $0$ for the connected components of minimal and toroidal compactifications. 

We recall a functoriality between Shimura data and compactifications. Let $g : (H,X_H) \rightarrow (S,X_S)$ be a morphism of Shimura data. We assume that  $g$ induces an isomorphism $H^{ad} = S^{ad}$. Let $K' \subseteq H(\mathbb{A}_f)$ and $K \subset S(\mathbb{A}_f)$ be compact open subgroups such that $g(K') \subseteq K$. Let $\Sigma$ be a cone decomposition for $S$. 

\begin{prop}\label{proposition-toro-functo} The following point are satisfied. 

\begin{enumerate} 
\item The morphism $S(H, X_H)_{K'} \rightarrow S(S,X_S)_{K}$ is  finite \'etale.
\item The cone decomposition $\Sigma$ for $S$ induces a cone decomposition for $H$ and the morphism 
$S^{tor}(H, X_H)_{K',\Sigma} \rightarrow S^{tor}(S,X_S)_{K,\Sigma}$ is finite. 
\item If $g(K')$ is normal inside $K$, the morphism $S^0(H, X_H)_{K'} \rightarrow S^0(S,X_S)_{K}$ is Galois with finite group $\Delta(K,K')$. The action of $\Delta(K,K')$ extends to an action on $S^{tor,0}(H, X_H)_{K',\Sigma}$,  and $ S^{tor,0}(S,X_S)_{K,\Sigma}$ is the quotient of $S^{tor,0}(H,X_H)_{K'}$ by the action of $\Delta(K,K')$. 
\end{enumerate}
\end{prop}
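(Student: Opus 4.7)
The plan is to reduce everything to the complex analytic picture using the description of the connected components, exploit the fact that $g$ induces an isomorphism $H^{ad}=S^{ad}$ (hence $X_H^+ = X_S^+$), and then extend from the open Shimura variety to the compactifications using the known functorial properties of the Baily-Borel and toroidal compactifications.

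For (1), I would first recall that the set of geometric connected components of $S(H,X_H)_{K'}$ is $H(\qq)_+ \backslash H(\mathbb{A}_f)/K'$, and similarly for $S(S,X_S)_K$. The morphism $g$ gives a map between these coset spaces, and picking a component on each side, it suffices to study a single map of connected components $S^0(H,X_H)_{K'}\to S^0(S,X_S)_K$, which analytically is the map of arithmetic quotients $\Gamma_{K'}\backslash X_H^+ \to \Gamma_{K}\backslash X_S^+$. Since $g$ induces an isomorphism on adjoint groups, we may identify $X_H^+=X_S^+$, and the induced map on arithmetic groups $\Gamma_{K'}\to\Gamma_K$ (viewed inside $H^{ad}(\qq)^+$) is an inclusion of finite index subgroups, the index being bounded by $[K:g(K')]$, which is finite. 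Neatness of $K'$ (and hence of $\Gamma_{K'}$) ensures the action on $X_H^+$ is free, so the map is a finite covering, hence finite étale on each connected component; algebraizing via GAGA gives (1) over $\mathbb{C}$, and one descends to $E$ using that the morphism is defined there.

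For (2), I would use that cone decompositions for $(S,X_S)$ and $(H,X_H)$ are defined with respect to the rational boundary components of $X_S^+$ and $X_H^+$, which are naturally identified since these depend only on the common adjoint group. A $\Sigma$ for $(S,X_S)$ pulls back to a $\Sigma'$ for $(H,X_H)$ (one simply intersects the cones with the relevant rational cones for $H$). With these compatible choices, one gets a morphism of toroidal compactifications extending the morphism of Shimura varieties. Finiteness follows by combining (1) with properness of both toroidal compactifications over the reflex field: the morphism is quasi-finite (as it is finite étale on the open part and the boundary strata also correspond bijectively up to finite étale covers coming from the same arithmetic group comparison) and proper, hence finite. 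Here it will also be useful to consult the local charts $\mathbf{S}_{K_\Phi}(Q_\Phi,D_\Phi,\Sigma(\Phi))$ recalled in Theorem \ref{thm-description-toro-compactificiation2}, where the functoriality can be seen directly on the torus embedding data.

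For (3), when $g(K')$ is normal in $K$, the group $\Delta(K,K')$ is defined as the image of $K$ in the automorphism group of the cover $S^0(H,X_H)_{K'}\to S^0(S,X_S)_K$, or more concretely as $\Gamma_K/\Gamma_{K'}$ viewed in the adjoint quotient; this is finite by the same index computation as above. This group acts on $S^0(H,X_H)_{K'}$ by Hecke correspondences and the quotient on the analytic connected component is $\Gamma_K\backslash X_H^+ = S^0(S,X_S)_K$, which extends to all of $S(S,X_S)_K$ via the $G(\mathbb{A}_f)$-translates. To extend this action to the toroidal compactification, one uses the functoriality above: the element of $\Delta(K,K')$ acts as an automorphism of $(H,X_H,K')$ preserving the cone decomposition $\Sigma'$ (since $\Sigma'$ is pulled back from $\Sigma$ for $S$), hence lifts to the toroidal model. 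That this quotient is $S^{tor,0}(S,X_S)_{K,\Sigma}$ can be checked on local formal charts along each boundary stratum using the description in Theorem \ref{thm-description-toro-compactificiation2}, where the quotient by $\Delta(K,K')$ amounts to a compatible quotient of the boundary data for $(H,X_H)$ giving back the boundary data for $(S,X_S)$.

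The main obstacle I expect is the bookkeeping around the centers: the group $H(\qq)_+$ is not equal to $S(\qq)_+$ under $g$ (since $g$ need not be surjective on $\qq$-points and its kernel may be nontrivial), so one must carefully track the difference between $\Gamma_{K'}$ and the preimage of $\Gamma_K$ in $H(\qq)_+$, and verify that the map on connected component sets $H(\qq)_+ \backslash H(\mathbb{A}_f)/K' \to S(\qq)_+\backslash S(\mathbb{A}_f)/K$ has finite fibers with the claimed description. Relatedly, identifying $\Delta(K,K')$ precisely and showing its action preserves the chosen cone decomposition $\Sigma'$ (as opposed to merely permuting refinements) will require choosing $\Sigma$ invariant under the action of $K/g(K')$, which is always possible by refinement; the finiteness of $\Delta(K,K')$ comes ultimately from the fact that in the adjoint quotient the image of $K$ and the image of $g(K')$ are commensurable.
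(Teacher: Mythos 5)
Your overall plan matches the paper's: reduce to connected components over $\mathbb{C}$, identify $X_H^+=X_S^+$ via the common adjoint, compare arithmetic subgroups in $H^{ad}(\qq)^+$, pull back the cone decomposition via the identification of rational boundary data, and define $\Delta(K,K')$ as a quotient of adjoint arithmetic groups. Two points deserve comment.

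First, for (1) your middle paragraph justifies finiteness by claiming the index of $\Gamma_{K'}^{ad}$ in $\Gamma_K^{ad}$ is bounded by $[K:g(K')]$. This step does not go through as written: an element of $\Gamma_K=S(\qq)_+\cap K$ lying in $g(K')$ is a priori the image of some $k'\in K'\subseteq H(\mathbb{A}_f)$, and there is no reason for that $k'$ to lie in $H(\qq)_+$, so you cannot conclude that its image in $H^{ad}(\qq)^+$ lies in $\Gamma_{K'}^{ad}$. The paper instead invokes \cite{MR2192012}, prop.~3.2, to say that both $\Gamma_K^{ad}$ and $\Gamma_{K'}^{ad}$ are arithmetic subgroups of $H^{ad}(\qq)^+$, and therefore commensurable; combined with the inclusion $\Gamma_{K'}^{ad}\subseteq\Gamma_K^{ad}$ this gives finite index. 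You do flag this very issue in your final paragraph (commensurability in the adjoint quotient is ``ultimately'' the reason), but you should make that the actual argument rather than a caveat, since the index-bound claim is false.

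Second, for (2), the paper deduces finiteness of $S^{tor}(H,X_H)_{K',\Sigma}\to S^{tor}(S,X_S)_{K,\Sigma}$ directly from the local charts; your alternative of ``quasi-finite plus proper'' is fine in principle but you still have to verify quasi-finiteness at the boundary, which in practice again amounts to examining the local charts, so it is not really a shortcut. These are minor, and once the finite-index justification in (1) is repaired the argument is essentially the paper's.
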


\begin{proof} For the first point we reduce to check this on connected components over the complex numbers. We have $X_H^+ = X_S^+=X^+$. We have $S^0(H, X_H)_{K'} = X^+/\Gamma_{K'}$ and $S^0(S, X_S)_{K} = X^+/\Gamma_{K}$. The images of $\Gamma_{K}$ and $\Gamma_{K}'$  in $H^{ad}(\qq)^+$, denoted respectively $\Gamma^{ad}_{K}$ and $\Gamma^{ad}_{K'}$,  are arithmetic subgroups by \cite{MR2192012}, prop. 3.2. We deduce that $\Gamma^{ad}_{K'}$ has finite index in  $\Gamma^{ad}_{K}$.  
We next claim that $\Sigma$ also induces a cone decomposition for $H$. By our assumption, the map $P \mapsto g^{-1}(P)$ induces a bijection between the rational parabolics of $S$ and $H$ and this is basically all we need. See \cite{MR997249}, sect. 2.5.  The finiteness of the map follows from the description of the local charts. 
For the last point, the group $\Delta(K,K')$ is $\Gamma_{K}^{ad}/\Gamma_{K'}^{ad}$. By normality, the action of $\Delta(K,K')$ extends and the last point follows.
\end{proof}

We also recall a construction of fiber products of torsors. Let $S$ be a scheme and let $H_1,H_2,H_3$ be three group schemes with morphisms $H_1 \rightarrow H_2$  and $H_3 \rightarrow H_2$. We assume that we have $H_i$-torsors $P_i$ over $S$ and isomorphisms $\theta_1 : P_{H_1} \times^{H_1} H_2 \simeq P_{H_2}$ and $\theta_3 : P_{H_3} \times^{H_3} H_2 \simeq P_{H_2}$. Then one can form $P_{H_1 \times_{H_2} H_3} = P_{H_1} \times_{P_{H_2}} P_{H_1}$ which is an $H_1 \times_{H_2} H_3$ torsor.

\begin{thm}\label{thm-abelian-torsor}  The following points are satisfied: 
\begin{enumerate} 
\item The towers of connected components of the Shimura varieties $\lim_{K \subseteq B_1(\mathbb{A}_f)}S^0(B_1,X_{B_1})_{K}$ and $\lim_{K \subseteq G_1(\mathbb{A}_f)} S^0(G_1,X_1)_{K}$ are canonically isomorphic. 
The same holds for the minimal and toroidal compactifications. 

\item  Let $K^p \subseteq B_1(\mathbb{A}_f^p)$ be compact open. There exists a compact open subgroup $(K^p)' \subseteq G_1(\mathbb{A}_f^p)$  such that we have a finite \'etale Galois  morphism:  
$$\lim_{K'_p \subseteq G_1(\qq_p)}S^0(G_1,X_1)_{(K^p)' K_p'} \rightarrow \lim_{K_p \subseteq B_1(\qq_p)} S^0(B_1,X_{B_1})_{K^pK_p}.$$
\item Let $K\subseteq B_1(\mathbb{A}_f)$ be a compact open subgroup and $\Sigma$ be a cone decomposition for $G_1$. 
There are compact open subgroups $K_1 \subseteq G_1(\mathbb{A}_f)$, $K_2 \subseteq T(\mathbb{A}_f)$, $K_3 \subseteq G^{ab}_1(\mathbb{A}_f)$,
\begin{eqnarray*}
\xymatrix{  {S}^{tor}(B_1, X_{B_1})_{{K},\Sigma}  \ar[r]^{\pi_1} \ar[d]^{\pi_2} &  {S}^{tor}(G_1, X_{_1})_{{K_{1}},\Sigma} \ar[d] \\
 {S}(T, X_{T})_{K_{2}} \ar[r] &  {S}(G_1^{ab}, X_{G_1^{ab}})_{K_{3}}}
 \end{eqnarray*}
Denote by $\pi_3 : {S}^{tor}(B_1, X_{B_1})_{{K},\Sigma} \rightarrow {S}(G_1^{ab}, X_{G_1^{ab}})_{K_{3}}$. 
 The torsor $M_{dR}(B_1,X_{B_1})$ over ${S}^{tor}(B_1, X_{B_1})_{{K^pK_p},\Sigma}$  is canonically isomorphic to the fiber product  $$\pi_1^\star M_{dR}(G_1,X_1) \times_{ \pi_3^\star M_{dR}(G_1^{ab}, X_{G_1^{ab}})} \pi_2^\star M_{dR}(T,X_{T}).$$

\item  Let $g : B_1 \rightarrow G$.   Let $K\subseteq G(\mathbb{A}_f)$ be neat open compact.  There exists a compact open subgroup $K' \subseteq B_1(\mathbb{A}_f)$ such that $g(K') \subseteq K$ and the morphism $S^0(B_1,X_{B_1})_{K'} \rightarrow S^0(G,X)_{K}$ is finite \'etale and Galois with group $\Delta(K,K')$.

\item   Let $\Sigma$ be a cone decomposition for $G$. The torsor $M_{dR}(G,X)$ over $S^{tor,0}(G,X)_{K,\Sigma}$  is  the quotient by $\Delta(K,K')$ of the torsor $$M_{dR}(B_1,X_1) \times^{M_{\mu_{B_1}}^c} M_{\mu_{G}}^c$$  over $S^{tor,0}(B_1,X_{B_1})_{K',\Sigma}$.

\item Let $K^p \subseteq G(\mathbb{A}^p_f)$. There exists a compact open subgroup $(K')^p \subseteq B_1(\mathbb{A}_f^p)$ such that $g((K')^p) \subseteq K^p$ and the morphism 
$$\lim_{K_p' \subseteq B_1(\qq_p)} S^0(B_1,X_{B_1})_{(K')^pK'_p} \rightarrow \lim_{K_p \subseteq G(\qq_p)}S^0(G,X)_{K^p K_p}$$ is finite \'etale and Galois.

\end{enumerate}
\end{thm}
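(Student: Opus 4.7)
The idea is to reduce everything to three basic observations: (a) a morphism of Shimura data which induces an isomorphism on adjoint groups realizes (towers of) Shimura varieties as coverings of each other via arithmetic subgroups of a common $H^{ad}(\qq)^+$; (b) the Tannakian formalism permits the recovery of the standard principal $M_\mu^c$-torsor from its canonical reductions and pushouts under central isogenies; (c) the specific product structure $B_1 = G_1 \times_{G_1^{ab}} T$ transports to an analogous product structure on $M_{\mu_{B_1}}^c$.

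For part (1), I would use proposition \ref{proposition-toro-functo} together with the fact that the derived group and the adjoint group of $B_1$ and $G_1$ coincide with those of $G^{der}_1$ and $G_1^{ad}$. Since $S^0(H,X_H)_K(\C)=\Gamma_K\backslash X^+$ depends only on the arithmetic subgroup $\Gamma_K^{ad}\subseteq H^{ad}(\qq)^+$, one can match compact open subgroups of $B_1(\mathbb{A}_f)$ and $G_1(\mathbb{A}_f)$ inducing identical arithmetic subgroups at the adjoint level, and pass to the limit. The corresponding statements for minimal and toroidal compactifications follow since the local charts depend only on the adjoint datum and the arithmetic group. Part (2) is then a matter of choosing $(K^p)'$ so that $(K^p)'$ maps into $K^p$ at the adjoint level and the quotient group is the Galois group of the covering; the finite étaleness comes from the neatness assumption and the identification of connected components.

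For part (3), the product structure $B_1 = G_1 \times_{G_1^{ab}} T$ combined with the commutative diagram of Shimura data provides, after choosing compatible compact open subgroups $K_1$, $K_2$, $K_3$, the claimed cartesian diagram. The $M_{\mu_{B_1}}^c$-torsor $M_{dR}(B_1,X_{B_1})$ can then be described Tannakially: since $M_{\mu_{B_1}}^c = M_{\mu_{G_1}}^c \times_{G_1^{c,ab}} T^c$, giving the $M_{\mu_{B_1}}^c$-torsor is equivalent to giving the three pieces together with the compatibility isomorphism upon pushout to the common quotient $G_1^{c,ab}$. The canonical reductions $\pi_1^\star M_{dR}(G_1,X_1)$, $\pi_2^\star M_{dR}(T,X_T)$ both push out to $\pi_3^\star M_{dR}(G_1^{ab},X_{G_1^{ab}})$ (by functoriality of the de Rham torsor under morphisms of Shimura data), and the compatibility can be checked complex analytically on connected components using the Borel embedding, where it is tautological.

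Parts (4)--(6) are the passage from $B_1$ to $G$ via the central isogeny $g:B_1\to G$. Since $g$ induces an isomorphism $B_1^{ad}\simeq G^{ad}$, proposition \ref{proposition-toro-functo} applies: for $K\subseteq G(\mathbb{A}_f)$ neat, one chooses $K'\subseteq g^{-1}(K)$ so that $g(K')$ is normal in $K$ and the induced arithmetic subgroups coincide on a suitable finite-index subgroup, yielding the finite étale Galois cover of part (4). Part (5) then follows since the $M_{\mu_G}^c$-torsor $M_{dR}(G,X)$ is determined by descent along this Galois cover: on $S^{tor,0}(B_1,X_{B_1})_{K',\Sigma}$ one forms the pushout $M_{dR}(B_1,X_{B_1})\times^{M_{\mu_{B_1}}^c} M_{\mu_G}^c$ (noting $M_{\mu_{B_1}}^c \to M_{\mu_G}^c$ is well-defined because $g$ sends $P_{\mu_{B_1}}$ into $P_{\mu_G}$), and verifies that this torsor is $\Delta(K,K')$-equivariant; the quotient gives $M_{dR}(G,X)$ because both torsors agree on the underlying adjoint Shimura variety by Tannakian compatibility (any representation of $M_{\mu_G}^c$ pulls back to the same automorphic vector bundle computed either way). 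Part (6) is a tame-level version of (4) obtained by varying $K_p$.

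The hardest step will be part (5), making the descent and compactification interact correctly. The subtleties are: first, verifying equivariance of the pushout torsor for the $\Delta(K,K')$-action, which requires tracking how $\Delta(K,K')$ acts on the $M_{\mu_{B_1}}^c$-torsor via lemma \ref{lem-compact-abelian} (both compact open subgroups have the same image in $G^{ad}$); second, extending the identification from the open Shimura variety across the boundary of the toroidal compactification, which works because the construction of $M_{dR}$ at the boundary is compatible with morphisms of Shimura data of the type considered (\cite{MR997249}, section 4). Parts (4)--(6) are then essentially formal consequences once (5) is established.
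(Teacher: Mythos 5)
Your plan captures the right high-level ideas for parts (1), (3), (4), (5): reduce to the adjoint group via proposition \ref{proposition-toro-functo}, use the product structure $B_1 = G_1 \times_{G_1^{ab}} T$ Tannakially for the torsor in (3), and pass from $B_1$ to $G$ via the central isogeny with pushout and descent for (4)--(5). This matches the paper's approach (the paper in fact says less than you do for (3) and (5), simply citing functorialities).

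The genuine gap is in parts (2) and (6), the infinite-level statements, which you dismiss too quickly. In part (2) you say the matter is "choosing $(K^p)'$ so that $(K^p)'$ maps into $K^p$ at the adjoint level." But this is not sufficient: the subtle point is that the maps $G_1^{der}(\qq)_+ \to B_1^{ad}(\qq)^+$ and $B_1^{der}(\qq)_+ \to B_1^{ad}(\qq)^+$ have finite kernels, and one must ensure that the Galois groups $\Delta_n$ of the successive coverings stabilize in the limit (so the pro-finite group $\Delta = \lim_n\Delta_n$ is actually finite). The paper achieves this by using the Hodge-type hypothesis to arrange $\Gamma'_1\subseteq G_1^{der}(\qq)_+$ (using discreteness of $G_1^{ab}(\qq)$ in $G_1^{ab}(\mathbb{A}_f)$) and then verifying $\Delta_n\hookrightarrow\Delta_{n-1}$. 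In part (6) you claim it is "a tame-level version of (4) obtained by varying $K_p$," but this would fail: since $g:B_1^{der}\to G^{der}$ is a nontrivial central isogeny, $\cap_n K_{p,n}^{der} = \mathrm{Ker}(B_1^{der}(\qq_p)\to G(\qq_p))$ is a nontrivial finite group, so simply taking the limit over $K_p$ does not give a finite Galois covering. The paper fixes this by choosing a compact open $L_p\subseteq B_1^{der}(\qq_p)$ trivially intersecting this kernel and working with $\Gamma^{der}_n\cap L_p$; without this step the asserted morphism in (6) is not finite. Both of these points are exactly where the infinite-level statements genuinely differ from their finite-level analogues, and your plan does not account for them.
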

\begin{proof} 
We recall  that for a reductive group $H$ defined over $\qq$, the topologies on $H^{ad}(\qq)^+$ defined by the  images of congruence subgroups of $H(\qq)_+$ and $H^{der}(\qq)_+$ are the same. This implies that the connected components of the tower of Shimura varieties is determined by the connected Shimura datum. By construction, the  datum $(G_1^{der}, X_1^+)$ and $(B^{der}_1, X_{B_1}^+)$  are the same. 

We now check the second point. Let us fix a decreasing sequence of compact open subgroups $K_{p,n} \subseteq B_1(\qq_p)$ with $\cap_n K_{p,n} = \{1\}$ and $K_{p,n}$ normal inside $K_{p,1}$. Let $K_{p,n}^{ad}$ be the image of $K_{p,n}$ in $B_1^{ad}(\qq_p)$.
Let $\Gamma_n= B_1(\qq)_+ \cap K^p K_{p,n}$ and let $\Gamma_n^{ad}$ the image of $\Gamma_n$ in $B_1^{ad}(\qq)^+$. 
 
 By the first point, there exists a compact open subgroup $(K^p)'K_{p,1}' \subseteq G_1(\mathbb{A}_f)$ such that $\Gamma'_1 =  G_1(\qq)_+ \cap (K^p)' K'_{p,1}$ has the property that its image in $B_1^{ad}(\qq)^+$ is a finite index normal subgroup of $\Gamma_1^{ad}$. 
 Actually, we may also choose $\Gamma'_1$ such that $\Gamma'_1 \subseteq G_1^{der}(\qq)_+$ (this follows from the fact that $G_1$ is of Hodge type, hence $G_1^{ab}(\qq)$ is discrete in $G_1^{ab}(\mathbb{A}_f)$). 

We may also assume that the map $G_1^{der}(\qq)_+ \rightarrow B_1^{ad}(\qq)^+$ induces an isomorphism from $\Gamma'_1$ to its image (indeed, the kernel of this map is finite).  
Let us denote by $\Gamma'_n = \Gamma^{ad}_n \cap \Gamma'_1$.  Let $\Delta_n = \Gamma_n^{ad}/\Gamma_n'$. Clearly $\Delta_{n} \rightarrow \Delta_{n-1}$ is injective and therefore $\Delta = \lim_n \Delta_n$ is a finite group. 

Since the map $G_1^{der}(\qq_p) \rightarrow B_1^{ad}(\qq_p)$ is a local homeomorphism, there is for $n$ large enough a subgroup $K_{p,n}^{der} \subseteq G_1^{der}(\qq_p)$ which maps isomorphically to $K_{p,n}^{ad}$, the image of $K_{p,n}$ in $B_1^{ad}(\qq_p)$. We deduce that $\Gamma'_n = \Gamma'_1 \cap  K^{der}_{p,n}$. Finally, we can find a decreasing sequence of compact open subgroups $\{K'_{p,n} \subseteq G_1(\qq_p)\}$  such that $\cap_n K'_{p,n} = \{1\}$ and $K'_{p,n} \cap G^{der}_1(\qq_p)=  K_{p,n}^{der}$. 
We deduce that $\lim_{K'_{p,n}}S^0(G_1,X_{B_1})_{(K^p)' K'_{p,n}} \rightarrow \lim_{K_{p,n} } S^0(B_1,X_{B_1})_{K^pK_{p,n}}$ is finite \'etale with group $\Delta$. 
For the third point, using the various functorialities of Shimura varieties (see for instance \cite{MR3720933}, lemma 3.1.6), we  see that we have a map of torsors $$M_{dR}(B_1,X_{B_1}) \rightarrow  \pi_1^\star M_{dR}(G_1,X_1) \times_{ \pi_3^\star M_{dR}(G_1^{ab}, X_{G_1^{ab}})} \pi_2^\star M_{dR}(T,X_{T}).$$ and a map of torsors is an isomorphism. 
We check the fourth point. We can find a normal compact open subgroup $K' \subseteq g^{-1}(K)$ and we apply proposition \ref{proposition-toro-functo}. 
The fifth point follows again from the functorialities between Shimura varieties. 
We now check the last point. Let us fix a decreasing sequence of compact open subrgoups $\{K_{p,n} \subseteq G(\qq_p)\}$ with $\cap_n K_{p,n} = \{1\}$ and $K_{p,n}$ is normal inside $K_{p,1}$. Let us define $\Gamma_n = G(\qq)_+ \cap K^pK_{p,n}$. 
Let us fix a compact open subgroup $(K')^pK_{p,1}' \subseteq B_1(\mathbb{A}_f)$ with $g((K')^pK_{p,1}') \subseteq K^pK_{p,n}$.  Let $K_{p,n}' = g^{-1}(K_{p,n}) \cap K'_{p,1}$. Let $K_{p,n}^{der} = K'_{p,n} \cap B^{der}_1(\qq_p)$. 
Let $\Gamma'_n = B_1(\qq)_+ \cap (K^p)'K'_{p,n}$. Let $\Gamma^{der}_n = \Gamma_n \cap B_1(\qq)_+^{der}$. 
For any of $\Gamma_n, \Gamma'_n, \Gamma_n$, we add a subscript $ad$ to mean their image in $G^{ad}(\qq)^+$. 
We see that $(\Gamma_n^{der})^{ad} \subseteq (\Gamma'_n)^{ad} \subseteq \Gamma_n^{ad}$ are all arithmetic subgroups.  Let $\Delta_n = \Gamma_n^{ad}/(\Gamma_n^{der})^{ad}$. We see that $\Delta_n \rightarrow \Delta_{n-1}$ is injective and therefore $\Delta = \lim_n \Delta_n$ is a finite group. 
We observe that $\cap_n K_{p,n}^{der} = \mathrm{Ker}( B_1^{der}(\qq_p) \rightarrow G(\qq_p))$ is a finite group. 
Let us fix a compact open subgroup $L_p \subseteq B_1^{der}(\qq_p)$ such that $L_p \cap \mathrm{Ker}( B_1^{der}(\qq_p) \rightarrow G(\qq_p))= \{1\}$.  We let $\Gamma^{der,'}_n = \Gamma^{der}_n  \cap L_p$. 
The morphism $\lim_n \Gamma^{der,'}_n \backslash X^+ \rightarrow \lim_n \Gamma_n \backslash X^+$ is finite \'etale and Galois. Since this map factors the map $\lim_{K_p' \subseteq B_1(\qq_p)} S^0(B_1,X_{B_1})_{(K')^pK'_p}(\C) \rightarrow \lim_{K_p \subseteq G(\qq_p)}S^0(G,X)_{K^p K_p}(\C)$, we conclude. 
\end{proof}

In this paper, we will be able to reduce a number of statements (and in particular vanishing theorems) on  abelian type Shimura varieties to the Hodge type case using the   following  principle:
\begin{Principle}\label{abelianstrategy}
\begin{enumerate} 
\item We can extend  a construction (eg. of a torsor) from the Shimura variety for $(G_1,X_1)$ to the Shimura variety for $(B_1, X_{B_1})$   by   performing a (trivial) construction on the Shimura variety attached to  $T$  and using the identity $M_{\mu_{B_1}}^c = M^c_{\mu_{G_1}} \times_{M^{ab,c}_{\mu_{G_1}}} T^c$.
\item The torsors we construct on a connected component of a Shimura variety for $(G,X)$ is obtained  from a torsor on a connected  component of the Shimura variety for $(B_1,X_{B_1})$ by   pushing out along the map  $B_1^c \rightarrow G^c$ and taking the quotient by a finite group.
\end{enumerate}
\end{Principle}

We  work until the end of this paragraph  over the algebraically closed field $\C\simeq \C_p$ (this means that all Shimura varieties are base changed to $\C_p$). This simplifies our treatment of connected components of Shimura varieties. We also recall that the rationality questions with respect to the reflex field are irrelevant for us. 

The principle \ref{abelianstrategy} is  based on the following theorem: 

\begin{thm}\label{thm-abelian-strategy} There is a commutative diagram 
\begin{eqnarray*} 
\xymatrix{  & \mathcal{S}^{\star}(B_1,X_{B_1}) \ar[ld] \ar[dd] \ar[rd]&  \\
 \mathcal{S}^{\star}(G_1,X_1)  \ar[rd] & & \mathcal{S}^{\star}(G,X)  \ar[ld]\\
 & \mathcal{FL}_{G,\mu} &}
 \end{eqnarray*}
where $\mathcal{S}^{\star}(B_1,X_{B_1})$, $\mathcal{S}^{\star}(G_1,X_1)$, and $\mathcal{S}^{\star}(G,X)$ are perfectoid spaces representing (in the category of diamonds)
$\lim_K \mathcal{S}^{\star}(B_1,X_{B_1})_{K}$, $\lim_K \mathcal{S}^{\star}(G_1,X_{G_1})_{K}$, and $\lim_K \mathcal{S}^{\star}(G,X)_{K}$.
The maps are equivariant for the respective actions of $B_1(\mathbb{A}_f)$, $G_1(\mathbb{A}_f)$, and $G(\mathbb{A}_f)$. 
The three maps to the flag variety $\mathcal{FL}_{G,\mu} = \mathcal{FL}_{G_1,\mu_{G_1}} = \mathcal{FL}_{B_1,\mu_{B_1}}$ are  Hodge-Tate period maps, and the composite map $\mathcal{S}^{tor,\Diamond}(G,X)_{\Sigma} = \lim_K   \mathcal{S}^{tor,\Diamond}(G,X)_{K,\Sigma} \rightarrow \mathcal{S}^{\star}(G,X) \rightarrow   \mathcal{FL}_{G,\mu} $ is the map $\pi_{HT}^{tor}$ of theorem \ref{thm-diamond-general-Shimura}. 
\end{thm}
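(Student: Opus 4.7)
The strategy is to build the three perfectoid spaces and their period maps in two stages, starting from the Hodge type case $(G_1,X_1)$ which was handled in section~\ref{Section-Hodge-perfectoid}: first pass to $(B_1,X_{B_1})$ using the fiber product description from Theorem~\ref{thm-abelian-torsor}(3), then descend to $(G,X)$ using the finite Galois quotient description from Theorem~\ref{thm-abelian-torsor}(4),(6). The three flag varieties $\mathcal{FL}_{G,\mu}$, $\mathcal{FL}_{G_1,\mu_{G_1}}$ and $\mathcal{FL}_{B_1,\mu_{B_1}}$ are canonically identified, since the central isogenies $B_1^{der}\simeq G_1^{der}\simeq G^{der}$ identify adjoint groups and the flag variety only depends on the conjugacy class of $\mu$ in the adjoint group.

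For the first stage, the Shimura variety $\mathcal{S}(T,X_T)$ for the torus $T=\mathrm{Res}_{E/\qq}\mathbb{G}_m$ is zero-dimensional at every finite level, and $\lim_{K_2}\mathcal{S}(T,X_T)_{K_2}$ is represented by a perfectoid space (a profinite disjoint union of $\Spa$'s of finite extensions of $E$ whose colimit one completes $p$-adically); the same applies to $\lim_{K_3}\mathcal{S}(G_1^{ab},X_{G_1^{ab}})_{K_3}$. Taking the fiber product from Theorem~\ref{thm-abelian-torsor}(3) in compactified form and passing to the limit over all $K\subseteq B_1(\mathbb{A}_f)$ (with $K_1,K_2,K_3$ shrinking compatibly) produces $\mathcal{S}^\star(B_1,X_{B_1}) \sim \lim_K \mathcal{S}^\star(B_1,X_{B_1})_K$; representability follows because the torus factor is essentially a finite cover at each level which becomes a pro-finite étale cover of the perfectoid $\mathcal{S}^\star(G_1,X_1)$. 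The Hodge--Tate period map for $(B_1,X_{B_1})$ is then defined as the composition with the projection to $\mathcal{S}^\star(G_1,X_1)$ followed by the known map $\pi_{HT}$ for $(G_1,X_1)$, and it is automatically $B_1(\mathbb{A}_f)$-equivariant because the torus factor acts trivially on the target flag variety.

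For the second stage, fix a connected component and apply Theorem~\ref{thm-abelian-torsor}(6) to produce a finite étale Galois cover $\lim_{K_p'} S^0(B_1,X_{B_1})_{(K')^p K_p'}\to\lim_{K_p} S^0(G,X)_{K^pK_p}$ with a finite Galois group $\Delta$. Because the minimal compactification is functorial along finite morphisms of Shimura varieties with identified adjoint data (compare Proposition~\ref{proposition-toro-functo}), this Galois cover extends to a finite cover of minimal compactifications. Applying Lemma~\ref{lem-connected-perfectoid} decomposes $\mathcal{S}^\star(B_1,X_{B_1})$ into perfectoid components; Lemma~\ref{lem-group-perfectoid} then realizes the $\Delta$-quotient as a perfectoid space on each component, and reassembly yields $\mathcal{S}^\star(G,X)\sim\lim_K\mathcal{S}^\star(G,X)_K$. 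To invoke Lemma~\ref{lem-group-perfectoid} we must exhibit a $\Delta$-stable cover of $\mathcal{S}^\star(B_1,X_{B_1})$ by pregood affinoids; this is provided by pulling back the affinoid cover $\mathcal{FL}_{G,\mu}=\cup V_i$ along $\pi_{HT}$, whose preimages in the Hodge type case are good affinoid perfectoid by \cite{scholze-torsion} and remain good under the finite morphisms we have introduced. The Hodge--Tate map $\pi_{HT}$ for $(G,X)$ is induced as the $\Delta$-invariant map on the quotient, and it is $G(\mathbb{A}_f)$-equivariant by construction.

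The main obstacle is the last assertion: identifying the composite $\mathcal{S}^{tor,\Diamond}(G,X)_\Sigma\to\mathcal{S}^\star(G,X)\to\mathcal{FL}_{G,\mu}$ with the map $\pi_{HT}^{tor}$ of Theorem~\ref{thm-diamond-general-Shimura}. Both maps arise from the same Tannakian mechanism applied to the tautological $\mathcal{M}_\mu^c$-torsor $\mathcal{M}_{HT}^{an}=\mathcal{M}_{dR}^{an}$: the map of Theorem~\ref{thm-diamond-general-Shimura} is characterized by the fact that its pullback of the universal torsor over $\mathcal{FL}_{G,\mu}$ recovers $\mathcal{M}_{HT}^{an}$, and the functoriality of $\mathcal{M}_{HT}^{an}$ along $B_1\to G$ together with the Hodge type construction for $\tilde G=\mathrm{GSp}_{2g}$ forces our constructed map to share this property on a Zariski dense open (the open part of the Shimura variety, where principle~\ref{abelianstrategy} applies verbatim to identify torsors). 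Zariski density together with separatedness of the target then upgrades the agreement from a dense open to all of $\mathcal{S}^{tor,\Diamond}(G,X)_\Sigma$, completing the identification.
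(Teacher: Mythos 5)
Your high-level strategy coincides with the paper's: reduce the general abelian datum $(G,X)$ to the Hodge-type datum $(G_1,X_1)$ by going through $(B_1,X_{B_1})$, using Lemmas~\ref{lem-connected-perfectoid} and~\ref{lem-group-perfectoid} and the finite (pro-)\'etale covers of connected components from Theorem~\ref{thm-abelian-torsor}. The identification of the three flag varieties via the common adjoint group, and the second-stage descent from $B_1$ to $G$ via Theorem~\ref{thm-abelian-torsor}(4),(6) plus Lemmas~\ref{lem-connected-perfectoid},~\ref{lem-group-perfectoid}, match Proposition~\ref{prop-final-perfectoid-abelian}.

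However, the first stage as you have written it contains a genuine gap. You invoke Theorem~\ref{thm-abelian-torsor}(3) to write $\mathcal{S}^{\star}(B_1,X_{B_1})$ as a fiber product of $\mathcal{S}^{\star}(G_1,X_1)$ and $\mathcal{S}(T,X_T)$ over $\mathcal{S}(G_1^{ab},X_{G_1^{ab}})$, but point~(3) is a statement about the torsor $M_{dR}(B_1,X_{B_1})$ being a fiber product of torsors; it does \emph{not} say that the Shimura variety $S(B_1,X_{B_1})_K$ is the fiber product of $S(G_1,X_1)_{K_1}$, $S(T,X_T)_{K_2}$, $S(G_1^{ab},X_{G_1^{ab}})_{K_3}$, and indeed this is false in general (the component groups do not behave multiplicatively). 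The further assertion that ``the torus factor\ldots becomes a pro-finite \'etale cover of the perfectoid $\mathcal{S}^\star(G_1,X_1)$'' is reversed: by Theorem~\ref{thm-abelian-torsor}(2) it is the Hodge-type tower $\lim S^0(G_1,X_1)$ that covers the $B_1$-tower $\lim S^0(B_1,X_{B_1})$ with a finite Galois group, not the other way around. As written, there is no argument that $\lim_K \mathcal{S}^\star(B_1,X_{B_1})_K$ is perfectoid. What the paper does instead (Lemma~\ref{existence-perf-B1}) is decompose $\mathcal{S}^\star(B_1,X_{B_1})_{\overline{K^p}}$ into finitely many $K_p$-orbits of connected components, identify each representative $\mathcal{S}^{\star,0}(B_1,X_{B_1})_{\overline{k_i K^p k_i^{-1}}}$ as the quotient of a perfectoid $\mathcal{S}^{\star,0}(G_1,X_1)_{\overline{(K_i^p)'}}$ by a finite group (using Theorem~\ref{thm-abelian-torsor}(1),(2) and Lemmas~\ref{lem-connected-perfectoid},~\ref{lem-group-perfectoid}), and reassemble via $\mathcal{S}^\star(B_1,X_{B_1})_{\overline{K^p}} = \coprod_i [\mathcal{S}^{\star,0}(B_1,X_{B_1})_{\overline{k_iK^pk_i^{-1}}}\times K_p]/S(k_i)$.

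Your construction of the Hodge--Tate period map for $B_1$ by composing with a projection $\mathcal{S}^\star(B_1)\to\mathcal{S}^\star(G_1)$ is circular at this stage (that projection at infinite level is itself part of what the theorem asserts and only makes sense once $\mathcal{S}^\star(B_1)$ is known to be perfectoid). The paper instead defines the period map directly via the group-theoretic formula $\overline{\mathcal{S}}^\star(B_1,X_{B_1}) = [\overline{\mathcal{S}}^{\star,0}(G_1,X_1)\times \mathcal{A}(B_1)]/\mathcal{A}^0(B_1)$, sending $(x,a)\mapsto a\cdot\pi_{HT}(x)$ (Lemma~\ref{lem-HT-equivariant}), with equivariance built in. Once that is in place, the commutativity of the diagram and the identification of the composite with $\pi_{HT}^{tor}$ from Theorem~\ref{thm-diamond-general-Shimura} do follow from the Tannakian/functoriality argument you sketch (Zariski density from the open Shimura variety), which is consistent with what the paper leaves implicit.
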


\begin{rem} In the course of the proof, we also introduce spaces  $\overline{\mathcal{S}}^{\star}(G,X) = \lim_K {\mathcal{S}}^{\star}(G,X)_{\overline{K}} $ which depend on the embedding of $(G_1,X_1)$ in a Siegel datum. We have finite  maps ${\mathcal{S}}^{\star}(G,X)_{{K}} \rightarrow {\mathcal{S}}^{\star}(G,X)_{\overline{K}}$ which are isomorphisms away from the boundary. We prove that $\overline{\mathcal{S}}^{\star}(G,X) \sim \lim_K {\mathcal{S}}^{\star}(G,X)_{\overline{K}} $ and that the map $\pi_{HT}$ factors through an affinoid map $\pi_{HT} : \overline{\mathcal{S}}^{\star}(G,X)  \rightarrow \mathcal{FL}_{G,\mu}$. See proposition \ref{prop-final-perfectoid-abelian}.
\end{rem}

\begin{rem} Perfectoid (minimal compactifications of)  abelian type Shimura varieties  have been constructed for the first time in \cite{DHansen2}. Since we also need to construct the Hodge-Tate period map and prove several compatibilities, we give a complete argument. The argument is very similar to \cite{DHansen2} in the sense that this is a reduction to the Hodge type case. 
\end{rem} 

\subsubsection{Proof of theorem \ref{thm-abelian-strategy}}
 We first  briefly recall how to reconstruct a Shimura variety from its connected Shimura variety following  \cite{MR546620}, section 2.1.  
Let $(G,X)$ be a Shimura datum. 
The center $Z(G)$ of $G$ is simply denoted by $Z$ unless some confusion may arise. 

The inverse system $S(G,X) = \lim_K S(G,X)_K$ has a right  action of the group $\mathcal{A}(G) = G(\mathbb{A}_f)/\overline{Z(\qq)} \ast_{G(\qq)_+} G^{ad}(\qq)^+$. Here $\overline{Z(\qq)}$ is the closure of $Z(\qq)$ in $G(\mathbb{A}_f)$. 
\begin{rem} If $Z_{s} = \{1\}$,   then $\overline{Z(\qq)} = Z(\qq)$.  If the morphism $G(\qq) \rightarrow G^{ad}(\qq)$ is surjective (for example, if $Z(G)$ is a split torus), then $\mathcal{A}(G)  = G(\mathbb{A}_f)/\overline{Z(\qq)}$. 
\end{rem}

 There is a canonical  bijection  $\pi_0 (S(G,X)) \rightarrow \overline{G(\qq)_+} \backslash G(\mathbb{A}_f)$ of right $\mathcal{A}(G)$-profinite sets where $\overline{G(\qq)_+} $ is the closure of $G(\qq)_+$ in $G(\mathbb{A}_f)$.   This is also the completion of $G(\qq)_+$ for the topology where a basis of open neighborhoods of the identity is given by the congruence subgroups of $G(\qq)_+$.
 
   Let us pick $1 \in  \overline{G(\qq)_+} \backslash G(\mathbb{A}_f)$ and let $S^0(G,X) \hookrightarrow S(G,X)$ be the connected component corresponding to $1$. This is the tower of connected Shimura varieties. The stabilizer of $S^0(G,X)$ for the action of $\mathcal{A}(G)$ is $\mathcal{A}^0(G) = \overline{G(\qq)_+}/\overline{Z(\qq)} \ast_{G(\qq)_+} G^{ad}(\qq)^+$ and we have the formula:
$$ S(G,X) = [S^0(G,X) \times \mathcal{A}(G)]/\mathcal{A}^0(G)$$
for the action $(s,g).h = (sh, h^{-1}g)$. 
 One important observation is that $S^0(G,X)$ depends only on the connected Shimura datum $(G^{der}, X^+)$ and the group $\mathcal{A}^0(G)$ depends also only on $G^{der}$. More precisely, we have that 
 $\mathcal{A}^0(G) = \overline{G^{der}(\qq)_+} \ast_{G^{der}(\qq)_+} G^{ad}(\qq)^+$, and  $\mathcal{A}^0(G)$ is therefore the completion of $G^{ad}(\qq)^+$ with respect to the topology where a basis  of open neighborhoods of $1$ is the images of congruence subgroups of $G^{der}(\qq)_+$. 
 We have that $S^0(G,X) = \lim_{K} S^0(G,X)/K$ where the limit runs over the compact open subgroups of $\overline{G^{der}(\qq)_+}$,  and  $ (S^0(G,X)/K)(\C) = \Gamma_K \backslash X^+$ where $\Gamma_K = G^{der}(\qq)_+\cap K$. 
 
  Of course, there is also a simpler formula: $ S(G,X) = [S^0(G,X) \times G(\mathbb{A}_f)]/\overline{G(\qq)_+}$, but the disadvantage of this formula is that $\overline{G(\qq)_+}$ depends on $G$!

All of this extends to the minimal compactification 
since a Shimura variety and its minimal compactification have the same connected components, and group actions extend by normality of minimal compactifications.

We now go back to our Hodge type datum $(G_1,X_1)$, and we fix an embedding in a Siegel datum $(\tilde{G}, \tilde{X})$. Attached to this fixed embedding, for any compact open $K$ we have  the minimal compactification $S^\star(G_1,X_1)_{K}$ together with a finite surjective map to $S^\star(G_1,X_1)_{\overline{K}}$, where $S^\star(G_1,X_1)_{\overline{K}}$ is the schematic image of $S^\star(G_1,X_1)_{K} \rightarrow S^\star(\tilde{G}, \tilde{X})_{\tilde{K}}$ for  all small enough $\tilde{K}$ with $\tilde{K} \cap G(\mathbb{A}_f) = K$. Let us denote $\overline{S}^\star(G_1,X_1) = \lim_K S^\star(G_1,X_1)_{\overline{K}}$. 
For the Siegel datum $(\tilde{G}, \tilde{X})$, we have that $\mathcal{A}(\tilde{G}) = \tilde{G}(\mathbb{A}_f)/Z(\tilde{G})(\qq)$. We deduce that the closed subgroup $\mathcal{A}(G_1)$ of $\mathcal{A}(\tilde{G})$ acts on $\overline{S}^\star(G_1,X_1)$ and ${S}^\star(G_1,X_1)$ in a compatible way. We also deduce that the Hodge-Tate period map
$ \mathcal{S}^\star(G_1,X_1) \rightarrow \overline{\mathcal{S}}^\star(G_1,X_1) \rightarrow \mathcal{FL}_{G_1, \mu_{G_1}}$ is $\mathcal{A}(G_1)$-equivariant (by reduction to the Siegel case).  We remark that the $\mathcal{A}(G_1)$-action on $\mathcal{FL}_{G_1, \mu_{G_1}}$ factors through the map $\mathcal{A}(G_1) \rightarrow G_1^{ad}(\mathbb{A}_f) \rightarrow G^{ad}_1(\qq_p)$.

We are now ready to extend things to $(B_1, X_{B_1})$. First, we have that $S^\star(B_1,X_{B_1}) = [S^{\star,0}(G_1,X_1) \times \mathcal{A}(B_1)]/\mathcal{A}^0(B_1)$. We may also define  $\overline{S}^\star(B_1,X_{B_1}) = [\overline{S}^{\star,0}(G_1,X_1) \times \mathcal{A}(B_1)]/\mathcal{A}^0(B_1)$. By taking $K$-invariants, we define $S^\star(B_1,X_{B_1})_{\overline{K}}=\overline{S}^\star(B_1,X_{B_1})/K$. 

Our first lemma is: 

\begin{lem}\label{existence-perf-B1} There is a perfectoid space $ \mathcal{S}^{\star}(B_1,X_{B_1})_{\overline{K^p}} \sim \lim_{K_p}  \mathcal{S}^{\star}(B_1,X_{B_1})_{\overline{K^pK_p}}$ and a perfectoid space $ \mathcal{S}^{\star}(B_1,X_{B_1})_{{K^p}} =  \lim_{K_p}  \mathcal{S}^{\star,\Diamond}(B_1,X_{B_1})_{{K^pK_p}}$.
\end{lem}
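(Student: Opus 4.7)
The plan is to bootstrap from the Hodge-type case $(G_1,X_1)$, where perfectoid minimal compactifications (in both the $\sim$-limit and diamond-limit senses) are already available from Section~\ref{Section-Hodge-perfectoid}, using the quotient presentations
$$ \overline{S}^\star(B_1,X_{B_1}) = [\overline{S}^{\star,0}(G_1,X_1) \times \mathcal{A}(B_1)]/\mathcal{A}^0(B_1),\quad  S^\star(B_1,X_{B_1}) = [S^{\star,0}(G_1,X_1) \times \mathcal{A}(B_1)]/\mathcal{A}^0(B_1)$$
that were introduced just above the lemma, together with the fact from Theorem~\ref{thm-abelian-torsor}~(1) that the towers of connected components for $B_1$ and $G_1$ coincide.

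First I would pass to a single connected component in the Hodge-type tower. The space $\overline{\mathcal{S}}^\star(G_1,X_1)_{\overline{K_1^p}}$ is perfectoid with a good affinoid cover coming from the pullback of an affinoid cover of $\mathcal{FL}_{G_1,\mu_{G_1}}$ via $\pi_{HT}$ (see Section~\ref{Section-Hodge-perfectoid}); since at each finite level the set of connected components is finite, Lemma~\ref{lem-connected-perfectoid} produces a perfectoid space $\overline{\mathcal{S}}^{\star,0}(G_1,X_1) \sim \lim_{K_{1,p}} \overline{\mathcal{S}}^{\star,0}(G_1,X_1)_{\overline{K_1^p K_{1,p}}}$ (and the analogous diamond-limit version $\mathcal{S}^{\star,0}(G_1,X_1)$).

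Next, I would use Theorem~\ref{thm-abelian-torsor}~(2) to transfer to $B_1$: for each $K^p \subseteq B_1(\mathbb{A}_f^p)$ it produces a $(K^p)' \subseteq G_1(\mathbb{A}_f^p)$ so that the tower $\lim_{K'_p} S^0(G_1,X_1)_{(K^p)' K'_p}$ is a finite étale Galois cover of $\lim_{K_p} S^0(B_1,X_{B_1})_{K^p K_p}$ with finite group~$\Delta$. This extends by normality to the minimal compactifications. Applying Lemma~\ref{lem-group-perfectoid} to the action of $\Delta$ on the perfectoid connected component of $G_1$ (whose $\Delta$-invariant pregood affinoid cover is produced by intersecting the Hodge--Tate cover from Step~1 with $\Delta$-orbits, which remain good since good affinoids are stable under rational subsets), we obtain a perfectoid $\overline{\mathcal{S}}^{\star,0}(B_1,X_{B_1})_{\overline{K^p}} \sim \lim_{K_p} \overline{\mathcal{S}}^{\star,0}(B_1,X_{B_1})_{\overline{K^p K_p}}$, and likewise for the diamond-limit version. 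Finally, to assemble the full space at level $\overline{K^p}$, I would use the quotient formula to write $\overline{\mathcal{S}}^\star(B_1,X_{B_1})_{\overline{K^p K_p}}$ as a finite disjoint union (indexed by $\pi_0$ at level $\overline{K^pK_p}$, which is itself finite) of quotients of translates of the identity component by a finite subgroup; another application of Lemmas~\ref{lem-connected-perfectoid} and~\ref{lem-group-perfectoid} turns the inverse limit into a perfectoid space with the required $\sim$-property, and the same argument in the diamond category handles the second statement.

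The hard part is bookkeeping the group-theoretic input from Theorem~\ref{thm-abelian-torsor} together with the hypothesis of Lemma~\ref{lem-group-perfectoid}: one must verify that at some finite level the relevant finite groups act with a $G$-invariant cover by pregood affinoids. This is where the minimal compactification (rather than, say, the toroidal one) is essential, because the Hodge--Tate period map $\pi_{HT}$ on $\overline{\mathcal{S}}^\star(G_1,X_1)_{\overline{K_1^p}}$ supplies an affinoid cover whose preimages are automatically good; equivariance under the relevant finite groups is arranged by refining this cover with finitely many translates and passing to common refinements, which preserves goodness by the remark following Definition~\ref{defi-of-good}.
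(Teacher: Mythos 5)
Your first two steps match the paper's argument: apply Lemma \ref{lem-connected-perfectoid} to the Hodge-type tower to get a perfectoid connected component, then use Theorem \ref{thm-abelian-torsor}~(2) together with Lemma \ref{lem-group-perfectoid} (with the finite group $\Delta$) to transfer to a connected component of the $B_1$-tower. This part is correct, provided you apply Theorem \ref{thm-abelian-torsor}~(2) with tame level $k_iK^pk_i^{-1}$ for each representative (not just $K^p$), but that is a minor omission.

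The gap is in the final assembly step. You say you will ``write $\overline{\mathcal{S}}^\star(B_1,X_{B_1})_{\overline{K^p K_p}}$ as a finite disjoint union (indexed by $\pi_0$ at level $\overline{K^pK_p}$)'' and then take limits, invoking another application of Lemmas \ref{lem-connected-perfectoid} and \ref{lem-group-perfectoid}. Neither of those lemmas applies here. Lemma \ref{lem-connected-perfectoid} is a one-way statement: it produces perfectoid connected components \emph{given} a perfectoid ambient space, not the other way around. And Lemma \ref{lem-group-perfectoid} is for quotients by a \emph{finite} group, whereas passing from the tower at level $K^pK_p$ down to level $\overline{K^p}$ involves the profinite group $K_p$ and a set of connected components which is profinite (not finite) in the limit. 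In particular, the indexing set of connected components does \emph{not} stabilize at any finite level, so the ``finite disjoint union'' you want to take a limit of does not behave the way a limit of quotients by finite groups does.

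What the paper actually does is fix $K_p$ once and for all and observe that $K_p$ acts on the (profinite) set of connected components of $S^\star(B_1,X_{B_1})_{\overline{K^p}}$ with \emph{finitely many orbits}. Choosing orbit representatives $k_i$ with stabilizers $S(k_i)\subseteq K_p$, each $K_p$-orbit of components contributes a twisted product $[\mathcal{S}^{\star,0}(B_1,X_{B_1})_{\overline{k_iK^pk_i^{-1}}}\times K_p]/S(k_i)$, which is topologically the product of the (already-known-to-be-perfectoid) connected component with the profinite set $K_p/S(k_i)$. The decisive fact, which your argument leaves out, is that a product of a perfectoid space with a profinite set is again perfectoid and satisfies the $\sim$-limit property; the disjoint union over the finitely many orbits then gives the full space. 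Your route cannot work as written without some substitute for this observation, and the two lemmas you cite do not provide one. (Once the $\sim$-limit assertion is established, the diamond-limit statement is a citation to \cite{bhatt2019prisms}, Theorem~1.16, as in the paper, rather than ``the same argument in the diamond category.'')
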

\begin{proof} We prove the first statement. Fix a compact open subgroup $K_p\subseteq B_1(\qq_p)$.  It acts on the scheme ${S}^\star(B_1,X_{B_1})_{\overline{K^p}}$, and it acts on the connected components with finitely many orbits. We choose representatives for the orbits, which are all of the form ${S}^{\star,0}(B_1,X_{B_1})_{\overline{k_iK^pk_i^{-1}}}$ for suitable elements $k_i \in B_1(\mathbb{A}_f^p)$. Let $S(k_i)$ be the stabilizer of the connected component ${S}^{\star,0}(B_1,X_{B_1})_{\overline{k_i K^pk_i^{-1}}}$ in $K_p$. This is a closed subgroup of $K_p$. 
We deduce that ${S}^\star(B_1,X_{B_1})_{\overline{K^p}} = \coprod_i  [{S}^{\star,0}(B_1,X_{B_1})_{\overline{k_iK^pk_i^{-1}}} \times K_p]/{S(k_i)}$.

By theorem \ref{thm-abelian-torsor} (2), we can find compact open subgroups $ (K_i^p)' \subseteq G_1(\mathbb{A}^p_f)$ such that 
 we have finite morphisms $S^{\star,0}(G_1,X_{1})_{\overline{(K_i^p)'} } \rightarrow S^{\star,0}(B_1,X_{B_1})_{\overline{k_iK^pk_i^{-1}}}$, identifying $S^{\star,0}(B_1,X_{B_1})_{\overline{k_iK^pk_i^{-1}}}$ as the quotient of  $S^{\star,0}(G_1,X_{1})_{\overline{(K_i^p)'} } $ by a finite group. 

Now we pass to analytic geometry.  
It follows from  lemma \ref{lem-connected-perfectoid} and  the results of \ref{Section-Hodge-perfectoid} that the $\mathcal{S}^{\star,0}(G_1,X_{1})_{\overline{(K_i^p)'} } $ are perfectoid. We deduce from lemma \ref{lem-group-perfectoid} that the $\mathcal{S}^{\star,0}(B_1,X_{B_1})_{\overline{k_iK^pk_i^{-1}}}$ are perfectoid spaces.

We now want to define  $\mathcal{S}^\star(B_1,X_{B_1})_{\overline{K^p}} = \coprod_i [\mathcal{S}^{\star,0}(B_1,X_{B_1})_{\overline{k_iK^pk_i^{-1}}} \times K_p]/{S(k_i)}$.  The product  $\mathcal{S}^{\star,0}(B_1,X_{B_1})_{\overline{k_iK^pk_i^{-1}}} \times (K_p/{S(k_i)})$ is a  perfectoid space, being the product of a perfectoid space with a profinite set. 
Checking that $\mathcal{S}^{\star}(B_1,X_{B_1})_{\overline{K^p}} \sim \lim_{K_p}  \mathcal{S}^{\star}(B_1,X_{B_1})_{\overline{K^pK_p}}$ is now an easy exercise, again using \ref{Section-Hodge-perfectoid}. 
We deduce that $ \mathcal{S}^{\star}(B_1,X_{B_1})_{{K^p}} =  \lim_{K_p}  \mathcal{S}^{\star,\Diamond}(B_1,X_{B_1})_{{K^pK_p}}$ by \cite{bhatt2019prisms}, thm. 1.16.
\end{proof}

We can now construct the Hodge-Tate period map.

\begin{lem}\label{lem-HT-equivariant} We have a  Hodge-Tate period map  $$\pi_{HT} :  \mathcal{S}^{\star}(B_1,X_{B_1})_{{K^p}} \rightarrow \mathcal{S}^{\star}(B_1,X_{B_1})_{\overline{K^p}} \rightarrow \mathcal{FL}_{B_1, \mu_{B_1}}$$ which agrees on connected components with Hodge-Tate period map for $G_1$. 
The Hodge-Tate period map is $B_1(\qq_p)$-equivariant and Hecke equivariant away from $p$. The map $\mathcal{S}^{\star}(B_1,X_{B_1})_{\overline{K^p}} \rightarrow \mathcal{FL}_{B_1, \mu_{B_1}}$ is affinoid in the sense that $\mathcal{FL}_{B_1, \mu_{B_1}}$ has a cover by affinoid opens whose preimages in $\mathcal{S}^{\star}(B_1,X_{B_1})_{\overline{K^p}}$ are good affinoid perfectoid opens.
\end{lem}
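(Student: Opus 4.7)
The plan is to descend the Hodge-Tate period map from the Hodge type Shimura variety for $G_1$ to that of $B_1$, and then to assemble the map from the connected-component decomposition. Since $B_1^{der}=G_1^{der}$, we have $B_1^{ad}=G_1^{ad}$ and the cocharacters $\mu_{B_1}$ and $\mu_{G_1}$ have the same image in the adjoint group. The parabolics $P_{\mu_{B_1}}$ and $P_{\mu_{G_1}}$ therefore correspond under this identification, yielding a canonical identification $\mathcal{FL}_{B_1,\mu_{B_1}}=\mathcal{FL}_{G_1,\mu_{G_1}}$; moreover the $B_1(\qq_p)$-action on $\mathcal{FL}_{B_1,\mu_{B_1}}$ factors through $B_1^{ad}(\qq_p)=G_1^{ad}(\qq_p)$ and matches the $G_1(\qq_p)$-action there.

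First I would construct $\pi_{HT}$ on each connected component of $\overline{\mathcal{S}}^{\star}(B_1,X_{B_1})_{\overline{K^p}}$. By theorem \ref{thm-abelian-torsor}(2) combined with the proof of lemma \ref{existence-perf-B1}, each component $\mathcal{S}^{\star,0}(B_1,X_{B_1})_{\overline{k_iK^pk_i^{-1}}}$ is the quotient of a component $\mathcal{S}^{\star,0}(G_1,X_1)_{\overline{(K_i^p)'}}$ by a finite group $\Delta_i$. Since both sides arise from the same connected Shimura datum $(G_1^{der},X_1^+)$, the $G_1$ Hodge-Tate map $\pi_{HT}^{G_1}$, being $\mathcal{A}(G_1)$-equivariant, is canonically $\Delta_i$-equivariant and so descends uniquely to the quotient. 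Using the decomposition $\mathcal{S}^{\star}(B_1,X_{B_1})_{\overline{K^p}}=\coprod_i[\mathcal{S}^{\star,0}(B_1,X_{B_1})_{\overline{k_iK^pk_i^{-1}}}\times(K_p/S(k_i))]$ from the proof of lemma \ref{existence-perf-B1}, I would then define $\pi_{HT}$ by $(x,k)\mapsto \pi_{HT}(x)\cdot k$, where the $K_p$-action on $\mathcal{FL}_{B_1,\mu_{B_1}}$ is through its image in $B_1^{ad}(\qq_p)$; well-definedness modulo $S(k_i)$ is immediate from $\mathcal{A}^0(G_1)$-equivariance of $\pi_{HT}^{G_1}$. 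The map on $\mathcal{S}^{\star}(B_1,X_{B_1})_{K^p}$ is obtained by composing with the canonical map to $\mathcal{S}^{\star}(B_1,X_{B_1})_{\overline{K^p}}$.

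The $B_1(\qq_p)$-equivariance and prime-to-$p$ Hecke equivariance are built into this construction, as both actions enter through the $K_p$ coordinate and through the permutation of connected components and are intertwined with the corresponding actions on $\mathcal{FL}_{B_1,\mu_{B_1}}$ by $\mathcal{A}(G_1)$-equivariance of $\pi_{HT}^{G_1}$. For affinoidness, I would start from a cover of $\mathcal{FL}_{G_1,\mu_{G_1}}=\mathcal{FL}_{B_1,\mu_{B_1}}$ by affinoids whose preimages in $\overline{\mathcal{S}}^{\star}(G_1,X_1)$ are good affinoid perfectoid opens, as provided by the Hodge type construction of section \ref{Section-Hodge-perfectoid}. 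The preimage in each $\mathcal{S}^{\star,0}(B_1,X_{B_1})_{\overline{k_iK^pk_i^{-1}}}$ is the quotient of a good affinoid perfectoid open by the finite group $\Delta_i$, hence again good affinoid perfectoid by lemma \ref{lem-group-perfectoid}. The full preimage is a finite disjoint union of products of such opens with the profinite sets $K_p/S(k_i)$, which preserves being good affinoid perfectoid. The main technical point to verify is that the $\Delta_i$-equivariance extends from the interior to the minimal compactification, which should follow from the normality of the minimal compactification and the fact that it is determined canonically by its interior together with the tower structure.
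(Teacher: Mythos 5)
Your proposal is correct and takes essentially the same approach as the paper: you build $\pi_{HT}$ by descending the $G_1$ Hodge-Tate period map through the finite quotients $\Delta_i$, using $\mathcal{A}^0(G_1)$-equivariance for well-definedness, and then assemble over the full Shimura variety via the contracted-product description with the profinite sets $K_p/S(k_i)$; the paper does the same thing, just phrased at infinite tame level $K^p$ where the formula $\overline{\mathcal{S}}^{\star}(B_1,X_{B_1}) = [\overline{\mathcal{S}}^{\star,0}(G_1,X_1) \times \mathcal{A}(B_1)]/\mathcal{A}^0(B_1)$ is most transparent and the map is just $(x,a)\mapsto a\cdot\pi_{HT}(x)$. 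The worry you flag at the end is actually already handled in the paper's setup: the $\mathcal{A}(G_1)$-equivariance of $\pi_{HT}^{G_1}$ on the \emph{minimal compactification} (not just the interior) was established earlier by reduction to the Siegel case, so no separate extension argument across the boundary is needed.
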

\begin{proof} We may pass to the limit over $K^p$ and consider the map $\overline{\mathcal{S}}^{\star}(B_1,X_{B_1}) = [\overline{\mathcal{S}}^{\star,0}(G_1,X_1) \times \mathcal{A}(B_1)]/\mathcal{A}^0(B_1) \rightarrow \mathcal{FL}_{B_1, \mu_{B_1}}$ sending $(x,a) \in \overline{\mathcal{S}}^{\star,0}(G_1,X_1) \times \mathcal{A}(B_1)$ to $a.\pi_{HT}(x)$ where $\pi_{HT}(x) \in  \mathcal{FL}_{B_1, \mu_{B_1}}  =  \mathcal{FL}_{G_1, \mu_{G_1}}$ and $\mathcal{A}(B_1)$ acts via its quotient $B^{ad}_1(\qq_p)$ on $\mathcal{FL}_{B_1, \mu_{B_1}}$. Since the map $ \overline{\mathcal{S}}^{\star,0}(G_1,X_1)  \rightarrow  \mathcal{FL}_{B_1, \mu_{B_1}}$ is $\mathcal{A}^0(G_1)$-equivariant, we deduce that we have a well-defined  $\mathcal{A}(B_1)$-equivariant map $\overline{\mathcal{S}}^{\star}(B_1,X_{B_1}) \rightarrow  \mathcal{FL}_{B_1, \mu_{B_1}}$. The affine property follows from the Hodge case. 
\end{proof}

We now consider the descent from $(B_1, X_{B_1})$ to $(G,X)$. We have a map  $f : B_1 \rightarrow G$ inducing an isogeny $B_1^{der} \rightarrow G^{der}$. 

\begin{lem} \begin{enumerate}
\item  We have a continuous surjective map $\mathcal{A}^0(B_1) \rightarrow \mathcal{A}^0(G)$ with kernel a pro-finite group $\Delta$. 
\item We have a pro-finite \'etale morphism $S^0(B_1,X_{B_1}) \rightarrow S^0(G,X)$ which is Galois with group $\Delta$.
\end{enumerate}
\end{lem}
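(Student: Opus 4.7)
The plan is to reduce both statements to the general fact that if $B_1^{der} \to G^{der}$ is a central isogeny of semisimple groups over $\qq$, then the two topologies on the common adjoint group $G^{ad}(\qq)^+ = B_1^{ad}(\qq)^+$ defined by images of congruence subgroups of $B_1^{der}(\qq)_+$ and $G^{der}(\qq)_+$ differ only by a profinite amount.

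For part (1), I would first identify $\mathcal{A}^0(B_1)$ and $\mathcal{A}^0(G)$ as the completions of the single abstract group $G^{ad}(\qq)^+$ with respect to the topologies $\tau_{B_1}$ and $\tau_G$ where a basis of neighborhoods of the identity is given by the images of the congruence subgroups of $B_1^{der}(\qq)_+$ and $G^{der}(\qq)_+$ respectively (this is the formula recalled in the paragraph preceding theorem \ref{thm-abelian-torsor}). Since $f^{der} \colon B_1^{der} \to G^{der}$ is a central isogeny, its kernel $F$ is a finite $\qq$-group scheme, and the induced map $B_1^{der}(\qq_p) \to G^{der}(\qq_p)$ is a local homeomorphism with finite kernel. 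Hence for any congruence subgroup $K \subseteq G^{der}(\qq)_+$ one can find a congruence subgroup $K' \subseteq B_1^{der}(\qq)_+$ with $f(K') \subseteq K$ of finite index; conversely $f$ of a congruence subgroup is open in a congruence subgroup. This shows that $\tau_{B_1}$ refines $\tau_G$, and that the identity on $G^{ad}(\qq)^+$ is uniformly continuous from $\tau_{B_1}$ to $\tau_G$, giving a continuous map $\mathcal{A}^0(B_1) \to \mathcal{A}^0(G)$ with dense image. Since $\mathcal{A}^0(G)$ is the completion of a group in which $G^{ad}(\qq)^+$ sits densely, combined with the fact that all relevant topologies are given by a countable basis, one concludes surjectivity and that the kernel $\Delta$ is the profinite group obtained as the completion of $\{1\} \subseteq G^{ad}(\qq)^+$ with respect to $\tau_{B_1}$: explicitly $\Delta = \lim_{K'} K'/(f^{-1}(1) \cdot K' \cap \ker)$ where $K'$ ranges over congruence subgroups of $B_1^{der}(\qq)_+$, and this inverse limit is profinite because each term is finite by the isogeny estimate above.

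For part (2), I would use the complex uniformization $S^0(H,X_H)/K = \Gamma_K \backslash X^+$ for $H \in \{B_1,G\}$ and a congruence subgroup $K \subseteq H^{der}(\qq)_+$. Given a compact open $K \subseteq B_1^{der}(\qq_p)$ (possibly shrinking $K$), by the local homeomorphism argument above, $f(K)$ is open of finite index in some compact open $K_G \subseteq G^{der}(\qq_p)$, and the finite group $\Delta_K := \Gamma_{K_G}/f(\Gamma_K)$ acts on $\Gamma_K \backslash X^+$ with quotient $\Gamma_{K_G} \backslash X^+$. This is exactly the finite level analog exploited in the proof of theorem \ref{thm-abelian-torsor}(6) and shows that $S^0(B_1,X_{B_1})/K \to S^0(G,X)/K_G$ is finite étale Galois with group $\Delta_K$. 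Passing to the limit over $K$, the transition maps $\Delta_{K'} \to \Delta_K$ are surjective and the inverse limit equals the group $\Delta$ constructed in part (1) (both are obtained as $\lim K_G/f(K)$), so $S^0(B_1,X_{B_1}) \to S^0(G,X)$ is pro-finite étale Galois with group $\Delta$.

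The main technical obstacle is to make rigorous the compatibility in part (1) between the abstract profinite group $\Delta$ defined via topologies on $G^{ad}(\qq)^+$ and the geometric Galois group appearing in part (2). This amounts to a careful bookkeeping of the finite groups $K_G/f(K)$ as $K$ shrinks, relying on the fact (already used in the proof of theorem \ref{thm-abelian-torsor}(2),(6)) that $\ker f^{der}$ is finite and that $B_1^{der}(\qq_p) \to G^{der}(\qq_p)$ is a local homeomorphism; the estimate that congruence subgroups go to finite index subgroups of congruence subgroups is the uniform input making the inverse limit profinite rather than just countable.
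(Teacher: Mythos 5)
Your strategy matches the paper's: identify $\mathcal{A}^0(B_1)$ and $\mathcal{A}^0(G)$ as completions of $G^{ad}(\qq)^+$ for two topologies $\tau_{B_1}$, $\tau_G$ induced by congruence subgroups, check that $\tau_{B_1}$ refines $\tau_G$, and realize $\Delta$ as the profinite kernel. But two steps need to be tightened.

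First, the surjectivity of $\mathcal{A}^0(B_1)\to\mathcal{A}^0(G)$ is the genuine content, and ``dense image $+$ countable basis'' is not a valid principle for deducing it (a continuous injection of Polish groups with dense image need not be surjective). The correct argument, and the one the paper uses, is to index the inverse systems compatibly: take the set $I$ of pairs $(K,K')$ of compact opens in $B_1^{der}(\mathbb{A}_f)\times G^{der}(\mathbb{A}_f)$ with $f(K)\subseteq K'$, set $\Gamma_K=B_1^{der}(\qq)_+\cap K$, $\Gamma_{K'}=G^{der}(\qq)_+\cap K'$ and $\Delta(K,K')=\Gamma_K\backslash\Gamma_{K'}$ (finite by commensurability of arithmetic subgroups); then one has for each $(K,K')$ a short exact sequence of groups with $\Delta(K,K')$ finite, and exactness of the inverse limit follows from the Mittag--Leffler criterion (automatic for systems of finite groups). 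This also gives the correct formula $\Delta=\lim_{(K,K')\in I}\Delta(K,K')$ --- your displayed formula for $\Delta$ mixes kernels of different maps and does not parse as written.

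Second, the sentence ``conversely $f$ of a congruence subgroup is open in a congruence subgroup'' is misleading and should be replaced. The paper takes care to point out that $f(\Gamma)$ for $\Gamma\in\mathrm{Cong}(B_1)$ is an \emph{arithmetic} subgroup of $G^{der}(\qq)_+$ of finite index in a congruence subgroup, but is \emph{not necessarily itself a congruence subgroup}. This is precisely the failure of the congruence subgroup property, and it is the reason $\tau_{B_1}$ can be strictly finer than $\tau_G$, hence why $\Delta$ can be nontrivial. Your phrasing suggests the opposite. Relatedly, the local homeomorphism of $B_1^{der}(\qq_p)\to G^{der}(\qq_p)$ is not what forces $f(K')$ to have finite index in $K$: that follows from commensurability of arithmetic subgroups over $\qq$, together with the finiteness of $\ker(f)$ and of the kernel of $G^{der}(\qq)_+\to G^{ad}(\qq)^+$. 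The $\qq_p$-local homeomorphism is genuinely useful elsewhere (e.g.\ in the proof of point (6) of the theorem preceding this lemma) but is a red herring here.
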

\begin{proof}  Let $Cong(B_1)$ be the set of congruence subgroups of $B_1^{der}(\qq)_+$, and let $Cong(G)$ be the set of congruence subgroups of $G^{der}(\qq)_+$. 
We claim that  the map $f : B_1^{der}(\qq)_+ \rightarrow G^{der}(\qq)_+$ induces a map   $f^{-1} : Cong(G) \rightarrow Cong(B_1)$. Indeed, let $K \subseteq G^{der}(\mathbb{A}_f)$ be  a compact open subgroup, then $f^{-1}(K)$ is compact open in $B_1^{der}(\mathbb{A}_f)$ and $f^{-1}( G^{der}(\qq)_+ \cap K ) = B_1^{der}(\qq)_+\cap f^{-1} (K)$. 
If $\Gamma \in Cong(B_1)$, then $f(\Gamma)$ is an arithmetic subgroup of $G^{der}(\qq)_+$ but not necessarily a congruence subgroup! 

We consider the index set  $I$ of compact open subgroups $ (K,K') \in B_1^{der}(\mathbb{A}_f) \times G^{der}(\mathbb{A}_f)$  such that $f(K) \subseteq K'$. 

Let $\Gamma_K = B_1^{der}(\qq)_+ \cap K$ and $\Gamma_{K'} = G^{der}(\qq)_+ \cap K'$. Let $\Delta(K,K') = \Gamma_{K} \backslash \Gamma_{K'}$. 

We observe that for any sufficiently small $\Gamma \in Cong(B_1)$ or $\Gamma \in Cong(G)$, the map $f\circ g : \Gamma \rightarrow G^{ad}(\qq)^+$ or $f : \Gamma \rightarrow G^{ad}(\qq)^+$ is injective. Indeed, remark that the  kernel of $G^{der}(\qq)_+ \rightarrow G^{ad}(\qq)^+$ is a finite group.

The groups $ g \circ f(Cong(B_1))$ and $f(Cong(G))$ are the basis of open neighborhoods of $1$ for  two topologies $\tau(B_1)$ and $\tau(G)$ on $G^{ad}(\qq)^+$. We know that $\mathcal{A}^0(B_1)$ and $\mathcal{A}^0(G)$ are the completion of $G^{ad}(\qq)^+$ for the topologies $\tau(B_1)$ and $\tau(G)$ respectively. 
Therefore, $\mathcal{A}^0(B_1) = \lim_{\Gamma \in Cong(B_1)} G^{ad}(\qq)^+/\Gamma$ and $\mathcal{A}^0(G) =  \lim_{\Gamma \in Cong(G)} G^{ad}(\qq)^+/\Gamma$. 
By the Mittag-Leffler criterion the sequence   $$1 \rightarrow \lim_{(K,K') \in I} \Delta(K,K') \rightarrow \lim_{I}  G^{ad}(\qq)^+/(\Gamma_{K}) \rightarrow \lim_{I} G^{ad}(\qq)^+/\Gamma_{K'} \rightarrow 1$$
is exact and equals $1 \rightarrow \Delta \rightarrow \mathcal{A}^0(B_1) \rightarrow \mathcal{A}^0(G) \rightarrow 1$. 
\end{proof}

We are now ready to descend everything from  $(B_1, X_{B_1})$ to $(G,X)$. First, we have that $S^{\star,0}(G,X) = S^{\star,0}(B_1, X_{B_1})/\Delta$ and we may also define $\overline{S}^{\star, 0} (G,X) =  \overline{S}^{\star,0}(B_1, X_{B_1})/\Delta$. We have  $S^{\star}(G,X)= [S^{\star,0}(G,X) \times \mathcal{A}(G)]/\mathcal{A}^0(G)$. We may also define  $\overline{S}^\star(G,X) = [\overline{S}^{\star,0}(G,X) \times \mathcal{A}(G)]/\mathcal{A}^0(G)$. By taking $K$-invariants, we define $\overline{S}^\star(G,X)/K = {S}^\star(G,X)_{\overline{K}}$. 

\begin{prop}\label{prop-final-perfectoid-abelian} Let $(G,X)$ be an abelian type Shimura datum as above.  \begin{enumerate}
\item There is a perfectoid space $ \mathcal{S}^{\star}(G,X)_{\overline{K^p}} \sim \lim_{K_p}  \mathcal{S}^{\star}(G,X)_{\overline{K^pK_p}}$ and a perfectoid space $ \mathcal{S}^{\star}(G,X)_{{K^p}} =  \lim_{K_p}  \mathcal{S}^{\star,\Diamond}(G,X)_{{K^pK_p}}$.
\item We have a Hodge-Tate period map $\pi_{HT} :  \mathcal{S}^{\star}(G,X)_{{K^p}} \rightarrow \mathcal{S}^{\star}(G,X)_{\overline{K^p}} \rightarrow \mathcal{FL}_{G, \mu}$ which is $G(\qq_p)$-equivariant and Hecke equivariant. The map $\mathcal{S}^{\star}(G,X)_{\overline{K^p}} \rightarrow \mathcal{FL}_{G, \mu}$ is affinoid in the sense that $\mathcal{FL}_{G, \mu}$ has a cover by affinoid opens whose preimages in $\mathcal{S}^{\star}(G,X)_{\overline{K^p}}$ are good affinoid perfectoid opens. 
\end{enumerate}
\end{prop}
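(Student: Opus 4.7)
The plan is to mimic the strategy used for the $B_1$ case in Lemmas \ref{existence-perf-B1} and \ref{lem-HT-equivariant}, descending the already constructed perfectoid space and Hodge-Tate map for $(B_1, X_{B_1})$ along the finite-type quotient by $\Delta$, and then reassembling the full minimal compactification using the formula $\overline{S}^\star(G,X)=[\overline{S}^{\star,0}(G,X)\times\mathcal{A}(G)]/\mathcal{A}^0(G)$. First I would extract a perfectoid space $\mathcal{S}^{\star,0}(B_1,X_{B_1})\sim\lim_{K}\mathcal{S}^{\star,0}(B_1,X_{B_1})/K$ by combining Lemma \ref{existence-perf-B1} with Lemma \ref{lem-connected-perfectoid} on connected components.

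Next I would descend along $\Delta$. At each finite level one has a surjection $\Delta\twoheadrightarrow\Delta(K,K')$ onto a finite group, and the induced map $\mathcal{S}^{\star,0}(B_1,X_{B_1})/K\to\mathcal{S}^{\star,0}(G,X)/K'$ is a finite Galois cover. Pulling back the affinoid cover of $\mathcal{FL}_{G,\mu}=\mathcal{FL}_{B_1,\mu_{B_1}}$ produces a $G^{ad}(\qq_p)$-equivariant (hence $\Delta(K,K')$-invariant) covering by pregood affinoids of $\mathcal{S}^{\star,0}(B_1,X_{B_1})$, so Lemma \ref{lem-group-perfectoid} applies and realizes $\mathcal{S}^{\star,0}(G,X)/K'$ as a perfectoid space with the tilde-limit property. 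Assembling the connected components as in the proof of Lemma \ref{existence-perf-B1}, using $\overline{S}^\star(G,X)=[\overline{S}^{\star,0}(G,X)\times\mathcal{A}(G)]/\mathcal{A}^0(G)$ with finite orbits under any $K_p$, then produces $\mathcal{S}^\star(G,X)_{\overline{K^p}}\sim\lim_{K_p}\mathcal{S}^\star(G,X)_{\overline{K^pK_p}}$. The existence of $\mathcal{S}^\star(G,X)_{K^p}=\lim_{K_p}\mathcal{S}^{\star,\Diamond}(G,X)_{K^pK_p}$ as a perfectoid space then follows from \cite{bhatt2019prisms}, Theorem 1.16.

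For the Hodge-Tate map, the key observation is that $\Delta$ acts trivially on $\mathcal{FL}_{G,\mu}$: the action of $\mathcal{A}^0(B_1)$ on $\mathcal{FL}_{B_1,\mu_{B_1}}$ factors through $G^{ad}(\qq_p)$, and the induced map $\mathcal{A}^0(B_1)\to G^{ad}(\qq_p)$ factors through $\mathcal{A}^0(G)$ (both are continuous completions of $G^{ad}(\qq)^+$ for topologies finer than the one induced by $G^{ad}(\qq_p)$, and $\tau(B_1)$ is finer than $\tau(G)$). Consequently the $\mathcal{A}(B_1)$-equivariant map of Lemma \ref{lem-HT-equivariant} descends to a $\mathcal{A}(G)$-equivariant map $\pi_{HT}:\mathcal{S}^\star(G,X)_{\overline{K^p}}\to\mathcal{FL}_{G,\mu}$, and composition with $\mathcal{S}^\star(G,X)_{K^p}\to\mathcal{S}^\star(G,X)_{\overline{K^p}}$ yields the desired period map. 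The affinoid property is inherited from the $B_1$-case (hence from the Hodge case via Lemma \ref{existence-perf-B1}) because quotienting an affinoid perfectoid by a finite group preserves being affinoid perfectoid by Lemma \ref{lem-group-perfectoid}.

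The main obstacle will be verifying that $\Delta$ acts trivially on the Hodge-Tate image, i.e.~that the map $\mathcal{A}^0(B_1)\to G^{ad}(\qq_p)$ factors through $\mathcal{A}^0(G)$; this amounts to a careful comparison of the congruence topologies $\tau(B_1)$ and $\tau(G)$ on $G^{ad}(\qq)^+$ with the $p$-adic topology, relying on the fact that $B_1^{der}\to G^{der}$ is a central isogeny so that preimages of congruence subgroups of $G^{der}$ are congruence subgroups of $B_1^{der}$. All other steps are essentially formal consequences of the results already assembled at the Hodge type and $B_1$ levels.
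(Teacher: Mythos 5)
Your proposal reproduces the paper's argument essentially exactly, including the key steps of descending along $\Delta$ via Lemma \ref{lem-group-perfectoid} using the $\Delta$-invariant pregood affinoid cover pulled back from $\mathcal{FL}_{G,\mu}$, reassembling connected components as in the proof of Lemma \ref{existence-perf-B1}, descending the $\mathcal{A}^0(B_1)$-equivariant period map along the triviality of $\Delta$ on the flag variety, and invoking \cite{bhatt2019prisms} for the diamond limit; you even supply a concrete argument (factoring $\mathcal{A}^0(B_1)\to G^{ad}(\qq_p)$ through $\mathcal{A}^0(G)$) for the assertion that $\Delta$ acts trivially on $\mathcal{FL}_{G,\mu}$, which the paper states without justification. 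One minor imprecision: the natural map $\Delta\to\Delta(K,K')$ is injective rather than surjective (an isomorphism once the level is small enough), but this does not affect the argument since Lemma \ref{lem-group-perfectoid} only needs the quotient structure to hold at a cofinal set of levels.
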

\begin{proof} For the first point, we observe that the group $\Delta $ acts trivially on the flag variety. Since $\pi_{HT}$ was affinoid for $(B_1,X_{B_1})$, we can use theorem \ref{thm-abelian-torsor} (6), and  apply lemma \ref{lem-connected-perfectoid} and lemma \ref{lem-group-perfectoid} to deduce that $\mathcal{S}^{\star,0}(G,X)_{\overline{K^p}}$ is perfectoid. We may now extend the result from $\mathcal{S}^{\star,0}(G,X)_{\overline{K^p}}$ to $\mathcal{S}^{\star}(G,X)_{\overline{K^p}}$ as in the proof of lemma \ref{existence-perf-B1}. 

Passing to the limit over $K^p$, we have a Hodge-Tate period map $ \overline{\mathcal{S}}^{\star,0}(B_1,X_{B_1})/\Delta = \overline{\mathcal{S}}^{\star,0}(G,X) \rightarrow \mathcal{FL}_{G, \mu} = \mathcal{FL}_{B_1,\mu_{B_1}}$ which is $\mathcal{A}^0(G)$-equivariant. We deduce that there is a $\mathcal{A}(G)$-equivariant Hodge-Tate period map $\overline{\mathcal{S}}^{\star}(G,X) \rightarrow \mathcal{FL}_{G, \mu} $. 
The representability of $\lim_{K_p}  \mathcal{S}^{\star,\Diamond}(G,X)_{{K^pK_p}}$ follows from \cite{bhatt2019prisms}, thm. 1.16.

\end{proof}

\subsection{The truncated Hodge-Tate period map} In most of this paper, we work on finite level Shimura varieties rather than perfectoid Shimura varieties. For this reason we introduce some truncated Hodge-Tate period map. 

Let  $(G,X)$ be an abelian type Shimura datum. Let $K = K^p K_p \subseteq G(\mathbb{A}_f)$ be a compact open subgroup. 
 The group $K_p$ acts on $\mathcal{FL}_{G,\mu}$. We form the quotient space $ \mathcal{FL}_{G,\mu}/K_p$, equipped with the quotient topology from the surjective map    $ \pi_{K_p} : \mathcal{FL}_{G,\mu} \rightarrow \mathcal{FL}_{G,\mu}/K_p$. We merely view  $\mathcal{FL}_{G,\mu}/K_p$ as a topological space. We adopt some definitions. We say that an open $U \subseteq \mathcal{FL}_{G,\mu}/K_p$ is affinoid if $\pi_{K_p}^{-1}(U)$ is affinoid.  If $V\subseteq U\subseteq \mathcal{FL}_{G,\mu}/K_p$ are open we say that $V$ is a rational subset of $U$ if $\pi_{K_p}^{-1}(V)$ is a rational subset of $\pi_{K_p}^{-1}(U)$.  We let the residue field of a  point $x \in \mathcal{FL}_{G,\mu}/K_p$ be the residue field of any lift of this point to $\mathcal{FL}_{G,\mu}$.  To illustrate all these definitions, we have the following lemma: 

\begin{lem}\label{lem-example-neighborhoods} Any point $x \in \mathcal{FL}_{G,\mu}/K_p$  with finite residue field over $\qq_p$ (i.e. a classical point in the sense of rigid analytic geometry) has a basis of neighborhoods consisting of affinoids $\{U_n\}_{n \geq 1}$ with the property that $U_{n+1} \subseteq U_n$ is a rational subset. 
\end{lem}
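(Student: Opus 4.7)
The plan is to construct explicit $K_p$-invariant affinoid neighborhoods of a classical lift $\tilde x$ of $x$ in $\mathcal{FL}_{G,\mu}$. Since $K_p = K_{p,m,b}$ with $m \geq 1$ is contained in the Iwahori $\mathrm{Iw}$, it preserves each Bruhat cell tube $]C_{w,k}[$, and hence the $K_p$-orbit of $\tilde x$ lies entirely in the unique such cell containing $\tilde x$. Using the local coordinates on $]C_{w,k}[$ from Corollary \ref{coro-description-bruhat-cell}, I would choose a decreasing basis $V_1 \supseteq V_2 \supseteq \cdots$ of affinoid polydisc neighborhoods of $\tilde x$, with each $V_{n+1}$ a rational subset of $V_n$.

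The key step is to show that for each $n$, the set-theoretic stabilizer $H_n := \{k \in K_p : V_n \cdot k = V_n\}$ is an open subgroup of $K_p$. This follows from continuity of the $K_p$-action and the non-archimedean strict inequality principle: for $k$ in a sufficiently small open subgroup of $K_p$, the translation $\tau_k$ perturbs the defining functions of $V_n$ by strictly less than the defining radii, which by the ultrametric inequality forces $V_n \cdot k = V_n$. Hence $K_p / H_n$ is finite; choosing coset representatives $k_1, \ldots, k_{r_n}$, we obtain $V_n \cdot K_p = \bigcup_i V_n \cdot k_i$, a finite union. By further shrinking $V_n$ so that the distinct translates $\tilde{x} \cdot k_i$ have pairwise disjoint polydisc neighborhoods, this union becomes disjoint, whence $V_n\cdot K_p$ is an affinoid. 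Its image $U_n$ in $\mathcal{FL}_{G,\mu}/K_p$ is then an affinoid open neighborhood of $x$ by the definition preceding the lemma, and the $U_n$ form a basis by compactness of $\tilde x \cdot K_p$: any $K_p$-invariant open containing this compact orbit contains a uniform polydisc thickening $V_n\cdot K_p$ of it.

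The most delicate point, which I expect to require the most care, is to verify that the inclusion $U_{n+1} \subseteq U_n$ is a rational subset. On each component $V_n \cdot k_i$, the intersection $V_{n+1}\cdot K_p \cap V_n\cdot k_i$ is a finite disjoint union of translates $V_{n+1}\cdot k'_j$, each of which is rational in $V_n\cdot k_i$ since rational subsets are preserved by the isomorphism given by right translation. A finite disjoint union of rational subsets of an affinoid is again rational: one combines the defining functions of the separate pieces into a single product, and the ultrametric inequality guarantees that the product inequality recovers exactly the union whenever the pieces are pairwise separated, as is the case here after the shrinking step above.
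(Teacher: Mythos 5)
Your overall strategy — lift $x$ to a classical point $\tilde{x} \in \mathcal{FL}_{G,\mu}$, build a $K_p$-invariant affinoid neighborhood of the orbit $\tilde{x}\cdot K_p$ as a finite disjoint union of translated affinoids, and descend — is the same as the paper's. The difference is how you arrange the disjointness, and this is where there is a genuine gap. You take $V_n$ to be a general polydisc neighborhood, show its setwise stabilizer $H_n$ in $K_p$ is open, and then claim that after shrinking so that the distinct points $\tilde{x}\cdot k_i$ are separated, the translates $V_n\cdot k_i$ become pairwise disjoint. But the $k_i$ are coset representatives of $K_p/H_n$, not of $K_p/\mathrm{Stab}_{K_p}(\tilde{x})$, and these two groups need not coincide. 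If $\tilde{x}$ has a stabilizer $S\subseteq K_p$ strictly larger than $H_n$ (which is typical — e.g. for $\tilde{x}=P_\mu$ the stabilizer contains $K_p\cap P_\mu(\qq_p)$), then there exist $k_i\ne k_j \bmod H_n$ with $\tilde{x} k_i = \tilde{x} k_j$; the translates $V_n k_i$ and $V_n k_j$ both contain this common point yet are distinct sets, so they overlap but are not equal. Shrinking $V_n$ to separate the \emph{distinct} points $\tilde{x} k_i$ cannot help here because the colliding pair is already equal, and one cannot intersect $V_n$ with all its $S$-translates either, since $S$ is generally infinite and the intersection need not be affinoid. So the finite union $\bigcup_i V_n k_i$ is not a disjoint union, and (since a finite union of affinoid opens that overlap need not be affinoid) the affinoidness of $V_n\cdot K_p$ is not established.

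The paper resolves exactly this issue by choosing $V_n = \tilde{x}\,\mathcal{G}_n$, where $\mathcal{G}_n$ is the congruence kernel (a \emph{normal} open affinoid subgroup of $\mathcal{G}$). Then $V_n\cdot k = \tilde{x}\,\mathcal{G}_n k = \tilde{x} k\,\mathcal{G}_n$, so the translates are $\mathcal{G}_n$-orbits on $\mathcal{FL}_{G,\mu}$ and are automatically either equal or disjoint — no shrinking or stabilizer bookkeeping required, and in particular $S$-invariance of $V_n$ is automatic. The group $\mathcal{G}_n K_p$ has finitely many $\mathcal{G}_n$-cosets (as $\mathcal{G}_n$ is open normal and $K_p$ compact), giving the finite disjoint decomposition directly. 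This also removes the need for your preliminary reduction to a Bruhat cell (which anyway uses the hypothesis $K_p\subseteq\mathrm{Iw}$, not assumed in the lemma). The rationality of the inclusion $U_{n+1}\subseteq U_n$ then follows component by component as you indicate, and your remark that one must verify that a disjoint union of equal-radius tubes inside a fixed connected component is rational is a legitimate point that the paper also treats briefly; but the normality of $\mathcal{G}_n$ is the ingredient your proposal is missing.
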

\begin{proof} The group $\mathcal{G}$ admits a filtration by open affinoid normal subgroups $\mathcal{G}_n$ which form a basis of neighborhoods of the identity (take $\mathcal{G}_n$ the subgroup of elements which reduce to $1$ modulo $p^n$). Let $y \in  \mathcal{FL}_{G,\mu}$ be a lift of $x$.  Then   $y \mathcal{G}_n \hookrightarrow \mathcal{FL}_{G,\mu}$ is an affinoid for any $n \geq 1$ (this is  a closed tube centered at the point $y$, this is also where we use that $x$ has finite residue field over $\qq_p$). We now consider the group $\mathcal{G}_n K_p \subset \mathcal{G}$. The connected component of the identity of this group is $\mathcal{G}_n$, and the quotient $\mathcal{G}_n K_p/\mathcal{G}_n$ is a finite group. Then we find that  $y \mathcal{G}_nK_p = \coprod_{i \in I} y k_i \mathcal{G}_n$ for a finite set $I$ and elements $k_i \in K_p$, and is therefore an affinoid which is $K_p$-invariant. The $\{y \mathcal{G}_nK_p/K_p\}_{n\geq 1}$ form a basis of open neighborhoods of $x$. Moreover, $y \mathcal{G}_nK_p \subseteq y \mathcal{G}_{n-1}K_p$ is a rational subset because $y k_i \mathcal{G}_n \subseteq yk_i\mathcal{G}_{n-1}$ is a rational subset. 
\end{proof}

\bigskip

 The main result of this section  is the following:  

\begin{thm}\label{thm-affineness} There is a continuous map:
$$ \pi_{HT, K_p}  : \mathcal{S}^\star_{K} \rightarrow \mathcal{FL}_{G,\mu}/K_p$$
which is equivariant for the action  of the Hecke algebra $\mathcal{C}^{\infty}_c(K\backslash G(\mathbb{A}_f)/K, \ZZ)$ by correspondences. 

Moreover, any point $x \in \mathcal{FL}_{G,\mu}/K_p$  with finite residue field over $\qq_p$ (i.e. a classical point in the sense of rigid analytic geometry) has an affinoid   neighborhood $U$  such that  for any rational subset $V \subseteq U$, $(\pi_{HT, K_{p}})^{-1}(V)$ is affinoid. 
\end{thm}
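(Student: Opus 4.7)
The plan is to descend the infinite-level Hodge--Tate period map, which exists by Proposition \ref{prop-final-perfectoid-abelian}, to the finite-level space $\mathcal{S}^\star_K$.

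\emph{Construction of $\pi_{HT,K_p}$.} Start from the $G(\qq_p)$-equivariant Hodge--Tate morphism $\pi_{HT} : \mathcal{S}^\star_{\overline{K^p}} \to \mathcal{FL}_{G,\mu}$ of Proposition \ref{prop-final-perfectoid-abelian}. Since $\mathcal{S}^\star_{\overline{K^p}} \sim \lim_{K_p}\mathcal{S}^\star_{\overline{K^p K_p}}$, the underlying topological spaces satisfy $|\mathcal{S}^\star_{\overline{K^p}}| = \lim_{K_p}|\mathcal{S}^\star_{\overline{K^p K_p}}|$, and the fibers of $|\mathcal{S}^\star_{\overline{K^p}}|\to|\mathcal{S}^\star_{\overline{K^p K_p}}|$ are exactly the $K_p$-orbits. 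The $K_p$-equivariance of $\pi_{HT}$ therefore yields a continuous map $|\mathcal{S}^\star_{\overline{K^p K_p}}| \to |\mathcal{FL}_{G,\mu}|/K_p$. Precomposing with the continuous map induced by the finite morphism $\mathcal{S}^\star_{K^p K_p} \to \mathcal{S}^\star_{\overline{K^p K_p}}$ (see Section \ref{Section-Hodge-perfectoid} and \ref{section-Abelian-type2}) defines $\pi_{HT,K_p}$. Hecke equivariance away from $p$ and at $p$ follows from the full $G(\mathbb{A}_f)$-equivariance at infinite level: a correspondence $[K g K]$ on $\mathcal{S}^\star_K$ is induced by the action of $g = g_p g^p$ at infinite level, whose image under $\pi_{HT}$ is the right-translate by $g$; passing to $K_p$-orbits on the target kills the contribution of $g^p$ and $K_p$, leaving exactly the double-coset correspondence $[K_p g_p K_p]$.

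\emph{The local affineness.} Fix a classical $x \in \mathcal{FL}_{G,\mu}/K_p$, a lift $y \in \mathcal{FL}_{G,\mu}$, and the basis of $K_p$-invariant affinoid neighborhoods $\widetilde{U}_n := y\mathcal{G}_n K_p = \coprod_{j\in J_n} y k_j \mathcal{G}_n$ given by Lemma \ref{lem-example-neighborhoods} (with $U_n := \widetilde{U}_n/K_p$). By Proposition \ref{prop-final-perfectoid-abelian}(2), fix an affinoid cover $\{V_i\}$ of $\mathcal{FL}_{G,\mu}$ whose preimages $\pi_{HT}^{-1}(V_i)$ are good affinoid perfectoid opens of $\mathcal{S}^\star_{\overline{K^p}}$. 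Since $\bigcap_n yk\mathcal{G}_n = \{yk\}$ for every $k \in K_p$ and the set of $K_p$-translates of $y$ intersecting $\widetilde{U}_n$ lies in finitely many $K_p/(K_p\cap\mathcal{G}_n)$-cosets, we may choose $n$ large enough so that each component $yk_j \mathcal{G}_n$ is contained in some $V_{i(j)}$. I take $U := U_n$.

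\emph{Rational subsets.} For any rational $V \subseteq U$, the preimage $\widetilde V := \pi_{K_p}^{-1}(V) \subseteq \widetilde U$ is $K_p$-invariant and, in each component $yk_j\mathcal{G}_n$, is a rational subset of a good affinoid perfectoid open (stability of goodness under rational localization being remarked after Definition \ref{defi-of-good}). Thus $\pi_{HT}^{-1}(\widetilde V) \subseteq \mathcal{S}^\star_{\overline{K^p}}$ is a $K_p$-stable affinoid perfectoid open which, by goodness, descends to an affinoid open $\bar{\mathcal{W}}_{K_p'} \subseteq \mathcal{S}^\star_{\overline{K^p K_p'}}$ for some $K_p' \subseteq K_p$; the $K_p$-invariance upstairs forces $\bar{\mathcal{W}}_{K_p'}$ to be $K_p/K_p'$-invariant, and quotienting by this finite group action yields an affinoid open $\bar{\mathcal{W}} \subseteq \mathcal{S}^\star_{\overline{K^p K_p}}$. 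Finally, $(\pi_{HT,K_p})^{-1}(V)$ is the preimage of $\bar{\mathcal{W}}$ under the finite map $\mathcal{S}^\star_{K^p K_p} \to \mathcal{S}^\star_{\overline{K^p K_p}}$, hence is affinoid.

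\emph{Main obstacle.} The delicate point is the bookkeeping of $K_p$-actions and descents: one must verify that the good affinoid perfectoid opens $\pi_{HT}^{-1}(V_i)$ really do come from a single affinoid at finite level, that rational localization and the $K_p$-quotient preserve this property, and that the resulting affinoid in $\mathcal{S}^\star_{\overline{K^pK_p}}$ is stable under the finite pullback to $\mathcal{S}^\star_{K^pK_p}$. Once this is in place, the rest is a direct tracing through of definitions.
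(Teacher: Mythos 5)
Your construction of $\pi_{HT,K_p}$ matches the paper's, and the reduction of the local affineness statement to the finite disjoint union $y\mathcal{G}_nK_p = \coprod_j yk_j\mathcal{G}_n$ from Lemma \ref{lem-example-neighborhoods} is also the same. But the way you pass from this decomposition to goodness of the preimages has a gap. You argue that, for $n$ uniformly large, each component $yk_j\mathcal{G}_n$ is contained in some $V_{i(j)}$ from the very-good cover (this itself needs a quasi-compactness argument on the orbit $yK_p$ that you do not spell out, since the number of cosets $J_n$ grows with $n$), and then you try to invoke the stability of goodness under localization cited after Definition \ref{defi-of-good}. But that remark applies only to \emph{rational} subsets of good affinoids, and the containment $yk_j\mathcal{G}_n\subseteq V_{i(j)}$ does not give you that $yk_j\mathcal{G}_n$ is a rational subset of $V_{i(j)}$. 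The paper avoids this entirely with a shorter observation: once $y\mathcal{G}_n$ is a very good affinoid, each translate $yk_j\mathcal{G}_n = y\mathcal{G}_nk_j$ is automatically very good because $\pi_{HT}$ is $G(\qq_p)$-equivariant, so "very good" is preserved under right translation. You should use this equivariance in place of the compactness/containment argument; it needs the very-good property only for the single affinoid $y\mathcal{G}_n$.

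A second, smaller point concerns the descent. You quotient by $K_p/K_p'$ inside the $\overline{\phantom{X}}$-tower (passing from $\mathcal{S}^\star_{\overline{K^pK_p'}}$ to $\mathcal{S}^\star_{\overline{K^pK_p}}$) and then pull back along the finite map $\mathcal{S}^\star_{K^pK_p}\to\mathcal{S}^\star_{\overline{K^pK_p}}$. This silently assumes that $\mathcal{S}^\star_{\overline{K^pK_p}}$ is the categorical quotient of $\mathcal{S}^\star_{\overline{K^pK_p'}}$ by $K_p/K_p'$, which is not established in the paper (these $\overline{\phantom{X}}$-spaces are schematic images in the Siegel compactification and need not be normal). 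The paper's Lemma \ref{lem-good-neighb} goes the other way: pull back the finite-level affinoid from $\mathcal{S}^\star_{\overline{K^pK_p'}}$ to $\mathcal{S}^\star_{K^pK_p'}$ first, and then quotient using the explicit statement that $\mathcal{S}^\star_{K^pK_p}$ is the categorical quotient of $\mathcal{S}^\star_{K^pK_p'}$ by the finite group $K_p/K_p'$. Reordering your steps to match sidesteps the unsupported quotient claim.
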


By proposition \ref{prop-final-perfectoid-abelian} there is a  map $\pi_{HT} : \mathcal{S}^\star_{\overline{K^p}} \rightarrow \mathcal{FL}_{G,\mu}$ which is equivariant for the action of the $G(\qq_p)$. For  any point $x \in \mathcal{FL}_{G,\mu}$, there is an affinoid neighborhood $U$ of $x$ such that $\pi_{HT}^{-1}(U)$ is affinoid, and moreover, $\pi_{HT}^{-1}(U) = \lim_{K_p} \pi_{HT}^{-1}(U)_{K_p}$ where for $K_p$ small enough, $\pi_{HT}^{-1}(U)_{K_p} \hookrightarrow \mathcal{S}^\star_{\overline{K^pK_p}}$ is affinoid. 
We call an open affinoid in $\mathcal{FL}_{G,\mu}$ with these properties a \emph{very good affinoid}. Clearly,  a rational subset  of a  very good affinoid is a very good affinoid.

We can define  truncated Hodge-Tate period maps: 
$$\pi_{HT,K_p} : \mathcal{S}^\star_{K^pK_p}  \rightarrow \mathcal{S}^\star_{\overline{K^pK_p}}  \rightarrow \mathcal{FL}_{G,\mu}/K_p.$$

\begin{lem}\label{lemma-continuous} The map $\pi_{HT,K_p}$ is continuous.
\end{lem}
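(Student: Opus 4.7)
The strategy is to deduce continuity at finite level from continuity at infinite level, where it has already been established in Proposition \ref{prop-final-perfectoid-abelian}. I would first record the commutative diagram
$$
\xymatrix{
\mathcal{S}^\star_{\overline{K^p}} \ar[r]^{\pi_{HT}} \ar[d]_{q} & \mathcal{FL}_{G,\mu} \ar[d]^{\pi_{K_p}} \\
\mathcal{S}^\star_{\overline{K^pK_p}} \ar[r]^{\pi_{HT,K_p}} & \mathcal{FL}_{G,\mu}/K_p
}
$$
whose commutativity follows from the very definition of $\pi_{HT,K_p}$ together with the $G(\qq_p)$-equivariance of $\pi_{HT}$ (this equivariance is what makes $\pi_{HT,K_p}$ well defined on $K_p$-orbits).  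The top horizontal map is continuous by Proposition \ref{prop-final-perfectoid-abelian}, the right vertical map is continuous by the very definition of the quotient topology on $\mathcal{FL}_{G,\mu}/K_p$, and $q$ is continuous and surjective.

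Next, given any open $U \subseteq \mathcal{FL}_{G,\mu}/K_p$, commutativity of the diagram gives
$$q^{-1}\!\left(\pi_{HT,K_p}^{-1}(U)\right) \;=\; \pi_{HT}^{-1}\!\left(\pi_{K_p}^{-1}(U)\right),$$
which is open in $\mathcal{S}^\star_{\overline{K^p}}$ and $K_p$-invariant.  To conclude that $\pi_{HT,K_p}^{-1}(U)$ is open in $\mathcal{S}^\star_{\overline{K^pK_p}}$, it suffices to prove that $q$ is a topological quotient map.  Granting this, continuity of the second map in the composition defining $\pi_{HT,K_p}$ follows, and composing with the continuous finite morphism $\mathcal{S}^\star_{K^pK_p} \to \mathcal{S}^\star_{\overline{K^pK_p}}$ yields the theorem.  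The Hecke equivariance statement, although not asked for in this lemma, is inherited from the $G(\mathbb{A}_f)$-equivariance at infinite level.

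The main obstacle is therefore the topological claim that $q$ is a quotient map.  This I would extract from the $\sim$-limit property $\mathcal{S}^\star_{\overline{K^p}} \sim \lim_{K'_p \subseteq K_p}\mathcal{S}^\star_{\overline{K^pK'_p}}$ of Proposition \ref{prop-final-perfectoid-abelian}, which by point (1) of Definition \ref{defi-tower} identifies $|\mathcal{S}^\star_{\overline{K^p}}|$ with the inverse limit $\lim_{K'_p}|\mathcal{S}^\star_{\overline{K^pK'_p}}|$ of topological spaces.  Since each transition map $\mathcal{S}^\star_{\overline{K^pK'_p}} \to \mathcal{S}^\star_{\overline{K^pK_p}}$ is continuous surjective and factors (for $K'_p \trianglelefteq K_p$ normal) as the quotient by the finite group $K_p/K'_p$, it is already a topological quotient map; the cofiltered limit of these quotient maps is again a quotient map $q$, because a $K_p$-invariant open $W \subseteq \mathcal{S}^\star_{\overline{K^p}}$ with $W = q^{-1}(q(W))$ descends by the inverse-limit description of the topology to a $K_p/K'_p$-invariant open at some finite level $K'_p$, whose image in $\mathcal{S}^\star_{\overline{K^pK_p}}$ is open.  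I would finally double-check the ``basis of neighborhoods'' step (which is implicit in the same spirit as Lemma \ref{lem-example-neighborhoods}) to handle the behavior at higher rank points.
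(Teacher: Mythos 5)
Your overall strategy coincides with the paper's: reduce continuity of $\pi_{HT,K_p}$ to the already-established continuity of $\pi_{HT}$ at infinite level, using that the projection down to finite level carries the quotient topology. The paper's own proof is terse but goes the same way, citing Hansen's theorem for the fact that $\mathcal{S}^\star_{K^pK'_p}\to\mathcal{S}^\star_{K^pK_p}$ (normal $K'_p\trianglelefteq K_p$) is a topological quotient map, and then concluding.

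However, your argument for the key topological claim contains a gap. You assert that a $K_p$-invariant open $W\subseteq\mathcal{S}^\star_{\overline{K^p}}$ ``descends by the inverse-limit description of the topology to a $K_p/K'_p$-invariant open at some finite level $K'_p$.'' This is false for general opens: the inverse-limit topology has a \emph{basis} consisting of preimages of opens at finite level, but an arbitrary open set (even a $K_p$-invariant one) need not be the preimage of an open at any single finite level --- that descent statement is valid only for \emph{quasi-compact} opens, by the usual spectral-space approximation. The preimage of an open in $\mathcal{FL}_{G,\mu}/K_p$ will generally not be quasi-compact, so this step does not go through as stated. The repair is to work with the full decomposition: write $W=\bigcup_\alpha p_{K'_{p,\alpha}}^{-1}(V_\alpha)$ as a union of basic opens with each $K'_{p,\alpha}\trianglelefteq K_p$, then use $K_p$-invariance of $W$ together with $gp_{K'_{p,\alpha}}^{-1}(V_\alpha)=p_{K'_{p,\alpha}}^{-1}(gV_\alpha)$ for $g\in K_p$ to replace each $V_\alpha$ by the $K_p/K'_{p,\alpha}$-invariant open $K_pV_\alpha$; each such set has open image by the quotient property at level $K'_{p,\alpha}$, and $q(W)$ is the union of these images, hence open. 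In other words, one shows $q$ is a quotient map by showing it is an \emph{open} map (on a basis), not by descending $W$ itself.

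A secondary point: you formulate your diagram at the level of the ``barred'' spaces $\mathcal{S}^\star_{\overline{K^pK'_p}}$, but the paper only establishes the quotient property (via \cite{DHansen}, Theorem 1.1) for the \emph{unbarred} spaces $\mathcal{S}^\star_{K^pK'_p}$. The barred spaces are schematic images in a Siegel minimal compactification, and it is not recorded in the paper that the intermediate transition maps $\mathcal{S}^\star_{\overline{K^pK'_p}}\to\mathcal{S}^\star_{\overline{K^pK_p}}$ are categorical quotients by $K_p/K'_p$; you would need to justify this separately. It is cleaner to follow the paper and work with $\lim_{K'_p}\mathcal{S}^\star_{K^pK'_p}=\mathcal{S}^\star_{K^p}$ and the unbarred quotients throughout, composing with $\mathcal{S}^\star_{K^p}\to\mathcal{S}^\star_{\overline{K^p}}\to\mathcal{FL}_{G,\mu}$ for the infinite-level continuity. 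Finally, your closing remark about ``double-checking the basis of neighborhoods step'' with reference to Lemma \ref{lem-example-neighborhoods} does not apply here: that lemma concerns neighborhood bases of classical points of the flag variety, not the topology of the inverse-limit Shimura variety, and there is no separate issue with higher-rank points in the quotient-map argument once the openness is correctly established on a basis.
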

\begin{proof} We have a continuous map (of topological spaces) $\lim_{K'_p} \mathcal{S}^\star_{K^pK_p'} \rightarrow  \mathcal{S}^\star_{\overline{K^p}} \rightarrow \mathcal{FL}_{G,\mu}$. For all normal subgroup $K'_p \subseteq K_p$, the map 
$\mathcal{S}^\star_{K^pK_p'} \rightarrow \mathcal{S}^\star_{K^pK_p} $ is surjective and the target carries the quotient topology (\cite{DHansen}, theorem 1.1). The continuity of the map of the lemma follows. 
\end{proof}

\begin{lem}\label{lem-good-neighb} Let $V \subseteq \mathcal{FL}_{G,\mu} $ be a very  good affinoid invariant under a compact open subgroup $K_p$. Let $\overline{V}$ be its image in $\mathcal{FL}_{G,\mu}/K_p$. Then $\pi_{HT,K_p}^{-1}(\overline{V}) \subseteq \mathcal{S}^\star_{{K^pK_p}}$ is affinoid.
\end{lem}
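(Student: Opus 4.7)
The plan is to descend the affineness from infinite level down to $\mathcal{S}^\star_{K^pK_p}$ in two stages: first by taking the quotient by the finite group $K_p/K'_p$ to land in $\mathcal{S}^\star_{\overline{K^pK_p}}$, then by pulling back under the finite map $\mathcal{S}^\star_{K^pK_p}\to\mathcal{S}^\star_{\overline{K^pK_p}}$.

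First, because $V$ is a very good affinoid, $\pi_{HT}^{-1}(V)\subseteq\mathcal{S}^\star_{\overline{K^p}}$ is affinoid perfectoid and $\pi_{HT}^{-1}(V)\sim\lim_{K'_p}\pi_{HT}^{-1}(V)_{K'_p}$, where for every sufficiently small $K'_p$ the set $\pi_{HT}^{-1}(V)_{K'_p}\subseteq\mathcal{S}^\star_{\overline{K^pK'_p}}$ is an affinoid open. I would choose $K'_p\subseteq K_p$ small enough so that this holds and also normal in $K_p$. Because $V$ is $K_p$-stable and $\pi_{HT}$ is $G(\qq_p)$-equivariant, $\pi_{HT}^{-1}(V)$ is $K_p$-stable; passing to the descent, the affinoid open $\pi_{HT}^{-1}(V)_{K'_p}$ is stable under the residual finite group action of $K_p/K'_p$ coming from the Galois cover $\mathcal{S}^\star_{\overline{K^pK'_p}}\to\mathcal{S}^\star_{\overline{K^pK_p}}$.

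Next, the map $\mathcal{S}^\star_{\overline{K^pK'_p}}\to\mathcal{S}^\star_{\overline{K^pK_p}}$ identifies the target with the categorical quotient by $K_p/K'_p$. Taking the quotient of a $K_p/K'_p$-invariant affinoid open of the source gives an affinoid open of the target (affinoid quotients by finite groups being affinoid by the standard argument using Hilbert--Noether finiteness of invariants applied to the Banach algebra of global sections). This quotient is exactly the image of $\pi_{HT}^{-1}(V)_{K'_p}$ in $\mathcal{S}^\star_{\overline{K^pK_p}}$, and using that $V$ is $K_p$-stable, one checks that this image equals the preimage of $\overline{V}$ under the map $\mathcal{S}^\star_{\overline{K^pK_p}}\to\mathcal{FL}_{G,\mu}/K_p$, i.e.\ the image of $\pi_{HT,K_p}^{-1}(\overline{V})$ in $\mathcal{S}^\star_{\overline{K^pK_p}}$.

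Finally, the morphism $\mathcal{S}^\star_{K^pK_p}\to\mathcal{S}^\star_{\overline{K^pK_p}}$ is finite (in the Hodge type case it is a finite surjection by the definition of $\overline{K}$ as schematic image; the abelian type case is reduced to the Hodge type case via the construction of Section~\ref{section-Abelian-type2}), so the preimage of an affinoid open is affinoid, giving $\pi_{HT,K_p}^{-1}(\overline{V})\subseteq\mathcal{S}^\star_{K^pK_p}$ affinoid. The main (but mild) obstacle is the verification that the two compatibility issues --- the identification of the quotient by $K_p/K'_p$ with the preimage of $\overline{V}$, and the affineness of a finite quotient of an affinoid adic space --- go through uniformly at finite level; all the genuinely hard perfectoid input is packaged into the definition of \emph{very good affinoid}.
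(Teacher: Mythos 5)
Your proof is correct and follows essentially the same route as the paper: both exploit that (by the definition of a very good affinoid) $\pi_{HT}^{-1}(V)$ descends to an affinoid open of $\mathcal{S}^\star_{\overline{K^pK'_p}}$ at some finite normal level $K'_p\subseteq K_p$, and then reach $\mathcal{S}^\star_{K^pK_p}$ via a quotient by the finite group $K_p/K'_p$ together with a preimage under the finite map $\mathcal{S}^\star\to\mathcal{S}^\star_{\overline{\phantom{X}}}$. The only difference is the order of these two steps --- the paper first pulls back to $\mathcal{S}^\star_{K^pK'_p}$ and then quotients by $K_p/K'_p$, whereas you quotient down to $\mathcal{S}^\star_{\overline{K^pK_p}}$ first and then pull back --- and since the relevant square commutes, the two versions are interchangeable.
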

\begin{proof} It is part of the definition that  $\pi_{HT}^{-1}(V)$ is the pullback of an affinoid $\pi_{HT}^{-1}(V)_{K'_p} \subset \mathcal{S}^\star_{\overline{K^pK_p'}}$ for some $K'_p \subseteq K_p$, a normal compact open subgroup.   We can further pull back $\pi_{HT}^{-1}(V)_{K'_p}$ to an affinoid $\tilde{U} \subseteq \mathcal{S}^\star_{{K^pK_p'}}$.  The space $\mathcal{S}^\star_{K^pK_p}$ is the categorical quotient of $\mathcal{S}^\star_{K^pK_p'}$ by $K_p/K'_p$, and the image of $\tilde{U}$ in $\mathcal{S}^\star_{K^pK_p}$ is indeed affinoid.
\end{proof}

\begin{proof}[Proof of Theorem \ref{thm-affineness}]  The continuity of the map is lemma \ref{lemma-continuous}.   Let $x \in \mathcal{FL}_{G,\mu}/K_p$  with finite residue field over $\qq_p$. Let $y$ be a lift of $x$ in  $\mathcal{FL}_{G,\mu}$. For $n$ large enough, $x \mathcal{G}_n \hookrightarrow \mathcal{FL}_{G,\mu}$ is a very good affinoid (where $\mathcal{G}_n$ is the subgroup of elements which reduce to $1$ modulo $p^n$ as in lemma \ref{lem-example-neighborhoods}). We form the group $\mathcal{G}_nK_p$. Then $y \mathcal{G}_nK_p = \coprod_{i \in I} x k_i \mathcal{G}_n$ for a finite set $I$ and elements $k_i \in K_p$. Moreover, $y k_i \mathcal{G}_n = y  \mathcal{G}_nk_i$ is a very good affinoid (because the property of being a very good affinoid is preserved under $G(\qq_p)$-action). 
A finite disjoint union  of very good affinoids is a very good affinoid. We can apply lemma \ref{lem-good-neighb}, to $y \mathcal{G}_nK_p$. The image of $y \mathcal{G}_nK_p$ in $\mathcal{FL}_{G,\mu}/K_p$ provides the open neighborhood $U$ of $x$ as in  the theorem.  Finally, since any rational subset of a very good affinoid is again a very good affinoid, a second application of lemma \ref{lem-good-neighb} proves the last point. 
\end{proof}

We also adopt the notation $\pi_{HT,K_p}^{tor} :  \mathcal{S}^{tor}_{K^pK_p,\Sigma} \rightarrow  \mathcal{S}^{\star}_{K^pK_p} \stackrel{\pi_{HT,K_p}}\rightarrow \mathcal{FL}_{G,\mu}/K_p$.

\subsection{Reductions of the torsor $\mathcal{M}^{an}_{dR}$}\label{section-reduction}
The group $G$  of the Shimura datum  is defined over $\qq$. We recall that we have fixed  a finite extension $F$ of $\qq_p$  which splits $G$. We have also fixed a representative of the cocharacter $\mu$ over $F$ and let  $P_\mu$ and $M_\mu$ be the corresponding parabolic and Levi subgroups of $G$.  We will soon assume that $G$ is quasi-split over $\qq_p$. When this is the case we assume that $P_\mu$ contains a  Borel $B$ defined over $\qq_p$.  We  take a reductive model for $G_F$ defined over $\Spec~\ocal_F$. By abuse of notation, we also denote this model by $G$. We also then have models for $P_\mu$ and $M_\mu$ over $\Spec~\ocal_F$. 
On the analytic side, we have the (non-quasi-compact) groups $\mathcal{G}^{an}$, $\mathcal{P}_\mu^{an}$, and $\mathcal{M}_\mu^{an}$, all considered over $\Spa (F, \ocal_F)$, and there is an embedding $G(\qq_p) \hookrightarrow \mathcal{G}^{an}$. 
Because we have fixed an integral model for $G$, we also  have the quasi-compact groups $\mathcal{G} \hookrightarrow \mathcal{G}^{an}$, $\mathcal{P}_\mu\hookrightarrow \mathcal{P}_\mu^{an}$, and $\mathcal{M}_\mu\hookrightarrow \mathcal{M}_\mu^{an}$.

The goal of this section is to use the Hodge-Tate period morphism to produce some finer structure on the torsor $\mathcal{M}^{an}_{dR}$. These result generalize those obtained in \cite{MR3275848}, prop. 4.3.1 for example. These refined structure will allow us to $p$-adically interpolate  automorphic vector bundles.

\subsubsection{Preparations} We start by giving a  detailed description of the torsor pulled back from the map $\pi_{HT}^{tor} : \mathcal{S}^{tor}_{K^p, \Sigma}  \rightarrow \mathcal{FL}_{G,\mu}$.

If $(G,X)$ is Hodge type Shimura datum,  and  $\Sigma$ is a perfect cone decomposition, then $ \mathcal{S}^{tor}_{K^p, \Sigma}$ is a perfectoid space. In the general abelian case, it is only known to be a diamond. 

Let $S \rightarrow \mathcal{S}^{tor}_{K^p, \Sigma}$ be a map from a perfectoid space $S$.  By composing with $\pi_{HT}^{tor}$,  we get a map $S \rightarrow \mathcal{FL}_{G,\mu}$ which can be described as follows. Over $S$, we have a map  $\mathcal{P}^{an}_{HT} \hookrightarrow \mathcal{G}^{c,an}\times S$, from the $\mathcal{P}^{c,an}_{\mu}$-torsor   $\mathcal{P}^{an}_{HT}$ to the trivial $\mathcal{G}^{c,an}$-torsor, which is equivariant for the natural morphism of groups $\mathcal{P}^{c,an}_{\mu} \rightarrow \mathcal{G}^{c,an}$. 

There is an \'etale cover $\tilde{S} \rightarrow S$ such that  the torsor $\mathcal{P}^{an}_{HT} \times_{S} \tilde{S}$ becomes trivial and aquires a section $g_{\tilde{S}} \in \mathcal{G}^{c,an}(\tilde{S})$, unique up to left multiplication by elements of $\mathcal{P}^{c,an}_{\mu}(\tilde{S})$, so that we get a commutative diagram of torsors: 
\begin{eqnarray*}
\xymatrix{ \mathcal{P}^{an}_{HT}\times_S \tilde{S} \ar[r] \ar[d]^{\sim}& \mathcal{G}^{c,an}\times \tilde{S} \ar[d]^{\mathrm{Id}} \\
(\mathcal{P}^{c,an}_{\mu} \times \tilde{S})\cdot g_{\tilde{S}} \ar[r] &\mathcal{G}^{c,an}\times \tilde{S}}
\end{eqnarray*}

Over $\tilde{S} \times_S \tilde{S}$, there is a section $h_{\tilde{S} \times_S \tilde{S}} \in \mathcal{P}^{c,an}_{\mu}(\tilde{S} \times_S \tilde{S})$ such that $h_{\tilde{S} \times_S \tilde{S}}\cdot p_1^\star g_{\tilde{S}} = p_2^\star g_{\tilde{S}}$.  The section $h_{\tilde{S} \times_S \tilde{S}}$ is a $1$-cocycle which describes the original torsor $\mathcal{P}^{an}_{HT}$ . Changing $g_{\tilde{S}}$ by left multiplication by an element of $\mathcal{P}^{c,an}_{\mu}(\tilde{S})$ will change $h_{\tilde{S} \times_S \tilde{S}}$ by a coboundary. 

The image of $g_{\tilde{S}} \in \mathcal{FL}_{G,\mu}(\tilde{S})$ descends to give a point in $\mathcal{FL}_{G,\mu}({S})$:  the morphism $S \rightarrow \mathcal{FL}_{G,\mu}$ we started with.

The group $\mathcal{G}^{an}$ acts on the right on  $\mathcal{FL}_{G,\mu}$.  Concretely this action sends $g_{\tilde{S}}$ to $g_{\tilde{S}}.g$. This action does not affect the construction of the torsor $\mathcal{P}_{HT}^{an}$ which is indeed $\mathcal{G}^{an}$-equivariant. 

\subsubsection{Integral structure} 
Recall that $\mathcal{M}_\mu^c$ is a (quasi-compact) open subgroup of $\mathcal{M}_{\mu}^{c,an}$. The following proposition can be interpreted as  the existence of an integral structure on the torsors $\mathcal{M}_{dR}^{an}$ or $\mathcal{M}_{HT}^{an}$.  

\begin{prop}\label{prop-first-reduction} Let $K_p \subset G(\qq_p) \cap G(\ocal_F)$. The \'etale torsor $\mathcal{M}_{dR}^{an} = \mathcal{M}_{HT}^{an}$  over $\mathcal{S}_{K^pK_p, \Sigma}^{tor}$ has a reduction of structure group to  an \'etale $\mathcal{M}^c_{\mu}$-torsor $\mathcal{M}_{dR} = \mathcal{M}_{HT}$. 
\end{prop}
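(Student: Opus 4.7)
The plan is to construct the $\mathcal{M}_\mu^c$-reduction universally on the Hodge--Tate flag variety and then pull it back via the Hodge--Tate period map and descend to finite level. First I would produce a canonical reduction of structure group of the universal torsor on $\mathcal{FL}_{G,\mu}$ itself. The morphism $G\to P_\mu\backslash G=FL_{G,\mu}$ is a Zariski-locally trivial $P_\mu$-torsor of schemes over $\Spec\,\ocal_F$, so its analytification is an \'etale-locally trivial $\mathcal{P}_\mu$-torsor $\mathcal{G}\to\mathcal{FL}_{G,\mu}$ whose extension of structure group along $\mathcal{P}_\mu\hookrightarrow\mathcal{P}_\mu^{an}$ is canonically isomorphic to the universal torsor $\mathcal{G}^{an}\to\mathcal{FL}_{G,\mu}$. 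Passing to the quotient by $Z_s$ and pushing out along $\mathcal{P}_\mu^c\to\mathcal{M}_\mu^c$ produces an \'etale $\mathcal{M}_\mu^c$-torsor $\mathcal{N}^{univ}\to\mathcal{FL}_{G,\mu}$ whose pushout to $\mathcal{M}_\mu^{c,an}$ is the universal torsor $\mathcal{G}^{c,an}/U_{\mathcal{P}_\mu^{c,an}}\to\mathcal{FL}_{G,\mu}$.

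Next I would pull $\mathcal{N}^{univ}$ back via the Hodge--Tate period map at infinite level. By theorem~\ref{thm-diamond-general-Shimura}, the pullback of $\mathcal{G}^{c,an}/U_{\mathcal{P}_\mu^{c,an}}$ under $\pi^{tor}_{HT}:\mathcal{S}^{tor,\Diamond}_{K^p,\Sigma}\to\mathcal{FL}_{G,\mu}$ is canonically $\mathcal{M}^{an}_{HT}=\mathcal{M}^{an}_{dR}$ pulled back to infinite level, so $(\pi^{tor}_{HT})^\star\mathcal{N}^{univ}$ yields an $\mathcal{M}_\mu^c$-reduction of $\mathcal{M}_{dR}^{an}$ on $\mathcal{S}^{tor,\Diamond}_{K^p,\Sigma}$. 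The integral $\mathcal{P}_\mu$-reduction $\mathcal{G}\to\mathcal{FL}_{G,\mu}$ is equivariant for the right action of the integral group $\mathcal{G}$ on $\mathcal{G}^{an}$; because the hypothesis $K_p\subseteq G(\qq_p)\cap G(\ocal_F)$ places $K_p$ inside $\mathcal{G}$, the pullback $(\pi^{tor}_{HT})^\star\mathcal{N}^{univ}$ carries a natural $K_p$-equivariant structure and so descends along the $K_p$-quotient $\mathcal{S}^{tor,\Diamond}_{K^p,\Sigma}\to\mathcal{S}^{tor,\Diamond}_{K^pK_p,\Sigma}$ to an $\mathcal{M}_\mu^c$-torsor that I would take as the definition of $\mathcal{M}_{HT}=\mathcal{M}_{dR}$.

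The main obstacle will be verifying that the torsor obtained on $\mathcal{S}^{tor}_{K^pK_p,\Sigma}$ is genuinely an \'etale $\mathcal{M}_\mu^c$-torsor on the adic space rather than merely a torsor in the diamond (or pro-Kummer \'etale) sense. I would argue this by working \'etale-locally: since $\mathcal{M}_{dR}^{an}$ is \'etale-locally trivial, on a local trivialization the constructed $\mathcal{M}_\mu^c$-subtorsor corresponds to an $\mathcal{M}_\mu^c$-coset inside $\mathcal{M}_\mu^{c,an}$ coming from the integral structure on $\mathcal{FL}_{G,\mu}$, and this allows one to refine the trivialization to an \'etale $\mathcal{M}_\mu^c$-trivialization. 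Concretely I would arrange this over the preimages of $K_p$-invariant very good affinoid opens of $\mathcal{FL}_{G,\mu}$, using theorem~\ref{thm-affineness} together with the Iwahori decomposition of $K_p$ to exhibit the cocycle as taking values in $\mathcal{M}_\mu^c$.
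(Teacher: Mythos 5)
Your first two paragraphs reproduce the paper's construction faithfully: the crucial point is exactly that $\mathcal{FL}_{G,\mu}=\mathcal{P}_\mu^{c}\backslash\mathcal{G}^{c}=\mathcal{P}_\mu^{c,an}\backslash\mathcal{G}^{c,an}$, so the universal $\mathcal{P}_\mu^{c,an}$-torsor on the flag space has a canonical $\mathcal{P}_\mu^c$-reduction coming from the integral model, and the hypothesis $K_p\subset G(\ocal_F)$ is what makes the pullback to $\mathcal{S}^{tor}_{K^p,\Sigma}$ of the pushout to $\mathcal{M}_\mu^c$ a $K_p$-invariant open subset of $\mathcal{M}_{HT}^{an}\times_{\mathcal{S}^{tor}_{K^pK_p,\Sigma}}\mathcal{S}^{tor}_{K^p,\Sigma}$. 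The paper phrases this through cocycles (choose $g_{\tilde S}\in\mathcal{G}^c(\tilde S)$ lifting the period point, take $h_{\tilde S\times_S\tilde S}=p_2^\star g_{\tilde S}\cdot(p_1^\star g_{\tilde S})^{-1}\in\mathcal{P}_\mu^c$) while you phrase it as a pullback of a universal $\mathcal{M}_\mu^c$-torsor on $\mathcal{FL}_{G,\mu}$; these are equivalent packagings.

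Where I see a genuine gap is in your final paragraph. You correctly identify that the crux is whether the descended object is an \'etale $\mathcal{M}_\mu^c$-torsor on the adic space $\mathcal{S}^{tor}_{K^pK_p,\Sigma}$ and not merely a (pro-\'etale/diamond) torsor, and the paper itself flags exactly this: the argument relies crucially on the prior knowledge that $\mathcal{M}_{HT}^{an}$ already lives at finite level, because pro-\'etale descent of torsors is not effective in general. But your proposed remedy -- trivializing over preimages of $K_p$-invariant very good affinoids of $\mathcal{FL}_{G,\mu}$, invoking theorem~\ref{thm-affineness} and the Iwahori decomposition of $K_p$ -- is misdirected on two counts. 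First, theorem~\ref{thm-affineness} and the very good affinoid notion concern the truncated period map to the \emph{minimal} compactification $\mathcal{S}^\star_K$; the pullback of such an affinoid to the toroidal compactification, where $\mathcal{M}_{HT}^{an}$ lives, is not affinoid (the map $\mathcal{S}^{tor}_{K,\Sigma}\to\mathcal{S}^\star_K$ is projective). Second, the Iwahori decomposition plays no role in this proposition, which is stated for an arbitrary $K_p\subset G(\qq_p)\cap G(\ocal_F)$; it only enters in the finer reductions of the next proposition. The paper's actual argument is more elementary: (i) the topological space $\lvert\mathcal{M}_{HT}^{an}\rvert$ is the quotient of $\lvert\mathcal{M}_{HT}^{an}\times_{\mathcal{S}^{tor}_{K^pK_p,\Sigma}}\mathcal{S}^{tor}_{K^p,\Sigma}\rvert$ by $K_p$, so $K_p$-invariant open subsets descend to open subsets -- since $\mathcal{M}_{HT}^{an}$ is already an \'etale torsor on an adic space, this produces $\mathcal{M}_{HT}$ as an honest open adic subspace; (ii) the freeness and transitivity of the $\mathcal{M}_\mu^c$-action on $\mathcal{M}_{HT}$ is checked after pullback to infinite level; (iii) the map $\mathcal{M}_{HT}\to\mathcal{S}^{tor}_{K,\Sigma}$ is then smooth and surjective on geometric points, hence admits sections \'etale-locally. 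If you replace your last paragraph with this, the proof is complete.
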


\begin{rem} In the Siegel case, the torsor $\mathcal{M}_{dR}^{an}$ is  (ignoring the center) the torsor of trivializations of the vector bundle $\omega_{A}$, the conormal sheaf of the    universal semi-abelian scheme over $\mathcal{S}^{tor}_{K^pK_p,\Sigma}$ (well defined up to prime to $p$-isogeny by our choice of level structure). A possible integral structure is obtained by 
declaring  that an invariant differential form is integral if it extends to an invariant differential form on an integral model of the universal semi-abelian scheme. However, the integral structure we consider here is different. Namely, we declare that a differential form is integral if it is in the span of the image of the integral Tate module for the Hodge-Tate period map.  By \cite{MR2919687}, Theorem 2,  section 5.3.2.  we can  explicitly bound the difference between both integral structures. 
\end{rem}

\begin{proof}      
We  work over $\mathcal{S}_{K^p,\Sigma}^{tor}$. Let $S \rightarrow \mathcal{S}_{K^p,\Sigma}^{tor}$ be a map from a perfectoid space. We first explain how to define the torsor $\mathcal{M}_{HT}$ over $S$ (in a functorial way). 

The map $S \rightarrow \mathcal{FL}_{G,\mu}$ is described by an element $g_{\tilde{S}} \in \mathcal{G}^{c,an}(\tilde{S})$ for some \'etale cover $\tilde{S} \rightarrow S$. We are free to change $g_{\tilde{S}}$ by left multiplication by an element of $\mathcal{P}^{c,an}_{\mu}(\tilde{S})$. Thus, up to passing to some further cover of $\tilde{S}$, we may actually assume that $g_{\tilde{S}} \in \mathcal{G}^c(\tilde{S})$ (because $\mathcal{FL}_{G,\mu} = \mathcal{P}^c_\mu \backslash \mathcal{G}^c =  \mathcal{P}^{c,an}_\mu \backslash \mathcal{G}^{c,an}$), and this new element is well defined up to multiplication by an element of $\mathcal{P}^c_{\mu}(\tilde{S})$.  The torsor $\mathcal{P}^{an}_{HT}$ is defined by the cocycle $h_{\tilde{S} \times_S \tilde{S}}=p_2^\star g_{\tilde{S}}\cdot (p_1^\star g_{\tilde{S}})^{-1} \in \mathcal{P}^c_{\mu}( \tilde{S} \times_S \tilde{S}) \subset \mathcal{P}^{c,an}_{\mu}( \tilde{S} \times_S \tilde{S})$.  We therefore have produced a reduction of the torsor $\mathcal{P}^{an}_{HT}$ to  a torsor $\mathcal{P}_{HT}$ under the group $\mathcal{P}_\mu^c$. We can take the pushout under the map $\mathcal{P}^c_\mu \rightarrow \mathcal{M}^c_\mu$ (which amounts to projecting  $h_{\tilde{S} \times_S \tilde{S}} $ in $\mathcal{M}^c_\mu(\tilde{S} \times_S \tilde{S})$) to get the desired torsor $\mathcal{M}_{HT}$. 

We now proceed to descend from $\mathcal{S}^{tor}_{K^p,\Sigma}$ to $\mathcal{S}_{K^pK_p, \Sigma}^{tor}$. We have an \'etale torsor $\mathcal{M}_{HT}^{an} \rightarrow \mathcal{S}_{K^pK_p, \Sigma}^{tor}$, and we have defined an open subset $\mathcal{M}_{HT} \subset \mathcal{M}_{HT}^{an} \times _{\mathcal{S}_{K^pK_p, \Sigma}^{tor}} \mathcal{S}^{tor}_{K^p,\Sigma}$.  We claim that this open descends to an open subset of  $\mathcal{M}_{HT}^{an}$.  The map $\mathcal{M}_{HT}^{an} \times_{\mathcal{S}_{K^pK_p,\Sigma}^{tor}}\mathcal{S}_{K^p,\Sigma}^{tor} \rightarrow \mathcal{M}_{HT}^{an}$ identifies the topological space $\vert \mathcal{M}^{an}_{HT} \vert $ with the quotient of $\vert \mathcal{M}_{HT}^{an} \times_{\mathcal{S}_{K^pK_p,\Sigma}^{tor}} \mathcal{S}_{K^p,\Sigma}^{tor}\vert$ by the action of $K_p$. We have an identification between $K_p$-invariant open subsets of $\mathcal{M}_{HT}^{an} \times_{\mathcal{S}_{K,\Sigma}^{tor}} \mathcal{S}_{K^p,\Sigma}^{tor}$ and open subsets of $\mathcal{M}_{HT}^{an}$. 
The only thing to check is therefore that $\mathcal{M}_{HT}$ is indeed invariant under the action of $K_p$.  We go back to considering a map $S \rightarrow \mathcal{S}_{K^p,\Sigma}^{tor}$, described by an element $g_{\tilde{S}} \in  \mathcal{G}^c(\tilde{S})$. Under right multiplication by $k \in K_p$, we get a new element $g_{\tilde{S}} k \in \mathcal{G}^c(\tilde{S})$ (it is crucial here that $k \in K_p \subseteq G(\ocal_F)$) and the corresponding reduction of the torsor described by the element $p_2^\star g_{\tilde{S}} k . (p_1^\star g_{\tilde{S}}. k)^{-1} = p_2^\star g_{\tilde{S}}. (p_1^\star g_{\tilde{S}})^{-1}$ doesn't depend on $k$. 

We need to prove that the action map $\mathcal{M}^{an}_\mu \times \mathcal{M}^{an}_{HT} \rightarrow \mathcal{M}^{an}_{HT} \times \mathcal{M}^{an}_{HT}$ (which is an isomorphism), induces an isomorphism
 $\mathcal{M}_\mu \times \mathcal{M}_{HT} \rightarrow \mathcal{M}_{HT} \times \mathcal{M}_{HT}$. In other words, we need to prove that the two open subsets $\mathcal{M}_\mu \times \mathcal{M}_{HT}$ and $\mathcal{M}_{HT} \times \mathcal{M}_{HT}$ identify via the action map. This can be checked after pull back to $S^{tor}_{K^p,\Sigma}$ and this is true. 
Finally, the morphism $\mathcal{M}_{HT} \rightarrow \mathcal{S}_{K,\Sigma}^{tor}$ is smooth, and surjective on geometric points. Therefore, there are sections \'etale-locally and $\mathcal{M}_{HT}$ is an \'etale  torsor. 
\end{proof}
\begin{rem} The above argument uses crucially that we know that $\mathcal{M}_{HT}^{an}$ descends from $\mathcal{S}^{tor}_{K^p,\Sigma}$ to $\mathcal{S}^{tor}_{K^pK_p,\Sigma}$, because pro-\'etale descent is not effective in general. 
\end{rem}

\begin{rem} \label{rem-connection-Hodge-abelian}
We briefly explain how the construction   in the abelian case connects with the construction in the Hodge case, in order to illustrate principle \ref{abelianstrategy}.  We consider a diagram of Shimura datum with $(G,X)$ of abelian type and $(G_1,X_1)$ of  Hodge type as in section \ref{section-Abelian-type2}:
\begin{eqnarray*}
\xymatrix{ (B_1,X_{B_1}) \ar[r] \ar[d] & (B, X_B) \ar[d] \\
(G_1,X_1) &  (G,X)}
\end{eqnarray*}
We may assume that the various morphisms between the groups $G_1, B_1, B, G$ over $\qq$ extend to morphisms over $\ocal_F$.  We assume that we have a diagram for suitable compacts $K', K_1, K_2, K_3$ with $K'_p \subseteq B_1(\ocal_F)$ and $K_{1,p} \subseteq  G_1(\ocal_F)$, and   cone decomposition for $G_1$: 

\begin{eqnarray*}
\xymatrix{  {S}^{tor}(B_1, X_{B_1})_{{K'},\Sigma}  \ar[r]^{\pi_1} \ar[d]^{\pi_2} \ar[rd]^{\pi} &  {S}^{tor}(G_1, X_{_1})_{{K_{1}},\Sigma} \ar[d] \\
 {S}(T, X_{T})_{K_{2}} \ar[r] &  {S}(G_1^{ab}, X_{G_1^{ab}})_{K_{3}}}
 \end{eqnarray*}
  
 We have the first formula:  $$\mathcal{M}_{HT}(B_1,X_{B_1}) = \pi_1^\star \mathcal{M}_{HT}(G_1,X_1) \times_{\pi^\star \mathcal{M}_{HT}(G^{ab}_1,X_{G^{ab}_1})} \pi_2^\star \mathcal{M}_{HT}(T,X_{T})$$
 
 Secondly, we have a finite  map: ${S}^{tor,0}(B_1, X_{B_1})_{{K'},\Sigma}   \rightarrow {S}^{tor,0}(G, X)_{{K},\Sigma}$, generically finite \'etale with group $\Delta(K,K')$ (possibly after refining $\Sigma$).
and the second formula $$(\mathcal{M}_{HT}(B_1,X_{B_1})  \times^{\mathcal{M}^c_{\mu_{B_1}}} \mathcal{M}_{\mu_{G}}^c) /\Delta(K,K') = \mathcal{M}_{HT}(G,X_{G})$$ (valid for the restrictions of the torsor to the connected component). 

\end{rem}

For $\kappa \in X^\star(T^c)^{M_\mu,+}$, the sheaf $\mathcal{V}_\kappa$ is modeled on the representation $V_\kappa$ of $\mathcal{M}_\mu^{an}$ defined over $F$. Recall that $V_\kappa$ is defined as the module of sections $f(m) \in \HH^0(\mathcal{M}^{an}_\mu, \oscr_{\mathcal{M}^{an}_\mu})$ such that $f(mb ) = - w_{0, M} \kappa(b) f(m)$ for $b \in \mathcal{B}^{an} \cap \mathcal{M}^{an}_\mu$. Using that $$\mathcal{M}^{an}_\mu/(\mathcal{B}^{an} \cap \mathcal{M}^{an}_\mu) = \mathcal{M}_\mu/(\mathcal{B} \cap \mathcal{M}_\mu)$$ we find that this is also the module of sections $f(m) \in \HH^0(\mathcal{M}_\mu, \oscr_{\mathcal{M}_\mu})$ such that $f(mb ) = - w_{0, M} \kappa(b) f(m)$ for $b \in \mathcal{B} \cap \mathcal{M}_\mu$. 

We can define an  $\ocal_{F}$-submodule $V_\kappa^+ \subset V_\kappa$ by considering sections $f(m) \in \HH^0(\mathcal{M}_\mu, \oscr_{\mathcal{M}_\mu}^+)$ such that $f(mb ) = - w_{0, M_\mu} \kappa(b) f(m)$.  Since $\HH^0(\mathcal{M}_\mu, \oscr_{\mathcal{M}_\mu}^+)$ is open and bounded in $\HH^0(\mathcal{M}_\mu, \oscr_{\mathcal{M}_\mu})$, we deduce that  $V_\kappa^+$ is a lattice in $V_\kappa$, stable under the action of $\mathcal{M}_\mu$.

\begin{coro}\label{coro-existence-integral-struct} Assume that $K_p \subseteq G(\ocal_F)$. For all $\kappa \in X^\star(T^c)^{M_\mu,+}$, the locally free sheaf $\mathcal{V}_\kappa$ over $\mathcal{S}^{tor}_{K, \Sigma}$ has an integral structure $\mathcal{V}_\kappa^+$ in the sense of definition \ref{defi-integral-structure-sheaf}.
\end{coro}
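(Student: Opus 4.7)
The plan is to construct $\mathcal{V}_\kappa^+$ as the sheaf associated to the integral $\mathcal{M}_\mu^c$-torsor $\mathcal{M}_{dR}=\mathcal{M}_{HT}$ from proposition \ref{prop-first-reduction} and the integral representation $V_\kappa^+$ of $\mathcal{M}_\mu$ constructed just before the corollary. Concretely, if $g\colon \mathcal{M}_{dR}\to\mathcal{S}^{tor}_{K,\Sigma}$ denotes the structure map of the integral torsor (on the \'etale site), I will define
\[
\mathcal{V}_\kappa^+ \;\subseteq\; g_\star\oscr_{\mathcal{M}_{dR}}^+
\]
to be the subsheaf of sections $f$ satisfying the usual equivariance condition $f(mb)=(w_{0,M_\mu}\kappa)(b^{-1})f(m)$ for $b\in (B\cap M_\mu)(\oscr^+)$. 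Equivalently, on any \'etale $U\to\mathcal{S}^{tor}_{K,\Sigma}$ trivializing $\mathcal{M}_{dR}$ by a section $s_U$, one puts $\mathcal{V}_\kappa^+(U)=V_\kappa^+\hat{\otimes}_{\ocal_F}\oscr^+_U$, with transition functions coming from the $\mathcal{M}_\mu^c$-valued cocycle of $\mathcal{M}_{dR}$ acting on $V_\kappa^+$ (which is stable because $V_\kappa^+$ was defined to be stable under $\mathcal{M}_\mu$ by construction).

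Next I would verify the two conditions of definition \ref{defi-integral-structure-sheaf}. For condition (1), the inclusion $V_\kappa^+\hookrightarrow V_\kappa$ is an inclusion of a lattice, stable under $\mathcal{M}_\mu$, and the pushout along the open immersion $\mathcal{M}_\mu\hookrightarrow \mathcal{M}_\mu^{an}$ together with the identification $\mathcal{M}_{dR}^{an}=\mathcal{M}_{dR}\times^{\mathcal{M}_\mu^c}\mathcal{M}_\mu^{c,an}$ (proposition \ref{prop-first-reduction}) shows that the associated subsheaf $\mathcal{V}_\kappa^+$ injects into $\mathcal{V}_\kappa$ and that inverting $p$ (equivalently, tensoring with $F$) recovers $\mathcal{V}_\kappa$, since it recovers both the representation and the analytic torsor. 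For condition (2), the \'etale torsor $\mathcal{M}_{dR}$ admits an \'etale trivialization by affinoid opens $\{U_i\to\mathcal{S}^{tor}_{K,\Sigma}\}$; over each such $U_i$ the local description $\mathcal{V}_\kappa^+(U_i)=V_\kappa^+\hat{\otimes}_{\ocal_F}\oscr^+_{\mathcal{X}}(U_i)$ exhibits $\mathcal{V}_\kappa^+(U_i)$ as a finite free $\oscr^+_{\mathcal{X}}(U_i)$-module (and in particular as a completion of a free module), and the natural map $\mathcal{V}_\kappa^+(U_i)\hat{\otimes}_{\oscr^+_{\mathcal{X}}(U_i)}\oscr^+_{V}\to \mathcal{V}_\kappa^+|_V$ is an isomorphism for any affinoid $V\subseteq U_i$ by the compatibility of pushout of torsors with base change.

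The main (though small) subtlety is ensuring that the definition makes sense sheaf-theoretically: the equivariance condition must be read on the \'etale site, and the formation of the invariants must commute with \'etale restriction. This is clear for the integral torsor $\mathcal{M}_{dR}$ since we are working with the honest algebraic group $\mathcal{M}_\mu^c$ (a quasi-compact adic group scheme) acting on the finite free module $V_\kappa^+$ through an algebraic homomorphism $M_\mu^c\to\mathrm{GL}(V_\kappa^+)$; thus the construction is nothing but the twisting of the vector bundle associated to an algebraic representation of the structure group of a torsor, now performed integrally. The rest is bookkeeping, and the classical locally free sheaf $\mathcal{V}_\kappa$ is recovered on the generic fibre by the same procedure applied to $V_\kappa=V_\kappa^+[1/p]$ and the analytic torsor $\mathcal{M}_{dR}^{an}$.
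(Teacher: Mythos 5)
Your proposal is correct and follows essentially the same route as the paper. The paper's proof is a one-liner: it defines $\mathcal{V}_\kappa^+$ as the subsheaf of $g_\star\oscr^+_{\mathcal{M}_{dR}}$ cut out by the equivariance condition, exactly as you do, and leaves the verification of the conditions in definition \ref{defi-integral-structure-sheaf} to the reader; you have supplied precisely that routine verification (lattice stability of $V_\kappa^+$, étale-local triviality of the integral $\mathcal{M}_\mu^c$-torsor $\mathcal{M}_{dR}$ from proposition \ref{prop-first-reduction}, compatibility with the analytic pushout).
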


\begin{proof} We consider the map $g : \mathcal{M}_{dR} \rightarrow \mathcal{S}^{tor}_{K, \Sigma}$. We let $\mathcal{V}_\kappa^+$ be the subsheaf of $g_\star \oscr^+_{\mathcal{M}_{dR}}$ of sections $f(m)$ which satisfy $f(mb) = - w_{0,M}\kappa(b) f(m)$.
\end{proof}
\subsubsection{Further reductions of the group structure}\label{sect-further-restriction-group-structure} We assume that $K_p = K_{p,m',b'}$ for $m' \geq b' \in \ZZ_{\geq 0}$ and $m' >0$ (see section \ref{section-compact-open-subgroups}). This is a compact open subgroup of $G(\qq_p)$ with an Iwahori decomposition.  
For any $w \in \WM$, we let  $K_{p,w,M_{\mu}}$ be the projection of $wK_pw^{-1} \cap \mathcal{P}_\mu$ to $\mathcal{M}_{\mu}$.  

We  will describe this group. We can define $N_{p,w, M_{\mu}} = K_{p,w,M_{\mu}} \cap U_{M_\mu}$ and $\overline{N}_{p,w, M_{\mu}} = K_{p,w,M_{\mu}} \cap \overline{U}_{M_\mu}$, where we let $B_{M_\mu}$ be the Borel subgroup of $M_{\mu}$ which is the image of $B$ in $M_{\mu}$, $\overline{B}_{M_\mu}$ the opposite Borel,  and $U_{M_\mu}$ (resp. $\overline{U_{M_\mu}}$) be the unipotent radical of $B_{M_\mu}$ (resp. $\overline{B}_{M_\mu}$). We also recall that $T_{b'} = \mathrm{Ker}(T(\ocal_F) \rightarrow T(\ocal_F/p^{b'})) \cap T(\qq_p) = T \cap K_p$.

\begin{prop}\label{prop-Iwahori-deco-of-weird-group}  For any $w \in \WM$, the group $K_{p,w,M_{\mu}}$ is a subgroup of the Iwahori subgroup of $M_\mu(\ocal_{F})$. Moreover, it admits an Iwahori decomposition.  Namely, the product map:
$$ N_{p,w, M_{\mu}} \times wT_{b'}w^{-1} \times \overline{N}_{p,w, M_{\mu}}  \rightarrow K_{p,w,M_{\mu}}$$ is an isomorphism. 

When $G$ is unramified, $M_\mu$ is defined over $\qq_p$,  $w$ is $\mathrm{Gal}(F/\qq_p)$-invariant,  and $K_p = K_{p,1,0}$ is the Iwahori subgroup of $G(\ZZ_p)$,  then $K_{p,w,M_{\mu}}$ is the Iwahori subgroup of $M_\mu(\ZZ_p)$. 
\end{prop}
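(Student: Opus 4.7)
The plan is to begin with the Iwahori decomposition $K_p=\overline{U}_{m'}\cdot T_{b'}\cdot U(\ZZ_p)$ and refine it using the Levi decompositions $\overline{U}=\overline{U}_{P_\mu}\cdot\overline{U}_{M_\mu}$ and $U=U_{M_\mu}\cdot U_{P_\mu}$ (valid since $\overline{U}_{P_\mu}$ is normal in $\overline{U}$ and $U_{P_\mu}$ in $U$), which intersect nicely with the congruence conditions to give
$$K_p=\overline{U}_{P_\mu,m'}\cdot K_p^{M_\mu}\cdot U_{P_\mu}(\ZZ_p),$$
where $K_p^{M_\mu}=\overline{U}_{M_\mu,m'}\cdot T_{b'}\cdot U_{M_\mu}(\ZZ_p)$ is the evident Iwahori-type subgroup of $M_\mu(\qq_p)$. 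Since $m'\geq 1$, the factor $K_p^{M_\mu}$ is contained in the Iwahori of $M_\mu(\ocal_F)$, and under the hypotheses of the proposition's final clause it will turn out to coincide with the full Iwahori of $M_\mu(\ZZ_p)$ by a direct count.

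Next I would conjugate by $w$ and track how each root subgroup of $K_p$ lands in $\mathcal{G}$ relative to the Levi decomposition of $\mathcal{P}_\mu$. The key combinatorial input is the Kostant property $w^{-1}\Phi_M^+\subseteq\Phi^+$, which partitions the roots of $\overline{U}$ and $U$ according to where their $w$-image sits among $\Phi_M$, $\Phi^{+,M}$, and $\Phi^{-,M}$: in particular $w\Phi^-\cap\Phi_M^+=\emptyset$, so every positive $M$-root component of $wkw^{-1}$ must come from the $U(\ZZ_p)$-factor through a root in $w^{-1}\Phi_M^+\subseteq\Phi^+$. Imposing $wkw^{-1}\in\mathcal{P}_\mu$ amounts to vanishing of the $\overline{U}_{P_\mu}$-component in the big-cell Levi decomposition of $wkw^{-1}$, and the heart of the argument is to show that this single condition \emph{decouples} into the independent vanishing of exactly those root-subgroup components of $\overline{u}$ and $u$ whose roots lie in $w^{-1}\Phi^{-,M}\cap\Phi^-$ and $w^{-1}\Phi^{-,M}\cap\Phi^+$ respectively, leaving the remaining components of $\overline{u},t,u$ free.

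Once this decoupling is established, the projection to $\mathcal{M}_\mu$ picks out exactly $wT_{b'}w^{-1}$ together with the root-subgroup components whose $w$-images lie in $\Phi_M$; reassembling these yields the claimed product $N_{p,w,M_\mu}\cdot wT_{b'}w^{-1}\cdot\overline{N}_{p,w,M_\mu}$, with bijectivity of the product map following from the uniqueness of the Iwahori decomposition in $M_\mu(\ocal_F)$. The containment in the Iwahori of $M_\mu(\ocal_F)$ is then automatic since $\overline{N}_{p,w,M_\mu}$ is built from conjugates of $\overline{U}_{m'}$-components, which reduce to $1$ modulo $p^{m'}$. The main obstacle is the commutator decoupling: rearranging $wkw^{-1}$ from its $w$-twisted Iwahori form into the big-cell form $\overline{u}_P\cdot m\cdot u_P$ requires commutator calculations among $M_\mu$-, $U_{P_\mu}$-, and $\overline{U}_{P_\mu}$-elements, and one must verify that these commutators contribute nothing to the $\overline{U}_{P_\mu}$-component. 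In the Shimura setting, where $U_{P_\mu}$ and $\overline{U}_{P_\mu}$ are abelian and $[U_{P_\mu},\overline{U}_{P_\mu}]\subseteq M_\mu$, this check is explicit, but it is what truly ensures that the vanishing condition isolates the advertised root-subgroup components of $k$ rather than imposing tangled simultaneous constraints; once it is done, the rest is bookkeeping with the Kostant partition of the root system.
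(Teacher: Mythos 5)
Your overall line of attack — Iwahori decomposition of $K_p$, conjugate by $w$, invoke the Kostant property $\Phi_M^+\subseteq w\Phi^+$, then project to the Levi — is the paper's. But there are three places where the proposal diverges from what actually works, and two of them are gaps rather than just stylistic choices.

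First, the opening refinement $K_p=\overline{U}_{P_\mu,m'}\cdot K_p^{M_\mu}\cdot U_{P_\mu}(\ZZ_p)$ is a detour. After conjugation by $w$, the factor $w\overline{U}_{P_\mu,m'}w^{-1}$ has roots $w\Phi^{-,M}$, which are scattered across $\Phi^{-,M}$, $\Phi_M$, and $\Phi^{+,M}$ for $w\neq 1$; similarly for the other two factors. So this grouping does not line up with the Levi decomposition of $P_\mu$ and gives you no foothold for computing $wK_pw^{-1}\cap P_\mu$. (Also, $K_p^{M_\mu}$ is \emph{not} $K_{p,w,M_\mu}$ except when $w=1$; be careful not to conflate them.) The paper avoids this entirely by parametrizing $K_p$ directly via the product map over individual root groups.

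Second, and more substantively, your characterization of the decoupling is off. You claim one must verify that the commutators "contribute nothing to the $\overline{U}_{P_\mu}$-component," but they \emph{do}: already for $SL_3$ with the $(1,2)$-parabolic (which is minuscule and satisfies $[U_{P_\mu},\overline{U}_{P_\mu}]\subseteq M_\mu$), the $\overline{U}_{P_\mu}$-component of $wkw^{-1}$ carries cross-terms mixing the $w^{-1}\Phi^{-,M}$ parameters of $k$ with the others. What is actually true is that the vanishing locus decouples — the dependence is triangular, so imposing $\overline{u}(wkw^{-1})=1$ is equivalent to killing exactly the $w^{-1}\Phi^{-,M}$ components of $k$ — and this holds without any abelianness hypothesis. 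The paper's mechanism for establishing this is the strong form of the Iwahori factorization (the product map over the rational root groups $U_{\alpha_0}$ is a bijection onto $K_p$ for \emph{any} ordering of $\Phi_0$, a Bruhat--Tits fact), which lets one isolate the $w^{-1}\Phi^{-,M}$ components cleanly; and indeed the paper explicitly remarks earlier that the abelian assumption on the unipotent radical is not in use here. Conditioning your argument on the Shimura-specific structure of $[U_{P_\mu},\overline{U}_{P_\mu}]$ is therefore both not quite the right reason and an unnecessary restriction.

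Third, you never engage with the $\qq_p$-rationality, which is a genuine wrinkle: $w$ is an absolute Weyl element and need not be $\Gal(F/\qq_p)$-equivariant, so a single Galois orbit $\alpha_0\in\Phi_0$ can be split by $w$ between $\Phi^{-,M}$ and its complement. The paper's condition $n_{\alpha_0}\in U'_{\alpha_0}=\ker\bigl(U_{\alpha_0}(\qq_p)\cap G(\ocal_F)\to\prod_{\alpha\in w^{-1}\Phi^{-,M},\,\alpha\mapsto\alpha_0}U_\alpha(\ocal_F)\bigr)$ is formulated precisely to make sense of "kill only certain absolute-root components" within one $\qq_p$-rational root group; your proposal, written as if everything were split, does not account for this. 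Relatedly, the unramified final clause is only sketched; it needs the observation that under those hypotheses $U''_{\alpha_0}$ is $U_{\alpha_0}(\ZZ_p)$ or $\{1\}$ according to whether $\alpha_0\in w^{-1}\Phi_{0,M}$, which is a computation, not just a count.
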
 

\begin{proof}
Let  $U$ and $\overline{U}$ be respectively the unipotent radicals of $B$ and $\overline{B}$. We have the Iwahori decomposition  $K_p = \overline{N}_p \times T_{b'}(\ZZ_p) \times N_p$ where $N_p = K_p \cap U$ is $U(\qq_p)\cap U(\ocal_F)$  and $\overline{N}_p = K_p \cap \overline{U} \subseteq \overline{U}(\qq_p)$ is the subgroup of elements reducing to $1$ modulo $p^{m'}$ 
It is useful to give a more precise version of this decomposition. We let $\Phi$ be the set of roots (defined over $F$).  We have $\Phi = \Phi^+_M \coprod \Phi^-_M \coprod \Phi^{+,M} \coprod \Phi^{-,M}$, where $\Phi_M = \Phi^+_M \coprod \Phi^-_M$ is the set of roots in $M$. 
We also let $\Phi_0 = \Phi/\mathrm{Gal}(F/\qq_p)$ and have $\Phi_0 =\Phi_0^+ \coprod \Phi_0^-$ (because $G$ is quasi-split). 

For all $\alpha_0 \in \Phi_0$, we let $U_{\alpha_0} \hookrightarrow G$ be the corresponding unipotent group. For all $\alpha \in \Phi$ we also denote by $U_\alpha \hookrightarrow G_{\ocal_{F}}$ the one parameter subgroup. We have $U_{\alpha_0} \times_{\Spec~\qq_p} \Spec~F = \prod_{\alpha \in \Phi, \alpha \mapsto \alpha_0} U_{\alpha} \times_{\Spec~\ocal_F} \Spec~F$. 
We consider the product map (in any order of the factors): 
$$ \prod_{\alpha_0} U_{\alpha_0}(\qq_p) \times T(\qq_p) \rightarrow G(\qq_p).$$
This maps induces a bijection between $K_p$ and  the set of elements $((n_{\alpha_0})_{\alpha_0 \in \Phi_0}, t)$ which satisfy:  $n_{\alpha_0}$, $t \in G(\ocal_F)$,  $n_{\alpha_0} = 1~\mod p^{m'}$ if $\alpha_0 \in \Phi_0^{-}$, and $t = 1~\mod p^{b'}$.
We get that $w K_p w^{-1} \cap P_\mu $ identifies (via the product map) with the set of elements $((wn_{\alpha_0}w^{-1})_{\alpha_0 \in \Phi_0 } \in wU_{\alpha_0}(\qq_p)w^{-1}, wtw^{-1} \in w T(\qq_p) w^{-1} )$ such that:
\begin{itemize}
\item $n_{\alpha_0},~t \in G(\ocal_F)$,
\item $n_{\alpha_0} = 1~\mod p^{m'}$ if $\alpha_0 \in \Phi_0^{-}$,
\item $t = 1~\mod p^{b'}$,
\item $n_{\alpha_0} \in U'_{\alpha_0}=  \mathrm{Ker} \big(U_{\alpha_0}(\qq_p) \cap G(\ocal_F) \rightarrow \prod_{ \alpha \in w^{-1}\Phi^{-,M}, \alpha \mapsto \alpha_0} U_{\alpha}(\ocal_{F})\big)$.
\end{itemize}

We let $U''_{\alpha_0} = \mathrm{Im} \big( U'_{\alpha_0} \rightarrow \prod_{ \alpha \in w^{-1}\Phi_M, \alpha \mapsto \alpha_0} U_\alpha(\ocal_{F})\big)$. 
We deduce that $K_{p,w,M_{\mu}}$  is in bijection (via the product map) with the elements $((wn_{\alpha_0}w^{-1})_{\alpha_0 \in \Phi_0} \in wU''_{\alpha_0}w^{-1}, wtw^{-1} \in w T_{b'} w^{-1} )$ such that 
 $n_{\alpha_0} = 1~\mod p^{m'}$ if $\alpha_0 \in \Phi_0^{-}$.  The fact that any element of $K_{p,w,M_{\mu}}$ can be written in this way follows from the previous discussion. The injectivity of the product map is a general fact.

We now observe that $w \in \WM$ and therefore $\Phi_M^+ \subset w(\Phi^+)$ and $\Phi_M^- \subset w(\Phi^-)$.  We deduce that $w^{-1} \Phi_M \cap \Phi^- = w^{-1} \Phi_M^-$ and $w^{-1} \Phi_M \cap \Phi^+ = w^{-1} \Phi_M^+$.

It follows that if $\alpha_0 \in \Phi_0^+$, $w U''_{\alpha_0} w^{-1} \subset U_{M_\mu}(\ocal_{F})$ and if $\alpha_0 \in \Phi_0^-$, $w U''_{\alpha_0} w^{-1} \subset \overline{U}_{M_\mu}(\ocal_{F})$. Therefore, we deduce that $K_{p,w,M_\mu} =  N_{p,w, M_{\mu}} \times wT_{b'}w^{-1} \times \overline{N}_{p,w, M_{\mu}}$.  By the condition that  $n_{\alpha_0} = 1~\mod p^{m'}$ if $\alpha_0 \in   \Phi_0^{-}$, we deduce that $K_{p, w, M_{\mu}}$ is a subgroup of the Iwahori of $M_\mu(\ocal_{F})$.

Finally, if $G$ is unramified, and $M_\mu$ is defined over $\qq_p$,  the partition $\Phi = \Phi^+_M \coprod \Phi^-_M \coprod \Phi^{+,M} \coprod \Phi^{-,M}$ descends to a partition $\Phi_0 = \Phi^+_{0,M} \coprod \Phi^-_{0,M} \coprod \Phi_0^{+,M} \coprod \Phi_0^{-,M}$.  If $w$ is rational, then it acts on $\Phi_0$. Assume that $K_p = K_{p,1,0}$.  The description of $K_{p,w,M_{\mu}}$ simplifies  and we find that $U''_{\alpha_0} = \{1\}$ if $\alpha_0 \notin w^{-1} \Phi_{0,M}$, $U''_{\alpha_0} = U_{\alpha_0}(\ZZ_p)$ if $\alpha_0 \in w^{-1} \Phi_{0,M}$. It follows that $K_{p,w,M_\mu}$ is the Iwahori of $M_\mu(\ZZ_p)$.

\end{proof}

\begin{ex} The group $K_{p, w,  M_\mu}$ may be a little strange. Let us consider the following example. We assume that $G_{\qq_p} $ is $\mathrm{Res}_{\qq_{p^2}/\qq_p} \mathrm{GL}_2$, with standard diagonal torus $T_{\qq_p}$ and upper triangular borel $B_{\qq_p}$. We let $K_p = K_{p,1,0}$.   We identify $G_{\qq_{p^2}} = \mathrm{GL}_2 \times \mathrm{GL}_2$. We assume that $\mu$ is defined over $\qq_{p^2}$ and is given by the cocharacter $t \mapsto \mathrm{diag}(t,1) \times \mathrm{diag}(1,1)$ of $T_{\qq_{p^2}}$. We deduce that $P_{\mu}$ is $B_{\qq_{p^2}} \times \mathrm{GL}_2$, and that $M_\mu$ is $T_{\qq_{p^2}} \times GL_2$. We finally observe that $K_{p} \cap P_\mu$ is  $B(\ZZ_p)$ and therefore $K_{p,1, \mu}$ is  the image of $B(\Z_p)$ in $T(\qq_{p^2}) \times GL_2(\qq_{p^2})$. \end{ex} 

For all $m,n \in \qq_{\geq 0}$, we let $\mathcal{G}^1_{m,n}$ be the subgroup of $\mathcal{G}$ of elements which reduce to $\mathcal{U}$ modulo $p^{m + \epsilon}$ for all $\epsilon >0$ and to $\overline{\mathcal{U}}$ modulo $p^n$ (see section \ref{section-orbitsofcells}). We let $\mathcal{M}^1_{\mu,m,n} \subseteq \mathcal{M}_{\mu}$ be the group of elements which reduce to $\mathcal{U}_{\mathcal{M}_\mu}$ modulo $p^{m + \epsilon}$ for all $\epsilon >0$ and to $\overline{\mathcal{U}}_{ \mathcal{M}_\mu}$ modulo $p^n$.

\begin{lem}Let $w \in \WM$ and let $K_p = K_{p,m',b'}$ with $m' \in \ZZ_{>0}$ and $b' \in \ZZ_{\geq 0}$. Let $m, n \geq 0$ and assume that  $0 \leq m-n \leq m'-1$.  Then 
$K_{p,w,M_{\mu}}$ normalizes $\mathcal{M}^1_{\mu,m,n}$.
\end{lem}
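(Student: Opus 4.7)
The plan is to use the Iwahori decomposition of $K_{p,w,M_\mu}$ given in Proposition~\ref{prop-Iwahori-deco-of-weird-group}, namely $K_{p,w,M_\mu} = N_{p,w,M_\mu} \cdot (wT_{b'}w^{-1}) \cdot \overline{N}_{p,w,M_\mu}$, so that it suffices to check that each of the three factors normalizes $\mathcal{M}^1_{\mu,m,n}$. From the explicit description in the proof of \emph{loc. cit.}, I extract that $N_{p,w,M_\mu} \subseteq U_{M_\mu}(\ocal_F)$ (with no further congruence) while $\overline{N}_{p,w,M_\mu}$ consists of elements of $\overline{U}_{M_\mu}(\ocal_F)$ congruent to $1$ modulo $p^{m'}$. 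The torus factor $wT_{b'}w^{-1}$ is immediate, since $T$ normalizes every root subgroup $\mathcal{U}_\alpha$ of $\mathcal{M}_\mu$ and hence normalizes the group $\mathcal{M}^1_{\mu,m,n}$, which admits an Iwahori-type presentation via the analog of Proposition~\ref{prop-superIwahoridecomposition2} for $\mathcal{M}_\mu$.

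For the two unipotent factors, I would use this Iwahori decomposition of $\mathcal{M}^1_{\mu,m,n}$ to write a typical element as $\bar{n}\cdot t\cdot n$ where $n\in\prod_{\gamma\in\Phi_M^+}\mathcal{U}_\gamma$ has root coordinates in $p^n\ocal_F$, $t\in T$ satisfies $t\equiv 1\bmod p^{m+\epsilon}$, and $\bar{n}\in\prod_{\gamma\in\Phi_M^-}\mathcal{U}_\gamma$ has root coordinates in $p^{m+\epsilon}\ocal_F$ (recalling that $m\geq n$ makes the torus constraint from the two sides consistent). Then I would check, one factor at a time, that conjugation by a one-parameter element $u_\delta(y)$ with $\delta\in\Phi_M^+$ and $y\in\ocal_F$ (a generator of $N_{p,w,M_\mu}$), or $\delta\in\Phi_M^-$ and $y\in p^{m'}\ocal_F$ (a generator of $\overline{N}_{p,w,M_\mu}$), preserves the valuation constraints. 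This is controlled by the Chevalley commutator formula, which expands $u_\delta(y)\,u_\gamma(x)\,u_\delta(y)^{-1}$ as $u_\gamma(x)$ times a product of factors $u_{i\gamma+j\delta}(c_{i,j,\gamma,\delta}\, x^iy^j)$ for $i,j\geq 1$, with an additional torus contribution in the exceptional case where $i\gamma+j\delta=0$ (which forces $\delta$ to be $\qq_{>0}$-proportional to $-\gamma$, typically $\delta=-\gamma$ with $i=j=1$).

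The main obstacle, and the place where the hypothesis $0\leq m-n\leq m'-1$ is genuinely used, is conjugation of a positive-root factor $u_\gamma(x)$ (with $\gamma\in\Phi_M^+$, $v(x)\geq n$) by a generator $u_{-\beta}(y)$ of $\overline{N}_{p,w,M_\mu}$ (with $\beta\in\Phi_M^+$, $v(y)\geq m'$). The Chevalley formula produces commutator factors of valuation $\geq i\,v(x)+j\,v(y)\geq n+m'$. When the resulting root $i\gamma-j\beta$ is negative we need valuation $\geq m+\epsilon$, and indeed $n+m'\geq m+1>m+\epsilon$ for $\epsilon<1$ thanks to precisely the hypothesis $m'\geq m-n+1$. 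The same bound governs the torus term arising when $\gamma=\beta$, $i=j=1$: it is congruent to $1$ modulo $p^{n+m'}\subseteq p^{m+\epsilon}$, matching the torus part of $\mathcal{M}^1_{\mu,m,n}$.

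The remaining cases are handled by similar but slicker estimates. Conjugation of $t\in T$ with $t\equiv 1\bmod p^{m+\epsilon}$ by $u_{-\beta}(y)$ yields the extra factor $u_{-\beta}\bigl(y(\beta(t)-1)\bigr)$ of valuation $\geq m'+m+\epsilon$, which is harmless. For conjugation by generators $u_\beta(y)$ of $N_{p,w,M_\mu}$ (only $v(y)\geq 0$), the commutator terms have valuation $\geq i\,v(x)\geq v(x)$, and the required valuation bounds ($n$ for positive roots, $m+\epsilon$ for negative roots and torus) follow from the elementary inequality $m+\epsilon\geq n$ together with the constraints already on the starting factor. Conjugation of negative-root factors by either side is the easiest case, since the commutator roots remain negative and the bound $i(m+\epsilon)+jv(y)\geq m+\epsilon$ is immediate.
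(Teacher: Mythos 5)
Your proposal is correct, but it takes a genuinely different route from the paper. The paper's proof is a two-line corollary: it observes that the hypothesis $0 \leq m-n \leq m'-1$ gives $K_p \subseteq \mathcal{G}_{m-n,0}$, invokes lemma \ref{lem-normalsubgroups} (already proved by embedding into $\mathrm{GL}_r$) to conclude that $K_p$ normalizes $\mathcal{G}^1_{m,n}$ in the ambient group $\mathcal{G}$, and then transports this statement down to $\mathcal{M}_\mu$ via the observation that $\mathrm{Im}(w\mathcal{G}^1_{m,n}w^{-1} \cap \mathcal{P}_\mu \to \mathcal{M}_\mu) = \mathcal{M}^1_{\mu,m,n}$ and that $K_{p,w,M_\mu}$ is by definition the image of $wK_pw^{-1}\cap\mathcal{P}_\mu$, so conjugation commutes with the projection. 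You instead re-derive the normalization from scratch inside $M_\mu$, using the Iwahori decomposition of $K_{p,w,M_\mu}$ from proposition \ref{prop-Iwahori-deco-of-weird-group} and of $\mathcal{M}^1_{\mu,m,n}$, and running the Chevalley commutator formula factor-by-factor. The case analysis is longer but self-contained, and you correctly isolate where each half of the hypothesis enters ($m \geq n$ for compatibility of the torus constraints and the easy directions, $n + m' \geq m+1$ for the genuinely delicate conjugation of a positive-root factor by a generator of $\overline{N}_{p,w,M_\mu}$, including the torus term that appears when $\gamma = \beta$); the paper uses these same inequalities, but absorbs the root-group arithmetic into lemma \ref{lem-normalsubgroups}. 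One small inaccuracy to fix: your final sentence asserts that for conjugation of a negative-root factor the commutator roots ``remain negative,'' which holds when conjugating by a generator $u_{-\beta}(y)$ of $\overline{N}_{p,w,M_\mu}$ but not when conjugating by $u_\beta(y)$ with $\beta \in \Phi_M^+$, where $-i\gamma + j\beta$ can be positive or zero; the bound nonetheless closes in those cases because $v(x) > m \geq n$ already clears every threshold, as your previous sentence in fact shows, so only the stated justification (not the conclusion) needs adjusting.
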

\begin{proof}  Observe that $K_p \subseteq \mathcal{G}_{m-n,0}$. By lemma \ref{lem-normalsubgroups}, $K_p$ normalizes $\mathcal{G}^1_{m,n}$. Moreover, $\mathrm{Im} (w \mathcal{G}^1_{m,n} w^{-1} \cap \mathcal{P}_\mu  \rightarrow \mathcal{M}_\mu) = \mathcal{M}^1_{\mu, m,n}$. 

\end{proof}

It follows from this lemma that $K_{p,w,M_{\mu}} \mathcal{M}^1_{\mu,m,n}$ is a subgroup of $\mathcal{M}_\mu$. 

\begin{prop}\label{prop-reduction-2} Let $w \in \WM$ and let $K_p = K_{p,m',b'}$ with $m' \in \ZZ_{>0}$ and $m' \geq b' $. Let $m, n \geq 0$ and assume that  $0 \leq m-n \leq m'-1$. 
 Over $(\pi^{tor}_{HT,K_p})^{-1} (
  ]C_{w,k}[_{m,n} K_p)  \subseteq \mathcal{S}_{K^pK_p,\Sigma}^{tor}$, the torsor $\mathcal{M}_{HT}$ has a  reduction to an \'etale torsor $\mathcal{M}_{HT,m,n,K_p}$ under the group $K^c_{p,w,M_{\mu}} \mathcal{M}^{1,c}_{\mu,m,n}$.
  
\end{prop}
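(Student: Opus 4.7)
The plan is to adapt the strategy of Proposition \ref{prop-first-reduction}, producing the reduction first over the preimage of the Bruhat cell in the infinite level toroidal compactification $\mathcal{S}^{tor}_{K^p,\Sigma}$, and then descending to finite level.

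For the first step, I would consider a map $S \to \mathcal{S}^{tor}_{K^p,\Sigma}$ from a perfectoid space whose image under $\pi_{HT}^{tor}$ lies in $]C_{w,k}[_{m,n}$. On an \'etale cover $\tilde S \to S$, one represents the composition $\tilde S \to \mathcal{FL}_{G,\mu}$ by an element $g_{\tilde S} \in \mathcal{G}^c(\tilde S)$, well-defined up to left multiplication by $\mathcal{P}_\mu^c(\tilde S)$. Using the identity $]C_{w,k}[_{m,n} = \mathcal{P}_\mu \backslash \mathcal{P}_\mu \cdot w \mathcal{G}^1_{m,n}$, after adjusting by this ambiguity one can take $g_{\tilde S} \in w \mathcal{G}^1_{m,n}(\tilde S)$; the residual left-ambiguity is then precisely $w \mathcal{G}^1_{m,n} w^{-1} \cap \mathcal{P}_\mu^c$, whose image in $\mathcal{M}_\mu^c$ is $\mathcal{M}^{1,c}_{\mu,m,n}$. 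The cocycle $p_2^\star g_{\tilde S}\,(p_1^\star g_{\tilde S})^{-1}$ thus descends to a cocycle valued in $\mathcal{M}^{1,c}_{\mu,m,n}$, yielding the desired reduction of the pullback of $\mathcal{M}_{HT}$ to $\mathcal{S}^{tor}_{K^p,\Sigma}$ over $(\pi_{HT}^{tor})^{-1}(]C_{w,k}[_{m,n})$.

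For the descent step, I would use that the hypothesis $0 \leq m-n \leq m'-1$ forces $K_p \subseteq \mathcal{G}_{m-n,0}$, so that $K_p$ normalizes $\mathcal{G}^1_{m,n}$ by Lemma \ref{lem-normalsubgroups}. By Lemma \ref{lem-nice-disjoint-union} the set $]C_{w,k}[_{m,n}K_p$ decomposes as a finite disjoint union of translates $]C_{w,k}[_{m,n}\cdot k_i$, and the first step gives a $\mathcal{M}^{1,c}_{\mu,m,n}$-reduction on the preimage of each translate in $\mathcal{S}^{tor}_{K^p,\Sigma}$. The $K_p$-action at infinite level permutes these pieces, while the stabilizer in $K_p$ of a given translate acts on the corresponding fibers through the projection $wK_pw^{-1}\cap\mathcal{P}_\mu \to \mathcal{M}_\mu^c$ that defines $K^c_{p,w,M_\mu}$. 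Taking the $K_p$-quotient therefore produces, over the open $(\pi^{tor}_{HT,K_p})^{-1}(]C_{w,k}[_{m,n}K_p) \subseteq \mathcal{S}^{tor}_{K^pK_p,\Sigma}$, an \'etale torsor under the group $K^c_{p,w,M_\mu}\mathcal{M}^{1,c}_{\mu,m,n}$, which is a genuine subgroup of $\mathcal{M}_\mu^c$ by the preceding lemma. \'Etaleness follows from that of $\mathcal{M}_{HT}$ as in Proposition \ref{prop-first-reduction}.

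The main obstacle will be the rigorous execution of the descent: one must check that the $K_p$-action on the collection of infinite-level reductions is by elements of $K^c_{p,w,M_\mu}\mathcal{M}^{1,c}_{\mu,m,n}$ on fibers and that the pieces glue consistently. This relies on the commutation identity $K_p \cdot \mathcal{G}^1_{m,n} = \mathcal{G}^1_{m,n} \cdot K_p$, together with the Iwahori decomposition of $K_{p,w,M_\mu}$ from Proposition \ref{prop-Iwahori-deco-of-weird-group}, to match the discrete factor $K^c_{p,w,M_\mu}$ with the index set of translates and the continuous factor $\mathcal{M}^{1,c}_{\mu,m,n}$ with the infinitesimal ambiguity in the choice of $g_{\tilde S}$.
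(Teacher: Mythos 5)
Your approach mirrors the paper's in the broad strokes (build the reduction at infinite level via a trivialization $g_{\tilde S}$ lying in a prescribed subset of $\mathcal{G}^c$, then descend by $K_p$-invariance), but you split the construction in a way that opens a genuine gap in the descent. The paper works directly over $(\pi_{HT}^{tor})^{-1}(]C_{w,k}[_{m,n}K_p)$ and takes $g_{\tilde S}\in w\mathcal{G}^1_{m,n}K_p$; this set is stable under right multiplication by $K_p$ (since $\mathcal{G}^1_{m,n}K_p$ is a group), so the resulting open subset of $\mathcal{M}_{HT}$ is automatically $K_p$-invariant, and the structure group is the image in $\mathcal{M}_\mu^c$ of $\mathcal{P}_\mu\cap w\mathcal{G}^1_{m,n}K_p w^{-1}$, namely $K^c_{p,w,M_\mu}\mathcal{M}^{1,c}_{\mu,m,n}$ (the $t=1$ case of Lemma \ref{lem-double-cosets}). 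You instead build a $\mathcal{M}^{1,c}_{\mu,m,n}$-reduction over each single translate $]C_{w,k}[_{m,n}k_i$ and expect the union to descend.

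The problem is that the union $\cup_i P_i$ is \emph{not} $K_p$-invariant, so it cannot be descended as is, and your claim that the $K_p$-action ``permutes these pieces'' does not hold at the level of the reductions. If $k\in K_p$ sends $]C_{w,k}[_{m,n}k_1$ to $]C_{w,k}[_{m,n}k_2$, write $k_1k=sk_2$ with $s$ in the stabilizer of $]C_{w,k}[_{m,n}$; then a trivialization $g=wh_1k_1k=wh_1sk_2$ becomes, after passing $s$ past $h_1$ (Lemma \ref{lem-normalsubgroups}) and writing $wsw^{-1}=p\,wh_2w^{-1}$ with $p\in\mathcal{P}_\mu$, of the form $p\cdot(wh_2h_1'k_2)$ with $p\in wK_p\mathcal{G}^1_{m,n}w^{-1}\cap\mathcal{P}_\mu$. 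The image of $p$ in $\mathcal{M}_\mu^c$ lies in $K^c_{p,w,M_\mu}\mathcal{M}^{1,c}_{\mu,m,n}$ but generally not in $\mathcal{M}^{1,c}_{\mu,m,n}$, so $P_1.k$ is a nontrivial translate of $P_2$ rather than $P_2$ itself. For the same reason, your description of the stabilizer is too narrow: the stabilizer of $]C_{w,k}[_{m,n}$ in $K_p$ is $\{h\in K_p:whw^{-1}\in\mathcal{P}_\mu w\mathcal{G}^1_{m,n}w^{-1}\}$, which is strictly larger than the preimage of $K^c_{p,w,M_\mu}$, and it acts on fibers through $K^c_{p,w,M_\mu}\mathcal{M}^{1,c}_{\mu,m,n}$, not merely through $K^c_{p,w,M_\mu}$. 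To repair your descent you must first enlarge $\cup_i P_i$ to its $K^c_{p,w,M_\mu}\mathcal{M}^{1,c}_{\mu,m,n}$-orbit inside $\mathcal{M}_{HT}$, which is exactly the locus $g\in w\mathcal{G}^1_{m,n}K_p$; at that point you have reproduced the paper's one-step construction, and the two-step detour buys nothing.
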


\begin{proof} The proof is very similar to the proof of proposition \ref{prop-first-reduction}.  We observe that  $ ]C_{w,k}[_{m,n} K_p =  \mathcal{P}_\mu^{an} \backslash \mathcal{P}_\mu^{an} w \mathcal{G}^1_{m,n} K_p$. Since $K_p$ normalizes $\mathcal{G}^1_{m,n}$, $\mathcal{G}^1_{m,n} K_p = K_p \mathcal{G}^1_{m,n} $ is a group. It follows that 
$$ ]C_{w,k}[_{m,n} K_p =  (\mathcal{P}_\mu^{an} \cap w  K_p \mathcal{G}^1_{m,n} w^{-1}) \backslash w  K_p \mathcal{G}^1_{m,n} w^{-1}.w$$ 
Observe that we have the following $K_p$-equivariant $K^c_{p,w,M_{\mu}} \mathcal{M}^{1,c}_{\mu,m,n}$-torsor: 
$$(\mathcal{U}_\mu^{an,c} \cap w  K^c_p \mathcal{G}^{1,c}_{m,n} w^{-1}) \backslash w  K^c_p \mathcal{G}^{1,c}_{m,n} w^{-1}.w \rightarrow (\mathcal{P}_\mu^{an} \cap w  K_p \mathcal{G}^1_{m,n} w^{-1}) \backslash w  K_p \mathcal{G}^1_{m,n} w^{-1}.w$$ and we proceed to pull back and descend this torsor. 

 We first construct the torsor $\mathcal{M}_{HT,m,n,K_p}$ as an open subset of $$\mathcal{M}_{HT} \times_{S^{tor}_{K,\Sigma}}( \pi^{tor}_{HT})^{-1} (]C_{w,k}[_{m,n} K_p),$$ and prove it is $K_p$-invariant to descend it to an open subset of $$\mathcal{M}_{HT} \times_{S^{tor}_{K,\Sigma}} (\pi^{tor}_{HT,K_p})^{-1} (]C_{w,k}[_{m,n} K_p).$$ 

Let $S \rightarrow (\pi^{tor}_{HT})^{-1} (]C_{w,k}[_{m,n} K_p)$ be a map from a perfectoid space $S$. The torsor $\mathcal{P}_{HT}$ is described as follows: there is a cover $\tilde{S} \rightarrow S$ and an element $g_{\tilde{S}} \in \mathcal{G}(\tilde{S})$ such that $h_{\tilde{S}\times_S \tilde{S}} = p_2^\star g_{\tilde{S}}. (p_1^\star g_{\tilde{S}})^{-1}  \in \mathcal{P}_{\mu}( \tilde{S} \times_S \tilde{S})$ is a $1$-cocycle describing the torsor. By assumption, we may assume $g_{\tilde{S}} \in w \mathcal{G}^1_{m,n} K_p$, so that $h_{\tilde{S}\times_S \tilde{S}} \in \mathcal{P}_{\mu}( \tilde{S} \times_S \tilde{S}) \cap w \mathcal{G}^1_{m,n} K_p w^{-1}$ and its image in $\mathcal{M}_\mu$ describes a reduction $\mathcal{M}_{HT,m,n,K_p}$  of $\mathcal{M}_{HT} \times_{S^{tor}_{K,\Sigma}}( \pi^{tor}_{HT})^{-1} (]C_{w,k}[_{m,n} K_p)$ to a $K_{p,w,M_\mu} \mathcal{M}^1_{\mu,m,n}$-torsor. One checks easily that this torsor is $K_p$-invariant and therefore descends to a torsor over $(\pi^{tor}_{HT,K_p})^{-1} (]C_{w,k}[_{m,n} K_p)$. 
\end{proof}

\begin{rem}\label{rem-on-the-abelian-case} The connection between the abelian case and Hodge case can be described similarly  as in remark \ref{rem-connection-Hodge-abelian}. We may use section \ref{subsecchangegroup} to make sure that the subset  $]C_{w,k}[_{m,n} K_p$ of the flag variety doesn't change when we consider the different groups $G_1, B_1, G$. 
\end{rem}

\begin{prop} Let $w \in \WM$ and let $K_p = K_{p,m',b'}$ with $m' \in \ZZ_{>0}$ and $m' \geq b' $.
Let $K_p' = K_{p,m'', b''}$ with $m'' \geq m'$ and $m'' \geq b'' \geq b'$.  Let $m, n \geq 0$ and assume that $0 \leq m-n \leq m'-1$. Let $r,s \geq 0$ with $r \geq m$, $s \geq m$ and $0 \leq r-s \leq m''-1$. 
There is a commutative diagram:
\begin{eqnarray*}
\xymatrix{ \mathcal{M}_{HT,r,s,K'_p} \ar[r] \ar[d] & \mathcal{M}_{HT,m,n, K_p} \ar[d] \\
(\pi^{tor}_{HT,K'_p})^{-1} (
  ]C_{w,k}[_{r,s} K'_p) \ar[r] & (\pi^{tor}_{HT,K_p})^{-1} (
  ]C_{w,k}[_{m,n} K_p)  }
\end{eqnarray*}
The top horizontal map is  equivariant  for the map $K'_{p,w,M_{\mu}} \mathcal{M}^1_{\mu,r,s} \rightarrow K_{p,w,M_{\mu}} \mathcal{M}^1_{\mu,m,n}$.
\end{prop}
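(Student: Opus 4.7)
The plan is to construct the map of torsors by unwinding the cocycle description given in the proof of proposition \ref{prop-reduction-2} and checking that the trivializations used for $\mathcal{M}_{HT,r,s,K_p'}$ can be re-interpreted as trivializations for $\mathcal{M}_{HT,m,n,K_p}$.

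First I would check the group-theoretic consistency. Since $K_p' \subseteq K_p$, we have $K'_{p,w,M_\mu} \subseteq K_{p,w,M_\mu}$ by definition. Assuming the natural inequalities $r \geq m$ and $s \geq n$ (which are what makes the construction meaningful), we have $\mathcal{M}^1_{\mu,r,s} \subseteq \mathcal{M}^1_{\mu,m,n}$, and therefore an inclusion of subgroups $K'_{p,w,M_\mu} \mathcal{M}^1_{\mu,r,s} \subseteq K_{p,w,M_\mu} \mathcal{M}^1_{\mu,m,n}$ of $\mathcal{M}_\mu$. Next, since $]C_{w,k}[_{r,s} \subseteq ]C_{w,k}[_{m,n}$ and $K_p' \subseteq K_p$, the natural map $\mathcal{S}^{tor}_{K^pK_p',\Sigma} \to \mathcal{S}^{tor}_{K^pK_p,\Sigma}$ sends $(\pi^{tor}_{HT,K_p'})^{-1}(]C_{w,k}[_{r,s} K_p')$ into $(\pi^{tor}_{HT,K_p})^{-1}(]C_{w,k}[_{m,n} K_p)$, which gives the bottom horizontal arrow.

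For the top horizontal arrow, I would work at the level of the perfectoid (or diamond) limit $\mathcal{S}^{tor}_{K^p,\Sigma}$, where both torsors arise as open subsets of the common torsor $\mathcal{M}_{HT}$ constructed in proposition \ref{prop-first-reduction}, and then descend. Concretely, given $S \to (\pi^{tor}_{HT})^{-1}(]C_{w,k}[_{r,s} K_p')$ from a perfectoid space, the image in $\mathcal{FL}_{G,\mu}$ is described, after passing to an \'etale cover $\tilde{S} \to S$, by an element $g_{\tilde{S}} \in w\mathcal{G}^1_{r,s} K_p' \subseteq w\mathcal{G}^1_{m,n} K_p$. The cocycle $h_{\tilde{S}\times_S \tilde{S}} = p_2^\star g_{\tilde{S}} \cdot (p_1^\star g_{\tilde{S}})^{-1}$ then lies in $\mathcal{P}_\mu \cap w\mathcal{G}^1_{r,s}K_p' w^{-1} \subseteq \mathcal{P}_\mu \cap w\mathcal{G}^1_{m,n}K_p w^{-1}$, whose image in $\mathcal{M}_\mu$ sits inside $K'_{p,w,M_\mu} \mathcal{M}^1_{\mu,r,s} \subseteq K_{p,w,M_\mu} \mathcal{M}^1_{\mu,m,n}$. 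This inclusion of cocycles gives the required map of reduced torsors and its equivariance for the map of structure groups is built into the construction. Compatibility with the bottom arrow is then clear by construction.

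The final step is to verify that this map at the perfectoid (or diamond) level descends to finite level. Both torsors $\mathcal{M}_{HT,m,n,K_p}$ and $\mathcal{M}_{HT,r,s,K_p'}$ were obtained in proposition \ref{prop-reduction-2} by checking $K_p$- (resp. $K_p'$-) invariance of the corresponding open subsets of $\mathcal{M}_{HT} \times_{\mathcal{S}^{tor}_{K,\Sigma}}(\pi^{tor}_{HT})^{-1}(\ldots)$; since $K_p' \subseteq K_p$, the map we have just constructed at infinite level is $K_p'$-equivariant, hence descends to the desired map at finite level. The main (mild) obstacle is purely notational: keeping track of several simultaneous inclusions of open subsets, subgroups, and levels, and making sure that the identification of cocycles is canonical and independent of the auxiliary choice of trivialization $g_{\tilde{S}}$, which follows since a change of $g_{\tilde{S}}$ by left multiplication by an element of $\mathcal{P}_\mu \cap w\mathcal{G}^1_{r,s}K_p' w^{-1}$ modifies the cocycle only by a coboundary with values in the same subgroup.
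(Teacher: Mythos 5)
Your proposal is correct and takes essentially the same approach as the paper; the paper's own proof is simply the one-line ``This follows from the construction of the torsors,'' and what you have written is a detailed and accurate unwinding of that construction, tracking the inclusions $w\mathcal{G}^1_{r,s}K_p' \subseteq w\mathcal{G}^1_{m,n}K_p$, the induced containment of cocycle groups, and $K_p'$-equivariance for descent.
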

\begin{proof} This follows from the construction of the torsors.
\end{proof}

We have a map $\mathcal{M}^1_{\mu,n,n} \rightarrow \mathcal{M}_{\mu,n}$ where $\mathcal{M}_{\mu,n}$ is the  sub-group of $\mathcal{M}_\mu$ of elements reducing to $1$ modulo $p^n$. Let $K_{p} = K_{p,m',b'}$ with $m' >0$, $m' \geq b'$ and let $m,n \geq 0$ be such that $0 \leq m-n \leq m'-1$. Over  $(\pi^{tor}_{HT,K_p})^{-1} (
  ]C_{w,k}[_{n,n} K_p)$, we define the pushout:  $$\mathcal{M}_{HT,n,K_p} = \mathcal{M}_{HT,n,n,K_p} \times^{K^c_{p,w,M_\mu}\mathcal{M}^{1,c}_{\mu,n,n}} K^c_{p,w,M_\mu}\mathcal{M}^{c}_{\mu,n}.$$ 

It is sometimes more convenient to work with this torsor because the group $\mathcal{M}^c_{\mu,n}$ is affinoid. 

\begin{prop}\label{prop-torsor-become-trivial} Assume that $(G,X)$ is a Hodge-type Shimura datum and $\Sigma$ a perfect cone decomposition. Let $K_{p} = K_{p,m',b}$. For any affinoid open   $\Spa (R, R^+) \rightarrow (\pi^{tor}_{HT,K_p})^{-1} (
  ]C_{w,k}[_{n,n} K_p)$  which we assume to be  pregood (see definition \ref{defi-of-good}), there exists $K'_p \subseteq K_p$ such  that over $\Spa(R,R^+) \times_{\mathcal{S}_{K^pK_p, \Sigma}^{tor}} \mathcal{S}_{K^pK_p', \Sigma}^{tor}$ the torsor $\mathcal{M}_{HT, n, K_p}$ is trivial.
  \end{prop}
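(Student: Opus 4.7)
The plan is to pass to the infinite level perfectoid Shimura variety, where the torsor admits a natural trivialization coming from the Hodge-Tate period map, and then descend this trivialization to a finite level by the standard approximation mechanism for pregood affinoids.

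First, by the Hodge-type assumption and the perfectness of $\Sigma$ (see Section~\ref{Section-Hodge-perfectoid}), the perfectoid space $\mathcal{S}^{tor}_{K^p, \Sigma}$ exists with $\mathcal{S}^{tor}_{K^p, \Sigma} \sim \lim_{K'_p} \mathcal{S}^{tor}_{K^pK'_p, \Sigma}$, equipped with the Hodge-Tate period map $\pi^{tor}_{HT}\colon \mathcal{S}^{tor}_{K^p, \Sigma} \to \mathcal{FL}_{G, \mu}$. Set $\Spa(R_\infty, R_\infty^+) := \Spa(R, R^+) \times_{\mathcal{S}^{tor}_{K^pK_p, \Sigma}} \mathcal{S}^{tor}_{K^p, \Sigma}$. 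By the pregood hypothesis on $\Spa(R, R^+)$, this is an affinoid perfectoid space, and for each $K'_p \subseteq K_p$ small enough the fiber product $\Spa(R_{K'_p}, R_{K'_p}^+) := \Spa(R, R^+) \times_{\mathcal{S}^{tor}_{K^pK_p, \Sigma}} \mathcal{S}^{tor}_{K^pK'_p, \Sigma}$ is an affinoid, with $R_\infty^+$ the $p$-adic completion of $\colim_{K'_p} R_{K'_p}^+$.

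Next, we produce a section at infinite level. The composition $\pi^{tor}_{HT}\colon \Spa(R_\infty, R_\infty^+) \to \mathcal{FL}_{G, \mu}$ factors through the quasi-compact open $]C_{w, k}[_{n, n} K_p = \mathcal{P}^{an}_\mu \backslash \mathcal{P}^{an}_\mu \, w \, \mathcal{G}^1_{n, n} K_p$. Over this Bruhat cell the quotient map $\mathcal{G}^{c, an} \to \mathcal{FL}_{G, \mu}$ admits local sections (e.g.\ coming from the opposite unipotent factor in the Iwahori decomposition of Proposition~\ref{prop-superIwahoridecomposition2}), so after refining by an \'etale cover coming from a finite level, we obtain a lift $g_\infty \in (w \mathcal{G}^1_{n, n} K_p)(R_\infty)$ of the Hodge-Tate period map. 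Following the construction in Proposition~\ref{prop-reduction-2}, the image of $g_\infty$ in $\mathcal{M}^c_\mu$ modulo $\mathcal{M}^{1, c}_{\mu, n, n}$, pushed out along $K^c_{p, w, M_\mu} \mathcal{M}^{1, c}_{\mu, n, n} \to K^c_{p, w, M_\mu} \mathcal{M}^c_{\mu, n}$, is a section of $\mathcal{M}_{HT, n, K_p}$ over $\Spa(R_\infty, R_\infty^+)$.

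Finally, we descend this section to a finite level by approximation. The target $w \mathcal{G}^1_{n, n} K_p$ is a quasi-compact open in the adic group $\mathcal{G}^{c, an}$ (locally of finite type), so the density of $\colim R_{K'_p}^+$ in $R_\infty^+$ allows us to approximate $g_\infty$ arbitrarily closely by an element $g_{K'_p} \in (w \mathcal{G}^1_{n, n} K_p)(R_{K'_p})$ for some sufficiently small $K'_p \subseteq K_p$. Choosing the approximation close enough that the projections of $g_\infty$ and $g_{K'_p}$ to $\mathcal{M}^c_\mu$ agree modulo a subgroup contained in $\mathcal{M}^{1, c}_{\mu, n, n}$ ensures that the image of $g_{K'_p}$ yields a section of $\mathcal{M}_{HT, n, K_p}$ over $\Spa(R_{K'_p}, R_{K'_p}^+)$, trivializing the torsor at finite level. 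The main obstacle is carefully managing the interaction between the analytic component $\mathcal{M}^c_{\mu, n}$ (where $p$-adic approximation applies) and the profinite component $K^c_{p, w, M_\mu}$ (which contributes a finite collection of connected components), and ensuring the approximated section remains compatible with the descent data identifying $\mathcal{M}_{HT, n, K_p}$ as an \'etale $K^c_{p,w,M_\mu} \mathcal{M}^c_{\mu, n}$-torsor on $\Spa(R_{K'_p}, R_{K'_p}^+)$; this reduces to verifying that the cocycle $p_2^* g_{K'_p} \cdot (p_1^* g_{K'_p})^{-1}$ on $\Spa(R_{K'_p} \hat\otimes_R R_{K'_p}, \cdot)$ differs from the infinite-level cocycle by an element of the appropriate subgroup, which is automatic once the approximation is fine enough.
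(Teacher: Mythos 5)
Your overall strategy -- pass to the infinite-level perfectoid space, trivialize there, then descend by approximation -- is the same as the paper's, and your explicit description of a trivializing element $g_\infty \in (w\mathcal{G}^1_{n,n}K_p)(R_\infty)$ via the Iwahori decomposition is a nice, concrete way to see the triviality at infinite level. But the crucial approximation step has a genuine gap that your proof glosses over.

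You claim that density of $\colim R_{K'_p}^+$ in $R_\infty^+$ allows you to ``approximate $g_\infty$ arbitrarily closely by an element $g_{K'_p} \in (w\mathcal{G}^1_{n,n}K_p)(R_{K'_p})$,'' and that the remaining verifications are ``automatic once the approximation is fine enough.'' This is precisely where the proof breaks down. Approximating the matrix entries of $g_\infty$ modulo $p^N$ produces a matrix with entries in $R_{K'_p}^+$ that satisfies the defining equations of $\mathcal{G}^{c,an}$ only approximately, not exactly, so it does not give you a point of the group at all. To convert an approximate solution to an exact one you must invoke the smoothness of $\mathcal{G}$ together with a genuine approximation theorem (Newton/Hensel lifting over a $p$-adically complete non-local ring, which is Elkik's theorem \cite{MR345966}). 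The paper applies exactly this: it first establishes that $\mathcal{M}_{HT,n,K_p} \times \Spa(R_k, R_k^+)$ is affinoid $\Spa(T,T^+)$ (by approximating the equations of the rational subset that it becomes at infinite level -- a step your proposal skips entirely), produces a finite type smooth model $A$ of $T^+$, and then uses Elkik's theorems 2 and 7 to lift a section $A \to R_{k'}^+/p^n$ coming from the infinite-level section to an honest section $A \to R_{k'}^+$. Your direct approximation of $g_\infty$ would need the same ingredient, applied to the smooth group scheme $\mathcal{G}$, but you never invoke it. Also note a small technical inaccuracy: $w\mathcal{G}^1_{n,n}K_p$ is not quasi-compact (the factor $\mathcal{G}^\circ_{U,n}$ is an increasing union of affinoids), though since $\Spa(R_\infty,R_\infty^+)$ is quasi-compact the map $g_\infty$ does factor through an affinoid subspace, so this can be repaired.

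In short: the framework is right, but the heart of the argument -- upgrading an approximate solution to an exact one over a non-complete, non-local base -- is missing, and ``automatic'' is not a substitute for Elkik's theorem.
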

  
  \begin{rem} It will be important for certain  vanishing theorems 
  that we are able to prove the triviality of the torsor after a finite flat cover  for  pregood  affinoid opens.
  \end{rem}
  
  \begin{proof}
   Let us consider a decreasing sequence of compacts $K_{p,k}$ with $K_{p,0} = K_p$ and $\cap_k K_{p,k} = \{1\}$. Let $\Spa (R_k, R_{k}^+) = \Spa(R,R^+) \times_{\mathcal{S}_{K^pK_p, \Sigma}^{tor}} \mathcal{S}_{K^pK_{p,k}, \Sigma}^{tor}$. Let $\Spa (R_\infty, R_{\infty}^+) = \lim \Spa (R_k, R_{k}^+)$ be an affinoid open of  $\mathcal{S}_{K^p, \Sigma}^{tor}$.  We first observe that the torsor $\mathcal{M}_{HT}^{an}\vert_{\Spa (R, R^+)}$ is a Stein space, which can be written as an increasing union of quasi-compact  affinoid subsets: $$\mathcal{M}_{HT}^{an}\vert_{\Spa (R, R^+)} = \cup_{i \geq 0} (\mathcal{M}_{HT}^{an})_i.$$
  Over $\Spa (R_\infty, R_\infty^+)$ we  observe that the the torsor $\mathcal{M}_{HT,n,K_p}$ is trivial. Indeed, this torsor is pulled back from the following torsor (over the flag variety): 
  $$  (U_{\mathcal{P}_\mu} \cap wK_p \mathcal{G}_n w^{-1}) \backslash wK_p \mathcal{G}_n w^{-1} \rightarrow  ( {\mathcal{P}_\mu} \cap wK_p \mathcal{G}_n w^{-1})\backslash wK_p \mathcal{G}_n w^{-1} $$ which is trivial because of the Iwahori decomposition of the group $ wK_p \mathcal{G}_n w^{-1}$. 
  It follows that $\mathcal{M}_{HT,n,K_p} \times \Spa (R_\infty, R_\infty^+)$ is affinoid, and is a rational open subset of $(\mathcal{M}_{HT}^{an})_i \times \Spa (R_\infty, R_\infty^+)$ for $i$ large enough. 
 
 We deduce that $\mathcal{M}_{HT,n,K_p} \times \Spa (R_k, R_k^+)$ is a rational subset of $(\mathcal{M}_{HT}^{an})_i \times \Spa (R_k, R_k^+)$ for $k$ large enough (by approximating the equations defining $\mathcal{M}_{HT,n,K_p} \times \Spa (R_\infty, R_\infty^+)$ ). Therefore, $\mathcal{M}_{HT,n,K_p} \times \Spa (R_k, R_k^+) = \Spa (T, T^+)$ where $(T,T^+)$ is an $(R_k,R_k^+)$ algebra topologically of finite type. Moreover, there is a section $T^+\hat{ \otimes}_{R_k^+} R_\infty^+ \rightarrow R_\infty^+$.  We now prove that this section can be approximated to a section $T^+\hat{ \otimes}_{R_k^+} R_{k'}^+ \rightarrow R_{k'}^+$ for $k'$ large enough. By \cite{MR345966}, theorem 7,  there is a finite type $R_{k}^+$-algebra $A$, such that $A[1/p]$ is smooth over $R_{k}^+[1/p]$, and whose $p$-adic completion is isomorphic to  $T^+$. By  \cite{MR345966}, theorem 2, there exists integers $n_0, r \geq 0$ with the property that  for any $n \geq n_0$, for any map of $R_{k}^+$-algebras $f : A \rightarrow R_{k'}^+/p^{n}$ there is a map $ \tilde{f} : A \rightarrow R_{k'}^+$ with the property that $f \mod p^{n-r} = \tilde{f} \mod p^{n-r}$ (the ideal  denoted $H_B$ of the reference contains $p^h$ for a large enough integer $h$, because $A[1/p]$ is smooth over  $R_{k}^+[1/p]$). Let $n = n_0 + r$.  We consider the section $ s: A \rightarrow R_\infty^+$.  Its reduction mod $p^n$ factors through  $s ~\mod p^n : A \rightarrow R_{k'}^+/p^n$ for $k'$ large enough and therefore we find   a lift to a section $\tilde{s} : A \rightarrow R_{k'}^+$. 
  \end{proof}
  \begin{rem} We ask the following question: let $\mathcal{S}$ be an affinoid adic space and let $\mathcal{H}$ be an affinoid group over $\mathcal{S}$. Let $\mathcal{T} \rightarrow \mathcal{S}$ be a $\mathcal{H}$-torsor for the \'etale topology. Is $\mathcal{T}$  affinoid over $\mathcal{S}$? It follows from proposition \ref{prop-torsor-become-trivial} that the torsor $\mathcal{M}_{HT,n,K_p}$ is affinoid  over any pregood affinoid open. 
  \end{rem}

\subsubsection{Maps between torsors}  We consider for the moment the following abstract situation. We assume that we have two analytic groups $\mathcal{K} \hookrightarrow \mathcal{G}$. For $i \in \{1,2\}$, we let $\mathcal{K}_{i}$ be $\mathcal{K}$-torsors over an adic space $\mathcal{X}$. We let $\mathcal{G}_i$ be their push-out via the map $\mathcal{K} \hookrightarrow \mathcal{G}$. 
Let  $\alpha : \mathcal{G}_1 \rightarrow \mathcal{G}_2$ be a map of $\mathcal{G}$-torsors over $\mathcal{X}$. Over any cover $\mathcal{U} \rightarrow \mathcal{X}$ which trivializes both $\mathcal{K}_1$ and $\mathcal{K}_2$, we can represent the map $\alpha$ by an element $g \in \mathcal{G}(\mathcal{U})$, well defined up to right and left multiplication by $\mathcal{K}(\mathcal{U})$.  We shall say that the map $\alpha$ is locally represented (over $\mathcal{U}$) by $\mathcal{K}g\mathcal{K}$. When $\mathcal{K}$ is clear from the context, we also say for simplicity that the map is locally represented by $g$.

Let $(G,X)$ be an abelian Shimura datum. Let $t \in G(\qq_p)$. Let $K^pK_p\subset G(\mathbb{A}_f)$ be a compact open subgroup. For suitable choices of polyhedral cone decomposition, we have a correspondence:  

\begin{eqnarray*}
\xymatrix{ & S^{tor}_{K^p (K_p  \cap t K_p t^{-1}), \Sigma''} \ar[rd]^{p_1} \ar[ld]_{p_2} & \\
S^{tor}_{K^pK_p, \Sigma} & & S^{tor}_{K^pK_p, \Sigma'}}
\end{eqnarray*}

We get an associated map of pro-Kummer-\'etale right torsors: 
\begin{eqnarray*}
\xymatrix{ p_1^\star \mathcal{G}^{an}_{pet} \ar[r] &  p_2^\star \mathcal{G}^{an}_{pet} \\
p_1^\star \mathcal{G}_{pet} \ar[u] &  p_2^\star \mathcal{G}_{pet} \ar[u]}
\end{eqnarray*}
which by definition is locally represented by $t$.  We also deduce a map of \'etale right torsors:
 \begin{eqnarray*}
\xymatrix{ p_1^\star \mathcal{M}^{an}_{HT} \ar[r] &  p_2^\star \mathcal{M}^{an}_{HT} \\
p_1^\star \mathcal{M}_{HT} \ar[u] &  p_2^\star \mathcal{M}_{HT} \ar[u]}
\end{eqnarray*}
  Let $w \in \WM$ and let $K_p = K_{p,m',b'}$ with $m'>0$ and $b' \geq 0$. Let $m, n \geq 0$ and assume that  $0 \leq m-n \leq m'-1$. 
Over $p_2^{-1} \big( (\pi^{tor}_{HT, K_p})^{-1}( ]C_{w,k}[_{m,n} K_p)\big) \cap p_1^{-1} \big( (\pi^{tor}_{HT, K_p})^{-1}( ]C_{w,k}[_{m,n} K_p) \big)$,
 we have a map of  \'etale right torsors: 
 \begin{eqnarray*}
\xymatrix{ p_1^\star \mathcal{M}^{an}_{HT} \ar[r] &  p_2^\star \mathcal{M}^{an}_{HT} \\
p_1^\star \mathcal{M}_{HT} \ar[u] &  p_2^\star \mathcal{M}_{HT} \ar[u] \\
p_1^\star \mathcal{M}_{HT,m,n,K_p} \ar[u] &  p_2^\star \mathcal{M}_{HT,m,n,K_p}\ar[u]}
\end{eqnarray*}
 
\begin{prop}\label{prop-representing-torsor-map} Let $w \in \WM$. Let $t \in T(\qq_p)$. The map 
$ p_1^\star \mathcal{M}^{an}_{HT} \rightarrow p_2^\star \mathcal{M}^{an}_{HT}$ restricted to  $$p_2^{-1} \big( (\pi^{tor}_{HT, K_p})^{-1}( ]C_{w,k}[_{m,n} K_p)\big) \cap p_1^{-1} \big( (\pi^{tor}_{HT, K_p})^{-1}( ]C_{w,k}[_{m,n} K_p) \big)$$ is locally represented by $$K^c_{p,w, M_\mu} \mathcal{M}^{1,c}_{\mu,m,n} wtw^{-1}K^c_{p,w, M_\mu} \mathcal{M}^{1,c}_{\mu,m,n}.$$
\end{prop}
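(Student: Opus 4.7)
The plan is to unpack the construction of the reduction $\mathcal{M}_{HT,m,n,K_p}$ from Proposition \ref{prop-reduction-2} and compute how the Hecke map interacts with it, using that the underlying Hodge--Tate morphism is $G(\qq_p)$-equivariant. Work locally on a perfectoid cover $\tilde S \to S$ of a map from a perfectoid space $S$ to the overlap $p_1^{-1}\big((\pi_{HT,K_p}^{tor})^{-1}(]C_{w,k}[_{m,n}K_p)\big)\cap p_2^{-1}\big((\pi_{HT,K_p}^{tor})^{-1}(]C_{w,k}[_{m,n}K_p)\big)$. By the proof of Proposition \ref{prop-reduction-2}, after shrinking $\tilde S$ I can choose elements $g_1, g_2\in w\mathcal{G}_{m,n}^1 K_p \subseteq \mathcal{G}^c(\tilde S)$ representing $\pi_{HT}\circ p_1$ and $\pi_{HT}\circ p_2$ respectively, and the reductions $p_i^\star\mathcal{M}_{HT,m,n,K_p}$ are trivialized by the images of these sections.

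By the $G(\qq_p)$-equivariance of $\pi_{HT}$ (and the description of the map $p_1^\star\mathcal{G}_{pet}^{an}\to p_2^\star\mathcal{G}_{pet}^{an}$ as being locally given by $t$), the two choices are related by $g_1 t = \gamma\cdot g_2$ for a uniquely determined $\gamma\in\mathcal{P}_\mu^c(\tilde S)$. The map $p_1^\star\mathcal{M}_{HT}^{an}\to p_2^\star\mathcal{M}_{HT}^{an}$ is then locally represented, in the chosen trivializations, by the image $\overline{\gamma}$ of $\gamma$ in $\mathcal{M}_\mu^c$. Hence I only need to locate $\overline{\gamma}$ inside a suitable double coset of $K_{p,w,M_\mu}^c\mathcal{M}_{\mu,m,n}^{1,c}$.

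Writing $g_i = w h_i k_i$ with $h_i\in\mathcal{G}_{m,n}^1$ and $k_i\in K_p$, and setting $\mathcal{K} = w(\mathcal{G}_{m,n}^1 K_p)w^{-1}$ (which is a group by Lemma \ref{lem-normalsubgroups}), the factorization $\gamma = (g_1 w^{-1})\cdot(wtw^{-1})\cdot(g_2 w^{-1})^{-1}$ shows $\gamma\in \mathcal{K}\cdot(wtw^{-1})\cdot\mathcal{K}$. The key step is now to descend this double coset to $\mathcal{M}_\mu^c$. Using Proposition \ref{prop-superIwahoridecomposition2} to endow $\mathcal{K}$ with an Iwahori decomposition (inherited from those of $wK_pw^{-1}$ and $w\mathcal{G}_{m,n}^1w^{-1}$, and combining with Proposition \ref{prop-Iwahori-deco-of-weird-group}), one verifies that the intersection $\mathcal{K}\cap\mathcal{P}_\mu$ decomposes as $(\mathcal{K}\cap\overline{\mathcal{V}}_{\mathcal{P}_\mu,M_\mu})\cdot(\mathcal{K}\cap\mathcal{M}_\mu)\cdot(\mathcal{K}\cap\mathcal{V}_{\mathcal{P}_\mu})$ and projects onto $K_{p,w,M_\mu}^c\mathcal{M}_{\mu,m,n}^{1,c}$ in $\mathcal{M}_\mu^c$. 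Since $wtw^{-1}$ lies in $\mathcal{T}$, a straightforward reordering---absorbing the $\mathcal{V}_{\mathcal{P}_\mu}$ factors (which are killed on passing to $\mathcal{M}_\mu^c$) into the $(wtw^{-1})$-conjugation---yields the desired containment $\overline{\gamma}\in K_{p,w,M_\mu}^c\mathcal{M}_{\mu,m,n}^{1,c}\cdot (wtw^{-1})\cdot K_{p,w,M_\mu}^c\mathcal{M}_{\mu,m,n}^{1,c}$.

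The main obstacle is bookkeeping: tracking the interaction between the Iwahori decomposition of $\mathcal{K}$ (which is relative to the Borel $\mathcal{B}$) and the parabolic $\mathcal{P}_\mu$, and checking that the ``positive parts'' of the decomposition land in the unipotent radical $\mathcal{V}_{\mathcal{P}_\mu}$ and therefore disappear after projection. This uses crucially that $w\in\WM$, so $w^{-1}\Phi_M^+\subseteq\Phi^+$, which ensures that $\mathcal{K}\cap\mathcal{U}_{M_\mu}$ and $\mathcal{K}\cap\overline{\mathcal{U}}_{M_\mu}$ compute correctly the Iwahori decomposition of $K_{p,w,M_\mu}^c\mathcal{M}_{\mu,m,n}^{1,c}$ produced in Proposition \ref{prop-Iwahori-deco-of-weird-group} and the lemma preceding Proposition \ref{prop-reduction-2}.
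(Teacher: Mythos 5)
Your overall strategy matches the paper's: you reduce to a local computation of the element $\gamma = g_1 t g_2^{-1}\in\mathcal{P}_\mu^c$, factor it through the group $\mathcal{K} = w\,\mathcal{G}^1_{m,n}K_p\,w^{-1}$, and then try to show that $\overline{\gamma}$ lands in the stated double coset of $K^c_{p,w,M_\mu}\mathcal{M}^{1,c}_{\mu,m,n}$ in $\mathcal{M}^c_\mu$. Up to and including the containment $\gamma\in\mathcal{K}\cdot(wtw^{-1})\cdot\mathcal{K}\cap\mathcal{P}^{an}_\mu$, your argument is correct and essentially the same as the one in the paper.

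The gap is in the final step. Knowing $\gamma\in\bigl(\mathcal{K}\,(wtw^{-1})\,\mathcal{K}\bigr)\cap\mathcal{P}^{an}_\mu$ and knowing that $\mathcal{K}\cap\mathcal{P}^{an}_\mu$ projects onto $K^c_{p,w,M_\mu}\mathcal{M}^{1,c}_{\mu,m,n}$ does \emph{not} by itself give $\overline{\gamma}\in K^c_{p,w,M_\mu}\mathcal{M}^{1,c}_{\mu,m,n}\,(wtw^{-1})\,K^c_{p,w,M_\mu}\mathcal{M}^{1,c}_{\mu,m,n}$: you would also need that the factors $\kappa_1,\kappa_2\in\mathcal{K}$ with $\gamma=\kappa_1(wtw^{-1})\kappa_2$ can be taken inside $\mathcal{K}\cap\mathcal{P}^{an}_\mu$. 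This is precisely the content of the paper's Lemma~\ref{lem-double-cosets}, which asserts
$$\bigl(\mathcal{K}\,(wtw^{-1})\,\mathcal{K}\bigr)\cap\mathcal{P}^{an}_\mu = (\mathcal{K}\cap\mathcal{P}^{an}_\mu)\,(wtw^{-1})\,(\mathcal{K}\cap\mathcal{P}^{an}_\mu),$$
and your ``straightforward reordering'' does not reproduce its proof. In fact you are attending to the wrong root spaces: the factors you propose to ``absorb'' are the $\mathcal{V}_{\mathcal{P}_\mu}$ (i.e.\ $\Phi^{+,M}$) components, which are harmless because they die under $\mathcal{P}_\mu\to\mathcal{M}_\mu$ and commute nicely with $wtw^{-1}\in\mathcal{T}$. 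The genuinely problematic components are the $\Phi^{-,M}$ components of $\kappa_1,\kappa_2$, which do \emph{not} lie in $\mathcal{P}_\mu$ at all. To dispose of them one must decompose $\kappa_1 = a_1\bar v_1$, $\kappa_2 = \bar v_2 a_2$ with $a_i\in\mathcal{K}\cap\mathcal{P}^{an}_\mu$ and $\bar v_i$ carrying only $\Phi^{-,M}$ components, observe that the constraint $\gamma\in\mathcal{P}^{an}_\mu$ forces $\bar v_1(wtw^{-1})\bar v_2\in\mathcal{P}^{an}_\mu\cap\bigl(\overline{\mathcal{U}}_{\mathcal{P}_\mu}\rtimes\mathcal{T}\bigr)=\mathcal{T}$, and then compare $\mathcal{T}$-components to conclude that $\bar v_1(wtw^{-1})\bar v_2 = wtw^{-1}$, so that $\gamma = a_1(wtw^{-1})a_2$. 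That argument is not visible anywhere in your write-up, and without it the conclusion does not follow.
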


\begin{proof} By definition of the Hecke correspondence,  the map  $ p_1^\star \mathcal{G}^{an}_{pet} \rightarrow p_2^\star \mathcal{G}^{an}_{pet}$ is locally represented by $ t$. This means that 
locally for the pro-Kummer-\'etale topology we have sections $x_2 \in p_2^\star\mathcal{G}_{pet}$ and $x_1 \in  p_1^\star\mathcal{G}_{pet}$ and there is an isomorphism 
\begin{eqnarray*}
 \xymatrix{p_1^\star \mathcal{G}^{an}_{pet} \ar[r] & p_2^\star \mathcal{G}^{an}_{pet} \\
x_1 G(\qq_p)   \ar[u] \ar[r]^{t}  & x_2G(\qq_p)   \ar[u] }
 \end{eqnarray*}
 where the bottom map  is $ x_1 g  \mapsto  x_2 g = x_1 t g $.  We now get by pushforward to $\mathcal{G}^{an} $  a diagram: 
 
 \begin{eqnarray*}
 \xymatrix{p_1^\star \mathcal{G}^{an}_{pet}\times_{G(\qq_p)} \mathcal{G}^{an} \ar[r] & p_2^\star \mathcal{G}^{an}_{pet} \times_{G(\qq_p)} \mathcal{G}^{an}\\
 x_1  \mathcal{G}^{an} \ar[u] \ar[r]^{t}  &  x_2 \mathcal{G}^{an}  \ar[u] }
 \end{eqnarray*}
 
The torsors  $p_1^\star \mathcal{G}^{an}_{pet}$ and $p_2^\star \mathcal{G}^{an}_{pet}$ arise by pushforward from torsors $p_1^\star \mathcal{P}^{an}_{HT}$ and $p_2^\star \mathcal{P}^{an}_{HT}$  and we have a diagram: 
 
 \begin{eqnarray*}
 \xymatrix{p_1^\star \mathcal{G}^{an}_{pet} \ar[r] & p_2^\star \mathcal{G}^{an}_{pet} \\
p_1^\star \mathcal{P}_{HT}^{an}    \ar[u] \ar[r]  & p_2^\star \mathcal{P}_{HT}^{an}   \ar[u] }
 \end{eqnarray*}

 We  pick $w \in \WM$ and we work locally over $$p_2^{-1} \big( (\pi^{tor}_{HT, K_p})^{-1}( ]C_{w,k}[_{m,n} K_p)\big) \cap p_1^{-1} \big( (\pi^{tor}_{HT, K_p})^{-1}( ]C_{w,k}[_{m,n} K_p) \big) \subseteq S^{tor}_{K^p (K_p  \cap t K_p t^{-1}), \Sigma''}.$$
 
 Concretely, this means that we can find trivializations $x'_1$ and $x'_2$ of $p_1^\star \mathcal{P}_{HT}^{an}$ and $p_2^\star \mathcal{P}_{HT}^{an}$ of the form $x_2 = x'_2 w h_2$ and $x_1 = x'_1 wh_1$ for $h_i \in \mathcal{G}^{1,c}_{m,n} K^c_p$.
 
 We deduce that 
 $$ x'_2 = x'_1  w h_1  t  h_2^{-1} w^{-1}$$
  This forces  $w h_1  t  h_2^{-1} w^{-1} \in \mathcal{P}^{an,c}_\mu$.
 By pushforward to $\mathcal{M}^{an}_{HT}$ we get:

\begin{eqnarray*}
 \xymatrix{p_1^\star \mathcal{M}^{an}_{HT} \ar[r] & p_2^\star \mathcal{M}^{an}_{HT} \\
 x'_1 \mathcal{M}_\mu^{an}  \ar[u] \ar[r]  & x'_2\mathcal{M}_\mu^{an}   \ar[u] }
 \end{eqnarray*}

where $x'_2$ and $x'_1$ are now viewed as  sections of $p_2^\star\mathcal{M}_{HT,m,n,K_p}$ and $p_1^\star \mathcal{M}_{HT,m,n,K_p}$ and the bottom map is given by  $$x'_2 m =  x'_1 \overline{(w h_1  t  h_2^{-1} w^{-1})}  m$$ where $\overline{(w h_1  t  h_2^{-1} w^{-1})}$ is the image of $(w h_1  t  h_2^{-1} w^{-1})$ via the map $\mathcal{P}_\mu^{an} \rightarrow \mathcal{M}_{\mu}^{an}$.  By  lemma \ref{lem-double-cosets}, $\overline{(w h_1  t  h_2^{-1} w^{-1})} \in K^c_{p,w, M_\mu} \mathcal{M}^{1,c}_{\mu, m,n} wt w^{-1}K^c_{p,w, M_\mu} \mathcal{M}^{1,c}_{\mu, m,n}.$

\end{proof}

\begin{lem}\label{lem-double-cosets} For any $t \in T(\qq_p)$, we have that $$\mathrm{Im} (wK_{p} \mathcal{G}^1_{m,n} t  K_p \mathcal{G}^1_{m,n} w^{-1} \cap \mathcal{P}_\mu^{an} \rightarrow \mathcal{M}_\mu^{an})  = K_{p,w, M_\mu} \mathcal{M}^1_{\mu,m,n} wt w^{-1}K_{p,w, M_\mu} \mathcal{M}^1_{\mu, m,n}.$$
\end{lem}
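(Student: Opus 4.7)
The inclusion $\supseteq$ is straightforward. Setting $H := K_p\mathcal{G}^1_{m,n}$, which is a group since $K_p$ normalizes $\mathcal{G}^1_{m,n}$ by lemma \ref{lem-normalsubgroups}, one checks directly from the definitions that $K_{p,w,M_\mu}\cdot\mathcal{M}^1_{\mu,m,n}$ is the image of $wHw^{-1}\cap\mathcal{P}_\mu$ under the projection $\pi:\mathcal{P}_\mu\to\mathcal{M}_\mu$. Given elements $k_1,k_2\in K_{p,w,M_\mu}\mathcal{M}^1_{\mu,m,n}$, lifting them to $\tilde{k}_1,\tilde{k}_2\in wHw^{-1}\cap\mathcal{P}_\mu$ and forming $\tilde{k}_1(wtw^{-1})\tilde{k}_2\in wHtHw^{-1}\cap\mathcal{P}_\mu^{an}$ exhibits $k_1(wtw^{-1})k_2$ as the image of an element on the left-hand side.

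For the reverse inclusion, the plan is to first establish an Iwahori-type decomposition of $wHw^{-1}$ with respect to the parabolic $\mathcal{P}_\mu$, namely
$$wHw^{-1}=\overline{N}_w\cdot K'\cdot N_w,$$
where $\overline{N}_w=wHw^{-1}\cap\overline{\mathcal{U}}_{\mathcal{P}_\mu}^{an}$, $K'=wHw^{-1}\cap\mathcal{M}_\mu^{an}=K_{p,w,M_\mu}\mathcal{M}^1_{\mu,m,n}$, and $N_w=wHw^{-1}\cap\mathcal{U}_{\mathcal{P}_\mu}^{an}$. This follows by applying proposition \ref{prop-superIwahoridecomposition2} (which is valid for any element of the Weyl group) to each of $K_p$ and $\mathcal{G}^1_{m,n}$ with the choice $w$, and regrouping the resulting product of root subgroups according to whether they lie in $\mathcal{U}_{\mathcal{P}_\mu}$, $\overline{\mathcal{U}}_{\mathcal{P}_\mu}$, or $\mathcal{M}_\mu$ (using that $w\in\WM$ ensures $w^{-1}\Phi_M\subseteq\Phi^+\cup\Phi^-$ is controlled). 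In particular $wHw^{-1}\cap\mathcal{P}_\mu^{an}=K'\cdot N_w$.

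Now fix $g=wh_1th_2w^{-1}\in wHtHw^{-1}\cap\mathcal{P}_\mu^{an}$, set $H_i=wh_iw^{-1}$ and $s=wtw^{-1}\in T\subseteq\mathcal{M}_\mu$, and decompose $H_i=\overline{N}_iM_iN_i$. Using that $s$ normalizes both $\mathcal{U}_{\mathcal{P}_\mu}$ and $\overline{\mathcal{U}}_{\mathcal{P}_\mu}$ and that $\mathcal{M}_\mu$ normalizes both unipotent radicals, I rearrange to
$$g=\overline{N}_1\cdot (M_1sM_2)\cdot\big(M_2^{-1}\tilde N_1\overline{N}_2''M_2\big)\cdot N_2$$
for suitable conjugates $\tilde N_1\in\mathcal{U}_{\mathcal{P}_\mu}^{an}$ and $\overline{N}_2''\in\overline{\mathcal{U}}_{\mathcal{P}_\mu}^{an}$. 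The mixed product $\tilde N_1\overline{N}_2''\in\mathcal{U}_{\mathcal{P}_\mu}^{an}\cdot\overline{\mathcal{U}}_{\mathcal{P}_\mu}^{an}$ admits (for elements sufficiently close to the identity, which is our setting) a Bruhat-type decomposition $\overline{N}'\cdot M'\cdot N'$ in the open cell. Substituting and using that $g\in\mathcal{P}_\mu^{an}$ forces the $\overline{\mathcal{U}}_{\mathcal{P}_\mu}$-part of $g$ to vanish, pinning down $\overline{N}'$ in terms of $\overline{N}_1$; projecting the resulting expression to $\mathcal{M}_\mu^{an}$ then gives $\pi(g)=M_1\cdot s\cdot M'\cdot M_2$.

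The main obstacle is then to verify that the Bruhat-correction $M'$ lies in $K'$, so that $\pi(g)\in K'\cdot s\cdot K'$. This amounts to estimating how commutators $[\mathcal{U}_{\mathcal{P}_\mu},\overline{\mathcal{U}}_{\mathcal{P}_\mu}]\subseteq\mathcal{M}_\mu$ interact with the integral structure, using the hypothesis $0\leq m-n\leq m'-1$ to control the various radii of the unipotent factors (the element $\tilde N_1$ is forced into $\mathcal{U}_{\mathcal{P}_\mu}\cap w\mathcal{G}^1_{m,n}w^{-1}$ by the fact that $s$ normalizes by elements contained in $K_p\mathcal{G}^1_{m,n}$). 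When the unipotent radical of $\mathcal{P}_\mu$ is abelian---as it will be in our Shimura-variety applications---the commutator is trivially handled; in general one inducts on the nilpotency class using the filtration by root height, with the bounds preserved at each step. Once this bookkeeping is complete the containment $\pi(g)\in K'sK'$ follows, concluding the proof.
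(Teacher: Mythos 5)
Your overall strategy — prove $\supseteq$ directly, then use an Iwahori decomposition of $wHw^{-1}$ (with $H=K_p\mathcal{G}^1_{m,n}$) relative to $\mathcal{P}_\mu$, and analyse the cross term around $s=wtw^{-1}$ — is the right one, and the $\supseteq$ inclusion together with the identification $wHw^{-1}\cap\mathcal{M}_\mu^{an}=K_{p,w,M_\mu}\mathcal{M}^1_{\mu,m,n}$ are correct. The problem is the choice of ordering: by decomposing both $H_1$ and $H_2$ in the \emph{same} order $\overline{N}_iM_iN_i$, you produce the cross term $N_1\,s\,\overline{N}_2\in\mathcal{U}_{\mathcal{P}_\mu}\cdot\mathcal{T}\cdot\overline{\mathcal{U}}_{\mathcal{P}_\mu}$, which cuts across the big Bruhat cell and therefore forces you to perform the delicate $\overline{N}'M'N'$ decomposition of $\tilde N_1\overline{N}_2$ and to bound the Levi correction $M'$. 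The paper's proof sidesteps this entirely by choosing \emph{opposite} orderings for the two factors: write $k=p_1\overline{u}_1$ with $p_1\in wHw^{-1}\cap\mathcal{P}_\mu$ (roots in $\Phi_M\cup\Phi^{+,M}$) and $\overline{u}_1\in wHw^{-1}\cap\overline{\mathcal{U}}_{\mathcal{P}_\mu}$ (roots in $\Phi^{-,M}$), and $k'=\overline{u}_2p_2$ in the reversed order. Then $ksk'=p_1(\overline{u}_1 s\overline{u}_2)p_2$, and $\overline{u}_1 s\overline{u}_2\in\mathcal{T}\cdot\overline{\mathcal{U}}_{\mathcal{P}_\mu}$, which meets $\mathcal{P}_\mu$ only in $\mathcal{T}$; this immediately forces $\overline{u}_1 s\overline{u}_2=s$ and gives $\pi(ksk')=\pi(p_1)\,s\,\pi(p_2)\in K_{p,w,M_\mu}\mathcal{M}^1_{\mu,m,n}\,s\,K_{p,w,M_\mu}\mathcal{M}^1_{\mu,m,n}$ with no Bruhat decomposition and no estimate.

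Beyond being a longer route, your sketch of the remaining estimate has a genuine gap. You assert that $\tilde N_1=s^{-1}N_1 s$ remains in $\mathcal{U}_{\mathcal{P}_\mu}\cap w\mathcal{G}^1_{m,n}w^{-1}$ and therefore close to the identity; this is not justified for arbitrary $t\in T(\qq_p)$ — conjugation by $s^{-1}$ can dilate $N_1$ out of any compact region, and $s$ does not normalize $K_p\mathcal{G}^1_{m,n}$. Consequently the Levi factor $M'$ in the Bruhat decomposition of $\tilde N_1\overline{N}_2$ has no a priori bound (already for $\mathrm{SL}_2$ the factor is $\mathrm{diag}(1+ab,(1+ab)^{-1})$, which can be far from the identity when $a$ is large). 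One can in fact show the mixed product lies in the big cell (because $g\in\mathcal{P}_\mu$), but showing $M'\in K_{p,w,M_\mu}\mathcal{M}^1_{\mu,m,n}$ would require an extra argument that you neither give nor is clearly available; the asymmetric-ordering trick above makes all of this unnecessary.
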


\begin{proof}  We will prove that $(wK_{p} \mathcal{G}^1_{m,n} t K_p \mathcal{G}^1_{m,n} w^{-1} )\cap \mathcal{P}_\mu^{an} = $ $$((wK_{p} \mathcal{G}^1_{m,n}w^{-1}) \cap \mathcal{P}_\mu^{an}) w t w^{-1} ((wK_{p} \mathcal{G}^1_{m,n}w^{-1}) \cap \mathcal{P}_\mu^{an}).$$
Any element in $k \in wK_{p} \mathcal{G}^1_{m,n}w^{-1}$ writes uniquely $\prod_{\alpha \in \Phi} k_\alpha$ (for any fixed ordering of the roots). Let  $$k wtw^{-1} k' \in (wK_{p} \mathcal{G}^1_{m,n}w^{-1} w t w^{-1} w K_p \mathcal{G}^1_{m,n} w^{-1} )\cap \mathcal{P}_\mu^{an} $$ with $k = \prod_{\alpha \in \Phi^+ \cup \Phi^-_M} k_\alpha  \prod_{\alpha \in \Phi^{-,M}} k_\alpha $ and $k' =  \prod_{\alpha \in \Phi^{-,M}} k'_\alpha \prod_{\alpha \in \Phi^+ \cup \Phi^-_M} k'_\alpha$. A necessary and sufficient condition that $k wtw^{-1} k' \in \mathcal{P}_\mu^{an}$ is that $k'' wtw^{-1} k''' \in \mathcal{P}_\mu^{an}$ where 
$k'' = \prod_{\alpha \in \Phi^-_M} k_\alpha$ and $k''' = \prod_{\alpha \in \Phi^-_M} k'''_\alpha$. But $k'' wtw^{-1} k''' \in \mathcal{U}_{\overline{P}_\mu^{an}} \rtimes \mathcal{T}^{an}$ and necessarily, $k'' = k''' = 1$. 
\end{proof}

\section{Overconvergent cohomologies and the spectral sequence}\label{section-overconvergent-coho}

Our goal in this section is to introduce a spectral sequence which computes classical finite slope cohomology in terms of the finite slope parts of certain overconvergent cohomologies indexed by $w\in\WM$.  Moreover we will prove a classicality theorem comparing the small slope part of classical cohomology in regular weight with the small slope part of a single overconvergent cohomology for a $w$ determined by the weight. We will also prove a vanishing theorem for the classical cohomology in all weights, including non regular ones.

\subsection{The finite slope part} We briefly recall the spectral theory of compact operators over a non-archimedean field. 
\subsubsection{Slope decomposition}\label{sect-slope-decompo} Let  $F$ be a non archimedean field extension of $\qq_p$. The valuation $v$ on $F$ is normalized by $v(p) =1$.  A polynomial $Q   \in F[X]$ has a Newton polygon. The slopes of the newton polygon are the inverse of the valuations of the roots of $Q$ (in an algebraic closure of $F$). We let $Q^\star = X^{\deg Q}Q(1/X)$ be the reciprocal polynomial. Let $h \in \qq$. A polynomial $Q$ is said to have slope $\leq h$ if the slopes of its Newton polygon are $\leq h$ (equivalently,  the roots of $Q^\star(X)$ have valuation less or equal to $h$).  Let $M$ be a vector space over $F$ and let $T$ be an endomorphism of the vector space $M$. Let $h \in \qq$. Following  \cite{ashstevensslopes}, def. 4.6.3,  an $h$-slope decomposition of $M$ with respect to $T$ is a direct sum decomposition  of $F$-vector spaces $M = M^{\leq h} \oplus M^{>h}$ such that:
\begin{enumerate}
\item  $M^{\leq h}$  and $M^{>h}$ are stable under the action of $T$.
\item  $M^{\leq h} $  is finite  dimensional over $F$.
\item  All the eigenvalues  (in an algebraic closure of $F$) of $T$ acting on $M^{\leq h}$ are of valuation less or equal to $h$. 
\item  For any polynomial $Q$ with slope $\leq h$, the restriction of $Q^\ast(T)$  to $M^{>h}$
is an invertible endomorphism.
\end{enumerate} 
By \cite{MR2846490}, coro. 2.3.3, if such a slope decomposition exists, it is unique.  If  $M$ has an $h$-slope decomposition for all $h \in \qq$, we simply say that $M$ has slope decomposition. In this situation we can obviously  define submodules  $M^{=h}$ and $M^{<h}$ of $M$ for all $h \in \qq$.  We let $M^{fs} = \lim_{ h } M^{\leq h}$ be the projective limit of all the slope $\leq h$ factors of $M$ and we call it the finite slope part of $M$ with respect to $T$. There is a projection $M \rightarrow M^{fs}$. The kernel of this projection is the space $M^{\infty s}$ of infinite slope vectors. 

\begin{lem}\label{lemma-slope-decomp} Let $u : M \rightarrow N$ be a map of $F$-vector spaces. Let $T_M$ and $T_N$ be endomorphisms of $M$ and $N$ respectively, such that $T_N \circ u = u   \circ T_M$. Assume that $M$ and $N$ have $h$-slope decompositions for $T_M$ and $T_N$. Then $u(M^{\leq h}) \subseteq N^{\leq h}$ and $u(M^{>h}) \subseteq N^{>h}$. Moreover, $\mathrm{ker}(u)$, $\mathrm{Im}(u)$ and $\mathrm{coker}(u)$ have $h$-slope decompositions. 
\end{lem}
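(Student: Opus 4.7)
The plan is to exploit the uniqueness of slope decompositions via two distinguished polynomials. Write $P$ for the characteristic polynomial of $T_M$ acting on the finite-dimensional space $M^{\leq h}$, and $R$ for that of $T_N$ on $N^{\leq h}$. Both $P$ and $R$ have all their roots of valuation $\leq h$. Using the involution $Q \mapsto Q^{\star}$ and the equivalence stated after the definition of slope, one checks that $P^\star$ and $R^\star$ both have slope $\leq h$. Applying the invertibility axiom of slope decomposition to $Q = P^\star$ (so $Q^\star = P$) and to $Q = R^\star$ gives the two key facts: $P(T_N)$ is invertible on $N^{>h}$ and $R(T_M)$ is invertible on $M^{>h}$.

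First I would prove equivariance of the decompositions. For $m \in M^{\leq h}$, $P(T_M)m = 0$ forces $P(T_N)u(m) = 0$; writing $u(m) = n_1 + n_2$ in the decomposition of $N$ and using that $P(T_N)$ preserves both summands, the invertibility of $P(T_N)$ on $N^{>h}$ gives $n_2 = 0$, i.e., $u(M^{\leq h}) \subseteq N^{\leq h}$. For $m \in M^{>h}$, invertibility of $R(T_M)$ on $M^{>h}$ lets us write $m = R(T_M)m'$ with $m' \in M^{>h}$; then the projection of $u(m)$ onto $N^{\leq h}$ equals $R(T_N)$ applied to the projection of $u(m')$, which vanishes because $R(T_N)$ kills $N^{\leq h}$. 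Hence $u(M^{>h}) \subseteq N^{>h}$.

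With equivariance in hand, I would set the candidate decompositions in the obvious way:
\begin{align*}
\ker(u)^{\leq h} &= \ker(u) \cap M^{\leq h}, & \ker(u)^{>h} &= \ker(u) \cap M^{>h}, \\
\mathrm{Im}(u)^{\leq h} &= u(M^{\leq h}), & \mathrm{Im}(u)^{>h} &= u(M^{>h}), \\
\mathrm{coker}(u)^{\leq h} &= N^{\leq h}/u(M^{\leq h}), & \mathrm{coker}(u)^{>h} &= N^{>h}/u(M^{>h}).
\end{align*}
Each decomposition is visibly a direct sum and each ``$\leq h$'' piece is finite-dimensional with $T$-eigenvalues inheriting the valuation bound from the ambient module. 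The remaining point in each case is to verify the invertibility axiom on the ``$>h$'' piece for an arbitrary polynomial $Q$ of slope $\leq h$. Injectivity is immediate because $Q^\star(T_M)$ and $Q^\star(T_N)$ are injective on $M^{>h}$ and $N^{>h}$. Surjectivity for $\ker(u)$ follows from the fact that the unique preimage under $Q^\star(T_M)$ of an element of $\ker(u) \cap M^{>h}$ lies in $\ker(u)$, which uses injectivity of $Q^\star(T_N)$ on $N^{>h}$; surjectivity for $\mathrm{Im}(u)$ is a direct transport through $u$; and for $\mathrm{coker}(u)$ both injectivity and surjectivity on $N^{>h}/u(M^{>h})$ are formal diagram-chases combining invertibility of $Q^\star(T_M)$ on $M^{>h}$ with that of $Q^\star(T_N)$ on $N^{>h}$.

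There is no real obstacle here; the only slightly delicate point is remembering the sign convention implicit in the definition of slope so that, when passing from ``$T$ acts with eigenvalues of valuation $\leq h$'' to ``$P(T)$ is invertible on the $>h$ part,'' one applies the axiom to $P^\star$ rather than $P$ itself. Once this is pinned down, everything else is elementary linear algebra.
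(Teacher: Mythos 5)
Your proof is correct and supplies, via characteristic polynomials and the reflection involution $Q\mapsto Q^\star$, exactly the details that the paper's one-line proof (``evident from equivariance... the remaining points are left to the reader'') omits; in particular your observation that the characteristic polynomial $P$ of $T_M|_{M^{\leq h}}$ has roots of valuation $\leq h$ with $P(0)\neq 0$, so that $P = (P^\star)^\star$ and $P^\star$ has slope $\leq h$, which is what makes the invertibility axiom applicable, is the key point. The approach is the expected one, and the verification of the four axioms for $\ker(u)$, $\mathrm{Im}(u)$ and $\mathrm{coker}(u)$ is sound.
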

\begin{proof} The inclusions $u(M^{\leq h}) \subseteq N^{\leq h}$ and $u(M^{>h}) \subseteq N^{>h}$ are evident from the property that $u$ is equivariant. It follows that $\mathrm{ker}(u) =  \mathrm{ker}(u) \cap M^{\leq h} \oplus \mathrm{ker}(u) \cap M^{>h}$ and this is an $h$-slope decomposition. The remaining points are left to the reader.
\end{proof}

\subsubsection{Compact operators}\label{section-pointwise-spectral} Let $F$ be a non archimedean field extension of $\qq_p$.  Let $M \in Ob( \mathbf{Ban}(F))$, and let $T $ be a compact endomorphism of $M$. Then by \cite{MR144186}, $M$ has a slope decomposition with respect to $T$. We can generalize this slightly. 

\begin{prop}\label{prop-establishing-slope-decomp} Let $M^\bullet \in \mathcal{D}(F)$ and let $T \in \mathrm{End}_{\mathcal{D}(F)}(M^\bullet)$ be a compact operator (in the sense of definition \ref{defi-notop-compact}). Then for any $h \in \qq$, we have a direct sum decomposition in $\mathcal{D}(F)$, $M^\bullet = M^{\bullet,\leq h} \oplus M^{\bullet,>h}$ characterized by the property that  
the cohomology groups $\HH^i(M^\bullet)$ have $h$-slope decompositions for $T$, and $\HH^i(M^\bullet)^{\leq h} = \HH^i(M^{\bullet, \leq h})$.
\end{prop}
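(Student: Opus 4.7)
The plan is to reduce the claim to classical Serre-Riesz spectral theory for compact operators, applied uniformly across all degrees of a bounded complex of projective Banach modules, and then to descend the resulting splitting from the pro/ind model to the derived category.

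First I would use definition \ref{defi-notop-compact} to represent $(M^\bullet,T)$ by a compact morphism in $\mathrm{Pro}_{\mathbb{N}}(\mathcal{K}^{proj}(\mathbf{Ban}(F)))$; the ind case is symmetric via lemma \ref{lem-potent-compact-factor2}. Thus one can assume $M^\bullet = ``\lim_i"M_i^\bullet$ with the $M_i^\bullet$ of uniformly bounded amplitude, and by lemma \ref{lem-potent-compact-factor} the operator $T$ factors through a compact endomorphism $T':N^\bullet\to N^\bullet$ of some $N^\bullet\in\mathcal{K}^{proj}(\mathbf{Ban}(F))$, yielding a compatible family of compact endomorphisms $T_i$ of $M_i^\bullet$ for $i$ large enough.

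Second, I would invoke classical spectral theory degree by degree. For any compact operator $S$ on a projective Banach $F$-module $N$, Serre's theory gives a Fredholm determinant $\det(1-XS)\in F\{\{X\}\}$ which factors uniquely for each $h\in\qq$ as $P_h(X)R_h(X)$, with $P_h$ a polynomial whose reciprocal roots have slope $\leq h$, $R_h$ an entire series whose reciprocal roots all have slope $>h$, and $P_h$ coprime to $R_h$; this gives the slope decomposition $N=\ker P_h^\star(S)\oplus \mathrm{Im}\,P_h^\star(S)$. When $S^\bullet$ is a compact endomorphism of a bounded complex $N^\bullet$ in $\mathcal{C}^{proj}(\mathbf{Ban}(F))$, letting $Q_h$ be the product of the $P_h$ attached to $S^\bullet$ in each degree, $Q_h^\star(S^\bullet)$ commutes with the differential and splits $N^\bullet$ as a direct sum of subcomplexes $N^{\bullet,\leq h}\oplus N^{\bullet,>h}$ in $\mathcal{K}^{proj}(\mathbf{Ban}(F))$, with $N^{\bullet,\leq h}$ a bounded complex of finite-dimensional $F$-vector spaces and $Q_h^\star(S^\bullet)$ invertible up to homotopy on $N^{\bullet,>h}$.

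Third, I would apply this construction to each $(M_i^\bullet,T_i)$ and use compatibility with the pro-system, together with the factorization through $N^\bullet$, to show that the finite-dimensional summands $M_i^{\bullet,\leq h}$ stabilize to a common value as $i$ grows; this pins down an object $M^{\bullet,\leq h}$ of $\mathcal{D}(F)$ and a complementary $M^{\bullet,>h}$ with $M^\bullet=M^{\bullet,\leq h}\oplus M^{\bullet,>h}$ in $\mathcal{D}(F)$. Passing to cohomology commutes with direct sums, and $\HH^i(M^{\bullet,\leq h})$ is finite-dimensional with $T$-slopes $\leq h$ while $Q_h^\star(T)$ acts invertibly on $\HH^i(M^{\bullet,>h})$; uniqueness of the slope decomposition (\cite{MR2846490}, coro. 2.3.3) identifies this with the required decomposition and gives $\HH^i(M^\bullet)^{\leq h}=\HH^i(M^{\bullet,\leq h})$. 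The main obstacle is the spectral step: producing a single polynomial $Q_h$ which splits the entire complex simultaneously (not merely each degree separately) and verifying that the resulting finite-dimensional pieces stabilize coherently along the pro-system, so that the splitting lifts from $\mathcal{K}^{proj}(\mathbf{Ban}(F))$ to a genuine decomposition in $\mathcal{D}(F)$.
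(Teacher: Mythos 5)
Your proposal follows essentially the same route as the paper's proof: reduce to the case of a genuine compact endomorphism of a bounded complex in $\mathcal{C}^{proj}(\mathbf{Ban}(F))$ via lemma \ref{lem-potent-compact-factor}, split degree by degree by Serre's spectral theory, observe compatibility with the differential, descend along the pro-system, and pass to cohomology.

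One small remark on packaging: you identify the ``main obstacle'' as producing a single polynomial $Q_h$ splitting the whole complex simultaneously. The paper dispenses with this step: since the differentials of $M^\bullet$ commute with $\tilde{T}$, lemma \ref{lemma-slope-decomp} shows immediately that the degree-wise $h$-slope decompositions are preserved by the differential, so $M^{\bullet,\leq h}$ and $M^{\bullet,>h}$ are automatically subcomplexes without any auxiliary polynomial. Your $Q_h$ (the product of the degree-wise polynomials) does also work, but it is an unnecessary detour, and it slightly obscures the verification of property (4) of Definition \ref{defi-slope-module} (invertibility of $Q^\star(T)$ for \emph{every} slope-$\leq h$ polynomial $Q$, not just $Q_h$), which lemma \ref{lemma-slope-decomp} delivers for free. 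Two further points you glossed over: (a) you should argue uniqueness of the decomposition in $\mathcal{D}(F)$ (the paper does this first by showing the cross maps between two decompositions are zero on cohomology, hence zero over a field), rather than appeal only to uniqueness of slope decompositions of modules; and (b) in the pro step, ``stabilize to a common value'' should be sharpened to the precise assertion that the transition maps $M_{i+1}^{\bullet,\leq h}\to M_i^{\bullet,\leq h}$ are quasi-isomorphisms and the cross maps are null-homotopic (again using that $F$ is a field and the $\leq h$ pieces are bounded complexes of finite-dimensional spaces), which is what yields the splitting of $``\lim_i" M_i^\bullet$ in $\mathrm{Pro}_{\N}(\mathcal{K}^{proj}(\mathbf{Ban}(F)))$ and hence in $\mathcal{D}(F)$.
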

\begin{proof} We first check that if we have such a decomposition, then it is unique. Indeed, let $M^\bullet = M^{\bullet,\leq h} \oplus M^{\bullet,>h} = N^{\bullet,\leq h} \oplus N^{\bullet,>h}$ be two decompositions. Then we see that the map $M^{\bullet,\leq h} \rightarrow N^{\bullet,\leq h}$ obtained by composing the inclusion into $M^\bullet$ and the projection orthogonal to 
$N^{\bullet, >h}$ induces a quasi-isomorphism.  We deduce similarly that the map $M^{\bullet,> h} \rightarrow N^{\bullet,> h}$ is a quasi-isomorphism. On the other hand, the map $M^{\bullet,\leq h} \rightarrow N^{\bullet,> h}$ and $M^{\bullet,> h} \rightarrow N^{\bullet, \leq h}$ are the zero map on cohomology, and they are therefore the zero map (since $F$ is a field).

Now check the existence.  First assume that $T$ is represented by a compact morphism in $\mathcal{K}^{proj}(\mathbf{Ban}(F))$. Then, by applying the spectral theory to each term of the complex, we get a direct sum decomposition $M^\bullet = M^{\bullet,\leq h} \oplus M^{\bullet,>h}$, where for each $i$, $M^i = M^{i,\leq h} \oplus M^{i,>h}$ is the $h$-slope decomposition of $M^i$. Moreover, one deduces from lemma \ref{lemma-slope-decomp} that $\HH^i(M^\bullet)$ has an $h$-slope  decomposition with  $\HH^i(M^\bullet)^{\leq h} = \HH^i(M^{\bullet, \leq h})$, $\HH^i(M^\bullet)^{> h} = \HH^i(M^{\bullet, > h})$.

Now assume that $M^\bullet$ is represented by  $``\lim_i"M_i^\bullet \in  Ob(\mathrm{Pro}_{\N}(\mathcal{K}^{proj}(\mathbf{Ban}(F))))$ and $T$ by a compact operator of  $``\lim_i"M_i^\bullet \in  Ob(\mathrm{Pro}_{\N}(\mathcal{K}^{proj}(\mathbf{Ban}(F))))$. By lemma \ref{lem-potent-compact-factor},   $T$ induces canonically a  compact endomorphism $T_i$ of $M^\bullet_i$ for $i$ large enough and there are factorization diagrams: 
\begin{eqnarray*}
\xymatrix{ M^\bullet_{i+1} \ar[d]\ar[r]^{T_{i+1}} & M_{i+1}^\bullet \ar[d] \\
M_i^\bullet \ar[ru]\ar[r]^{T_i} & M_i^\bullet}
\end{eqnarray*}
For any $h \in \qq$, we deduce that $M_{i+1}^{\bullet, \leq h} \rightarrow M_{i}^{\bullet, \leq h}$ is a quasi-isomorphism. We also deduce that  $M_{i+1}^{\bullet, \leq h} \rightarrow M_i^{\bullet, > h}$ and $M_{i+1}^{\bullet, > h} \rightarrow M_i^{\bullet, \leq h}$ are the zero map on cohomology. Using that we are over a field, we deduce that these are the zero map.  We therefore deduce that $``\lim_i" M_{i}^{\bullet} = ``\lim_i" M_{i}^{\bullet, \leq h} \oplus ``\lim_i" M_{i}^{\bullet, >h} $. 
By taking the limit, we deduce that there is a decomposition $M^\bullet = M^{\bullet, \leq h} \oplus M^{\bullet, >h}$. We now immediately check that this gives an $h$-slope decomposition on cohomology. 
The case where  $M^\bullet$ is represented by  $``\colim_i"M_i^\bullet \in  Ob(\mathrm{Ind}_{\N}(\mathcal{K}^{proj}(\mathbf{Ban}(F))))$  is similar and left to the reader. 

\end{proof}

We have natural projections $M^{\bullet, \leq h'} \rightarrow M^{\bullet, \leq h}$ for any $h' > h \in \qq$.  We let $M^{\bullet, fs} = \mathrm{lim}_{h \geq 0} M^{\bullet, \leq h}$ be the finite slope part of $M^\bullet$. There is a map $M^\bullet \rightarrow M^{\bullet,fs}$. 

\subsubsection{Action of an algebra}\label{section-algebra-acting-fs} Let  $T^+$ be a commutative monoid (with neutral element)  and $T^{++} \subseteq T^+$ be a sub-semigroup (possibly without neutral element), such that $T^+ . T^{++} \subseteq T^{++}$. We also assume that  for any $t, t' \in T^{++}$, there  exists $n \in \mathbb{N}$, $t'' \in T^+$ such that $t^n = t' t''$. 
Let $M \in \mathbf{Ban}(F)$. We assume that we have an algebra action $\ZZ[T^+] \rightarrow \mathrm{End}_F (M)$, such that the ideal $\ZZ[T^{++}]$ acts by potent compact operators.

\begin{lem}\label{lem-indep-t1} For any $t,t' \in T^{++}$ acting compactly on $M$, and any $h \in \qq$, there exists $h' \in \qq$ such that $M^{\leq_t h} \subseteq M^{\leq_{t'} h'}$,  where $M^{\leq_t h}$ and $M^{\leq_{t'} h'}$ are the slope $\leq h$ part for $t$ and $\leq h'$ part for $t'$ respectively. 
\end{lem}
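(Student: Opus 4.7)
The plan is to exploit commutativity of $T^+$ together with the finite dimensionality of the slope $\leq h$ part for $t$. Since $T^+$ is commutative, the operators $t$ and $t'$ commute on $M$. Therefore, by lemma \ref{lemma-slope-decomp} applied to $u = t'$ (which is equivariant with respect to $t$), the $t$-slope decomposition $M = M^{\leq_t h} \oplus M^{>_t h}$ is preserved by $t'$.

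The main ingredient is then the following observation: $M^{\leq_t h}$ is finite dimensional over $F$ by the definition of slope decomposition recalled in section \ref{sect-slope-decompo}, so $t'$ acts on $M^{\leq_t h}$ with only finitely many eigenvalues (over an algebraic closure of $F$). Let $h' \in \qq$ be the maximum of the valuations of these eigenvalues. Then the trivial decomposition $M^{\leq_t h} = M^{\leq_t h} \oplus 0$ is an $h'$-slope decomposition for the action of $t'$ on $M^{\leq_t h}$, and by the uniqueness of slope decompositions (recalled in section \ref{sect-slope-decompo}) this is \emph{the} $h'$-slope decomposition.

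To conclude, I would apply lemma \ref{lemma-slope-decomp} once more, this time to the inclusion $M^{\leq_t h} \hookrightarrow M$ as a map of $F$-vector spaces equivariant for the action of $t'$. Both sides now admit $h'$-slope decompositions for $t'$ (the left side by the construction just given, the right side because $t'$ acts compactly on $M$ by hypothesis, so section \ref{section-pointwise-spectral} applies). Hence the inclusion sends the $\leq h'$ part into the $\leq h'$ part, giving $M^{\leq_t h} \subseteq M^{\leq_{t'} h'}$, as desired.

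There is no real obstacle here; the argument is formal once one has commutativity of $T^+$ and finite dimensionality of $M^{\leq_t h}$. Note in particular that the hypothesis relating powers of elements of $T^{++}$ (the condition $t^n = t' t''$ with $t'' \in T^+$) is not needed for this lemma, though it will be essential later for controlling the finite slope part uniformly in $t$.
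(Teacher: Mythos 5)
Your argument is structurally sound but contains one genuine gap, and it is precisely at the point where you dismiss the hypothesis $t^n = t' t''$ as unnecessary. You define $h'$ as the maximum of the valuations of the eigenvalues of $t'$ acting on the finite-dimensional space $M^{\leq_t h}$, and you need $h'\in\qq$. But a priori nothing prevents $t'$ from having the eigenvalue $0$ on $M^{\leq_t h}$, in which case that maximum is $+\infty$ and no $h'$ exists. Note that condition (3) in the definition of slope decomposition guarantees $t$ itself is invertible on $M^{\leq_t h}$ (since $v(0)\not\leq h$), but it says nothing about $t'$: stability of $M^{\leq_t h}$ under $t'$ does not rule out a $t'$-nilpotent direction. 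For instance, on $M=F^2$ with $t=p\cdot\mathrm{Id}$ and $t'$ a nonzero nilpotent matrix one has $M^{\leq_t 1}=M$ while $M^{\leq_{t'}h'}=0$ for every $h'$, so without further input the conclusion fails.

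This is exactly what the hypothesis on $T^{++}$ repairs and what the paper's proof invokes: since there exist $n\in\N$ and $t''\in T^+$ with $t^n=t't''$, and since $t^n$ is invertible on the finite-dimensional $T^+$-stable space $M^{\leq_t h}$, the factor $t'$ must be surjective and hence invertible on $M^{\leq_t h}$; in particular $0$ is not among its eigenvalues and the maximum $h'$ of the (finite) valuations is a well-defined rational number. With this correction inserted your proof is essentially the paper's proof; the final step via Lemma~\ref{lemma-slope-decomp} applied to the inclusion is a slightly more formal packaging of the same conclusion, but it does no extra work and is not a different route.
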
 

\begin{proof} We see that $M^{\leq_t h}$ is stable under $T^+$. In particular it is stable under the action of $t'$. Moreover, $t'$ is invertible since there exists $n \in \mathbb{N}$, $t'' \in T^+$ such that $t^n = t' t''$. It follows that $M^{\leq_t h} \subseteq M^{\leq_{t'} h'}$ where $h'$ is the maximum of the valuation of the eigenvalues of $t'$ on $M^{\leq_t h}$.
\end{proof}

Let  $M^\bullet \in Ob(\mathcal{D}(F))$. We assume that we have an algebra action $\ZZ[T^+] \rightarrow \mathrm{End}_{\mathcal{D}(F)}(M^\bullet)$, such that the ideal $\ZZ[T^{++}]$ acts by potent compact operators.  

\begin{lem}\label{lem-indep-t} For any $t,t' \in T^{++}$ acting compactly on $M^\bullet$, the corresponding finite slope parts $M^{\bullet, t-fs}$ and $M^{\bullet, t'-fs}$ are  quasi-isomorphic. 
\end{lem}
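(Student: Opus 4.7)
The plan is to exhibit canonical mutually inverse morphisms between $M^{\bullet, t-fs}$ and $M^{\bullet, t'-fs}$ by comparing the intermediate slope pieces $M^{\bullet, \leq_t h}$ and $M^{\bullet, \leq_{t'} h'}$ as direct summands of $M^\bullet$ in $\mathcal{D}(F)$, and then passing to the limit. Since $T^+$ is commutative, $t$ and $t'$ commute in $\mathrm{End}_{\mathcal{D}(F)}(M^\bullet)$; by the uniqueness asserted in Proposition \ref{prop-establishing-slope-decomp}, any endomorphism commuting with $t$ preserves the decomposition $M^\bullet = M^{\bullet, \leq_t h} \oplus M^{\bullet, >_t h}$, so $t'$ acts on the summand $M^{\bullet, \leq_t h}$, whose cohomology $\HH^i(M^{\bullet, \leq_t h}) = \HH^i(M^\bullet)^{\leq_t h}$ is finite-dimensional in every degree.

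Next I would use the hypothesis $t^n = t' t''$ (with $n \in \N$ and $t'' \in T^+$) to transfer invertibility. On each finite-dimensional $\HH^i(M^\bullet)^{\leq_t h}$, the operator $t^n$ has all eigenvalues of valuation at most $nh$ (and non-zero), hence is invertible; since $t'$ and $t''$ commute and their product $t^n$ is invertible, in finite dimensions each factor is invertible, and in particular $t'$ is invertible on $\HH^i(M^\bullet)^{\leq_t h}$. Lemma \ref{lem-indep-t1}, applied in the finitely many degrees where $M^\bullet$ has non-zero cohomology, then yields $h' \in \qq$ (depending on $h$) with $\HH^i(M^\bullet)^{\leq_t h} \subseteq \HH^i(M^\bullet)^{\leq_{t'} h'}$ for all $i$.

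To lift this cohomological inclusion to $\mathcal{D}(F)$, I would apply Proposition \ref{prop-establishing-slope-decomp} once more, this time to the action of $t'$ on $M^{\bullet, \leq_t h}$: the decomposition $M^{\bullet, \leq_t h} = (M^{\bullet, \leq_t h})^{\leq_{t'} h'} \oplus (M^{\bullet, \leq_t h})^{>_{t'} h'}$ has second summand with vanishing cohomology by the previous step, and is therefore zero in $\mathcal{D}(F)$. By the uniqueness of the $t'$-slope decomposition of $M^\bullet$, the composite $M^{\bullet, \leq_t h} \hookrightarrow M^\bullet \twoheadrightarrow M^{\bullet, \leq_{t'} h'}$ is then an inclusion of direct summand. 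Passing to the limit in $h$ within the pro-/ind-categorical framework of Section \ref{section-Cohomological properties of Banach sheaves} (which applies because the objects live in $\mathcal{K}^{proj}(\mathbf{Ban}(F))$ with uniformly bounded amplitudes) produces a morphism $M^{\bullet, t-fs} \to M^{\bullet, t'-fs}$. The symmetric hypothesis furnishes $m \in \N$ and $t''' \in T^+$ with $t'^m = t t'''$, yielding by the same argument a morphism the other way; both compositions coincide with the identity because, by the uniqueness of slope decompositions, they realize the same direct summand of $M^\bullet$.

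The main technical obstacle is the third step: one must verify that the choice function $h \mapsto h'$ and the resulting inclusions of direct summands assemble into a genuine morphism of pro- or ind-objects representing $M^{\bullet, t-fs}$ and $M^{\bullet, t'-fs}$ — compatibly with the transition projections used to define the respective limits — rather than merely an abstract compatible family in $\mathcal{D}(F)$.
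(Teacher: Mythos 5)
Your proof is correct and follows the same strategy as the paper's (much terser) argument: apply Lemma \ref{lem-indep-t1} degree by degree on cohomology, lift the resulting containment $\HH^i(M^\bullet)^{\leq_t h}\subseteq\HH^i(M^\bullet)^{\leq_{t'} h'}$ to a statement about direct summands in $\mathcal{D}(F)$, and pass to the limit. The paper simply says ``we deduce from Lemma \ref{lem-indep-t1} that the projection $M^\bullet\to M^{\bullet,\leq_t h}$ factors through $M^\bullet\to M^{\bullet,\leq_{t'}h'}$'' and then passes to the limit; you make explicit the intermediate step of checking that $t'$ preserves the $t$-slope decomposition and that the $t'$-slope $>h'$ summand of $M^{\bullet,\leq_t h}$ is acyclic, hence zero in $\mathcal{D}(F)$. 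That detail is genuinely being used in the paper's proof, so spelling it out is worthwhile.

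The concern you flag at the end is not a real gap. Over a field $F$, any object of $\mathcal{D}(F)$ is (non-canonically) quasi-isomorphic to its cohomology with zero differentials, and a morphism between two such objects is determined by the induced maps on cohomology. The transition projections $M^{\bullet,\leq_\tau h_2}\to M^{\bullet,\leq_\tau h_1}$ and the inclusions of summands you construct are all projections/inclusions of direct summands in the sense of Proposition \ref{prop-establishing-slope-decomp}; by that proposition's uniqueness statement together with the above principle, the relevant squares commute in $\mathcal{D}(F)$ because they commute on cohomology. Moreover the target of the lemma — being quasi-isomorphic — only concerns the cohomology groups $\HH^i(M^\bullet)^{t\text{-}fs}$ and $\HH^i(M^\bullet)^{t'\text{-}fs}$, which are canonically identified by Lemma \ref{lem-indep-t1} applied in each degree; this is all that is ever used downstream.
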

\begin{proof}   We deduce from lemma \ref{lem-indep-t1}, that for any $h \in \qq$, the projection $M^\bullet \rightarrow M^{\bullet, \leq_t h } $ factors through $M^\bullet \rightarrow M^{\bullet, \leq_{t'} h'}$ for some $h' \in \qq$.  Passing to the limit over $h$ and $h'$ we deduce that we have a map $M^{\bullet, t-fs} \rightarrow M^{\bullet, t'-fs}$. This map is a quasi-isomorphism. 
\end{proof}

In view of this lemma, we use the notation $M^{\bullet, fs} $ to mean $M^{\bullet, t-fs}$  for any compact operator $t \in T^{++}$.

\subsection{Correspondences and cohomology with support} We now discuss the action of a cohomological correspondence on the cohomology with support of  a sheaf. 
Let $(F, \ocal_F)$ be a non-archimedean local field and let   $\mathcal{X}$ be an adic space of finite type over $\Spa(F, \ocal_F)$. Let 
\begin{eqnarray*}
\xymatrix{ & \mathcal{C} \ar[rd]^{p_1} \ar[ld]_{p_2} & \\
\mathcal{X} & & \mathcal{X}}
\end{eqnarray*}
be a correspondence, where $\mathcal{C}$ is of finite type and $p_1$ and $p_2$ are  morphisms of adic spaces. 

\subsubsection{Action of the correspondences on subsets of $\mathcal{X}$} Let $\mathcal{P}(\mathcal{X})$ be the set of  subsets of $\mathcal{X}$. Let us denote by $T : \mathcal{P}(\mathcal{X}) \rightarrow \mathcal{P}(\mathcal{X})$ the map which takes $\mathcal{B} \in \mathcal{P}(\mathcal{X})$ to $p_2 (p_1^{-1}(\mathcal{B}))$ and $T^t : \mathcal{P}(\mathcal{X}) \rightarrow \mathcal{P}(\mathcal{X})$ the map which takes $\mathcal{B} \in \mathcal{P}(\mathcal{X})$ to $p_1 (p_2^{-1}(\mathcal{B}))$. 

\begin{lem}\label{lem-T-acting-on-set} If $p_1$ is proper  (see \cite{MR1734903}, section 1.3), the map $T^t$ preserves closed subsets and closed subsets with quasi-compact complements.  If $p_1$ is flat, the map $T^t$ preserves open subsets and quasi-compact open subsets.
\end{lem}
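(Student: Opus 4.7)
The plan is to verify each of the four assertions by unwinding the definition $T^{t}(\mathcal{B}) = p_{1}(p_{2}^{-1}(\mathcal{B}))$ and invoking standard topological properties of $p_{1}$ and $p_{2}$. Both maps are morphisms of finite type adic spaces, hence quasi-compact and quasi-separated, so $p_{i}^{-1}$ is continuous and preserves quasi-compactness of opens.

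First I would handle the flat case. By \cite{MR1734903}, a flat morphism between finite type adic spaces is an open map of topological spaces. Thus, if $\mathcal{B}$ is open in $\mathcal{X}$, then $p_{2}^{-1}(\mathcal{B})$ is open in $\mathcal{C}$ (by continuity of $p_{2}$) and $T^{t}(\mathcal{B}) = p_{1}(p_{2}^{-1}(\mathcal{B}))$ is open in $\mathcal{X}$ (by openness of $p_{1}$). If $\mathcal{B}$ is moreover quasi-compact, then $p_{2}^{-1}(\mathcal{B})$ is quasi-compact (because $p_{2}$ is quasi-compact), and so is its continuous image $T^{t}(\mathcal{B})$.

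Next I would handle the proper case. By \cite{MR1734903}, a proper morphism is in particular a closed topological map. If $\mathcal{B}$ is closed in $\mathcal{X}$, then $p_{2}^{-1}(\mathcal{B})$ is closed in $\mathcal{C}$ by continuity of $p_{2}$, and its image $T^{t}(\mathcal{B}) = p_{1}(p_{2}^{-1}(\mathcal{B}))$ is closed in $\mathcal{X}$ by the closed-map property of $p_{1}$. For the preservation of closed subsets with quasi-compact complement, the idea is to reinterpret this class as the class of closed constructible subsets and then invoke the fact that the image of a constructible subset under a quasi-compact spectral morphism (which $p_{1}$ is, being proper) is again constructible. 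More precisely, if $\mathcal{U} = \mathcal{X} \setminus \mathcal{B}$ is quasi-compact open, then $p_{2}^{-1}(\mathcal{U})$ is quasi-compact open, hence $p_{2}^{-1}(\mathcal{B}) = \mathcal{C} \setminus p_{2}^{-1}(\mathcal{U})$ is closed constructible in $\mathcal{C}$. Its image $T^{t}(\mathcal{B})$ is therefore constructible, and closed by the previous step, so its complement is a quasi-compact open.

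The main obstacle is the quasi-compactness of the complement in the proper case; the other three assertions follow immediately from the openness or closedness of the relevant maps and the fact that the morphisms are quasi-compact. This obstacle is overcome by passing from the slogan ``closed with quasi-compact complement'' to the more flexible notion of a closed constructible subset, so that properness and quasi-compactness of $p_{1}$ and $p_{2}$ can be combined via the preservation of constructibility under spectral qcqs maps.
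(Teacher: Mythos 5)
Your treatment of the flat case coincides with the paper's: openness of flat morphisms, continuity of $p_2$, and quasi-compactness of $p_2$ give the two open-subset assertions exactly as you describe, and the paper does the same.

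The proper case is where you diverge, and there is a genuine gap. The crucial step in your argument is the assertion that ``the image of a constructible subset under a quasi-compact spectral morphism is again constructible.'' This is not true in general. For a spectral map $f\colon X\to Y$ between spectral spaces, the image of a constructible (equivalently, the image of a patch-closed, i.e.\ pro-constructible) set is pro-constructible, but pro-constructible plus topologically closed does not imply constructible outside the Noetherian setting. What one needs here is an adic-space analogue of Chevalley's theorem for morphisms of finite type -- a nontrivial result whose hypotheses go well beyond ``proper and spectral'' -- and you neither state nor cite it; as written, the justification rests on a false general principle. The paper avoids this issue entirely by passing to formal models in the sense of Raynaud--Bosch--L\"utkebohmert: it chooses a formal model $\mathfrak p_1\colon\mathfrak C\to\mathfrak X$ of $p_1$ which is proper and on which the open complement of $p_2^{-1}(\mathcal Z)$ comes from an open formal subscheme, so that $p_2^{-1}(\mathcal Z)=\mathrm{sp}^{-1}(Z_k)$ for a closed $Z_k$ in the special fiber; then $\mathfrak p_1(Z_k)$ is closed by properness of formal schemes, and $T^t(\mathcal Z)=\mathrm{sp}^{-1}(\mathfrak p_1(Z_k))$ is visibly closed with quasi-compact complement. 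If you want to retain your constructibility route, you would need to replace the appeal to spectrality with an explicit invocation of a Chevalley theorem for finite-type morphisms of analytic adic spaces (or reduce to the scheme-theoretic Chevalley theorem via formal models, which essentially reproduces the paper's argument).
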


\begin{proof} 
Let $\mathcal{Z}$ be a closed subset of $\mathcal{X}$. Then $p_2^{-1} (\mathcal{Z})$ is a closed subset of $\mathcal{C}$, and $T^t(\mathcal{Z})$ is closed by properness. Moreover if $\mathcal{Z}$ has quasi-compact complement, then $p_2^{-1} (\mathcal{Z})$ has quasi-compact complement. We can find a formal model for the projection $p_1$, $\mathfrak{p}_1 : \mathfrak{C} \rightarrow \mathfrak{X}$, which is proper and with the property that the complement of $p_2^{-1} \mathcal{Z}$ is the generic fiber of an open subset  of $\mathfrak{C}$ (\cite{MR1032938}, thm. 1.6, thm. 3.1). Therefore $p_2^{-1} (\mathcal{Z}) = \mathrm{sp}^{-1}( Z_k)$ for  some closed subset $Z_k$ of the special fiber of $\mathfrak{C}$. Since $\mathfrak{p}_1$ is proper, $\mathfrak{p}_1(Z_k)$ is closed, and $T^t(\mathcal{Z}) =\mathrm{sp}^{-1}( \mathfrak{p}_1(Z_k))$ has quasi-compact complement. 
Let $\mathcal{U} \subseteq \mathcal{X}$ be an open subset. Then $p_2^{-1}(\mathcal{U})$ is open in $\mathcal{C}$ and   $T^t(\mathcal{U})$ is open since  flat morphisms are open (\cite{MR1734903}, lemma 1.7.9).  If we assume that $\mathcal{U}$ is quasi-compact, then $p_2^{-1}(\mathcal{U})$ and hence $T^t(\mathcal{U})$ are as well.
\end{proof}

\subsubsection{Action of the correspondence on cohomology}\label{section-action-sheaf}
Let $\mathscr{F}$ be a sheaf of  $\oscr_{\mathcal{X}}$-modules.  Let $\mathcal{U} \subseteq \mathcal{X}$ be an open subset of $\mathcal{X}$ and let $\mathcal{Z} \subseteq \mathcal{X}$ be a closed subset.   We assume that $T(\mathcal{U})$ is open and that $p_1$ is finite flat.   We also assume that we have a map $T : p_2^\star \mathscr{F} \rightarrow p_1^\star \mathscr{F}$.  We can define a map $$T : \mathrm{R}\Gamma_{\mathcal{Z} \cap T(\mathcal{U})}(T(\mathcal{U}), \mathscr{F}) \rightarrow \mathrm{R}\Gamma_{T^t(\mathcal{Z}) \cap \mathcal{U}}(\mathcal{ U}, \mathscr{F})$$ as the following composite:

$$ \mathrm{R}\Gamma_{\mathcal{Z} \cap T(\mathcal{U})}(T(\mathcal{U}), \mathscr{F}) \stackrel{a}\rightarrow \mathrm{R}\Gamma_{p_2^{-1}(\mathcal{Z} \cap T(\mathcal{U}))} (p_2^{-1}( T(\mathcal{U})), p_2^\star \mathscr{F}) \stackrel{b} \rightarrow  \mathrm{R}\Gamma_{p_2^{-1}(\mathcal{Z}) \cap p_1^{-1}(\mathcal{U})} (p_1^{-1} (\mathcal{U}), p_2^\star \mathscr{F}) $$
$$\stackrel{c}\rightarrow \mathrm{R}\Gamma_{p_2^{-1}(\mathcal{Z}) \cap p_1^{-1}(\mathcal{U})} (p_1^{-1} (\mathcal{U}), p_1^\star \mathscr{F}) \stackrel{d}\rightarrow \mathrm{R}\Gamma_{T^t(\mathcal{Z}) \cap \mathcal{U}}( \mathcal{U}, \mathscr{F}),$$

where:
\begin{itemize}
\item $a$ is a pull back map along $p_2^{-1}(T(\mathcal{U})) \rightarrow T(\mathcal{U})$,
\item $b$ is a pull back map along $p_1^{-1} (\mathcal{U} ) \rightarrow p_2^{-1}(T(\mathcal{U}))$. Notice that $p_2^{-1}(\mathcal{Z} \cap T(\mathcal{U}))\cap p_1^{-1}(\mathcal{U}) = p_2^{-1}(\mathcal{Z}) \cap p_1^{-1}(\mathcal{U})$,
\item  $c$ is given by the map $T : p_2^\star \mathscr{F} \rightarrow p_1^\star \mathscr{F}$,
\item $d$ is given by the trace map of lemma \ref{lem-trace-1}.  Notice that $p_1(p_2^{-1}(\mathcal{Z}) \cap p_1^{-1}(\mathcal{U})) \subset T^t(\mathcal{Z}) \cap \mathcal{U}$. 
\end{itemize}
\begin{rem}\label{rem-simplification-map-sheaf} We observe that in the definition of the correspondence, the map $p_2^\star \mathscr{F} \rightarrow p_1^\star \mathscr{F}$ is only used  in  a neighborhood of 
$$ p_2^{-1}(\mathcal{Z} ) \cap p_1^{-1}(\mathcal{U}).$$
Indeed, let $\mathcal{V}$ be  a neighborhood of $p_2^{-1}(\mathcal{Z} ) \cap p_1^{-1}(\mathcal{U})$ in $p_1^{-1}(\mathcal{U})$, then we have $$\mathrm{R}\Gamma_{p_2^{-1}(\mathcal{Z}) \cap p_1^{-1}(\mathcal{U})} (p_1^{-1} (\mathcal{U}), p_i^\star \mathscr{F}) = \mathrm{R}\Gamma_{p_2^{-1}(\mathcal{Z}) \cap p_1^{-1}(\mathcal{U})} ( \mathcal{V}, p_i^\star \mathscr{F}) $$  for $i = 1,2$. 
\end{rem}

\subsubsection{Representing the correspondence by a map of complexes} In this section, we prove under some mild assumptions that the map $T$ of the last section can be represented by a morphism of complexes of Banach spaces. We assume that $\mathscr{F}$ is a locally projective Banach sheaf.  

\begin{lem}\label{lem-representing-mapT} Assume that $\mathcal{U}$ and $T(\mathcal{U})$ are quasi-compact open subsets, and that  $\mathcal{Z}$ and $T^{t}(\mathcal{Z})$ have quasi-compact complements. Then the map $T : \mathrm{R}\Gamma_{\mathcal{Z} \cap T(\mathcal{U})} (T(\mathcal{U}), \mathscr{F}) \rightarrow \mathrm{R}\Gamma_{T^t(\mathcal{Z})\cap \mathcal{U}} (\mathcal{U}, \mathscr{F})$ is represented by a morphism in $\mathcal{K}^{proj}(\mathbf{Ban}(F))$. 
\end{lem}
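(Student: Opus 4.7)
The strategy is to realize each of the four constituent maps $a,b,c,d$ from the construction of $T$ in section \ref{section-action-sheaf} by an honest morphism between finite \v{C}ech complexes of projective Banach modules built from carefully chosen affinoid covers, and then to pass to mapping fibers in order to encode the support conditions.

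First, using the defining exact triangle $\mathrm{R}\Gamma_{\mathcal{Z}'\cap\mathcal{V}}(\mathcal{V},\mathscr{F}) \to \mathrm{R}\Gamma(\mathcal{V},\mathscr{F}) \to \mathrm{R}\Gamma(\mathcal{V}\setminus\mathcal{Z}',\mathscr{F})$ applied on both sides, I would present the source and target of $T$ as mapping fibers of restriction maps between cohomologies of the quasi-compact opens $T(\mathcal{U}),\ T(\mathcal{U})\setminus\mathcal{Z}$ on one side and $\mathcal{U},\ \mathcal{U}\setminus T^t(\mathcal{Z})$ on the other. By the proof of lemma \ref{lem-represent-Banach}, each such open cohomology is computed by a finite \v{C}ech complex with respect to an $\mathscr{F}$-acyclic affinoid cover, and such a \v{C}ech complex is naturally an object of $\mathcal{C}^{proj}(\mathbf{Ban}(F))$.

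Next I would choose compatible covers. Since $p_1$ is finite, hence affine, the preimage under $p_1$ of any affinoid is affinoid. Fix a finite affinoid cover $\mathfrak{V}=\{V_i\}$ of $\mathcal{U}$ containing a sub-cover of $\mathcal{U}\setminus T^t(\mathcal{Z})$, such that $\mathscr{F}$ is acyclic on all finite intersections of the $V_i$ and $p_1^{\star}\mathscr{F}$ is acyclic on all finite intersections of the $p_1^{-1}(V_i)$. Choose similarly a finite affinoid cover $\mathfrak{W}=\{W_j\}$ of $T(\mathcal{U})$ containing a sub-cover of $T(\mathcal{U})\setminus\mathcal{Z}$, with the analogous acyclicity properties for $\mathscr{F}$ and $p_2^{\star}\mathscr{F}$, and such that after passing to a common refinement the cover $\{p_2^{-1}(W_j)\cap p_1^{-1}(\mathcal{U})\}$ of $p_1^{-1}(\mathcal{U})$ refines $\{p_1^{-1}(V_i)\}$. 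Since $\mathcal{U}$ and $T(\mathcal{U})$ are quasi-compact, only finitely many acyclicity conditions are imposed, so this is achievable by iterated rational refinement.

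With these covers in hand, each of the four maps becomes a termwise morphism of \v{C}ech complexes in $\mathcal{C}^{proj}(\mathbf{Ban}(F))$: the pullback $a$ sends sections over $W_j$ to sections over $p_2^{-1}(W_j)$; the restriction $b$ passes to the refined cover on $p_1^{-1}(\mathcal{U})\subseteq p_2^{-1}(T(\mathcal{U}))$; the sheaf map $c\colon p_2^{\star}\mathscr{F}\to p_1^{\star}\mathscr{F}$ acts termwise on the common refinement; and the trace $d$ is induced termwise by the trace map $\mathrm{tr}_{p_1}\colon (p_1)_{\star}p_1^{\star}\mathscr{F}\to\mathscr{F}$ of lemma \ref{lem-trace-1}, using that each restriction $p_1^{-1}(V_i)\to V_i$ is finite flat. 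Taking the mapping fibers along the restriction maps from the ambient open to its complement of the support locus, and invoking lemma \ref{lem-invert-quasi-isom} to invert any auxiliary quasi-isomorphisms (coming, for instance, from the comparison between different choices of acyclic covers), yields a morphism in $\mathcal{K}^{proj}(\mathbf{Ban}(F))$ whose image in $\mathcal{D}(F)$ agrees with the map $T$ defined in section \ref{section-action-sheaf}.

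The main technical obstacle lies in the choice of covers: one must arrange a single pair $(\mathfrak{V},\mathfrak{W})$ that is simultaneously compatible with the open immersion $p_1^{-1}(\mathcal{U})\hookrightarrow p_2^{-1}(T(\mathcal{U}))$ and with the removal of the support loci on either side, while preserving acyclicity for $\mathscr{F}$, $p_1^{\star}\mathscr{F}$, and $p_2^{\star}\mathscr{F}$ on all relevant intersections. Once such covers exist the remaining verifications are mechanical, relying only on lemma \ref{lem-trace-1} for the trace map and on lemma \ref{lem-invert-quasi-isom} for the homotopy-invertibility of quasi-isomorphisms in $\mathcal{K}^{proj}(\mathbf{Ban}(F))$.
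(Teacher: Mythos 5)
Your proposal takes essentially the same route as the paper's proof: compute each local cohomology as a mapping cone (shifted) of a restriction between finite \v{C}ech complexes built from $\mathscr{F}$-acyclic affinoid covers, realize the constituent maps $a,b,c,d$ termwise, and invert a \v{C}ech refinement quasi-isomorphism via lemma \ref{lem-invert-quasi-isom}. The one detail worth sharpening is that one cannot in general choose $\mathfrak{W}$ so that $\{p_2^{-1}(W_j)\cap p_1^{-1}(\mathcal{U})\}$ literally refines $\{p_1^{-1}(V_i)\}$ (the intersections can be ``diagonal''); instead one introduces a third cover of $p_1^{-1}(\mathcal{U})$ refining both pullback covers, and the refinement map from $\check{C}(\{p_1^{-1}(V_i)\},p_1^{\star}\mathscr{F})$ to the \v{C}ech complex of that third cover is the quasi-isomorphism to invert --- it is one because $p_1$ is finite, so $p_1^{-1}$ carries $\mathscr{F}$-acyclic affinoids to $p_1^{\star}\mathscr{F}$-acyclic affinoids --- which is precisely where your appeal to lemma \ref{lem-invert-quasi-isom} belongs.
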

\begin{proof} We consider finite affinoid $\mathscr{F}$-acyclic coverings $T(\mathcal{U}) = \cup_{i \in I} U_i$ and $\mathcal{U} = \cup_{i \in I'} U'_i$. We also let $Z = \cap_{j \in J} Z_j$, $T^t(\mathcal{Z}) = \cap_{j \in J'} Z'_j$ where $Z_j^c$ and $(Z'_j)^c$  are affinoid and $\mathscr{F}$-acyclic and $J$, $J'$ are finite. 
We see that $\mathrm{R}\Gamma_{\mathcal{Z} \cap T(\mathcal{U})} (T(\mathcal{U}), \mathscr{F})$ is represented by a cone of a map of \v{C}ech complexes
$$ \mathrm{Cone} \big( \check{C}( \{U_i\}_{i \in I}, \mathscr{F}) \rightarrow \check{C}( \{U_i \cap Z_j^c\}_{(i,j) \in I\times J}, \mathscr{F}) \big)[-1],$$
and similarly $\mathrm{R}\Gamma_{T^t(\mathcal{Z}) \cap \mathcal{U}} (\mathcal{U}, \mathscr{F})$ is represented by
$$ \mathrm{Cone} \big( \check{C}( \{U'_i\}_{i \in I'}, \mathscr{F}) \rightarrow \check{C}( \{U'_i \cap (Z'_j)^c\}_{(i,j) \in I'\times J'}, \mathscr{F}) \big)[-1].$$
We now proceed to represent the map $T$. 
We first have a map (pullback under $p_2$): 
$$A :  \mathrm{Cone} \big( \check{C}( \{U_i\}_{i \in I}, \mathscr{F}) \rightarrow \check{C}( \{U_i \cap Z_j^c\}_{(i,j) \in I\times J}, \mathscr{F}) \big)[-1] \rightarrow  $$ $$\mathrm{Cone} \big( \check{C}( \{p_2^{-1}(U_i)\}_{i \in I}, p_2^\star \mathscr{F})  \rightarrow \check{C}( \{p_2^{-1}(U_i \cap Z_j^c)\}_{(i,j) \in I\times J}, p_2^\star \mathscr{F}) \big)[-1]$$
We then have a restriction map, composed with the map $p_2^\star \mathscr{F} \rightarrow p_1^\star \mathscr{F}$: 
$$B: \mathrm{Cone} \big( \check{C}( \{p_2^{-1}(U_i)\}_{i \in I}, p_2^\star \mathscr{F}) \rightarrow \check{C}( \{p_2^{-1}(U_i \cap Z_j^c)\}_{(i,j) \in I\times J}, p_2^\star \mathscr{F}) \big)[-1]
\rightarrow $$ $$ \mathrm{Cone} \big( \check{C}( \{p_2^{-1}(U_i) \cap p_1^{-1}(\mathcal{U})\}_{i \in I}, p_1^\star\mathscr{F}) \rightarrow \check{C}( \{p_2^{-1}(U_i \cap Z_j^c) \cap p_1^{-1}(\mathcal{U} \cap T^t(\mathcal{Z})^c)\}_{(i,j) \in I\times J}, p_1^\star\mathscr{F}) \big)[-1].$$
We now observe that $\{p_2^{-1}(U_i) \cap p_1^{-1}(\mathcal{U})\}_{i \in I}$ is a covering of $p_1^{-1}(\mathcal{U})$ since $p_1^{-1} (\mathcal{U}) \subseteq p_2^{-1}(T(\mathcal{U}))$. We also observe that $\{p_2^{-1}(U_i \cap Z_j^c) \cap p_1^{-1}(\mathcal{U} \cap T^t(\mathcal{Z})^c)\}_{(i,j) \in I\times J}$ is a covering of $p_1^{-1}(\mathcal{U} \cap T^t(\mathcal{Z})^c)$ since $p_2^{-1}(\mathcal{Z})  \subseteq p_1^{-1} (T^t(\mathcal{Z}))$. 

We now pick an $\mathscr{F}$-acyclic finite affinoid covering  $ p_1^{-1}(\mathcal{U}) = \cup_{i \in I''} U''_i$ which refines both $\{p_2^{-1}U_i \cap p_1^{-1}(\mathcal{U})\}_{i \in I}$ and $\{ p_1^{-1} U_i'\}_{i\in I'}$. We also let $p_1^{-1} (\mathcal{Z}) = \cap_{j \in J''} Z''_j$ where $(Z''_j)^c$ is affinoid and $\mathscr{F}$-acyclic, $J''$ is finite, and the covering $\cup_{j \in J''} (Z''_j)^c$ of $p_1^{-1} (\mathcal{Z})^c$ refines both $\{p_1^{-1} (Z'_j)^c\}_{j \in J'}$ and $\{p_2^{-1} (Z_j)^c \cap p_1^{-1}(\mathcal{Z})^c\}_{j \in J}$. 
We deduce that there are refinement maps:
$$C : \mathrm{Cone} \big( \check{C}( \{p_2^{-1}(U_i) \cap p_1^{-1}(\mathcal{U})\}_{i \in I}, p_1^\star\mathscr{F}) \rightarrow \check{C}( \{p_2^{-1}(U_i \cap Z_j^c) \cap p_1^{-1}(\mathcal{U} \cap T^t(\mathcal{Z})^c)\}_{(i,j) \in I\times J}, p_1^\star\mathscr{F}) \big)[-1] \rightarrow $$ $$ \mathrm{Cone} \big( \check{C}( \{U_i''\}_{i \in I''}, p_1^\star\mathscr{F}) \rightarrow \check{C}( \{U''_i \cap (Z''_j)^c\}_{(i,j) \in I''\times J''}, p_1^\star\mathscr{F}) \big)[-1]$$
and
$$D : \mathrm{Cone} \big( \check{C}( \{ p_1^{-1}(U'_i)\}_{i \in I'}, p_1^\star\mathscr{F}) \rightarrow \check{C}( \{p_1^{-1}(U'_i \cap (Z'_j)^c)\}_{(i,j) \in I'\times J'}, p_1^\star\mathscr{F}) \big)[-1] \rightarrow $$ $$ \mathrm{Cone} \big( \check{C}( \{U_i''\}_{i \in I''}, p_1^\star\mathscr{F}) \rightarrow \check{C}( \{U''_i \cap (Z''_j)^c\}_{(i,j) \in I''\times J''}, p_1^\star\mathscr{F}) \big)[-1].$$
But since $p_1$ is finite, $p_1^{-1}$ sends $\mathscr{F}$-acyclic affinoids to $p_1^\star\mathscr{F}$-acyclic affinoids.  Hence $D$ is actually a quasi-isomorphism, and we can pick an inverse $D^{-1}$ for $D$ up to homotopy by lemma \ref{lem-invert-quasi-isom}.

We finally have a trace map: 
 $$ E : \mathrm{Cone} \big( \check{C}( \{ p_1^{-1}(U'_i)\}_{i \in I'}, p_1^\star\mathscr{F}) \rightarrow \check{C}( \{p_1^{-1}(U'_i \cap (Z'_j)^c)\}_{(i,j) \in I'\times J'}, p_1^\star\mathscr{F}) \big)[-1] \rightarrow $$ $$  \mathrm{Cone} \big( \check{C}( \{U'_i\}_{i \in I'}, \mathscr{F}) \rightarrow \check{C}( \{U'_i \cap (Z'_j)^c\}_{(i,j) \in I'\times J'}, \mathscr{F}) \big)[-1]$$
Then $E \circ D^{-1} \circ C \circ B \circ A$ represents $T$. 
\end{proof}

\begin{coro}\label{coro-representing-mapT} Assume that $\mathcal{U} = \cup_n \mathcal{U}_n$  is a countable union of quasi-compact open subsets. Let $\mathcal{Z} = \cap_n \mathcal{Z}_n$ be a countable intersection of closed subsets with quasi-compact complement.  Assume that $T(\mathcal{U}_n)$ is quasi-compact  and $T^t(\mathcal{Z}_n)$ is the complement of a quasi-compact space.  Assume also that $\mathcal{U}$, $T(\mathcal{U})$  are a finite union of quasi-Stein spaces and that the complements of $\mathcal{Z}$ and $\mathcal{T}^t(\mathcal{Z})$ are finite union of quasi-Stein spaces.  Then the map $T : \mathrm{R}\Gamma_{\mathcal{Z} \cap T(\mathcal{U})} (T(\mathcal{U}), \mathscr{F}) \rightarrow \mathrm{R}\Gamma_{T^t(\mathcal{Z})\cap \mathcal{U}} (\mathcal{U}, \mathscr{F})$ is represented in $\mathrm{Pro}_{\N}(\mathcal{K}^{proj}(\mathbf{Ban}(F)))$.
\end{coro}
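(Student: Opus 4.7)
The plan is to reduce to Lemma \ref{lem-representing-mapT} by exhausting with quasi-compact pieces and then assembling the resulting maps into a pro-morphism. First, I would use Lemma \ref{lem-complex-frechet} to represent both cohomology complexes as pro-systems in $\mathrm{Pro}_{\N}(\mathcal{K}^{proj}(\mathbf{Ban}(F)))$. Specifically, after refining the exhaustions so that each $\mathcal{U}_n$, $T(\mathcal{U}_n)$ is quasi-compact and the complements of $\mathcal{Z}_n$, $T^t(\mathcal{Z}_n)$ are quasi-compact (using the stability properties recorded in Lemma \ref{lem-T-acting-on-set} where needed), I would write
\begin{eqnarray*}
\mathrm{R}\Gamma_{\mathcal{Z}\cap T(\mathcal{U})}(T(\mathcal{U}),\mathscr{F}) &=& ``\lim_n"\,\mathrm{R}\Gamma_{\mathcal{Z}_n\cap T(\mathcal{U}_n)}(T(\mathcal{U}_n),\mathscr{F}),\\
\mathrm{R}\Gamma_{T^t(\mathcal{Z})\cap\mathcal{U}}(\mathcal{U},\mathscr{F}) &=& ``\lim_n"\,\mathrm{R}\Gamma_{T^t(\mathcal{Z}_n)\cap\mathcal{U}_n}(\mathcal{U}_n,\mathscr{F}),
\end{eqnarray*}
each term of which lies in $\mathcal{K}^{proj}(\mathbf{Ban}(F))$ by Lemma \ref{lem-represent-Banach}.

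Next, for each fixed $n$ the four subsets $\mathcal{U}_n$, $T(\mathcal{U}_n)$, $\mathcal{Z}_n$, $T^t(\mathcal{Z}_n)$ satisfy exactly the hypotheses of Lemma \ref{lem-representing-mapT}, which therefore produces a morphism
$$T_n:\mathrm{R}\Gamma_{\mathcal{Z}_n\cap T(\mathcal{U}_n)}(T(\mathcal{U}_n),\mathscr{F})\longrightarrow \mathrm{R}\Gamma_{T^t(\mathcal{Z}_n)\cap\mathcal{U}_n}(\mathcal{U}_n,\mathscr{F})$$
in $\mathcal{K}^{proj}(\mathbf{Ban}(F))$. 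I would then check that the $T_n$ are compatible under the transition maps of the pro-systems: for each $n$ the square
\begin{eqnarray*}
\xymatrix{ \mathrm{R}\Gamma_{\mathcal{Z}_{n+1}\cap T(\mathcal{U}_{n+1})}(T(\mathcal{U}_{n+1}),\mathscr{F}) \ar[r]^{T_{n+1}} \ar[d] & \mathrm{R}\Gamma_{T^t(\mathcal{Z}_{n+1})\cap\mathcal{U}_{n+1}}(\mathcal{U}_{n+1},\mathscr{F}) \ar[d]\\
\mathrm{R}\Gamma_{\mathcal{Z}_n\cap T(\mathcal{U}_n)}(T(\mathcal{U}_n),\mathscr{F}) \ar[r]^{T_n} & \mathrm{R}\Gamma_{T^t(\mathcal{Z}_n)\cap\mathcal{U}_n}(\mathcal{U}_n,\mathscr{F})}
\end{eqnarray*}
commutes, because each of the four operations $a,b,c,d$ defining $T$ in section \ref{section-action-sheaf} (pullback along $p_2$, restriction to $p_1^{-1}(\mathcal{U})$, the sheaf morphism $p_2^\star\mathscr{F}\to p_1^\star\mathscr{F}$, and the trace for the finite flat map $p_1$) is natural with respect to restriction to smaller opens and enlarged supports.

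The main obstacle is that the representative produced by Lemma \ref{lem-representing-mapT} involves inverting a quasi-isomorphism $D$ only up to homotopy (via Lemma \ref{lem-invert-quasi-isom}), so the individual $T_n$ are not literally canonical. To cope with this, I would use that morphisms in $\mathrm{Pro}_{\N}(\mathcal{K}^{proj}(\mathbf{Ban}(F)))$ are elements of $\lim_m\colim_n\mathrm{Hom}_{\mathcal{K}^{proj}}(M_n,N_m)$, so only homotopy-commutativity of the restriction squares is needed; this follows from the derived-category naturality of the steps $a,b,c,d$ together with the full faithfulness of $\mathcal{K}^{proj}(\mathbf{Ban}(F))\to\mathcal{D}(\mathbf{Ban}(F))$ cited above Lemma \ref{lem-invert-quasi-isom}. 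Passing to a cofinal subsystem if necessary then yields the desired morphism in the pro-category, and by construction it represents the map $T$ of section \ref{section-action-sheaf}.
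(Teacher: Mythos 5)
Your proof is correct and follows essentially the same approach as the paper's: apply Lemma \ref{lem-representing-mapT} to each term of a nested exhaustion and pass to the pro-limit, as the paper does (citing Lemma \ref{lem-complex-frechet} for the representability of the two pro-systems). Your extra discussion of the homotopy-level compatibility of the $T_n$ --- which is glossed over in the paper's one-line proof --- is a valid and welcome elaboration of the same idea, not a different route.
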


\begin{proof} We can assume that $\mathcal{U}_n \subseteq \mathcal{U}_{n+1}$ and $\mathcal{Z}_n \subseteq \mathcal{Z}_{n-1}$. 
By lemma \ref{lem-representing-mapT}, the maps: $\mathrm{R}\Gamma_{\mathcal{Z}_n \cap T(\mathcal{U}_n)} (T(\mathcal{U}_n), \mathscr{F}) \rightarrow \mathrm{R}\Gamma_{T^t(\mathcal{Z}_n)\cap \mathcal{U}_n} (\mathcal{U}_n, \mathscr{F})$ are representable by maps in $\mathcal{K}^{proj}(\mathbf{Ban}(F))$. 
We then pass to the limit over $n$. See also lemma \ref{lem-complex-frechet}.
\end{proof} 

\begin{rem} We note that the assumptions of lemma \ref{lem-representing-mapT} and corollary \ref{coro-representing-mapT} are satisfied if $p_1$ and $p_2$ are   finite flat by lemma \ref{lem-T-acting-on-set}.
In our applications to Shimura varieties, the maps of the (toroidal compactifications of the) correspondences are only finite flat for suitable choices of cone decompositions. However, we will only consider cohomology with support of certain subsets which are ``well positioned'' with respect to the boundary which will have the consequence that the cohomologies we consider can be defined for any choice of cone decomposition and are further independent of this choice.
\end{rem}
 
\subsection{A formal analytic continuation result}\label{subsection-analytic-continuation}  In this section we prove a  result which will identify the finite slope part of different cohomologies.  This can be seen  as an (abstract)  generalization of \cite{MR1937198}, thm. 5.2. Let $\mathcal{X}$ be an adic space of finite type over $\Spa (F, \ocal_F)$, and let $p_1, p_2 : \mathcal{C} \rightarrow \mathcal{X}$ be a correspondence. We assume that $p_1$ is finite flat. We also let  $\mathscr{F}$ be a locally projective Banach sheaf (see definition \ref{defi-banach-sheaf}), and we assume that the map $T : p_2^\star \mathscr{F} \rightarrow p_1^\star \mathscr{F}$ is compact (see definition \ref{defi-compact-map}).  We recall that when $\mathscr{F}$ is a locally free coherent sheaf, any map $p_2^\star \mathscr{F} \rightarrow p_1^\star \mathscr{F}$ is compact. 

\subsubsection{The diamond of a correspondence} We now make the further assumption that $T^t(\mathcal{Z}) \subseteq \mathcal{Z}$ and $T(\mathcal{U}) \subseteq \mathcal{U}$. We can build the following diamond shaped diagram:

\begin{eqnarray*}
\xymatrix{ & \mathrm{R}\Gamma_{\mathcal{Z} \cap \mathcal{U}}(\mathcal{U}, \mathscr{F})  \ar[ld]^{res} &  \\
\mathrm{R}\Gamma_{\mathcal{Z} \cap T(\mathcal{U})} (T(\mathcal{U}), \mathscr{F}) \ar[rr]^{T}& & \mathrm{R}\Gamma_{T^t(\mathcal{Z}) \cap \mathcal{U}}(\mathcal{U}, \mathscr{F}) \ar[lu]^{cores} \ar[ld]^{res} \\
& \mathrm{R}\Gamma_{T^t(\mathcal{Z}) \cap T(\mathcal{U})}(T(\mathcal{U}), \mathscr{F}) \ar[lu]^{cores}& }
\end{eqnarray*}
where the composition of the top and lower triangle  $ res \circ cores$ and $cores \circ res$ are equal.  We can define an endomorphism of each of the four cohomologies occurring in this diagram by suitably composing $T$ with restriction and corestriction maps.  By abuse of notation we also call these endomorphisms $T$.  One checks immediately that all the maps in the diamond are equivariant for these endomorphisms.
 
\begin{prop}\label{last-prop-diamond} Assume that all the complexes appearing are objects of $\mathrm{Pro}_{\N}(\mathcal{K}^{proj}(\mathbf{Ban}(F)))$ and that the endomorphisms $T$ are potent compact. After taking finite slope parts, all the maps in the resulting diamond 
\begin{eqnarray*}
\xymatrix{ & \mathrm{R}\Gamma_{\mathcal{Z} \cap \mathcal{U}}(\mathcal{U}, \mathscr{F})^{fs}  \ar[ld]^{res} &  \\
\mathrm{R}\Gamma_{\mathcal{Z} \cap T(\mathcal{U})} (T(\mathcal{U}), \mathscr{F})^{fs} \ar[rr]^{T}& & \mathrm{R}\Gamma_{T^t(\mathcal{Z}) \cap \mathcal{U}}(\mathcal{U}, \mathscr{F})^{fs} \ar[lu]^{cores} \ar[ld]^{res} \\
& \mathrm{R}\Gamma_{T^t(\mathcal{Z}) \cap T(\mathcal{U})}(T(\mathcal{U}), \mathscr{F})^{fs} \ar[lu]^{cores}& }
\end{eqnarray*}
are quasi-isomorphisms.
\end{prop}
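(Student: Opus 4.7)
My strategy is to exhibit, on finite slope parts, an explicit quasi-inverse for each arrow of the diamond, built from the remaining two arrows of an enclosing triangle and the inverse of the Hecke endomorphism $T$, which exists by Proposition \ref{prop-establishing-slope-decomp}.

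I would first fix notation: label the complexes $A,B,C,D$ for the upper, left, right and lower vertices, and the five arrows as $r_{AB}\colon A\to B$, $T_{BC}\colon B\to C$, $c_{CA}\colon C\to A$, $r_{CD}\colon C\to D$, $c_{DB}\colon D\to B$. The basic commutativity identity
\[
r_{AB}\circ c_{CA} = c_{DB}\circ r_{CD}\colon C\longrightarrow B,
\]
recorded in the paper immediately before the proposition, makes the four round-trip Hecke endomorphisms
\[
T_A = c_{CA}\circ T_{BC}\circ r_{AB},\qquad T_D = r_{CD}\circ T_{BC}\circ c_{DB},
\]
\[
T_B = r_{AB}\circ c_{CA}\circ T_{BC} = c_{DB}\circ r_{CD}\circ T_{BC},\qquad T_C = T_{BC}\circ r_{AB}\circ c_{CA} = T_{BC}\circ c_{DB}\circ r_{CD}
\]
well defined; a short check then shows that each of the five arrows intertwines the corresponding $T_\bullet$'s.

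Next I would invoke Proposition \ref{prop-establishing-slope-decomp}: since each $T_\bullet$ is potent compact on an object of $\mathrm{Pro}_{\N}(\mathcal{K}^{proj}(\mathbf{Ban}(F)))$, the finite slope decomposition exists in $\mathcal{D}(F)$ and every $T_\bullet^{fs}$ becomes an automorphism. For each arrow of the diamond I then read off a quasi-inverse on finite slope parts. For instance, the candidate inverse of $r_{AB}\colon A^{fs}\to B^{fs}$ is $T_A^{-1}\circ c_{CA}\circ T_{BC}$, and the two composites compute to $T_A^{-1}\circ T_A=\mathrm{id}_{A^{fs}}$ and $T_B\circ T_B^{-1}=\mathrm{id}_{B^{fs}}$, using the two expressions above for $T_B$. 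The other four arrows are treated symmetrically, with the inverse in each case being the product of the two arrows of the enclosing triangle distinct from the one being inverted, composed with the appropriate $T^{-1}$.

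The main obstacle is organizational rather than conceptual: one has to verify that the two formulas written above for $T_B$ and $T_C$ really agree, which is precisely the role of the compatibility $r_{AB}\circ c_{CA}=c_{DB}\circ r_{CD}$, and one has to bookkeep that the quasi-inverses obtained from the upper and lower triangles are consistent on their common vertices $B$ and $C$. Once these formal identities are secured, the conclusion that every arrow of the diamond becomes a quasi-isomorphism on finite slope parts follows automatically from the invertibility of $T$.
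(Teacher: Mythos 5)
Your proposal is correct and follows the same line of argument as the paper: for each arrow $f$ of the diamond, there is a composite $g$ of two other arrows with $gf$ and $fg$ equal to the relevant $T$, so that invertibility of $T$ on finite slope parts (Proposition \ref{prop-establishing-slope-decomp}) forces $f$ to become a quasi-isomorphism. Your write-up merely makes explicit the quasi-inverse formulas and the consistency check on $T_B$ and $T_C$ that the paper leaves implicit.
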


\begin{proof} For each map $f$ in the diagram, there is another map $g$ (the composition of two other arrows in the diagram) so that $gf$ and $fg$ are the endomorphism $T$ on the source and target of $f$.  As $T$ becomes invertible on the finite slope part, so does $f$.
\end{proof}

\begin{rem} Under the assumptions of corollary \ref{coro-representing-mapT}, the operator $T$ is potent compact if  the map ``restriction-corestriction'' obtained by composing the top  or bottom arrows of the diamond $\mathrm{R}\Gamma_{T^t(\mathcal{Z}) \cap \mathcal{U}}(\mathcal{U}, \mathscr{F}) \rightarrow \mathrm{R}\Gamma_{\mathcal{Z} \cap T(\mathcal{U})} (T(\mathcal{U}), \mathscr{F})$ is compact (see lemma \ref{lem-criterium-compacity2} for a criterium).
\end{rem}

As a corollary, we note the following fact:

\begin{coro}\label{coro-allowing-more-support}  Under the hypothesis of  proposition \ref{last-prop-diamond}, let $\mathcal{U}'$ and $\mathcal{Z}'$ be open and closed subsets, such that  $T(\mathcal{U})\cap \mathcal{Z} \subseteq \mathcal{U}' \subseteq \mathcal{U} $ and $\mathcal{U} \cap T^t(\mathcal{Z}) \subseteq \mathcal{Z}' \cap \mathcal{U}' \subseteq \mathcal{Z}$. Assume also that   $\mathrm{R}\Gamma_{\mathcal{Z}' \cap \mathcal{U}'} (\mathcal{U}, \mathscr{F})$ is an object of $\mathrm{Pro}_{\N}(\mathcal{K}^{proj}(\mathbf{Ban}(F)))$.  Then the operator $T$ is well defined and potent compact on $\mathrm{R}\Gamma_{\mathcal{Z}' \cap \mathcal{U}'} (\mathcal{U}', \mathscr{F})$ and the finite slope part of $\mathrm{R}\Gamma_{\mathcal{Z}' \cap \mathcal{U}'} (\mathcal{U}', \mathscr{F})$ is canonically quasi-isomorphic to the finite slope part of $\mathrm{R}\Gamma_{\mathcal{Z}\cap\mathcal{ U}} (\mathcal{U}, \mathscr{F})$.
\end{coro}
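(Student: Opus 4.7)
My plan is to compare $\mathrm{R}\Gamma_{\mathcal{Z}\cap\mathcal{U}}(\mathcal{U},\mathscr{F})$ and $\mathrm{R}\Gamma_{\mathcal{Z}'\cap\mathcal{U}'}(\mathcal{U}',\mathscr{F})$ by passing through the intermediate complex $\mathrm{R}\Gamma_{\mathcal{Z}\cap\mathcal{U}'}(\mathcal{U}',\mathscr{F})$ via two distinguished triangles, and in each case to show that the third term has vanishing finite slope part using one of the two hypotheses.

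The first triangle is
$$\mathrm{R}\Gamma_{\mathcal{Z}\cap(\mathcal{U}\setminus\mathcal{U}')}(\mathcal{U},\mathscr{F}) \to \mathrm{R}\Gamma_{\mathcal{Z}\cap\mathcal{U}}(\mathcal{U},\mathscr{F}) \xrightarrow{g} \mathrm{R}\Gamma_{\mathcal{Z}\cap\mathcal{U}'}(\mathcal{U}',\mathscr{F}) \xrightarrow{+1},$$
where the identification of the third term uses change of ambient space (permissible because $\mathcal{Z}\cap\mathcal{U}'$ is closed in $(\mathcal{U}\setminus\mathcal{Z})\cup\mathcal{U}'$ and contained in the open subset $\mathcal{U}'$ thereof). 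The $T$-endomorphism on the middle term $B$ (as constructed in section \ref{section-action-sheaf}) kills the image of the first term $A$: the intermediate restriction sends classes supported in $\mathcal{Z}\cap(\mathcal{U}\setminus\mathcal{U}')$ to classes supported in $\mathcal{Z}\cap(\mathcal{U}\setminus\mathcal{U}')\cap T(\mathcal{U})$, which is empty by the hypothesis $\mathcal{Z}\cap T(\mathcal{U})\subseteq\mathcal{U}'$. A standard triangulated-category lifting (using exactness of $\mathrm{Hom}(-,B)$ applied to the triangle) then produces $\tilde T: C \to B$ with $T_B=\tilde T g$, and $T$-equivariance of the triangle yields $T_C=g\tilde T$, so that $g$ becomes a quasi-isomorphism on finite slope parts.

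The second triangle is symmetric:
$$\mathrm{R}\Gamma_{\mathcal{Z}'\cap\mathcal{U}'}(\mathcal{U}',\mathscr{F}) \xrightarrow{v} \mathrm{R}\Gamma_{\mathcal{Z}\cap\mathcal{U}'}(\mathcal{U}',\mathscr{F}) \to \mathrm{R}\Gamma_{(\mathcal{Z}\setminus\mathcal{Z}')\cap\mathcal{U}'}(\mathcal{U}'\setminus(\mathcal{Z}'\cap\mathcal{U}'),\mathscr{F}) \xrightarrow{+1}.$$
Here the $T$-operator on the middle term factors through $v$: its final corestriction lands in $\mathrm{R}\Gamma_{T^t(\mathcal{Z})\cap\mathcal{U}'}(\mathcal{U}',\mathscr{F})$, which by the hypothesis $\mathcal{U}\cap T^t(\mathcal{Z})\subseteq\mathcal{Z}'\cap\mathcal{U}'$ maps first into $\mathrm{R}\Gamma_{\mathcal{Z}'\cap\mathcal{U}'}(\mathcal{U}',\mathscr{F})$ and then into $C$. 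Writing $T_C=vu$ and verifying dually $uv=T_E$, we conclude that $v$ is a quasi-isomorphism on finite slope parts. Composing the two identifications yields $\mathrm{R}\Gamma_{\mathcal{Z}\cap\mathcal{U}}(\mathcal{U},\mathscr{F})^{fs}\simeq\mathrm{R}\Gamma_{\mathcal{Z}'\cap\mathcal{U}'}(\mathcal{U}',\mathscr{F})^{fs}$; potent compactness on the new complex is inherited via these factorizations of $T$.

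The main obstacle is verifying the equivariance and compositional identities needed for the lifting arguments (in particular $T_C = g\tilde T$ and $uv = T_E$), which demand that all restriction, corestriction, and change-of-ambient-space maps interact correctly with the action of $T$ as a correspondence. This bookkeeping, together with checking that the intermediate complexes $\mathrm{R}\Gamma_{\mathcal{Z}\cap\mathcal{U}'}(\mathcal{U}',\mathscr{F})$ and the various ``third terms'' lie in $\mathrm{Pro}_{\mathbb{N}}(\mathcal{K}^{proj}(\mathbf{Ban}(F)))$ so that the finite-slope machinery of proposition \ref{prop-establishing-slope-decomp} applies, makes up the bulk of the technical work.
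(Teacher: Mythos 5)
Your proposal takes a genuinely different and more circuitous route than the paper, and it has a gap. The paper's proof does not pass through the intermediate term $\mathrm{R}\Gamma_{\mathcal{Z}\cap\mathcal{U}'}(\mathcal{U}',\mathscr{F})$ or any distinguished triangles. It instead observes that restriction--corestriction gives direct maps
$$\mathrm{R}\Gamma_{\mathcal{Z}'\cap\mathcal{U}'}(\mathcal{U}',\mathscr{F}) \rightarrow \mathrm{R}\Gamma_{\mathcal{Z}\cap T(\mathcal{U})}(T(\mathcal{U}),\mathscr{F}) \quad\text{and}\quad \mathrm{R}\Gamma_{T^t(\mathcal{Z})\cap\mathcal{U}}(\mathcal{U},\mathscr{F}) \rightarrow \mathrm{R}\Gamma_{\mathcal{Z}'\cap\mathcal{U}'}(\mathcal{U}',\mathscr{F})$$
(the first uses $\mathcal{Z}'\cap\mathcal{U}'\subseteq\mathcal{Z}$ and $T(\mathcal{U})\cap\mathcal{Z}\subseteq\mathcal{U}'$, with a change of ambient space; the second uses $\mathcal{U}'\subseteq\mathcal{U}$ and $\mathcal{U}\cap T^t(\mathcal{Z})\subseteq\mathcal{Z}'\cap\mathcal{U}'$), producing a commuting triangle with the middle arrow $T$. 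This \emph{defines} the $T$-operator on the new complex as the cyclic composite, makes potent compactness immediate (it factors through the original $T$), and then the two-sided-inverse argument from proposition \ref{last-prop-diamond} shows at once that all three arrows become quasi-isomorphisms on finite slope parts. That is a shorter path and it sidesteps exactly the subtleties your argument runs into.

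The gap in your argument is the assertion that ``$T$-equivariance of the triangle yields $T_C=g\tilde T$'' (and dually $uv=T_E$). First, it is not clear that $T_C$ is even defined on $C=\mathrm{R}\Gamma_{\mathcal{Z}\cap\mathcal{U}'}(\mathcal{U}',\mathscr{F})$ by the construction of section \ref{section-action-sheaf}: that construction needs $T(\mathcal{U}')\subseteq\mathcal{U}'$ to give an endomorphism of $\mathrm{R}\Gamma_{\mathcal{Z}\cap\mathcal{U}'}(\mathcal{U}',\mathscr{F})$, and this is not a hypothesis of the corollary. Second, even granting a morphism of triangles $(T_A,T_B,T_C)$, from $T_B=\tilde T g$ and $T_Cg=gT_B$ one only gets $(T_C-g\tilde T)g=0$; applying $\mathrm{Hom}(-,C)$ to the triangle shows only that $T_C-g\tilde T$ factors through the connecting map $C\to A[1]$, not that it vanishes. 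Without $T_C=g\tilde T$ you obtain a one-sided factorization $T_B=\tilde T g$, which gives injectivity of $g$ on finite slope cohomology but not surjectivity. Your observation that $\mathcal{Z}\cap(\mathcal{U}\setminus\mathcal{U}')\cap T(\mathcal{U})=\emptyset$ is correct and is the right kind of input, but to exploit it cleanly one either needs to know the third and first terms of the triangles have vanishing finite slope cohomology (which requires defining a $T$-action on them) or, more simply, to bypass the triangles entirely as the paper does. The claim that potent compactness is ``inherited via these factorizations'' is also left unjustified; in the paper it is automatic because the new $T$-endomorphism literally factors through the original compact map $T$.
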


\begin{proof}   Under our assumptions, we have restriction-corestriction maps: $$\mathrm{R}\Gamma_{\mathcal{Z}' \cap \mathcal{U}'} (\mathcal{U}') \rightarrow \mathrm{R}\Gamma_{\mathcal{Z}' \cap \mathcal{U}' \cap T(\mathcal{U})} (\mathcal{U}' \cap T(\mathcal{U}))  \rightarrow \mathrm{R}\Gamma_{\mathcal{Z} \cap T(\mathcal{U})} (T(\mathcal{U}) \cap \mathcal{U}', \mathscr{F}) = \mathrm{R}\Gamma_{\mathcal{Z} \cap T(\mathcal{U})} (T(\mathcal{U}), \mathscr{F})$$ and $$ \mathrm{R}\Gamma_{T^t(\mathcal{Z}) \cap \mathcal{U}}(\mathcal{U}, \mathscr{F}) \rightarrow  \mathrm{R}\Gamma_{T^t(\mathcal{Z}) \cap \mathcal{U}'}(\mathcal{U}', \mathscr{F}) \rightarrow  \mathrm{R}\Gamma_{\mathcal{Z}' \cap \mathcal{U}'}(\mathcal{U}', \mathscr{F}). $$
We can build the following diagram: 
\begin{eqnarray*}
\xymatrix{ & \mathrm{R}\Gamma_{\mathcal{Z}' \cap \mathcal{U}'}(\mathcal{U}', \mathscr{F})  \ar[ld] &  \\
\mathrm{R}\Gamma_{\mathcal{Z} \cap T(\mathcal{U})} (T(\mathcal{U}), \mathscr{F}) \ar[rr]^{T}& & \mathrm{R}\Gamma_{T^t(\mathcal{Z}) \cap \mathcal{U}}(\mathcal{U}, \mathscr{F}) \ar[lu] }
\end{eqnarray*}
and $T$ is therefore well defined and potent compact on $\mathrm{R}\Gamma_{\mathcal{Z}' \cap \mathcal{U}'}(\mathcal{U}', \mathscr{F})$.  We deduce that $\mathrm{R}\Gamma_{\mathcal{Z}' \cap \mathcal{U}'}(\mathcal{U}', \mathscr{F})^{fs}$ is quasi-isomorphic to  $\mathrm{R}\Gamma_{\mathcal{Z} \cap T(\mathcal{U})} (T(\mathcal{U}), \mathscr{F})^{fs}$ which is in turn quasi-isomorphic to $\mathrm{R}\Gamma_{\mathcal{Z}\cap \mathcal{U}} (\mathcal{U}, \mathscr{F})^{fs}$.
\end{proof}

\subsubsection{The infinite diamond}\label{section-infinite-diamond}
  It is interesting to iterate the operator $T$.  We  now work under the stronger  assumption that $p_1$ and $p_2$ are finite flat. These assumptions imply that for any $n \geq 0$, the $n$-th iterate $\mathcal{C}^{(n)}$ of the correspondence $\mathcal{C}$ comes with two finite flat projections $p_{1,n}$ and $p_{2,n}$.

\begin{rem} Later we will apply this material to toroidal compactifications of Shimura varieties. We  therefore have to work under slightly more general assumptions. Namely, it is not possible in general to find cone decompositions such that all the maps between compactifications of Hecke correspondences are finite flat. Nevertheless, by allowing suitable changes of the cone decompositions, we can always assume that a given map is finite flat. Moreover,  the composition of compactified  Hecke correspondences and their action on the cohomology has been explained in detail in section \ref{section-compos-Hecke-op}. Therefore, for the clarity of the exposition, we will keep the assumption that $p_1$ and $p_2$ are finite flat here.  

\end{rem}

We let $\mathcal{U}_m = T^m(\mathcal{U})$ and $\mathcal{Z}_n = (T^t)^n(\mathcal{Z})$. We assume that $T(\mathcal{U}) \subseteq \mathcal{U}$ and $T^t(\mathcal{Z}) \subseteq \mathcal{Z}$.  The sequences $\{\mathcal{U}_m\}_{m \geq 0}$ and $\{\mathcal{Z}_n\}_{n \geq 0}$ are therefore decreasing.

We can then construct diamonds as above for all $n, m \geq 0$: 

\begin{eqnarray*}
\xymatrix{ & \mathrm{R}\Gamma_{\mathcal{Z}_n \cap \mathcal{U}_m}(\mathcal{U}_m, \mathscr{F})  \ar[ld]^{res} &  \\
\mathrm{R}\Gamma_{\mathcal{Z}_n \cap \mathcal{U}_{m+1}} ( \mathcal{U}_{m+1}, \mathscr{F}) \ar[rr]^{T}& & \mathrm{R}\Gamma_{\mathcal{Z}_{n+1} \cap \mathcal{U}_m}(\mathcal{U}_m, \mathscr{F}) \ar[lu]^{cores} \ar[ld]^{res} \\
& \mathrm{R}\Gamma_{\mathcal{Z}_{n+1}\cap \mathcal{U}_{m+1}}(\mathcal{U}_{m+1}, \mathscr{F}) \ar[lu]^{cores}& }
\end{eqnarray*}

and we can add them to get an infinite  diamond diagram looking like:

\begin{tiny} \begin{eqnarray}\label{eqn-diamond}
\xymatrix{ & & \mathrm{R}\Gamma_{\mathcal{Z} \cap \mathcal{U}}(\mathcal{U}, \mathscr{F})  \ar[ldd]^{res} & & \\
& & & & \\
&\mathrm{R}\Gamma_{\mathcal{Z} \cap \mathcal{U}_1} (\mathcal{U}_1, \mathscr{F}) \ar[ldd]^{res} \ar[rr]^{T}& & \mathrm{R}\Gamma_{\mathcal{Z}_1 \cap \mathcal{U}}(\mathcal{U}, \mathscr{F}) \ar[luu]^{cores} \ar[ldd]^{res}  & \\
& & & & \\
\mathrm{R}\Gamma_{\mathcal{Z} \cap \mathcal{U}_2} (\mathcal{U}_2, \mathscr{F}) \ar[rr]^{T}& & \mathrm{R}\Gamma_{\mathcal{Z}_1 \cap \mathcal{U}_1}(\mathcal{U}_1, \mathscr{F}) \ar[luu]^{cores} \ar[rr]^{T}&  & \mathrm{R}\Gamma_{\mathcal{Z}_2 \cap \mathcal{U}} (\mathcal{U}, \mathscr{F}) \ar[luu]^{cores} \\
\vdots &  \vdots & \vdots &\vdots & \vdots\\
}
\end{eqnarray}

\end{tiny}

We assume that all the objects of the above diagram belong to $\mathrm{Pro}_{\N}(\mathcal{K}^{proj}(\mathbf{Ban}(F)))$. We now make  the further assumption that there exists $(m_0,n_0)$ such that one morphism  ``restriction-corestriction'' obtained by composing the top  or low arrows of a diamond $\mathrm{R}\Gamma_{\mathcal{Z}_{n_0+1} \cap \mathcal{U}_{m_0}}(\mathcal{U}_{m_0}, \mathscr{F}) \rightarrow \mathrm{R}\Gamma_{\mathcal{Z}_{n_0} \cap \mathcal{U}_{m_0+1}} (\mathcal{U}_{m_0+1}, \mathscr{F})$ is compact. 

For any $m, n$ with $m, n \geq 0$ and $(m,n) \neq (0,0)$, we can define an endomorphism $T_{m,n} : \mathrm{R}\Gamma_{\mathcal{Z}_n \cap\mathcal{ U}_m}(\mathcal{U}_m, \mathscr{F}) \rightarrow \mathrm{R}\Gamma_{\mathcal{Z}_n \cap \mathcal{U}_m}(\mathcal{U}_m, \mathscr{F})$  by composing $T$, $res$ and $cores$ in a suitable order. We abuse notation and denote this operator by $T$. The operator $T$ is potent compact because some power of it will factor over the compact ``restriction-corestriction'' map above.     In any case, we can speak of the finite slope direct factor of $\mathrm{R}\Gamma_{\mathcal{Z}_n\cap \mathcal{U}_m}(\mathcal{U}_m, \mathscr{F})$ for $T$.

\begin{thm}\label{last-thm-diamond} On the finite slope part, all the morphisms of the infinite diamond are quasi-isomorphisms.
\end{thm}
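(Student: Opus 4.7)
The plan is to deduce Theorem \ref{last-thm-diamond} by invoking Proposition \ref{last-prop-diamond} separately on each of the small diamonds that tile the infinite diagram \eqref{eqn-diamond}. For each pair $(m,n) \in \ZZ_{\geq 0}^2$, the four vertices $\mathrm{R}\Gamma_{\mathcal{Z}_i \cap \mathcal{U}_j}(\mathcal{U}_j, \mathscr{F})$ with $(i,j) \in \{n, n+1\} \times \{m, m+1\}$ form a diamond of exactly the abstract shape treated in Proposition \ref{last-prop-diamond}: one simply replaces the pair $(\mathcal{U}, \mathcal{Z})$ there by $(\mathcal{U}_m, \mathcal{Z}_n)$ here, and one has $T(\mathcal{U}_m) = \mathcal{U}_{m+1} \subseteq \mathcal{U}_m$ and $T^t(\mathcal{Z}_n) = \mathcal{Z}_{n+1} \subseteq \mathcal{Z}_n$ as required. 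Since every arrow in the infinite diamond is one of the four arrows of some such small diamond, the theorem reduces to verifying, for each $(m,n)$, the hypotheses of Proposition \ref{last-prop-diamond}.

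The nontrivial input is to check that the induced endomorphism $T$ of each vertex $\mathrm{R}\Gamma_{\mathcal{Z}_n \cap \mathcal{U}_m}(\mathcal{U}_m, \mathscr{F})$ is potent compact, starting only from the single assumed compactness at the edge from $\mathrm{R}\Gamma_{\mathcal{Z}_{n_0+1} \cap \mathcal{U}_{m_0}}(\mathcal{U}_{m_0}, \mathscr{F})$ to $\mathrm{R}\Gamma_{\mathcal{Z}_{n_0} \cap \mathcal{U}_{m_0+1}}(\mathcal{U}_{m_0+1}, \mathscr{F})$. I would proceed by a factorization argument: at the vertex $(m,n)$, the endomorphism $T$ is by definition a composition of a correspondence step together with restriction and corestriction. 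Iterating, for $N$ large enough, $T^N$ can be written as a long composition moving through many different vertices $(m',n')$, with the freedom, thanks to the commutation relation $res\circ cores = cores \circ res$ and the compatibility of $res$ and $cores$ with $T$, to reroute the path at will within the grid. Choosing such a path that passes through the distinguished edge of index $(m_0, n_0)$ exhibits $T^N$ as a composition containing a compact factor, and hence $T^N$ itself is compact in the sense of Definition \ref{defi-notop-compact}. This shows $T$ is potent compact at every vertex and, in particular, that the finite slope parts are all well defined.

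With potent compactness in hand at every vertex, Proposition \ref{last-prop-diamond} applies directly to each small diamond and shows that all four of its arrows are quasi-isomorphisms after taking finite slope parts. Gluing these local statements together over all $(m,n)$ yields the desired conclusion: every arrow of the infinite diamond induces a quasi-isomorphism on finite slope parts.

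The main (though rather mild) obstacle is the combinatorial step of rewriting an arbitrary iterate $T^N$ at $(m,n)$ so that it visibly factors through the distinguished compact edge; once one has the commutation of $res$, $cores$ and $T$ this is just a routing exercise on the integer lattice, but it must be carried out carefully so that the resulting composition really lands back at $(m,n)$ and equals $T^N$ up to a diagram chase using the compatibilities of restriction-corestriction with the cohomological correspondence.
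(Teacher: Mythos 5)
Your proposal is correct and is essentially the same argument as the paper's: the paper's proof of the theorem is the one-line remark that it follows from Proposition \ref{last-prop-diamond}, with the potent compactness at every vertex asserted in the paragraph preceding the theorem (``some power of $T$ will factor over the compact restriction-corestriction map above''), which is precisely the rerouting argument you spell out. The only thing you add is a more explicit account of why every arrow of the infinite diagram belongs to a small diamond and of the combinatorial factorization of $T^N$; that filling-in is harmless and matches the intent of the text.
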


\begin{proof} This follows from proposition \ref{last-prop-diamond}.
\end{proof}

\begin{coro}\label{coro-infinitediamond}  Under the hypothesis of theorem \ref{last-thm-diamond}, assume that there is $m,n,s \in \ZZ_{\geq 0}$ such that   $ (T^t)^{n+s} (\mathcal{Z}) \cap T^m(\mathcal{U}) \subseteq \mathcal{Z}'  \cap \mathcal{U}'\subseteq (T^t)^n(\mathcal{Z})$ is a closed subset  and $T^{m+s}(\mathcal{U}) \cap (T^t)^n(\mathcal{Z}) \subseteq \mathcal{U}' \subseteq T^m(\mathcal{U}) $. Assume moreover that $\mathrm{R}\Gamma_{\mathcal{Z}' \cap \mathcal{U}'} (\mathcal{U}', \mathscr{F})$ is an object of $\mathrm{Pro}_{\N}(\mathcal{K}^{proj}(\mathbf{Ban}(F)))$.   Then the operator $T^s$ is well defined and potent compact on $\mathrm{R}\Gamma_{\mathcal{Z}' \cap \mathcal{U}'} (\mathcal{U}', \mathscr{F})$ and the finite slope part of $\mathrm{R}\Gamma_{\mathcal{Z}' \cap \mathcal{U}'} (\mathcal{U}', \mathscr{F})$ is canonically quasi-isomorphic to the finite slope part of $\mathrm{R}\Gamma_{\mathcal{Z}\cap \mathcal{U}} (\mathcal{U}, \mathscr{F})$.
\end{coro}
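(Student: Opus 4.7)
The approach is to invoke Corollary~\ref{coro-allowing-more-support} applied to the $s$-fold iterated correspondence $\mathcal{C}^{(s)}$ with operator $T^s$, replacing the starting data $(\mathcal{U},\mathcal{Z})$ by $(\mathcal{U}_m,\mathcal{Z}_n)$. Since $T^s(\mathcal{U}_m)=\mathcal{U}_{m+s}$ and $(T^t)^s(\mathcal{Z}_n)=\mathcal{Z}_{n+s}$, the two containments $T^s(\mathcal{U}_m)\cap\mathcal{Z}_n\subseteq\mathcal{U}'\subseteq\mathcal{U}_m$ and $\mathcal{U}_m\cap(T^t)^s(\mathcal{Z}_n)\subseteq\mathcal{Z}'\cap\mathcal{U}'\subseteq\mathcal{Z}_n$ demanded by Corollary~\ref{coro-allowing-more-support} are precisely the hypotheses of the present corollary.

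Before applying it, I would check that the setup of Proposition~\ref{last-prop-diamond} is inherited by $\mathcal{C}^{(s)}$ with data $(\mathcal{U}_m,\mathcal{Z}_n)$. The invariances $T^s(\mathcal{U}_m)\subseteq\mathcal{U}_m$ and $(T^t)^s(\mathcal{Z}_n)\subseteq\mathcal{Z}_n$ follow from iterating the corresponding invariances for $T$; the projections $p_1^{(s)},p_2^{(s)}$ are finite flat as compositions of finite flat morphisms; the compact sheaf morphism $T:p_2^\star\mathscr{F}\to p_1^\star\mathscr{F}$ iterates to a compact morphism for $\mathcal{C}^{(s)}$; and potent compactness of $T^s$ on $\mathrm{R}\Gamma_{\mathcal{Z}_n\cap\mathcal{U}_m}(\mathcal{U}_m,\mathscr{F})$ is immediate from potent compactness of $T$. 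The four vertex-complexes of the sub-diamond already belong to $\mathrm{Pro}_{\N}(\mathcal{K}^{proj}(\mathbf{Ban}(F)))$ by the infinite-diamond hypothesis, and $\mathrm{R}\Gamma_{\mathcal{Z}'\cap\mathcal{U}'}(\mathcal{U}',\mathscr{F})$ is so by assumption.

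Corollary~\ref{coro-allowing-more-support} applied to $\mathcal{C}^{(s)}$ then produces a potent compact endomorphism $T^s$ of $\mathrm{R}\Gamma_{\mathcal{Z}'\cap\mathcal{U}'}(\mathcal{U}',\mathscr{F})$ and identifies its finite-slope part canonically with the finite-slope part of $\mathrm{R}\Gamma_{\mathcal{Z}_n\cap\mathcal{U}_m}(\mathcal{U}_m,\mathscr{F})$. Theorem~\ref{last-thm-diamond} then identifies this last with the finite-slope part of $\mathrm{R}\Gamma_{\mathcal{Z}\cap\mathcal{U}}(\mathcal{U},\mathscr{F})$, and Lemma~\ref{lem-indep-t} ensures that passing to finite slope for $T$ and for $T^s$ gives the same direct factor. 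Concatenating these identifications yields the desired conclusion.

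The only genuinely delicate point is verifying that the endomorphism of $\mathrm{R}\Gamma_{\mathcal{Z}'\cap\mathcal{U}'}(\mathcal{U}',\mathscr{F})$ manufactured by Corollary~\ref{coro-allowing-more-support} applied to $\mathcal{C}^{(s)}$ genuinely agrees with the $s$-fold iterate of the natural $T$-action; this is a bookkeeping exercise consisting of unwinding the construction of Section~\ref{section-action-sheaf} for $\mathcal{C}^{(s)}$ and comparing with $s$-fold composition of the $T$-action for $\mathcal{C}$, which amounts to checking that trace maps and pullbacks compose as expected along iterated correspondences.
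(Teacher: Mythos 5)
Your proof is correct and follows the same route as the paper: the paper's proof reads simply ``This follows from corollary~\ref{coro-allowing-more-support},'' and you have correctly identified that one applies that corollary to the $s$-fold iterated correspondence $\mathcal{C}^{(s)}$ with shifted data $(\mathcal{U}_m,\mathcal{Z}_n)$, then composes with theorem~\ref{last-thm-diamond} (and lemma~\ref{lem-indep-t} for the independence of the finite slope part on the choice of $T$ versus $T^s$).
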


\begin{proof} This follows from corollary \ref{coro-allowing-more-support}.
\end{proof}

\subsection{Overconvergent cohomologies}\label{section-overconvergent-cohomologies} Let $(G,X)$ be an abelian type Shimura datum such that $G_{\qq_p}$ is quasi split. 
Let $w \in \WM$.  For a choice of $+$ or $-$ and a weight $\kappa \in  X^\star(T^c)^{M_\mu,+}$ we want to define finite slope overconvergent cohomologies $\mathrm{R}\Gamma_w(K^p, \kappa)^{\pm,fs}$ and their cuspidal counterparts $\mathrm{R}\Gamma_w(K^p, \kappa, cusp)^{\pm, fs}$ by taking cohomologies with suitable support conditions of neighborhoods of the inverse image under the Hodge-Tate period map of $\mathcal{P_\mu}\backslash\mathcal{P_\mu}wK_p$, and by taking finite slope parts for a suitable Hecke operator.  We will also define variants $\mathrm{R}\Gamma_w(K^p,\kappa,\chi)^{\pm,fs}$ and $\mathrm{R}\Gamma_w(K^p,\kappa,\chi,cusp)^{\pm,fs}$ where $\chi:T(\ZZ_p)\to {\overline{F}}^\times$ is a finite order character.

\subsubsection{First definition}\label{section-first-defin}

For a level $K_p=K_{p,m',b}$ with $m'\geq b\geq 0$ and $m'>0$ and a weight $\kappa \in  X^\star(T^c)^{M_\mu,+}$, we define:
$$\mathrm{R}\Gamma_w(K^pK_p, \kappa)^{+,fs}  : =   \mathrm{R}\Gamma_{ (\pi_{HT, K_p}^{tor})^{-1}(]C_{w,k}[_{0, \overline{0}})}((\pi_{HT, K_p}^{tor})^{-1}(]X_{w,k}[), \mathcal{V}_\kappa)^{+,fs}.$$
Implicit in this definition is that this cohomology is an object of $\mathrm{Pro}_{\N}(\mathcal{K}^{proj}(\mathbf{Ban}(F)))$ and that $\mathcal{H}_{p,m',b}^+$ acts on it in a way that $\mathcal{H}_{p,m',b}^{++}$ acts by potent compact operators (this will be proved below in Theorem \ref{thm-finiteslopecoho}). 

Similarly for a weight $\kappa \in  X^\star(T^c)^{M_\mu,+}$, we define:
$$\mathrm{R}\Gamma_w(K^pK_p, \kappa)^{-,fs}  : =   \mathrm{R}\Gamma_{(\pi_{HT, K_p}^{tor})^{-1}(]C_{w,k}[_{\overline{0}, {0}})}((\pi_{HT, K_p}^{tor})^{-1}(]Y_{w,k}[), \mathcal{V}_\kappa)^{-,fs}.$$
Again implicit in this definition is that this cohomology is an object of $\mathrm{Pro}_{\N}(\mathcal{K}^{proj}(\mathbf{Ban}(F)))$ and that $\mathcal{H}_{p,m',b}^-$ acts on it in a way that $\mathcal{H}_{p,m',b}^{--}$ acts by potent compact operators (this will also be proved below in Theorem \ref{thm-finiteslopecoho}). 

We have similar definitions for cuspidal cohomology.

\subsubsection{Existence of finite slope cohomology}

\begin{thm}\label{thm-finiteslopecoho} Let $K_p=K_{p,m,b}$ for some $m\geq b\geq0$ with $m>0$, and fix $w\in\WM$ and $\kappa \in  X^\star(T^c)^{M_\mu,+}$.
 \begin{enumerate}
\item  There is an action of $\mathcal{H}_{p,m,b}^+$ on $\mathrm{R}\Gamma_{ (\pi_{HT, K_p}^{tor})^{-1}(]C_{w,k}[_{0, \overline{0}})}((\pi_{HT, K_p}^{tor})^{-1}(]X_{w,k}[), \mathcal{V}_\kappa)$ for which $\mathcal{H}_{p,m,b}^{++}$ acts via potent compact operators.  The same statement holds for $\mathrm{R}\Gamma_{ (\pi_{HT, K_p}^{tor})^{-1}(]C_{w,k}[_{0, \overline{0}})}((\pi_{HT, K_p}^{tor})^{-1}(]X_{w,k}[), \mathcal{V}_\kappa(-D))$.
\item  There is an action of $\mathcal{H}_{p,m,b}^-$ on $\mathrm{R}\Gamma_{(\pi_{HT, K_p}^{tor})^{-1}(]C_{w,k}[_{\overline{0}, {0}})}((\pi_{HT, K_p}^{tor})^{-1}(]Y_{w,k}[), \mathcal{V}_\kappa)$ for which $\mathcal{H}_{p,m,b}^{--}$ acts via potent compact operators.  The same statement holds for $\mathrm{R}\Gamma_{(\pi_{HT, K_p}^{tor})^{-1}(]C_{w,k}[_{\overline{0}, {0}})}((\pi_{HT, K_p}^{tor})^{-1}(]Y_{w,k}[), \mathcal{V}_\kappa(-D))$.
 \end{enumerate}
\end{thm}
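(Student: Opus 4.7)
I sketch the proof of part (1) for $\mathcal{V}_\kappa$; the cuspidal case, and the $-$ case in (2), are strictly parallel via the dual dynamical statements of Lemmas \ref{lem-dynamic-corres}--\ref{lemma-strongcontraction}. Set $\mathcal{U} := (\pi_{HT,K_p}^{tor})^{-1}(]X_{w,k}[)$ and $\mathcal{Z} := (\pi_{HT,K_p}^{tor})^{-1}(]C_{w,k}[_{0,\overline{0}})$.

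\emph{Step 1 (Banach structure).} Theorem \ref{thm-affineness} exhibits $\mathcal{U}$ as a finite union of quasi-Stein opens: pull back a quasi-Stein exhaustion of $]X_{w,k}[$ refined by opens whose $\pi_{HT,K_p}^{tor}$-preimages are affinoid. By Lemma \ref{lem-decomposition-cells}(1), the open complement $\mathcal{U}\setminus\mathcal{Z}$ equals $\bigcup_{w'<w}(\pi_{HT,K_p}^{tor})^{-1}(]X_{w',k}[)$, and the same argument shows it is a finite union of quasi-Stein opens. Lemma \ref{lem-complex-frechet} then provides a representative of $\mathrm{R}\Gamma_{\mathcal{Z}}(\mathcal{U}, \mathcal{V}_\kappa)$ in $\mathrm{Pro}_{\mathbb{N}}(\mathcal{K}^{proj}(\mathbf{Ban}(F)))$.

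\emph{Step 2 (Hecke action).} For each $t\in T^+$, choose cone decompositions so that the projections $p_1,p_2$ of the correspondence associated with $[K_ptK_p]$ are finite flat. By Lemma \ref{lem-dynamic-corres}(1) and the equivariance of $\pi_{HT,K_p}^{tor}$, both $\mathcal{U}$ and the open subset $\mathcal{U}\setminus\mathcal{Z}$ are sent into themselves by this correspondence. The construction of section \ref{section-action-sheaf} then produces Hecke operators on $\mathrm{R}\Gamma(\mathcal{U}, \mathcal{V}_\kappa)$ and on $\mathrm{R}\Gamma(\mathcal{U}\setminus\mathcal{Z}, \mathcal{V}_\kappa)$ compatible with the open restriction between them; passing to the fiber yields the desired endomorphism of $\mathrm{R}\Gamma_\mathcal{Z}(\mathcal{U}, \mathcal{V}_\kappa)$. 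That these endomorphisms multiply according to the relations of $\mathcal{H}_{p,m,b}^+$ follows by adapting Proposition \ref{prop-comp-classical-coh}: its geometric decomposition of composed correspondences goes through unchanged, and the constituent intermediate Shimura varieties remain compatible (via their own truncated Hodge--Tate period maps) with the stability of the support conditions, so that all intermediate $\mathcal{U}$'s and $\mathcal{U}\setminus\mathcal{Z}$'s are preserved throughout.

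\emph{Step 3 (Potent compactness and main obstacle).} Fix $t\in T^{++}$ and set $T := [K_ptK_p]$. The key point---also the principal obstacle---is that $\mathcal{Z}$ is \emph{not} stable under the transpose correspondence: Lemma \ref{lem-dynamic-corres}(4), applied to $t^{-1}\in T^{--}$, only shows that $\mathcal{Z}\cdot K_pt^{-1}K_p$ sits inside a set that properly contains $\mathcal{Z}$, so $T^t$ does not literally preserve the closed support condition. However, the imprecision is \emph{quantitative}: Lemma \ref{lemma-strongcontraction} together with Lemma \ref{lem-dynamic-corres}(3),(5) show that for $N$ sufficiently large, $\mathcal{U}$ is contracted and $\mathcal{Z}$ is enlarged \emph{strictly} in senses matching the hypotheses $\overline{\mathcal{U}'}\subseteq\mathcal{U}$ and $\mathcal{Z}\subseteq\overset{\circ}{\mathcal{Z}'}$ of Lemma \ref{lem-criterium-compacity2}. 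Using the infinite diamond formalism (equation \ref{eqn-diamond}), $T^N$ factors through a restriction--corestriction map between cohomologies whose open and closed supports satisfy these strict inclusions; Lemma \ref{lem-criterium-compacity2} identifies this factor as a compact morphism in $\mathrm{Pro}_{\mathbb{N}}(\mathcal{K}^{proj}(\mathbf{Ban}(F)))$, so that $T^N$---and hence $T$---is potent compact in the sense of Definition \ref{defi-notop-compact}. The technical heart of the argument is aligning the ``barred'' support geometry (which involves sets of specializations rather than honest adic spaces) with the compactness criteria of Lemma \ref{lem-criterium-compacity2}, via careful bookkeeping of the strict inclusions produced by Lemma \ref{lemma-strongcontraction} as $N$ grows.
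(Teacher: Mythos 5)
Your Steps 1 and 2 are sound, and the cone construction of the Hecke action in Step 2 is a valid (if slightly different) route to the one the paper takes. But Step 3 contains a real gap, and you have half-identified it yourself.

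You correctly observe that $\mathcal{Z} := (\pi_{HT,K_p}^{tor})^{-1}(]C_{w,k}[_{0,\overline{0}})$ is \emph{not} stable under $T^t$: indeed Lemma \ref{lemma-strongcontraction}(2) sends $]C_{w,k}[_{0,\overline{0}}$ into $]C_{w,k}[_{-\max(t),\overline{\min(t)}}$, which properly \emph{enlarges} the set in both directions. You then claim to salvage the compactness argument by appealing to the infinite diamond formalism of Section \ref{section-infinite-diamond} and Lemma \ref{lem-criterium-compacity2}. But the diamond formalism is set up on the standing assumptions $T(\mathcal{U})\subseteq\mathcal{U}$ and $T^t(\mathcal{Z})\subseteq\mathcal{Z}$, precisely so that the two families $\{T^m(\mathcal{U})\}$ and $\{(T^t)^n(\mathcal{Z})\}$ are \emph{decreasing}; with your choice of $\mathcal{Z}$ the second family is strictly increasing and the diamond diagram does not compose. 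No amount of ``quantitative bookkeeping'' fixes this, because $(T^t)^N(\mathcal{Z})$ never comes back inside $\mathcal{Z}$ for any $N$.

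The resolution, which the paper uses, is conceptually simple: replace the closed support by a \emph{larger} closed subset of the ambient space that \emph{is} stable, at no cost to the cohomology. Set $Z := (\pi_{HT,K_p}^{tor})^{-1}(\overline{]Y_{w,k}[})$. By Lemma \ref{lem-topological-description}(4), $]X_{w,k}[\,\cap\,\overline{]Y_{w,k}[} = ]C_{w,k}[_{0,\overline{0}}$, so $Z\cap\mathcal{U}=\mathcal{Z}$ and hence $\mathrm{R}\Gamma_{\mathcal{Z}}(\mathcal{U},\mathcal{V}_\kappa)=\mathrm{R}\Gamma_{Z\cap\mathcal{U}}(\mathcal{U},\mathcal{V}_\kappa)$ is literally the same complex. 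Now $T(\mathcal{U})\subseteq\mathcal{U}$ (Lemma \ref{lemma-dynamical1}(1)) and $T^t(Z)\subseteq Z$ (Lemma \ref{lemma-dynamical1}(2), taking closures and using $K_p$-invariance), so the diamond formalism applies directly to give both the $\mathcal{H}^+_{p,m,b}$-action and, with the explicit intermediate supports constructed from Lemma \ref{lem-dynamic-corres}(3),(5), the compactness factorization via Lemma \ref{lem-criterium-compacity2}. Without this reinterpretation of the closed support, your Step 3 does not go through.
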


\begin{proof} We only prove point $1$ for non cuspidal cohomology. The rest of the argument is very similar and left to the reader.

Let $U=(\pi^{tor}_{HT, K_{p}})^{-1}(]X_{w,k}[)$ and $Z=(\pi^{tor}_{HT, K_{p}})^{-1}(\overline{]Y_{w,k}[})$.  By lemma \ref{lem-topological-description} we have
$$\mathrm{R}\Gamma_{U\cap Z}(U,\mathcal{V}_\kappa)=\mathrm{R}\Gamma_{ (\pi_{HT, K_p}^{tor})^{-1}(]C_{w,k}[_{0, \overline{0}})}((\pi_{HT, K_p}^{tor})^{-1}(]X_{w,k}[), \mathcal{V}_\kappa).$$

Now if $T$ is the Hecke operator $[K_{p}tK_{p}]$ for any $t\in T^+$, we have $T(U)\subseteq U$ and $T^t(Z)\subseteq Z$ by lemma \ref{lem-dynamic-corres} and hence there is an action of $\mathcal{H}_{p,m,b}^+$ on $\mathrm{R}\Gamma_{U\cap Z}(U,\mathcal{V}_\kappa)$ via the construction explained at the beginning of Section \ref{subsection-analytic-continuation}. That this action defines an action of the Hecke algebra $\mathcal{H}_{p,m,b}^{+}$  follows from the discussion of section \ref{section-compos-Hecke-op}. 

Now suppose that $T$ is associated to $t\in T^{++}$. In order to simplify notations, we choose $t$ such  that $\min(t) = \inf_{\alpha \in \Phi^+} v(\alpha(t)) \geq 1$.  In order to show the action of $T$ is potent compact, it suffices to show that the ``restriction-corestriction'' map
$$\mathrm{R\Gamma}_{U \cap T^t(Z)} ( U, \mathcal{V}_\kappa) \rightarrow \mathrm{R\Gamma}_{T(U) \cap Z} ( T(U), \mathcal{V}_\kappa)$$
is compact.  We need to check the assumptions of  lemma \ref{lem-criterium-compacity2}. 
 
 We will be done if we can find a quasi-compact open subset $U'$ such that $T(U) \cap Z \subseteq U'$ and $\overline{U'} \subseteq U$, and closed subset  $Z_1, Z_2$, with quasi-compact complement, such that $U \cap T^t(Z) \subseteq Z_1 $, $Z_1 \subseteq \overset{\circ}Z_2$ and $Z_2 \subseteq Z$. 
By  lemma \ref{lem-dynamic-corres},  we have:
 $$U \cap T^t(Z) \subseteq (\pi^{tor}_{HT, K_{p}})^{-1}\big( ]C_{w,k}[_{0, \overline{1}}K_{p} \big)$$
and  $$T(U) \cap Z \subseteq (\pi^{tor}_{HT, K_{p}})^{-1}\big( ]C_{w,k}[_{1, \overline{0}}K_{p} \big)$$
We first find $U'$. 
By lemma \ref{lem-topological-description}, $$(\pi_{HT, K_{p}}^{tor})^{-1}\big( ]C_{w,k}[_{0, -\infty}K_{p} \big) \subseteq U.$$  We may now take $U_1$ a quasi-compact open  of $\mathcal{FL}$ with the property  that  $]C_{w,k}[_{1, \overline{0}} \subseteq U_1 \subseteq ]C_{w,k}[_{\frac{1}{2}, -1}$ and let $U' = (\pi_{HT,K_p}^{tor})^{-1}(U_1 K_p)$. Observe that $U_1  K_p$ is a finite union of translates of $U_1$.  It follows easily from lemma \ref{lem-topological-description}, (1) that $U'$ has the required property. 

We now proceed to find $Z_1$ and $Z_2$. By lemma \ref{lem-topological-description}, $$(\pi_{HT, K_{p,m,b}}^{tor})^{-1}\big( ]C_{w,k}[_{-\infty, {0}}\big) \subseteq (\pi_{HT, K_{p,m,b}}^{tor})^{-1}(]Y_{w,k}[).$$

We can find $]C_{w,k}[_{0, \overline{1}} \subseteq W_1 \subseteq W_2 \subseteq  ]C_{w,k}[_{-\infty, {0}}$ with $W_1$, $W_2$  closed with quasi-compact complements and $W_1 \subseteq \overset{\circ}W_2$.  We deduce that $]C_{w,k}[_{0, \overline{1}} K_p \subseteq W_1 K_p \subseteq W_2K_p \subseteq ]Y_{w,k}[$ and that $W_1 K_p \subseteq \overset{\circ}{W_2 K_p} = \overset{\circ}{ W_2} K_p$ since $W_1 K_p $ and $W_2K_p$ are a finite union of translates of $W_1$ and $W_2$. We let $Z_1 = (\pi_{HT, K_{p}}^{tor})^{-1} ( W_1K_p)$ and $Z_2 = (\pi_{HT, K_{p}}^{tor})^{-1} ( W_2K_p)$.
We let $Z_1 = (\pi_{HT, K_{p,m,b}}^{tor})^{-1} ( \overline{\cup_{ s < \frac{1}{2}} ]C_{w,k}[_{\overline{0}, s}} K_{p,m,b})$.  We now find $Z_2$.  We first observe that $ \overline{\cup_{ s < \frac{1}{2}} ]C_{w,k}[_{\overline{0}, s}} \subseteq ]C^{w}_k[ \subseteq ]X^{w}_k[$, and that $\overline{]X^{w}_k[} = \mathrm{sp}^{-1}(X^{w}_k) \subseteq ]Y_{w,k}[$. Since $]X^{w}_k[ K_{p,m,b}$ is a finite union of translates of $]X^{w}_k[$, we can take $$Z_2 = (\pi_{HT, K_{p,m,b}}^{tor})^{-1} ( \overline{]X^{w}_k[} K_{p,m,b}).$$ 
 We observe that $Z_1^{c} \cap  (\pi_{HT, K_{p,m,0}}^{tor})^{-1}(]Y_{w,k}[)$ is a non-empty quasi-compact open, call it $W$. We can find a non-empty quasi-compact open $W'$ with the property that $\overline{W} \subset W$. 
We let $$Z' =]C_{w,k}[_{\overline{0}, \overline{0}}$$ and $Z = (\pi_{HT, K_{p,m,0}}^{tor})^{-1}(Z')$. 

\end{proof}

\subsubsection{The finite slope cohomology and the cohomology of dagger spaces} We   construct two exact triangles which give an interpretation of   finite slope cohomology for a given $w \in \WM$ as being quasi-isomorphic to the cone of  a map between finite slope cohomology of certain dagger spaces, or the cone of a map between  cohomology with compact support of dagger spaces. This will be useful for duality and vanishing. 

\begin{prop}\label{prop-dagger1}
Let $K_p=K_{p,m,b}$ for some $m\geq b\geq0$ with $m>0$, and fix $w\in\WM$ and $\kappa \in  X^\star(T^c)^{M_\mu,+}$.
\begin{enumerate} 
\item The Hecke algebra $\mathcal{H}_{p,m,b}^{-}$ acts on 
$\mathrm{R}\Gamma((\pi_{HT, K_p}^{tor})^{-1}(]Y_{w,k}[), \mathcal{V}_\kappa)$ and $\mathrm{R}\Gamma((\pi_{HT, K_p}^{tor})^{-1}(]\cup_{w' >w}Y_{w',k}[), \mathcal{V}_\kappa)$ and the ideal $\mathcal{H}_{p,m,b}^{--}$ acts by potent compact operators.  We have an exact triangle:
$$\mathrm{R}\Gamma_w(K^pK_p, \kappa)^{-,fs}  \rightarrow  \mathrm{R}\Gamma((\pi_{HT, K_p}^{tor})^{-1}(]Y_{w,k}[), \mathcal{V}_\kappa)^{-,fs}  $$ $$ \rightarrow
\mathrm{R}\Gamma((\pi_{HT, K_p}^{tor})^{-1}(]\cup_{w' >w}Y_{w',k}[), \mathcal{V}_\kappa)^{-,fs} \stackrel{+1}\rightarrow$$
\item The Hecke algebra $\mathcal{H}_{p,m,b}^{+}$ acts on 
$\mathrm{R}\Gamma_{(\pi_{HT, K_p}^{tor})^{-1}(\overline{]Y_{w,k}[})}( \mathcal{S}^{tor}_{K^pK_p,\Sigma}, \mathcal{V}_\kappa)$ and $\mathrm{R}\Gamma_{(\pi_{HT, K_p}^{tor})^{-1}(\overline{]\cup_{w' >w}Y_{w',k}[})}( \mathcal{S}^{tor}_{K^pK_p,\Sigma}, \mathcal{V}_\kappa)$ and the ideal $\mathcal{H}_{p,m,b}^{++}$ acts by potent compact operators.  We have an exact triangle:
$$\mathrm{R}\Gamma_{(\pi_{HT, K_p}^{tor})^{-1}(\overline{]\cup_{w' >w}Y_{w',k}[})}( \mathcal{S}^{tor}_{K^pK_p,\Sigma}, \mathcal{V}_\kappa)^{+,fs}   \rightarrow  \mathrm{R}\Gamma_{(\pi_{HT, K_p}^{tor})^{-1}(\overline{]Y_{w,k}[})}( \mathcal{S}^{tor}_{K^pK_p,\Sigma}, \mathcal{V}_\kappa)^{+,fs}$$ $$ \rightarrow
\mathrm{R}\Gamma_w(K^pK_p, \kappa)^{+,fs}  \stackrel{+1}\rightarrow$$
\end{enumerate}
\end{prop}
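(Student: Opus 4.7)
The plan is to treat both parts by the same strategy: produce the triangle at the level of cohomology with supports from a suitable decomposition (either of the open $]Y_{w,k}[$ or of the closed $\overline{]Y_{w,k}[}$) provided by lemma \ref{lem-decomposition-cells}, check that the Hecke algebra acts equivariantly by studying the dynamics of the correspondences on the flag variety, verify that $\mathcal{H}_{p,m,b}^{\pm\pm}$ acts by potent compact operators on each of the three terms, and then pass to the finite slope part using proposition \ref{prop-establishing-slope-decomp} (which is exact since the finite slope part is a direct summand). The existence of Hecke-equivariant Banach representatives of all the cohomology groups in $\mathrm{Pro}_{\N}(\mathcal{K}^{proj}(\mathbf{Ban}(F)))$ is handled by lemmas \ref{lem-represent-Banach}--\ref{lem-complex-frechet} and corollary \ref{coro-representing-mapT}.

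For part 1, I would use the decomposition of lemma \ref{lem-decomposition-cells}(2), which writes the open $]Y_{w,k}[$ as the disjoint union of the closed subset $]C_{w,k}[_{\overline{0},0}$ (closed by lemma \ref{lem-topological-description}(5)) and the open subset $\bigcup_{w'>w}]Y_{w',k}[$. Pulling back via $\pi^{tor}_{HT,K_p}$ and applying the open--closed triangle of local cohomology produces the triangle before taking finite slopes. For the Hecke action, lemma \ref{lemma-dynamical1}(2) gives $]Y_{w,k}[\cdot t\subseteq ]Y_{w,k}[$ for $t\in T^-$, and the same for each $]Y_{w',k}[$; combined with the $K_p$-invariance of Bruhat strata (since $K_p$ is contained in the Iwahori), this shows that the Hecke correspondence $[K_ptK_p]$ preserves both the ambient open and the sub-open, yielding an equivariant triangle. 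For potent compactness of $\mathcal{H}_{p,m,b}^{--}$ on each term, lemma \ref{lem-dynamic-corres}(7) provides $n\geq d$ such that $\overline{]Y_{w,k}[}\cdot(K_p tK_p)^n\subseteq ]Y_{w,k}[$ for $t\in T^{--}$; this strict contraction of the closure into the open feeds into lemma \ref{lem-criterium-compacity2} exactly as in the proof of theorem \ref{thm-finiteslopecoho}. The same dynamics handle $\bigcup_{w'>w}]Y_{w',k}[$, while potent compactness on $\mathrm{R}\Gamma_w(K^pK_p,\kappa)$ itself is already contained in theorem \ref{thm-finiteslopecoho}.

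Part 2 is symmetric. I would apply lemma \ref{lem-decomposition-cells}(4) to write $\overline{]Y_{w,k}[}=]C_{w,k}[_{0,\overline{0}}\sqcup\bigcup_{w'>w}\overline{]Y_{w',k}[}$ and use the triangle of section \ref{section-spectral-sequence} for the pair of closed subsets $Z_3\subseteq Z_2\subseteq\mathcal{S}^{tor}_{K^pK_p,\Sigma}$ with $Z_2=(\pi^{tor}_{HT,K_p})^{-1}(\overline{]Y_{w,k}[})$ and $Z_3=(\pi^{tor}_{HT,K_p})^{-1}(\bigcup_{w'>w}\overline{]Y_{w',k}[})$. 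The quotient term $\mathrm{R}\Gamma_{(\pi^{tor}_{HT,K_p})^{-1}(]C_{w,k}[_{0,\overline{0}})}(\mathcal{S}^{tor}_{K^pK_p,\Sigma}\setminus Z_3,\mathcal{V}_\kappa)$ has to be identified with the definition of $\mathrm{R}\Gamma_w(K^pK_p,\kappa)$ using the change-of-ambient-space property, which requires the chain $]C_{w,k}[_{0,\overline{0}}\subseteq ]X_{w,k}[\subseteq\mathcal{S}^{tor}_{K^pK_p,\Sigma}\setminus Z_3$: the left inclusion is lemma \ref{lem-topological-description}(4), while the right one uses lemma \ref{topological-lemma-easy}(2), which gives $]X_{w,k}[\cap\overline{]Y_{w',k}[}=\emptyset$ whenever $w'>w$ since then $\ell(w')>\ell(w)$. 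Hecke equivariance for $t\in T^+$ is obtained by the transpose dynamics: by continuity and $K_p$-invariance, $\overline{]Y_{w,k}[}\cdot t^{-1}K_p\subseteq\overline{]Y_{w,k}[}$, and similarly for $Z_3$, so the transpose correspondence $T^t$ preserves both $Z_2$ and $Z_3$. Potent compactness of $\mathcal{H}_{p,m,b}^{++}$ follows from a dynamical argument dual to part 1. The main technical obstacle in both parts is not any single step but the careful bookkeeping of auxiliary quasi-compact opens and closed subsets with quasi-compact complements required to invoke lemma \ref{lem-criterium-compacity2}, which is however an exact parallel of what was already done in the proof of theorem \ref{thm-finiteslopecoho}.
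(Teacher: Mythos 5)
Your proposal is correct and follows essentially the same route as the paper: both parts come from the decompositions of lemma \ref{lem-decomposition-cells} ((2) and (4) respectively), both use the open--closed and nested-closed triangles, both identify the ``quotient'' term with $\mathrm{R}\Gamma_w$ via a change-of-ambient-space argument based on lemma \ref{lem-topological-description} and lemma \ref{topological-lemma-easy}, and both obtain the Hecke equivariance and potent compactness from the dynamical statements of lemma \ref{lem-dynamic-corres} (you cite \ref{lemma-dynamical1}(2) where the paper cites the consequence \ref{lem-dynamic-corres}(2), but that is the same content). Your version is more explicit about the bookkeeping (e.g.\ which local-cohomology triangle is invoked, how lemma \ref{lem-criterium-compacity2} is applied) whereas the paper is terse, but there is no genuine divergence in method.
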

\begin{proof}  We have an exact triangle: 
$$\mathrm{R}\Gamma_{(\pi_{HT, K_p}^{tor})^{-1}(]C_{w,k}[_{\overline{0}, {0}})}((\pi_{HT, K_p}^{tor})^{-1}(]Y_{w,k}[), \mathcal{V}_\kappa)  \rightarrow  \mathrm{R}\Gamma((\pi_{HT, K_p}^{tor})^{-1}(]Y_{w,k}[), \mathcal{V}_\kappa)  $$ $$ \rightarrow
\mathrm{R}\Gamma((\pi_{HT, K_p}^{tor})^{-1}(]\cup_{w' >w}Y_{w',k}[), \mathcal{V}_\kappa) \stackrel{+1}\rightarrow$$
since  $]Y_{w,k}[ = ] \cup_{w'>w}Y_{w,k}[ \coprod ]C_{w,k}[_{\overline{0}, {0}}$ by lemma \ref{lem-decomposition-cells}. 
We also have an exact triangle 
$$\mathrm{R}\Gamma_{(\pi_{HT, K_p}^{tor})^{-1}(\overline{]\cup_{w' >w}Y_{w',k}[})}( \mathcal{S}^{tor}_{K^pK_p,\Sigma}, \mathcal{V}_\kappa)   \rightarrow  \mathrm{R}\Gamma_{(\pi_{HT, K_p}^{tor})^{-1}(\overline{]Y_{w,k}[})}( \mathcal{S}^{tor}_{K^pK_p,\Sigma}, \mathcal{V}_\kappa)$$ $$ \rightarrow \mathrm{R}\Gamma_{(\pi_{HT, K_p}^{tor})^{-1}(]C_{w,k}[_{{0}, \overline{0}})}((\pi_{HT, K_p}^{tor})^{-1}(]X_{w,k}[), \mathcal{V}_\kappa) 
 \stackrel{+1}\rightarrow$$
since  $\overline{]Y_{w,k}[ }=\overline{ ] \cup_{w'>w}Y_{w,k}[ } \coprod ]C_{w,k}[_{{0}, \overline{0}}$ by lemma \ref{lem-decomposition-cells}, and $]X_{w,k}[$ is an open neighborhood of  $]C_{w,k}[_{{0}, \overline{0}}$ in $\mathcal{FL} \setminus \overline{ ] \cup_{w'>w}Y_{w,k}[ }$. 
It therefore remains to prove that the relevant Hecke algebras act in an equivariant way on the triangle and to take the finite slope part. 
We observe that the cohomologies $\mathrm{R}\Gamma((\pi_{HT, K_p}^{tor})^{-1}(]Y_{w,k}[), \mathcal{V}_\kappa)$ and $\mathrm{R}\Gamma((\pi_{HT, K_p}^{tor})^{-1}( ] \cup_{w'>w}Y_{w',k}[), \mathcal{V}_\kappa)$ are represented by complexes in $\mathcal{K}^{proj}(\mathbf{Ban}(F))$. It follows from lemma \ref{lem-dynamic-corres}, (2) and (7) that $\mathcal{H}_{p,m',b}^{-}$ is acting and that $\mathcal{H}_{p,m',b}^{--}$ is acting via potent compact operators. 
We observe that the cohomologies $\mathrm{R}\Gamma_{(\pi_{HT, K_p}^{tor})^{-1}(\overline{]\cup_{w' >w}Y_{w',k}[})}( \mathcal{S}^{tor}_{K^pK_p,\Sigma}, \mathcal{V}_\kappa)$   and  $\mathrm{R}\Gamma_{(\pi_{HT, K_p}^{tor})^{-1}(\overline{]Y_{w,k}[})}( \mathcal{S}^{tor}_{K^pK_p,\Sigma}, \mathcal{V}_\kappa)$ are represented by complexes in $\mathrm{Pro}_{\N} (\mathcal{K}^{proj}(\mathbf{Ban}(F)))$ and it follows form lemma \ref{lem-dynamic-corres} (2) and (7) that $\mathcal{H}_{p,m',b}^{+}$ is acting and that $\mathcal{H}_{p,m',b}^{++}$ is acting via potent compact operators. 
\end{proof}

\begin{rem} One could also consider two other triangles: one for the $-$ theory, using cohomology with support in the closed subsets $(\pi_{HT, K_p}^{tor})^{-1}(\overline{]\cup_{w'<w}X_{w',k}[})$ and $(\pi_{HT, K_p}^{tor})^{-1}(\overline{]X_{w,k}[})$, and one for the $+$ theory using cohomology of the  open subsets $(\pi_{HT, K_p}^{tor})^{-1}({]\cup_{w'<w}X_{w',k}[})$ and $(\pi_{HT, K_p}^{tor})^{-1}({]X_{w,k}[})$. 
\end{rem}

By remark \ref{rem-dagger}, the cohomologies $$\mathrm{R}\Gamma_{(\pi_{HT, K_p}^{tor})^{-1}(\overline{]\cup_{w' >w}Y_{w',k}[})}( \mathcal{S}^{tor}_{K^pK_p,\Sigma}, \mathcal{V}_\kappa),~\textrm{and}~  \mathrm{R}\Gamma_{(\pi_{HT, K_p}^{tor})^{-1}(\overline{]Y_{w,k}[})}( \mathcal{S}^{tor}_{K^pK_p,\Sigma}, \mathcal{V}_\kappa)$$ are the cohomologies with compact support of the dagger spaces $$(\pi_{HT, K_p}^{tor})^{-1}({]\cup_{w' >w}Y_{w',k}[})^\dag ~\textrm{and}~(\pi_{HT, K_p}^{tor})^{-1}({]Y_{w,k}[})^\dag.$$ Similarly, 
$$ \mathrm{R}\Gamma((\pi_{HT, K_p}^{tor})^{-1}(\overline{]Y_{w,k}[}), \mathcal{V}_\kappa), ~\textrm{and}~ 
\mathrm{R}\Gamma((\pi_{HT, K_p}^{tor})^{-1}(\overline{]\cup_{w' >w}Y_{w',k}[}), \mathcal{V}_\kappa)$$ are the cohomologies of the dagger spaces $$(\pi_{HT, K_p}^{tor})^{-1}({]\cup_{w' >w}Y_{w',k}[})^\dag~\textrm{and}~(\pi_{HT, K_p}^{tor})^{-1}({]Y_{w,k}[})^\dag.$$

\begin{prop}\label{prop-dagger2} The Hecke algebra $\mathcal{H}_{p,m,b}^{-}$ acts on 
$\mathrm{R}\Gamma((\pi_{HT, K_p}^{tor})^{-1}(\overline{]Y_{w,k}[}), \mathcal{V}_\kappa)$ and $\mathrm{R}\Gamma((\pi_{HT, K_p}^{tor})^{-1}(\overline{]\cup_{w' >w}Y_{w',k}[}), \mathcal{V}_\kappa)$ and the ideal $\mathcal{H}_{p,m,b}^{--}$ acts by potent compact operators.
Moreover, the morphisms: 
\begin{eqnarray*}
 \mathrm{R}\Gamma((\pi_{HT, K_p}^{tor})^{-1}(\overline{]Y_{w,k}[}), \mathcal{V}_\kappa) &\rightarrow &\mathrm{R}\Gamma((\pi_{HT, K_p}^{tor})^{-1}({]Y_{w,k}[}), \mathcal{V}_\kappa) \\
\mathrm{R}\Gamma((\pi_{HT, K_p}^{tor})^{-1}(\overline{]\cup_{w' >w}Y_{w',k}[}), \mathcal{V}_\kappa) &\rightarrow& \mathrm{R}\Gamma((\pi_{HT, K_p}^{tor})^{-1}({]\cup_{w' >w}Y_{w',k}[}), \mathcal{V}_\kappa)
\end{eqnarray*}
induce isomorphisms on the finite slope part:
\begin{eqnarray*}
 \mathrm{R}\Gamma((\pi_{HT, K_p}^{tor})^{-1}(\overline{]Y_{w,k}[}), \mathcal{V}_\kappa)^{-,fs} &\stackrel{\sim}\rightarrow &\mathrm{R}\Gamma((\pi_{HT, K_p}^{tor})^{-1}({]Y_{w,k}[}), \mathcal{V}_\kappa)^{-,fs} \\
\mathrm{R}\Gamma((\pi_{HT, K_p}^{tor})^{-1}(\overline{]\cup_{w' >w}Y_{w',k}[}), \mathcal{V}_\kappa)^{-,fs} & \stackrel{\sim}\rightarrow& \mathrm{R}\Gamma((\pi_{HT, K_p}^{tor})^{-1}({]\cup_{w' >w}Y_{w',k}[}), \mathcal{V}_\kappa)^{-,fs}
\end{eqnarray*}
\end{prop}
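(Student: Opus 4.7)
The plan rests on the dynamical properties of Lemma \ref{lem-dynamic-corres}. The key inputs are: (a) for $t \in T^-$, $]Y_{w,k}[ \cdot [K_ptK_p] \subseteq ]Y_{w,k}[$, which by continuity of the correspondence action (closures of pro-constructibles being the sets of specializations) gives $\overline{]Y_{w,k}[} \cdot [K_ptK_p] \subseteq \overline{]Y_{w,k}[}$; and (b) for $t \in T^{--}$, $\overline{]Y_{w,k}[} \cdot [K_ptK_p]^n \subseteq ]Y_{w,k}[$ for $n \geq d$. Both statements hold identically with $]Y_{w,k}[$ replaced by $]\cup_{w'>w} Y_{w',k}[$.

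For the existence of the Hecke action and potent compactness, I would represent $\mathrm{R}\Gamma((\pi_{HT,K_p}^{tor})^{-1}(\overline{]Y_{w,k}[}), \mathcal{V}_\kappa)$ as an object of $\mathrm{Pro}_{\mathbb{N}}(\mathcal{K}^{proj}(\mathbf{Ban}(F)))$ by choosing a decreasing sequence of quasi-compact open neighborhoods $W_n \subset \mathcal{FL}/K_p$ with $\bigcap_n W_n = \overline{]Y_{w,k}[}$, and setting $\mathcal{V}_n := (\pi_{HT,K_p}^{tor})^{-1}(W_n)$; Lemma \ref{lem-complex-frechet} then provides the pro-representation. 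Using (a), one arranges by choice of $W_n$ that for each $t \in T^-$ the correspondence sends $W_n$ into itself, giving compatible actions of $\mathcal{H}_{p,m,b}^{-}$ on the $\mathrm{R}\Gamma(\mathcal{V}_n)$ via section \ref{section-action-sheaf}, which pass to the limit. For $t \in T^{--}$ with $\min(t)$ large, (b) together with the fact that $]Y_{w,k}[$ is strictly contained in $W_n$ allows one to arrange $W_n \cdot [K_ptK_p]^N$ to be contained in a quasi-compact open relatively compactly contained in $W_n$, whence Lemma \ref{lem-criterium-compacity2} yields compactness of the resulting operator. The argument for $\overline{]\cup_{w'>w} Y_{w',k}[}$ is completely parallel.

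For the isomorphism on finite slope parts, I would use (b) in the sharpened form: for $t \in T^{--}$ with $\min(t)$ large enough, $\overline{]Y_{w,k}[} \cdot T \subseteq ]Y_{w,k}[$ where $T = [K_p t K_p]$ (since $[K_ptK_p]^n = [K_pt^nK_p]$ by Lemma \ref{lemma-casselman}, one may replace $t$ by a high power). Writing $\mathcal{U} = (\pi_{HT,K_p}^{tor})^{-1}(]Y_{w,k}[)$ and $\overline{\mathcal{U}} = (\pi_{HT,K_p}^{tor})^{-1}(\overline{]Y_{w,k}[})$, the inclusion $p_1^{-1}(\overline{\mathcal{U}}) \subseteq p_2^{-1}(\mathcal{U})$ on the Hecke correspondence permits the definition of an auxiliary operator
\[
T_1 : \mathrm{R}\Gamma(\mathcal{U}, \mathcal{V}_\kappa) \xrightarrow{p_2^*} \mathrm{R}\Gamma(p_2^{-1}(\mathcal{U}), p_2^*\mathcal{V}_\kappa) \xrightarrow{\mathrm{res}} \mathrm{R}\Gamma(p_1^{-1}(\overline{\mathcal{U}}), p_1^*\mathcal{V}_\kappa) \xrightarrow{\mathrm{tr}_{p_1}} \mathrm{R}\Gamma(\overline{\mathcal{U}}, \mathcal{V}_\kappa).
\]
By naturality of trace, pullback and restriction, one verifies the two identities $T|_{\mathrm{R}\Gamma(\mathcal{U})} = \mathrm{res} \circ T_1$ and $T|_{\mathrm{R}\Gamma(\overline{\mathcal{U}})} = T_1 \circ \mathrm{res}$, where $\mathrm{res}$ denotes restriction from $\overline{\mathcal{U}}$ to $\mathcal{U}$. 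On the finite slope parts (where $T$ is invertible, and where by Lemma \ref{lem-indep-t} the finite slope direct factor is independent of the chosen $t \in T^{--}$), these exhibit $T^{-1} T_1 = T_1 T^{-1}$ as a two-sided inverse to $\mathrm{res}$, giving the desired isomorphism. The argument for $\overline{]\cup_{w'>w} Y_{w',k}[}$ proceeds identically.

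The main technical obstacle will be verifying the two factorization identities $T = \mathrm{res} \circ T_1 = T_1 \circ \mathrm{res}$: these require a careful diagram chase reconciling the standard presentation of $T$ as $\mathrm{tr}_{p_1} \circ T_{\mathrm{sheaf}} \circ \mathrm{res} \circ p_2^*$ on the respective cohomologies with the auxiliary operator $T_1$, using that $p_2^{-1}(\mathcal{U}) \supseteq p_1^{-1}(\overline{\mathcal{U}}) \supseteq p_1^{-1}(\mathcal{U})$ and that restriction commutes with pullback and with trace. A secondary bookkeeping issue is that in general the projections $p_1, p_2$ between toroidal compactifications are only finite flat after suitable choice of cone decomposition, but the support conditions here are ``well-positioned'' near the boundary and the cohomologies in question are independent of the cone decomposition, so this causes no trouble.
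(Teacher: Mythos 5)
Your overall strategy tracks the paper's proof, which indeed appeals to Lemma \ref{lem-dynamic-corres} parts (2) and (7) together with the diamond argument of Proposition \ref{last-prop-diamond}; the factorization $T = \mathrm{res} \circ T_1 = T_1 \circ \mathrm{res}$ you aim to verify is exactly the diamond mechanism, and Lemma \ref{lem-dynamic-corres} (7) is the sharpened form of your (b) you need for it. However there is a genuine error in your representation of the cohomology.

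You propose to exhibit $\mathrm{R}\Gamma((\pi_{HT,K_p}^{tor})^{-1}(\overline{]Y_{w,k}[}), \mathcal{V}_\kappa)$ as an object of $\mathrm{Pro}_{\N}(\mathcal{K}^{proj}(\mathbf{Ban}(F)))$ by taking a \emph{limit} over a decreasing sequence of quasi-compact open neighborhoods $W_n$ with $\bigcap_n W_n = \overline{]Y_{w,k}[}$. This is backwards: for a closed subset $\mathcal{Z}$ which is the closure of a quasi-compact open, one has $\mathrm{R}\Gamma(\mathcal{Z}, \iota^{-1}\mathscr{F}) = \colim_{\mathcal{Z}\subset V}\mathrm{R}\Gamma(V,\mathscr{F})$, i.e.\ a filtered \emph{colimit} of the cohomologies of shrinking quasi-compact open neighborhoods (this is the dagger-space interpretation spelled out in Remark \ref{rem-dagger}). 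Accordingly the correct ambient category is $\mathrm{Ind}_{\N}(\mathcal{K}^{proj}(\mathbf{Ban}(F)))$, as the paper's proof states, not $\mathrm{Pro}_{\N}$. Your citation of Lemma \ref{lem-complex-frechet} is also the wrong tool for the same reason: that lemma handles quasi-Stein opens and cohomology with support, and produces pro-objects; here each quasi-compact open $W_n$ already gives an object of $\mathcal{K}^{proj}(\mathbf{Ban}(F))$ by Lemma \ref{lem-represent-Banach}, and the colimit over $n$ is the Ind-representation. Once this is corrected, the rest of your argument goes through essentially unchanged: the Hecke action and potent compactness follow from Lemma \ref{lem-dynamic-corres} (2) and (7) (the factorization criterion for potent compactness on Ind-objects being Lemma \ref{lem-potent-compact-factor2} rather than Lemma \ref{lem-potent-compact-factor}), and the diamond factorization identities give the isomorphism on finite slope parts.
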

\begin{proof} The cohomologies $\mathrm{R}\Gamma((\pi_{HT, K_p}^{tor})^{-1}(\overline{]Y_{w,k}[}), \mathcal{V}_\kappa)$ and $\mathrm{R}\Gamma((\pi_{HT, K_p}^{tor})^{-1}(\overline{]\cup_{w' >w}Y_{w',k}[}), \mathcal{V}_\kappa)$ are represented by complexes in $\mathrm{Ind}_{\N}(\mathcal{K}^{proj}(\mathbf{Ban}(F)))$. The Hecke algebra $\mathcal{H}_{p,m',b}^{-}$ acts and the operators are potent compact as a consequence of lemma \ref{lem-dynamic-corres} (2) and (7). The same lemma shows the quasi-isomorphism on the finite slope part. 
\end{proof}

\begin{coro}\label{coro-dagger3} We have an exact triangle:
$$\mathrm{R}\Gamma_w(K^pK_p, \kappa)^{-,fs}  \rightarrow  \mathrm{R}\Gamma((\pi_{HT, K_p}^{tor})^{-1}(\overline{]Y_{w,k}[}), \mathcal{V}_\kappa)^{-,fs}  $$ $$ \rightarrow
\mathrm{R}\Gamma((\pi_{HT, K_p}^{tor})^{-1}(\overline{]\cup_{w' >w}Y_{w',k}[}), \mathcal{V}_\kappa)^{-,fs} \stackrel{+1}\rightarrow$$
\end{coro}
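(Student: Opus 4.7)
The plan is to obtain the triangle by combining Proposition~\ref{prop-dagger1}(1) with the quasi-isomorphisms provided by Proposition~\ref{prop-dagger2}. Concretely, Proposition~\ref{prop-dagger1}(1) already yields an exact triangle
\[
\mathrm{R}\Gamma_w(K^pK_p,\kappa)^{-,fs}\to \mathrm{R}\Gamma((\pi_{HT,K_p}^{tor})^{-1}(]Y_{w,k}[),\mathcal{V}_\kappa)^{-,fs}\to \mathrm{R}\Gamma((\pi_{HT,K_p}^{tor})^{-1}(]\cup_{w'>w}Y_{w',k}[),\mathcal{V}_\kappa)^{-,fs}\stackrel{+1}{\to}
\]
in which the middle and right terms are cohomologies of the Stein spaces (not their closures). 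Proposition~\ref{prop-dagger2} then identifies these two terms, after passing to the finite slope part, with the cohomologies of the corresponding closed neighborhoods $\overline{]Y_{w,k}[}$ and $\overline{]\cup_{w'>w}Y_{w',k}[}$ (equivalently, with the cohomologies of the attached dagger spaces in the sense of Remark~\ref{rem-dagger}). Substituting these identifications into the triangle yields the claimed exact triangle.

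The only point requiring verification is that the restriction map appearing in the triangle intertwines, in a Hecke-equivariant way, with the comparison maps of Proposition~\ref{prop-dagger2}. First, I would note that the inclusion $\overline{]\cup_{w'>w}Y_{w',k}[}\subseteq \overline{]Y_{w,k}[}$ is an inclusion of closed neighborhoods inside the pair of opens $]\cup_{w'>w}Y_{w',k}[\subseteq ]Y_{w,k}[$, so there is a tautological commutative square of restriction maps
\[
\xymatrix{
\mathrm{R}\Gamma((\pi_{HT,K_p}^{tor})^{-1}(\overline{]Y_{w,k}[}),\mathcal{V}_\kappa)\ar[r]\ar[d] & \mathrm{R}\Gamma((\pi_{HT,K_p}^{tor})^{-1}(\overline{]\cup_{w'>w}Y_{w',k}[}),\mathcal{V}_\kappa)\ar[d]\\
\mathrm{R}\Gamma((\pi_{HT,K_p}^{tor})^{-1}(]Y_{w,k}[),\mathcal{V}_\kappa)\ar[r] & \mathrm{R}\Gamma((\pi_{HT,K_p}^{tor})^{-1}(]\cup_{w'>w}Y_{w',k}[),\mathcal{V}_\kappa)
}
\]
which is clearly equivariant for the Hecke action of $\mathcal{H}_{p,m,b}^{-}$ constructed on each term in Propositions~\ref{prop-dagger1} and \ref{prop-dagger2}, since each arrow is built from pullback along the evident inclusions of topological spaces and these inclusions commute with the standard construction of Section~\ref{section-action-sheaf}.

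After applying the $(-,fs)$-functor, the vertical arrows become quasi-isomorphisms by Proposition~\ref{prop-dagger2}. Using that a morphism of exact triangles two of whose vertical arrows are quasi-isomorphisms forces the third to be a quasi-isomorphism (here interpreted as the identity functor on the shared left-hand term $\mathrm{R}\Gamma_w(K^pK_p,\kappa)^{-,fs}$), we transport the triangle of Proposition~\ref{prop-dagger1}(1) to the triangle of the corollary, with Hecke-equivariant connecting morphism. The main (and only real) obstacle here is bookkeeping: checking that each Hecke operator $[K_p t K_p]$ with $t\in T^-$ genuinely preserves the pairs of support conditions involving the closed sets $\overline{]Y_{w,k}[}$ and $\overline{]\cup_{w'>w}Y_{w',k}[}$. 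This is, however, exactly what was used in the proof of Proposition~\ref{prop-dagger2}, via lemma~\ref{lem-dynamic-corres}(2) and (7), so no additional dynamical input is needed and the proof reduces to a formal diagram chase.
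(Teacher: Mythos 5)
Your proposal is correct and follows exactly the route the paper takes: the corollary is deduced by combining the exact triangle of Proposition~\ref{prop-dagger1}(1) with the finite-slope quasi-isomorphisms of Proposition~\ref{prop-dagger2}. The extra bookkeeping you supply about Hecke-equivariance of the restriction square is a reasonable amplification of what the paper leaves implicit.
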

\begin{proof} This is a consequence of propositions \ref{prop-dagger1} and \ref{prop-dagger2}.
\end{proof}
\subsubsection{Change of support condition} It is important to us that the cohomology $\mathrm{R}\Gamma_w(K^pK_p, \kappa)^{\pm,fs}$ can actually be realized as the finite slope part of cohomology groups with different support conditions.  The following definition is motivated by Lemma \ref{lem-complex-frechet} and the discussion of Section \ref{subsection-analytic-continuation}, especially Corollary \ref{coro-infinitediamond}.  We start by fixing an element $t \in T^{++}$ such that $\min(t) \geq 1$. We let $C = \mathrm{max}(t)$. 

\begin{defi}\label{defi-support-condition} Let $m'\geq b\geq0$ with $m'>0$.  A $(+,w, K_{p,m',b})$-allowed support is a pair $(\mathcal{U}, \mathcal{Z})$  where:

\begin{enumerate}
\item $\mathcal{U}$ is an open subset of  $\mathcal{S}_{K^pK_{p,m',b}, \Sigma}^{tor}$ which is a finite union of quasi-Stein open subsets. 
\item $\mathcal{Z}$ is a closed subset of  $\mathcal{S}_{K^pK_{p,m',b}, \Sigma}^{tor}$ whose complement is a finite union of quasi-Stein open subsets.
\item   There exists $m,n,s \in \ZZ_{\geq 0}$ such that:
 $$(\pi_{HT, K_{p, m',b}}^{tor})^{-1}(]C_{w,k}[_{m, \overline{0}} K_{p,m',b} \cap ]C_{w,k}[_{0, \overline{n +s}} K_{p,m',b}) \subseteq \mathcal{Z} \cap \mathcal{U}
  \subseteq (\pi_{HT, K_{p, m',b}}^{tor})^{-1}(]C_{w,k}[_{{0}, C\overline{n}}K_{p,m',b}),$$
 $$(\pi_{HT, K_{p, m',b}}^{tor})^{-1}(]C_{w,k}[_{m+s, \overline{0}} K_{p,m',b} \cap ]C_{w,k}[_{0, \overline{n}} K_{p,m',b}) \subseteq \mathcal{U} \subseteq (\pi_{HT, K_{p, m',b}}^{tor})^{-1}(]C_{w,k}[_{Cm,-1} K_{p,m',b}).$$
 
 \end{enumerate}

Let $m'\geq b\geq0$ with $m'>0$. A $(-,w, K_{p,m',b})$-allowed support is a pair $(\mathcal{U}, \mathcal{Z})$  where:
\begin{enumerate}
\item $\mathcal{U}$ is an open subset of  $\mathcal{S}_{K^pK_{p,m',b}, \Sigma}^{tor}$ which is a finite union of quasi-Stein open subsets. 
\item $\mathcal{Z}$ is a closed subset of  $\mathcal{S}_{K^pK_{p,m',b}, \Sigma}^{tor}$ whose complement is a finite union of quasi-Stein open subsets.
\item There exists $m,n,s \in \ZZ_{\geq 0}$ such that:
$$(\pi_{HT, K_{p,m',b}}^{tor})^{-1}(]C_{w,k}[_{\overline{m+s}, {0} } K_{p,m',b} \cap ]C_{w,k}[_{\overline{0}, n} K_{p,m',b}) \subseteq \mathcal{Z} \cap \mathcal{U} \subseteq (\pi_{HT, K_{p,m',b}}^{tor})^{-1}( ]C_{w,k}[_{C\overline{m}, {0}} K_{p,m',b}),$$
 $$(\pi_{HT, K_{p,m',b}}^{tor})^{-1}(]C_{w,k}[_{\overline{m}, 0} K_{p,m',b} \cap ]C_{w,k}[_{\overline{0}, n+s}  K_{p,m',b} ) \subseteq \mathcal{U} \subseteq (\pi_{HT, K_{p,m',b}}^{tor})^{-1}(]C_{w,k}[_{-1, Cn} K_{p,m',b}).$$
 \end{enumerate}
\end{defi}

 \begin{ex}\label{example-allowed-support} For any $m'\geq b\geq 0$ with $m'>0$ and any $s\geq 0$, the pair $$ \mathcal{U} = (\pi_{HT, K_{p,m',b}}^{tor})^{-1}(]C_{w,k}[_{s,-1} K_{p,m',b}),\quad \mathcal{Z} = (\pi_{HT, K_{p,m',b}}^{tor})^{-1}(]C_{w,k}[_{\overline{0}, \overline{s}} K_{p,m',b})$$ is a $(+,w, K_{p,m',b})$-allowed support (since $]C_{w,k}[_{s,-1} K_{p,m',b} \cap ]C_{w,k}[_{\overline{0}, \overline{s}} K_{p,m',b} \subseteq ]X_{w,k}[ \cap \overline{]Y_{w,k}[} = ]C_{w,k}[_{0,\overline{0}}$).  Similarly, the pair $$\mathcal{U} = (\pi_{HT, K_{p,m',b}}^{tor})^{-1}(]C_{w,k}[_{-1,s} K_{p,m',b}),\quad \mathcal{Z} = (\pi_{HT, K_{p,m',b}}^{tor})^{-1}(]C_{w,k}[_{\overline{s}, \overline{0}} K_{p,m',b})$$ is a $(-,w, K_{p,m',b})$-allowed support.
 \end{ex}
 
\begin{thm}\label{thm-finiteslopesupport}Let $m'\geq b\geq0$ with $m'>0$, and fix $w\in\WM$ and $\kappa \in  X^\star(T^c)^{M_\mu,+}$. \begin{enumerate} 
\item  Let $(\mathcal{U}, \mathcal{Z})$ be a $(+,w,K_{p,m',b})$-allowed support condition.  Then $\mathrm{R}\Gamma_{\mathcal{Z} \cap \mathcal{U}}(\mathcal{U}, \mathcal{V}_\kappa)$ and $\mathrm{R}\Gamma_{\mathcal{Z} \cap \mathcal{U}}(\mathcal{U}, \mathcal{V}_\kappa(-D))$ are objects of $\mathrm{Pro}_{\N}(\mathcal{K}^{proj}(\mathbf{Ban}(F)))$ and carry a canonically defined, potent compact action of $T^s$ where $T = [K_{p,m',b} t K_{p,m',b}]$ be the corresponding Hecke operator. .  Moreover there are canonical isomorphisms
$$\mathrm{R}\Gamma_w(K^pK_{p,m',b},\kappa)^{+,fs}\simeq \mathrm{R}\Gamma_{\mathcal{Z} \cap \mathcal{U}}(\mathcal{U}, \mathcal{V}_\kappa)^{T^s-fs}$$
and
$$\mathrm{R}\Gamma_w(K^pK_{p,m',b},\kappa,cusp)^{+,fs}\simeq \mathrm{R}\Gamma_{\mathcal{Z} \cap \mathcal{U}}(\mathcal{U}, \mathcal{V}_\kappa(-D))^{T^s-fs}.$$

\item  Let $(\mathcal{U}, \mathcal{Z})$ be a $(-,w,K_{p,m',b})$-allowed support condition.  Then $\mathrm{R}\Gamma_{\mathcal{Z} \cap \mathcal{U}}(\mathcal{U}, \mathcal{V}_\kappa)$ and $\mathrm{R}\Gamma_{\mathcal{Z} \cap \mathcal{U}}(\mathcal{U}, \mathcal{V}_\kappa(-D))$ are objects of $\mathrm{Pro}_{\N}(\mathcal{K}^{proj}(\mathbf{Ban}(F)))$ and carry a canonically defined, potent compact action of $T^s$ where $T = [K_{p,m',b} t K_{p,m',b}]$.  Moreover there are canonical isomorphisms
$$\mathrm{R}\Gamma_w(K^pK_{p,m',b},\kappa)^{-,fs}\simeq \mathrm{R}\Gamma_{\mathcal{Z} \cap \mathcal{U}}(\mathcal{U}, \mathcal{V}_\kappa)^{T^s-fs}$$
and
$$\mathrm{R}\Gamma_w(K^pK_{p,m',b},\kappa,cusp)^{-,fs}\simeq \mathrm{R}\Gamma_{\mathcal{Z} \cap \mathcal{U}}(\mathcal{U}, \mathcal{V}_\kappa(-D))^{T^s-fs}.$$
 \end{enumerate}
\end{thm}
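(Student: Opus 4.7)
The plan is to reduce both assertions to the abstract infinite-diamond formalism of Corollary \ref{coro-infinitediamond}, applied to the Hecke correspondence $T = [K_{p,m',b} t K_{p,m',b}]$ acting on $\mathcal{S}_{K^p K_{p,m',b},\Sigma}^{tor}$. Since $\mathcal{U}$ and the complement of $\mathcal{Z}$ are, by hypothesis, finite unions of quasi-Stein spaces, Lemma \ref{lem-complex-frechet} places $\mathrm{R}\Gamma_{\mathcal{Z}\cap\mathcal{U}}(\mathcal{U}, \mathcal{V}_\kappa)$ and $\mathrm{R}\Gamma_{\mathcal{Z}\cap\mathcal{U}}(\mathcal{U}, \mathcal{V}_\kappa(-D))$ in $\mathrm{Pro}_{\mathbb{N}}(\mathcal{K}^{proj}(\mathbf{Ban}(F)))$.

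The main step is to compare $(\mathcal{U}, \mathcal{Z})$ with the standard support pair
$(\mathcal{U}_0, \mathcal{Z}_0) = ((\pi_{HT, K_p}^{tor})^{-1}(]X_{w,k}[), (\pi_{HT, K_p}^{tor})^{-1}(\overline{]Y_{w,k}[}))$ that appeared in Section \ref{section-first-defin} to define $\mathrm{R}\Gamma_w(K^pK_{p,m',b},\kappa)^{+,fs}$ (symmetrically, using $]Y_{w,k}[$ and $\overline{]X_{w,k}[}$ for the $-$ case). Iterating Lemma \ref{lem-dynamic-corres} then sandwiches $T^m(\mathcal{U}_0)$ and $(T^t)^n(\mathcal{Z}_0)$ between $K_p$-invariant tubes of the form $(\pi_{HT,K_p}^{tor})^{-1}(]C_{w,k}[_{a,\overline{b}} K_p)$ whose parameters $a, b$ grow linearly in $m, n$ with slopes between $\min(t)$ and $\max(t) = C$, modulo contributions from neighboring Schubert strata $\bigcup_{w'<w}]X_{w',k}[$ (or their closures in the $-$ case). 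The upper bounds $\mathcal{U} \subseteq (\pi_{HT,K_p}^{tor})^{-1}(]C_{w,k}[_{Cm,-1}K_p)$ and $\mathcal{Z}\cap\mathcal{U} \subseteq (\pi_{HT,K_p}^{tor})^{-1}(]C_{w,k}[_{0,C\overline{n}}K_p)$ in Definition \ref{defi-support-condition} constrain $(\mathcal{U}, \mathcal{Z})$ to lie in a tube around $C_{w,k}$ that is disjoint from the stray strata, while the lower bounds are matched with $T^{m+s}(\mathcal{U}_0)\cap (T^t)^n(\mathcal{Z}_0)$ and $(T^t)^{n+s}(\mathcal{Z}_0) \cap T^m(\mathcal{U}_0)$. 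This verifies the inclusion hypotheses of Corollary \ref{coro-infinitediamond} with slack $s$, and the same verification applies verbatim to $\mathcal{V}_\kappa(-D)$ since the support conditions are identical.

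The compactness of a sufficiently deep restriction-corestriction transition in the diamond, needed to invoke Corollary \ref{coro-infinitediamond}, comes from Lemma \ref{lem-criterium-compacity2}: the extra slack $s \geq 1$ built into Definition \ref{defi-support-condition}, together with the strict dynamical contraction produced by iterating an element of $T^{++}$ (resp.\ $T^{--}$), yields at some stage of the diamond a strict nesting of tubes satisfying the closure/interior conditions of that lemma. With the infinite diamond in place, Corollary \ref{coro-infinitediamond} provides a canonical potent-compact action of $T^s$ on $\mathrm{R}\Gamma_{\mathcal{Z}\cap\mathcal{U}}(\mathcal{U}, \mathcal{V}_\kappa)$ together with a canonical quasi-isomorphism $\mathrm{R}\Gamma_{\mathcal{Z}\cap\mathcal{U}}(\mathcal{U}, \mathcal{V}_\kappa)^{T^s-fs} \simeq \mathrm{R}\Gamma_w(K^pK_{p,m',b},\kappa)^{+,fs}$, and likewise for the $-$ and cuspidal variants; the independence of the finite slope part from the choice of $t \in T^{++}$ is guaranteed by Lemma \ref{lem-indep-t}. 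The main obstacle is the geometric bookkeeping of how the Hecke orbits of the decorated cells $]C_{w,k}[_{m,n}$ and their partial closures $]C_{w,k}[_{\overline{m},n}$, $]C_{w,k}[_{m,\overline{n}}$ nest under $T$ and $T^t$; once those inclusions are carefully extracted from Lemma \ref{lem-dynamic-corres}, everything else is a direct application of the abstract machinery of Section \ref{section-infinite-diamond}.
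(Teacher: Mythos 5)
Your proposal is correct and follows essentially the same route as the paper: Lemma~\ref{lem-complex-frechet} for membership in $\mathrm{Pro}_{\N}(\mathcal{K}^{proj}(\mathbf{Ban}(F)))$, then Corollary~\ref{coro-infinitediamond} applied to $U_m=(\pi^{tor}_{HT})^{-1}(T^m(]X_{w,k}[))$ and $Z_n=(\pi^{tor}_{HT})^{-1}((T^t)^n(\overline{]Y_{w,k}[}))$, with Lemma~\ref{lem-dynamic-corres} supplying the sandwiching inclusions matched against Definition~\ref{defi-support-condition}. The only slight imprecision is your attribution of the compactness input: the needed compact restriction-corestriction transition was already established once for the basic diamond in the proof of Theorem~\ref{thm-finiteslopecoho} (via Lemma~\ref{lem-criterium-compacity2}), whereas the slack $s$ in the definition of allowed supports is used to verify the inclusion hypotheses of Corollary~\ref{coro-infinitediamond}, not to re-derive compactness at a deep stage.
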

\begin{proof}
We will only prove the first point for non cuspidal cohomology.  It follows from lemma \ref{lem-complex-frechet} that $\mathrm{R}\Gamma_{\mathcal{Z} \cap \mathcal{U}}(\mathcal{U}, \mathcal{V}_\kappa)$ is an object of $\mathrm{Pro}_{\N}(\mathcal{K}^{proj}(\mathbf{Ban}(F)))$.

For the rest of the statement we will use the infinite diamond construction of section \ref{section-infinite-diamond} and corollary \ref{coro-infinitediamond}.  For $m,n\in\ZZ_{\geq 0}$ we let $U_m = (\pi^{tor}_{HT, K_{p,m',b}})^{-1} (T^m(]X_{w,k}[))$ and $Z_n = (\pi^{tor}_{HT, K_{p,m',b}})^{-1}((T^t)^n (\overline{]Y_{w,k}[}))$.  Since we have already checked in the proof of theorem \ref{thm-finiteslopecoho} that the ``restriction-corestriction'' map
$$\mathrm{R\Gamma}_{U_0 \cap Z_1} ( U_0, \mathcal{V}_\kappa) \rightarrow \mathrm{R\Gamma}_{U_1 \cap Z_0} ( U_1, \mathcal{V}_\kappa)$$
is compact, it follows that the conclusion of corollary \ref{coro-infinitediamond} hold.

By lemma \ref{lem-dynamic-corres} we have:
\begin{eqnarray*} U_m \cap Z_{n+s}& \subseteq&(\pi_{HT, K_{p, m',b}}^{tor})^{-1}( ]C_{w,k}[_{{m}, \overline{0}} K_{p, m',b}\cap ]C_{w,k}[_{0, \overline{n+s}} K_{p, m',b})\\
 U_{m+s} \cap Z_{n} & \subseteq &(\pi^{tor}_{HT, K_{p, m',b}})^{-1}( ]C_{w,k}[_{{m+s}, \overline{0}} K_{p, m',b}\cap ]C_{w,k}[_{0, \overline{n}} K_{p, m',b}) \\
 (\pi_{HT, K_{p, m',b}}^{tor})^{-1}( ]C_{w,k}[_{{m}C, -1} K_{p, m',b}) & \subseteq & U_m \\
(\pi_{HT, K_{p, m',b}}^{tor})^{-1}( ]C_{w,k}[_{0, \overline{n}C} K_{p, m',b}) & \subseteq & Z_n.
 \end{eqnarray*}
 
It follows from corollary \ref{coro-infinitediamond} that if $(\mathcal{U}, \mathcal{Z})$ is a $(+,w,K_{p,m',b})$-allowed support condition, then $\mathrm{R}\Gamma_{\mathcal{U} \cap \mathcal{Z}}( \mathcal{U}, \mathcal{V}_\kappa)$ carries a canonical action of $T^s$ and we have canonical quasi-isomorphisms
$$\mathrm{R}\Gamma_{\mathcal{U} \cap \mathcal{Z}}( \mathcal{U}, \mathcal{V}_\kappa)^{T^s-fs}\simeq\mathrm{R}\Gamma_{U\cap Z}(U,\mathcal{V}_\kappa)^{T-fs}\simeq\mathrm{R}\Gamma_w(K^pK_{p,m',b},\kappa)^{+,fs}.$$
\end{proof}
 
\subsubsection{Change of level}Now we investigate how the finite slope cohomologies $\mathrm{R}\Gamma_w(K^pK_p,\kappa)^{\pm,fs}$ and $\mathrm{R}\Gamma_w(K^pK_p,\kappa,cusp)^{\pm,fs}$ vary with the level $K_p$.
\begin{thm}\label{thm-change-of-level}
\begin{enumerate}
\item For all $w\in\WM$ and all $m' \geq m\geq b\geq0$ with $m>0$, the pullback map 
$$ \mathrm{R}\Gamma_w(K^pK_{p,m,b},\kappa)^{+, fs}  \rightarrow  \mathrm{R}\Gamma_w(K^pK_{p,m',b},\kappa)^{+, fs}$$ 
and the trace map
$$ \mathrm{R}\Gamma_w(K^pK_{p,m',b},\kappa)^{-, fs}  \rightarrow  \mathrm{R}\Gamma_w(K^pK_{p,m,b},\kappa)^{-, fs}$$ are quasi-isomorphisms, compatible with the action of $\qq[T(\qq_p)/T_b]$, and the same statements are true for cuspidal cohomology.
\item For all $w\in\WM$ and all $m\geq b'\geq b\geq 0$ with $m>0$, the pullback map
$$ \mathrm{R}\Gamma_w(K^pK_{p,m,b},\kappa)^{+, fs}  \rightarrow  (\mathrm{R}\Gamma_w(K^pK_{p,m,b'},\kappa)^{+, fs})^{T_b/T_{b'}}$$
and the trace map
$$ (\mathrm{R}\Gamma_w(K^pK_{p,m,b'},\kappa)^{-, fs})^{T_b/T_{b'}}  \rightarrow  \mathrm{R}\Gamma_w(K^pK_{p,m,b},\kappa)^{-, fs}$$ are quasi-isomorphisms, compatible with the action of $\qq[T(\qq_p)/T_b]$, and the same statements are true for cuspidal cohomology. 
\end{enumerate}
\end{thm}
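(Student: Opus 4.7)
\emph{Proof plan.} The plan is to mirror, in the support/finite-slope setting, the proof of the classical analog (the corollary of section \ref{sect-finite-slope-classical-coho}). Part (1) will follow from the Hecke algebra factorizations of lemma \ref{lemma-relations-Hecke-algebra} once those factorizations are lifted to the level of cohomology with support; part (2) will follow from finite-flat Galois descent along $\pi:\mathcal{S}^{tor}_{K^pK_{p,m,b'},\Sigma}\to\mathcal{S}^{tor}_{K^pK_{p,m,b},\Sigma}$, which is generically finite \'etale Galois of group $T_b/T_{b'}$.

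For part (1), first I would fix $t\in T^{++}$ with $\min(t)\geq 1$ and observe that the support pairs defining $\mathrm{R}\Gamma_w(K^pK_{p,m,b},\kappa)^{+,fs}$ at levels $m$ and $m+1$ correspond under the finite map $\pi_{m+1,m}:\mathcal{S}^{tor}_{K^pK_{p,m+1,b},\Sigma}\to\mathcal{S}^{tor}_{K^pK_{p,m,b},\Sigma}$: the open $(\pi_{HT,K_{p,m,b}}^{tor})^{-1}(]X_{w,k}[)$ and the closed $(\pi_{HT,K_{p,m,b}}^{tor})^{-1}(]C_{w,k}[_{0,\overline{0}})$ pull back under $\pi_{m+1,m}$ to the corresponding data at level $m+1$, because $]X_{w,k}[$ and $]C_{w,k}[_{0,\overline{0}}$ are already $K_{p,1,0}$--invariant and hence $K_{p,m,b}$--invariant. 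Consequently the pullback map $a=[K_{p,m+1,b}1K_{p,m,b}]$ and the trace map $b^-=[K_{p,m,b}1K_{p,m+1,b}]$ are well defined between the appropriate cohomologies with support. The dynamical lemma \ref{lem-dynamic-corres} combined with the framework of section \ref{section-action-sheaf} and the composition of Hecke correspondences of section \ref{section-compos-Hecke-op} shows that the Hecke operator $[K_{p,m,b}tK_{p,m+1,b}]$ also preserves the relevant support conditions and that the identities of lemma \ref{lemma-relations-Hecke-algebra} lift verbatim to identities of morphisms of cohomologies with support (the geometric composition of the two correspondences is isomorphic, by proposition \ref{prop-geometric-comp}, to the correspondence computing $[K_{p,m,b}tK_{p,m,b}]$ or $[K_{p,m+1,b}tK_{p,m+1,b}]$, and the support conditions match up). Since $[K_{p,m,b}tK_{p,m,b}]$ and $[K_{p,m+1,b}tK_{p,m+1,b}]$ become invertible on the respective finite slope parts, the factorizations force $a$ to be a quasi-isomorphism on finite slope parts, and an easy induction on $m'-m$ finishes the $+$ non-cuspidal case. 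The cuspidal and $-$ cases are formally identical, using the cuspidal extensions of the trace map of section \ref{section-automorphicvectorbundles} and the transposed Hecke relations.

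For part (2), I would use that $K_{p,m,b'}$ is normal in $K_{p,m,b}$ with quotient the finite group $T_b/T_{b'}$, so that $\pi:\mathcal{S}^{tor}_{K^pK_{p,m,b'},\Sigma}\to\mathcal{S}^{tor}_{K^pK_{p,m,b},\Sigma}$ is finite, with $\mathrm{R}^i\pi_\star\mathcal{V}_{\kappa,K^pK_{p,m,b'},\Sigma}=0$ for $i>0$ and $(\pi_\star\mathcal{V}_{\kappa,K^pK_{p,m,b'},\Sigma})^{T_b/T_{b'}}=\mathcal{V}_{\kappa,K^pK_{p,m,b},\Sigma}$ by theorem \ref{thm-van-between-toroidal} (and analogously for cuspidal sheaves). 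The support conditions at level $b'$ are literally the $\pi$-preimages of those at level $b$, because the $K_{p,m,b}$--orbit of $]X_{w,k}[$ in $\mathcal{FL}$ is a disjoint union of $K_{p,m,b'}$--orbits permuted by $T_b/T_{b'}$. Since $|T_b/T_{b'}|$ is invertible in $F$, taking $T_b/T_{b'}$--invariants is exact and commutes with the finite slope projection, so the pullback induces a quasi-isomorphism from $\mathrm{R}\Gamma_w(K^pK_{p,m,b},\kappa)^{+,fs}$ onto $(\mathrm{R}\Gamma_w(K^pK_{p,m,b'},\kappa)^{+,fs})^{T_b/T_{b'}}$, and similarly the trace gives the $-$ statement; compatibility with the $\qq[T(\qq_p)/T_b]$--action is automatic from the construction. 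The hardest part of the argument is step (1): carefully checking that the geometric composition of correspondences of proposition \ref{prop-geometric-comp}, together with the dynamical estimates of section \ref{section-dynamics} and section \ref{section-orbitsofcells}, preserves the prescribed support conditions so that the Hecke factorizations of lemma \ref{lemma-relations-Hecke-algebra} are genuine identities in $\mathcal{D}(F)$ and not merely up to homotopies that fail to preserve the finite-slope decomposition; everything else is formal from the $\mathrm{Pro}_\N(\mathcal{K}^{proj}(\mathbf{Ban}(F)))$--formalism of section \ref{section-overconvergent-cohomologies}.
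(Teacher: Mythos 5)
Your plan is correct and is, in substance, the fleshing-out of the paper's one-line proof (the paper simply says the theorem follows from lemma~\ref{lemma-relations-Hecke-algebra}). Part~(1) is exactly the intended argument: the support data $(\pi^{tor}_{HT,K_{p,m,b}})^{-1}(]X_{w,k}[)$ and $(\pi^{tor}_{HT,K_{p,m,b}})^{-1}(]C_{w,k}[_{0,\overline 0})$ are defined by Iwahori-invariant subsets of the flag variety, so they are compatible with the change-of-level maps, the factorizations of lemma~\ref{lemma-relations-Hecke-algebra}(3),(4) (with $t$ chosen so $\min(t)\geq1$, which is harmless by lemma~\ref{lem-indep-t}) lift to identities of maps on cohomology-with-support via section~\ref{section-compos-Hecke-op}, and invertibility on the finite-slope part follows.

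For part~(2) you use sheaf-level Galois descent via theorem~\ref{thm-van-between-toroidal}, whereas the paper's citation suggests the purely Hecke-algebraic version: $[K_{p,m,b}1K_{p,m,b'}][K_{p,m,b'}1K_{p,m,b}]=|T_b/T_{b'}|$ and $[K_{p,m,b'}1K_{p,m,b}][K_{p,m,b}1K_{p,m,b'}]=\sum_{t\in T_b/T_{b'}}[K_{p,m,b'}tK_{p,m,b'}]$, from which the iso onto invariants is immediate since $|T_b/T_{b'}|$ is invertible. Both arguments are valid and essentially equivalent; yours is a bit more geometric. Two small points: (i)~the clause ``the $K_{p,m,b}$-orbit of $]X_{w,k}[$ in $\mathcal{FL}$ is a disjoint union of $K_{p,m,b'}$-orbits permuted by $T_b/T_{b'}$'' is misleading, since $]X_{w,k}[$ is already invariant under all these groups (the correct statement is simply that the Hodge--Tate preimages at different levels correspond under the forgetful map, by the construction of $\pi_{HT,K_p}$ in theorem~\ref{thm-affineness}); (ii)~your closing worry about homotopies ``failing to preserve the finite-slope decomposition'' is overstated: the finite-slope projector in section~\ref{relative-spectral-1}--\ref{relative-spectral-3} is attached to an endomorphism in $\mathcal{D}(F)$ (or $\mathrm{Pro}_\N(\mathcal{K}^{proj}(\mathbf{Ban}(F)))$), so chain-level identities up to homotopy are exactly what is needed, and proposition~\ref{prop-geometric-comp} together with section~\ref{section-compos-Hecke-op} supplies them.
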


\begin{proof}
This follows from lemma \ref{lemma-relations-Hecke-algebra}.
\end{proof}

As a result of the theorem, we can let $\mathrm{R}\Gamma_w(K^p,\kappa,\chi)^{\pm,fs}$ and $\mathrm{R}\Gamma_w(K^p,\kappa,\chi,cusp)^{\pm,fs}$ denote $\mathrm{R}\Gamma_w(K^pK_{p,m,b},\kappa)^{\pm,fs}[\chi]$ and $\mathrm{R}\Gamma_w(K^pK_{p,m,b},\kappa,cusp)^{\pm,fs}[\chi]$ for any $m\geq b\geq\mathrm{cond}(\chi)$ with $m>0$, as these spaces have been canonically identified.

\subsection{The spectral sequence associated with the Bruhat stratification}\label{section-spectral-sequences}
Recall that there is a length function $\ell : \WM \rightarrow [0, d]$ where $d = \dim \mathcal{FL} = \dim S_{K}$ with the property that $\ell (w) = \dim C_w$. We let $\ell_+(w) = \ell(w)$ and $\ell_-(w) = d-\ell(w)$.  

The main result of this section is the following theorem:

\begin{thm}\label{thm-spectral-sequence} Let $\kappa \in X^\star(T^c)^{M_{\mu},+}$ be a weight and let $\chi:T(\ZZ_p)\to {\overline{F}}^\times$ be a finite order character.  For a choice of $+$ or $-$, there is a $\mathcal{H}_{p,m,b}^{\pm}$-equivariant spectral sequence  $\mathbf{E}^{p,q}(K^p, \kappa,\chi)^\pm$ converging to classical finite slope cohomology $ \HH^{p+q}(K^p,\kappa,\chi)^{\pm,fs}$,
such that $$\mathbf{E}_1^{p,q}(K^p, \kappa,\chi)^{\pm} = \oplus_{w \in \WM, \ell_\pm(w) = p} \HH^{p+q}_w( K^p, \kappa,\chi)^{\pm,fs}.$$
There are also spectral sequences $\mathbf{E}^{p,q}(K^p, \kappa,\chi,cusp)^\pm$ converging to $ \HH^{p+q}(K^p,\kappa,\chi,cusp)^{\pm,fs}$ such that $$\mathbf{E}_1^{p,q}(K^p, \kappa, \chi,cusp)^{\pm} = \oplus_{w \in \WM, \ell_\pm(w) = p} \HH^{p+q}_w( K^p, \kappa,\chi, cusp)^{\pm,fs}.$$
\end{thm}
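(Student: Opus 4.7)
The strategy is to build, at a fixed finite level $K_p = K_{p,m,b}$ (with $m \geq b \geq \mathrm{cond}(\chi)$ and $m > 0$), a filtration of $\mathcal{S}^{tor}_{K^pK_p,\Sigma}$ obtained by pulling back the Bruhat stratification via $\pi_{HT,K_p}^{tor}$, apply the spectral sequence of a filtered topological space from Section~\ref{section-spectral-sequence}, and then pass to $\chi$-isotypic and finite slope parts. In the $+$ case, I would set
\[ Z_p^+ = \bigcup_{w \in \WM,\,\ell(w) \geq p} (\pi_{HT,K_p}^{tor})^{-1}(\overline{]Y_{w,k}[}) \]
for $p = 0, \ldots, d+1$; the subsets $\overline{]Y_{w,k}[}$ are $K_p$-invariant (since $K_p$ reduces into the Iwahori of $G(\ocal_F)$, which preserves the Bruhat stratification), so the preimages are well-defined and yield a decreasing filtration by closed subsets with $Z_0^+ = \mathcal{S}^{tor}_{K^pK_p,\Sigma}$ and $Z_{d+1}^+ = \emptyset$. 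For the $-$ case, use the dual filtration $Z_p^- = \bigcup_{\ell(w) \leq d-p} (\pi_{HT,K_p}^{tor})^{-1}(\overline{]X_{w,k}[})$.

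To identify the $E_1$ terms, iterate Lemma~\ref{lem-decomposition-cells}(4) together with Lemma~\ref{topological-lemma-easy}(2) to obtain
\[ Z_p^+ \setminus Z_{p+1}^+ = \coprod_{w \in \WM,\,\ell(w)=p} (\pi_{HT,K_p}^{tor})^{-1}(]C_{w,k}[_{0,\overline{0}}) \]
as a disjoint union of locally closed subsets. Lemma~\ref{lem-direct-sum-complex-support} then splits $\mathrm{R}\Gamma_{Z_p^+/Z_{p+1}^+}$ into a direct sum indexed by $\ell(w) = p$, and the change of ambient space property lets me replace $\mathcal{S}^{tor}\setminus Z_{p+1}^+$ by the smaller open $(\pi_{HT,K_p}^{tor})^{-1}(]X_{w,k}[)$ on each summand; this is permitted because $]X_{w,k}[$ is disjoint from $\overline{]Y_{w',k}[}$ for every $w'\neq w$ with $\ell(w')\geq \ell(w)$ by Lemma~\ref{topological-lemma-easy}(2), and it identifies each summand with $\mathrm{R}\Gamma_w(K^pK_p,\kappa)$ as defined in Section~\ref{section-first-defin}. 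The $-$ case is symmetric, using Lemma~\ref{lem-decomposition-cells}(3) and Lemma~\ref{topological-lemma-easy}(1), with ambient space $(\pi_{HT,K_p}^{tor})^{-1}(]Y_{w,k}[)$.

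For Hecke equivariance of the entire spectral sequence (not just the individual $E_1$ terms), I would check that each $Z_p^\pm$ is stable under the transposed Hecke correspondence $T^t = [K_p t^{-1} K_p]$ for $t \in T^\pm$: this reduces to $\overline{]Y_{w,k}[}\cdot t^{-1} \subseteq \overline{]Y_{w,k}[}$ (respectively for $\overline{]X_{w,k}[}$), which follows from Lemma~\ref{lemma-dynamical1} applied to $t^{-1} \in T^\mp$ since right multiplication by $t^{-1}$ is a homeomorphism; the differentials $d_1$ are then Hecke equivariant by naturality of the triangles. Finally I pass to $\chi$-isotypic parts (exact in characteristic zero) and finite slope parts: exactness of the latter on each $E_1$ term follows from the potent compact action of $\mathcal{H}_{p,m,b}^{\pm\pm}$ guaranteed by Theorem~\ref{thm-finiteslopecoho} together with Proposition~\ref{prop-establishing-slope-decomp} and Lemma~\ref{lemma-slope-decomp}, while the abutment at this finite level is finite-dimensional over $F$ and hence automatically carries a compatible decomposition. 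The cuspidal case is identical with $\mathcal{V}_\kappa(-D)$ throughout. The main technical difficulty is bookkeeping how the $K_p$-orbits of the tubes $]C_{w,k}[_{\bullet,\bullet}$ behave under closures, intersections, and the transposed dynamics, and ensuring these descend correctly from the infinite-level flag variety through $\pi_{HT,K_p}^{tor}$, which is only a continuous map of topological spaces rather than a morphism of adic spaces.
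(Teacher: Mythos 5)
Your proposal follows the paper's proof in Section~\ref{sect-construction-filtration} exactly: the filtrations $Z^{\pm}_p$ are the same (your union $\bigcup_{\ell(w)\geq p}\overline{]Y_{w,k}[}$ equals the paper's $\overline{]FL_k^{\geq p}[}$ since the union is finite and $FL_k^{\geq p}=\bigcup_{\ell(w)\geq p}Y_{w,k}$), and the identification of $E_1$-terms via Lemmas~\ref{lem-decomposition-cells}, \ref{topological-lemma-easy}, \ref{lem-direct-sum-complex-support}, the change-of-ambient-space step, the dynamical argument for Hecke equivariance, and the passage to finite slope parts via Theorem~\ref{thm-finiteslopecoho} all match the paper's reasoning. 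The only difference is that you spell out details (notably the Hecke equivariance of the filtration and the $E_1$-term decomposition) that the paper relegates to ``easy and left to the reader.''
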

\subsubsection{Construction of a filtration}\label{sect-construction-filtration} 
We consider the following two  stratifications of the special fiber of the flag variety $FL_k $, the first one is by open subsets:
$$ \{ FL_{k}^{\geq r} = \cup_{\ell(w) \geq r} C_{w,k}\}_{0 \leq r \leq d} $$
and the second one is by closed subsets:  $$ \{ FL_{k}^{\leq_{d-r}} = \cup_{\ell(w) \leq d-r} C_{w,k}\}_{0 \leq r \leq d}.$$

We then define $Z^+_{r}=\overline{] FL_{k}^{\geq r}  [}$ and $Z^-_{r}=\overline{] FL_{k}^{\leq d- r}  [}$. This gives two filtrations  $$\mathcal{FL} = Z^\pm_0 \supset Z^\pm_1 \supset \cdots \supset Z^\pm_d \supset Z^\pm_{d+1} = \emptyset$$ by closed subspaces invariant under the Iwahori $\mathrm{Iw}$.

Now let $K_p=K_{p,m,b}$ for some $m\geq b\geq0$ with $m>0$.  We can consider the pullback of these filtrations by $\pi^{tor}_{HT, K_{p}}$ to get two filtrations $\mathcal{S}^{tor} _{K, \Sigma} = \mathcal{Z}^\pm_0 \supset \mathcal{Z}^\pm_1 \supset \cdots \supset \mathcal{Z}^\pm_d \supset \mathcal{Z}^\pm_{d+1} = \emptyset$ by closed subspaces.

For any weight $\kappa$, we can consider the associated spectral sequence (see section \ref{section-spectral-sequence}): 
$$ \HH^{p+q}_{\mathcal{Z}^\pm_p/\mathcal{Z}^\pm_{p+1}} ( \mathcal{S}^{tor}_{K, \Sigma}, \mathcal{V}_{\kappa}) \Rightarrow  \HH^{p+q} ( \mathcal{S}^{tor}_{K, \Sigma}, \mathcal{V}_{\kappa}).$$

By definition and lemma \ref{lem-writing-intersections},

$$  \HH^{\star}_{\mathcal{Z}^+_p/\mathcal{Z}^+_{p+1}} ( \mathcal{S}^{tor}_{K^pK_{p}}, \mathcal{V}_{\kappa})
= $$
$$\HH^\star_{(\pi_{HT, K_{p}}^{tor})^{-1}(\overline{] \cup_{\ell(w) \geq p} C_w[} \cap { ] \cup_{\ell(w) \leq p} C_w[})} ( (\pi_{HT, K_{p}}^{tor})^{-1}( { ] \cup_{\ell(w) \leq p} C_w[)} , \mathcal{V}_{\kappa}).$$

and $$  \HH^{\star}_{\mathcal{Z}^-_p/\mathcal{Z}^-_{p+1}} ( \mathcal{S}^{tor}_{K^pK_{p}}, \mathcal{V}_{\kappa})
= $$
$$\HH^\star_{(\pi_{HT, K_{p}}^{tor})^{-1}(\overline{] \cup_{\ell(w) \leq d-p} C_w[} \cap { ] \cup_{\ell(w) \geq d- p} C_w[})} ( (\pi_{HT, K_{p}}^{tor})^{-1}({ ] \cup_{\ell(w) \geq d- p} C_w[)} , \mathcal{V}_{\kappa}).$$

We also have a cuspidal version $$ \HH^{p+q}_{\mathcal{Z}^\pm_p/\mathcal{Z}^\pm_{p+1}} ( \mathcal{S}^{tor}_{K, \Sigma}, \mathcal{V}_{\kappa}(-D)) \Rightarrow  \HH^{p+q} ( \mathcal{S}^{tor}_{K, \Sigma}, \mathcal{V}_{\kappa}(-D)).$$

We relate the $\mathbf{E}_1$ pages of these spectral sequences to the overconvergent cohomologies considered in the previous section.

\begin{lem} For all $p$,
$$\mathrm{R}\Gamma_{\mathcal{Z}^+_p/\mathcal{Z}^+_{p+1}} ( \mathcal{S}^{tor}_{K^pK_{p},\Sigma}, \mathcal{V}_{\kappa}) = $$ $$ \oplus_{w \in \WM,~\ell(w) = p} \mathrm{R}\Gamma_{(\pi_{HT, K_{p}}^{tor})^{-1}(]X_{w,k}[ \cap \overline{]Y_{w,k}[})} ( (\pi_{HT, K_{p}}^{tor})^{-1}(]X_{w,k}[), \mathcal{V}_{\kappa}) $$
$$\mathrm{R}\Gamma_{\mathcal{Z}^-_p/\mathcal{Z}^-_{p+1}} ( \mathcal{S}^{tor}_{K^pK_{p},\Sigma}, \mathcal{V}_{\kappa}) = $$ $$\oplus_{w \in \WM,~\ell(w) = d- p} \mathrm{R}\Gamma_{(\pi_{HT, K_{p}}^{tor})^{-1}(]Y_{w,k}[ \cap \overline{]X_{w,k}[})} ( (\pi_{HT, K_{p}}^{tor})^{-1}(]Y_{w,k}[), \mathcal{V}_{\kappa}) $$
We have similar results for cuspidal cohomology. 
\end{lem}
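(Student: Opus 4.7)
The plan is to establish the decomposition of the stratum $\mathcal{Z}^+_p\setminus \mathcal{Z}^+_{p+1}$ as a disjoint union indexed by $w\in\WM$ with $\ell(w)=p$, and then apply lemma \ref{lem-direct-sum-complex-support} and the change-of-ambient-space property. I will only sketch the $+$ case for non-cuspidal cohomology; the $-$ case is obtained by exchanging the roles of $X_{w,k}$ and $Y_{w,k}$, and the cuspidal variant is identical after replacing $\mathcal{V}_\kappa$ by $\mathcal{V}_\kappa(-D)$. Since $\pi_{HT,K_p}^{tor}$ is continuous, all the topological decompositions below pull back from $\mathcal{FL}$ to $\mathcal{S}^{tor}_{K^pK_p,\Sigma}$, so I work on the flag variety.

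The first observation is that $FL_k^{\geq p}=\cup_{w\in\WM,\ell(w)=p} Y_{w,k}$: any cell of length $\geq p$ is contained in some $Y_{w,k}$ with $\ell(w)=p$ (extend downward in Bruhat order). Since $Y_{w,k}$ is open, $]Y_{w,k}[=\mathrm{sp}^{-1}(Y_{w,k})$ and therefore
\[
Z^+_p=\overline{]FL_k^{\geq p}[}=\bigcup_{\ell(w)=p}\overline{]Y_{w,k}[}.
\]
Using lemma \ref{lem-writing-intersections} with $r=p+1$, the complement of $Z^+_{p+1}$ in $\mathcal{FL}$ is $]FL_k^{\leq p}[$, so $Z^+_p\setminus Z^+_{p+1}=Z^+_p\cap]FL_k^{\leq p}[$.

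The second step is to show
\[
Z^+_p\setminus Z^+_{p+1}=\coprod_{w\in\WM,\ell(w)=p}\bigl(\overline{]Y_{w,k}[}\cap ]X_{w,k}[\bigr).
\]
For disjointness, if $\ell(w)=\ell(w')=p$ and $w\neq w'$, then lemma \ref{topological-lemma-easy}(2) gives $]X_{w',k}[\cap\overline{]Y_{w,k}[}=\emptyset$. For the inclusion $\supseteq$, each set $\overline{]Y_{w,k}[}\cap ]X_{w,k}[$ is contained in $Z^+_p$; moreover, for any $w'$ with $\ell(w')=p+1$ (and hence $w\neq w'$), lemma \ref{topological-lemma-easy}(2) again gives $]X_{w,k}[\cap\overline{]Y_{w',k}[}=\emptyset$, so $]X_{w,k}[$ is disjoint from $Z^+_{p+1}=\cup_{\ell(w')=p+1}\overline{]Y_{w',k}[}$. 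For the reverse inclusion, pick $x\in Z^+_p\setminus Z^+_{p+1}$; then $x\in\overline{]Y_{w,k}[}$ for some $w$ with $\ell(w)=p$, and the decomposition of lemma \ref{lem-decomposition-cells}(4) forces either $x\in ]C_{w,k}[_{0,\overline{0}}=\overline{]Y_{w,k}[}\cap ]X_{w,k}[$ (by lemma \ref{lem-topological-description}(4)) or $x\in\overline{]Y_{w'',k}[}$ for some $w''>w$, in which case $x\in Z^+_{p+1}$, a contradiction.

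Finally, pulling back via $\pi_{HT,K_p}^{tor}$ preserves the disjoint decomposition, so lemma \ref{lem-direct-sum-complex-support} applied to the locally closed substrata inside the open $\mathcal{FL}\setminus Z^+_{p+1}$ yields a direct sum decomposition of $\mathrm{R}\Gamma_{\mathcal{Z}^+_p/\mathcal{Z}^+_{p+1}}(\mathcal{S}^{tor}_{K^pK_p,\Sigma},\mathcal{V}_\kappa)$. On each summand, I apply the change-of-ambient-space property (property (3) in the cohomology-with-support list): $(\pi_{HT,K_p}^{tor})^{-1}(]X_{w,k}[)$ is an open subset of $(\pi_{HT,K_p}^{tor})^{-1}(\mathcal{FL}\setminus Z^+_{p+1})$ containing the closed support $(\pi_{HT,K_p}^{tor})^{-1}(\overline{]Y_{w,k}[}\cap ]X_{w,k}[)$ (closedness inside this larger open holds because, as shown above, $\overline{]Y_{w,k}[}\cap(\mathcal{FL}\setminus Z^+_{p+1})\subseteq\overline{]Y_{w,k}[}\cap ]X_{w,k}[$). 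This produces the desired identification. The main verification, rather than an obstacle, is the combinatorial/topological decomposition in step two; once this is in hand, the cohomological part is formal.
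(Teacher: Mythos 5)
Your proof is correct and follows essentially the same strategy as the paper's: identify the locally closed stratum $\mathcal{Z}^+_p\setminus\mathcal{Z}^+_{p+1}$ with the disjoint union of the sets $(\pi^{tor}_{HT,K_p})^{-1}(\overline{]Y_{w,k}[}\cap ]X_{w,k}[)$ for $\ell(w)=p$, then apply lemma \ref{lem-direct-sum-complex-support} together with the change-of-ambient-space property. The only difference is in how you establish the topological decomposition of $Z^+_p\cap ]FL_k^{\leq p}[$: the paper simply distributes the intersection $(\cup_{\ell(w)=p}\overline{]Y_{w,k}[})\cap(\cup_{\ell(w')=p}]X_{w',k}[)$ and invokes lemma \ref{topological-lemma-easy} to kill the off-diagonal terms, while you additionally bring in lemma \ref{lem-decomposition-cells}(4) and lemma \ref{lem-topological-description}(4) for the containment $Z^+_p\setminus Z^+_{p+1}\subseteq\cup_w\bigl(\overline{]Y_{w,k}[}\cap ]X_{w,k}[\bigr)$; both routes work, the paper's is slightly shorter.
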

\begin{proof} We have $$\overline{] \cup_{\ell(w) \geq p} C_w[} \cap { ] \cup_{\ell(w) \leq p} C_w[} = \cup_{w, \ell(w)=p} 
\overline{]Y_{w,k}[} \cap ]X_{w,k}[$$ and $$\overline{] \cup_{\ell(w) \leq d-p} C_w[} \cap { ] \cup_{\ell(w) \geq d- p} C_w[} = \cup_{w, \ell(w)=d-p} 
{]Y_{w,k}[} \cap \overline{ ]X_{w,k}[}$$ by lemma \ref{topological-lemma-easy}. The conclusion follows from lemma \ref{lem-direct-sum-complex-support}.
\end{proof}

\begin{lem}\label{lem-hecke-spectral-sequence}
For a choice of $+$ or $-$, $\mathcal{H}_{p,m,b}^{\pm}$ acts on $\mathrm{R}\Gamma_{\mathcal{Z}^\pm_p/\mathcal{Z}^\pm_{p+1}} ( \mathcal{S}^{tor}_{K^pK_{p},\Sigma}, \mathcal{V}_{\kappa})$, and the spectral sequence 
$$\HH^{p+q}_{\mathcal{Z}^\pm_p/\mathcal{Z}^\pm_{p+1}} ( \mathcal{S}^{tor}_{K, \Sigma}, \mathcal{V}_{\kappa}) \Rightarrow  \HH^{p+q} ( \mathcal{S}^{tor}_{K, \Sigma}, \mathcal{V}_{\kappa})$$ is $\mathcal{H}_{p,m,b}^{\pm}$-equivariant.  The same result holds for cuspidal cohomology.
\end{lem}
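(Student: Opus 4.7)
The plan is to reduce the lemma to the compatibility of the filtrations $\{\mathcal{Z}^\pm_r\}$ with the dynamics of Hecke correspondences already established in Section \ref{section-dynamics} (especially Lemma \ref{lem-dynamic-corres}) and then feed this into the formalism of Section \ref{section-action-sheaf} together with the composition law of Section \ref{section-compos-Hecke-op}.

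The first and main step is to verify that, for each $t \in T^+$ and each $r$, the correspondence $T^t = [K_p t^{-1} K_p]$ preserves the closed subset $\mathcal{Z}^+_r = (\pi_{HT,K_p}^{tor})^{-1}(\overline{]FL^{\geq r}_k[})$, and symmetrically that for $t \in T^-$ the correspondence $T^t$ preserves $\mathcal{Z}^-_r$. Since $FL^{\geq r}_k$ is a finite union of open Bruhat cells $Y_{w,k}$ with $\ell(w) \geq r$, and taking closure commutes with finite unions, it suffices to show $T^t(\overline{]Y_{w,k}[}) \subseteq \overline{]Y_{w,k}[}$ for each such $w$. Lemma \ref{lem-dynamic-corres}(2) applied to $t^{-1} \in T^-$ gives $T^t(]Y_{w,k}[) \subseteq ]Y_{w,k}[$ on rank-one points; and since the transition maps in the Hecke correspondence on the flag variety are $\mathcal{G}$-equivariant and preserve generalization (this is how the action of correspondences on $\mathcal{FL}/K_p$ was defined), and since $\overline{]Y_{w,k}[}$ equals the set of specializations of its rank-one points (it is the preimage under $\mathrm{sp}$ of a closed subset up to higher-rank correction, cf.\ Lemma \ref{lem-decomposition-cells}), the rank-one statement propagates to the closure. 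The same argument using Lemma \ref{lem-dynamic-corres}(1) handles the $-$ case.

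Having established preservation of the filtration, the construction of Section \ref{section-action-sheaf} produces, for each $t \in T^\pm$, endomorphisms of each $\mathrm{R}\Gamma_{\mathcal{Z}^\pm_p/\mathcal{Z}^\pm_{p+1}}(\mathcal{S}^{tor}_{K^pK_p,\Sigma}, \mathcal{V}_\kappa)$, and more generally of $\mathrm{R}\Gamma_{\mathcal{Z}^\pm_p/\mathcal{Z}^\pm_{p+2}}$, all compatible under restriction-corestriction with the exact triangle
$$\mathrm{R}\Gamma_{\mathcal{Z}^\pm_{p+1}/\mathcal{Z}^\pm_{p+2}} \to \mathrm{R}\Gamma_{\mathcal{Z}^\pm_{p}/\mathcal{Z}^\pm_{p+2}} \to \mathrm{R}\Gamma_{\mathcal{Z}^\pm_p/\mathcal{Z}^\pm_{p+1}} \stackrel{+1}{\to}$$
that defines the boundary $d_1^{p,q}$. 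Functoriality of the long exact sequence of this triangle then gives equivariance of $d_1$, and the spectral sequence formalism propagates this to higher pages and to the abutment. That these endomorphisms actually assemble into an action of the Hecke algebra $\mathcal{H}^\pm_{p,m,b}$, rather than just the monoid $T^\pm$, is obtained by repeating the geometric argument of Section \ref{section-compos-Hecke-op} (Proposition \ref{prop-comp-classical-coh}) with the cohomology with support: the double-coset composition formula of Proposition \ref{prop-conv-doublecoset} is matched with the geometric composition of Hecke correspondences, and the needed trace and action compatibilities hold on the nose since everything is computed on locally free sheaves away from the boundary and the support conditions $\mathcal{Z}^\pm_p, \mathcal{Z}^\pm_{p+1}$ are stable under the relevant intermediate correspondences (which follows from Step 1 applied at all intermediate levels).

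For cuspidal cohomology the argument is identical: the action map and pullback underlying the cohomological correspondence in Section \ref{section-hecke-construction} send $\mathcal{V}_{\kappa}(-D_{K,\Sigma})$ into itself (the boundary divisor is preserved by the Hecke correspondence), and the trace for the finite \'etale map $p_1$ restricts to a map on sub-canonical extensions by Theorem \ref{thm-van-between-toroidal}, so the same filtration argument produces a Hecke-equivariant spectral sequence converging to $\HH^{p+q}(K^p,\kappa,\chi,cusp)^{\pm,fs}$. The variant with nebentypus $\chi$ is obtained by taking $\chi$-isotypic parts throughout, which is exact and commutes with the formation of the spectral sequence.

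The single substantive point, which is the hard step, is the verification in Step 1 that the transpose correspondences preserve the closures $\overline{]Y_{w,k}[}$ (resp.\ $\overline{]X_{w,k}[}$); everything else is formal unpacking of the definitions already set up in Section \ref{section-action-sheaf}, Section \ref{section-compos-Hecke-op}, and Section \ref{section-spectral-sequence}. Once Step 1 is in hand, the equivariance of the spectral sequence is immediate from the functoriality of the triangle defining the differentials.
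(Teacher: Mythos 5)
Your proof is correct, and since the paper dispatches this lemma with ``Easy and left to the reader,'' there is no written proof to compare against; what you write is the natural argument, in two parts as you identify: first that the transposed correspondences preserve the closed subsets $\mathcal{Z}^\pm_r$, and then functoriality of the spectral sequence of a filtered space together with the geometric composition law of Section \ref{section-compos-Hecke-op}.

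That said, Step 1 is considerably simpler than you make it sound. The Hecke correspondence on $\mathcal{FL}/K_p$ is assembled from right translations by elements of $G(\qq_p)$, each of which is a homeomorphism of $\mathcal{FL}$ and therefore commutes with taking closure. So from $]Y_{w,k}[.t^{-1}\subseteq]Y_{w,k}[$ for $t^{-1}\in T^-$ (Lemma \ref{lemma-dynamical1}(2)), one gets $\overline{]Y_{w,k}[}.t^{-1}=\overline{]Y_{w,k}[.t^{-1}}\subseteq\overline{]Y_{w,k}[}$ at once, and combined with the $K_p$-invariance of $Y_w$ (hence of the tube and its closure), this yields $Z^+_r.K_pt^{-1}K_p\subseteq Z^+_r$ with no appeal to flatness, properness, lifting of generalizations, or any structure of the correspondence beyond the fact that each branch is a translation. (The $-$ case is the same using $]X_{w,k}[.t^{-1}\subseteq]X_{w,k}[$ and $\overline{]X_{w,k}[}=\mathrm{sp}^{-1}(X_{w,k})$.) Two terminological quibbles: the $Y_{w,k}$ are unions of Bruhat cells, not cells; and the parenthetical about $\overline{]Y_{w,k}[}$ being the preimage under $\mathrm{sp}$ of a closed set is not right — it is the closure of the tube of an open subscheme, and the property you actually use (that it is the set of all specializations of the quasi-compact open $]Y_{w,k}[$) is the relevant one. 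The treatment of the composition law via Proposition \ref{prop-comp-classical-coh} and of the cuspidal variant are fine.
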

\begin{proof} Easy and left to the reader.
\end{proof}

\subsubsection{Proof of Theorem \ref{thm-spectral-sequence}}  For a choice of $\pm$, we have constructed a spectral sequence 
$$ \HH^{p+q}_{\mathcal{Z}^\pm_p/\mathcal{Z}^\pm_{p+1}} ( \mathcal{S}^{tor}_{K, \Sigma}, \mathcal{V}_{\kappa}) \Rightarrow  \HH^{p+q} ( \mathcal{S}^{tor}_{K, \Sigma}, \mathcal{V}_{\kappa}).$$
The Hecke algebra $\mathcal{H}_{p,m,b}^{\pm}$ acts on this spectral sequence and it makes sense to take the finite slope part by lemma \ref{lem-hecke-spectral-sequence}. Applying the finite slope projector, we obtain the spectral sequence of the theorem. 

\subsubsection{Cousin complexes}\label{subsection-Cousin}  We now extract from the spectral sequence certain complexes that play a prominent role, in particular in view of  corollaries   \ref{coro-concentration-compact} and \ref{coro-concentration-interior}. 
We let $\mathcal{C}ous (K^p, \kappa, \chi)^{\pm}$ be the complex $\mathbf{E}_1^{\bullet,0}(K^p, \kappa,\chi)^{\pm}$ ($w_0^M$ is the longest element of $\WM$): 
$$\HH^0_{Id/w_0^M}( K^p, \kappa,\chi)^{\pm,fs} \rightarrow \oplus_{w \in \WM, \ell_\pm(w) = 1} \HH^{1}_w( K^p, \kappa,\chi)^{\pm,fs} \rightarrow $$ $$ \oplus_{w \in \WM, \ell_\pm(w) = 2} \HH^{2}_w( K^p, \kappa,\chi)^{\pm,fs}   \rightarrow \cdots  \rightarrow \HH^d_{w_0^M/Id}( K^p, \kappa,\chi)^{\pm,fs}$$
and we let $\mathcal{C}ous (K^p, \kappa, \chi, cusp )^{\pm}$ be the complex $\mathbf{E}_1^{\bullet,0}(K^p, \kappa,\chi,cusp)^{\pm}$: 
$$\HH^0_{Id/w_0^M}( K^p, \kappa,\chi, cusp)^{\pm,fs} \rightarrow \oplus_{w \in \WM, \ell_\pm(w) = 1} \HH^{1}_w( K^p, \kappa,\chi, cusp)^{\pm,fs} \rightarrow $$ $$ \oplus_{w \in \WM, \ell_\pm(w) = 2} \HH^{2}_w( K^p, \kappa,\chi, cusp)^{\pm,fs}   \rightarrow \cdots  \rightarrow \HH^d_{w_0^M/Id}( K^p, \kappa,\chi,cusp)^{\pm,fs}$$
 
\subsection{Cohomological vanishing}  The following vanishing theorem is crucial in order to study the spectral sequence.
\begin{thm} \label{theorem-coho-van} The cohomology complex $\mathrm{R}\Gamma_w(K^p, \kappa, \chi, cusp)^{\pm,fs}$ has amplitude  $[0, \ell_\pm(w)]$. 
\end{thm}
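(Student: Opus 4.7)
The strategy is to use the exact triangle relating local cohomology to cohomology with and without the support condition, bounding each term via the affineness of the Hodge--Tate period map combined with cuspidal vanishing on the minimal compactification. I describe the $+$ case; the $-$ case is obtained by the parallel argument, replacing $]X_{w,k}[$ by $]Y_{w,k}[$, the closed piece $]C_{w,k}[_{0,\overline{0}}$ by $]C_{w,k}[_{\overline{0},0}$, and covering in the $w^{-1}\Phi^{-,M}\cap\Phi^{-}$ directions (of which there are $d-\ell(w)=\ell_-(w)$) instead of the $\Phi^+$ directions.

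First, by Theorem \ref{thm-finiteslopesupport} together with Example \ref{example-allowed-support}, I can compute
$$\mathrm{R}\Gamma_w(K^pK_p,\kappa,\chi,cusp)^{+,fs} \simeq \mathrm{R}\Gamma_{\mathcal{U}\cap\mathcal{Z}}(\mathcal{U},\mathcal{V}_\kappa(-D))^{+,fs}[\chi]$$
with $\mathcal{U}=(\pi_{HT,K_p}^{tor})^{-1}(]C_{w,k}[_{s,-1}K_p)$ and $\mathcal{Z}=(\pi_{HT,K_p}^{tor})^{-1}(]C_{w,k}[_{\overline{0},\overline{s}}K_p)$ for $s$ sufficiently large and $K_p=K_{p,m,b}$ with $b\geq\mathrm{cond}(\chi)$. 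Since the finite slope part and the $\chi$-component are direct summands, it suffices to bound the amplitude of $\mathrm{R}\Gamma_{\mathcal{U}\cap\mathcal{Z}}(\mathcal{U},\mathcal{V}_\kappa(-D))$ itself. The long exact sequence attached to the exact triangle
$$\mathrm{R}\Gamma_{\mathcal{U}\cap\mathcal{Z}}(\mathcal{U},\mathcal{V}_\kappa(-D))\to\mathrm{R}\Gamma(\mathcal{U},\mathcal{V}_\kappa(-D))\to\mathrm{R}\Gamma(\mathcal{U}\setminus\mathcal{Z},\mathcal{V}_\kappa(-D))\xrightarrow{+1}$$
then reduces the problem to proving: (a) $\mathrm{R}\Gamma(\mathcal{U},\mathcal{V}_\kappa(-D))$ is concentrated in degree $0$, and (b) $\mathrm{R}\Gamma(\mathcal{U}\setminus\mathcal{Z},\mathcal{V}_\kappa(-D))$ is concentrated in degrees $[0,\ell(w)-1]$.

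For (a), the set $]C_{w,k}[_{s,-1}K_p$ is a finite disjoint union of translates of the quasi-Stein space $]C_{w,k}[_{s,-1}$ by Lemma \ref{lem-nice-disjoint-union}, and Theorem \ref{thm-affineness} identifies its preimage in $\mathcal{S}^\star_{K^pK_p}$ as a quasi-Stein space (exhausted by affinoid preimages of classical points). Combining the Leray spectral sequence for $\mathcal{S}^{tor}_{K^pK_p,\Sigma}\to\mathcal{S}^\star_{K^pK_p}$ with the vanishing $R^i\pi_\star\mathcal{V}_\kappa(-D)=0$ of Theorem \ref{vanishingtominimal} and Kiehl's acyclicity Theorem \ref{thmKhiel} yields the claim. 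For (b), the product description of Corollary \ref{coro-description-bruhat-cell} writes $]C_{w,k}[_{s,-1}$ as $w\prod_{\alpha\in w^{-1}\Phi^{-,M}\cap\Phi^-}\mathcal{U}^o_{\alpha,s}\times\prod_{\alpha\in w^{-1}\Phi^{-,M}\cap\Phi^+}\mathcal{U}_{\alpha,-1}$, inside which the intersection with $]C_{w,k}[_{\overline{0},\overline{s}}$ is simply $\prod\overline{\mathcal{U}_{\alpha,s}}$ in the $\Phi^+$ coordinates. The set-theoretic complement is therefore covered by the $\ell(w)=\#(w^{-1}\Phi^{-,M}\cap\Phi^+)$ rational subsets
$$V_\alpha=\Bigl(\text{full product in }\beta\neq\alpha\Bigr)\times\{|p|^s<|u_\alpha|\leq|p|^{-1}\},\qquad\alpha\in w^{-1}\Phi^{-,M}\cap\Phi^+,$$
each a product of annuli and balls, and every non-empty finite intersection is again affinoid. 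Passing to $K_p$-translates and pulling back through $\pi_{HT,K_p}^{tor}$, one obtains a \v{C}ech cover of $\mathcal{U}\setminus\mathcal{Z}$ to which the same affineness and vanishing machinery applies termwise; the \v{C}ech-to-derived spectral sequence then bounds the amplitude by $\ell(w)-1$.

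The principal obstacle is making step (a) rigorous: Theorem \ref{thm-affineness} supplies affinoidness of preimages only at rational subsets of affinoid neighborhoods of \emph{classical} points, so one has to patch these neighborhoods along a quasi-Stein exhaustion of the open/closed ball product and verify that the resulting acyclicity statement for $\pi_\star\mathcal{V}_\kappa(-D)$ survives on the minimal compactification. This is where the theory of small formal Banach sheaves enters through Theorem \ref{thm-small-vanish} and the nice formal models of Sections \ref{sect-formal-perf-Siegel}--\ref{subsec-integral-hodge} (in the Hodge-type case); for a general abelian-type datum, one reduces to the Hodge-type case via Principle \ref{abelianstrategy}, taking invariants under the finite quotient $\Delta(K,K')$ of Theorem \ref{thm-abelian-torsor}, which is harmless for cohomology in characteristic $0$.
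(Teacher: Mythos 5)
Your overall strategy (pass to an allowed support, use the exact triangle of local cohomology, bound the open piece by acyclicity on the minimal compactification, bound the complement by a \v{C}ech cover of size $\ell(w)$) is the same as the paper's, but step (a) has a genuine gap that the paper avoids by a preliminary shrinking. You take as ambient open $\mathcal{U}=(\pi_{HT,K_p}^{tor})^{-1}(]C_{w,k}[_{s,-1}K_p)$, whose $\Phi^+$-coordinates run out to the ball of radius $|p^{-1}|$, far outside any tube $y\mathcal{G}_nK_p/K_p$ around the fixed point to which Theorem \ref{thm-affineness} applies. Consequently the affinoid exhaustion $]C_{w,k}[_{s_i,-1}$ is not made of rational subsets of a single very good affinoid, and you cannot conclude that $\pi_{HT,K_p}^{-1}(]C_{w,k}[_{s,-1}K_p)$ is quasi-Stein in $\mathcal{S}^\star_{K^pK_p}$ --- you acknowledge this as ``the principal obstacle,'' but do not resolve it. The paper resolves it with no new input: since $\mathcal{U}\cap\mathcal{Z}\subseteq(\pi_{HT,K_p}^{tor})^{-1}(]C_{w,k}[_{s,s-1}K_p)\subseteq\mathcal{U}$, the change-of-ambient-space property (point (3) at the start of section 2.1) lets one replace $\mathcal{U}$ by the smaller $(\pi_{HT,K_p}^{tor})^{-1}(]C_{w,k}[_{s,s-1}K_p)$ before running the exact triangle. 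For $s$ large, $]C_{w,k}[_{s,s-1}$ is contained in a small tube around $P_\mu w$, so each term of its affinoid exhaustion is a rational subset of a very good affinoid and Theorem \ref{thm-affineness} applies directly; after pushing to the minimal compactification (Theorem \ref{vanishingtominimal}), passing to the principal level $K_p'$ by Hochschild--Serre, and reducing to a single translate via Lemma \ref{lem-nice-disjoint-union}, the quasi-Stein acyclicity and the $\ell(w)$-element \v{C}ech cover are exactly as you describe.

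Separately, your invocation of Theorem \ref{thm-small-vanish} (small formal Banach sheaves) and of the abelian-type reduction through Principle \ref{abelianstrategy} is misdirected for this particular theorem. Here the coefficient is the coherent sheaf $\mathcal{V}_\kappa(-D)$, and once quasi-Stein is established on the minimal compactification, Kiehl's acyclicity (Theorem \ref{thmKhiel}) is all that is needed; moreover Theorem \ref{thm-affineness} is already proved for abelian-type Shimura data, so no descent from the Hodge-type case is required. The machinery you cite is what one needs for the locally analytic analogue Theorem \ref{thm-one-half-conj}, via Lemmas \ref{lemma-local-vanishing} and \ref{lemma-local-vanishing1}, where the sheaves are genuinely non-coherent.
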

\begin{proof} We only give the argument for the $+$ case, as the $-$ case follows with minor modifications. Let $b$ be the conductor of $\chi$.  We can realize $\mathrm{R}\Gamma_w(K^p, \kappa, cusp)^{+,fs}$ as the $\chi$-isotypic part of the finite slope part of 
$$ \mathrm{R}\Gamma_{(\pi_{HT, K_{p, m',b}}^{tor})^{-1}(]C_{w,k}[_{s,-1}K_{p,m',b} \cap C_{w,k}[_{\overline{0},\overline{s}}K_{p,m',b} )}((\pi^{tor}_{HT,K_{p,m',b}})^{-1}(]C_{w,k}[_{s,-1}K_{p,m',b}), \mathcal{V}_{\kappa}(-D))$$
for any $s \geq 0$ and $m' \geq b$, $m' >0$ by example \ref{example-allowed-support} and theorem \ref{thm-finiteslopesupport}. 

We fix $s$ large enough so that $\pi_{HT, K_{p,m',b}}^{-1}(]C_{w,k}[_{s,s-1})$ is quasi-Stein in the minimal compactification.  We also fix $m' = s$. To simplify notations, we let $K_p = K_{p,m',b}$. 

We observe that under these assumptions, $]C_{w,k}[_{s,-1}K_{p} \cap ]C_{w,k}[_{\overline{0},\overline{s}}K_{p} = ]C_{w,k}[_{s,\overline{s}} K_{p}$ by lemma \ref{lem-nice-intersections}, and therefore the above cohomology writes 
$$ \mathrm{R}\Gamma_{(\pi^{tor}_{HT,K_{p}})^{-1}(]C_{w,k}[_{s,\overline{s}}K_{p})}((\pi^{tor}_{HT,K_{p}})^{-1}(]C_{w,k}[_{s,s-1}K_{p}), \mathcal{V}_{\kappa}(-D)).$$

 We shall prove that $$ \mathrm{R}\Gamma_{(\pi_{HT, K_{p}}^{tor})^{-1}(]C_{w,k}[_{s,\overline{s}}K_{p})}((\pi^{tor}_{HT,K_{p}})^{-1}(]C_{w,k}[_{s,s-1}K_{p}), \mathcal{V}_{\kappa}(-D))$$ has cohomological amplitude $[0, \ell_+(w)]$. 
 
 Projecting via $\pi_{K^pK_p,\Sigma} : \mathcal{S}_{K^pK_p, \Sigma}^{tor} \rightarrow \mathcal{S}_{K^pK_p}^\star$ and using theorem \ref{vanishingtominimal}, this cohomology is quasi-isomorphic to 
 
 $ \mathrm{R}\Gamma_{(\pi_{HT, K_{p}})^{-1}(]C_{w,k}[_{s,\overline{s}}K_{p})}((\pi_{HT,K_{p}})^{-1}(]C_{w,k}[_{s,s-1}K_{p}), (\pi_{K^pK_p,\Sigma})_\star \mathcal{V}_{\kappa}(-D))$
 
 Le $K'_p$ be the principal level $m'$ congruence  subgroup and let $ \pi_{K'_p, K_p} : \mathcal{S}_{K^pK'_p}^\star \rightarrow \mathcal{S}_{K^pK_p}^\star$. Since  
 $$ \mathrm{R}\Gamma_{(\pi_{HT, K_{p}})^{-1}(]C_{w,k}[_{s,\overline{s}}K_{p})}((\pi^{tor}_{HT,K_{p}})^{-1}(]C_{w,k}[_{s,s-1}K_{p}),  (\pi_{K^pK_p,\Sigma})_\star\mathcal{V}_{\kappa}(-D)) =$$ $$ \mathrm{R}\Gamma\big( K_p/K'_p, \mathrm{R}\Gamma_{(\pi_{HT, K'_{p}})^{-1}(]C_{w,k}[_{s,\overline{s}}K_{p})}((\pi_{HT,K'_{p}})^{-1}(]C_{w,k}[_{s,s-1}K_{p}), \pi_{K'_p, K_p} ^\star (\pi_{K^pK_p,\Sigma})_\star \mathcal{V}_{\kappa}(-D))\big)$$
 
 It will suffice to prove that $$\mathrm{R}\Gamma_{(\pi_{HT, K'_{p}})^{-1}(]C_{w,k}[_{s,\overline{s}}K_{p})}((\pi_{HT,K'_{p}})^{-1}(]C_{w,k}[_{s,s-1}K_{p}), \pi_{K'_p, K_p} ^\star (\pi_{K^pK_p,\Sigma})_\star \mathcal{V}_{\kappa}(-D))$$ has cohomological amplitude in $[0, \ell_+(w)]$. 
 
 Since $]C_{w,k}[_{s,\overline{s}}K_{p}$ is a finite disjoint union of translates of $\{]C_{w,k}[_{s,\overline{s}} k_i\}$ for elements $k_1, \cdots, k_n$ by lemma \ref{lem-nice-disjoint-union},  we are left to prove that $$\mathrm{R}\Gamma_{(\pi_{HT, K'_{p}})^{-1}(]C_{w,k}[_{s,\overline{s}}k_i)}((\pi_{HT,K'_{p}})^{-1}(]C_{w,k}[_{s,s-1}k_i), \pi_{K'_p, K_p} ^\star (\pi_{K^pK_p,\Sigma})_\star \mathcal{V}_{\kappa}(-D) )$$  for $1 \leq i \leq n$ has cohomological amplitude in $[0, \ell_+(w)]$. Also, using the action of $K_p$ it suffices to treat the case $k_i = 1$. 
The cohomology fits in a distinguished triangle:

  $$ \mathrm{R}\Gamma_{(\pi_{HT,K'_{p}})^{-1}(]C_{w,k}[_{s,\overline{s}})}((\pi_{HT,K'_{p}})^{-1}(]C_{w,k}[_{s,s-1}), \pi_{K'_p, K_p} ^\star (\pi_{K^pK_p,\Sigma})_\star \mathcal{V}_{\kappa}(-D) ) \rightarrow $$ $$\mathrm{R}\Gamma((\pi_{HT,K'_{p}})^{-1}({]C_{w,k}[_{s,s-1}}), \pi_{K'_p, K_p} ^\star (\pi_{K^pK_p,\Sigma})_\star \mathcal{V}_{\kappa}(-D) ) \rightarrow  $$ $$\mathrm{R}\Gamma((\pi_{HT,K'_{p}})^{-1}({]C_{w,k}[_{s,s-1}} \setminus {]C_{w,k}[_{s, \overline{s}}}), \pi_{K'_p, K_p} ^\star (\pi_{K^pK_p,\Sigma})_\star \mathcal{V}_{\kappa}(-D) ) \stackrel{+1}\rightarrow$$
  
  Since $\pi_{HT, K'_p}^{-1}({]C_{w,k}[_{s,s-1}})$ is quasi-Stein in the minimal compactification $\mathcal{S}^\star_{K^pK'_p}$ we have  that $$ \mathrm{R}\Gamma((\pi_{HT,K'_{p}})^{-1}({]C_{w,k}[_{s,s-1}}), \pi_{K'_p, K_p} ^\star (\pi_{K^pK_p,\Sigma})_\star \mathcal{V}_{\kappa}(-D) )$$ is concentrated in degree $0$. 

We will now prove that $\pi_{HT,K'_p}^{-1}({]C_{w,k}[_{s,s-1}} \setminus {]C_{w,k}[_{s,\overline{s}}})$ admits a covering by $\ell_+(w)$ acyclic spaces. This will show that 
$$\mathrm{R}\Gamma((\pi^{tor}_{HT,K'_{p}})^{-1}({]C_{w,k}[_{s,s-1}} \setminus {]C_{w,k}[_{s, \overline{s}}}), \pi_{K'_p, K_p} ^\star (\pi_{K^pK_p,\Sigma})_\star \mathcal{V}_{\kappa}(-D) ) $$
has only cohomology in degree $0$ to $\ell_+(w)-1$ and the theorem will follow. 

We recall from corollary \ref{coro-description-bruhat-cell} the isomorphism: 

\begin{eqnarray*}
 \prod_{\alpha \in  (w^{-1} \Phi^{-,M}) \cap \Phi^+} \mathcal{U}_{\alpha} \times \prod_{ \alpha \in  (w^{-1} \Phi^{-,M}) \cap \Phi^-} \mathcal{U}_{\alpha}^o &\rightarrow&   ]C_{w,k}[_{\mathcal{FL}}  \\
 (u_{\alpha})_{\alpha \in  w^{-1}\Phi^{-,M}} & \mapsto & w\prod_\alpha u_{\alpha}
 \end{eqnarray*}
 
 Let us fix coordinates $1+ u_{\alpha}$ on each of the one parameter groups. For these coordinates, the equation of ${]C_{w,k}[_{s,s-1}} \setminus ]C_{w,k}[_{s,\overline{s}}$ is:
 
\begin{itemize}
\item $\forall \alpha \in (w^{-1} \Phi^{-,M}) \cap \Phi^+$, $ \vert u_\alpha \vert \leq \vert p^{s-1} \vert$, 
\item $\forall \alpha \in  (w^{-1} \Phi^{-,M}) \cap \Phi^-$, $\exists \epsilon >0$,  $ \vert u_\alpha \vert \leq \vert p^{s+\epsilon}\vert$, 
\item $\exists \alpha \in  (w^{-1} \Phi^{-,M}) \cap \Phi^+$, $\exists \nu >0$,  $ \vert u_\alpha \vert \geq \vert p^{s-\nu}\vert$.
\end{itemize}

Since $\sharp  (w^{-1} \Phi^{-,M}) \cap \Phi^+ = \ell(w) = \ell_+(w)$, we deduce that ${]C_{w,k}[_{s,s-1}} \setminus {]C_{w,k}[_{s,\overline{s}}}$ is indeed covered by $\ell(w)$ acyclic spaces and the same holds for its pre-image by $\pi_{HT,K'_p}$. 
\end{proof}

\begin{prop}\label{prop-deg-spect-sequ1} For the spectral sequence $\mathbf{E}^{p,q}(K^p, \kappa,\chi,cusp)^\pm$ converging to $ \HH^{p+q}(K^p,\kappa,\chi,cusp)^{\pm,fs}$  we have $\mathbf{E}_1^{p,q}(K^p, \kappa,\chi,cusp)^\pm = 0$ if  $q >0$.  In particular $\mathbf{E}_\infty^{p,q}(K^p, \kappa,\chi,cusp)^\pm = \mathrm{Gr}^p(  \HH^{p+q}(K^p,\kappa,\chi,cusp)^{\pm,fs}) =0$ if $p< p+q$. 
\end{prop}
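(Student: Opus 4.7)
The plan is to deduce this proposition as an essentially immediate consequence of the vanishing theorem \ref{theorem-coho-van} applied termwise to the $\mathbf{E}_1$ page described in theorem \ref{thm-spectral-sequence}. There is no serious obstacle here, and in fact I expect the proof to be only a few lines; the substantive work has already been done in the proof of theorem \ref{theorem-coho-van}.

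First, I would recall from theorem \ref{thm-spectral-sequence} that
\[
\mathbf{E}_1^{p,q}(K^p,\kappa,\chi,cusp)^{\pm} \;=\; \bigoplus_{w \in \WM,\;\ell_\pm(w)=p} \HH^{p+q}_w(K^p,\kappa,\chi,cusp)^{\pm,fs}.
\]
Each summand is indexed by a $w\in\WM$ with $\ell_\pm(w)=p$, and by theorem \ref{theorem-coho-van} the complex $\mathrm{R}\Gamma_w(K^p,\kappa,\chi,cusp)^{\pm,fs}$ has cohomology concentrated in degrees $[0,\ell_\pm(w)]=[0,p]$. Hence $\HH^{p+q}_w(K^p,\kappa,\chi,cusp)^{\pm,fs}=0$ whenever $p+q>p$, i.e.\ whenever $q>0$, which gives the vanishing of the $\mathbf{E}_1$ page in positive $q$-degree.

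For the second assertion, I would use the general fact that every later page $\mathbf{E}_r^{p,q}$ (including $\mathbf{E}_\infty^{p,q}$) is a subquotient of $\mathbf{E}_1^{p,q}$, so it inherits the vanishing. Convergence of the spectral sequence then identifies $\mathbf{E}_\infty^{p,q}$ with $\mathrm{Gr}^p\bigl(\HH^{p+q}(K^p,\kappa,\chi,cusp)^{\pm,fs}\bigr)$ for the filtration induced by $\mathcal{Z}^\pm_\bullet$, and the condition $p<p+q$ is exactly $q>0$, so this graded piece vanishes as claimed.
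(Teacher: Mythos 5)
Your proposal is correct and follows the paper's own argument exactly: the paper likewise deduces the $E_1$-vanishing directly from theorem \ref{theorem-coho-van} (the amplitude bound $[0,\ell_\pm(w)]$ for cuspidal overconvergent cohomology) applied to each summand of the $E_1$ page from theorem \ref{thm-spectral-sequence}, and the $E_\infty$ statement then follows formally.
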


\begin{proof}  This follows from  theorem \ref{theorem-coho-van}.  The picture of the first page of the spectral sequence is as follows ($w_0^M$ is the longest element of $\WM$):

\begin{eqnarray*} \tiny{
\xymatrix{  \HH^0_{Id/w_0^M}( K^p, \kappa,\chi, cusp)^{\pm,fs}  & \oplus_{w \in \WM, \ell_\pm(w) = 1} \HH^{1}_w( K^p, \kappa,\chi, cusp)^{\pm,fs} & \oplus_{w \in \WM, \ell_\pm(w) = 2} \HH^{2}_w( K^p, \kappa,\chi, cusp)^{\pm,fs}  & \cdots \\
& \oplus_{w \in \WM, \ell_\pm(w) = 1} \HH^{0}_w( K^p, \kappa,\chi, cusp)^{\pm,fs}  & \oplus_{w \in \WM, \ell_\pm(w) = 2} \HH^{1}_w( K^p, \kappa,\chi, cusp)^{\pm,fs}  & \cdots \\
&& \oplus_{w \in \WM, \ell_\pm(w) = 2} \HH^{0}_w( K^p, \kappa,\chi, cusp)^{\pm,fs}  & \cdots }}
\end{eqnarray*}
\end{proof}

\subsection{Duality} In this section we investigate Serre duality on overconvergent cohomologies. 

\begin{thm}\label{thm-pairing-over-coho} For all $w \in \WM$,  there is a  pairing: 
$$\langle, \rangle :  \HH^{i}_w(K^p, \kappa, \chi, cusp)^{\pm,fs} \times \HH^{d-i}_w(K^p, -2\rho_{nc}- w_{0,M} \kappa, \chi^{-1})^{\mp,fs} \rightarrow \overline{F}$$
 such that for $t \in T^{\pm}$ we have $\langle t -,- \rangle = \langle  -,t^{-1}- \rangle $. This pairing induces a pairing between the spectral sequences:
$$\langle, \rangle_{p,q,r} :  \mathbf{E}^{p,q}_r(K^p, \kappa,\chi,cusp)^\pm \times \mathbf{E}^{d-p,-q}_r(K^p, -2\rho_{nc}- w_{0,M} \kappa,\chi^{-1})^\mp \rightarrow \overline{F}$$
On the abutment of the spectral sequence the pairing $\langle, \rangle_{p,q,\infty}$ is induced by the perfect Serre duality pairing:
$$\HH^{p+q}(K^p,\kappa,\chi,cusp)^{\pm,fs} \times \HH^{d-p-q}(K^p,-2\rho_{nc}- w_{0,M} \kappa, \chi^{-1})^{\mp,fs} \rightarrow \overline{F}.$$
\end{thm}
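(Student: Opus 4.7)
The strategy is to build the pairing at the level of complexes via the Serre duality theorem \ref{thm-duality-GK} for proper smooth adic spaces, applied locally to the closed subsets of $\mathcal{S}^{tor}_{K^pK_p,\Sigma}$ appearing in the Bruhat stratification. Fix $w\in\WM$, a level $K_p=K_{p,m,b}$ with $b\geq \mathrm{cond}(\chi)$, and set $\mathcal{U}_w=(\pi^{tor}_{HT,K_p})^{-1}(]Y_{w,k}[)$, $\mathcal{U}_{>w}=(\pi^{tor}_{HT,K_p})^{-1}(]\cup_{w'>w}Y_{w',k}[)$, with closures $\overline{\mathcal{U}}_w,\overline{\mathcal{U}}_{>w}$. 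Proposition \ref{prop-dagger1}(2) realizes $\mathrm{R}\Gamma_w(K^pK_p,\kappa,cusp)^{+,fs}$ as the cone of a corestriction map $\mathrm{R}\Gamma_{\overline{\mathcal{U}}_{>w}}(\mathcal{V}_\kappa(-D))^{+,fs}\to\mathrm{R}\Gamma_{\overline{\mathcal{U}}_w}(\mathcal{V}_\kappa(-D))^{+,fs}$, while Corollary \ref{coro-dagger3} realizes $\mathrm{R}\Gamma_w(K^pK_p,\kappa^\vee)^{-,fs}$ as the fiber of a restriction map $\mathrm{R}\Gamma(\overline{\mathcal{U}}_w,\mathcal{V}_{\kappa^\vee})^{-,fs}\to\mathrm{R}\Gamma(\overline{\mathcal{U}}_{>w},\mathcal{V}_{\kappa^\vee})^{-,fs}$. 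Theorem \ref{thm-duality-GK}(3)--(5) applied to each of $\overline{\mathcal{U}}_w$ and $\overline{\mathcal{U}}_{>w}$ (and to the coherent sheaves $\mathcal{V}_\kappa(-D)$, whose Serre dual is $\mathcal{V}_{\kappa^\vee}$ since $\Omega^d_{\mathcal{S}^{tor}/F}=\mathcal{V}_{-2\rho_{nc}}(-D)$) yields topological dualities of locally convex $F$-vector spaces that are compatible with the restriction/corestriction maps. Passing to finite slope parts (which are a direct factor) turns these into perfect pairings of finite-dimensional $F$-vector spaces; taking cones produces the pairing $\langle,\rangle$ of the theorem, with the shift by $d$ arising from the shift in Serre duality.

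The Hecke adjointness $\langle t-,-\rangle=\langle-,t^{-1}-\rangle$ is a local version of Proposition \ref{prop-classical-duality}: the cohomological correspondence defining $[K_ptK_p]$ decomposes into a pullback, an action map, and a trace, and each of these has a Serre-dual transpose obtained by swapping pullback and trace. The dynamical compatibility in lemma \ref{lem-dynamic-corres} (points (1)--(2)) shows that the $+$ support condition is preserved by $T^+$ and the $-$ support condition by $T^-$, so the adjointness computation of Proposition \ref{prop-classical-duality} goes through verbatim with supports; alternatively, one rewrites the $+$ theory via cohomology with support in closures (Proposition \ref{prop-dagger1}(2)) and the $-$ theory via dagger cohomology (Corollary \ref{coro-dagger3}), whose transpose under the Hecke correspondence is precisely the opposite theory with the transposed Hecke operator.

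To produce the pairing of spectral sequences, observe that the two filtrations $\mathcal{Z}^\pm_\bullet$ of section \ref{sect-construction-filtration} are Serre-dual in the sense that for each $p$, lemma \ref{lem-writing-intersections} identifies the complement of $\mathcal{Z}^+_{p+1}$ with the interior of $\mathcal{Z}^-_{d-p}$. Applying theorem \ref{thm-duality-GK} on the open $\mathcal{S}^{tor}\setminus\mathcal{Z}^+_{p+1}$ with closed subset $\mathcal{Z}^+_p\setminus\mathcal{Z}^+_{p+1}$ yields a pairing
\[
\mathrm{R}\Gamma_{\mathcal{Z}^+_p/\mathcal{Z}^+_{p+1}}(\mathcal{S}^{tor},\mathcal{V}_\kappa(-D))\otimes^L \mathrm{R}\Gamma_{\mathcal{Z}^-_{d-p}/\mathcal{Z}^-_{d-p+1}}(\mathcal{S}^{tor},\mathcal{V}_{\kappa^\vee})\to F[-d]
\]
which after decomposing into $w$-pieces (both sides index the set $\{w\in\WM:\ell(w)=p\}$) recovers the pairing $\langle,\rangle$ constructed above. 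The compatibility of these pairings with the connecting maps in the triangles $\mathrm{R}\Gamma_{\mathcal{Z}^\pm_{p+1}/\mathcal{Z}^\pm_{p+2}}\to\mathrm{R}\Gamma_{\mathcal{Z}^\pm_p/\mathcal{Z}^\pm_{p+2}}\to\mathrm{R}\Gamma_{\mathcal{Z}^\pm_p/\mathcal{Z}^\pm_{p+1}}$ (which produce $d_1$) is built into Serre duality, yielding by induction the pairing $\langle,\rangle_{p,q,r}$ on all pages of the spectral sequence. Compatibility on the abutment with the classical Serre duality pairing of Proposition \ref{prop-classical-duality} is then tautological from the construction: both pairings arise from the same Serre-duality pairing on $\mathrm{R}\Gamma(\mathcal{S}^{tor},\mathcal{V}_\kappa(-D))^{+,fs}$ against $\mathrm{R}\Gamma(\mathcal{S}^{tor},\mathcal{V}_{\kappa^\vee})^{-,fs}[d]$, restricted to the finite slope subspaces.

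The main obstacle is the third step: verifying in detail that Serre duality applied stratum-by-stratum assembles into a genuine pairing of the two spectral sequences in a way compatible with all higher differentials. The subtlety is that the two filtrations are not literally given by the same stratification but by closures in opposite directions, and the natural Serre-dual pairing swaps these; making this precise requires careful bookkeeping of support conditions, signs, and the trace-map compatibilities of theorem \ref{thm-duality-GK}(2). Once one checks that the topological-duality property of Serre duality is preserved by the transition maps between strata, the pairing of spectral sequences and its compatibility with the abutment pairing follow formally.
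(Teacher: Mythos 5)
Your approach is genuinely different from the paper's, and it is worth comparing. The paper constructs the pairing \emph{directly} via the cup product of Proposition \ref{prop-construction-cuprod}: it realizes $\HH^i_w(K^p,\kappa,\chi,cusp)^{+,fs}$ and $\HH^{d-i}_w(K^p,\kappa^\vee,\chi^{-1})^{-,fs}$ as finite slope parts of cohomologies with two carefully chosen \emph{overlapping} allowed support conditions (of the form $]C_{w,k}[_{s,\overline{s}}K_p$ inside $]C_{w,k}[_{s,-1}K_p$, and $]C_{w,k}[_{\overline{s+1},s-1}K_p$ inside $]C_{w,k}[_{-1,s-1}K_p$), cups them to land in $\HH^d$ supported in the intersection, and feeds that into the single trace map of Theorem \ref{thm-duality-GK}. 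This makes the pairing manifestly well-defined and the Hecke adjointness is a (tedious but straightforward) verification at the level of cup products; the spectral-sequence compatibility then follows from functoriality of the trace. Your route instead assembles the pairing from the dagger-space exact triangles of Proposition \ref{prop-dagger1}(2) and Corollary \ref{coro-dagger3}, pairing term-by-term and passing to cones. That decomposition is actually the one the paper uses in Theorem \ref{thm-perfect-pairing} to prove \emph{non-degeneracy}, so your instinct is well-founded; the trade-off is that constructing the pairing this way is logically heavier than the paper's direct cup-product argument.

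The genuine gap is in the passage "taking cones produces the pairing $\langle,\rangle$ of the theorem." Given two exact triangles $A'\to A\to A''\to$ and $B''\to B\to B'\to$ and compatible pairings $A\otimes B\to F[-d]$, $A'\otimes B'\to F[-d]$, it is not automatic that one gets an induced pairing $A''\otimes B''\to F[-d]$ on the cones, still less one with the required Hecke-equivariance; one needs to check that the connecting maps in the two triangles are Serre-dual to each other, which amounts to the compatibility Theorem \ref{thm-duality-GK}(2) applied to the restriction/corestriction maps between $\overline{]Y_{w,k}[}$ and $\overline{]\cup_{w'>w}Y_{w',k}[}$, and you would then still need to verify that the resulting pairing is independent of the choices made in forming the cones. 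This is all doable (and implicitly part of the paper's non-degeneracy argument), but your proposal treats it as automatic, and it is exactly the step the paper avoids by constructing the pairing directly. Similarly, Proposition \ref{prop-classical-duality} concerns the Hecke action on full toroidal cohomology without support conditions; invoking it "verbatim with supports" skips over the additional check that the cohomological correspondence respects (in dual directions) the two families of support conditions $\mathcal{Z}^+_\bullet$ and $\mathcal{Z}^-_\bullet$ — this is the content that lemma \ref{lem-dynamic-corres} provides, and it should be spelled out rather than asserted.
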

\begin{proof} We construct the pairing $$\langle, \rangle :  \HH^{i}_w(K^p, \kappa, \chi, cusp)^{+,fs} \times \HH^{d-i}_w(K^p, -2\rho_{nc}- w_{0,M} \kappa, \chi^{-1})^{-,fs} \rightarrow \overline{F}.$$  Choose $b$ so that $\chi$ is trivial on $T_b$.  We can realize $\mathrm{R}\Gamma_w(K^p, \kappa, \chi, cusp)^{+,fs}$ as the $\chi$-isotypic part of the finite slope part of 
$$ \mathrm{R}\Gamma_{(\pi_{HT, K_{p, m',b}}^{tor})^{-1}(]C_{w,k}[_{s,\overline{s}}K_{p,m',b})}((\pi^{tor}_{HT,K_{p,m',b}})^{-1}(]C_{w,k}[_{s,-1}K_{p,m',b}), \mathcal{V}_{\kappa}(-D))$$
for any  $s > 0$ and $m' \geq b$, $m' >s$.  by example \ref{example-allowed-support} and theorem \ref{thm-finiteslopesupport}. 

We can realize $\mathrm{R}\Gamma_w(K^p, -2\rho_{nc}- w_{0,M} \kappa, \chi^{-1} )^{-,fs}$ as the $\chi^{-1}$-isotypic part of the finite slope part of 
$$ \mathrm{R}\Gamma_{(\pi_{HT, K_{p, m',b}}^{tor})^{-1}(]C_{w,k}[_{\overline{s+1},s-1}K_{p,m',b})}((\pi^{tor}_{HT,K_{p,m',b}})^{-1}(]C_{w,k}[_{-1,s-1}K_{p,m',b}), \mathcal{V}_{-2\rho_{nc}- w_{0,M} \kappa}).$$

We have a cup-product by proposition \ref{prop-construction-cuprod}: 
$$ \HH^i_{(\pi_{HT, K_{p, m',b}}^{tor})^{-1}(]C_{w,k}[_{s,\overline{s}}K_{p,m',b})}((\pi^{tor}_{HT,K_{p,m',b}})^{-1}(]C_{w,k}[_{s,-1}K_{p,m',b}), \mathcal{V}_{\kappa}(-D))   \times $$
$$  
\HH^{d-i}_{(\pi_{HT, K_{p, m',b}}^{tor})^{-1}(]C_{w,k}[_{\overline{s+1},s-1}K_{p,m',b})}((\pi^{tor}_{HT,K_{p,m',b}})^{-1}(]C_{w,k}[_{-1,s-1}K_{p,m',b}), \mathcal{V}_{-2\rho_{nc}- w_{0,M} \kappa})$$
$$ \rightarrow \HH^d_{ (\pi_{HT, K_{p, m',b}}^{tor})^{-1}(]C_{w,k}[_{\overline{s+1},\overline{s}}K_{p,m',b})}( (\pi^{tor}_{HT,K_{p,m',b}})^{-1}(]C_{w,k}[_{s,s-1}K_{p,m',b},  \mathcal{V}_{-2\rho_{nc}}(-D))$$
and there is a trace map (by theorem \ref{thm-duality-GK}): $$\HH^d_{ (\pi_{HT, K_{p, m',b}}^{tor})^{-1}(]C_{w,k}[_{\overline{s+1},\overline{s}}K_{p,m',b})}( (\pi^{tor}_{HT,K_{p,m',b}})^{-1}(]C_{w,k}[_{s,s-1}K_{p,m',b}),  \mathcal{V}_{-2\rho_{nc}}(-D)) \rightarrow F.$$

This pairing intertwines the actions of $\mathcal{H}_{p,m',b}^+$ and $\mathcal{H}_{p,m',b}^{-}$. It is straightforward (but painful) to check that the induced pairing $$\langle, \rangle :  \HH^{i}_w(K^p, \kappa, \chi, cusp)^{+,fs} \times \HH^{d-i}_w(K^p, -2\rho_{nc}- w_{0,M} \kappa, \chi^{-1})^{-,fs} \rightarrow \overline{F}$$ is independent of choices.

The rest of the theorem follows from the functoriality of the trace map. 

\end{proof}

We now prove the following theorem: 

\begin{thm}\label{thm-perfect-pairing} The pairing $$\langle, \rangle :  \HH^{i}_w(K^p, \kappa, \chi,cusp)^{\pm,fs} \times \HH^{d-i}_w(K^p, -2\rho_{nc}- w_{0,M} \kappa, \chi^{-1})^{\mp,fs} \rightarrow \overline{F}$$ is non degenerate.  (Equivalently it induces perfect pairings between the finite dimensional generalized eigenspaces for $T^\pm$).
\end{thm}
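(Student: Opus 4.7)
The strategy is to reduce the non-degeneracy to the perfect Serre duality of Gros--Kiel (Theorem \ref{thm-duality-GK}) applied to the dagger spaces attached to $]Y_{w,k}[$ and $]\cup_{w'>w}Y_{w',k}[$, and then to propagate the duality through a pair of triangles via a five-lemma argument.

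First, by Theorem \ref{thm-finiteslopecoho} and the structure theory of compact operators, after projecting onto a generalized eigenspace for some $t\in T^{\pm\pm}$ each of the two sides of the pairing becomes a finite-dimensional $\overline F$-vector space, so non-degeneracy of the pairing is equivalent to the induced map from one side to the linear dual of the other being an isomorphism. It therefore suffices to prove this statement on finite slope parts globally (without passing to individual eigenspaces).

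Next, I will realize $\mathrm R\Gamma_w(K^p,\kappa,\chi,cusp)^{-,fs}$ and $\mathrm R\Gamma_w(K^p,-2\rho_{nc}-w_{0,M}\kappa,\chi^{-1})^{+,fs}$ as the fibre and (shifted) cofibre of natural maps between cohomologies of the two dagger spaces
\[
\mathcal Y_w^{\dag}:=(\pi^{tor}_{HT,K_p})^{-1}(]Y_{w,k}[)^\dag,\qquad \mathcal Y_{>w}^{\dag}:=(\pi^{tor}_{HT,K_p})^{-1}(]\cup_{w'>w}Y_{w',k}[)^\dag.
\]
Concretely, the cuspidal version of Corollary \ref{coro-dagger3} (whose proof is identical) gives a distinguished triangle
\[
\mathrm R\Gamma_w(\kappa,\chi,cusp)^{-,fs}\to \mathrm R\Gamma(\mathcal Y_w^\dag,\mathcal V_\kappa(-D))^{-,fs}\to \mathrm R\Gamma(\mathcal Y_{>w}^\dag,\mathcal V_\kappa(-D))^{-,fs}\stackrel{+1}\to,
\]
and Proposition \ref{prop-dagger1}(2), also with its evident cuspidal variant, gives
\[
\mathrm R\Gamma_c(\mathcal Y_{>w}^\dag,\mathcal V_{\kappa^\vee})^{+,fs}\to \mathrm R\Gamma_c(\mathcal Y_w^\dag,\mathcal V_{\kappa^\vee})^{+,fs}\to \mathrm R\Gamma_w(\kappa^\vee,\chi^{-1})^{+,fs}\stackrel{+1}\to,
\]
where $\kappa^\vee=-2\rho_{nc}-w_{0,M}\kappa$ and $\mathrm R\Gamma_c$ denotes dagger cohomology with compact support (Remark \ref{rem-dagger}).

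By Theorem \ref{thm-duality-GK}, each of the dagger spaces $\mathcal Y_w^\dag$ and $\mathcal Y_{>w}^\dag$ (which are smooth, since $\mathcal S^{tor}_{K,\Sigma}$ is) carries a perfect topological Serre duality pairing between $\mathrm R\Gamma(-,\mathcal V_\kappa(-D))$ and $\mathrm R\Gamma_c(-,\mathcal V_{\kappa^\vee})[d]$, compatible with restriction to open subspaces in one direction and corestriction from closed subspaces in the other. The key compatibility I need to verify is that under these identifications, the two triangles above are Verdier-dual to one another (up to a shift by $d$) and that the pairing constructed in Theorem \ref{thm-pairing-over-coho} is obtained from the Gros--Kiel pairings via the connecting map in the long exact cohomology sequence. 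This reduces to chasing the cup-product construction of Proposition \ref{prop-construction-cuprod} through the restriction/corestriction maps appearing in the proofs of Proposition \ref{prop-dagger1} and Corollary \ref{coro-dagger3}; the compatibility of cup products with boundary maps and the functoriality of the trace map provide it.

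Finally, on each side the finite slope projector is a direct summand, so commutes with the formation of fibres and duals. Applying the five-lemma to the long exact cohomology sequences of the two triangles, paired with each other via the Gros--Kiel pairings (which are isomorphisms on the $B,B'$ and $C,C'$ columns), yields that the induced map $\HH^i_w(K^p,\kappa,\chi,cusp)^{-,fs}\to (\HH^{d-i}_w(K^p,\kappa^\vee,\chi^{-1})^{+,fs})^\ast$ is an isomorphism. The $(+,-)$ case is symmetric, exchanging the roles of the two triangles. The principal obstacle is the compatibility-of-pairings step: making the identification of the boundary map in the cohomology long exact sequence with the transpose boundary map on the dual side rigorous, especially since the pairing of Theorem \ref{thm-pairing-over-coho} is defined using cohomology with support in neighbourhoods which are \emph{a priori} smaller than the dagger tubes of $]Y_{w,k}[$. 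This will require identifying the Gros--Kiel trace on $\overline{]Y_{w,k}[}$ with the trace on these smaller supports via the corestriction maps of the analytic continuation diamond of Section \ref{subsection-analytic-continuation}.
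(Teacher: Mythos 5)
Your overall strategy coincides with the paper's: both reduce, via the triangles of Proposition~\ref{prop-dagger1} and Corollary~\ref{coro-dagger3}, to showing that the Serre pairings between finite slope cohomology and compactly supported cohomology of the dagger spaces $\mathcal Y_w^\dag$ and $\mathcal Y_{>w}^\dag$ are non-degenerate. But you misplace the main difficulty. You devote your attention to the compatibility of the pairing of Theorem~\ref{thm-pairing-over-coho} with the boundary maps (which, as the paper notes, follows routinely from naturality/functoriality of the trace), while you simply assert that the Gros--Kiel pairings ``are isomorphisms on the $B,B'$ and $C,C'$ columns.'' That assertion is precisely the thing that must be proven, and it is the heart of the paper's argument.

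Two concrete problems. First, Theorem~\ref{thm-duality-GK}(5) is a statement about \emph{affinoid} dagger spaces; the spaces $\mathcal Y_w^\dag$ and $\mathcal Y_{>w}^\dag$ are not affinoid, so you cannot invoke it directly. Second, and more importantly, even granting a topological duality between $\mathrm R\Gamma(\mathcal Y_w^\dag,-)$ and $\mathrm R\Gamma_c(\mathcal Y_w^\dag,-)[d]$ at the complex level (which is what one gets by covering with affinoids and observing that the \v Cech complexes $M^\bullet$ and $N^\bullet$ are strict topological duals of each other term by term), this does \emph{not} automatically produce a perfect pairing on the cohomology groups $\HH^q(\mathcal Y_w^\dag,-)$ and $\HH^{d-q}_c(\mathcal Y_w^\dag,-)$: these groups are infinite-dimensional inductive/projective limits of Banach spaces and the differentials need not be strict. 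The paper explicitly flags this (``In general, this does not induce a perfect duality between the cohomology of $M^\bullet$ and $N^\bullet$'') and resolves it by lifting a compact operator $t\in T^{--}$ to the \v Cech complex $M^\bullet$ (using Lemma~\ref{lem-representing-mapT}), observing that the adjoint of this lift represents $t^{-1}$ on $N^\bullet$, and then passing to the slope $\le h$ direct summands, which are finite-dimensional complexes in perfect duality. Only after this reduction do you obtain perfect pairings on the cohomology of the outer columns, and then a five-lemma-type argument (which the paper phrases as ``naturality of the duality pairing'') finishes. Your sketch compresses exactly this step into the phrase ``the finite slope projector is a direct summand,'' which leaves the decisive finite-dimensionality/strictness argument unproven.
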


\begin{proof} We consider the pairing $\langle, \rangle :  \HH^{i}_w(K^p, \kappa, \chi,cusp)^{+,fs} \times \HH^{d-i}_w(K^p, -2\rho_{nc}- w_{0,M} \kappa, \chi^{-1})^{-,fs} \rightarrow \overline{F}$. The other case is similar. 
Recall from propositions \ref{prop-dagger1} and  corollary \ref{coro-dagger3}, the exact triangles:
$$\mathrm{R}\Gamma_w(K^pK_p, -2\rho_{nc}-w_{0,M}\kappa)^{-,fs}  \rightarrow  \mathrm{R}\Gamma((\pi_{HT, K_p}^{tor})^{-1}(\overline{]Y_{w,k}[}), \mathcal{V}_{-2\rho_{nc}-w_{0,M}\kappa})^{-,fs}  $$ $$ \rightarrow
\mathrm{R}\Gamma((\pi_{HT, K_p}^{tor})^{-1}(\overline{]\cup_{w' >w}Y_{w',k}[}), \mathcal{V}_{-2\rho_{nc}-w_{0,M}\kappa})^{-,fs} \stackrel{+1}\rightarrow$$

and $$\mathrm{R}\Gamma_{(\pi_{HT, K_p}^{tor})^{-1}(\overline{]\cup_{w' >w}Y_{w',k}[})}( \mathcal{S}^{tor}_{K^pK_p,\Sigma}, \mathcal{V}_\kappa(-D))^{+,fs}   \rightarrow  \mathrm{R}\Gamma_{(\pi_{HT, K_p}^{tor})^{-1}(\overline{]Y_{w,k}[})}( \mathcal{S}^{tor}_{K^pK_p,\Sigma}, \mathcal{V}_\kappa(-D))^{+,fs}$$ $$ \rightarrow
\mathrm{R}\Gamma_w(K^pK_p, \kappa, cusp)^{+,fs}  \stackrel{+1}\rightarrow$$

Using the naturality of the duality pairing, it is sufficient to show that the pairings: 
 $$\HH^i((\pi_{HT, K_p}^{tor})^{-1}(\overline{]Y_{w,k}[}), \mathcal{V}_{-2\rho_{nc}-w_{0,M}\kappa})^{-,fs} \times \HH^{d-i}_{(\pi_{HT, K_p}^{tor})^{-1}(\overline{]Y_{w,k}[})}( \mathcal{S}^{tor}_{K^pK_p,\Sigma}, \mathcal{V}_\kappa(-D))^{+,fs} \rightarrow F$$
 and 
 $$\HH^{i}((\pi_{HT, K_p}^{tor})^{-1}(\overline{] \cup_{w'>w}Y_{w,k}[}), \mathcal{V}_{-2\rho_{nc}-w_{0,M}\kappa})^{-,fs} \times \HH^{d-i}_{(\pi_{HT, K_p}^{tor})^{-1}(\overline{] \cup_{w'>w}Y_{w,k}[})}( \mathcal{S}^{tor}_{K^pK_p,\Sigma}, \mathcal{V}_\kappa(-D))^{+,fs} \rightarrow F$$ are non degenerate. 
 We only explain how to show that the first pairing is non-degenerate. The proof for the second pairing is identical. We take a finite affinoid covering  $(\pi_{HT, K_p}^{tor})^{-1}(]Y_{w,k}[) = \cup_{i \in I} U_i$. 
 
 We deduce that $\mathrm{R}\Gamma((\pi_{HT, K_p}^{tor})^{-1}(\overline{] Y_{w,k}[}), \mathcal{V}_{-2\rho_{nc}-w_{0,M}\kappa})$, which is the cohomology of the dagger space $(\pi_{HT, K_p}^{tor})^{-1}(]Y_{w,k}[)^\dag$,   is represented by the \v{C}ech complex: 
 $$ M^\bullet :  \prod_{i} \HH^0(U_i^\dag, \mathcal{V}_{-2\rho_{nc}-w_{0,M}\kappa}) \rightarrow \prod_{i,j} \HH^0((U_i \cap U_j)^\dag, \mathcal{V}_{-2\rho_{nc}-w_{0,M}\kappa}) 
 \rightarrow \cdots $$
 
 We deduce that $\mathrm{R}\Gamma_{(\pi_{HT, K_p}^{tor})^{-1}(\overline{] \cup_{w'>w}Y_{w,k}[})}(\mathcal{S}_{K^pK_p,\Sigma}^{tor}, \mathcal{V}_{\kappa})$ (which is the cohomology with compact support of the dagger space $(\pi_{HT, K_p}^{tor})^{-1}(]Y_{w,k}[)^\dag$)  is represented by the \v{C}ech complex:
 $$ N^\bullet :  \cdots \rightarrow \prod_{i,j} \HH^d_c((U_i \cap U_j)^\dag, \mathcal{V}_{\kappa}(-D)) 
 \rightarrow \prod_{i} \HH^d_c(U_i^\dag, \mathcal{V}_{\kappa}(-D))$$
 
 Using the duality for affinoid dagger spaces (see theorem \ref{thm-duality-GK}), we see that $M^\bullet$ is  a complex of inductive limit of Banach spaces, $N^\bullet$ is a complex of projective limit of Banach spaces and these complexes are strict dual of each other. More precisely, the Serre pairing is induced by  the perfect duality on each of the modules (which identifies each module as the strong dual of the other):
 $$  \HH^d_c((U_{J})^\dag, \mathcal{V}_{\kappa}(-D)) \times \HH^0( (U_J)^\dag, \mathcal{V}_{-2\rho_{nc}-w_{0,M}\kappa}) \rightarrow F,$$
 where $J \subseteq I$ and $U_J = \cap_{j \in J} U_{j}$. 
 In general, this does not induce a  perfect duality between the cohomology of $M^\bullet$ and $N^\bullet$. However, we will prove that this is the case on the finite slope part. Now, let $t \in T^{--}$ be an operator acting compactly. We claim that we can represent it by an endomorphism of $M^\bullet$.
 Indeed, for all $i$ we can find an affinoid $V_i$ with  $\overline{U}_i  \subseteq V_i$ (by \cite{MR1032938}, thm. 5.1). Let $V = \cup V_i$. By shrinking the $V_i$'s,  we may assume  by lemma \ref{lem-dynamic-corres}, 7),  that $t(V) \subseteq (\pi_{HT, K_p}^{tor})^{-1}(\overline{] Y_{w,k}[}) \subseteq V$. Let us finally take an affinoid covering $U'_i$ refining $t(V) \cap U_i$. 
 By lemma \ref{lem-representing-mapT}, there is a map $$t : \check{C}( \{U_i'\} ,  \mathcal{V}_{-2\rho_{nc}-w_{0,M}\kappa}) \rightarrow \check{C}(\{V_i\}, \mathcal{V}_{-2\rho_{nc}-w_{0,M}\kappa}).$$
 There are restriction  maps $A : \check{C}(\{U_i^\dag\},  \mathcal{V}_{-2\rho_{nc}-w_{0,M}\kappa}) \rightarrow \check{C}( \{U_i'\} ,  \mathcal{V}_{-2\rho_{nc}-w_{0,M}\kappa}) $ and
 $B : \check{C}( \{V_i\} ,  \mathcal{V}_{-2\rho_{nc}-w_{0,M}\kappa})  \rightarrow \check{C}( \{U_i^\dag\} ,  \mathcal{V}_{-2\rho_{nc}-w_{0,M}\kappa})$. We deduce that $B \circ t \circ A$ represents $t$ acting on $M^\bullet$ (and is compact since $A$ and $B$ are easily seen to be compact). 
 We easily deduce that the  adjoint of $t$ represents $t^{-1}$ acting on $N^\bullet$. 
 Passing to the slope $\leq h$ part on both $M^\bullet$ and $N^\bullet$, we obtain a perfect pairing between complexes of finite dimensional vector spaces. It induces a perfect pairing on slope $\leq h$ cohomology groups. 
 \end{proof}
 
Using duality, we can prove vanishing theorems for non-cuspidal cohomology: 

\begin{thm}\label{theorem-coho-van2} The cohomology complex $\mathrm{R}\Gamma_w(K^p, \kappa, \chi)^{\pm,fs}$ has amplitude  $[ \ell_\pm(w),d]$. 
\end{thm}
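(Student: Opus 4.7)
The approach is to deduce this non-cuspidal vanishing theorem directly from the cuspidal vanishing theorem (Theorem \ref{theorem-coho-van}) via the perfect duality pairing established in Theorem \ref{thm-perfect-pairing}. The key observation is that under this pairing, the indexing ranges transform in exactly the right way: cuspidal amplitude $[0, \ell_{\mp}(w)]$ for the dual weight corresponds to non-cuspidal amplitude $[\ell_{\pm}(w), d]$ for the original weight.

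More precisely, I would first apply Theorem \ref{theorem-coho-van} to the pair $(\kappa^{\vee}, \chi^{-1}) := (-2\rho_{nc}-w_{0,M}\kappa, \chi^{-1})$ with the opposite sign convention $\mp$. This yields
$$\HH^{i}_w(K^p, \kappa^{\vee}, \chi^{-1}, cusp)^{\mp,fs} = 0 \quad \text{for } i \notin [0, \ell_{\mp}(w)].$$
Next, I invoke Theorem \ref{thm-perfect-pairing}, which provides a non-degenerate pairing
$$\HH^{d-j}_w(K^p, \kappa^{\vee}, \chi^{-1}, cusp)^{\mp,fs} \times \HH^{j}_w(K^p, \kappa, \chi)^{\pm,fs} \rightarrow \overline{F}$$
(obtained by setting $i = d-j$ in the formulation of loc. cit.). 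Since this pairing identifies each side with the dual of the other on generalized eigenspaces for $T^{\mp}$, the vanishing of the left-hand side forces the vanishing of the right-hand side.

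Setting $j$ such that $d - j \notin [0, \ell_{\mp}(w)]$, equivalently $j \notin [d - \ell_{\mp}(w), d]$, and using the identity $d - \ell_{\mp}(w) = \ell_{\pm}(w)$ (which is the defining relation between $\ell_{+}$ and $\ell_{-}$), we conclude
$$\HH^{j}_w(K^p, \kappa, \chi)^{\pm,fs} = 0 \quad \text{for } j \notin [\ell_{\pm}(w), d],$$
which is exactly the amplitude claim. There is no real obstacle here beyond bookkeeping: once Theorem \ref{theorem-coho-van} and Theorem \ref{thm-perfect-pairing} are in hand, the result follows by a purely formal duality argument, with the combinatorial identity $\ell_{\pm}(w) + \ell_{\mp}(w) = d$ ensuring the amplitude intervals match up.
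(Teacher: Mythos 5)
Your proposal is correct and follows precisely the same route as the paper, which proves the statement by combining Theorem \ref{thm-perfect-pairing} (perfect duality) with Theorem \ref{theorem-coho-van} (cuspidal amplitude bound). You have merely unpacked the bookkeeping that the paper leaves implicit, and the computation $(\kappa^\vee)^\vee = \kappa$ (using $w_{0,M}(2\rho_{nc}) = 2\rho_{nc}$) along with $d - \ell_{\mp}(w) = \ell_{\pm}(w)$ is exactly the right check.
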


\begin{proof} This follows by combining theorem \ref{thm-perfect-pairing} and theorem \ref{theorem-coho-van}.
\end{proof}

\begin{prop}\label{prop-deg-spect-sequ2} For the spectral sequence $\mathbf{E}^{p,q}(K^p, \kappa,\chi)^\pm$ converging to $ \HH^{p+q}(K^p,\kappa,\chi)^{\pm,fs}$  we have $\mathbf{E}_1^{p,q}(K^p, \kappa,\chi)^\pm = 0$ if  $q <0$.  In particular $\mathbf{E}_\infty^{p,q}(K^p, \kappa,\chi)^\pm = \mathrm{Gr}^p(  \HH^{p+q}(K^p,\kappa,\chi)^{\pm,fs}) =0$ if $p> p+q$. 
\end{prop}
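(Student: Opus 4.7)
The plan is to deduce this proposition as a direct and immediate consequence of the vanishing statement in Theorem \ref{theorem-coho-van2}, exactly paralleling how Proposition \ref{prop-deg-spect-sequ1} followed from Theorem \ref{theorem-coho-van}. Indeed, by the definition of the spectral sequence in Theorem \ref{thm-spectral-sequence}, the $\mathbf{E}_1$ page is given by
$$\mathbf{E}_1^{p,q}(K^p, \kappa,\chi)^{\pm} = \bigoplus_{w \in \WM,\, \ell_\pm(w) = p} \HH^{p+q}_w(K^p, \kappa,\chi)^{\pm,fs}.$$

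First, I would observe that if $q < 0$ and $w \in \WM$ satisfies $\ell_\pm(w) = p$, then the cohomological degree $p+q$ is strictly less than $\ell_\pm(w)$. By Theorem \ref{theorem-coho-van2}, the complex $\mathrm{R}\Gamma_w(K^p, \kappa, \chi)^{\pm,fs}$ has amplitude contained in $[\ell_\pm(w), d]$, so $\HH^{p+q}_w(K^p, \kappa, \chi)^{\pm,fs} = 0$. Summing over the relevant $w$ gives the vanishing $\mathbf{E}_1^{p,q}(K^p, \kappa,\chi)^\pm = 0$ for $q < 0$.

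For the second assertion, the spectral sequence degenerates below the $q = 0$ line, so all subsequent pages $\mathbf{E}_r^{p,q}$ for $r \geq 1$ and $q < 0$ also vanish; in particular $\mathbf{E}_\infty^{p,q} = 0$ in this range. Since $\mathbf{E}_\infty^{p,q}$ computes the associated graded $\mathrm{Gr}^p(\HH^{p+q}(K^p, \kappa, \chi)^{\pm,fs})$ of the filtration on the abutment, we obtain the claim that this associated graded piece vanishes whenever $p > p+q$, i.e.\ $q < 0$. There is no substantive obstacle: the entire content has already been absorbed into Theorem \ref{theorem-coho-van2}, which itself was deduced from the cuspidal vanishing (Theorem \ref{theorem-coho-van}) via the Serre duality of Theorem \ref{thm-perfect-pairing}.
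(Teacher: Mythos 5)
Your proof is correct and follows exactly the paper's argument: the paper's proof is the single line ``This follows from theorem \ref{theorem-coho-van2}.'' You have merely unpacked that reference, correctly observing that for $\ell_\pm(w)=p$ and $q<0$ the degree $p+q$ lies strictly below $\ell_\pm(w)$, which is excluded by the amplitude bound $[\ell_\pm(w),d]$.
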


\begin{proof} This follows from theorem \ref{theorem-coho-van2}.
\end{proof}

\begin{coro}\label{coro-cousin-computes} If the Shimura variety is compact, $\mathrm{R}\Gamma_w(K^p, \kappa, \chi)^{\pm,fs}$ is concentrated in degree $\ell_{\pm}(w)$. 
\end{coro}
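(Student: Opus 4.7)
The plan is simply to combine the two vanishing theorems just proved. When the Shimura variety is compact, the boundary divisor $D_{K,\Sigma}$ is empty, so the canonical and sub-canonical extensions coincide: $\mathcal{V}_{\kappa,\Sigma}(-D_{K,\Sigma}) = \mathcal{V}_{\kappa,\Sigma}$. Consequently, by the very definition of the overconvergent cohomologies in Section \ref{section-first-defin}, we have a canonical identification
$$\mathrm{R}\Gamma_w(K^p,\kappa,\chi,cusp)^{\pm,fs} = \mathrm{R}\Gamma_w(K^p,\kappa,\chi)^{\pm,fs}.$$

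Now apply Theorem \ref{theorem-coho-van} to the left-hand side: this complex has cohomological amplitude $[0,\ell_\pm(w)]$. On the other hand, Theorem \ref{theorem-coho-van2} (which was itself obtained from Theorem \ref{theorem-coho-van} via the duality of Theorem \ref{thm-perfect-pairing}) applies to the right-hand side and shows that it has cohomological amplitude $[\ell_\pm(w),d]$. Under the identification above, these two amplitudes must agree, and their intersection is the single degree $\{\ell_\pm(w)\}$. This gives the concentration claimed.

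There is no serious obstacle here: the entire content of the corollary is the compatibility of the two amplitude bounds in the absence of a boundary, and both bounds have already been established. The only thing to double-check is that the duality used in Theorem \ref{theorem-coho-van2} (which a priori exchanges cuspidal and non-cuspidal) is consistent with the identification $\mathcal{V}_\kappa(-D) = \mathcal{V}_\kappa$ in the compact case, but this is automatic since in that case the distinction between $\HH^i_w(K^p,\kappa,\chi)^{\pm,fs}$ and $\HH^i_w(K^p,\kappa,\chi,cusp)^{\pm,fs}$ disappears entirely.
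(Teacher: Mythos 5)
Your argument is correct and matches the paper's intended proof, which is stated tersely as ``a combination of theorems \ref{theorem-coho-van} and \ref{theorem-coho-van2}''; you have merely spelled out the (obvious but necessary) observation that compactness makes $D$ empty, hence identifies cuspidal and non-cuspidal overconvergent cohomology so that both amplitude bounds apply to the same complex.
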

\begin{proof} This is a combination of theorems \ref{theorem-coho-van} and \ref{theorem-coho-van2}.
\end{proof}

\begin{rem}
Even if the Shimura variety is not compact, it can still happen that for certain $w\in\WM$, the locally closed on which $\mathrm{R}\Gamma_w(K^p,\kappa,\chi)^{\pm,fs}$ is computed does not meet the boundary $D$, and hence $\mathrm{R}\Gamma_w(K^p,\kappa,\chi)^{\pm,fs}=\mathrm{R}\Gamma_w(K^p,\kappa,\chi,cusp)^{\pm,fs}$ is concentrated in degree $\ell_{\pm}(w)$ just as in the compact case.  As a basic example, consider the Hilbert Shimura datum $(\mathrm{Res}_{F/\qq}\mathrm{GL}_2,\mathcal{H}_1^{[F:\qq]})$ and a prime $p$ which is totally inert in $F$.  Then $\WM=W=\{1,w_0\}^{[F:\qq]}$ and we have $\mathrm{R}\Gamma_w(K^p,\kappa,\chi)^{\pm,fs}=\mathrm{R}\Gamma_w(K^p,\kappa,\chi,cusp)^{\pm,fs}$ unless $w=(1,\ldots,1)$ or $(w_0,\ldots,w_0)$.
\end{rem}

The following corollary illustrates the importance of the Cousin complex:

\begin{coro}\label{coro-concentration-compact} If the Shimura variety is compact, we have a quasi-isomorphism: 
${\mathcal{C}ous}(K^p,\kappa, \chi)^{\pm} = \mathrm{R}\Gamma(K^p, \kappa, \chi)^{\pm,fs}$
\end{coro}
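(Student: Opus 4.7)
The assertion follows directly by combining Corollary \ref{coro-cousin-computes} with the spectral sequence of Theorem \ref{thm-spectral-sequence}. My plan is as follows. Under the compactness hypothesis, Corollary \ref{coro-cousin-computes} tells us that for each $w \in \WM$ the complex $\mathrm{R}\Gamma_w(K^p, \kappa, \chi)^{\pm, fs}$ is concentrated in the single degree $\ell_\pm(w)$. Substituting this into the description of the $\mathbf{E}_1$-page,
$$\mathbf{E}_1^{p,q}(K^p, \kappa, \chi)^{\pm} = \bigoplus_{w \in \WM,\, \ell_\pm(w) = p} \HH^{p+q}_w(K^p, \kappa, \chi)^{\pm, fs},$$
the only nonvanishing contribution occurs when $p + q = \ell_\pm(w) = p$, i.e.\ when $q = 0$. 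Hence the $\mathbf{E}_1$-page is supported in the single row $q = 0$, which by definition is precisely the Cousin complex $\mathcal{C}ous(K^p, \kappa, \chi)^{\pm}$ equipped with its $d_1$-differential.

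Since only one row of $\mathbf{E}_1$ is nonzero, every higher differential $d_r$ with $r \geq 2$ vanishes for degree reasons, so the spectral sequence degenerates at $\mathbf{E}_2$. The filtration underlying the spectral sequence is finite (of length $d+1$), so convergence is immediate and we obtain a clean equality $\HH^n(\mathcal{C}ous(K^p, \kappa, \chi)^{\pm}) = \mathbf{E}_\infty^{n,0} = \HH^n(K^p, \kappa, \chi)^{\pm, fs}$ for every $n$.

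To upgrade this isomorphism of cohomology groups to a quasi-isomorphism of complexes, I would revisit the construction in Section \ref{section-spectral-sequences}. The spectral sequence arises from the filtration $F^p = \mathrm{R}\Gamma_{\mathcal{Z}_p^\pm}(\mathcal{S}^{tor}_{K^p K_p, \Sigma}, \mathcal{V}_\kappa)^{\pm, fs}$ on $F^0 = \mathrm{R}\Gamma(K^p, \kappa, \chi)^{\pm, fs}$ (taking $\chi$-isotypic parts implicitly), whose graded pieces $\mathrm{gr}^p F = \mathrm{R}\Gamma_{\mathcal{Z}_p^\pm / \mathcal{Z}_{p+1}^\pm}^{\pm, fs}$ are each concentrated in degree $p$ by the vanishing above. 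By downward induction on $p$ using the exact triangles $F^{p+1} \to F^p \to \mathrm{gr}^p F$, one shows $F^p$ lies in degrees $\geq p$ with $\HH^p(F^p) = \HH^p(\mathrm{gr}^p F)$; the connecting maps of successive triangles then assemble into the $d_1$-differential of $\mathcal{C}ous(K^p, \kappa, \chi)^{\pm}$, yielding the desired quasi-isomorphism. This is a formal consequence of having a filtered complex whose graded pieces are pure of weight matching the filtration index, and I do not expect any serious obstacle; the only mild subtlety is careful bookkeeping in the filtered derived category to ensure that the inductive construction produces a genuine morphism $\mathcal{C}ous(K^p, \kappa, \chi)^{\pm} \to \mathrm{R}\Gamma(K^p, \kappa, \chi)^{\pm, fs}$ rather than merely an identification on cohomology.
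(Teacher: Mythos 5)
Your proposal is correct and follows the same route as the paper: Corollary \ref{coro-cousin-computes} forces the $\mathbf{E}_1$-page of the spectral sequence of Theorem \ref{thm-spectral-sequence} to be concentrated in the row $q=0$, which by definition is $\mathcal{C}ous(K^p,\kappa,\chi)^{\pm}$. The final paragraph, upgrading the $\mathbf{E}_\infty$-identification to a genuine quasi-isomorphism via the finite filtration and the exact triangles $F^{p+1}\to F^p\to\mathrm{gr}^p F$, is exactly the standard Cousin-complex formalism that the paper implicitly invokes (and which goes back to Kempf); the paper simply omits the bookkeeping you spell out, since it is routine.
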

\begin{proof} For  the spectral sequence $\mathbf{E}^{p,q}(K^p, \kappa,\chi)^\pm$ converging to $ \HH^{p+q}(K^p,\kappa,\chi)^{\pm,fs}$,  we have  $\mathbf{E}_1^{p,q}(K^p, \kappa,\chi)^\pm =0$ if $q\neq 0$.
\end{proof}

\subsection{Interior cohomology}\label{section-interior-coho}

For non compact Shimura varieties, we can introduce the interior cohomology: $$ \overline{\HH}^{i}(K^p, \kappa, \chi)^{\pm,fs} = \mathrm{Im}({\HH}^{i}(K^p, \kappa, \chi,cusp)^{\pm,fs} \rightarrow {\HH}^{i}(K^p, \kappa, \chi)^{\pm,fs}). $$ 

We can also consider the interior overconvergent cohomology
$$\overline{\HH}^i_w(K^p,\kappa, \chi)^{\pm,fs}  =  \mathrm{Im} \big( \HH^i_w (K^p, \kappa, \chi, cusp)^{\pm} \rightarrow \HH^i_w (K_p, \kappa, \chi)^{\pm} \big)$$
as well as the interior Cousin complex  $$\overline{\mathcal{C}ous}(K^p,\kappa, \chi)^{\pm} = $$ $$ \mathrm{Im} \big( \mathcal{C}ous (K^p, \kappa, \chi, cusp)^{\pm} \rightarrow \mathcal{C}ous (K_p, \kappa, \chi)^{\pm} \big).$$

By definition, $\overline{\mathcal{C}ous}(K^p,\kappa, \chi)^{\pm}$ is concentrated in degrees in the interval $[0,d]$ and its degree $i$ object is $\oplus_{w \in \WM, \ell_{\pm}(w)=i}  \overline{\HH}^{i}_w(K^p,\kappa, \chi)^{\pm,fs}$.

\begin{coro}\label{coro-concentration-interior}  We have the formula:  $$\overline{\HH}^{p}(K^p,\kappa, \chi)^{\pm,fs}  = \mathrm{Im} (\mathbf{E}_\infty^{p,0}(K^p, \kappa,\chi,cusp)^\pm \rightarrow \mathbf{E}^{p,0}_\infty(K^p, \kappa,\chi)^{\pm}).$$

Moreover, for all $p$, $\overline{\HH}^{p}(K^p,\kappa, \chi)^{\pm,fs}$ is a subquotient of $\HH^p(\overline{\mathcal{C}ous}(K^p,\kappa, \chi)^{\pm})$.
\end{coro}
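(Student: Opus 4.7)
The plan is to exploit the morphism of spectral sequences $\mathbf{E}^{\bullet,\bullet}(K^p,\kappa,\chi,cusp)^{\pm}\to \mathbf{E}^{\bullet,\bullet}(K^p,\kappa,\chi)^{\pm}$ induced by the inclusion of sheaves $\mathcal{V}_{\kappa,\Sigma}(-D_{K,\Sigma})\hookrightarrow \mathcal{V}_{\kappa,\Sigma}$, together with the degeneration information of propositions \ref{prop-deg-spect-sequ1} and \ref{prop-deg-spect-sequ2}.

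For the identification of $\overline{\HH}^p$, I would first unpack the abutment filtrations $F^\bullet$ on both sides. Proposition \ref{prop-deg-spect-sequ1} gives $\mathbf{E}_\infty^{p,q}(cusp)=0$ whenever $q>0$, so on $\HH^n(K^p,\kappa,\chi,cusp)^{\pm,fs}$ one has $F^0=F^1=\cdots=F^n$; consequently the natural surjection $\HH^n(cusp)=F^n\twoheadrightarrow F^n/F^{n+1}=\mathbf{E}_\infty^{n,0}(cusp)$ exhibits $\mathbf{E}_\infty^{n,0}(cusp)$ as a quotient of $\HH^n(cusp)$. Dually, proposition \ref{prop-deg-spect-sequ2} forces $F^{n+1}\HH^n=0$ on the non-cuspidal side, so $\mathbf{E}_\infty^{n,0}=F^n\HH^n\hookrightarrow \HH^n(K^p,\kappa,\chi)^{\pm,fs}$ is a subobject. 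Since the map of spectral sequences respects filtrations, the induced map $\HH^n(cusp)\to \HH^n$ kills $F^{n+1}\HH^n(cusp)$ and lands in $F^n\HH^n=\mathbf{E}_\infty^{n,0}$; so it factors as the composition $\HH^n(cusp)\twoheadrightarrow \mathbf{E}_\infty^{n,0}(cusp)\to \mathbf{E}_\infty^{n,0}\hookrightarrow \HH^n$, from which the first equality follows immediately.

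For the ``subquotient'' statement, I would factor the map of $\mathbf{E}_1^{\bullet,0}$-complexes through the image:
$$\mathcal{C}ous(cusp) \twoheadrightarrow \overline{\mathcal{C}ous}\hookrightarrow \mathcal{C}ous.$$
The long exact sequences attached to these two short exact sequences of complexes show that $\mathrm{Im}(\HH^p(\mathcal{C}ous(cusp))\to \HH^p(\mathcal{C}ous))=\mathrm{Im}(\mathbf{E}_2^{p,0}(cusp)\to \mathbf{E}_2^{p,0})$ is a quotient of the subobject $\mathrm{Im}(\HH^p(\mathcal{C}ous(cusp))\to \HH^p(\overline{\mathcal{C}ous}))\subseteq \HH^p(\overline{\mathcal{C}ous})$, and hence a subquotient of $\HH^p(\overline{\mathcal{C}ous})$. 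To conclude, I would invoke the elementary fact that if $f\colon A\to B$ restricts compatibly to a map $f'\colon A_1/A_0\to B_1/B_0$ between subquotients, then $\mathrm{Im}(f')\cong f(A_1)/(f(A_1)\cap B_0)$ is itself a subquotient of $\mathrm{Im}(f)$. Applied to the passage from the $E_2$-page to the $E_\infty$-page on both sides, this identifies $\overline{\HH}^p=\mathrm{Im}(\mathbf{E}_\infty^{p,0}(cusp)\to \mathbf{E}_\infty^{p,0})$ as a subquotient of $\mathrm{Im}(\mathbf{E}_2^{p,0}(cusp)\to \mathbf{E}_2^{p,0})$; stringing the two subquotient relations together yields the claim.

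The main obstacle is essentially bookkeeping: verifying carefully that the map of spectral sequences restricts compatibly to the various subquotients used to build $\mathbf{E}_\infty$ from $\mathbf{E}_2$, so that taking images behaves as desired. Once the $A_1/A_0\to B_1/B_0$ framework is spelled out, everything reduces to the second isomorphism theorem.
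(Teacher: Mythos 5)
Your argument is correct and follows essentially the same two-step strategy as the paper: unpack the abutment filtrations using the vanishing of $\mathbf{E}_\infty^{p,q}$ away from $q=0$ (which comes from propositions \ref{prop-deg-spect-sequ1} and \ref{prop-deg-spect-sequ2}) to get the first formula, then factor the map of Cousin complexes through $\overline{\mathcal{C}ous}$ and compare $\mathbf{E}_\infty^{p,0}$ with $\mathbf{E}_2^{p,0}=\HH^p(\mathcal{C}ous)$ to get the subquotient claim. The only difference worth noting is in the last step: the paper exploits the degeneration propositions a second time to produce a concrete injection $\mathbf{E}_\infty^{p,0}(cusp)\hookrightarrow\mathbf{E}_2^{p,0}(cusp)$ (incoming differentials vanish for $r\geq 2$ on the cuspidal side) and surjection $\mathbf{E}_2^{p,0}\twoheadrightarrow\mathbf{E}_\infty^{p,0}$ (outgoing differentials vanish on the non-cuspidal side), from which the subquotient relation is immediate without any bookkeeping; you instead invoke the general principle that $\mathbf{E}_\infty$ is a functorial subquotient of $\mathbf{E}_2$ under morphisms of spectral sequences, which works but requires the compatibility verification you flag at the end. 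Both routes are valid; the paper's is marginally cleaner here because the special shape of the $E_1$-page (supported in $q\leq 0$ for cuspidal, $q\geq 0$ for non-cuspidal) collapses the subquotient to an actual subobject on one side and an actual quotient on the other.
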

\begin{proof}  We have a  commutative diagram:

\begin{eqnarray*}
\xymatrix{  {\HH}^{p}(K^p,\kappa, \chi, cusp)^{\pm,fs} \ar[r]  \ar[d] & {\HH}^{p}(K^p,\kappa, \chi)^{\pm,fs} \\
\mathbf{E}_\infty^{p,0}(K^p, \kappa,\chi,cusp)^\pm \ar[r] \ar[d] & \mathbf{E}_\infty^{p,0}(K^p, \kappa,\chi)^\pm \ar[u]\\
\HH^p({\mathcal{C}ous}(K^p,\kappa, \chi,cusp)^{\pm})  \ar[r] & \HH^p({\mathcal{C}ous}(K^p,\kappa, \chi)^{\pm}) \ar[u]}
\end{eqnarray*}
where the map ${\HH}^{p}(K^p,\kappa, \chi, cusp)^{\pm,fs}  \rightarrow \mathbf{E}_\infty^{p,0}(K^p, \kappa,\chi,cusp)^\pm$ is surjective (this last module is the last non-zero graded piece in the filtration given by the spectral sequence) and the map $ \mathbf{E}_\infty^{p,0}(K^p, \kappa,\chi)^\pm \rightarrow {\HH}^{p}(K^p,\kappa, \chi)^{\pm,fs}$ is injective (the first module is the first non-zero graded piece in the filtration given by the spectral sequence). 

We deduce that $$\overline{\HH}^{p}(K^p,\kappa, \chi)^{\pm,fs}  = \mathrm{Im} (\mathbf{E}_\infty^{p,0}(K^p, \kappa,\chi,cusp)^\pm \rightarrow \mathbf{E}^{p,0}_\infty(K^p, \kappa,\chi)^{\pm}).$$

The map $$\mathbf{E}_\infty^{p,0}(K^p, \kappa,\chi,cusp)^\pm \rightarrow \HH^p({\mathcal{C}ous}(K^p,\kappa, \chi,cusp)^{\pm}) = \mathbf{E}_2^{p,0}(K^p, \kappa,\chi,cusp)^\pm$$ is injective, and the map $$\HH^p({\mathcal{C}ous}(K^p,\kappa, \chi)^{\pm}) = \mathbf{E}_2^{p,0}(K^p, \kappa,\chi)^\pm \rightarrow \mathbf{E}_\infty^{p,0}(K^p, \kappa,\chi)^\pm$$ is surjective. 

We deduce  that $\overline{\HH}^{p}(K^p,\kappa, \chi)^{\pm,fs} $ is a subquotient of $$\mathrm{Im} ( \HH^p({\mathcal{C}ous}(K^p,\kappa, \chi,cusp)^{\pm})\rightarrow \HH^p({\mathcal{C}ous}(K^p,\kappa, \chi)^{\pm})).$$
This last group is a subquotient of  $\HH^p(\overline{\mathcal{C}ous}(K^p,\kappa, \chi)^{\pm})$.
\end{proof}
 
\subsection{Lower bounds on slopes}  In this section we will write $\langle -,-\rangle$  for the usual pairing $X_\star(T)\times X^\star(T)\to\ZZ$. We denote by $T^d$ the maximal $\qq_p$-split subtorus of $T$.  We have a relative root system $\Phi_d\subset X^*(T^d)$ with a choice of positive and simple roots $\Delta_d\subset\Phi^+_d\subset\Phi_d$.  Because $G/\qq_p$ is quasi-split, restriction from $T$ to $T^d$ defines a surjective map $r:\Phi\rightarrow\Phi_d$, which restricts to a surjective map $r:\Delta\rightarrow\Delta_d$ (the fibers of $r$ are exactly the Galois orbits of absolute roots.)

We have on $X^\star(T)_{\mathbb{R}}$ a partial order $\preceq$ where $\lambda\preceq \lambda'$ if and only if $\lambda'-\lambda\in\mathbb{R}_{\geq 0}\Delta$.  We have on $X^\star(T^d)_{\mathbb{R}}$ a partial order $\leq$ where $\lambda\leq\lambda'$ if and only if $\lambda'-\lambda\in\mathbb{R}_{\geq 0}\Delta_d$.  We extend the symbol $\leq$ to the case that one or both sides are in $X^\star(T)_{\mathbb{R}}$, in which case we apply the restriction map $X^\star(T)\rightarrow X^\star(T^d)$ (so in particular for $\lambda,\lambda'\in X^\star(T)$, $\lambda\preceq\lambda'$ implies $\lambda\leq\lambda'$, but not necessarily conversely.) 

Recall that we have monoids $T^+$ and $T^-$ in $T(\qq_p)$.  In section \ref{section-compact-open-subgroups}, we defined a valuation morphism $v : T(\qq_p) \rightarrow X_\star(T^d) \otimes \qq$, whose image is a lattice, and whose kernel is the maximal compact subgroup of $T(\qq_p)$ which we denoted (slightly abusively) by $T(\ZZ_p)$.  For $\lambda\in X^\star(T^d)_{\mathbb{R}}$ and $t\in T(\qq_p)$ we will abusively write $v(\lambda(t))$ for $\langle v(t),\lambda\rangle$.  The partial order $\leq$ has another characterization that we frequently use:
\begin{lem}
Let $\lambda,\lambda'\in X^\star(T^d)_{\mathbb{R}}$.  Then $\lambda\leq\lambda'$ if and only if $v(\lambda(t))\leq v(\lambda'(t))$ for all $t\in T^+$. 
\end{lem}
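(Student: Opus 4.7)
My plan is to prove the two directions separately; the forward direction is essentially a direct verification while the converse is an application of cone duality (Farkas).

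For the forward direction, suppose $\lambda\preceq_d\lambda'$, i.e.\ $\lambda'-\lambda=\sum_{\alpha\in\Delta_d}c_\alpha\alpha$ with $c_\alpha\geq 0$. For each $\alpha\in\Delta_d$, the surjectivity of the restriction map $r:\Delta\twoheadrightarrow\Delta_d$ lets me pick $\tilde\alpha\in\Delta$ with $\tilde\alpha|_{T^d}=\alpha$. For any $t\in T^+$ I have $v(\tilde\alpha(t))\geq 0$ by definition of $T^+$, and using the abusive notation $v(\alpha(t))=\langle v(t),\alpha\rangle$ with $v(t)\in X_\star(T^d)\otimes\qq$, the equality $\langle v(t),\tilde\alpha|_{T^d}\rangle=\langle v(t),\alpha\rangle$ gives $v(\alpha(t))\geq 0$. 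Summing, $v((\lambda'-\lambda)(t))=\sum c_\alpha v(\alpha(t))\geq 0$, which is the desired inequality.

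For the converse, set $\mu=\lambda'-\lambda\in X^\star(T^d)_{\mathbb{R}}$ and assume $v(\mu(t))\geq 0$ for all $t\in T^+$. I want to conclude $\mu\in\mathbb{R}_{\geq 0}\Delta_d$. Consider the valuation image $v(T(\qq_p))\subseteq X_\star(T^d)\otimes\qq$, which by the discussion in section \ref{section-compact-open-subgroups} is a full lattice. By the very definition $T^+=\{t\in T(\qq_p)\mid v(\alpha(t))\geq 0\ \forall \alpha\in\Phi^+\}$, and because the positive roots $\Phi^+$ all restrict into $\mathbb{R}_{\geq 0}\Delta_d$, the subset $v(T^+)$ is exactly the intersection of $v(T(\qq_p))$ with the ``dominant cone''
\[
\mathcal{C}^\vee_{dom}:=\{\xi\in X_\star(T^d)_{\mathbb{R}}\mid \langle\xi,\alpha\rangle\geq 0,\ \forall\alpha\in\Delta_d\}.
\]
Since $v(T(\qq_p))$ is a full lattice, $\mathbb{R}_{\geq 0}\cdot v(T^+)$ is dense in (and in fact equals) $\mathcal{C}^\vee_{dom}$. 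Hence the hypothesis $\langle v(t),\mu\rangle\geq 0$ for all $t\in T^+$ is equivalent to $\langle \xi,\mu\rangle\geq 0$ for every $\xi\in\mathcal{C}^\vee_{dom}$, i.e.\ $\mu$ lies in the dual cone $(\mathcal{C}^\vee_{dom})^\vee$.

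The main (and only nontrivial) step is to identify this dual cone. By the standard duality for finitely generated rational polyhedral cones (Farkas' lemma), the dual of the cone cut out by the inequalities $\langle\,\cdot\,,\alpha\rangle\geq 0$ for $\alpha\in\Delta_d$ is precisely the cone generated by $\{\alpha:\alpha\in\Delta_d\}$. Thus $\mu\in\mathbb{R}_{\geq 0}\Delta_d$, which is exactly $\lambda\leq\lambda'$. This finishes the proof; I expect no serious obstacle beyond carefully invoking the basic Farkas duality and the density of the lattice image in the dominant cone.
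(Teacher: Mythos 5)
Your proof is correct. The paper in fact gives no proof of this lemma at all, treating it as a standard fact, so there is no argument to compare against; your Farkas/biduality argument is a reasonable and complete way to fill that gap.

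A few minor points to tighten. In the forward direction, the key identity $v(\tilde\alpha(t))=\langle v(t),r(\tilde\alpha)\rangle$ should be stated explicitly as following from the definition of the valuation map $v:T(\qq_p)\to X_\star(T^d)\otimes\qq$ (Galois-invariant image of $X_\star(T)\otimes v$), so that $\langle v(t),\tilde\alpha\rangle$ only sees the restriction $\tilde\alpha|_{T^d}$. In the converse, the identification $v(T^+)=v(T(\qq_p))\cap\mathcal{C}^\vee_{dom}$ uses that for $t\in T(\qq_p)$ the conditions $v(\alpha(t))\geq 0$ for $\alpha\in\Phi^+$ are equivalent to $\langle v(t),\beta\rangle\geq 0$ for $\beta\in\Delta_d$, which follows because $r(\Phi^+)=\Phi_d^+\subseteq\ZZ_{\geq 0}\Delta_d$ and $r(\Delta)=\Delta_d$; this is worth spelling out since the defining inequalities for $T^+$ are phrased in terms of absolute roots. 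Finally, the biduality $(\mathcal{C}_{dom}^\vee)^\vee=\mathbb{R}_{\geq 0}\Delta_d$ holds because $\mathbb{R}_{\geq 0}\Delta_d$ is finitely generated, hence closed, and the statement is the closed-cone biduality theorem; this remains valid even when $\Delta_d$ does not span $X^\star(T^d)_{\mathbb{R}}$ (i.e.\ when the dominant cone contains a nontrivial linear subspace). As you note, density of $\mathbb{R}_{\geq 0}\cdot v(T^+)$ in $\mathcal{C}^\vee_{dom}$ already suffices for the argument, though equality does in fact hold since the cone is rational polyhedral and the lattice is full.
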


Given a homomorphism $\lambda:T(\qq_p)\rightarrow \overline{F}^\times$, the composition with the valuation $v:\overline{F}^\times\to\mathbb{R}$ factors through a morphism $v(T(\qq_p))=T(\qq_p)/T(\ZZ_p)\rightarrow\mathbb{R}$.  This extends by linearity to a $\mathbb{R}$-linear map $X_\star(T^d)_{\mathbb{R}}\to\mathbb{R}$ and thus defines an element of $X^\star(T^d)_{\mathbb{R}}$, which we will denote by $v(\lambda)$ and call the \emph{slope} of $\lambda$.  Unravelling the definition we have $\langle v(\lambda),v(t)\rangle=v(\lambda(t))$.

If we start instead with a monoid homomorphism $T^{\pm}\rightarrow \overline{F}^\times$, we also define the slope $v(\lambda)$ of $\lambda$  by first extending $\lambda$ to a group homomorphism $T(\qq_p)\to\overline{F}^\times$ (recall that $T(\qq_p)$ is generated by the monoids $T^\pm$).

We now formulate a general conjectural lower bound on the slopes of $\mathrm{R}\Gamma_w(K^p, \kappa,\chi)^{\pm,fs}$ and $\mathrm{R}\Gamma_w(K^p, \kappa,\chi,cusp)^{\pm,fs}$

\begin{conj}\label{conj-strongslopes}
Fix $w\in\WM$, $\kappa\in X^\star(T^c)^{M_\mu,+}$, and $\chi:T(\ZZ_p)\to {\overline{F}}^\times$ of finite order.
\begin{enumerate}
\item For any  character $\lambda$ of $T^+$ on $\mathrm{R}\Gamma_w(K^p, \kappa,\chi)^{+,fs}$ or $\mathrm{R}\Gamma_w(K^p, \kappa,\chi,cusp)^{+,fs}$ we have $v(\lambda) \geq w^{-1} w_{0,M}(\kappa + \rho) +  \rho$. 

\item For any  character $\lambda$ of $T^-$ on $\mathrm{R}\Gamma_w(K^p, \kappa,\chi)^{-,fs}$ or $\mathrm{R}\Gamma_w(K^p, \kappa,\chi,cusp)^{-,fs}$ we have $v(\lambda) \leq w^{-1} (\kappa + \rho) -  \rho$.
\end{enumerate}
\end{conj}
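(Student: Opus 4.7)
The natural strategy is to prove the bound by finding an explicit integral structure on $\mathrm{R}\Gamma_w(K^p,\kappa,\chi)^{\pm,fs}$ and its cuspidal variant, and then identifying the precise power of each element of $T^\pm$ by which the corresponding Hecke operator must be normalized in order to preserve this integral structure. I will focus on the $+$ case; the $-$ case is analogous (with extra care about whether one works with $\mathcal{V}_\kappa^+$ or its dual). Starting from the integral model $\mathcal{V}_\kappa^+$ of corollary \ref{coro-existence-integral-struct}, one obtains an integral subcomplex of $\mathrm{R}\Gamma_{\mathcal{Z}\cap\mathcal{U}}(\mathcal{U},\mathcal{V}_\kappa)$ for any $(+,w,K_{p,m',b})$-allowed support $(\mathcal{U},\mathcal{Z})$, as soon as $\mathcal{U}$ and $\mathcal{Z}^c$ are covered by affinoids on which the integral cohomology of $\mathcal{V}_\kappa^+$ is well controlled (lemma \ref{lem-Cechversusanalytic}). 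The plan is then to compute the action of a single Hecke correspondence $[K_ptK_p]$ with $t\in T^{++}$ on this integral lattice using proposition \ref{prop-representing-torsor-map}, and to show that up to the conjectural normalization it preserves it. Passing to the finite slope part and using proposition \ref{prop-establishing-slope-decomp} then gives the slope bound for every eigenvalue of $t$.

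The computation of this normalization factor would proceed in two steps. First, on the reduced torsor $\mathcal{M}_{HT,m,n,K_p}$ over $(\pi^{tor}_{HT,K_p})^{-1}(]C_{w,k}[_{m,n}K_p)$, proposition \ref{prop-representing-torsor-map} tells us that the action of $[K_ptK_p]$ is locally represented by $K^c_{p,w,M_\mu}\mathcal{M}^{1,c}_{\mu,m,n}\cdot wtw^{-1}\cdot K^c_{p,w,M_\mu}\mathcal{M}^{1,c}_{\mu,m,n}$. Applied to a section of $\mathcal{V}_\kappa^+$ with the transformation law $f(mb)=(w_{0,M}\kappa)(b^{-1})f(m)$, this multiplies the section by the character $-w_{0,M}\kappa$ evaluated on (the image in $T^c$ of) $wtw^{-1}$, whose valuation is $\langle v(t), w^{-1}w_{0,M}\kappa\rangle$. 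Second, the trace map attached to the finite flat projection $p_1$ of the Hecke correspondence, combined with the contraction properties of lemma \ref{lemma-strongcontraction} and the computation of how $wtw^{-1}$ acts on the unipotent coordinates of the cell $]C_{w,k}[$ via the roots, produces the additional $\rho$-shift (essentially because the Jacobian of this contraction is a product over the positive roots, and half of these combine with the character above into the $w^{-1}w_{0,M}(\kappa+\rho)+\rho$ of the conjecture). Formally this is the $p$-adic analogue of lemma \ref{lem-BGG-character} in the Bruhat setting.

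The main obstacle—and the reason the authors state this only as a conjecture while proving in theorem \ref{thm-slopes} a slightly weaker bound—is the presence of bounded torsion in the integral cohomology of $\mathcal{V}_\kappa^+$ on the neighborhoods of the Bruhat cells, coming from the toroidal boundary. Concretely, lemma \ref{lem-Cechversusanalytic} and theorem \ref{thm-ref-Barten} only guarantee that the higher integral cohomology of $\mathcal{V}_\kappa^+$ is killed by some fixed power $p^N$, not that it vanishes. This forces any integral structure on $\mathrm{R}\Gamma_{\mathcal{Z}\cap\mathcal{U}}(\mathcal{U},\mathcal{V}_\kappa)$ to be defined only up to bounded denominators, which inflate the slope bound by $N$ and yield theorem \ref{thm-slopes} rather than the conjectural optimum. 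Sharpening the argument would require either constructing a torsion-free integral model for these cohomologies with supports, or an indirect argument. The interpolation route, which the authors successfully deploy for interior cohomology in theorem \ref{thm-strongslopes-interior}, relies on the fact that in regular weight the strong slope bound is automatic (classicality plus a trivial estimate) and then uses $p$-adic continuity over weight space to propagate it; the analogous bootstrap fails for the full overconvergent cohomology because it is not controlled by classical cohomology in this way.

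For this reason my plan would be to combine the direct integral computation above, which unconditionally gives the conjectural bound up to a fixed constant independent of $\kappa$, with a rigidification of the integral cohomology obtained by passing to a sufficiently deep toroidal level or by using Faltings's dual BGG complex to replace $\mathcal{V}_\kappa^+$ by a complex with better torsion control. I expect the hardest step to be exactly this rigidification: showing that the boundary torsion is concentrated in a direction that drops out under the $T^{++}$-action, so that the projection onto the finite slope part kills it. This is morally parallel to what the authors achieve for the interior cohomology, and the conjecture essentially predicts that no new contributions arise from the boundary even in the non-cuspidal or non-dual setting.
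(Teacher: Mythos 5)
The statement under review is explicitly a conjecture in the paper: the authors prove only the weaker theorem \ref{thm-slopes} in general, and establish the conjectural bound for interior cohomology via an eigenvariety bootstrap (theorem \ref{thm-strongslopes-interior}), so there is no proof to compare against. Your strategy correctly recalls the integral-lattice mechanism behind theorem \ref{thm-slopes}, namely corollary \ref{coro-existence-integral-struct}, proposition \ref{prop-representing-torsor-map}, and the lattice construction of lemma \ref{lemma-construct-lattice}, and you rightly expect the missing $\rho$-shift to be a contribution of the trace map and the geometry of the Hecke correspondence.

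Your diagnosis of the obstruction is, however, wrong. You attribute the gap between theorem \ref{thm-slopes} and the conjecture to the bounded torsion in $\HH^i_{an}(\mathcal{U},\mathcal{V}_\kappa^+)$ guaranteed by lemma \ref{lem-Cechversusanalytic} and theorem \ref{thm-ref-Barten}, and claim that a fixed $p^N$ inflates the slope bound. It does not: in lemma \ref{lemma-construct-lattice} this torsion is precisely what is shown to be harmless, since the image of the integral cohomology is still open and bounded in the finite slope part, which is all a lower slope bound requires; no power of $p$ is lost. Moreover, if bounded torsion were the obstacle, the deficit would be a fixed constant independent of $\kappa$ and $w$, whereas the actual deficit between theorem \ref{thm-slopes} and the conjecture is $\rho-w^{-1}\rho$ (equivalently $\rho+w^{-1}w_{0,M}\rho-w^{-1}2\rho_{nc}$), a weight depending on $w$. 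As the authors observe immediately after the conjecture, the $\rho$-shift is ``independent of $\kappa$ and related to the geometry of the correspondence (and in particular to the ramification of integral models of the correspondence).'' The unsolved problem is to show that $\mathrm{Tr}_{p_1}$ in the integral Hecke action genuinely gains a factor $p^{\langle v(t),\,w^{-1}w_{0,M}\rho+\rho\rangle}$ beyond what lemma \ref{lem-trace-1} alone provides; lemma \ref{lem-sheaf-map-integral} only controls the $\kappa$-dependent part $w^{-1}w_{0,M}\kappa$. Your proposed rigidification of the integral cohomology to eliminate torsion therefore attacks the wrong target; what is actually needed is an integral divisibility statement for the trace of the Hecke correspondence on a suitable formal model.
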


\begin{rem} We can spell out the meaning of these inequalities.  The  inequality $v(\lambda) \geq w^{-1} w_{0,M}(\kappa + \rho) +  \rho$ means that for all  $t \in T^+$ (and corresponding $v(t) \in X_\star(T^d)_{\qq}^+$), we have $$v(\lambda(t)) \geq  \langle v(t), w^{-1} w_{0,M}(\kappa + \rho) +  \rho\rangle.$$
The inequality $v(\lambda) \leq w^{-1} (\kappa + \rho) -  \rho$ means that for all $t \in T^-$ and  (and corresponding $v(t) \in X_\star(T^d)_{\qq}^-$), we have 
$$ v(\lambda(t)) \geq \langle v(t), w^{-1}(\kappa + \rho) - \rho\rangle.$$
\end{rem}

\begin{rem}
We recall that for any $w\in W$ we have $\rho+w\rho,\rho-w\rho\in X^\star(T)^+$ (even if $\rho$ is not itself in $X^\star(T)$.)  It follows that for all $t\in T^+$ we have $\langle t,\rho+w^{-1}w_{0,M}\rho\rangle\in\mathbb{Z}_{\geq0}$ and for all $t\in T^-$ we have $-\langle t,\rho-w^{-1}\rho\rangle\in\mathbb{Z}_{\geq 0}.$
\end{rem}

\begin{rem}
The bounds of Conjecture \ref{conj-strongslopes} are compatible with duality in the sense that they are exchanged upon replacing $t$ by $t^{-1}$ and $\kappa$ by $-2\rho_{nc}-w_{0,M}\kappa$.
\end{rem}

On the right hand side of the inequality  of conjecture \ref{conj-strongslopes} we have $w^{-1} w_{0,M}(\kappa + \rho) +  \rho$ and  $w^{-1} (\kappa + \rho) -  \rho$. Each of these expressions can be separated into  $(w^{-1} w_{0,M}\kappa )  + (w^{-1} w_{0,M}\rho +  \rho)$ and $(w^{-1} \kappa )  + (w^{-1}\rho -  \rho)$ where the first term depends on $\kappa$ and is related to the action of the Hecke correspondences on the sheaf, while the second term is independent of $\kappa$ and is related to the geometry of the correspondence (and in particular to the ramification of integral models of the correspondence). The second term is the more delicate to study. 

The main result of this section is a bound which is slightly weaker than the conjecture (and concerns only the first term).

\begin{thm}\label{thm-slopes} 
Fix $w\in\WM$, $\kappa\in X^\star(T^c)^{M_\mu,+}$, and $\chi:T(\ZZ_p)\to {\overline{F}}^\times$ of finite order.
\begin{enumerate}
\item For any  character $\lambda$ of $T^+$ on $\mathrm{R}\Gamma_w(K^p, \kappa,\chi)^{+,fs}$ or $\mathrm{R}\Gamma_w(K^p, \kappa,\chi,cusp)^{+,fs}$ we have $v(\lambda) \geq w^{-1} w_{0,M}\kappa$ and $v(\lambda) \geq w^{-1} w_{0,M}\kappa + w^{-1} 2 \rho_{nc}$. 

\item For any  character $\lambda$ of $T^-$ on $\mathrm{R}\Gamma_w(K^p, \kappa,\chi)^{-,fs}$ or $\mathrm{R}\Gamma_w(K^p, \kappa,\chi,cusp)^{-,fs}$ we have $v(\lambda) \leq w^{-1}\kappa$ and $v(\lambda) \leq w^{-1} \kappa + w^{-1} 2 \rho_{nc}$. 

\end{enumerate}
\end{thm}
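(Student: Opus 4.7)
My plan is to prove the two inequalities separately: the first bound $v(\lambda)\geq w^{-1}w_{0,M}\kappa$ in the $+$ case (and its analog $v(\lambda)\leq w^{-1}\kappa$ in the $-$ case) by a direct integral structure argument on the sheaf, and the second bound by Serre duality (Theorem \ref{thm-perfect-pairing}) applied to the first bound on the dual side. Throughout, the character $\chi$ plays no role in the argument, since the slopes are insensitive to the component.

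For the integral structure, I would use Proposition \ref{prop-reduction-2}: over the neighborhood $(\pi^{tor}_{HT,K_p})^{-1}(]C_{w,k}[_{m,n}K_p)$ of the ``Igusa variety'' at $w$, the torsor $\mathcal{M}_{HT}$ has a canonical reduction of structure group to $K^c_{p,w,M_\mu}\mathcal{M}^{1,c}_{\mu,m,n}$. By Proposition \ref{prop-Iwahori-deco-of-weird-group}, $K_{p,w,M_\mu}$ lies inside the Iwahori of $M_\mu(\mathcal{O}_F)$, so the lattice $V_\kappa^+\subset V_\kappa$ of integral-valued functions is preserved by the reduced structure group; this produces an integral subsheaf $\mathcal{V}_\kappa^{+,w}$ of $\mathcal{V}_\kappa$, and Bartenwerfer's bounded-torsion theorem (Lemma \ref{lem-Cechversusanalytic}) together with the Kiehl acyclicity for quasi-Stein spaces ensures that, computed on suitable support conditions from definition \ref{defi-support-condition}, cohomology of this integral sheaf yields a well-defined integral structure on the finite-slope cohomology up to bounded torsion.

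The crucial computation uses Proposition \ref{prop-representing-torsor-map}: the Hecke correspondence on the torsor, restricted to the intersection of neighborhoods of the Igusa at $w$, is locally represented by $wtw^{-1}$ modulo the smaller group. Since $w\in N(T)$, the element $wtw^{-1}$ lies in $T$ and acts diagonally on $V_\kappa^+$ with eigenvalues of valuations $\langle v(t),w^{-1}\mu\rangle$ as $\mu$ ranges over weights of $V_\kappa$. Writing $\mu=\kappa-\sum_{\alpha\in\Phi_M^+}c_\alpha\alpha$ with $c_\alpha\geq 0$, and using that $w\in\WM$ forces $w^{-1}\alpha\in\Phi^+$, one sees that for $t\in T^+$ the minimum valuation is attained at $\mu=w_{0,M}\kappa$, and equals $\langle v(t),w^{-1}w_{0,M}\kappa\rangle$; symmetrically, for $t\in T^-$ the minimum is attained at $\mu=\kappa$ and equals $\langle v(t),w^{-1}\kappa\rangle$. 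The pullback and the trace components of the cohomological correspondence are integral, so the Hecke operator $[K_ptK_p]$ is divisible by the asserted power of $p$ on integral cohomology, hence on its finite-slope direct summand, yielding the first bound in each case.

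For the second bound, Serre duality (Theorem \ref{thm-perfect-pairing}) gives a perfect pairing between $\HH^i_w(K^p,\kappa,\chi,cusp)^{+,fs}$ and $\HH^{d-i}_w(K^p,-2\rho_{nc}-w_{0,M}\kappa,\chi^{-1})^{-,fs}$ with $\langle tx,y\rangle=\langle x,t^{-1}y\rangle$. Applying the first $-$ bound to the right-hand side with weight $\kappa^\vee=-2\rho_{nc}-w_{0,M}\kappa$ gives $v(\lambda')\leq w^{-1}\kappa^\vee=-w^{-1}2\rho_{nc}-w^{-1}w_{0,M}\kappa$ (using $w_{0,M}(2\rho_{nc})=2\rho_{nc}$, which holds because $W_{M_\mu}$ permutes $\Phi_{nc}^+$); dualizing via $t\mapsto t^{-1}$ converts this into the second $+$ bound $v(\lambda)\geq w^{-1}w_{0,M}\kappa+w^{-1}2\rho_{nc}$ on the cuspidal side, and the non-cuspidal case is obtained analogously from the other pairing. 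The same argument exchanges the roles of $+$ and $-$, giving the second $-$ bound.

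The main obstacle will be rigorously implementing the integral structure argument: one must choose the support condition $(\mathcal{U},\mathcal{Z})$ of definition \ref{defi-support-condition} carefully so that $\mathrm{R}\Gamma_{\mathcal{Z}\cap\mathcal{U}}(\mathcal{U},\mathcal{V}_\kappa^{+,w})$ can be realized by an explicit Čech-type complex of bounded-torsion integral modules (incorporating the sub-canonical extension near the toroidal boundary in the cuspidal case), verify that the Hecke correspondence acts with the asserted divisibility on this integral complex in a manner compatible with all the changes of support and the projector onto the finite-slope part of Section \ref{sect-slope-decompo}, and finally transfer the divisibility from the integral Hecke action to the sought-after bound on eigenvalue valuations on the rational finite-slope cohomology.
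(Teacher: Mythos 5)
Your proposal is correct and takes essentially the same route as the paper: the first pair of inequalities is proved by exhibiting the asserted $p$-divisibility of the Hecke correspondence on an integral structure for $\mathcal{V}_\kappa$ via the local representation of Proposition \ref{prop-representing-torsor-map} (this is Lemma \ref{lem-sheaf-map-integral}, with the transfer to cohomology via the C\v{e}ch/bounded-torsion argument of Lemma \ref{lemma-construct-lattice}), and the second pair follows from Serre duality (Theorem \ref{thm-perfect-pairing}). The paper simply performs the duality reduction at the outset instead of at the end, and uses the $\mathcal{M}_\mu^c$-integral structure of Corollary \ref{coro-existence-integral-struct} (defined over the whole toroidal compactification) rather than the finer $K_{p,w,M_\mu}^c\mathcal{M}_\mu^{1,c}$-reduction of Proposition \ref{prop-reduction-2}, though both produce the same lattice $V_\kappa^+$.
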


\begin{rem} Using eigenvarieties, we will be able to prove the conjecture \ref{conj-strongslopes} for the interior cohomology in theorem \ref{thm-strongslopes-interior}.
\end{rem}
\subsubsection{Proof of theorem \ref{thm-slopes}}   We begin with a reduction.  The bounds in  theorem \ref{thm-slopes} are compatible with the duality theorem \ref{thm-perfect-pairing}. Actually, it will suffice to prove: \begin{enumerate}
\item For any  character $\lambda$ of $T^+$ on $\mathrm{R}\Gamma_w(K^p, \kappa,\chi)^{+,fs}$ or $\mathrm{R}\Gamma_w(K^p, \kappa,\chi,cusp)^{+,fs}$ we have $v(\lambda) \geq w^{-1} w_{0,M}\kappa$. 

\item For any  character $\lambda$ of $T^-$ on $\mathrm{R}\Gamma_w(K^p, \kappa,\chi)^{-,fs}$ or $\mathrm{R}\Gamma_w(K^p, \kappa,\chi,cusp)^{-,fs}$ we have $v(\lambda) \leq w^{-1}\kappa$. 

\end{enumerate}
and the rest will follow from duality.

 Let $\kappa \in X^\star(T^c)^{M_\mu,+}$. The definition of the sheaf $\mathcal{V}_\kappa$ is given in \ref{section-automorphicvectorbundles} with the help of the torsor $\mathcal{M}^{an}_{dR}$ and modeled on the highest weight representation $V_\kappa$. By corollary \ref{coro-existence-integral-struct}, the sheaf $\mathcal{V}_\kappa$ has an integral structure $\mathcal{V}_\kappa^+$ (in the sense of definition \ref{defi-integral-structure-sheaf}), constructed with the help of  the $\mathcal{M}_\mu$-torsor $\mathcal{M}_{dR}$ (see proposition \ref{prop-first-reduction}) and modeled on the submodule $V_\kappa^+ \subseteq V_\kappa$.

\begin{lem}\label{lem-sheaf-map-integral} 
Let $K_p=K_{p,m',b}$ for $m'\geq b\geq 0$ and $m'>0$.
\begin{enumerate}
\item  Let $t \in T^+$.  For all $n \geq 1$,
the isomorphism  $p_2^\star \mathcal{V}_\kappa \rightarrow p_1^\star \mathcal{V}_\kappa$ induces a map
$p_2^\star \mathcal{V}_\kappa^+ \rightarrow p^{ \langle  wv(t), w_{0,M}\kappa \rangle} p_1^\star \mathcal{V}_\kappa^+$
on $$p_2^{-1} \big( (\pi^{tor}_{HT, K_p})^{-1}( w\mathcal{G}_n K_p)\big) \cap p_1^{-1} \big( (\pi^{tor}_{HT, K_p})^{-1}( w\mathcal{G}_n K_p) \big).$$
\item Let $t \in T^-$.  For all $n \geq 1$,
the isomorphism  $p_2^\star \mathcal{V}_\kappa \rightarrow p_1^\star \mathcal{V}_\kappa$ induces a map
$p_2^\star \mathcal{V}_\kappa^+ \rightarrow p^{ \langle wv(t),  \kappa \rangle} p_1^\star \mathcal{V}_\kappa^+$
on $$p_2^{-1} \big( (\pi^{tor}_{HT, K_p})^{-1}( w\mathcal{G}_n K_p)\big) \cap p_1^{-1} \big( (\pi^{tor}_{HT, K_p})^{-1}( w\mathcal{G}_n K_p) \big).$$
\end{enumerate}
\end{lem}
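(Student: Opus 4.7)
The plan is to combine the integral reduction of the torsor $\mathcal{M}_{HT}$ established in Proposition \ref{prop-reduction-2} with the local description of the Hecke map on torsors from Proposition \ref{prop-representing-torsor-map}, and then to conclude by a weight-space analysis on the representation $V_\kappa$.

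By Proposition \ref{prop-reduction-2} applied to both projections $p_1,p_2$ of the correspondence, over the restricted locus the torsor $\mathcal{M}_{HT}^{an}$ admits a reduction to a torsor $\mathcal{M}_{HT,n,n,K_p}$ under the group $K^c_{p,w,M_\mu}\mathcal{M}^{1,c}_{\mu,n,n}$. By the Iwahori decomposition of Proposition \ref{prop-Iwahori-deco-of-weird-group}, this group is contained in $\mathcal{M}_\mu^c(\ocal_F)$, so this reduction provides local integral trivializations with respect to which $\mathcal{V}_\kappa^+$ corresponds precisely to $V_\kappa^+\subseteq V_\kappa$. With respect to such trivializations on both sides, Proposition \ref{prop-representing-torsor-map} asserts that the sheaf map is locally given by the action on $V_\kappa$ of an element
\[
g \in K^c_{p,w,M_\mu}\mathcal{M}^{1,c}_{\mu,n,n}\cdot (wtw^{-1})\cdot K^c_{p,w,M_\mu}\mathcal{M}^{1,c}_{\mu,n,n}.
\]

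Writing $g=k_1(wtw^{-1})k_2$ with $k_i\in K^c_{p,w,M_\mu}\mathcal{M}^{1,c}_{\mu,n,n}\subseteq \mathcal{M}_\mu^c(\ocal_F)$, the actions of $k_1,k_2$ on $V_\kappa$ preserve the lattice $V_\kappa^+$ (they arise from automorphisms of the integral group scheme $\mathcal{M}_\mu^c$). Hence the denominator introduced is entirely determined by the torus element $wtw^{-1}$, obtained after fixing a representative of $w$ in $N(T)(\ocal_F)$. On a weight-$\mu$ vector $v_\mu\in V_\kappa$, this element acts by the scalar $\mu(wtw^{-1})=(w^{-1}\mu)(t)$, whose valuation equals $\langle v(t),w^{-1}\mu\rangle=\langle wv(t),\mu\rangle$. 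Consequently the map carries $V_\kappa^+$ into $p^{v_{\min}}V_\kappa^+$, where
\[
v_{\min}\;=\;\min_{\mu\text{ weight of }V_\kappa}\langle wv(t),\mu\rangle.
\]

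It remains to identify $v_{\min}$. Since the weights of $V_\kappa$ lie in the convex hull of the $W_M$-orbit $W_M\kappa$, the minimum is attained at some element $w'\kappa$ with $w'\in W_M$. The key combinatorial input is that $w\in\WM$ implies $w^{-1}\Phi_M^+\subseteq\Phi^+$: for any $w'\in W_M$, the difference $\kappa-w'\kappa$ is a non-negative combination of elements of $\Phi_M^+$, so $w^{-1}(\kappa-w'\kappa)$ is a non-negative combination of elements of $\Phi^+$. If $t\in T^+$, then $\langle v(t),\beta\rangle\geq 0$ for $\beta\in\Phi^+$, so $w'\mapsto\langle wv(t),w'\kappa\rangle$ is minimized at $w'=w_{0,M}$, giving $v_{\min}=\langle wv(t),w_{0,M}\kappa\rangle$ and proving (1). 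If $t\in T^-$ the inequality reverses and the minimum is attained at $w'=1$, giving $v_{\min}=\langle wv(t),\kappa\rangle$ and proving (2). The main technical subtlety is tracking the compatibility of the integral structures on the two sides of the correspondence, but this is essentially formal once Propositions \ref{prop-reduction-2} and \ref{prop-representing-torsor-map} are combined, and the proof reduces to the weight-space computation above.
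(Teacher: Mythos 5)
Your proof is correct and follows essentially the same route as the paper's: both reduce via Proposition \ref{prop-representing-torsor-map} to the statement that the Hecke correspondence acts on a local trivialization of $\mathcal{V}_\kappa^+$ by an element of the double coset $K^c_{p,w,M_\mu}\mathcal{M}^{1,c}_{\mu,n,n}(wtw^{-1})K^c_{p,w,M_\mu}\mathcal{M}^{1,c}_{\mu,n,n}$, use that the unit factors preserve the lattice $V_\kappa^+$, and then compute the minimal valuation of $wtw^{-1}$ on the weight spaces of $V_\kappa$, which is reached at the lowest weight $w_{0,M}\kappa$ (resp. highest weight $\kappa$) because $wv(t)$ is $M_\mu$-dominant (resp. anti-dominant) when $t\in T^+$ (resp. $t\in T^-$). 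Your write-up is arguably tidier, since the paper phrases things via the inverse map $t^\star$ and compresses the double-coset discussion to ``represented by $wt$''; you make the factorization $g=k_1(wtw^{-1})k_2$ and the lattice-preservation explicit and work directly with the map of the lemma. One tiny slip in the final combinatorial step: the inequality you state, $\kappa - w'\kappa\succeq 0$, only shows the maximum of $\langle wv(t),w'\kappa\rangle$ is at $w'=1$; to see the minimum is at $w'=w_{0,M}$ you should instead invoke $w'\kappa - w_{0,M}\kappa\succeq 0$ (equivalently, that $w_{0,M}\kappa$ is the $M$-anti-dominant extreme weight). This is an expositional nitpick — the conclusion is correct.
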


\begin{proof} 
We prove the first point. We have a map $ t : p_2^\star \mathcal{M}_{dR}^{an} \rightarrow p_1^\star \mathcal{M}_{dR}^{an}$, which is locally represented by $wt$ by proposition \ref{prop-representing-torsor-map}. 

Therefore, we have an isomorphism $ t^\star : p_1^\star \mathcal{V}_\kappa \rightarrow p_2^\star \mathcal{V}_\kappa$ which is locally given by
$ t^\star f (x'_2 m) = f( x'_1 wt m)$ for trivializations $x'_1$ and $x'_2$ of $p_1^\star \mathcal{M}_{dR}$ and $p_2^\star \mathcal{M}_{dR}$. The map $p_2^\star \mathcal{V}_\kappa \rightarrow p_1^\star \mathcal{V}_\kappa$ of the lemma is the inverse of the map $t^\star$. 

This map is locally isomorphic to the map 
\begin{eqnarray*}
V_\kappa & \rightarrow & V_\kappa \\
v & \mapsto & (wt)^{-1} v
\end{eqnarray*}
which has eigenvectors of valuation $\langle (wv(t))^{-1}, \nu \rangle$ where $\nu$ ranges through the weights of $V_\kappa$. Since $t \in T^{d,+}$ and $w \in \WM$, it follows that $wv(t) \in X_\star(T)_{\qq}^{M_\mu,+}$ so that $(wv(t))^{-1} \in X_\star(T)_{\qq}^{M_\mu, -}$. 
The lowest weight of $V_\kappa$ is $w_{0,M} \kappa$ and therefore,  $p^{\langle (wv(t))^{-1}, w_{0,M}\kappa \rangle} V_\kappa^+ \subseteq (wt)^{-1} V_\kappa^+$. We deduce that  $p^{\langle (wv(t))^{-1}, w_{0,M}\kappa \rangle} p_2^\star \mathcal{V}_\kappa^+ \subseteq  t^\star  p_1^\star \mathcal{V}_\kappa^+$ from which we deduce that $p_2^\star \mathcal{V}_\kappa^+ \rightarrow p^{ \langle  wv(t),  w_{0,M} \kappa \rangle} p_1^\star \mathcal{V}_\kappa^+$.
The proof of the second point is almost identical, it is enough to observe that now $(wv(t))^{-1} \in X_\star(T)_{\qq}^{M_\mu,+}$ and that $\kappa$ is the highest weight of $V_\kappa$. Details are left to the reader. 
\end{proof}

\begin{lem}\label{lemma-construct-lattice} Let $w \in \WM$, $\kappa \in X^{\star}(T^c)^{M_\mu, +}$.  
\begin{enumerate}
\item Let $(\mathcal{U}, \mathcal{Z})$ be a $(+, w , K_{p,m',b})$-allowed support condition. Assume further that $\mathcal{U}$ is a quasi-compact open and that the complement of $\mathcal{Z}$ is quasi-compact. The image of 
$ \HH^i_{\mathcal{U} \cap \mathcal{Z}}(\mathcal{U}, \mathcal{V}_\kappa^+)$ in $\HH^i_w(K^p, \kappa)^{+,\leq fs}$ is an open and bounded submodule.
\item Let $(\mathcal{U}, \mathcal{Z})$ be a $(-, w , K_{p,m',b})$-allowed support condition. Assume further that $\mathcal{U}$ is a quasi-compact open and that the complement of $\mathcal{Z}$ is quasi-compact. The image of 
$ \HH^i_{\mathcal{U} \cap \mathcal{Z}}(\mathcal{U}, \mathcal{V}_\kappa^+)$ in $\HH^i_w(K^p, \kappa)^{-,\leq fs}$  is an open and bounded submodule. 
\end{enumerate}
\end{lem}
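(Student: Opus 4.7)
The plan is to produce a natural $\ocal_F$-lattice inside the (finite-dimensional) slope--$\leq h$ piece of the overconvergent cohomology by transferring the integral structure $\mathcal{V}_\kappa^+$ of Corollary~\ref{coro-existence-integral-struct} to cohomology. I would begin by choosing a finite affinoid cover $\{U_j\}$ of $\mathcal{U}$ consisting of smooth $\mathcal{V}_\kappa$-acyclic affinoids on which $\mathcal{V}_\kappa$ is associated to its global sections, together with a similar presentation of $\mathcal{Z}^c\cap\mathcal{U}$ as a finite union of such affinoids. The hypothesis that $\mathcal{U}$ is quasi-compact and that the complement of $\mathcal{Z}$ is quasi-compact is precisely what allows such finite choices. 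Lemma~\ref{lem-represent-Banach} then provides a representative $C^\bullet$ of $\mathrm{R}\Gamma_{\mathcal{Z}\cap\mathcal{U}}(\mathcal{U},\mathcal{V}_\kappa)$ in $\mathcal{K}^{proj}(\mathbf{Ban}(F))$, obtained from the associated \v{C}ech-type complex with support.

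Next I would consider the integral analog $C^{\bullet,+}$ built in the same way from $\mathcal{V}_\kappa^+$. Each term $C^{i,+}$ is an open and bounded $\ocal_F$-submodule of $C^i$ by Lemma~\ref{lem-bounded-torsion-integral} (using the reducedness coming from smoothness). The crucial input is Lemma~\ref{lem-Cechversusanalytic}: applied to each of the finitely many (iterated) intersections of members of the cover, it yields a uniform integer $N$ such that $p^N$ annihilates $\HH^i_{\mathrm{an}}(V,\mathcal{V}_\kappa^+)$ for $i>0$ on every such intersection $V$. This uniform bounded torsion implies that $\HH^i(C^{\bullet,+})$ agrees with $\HH^i_{\mathcal{U}\cap\mathcal{Z}}(\mathcal{U},\mathcal{V}_\kappa^+)$ up to bounded $p$-torsion; in particular the image $L_i$ of $\HH^i_{\mathcal{U}\cap\mathcal{Z}}(\mathcal{U},\mathcal{V}_\kappa^+)$ in $\HH^i(C^\bullet)$ is bounded, and after inverting $p$ it generates $\HH^i_{\mathcal{U}\cap\mathcal{Z}}(\mathcal{U},\mathcal{V}_\kappa)$, so $L_i$ is a full $\ocal_F$-lattice there. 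To conclude, I would project to the slope-$\leq h$ summand via the continuous idempotent cut out by a polynomial in the compact Hecke operator $T$: this takes the bounded submodule $L_i$ to a bounded submodule of the finite-dimensional $F$-vector space $\HH^i_w(K^p,\kappa)^{+,\leq h}$, which must then also be open (i.e.\ a full lattice) because its $F$-span is all of $\HH^i_w(K^p,\kappa)^{+,\leq h}$ by surjectivity of the projector on the rational cohomology. The $-$ case is entirely parallel, replacing the $(+,w,K_{p,m',b})$-allowed support by a $(-,w,K_{p,m',b})$-allowed one and $T^+$ by $T^-$.

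The main obstacle will be ensuring that the uniformly bounded $p$-torsion produced by Bartenwerfer's theorem really does survive the passage through the cone defining cohomology with support. One has to control simultaneously the torsion coming from the \v{C}ech-to-analytic comparison \emph{and} the torsion introduced by restricting to $\mathcal{Z}^c\cap\mathcal{U}$, and the key point is that the quasi-compactness hypotheses on $\mathcal{U}$ and $\mathcal{Z}^c\cap\mathcal{U}$ allow one to choose all the covers involved to be finite, so that the integer $N$ furnished by Lemma~\ref{lem-Cechversusanalytic} may be taken uniform across the entire spectral sequence computing the cohomology with support.
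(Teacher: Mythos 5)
Your proposal is correct and follows essentially the same strategy as the paper's proof: represent the cohomology by a finite \v{C}ech cone $C^\bullet$ with integral subcomplex $C^{+,\bullet}$, invoke Lemma~\ref{lem-Cechversusanalytic} (together with the \v{C}ech-to-analytic spectral sequence) to compare $\HH^i(C^{+,\bullet})$ with $\HH^i_{\mathcal{U}\cap\mathcal{Z}}(\mathcal{U},\mathcal{V}_\kappa^+)$ up to uniformly bounded torsion, and then project to the slope-$\leq h$ factor to land in a finite-dimensional $F$-vector space. Two small points of precision: the slope-$\leq h$ idempotent is given by an entire series in the compact lift $\tilde T$ rather than a polynomial (which still yields the continuity you need), and at the very end you should pass to the limit over $h$ to reach the full finite slope part $\HH^i_w(K^p,\kappa)^{+,fs}$, as the paper does.
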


\begin{proof} We only treat the first item since the second one follows with minor modifications. We can represent the cohomology $ \mathrm{R}\Gamma_{\mathcal{U} \cap \mathcal{Z}}(\mathcal{U}, \mathcal{V}_\kappa)$ by an explicit complex of Banach modules $C^\bullet$. We choose an affinoid covering  $\mathfrak{U}_1$  of $\mathcal{U}$ and  consider the \v{C}ech complex $\check{C}(\mathfrak{U}_1, \mathcal{V}_\kappa)$ which computes $\mathrm{R}\Gamma(\mathcal{U}, \mathcal{V}_\kappa)$. Next we take an affinoid covering  $\mathfrak{U}_2$ of $\mathcal{U} \cap \mathcal{Z}^c$ refining the covering $\mathfrak{U}_1 \cap \mathcal{Z}^c$ and consider the \v{C}ech complex $\check{C}(\mathfrak{U}_2, \mathcal{V}_\kappa)$ which computes $\mathrm{R}\Gamma(\mathcal{U} \cap \mathcal{Z}^c, \mathcal{V}_\kappa)$. Finally, we  represent the cohomology $ \mathrm{R}\Gamma_{\mathcal{U} \cap \mathcal{Z}}(\mathcal{U}, \mathcal{V}_\kappa)$ by $$C^\bullet = \mathrm{Cone}( \check{C}(\mathfrak{U}_1, \mathcal{V}_\kappa) \rightarrow \check{C}(\mathfrak{U}_2, \mathcal{V}_\kappa))[-1].$$    We also consider the subcomplex of open and bounded submodules of $C^\bullet$: 
$$C^{+, \bullet} =  \mathrm{Cone}( \check{C}(\mathfrak{U}_1, \mathcal{V}^+_\kappa) \rightarrow \check{C}(\mathfrak{U}_2, \mathcal{V}^+_\kappa))[-1].$$
Any sufficiently regular element $T$ of $T^{++}$  lifts to a compact endomorphism $\tilde{T}$ of $C^\bullet$ and we can consider the direct ``slope less  or equal than $h$'' factor  $C^{\bullet, \leq h }$ of $C^\bullet$. This is a perfect complex of $F$-vector spaces,  whose cohomology groups compute $\HH^i_w(K^p, \kappa)^{+,\leq h}$. Denote the projection of  $C^{+, \bullet}$ in $C^{\bullet, \leq h }$ by $C^{+, \bullet, \leq h}$. This is   a perfect complex of $\ocal_F$-modules and the image of $\HH^i(C^{+,\bullet,  \leq h })$ in $\HH^i(C^{\bullet, \leq h }) = \HH^i_w(K^p, \kappa)^{+,\leq h}$ is therefore an open and bounded submodule.  Therefore, for any $h$, the image of $\HH^i(C^{+,\bullet})$ in $ \HH^i_w(K^p, \kappa)^{+,\leq h}$ is open and bounded. Passing to the limit over $h$, we deduce that the image of $\HH^i(C^{+,\bullet})$ in $ \HH^i_w(K^p, \kappa)^{+,fs}$ is open and bounded.
To prove the lemma, it  suffices to show that the map $$ \HH^i(C^{+, \bullet}) \rightarrow \HH^i_{\mathcal{U} \cap \mathcal{Z}}(\mathcal{U}, \mathcal{V}_\kappa^+)$$ has kernel and cokernel of bounded torsion.  Using the \v{C}ech to cohomology spectral sequence, this follows from lemma \ref{lem-Cechversusanalytic}. 
\end{proof}

\begin{proof}[Proof of Theorem \ref{thm-slopes}]  We only prove the $+$ case. The $-$ case follows with minor modifications. We let $t \in T^{+}$, and let $T = [K_{p, m', b} t K_{p, m', b}]$.   We   take $(\mathcal{U}, \mathcal{Z})$  as in lemma \ref{lemma-construct-lattice}.
We find by using lemma \ref{lem-sheaf-map-integral} that we have an endomorphism $p^{ -\langle  wv(t), w_{0,M}\kappa \rangle}T : \mathrm{R}\Gamma_{\mathcal{U} \cap \mathcal{Z}}( \mathcal{U}, \mathcal{V}_\kappa^+) \rightarrow \mathrm{R}\Gamma_{\mathcal{U} \cap \mathcal{Z}}( \mathcal{U}, \mathcal{V}_\kappa^+)$. It follows from lemma \ref{lemma-construct-lattice} that $p^{ -\langle  wv(t), w_{0,M}\kappa \rangle}T$ preserves an open and bounded submodule in $\HH^i_w(K^p, \kappa)^{+,fs}$ for all $i$. For any character of $\lambda$ of $T^+$ on $\HH^i_w(K^p, \kappa)^{+,fs}$,    this implies that $v(\lambda(t)) \geq \langle  wv(t), w_{0,M}\kappa \rangle$. The theorem is thus proven. 
\end{proof}

\subsection{Comparisons with slope bounds on classical cohomology}
In this section, we make the connection between the slope bounds on overconvergent cohomology with other slope bounds on classical cohomology.
\subsubsection{Some combinatorics} Let $\kappa\in X^\star(T^c)^{M_\mu,+}$. We first attach to $\kappa$ certain subsets of the Weyl group. 
 We let $W(\kappa)^+ = \{ w \in W, w w_{0,M}(\kappa + \rho) = w_{0,M}(\kappa + \rho) \}$. We let $W(\kappa)^{-} =  \{ w \in W, w(\kappa + \rho) = \kappa + \rho \}$. We let $C(\kappa)^+ = \{ w \in W, w^{-1} w_{0,M}(\kappa + \rho) \in X^\star(T)_{\qq}^- \}$ and $C(\kappa)^- = \{ w \in W, w^{-1} (\kappa + \rho) \in X^\star(T)_{\qq}^+ \}$. 

\begin{prop} \begin{enumerate}
\item The set $C(\kappa)^{\pm}$ is a left principal homogeneous space under $W(\kappa)^{\pm}$. 
\item $C(\kappa)^{\pm} \subseteq \WM$.
\item $\kappa+\rho$ is regular if and only if $C(\kappa)^{\pm}$ is reduced to a single element.
\item We have $W(\kappa)^+ = w_{0,M} W(\kappa)^- w_{0,M}$ and $C(\kappa)^+ = w_{0,M} C(\kappa)^- w_{0}$.
\item We have $C(\kappa)^{\pm} = C(-w_{0,M}\kappa-2\rho_{nc})^{\mp}$.
\end{enumerate}
\end{prop}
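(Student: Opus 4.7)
The plan is to treat each of the five points in turn using standard facts about the Weyl group action on $X^\star(T)_\qq$: namely that $X^\star(T)_\qq^+$ (resp.\ $X^\star(T)_\qq^-$) is a fundamental domain for the $W$-action, the stabilizer of a weight $\lambda$ is generated by the reflections $s_\alpha$ for which $\langle \alpha^\vee, \lambda\rangle = 0$, and the identities $w_{0,M}\rho_M = -\rho_M$, $w_0\rho = -\rho$, together with the crucial fact that $W_M$ fixes $\rho_{nc}$ (which follows from $\rho = \rho_M + \rho_{nc}$ and the fact that simple reflections in $W_M$ shift $\rho$ by simple roots of $M$).

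For (1) and (3), fix the unique element $\nu^-$ of $X^\star(T)_\qq^-$ in the $W$-orbit of $w_{0,M}(\kappa+\rho)$; then $C(\kappa)^+$ is exactly the fiber over $\nu^-$ of the map $W\to W\cdot w_{0,M}(\kappa+\rho)$, $w\mapsto w^{-1}w_{0,M}(\kappa+\rho)$, and is thus a single left coset of the stabilizer $W(\kappa)^+$. The $-$ case is identical. Regularity of $\kappa+\rho$ is equivalent to the triviality of its stabilizer in $W$ (hence also of $w_{0,M}(\kappa+\rho)$), giving (3).

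For (2), I argue by contraposition: suppose $w\in W\setminus\WM$, so there exists $\alpha\in\Phi_M^+$ with $w^{-1}\alpha\in\Phi^-$. Since $\kappa\in X^\star(T^c)^{M_\mu,+}$ is $M_\mu$-dominant and $\rho$ is strictly dominant, $\kappa+\rho$ is strictly $M_\mu$-dominant, so $\langle\alpha^\vee,\kappa+\rho\rangle > 0$. Then $\langle (w^{-1}\alpha)^\vee, w^{-1}(\kappa+\rho)\rangle = \langle\alpha^\vee,\kappa+\rho\rangle > 0$ with $w^{-1}\alpha$ a negative root, contradicting $w^{-1}(\kappa+\rho)\in X^\star(T)_\qq^+$; hence $C(\kappa)^-\subseteq\WM$. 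The argument for $C(\kappa)^+$ is symmetric, replacing $\kappa+\rho$ by $w_{0,M}(\kappa+\rho)$, which is strictly $M_\mu$-antidominant for the same reason.

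Finally, (4) is a direct manipulation: $w\in W(\kappa)^+$ iff $(w_{0,M}^{-1}ww_{0,M})(\kappa+\rho) = \kappa+\rho$, so $W(\kappa)^+ = w_{0,M}W(\kappa)^-w_{0,M}$ using $w_{0,M}^2=1$; writing $w = w_{0,M}w'w_0$ one computes $w^{-1}w_{0,M}(\kappa+\rho) = w_0 w'^{-1}(\kappa+\rho)$, which lies in $X^\star(T)_\qq^-$ iff $w'^{-1}(\kappa+\rho)\in X^\star(T)_\qq^+$, giving the coset identity. For (5) the key step is the identity $\kappa^\vee+\rho = -w_{0,M}(\kappa+\rho)$ where $\kappa^\vee = -w_{0,M}\kappa-2\rho_{nc}$: expanding $\rho = \rho_M+\rho_{nc}$ and using $w_{0,M}\rho_M = -\rho_M$ together with $w_{0,M}\rho_{nc}=\rho_{nc}$ shows both sides equal $-w_{0,M}\kappa+\rho_M-\rho_{nc}$. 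Given this, $w\in C(\kappa^\vee)^-$ iff $w^{-1}(-w_{0,M}(\kappa+\rho))\in X^\star(T)_\qq^+$ iff $w\in C(\kappa)^+$, and the other equality follows by applying the involution $\kappa\mapsto\kappa^\vee$ again. No serious obstacle is anticipated; the only subtlety is remembering that $W_M$ fixes $\rho_{nc}$, which powers the compatibility in (5).
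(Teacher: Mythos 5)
Your proof is correct and takes essentially the same approach as the paper's (which is extremely terse, declaring (3)–(5) ``evident'' and handling (1)–(2) in two sentences). The only distinction is one of phrasing: for (2) the paper invokes the characterization of $\WM$ as $\{w : wX^\star(T)^+ \subseteq X^\star(T)^{M_\mu,+}\}$, whereas you work directly with the root-theoretic characterization $\Phi_M^+ \subseteq w\Phi^+$; these are equivalent and your contrapositive argument is the standard way to see it. Your fuller write-up of (4) via the substitution $w = w_{0,M}w'w_0$ and of (5) via the identity $\kappa^\vee + \rho = -w_{0,M}(\kappa+\rho)$ (using $w_{0,M}\rho_{nc} = \rho_{nc}$, which you correctly justify) is a sound expansion of what the paper calls ``evident.''
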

\begin{proof} Left multiplication defines an action of $W(\kappa)^{\pm}$ on $C(\kappa)^{\pm}$.   Given a weight $\lambda \in X^\star(T)^-$, we have $w\lambda \geq \lambda$ for all $w \in W$. It follows that if $w, w' \in C(\kappa)^{\pm}$, $w(w')^{-1} \in W(\kappa)^{\pm}$. The elements of $\WM$ are characterized among $W$ by the property that $wX^\star(T)^{+} \subseteq X^\star(T)^{M_\mu,+}$. Since $\kappa + \rho$ is $M_\mu$-dominant and regular, the second point follows. The remaining points are evident.
\end{proof}

We now give some more explanations concerning the meaning of these sets and the connection with infinitesimal characters. The element $-w^{-1} w_{0,M}(\kappa + \rho) \in X^\star(T)_{\qq}^+$ is independent of  $w \in C(\kappa)^+$, and we denote it by $\nu + \rho$ for $\nu \in X^\star(T)$.

\begin{prop}[\cite{harris-ann-arb}, prop. 3.1.4] The character $\nu + \rho$ is the dominant representative of the infinitesimal character of the automorphic representations contributing to the cohomology of the sheaves $\mathcal{V}_\kappa$ or $\mathcal{V}_{\kappa}(-D)$ over $S^{tor}_{K,\Sigma}(\C)$.
\end{prop}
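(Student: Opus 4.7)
The plan is to combine a combinatorial verification with the representation-theoretic description of coherent cohomology. First I would check that $\nu + \rho$ is well-defined and dominant: by the preceding proposition, $C(\kappa)^+$ is a left principal homogeneous space under $W(\kappa)^+ = \mathrm{Stab}_W(w_{0,M}(\kappa+\rho))$, so $-w^{-1}w_{0,M}(\kappa+\rho)$ is independent of the choice of $w \in C(\kappa)^+$; moreover it is $G$-dominant by the very definition of $C(\kappa)^+$. This accounts for the word ``the'' in ``the dominant representative''.

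For the main content, I would invoke the Faltings--Harris--Su formula (cf.\ \cite{harris-ann-arb}, \cite{su2018coherent}) expressing the $\pi_f$-isotypic component of coherent cohomology as
\[
\mathrm{Hom}_{G(\mathbb{A}_f)}\!\bigl(\pi_f,\,\HH^i(S_{K,\Sigma}^{tor},\mathcal{V}_{\kappa,\Sigma})\bigr)
\;\cong\; \bigoplus_{\pi_\infty} m(\pi_\infty \otimes \pi_f)\cdot \HH^i\!\bigl(\mathfrak{p}_\mu^{std,-},K_\infty\cap M_\mu(\mathbb{R});\,\pi_\infty\otimes V_\kappa\bigr),
\]
and similarly for the cuspidal and sub-canonical variants. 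The Koszul filtration on $\wedge^\bullet \mathfrak{p}_\mu^{std,-}$ together with a standard application of Wigner's lemma (or more concretely, the computation of $(\mathfrak{g},K_\infty)$-cohomology via Vogan--Zuckerman) implies that this relative Lie algebra cohomology can only be nonzero if $\pi_\infty$ has the same infinitesimal character as some irreducible $G$-representation whose restriction to $M_\mu$ pairs nontrivially with $V_\kappa$ as an $M_\mu$-module.

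The final step is the combinatorial translation between $M_\mu$-weights and $G$-infinitesimal characters. The irreducible $M_\mu$-representation $V_\kappa$ has highest weight $\kappa$, hence $M_\mu$-infinitesimal character $\kappa + \rho_M$ up to $W_M$; to match this with a $G$-infinitesimal character through the bottom layer / Harish-Chandra, one uses that $\rho - \rho_M = \rho_{nc}$ (half the sum of non-compact positive roots, by our choice of positive system) and that the Weyl group element converting $\kappa$ to its $G$-dominant version runs through $\WM$. The sign and the factor $w_{0,M}$ arise because $\mathcal{V}_{\kappa}$ is built from the representation of highest weight $\kappa$ (equivalently, lowest weight $w_{0,M}\kappa$), so that the Harish-Chandra parameter which reads off the infinitesimal character is $-w_{0,M}(\kappa+\rho)$; its $G$-dominant transform under $W$ is $-w^{-1}w_{0,M}(\kappa+\rho) = \nu+\rho$ for $w \in C(\kappa)^+$.

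The main obstacle here is bookkeeping: one must keep straight the distinction between the $M_\mu$-weight $\kappa$ and the attached $G$-weight for the purposes of Harish-Chandra, the passage between the two positive systems (for $M_\mu$ versus for $G$), and the $w_{0,M}$-twist arising from the convention tying $V_\kappa$ to its highest rather than lowest weight via the descent from $G$ to $M^c_\mu\backslash G$. Once these are pinned down, the formula $\nu+\rho = -w^{-1}w_{0,M}(\kappa+\rho)$ for $w \in C(\kappa)^+$ falls out; since the Faltings/Harris argument (and its cuspidal and boundary variants) is well-documented in the references above, I would simply cite \cite{harris-ann-arb}, Prop.~3.1.4 for the full verification rather than reproduce it.
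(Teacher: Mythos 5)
Your proposal is correct and takes essentially the same approach as the paper: the paper presents this proposition as a direct citation to \cite{harris-ann-arb}, Prop.~3.1.4, and you likewise conclude by citing the same result after sketching the argument behind it. Your outline — well-definedness and dominance of $\nu+\rho$ from the structure of $C(\kappa)^+$, the Harris--Su expression of coherent cohomology via relative Lie algebra cohomology, Wigner's lemma forcing a match of infinitesimal characters, and the $w_{0,M}$-twist plus $\rho$-shift giving $\nu+\rho = -w^{-1}w_{0,M}(\kappa+\rho)$ — is a faithful account of what the cited proposition establishes, consistent with the paper's bookkeeping (note the relation $\kappa = -w_{0,M}w(\nu+\rho)-\rho$ is precisely the inverse of your formula).
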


\begin{rem} The infinitesimal character of the automorphic representations contributing to the cohomology of the Serre dual sheaves $\mathcal{V}_{-w_{0,M} \kappa- 2 \rho_{nc}}$ and $\mathcal{V}_{-w_{0,M} \kappa- 2 \rho_{nc}}(-D)$ is $-\nu-\rho$. Its dominant representative is therefore $-w_{0}\nu+\rho$.
\end{rem}

It is important to record the formulas that allow us to switch between the infinitesimal character and the weight:
\begin{eqnarray*}
\nu& =& -w^{-1} w_{0,M}(\kappa + \rho) - \rho, ~\forall w \in C(\kappa)^+ \\
\kappa &=& - w_{0,M}w( \nu + \rho)-\rho, ~\forall w \in C(\kappa)^+ \\
\nu &=&  - w_{0}w^{-1}(\kappa + \rho) - \rho,~\forall w \in C(\kappa)^-\\
\kappa &=& -w w_{0}( \nu + \rho)-\rho, ~\forall w \in C(\kappa)^-.
\end{eqnarray*}

We also introduce the notation $\ell_{\min}(\kappa)=\min_{w\in C(\kappa)^+}\ell_+(w)=\min_{w\in C(\kappa)^-}\ell_-(w)$ and $\ell_{\max}(\kappa)=\max_{w\in C(\kappa)^+}\ell_+(w)=\max_{w\in C(\kappa)^-}\ell_-(w)$.  Here the equalities follow from the fact that for $w\in\WM$, $\ell_+(w_{0,M}ww_0)=d-\ell_+(w)=\ell_-(w)$. Moreover $\ell_{\min}(\kappa)=\ell_{\max}(\kappa)$ if and only if $\kappa+\rho$ is regular.  We note that we expect the automorphic vector bundle $\mathcal{V}_\kappa$ to have interesting cohomology exactly in the range $[\ell_{\min}(\kappa),\ell_{\max}(\kappa)]$.   

\begin{thm} Let $\pi$ be an automorphic representation contributing to the cohomology of the sheaves $\mathcal{V}_\kappa$ or $\mathcal{V}_{\kappa}(-D)$ over $S^{tor}_{K,\Sigma}(\C)$. Assume that $\pi_\infty$ is (essentially) tempered. Then $\pi$ can only contribute to the cohomology in degree in the range $[\ell_{\min}(\kappa),\ell_{\max}(\kappa)]$.
\end{thm}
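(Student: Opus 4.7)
The plan is to reduce this to the classical description of tempered contributions to coherent cohomology recalled in the introduction (attributed to Blasius--Harris--Ramakrishnan, Mirkovi\'c, Schmid, and Williams; see \cite{harris-ann-arb}, theorems 3.4 and 3.5), and to reinterpret that description in terms of the combinatorial subsets $C(\kappa)^\pm$ defined above. The key archimedean input is that an essentially tempered representation $\pi_\infty$ of $G(\R)$ contributing to the coherent cohomology of an automorphic vector bundle must be a non-degenerate limit of discrete series, and Harish-Chandra's classification parameterizes such representations by pairs $(\nu+\rho, w)$, where $\nu+\rho \in X^\star(T^c)_{\R}^+$ is the (dominant representative of the) infinitesimal character and $w \in \WM$ indexes the ``family'' in which $\pi_\infty$ sits.

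First, I would recall precisely which cohomological degree and which weight of automorphic sheaf such a representation contributes to: the family parameterized by $w \in \WM$ at infinitesimal character $\nu+\rho$ contributes to $\HH^{\ell(w)}(S_{K,\Sigma}^{tor}(\C), \mathcal{V}_{\kappa_w})$ (and its cuspidal variant), where $\kappa_w = -w_{0,M}w(\nu+\rho)-\rho$, provided $\kappa_w$ is $M_\mu$-dominant (otherwise the contribution is zero). Assuming $\pi$ contributes to the cohomology of $\mathcal{V}_\kappa$ or $\mathcal{V}_\kappa(-D)$, we therefore have $\kappa = \kappa_w$ for some $w \in \WM$ associated to $\pi_\infty$, and $\pi$ contributes in degree $\ell(w) = \ell_+(w)$.

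Next I would translate the condition $\kappa = \kappa_w$ into the statement $w \in C(\kappa)^+$. Indeed, rearranging $\kappa = -w_{0,M}w(\nu+\rho)-\rho$ gives $w^{-1}w_{0,M}(\kappa+\rho) = -(\nu+\rho)$, and since $\nu+\rho \in X^\star(T^c)_{\R}^+$ this is precisely the condition $w^{-1}w_{0,M}(\kappa+\rho) \in X^\star(T)_{\qq}^-$ defining $C(\kappa)^+$. Hence the degree $\ell_+(w)$ in which $\pi$ can contribute lies in $\{\ell_+(w') : w' \in C(\kappa)^+\} \subseteq [\ell_{\min}(\kappa), \ell_{\max}(\kappa)]$, which is the desired conclusion. (For the Serre-dual sheaf $\mathcal{V}_{-w_{0,M}\kappa-2\rho_{nc}}$, one uses the identity $C(\kappa)^+ = C(-w_{0,M}\kappa-2\rho_{nc})^-$ together with $\ell_-(w_{0,M}ww_0)=\ell_+(w)$ so that the same range $[\ell_{\min}(\kappa), \ell_{\max}(\kappa)]$ is obtained.)

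There is no real obstacle here beyond careful bookkeeping: the theorem is essentially a reformulation of the classification of tempered cohomological archimedean components via the combinatorics of $\WM$ and $C(\kappa)^\pm$. The only subtlety is making the ``essentially tempered'' hypothesis match cleanly with ``limit of discrete series, $\pi_\infty$ non-degenerate'' on the group $G(\R)$ versus on $G^c(\R) = (G/Z_s)(\R)$, which is handled, for example, by the discussion in \cite{harris-ann-arb} section 3; this identification of archimedean parameters with the index set $\WM$ is really the heart of the argument, and everything else is formal.
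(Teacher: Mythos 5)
Your proposal is correct and follows essentially the same approach as the paper, which simply cites the results of Blasius--Harris--Ramakrishnan, Mirkovi\'c, Schmid, and Williams via \cite{harris-ann-arb} theorems 3.4 and 3.5. Your unpacking of the combinatorial translation ($\kappa=\kappa_w\Leftrightarrow w\in C(\kappa)^+$) is accurate; the parenthetical remark about the Serre-dual sheaf is not needed for the statement as given (which concerns $\mathcal{V}_\kappa$ and its subcanonical extension $\mathcal{V}_\kappa(-D)$, not the dual), but is harmless.
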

\begin{proof} This follows from a combination of results of Blasius-Harris-Ramakrishnan, Mirkovich, Schmid and Williams. See \cite{harris-ann-arb}, thm. 3.4 and thm. 3.5.
\end{proof}

\subsubsection{Slope bounds on classical coherent cohomology} In light of the spectral sequences of section \ref{section-spectral-sequences}, conjecture \ref{conj-strongslopes} suggests the following conjectural slope bound for classical cohomology:
\begin{conj}\label{conj-strongslopes-classical} Let $\kappa \in X^\star(T^c)^{M_\mu,+}$ and let $\chi:T(\ZZ_p)\to \overline{F}^\times$ be a finite order character.  Let $\nu=-w^{-1}w_{0,M}(\kappa+\rho)-\rho$ for any $w\in C(\kappa)^+$.  For any eigensystem $\lambda : T^\pm \rightarrow \overline{F}^\times$ occurring in the classical cohomologies $\mathrm{R}\Gamma(K^p, \kappa, \chi)^{\pm,fs}$ or  $\mathrm{R}\Gamma(K^p, \kappa, \chi, cusp)^{\pm,fs}$, we have:
\begin{enumerate}
\item In the $+$ case, $v(\lambda) \geq -\nu$.
\item In the $-$ case, $v(\lambda) \leq -w_0\nu$.
\end{enumerate}
\end{conj}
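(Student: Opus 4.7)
The plan is to derive this conjecture as a formal consequence of Conjecture \ref{conj-strongslopes} via the spectral sequence of Theorem \ref{thm-spectral-sequence}, together with a short combinatorial comparison of weights. I will focus on the $+$ case, as the $-$ case is strictly analogous (and also follows by Serre duality after interchanging $\kappa$ and $-w_{0,M}\kappa-2\rho_{nc}$).

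First I would observe that any eigensystem $\lambda:T^+\to\overline{F}^\times$ occurring in $\mathrm{R}\Gamma(K^p,\kappa,\chi)^{+,fs}$ must occur on some page of the $\mathcal{H}_{p,m,b}^+$-equivariant spectral sequence
$$\mathbf{E}_1^{p,q}(K^p,\kappa,\chi)^{+} = \bigoplus_{w\in\WM,\ell_+(w)=p}\HH^{p+q}_w(K^p,\kappa,\chi)^{+,fs}\;\Longrightarrow\;\HH^{p+q}(K^p,\kappa,\chi)^{+,fs}.$$
Since all differentials on the spectral sequence are $T^+$-equivariant, and taking subquotients preserves the set of eigensystems, $\lambda$ must already occur as an eigensystem on the $\mathbf{E}_1$-page, hence in $\HH^i_w(K^p,\kappa,\chi)^{+,fs}$ for some $w\in\WM$. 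The same argument applies to cuspidal cohomology.

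Assuming Conjecture \ref{conj-strongslopes}, we then have $v(\lambda)\geq w^{-1}w_{0,M}(\kappa+\rho)+\rho$ for this particular $w\in\WM$. To conclude, I would show the purely combinatorial statement that for every $w\in\WM$,
$$w^{-1}w_{0,M}(\kappa+\rho)+\rho\;\geq\;-\nu\qquad\text{in }X^\star(T^d)_{\mathbb{R}}.$$
Writing $-\nu=w'^{-1}w_{0,M}(\kappa+\rho)+\rho$ for any $w'\in C(\kappa)^+$, this reduces to showing $w^{-1}w_{0,M}(\kappa+\rho)\geq w'^{-1}w_{0,M}(\kappa+\rho)$. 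Now $w'^{-1}w_{0,M}(\kappa+\rho)\in X^\star(T)_{\mathbb{Q}}^-$ by definition of $C(\kappa)^+$, so it is the antidominant representative of its absolute Weyl orbit. Hence for any $u\in W$ one has $u\cdot w'^{-1}w_{0,M}(\kappa+\rho)-w'^{-1}w_{0,M}(\kappa+\rho)\in\mathbb{R}_{\geq 0}\Delta$; taking $u=w^{-1}w_{0,M}w'$ gives the inequality with respect to $\preceq$, which restricts to the desired inequality with respect to $\leq$ on $X^\star(T^d)_{\mathbb{R}}$ via $r:\Delta\to\Delta_d$.

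The main obstacle is of course that the argument is entirely conditional on Conjecture \ref{conj-strongslopes}, which is only known in the weaker form of Theorem \ref{thm-slopes}. Running the same formal argument using Theorem \ref{thm-slopes} in place of Conjecture \ref{conj-strongslopes} would only yield the weaker bound $v(\lambda)\geq -w_{0,M}\kappa$ (the minimum over $w\in\WM$ of $w^{-1}w_{0,M}\kappa$, achieved at $w=w_{0,M}w_0$), since the ``shape'' term $w^{-1}w_{0,M}\rho+\rho$ is lost. Recovering the sharp bound therefore really does require improving Theorem \ref{thm-slopes}: the key geometric input missing is a sufficiently fine control of the ramification of the integral models of the Hecke correspondences near the Igusa varieties $(\pi_{HT,K_p}^{tor})^{-1}(P_\mu w K_p)$, which would allow one to replace the lattice $\mathcal{V}_\kappa^+$ used in the proof of Theorem \ref{thm-slopes} by a refined lattice detecting the full shift by $\rho$. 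For the subcategory of interior cohomology classes, however, this obstacle can be bypassed by the bootstrapping method of Theorem \ref{thm-strongslopes-interior} (using $p$-adic interpolation over the eigenvariety), and so at least the interior cohomology version of Conjecture \ref{conj-strongslopes-classical} is within reach unconditionally.
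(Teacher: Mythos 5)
Your proposal matches the paper's own argument: it is precisely the content of Proposition \ref{prop-strongslopes-implies-classical}, whose proof reduces to the $\mathbf{E}_1$-page of the spectral sequence of Theorem \ref{thm-spectral-sequence}, applies Conjecture \ref{conj-strongslopes}, and then checks the combinatorial inequality $w^{-1}w_{0,M}(\kappa+\rho)+\rho\geq -\nu$ (there it is phrased in terms of the dot action and Lemma \ref{lem-bruhat-inequality}, whereas you phrase it directly via antidominance of $w'^{-1}w_{0,M}(\kappa+\rho)$ — equivalent statements). One small slip: in your combinatorial step you should take $u=w^{-1}w'$, not $u=w^{-1}w_{0,M}w'$, so that $u\mu = w^{-1}w'w'^{-1}w_{0,M}(\kappa+\rho)=w^{-1}w_{0,M}(\kappa+\rho)$; with your choice one instead lands on $w^{-1}(\kappa+\rho)$. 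The remainder of your discussion (the weaker bound from Theorem \ref{thm-slopes} and the interior-cohomology bootstrap) also accurately reflects the paper.
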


\begin{rem}
The $+$ and $-$ statements are in fact equivalent, in view of the discussion of section \ref{subsection-jacquet} and in particular the isomorphism between the Jacquet modules for $U$ and $\overline{U}$ given by $w_0$.
\end{rem}

\begin{prop}\label{prop-strongslopes-implies-classical}
Conjecture \ref{conj-strongslopes} implies conjecture \ref{conj-strongslopes-classical}.
\end{prop}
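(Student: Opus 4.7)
The plan is to combine the $\mathcal{H}_p^{\pm}$-equivariant spectral sequence of Theorem \ref{thm-spectral-sequence} with a standard Weyl group inequality comparing the $w$-indexed bound of Conjecture \ref{conj-strongslopes} against the classical bound of Conjecture \ref{conj-strongslopes-classical}. No genuinely hard step is involved; the argument is essentially combinatorial bookkeeping once the spectral sequence is in hand.

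Fix an eigensystem $\lambda:T^+\to\overline{F}^\times$ appearing on $\HH^i(K^p,\kappa,\chi)^{+,fs}$ or its cuspidal variant (the $-$ case is symmetric). Since the spectral sequence $\mathbf{E}^{p,q}(K^p,\kappa,\chi)^+ \Rightarrow \HH^{p+q}(K^p,\kappa,\chi)^{+,fs}$ is $\mathcal{H}_p^+$-equivariant, $\lambda$ must appear in some $\mathbf{E}_\infty^{p,q}$ with $p+q=i$, hence as a subquotient of $\mathbf{E}_1^{p,q}=\bigoplus_{w\in\WM,\,\ell_+(w)=p}\HH_w^{p+q}(K^p,\kappa,\chi)^{+,fs}$. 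Thus $\lambda$ occurs on $\HH_w^{i}(K^p,\kappa,\chi)^{+,fs}$ for some $w\in\WM$, and Conjecture \ref{conj-strongslopes} applied to this $w$ yields
\[ v(\lambda)\;\geq\;w^{-1}w_{0,M}(\kappa+\rho)+\rho. \]
The cuspidal case is identical, using the cuspidal spectral sequence and Conjecture \ref{conj-strongslopes} for cuspidal cohomology.

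It remains to pass from this $w$-dependent inequality to the uniform bound $v(\lambda)\geq -\nu$. Fix any $w'\in C(\kappa)^+$: by the definitions in the excerpt, $-(\nu+\rho)=w'^{-1}w_{0,M}(\kappa+\rho)$, and the dominance of $\nu+\rho$ is exactly the statement that $-(\nu+\rho)\in X^\star(T)_{\qq}$ is antidominant. For any $u\in W$, the element $u\cdot(-(\nu+\rho))$ differs from $-(\nu+\rho)$ by a non-negative combination of positive absolute roots (the standard characterization of the antidominant element as the minimum of its Weyl orbit in the order $\preceq$), and this relation restricts to $T^d$ to give the analogous inequality in the order $\leq$ on $X^\star(T^d)_\qq$. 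Taking $u=w^{-1}w'$ gives
\[ w^{-1}w_{0,M}(\kappa+\rho)\;=\;u\cdot w'^{-1}w_{0,M}(\kappa+\rho)\;=\;u\cdot(-(\nu+\rho))\;\geq\;-(\nu+\rho), \]
and adding $\rho$ produces $w^{-1}w_{0,M}(\kappa+\rho)+\rho\geq -\nu$. Combined with the previous paragraph this proves $v(\lambda)\geq -\nu$.

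The $-$ case runs in parallel: an eigensystem of $T^-$ on the classical cohomology appears on some $\HH_w^i(K^p,\kappa,\chi)^{-,fs}$ via the spectral sequence, giving $v(\lambda)\leq w^{-1}(\kappa+\rho)-\rho$; for $w'\in C(\kappa)^-$ the identity $w_0\rho=-\rho$ gives $-w_0\nu = w'^{-1}(\kappa+\rho)-\rho$, and since $w'^{-1}(\kappa+\rho)$ is dominant (hence the maximum of its Weyl orbit), $w^{-1}(\kappa+\rho)\leq w'^{-1}(\kappa+\rho)$ for every $w\in W$, yielding $v(\lambda)\leq -w_0\nu$. The only points requiring care are that the spectral sequence is compatible with the $\pm$-Hecke action in both the cuspidal and non-cuspidal settings, which is immediate from Theorem \ref{thm-spectral-sequence}.
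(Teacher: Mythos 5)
Your proof is correct and follows essentially the same route as the paper: pass through the $\mathcal{H}_p^{\pm}$-equivariant spectral sequence of Theorem \ref{thm-spectral-sequence} to land on some $\HH^i_w(K^p,\kappa,\chi)^{\pm,fs}$, invoke Conjecture \ref{conj-strongslopes} there, and then compare $w$-dependent and uniform bounds by a Weyl-group inequality. The only cosmetic difference is that you phrase the last step as orbit-extremality of the (anti)dominant representative of $\nu+\rho$ under the linear Weyl action, whereas the paper packages the same fact via Lemma \ref{lem-bruhat-inequality} and the dotted action (taking $w=1$, $w'=u$ there); also beware that you write $u\cdot(-(\nu+\rho))$ but clearly mean the linear action $u(-(\nu+\rho))$, while the paper reserves $\cdot$ for the dotted action, which could cause confusion.
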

\begin{proof}
We treat the non cuspidal $+$ case, the others are identical.  By the spectral sequence of section \ref{section-spectral-sequences} $\lambda$ occurs in $\mathrm{R}\Gamma_{w'}(K^p,\kappa,\chi)^{+,fs}$ for some $w'\in\WM$, and hence by conjecture \ref{conj-strongslopes} we have
\begin{equation*}
s(\lambda)\geq {w'}^{-1}w_{0,M}(\kappa+\rho)+\rho=-(w'^{-1}w)\cdot\nu\geq-\nu
\end{equation*}
where the last inequality follows from lemma \ref{lem-bruhat-inequality} below, using that $\nu+\rho\in X^\star(T)^+_{\mathbb{R}}$.
\end{proof}

We are not able to prove conjecture \ref{conj-strongslopes-classical} completely, however we will see that it holds when $\kappa+\rho$ is regular in theorem \ref{coro-lafforgue-estimates} in the next section, and we will eventually use $p$-adic interpolation to prove it for interior cohomology in theorem \ref{thm-strongslopes-interior}, and so in particular it holds for compact Shimura varieties.

We now explain the relation of this conjecture with other known and conjectured slope bounds on classical cohomology.

\subsubsection{Slope bounds on Betti cohomology}  Let $\nu\in X^\star(T^c)^+$. Let $W_\nu$ be the corresponding irreducible representation of $G$ with highest weight $\nu$ defined over $F$. Over $S_K(\C)$, we can construct a local system $\mathcal{W}_\nu^\vee$ attached to $W_\nu^\vee$ and we can consider the Betti cohomology groups $ \HH^\star(S_K(\C), \mathcal{W}_\nu^\vee)$ and $\HH^\star_c(S_K(\C), \mathcal{W}_\nu^\vee).$

\begin{prop}\label{prop-lafforgue} Assume that $K = K^pK_p$ with  $K_p = K_{p,m,b}$. Let $\nu \in X^\star(T^c)^+$.  For any eigensystem $\lambda : T^\pm \rightarrow \overline{F}^\times$ for the action of $\mathcal{H}_{p,m,b}^\pm$ on $ \HH^\star(S_K(\C), \mathcal{W}_\nu^\vee)^{\pm,fs}$ or $\HH^\star_c(S_K(\C), \mathcal{W}_\nu^\vee)^{\pm,fs}$, we have:
\begin{enumerate}
\item $v(\lambda) \geq -\nu$ in the $+$ case,
\item $v(\lambda) \leq -w_0\nu$ in the $-$ case.
\end{enumerate}
\end{prop}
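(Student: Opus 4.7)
The plan is to mimic the strategy of Theorem \ref{thm-slopes}, replacing the integral structure on the automorphic sheaf $\mathcal{V}_\kappa$ with an integral structure on the local system $\mathcal{W}_\nu^\vee$. Since $G$ is reductive over $\ocal_F$, the highest weight representation $W_\nu$ has a natural $\ocal_F$-model $W_\nu^+$ (e.g.\ via the dual Weyl module), and one takes $W_\nu^{\vee,+} \subset W_\nu^\vee$ to be any $G(\ocal_F)$-stable lattice with $W_\nu^{\vee,+} \otimes F = W_\nu^\vee$. Pulling back along the transition functions of the local system gives an integral local system $\mathcal{W}_\nu^{\vee,+}$ on $S_K(\mathbb{C})$, whose cohomologies $\HH^i(S_K(\mathbb{C}), \mathcal{W}_\nu^{\vee,+})$ and $\HH^i_c(S_K(\mathbb{C}), \mathcal{W}_\nu^{\vee,+})$ are finitely generated $\ocal_F$-modules generating the corresponding $F$-cohomologies.

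The key computation is then the precise scaling of the Hecke operators on these integral lattices. I would only treat the $+$ case, the $-$ case being symmetric (via the anti-involution $t \mapsto t^{-1}$ on $T(\qq_p)$, together with the fact that the highest weight of $W_\nu^\vee$ is $-w_0\nu$ while its lowest weight is $-\nu$). For $t \in T^+$ the Iwahori decomposition of $K_p = K_{p,m,b}$ gives $K_p \cap tK_pt^{-1} = \overline{U}_m \cdot T_b \cdot tU(\ZZ_p)t^{-1}$, so $K_p t K_p = \coprod_i u_i t K_p$ with representatives $u_i \in U(\ZZ_p) \subset G(\ocal_F)$. Since $u_i$ preserves $W_\nu^{\vee,+}$, the scaling of the Hecke action on the integral lattice is controlled entirely by the scaling of the action of $t$ on $W_\nu^{\vee,+}$. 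Decomposing $W_\nu^\vee$ into $T$-weight spaces, the weights $\mu$ that appear all satisfy $\mu \geq -\nu$ in the partial order $\leq$ (since $-\nu$ is the lowest weight of $W_\nu^\vee$), so $\langle v(t), \mu\rangle \geq \langle v(t), -\nu\rangle = -\langle v(t), \nu\rangle$ for all $t \in T^+$. Hence $t \cdot W_\nu^{\vee,+} \subseteq p^{-\langle v(t), \nu\rangle} W_\nu^{\vee,+}$, and therefore $p^{\langle v(t), \nu\rangle} \cdot [K_p t K_p]$ preserves the integral cohomology lattices (in both compact-support and ordinary flavors).

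From this it follows as in the proof of Theorem \ref{thm-slopes} that the image of the integral cohomology in the finite slope part $\HH^\star(S_K(\mathbb{C}), \mathcal{W}_\nu^\vee)^{+,fs}$ is an open and bounded $\mathcal{H}_{p,m,b}^+$-stable $\ocal_F$-submodule; hence for any eigenvalue $\lambda(t)$ of $[K_p t K_p]$ on the finite slope part we must have $v(p^{\langle v(t),\nu\rangle}\lambda(t)) \geq 0$, which is exactly $v(\lambda(t)) \geq \langle v(t), -\nu\rangle$ for all $t \in T^+$, i.e.\ $v(\lambda) \geq -\nu$. The same argument with $\overline{U}_m$ coset representatives and highest weight $-w_0\nu$ gives the $-$ inequality.

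The main thing to check carefully is the compatibility between the geometric description of the Hecke action on Betti cohomology (via the correspondence $S_{K_1 \cap gK_2g^{-1}} \to S_{K_1}, S_{K_2}$) and the naive ``$\sum u_i t$'' action on coefficients: one needs to verify that the pull-back/trace formula on the correspondence matches the coset decomposition on the coefficients in a way that preserves the integral structure $W_\nu^{\vee,+}$, so that no unexpected powers of $p$ enter through the trace map. This is standard but occasionally subtle, particularly because one must check that the trace map on the integral structure sheaf does not lose integrality along the finite \'{e}tale covering $S_{K^p K_p' } \to S_{K^p K_p}$; since this covering is a finite \'{e}tale Galois quotient by a finite group of order prime to $p$ (after possibly replacing $K^p$ by a small enough prime-to-$p$ level), the integrality is preserved and the proof goes through.
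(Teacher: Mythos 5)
Your approach is essentially the paper's: fix a $G(\ocal_F)$-stable lattice $W_\nu^{\vee,+}$ with a $T$-weight decomposition, observe that $t \cdot W_\nu^{\vee,+} \subseteq (-\nu)(t)W_\nu^{\vee,+}$ for $t \in T^+$ because $-\nu$ is the lowest weight of $W_\nu^\vee$, pass to the integral local system $\mathcal{W}_\nu^{\vee,+}$, use finiteness of Betti cohomology to get a Hecke-stable lattice up to the expected scaling, and read off the slope bound. This is exactly the argument in the paper (attributed to an adaptation of Lafforgue's Prop.~3.1 in \cite{MR2869300}).

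One factual slip in your last paragraph: the covering $S_{K^p K_p'} \to S_{K^p K_p}$ has degree $[K_p : K_p\cap tK_pt^{-1}]$, which for $t\in T^{++}$ is a power of $p$, and shrinking $K^p$ cannot change that. So your justification ``finite group of order prime to $p$'' is wrong. The conclusion you want is nonetheless correct, but for a different reason: the transfer map in Betti cohomology along a finite \'etale covering is a sum over the sheets of the cover (at the chain level, each simplex downstairs is lifted to its finitely many preimages and the contributions are summed), so it carries integral cochains to integral cochains unconditionally, with no division by the degree. Thus $\mathrm{tr}_{p_1}$ preserves the lattice $\HH^\star(\cdot,\mathcal{W}_\nu^{\vee,+})$ regardless of whether the degree is prime to $p$, and the argument is not sensitive to that. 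Your explicit Iwahori coset decomposition $K_ptK_p = \coprod_i u_i t K_p$ with $u_i\in U(\ZZ_p)$ is correct but also not strictly necessary: since the cohomological correspondence factors as $\mathrm{tr}_{p_1}\circ[t]^*\circ p_2^*$ and both $p_2^*$ and $\mathrm{tr}_{p_1}$ preserve the integral structure, only the action map $[t]^*$ introduces scaling, and this is controlled entirely by $t$ acting on the lattice.
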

\begin{proof} This is a straightforward adaptation of \cite{MR2869300}, prop. 3.1 which considers the case where $K_p$ is hyperspecial in an unramified group $G$. Recall that $G$ splits over $F$. The representation $W_\nu^\vee$ admits a lattice $W_\nu^{\vee,+}$ which is stable under the action of $G(\ocal_F)$ and admits  a weight decomposition with respect to the action of $T$.  For any $t \in T^+$, we have that $t (W_\nu^{\vee,+}) \subseteq (-\nu)(t) W_\nu^{\vee,+}$ and for any $t \in T^-$, we have that $t (W_\nu^{+,\vee}) \subseteq (-w_0\nu)(t) W_\nu^{+,\vee}$, as $-\nu$ and $-w_0\nu$ are the lowest and highest weights of $W_\nu^\vee$. The lattice $W_\nu^{+,\vee}$ gives an $\ocal_F$-local system $\mathcal{W}_\nu^{+,\vee}$ such that $\mathcal{W}_\nu^{+,\vee} \otimes_{\ocal_F} F = \mathcal{W}_\nu^\vee$.  The image of $ \HH^\star(S_K(\C), \mathcal{W}_\nu^{+,\vee})$ in $ \HH^\star(S_K(\C), \mathcal{W}_\nu^\vee)$ (resp.  of $ \HH_c^\star(S_K(\C), \mathcal{W}_\nu^{+,\vee})$ in $ \HH_c^\star(S_K(\C), \mathcal{W}_\nu^\vee)$) is a lattice $L$ (resp. $L_c$) by the finiteness of Betti Cohomology. For any $t \in T^+$, we find that $[K_p t K_p](L) \subseteq  (-\nu)(t) L$ and $[K_p t K_p](L_c)\subseteq  (-\nu)(t) L_c$. For any $t \in T^-$, we find that $[K_p t K_p](L) \subseteq  (-w_0\nu)(t) L$ and $[K_p t K_p](L_c)\subseteq  (-w_0\nu)(t) L_c$. 
\end{proof}

Using this proposition, we can prove conjecture \ref{conj-strongslopes-classical} when the weight is regular.

\begin{coro}\label{coro-lafforgue-estimates} Conjecture \ref{conj-strongslopes-classical} holds when $\kappa+\rho$ is $G$-regular.
\end{coro}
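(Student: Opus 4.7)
The plan is to deduce the corollary from the analogous slope bound on Betti cohomology given by Proposition~\ref{prop-lafforgue}, via Faltings' dual BGG spectral sequence. Since $\kappa+\rho$ is $G$-regular, the set $C(\kappa)^+$ (respectively $C(\kappa)^-$) consists of a single element $w$, and there is a unique regular dominant weight $\nu \in X^\star(T^c)^+$ satisfying $\kappa = \kappa_w := -w_{0,M}w(\nu+\rho)-\rho$. Defining $\kappa_{w'} := -w_{0,M}w'(\nu+\rho)-\rho$ for $w' \in \WM$, the weights $\{\kappa_{w'}\}_{w' \in \WM}$ are pairwise distinct by regularity of $\nu+\rho$, and the bound to be established, $v(\lambda)\geq -\nu$ in the $+$ case (respectively $v(\lambda)\leq -w_0\nu$ in the $-$ case), coincides with the bound of Proposition~\ref{prop-lafforgue} for eigensystems on the Betti cohomologies of $\mathcal{W}_\nu^\vee$ and its compactly supported variant.

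I would then invoke Faltings' dual BGG spectral sequence, a Hecke-equivariant spectral sequence of the form
\[
E_1^{p,q} = \bigoplus_{w'\in\WM,\ \ell(w')=p}\HH^{q}(K,\kappa_{w'},\mathrm{cusp}) \Longrightarrow \HH^{p+q}_c(S_K(\C),\mathcal{W}_\nu^\vee),
\]
together with its analogue abutting to non-compactly-supported Betti cohomology with $E_1$-terms the non-cuspidal coherent cohomologies $\HH^{q}(K,\kappa_{w'})$. This spectral sequence arises from the Hodge filtration on the logarithmic de Rham cohomology of $\mathcal{W}_\nu^\vee$. The Hecke operators $[K_ptK_p]$ for $t\in T^\pm$ act compatibly on both sides, with the action on each summand $\HH^q(K,\kappa_{w'})$ coinciding with the usual coherent Hecke action. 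Regularity of $\nu+\rho$ ensures that the restrictions $\kappa_{w'}|_{T(\ZZ_p)}$ are pairwise distinct, so the contributions of different $w'$ are separated by the action of $T(\ZZ_p)$. Passing to the finite slope part for $T^\pm$ preserves this separation, and every $T^\pm$-eigensystem $\lambda$ appearing in $\HH^q(K,\kappa,\chi)^{\pm,fs}$ or its cuspidal counterpart lifts to a $T^\pm$-eigensystem $\tilde\lambda$ on the corresponding Betti abutment with identical Hecke eigenvalues. Applying Proposition~\ref{prop-lafforgue} to $\tilde\lambda$ then yields the sought-after bound.

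The main obstacle is the verification of Hecke equivariance for the dual BGG spectral sequence, particularly for the operators at $p$. Away from $p$ this is automatic from the algebraic nature of the Hecke correspondences, but at $p$ one must check that $[K_ptK_p]$ for $t\in T^\pm$ acts on each Hodge graded piece via the usual coherent Hecke action without introducing a spurious $p$-adic twist; this reduces to a local computation on the representation $W_\nu^\vee$ of $M_\mu$ using the description of the BGG complex in terms of universal differential operators between automorphic vector bundles. One must also verify that the finite slope decomposition is compatible with the spectral sequence, which follows from the fact that the spectral sequence is a spectral sequence of Hecke modules and the finite slope functor is exact on admissible modules.
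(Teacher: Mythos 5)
Your proposal follows essentially the same route as the paper: invoke the degeneration of Faltings's dual BGG spectral sequence to realize each coherent cohomology group $\HH^q(K,\kappa_{w'},\chi)^{\pm,fs}$ Hecke-equivariantly inside the Betti cohomology $\HH^{q+\ell(w')}(S_K(\C),\mathcal{W}_\nu^\vee)$ (resp. $\HH_c$ for the cuspidal variant), and then apply the bound of Proposition~\ref{prop-lafforgue}. The extra step you propose — separating the contributions of different $w'\in\WM$ via the action of $T(\ZZ_p)$ — is unnecessary (and the paper does not use it): once one knows the spectral sequence degenerates at $E_1$ and is Hecke-equivariant, each $E_1^{p,q}$ is a subquotient of the abutment, so any $T^\pm$-eigensystem on it already occurs on the Betti cohomology, which is all Proposition~\ref{prop-lafforgue} requires.
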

\begin{proof} Since $\kappa+\rho$ is $G$ regular, there is a unique $\nu \in X^\star(T)^+$ and a unique $v \in W$ such that $-\kappa-\rho =  v(\nu + \rho)$.  By the definition we have $C(\kappa)^+=\{w_{0,M}v\}$ and $C(\kappa)^-=\{vw_0\}$.

By the degeneration of Faltings's dual BGG spectral sequences (see for example \cite{harris-ann-arb} section 4, \cite{HZIII} Cor. 4.2.3), $\bigoplus_{w'\in \WM} \HH^{i-l(w')}(K^p,-w'w_0(\nu+\rho)-\rho, \chi)^{\pm,fs}$ embeds Hecke-equivariantly in  $\HH^i(S_K(\C),  \mathcal{W}_\nu^\vee)$ and $\bigoplus_{w'\in\WM} \HH^{i-l(w')}(K^p,-w'w_0(\nu+\rho)-\rho, \chi,cusp)^{\pm,fs}$ embeds Hecke-equivariantly in  $\HH^i_c(S_K(\C),  \mathcal{W}_\nu^\vee)$.   The estimate follows from proposition \ref{prop-lafforgue}. 
\end{proof}

\subsubsection{Connection with  \cite{F-Pilloni}}\label{section-connection-FP} We make a digression in order  to explain the relation between conjecture \ref{conj-strongslopes-classical}  and conjecture 4.5 of \cite{F-Pilloni} which concerns the action of  the spherical Hecke algebra on coherent cohomology at spherical level.  This conjecture is inspired by \cite{MR2869300}, and   is a translation of the Katz-Mazur inequality on the cohomology of algebraic varieties to the automorphic setting.  

Let $\Gamma$ be the Galois group of $F/\qq_p$, acting on  $X^\star(T)$. The restriction map $X^\star(T)_{\mathbb{R}} \rightarrow X^\star(T^d)_{\mathbb{R}}$ induces an isomorphism $X^\star(T)_{\mathbb{R}}^\Gamma \rightarrow X^\star(T^d)_{\mathbb{R}}$, with an inverse given by $\lambda \mapsto \frac{1}{\vert \Gamma \vert} \sum_{\sigma \in \Gamma} \sigma. \tilde{\lambda}$ where $\tilde{\lambda}\in X^\star(T)_{\mathbb{R}}$ is any lift of $\lambda\in X^\star(T^d)_{\mathbb{R}}$.

We can therefore identify $X^\star(T^d)_{\mathbb{R}}$ as a subspace of $X^\star(T)_{\mathbb{R}}$ and the partial order on $X^\star(T^d)_{\mathbb{R}}$ is the one induced by the partial order on $X^\star(T)_{\mathbb{R}}$. This is the point of view adopted in \cite{F-Pilloni}. 

We will assume that $G_{\qq_p}$ is of the form $\mathrm{Res}_{L/\qq_p} G_0$  where $L$ is a finite extension of $\qq_p$ and $G_0$ is an unramified reductive group over $L$. In \cite{F-Pilloni}, the group $G_{\qq_p}$ was assumed to be unramified, but the same conjecture can be made in this level of generality, and is interesting for applications.  We assume that $K_p \subseteq G(\qq_p) = G_0(L)$ is a hyperspecial subgroup of $G_0(L)$
We consider the classical cohomology $\mathrm{R}\Gamma(S_{K^pK_p, \Sigma}^{tor}, \mathcal{V}_\kappa)$ or $\mathrm{R}\Gamma(S_{K^pK_p, \Sigma}^{tor}, \mathcal{V}_\kappa(-D))$. We let $\infty(\kappa) \in X^\star(T)^+$ be the dominant representative of $-\kappa-\rho$ which is the infinitesimal character of automorphic representations contributing to $\mathrm{R}\Gamma(S_{K^pK_p, \Sigma}^{tor}, \mathcal{V}_\kappa)$ or $\mathrm{R}\Gamma(S_{K^pK_p, \Sigma}^{tor}, \mathcal{V}_\kappa(-D))$. Let $\mathcal{H}(G(\qq_p), K_p)$ be the spherical Hecke algebra. 

Let $T_0$ be a maximal torus of $G_0$. We can assume that $T = \mathrm{Res}_{L/\qq_p} T_0$. Let $T_0^d$ be the maximal split sub-torus of $T_0$. Then $T_0^d$ is naturally defined over $\qq_p$ and the diagonal map $T_0^d \hookrightarrow \mathrm{Res}_{L/\qq_p} T_0^d \hookrightarrow T$ indentifies $T_0^d$ and $T^d$. We let $W_0^d$ be the sub-group of the (geometric) Weyl group of $G_0$ which stabilizes $T_0^d$. 
We  fix an element $p^{\frac{1}{2}} \in F$ (this is always  possible if we enlarge $F$).  
We have the  Satake isomorphism:
$$ \mathcal{S} : \mathcal{H}(G(\qq_p), K_p) \otimes_{\ZZ} F \rightarrow F[X_\star(T_0^d)]^{W_0^d}$$
and to any $\lambda : \mathcal{H}(G(\qq_p), K_p) \rightarrow \overline{F}$, we can attach a semi-simple $\sigma$-conjugacy class $ c \in (X^\star(T_0^d) \otimes \bar{F}^\times)/W_0^d = ~^L{G_0}(\bar{F})^{ss}/\sigma-\mathrm{conj}$, where $~^LG_0(\bar{F}) = \hat{G_0} \rtimes \ZZ$ is the Langlands group of $G_0$. This is  the semi-direct product of the dual group $\hat{G_0}$ by the free group $\ZZ$ generated by $\sigma$. The action of $\sigma$ on $G_0^D$ is the one induced by the Frobenius which is a generator of the Galois group of  the unramified extension of $L$ which splits $G_0$. In particular, if $G_0$ is split, this action is trivial. 

Applying the valuation to $c$  gives the element  $\mathrm{Newt}_v(c) \in X^\star(T_0^d)_{\mathbb{R}}/W_0^d = X^\star(T_0)_{\mathbb{R}}^\Gamma/W_0^d =  X^\star(T_0^d)^+_{\mathbb{R}} = (X^\star(T_0)_{\mathbb{R}}^{ +})^{\Gamma}.$

We also remark that $T(\qq_p)/T(\ZZ_p) = T_0(L)/T_0(\ocal_L) = X_\star (T_0^d)$ via the map $\lambda \in  X_\star (T_0^d) \mapsto \lambda(\varpi_L)$ where $\varpi_L$ is a uniformizing element in $L$. We have also defined a valuation map $v : T(\qq_p)/T(\ZZ_p) \rightarrow X_\star (T^d) \otimes \qq$. We therefore get a  map $v : X_\star (T_0^d) \rightarrow 
 X_\star (T^d) \otimes \qq$. This map is given by multiplication by $v(\varpi_L)$ (via  the identification $X_\star (T_0^d) = X_\star(T^d)$). 

We have  the following conjecture (which is \cite{F-Pilloni}, conj. 4.5 in the unramified case):

\begin{conj}\label{conj-K-M} Let $\kappa \in X^\star(T^c)^{M_\mu, +}$. Let $c \in   ~^L{G_0}(\bar{F})^{ss}/\sigma-\mathrm{conj}$ arising via the Satake isomorphism from an eigensystem for the spherical Hecke alegbra action on   $\mathrm{R}\Gamma(S_{K^pK_p, \Sigma}^{tor}, \mathcal{V}_\kappa)$ or $\mathrm{R}\Gamma(S_{K^pK_p, \Sigma}^{tor}, \mathcal{V}_\kappa(-D))$ For any $t \in X_\star(T_0^d)_{\mathbb{R}}^+$, we have  
$$ \langle t,  \mathrm{Newt}_v(c) \rangle \leq \langle v(t),  -w_0  \frac{1}{\vert \Gamma \vert} \sum_{\sigma \in \Gamma} \sigma.  \infty(\kappa) \rangle.$$
\end{conj}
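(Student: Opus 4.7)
The strategy is to reduce Conjecture \ref{conj-K-M} to the Iwahori-level slope bound Conjecture \ref{conj-strongslopes-classical}, which is established in this paper for interior cohomology (Theorem \ref{thm-strongslopes-interior}) and for weights with $\kappa+\rho$ $G$-regular (Corollary \ref{coro-lafforgue-estimates}). The bridge is the classical Satake/Bernstein relationship between the spherical Hecke algebra and the Iwahori Hecke algebra.

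First I would pass from the hyperspecial subgroup $K_p$ to an Iwahori-type subgroup $K_p^{Iw} = K_{p,1,0}$ contained in it. By Theorem \ref{thm-van-between-toroidal}, the pullback
\[\mathrm{R}\Gamma(S^{tor}_{K^pK_p,\Sigma}, \mathcal{V}_\kappa) \longrightarrow \mathrm{R}\Gamma(S^{tor}_{K^pK_p^{Iw},\Sigma}, \mathcal{V}_\kappa)\]
is a quasi-isomorphism onto the $(K_p/K_p^{Iw})$-invariants, and similarly in the cuspidal case. This pullback is equivariant for the prime-to-$p$ Hecke algebra, and at $p$ the spherical Hecke algebra $\mathcal{H}(G(\qq_p), K_p)$ acts on the source through its Bernstein embedding into the center of the Iwahori Hecke algebra of the target. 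Consequently, any eigensystem with Satake parameter $c$ occurring at hyperspecial level refines, after passage to the $T^\pm$-finite slope part at Iwahori level, to a finite nonempty collection of characters $\lambda_\pm : T^\pm \to \overline{F}^\times$ occurring in $\mathrm{R}\Gamma(K^p,\kappa,\mathbf{1})^{\pm,fs}$ or its cuspidal analog.

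Next, I would invoke the standard description of these refinements via Proposition \ref{prop-casell-JM}: if $\chi$ is an unramified character of $T(\qq_p)$ representing the Satake parameter $c$, then the characters $\lambda_\pm$ are (up to the standard $\delta_B^{1/2}$ normalization) precisely the Weyl translates $\{w\chi\}_{w \in W_0^d}$. Applying Conjecture \ref{conj-strongslopes-classical} in the $-$ case gives $v(\lambda_-) \leq -w_0\nu$ for every such refinement, where $\nu+\rho = \infty(\kappa)$. Since $\mathrm{Newt}_v(c)$ is by definition the image of the $W_0^d$-orbit of $v(\chi)$ in the dominant chamber of $X^\star(T^d)_{\mathbb{R}}$, pairing with any $t \in X_\star(T_0^d)^+_{\mathbb{R}}$ combines these inequalities, using the identification $X^\star(T^d)_{\mathbb{R}} \simeq X^\star(T)_{\mathbb{R}}^\Gamma$ via Galois averaging recalled at the start of Section \ref{section-connection-FP} and verifying that the Satake normalization supplies precisely the $\rho$-shift converting $\nu$ into $\infty(\kappa)$, into
\[\langle t, \mathrm{Newt}_v(c)\rangle \leq \big\langle v(t),\, -w_0 \cdot \frac{1}{|\Gamma|}\sum_{\sigma\in\Gamma}\sigma\cdot\infty(\kappa)\big\rangle,\]
which is the content of Conjecture \ref{conj-K-M}.

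The principal obstacle is that Conjecture \ref{conj-strongslopes-classical} is itself not known in general. Theorem \ref{thm-slopes} yields only a bound weaker by a $\rho$-term, and the sharp bound is established only for interior cohomology (Theorem \ref{thm-strongslopes-interior}) and in regular weight (Corollary \ref{coro-lafforgue-estimates}). Consequently, the strategy above proves Conjecture \ref{conj-K-M} unconditionally for interior coherent cohomology and for all classical cohomology in regular weight, but leaves the irregular-weight case for the full cohomology open. Closing this gap would require either establishing the stronger local-cohomology Conjecture \ref{conj-strongslopes} directly, or extending the $p$-adic deformation techniques used in the interior case to accommodate boundary contributions.
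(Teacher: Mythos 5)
Two structural points first, before the technical gap.  First, the statement you are asked to prove is stated as Conjecture \ref{conj-K-M}, and the paper does not prove it: it only relates it to Conjecture \ref{conj-strongslopes-classical} via Proposition \ref{prop-reductiontoLaff}, observes that it is "inspired by \cite{MR2869300}", and leaves it open.  Second, the direction of the reduction in Proposition \ref{prop-reductiontoLaff} is the \emph{opposite} of yours: the paper shows that Conjecture \ref{conj-K-M} at spherical level \emph{implies} Conjecture \ref{conj-strongslopes-classical} for the submodule $\mathcal{H}(G,K_{p,1,0})\cdot\HH^i(\mathcal{S}^{tor}_{K^pK_p,\Sigma},\mathcal{V}_\kappa)$ at Iwahori level.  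You propose the converse implication, which the paper does not claim and does not use.

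The converse has a genuine gap.  You write that "the characters $\lambda_\pm$ are (up to the standard $\delta_B^{1/2}$ normalization) precisely the Weyl translates $\{w\chi\}_{w\in W_0^d}$", citing Proposition \ref{prop-casell-JM}.  That proposition only identifies the $\pm$-finite slope part of $\pi_p^{K_{p,m,b}}$ with a Jacquet module $\pi_{p,U}^{T_b}$ or $\pi_{p,\overline{U}}^{T_b}$; it says nothing about which characters of $T(\qq_p)$ occur there.  If $\pi_p$ is an irreducible unramified representation that is a \emph{proper} subquotient of the unramified principal series $\iota_B^G\chi$, its Jacquet module has strictly fewer than $|W_0^d|$ constituents, so only a proper subset of the Weyl translates appear as refinements.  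Since $\langle t,\mathrm{Newt}_v(c)\rangle$ is, by Lemma \ref{equivalentformconjecture}, a bound on \emph{every} eigenvalue of $c$ acting on $V_t$ (equivalently, via the trace and exterior powers, a bound on the slope of $v(\tilde{c})$ paired against $t$ after taking the dominant representative), controlling the valuations of only those refinements that happen to occur does not control $\mathrm{Newt}_v(c)$.  In particular, nothing in the paper guarantees that the refinement making $v(\tilde{c})$ dominant (the one that governs $\langle t,\mathrm{Newt}_v(c)\rangle$ for dominant $t$) belongs to the occurring subset.  Your conclusion that the strategy proves Conjecture \ref{conj-K-M} unconditionally for interior cohomology and in regular weight therefore does not follow; it requires either an additional argument that the dominant refinement always occurs in the Jacquet module, or, more in the spirit of the paper, a direct integral-lattice argument at spherical level bounding the trace of $c$ on $V_t$, analogous to Proposition \ref{prop-lafforgue} but in the coherent setting.
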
 

\begin{rem} If $L$ is unramified, $X_\star(T_0^d)$ is canonically identified with $X_\star(T^d)$ via the valuation map $v$, and the above identity simply writes: 
$ \mathrm{Newt}_v(c)  \leq  -w_0  \frac{1}{\vert \Gamma \vert} \sum_{\sigma \in \Gamma} \sigma.  \infty(\kappa) $ in $X_\star(T)$. 
\end{rem}
We can reformulate this conjecture in another  way. Any element $t \in X_\star(T_0^d)^+$ gives a $\sigma$-equivariant representation of $\hat{G_0}$ (of highest weight $t$) and therefore a representation of $~^LG_0$. 
\begin{lem}\label{equivalentformconjecture}  The conjecture \ref{conj-K-M} holds if and only if,  for any $t \in X_\star(T_0^d)^+$ viewed as a dominant character of the Langlands group, and with associated highest weight representation $V_t$, we have that for any eigenvalue $x$ of $c$ on $V_t$, $$v(x) \geq - \langle v(t),  \infty(\kappa) \rangle.$$
\end{lem}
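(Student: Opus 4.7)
The plan is to deduce the lemma from the standard characterization of the Newton point of a semisimple $\sigma$-conjugacy class in $^LG_0$, together with routine manipulations exploiting the Galois-invariance of the quantities appearing on both sides.

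First, I would recall the defining property of $\mathrm{Newt}_v(c)$: the dominant element $\nu_c \in X^\star(T_0^d)^+_{\mathbb{R}}$ is characterized by the fact that for every representation $V$ of $^LG_0$, the valuations of the eigenvalues of $c$ on $V$ are exactly the numbers $\langle \mu, \nu_c\rangle$ for $\mu$ ranging over the weights of $V$ (with multiplicities). Specializing to $V = V_t$ with $t \in X_\star(T_0^d)^+$, the weights of $V_t$ lie in the convex hull of the Weyl group orbit of $t$, so since $\nu_c$ is dominant, the minimum of $\langle \mu, \nu_c \rangle$ over weights $\mu$ of $V_t$ is attained at the lowest weight $w_0 t$. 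Therefore the condition ``$v(x) \geq -\langle v(t), \infty(\kappa)\rangle$ for every eigenvalue $x$ of $c$ on $V_t$'' is equivalent to
$$\langle w_0 t, \nu_c \rangle \geq -\langle v(t), \infty(\kappa) \rangle \qquad \text{for all } t \in X_\star(T_0^d)^+.$$

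Next, I would match this with the inequality of Conjecture~\ref{conj-K-M}. Substitute $s = -w_0 t$; since $-w_0$ preserves the dominant cone, this is a bijection on $X_\star(T_0^d)^+$. The map $v$ commutes with $w_0$ (as $v$ is simply scalar multiplication on $X_\star(T_0^d)$ via the valuation of a uniformizer of $L$), so $v(-w_0 s) = -w_0 v(s)$, and the previous inequality rewrites as
$$\langle s, \nu_c\rangle \leq \langle v(s), -w_0 \infty(\kappa) \rangle \qquad \text{for all } s \in X_\star(T_0^d)^+.$$
Comparing with the conjecture, the only discrepancy is whether the right-hand side uses $\infty(\kappa)$ or its $\Gamma$-average $\overline{\infty(\kappa)} = \frac{1}{|\Gamma|}\sum_{\sigma \in \Gamma}\sigma \cdot \infty(\kappa)$. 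But $v(s)$ lies in $X_\star(T^d)_{\mathbb{Q}} = X_\star(T)_{\mathbb{Q}}^{\Gamma}$ and is $\Gamma$-invariant, and the longest element $w_0$ of $W$ is also $\Gamma$-invariant (being characterized by its length), so $\sigma$ commutes with $w_0$ and
$$\langle v(s), w_0 \sigma \cdot \infty(\kappa)\rangle = \langle \sigma^{-1}v(s), w_0 \infty(\kappa)\rangle = \langle v(s), w_0 \infty(\kappa)\rangle$$
for every $\sigma \in \Gamma$. Averaging over $\Gamma$ thus does not change the pairing, proving the equivalence.

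The main obstacle is the first step: carefully justifying the characterization of $\nu_c$ through valuations of eigenvalues on representations of $^LG_0$ in the possibly non-split (though unramified) case, where $c = (s, \sigma)$ carries a nontrivial Frobenius twist and the eigenvalues of $c$ on $V_t$ are not literally of the form $\mu(s)$ but involve the norm $N(s) = \prod_{i=0}^{d-1}\sigma^i(s)$ along $\sigma$-orbits of weights. This is nonetheless a standard consequence of Kottwitz's theory of Newton points, and once it is invoked the remainder of the argument is the purely combinatorial juggling of Weyl-group actions and Galois-averaging sketched above.
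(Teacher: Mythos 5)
Your proposal is correct, but it takes a genuinely different route from the paper's. The paper proves the lemma by citing lemma~3.6 of \cite{F-Pilloni}, which recasts Conjecture~\ref{conj-K-M} as a trace bound $v(\mathrm{Tr}(c\mid V_t))\geq\langle v(t),-\infty(\kappa)\rangle$ for all dominant $t$; from there the converse direction of the lemma is immediate, while the direct direction requires an extra device --- when several eigenvalues share the minimal valuation, the paper passes to $\Lambda^i V_t$ so that the unique minimal-valuation eigenvalue on the exterior power is the product $x_1\cdots x_i$, recovering the bound for each $x_j$. You instead argue directly from the defining property of $\mathrm{Newt}_v(c)$: for a semisimple class $c\in(X^\star(T_0^d)\otimes\bar F^\times)/W_0^d$ the valuations of its eigenvalues on $V_t$ are exactly the pairings $\langle\mu,\nu_c\rangle$ over the weights $\mu$ of $V_t$, the minimum is at the lowest weight $w_0t$, and then the substitution $s=-w_0t$ together with the $\Gamma$-averaging computation matches this with the conjecture's inequality. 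What your approach buys is symmetry (both directions come out of a single reversible manipulation) and no need for the exterior-power trick; what it costs is that the weight--eigenvalue characterization of the Newton point, which you flag as ``the main obstacle,'' is carrying essentially the same weight as the cited lemma~3.6, so the argument is not really more elementary --- it just locates the input in a different place. One small caution: you should make explicit that the characterization you invoke is correct precisely because the Satake parametrization already furnishes $c$ as a $\Gamma$-invariant element of the split torus, so the $\sigma$-twist in $^LG_0$ collapses when taking valuations along $\sigma$-orbits of weights; without that observation the ``eigenvalues of $c\rtimes\sigma$'' versus ``weights paired with $\nu_c$'' discrepancy you mention is not resolved. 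The combinatorial bookkeeping ($v$ commutes with $w_0$, $w_0$ is $\Gamma$-stable, $v(s)$ is $\Gamma$-invariant) is all correct.
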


\begin{rem} We remark that $-\infty(\kappa)$ is the anti-dominant representative of $\kappa + \rho$. 
\end{rem}
\begin{proof} It follows from lemma 3.6 of \cite{F-Pilloni} that the conjecture is equivalent to the statement that $v(\mathrm{Tr} ( c  \vert V_t)) \geq \langle w_0(v(t)), -w_0\infty(\kappa) \rangle = \langle v(t), -\infty(\kappa) \rangle$. Therefore, the converse implication holds. Let us prove the direct implication.  If there is a unique eigenvalue of $c$ on $V_t$ with minimal   valuation, we deduce  that  for any eigenvalue $x$ of $c$ on $V_t$, $v(x) \geq v(\mathrm{Tr} ( c  \vert V_t))$. Otherwise, let $x_1, \cdots, x_i$ be the $i$-th eigenvalues of minimal valuation. Then one considers the representation $\Lambda^i V_t$ and we find 
$iv(x_1) = v(\mathrm{Tr} ( c \rtimes \sigma \vert  \Lambda^iV_t)) \geq i \langle v(t), -\infty(\kappa) \rangle$.
\end{proof}

Before we state our main compatibility, we need to recall certain relations between the spherical and Iwahori Hecke algebras. We have the spherical Hecke algebra $\mathcal{H}(G,K_p)$ and the Iwahori Hecke algebra $\mathcal{H}(G, K_{p,1,0})$.  These are algebras for the convolution product for a Haar measure normalized by  $\mathrm{vol} (K_p) = 1$ (respectively $\mathrm{vol} (K_{p,1,0}) = 1$).  We have also introduced a subalgebra $\mathcal{H}^+_{p,1,0}$ of $\mathcal{H}(G, K_{p,1,0})$, isomorphic to $\ZZ[T^+]$, and generated by the elements $[K_{p,1,0} t K_{p,1,0}]$ with $t \in T^+$.

We now consider the twisted embedding $$F[X_\star(T_0^d)^+] \hookrightarrow \mathcal{H}(G, K_{p,1,0}) \otimes_{\ZZ}F$$
which sends $t \in T^+/T(\ZZ_p) = X_\star(T_0^d)^+$ to $q^{ -\langle t, \rho_0 \rangle} [K_{p,1,0} t K_{p,1,0}]$ where $q$ is the cardinal of $\ocal_L/\varpi_L$ and $\rho_0$ is half the sum of the positive roots in $G_0$. All the operators $q^{ -\langle t, \rho \rangle} [K_{p,1,0} t K_{p,1,0}]$ are invertible in  $\mathcal{H}(G, K_{p,1,0}) \otimes_{\ZZ}F$ and this map extends to an embedding  $$F[X_\star(T_0^d)] \hookrightarrow \mathcal{H}(G, K_{p,1,0}) \otimes_{\ZZ} F.$$ Moreover, $F[X_\star(T_0^d)]^{W_d}$ is the center of $\mathcal{H}(G, K_{p,1,0}) \otimes_{\ZZ} F$. Let $e_{K_p} \in \mathcal{H}(G, K_{p,1,0})$ be the idempotent equal to  characteristic function of $K_p$ divided by the volume of $K_p$. The natural isomorphism:
$$ \mathcal{H}(G, K_p) \otimes F  \rightarrow  e_{K_p}(\mathcal{H}(G, K_{p,1,0}) \otimes F)e_{K_p}$$ induces an isomorphism  $$\mathcal{H}(G, K_p) \otimes F \rightarrow e_{K_p} F[X_\star(T_0^d)]^{W_d}$$ which is the Satake isomorphism.  

 \begin{coro} Let $\pi$ be an irreducible smooth representation of $G(\qq_p)$ defined over $\bar{F}$. Assume that $\pi^{K_p} \neq 0$. Then $\pi^{K_p}$ is one dimensional. Let $c \in (X^\star(T_0^d) \otimes \bar{F}^\times)/W_0^d$ be the  semi-simple $\sigma$-conjugacy class corresponding to the action of $\mathcal{H}(G,K_p)$ on $\pi^{K_p}$.  Any eigensystem of $F[X_\star(T_0^d)] $ acting on $\pi^{K_{p,1,0}}$ is given by a lift $\tilde{c} \in X^\star(T_0^d) \otimes \bar{F}^\times$ of $c$.  
\end{coro}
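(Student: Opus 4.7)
The plan is to combine the Borel--Casselman theorem on unramified representations with the relationship between the spherical Hecke algebra and the center of the Iwahori--Matsumoto Hecke algebra. First, since $\pi$ is irreducible with $\pi^{K_p}\neq 0$, Borel--Casselman tells us that $\pi$ embeds into an unramified principal series $\iota_B^G(\psi)$ for some unramified character $\psi$ of $T(\qq_p)$; the Iwasawa decomposition $G(\qq_p)=B(\qq_p)K_p$ then shows $\dim_{\bar F}(\iota_B^G\psi)^{K_p}=1$, so $\pi^{K_p}$ is one--dimensional. The action of $\mathcal{H}(G,K_p)$ on this line is therefore a character, which by the Satake isomorphism corresponds to the semi--simple $\sigma$--conjugacy class $c\in(X^\star(T_0^d)\otimes\bar F^\times)/W_0^d$.

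Next I would study $\pi^{K_{p,1,0}}$ as a module over the Iwahori--Matsumoto algebra $\mathcal{H}(G,K_{p,1,0})\otimes F$. Since $\pi$ is irreducible and generated by its $K_{p,1,0}$--fixed vectors (this is the classical statement for irreducibles with an Iwahori fixed vector, following from Borel's theorem on the generation of admissible representations by their Iwahori invariants combined with Bernstein's second adjointness), $\pi^{K_{p,1,0}}$ is a finite--dimensional irreducible $\mathcal{H}(G,K_{p,1,0})\otimes F$--module. By Schur's lemma the center $Z=F[X_\star(T_0^d)]^{W_d}$ acts on $\pi^{K_{p,1,0}}$ through a single character $\chi:Z\to\bar F$.

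I would then identify $\chi$ by restriction to the line $\pi^{K_p}\subset\pi^{K_{p,1,0}}$, which is $Z$--stable because $e_{K_p}$ commutes with every central element. On $\pi^{K_p}$ the idempotent $e_{K_p}$ acts as the identity, so for $z\in Z$ the element $z$ acts on $\pi^{K_p}$ exactly as $e_{K_p}z\in e_{K_p}Z$ does; under the Satake isomorphism $\mathcal{H}(G,K_p)\otimes F\simeq e_{K_p}Z$ this action is precisely the scalar by which $c$ evaluates on $e_{K_p}z$, i.e.\ the character of $Z$ determined by the $W_0^d$--orbit $c$. Consequently $\chi$ is the character of $Z$ corresponding to $c$.

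Finally, any eigensystem $\tilde c:F[X_\star(T_0^d)]\to\bar F$ occurring in $\pi^{K_{p,1,0}}$ restricts to $\chi$ on the subring $Z=F[X_\star(T_0^d)]^{W_d}$. Since the fiber over a character of $F[X_\star(T_0^d)]^{W_d}$ under the finite extension $F[X_\star(T_0^d)]^{W_d}\hookrightarrow F[X_\star(T_0^d)]$ is exactly the $W_0^d$--orbit of any one preimage, $\tilde c$ must be a lift of $c$. The main subtlety is the second step: verifying that $\pi^{K_{p,1,0}}$ is irreducible as an Iwahori Hecke module and that the Satake identification genuinely intertwines the $Z$--action on $\pi^{K_p}$ with the $\mathcal{H}(G,K_p)$--action; once this bookkeeping with $e_{K_p}$ is in place the rest is formal commutative algebra on the group algebra $F[X_\star(T_0^d)]$.
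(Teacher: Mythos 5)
Your argument is correct, and it is the standard one: the one--dimensionality of $\pi^{K_p}$ via embedding into an unramified principal series (or, equivalently, via the commutativity of $\mathcal{H}(G,K_p)\otimes\bar F$), the irreducibility of $\pi^{K_{p,1,0}}$ as a module over $\mathcal{H}(G,K_{p,1,0})\otimes F$ by Borel's equivalence, the central character $\chi$ on $Z=F[X_\star(T_0^d)]^{W_0^d}$ identified with $c$ by restricting to the $e_{K_p}$--stable line and invoking the Bernstein/Satake identification $e_{K_p}Z\simeq\mathcal{H}(G,K_p)\otimes F$, and finally the observation that the fibers of $\mathrm{Spec}\,F[X_\star(T_0^d)]\to\mathrm{Spec}\,Z$ over $\bar F$--points are single $W_0^d$--orbits. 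The paper states this corollary without a proof, so there is nothing to compare against, but your write--up supplies exactly the reasoning the authors left implicit; the only cosmetic remark is that the $Z$--action on $\pi^{K_{p,1,0}}$ is already scalar by Schur's lemma applied to the Bernstein center, so the irreducibility of $\pi^{K_{p,1,0}}$ as an Iwahori--Hecke module, while true, is not strictly needed at that step.
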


\begin{rem} In particular  for any $t \in T^+/T(\ZZ_p) = X_\star(T_0^d)^+$, the eigenvalues of $q^{ -\langle t, \rho_0 \rangle} [K_{p,1,0} t K_{p,1,0}]$  acting on $\pi^{K_{p,1,0}}$ are among the eigenvalues of $c$ acting on the representation $V_t$ of the Langlands dual group.
\end{rem}

\begin{prop}\label{prop-reductiontoLaff} 
Let $\kappa \in X^\star(T^c)^{M_\mu, +}$. 
Consider the submodule $\mathcal{H}(G,K_{p,1,0})\cdot  \HH^i(\mathcal{S}^{tor}_{K^pK_p,\Sigma}, \mathcal{V}_\kappa)$ of $\HH^i(\mathcal{S}^{tor}_{K^pK_{p,1,0},\Sigma}, \mathcal{V}_\kappa)$   generated by the cohomology at spherical level $ \HH^i(\mathcal{S}^{tor}_{K^pK_p,\Sigma}, \mathcal{V}_\kappa)$. If conjecture  \ref{conj-K-M} holds for all eigensystems in $ \HH^i(\mathcal{S}^{tor}_{K^pK_p,\Sigma}, \mathcal{V}_\kappa)$ then conjecture \ref{conj-strongslopes-classical} holds for all  eigensystems in $\mathcal{H}(G,K_{p,1,0})\cdot  \HH^i(\mathcal{S}^{tor}_{K^pK_p,\Sigma}, \mathcal{V}_\kappa)$. 
\end{prop}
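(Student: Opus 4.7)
The plan is to translate an arbitrary Iwahori Hecke eigensystem on the generated module $M$ into a lift of a Satake parameter appearing in the spherical cohomology, and then to match normalizations between conjecture \ref{conj-K-M} and conjecture \ref{conj-strongslopes-classical}.

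First, I will reduce to the $+$ case: the $+$ and $-$ statements of conjecture \ref{conj-strongslopes-classical} are equivalent via proposition \ref{prop-casell-JM} together with the $w_0$-twisted isomorphism $\pi_U \simeq \pi_{\overline{U}}$ from section \ref{subsection-jacquet}. Since $K_{p,1,0}$ corresponds to $b = 0$, the nebentypus $\chi$ in conjecture \ref{conj-strongslopes-classical} is forced to be trivial. Setting $\nu := \infty(\kappa) - \rho$, I must show that for every eigensystem $\lambda : T^+ \to \overline{F}^\times$ of $\mathcal{H}^+_{p,1,0}$ occurring in $M := \mathcal{H}(G, K_{p,1,0}) \cdot \HH^i(\mathcal{S}^{tor}_{K^pK_p, \Sigma}, \mathcal{V}_\kappa)$ and every $t \in T^+$, one has $v(\lambda(t)) \geq -\langle v(t), \nu\rangle$.

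Next, I will identify $\lambda$ with a lift of a Satake parameter. Under Bernstein's isomorphism, the center of $\mathcal{H}(G, K_{p,1,0})$ is canonically the spherical Hecke algebra $\mathcal{H}(G, K_p) \simeq F[X_\star(T_0^d)]^{W_0^d}$. Because $M$ is generated by $V^{K_p} := \HH^i(\mathcal{S}^{tor}_{K^pK_p, \Sigma}, \mathcal{V}_\kappa)$ as an $\mathcal{H}(G, K_{p,1,0})$-module and the center commutes with this action, every central eigensystem on $M$ already occurs as a spherical Hecke eigensystem on $V^{K_p}$; restricting $\lambda$ to the center thus yields a Satake parameter $c$ appearing in the spherical cohomology, to which conjecture \ref{conj-K-M} applies by hypothesis. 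Via the twisted embedding $F[X_\star(T_0^d)] \hookrightarrow \mathcal{H}(G, K_{p,1,0}) \otimes F$, the character $\lambda$ extends to a character $\tilde c$ of $F[X_\star(T_0^d)]$ lifting $c$, given explicitly for $t \in T^+/T(\ZZ_p) \simeq X_\star(T_0^d)^+$ by $\tilde c(t) = q^{-\langle t, \rho_0\rangle} \lambda(t)$. By the remark after the corollary preceding the proposition, $\tilde c(t)$ is an eigenvalue of $c$ acting on the highest-weight representation $V_t$ of the Langlands dual group. Applying Lemma \ref{equivalentformconjecture} to bound this eigenvalue gives $v(\tilde c(t)) \geq -\langle v(t), \infty(\kappa)\rangle$, that is,
\[ v(\lambda(t)) \geq \langle t, \rho_0\rangle v(q) - \langle v(t), \infty(\kappa)\rangle. \]

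Finally, I will verify the normalization identity $\langle t, \rho_0\rangle v(q) = \langle v(t), \rho\rangle$, which converts the displayed bound into $v(\lambda(t)) \geq -\langle v(t), \nu\rangle$ as required. Writing $e, f$ for the ramification and residue degrees of $L/\qq_p$, we have $v(q) = f$ and the map $v : X_\star(T_0^d) \to X_\star(T^d) \otimes \qq$ is multiplication by $v(\varpi_L) = 1/e$. The Weil-restriction decomposition $G_F = \prod_{\sigma : L \hookrightarrow F} G_{0, \sigma}$ gives $\rho = \sum_\sigma \rho_{0, \sigma}$, and under the diagonal identification $X_\star(T_0^d) = X_\star(T^d)$ a diagonal element $t$ pairs to $\langle t, \rho\rangle = [L:\qq_p]\langle t, \rho_0\rangle = ef\langle t, \rho_0\rangle$, so $\langle v(t), \rho\rangle = (1/e)\langle t, \rho\rangle = f\langle t, \rho_0\rangle = \langle t, \rho_0\rangle v(q)$. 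The main obstacle is the Bernstein-theoretic step in the second paragraph: the assertion that generation by $K_p$-fixed vectors forces the central character of every $\mathcal{H}^+_{p,1,0}$-eigensystem on $M$ to be drawn from the spherical Hecke eigensystems on $V^{K_p}$; the remaining work is the invocation of Lemma \ref{equivalentformconjecture} and the algebraic identity above, which is forced by the chosen normalization of the Satake isomorphism.
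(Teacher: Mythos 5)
Your proposal is correct and follows essentially the same route as the paper's proof: identify the normalized Iwahori eigenvalue as a lift of a Satake parameter occurring in $V^{K_p}$ via the twisted embedding $F[X_\star(T_0^d)]\hookrightarrow \mathcal{H}(G,K_{p,1,0})\otimes F$, apply Lemma \ref{equivalentformconjecture}, and untwist the $\rho_0$- versus $\rho$-normalizations using $v(q)v(\varpi_L)^{-1}=[L:\qq_p]$. You are somewhat more explicit than the paper in justifying, via the commuting action of the center of the Iwahori--Hecke algebra on the generated module, why every eigensystem on $\mathcal{H}(G,K_{p,1,0})\cdot\HH^i(\mathcal{S}^{tor}_{K^pK_p,\Sigma},\mathcal{V}_\kappa)$ restricts to a spherical eigensystem on $V^{K_p}$ -- a step the paper leaves implicit.
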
 
\begin{proof}  Let $c$ be the semi-simple conjugacy class arising from a spherical eigenclass. Let $\lambda$ be the character of $\mathcal{H}^+_{p,1,0}$ on the span of this eigenclass at Iwahori level. For each $t_0 \in T^+/T(\ZZ_p) = X_\star(T_0^d)^+$, we see that $\lambda(t_0) q^{ -\langle t_0, \rho_0 \rangle} $ is an eigenvalue for $c$ acting on $V_{t_0}$. It follows from lemma \ref{equivalentformconjecture} that $v(\lambda(t_0)) - v(q) \langle t_0, \rho_0 \rangle  \geq   \langle v(t_0), -\infty(\kappa) \rangle$. Where via the valuation map $v : X_\star(T_0^d) \rightarrow X_\star(T^d)\otimes \qq$, $t_0$ maps to $v(\varpi_L) t$, where we let $t$ be the element corresponding to $t_0$ via the isomorphism $X_\star(T_0^d) = X_\star(T^d)$. So this identity can be re-written: 
$$v(\lambda(t)) v(\varpi_L) - v(q) \langle t_0, \rho_0 \rangle  \geq  v( \varpi_L) \langle t, -\infty(\kappa) \rangle.$$
It remains to remark that $v(q) v(\varpi_L)^{-1} = [L:\qq_p]$ and that $\langle t_0, \rho_0 \rangle [L:\qq_p] = \langle t, \rho \rangle$. 
\end{proof} 

\subsection{Slopes and small slope conditions}\label{section-slope-conditions}
We will now define several ``small slope'' conditions that will occur in this paper.  The reason that there are so many conditions is explained as follows. First we consider small slope condition on the coherent cohomology in weight $\kappa$, but also conditions on the Betti cohomology in weight $-w_0\nu$. Second, the conditions needed to obtain a vanishing theorem are not exactly the same as those needed to obtain classicality theorems. For example, on a Shimura set the vanishing theorem is trivial and does not require any slope condition, but when one considers the theory of $p$-adic algebraic automorphic forms, there is a slope condition to achieve classicality. Finally, in our setting there are two types of control theorems. We have control theorems for cohomologies of classical automorphic sheaves, but also control theorems for cohomologies valued in Banach sheaves. All this explains the large variety of slope conditions we need.  We begin with the small slope condition and then turn to the strongly small slope condition which we need to use because we were not able to prove conjecture \ref{conj-strongslopes}. 
\subsubsection{Small slope conditions}

\begin{defi}\label{defi-ss}
Let $\lambda\in X^\star(T^d)_{\mathbb{R}}$.
\begin{itemize}
\item Let $\nu\in X^\star(T)$ satisfy $\nu+\rho\in X^\star(T)^+_{\mathbb{R}}$.
\begin{itemize}
\item We say $\lambda$ satisfies $+,ss(\nu)$ if for all $w\in W$ with $w\cdot\nu\not=\nu$, $$\lambda\not\geq-w\cdot\nu.$$
\item We say $\lambda$ satisfies $-,ss(\nu)$ if for all $w\in W$ with $w\cdot\nu\not=\nu$, $$\lambda\not\leq -w\cdot(w_0\nu).$$
\end{itemize}
\item Let $\kappa\in X^\star(T)^{M,+}$.
\begin{itemize}
\item We say $\lambda$ satisfies $+,ss^M(\kappa)$ if for all $w\in \WM\setminus C(\kappa)^+$, $$\lambda\not\geq w^{-1}w_{0,M}(\kappa+\rho)+\rho.$$
\item We say $\lambda$ satisfies $-,ss^M(\kappa)$ if for all $w\in \WM\setminus C(\kappa)^-$, $$\lambda\not\leq w^{-1}(\kappa+\rho)-\rho.$$
\end{itemize}

\item Let $\nu\in X^\star(T)^+$.
\begin{itemize}
\item We say $\lambda$ satisfies $+,ss_b(\nu)$ if for all $w\in \WM$, $\lambda$ satisfies $+, ss^M(-w_{0,M} w(\nu + \rho)- \rho)$. 
\item We say $\lambda$ satisfies $-,ss_b(\nu)$ if for all $w\in \WM$, $\lambda$ satisfies $-, ss^M(-w_{0,M} w(\nu + \rho)- \rho)$. 
\end{itemize}

\item Let $\kappa\in X^\star(T)$ and let $w\in \WM$.
\begin{itemize}
\item We say $\lambda$ satisfies $+,ss_{M,w}(\kappa)$ if for all $w'\in W_M$, $w'\not=1$, $$\lambda\not\geq w^{-1}w_{0,M}w'(\kappa+\rho)+\rho.$$
\item We say $\lambda$ satisfies $-,ss_{M,w}(\kappa)$ if for all $w'\in W_M$, $w'\not=1$, $$\lambda\not\leq w^{-1}w'(\kappa+\rho)-\rho.$$
\end{itemize}
\end{itemize}
\end{defi}

To orient the reader, we give a brief summary of how these conditions arise:
\begin{itemize}
\item The condition $\pm,ss^M(\kappa)$ will appear in the ``geometric'' classicality theorem relating classical cohomology and overconvergent cohomology, as well as vanishing theorems for classical cohomology in section \ref{subsection-classicality-vanishing}.
\item The condition $\pm,ss_{M,w}(\kappa)$ will arise in the second classicality theorem ``at the level of the sheaf'' relating overconvergent  and locally analytic cohomologies in algebraic weights in section \ref{subsection-control2}.
\item The condition $\pm,ss(\nu)$ is the usual small slope condition that arises in works on $p$-adic modular forms from the Betti perspective.  We shall see in proposition \ref{prop-small-slope-rel} below that it is the combination of the other two conditions.
\item The condition $\pm, ss_b(\nu)$ is the condition that arises in the vanishing theorems for classical Betti cohomology. We shall see in proposition \ref{prop-ssb-cond} that it is weaker than $\pm,ss(\nu)$. 
\end{itemize}

We first observe that the $+$ and $-$ conditions are related by two symmetries:
\begin{prop}\label{prop-plusminus-symmetry} Let $\lambda\in X^\star(T^d)_{\mathbb{R}}$.
\begin{enumerate}
\item Let $\nu\in X^\star(T)$ satisfy $\nu+\rho\in X^\star(T)^+_{\mathbb{R}}$.  Then the following are equivalent:
\begin{itemize}
\item $\lambda$ satisfies $-,ss(\nu)$.
\item $w_0(\lambda)$ satisfies $+,ss(\nu)$.
\item $-\lambda$ satisfies $+,ss(-w_0\nu)$.
\end{itemize}
\item Let $\nu\in X^\star(T)$ satisfy $\nu+\rho\in X^\star(T)^+_{\mathbb{R}}$.  Then the following are equivalent:
\begin{itemize}
\item $\lambda$ satisfies $-,ss_b(\nu)$.
\item $w_0(\lambda)$ satisfies $+,ss_b(\nu)$.
\item $-\lambda$ satisfies $+,ss_b(-w_0\nu)$.
\end{itemize}
\item Let $\kappa\in X^\star(T)^{M,+}$.  Then the following are equivalent:
\begin{itemize}
\item $\lambda$ satisfies $-,ss^M(\kappa)$.
\item $w_0(\lambda)$ satisfies $+,ss^M(\kappa)$.
\item $-\lambda$ satisfies $+,ss^M(-w_{0,M}\kappa-2\rho_{nc})$.
\end{itemize}
\item Let $\kappa\in X^\star(T)^{M,+}$ and let $w\in \WM$.  Then the following are equivalent:
\begin{itemize}
\item $\lambda$ satisfies $-,ss_{M,w}(\kappa)$.
\item $w_0(\lambda)$ satisfies $+,ss_{M,w_{0,M}ww_0}(\kappa)$.
\item $-\lambda$ satisfies $+,ss_{M,w}(-w_{0,M}\kappa-2\rho_{nc})$.
\end{itemize}
\end{enumerate}
\end{prop}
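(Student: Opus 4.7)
The proof amounts to a direct verification of each three-way equivalence, based on two elementary symmetries of the partial order on $X^\star(T^d)_{\mathbb{R}}$: inversion under the longest element, $\lambda\leq\mu \Leftrightarrow w_0\lambda\geq w_0\mu$ (valid because $w_0$ is Galois-invariant and $w_0\Delta_d=-\Delta_d$), and reversal under negation, $\lambda\leq\mu\Leftrightarrow-\lambda\geq-\mu$. Composing them shows the opposition involution $\sigma:=-w_0$ preserves the order.

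The central combinatorial identity, used repeatedly, is that $\sigma$ intertwines the dot action. Using $w_0\rho=-\rho$ one computes
\[
\sigma(w\cdot\mu)=-w_0\bigl(w(\mu+\rho)-\rho\bigr)=-w_0w(\mu+\rho)-\rho=(w_0ww_0)\cdot\sigma(\mu),
\]
so $w\mapsto w_0ww_0$ is a $W$-equivariant bijection identifying the dot orbit of $\nu$ with that of $\sigma(\nu)$ and matching the stabilizers $W_\nu$ and $W_{\sigma(\nu)}$. This immediately delivers the equivalence of (ii) and (iii) in the first bullet: apply inversion to rewrite $w_0\lambda\not\geq-w\cdot\nu$ as $\lambda\not\leq-w_0(w\cdot\nu)=(w_0ww_0)\cdot(-w_0\nu)$, substitute $w'=w_0ww_0$, and compare with the $+,ss(-w_0\nu)$ condition on $-\lambda$ after using negation. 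The equivalence (i) $\Leftrightarrow$ (ii) is obtained by a parallel manipulation that additionally incorporates the dot action of $w_0$ on the reference weight. The analogous identity with $w_{0,M}$ in place of $w_0$ handles the $ss^M$ and $ss_{M,w}$ bullets on the Levi side, and the $ss_b$ bullet is a conjunction of $ss^M$ conditions which follows formally once one notes that the indexing set $\{-w_{0,M}w(\nu+\rho)-\rho:w\in\WM\}$ is invariant under the involution $w\mapsto w_{0,M}ww_0$ of $\WM$.

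The main delicate point, and the one deserving the most care in a complete write-up, is keeping track of the bijections of indexing sets for the $\WM$-indexed bullets: since $w\mapsto w_0 w w_0$ does not preserve $\WM$, one must factor through the quotient $W_M\backslash W$ and use the induced involution $w\mapsto w_{0,M}ww_0$ on $\WM$, which is exactly the bijection appearing in part (4) of the preceding combinatorial proposition. The conditions $w\in\WM\setminus C(\kappa)^\pm$ also need to be transported correctly under this involution, which is supplied by the identities $W(\kappa)^+=w_{0,M}W(\kappa)^-w_{0,M}$ and $C(\kappa)^+=w_{0,M}C(\kappa)^-w_0$ already recorded. Once the Weyl combinatorics are set up cleanly, each equivalence reduces to bookkeeping, with no conceptual difficulty beyond the intertwining identity displayed above.
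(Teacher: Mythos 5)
Your overall strategy is the right one and the combinatorial ingredients you identify are exactly what is needed: order reversal under $w_0$ and under $-1$, the intertwining identity $\sigma(w\cdot\mu)=(w_0ww_0)\cdot\sigma(\mu)$ for $\sigma=-w_0$, the fact that $W_{-w_0\nu}=w_0W_\nu w_0$, and the involution $w\mapsto w_{0,M}ww_0$ of $\WM$ together with the identities $C(\kappa)^+=w_{0,M}C(\kappa)^-w_0$ and $C(\kappa)^\pm=C(-w_{0,M}\kappa-2\rho_{nc})^\mp$. Your verification of (ii) $\Leftrightarrow$ (iii) in part (1) is clean, and the same machinery does close parts (2)--(4).

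The step you wave off as ``a parallel manipulation,'' namely (i) $\Leftrightarrow$ (ii) of part (1), is precisely where a complete proof must pause. Carrying the manipulation out produces, from ``$w_0\lambda$ satisfies $+,ss(\nu)$,'' the condition $\lambda\not\leq-w_0(w\cdot\nu)$ for all $w$ with $w\cdot\nu\neq\nu$; but the definition of $-,ss(\nu)$ as printed reads $\lambda\not\leq-w\cdot(w_0\nu)$, and these are genuinely different conditions. Take $\mathrm{GL}_2$, $\nu=0$, $w=s$: one computes $-w_0(s\cdot 0)=-2\rho$, whereas $-s\cdot(w_0\nu)=-s\cdot 0=2\rho$, and at $\lambda=0$ the first non-inequality holds while the second fails, since $0\leq 2\rho$. (Worse, $-w\cdot(w_0\nu)\geq-w_0\nu$ for all $w$ here, so with the printed formula no $\lambda$ obeying the expected bound $\lambda\leq-w_0\nu$ could ever be small slope in the $-$ theory, contradicting the remark that ordinary classes are always small slope.) So either the displayed formula should read $-w_0(w\cdot\nu)$, equivalently $(w_0ww_0)\cdot(-w_0\nu)$, or the equivalence is false as stated. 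Your phrase that the manipulation ``additionally incorporates the dot action of $w_0$ on the reference weight'' does not, taken literally, describe a step that closes this gap; you need to either verify the definition against its intended Jacquet-module meaning or flag the discrepancy explicitly.

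A smaller imprecision: ``the analogous identity with $w_{0,M}$ in place of $w_0$'' is not what pushes through the $ss^M$ bullet. The (i) $\Leftrightarrow$ (ii) equivalence of part (3) rests on the substitution $w'=w_{0,M}ww_0$ on $\WM$, using $w_0w^{-1}w_{0,M}=(w')^{-1}$ together with $C(\kappa)^+=w_{0,M}C(\kappa)^-w_0$; the genuine $W_M$-conjugation only enters in (ii) $\Leftrightarrow$ (iii) of part (4). You recover the correct involution in your closing paragraph, so this is a matter of tightening the exposition rather than a gap, but the earlier sentence should be reworded.
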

The first symmetry is related to the fact that when we have a smooth, admissible representation of $G(\qq_p)$, the action of $w_0$ exchanges the $+$ and $-$ finite slope parts (see section \ref{subsubsection-jacquet} below).  The second symmetry is related to Poincare and Serre duality.

Now we try to further explain the meaning of these small slope conditions and make them more explicit.  In view of the symmetries above we only consider the $+$ case.  For $\nu\in X^\star(T)$ we introduce the notation $W_\nu=\{w\in W\mid w\cdot\nu=\nu\}$.

We will use the following standard lemma.
\begin{lem}\label{lem-bruhat-inequality}
Let $\nu\in X^\star(T)^+_{\mathbb{R}}-\rho$ and let $w,w'\in W$.  If $w\leq w'$ then $w'\cdot\nu\preceq w\cdot\nu$.
\end{lem}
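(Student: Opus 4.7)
The plan is to reduce to the case of a Bruhat covering relation and then use an explicit computation together with the dominance of $\nu+\rho$.

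First, I would recall the standard characterization of the Bruhat order via covering relations: $w\leq w'$ if and only if there exists a chain
\[w=w_0<w_1<\cdots<w_k=w'\]
in which each step has the form $w_{i+1}=w_i s_{\alpha_i}$ for some (not necessarily simple) reflection $s_{\alpha_i}$ satisfying $\ell(w_{i+1})=\ell(w_i)+1$. Since the relation $\preceq$ is transitive, it suffices to establish the lemma in the case of a single covering $w'=ws_\alpha$ with $\ell(w')=\ell(w)+1$, which is the ``atomic'' case.

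Second, I would record that the condition $\ell(ws_\alpha)=\ell(w)+1$ is equivalent to $w\alpha\in\Phi^+$ (this is the standard length-and-inversion criterion). Then I would compute directly:
\[w'\cdot\nu-w\cdot\nu \;=\; ws_\alpha(\nu+\rho)-w(\nu+\rho) \;=\; w\bigl(s_\alpha(\nu+\rho)-(\nu+\rho)\bigr) \;=\; -\langle\alpha^\vee,\nu+\rho\rangle\,w\alpha.\]
Now the dominance hypothesis $\nu+\rho\in X^\star(T)^+_{\mathbb{R}}$ gives $\langle\alpha^\vee,\nu+\rho\rangle\geq 0$ for every positive coroot $\alpha^\vee$ (and the number is nonnegative regardless of sign of $\alpha$ when combined with the sign of $w\alpha$; more carefully, if $\alpha\in\Phi^+$ we use $\langle\alpha^\vee,\nu+\rho\rangle\geq 0$, and if $\alpha\in\Phi^-$ we replace $\alpha$ by $-\alpha$ and flip signs twice). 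The key point is that since $w\alpha\in\Phi^+$, it is a nonnegative integer combination of the simple roots $\Delta$, so $w\alpha\in\mathbb{R}_{\geq 0}\Delta$. Combining:
\[w\cdot\nu-w'\cdot\nu \;=\; \langle\alpha^\vee,\nu+\rho\rangle\,w\alpha \;\in\;\mathbb{R}_{\geq 0}\Delta,\]
which is exactly the statement $w'\cdot\nu\preceq w\cdot\nu$.

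Finally, I would chain the single-step inequalities along the covering chain $w=w_0<w_1<\cdots<w_k=w'$ provided by the first step, using transitivity of $\preceq$ (which is immediate since $\mathbb{R}_{\geq 0}\Delta$ is closed under addition) to conclude $w'\cdot\nu\preceq w\cdot\nu$ in general. No step here is a serious obstacle; the only subtlety worth being careful about is keeping the signs straight when $\alpha$ is a negative root in the covering relation, but this is handled by noting that $s_\alpha=s_{-\alpha}$ and $\langle(-\alpha)^\vee,\nu+\rho\rangle\cdot(-\alpha)=\langle\alpha^\vee,\nu+\rho\rangle\cdot\alpha$, so the computation is insensitive to the choice of sign for $\alpha$.
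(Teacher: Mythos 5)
Your proof is correct and takes essentially the same approach as the paper: reduce to a single Bruhat covering relation, compute the difference $w\cdot\nu-w'\cdot\nu$ explicitly as a nonnegative multiple of a positive root, and conclude using the dominance of $\nu+\rho$ together with the standard length-inversion criterion. The only cosmetic difference is that you write the covering relation as $w'=ws_\alpha$ (right multiplication, with $w\alpha\in\Phi^+$) while the paper writes $w'=s_\alpha w$ with $\alpha\in\Phi^+$ (left multiplication, with $w^{-1}\alpha\in\Phi^+$); these are trivially related and the resulting computations are parallel.
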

\begin{proof}
By the definition of the Bruhat order and induction, it suffices to treat the case that $w'=s_\alpha w$ with $\alpha\in\Phi^+$ with $l(w')>l(w)$, which implies that $w^{-1}\alpha\in\Phi^+$ by \cite{MR1066460} 5.7.  Then $w'\cdot\nu=w\cdot\nu-\langle\alpha^\vee,w(\nu+\rho)\rangle\alpha$, and $\langle\alpha^\vee,w(\nu+\rho)\rangle=\langle (w^{-1}\alpha)^\vee,\nu+\rho\rangle\geq0$.
\end{proof}

We now give some alternative characterizations of the condition $+,ss(\nu)$.

\begin{prop}\label{prop-ss-equiv}
The following conditions on $\lambda\in X^\star(T^d)_{\mathbb{R}}$ are equivalent:
\begin{enumerate}
\item $\lambda\not\geq-w\cdot\nu$ for all $w\in W\setminus W_\nu$, i.e. $\lambda$ satisfies $+,ss(\nu)$.
\item $\lambda\not\geq-s_\alpha\cdot\nu$ for all $\alpha\in\Delta$ with $s_\alpha\not\in W_\nu$.
\end{enumerate}
Moreover if we additionally assume that $\lambda\geq-\nu$ then we have the further equivalent condition:
\begin{enumerate}
\setcounter{enumi}{2}
\item $\lambda=-\nu+\sum_{\alpha\in\Delta_d}c_\alpha\alpha$ with $$c_\alpha<\min_{\beta\in r^{-1}(\alpha),s_\beta\not\in W_\nu}\langle\beta^\vee,\nu\rangle+1.$$
\end{enumerate}
\end{prop}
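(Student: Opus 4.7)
The plan is to follow the template of Lemma \ref{lem-bw-definition}, adapting it to the quasi-split setting where the partial order lives on $X^\star(T^d)_{\R}$ rather than on $X^\star(T)_{\R}$. The essential inputs are the subword property of the Bruhat order, the already-proved Lemma \ref{lem-bruhat-inequality}, and the surjectivity of the restriction $r:\Delta\to\Delta_d$.

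First I would verify $(1)\Rightarrow(2)$, which is immediate since simple reflections not in $W_\nu$ form a subset of $W\setminus W_\nu$. For $(2)\Rightarrow(1)$, I would take $w\in W\setminus W_\nu$, fix a reduced decomposition $w=s_{\alpha_1}\cdots s_{\alpha_k}$ with $\alpha_i\in\Delta$, and observe that since $W_\nu$ is a subgroup at least one $s_{\alpha_i}$ does not lie in $W_\nu$. By the subword property $w\geq s_{\alpha_i}$ in the Bruhat order, so Lemma \ref{lem-bruhat-inequality} (whose hypothesis $\nu+\rho\in X^\star(T)^+_{\R}$ is by assumption) gives $w\cdot\nu\preceq s_{\alpha_i}\cdot\nu$ in $X^\star(T)_{\R}$, hence $-s_{\alpha_i}\cdot\nu\preceq -w\cdot\nu$, and a fortiori $-s_{\alpha_i}\cdot\nu\leq -w\cdot\nu$ in $X^\star(T^d)_{\R}$. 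Thus $\lambda\geq -w\cdot\nu$ would force $\lambda\geq -s_{\alpha_i}\cdot\nu$, contradicting~(2).

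For the equivalence $(2)\iff(3)$ under the hypothesis $\lambda\geq-\nu$, I would use the identity $s_\alpha\cdot\nu=\nu-(\langle\alpha^\vee,\nu\rangle+1)\alpha$ for $\alpha\in\Delta$, which after restriction to $T^d$ reads
\[
-s_\alpha\cdot\nu=-\nu+(\langle\alpha^\vee,\nu\rangle+1)\,r(\alpha)
\]
with $r(\alpha)\in\Delta_d$. Writing $\lambda=-\nu+\sum_{\alpha'\in\Delta_d}c_{\alpha'}\alpha'$ with $c_{\alpha'}\geq 0$, the inequality $\lambda\geq -s_\alpha\cdot\nu$ in $X^\star(T^d)_{\R}$ is equivalent to $c_{r(\alpha)}\geq\langle\alpha^\vee,\nu\rangle+1$ (with no constraint on the other coefficients, since $r(\alpha)\in\Delta_d$). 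So $(2)$ says that for every $\alpha\in\Delta$ with $s_\alpha\notin W_\nu$ one has $c_{r(\alpha)}<\langle\alpha^\vee,\nu\rangle+1$; grouping the absolute simple roots by their image under the surjection $r:\Delta\twoheadrightarrow\Delta_d$ yields exactly~(3), with the convention $\min\emptyset=+\infty$ when the fiber contains no $\alpha$ with $s_\alpha\notin W_\nu$.

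There is no real obstacle: the argument is formal once one is careful that the partial order on $X^\star(T^d)_{\R}$ is coarser than the one inherited from $X^\star(T)_{\R}$ via restriction, so that $\preceq$-inequalities in $X^\star(T)_{\R}$ descend correctly. The only point requiring attention is that in $(2)\Rightarrow(1)$ one must produce a simple absolute reflection not in $W_\nu$ (rather than a simple relative reflection), which is why working with a reduced expression in $\Delta$ and using the subword property is the natural route.
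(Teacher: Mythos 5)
Your proof follows essentially the same approach as the paper's: (1)$\Rightarrow$(2) is immediate, (2)$\Rightarrow$(1) proceeds by writing $w$ as a reduced word, extracting a factor $s_\alpha\notin W_\nu$, applying Lemma \ref{lem-bruhat-inequality}, and the equivalence (2)$\iff$(3) rests on the formula $-s_\alpha\cdot\nu=-\nu+(\langle\alpha^\vee,\nu\rangle+1)\alpha$. The only difference is that you spell out the descent from $\preceq$ on $X^\star(T)_{\mathbb{R}}$ to $\leq$ on $X^\star(T^d)_{\mathbb{R}}$ and the grouping of $\Delta$ by fibers of $r$ more explicitly, which the paper leaves implicit.
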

\begin{proof}
Clearly the first condition implies the second.  For the converse, given $w\in W\setminus W_\nu$, we have $w\geq s_\alpha$ for some $\alpha\in\Delta$, $s_\alpha\not\in W_\nu$ (write $w$ as a reduced product of simple reflections, not all the factors can fix $\nu$ as $w$ doesn't.)  Then $-w\cdot\nu\geq-s_\alpha\cdot\nu$ by lemma \ref{lem-bruhat-inequality} and so $\lambda\not\geq -s_\alpha\cdot\nu$ implies $\lambda\not\geq -w\cdot\nu$.

Under the hypothesis $\lambda\geq-\nu$, the equivalence of the second and third conditions is immediate from the formula $-s_{\beta}\cdot\nu=-\nu+(\langle\beta^\vee,\nu\rangle+1)\beta$.
\end{proof}

Now we consider the condition $+,ss^M(\kappa)$.  For $\kappa\in X^\star(T)$ we can write $-\kappa-\rho=v(\nu+\rho)$ for a unique $\nu\in X^\star(T)$ with $\nu+\rho\in X^\star(T)^+_{\mathbb{R}}$, and $v\in W$ uniquely determined up to right multiplication by $W_\nu$.
\begin{prop}\label{prop-ssnc-equiv}
Let $\kappa\in X^\star(T)^{M,+}$.  Then with $\nu$ and $v$ as above, the following conditions on $\lambda\in X^\star(T^d)_{\mathbb{R}}$ are equivalent:
\begin{enumerate}
\item $\lambda\not\geq w^{-1}w_{0,M}(\kappa+\rho)+\rho$ for all $w\in \WM \setminus C(\kappa)^+$, i.e. $\lambda$ satisfies $+,ss^M(\kappa)$.
\item $\lambda\not\geq -w\cdot\nu$ for all $w\in (\WM)^{-1} \cdot C(\kappa)^+\setminus W_\nu$.
\item $\lambda\not\geq -s_\alpha\cdot\nu$ for all $\alpha\in\Delta$ with $s_\alpha\in (\WM)^{-1}\cdot C(\kappa)^+\setminus W_\nu$.
\end{enumerate}
Moreover if we additionally assume that $\lambda\geq-\nu$ then we have the further equivalent condition:
\begin{enumerate}
\setcounter{enumi}{3}
\item $\lambda=-\nu+\sum_{\alpha\in\Delta_d}c_\alpha\alpha$ with $$c_\alpha<\min_{\beta\in r^{-1}(\alpha),s_\beta\in(\WM)^{-1}\cdot C(\kappa)^+\setminus W_\nu}\langle\beta^\vee,\nu\rangle+1.$$
\end{enumerate}
\end{prop}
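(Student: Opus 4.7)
The plan is to establish the equivalences in sequence: (1) $\Leftrightarrow$ (2), then (2) $\Leftrightarrow$ (3), and finally (3) $\Leftrightarrow$ (4) under the additional hypothesis $\lambda \geq -\nu$.

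First, I would prove (1) $\Leftrightarrow$ (2) by a change of variables. Fix any $w_* \in C(\kappa)^+$; the defining relation $\nu + \rho = -w_*^{-1}w_{0,M}(\kappa+\rho)$ yields $w_{0,M}(\kappa+\rho) = -w_*(\nu+\rho)$, and therefore
\begin{equation*}
w^{-1}w_{0,M}(\kappa+\rho)+\rho = -(w^{-1}w_*)\cdot\nu
\end{equation*}
for every $w \in W$. The assignment $w \mapsto u:=w^{-1}w_*$ restricts to a bijection $\WM \to (\WM)^{-1}w_*$, and the description $C(\kappa)^+ = w_*W_\nu$ shows that $w \in C(\kappa)^+$ if and only if $u \in W_\nu$. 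Since $u\cdot\nu$ depends only on the coset $uW_\nu$, and $(\WM)^{-1}\cdot C(\kappa)^+ = (\WM)^{-1}w_*\cdot W_\nu$ is its $W_\nu$-saturation, conditions (1) and (2) are equivalent.

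The implication (2) $\Rightarrow$ (3) is immediate by specialization to simple reflections. For (3) $\Rightarrow$ (2), I would mimic Proposition \ref{prop-ss-equiv}: given $u \in (\WM)^{-1}\cdot C(\kappa)^+\setminus W_\nu$, Lemma \ref{lem-bruhat-inequality} gives $-s_\alpha\cdot\nu \leq -u\cdot\nu$ for any simple reflection $s_\alpha \leq u$ in the Bruhat order, so $\lambda \not\geq -s_\alpha\cdot\nu$ forces $\lambda \not\geq -u\cdot\nu$. The subclaim I need is that such an $s_\alpha$ can in fact be chosen inside $(\WM)^{-1}\cdot C(\kappa)^+\setminus W_\nu$. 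My approach is to replace $u$ by the shortest element $u^{\min}$ of its coset $uW_\nu$ (which remains in $(\WM)^{-1}\cdot C(\kappa)^+\setminus W_\nu$ by the $W_\nu$-stability noted above), then take the leftmost letter $s_\alpha$ of a reduced expression for $u^{\min}$ and verify its membership in $(\WM)^{-1}\cdot C(\kappa)^+$ using the characterization that a simple reflection $s_\alpha$ lies in $\WM$ (resp.\ in $W_M$) precisely when $\alpha\in \Delta^M$ (resp.\ $\alpha\in\Delta_M$), combined with the coset description $C(\kappa)^+ = w_*W_\nu$ and the fact that $u\notin W_\nu$. I expect this verification to be the main obstacle.

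Finally, (3) $\Leftrightarrow$ (4) under $\lambda \geq -\nu$ is a direct calculation. Writing $\lambda + \nu = \sum_{\alpha \in \Delta_d} c_\alpha \alpha$ with all $c_\alpha \geq 0$, and using the identity $-s_\beta\cdot\nu = -\nu + (\langle\beta^\vee,\nu\rangle+1)\beta$ for a simple root $\beta\in\Delta$, the inequality $\lambda \geq -s_\beta\cdot\nu$, after restriction to $X^\star(T^d)_{\mathbb{R}}$ via $r:\Delta\to\Delta_d$, reads $c_{r(\beta)} \geq \langle\beta^\vee,\nu\rangle+1$. Imposing the negation over all $\beta\in\Delta$ with $s_\beta\in (\WM)^{-1}\cdot C(\kappa)^+\setminus W_\nu$ and grouping by the fibers of $r$ yields exactly the bound in (4). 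The substantive point of the proof is the subclaim in the step (3) $\Rightarrow$ (2); everything else reduces to bookkeeping.
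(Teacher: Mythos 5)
Your treatment of the equivalences (1) $\Leftrightarrow$ (2) and (3) $\Leftrightarrow$ (4) matches the paper's proof in essence. However, your proposal for (3) $\Rightarrow$ (2) contains a genuine gap, and you rightly flag it as the main obstacle — but it does not survive scrutiny in the form you describe.

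You propose to pass to the minimal coset representative $u^{\min}$ of $uW_\nu$ and then extract the \emph{leftmost} letter of a reduced expression. This can fail in two ways. First, the leftmost letter of a minimal length coset representative of a \emph{right} coset $uW_\nu$ need not lie outside $W_\nu$ at all: minimality constrains only the rightmost letters (no right descent lies in $\Delta\cap W_\nu$). Concretely, take $G=\mathrm{GL}_4$, $\Delta_M=\{\alpha_1,\alpha_3\}$, $W_\nu=\langle s_2\rangle$, $C(\kappa)^+=\{1,s_2\}\subseteq\WM$, and $u=s_2s_3s_1\in(\WM)^{-1}\cdot C(\kappa)^+\setminus W_\nu$. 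Then $u=u^{\min}$ (the other coset element $s_2s_3s_1s_2=s_2s_1s_3s_2$ has length $4$), and \emph{every} reduced expression of $u$ (only $s_2s_3s_1$ and $s_2s_1s_3$) has leftmost letter $s_2\in W_\nu$. Second, even when the chosen simple reflection $s_\alpha$ does lie outside $W_\nu$, the criterion you invoke characterizes membership of $s_\alpha$ in $\WM$ or in $W_M$, which is not the same as membership in $(\WM)^{-1}\cdot C(\kappa)^+$; that latter set is a more subtle union of double-coset translates and has no such simple-reflection description.

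The paper resolves the issue differently. It writes $w=(w')^{-1}w''$ with $w'\in\WM$ and $w''\in C(\kappa)^+$, fixes a reduced expression $w=s_1\cdots s_n$, and chooses the \emph{rightmost} letter $s_k$ with $s_k\notin W_\nu$ (so that $s_{k+1},\ldots,s_n\in W_\nu$). The crux is the factorization $s_k=(w's_1\cdots s_{k-1})^{-1}(w''s_{k+1}\cdots s_n)$: the right-hand factor stays in $C(\kappa)^+$ because $C(\kappa)^+$ is a left $W_\nu$-coset, while the left-hand factor stays in $\WM$ by Lemma~\ref{lem-WM-gallery}, a gallery lemma about galleries in $\WM$ that you do not use. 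That lemma is the missing ingredient; without an analogue of it, I do not see how to make your variant work. If you want to keep the spirit of your approach, you should work with the $(w',w'')$ decomposition rather than with $u^{\min}$, choose the rightmost simple reflection outside $W_\nu$, and prove (or cite) the gallery lemma.
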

\begin{proof}
The second condition is a direct translation of the first: we have $C(\kappa)^+=w_{0,M}vW_\nu$, and we can write
\begin{equation*}
w^{-1}w_{0,M}(\kappa+\rho)+\rho=(w^{-1}w_{0,M}v)v^{-1}(\kappa+\rho)+\rho=-(w^{-1}w_{0,M}v)\cdot\nu
\end{equation*}
and so the first condition is equivalent to $\lambda\not\geq -w\cdot\nu$ for $w\in (\WM)^{-1}w_{0,M}v\setminus W_\nu$, which is equivalent to condition 2 because $(\WM)^{-1}w_{0,M}vW_\nu=(\WM)^{-1}\cdot W_\nu$.

The second condition clearly implies the third.  For the converse, to argue as in the proof of proposition \ref{prop-ss-equiv} we need to show that for all $w\in W$ with $w\in(\WM)^{-1}\cdot C(\kappa)^+\setminus W_\nu$, we have $w\geq s_\alpha$ with $\alpha\in\Delta$ and $s_\alpha\in(\WM)^{-1}\cdot C(\kappa)^+\setminus W_\nu$.  To do this, suppose $w=(w')^{-1}w''$ with $w'\in\WM$, $w''\in C(\kappa)^+$, and let $w=s_1\cdots s_n$ be a reduced expression as a product of simple reflections.  Choose $k$ such that $s_k\not\in W_\nu$ but $s_{k+1},\ldots,s_n\in W_\nu$ (such a $k$ exists as $w\not\in W_\nu$.)  Then
$$s_k=(w's_1\cdots s_{k-1})^{-1}(w''s_{k+1}\cdots s_n)\in (\WM)^{-1}\cdot C(\kappa)^+$$
using lemma \ref{lem-WM-gallery} below to see that $w's_1\cdots s_{k-1}\in\WM$.  Moreover $s_k\leq w$, and $s_k\not\in W_\nu$, so we are done.

The equivalence of the third and fourth conditions is exactly as in proposition \ref{prop-ss-equiv}.
\end{proof}

\begin{lem}\label{lem-WM-gallery}
Let $w,w'\in\WM$ and let $w^{-1}w'=s_1\cdots s_n$ be a reduced expression as a product of simple roots.  Then $ws_1\cdots s_i\in\WM$ for all $1\leq i\leq n$.
\end{lem}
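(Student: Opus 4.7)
The plan is to argue by contradiction, using the well-known characterization that an element $u\in W$ lies in $\WM$ if and only if $u^{-1}\alpha\in\Phi^+$ for every $\alpha\in\Delta_M$ (the simple roots of $M_\mu$), together with the standard root-combinatorial description of the inversion set of a reduced expression.

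First I would set $u_i=ws_1\cdots s_i$ for $0\leq i\leq n$, so that $u_0=w$ and $u_n=w'$ are both in $\WM$, and suppose for contradiction that some $u_i\notin\WM$. Choose the smallest such $i$; then $i\geq 1$, and there exists $\alpha\in\Delta_M$ with $u_{i-1}^{-1}\alpha\in\Phi^+$ but $u_i^{-1}\alpha=s_i(u_{i-1}^{-1}\alpha)\in\Phi^-$. Since a simple reflection sends only one positive root to a negative root, namely the corresponding simple root, this forces $u_{i-1}^{-1}\alpha=\alpha_i$, where $\alpha_i$ denotes the simple root corresponding to $s_i$. Equivalently, $w^{-1}\alpha=s_1\cdots s_{i-1}\alpha_i$.

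Next I would invoke the standard fact (see e.g. \cite{MR1066460}, \S1.7) that for a reduced expression $s_1\cdots s_n$ of an element $u\in W$, the positive roots sent to negative roots by $u^{-1}$ are exactly the distinct roots $s_1\cdots s_{j-1}\alpha_j$ for $1\leq j\leq n$. Applying this to the reduced expression $w^{-1}w'=s_1\cdots s_n$, we see that $s_1\cdots s_{i-1}\alpha_i$ is a positive root with the property that $(w^{-1}w')^{-1}(s_1\cdots s_{i-1}\alpha_i)\in\Phi^-$. Combining this with the identity $w^{-1}\alpha=s_1\cdots s_{i-1}\alpha_i$ obtained above yields $w'^{-1}\alpha=(w^{-1}w')^{-1}(w^{-1}\alpha)\in\Phi^-$, which contradicts $w'\in\WM$ since $\alpha\in\Delta_M$.

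The argument is essentially formal once the two ingredients are in place, and there is no serious obstacle. The only mildly delicate point is the identification of $u_{i-1}^{-1}\alpha$ with the simple root $\alpha_i$, which relies on the classical observation that $s_i$ maps $\Phi^+\setminus\{\alpha_i\}$ into $\Phi^+$; this is precisely the minimality of $i$ being exploited.
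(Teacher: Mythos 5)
Your proof is correct and takes essentially the same approach as the paper's: both arguments pull back a root $\alpha\in\Delta_M$ by $w$ to get a positive root, observe that it becomes negative under $s_1\cdots s_i$, and propagate this negativity along the rest of the reduced word to contradict $w'\in\WM$. The only difference is one of packaging—you invoke the standard inversion-set description of a reduced word and fix a minimal $i$ to pin down $u_{i-1}^{-1}\alpha=\alpha_i$, whereas the paper reproves the relevant persistence claim by a short induction and works with an arbitrary bad $i$.
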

\begin{proof}
We begin with the following claim: if $\alpha\in \Phi^+$, $\beta\in\Delta$, and $u\in W$ are such that $l(us_\beta)>l(u)$ and $u^{-1}\alpha\in\Phi^-$, then $(us_\beta)^{-1}\alpha\in\Phi^-$.  Indeed, $(us_\beta)^{-1}\alpha=s_\beta(u^{-1}\alpha)\in\Phi^+$ would imply $u^{-1}(\alpha)=-\beta$, and hence $u\beta\in\Phi^-$, contradicting $l(us_\beta)>l(u)$.

Applying the claim inductively we see that if $\alpha\in\Phi^+$, and $(s_1\cdots s_i)^{-1}\alpha\in\Phi^-$, then $(s_1\cdots s_n)^{-1}\alpha\in\Phi^-$.

Now if $ws_1\cdots s_i\not\in\WM$, there exists $\beta\in\Delta_M$ with $(ws_1\cdots s_i)^{-1}\beta=(s_1\cdots s_i)^{-1}(w^{-1}\beta)\in\Phi^-$.  But $w\in \WM$ implies $w^{-1}\beta\in\Phi^+$, and so from the above with $\alpha=w^{-1}\beta$ we deduce $(s_1\cdots s_n)^{-1}(w^{-1}(\beta))={w'}^{-1}(\beta)\in\Phi^-$, and hence $w'\not\in\WM$.
\end{proof}

Now we turn to the conditions $+,ss_{M,w}(\kappa)$.  Let $\kappa\in X^\star(T)^+$ and let $w\in \WM$.  We let $v=w_{0,M}w$ and we let $\nu$ be defined by the formula $v(\nu+\rho)=-\kappa-\rho$.  Note that we have $\nu+\rho\in X^\star(T)^+_{\mathbb{R}}$ if and only if $w\in C(\kappa)^+$, which we have not assumed for the moment.
\begin{prop}\label{prop-ssc-equiv}
Let $\kappa\in X^\star(T)^{M,+}$ and let $w\in W$.  Let $\nu$ and $v$ be as above.  The following conditions on $\lambda\in X^\star(T^d)_{\mathbb{R}}$ are equivalent:
\begin{enumerate}
\item $\lambda\not\geq w^{-1}w_{0,M}w'(\kappa+\rho)+\rho$ for all $w'\in W_M$, $w'\not=1$, i.e. $\lambda$ satisfies $+,ss_{M,w}(\kappa)$
\item $\lambda\not\geq -(v^{-1}w'v)\cdot \nu$ for all $w'\in W_M$, $w'\not=1$.
\item $\lambda\not\geq -(v^{-1}s_\alpha v)\cdot\nu$ for all $\alpha\in\Delta_M$.
\end{enumerate}
\end{prop}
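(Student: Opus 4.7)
The plan is to first observe that conditions (1) and (2) are simply two rewritings of the same statement, and then to prove the equivalence of (2) and (3) by a monotonicity argument in the style of Lemma~\ref{lem-bruhat-inequality}.

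For (1) $\Leftrightarrow$ (2), I would use the identity $w = w_{0,M}v$, which gives $w^{-1}w_{0,M} = v^{-1}$, together with the defining relation $v(\nu+\rho) = -\kappa-\rho$, to rewrite
$$w^{-1}w_{0,M}w'(\kappa+\rho)+\rho \;=\; v^{-1}w'(\kappa+\rho)+\rho \;=\; -v^{-1}w'v(\nu+\rho)+\rho \;=\; -(v^{-1}w'v)\cdot\nu,$$
so the two conditions coincide term by term. Implication (2) $\Rightarrow$ (3) is trivial since (3) is a special case.

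For (3) $\Rightarrow$ (2), the strategy is: given $w' \in W_M$ with $w' \neq 1$, pick some $\alpha \in \Delta_M$ appearing in a reduced expression of $w'$ inside $W_M$, so $s_\alpha \leq w'$ in the Bruhat order of $W_M$. Since $\kappa+\rho$ is $M$-dominant, the $W_M$-analog of Lemma~\ref{lem-bruhat-inequality} (same proof, restricting all roots to $\Phi_M$) yields
$$w'(\kappa+\rho) \;=\; s_\alpha(\kappa+\rho) - \sum_{\beta \in \Delta_M} c_\beta\,\beta, \qquad c_\beta \geq 0.$$
The essential geometric input is then that $v^{-1}$ sends $\Phi_M^+$ into $\Phi^-$. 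Indeed, since $w \in \WM$ we have $w^{-1}(\Phi_M^+) \subseteq \Phi^+$, and
$$v^{-1}(\Phi_M^+) \;=\; w^{-1}w_{0,M}(\Phi_M^+) \;=\; w^{-1}(\Phi_M^-) \;=\; -w^{-1}(\Phi_M^+) \;\subseteq\; \Phi^-.$$
Consequently, for each $\beta \in \Delta_M$, $-v^{-1}\beta$ is a positive root of $G$, hence a non-negative $\mathbb{Z}_{\geq 0}$-combination of elements of $\Delta$, which remains non-negative upon restriction to $X^\star(T^d)_{\mathbb{R}}$ via $r:\Delta \to \Delta_d$.

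Applying $-v^{-1}$ to the displayed equation and restricting to $X^\star(T^d)_{\mathbb{R}}$ therefore gives
$$-(v^{-1}w'v)\cdot\nu \;=\; -(v^{-1}s_\alpha v)\cdot\nu \;-\; \sum_{\beta \in \Delta_M} c_\beta\,(-v^{-1}\beta) \;\leq\; -(v^{-1}s_\alpha v)\cdot\nu.$$
So any $\lambda \geq -(v^{-1}w'v)\cdot\nu$ would a fortiori satisfy $\lambda \geq -(v^{-1}s_\alpha v)\cdot\nu$, contradicting (3). This closes the loop.

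The only mildly delicate step is the inclusion $v^{-1}(\Phi_M^+) \subseteq \Phi^-$, which ultimately reduces to the defining property of Kostant representatives and the choice $v = w_{0,M}w$; everything else is a formal Bruhat-order monotonicity computation analogous to Propositions~\ref{prop-ss-equiv} and \ref{prop-ssnc-equiv}.
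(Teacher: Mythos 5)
Your strategy is essentially the paper's own: prove (1) $\Leftrightarrow$ (2) by direct substitution, then deduce (3) $\Rightarrow$ (2) from the $W_M$-version of the Bruhat monotonicity lemma combined with the geometric observation that $w\in\WM$ forces $v^{-1}\Phi_M^+\subseteq\Phi^-$ (the paper packages this same fact into its temporary relation $\preceq_{M,w}$, noting $w^{-1}\Delta_M\subseteq\Phi^+$, which is the same statement since $-w_{0,M}\Delta_M=\Delta_M$).

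However, your final display has the wrong sign, and with that sign the concluding ``a fortiori'' step does not follow ($\lambda\geq A$ and $A\leq B$ give no information about $\lambda$ versus $B$). The culprit is the phrase ``applying $-v^{-1}$'': the correct operation is to apply $v^{-1}$ and then add $\rho$, which is the affine transformation taking $X(\kappa+\rho)$ to $-(v^{-1}Xv)\cdot\nu$ that you already established in your (1) $\Leftrightarrow$ (2) step; it involves no overall negation. Carried out correctly, one gets
$$-(v^{-1}w'v)\cdot\nu \;=\; -(v^{-1}s_\alpha v)\cdot\nu \;+\; \sum_{\beta\in\Delta_M}c_\beta\,(-v^{-1}\beta)\;\geq\; -(v^{-1}s_\alpha v)\cdot\nu,$$
with $\geq$, not $\leq$. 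This is exactly the inequality you need: $\lambda\geq -(v^{-1}w'v)\cdot\nu\geq -(v^{-1}s_\alpha v)\cdot\nu$ follows by transitivity, closing the loop. So the underlying mathematics is right and matches the paper; only the sign bookkeeping in the last line needs to be corrected.
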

\begin{proof}
The equivalence of the first and second conditions is a direct translation.

For the equivalence of the second and third points, we introduce temporarily the notation $\lambda_1\preceq_{M,w}\lambda_2$ if $\lambda_2-\lambda_1\in\mathbb{R}_{\geq0}w^{-1}\Delta_M$ for $\lambda_1,\lambda_2\in X^\star(T)_{\mathbb{R}}$.  Since $w\in\WM$ we have $w^{-1}\Delta_M\subseteq \Phi^+$ and hence $\lambda_1\preceq_{M,w}\lambda_2$ implies $\lambda_1\preceq\lambda_2$ and hence $\lambda_1\leq\lambda_2$.

Now applying lemma \ref{lem-bruhat-inequality} for the group $M$, we see that for $\nu'\in X^\star(T)_{\mathbb{R}}^{-,M}-\rho_M$ and $w',w''\in W_M$ with $w'\leq w''$ we have $w'\cdot\nu'\preceq_{M,1}w''\cdot\nu'$, hence $(w_{0,M}w'')\cdot\nu'\preceq_{M,1}(w_{0,M}w'')\cdot\nu'$, and hence $(w^{-1}w_{0,M}w'')\cdot\nu'\preceq_{M,w}(w^{-1}w_{0,M}w')\cdot\nu'$.  We apply this with $\nu'=v\cdot\nu=-\kappa-\rho\in X^\star(T)^{-,M}_{\mathbb{R}}-\rho_M$ to deduce that for $w'\leq w''$ we have $-(v^{-1}w''v)\cdot\nu\geq -(v^{-1}w'v)\cdot\nu$, and hence the third condition implies the second.
\end{proof}

\begin{rem}
Note that for $w\in W_M$, $(v^{-1}wv)\cdot\nu=\nu$ implies $w=1$.  Indeed, then $w\cdot(-\kappa-2\rho)=-\kappa-2\rho$ or equivalently $w(-\kappa-\rho_M)=-\kappa-\rho_M$, and hence that $w=1$, since $\kappa\in X^\star(T)^{M,+}$.
\end{rem}

Note that we have now expressed all the small slope conditions as $\lambda\not\geq-w\cdot\nu$ as $w$ ranges over a certain subset of $W$.  We may use this to compare them.

\begin{prop}\label{prop-small-slope-rel}
Let $\kappa\in X^\star(T)^{M,+}$ and let $w\in C(\kappa)^+$.  Let $v=w_{0,M}w$ and let $\nu$ be given by $v(\nu+\rho)=-\kappa-\rho$ so that $\nu+\rho\in X^\star(T)^+_{\mathbb{R}}$.  Then a slope $\lambda\in X^\star(T^d)_{\mathbb{R}}$ satisfies $+,ss(\nu)$ if and only if it satisfies both $+,ss^M(\kappa)$ and $+,ss_{M,w}(\kappa)$.
\end{prop}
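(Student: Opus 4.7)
The plan is to reduce to checking a combinatorial statement about simple reflections by invoking the simple-root reformulations in propositions \ref{prop-ss-equiv}, \ref{prop-ssnc-equiv}, and \ref{prop-ssc-equiv}. By the second conditions in those propositions, the condition $+,ss(\nu)$ is equivalent to $\lambda\not\geq -s_\alpha\cdot\nu$ for every $\alpha\in\Delta$ with $s_\alpha\not\in W_\nu$; the condition $+,ss^M(\kappa)$ is equivalent to $\lambda\not\geq -s_\alpha\cdot\nu$ for every $\alpha\in\Delta$ with $s_\alpha\in (\WM)^{-1}\cdot C(\kappa)^+\setminus W_\nu$; and $+,ss_{M,w}(\kappa)$ is equivalent to $\lambda\not\geq -(v^{-1}s_\gamma v)\cdot\nu$ for every $\gamma\in\Delta_M$. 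The forward implication is immediate since both sets of simple-reflection constraints are sub-families of those imposed by $+,ss(\nu)$.

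For the converse I would fix $\alpha\in\Delta$ with $s_\alpha\not\in W_\nu$ and show that the constraint $\lambda\not\geq -s_\alpha\cdot\nu$ is already forced either by $+,ss^M(\kappa)$ or by $+,ss_{M,w}(\kappa)$. The key combinatorial dichotomy is whether $w\alpha\in\Delta_M$ or not. If $w\alpha\not\in\Delta_M$, then since $w\in\WM$ sends $\alpha$ to a positive root and the only positive root sent to a negative one by $s_\alpha$ is $\alpha$ itself, one checks that $s_\alpha w^{-1}\Delta_M\subseteq \Phi^+$, i.e.\ $ws_\alpha\in\WM$; hence $s_\alpha=(ws_\alpha)^{-1}w\in(\WM)^{-1}w\subseteq (\WM)^{-1}\cdot C(\kappa)^+$, and because $s_\alpha\not\in W_\nu$, the condition $+,ss^M(\kappa)$ yields the desired constraint. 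If instead $w\alpha=\beta\in\Delta_M$, then $ws_\alpha w^{-1}=s_\beta\in W_M$, so $s_\alpha=w^{-1}s_\beta w=v^{-1}(w_{0,M}s_\beta w_{0,M})v=v^{-1}s_{\beta'}v$ with $\beta'=-w_{0,M}\beta\in\Delta_M$ (using that $-w_{0,M}$ permutes $\Delta_M$); then $+,ss_{M,w}(\kappa)$ applied to $\gamma=\beta'$ supplies the constraint $\lambda\not\geq -(v^{-1}s_{\beta'}v)\cdot\nu=-s_\alpha\cdot\nu$.

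Most of the work is purely combinatorial, and the main obstacle is really just the bookkeeping of the several conjugations by $v=w_{0,M}w$ and $w_{0,M}$ and the careful verification that $ws_\alpha\in\WM$ in the first case. Everything else — the translation between dotted-action bounds and the statements of the propositions, and checking that the $\beta'$ produced in the second case is indeed a nonzero element of $\Delta_M$ so that the $+,ss_{M,w}(\kappa)$ constraint is nontrivial — follows directly from the formulas already established.
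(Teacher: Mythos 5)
Your proof is correct and follows essentially the same route as the paper. Both reduce the problem via the simple-root reformulations of propositions \ref{prop-ss-equiv}, \ref{prop-ssnc-equiv}, \ref{prop-ssc-equiv} to showing that for each $\alpha\in\Delta$ either $s_\alpha\in(\WM)^{-1}\cdot C(\kappa)^+$ or $vs_\alpha v^{-1}\in W_M$. The only cosmetic difference is that the paper extracts the underlying dichotomy into Lemma \ref{lem-weyl-alternative} (``either $ws_\alpha\in\WM$ or $ws_\alpha=s_\beta w$ for some $\beta\in\Delta_M$'') and cites it, whereas you re-derive the equivalent fact inline by parametrizing the cases on whether $w\alpha\in\Delta_M$; the conjugation bookkeeping $s_\alpha=v^{-1}s_{-w_{0,M}w\alpha}v$ in your second case matches the paper's identity $vs_\alpha v^{-1}=w_{0,M}s_\beta w_{0,M}$.
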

\begin{proof}
We apply the characterizations of propositions \ref{prop-ss-equiv}, \ref{prop-ssnc-equiv}, \ref{prop-ssc-equiv}.  Then it is clear that $+,ss(\nu)$ implies both $+,ss^M(\kappa)$ and $+,ss_{M,w}(\kappa)$.  For the other direction, we need to show that for each $\alpha\in\Delta$, then either $s_\alpha\in (\WM)^{-1}C(\kappa)^+$ or $vs_\alpha v^{-1}\in W_M$.

We apply lemma \ref{lem-weyl-alternative} to $w\in\WM$ and $s_\alpha$.  If $ws_\alpha\in\WM$ then $s_\alpha=(ws_{\alpha})^{-1}w\in(\WM^{-1})w$.  Otherwise there exists $\beta\in\Delta_M$ so that $s_\beta w=ws_\alpha$ and hence $vs_\alpha v^{-1}=w_{0,M}^{-1}s_\beta w_{0,M}\in W_M$.
\end{proof}

\begin{lem}\label{lem-weyl-alternative}
Let $w\in\WM$ and $\alpha\in\Delta$.  Then either $ws_\alpha\in\WM$ or $ws_\alpha=s_\beta w$ for $\beta\in\Delta_M$.
\end{lem}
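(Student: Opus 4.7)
The plan is to proceed by the standard characterization of $\WM$: an element $u \in W$ lies in $\WM$ if and only if $u^{-1}(\Delta_M) \subseteq \Phi^+$ (equivalently, $u^{-1}\Phi_M^+ \subseteq \Phi^+$). This is exactly the defining property recalled in Section~\ref{section-Flagvar}. Applying this criterion to $u = ws_\alpha$, membership in $\WM$ is equivalent to asking that $s_\alpha w^{-1}(\beta) \in \Phi^+$ for every $\beta \in \Delta_M$.

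Next I would invoke the basic fact about simple reflections: $s_\alpha$ permutes $\Phi^+ \setminus \{\alpha\}$ and sends $\alpha$ to $-\alpha$. Since $w \in \WM$, we already have $w^{-1}(\beta) \in \Phi^+$ for every $\beta \in \Delta_M$. Therefore $s_\alpha w^{-1}(\beta)$ can fail to lie in $\Phi^+$ only in the single case $w^{-1}(\beta) = \alpha$, i.e. $w(\alpha) = \beta \in \Delta_M$. Since $w$ is invertible, at most one such $\beta$ exists, and the dichotomy is then clean: either no such $\beta$ exists, and $ws_\alpha \in \WM$, or a unique $\beta \in \Delta_M$ satisfies $w(\alpha) = \beta$.

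In the second case, I would conclude by the standard conjugation formula for reflections: if $w(\alpha) = \beta$, then $w s_\alpha w^{-1} = s_{w(\alpha)} = s_\beta$, which rearranges to $ws_\alpha = s_\beta w$ with $\beta \in \Delta_M$, as required. This handles both alternatives of the lemma.

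There is no genuine obstacle here; the argument is essentially a one-step calculation using the defining property of $\WM$ and the elementary action of $s_\alpha$ on $\Phi^+$. The only point warranting a line of care is verifying that $\beta$ in the second case automatically lies in $\Delta_M$ (rather than merely in $\Phi_M$), but this is immediate from the chain $w^{-1}(\beta) = \alpha \in \Delta$ combined with the requirement $\beta \in \Delta_M$ coming from the way the obstruction was produced.
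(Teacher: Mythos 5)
Your proof is correct and matches the paper's argument step for step: both use the criterion $u\in\WM \Leftrightarrow u^{-1}(\Delta_M)\subseteq\Phi^+$, the fact that $s_\alpha$ permutes $\Phi^+\setminus\{\alpha\}$ so the only positive root sent to a negative one is $\alpha$ itself, and the conjugation formula $ws_\alpha w^{-1}=s_{w(\alpha)}$. The only stylistic remark is that your closing sentence about verifying $\beta\in\Delta_M$ is vacuous — the obstruction $\beta$ is by construction already drawn from $\Delta_M$ — so that clause can simply be dropped.
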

\begin{proof}
If $ws_\alpha\not\in\WM$ then there exists $\beta\in\Delta_M$ with $s_\alpha(w^{-1}(\beta))=(ws_\alpha)^{-1}(\beta)\in\Phi^-$.  But $w^{-1}(\beta)\in\Phi^+$.  Hence $w^{-1}(\beta)=\alpha$, so $w^{-1}s_\beta w=s_\alpha$, hence $ws_\alpha=s_\beta w$.
\end{proof}

We can write $\Phi=\coprod_i\Phi_i$ where the $\Phi_i$ are simple root systems.  Then $\Phi_M=\coprod_i\Phi_{M,i}$ where $\Phi_{M,i}=\Phi_M\cap\Phi_i$.  We let $\Phi_b=\coprod_{i,\Phi_i\not=\Phi_{M,i}}\Phi_i$ be the union of the simple factors where $M$ is a proper levi.  Let $\Delta_b=\Phi_b\cap \Delta$.
\begin{prop}\label{prop-ssb-cond}
Suppose that $\nu\in X^\star(T)^+$.  Then the following conditions on a slope $\lambda\in X^\star(T^d)_{\mathbb{R}}$ are equivalent.
\begin{enumerate}
\item $\lambda$ satisfies $+,ss^{M}(-w_{0,M}w(\nu+\rho)-\rho)$ for all $w\in \WM$.
\item $\lambda\not\geq -s_{\alpha}\cdot\nu$ for all $\alpha\in\Delta_b$.
\end{enumerate}
\end{prop}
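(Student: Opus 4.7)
The plan is to unwind condition (1) into a combinatorial statement about the Weyl group $W$. For $w \in \WM$, set $\kappa = -w_{0,M}w(\nu+\rho)-\rho$. Since $\nu \in X^\star(T)^+$, the weight $\nu+\rho$ is strictly dominant regular, and one verifies that $\kappa \in X^\star(T)^{M,+}$ and $C(\kappa)^+ = \{w\}$ (the latter because regularity forces $u^{-1}w(\nu+\rho) = \nu+\rho$ to have the unique solution $u = w$). Using the identity $(w')^{-1}w_{0,M}(\kappa+\rho)+\rho = -((w')^{-1}w)\cdot\nu$, condition (1) becomes: $\lambda \not\geq -u\cdot\nu$ for every $u \in \WM^{-1}\WM \setminus \{1\}$, where $\WM^{-1}\WM := \{(w')^{-1}w : w, w' \in \WM\}$.

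Next I would establish the containment $\WM^{-1}\WM \subseteq W_b := \langle s_\alpha : \alpha \in \Delta_b\rangle = \prod_{i : \Phi_{M,i} \neq \Phi_i} W_i$. This follows immediately from the product decompositions $W = \prod_i W_i$, $W_M = \prod_i W_{M,i}$, $\WM = \prod_i W^{M_i}$, together with the fact that $W^{M_i} = \{1\}$ whenever $\Phi_{M,i} = \Phi_i$. Since $W_b$ is a standard parabolic subgroup, any $u \in \WM^{-1}\WM \setminus \{1\}$ admits a reduced expression $u = s_{\alpha_1}\cdots s_{\alpha_n}$ with all $\alpha_i \in \Delta_b$ and $n \geq 1$, giving $s_{\alpha_1} \leq u$ in the Bruhat order. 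Applying lemma \ref{lem-bruhat-inequality} to the dominant weight $\nu$ yields $-s_{\alpha_1}\cdot\nu \preceq -u\cdot\nu$ in $X^\star(T)_\mathbb{R}$, so condition (2) implies condition (1).

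For the converse, it suffices to show that $s_\alpha \in \WM^{-1}\WM$ for every $\alpha \in \Delta_b$, since condition (1) then specializes at $u = s_\alpha$ to give condition (2). By reducing to a single simple factor with proper Levi, so that $\Delta = \Delta_b$, the case $\alpha \notin \Delta_M$ is immediate because $s_\alpha \in \WM$. For $\alpha \in \Delta_M$, I would choose a shortest path $\alpha = \beta_0, \beta_1, \ldots, \beta_{k-1}, \beta_k = \gamma$ from $\alpha$ to $\Delta \setminus \Delta_M$ in the irreducible Dynkin diagram, and form $w := s_\gamma s_{\beta_{k-1}}\cdots s_{\beta_1}$. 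An inductive propagation along the chain shows that $w\alpha$ has a strictly positive coefficient of $\gamma$ in its expansion into simple roots, whence $w\alpha \notin \Phi_M$. Granting $w \in \WM$, lemma \ref{lem-weyl-alternative} forces $ws_\alpha \in \WM$, and $s_\alpha = w^{-1}(ws_\alpha) \in \WM^{-1}\WM$.

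The main obstacle is verifying $w \in \WM$, which requires a delicate case check of Dynkin adjacencies for simple roots of $\Delta_M$ lying off the chosen path. A slicker alternative is to apply lemma \ref{lem-WM-gallery} to a reduced expression of the longest element $w_0^M \in \WM$: the lemma produces a gallery $1 = w_0, w_1, \ldots, w_N = w_0^M$ in $\WM$ whose consecutive quotients realize every simple reflection appearing in that reduced expression as a member of $\WM^{-1}\WM$. This reduces the claim to the standard fact that in an irreducible Coxeter system with proper parabolic, every simple reflection appears in some reduced expression of $w_0^M$ (equivalently, $w_0^M$ does not lie in any proper parabolic $W_{\Delta \setminus \{\alpha\}}$ for $\alpha \in \Delta$), which is straightforward by inspecting how $w_0^M$ acts on simple roots.
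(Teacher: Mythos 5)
Your overall strategy is sound and, after unwinding the definitions via Proposition \ref{prop-ssnc-equiv}, reduces correctly to the combinatorial claim that $\{\alpha\in\Delta : s_\alpha\in(\WM)^{-1}\WM\} = \Delta_b$. The forward containment $(\WM)^{-1}\WM\subseteq W_b$ via the product decomposition, and the deduction using Lemma \ref{lem-bruhat-inequality}, are both fine. This is a genuinely different route from the paper, which instead shows $\alpha\in\Delta_b$ iff there is $w\in\WM$ with $w s_\alpha w^{-1}\notin W_M$, then replaces $\WM$ by $W$ using the factorization $w = w_M w^M$, and finishes by noting that the $W$-orbit of $\alpha$ spans the irreducible factor $\Phi_i$. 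Your substitute for this, via Lemma \ref{lem-WM-gallery} applied to a reduced word of $w_0^M$, is a nice idea, but it has a gap at the very last step.

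The claim that ``every $\alpha\in\Delta_b$ appears in some reduced expression of $w_0^M$,'' equivalently that $w_0^M\notin W_{\Delta\setminus\{\alpha\}}$, is \emph{not} immediate from ``inspecting how $w_0^M$ acts on simple roots.'' Reducing to an irreducible factor with $\Delta_M\subsetneq\Delta$: the inspection shows that $(w_0^M)^{-1}$ sends $\Delta\setminus\Delta_M$ into $\Phi^-$ and $\Delta_M$ into $\Phi^+$, which rules out $w_0^M\in W_J$ whenever $J$ omits a simple root \emph{outside} $\Delta_M$ (since $W_J$ permutes $\Phi^+\setminus\Phi_J^+$). But it says nothing when $J = \Delta\setminus\{\alpha\}$ with $\alpha\in\Delta_M$, and precisely these $\alpha$ are the ones you need for $\Delta_b$. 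For that case one can argue as follows: $w_0^M\in W_J$ forces $\WM\subseteq W_J$ (since $\WM$ is a Bruhat lower interval and $W_J$ is a lower ideal), hence $W = W_M\cdot\WM\subseteq W_{\Delta_M\cup J}$, forcing $\Delta_M\cup J=\Delta$; and then one must exhibit a positive root with support meeting both $\Delta\setminus J$ and $\Delta\setminus\Delta_M$, for instance the highest root of a connected path in the Dynkin diagram joining them. This last step uses irreducibility in essentially the same way the paper does when it invokes that $W\alpha$ spans $\mathbb{Q}\Phi_i$ — so your route is not actually more elementary, and the phrase ``straightforward by inspecting'' papers over the real content. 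With this step supplied, your proof is correct; the paper's direct argument is shorter because it avoids the detour through reduced words.
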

\begin{proof}
We need to show that for $\alpha\in\Delta$, we have $\alpha\in\Delta_b$ if and only if there is $w\in\WM$ so that $ws_\alpha w^{-1}\not\in W_M$.

The later condition is equivalent to $w\alpha\not\in \Phi_M$ for all $w\in W$ (since if we write $w=w_Mw^M$ then $w\alpha\in \Phi_M$ implies $w^M\alpha\in\Phi_M$) but $w\alpha$ for $w\in W$ span $\mathbb{Q}\Phi_i$ where $\Phi_i$ is the simple factor containing $\alpha$ and so the only way that we will have $w\alpha\in\Phi_M$ for all $w\in W$ is if $\Phi_i=\Phi_{i,M}$.
\end{proof}

\subsubsection{The strongly small slope conditions}
We now introduce some slightly stronger versions of the small slope conditions of the last section.  We need these because we cannot prove the slope bounds of conjecture \ref{conj-strongslopes}, but only the weaker bounds of theorem \ref{thm-slopes}.  

\begin{defi}\label{defi-sss}
Let $\lambda\in X^\star(T^d)_{\mathbb{R}}$.
\begin{itemize}
\item Let $\kappa\in X^\star(T)^{M,+}$.
\begin{itemize}
\item We say $\lambda$ satisfies $+,sss^M(\kappa)$ if for all $w\in \WM\setminus C(\kappa)^+$, $$\lambda\not\geq w^{-1}w_{0,M}\kappa~\textrm{or}~\lambda\not\geq w^{-1}w_{0,M}\kappa + w^{-1}2\rho_{nc}.$$
\item We say $\lambda$ satisfies $-,sss^M(\kappa)$ if for all $w\in \WM\setminus C(\kappa)^-$, $$\lambda\not\leq w^{-1}\kappa~\textrm{or}~\lambda\not\leq w^{-1}\kappa + w^{-1}2\rho_{nc}.$$
\end{itemize}
\item Let $\kappa\in X^\star(T)$ and let $w\in \WM$.
\begin{itemize}
\item We say $\lambda$ satisfies $+,sss_{M,w}(\kappa)$ if for all $w'\in W_M$, $w'\not=1$, $$\lambda\not\geq w^{-1}w_{0,M}w'(\kappa)~\textrm{or}~\lambda \not \geq w^{-1}w_{0,M}w'(\kappa) + 2 w^{-1} w_{0,M} w' 2\rho_{nc}.$$
\item We say $\lambda$ satisfies $-,sss_{M,w}(\kappa)$ if for all $w'\in W_M$, $w'\not=1$, $$\lambda\not\leq w^{-1}(w'(\kappa+\rho)-\rho) ~\textrm{or}~\lambda \not \leq w^{-1}w'(\kappa) + 2 w^{-1} w' 2\rho_{nc}.$$
\end{itemize}
\end{itemize}
\end{defi}
It is immediate from the definitions that $\lambda$ satisfies $\pm,sss^M(\kappa)$ if and only if $w_0(\lambda)$ satisfies $\mp,sss^M(\kappa)$ and if and only if  $-\lambda$ satisfies $\mp,sss^M(-w_{0,M}\kappa-2\rho_{nc})$.

We introduce combinations of these conditions motivated by propositions \ref{prop-ssb-cond} and \ref{prop-small-slope-rel}.
\begin{itemize}
\item We say that $\lambda$ satisfies $+,sss_w(\nu)$ if it satisfies $+,sss^M(-w_{0,M}w(\nu+\rho)-\rho)$ and $+,sss_{M,w}(-w_{0,M}w(\nu+\rho)-\rho)$.  We say that $\lambda$ satisfies $-,sss_w(\nu)$ if it satisfies $-,sss^M(-ww_0(\nu+\rho)-\rho)$ and $-,sss_{M,w}(-ww_0(\nu+\rho)-\rho)$.
\item We say that $\lambda$ satisfies $\pm,sss_b(\nu)$ if it satisfies $\pm,sss^M(-w_{0,M}w(\nu+\rho)-\rho)$ for all $w\in\WM$.
\end{itemize}

\subsection{Small slopes, classicality, and vanishing}\label{subsection-classicality-vanishing}

We  introduce a notation. Let $M$ be a module or a complex carrying an action of $T(\qq_p)$ and admitting a slope decomposition. Let $? $ be a condition on the slope of characters of $T(\qq_p)$ (for instance the condition introduced in definition \ref{defi-ss}), then $M^? $ means the factor of $M$ which satisfies the condition $?$. 

\subsubsection{Coherent cohomology}

Theorem \ref{thm-slopes} and the definition of the small slope condition implies the following vanishing.
\begin{coro}\label{coro-theorem-slopes}
Let $\kappa\in X^\star(T^c)^{M_\mu,+}$ and let $\chi:T(\ZZ_p)\rightarrow \overline{F}^\times$ be a finite order character.  Then $$\mathrm{R}\Gamma_w(K^p,\kappa,\chi)^{\pm,sss^M(\kappa)}=\mathrm{R}\Gamma_w(K^p,\kappa,\chi,cusp)^{\pm,sss^M(\kappa)}=0$$
for $w\not\in C(\kappa)^\pm$.
\end{coro}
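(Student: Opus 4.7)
The plan is to deduce this corollary directly from Theorem \ref{thm-slopes} combined with the definition of $\pm,sss^M(\kappa)$ in Definition \ref{defi-sss}. By the spectral theory of section \ref{section-algebra-acting-fs} (and Proposition \ref{prop-establishing-slope-decomp}), the finite slope cohomologies $\mathrm{R}\Gamma_w(K^p,\kappa,\chi)^{\pm,fs}$ and $\mathrm{R}\Gamma_w(K^p,\kappa,\chi,cusp)^{\pm,fs}$ are direct sums (up to the slope projector) of generalized eigencomponents indexed by characters $\lambda:T^\pm\to \overline{F}^\times$. So to prove the vanishing of the $\pm,sss^M(\kappa)$-part, it is enough to show that no such $\lambda$ that actually occurs can satisfy the slope condition $\pm,sss^M(\kappa)$ once $w\notin C(\kappa)^\pm$.

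I will treat the $+$ case; the $-$ case is symmetric. Fix $w\in\WM\setminus C(\kappa)^+$ and suppose $\lambda$ is a character of $T^+$ occurring in $\mathrm{R}\Gamma_w(K^p,\kappa,\chi)^{+,fs}$ or $\mathrm{R}\Gamma_w(K^p,\kappa,\chi,cusp)^{+,fs}$. Theorem \ref{thm-slopes}(1) then gives the \emph{simultaneous} slope bounds
$$v(\lambda)\geq w^{-1}w_{0,M}\kappa \quad\text{and}\quad v(\lambda)\geq w^{-1}w_{0,M}\kappa + w^{-1}2\rho_{nc}.$$
Now look at the definition of $+,sss^M(\kappa)$: it requires that for \emph{every} $w'\in\WM\setminus C(\kappa)^+$, at least one of the two inequalities $v(\lambda)\geq {w'}^{-1}w_{0,M}\kappa$ or $v(\lambda)\geq {w'}^{-1}w_{0,M}\kappa + {w'}^{-1}2\rho_{nc}$ fails. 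Specializing to $w'=w$ (which is allowed by the hypothesis $w\notin C(\kappa)^+$), Theorem \ref{thm-slopes} shows that \emph{both} inequalities hold. Hence $\lambda$ does not satisfy $+,sss^M(\kappa)$, so the $+,sss^M(\kappa)$-isotypic part of our cohomology is zero.

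The $-$ case proceeds identically, using Theorem \ref{thm-slopes}(2) to obtain the upper bounds $v(\lambda)\leq w^{-1}\kappa$ and $v(\lambda)\leq w^{-1}\kappa+w^{-1}2\rho_{nc}$, and the parallel definition of $-,sss^M(\kappa)$. Since the argument is almost tautological given the setup, there is no real obstacle here; the entire content of the corollary is packaged in the strengthened bound on both $w^{-1}w_{0,M}\kappa$ and its $2\rho_{nc}$-twisted version supplied by Theorem \ref{thm-slopes}, which is exactly why the definition of $sss^M(\kappa)$ was formulated as a disjunction of two inequalities rather than the single inequality that would appear under the (conjectural but unproved) Conjecture \ref{conj-strongslopes}.
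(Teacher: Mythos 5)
Your proof is correct and is exactly what the paper has in mind: the corollary is stated immediately after the sentence ``Theorem \ref{thm-slopes} and the definition of the small slope condition implies the following vanishing'' with no further argument, and your step-by-step unpacking — specialize the disjunction in Definition \ref{defi-sss} to $w'=w$ and contradict both bounds supplied by Theorem \ref{thm-slopes} — is precisely that implication. The closing remark explaining why the definition is a disjunction of the $\kappa$ and $\kappa+2\rho_{nc}$-twisted bounds (as opposed to the single bound $w^{-1}w_{0,M}(\kappa+\rho)+\rho$ one would use under Conjecture \ref{conj-strongslopes}) is accurate and matches the paper's own discussion.
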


This implies that if we take the strongly small slope part of the spectral sequences from finite slope overconvergent cohomology to finite slope classical cohomology of Theorem \ref{thm-spectral-sequence}, all the terms for $w\not\in C(\kappa)^\pm$ vanish.  We immediately deduce our first main classicality theorem.

\begin{thm}\label{thm-control-thm-Coleman}
Let $\kappa\in X^\star(T^c)^{M_\mu,+}$ and assume that $\kappa+\rho$ is regular so that $C(\kappa)^{\pm} = \{w_{\pm}\}$.    Let $\chi:T(\ZZ_p)\rightarrow {\overline{F}}^\times$ be a finite order character. 
Then the spectral sequences of Theorem \ref{thm-spectral-sequence} induces isomorphisms
$$\mathrm{R}\Gamma_{w_\pm}(K^p,\kappa,\chi)^{\pm,sss^M(\kappa)}\simeq \mathrm{R}\Gamma(K^p, \kappa, \chi)^{\pm,sss^M(\kappa)}$$
$$\mathrm{R}\Gamma_{w_\pm}(K^p,\kappa,\chi,cusp)^{\pm,sss^M(\kappa)}\simeq \mathrm{R}\Gamma(K^p, \kappa, \chi,cusp)^{\pm,sss^M(\kappa)}$$
\end{thm}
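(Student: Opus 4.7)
The plan is to apply Corollary \ref{coro-theorem-slopes} directly to the spectral sequences of Theorem \ref{thm-spectral-sequence}. Since the spectral sequences
$$\mathbf{E}_1^{p,q}(K^p,\kappa,\chi)^{\pm}=\bigoplus_{w\in\WM,\ell_\pm(w)=p}\HH^{p+q}_w(K^p,\kappa,\chi)^{\pm,fs}\Rightarrow \HH^{p+q}(K^p,\kappa,\chi)^{\pm,fs}$$
are equivariant for the action of $\mathcal{H}_{p,m,b}^\pm$ (as noted in Lemma \ref{lem-hecke-spectral-sequence}), and the condition $\pm,sss^M(\kappa)$ is a direct summand condition defined by eigenvalue inequalities for elements of $T^\pm$, we may pass to the $\pm,sss^M(\kappa)$-part of the entire spectral sequence to obtain a new spectral sequence abutting to $\HH^{p+q}(K^p,\kappa,\chi)^{\pm,sss^M(\kappa)}$, with $E_1$-terms $\bigoplus_{w\in\WM,\ell_\pm(w)=p}\HH^{p+q}_w(K^p,\kappa,\chi)^{\pm,sss^M(\kappa)}$.

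Next I would invoke Corollary \ref{coro-theorem-slopes}, which says $\mathrm{R}\Gamma_w(K^p,\kappa,\chi)^{\pm,sss^M(\kappa)}=0$ for every $w\in\WM\setminus C(\kappa)^\pm$. Under the regularity hypothesis on $\kappa+\rho$ we have $C(\kappa)^\pm=\{w_\pm\}$, so after taking the strongly small slope part only the single term indexed by $w=w_\pm$ survives on the $E_1$-page. Since this term is concentrated at the single column $p=\ell_\pm(w_\pm)$, all differentials $d_r$ (which shift the column index) vanish for $r\geq 1$. The spectral sequence therefore degenerates at $E_1$, and the only non-zero row identifies
$$\HH^{\ell_\pm(w_\pm)+q}_{w_\pm}(K^p,\kappa,\chi)^{\pm,sss^M(\kappa)}\simeq \HH^{\ell_\pm(w_\pm)+q}(K^p,\kappa,\chi)^{\pm,sss^M(\kappa)}$$
for every $q$, which is precisely the desired quasi-isomorphism of complexes.

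The cuspidal case is handled identically using the cuspidal version of the spectral sequence from Theorem \ref{thm-spectral-sequence} and the cuspidal half of Corollary \ref{coro-theorem-slopes}.

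There is no serious obstacle here; the substance of the result is already encapsulated in the slope bounds of Theorem \ref{thm-slopes} (which feed into Corollary \ref{coro-theorem-slopes}) and in the construction of the Bruhat-stratification spectral sequence. The only point demanding a little care is verifying that the passage to the $\pm,sss^M(\kappa)$-part is compatible with the spectral sequence; this is automatic because the condition is cut out by Hecke operators and the filtration of Section \ref{sect-construction-filtration} together with the resulting spectral sequence is $\mathcal{H}_{p,m,b}^\pm$-equivariant.
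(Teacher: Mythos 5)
Your proof is correct and is essentially the same argument the paper gives: pass to the $\pm,sss^M(\kappa)$-part of the $\mathcal{H}_{p,m,b}^{\pm}$-equivariant spectral sequence of Theorem \ref{thm-spectral-sequence}, apply Corollary \ref{coro-theorem-slopes} to kill every term with $w\neq w_\pm$, and observe that the resulting $E_1$-page is concentrated in the single column $p=\ell_\pm(w_\pm)$, so the spectral sequence degenerates and identifies $\HH^\ast_{w_\pm}(K^p,\kappa,\chi)^{\pm,sss^M(\kappa)}$ with $\HH^\ast(K^p,\kappa,\chi)^{\pm,sss^M(\kappa)}$. The paper states this in one sentence immediately after Corollary \ref{coro-theorem-slopes}; your writeup merely spells out the degeneration step in more detail.
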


\begin{rem} Cases of this theorem for the degree $0$  cohomology of  PEL Shimura varieties  were already proven. See for example  \cite{MR1369416}, \cite{MR2219265}, \cite{MR2783930}, \cite{MR3488741}. 
\end{rem}

We also deduce vanishing theorems for classical cohomology.

\begin{thm}\label{thm-first-vanishing-sss} Let $\kappa\in X^\star(T^c)^{M_\mu,+}$ and let $\chi:T(\ZZ_p)\rightarrow \overline{F}^\times$ be a finite order character.
\begin{enumerate}
\item $\mathrm{R}\Gamma(K^p, \kappa, \chi, cusp)^{\pm,sss^M(\kappa)}$ is concentrated in degree $[0,\ell_{\max}(\kappa)]$.
\item $\mathrm{R}\Gamma(K^p, \kappa, \chi)^{\pm,sss^M(\kappa)}$   is  concentrated in degree $[\ell_{\min}(\kappa), d]$. 
\item $\overline{\HH}^i(K^p, \kappa, \chi)^{\pm, sss^M(\kappa)}$   is concentrated in degree $[\ell_{\min}(\kappa),\ell_{\max}(\kappa)]$.
\end{enumerate}
\end{thm}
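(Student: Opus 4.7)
The plan is to derive all three vanishing statements directly from the Hecke-equivariant spectral sequence of Theorem \ref{thm-spectral-sequence}, combined with the local vanishing Theorems \ref{theorem-coho-van} and \ref{theorem-coho-van2} and the strongly small slope vanishing of Corollary \ref{coro-theorem-slopes}. The key point is that taking the $\pm,sss^M(\kappa)$ part is an exact operation on finite slope Hecke modules and commutes with the formation of the spectral sequence, so we obtain a spectral sequence
\[
\mathbf{E}_1^{p,q}(K^p,\kappa,\chi)^{\pm,sss^M(\kappa)}=\bigoplus_{w\in\WM,\ell_\pm(w)=p}\HH_w^{p+q}(K^p,\kappa,\chi)^{\pm,sss^M(\kappa)}\Rightarrow\HH^{p+q}(K^p,\kappa,\chi)^{\pm,sss^M(\kappa)},
\]
and similarly for cuspidal cohomology.

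The first step is to use Corollary \ref{coro-theorem-slopes} to kill all terms with $w\not\in C(\kappa)^\pm$. Thus only indices $p$ of the form $p=\ell_\pm(w)$ for some $w\in C(\kappa)^\pm$ survive, and by definition these satisfy $p\in[\ell_{\min}(\kappa),\ell_{\max}(\kappa)]$. Next, for the cuspidal case I would invoke Theorem \ref{theorem-coho-van}, which says that $\HH_w^{p+q}(K^p,\kappa,\chi,cusp)^{\pm,fs}$ vanishes when $p+q>\ell_\pm(w)=p$, i.e.\ when $q>0$. Combining with $p\geq 0$ from the spectral sequence and $p\leq\ell_{\max}(\kappa)$, any nonzero abutment in total degree $n=p+q$ forces $n\leq p\leq\ell_{\max}(\kappa)$, proving (1). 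For (2), Theorem \ref{theorem-coho-van2} instead gives $\HH_w^{p+q}(K^p,\kappa,\chi)^{\pm,fs}=0$ for $q<0$, so $n=p+q\geq p\geq\ell_{\min}(\kappa)$, and combined with the global bound $n\leq d$ this yields the range $[\ell_{\min}(\kappa),d]$.

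For (3), the interior cohomology $\overline{\HH}^i(K^p,\kappa,\chi)^{\pm,sss^M(\kappa)}$ is by definition the image of the Hecke-equivariant map
\[
\HH^i(K^p,\kappa,\chi,cusp)^{\pm,sss^M(\kappa)}\longrightarrow \HH^i(K^p,\kappa,\chi)^{\pm,sss^M(\kappa)}.
\]
Since the source vanishes outside $[0,\ell_{\max}(\kappa)]$ by (1) and the target vanishes outside $[\ell_{\min}(\kappa),d]$ by (2), the image vanishes outside the intersection $[\ell_{\min}(\kappa),\ell_{\max}(\kappa)]$.

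In this argument there is no real obstacle once the preceding machinery is in place: the only thing to verify carefully is that the strongly small slope projector is exact and commutes with the filtration defining the spectral sequence, which is standard since the filtration $\mathcal{Z}_p^\pm$ is Hecke-stable (Lemma \ref{lem-hecke-spectral-sequence}). The real content of the theorem is hidden in the slope bound Theorem \ref{thm-slopes} (hence in Corollary \ref{coro-theorem-slopes}) and in the geometric vanishing Theorems \ref{theorem-coho-van} and \ref{theorem-coho-van2}; granted these, the remaining bookkeeping is a direct combinatorial comparison of the index ranges in the spectral sequence.
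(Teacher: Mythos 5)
Your argument is correct and follows essentially the same route as the paper's terse proof, which cites the spectral sequence of Theorem \ref{thm-spectral-sequence}, Propositions \ref{prop-deg-spect-sequ1} and \ref{prop-deg-spect-sequ2} (which encode exactly the bounds from Theorems \ref{theorem-coho-van} and \ref{theorem-coho-van2} you invoke), and Theorem \ref{thm-slopes} (via Corollary \ref{coro-theorem-slopes}). You have simply unpacked the index bookkeeping explicitly; there is no substantive difference.
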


\begin{proof}  
The vanishing result  follows from the spectral sequences of theorem \ref{thm-spectral-sequence} and the vanishing results of propositions \ref{prop-deg-spect-sequ1} and \ref{prop-deg-spect-sequ2}, together with theorem \ref{thm-slopes}. 
\end{proof}

\begin{rem} If we assume the conjecture \ref{conj-strongslopes}, then the strongly small slope conditions in theorems \ref{thm-first-vanishing-sss} 
can be weakened to small slope condition.  In theorem \ref{thm-strong-vanishing-ss} we will actually be able to prove the theorems  for the small slope condition for the interior cohomology only, using the eigenvariety.  
\end{rem}

\begin{rem}
In \cite{MR3571345}, an analog of theorem \ref{thm-first-vanishing-sss} 
is proved without any small slope condition, but with a regularity condition on the weight $\kappa$  instead.
\end{rem}

\begin{rem}
The classical coherent  cohomology can be computed in terms of automorphic forms for $G$ by the result of Su \cite{su2018coherent}.  Thus it may be possible to reprove theorem \ref{thm-first-vanishing-sss}
with sufficient knowledge of automorphic forms on $G$.
\end{rem}

The following corollary gives a situation for which the interior Cousin complex computes the classical interior cohomology. 

\begin{coro}\label{coro-cousin-exactly-computes} Let $\kappa\in X^\star(T^c)^{M_\mu,+}$ and assume that $\kappa+\rho$ is regular. 
Then  $\overline{\mathcal{C}ous}(K^p, \kappa, \chi)^{\pm, sss^M(\kappa)}$ computes  $\overline{\HH}^\star(K^p, \kappa, \chi)^{\pm,sss^M(\kappa)}$.
\end{coro}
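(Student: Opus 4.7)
The plan is to exploit the regularity hypothesis to reduce both sides to a single degree and then apply the classicality theorem already established. First, since $\kappa+\rho$ is $G$-regular, the set $C(\kappa)^{\pm}$ reduces to a single element $w_{\pm}$, with $\ell_{\min}(\kappa)=\ell_{\max}(\kappa)=\ell_{\pm}(w_{\pm})$. Then Corollary \ref{coro-theorem-slopes} gives that for every $w\in\WM\setminus\{w_{\pm}\}$ the complexes $\mathrm{R}\Gamma_w(K^p,\kappa,\chi)^{\pm,sss^M(\kappa)}$ and $\mathrm{R}\Gamma_w(K^p,\kappa,\chi,cusp)^{\pm,sss^M(\kappa)}$ vanish. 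In particular their images, which by definition are the graded pieces of the interior Cousin complex, also vanish, so $\overline{\mathcal{C}ous}(K^p,\kappa,\chi)^{\pm,sss^M(\kappa)}$ is concentrated in the single degree $\ell_{\pm}(w_{\pm})$, where it equals $\overline{\HH}^{\ell_{\pm}(w_{\pm})}_{w_{\pm}}(K^p,\kappa,\chi)^{\pm,sss^M(\kappa)}$.

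Next I would handle the right-hand side. By Theorem \ref{thm-first-vanishing-sss}(3) the interior cohomology $\overline{\HH}^i(K^p,\kappa,\chi)^{\pm,sss^M(\kappa)}$ is supported in the degree range $[\ell_{\min}(\kappa),\ell_{\max}(\kappa)]$, which by regularity is the single degree $\ell_{\pm}(w_{\pm})$. So both the interior Cousin complex in the small slope part and the interior cohomology in the small slope part live in the same single degree, and the statement reduces to identifying the two groups there.

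Finally I would invoke Theorem \ref{thm-control-thm-Coleman}, which under the same regularity hypothesis provides quasi-isomorphisms
\[
\mathrm{R}\Gamma_{w_{\pm}}(K^p,\kappa,\chi)^{\pm,sss^M(\kappa)}\simeq \mathrm{R}\Gamma(K^p,\kappa,\chi)^{\pm,sss^M(\kappa)}
\]
and the cuspidal analogue. Taking cohomology in degree $\ell_{\pm}(w_{\pm})$ and then taking images yields
\[
\overline{\HH}^{\ell_{\pm}(w_{\pm})}_{w_{\pm}}(K^p,\kappa,\chi)^{\pm,sss^M(\kappa)}\simeq \overline{\HH}^{\ell_{\pm}(w_{\pm})}(K^p,\kappa,\chi)^{\pm,sss^M(\kappa)},
\]
which combined with the two concentration statements above gives the claim.

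The argument is essentially formal once Theorem \ref{thm-control-thm-Coleman} and Corollary \ref{coro-theorem-slopes} are in hand; the one point that needs mild care is checking naturality, namely that the isomorphism produced at the level of $w_{\pm}$ is compatible with the map from cuspidal to non-cuspidal cohomology so that passage to images on the overconvergent side matches passage to images on the classical side. This follows because both classicality isomorphisms come from the same $\mathcal{H}_p^{\pm}$-equivariant spectral sequence of Theorem \ref{thm-spectral-sequence} applied to $\mathcal{V}_{\kappa,\Sigma}(-D_{K,\Sigma})$ and $\mathcal{V}_{\kappa,\Sigma}$ respectively, and the natural map between these sheaves induces a map of spectral sequences; this is the only genuinely non-formal check in the proof.
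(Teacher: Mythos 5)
Your proposal is correct and takes essentially the same approach as the paper's own proof: regularity reduces $C(\kappa)^{\pm}$ to a singleton, Corollary \ref{coro-theorem-slopes} kills the Cousin complex outside degree $\ell_{\pm}(w_{\pm})$, Theorem \ref{thm-first-vanishing-sss} concentrates the interior cohomology in that same degree, and Theorem \ref{thm-control-thm-Coleman} identifies the two. The paper's proof is terser (it doesn't spell out the naturality of the classicality isomorphism with respect to the cuspidal-to-non-cuspidal map), so your observation that this naturality follows from the spectral sequence of Theorem \ref{thm-spectral-sequence} being functorial in the sheaf map $\mathcal{V}_{\kappa,\Sigma}(-D_{K,\Sigma})\to\mathcal{V}_{\kappa,\Sigma}$ is a worthwhile detail; the rest is the same argument.
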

\begin{proof} We have $C(\kappa)^{\pm} = \{w_{\pm}\}$. We deduce that $$\overline{\mathcal{C}ous}(K^p, \kappa, \chi)^{\pm, sss^M(\kappa)} =$$$$ \mathrm{Im} \big( \HH^{\ell_\pm(w)}_w(K^p, \kappa, \chi, cusp)^{\pm, sss^M(\kappa)} \rightarrow $$ $$ \HH^{\ell_{\pm}(w)}_w (K_p, \kappa, \chi)^{\pm, sss^M(\kappa)} \big)[-\ell_{\pm}(w)]$$ which computes $\overline{\HH}^\star(K^p, \kappa, \chi)^{\pm,sss^M(\kappa)}$ by theorem \ref{thm-control-thm-Coleman} and theorem \ref{thm-first-vanishing-sss}.
\end{proof}

\subsubsection{Betti cohomology}

Let $\nu\in X^\star(T^c)^+$. We let $W_\nu$ be the corresponding irreducible representation of $G$ with highest weight $\nu$ and $W_\nu^\vee$ be its contragredient. We have an associated local system $\mathcal{W}^\vee_\nu$  over $S_K(\C)$. 

Using Faltings's dual BGG spectral sequence we deduce vanishing results for the small slope parts of Betti cohomology.
\begin{thm}\label{thm-betti-sss-vanishing}
Let $\nu\in X^\star(T^c)^+$.
\begin{enumerate}
\item $\HH^i(S_K(\mathbb{C}),\mathcal{W}_\nu^\vee)^{\pm,sss_b(\nu)}$ is concentrated in degree $[d,2d]$.
\item $\HH_c^i(S_K(\mathbb{C}),\mathcal{W}_\nu^\vee)^{\pm,sss_b(\nu)}$  is concentrated in degree $[0,d]$.
\item $\overline{\HH}^i(S_K(\mathbb{C}),\mathcal{W}_\nu^\vee)^{\pm,sss_b(\nu)}$ is concentrated in degree $d$.
\end{enumerate}
\end{thm}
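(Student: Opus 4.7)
The plan is to reduce the statement to the coherent vanishing theorem \ref{thm-first-vanishing-sss} by means of Faltings's dual BGG spectral sequence, along the lines of the proof of corollary \ref{coro-lafforgue-estimates}. For $\nu \in X^\star(T^c)^+$ one has a Hecke-equivariant spectral sequence
\begin{equation*}
E_1^{p,q} = \bigoplus_{w \in \WM,\ \ell(w)=p} \HH^{q}(K^p, \kappa_w, \chi)^{\pm,fs} \Longrightarrow \HH^{p+q}(S_K(\mathbb{C}), \mathcal{W}_\nu^\vee)^{\pm,fs}
\end{equation*}
where $\kappa_w = -w w_0 (\nu + \rho) - \rho \in X^\star(T^c)^{M_\mu,+}$, together with an analogous spectral sequence for compactly supported Betti cohomology whose $E_1$-terms are the corresponding cuspidal coherent cohomologies. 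Passing to the $\pm, sss_b(\nu)$ part of the abutment and of every $E_1$-term is compatible with the spectral sequence, and the very definition of $\pm, sss_b(\nu)$ (up to the bijection $w \mapsto w_{0,M} w w_0$ of $\WM$ that relates the two parametrizations of BGG weights appearing in corollary \ref{coro-lafforgue-estimates} and in definition \ref{defi-ss}) guarantees that the condition $\pm, sss^M(\kappa_w)$ holds on each $E_1^{p,q}$.

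Next, because $\nu + \rho$ is regular, for every $w \in \WM$ we have $C(\kappa_w)^\pm = \{w\}$ and $\ell_{\min}(\kappa_w) = \ell_{\max}(\kappa_w) = d - \ell(w)$ in this parametrization. Then theorem \ref{thm-first-vanishing-sss}(1) gives that the cuspidal $E_1^{p,q}$ is concentrated in $q \in [0, d - \ell(w)]$, hence in total degree $p + q \in [\ell(w), d]$; and theorem \ref{thm-first-vanishing-sss}(2) gives that the non-cuspidal $E_1^{p,q}$ is concentrated in $q \in [d - \ell(w), d]$, hence in $p + q \in [d, d + \ell(w)]$. Letting $w$ range over $\WM$ and using that every $E_\infty^{p,q}$ is a subquotient of $E_1^{p,q}$, the abutments vanish outside $p + q \in [0, d]$ for $\HH^i_c$ and outside $p + q \in [d, 2d]$ for $\HH^i$, proving parts (1) and (2).

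For part (3), it suffices to observe that $\overline{\HH}^i(S_K(\mathbb{C}), \mathcal{W}_\nu^\vee)^{\pm, sss_b(\nu)}$ is by definition a subquotient of both $\HH^i_c$ and $\HH^i$, hence is concentrated in $[0,d] \cap [d,2d] = \{d\}$. The one genuinely delicate point in the whole argument is the bookkeeping between the two conventions for labelling the Weyl group elements that appear in BGG and in the definition of $sss_b(\nu)$, and checking that the slope condition on Betti cohomology translates cleanly into a $\pm, sss^M(\kappa_w)$ condition on every $E_1$-term; once that is arranged, the BGG spectral sequence is a formal machine and theorem \ref{thm-first-vanishing-sss} does the real work.
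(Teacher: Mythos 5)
Your proof is correct and carries out exactly what the paper intends when it cites Faltings's dual BGG spectral sequence: you combine theorem \ref{thm-H-dR} (in both the canonical and subcanonical versions) with the coherent vanishing of theorem \ref{thm-first-vanishing-sss}, and the bookkeeping via the involution $w \mapsto w_{0,M}ww_0$ of $\WM$ that reconciles the weight parametrization of definition \ref{defi-ss} with that of theorem \ref{thm-H-dR} is handled correctly. The reduction of part (3) to parts (1) and (2) by observing that $\overline{\HH}^i$ is simultaneously a quotient of $\HH^i_c$ and a subspace of $\HH^i$ (all compatibly with the Hecke-equivariant slope projector) is likewise exactly the intended argument.
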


\begin{rem} If we assume the conjecture \ref{conj-strongslopes}, then the strongly small slope conditions in \ref{thm-betti-sss-vanishing} can be weakened to small slope condition.  In theorem \ref{thm-strong-vanishing-ss} we will actually be able to prove the theorems  for the small slope condition for the interior cohomology only, using the eigenvariety.  
\end{rem}

\begin{rem}
In \cite{MR3571345}, analogs of theorem  \ref{thm-betti-sss-vanishing} are proved without any small slope condition, but with a regularity condition on the weight $\nu$ instead.
\end{rem}

\begin{rem}
The classical  Betti cohomology can be computed in terms of automorphic forms for $G$ by the results of Franke \cite{MR1603257}.  Thus it may be possible to reprove theorem  \ref{thm-betti-sss-vanishing} with sufficient knowledge of automorphic forms on $G$.
\end{rem}

\subsubsection{Small slope conditions and Jacquet modules}\label{subsubsection-jacquet} We now use these small slope condition to define certain direct summands of smooth admissible representations and apply this to the cohomology of the Shimura variety. 

\begin{prop}\label{prop-jacquet-ss}
Let $\pi$ be a smooth admissible representation of $G(\qq_p)$ and let $\nu\in X^\star(T)$ satisfy $\nu+\rho\in X^\star(T)_{\mathbb{R}}^+$.  Then the following are equivalent:
\begin{enumerate}
\item There exists $m\geq b\geq 0$ such that $\pi^{K_{p,m,b},+,ss(\nu)}\not=0$.
\item There exists $m\geq b\geq 0$ such that $\pi^{K_{p,m,b},-,ss(\nu)}\not=0$.
\item $(\pi_U)^{+,ss(\nu)}\not=0$.
\item $(\pi_{\overline{U}})^{-,ss(\nu)}\not=0$.
\end{enumerate}

We have the same equivalent properties when the condition $ss(\nu)$ replaced by $ss_b(\nu)$, $sss_b(\nu)$, or $ss^M(\kappa)$, $sss^M(\kappa)$ for $\kappa \in X^\star(T)^{M,+}$. 
\end{prop}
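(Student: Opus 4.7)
The plan is to reduce everything to Casselman's theorem (proposition \ref{prop-casell-JM}) relating the finite slope part of invariants to the Jacquet module, combined with the $w_0$-symmetry between $\pi_U$ and $\pi_{\overline{U}}$ and the $w_0$-symmetry of the small slope condition recorded in proposition \ref{prop-plusminus-symmetry}.

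First I would establish the equivalence $(1)\Leftrightarrow(3)$. By proposition \ref{prop-casell-JM} there is a $T^+$-equivariant isomorphism $\pi^{K_{p,m,b},+,fs}\simeq \pi_U^{T_b}$ for any $m\geq b\geq 0$. Applying the $+,ss(\nu)$ projector (which is well-defined because $\pi$ is admissible, hence $\pi_U^{T_b}$ is finite dimensional) on both sides yields $\pi^{K_{p,m,b},+,ss(\nu)}\simeq (\pi_U)^{T_b,+,ss(\nu)}$. Non-vanishing for some $(m,b)$ is then equivalent to non-vanishing of $(\pi_U)^{+,ss(\nu)}$, since any finite-order character appearing in $\pi_U$ has finite conductor and hence lies in $\pi_U^{T_b}$ for $b$ large enough. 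The equivalence $(2)\Leftrightarrow(4)$ is identical, using the $-$ version of Casselman's isomorphism $\pi^{K_{p,m,b},-,fs}\simeq \pi_{\overline{U}}^{T_b}$.

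Next I would establish $(3)\Leftrightarrow(4)$ via the $w_0$-conjugation. Choose a representative of $w_0\in N(T)$. Since $w_0 U w_0^{-1}=\overline{U}$, the map $v\mapsto w_0\cdot v$ induces a linear isomorphism $\pi_U\xrightarrow{\sim}\pi_{\overline{U}}$ which is twisted equivariant: $t'\cdot(w_0\cdot v)=w_0\cdot((w_0^{-1}t'w_0)\cdot v)$. Hence if $v\in\pi_U$ is a generalized eigenvector with character $\lambda$, then $w_0\cdot v$ is a generalized eigenvector with character $\lambda'(t')=\lambda(w_0^{-1}t'w_0)$. Using $v(w_0^{-1}t'w_0)=w_0^{-1}(v(t'))$ in $X_\star(T^d)\otimes\qq$, one computes $s(\lambda')=w_0(s(\lambda))$. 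By proposition \ref{prop-plusminus-symmetry}(1), $s(\lambda)$ satisfies $+,ss(\nu)$ if and only if $w_0(s(\lambda))=s(\lambda')$ satisfies $-,ss(\nu)$, so the isomorphism $\pi_U\simeq \pi_{\overline{U}}$ exchanges $(\pi_U)^{+,ss(\nu)}$ with $(\pi_{\overline{U}})^{-,ss(\nu)}$. This gives $(3)\Leftrightarrow(4)$.

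Finally, to extend the equivalence from $ss(\nu)$ to $ss_b(\nu)$, $sss_b(\nu)$, $ss^M(\kappa)$ and $sss^M(\kappa)$, the argument above is purely formal and uses only one input specific to the condition, namely that it is invariant under the symmetry $s\mapsto w_0(s)$ that exchanges $+$ and $-$. This invariance is precisely what is recorded in proposition \ref{prop-plusminus-symmetry}, and the analogous symmetry for the $sss$-variants is noted immediately after definition \ref{defi-sss}. Since the Casselman isomorphism respects the $T(\qq_p)$-action, taking the corresponding projector on both sides again gives the stated equivalence. The main (modest) obstacle is simply bookkeeping with the $w_0$-twist: checking that the slope really transforms by $w_0$ under conjugation, and matching this with the direction of the inequality in each variant of the small slope condition; once the formula $s(\lambda')=w_0(s(\lambda))$ is in place, the rest is formal.
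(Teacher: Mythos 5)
Your proposal is correct and follows essentially the same route as the paper: $(1)\Leftrightarrow(3)$ and $(2)\Leftrightarrow(4)$ via Casselman's isomorphism (proposition \ref{prop-casell-JM}), and $(3)\Leftrightarrow(4)$ via the $w_0$-conjugation between $\pi_U$ and $\pi_{\overline{U}}$ together with proposition \ref{prop-plusminus-symmetry}. You have merely spelled out the slope-transformation computation that the paper leaves implicit.
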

\begin{proof}
The equivalence of (1) with (3) and (2) with (4) is immediate from proposition \ref{prop-casell-JM}.  The equivalence of (3) and (4) follows from proposition \ref{prop-plusminus-symmetry} and the isomorphism between $\pi_U$ and $\pi_{\overline{U}}$ given by $w_0$.
\end{proof}

\begin{defi}\label{defi-small-slope-rep}
Let $\pi$ be a smooth admissible representation of $G(\qq_p)$, let $\nu\in X^\star(T)$ satisfy $\nu+\rho\in X^\star(T)_{\mathbb{R}}^+$ and let $\kappa \in X^\star(T)^{M,+}$.  We define $\pi^{ss(\nu)}\subseteq\pi$ to be the sum of all indecomposable summands of $\pi$ which satisfy the equivalent conditions of proposition \ref{prop-jacquet-ss} for $ss(\nu)$. We define $\pi^{ss_b(\nu)}$, $\pi^{ss^M(\kappa)}$, $\pi^{sss_b(\nu)}$, and $\pi^{sss^M(\kappa)}$ similarly.
\end{defi}

\begin{rem} It is not necessarily true that any irreducible subquotient of $\pi^{ss(\nu)}$ satisfies the condition $ss(\nu)$ and similarly for the other conditions. 
\end{rem}
\begin{rem} If $\pi$ is irreducible, then $\pi^{ss(\nu)} = \pi$ means  that $\pi$ admits an embedding in $\iota_{B}^G\psi$ for a character $\psi : T(\qq_p) \rightarrow \overline{\qq}_p^\times$ with $v(\psi)$ satisfying $+,ss(\nu)$.  A similar remark holds for the other slope conditions.
\end{rem}

With these definition in place, we can deduce (most of) theorems \ref{thm-1-intro} and \ref{thm-2-intro}  of the introduction (we will be able to use the small slope condition for interior cohomology after we prove theorem \ref{thm-strong-vanishing-ss}):  

\begin{thm}\label{thm-12-main}  For any $\kappa \in X_\star(T^c)^{M_\mu,+}$,
\begin{enumerate}
\item  $\overline{\HH}^i(K^p, \kappa)^{sss^M(\kappa)}$ is concentrated in the range  $[\ell_{\min}(\kappa), \ell_{\max}(\kappa)]$,
\item ${\HH}^i(K^p, \kappa, cusp)^{sss^M(\kappa)}$ is concentrated in the range  $[0, \ell_{\max}(\kappa)]$,
\item ${\HH}^i(K^p, \kappa)^{sss^M(\kappa)}$ is concentrated in the range  $[\ell_{\min}(\kappa),d]$.
\end{enumerate}

For any $\nu \in X_\star(T^c)^{+}$,
\begin{enumerate}
\item $\overline{\HH}^i(K_p, \mathcal{W}_\nu^\vee)^{sss_b(\nu)}$ is concentrated in the middle degree $d$, 
\item ${\HH}_c^i(K^p, \mathcal{W}_\nu^\vee)^{sss_b(\nu)}$ is concentrated in the range  $[0, d]$,
\item ${\HH}^i(K^p, \mathcal{W}_\nu^\vee)^{sss_b(\nu)}$ is concentrated in the range  $[d,2d]$.
\end{enumerate}
\end{thm}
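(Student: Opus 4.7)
The plan is to deduce Theorem \ref{thm-12-main} from the character-theoretic vanishing results of Theorems \ref{thm-first-vanishing-sss} (coherent case) and \ref{thm-betti-sss-vanishing} (Betti case), via the bridge established in Proposition \ref{prop-jacquet-ss} between the representation-theoretic small slope conditions of Definition \ref{defi-small-slope-rep} and the character-theoretic ones appearing in those theorems.

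First I would handle the coherent case. Let $\pi$ denote one of the three smooth admissible $G(\qq_p)$-representations $\HH^i(K^p, \kappa)$, $\HH^i(K^p, \kappa, cusp)$, $\overline{\HH}^i(K^p, \kappa)$, and assume that $\pi^{sss^M(\kappa)} \neq 0$. By Definition \ref{defi-small-slope-rep} we may choose an indecomposable summand $\pi'$ of $\pi$ satisfying the equivalent conditions of Proposition \ref{prop-jacquet-ss} for $sss^M(\kappa)$. Hence there exist $m \geq b \geq 0$ with $m > 0$ such that $(\pi')^{K_{p,m,b}, +, sss^M(\kappa)} \neq 0$. Decomposing under the finite group $T(\ZZ_p)/T_b$, we pick a finite order character $\chi$ of $T(\ZZ_p)$ of conductor dividing $b$ so that the $\chi$-isotypic subspace is still nonzero; this subspace embeds into the corresponding $\HH^i(K^p, \kappa, \chi)^{+, sss^M(\kappa)}$ (respectively its cuspidal or interior variant). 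The three vanishing ranges of Theorem \ref{thm-first-vanishing-sss} then force $i$ to lie in the corresponding range, giving the three items of the coherent part of the theorem.

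The Betti case is handled by the same argument, with the cohomologies $\HH^i(S_K(\C), \mathcal{W}^\vee_\nu)$, $\HH^i_c(S_K(\C), \mathcal{W}^\vee_\nu)$, $\overline{\HH}^i(S_K(\C), \mathcal{W}^\vee_\nu)$ replacing the coherent ones, the condition $sss_b(\nu)$ replacing $sss^M(\kappa)$, and Theorem \ref{thm-betti-sss-vanishing} replacing Theorem \ref{thm-first-vanishing-sss}. The parenthetical remark of Proposition \ref{prop-jacquet-ss} ensures that the equivalence between representation-theoretic and character-theoretic conditions extends to $sss_b(\nu)$.

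There is no substantial obstacle: the proof is essentially a reformulation of already-established finite-level vanishing results in the smooth-representation language of Definition \ref{defi-small-slope-rep}. The only point to check carefully is that taking $K_{p,m,b}$-invariants and isolating a $\chi$-isotypic component under the finite group $T(\ZZ_p)/T_b$ both commute with direct sums and preserve nonvanishing, so that a nonzero $sss^M(\kappa)$ (respectively $sss_b(\nu)$) summand of the infinite-level representation is detected at some finite level and nebentypus.
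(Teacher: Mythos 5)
Your proof is correct and follows the same route the paper takes: the paper's proof of Theorem~\ref{thm-12-main} is the single sentence ``This follows immediately from theorems \ref{thm-first-vanishing-sss} and \ref{thm-betti-sss-vanishing},'' and you have spelled out the reduction through Definition~\ref{defi-small-slope-rep} and Proposition~\ref{prop-jacquet-ss} (including passing to a finite level $K_{p,m,b}$ and a nebentypus $\chi$) that the paper leaves implicit.
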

\begin{proof} This follows immediately from theorems \ref{thm-first-vanishing-sss} and \ref{thm-betti-sss-vanishing}.
\end{proof}

\subsection{De Rham and rigid cohomology}
 Let $\nu\in X^\star(T^c)^+$. Let $(\mathcal{W}^\vee_{\nu, dR}, \nabla)$ be the associated  filtered vector bundle with integrable logarithmic connection over $S_{K,\Sigma}^{tor}$.  Over $S_K(\C)$, the set of horizontal sections of the corresponding holomorphic vector bundle is the local system $\mathcal{W}^\vee_\nu$.
 Let $\mathrm{DR}(\mathcal{W}_\nu^\vee) =  \mathcal{W}_{\nu,dR}^\vee \otimes_{\oscr_{S^{tor}_{K,\Sigma}} }\Omega^\bullet_{S^{tor}_{K,\Sigma}/F}(\mathrm{log}(D))$ be the filtered de Rham complex with logarithmic poles associated to $(\mathcal{W}^\vee_{\nu, dR}, \nabla)$. Its cohomology will be denoted   $\mathrm{R}\Gamma_{dR}(S_{K,\Sigma}^{tor}, \mathcal{W}_\nu^\vee)$.  We also consider the sub-complex  $\mathrm{DR}(\mathcal{W}_\nu^\vee(-D))$ and   its cohomology will be denoted   $\mathrm{R}\Gamma_{dR,c}(S_{K,\Sigma}^{tor}, \mathcal{W}_\nu^\vee)$ where the subscript $c$ stands for compact support. 
 
 Faltings's dual BGG  complex  for $\mathcal{W}_\nu^\vee$  is a filtered     complex $\mathrm{BGG}(\mathcal{W}_\nu^\vee)$ in the category of vector bundles with maps given by differential operators. See for example \cite{MR3818616} or \cite{lan2019rham}, sect. 6.1. We have 
 $\mathrm{BGG}(\mathcal{W}_\nu^\vee)^i = \oplus_{w \in \WM, \ell(w)= i} \mathcal{V}_{-ww_0( \nu + \rho)-\rho}$. 
 We also have a subcomplex  $\mathrm{BGG}(\mathcal{W}_\nu^\vee(-D))$ with $$\mathrm{BGG}(\mathcal{W}_\nu^\vee(-D))^i = \oplus_{w \in \WM, \ell(w)= i} \mathcal{V}_{-ww_0( \nu + \rho)-\rho}(-D).$$
We have (see for example \cite{lan2019rham}, thm. 6.1.10):
 \begin{thm}\label{thm-H-dR} There is a filtered quasi-isomorphism $\mathrm{BGG}(\mathcal{W}_\nu^\vee) \rightarrow \mathrm{DR}(\mathcal{W}_\nu^\vee)$ in the category of vector bundles over ${S}_{K,\Sigma}^{tor}$, with morphisms given by differential operators. The stupid filtration on  $\mathrm{BGG}(\mathcal{W}_\nu^\vee)$ induces   is a  spectral sequence $$E_1^{p,q} = \oplus_{w \in \WM, \ell(w)=p} \HH^q(S_{K,\Sigma}^{tor}, \mathcal{V}_{-ww_0( \nu + \rho)-\rho}) \Rightarrow \HH^{p+q}_{dR}( S_{K,\Sigma}^{tor}, \mathcal{W}_\nu^\vee)$$ degenerating at $E_1$. 
  There is a quasi-isomorphism $\mathrm{BGG}(\mathcal{W}_\nu^\vee(-D)) \rightarrow \mathrm{DR}(\mathcal{W}_\nu^\vee(-D))$. The stupid filtration on  $\mathrm{BGG}(\mathcal{W}_\nu^\vee(-D))$ induces a spectral sequence $$E_1^{p,q} = \oplus_{w \in \WM, \ell(w)=p} \HH^q(S_{K,\Sigma}^{tor}, \mathcal{V}_{-ww_0( \nu + \rho)-\rho}(-D)) \Rightarrow \HH^{p+q}_{dR,c}( S_{K,\Sigma}^{tor}, \mathcal{W}_\nu^\vee)$$ degenerating at $E_1$. 
 \end{thm}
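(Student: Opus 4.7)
The plan is to construct the BGG complex and its quasi-isomorphism to the de Rham complex in a group-theoretic way on the compact dual, then transport the construction to the Shimura variety using the flat torsor structure, and finally deduce the degeneration of the spectral sequence from a Hodge-theoretic input.

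First I would work on the compact dual $FL^{std}_{G,\mu} = G/P_\mu^{std}$, or rather on the classifying space $BP_\mu^{std}$. Starting from the irreducible $G$-representation $W_\nu^\vee$ one forms the de Rham complex $W_\nu^\vee \otimes \Omega^\bullet_{FL^{std}_{G,\mu}}$, which on $M_\mu^c$-isotypic pieces decomposes into a direct sum of automorphic-type vector bundles. The crucial algebraic input is the Kostant/BGG resolution: the irreducible highest weight representation $W_\nu^\vee$ admits a resolution by direct sums of dual Verma modules indexed by $\WM$, obtained from Kostant's decomposition of $U(\mathfrak{g})$ with respect to the parabolic $\mathfrak{p}_\mu^{std}$. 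Translating this resolution into a quasi-isomorphism of filtered complexes of vector bundles on $FL^{std}_{G,\mu}$, one extracts a canonical sub-complex of differential operators (the BGG complex) whose $i$-th term is $\oplus_{w\in\WM,\ell(w)=i} \mathcal{V}_{-ww_0(\nu+\rho)-\rho}$, and this inclusion is a filtered quasi-isomorphism. This is a purely algebraic statement about $P_\mu^{std}$-equivariant complexes.

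Next I would transport this to the Shimura variety. The torsor $M_{dR}$ together with its natural connection allows one to interpret the logarithmic de Rham complex $\mathrm{DR}(\mathcal{W}_\nu^\vee)$ on $S^{tor}_{K,\Sigma}$ as the image of the (completed) de Rham complex on $FL^{std}_{G,\mu}$. Checking at the boundary requires  the Koecher-type extension of $M_{dR}$ and its connection with logarithmic singularities provided by the canonical extension from \cite{MR1044823} and \cite{harris-ann-arb}; the point is that the BGG inclusion, being a map of $P_\mu^{std}$-equivariant filtered objects, extends canonically and respects the sub-complex of cuspidal forms, giving both the non-cuspidal quasi-isomorphism and the cuspidal version $\mathrm{BGG}(\mathcal{W}_\nu^\vee(-D))\to\mathrm{DR}(\mathcal{W}_\nu^\vee(-D))$. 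Taking hypercohomology gives the two spectral sequences of the theorem with the stated $E_1$ terms.

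The hard part will be the $E_1$-degeneration. In characteristic zero one can deduce this from Hodge-to-de Rham degeneration: over $\C$, $\mathrm{R}\Gamma_{dR}(S^{tor}_{K,\Sigma},\mathcal{W}_\nu^\vee)$ and $\mathrm{R}\Gamma_{dR,c}(S^{tor}_{K,\Sigma},\mathcal{W}_\nu^\vee)$ compute the Betti cohomology $\mathrm{R}\Gamma(S_K(\C),\mathcal{W}_\nu^\vee)$ and $\mathrm{R}\Gamma_c(S_K(\C),\mathcal{W}_\nu^\vee)$, which carry a mixed Hodge structure (work of Deligne in the proper case, Saito/Harris-Zucker in the open case), and the spectral sequence in question is, via the BGG quasi-isomorphism and the strict compatibility with filtrations, identified with the Hodge--de Rham spectral sequence attached to the Hodge filtration on $\mathrm{DR}(\mathcal{W}_\nu^\vee)$. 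The degeneration of the latter at $E_1$ is the principle of Deligne, and once this identification is set up the degeneration for the BGG spectral sequence follows formally. Equivalently, one can argue $p$-adically via the comparison between the de Rham and \'etale realizations of $\mathcal{W}_\nu^\vee$ and the degeneration statements of \cite{diao2019logarithmic}. Either route requires careful tracking of filtrations through the BGG quasi-isomorphism, and this compatibility of filtrations is the most delicate point; I would follow the approach of \cite{lan2019rham}, Theorem 6.1.10, where this compatibility is worked out in detail.
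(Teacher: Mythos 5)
The paper does not prove Theorem \ref{thm-H-dR}; it simply cites \cite{lan2019rham}, Theorem 6.1.10 (and, for the construction of the complex, \cite{MR3818616} and \cite{lan2019rham}, Section 6.1). Your outline is a faithful summary of what those references do: Kostant's parabolic BGG resolution of $W_\nu^\vee$, transport via the $M_{dR}$-torsor and the Gauss--Manin connection, canonical log extension of the whole package to $S^{tor}_{K,\Sigma}$, and degeneration by comparing the stupid filtration on $\mathrm{BGG}$ with the Hodge filtration on $\mathrm{DR}$ and invoking Hodge theory (Deligne in the proper case, Harris--Zucker/Saito for the log/compactly supported versions). So you are taking essentially the same route as the source the paper relies on.

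One remark worth being careful about, which the paper itself flags in the remark immediately after the theorem: the $E_1$ page you write down comes from the \emph{stupid} filtration on $\mathrm{BGG}(\mathcal{W}_\nu^\vee)$, which is \emph{not} literally the Hodge filtration on $\mathrm{DR}(\mathcal{W}_\nu^\vee)$; rather, the two induce the same spectral sequence up to a reindexing because the graded pieces of the Hodge filtration on the BGG side are direct sums of automorphic vector bundles with zero differential. Your step "the spectral sequence in question is, via the BGG quasi-isomorphism and the strict compatibility with filtrations, identified with the Hodge--de Rham spectral sequence" glosses over this reindexing, which is exactly the delicate bookkeeping done in \cite{lan2019rham}, Theorem 6.1.10. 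You do point to the right reference for this, so it is not a gap, but it is the one place where "identified with" should be replaced by "reindexes to," as the paper's own remark makes explicit.
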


\begin{rem} Instead of using the stupid filtration on $\mathrm{BGG}(\mathcal{W}_\nu^\vee)$ one can use the filtration $F$ corresponding to the Hodge filtration on $\mathrm{DR}(\mathcal{W}_\nu^\vee)$. The associated graded of this  filtration $F$ are complexes of automorphic vector bundles (featuring those appearing as objects in $\mathrm{BGG}(\mathcal{W}_\nu^\vee)$), with trivial differential. The spectral sequence for the $F$-filtration is then the Hodge-to-de Rham spectral sequence. It also degenerates at $E_1$. The difference between the Hodge-to-de Rham spectral sequence and the stupid filtration spectral sequence  is therefore basically a reindexing of the terms. 
\end{rem}

We now pass to $p$-adic geometry. We can consider   $\mathrm{DR}(\mathcal{W}_\nu^\vee)$ and $\mathrm{BGG}(\mathcal{W}_\nu^\vee)$  as complexes of vector bundles with maps given by differential operators over the adic space $\mathcal{S}^{tor}_{K,\Sigma}$ and the GAGA theorem ensures that  $\mathrm{R}\Gamma_{dR}(\mathcal{S}_{K,\Sigma}^{tor}, \mathcal{W}_\nu^\vee)$ still computes the algebraic  de Rham cohomology groups. If $K = K^p K_p$ with $K_{p} = K_{p,m,b}$, and $\chi:T(\ZZ_p)\to {\overline{F}}^\times$  is a finite order character,  we can  define    $\mathrm{R}\Gamma_{dR}(K^p, \mathcal{W}_\nu^\vee,\chi)^{\pm,fs}$  as a direct factor of the complex of $\mathrm{R}\Gamma_{dR}(\mathcal{S}_{K^pK_p,\Sigma}^{tor}, \mathcal{W}_\nu^\vee)$ (see section \ref{sect-finite-slope-classical-coho}). We can also define $\mathrm{R}\Gamma_{dR,c}(K^p, \mathcal{W}_\nu^\vee,\chi)^{\pm,fs}$  as a direct factor of the complex of $\mathrm{R}\Gamma_{dR,c}(\mathcal{S}_{K^pK_p,\Sigma}^{tor}, \mathcal{W}_\nu^\vee)$

For any $w \in \WM$, we can also make sense of $\mathrm{R}\Gamma_{dR, w}(K^p, \mathcal{W}_\nu^\vee,\chi)^{\pm,fs}$ as in section \ref{section-overconvergent-cohomologies}. Namely, one just copies verbatim this section with the automorphic sheaf $\mathcal{V}_\kappa$ replaced by the complex of automorphic sheaves $\mathrm{DR}(\mathcal{W}_\nu^\vee)$. Similarly, we can define $\mathrm{R}\Gamma_{dR,c, w}(K^p, \mathcal{W}_\nu^\vee,\chi)^{\pm,fs}$ by considering the cohomology of the complex $\mathrm{DR}(\mathcal{W}_\nu^\vee(-D))$. 

\begin{rem} It would be interesting to study in depth the cohomologies $$\mathrm{R}\Gamma_{dR, w}(K^p, \mathcal{W}_\nu^\vee,\chi)^{\pm,fs}.$$ For example, are the cohomology groups finite dimensional $F$-vector spaces? 
In the Siegel case,  the cohomologies $\mathrm{R}\Gamma_{dR, w}(K^p, \mathcal{W}_\nu^\vee,\chi)^{\pm,fs}$ and $\mathrm{R}\Gamma_{dR, c, w}(K^p, \mathcal{W}_\nu^\vee,\chi)^{\pm,fs}$ for $w \in \{Id, w_0^M\}$ are rigid cohomologies with support conditions of certain coverings of the ordinary locus  in the Shimura variety.  
Is this a general phenomena (with the ordinary locus replaced by the Igusa variety corresponding to $w$)? 
\end{rem}

The following theorem is the direct analogue of theorem \ref{thm-spectral-sequence}. The proof proceeds in the exact same way. 
 
 \begin{thm}\label{thm-spectral-sequence-rigid} Let $\nu \in X^\star(T^c)^{+}$ be a weight and let $\chi:T(\ZZ_p)\to {\overline{F}}^\times$ be a finite order character.  For a choice of $+$ or $-$, there is a $\mathcal{H}_{p,m,b}^{\pm}$-equivariant spectral sequence  $\mathbf{E}_{dR}^{p,q}(K^p, \mathcal{W}_\nu^\vee,\chi)^\pm$ converging to classical finite slope de Rham cohomology $ \HH^{p+q}_{dR}(K^p, \mathcal{W}_\nu^\vee,\chi)^{\pm,fs}$,
such that $$\mathbf{E}_{dR,1}^{p,q}(K^p, \mathcal{W}_\nu^\vee ,\chi)^{\pm} = \oplus_{w \in \WM, \ell_\pm(w) = p} \HH^{p+q}_{dR,w}( K^p, \mathcal{W}_\nu^\vee,\chi)^{\pm,fs}.$$
There are also spectral sequences $\mathbf{E}_{dR,c}^{p,q}(K^p, \mathcal{W}_\nu^\vee,\chi)^\pm$ converging to $ \HH^{p+q}_{dR,c}(K^p, \mathcal{W}_\nu^\vee,\chi)^{\pm,fs}$ such that $$\mathbf{E}_{dR,c,1}^{p,q}(K^p, \mathcal{W}_\nu^\vee, \chi)^{\pm} = \oplus_{w \in \WM, \ell_\pm(w) = p} \HH^{p+q}_{dR,c,w}( K^p, \mathcal{W}_\nu^\vee,\chi)^{\pm,fs}.$$
\end{thm}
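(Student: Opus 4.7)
The plan is to mirror the proof of Theorem \ref{thm-spectral-sequence} verbatim, replacing the coherent sheaf $\mathcal{V}_\kappa$ with the complex of sheaves $\mathrm{DR}(\mathcal{W}_\nu^\vee)$ (respectively $\mathrm{DR}(\mathcal{W}_\nu^\vee(-D))$ for the compact support case). First I would reuse the filtration $\mathcal{S}^{tor}_{K,\Sigma} = \mathcal{Z}^{\pm}_0 \supset \mathcal{Z}^{\pm}_1 \supset \cdots \supset \mathcal{Z}^{\pm}_d \supset \mathcal{Z}^{\pm}_{d+1} = \emptyset$ constructed in section \ref{sect-construction-filtration} by pulling back the Bruhat stratification through the truncated Hodge-Tate period map. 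Applying the spectral sequence of a filtered topological space (section \ref{section-spectral-sequence}) to the bounded complex of sheaves $\mathrm{DR}(\mathcal{W}_\nu^\vee)$ produces a spectral sequence
$$E_1^{p,q} = \HH^{p+q}_{\mathcal{Z}^{\pm}_p/\mathcal{Z}^{\pm}_{p+1}}(\mathcal{S}^{tor}_{K,\Sigma}, \mathrm{DR}(\mathcal{W}_\nu^\vee)) \Rightarrow \HH^{p+q}_{dR}(\mathcal{S}^{tor}_{K,\Sigma}, \mathcal{W}_\nu^\vee),$$
and similarly for the cuspidal complex.

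Second, I would decompose the $E_1$ page. The topological arguments of lemmas \ref{topological-lemma-easy} and \ref{lem-direct-sum-complex-support} depend only on the sheaf being a sheaf of abelian groups (or a complex thereof), so they apply unchanged and yield
$$ \HH^{p+q}_{\mathcal{Z}^{+}_p/\mathcal{Z}^{+}_{p+1}}(\mathcal{S}^{tor}_{K,\Sigma}, \mathrm{DR}(\mathcal{W}_\nu^\vee)) = \bigoplus_{w \in \WM, \ell(w)=p} \HH^{p+q}_{(\pi^{tor}_{HT,K_p})^{-1}(]X_{w,k}[\cap \overline{]Y_{w,k}[})}\bigl((\pi^{tor}_{HT,K_p})^{-1}(]X_{w,k}[), \mathrm{DR}(\mathcal{W}_\nu^\vee)\bigr),$$
with the analogous statement for $\mathcal{Z}^-_p/\mathcal{Z}^-_{p+1}$ using $]Y_{w,k}[$ and $\overline{]X_{w,k}[}$. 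By the definition of $\HH^{p+q}_{dR,w}(K^p, \mathcal{W}_\nu^\vee, \chi)^{\pm,fs}$ given just above the statement of the theorem, these summands are the finite-level avatars of the desired terms.

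Third, I would install the Hecke action. Since $\mathrm{DR}(\mathcal{W}_\nu^\vee)$ is a $G(\mathbb{A}_f)$-equivariant complex of sheaves whose terms are locally projective Banach sheaves (in fact locally free coherent sheaves), the construction of section \ref{section-action-sheaf} produces actions of Hecke correspondences on the cohomology with support, and, term by term in the filtration, on $E_1^{\bullet,\bullet}$. Lemma \ref{lem-hecke-spectral-sequence} then transfers to make the entire spectral sequence $\mathcal{H}^{\pm}_{p,m,b}$-equivariant.

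The main obstacle will be establishing the potent compactness of the ideal $\mathcal{H}^{\pm\pm}_{p,m,b}$, needed to form finite slope parts (and hence to extract a direct summand of the spectral sequence on which $\mathcal{H}^{\pm}_{p,m,b}$ acts with finite slope). The cleanest route is to invoke the BGG quasi-isomorphism of Theorem \ref{thm-H-dR}: replacing $\mathrm{DR}(\mathcal{W}_\nu^\vee)$ by $\mathrm{BGG}(\mathcal{W}_\nu^\vee) = \bigoplus_{w' \in \WM} \mathcal{V}_{-w'w_0(\nu+\rho)-\rho}[-\ell(w')]$ and using that cohomology with supports commutes with direct sums of sheaves, each $E_1$ term becomes a finite direct sum of coherent overconvergent cohomologies $\HH^{\bullet}_w(K^p, -w'w_0(\nu+\rho)-\rho,\chi)^{\pm,fs}$ already analyzed in Theorem \ref{thm-finiteslopecoho}. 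The Hecke operators respect the differentials of the BGG complex (they are $G(\mathbb{A}_f)$-equivariant differential operators between automorphic vector bundles), so potent compactness for the ideal $\mathcal{H}^{\pm\pm}_{p,m,b}$ is inherited term by term from the coherent case. Taking finite slope parts across the spectral sequence preserves exactness by lemma \ref{lemma-slope-decomp} and yields the stated spectral sequences. The compact support case is identical, with $\mathrm{DR}(\mathcal{W}_\nu^\vee(-D))$ and $\mathrm{BGG}(\mathcal{W}_\nu^\vee(-D))$ replacing their non-cuspidal counterparts.
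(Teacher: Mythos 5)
Your proposal is correct and follows essentially the same route the paper indicates (the paper's proof is simply the remark that "the proof proceeds in the exact same way" as Theorem \ref{thm-spectral-sequence}): pull back the Bruhat filtration via $\pi_{HT,K_p}^{tor}$, apply the spectral sequence of a filtered space to the complex $\mathrm{DR}(\mathcal{W}_\nu^\vee)$ (resp. $\mathrm{DR}(\mathcal{W}_\nu^\vee(-D))$) rather than to a single automorphic bundle, decompose the $E_1$ page via lemmas \ref{topological-lemma-easy}/\ref{lem-direct-sum-complex-support}, endow it with the Hecke action as in section \ref{section-action-sheaf}, and take finite slope parts. Your way of securing potent compactness via the stupid filtration on $\mathrm{BGG}(\mathcal{W}_\nu^\vee)$ (whose graded pieces are finite direct sums of automorphic bundles, reducing to Theorem \ref{thm-finiteslopecoho}) is a perfectly valid elaboration of the detail the paper leaves implicit; just beware that writing $\mathrm{BGG}(\mathcal{W}_\nu^\vee)=\bigoplus_{w'\in\WM}\mathcal{V}_{-w'w_0(\nu+\rho)-\rho}[-\ell(w')]$ describes only the underlying graded object (the differentials are nonzero differential operators), which your "term by term" phrasing already accounts for.
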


It follows that we have two spectral sequences converging to the classical cohomology $\mathrm{R}\Gamma_{dR}(K^p, \mathcal{W}_\nu^\vee,\chi)^{\pm,fs}$ (and the compactly supported one). The first one  is associated to the stupid filtration on the de Rham complex (and is basically the Hodge-to-de Rham spectral sequence). The other one is the spectral sequence of theorem \ref{thm-spectral-sequence-rigid}, and it is really coming from the Bruhat stratification on the Flag variety. 

By comparing both spectral sequences on the strongly small slope part, we obtain the following decomposition of the de Rham cohomology. 

\begin{thm}\label{thm-hodge-decomposition} For all $\nu \in X^\star(T^c)^{+}$,  we have that:
$$(\HH^{n}_{dR}(K^p, \mathcal{W}_\nu^\vee,\chi)^{+,sss_b(\nu)}) =  \bigoplus_{p+q = n} \oplus_{w, \ell(w) = p} \HH^{q} (K^p, -ww_{0}(\nu+\rho)-\rho, \chi)^{+,sss_{b}(\nu)}$$
and that 
$$(\HH^{n}_{dR}(K^p, \mathcal{W}_\nu^\vee,\chi))^{-,sss_b(\nu)} = \bigoplus_{p+q = n} \oplus_{w, \ell_-(w) = p} \HH^{q} (K^p, -ww_{0}(\nu+\rho)-\rho, \chi)^{-,sss_b(\nu)}$$

We have similarly:
$$(\HH^{n}_{dR,c}(K^p, \mathcal{W}_\nu^\vee,\chi)^{+,sss_b(\nu)}) =  \bigoplus_{p+q = n} \oplus_{w, \ell(w) = p} \HH^{q} (K^p, -ww_{0}(\nu+\rho)-\rho, \chi,cusp)^{+,sss_{b}(\nu)}$$
and that 
$$(\HH^{n}_{dR,c}(K^p, \mathcal{W}_\nu^\vee,\chi))^{-,sss_b(\nu)} = \bigoplus_{p+q = n} \oplus_{w, \ell_-(w) = p} \HH^{q} (K^p, -ww_{0}(\nu+\rho)-\rho, \chi,cusp)^{-,sss_b(\nu)}.$$
\end{thm}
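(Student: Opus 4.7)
The plan is to exploit the two spectral sequences converging to $\mathrm{R}\Gamma_{dR}(K^p,\mathcal{W}_\nu^\vee,\chi)^{+,fs}$: the Hodge-BGG spectral sequence of Theorem \ref{thm-H-dR}, which degenerates at $E_1$, and the Bruhat-type spectral sequence of Theorem \ref{thm-spectral-sequence-rigid}, which is manifestly $\mathcal{H}_{p,m,b}^{\pm}$-equivariant. I treat the $+$ case; the $-$ case follows by the entirely parallel argument (using $\ell_-$ in place of $\ell_+$), or alternatively by Serre duality combined with the symmetry of the strongly small slope condition under $w_0$.

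The first step is to refine the Bruhat spectral sequence stratum by stratum. For each $w\in\WM$, the filtered quasi-isomorphism $\mathrm{BGG}(\mathcal{W}_\nu^\vee)\simeq\mathrm{DR}(\mathcal{W}_\nu^\vee)$ pulls back through the local cohomology construction defining $\mathrm{R}\Gamma_{dR,w}$ and yields a Hecke-equivariant secondary spectral sequence
$$F_1^{p',q'}(w)=\bigoplus_{w'\in\WM,\ \ell(w')=p'}\mathrm{H}^{q'}_w\bigl(K^p,\kappa_{w'},\chi\bigr)^{+,fs}\ \Longrightarrow\ \mathrm{H}^{p'+q'}_{dR,w}(K^p,\mathcal{W}_\nu^\vee,\chi)^{+,fs},$$
where $\kappa_{w'}:=-w'w_{0}(\nu+\rho)-\rho$. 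Since the condition $+,sss_b(\nu)$ is by definition the conjunction of the conditions $+,sss^M(\kappa_{w'})$ as $w'$ ranges over $\WM$, Corollary \ref{coro-theorem-slopes} forces $\mathrm{H}^{q'}_w(K^p,\kappa_{w'},\chi)^{+,sss^M(\kappa_{w'})}=0$ whenever $w\notin C(\kappa_{w'})^+$. Writing $w'w_0=w_{0,M}w''$ with $w'':=w_{0,M}w'w_0\in\WM$ (the Lusztig involution on $\WM$, of length $d-\ell(w')$), one checks $(w'')^{-1}w_{0,M}(\kappa_{w'}+\rho)=-(\nu+\rho)\in X^\star(T)^-_{\qq}$, so $w''\in C(\kappa_{w'})^+$, with equality $C(\kappa_{w'})^+=\{w''\}$ when $\nu+\rho$ is regular (and more generally $C(\kappa_{w'})^+=w''W_{\nu}\cap\WM$).

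The second step is to combine the refined inner columns with the outer Bruhat spectral sequence of Theorem \ref{thm-spectral-sequence-rigid}. After restriction to the $+,sss_b(\nu)$-part, the surviving contributions to the double page are indexed by pairs $(w,w')$ with $w\in C(\kappa_{w'})^+$; for each $w'$, the control theorem \ref{thm-control-thm-Coleman} identifies $\mathrm{H}^q_{w''}(K^p,\kappa_{w'},\chi)^{+,sss^M(\kappa_{w'})}$ with the full classical cohomology $\mathrm{H}^q(K^p,\kappa_{w'},\chi)^{+,sss_b(\nu)}$. Summing over $w\in\WM$, the $+,sss_b(\nu)$-part of the total $E_1$-page of the refined Bruhat spectral sequence is naturally identified with the $+,sss_b(\nu)$-part of the $E_1$-page of the Hodge-BGG spectral sequence, namely $\bigoplus_{w'\in\WM,\ \ell(w')=p}\mathrm{H}^q(K^p,\kappa_{w'},\chi)^{+,sss_b(\nu)}$. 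By Theorem \ref{thm-H-dR} the total dimension of this page already equals $\dim\mathrm{H}^{p+q}_{dR}(K^p,\mathcal{W}_\nu^\vee,\chi)^{+,sss_b(\nu)}$, so every differential on every subsequent page of both spectral sequences must vanish. Transversality of the two resulting filtrations on $\mathrm{H}^n_{dR}(K^p,\mathcal{W}_\nu^\vee,\chi)^{+,sss_b(\nu)}$ then yields the asserted direct sum decomposition. The cuspidal version follows by running the same argument with $\mathcal{W}_\nu^\vee$ replaced by $\mathcal{W}_\nu^\vee(-D)$, using the BGG subcomplex of cusp forms.

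The main obstacle is the bookkeeping between the Bruhat index $w$ and the BGG index $w'$, and in particular verifying in the irregular case (when $C(\kappa_{w'})^+$ is a nontrivial $W_\nu$-coset intersected with $\WM$) that the multiplicities from the two spectral sequences match term by term. Once this combinatorial matching is established the two spectral sequences force each other to degenerate on the $sss_b(\nu)$-part and the direct sum decomposition is canonical.
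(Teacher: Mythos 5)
Your proof matches the paper's argument: both compare the Hodge--BGG spectral sequence of Theorem \ref{thm-H-dR} with the Bruhat-stratification spectral sequence of Theorem \ref{thm-spectral-sequence-rigid} on the $sss_b(\nu)$ part, use Corollary \ref{coro-theorem-slopes} to kill the terms with $w\notin C(\kappa_{w'})^+$ and Theorem \ref{thm-control-thm-Coleman} to pass to classical cohomology, identify the two $E_1$-pages under the re-indexing $w\leftrightarrow w_{0,M}ww_0$, and conclude by the resulting opposition of the two filtrations. The one thing worth pointing out is that the irregular-weight bookkeeping you flag as the "main obstacle" is in fact vacuous here: the theorem assumes $\nu\in X^\star(T^c)^+$ is dominant, so $\nu+\rho$ is automatically regular, $W_\nu=\{1\}$, and each $C(\kappa_{w'})^+$ is a singleton, which is exactly why the term-by-term matching and the appeal to Theorem \ref{thm-control-thm-Coleman} go through without further work.
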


\begin{proof} We only prove the first displayed equation. The idea of the proof is that the two spectral spectral sequences are in a certain sense opposite to each other on the strongly small slope part, and therefore, not only do we have  degeneration at $E_1$, but also the induced filtration on cohomology is split.  

 From theorem \ref{thm-control-thm-Coleman} and corollary \ref{coro-theorem-slopes}, we have that 
\begin{eqnarray*}
\HH^{p+q}_{dR,w}( K^p, \mathcal{W}_\nu^\vee,\chi)^{+,sss_b(\nu)} &=& \HH^{p+q-\ell_-(w)}_{w}( K^p,  -w_{0,M}w(\nu+\rho)-\rho, \chi)^{+,sss_{b}(\nu)} \\
&=&\HH^{p+q-\ell_-(w)}( K^p,  -w_{0,M}w(\nu+\rho)-\rho, \chi)^{+,sss_{b}(\nu)}
\end{eqnarray*}

For the spectral sequence of theorem \ref{thm-spectral-sequence-rigid}, we deduce that 
\begin{eqnarray*} \mathbf{E}_{dR,1}^{p,q}(K^p, \mathcal{W}_\nu^\vee ,\chi)^{+, sss_b(\nu)} & = &\oplus_{w \in \WM, \ell(w) = p} \HH^{p+q}_{dR,w}( K^p, \mathcal{W}_\nu^\vee,\chi)^{\pm,fs} \\
&=& \oplus_{w \in \WM, \ell(w) = p} \HH^{p+q-\ell_-(w)}( K^p,  -w_{0,M}w(\nu+\rho)-\rho, \chi)^{+,sss_{b}(\nu)} 
\end{eqnarray*}

For the spectral sequence of theorem  \ref{thm-H-dR}, we have 
\begin{eqnarray*}
(E_1^{p,q})^{+,sss_b(\nu)} & = &\oplus_{w \in \WM, \ell(w)=p} \HH^q(K^p, -ww_0( \nu + \rho)-\rho,\chi)^{+,sss_b(\nu)} \\
&=& \mathbf{E}_{dR,1}^{p',q'}(K^p, \mathcal{W}_\nu^\vee ,\chi)^{+, sss_b(\nu)},~\textrm{for}~p'=d-p,~p'+q'=p+q.
\end{eqnarray*}
The spectral sequence of theorem  \ref{thm-H-dR} degenerates at $E_1$, and we deduce that we get two opposite filtrations on the cohomology. This gives the splitting. 
\end{proof}

\begin{rem} This theorem is reminiscent of complex Hodge theory, where one obtains a splitting of the Hodge filtration given by harmonic $\mathcal{C}^\infty$-differential forms.  In the $p$-adic setting, the de Rham cohomology comes with additional structure (notably a Frobenius on the  Hyodo-Kato cohomology \cite{MR1293974}). Is the above splitting induced by these additional structure?
\end{rem}

We conclude this section by   constructing an interior Cousin bi-complex analogue to the interior Cousin complex of section \ref{section-interior-coho}.

We define $\overline{\mathcal{C}ous}(K^p, \mathcal{W}_\nu^\vee, \chi)^{\pm}$ as the bi-complex concentrated in degrees in $[0,d]\times [0,d]$, where for all $(i,j) \in \ZZ \times \ZZ$, we have: 
$$(\overline{\mathcal{C}ous}(K^p, \mathcal{W}_\nu^\vee, \chi)^{\pm})^{(i,j)} = \oplus_{w \in \WM, \ell_{\pm}(w)=i} \oplus_{w' \in \WM, \ell(w') = j} \overline{\HH}^{i}_w(K^p,-w'w_0(\nu + \rho)-\rho, \chi)^{\pm,fs}$$

The horizontal complexes $(\overline{\mathcal{C}ous}(K^p, \mathcal{W}_\nu^\vee, \chi)^{\pm})^{(\bullet,j)}$ are $$\oplus_{w' \in \WM, \ell(w') = j}\overline{\mathcal{C}ous}(K^p, -w'w_0(\nu + \rho)-\rho, \chi)^{\pm}$$ and the vertical differentials are those given by the maps in the BGG complexes.

Let us define the interior de Rham cohomology by  $$\overline{\HH}^i_{dR}(K^p, \mathcal{W}_\nu^\vee, \chi)^{\pm} = \mathrm{Im}( {\HH}^i_{dR,c}(K^p, \mathcal{W}_\nu^\vee, \chi)^{\pm}  \rightarrow {\HH}^i_{dR,c}(K^p, \mathcal{W}_\nu^\vee, \chi)^{\pm}).$$

If the Shimura variety is compact we simply write ${\mathcal{C}ous}(K^p, \mathcal{W}_\nu^\vee, \chi)^{\pm}$ instead of $\overline{\mathcal{C}ous}(K^p, \mathcal{W}_\nu^\vee, \chi)^{\pm}$.

We find:

\begin{prop}\label{interior-deRham-cousin} If the Shimura variety is compact, the complex $ \mathrm{Tot}({\mathcal{C}ous}(K^p, \mathcal{W}_\nu^\vee, \chi)^{\pm})$ is quasi-isomorphic to $\mathrm{R}\Gamma_{dR}(K^p, \mathcal{W}_\nu^\vee, \chi)^{\pm}$. 
For a general Shimura variety, the interior cohomology  $\overline{\HH}^i_{dR}(K^p, \mathcal{W}_\nu^\vee, \chi)^{\pm}$ is a subquotient of   $$\HH^i ( \mathrm{Tot}(\overline{\mathcal{C}ous}(K^p, \mathcal{W}_\nu^\vee, \chi)^{\pm})).$$

\end{prop}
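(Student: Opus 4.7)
The plan is to combine two filtrations of the de Rham complex: the dual BGG filtration in the Hodge direction and the Bruhat (Cousin) filtration in the overconvergent direction. In the compact case these combine to give an actual quasi-isomorphism; in the general case one retains just enough control to identify the interior de Rham cohomology as a subquotient of the total cohomology of the interior Cousin bi-complex.

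In the compact case, I would begin with the filtered quasi-isomorphism $\mathrm{BGG}(\mathcal{W}_\nu^\vee)\xrightarrow{\sim}\mathrm{DR}(\mathcal{W}_\nu^\vee)$ of Theorem \ref{thm-H-dR}. Applying coherent $\mathrm{R}\Gamma$ over $\mathcal{S}^{tor}_{K,\Sigma}$ and passing to the $\pm,fs$ direct factor yields a quasi-isomorphism
$$\mathrm{R}\Gamma(\mathcal{S}^{tor}_{K,\Sigma},\mathrm{BGG}(\mathcal{W}_\nu^\vee))^{\pm,fs}\;\xrightarrow{\sim}\;\mathrm{R}\Gamma_{dR}(K^p,\mathcal{W}_\nu^\vee,\chi)^{\pm,fs}.$$
The left-hand side is represented by a bicomplex $C^{i,j}$ whose row at BGG-height $j$ is the complex $\bigoplus_{w'\in\WM,\;\ell(w')=j}\mathrm{R}\Gamma(K^p,-w'w_0(\nu+\rho)-\rho,\chi)^{\pm,fs}$ in the cohomological variable $i$, and whose vertical differentials $C^{i,j}\to C^{i,j+1}$ come from the BGG differential operators. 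Next, I would invoke Corollary \ref{coro-concentration-compact}: in the compact case, each row is quasi-isomorphic to $\bigoplus_{w',\ell(w')=j}\mathcal{C}ous(K^p,-w'w_0(\nu+\rho)-\rho,\chi)^\pm$. Substituting the Cousin incarnations and taking the total complex produces $\mathrm{Tot}(\mathcal{C}ous(K^p,\mathcal{W}_\nu^\vee,\chi)^\pm)$, giving the desired quasi-isomorphism.

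The main technical obstacle is to upgrade the derived-category identification above to a genuine bicomplex in which the horizontal Cousin and vertical BGG differentials are strictly compatible. To handle this I would pick functorial projective Banach representatives for the local cohomologies $\mathrm{R}\Gamma_w(K^p,\kappa,\chi)^{\pm,fs}$ in $\mathcal{K}^{proj}(\mathbf{Ban}(F))$ using the framework of Section \ref{section-overconvergent-coho}; the functoriality in $\kappa$ of these representatives lets the BGG differential operators between automorphic vector bundles induce actual vertical maps. Moreover the Bruhat filtration of $\mathcal{S}^{tor}_{K,\Sigma}$ is pulled back from the flag variety via $\pi_{HT,K_p}^{tor}$ and so is independent of the coherent sheaf, which forces the BGG differentials to preserve it strictly and hence to induce well-defined maps of Cousin complexes. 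The standard sign conventions then promote the result to a bicomplex whose total complex is the one displayed, with the two associated spectral sequences identified with those of Theorem \ref{thm-H-dR} and Theorem \ref{thm-spectral-sequence-rigid}.

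For the general case, the same argument gives a convergent $E_1$ spectral sequence with $E_1^{p,q}=\bigoplus_{w',\,\ell(w')=p}\HH^q(K^p,-w'w_0(\nu+\rho)-\rho,\chi)^{\pm,fs}$ abutting to $\HH^{p+q}_{dR}(K^p,\mathcal{W}_\nu^\vee,\chi)^{\pm,fs}$, and analogously with cuspidal coefficients for $\HH^*_{dR,c}$; the interior de Rham cohomology is by definition the image of the cuspidal abutment in the non-cuspidal one. At each weight $\kappa=-w'w_0(\nu+\rho)-\rho$, Corollary \ref{coro-concentration-interior} identifies $\overline{\HH}^*(K^p,\kappa,\chi)^{\pm,fs}$ as a subquotient of $\HH^*(\overline{\mathcal{C}ous}(K^p,\kappa,\chi)^\pm)$. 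Combining these subquotient descriptions along the BGG (vertical) direction inside the two spectral sequences, and taking the image of the cuspidal into the non-cuspidal total complex, yields that $\overline{\HH}^i_{dR}(K^p,\mathcal{W}_\nu^\vee,\chi)^\pm$ is a subquotient of $\HH^i(\mathrm{Tot}(\overline{\mathcal{C}ous}(K^p,\mathcal{W}_\nu^\vee,\chi)^\pm))$, as claimed.
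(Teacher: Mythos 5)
The compact-case part of your argument is essentially the same as what the paper has in mind, and it is correct. However, you over-dramatize the bicomplex issue: the differentials in the dual BGG complex are morphisms of sheaves (differential operators), and the functor $\mathscr{F}\mapsto\mathrm{R}\Gamma_{\mathcal{Z}_p/\mathcal{Z}_{p+1}}(\mathcal{S}^{tor}_{K,\Sigma},\mathscr{F})$ is a functor on sheaves that respects these maps, since the Bruhat filtration is purely topological. One therefore has an honest bicomplex of cohomology groups with no need to choose functorial Banach representatives or to worry about strict compatibility; the only input is Corollary~\ref{coro-concentration-compact} applied column by column. Your extra apparatus is unnecessary and risks obscuring where the actual content lies.

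The genuine gap is in the general case, and it is precisely the step you compress into one sentence (``Combining these subquotient descriptions along the BGG (vertical) direction \dots''). Knowing that $\overline{\HH}^q(K^p,\kappa_{w'},\chi)^{\pm,fs}$ is a subquotient of $\HH^q(\overline{\mathcal{C}ous}(K^p,\kappa_{w'},\chi)^\pm)$ for each BGG-graded piece does \emph{not} by itself imply that $\overline{\HH}^i_{dR}$, which is the image of a filtered map $\HH^i_{dR,c}\to\HH^i_{dR}$, is a subquotient of $\HH^i(\mathrm{Tot}(\overline{\mathcal{C}ous}))$. The image of a filtered morphism need not be a subquotient of the sum of the images on the associated gradeds: for a non-strict filtered map of vector spaces, $\dim\mathrm{Im}(f)$ can strictly exceed $\sum_j\dim\mathrm{Im}(\mathrm{gr}^j f)$. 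So to run your argument one must either establish some strictness of $\HH^i_{dR,c}\to\HH^i_{dR}$ with respect to the Hodge/BGG filtration, or instead repeat the diagram-chase in the proof of Corollary~\ref{coro-concentration-interior} directly for the double filtration (Bruhat and BGG) on the de Rham complex, using the vanishing of Theorems~\ref{theorem-coho-van} and~\ref{theorem-coho-van2} to pin down where $\mathbf{E}_\infty$ can be nonzero, and only then compare to the $E_2$-type terms given by $\mathrm{Tot}(\overline{\mathcal{C}ous})$. As written, your argument asserts the desired subquotient relation without supplying the step that would justify it.
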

\begin{proof} This follows from corollary \ref{coro-concentration-interior}. 
\end{proof}

\subsection{Explicit formulas in the symplectic case}
Let $V$ be a $\qq$-vector space of dimension $2g$. Let $\Psi$ be the symplectic form on $V$ given in the canonical basis $e_1, \cdots, e_{2g}$   by $\Psi (e_i, e_j) = 1$ if $i \leq g$ and $j = 2g-i+1$ and $\Psi(e_i, e_j) = 0$ if $i \leq g$ and $j \neq 2g-i+1$. 
Let $G = \mathrm{GSp}_{2g}$, be the subgroup of automorphisms of $V$ respecting $\Psi$ up to a similitude factor $\nu$.   We pick the maximal diagonal torus $T$  in $G$. A typical element $t \in T$   is labelled $(t_1, \cdots, t_g; c) = \mathrm{diag}(t_1c,\cdots, t_g c, t_g^{-1}c, \cdots, t_1^{-1} c)$. The character group $X^\star(T)$ identifies with  $\{(k_1, \cdots, k_g; k) \in \ZZ^{g+1},~\sum k_i = k ~\mathrm{mod}~2\}$. The action is  given by $(k_1, \cdots, k_g; k) (t_1, \cdots, t_g; c) = \prod t_i^{k_i} c^k$. 
We let $P_\mu^{std}$ be the stabilizer of the Lagrangian plan $\langle e_1, \cdots, e_g \rangle$. We therefore let $P_\mu$ be the stabilizer of the Lagrangian plan $\langle e_{g+1}, \cdots, e_{2g} \rangle$. 
We have  $M_\mu \simeq \mathrm{GL}_g \times \mathbb{G}_m$. We choose the upper triangular Borel in $M_\mu$, which fixes the positive compact roots. Recall that we choose the positive non-compact roots to be in $\mathfrak{g}/\mathfrak{p}_\mu^{std}$. It follows that $X^\star(T)^{M_\mu,+} = \{ (k_1, \cdots, k_g; k), k_1 \geq k_2 \geq \cdots \geq k_g\}$ and $X^\star(T)^{+} = \{ (k_1, \cdots, k_g; k), 0 \geq k_1 \geq k_2 \geq \cdots \geq k_g\}$. 
We have $\rho = (-1,-2,-3,\cdots, g ; 0)$, $2\rho_{nc} = (-g-1, \cdots, -g-1;0)$. The usual way to normalize the central character in the theory of Siegel modular forms is to consider weights of the form $(k_1, \cdots, k_g ; -\sum k_i)$, with $k_1 \geq \cdots \geq k_g$.
For example (consult \cite{F-Pilloni}, example 5.2 for the details), when $g=1$, $\mathcal{V}_{(k;-k)}  = \omega_E^{\otimes k}$ where $E \rightarrow S_K^{\star}$ is the universal semi-abelian scheme and $\omega_E$ is its conormal sheaf. When $g=2$, $\mathcal{V}_{(k_1,k_2;-k_1-k_2)}  = \mathrm{Sym}^{k_1-k_2}\omega_A \otimes \det^{k_2} \omega_A$ where $A \rightarrow S_{K,\Sigma}^{tor}$ is the universal semi-abelian scheme and $\omega_A$ is its conormal sheaf. 

A standard basis of   Hecke operators in $\mathcal{H}^+_{p,1,0}$ is given by  the classes: 
\begin{enumerate}
\item $ U_g =  [K_{p,1,0} (- 1/2, \cdots, -1/2; -1/2)(p) K_{p,1,0}] $, where $(- 1/2, \cdots, -1/2; -1/2)(p) = \mathrm{diag}(p^{-1}, \cdots, p^{-1}, 1, \cdots, 1)$. 
\item $U_i = [K_{p,1,0} (0, \cdots,0, -1, \cdots, -1; -1)(p) K_{p,1,0}]$ for $1 \leq i \leq g-1$ with $i$ many $-1$ before the $;$. We have that  $(0, \cdots,0, -1, \cdots, -1; -1)(p)  = \mathrm{diag}(p^{-1}\mathrm{Id}_{g-i},p^{-2} \mathrm{Id}_i,  \mathrm{Id}_i, p\mathrm{Id}_{g-i})$
\item $S=  [pK_{p,1,0}]$, $S^{-1}$. 
\end{enumerate}
\begin{rem} Following \cite{F-Pilloni},  remark 5.6, we justify that  for  $g=1$,  the double class  $[K_{p,1,0} \mathrm{diag}(p^{-1}, 1) K_{p,1,0}]$ indeed corresponds to the  standard $U_p$-operator!  For simplicity let us assume that $K^p \subseteq \mathrm{GL}_2( \prod_{\ell \neq p} \ZZ_\ell)$.  The corresponding moduli space (ignoring cusps) parametrizes two  elliptic curves $E_1, E_2$ up to isomorphisms, with $K$ level structures, together with a quasi isogeny $E_2 \rightarrow E_1$ giving a map  $ V(E_2) \rightarrow V(E_1)$  where $V(E_ i) = \lim E_i[N] \otimes \qq$ is the adelic Tate module, and we ask that this map is  represented by $K \mathrm{diag}(p^{-1},1) K$.    Concretely this means that the quasi-isogeny  $E_2 \rightarrow E_1$  comes from a degree $p$ isogeny $E_1 \rightarrow E_2$ and that this isogeny matches $K$-level structures on the Tate modules.  The $K_{p,1,0}$ level structure is the data of a rank $p$-subgroup $H_i \subseteq E_i[p]$.  Because we choose the lower triangular Borel, we find that the isogeny $E_1 \rightarrow E_2$ induces and isomorphism between $H_1$ and $H_2$. 
\end{rem} 
\subsubsection{$GL_2/\qq$} In this case, everything is already in \cite{Boxer-Pilloni}.  Let $\kappa = (k;-k)$. The Cousin complex is 
$\mathcal{C}ous(K^p, \kappa, \chi) : \HH^0_{1} ( K^p, \kappa, \chi)^{+,fs} \rightarrow \HH^1_{w} ( K^p, \kappa, \chi)^{+,fs}$
where $\HH_{1} ( K^p, \kappa,\chi)^{+,fs} $ is the space   of finite slope overconvergent modular forms of weight $k$, nebentypus $\chi$ and 
$\HH^1_{w} ( K^p, \kappa, \chi)^{+,fs}$ 
 is the finite slope part of the cohomology with compact support of the dagger space ``ordinary locus'' in weight $k$ and nebentypus $\chi$. 
 We have $w_{0,M} = 1$ for $\mathrm{GL}_2$.  
For $w = 1$, we find that   $w^{-1} w_{0,M}(\kappa +  \rho ) + \rho = (k -2; -k)$ and that $ \langle (-1/2; -1/2) , (k-2;-k) \rangle = 1$.   The un-normalized $U_p$-operator acts on $q$-expansion by $\sum a_n q^n = p \sum a_{np} q^n$ and has indeed slope greater or equal to $1$ on  $\HH^0_{1} ( K^p, \kappa, \chi)^{+,fs} $. 
For $w \neq 1$, we   find that $w^{-1} w_{0,M}(\kappa +  \rho ) + \rho = (-k; -k)$ and we get that $-\langle (-1/2; -1/2) , (-k;-k) \rangle  = k $.  On $\HH^1_{w} ( K^p, \kappa, \chi)^{+,fs}$, $U_p$ acts like Frobenius, and this explains why it is of slope greater or equal than $k$.  See lemma 5.3 in \cite{Boxer-Pilloni}.  We deduce from this lemma the classicality theorem. 

\subsubsection{$GSp_{4}/\qq$} The Weyl group is generated by the following transposition: $ s_0(k_1,k_2; k) = (k_2, k_1; k)$ and $s_1 (k_1,k_2;k) = (-k_1, k_2; k)$.  
The elements of $\WM$ are $ Id, s_1 ,  s_1 s_0, s_1s_0 s_1$.  We consider the weight $\kappa=(k_1,k_2;-k_1-k_2)$ so that $-w_{0,M}\kappa=(-k_2,-k_1; k_1 +k_2)$. The following table indicates the value of the pairing 
$$ \langle t, w^{-1} w_{0,M}(\kappa + \rho) + \rho \rangle = \langle t, w^{-1} w_{0,M}(\kappa)  \rangle + { \langle t, w^{-1} w_{0,M}( \rho) + \rho \rangle}$$  where $t = (-1/2,-1/2;-1/2)(p)$ or $(0,-1;-1)(p)$, and $w \in \WM$. 

\medskip

\begin{tabular}{|c|c|c|c|c|}
\hline
&Id&$s_1$&$s_1s_0$&$s_1s_0s_1$\\
\hline
(-1/2,-1/2;-1/2)&3&$k_2 +1$&$k_2 + 1$&$k_1 + k_2$ \\
\hline
(0,-1;-1)&$k_2+3$&$k_2 +3$& $ 2k_2 +k_1$ &$2k_2+ k_1$\\
\hline
\end{tabular}

\medskip

The table below gives the value $$ \max\{ \langle t, w^{-1} w_{0,M}(\kappa)  \rangle,  \langle t, w^{-1} w_{0,M}(\kappa)  + 2w^{-1} \rho_{nc} \rangle  \}$$ which appears in the strictly small slope condition. 
\medskip

\begin{tabular}{|c|c|c|c|c|}
\hline
&Id&$s_1$&$s_1s_0$&$s_1s_0s_1$\\
\hline
(-1/2,-1/2;-1/2)& 3 &$k_2 $&$k_2 $&$k_1 + k_2$ \\
\hline
(0,-1;-1)&$k_2+3 $&$k_2+3$& $ 2k_2 +k_1$ &$2k_2+ k_1$\\
\hline
\end{tabular}

\medskip

 The difference between these two tables illustrates the difference between conjecture \ref{conj-strongslopes} (first table) and theorem \ref{thm-slopes} (second table). Let us explain how to interpret this information.

Let us take a weight in the interior of the holomorphic chamber: $k_2 > 2$. We assume that  $U_2$ has slopes $< k_2 +1$. This is the ``small slope'' condition because there are conjecturally  no overconvergent cohomology classes for $w \neq 1$ which satisfy this slope condition. Therefore the $U_2$-slope $< k_2+1$ part of classical cohomology identifies conjecturally with the overconvergent cohomology for $w= 1$.  Under the bounds of theorem \ref{thm-slopes}, we have to  use  the strongly small slope condition $< k_2 $ instead. 

Let us take a weight in the interior of the  ``$\mathrm{H}^1$ chamber'': $k_2 < 2$ and $k_1 + k_2 > 3$. The small slope condition  is now $U_2$-slope $<3$ and $U_1$-slope $< k_1+2k_2$. The first condition kills  the overconvergent cohomology for $1$. The second condition kills  the overconvergent cohomology for $s_1s_0$ and $s_1s_0s_1$. 
Therefore, the small slope classical cohomology identifies  with the small slope overconvergent cohomology for $w=s_1$.  

\section{$p$-adic families of overconvergent cohomology}

\subsection{Relative spectral theory and slope decompositions}\label{section-construction-eigenvariety}

\subsubsection{Slope decomposition over Tate algebras} Let $(A,A^+)$ be  a finite type Tate $(F,\ocal_F)$-algebra.   For any point $x \in \Spa (A, A^+)$, we let $k(x)$ be the complete residue field. It possesses a  rank one  valuation $v$  which we normalize by  $v(p)=1$ (this valuation corresponds  to the maximal generalization $\tilde{x}$ of $x$).  

\begin{defi}
A polynomial $Q \in A[X]$ whose leading coefficient is a unit is said to be of slope $\leq h$ for $h\in\qq$ if for all $x \in \Spa(A,A^+)$, the image of $Q$ in $k(x)[X]$ has slope less or equal than $h$ in the sense of section \ref{sect-slope-decompo}.
\end{defi}

\begin{rem}This definition  of being of slope $\leq h$  agrees with \cite{ashstevensslopes}, def. 4.3.2, in the case that $A$ is reduced, we take the supremum norm as a norm on $A$, and the leading coefficient of $Q$ is a multiplicative unit.
\end{rem}

\begin{defi}\label{defi-slope-module}
Let $M$ be an $A$-module and let $T$ be an $A$-linear endomorphism of $M$.  Let $h \in \qq$.  An $h$-slope decomposition of $M$ with respect to $T$ is a direct sum decomposition  of $A$-modules $M = M^{\leq h} \oplus M^{>h}$ such that:
\begin{enumerate}
\item  $M^{\leq h}$  and $M^{>h}$ are stable under the action of $T$.
\item  $M^{\leq h} $  is a finite $A$-module.
\item  There is a unitary polynomial $Q \in A[X]$ with slope $\leq h$ such that $Q^\star(T)$ is zero on  $M^{\leq h}$. 
\item  For any unitary polynomial $Q \in A[X]$ with slope $\leq h$, the restriction of $Q^\ast(T)$  to $M^{>h}$
is an invertible endomorphism.
\end{enumerate} 
\end{defi}

 This definition fits in the framework of \cite{ashstevensslopes}, def. 4.1.1. It is easy to see that if  such a slope decomposition exists, it is unique.   

Now let $S = \Spa(A,A^+)$ and let $\mathcal{M}$ be a sheaf of $\oscr_S$-modules. We let $T \in \mathrm{End}_{\oscr_S}(\mathcal{M})$ be an endomorphism.

\begin{defi}
We say that $\mathcal{M}$ has slope decomposition with respect to $T$ if for any $x \in S$ and $h\in\qq$, there exists a affinoid neighborhood $U$ of $x$ in $S$, $h'\geq h$, and a $T$-stable direct sum decomposition of sheaves of $\oscr_U$-modules $$\mathcal{M}\vert_U=(\mathcal{M}\vert_{U})^{\leq h'} \oplus (\mathcal{M}\vert_{U})^{> h'}$$ such that:
\begin{enumerate}
\item $(\mathcal{M}\vert_{U})^{\leq h'}$ is a coherent sheaf of $\oscr_U$-modules.
\item For any affinoid open $V = \Spa (B,B^+) \subseteq U$, $$\mathcal{M}(V)=(\mathcal{M}\vert_{U})^{\leq h'}(V) \oplus (\mathcal{M}\vert_{U})^{> h'}(V)$$ is a $h'$-slope decomposition of the $B$-module $\mathcal{M}(V)$ in the sense of definition \ref{defi-slope-module}.
\end{enumerate}
\end{defi}

\begin{rem}  It is a subtle but important point  in the definition that we only ask for a slope $h'$ decomposition for  some $h' \geq h$ and not for an $h$-slope decomposition for any $h$. 
\end{rem}

We say that a section $s \in \mathcal{M}(U)$ for an open $U \subseteq S$ has infinite slope if for any affinoid open $V\subseteq U$ and $h\in\qq$ such that $\mathcal{M}(V)$ admits an $h$-slope decomposition, $s\vert_V\in\mathcal{M}(V)^{>h}$.  The infinite slope sections form a subsheaf $\mathcal{M}^{\infty s}\subseteq\mathcal{M}$.  We let $\mathcal{M}^{fs}=\mathcal{M}/\mathcal{M}^{\infty s}$ and call it the finite slope part of $\mathcal{M}$.

\begin{lem}\label{lem-functorialityoffs}  

\begin{enumerate} 
\item Let $ f : \mathcal{M} \rightarrow \mathcal{N}$ be a map of sheaves of $\oscr_S$-modules. Let $T_{\mathcal{M}}$ and $T_{\mathcal{N}}$ be endomorphisms of $\mathcal{M}$ and $\mathcal{N}$ respectively, commuting with $f$. Assume that $\mathcal{M}$ and $\mathcal{N}$ have slope decompositions. Then $\mathrm{ker}(f)$, $\mathrm{im}(f)$, and $\mathrm{coker}(f)$ have slope decompositions. 

\item Let $0 \rightarrow \mathcal{L} \rightarrow \mathcal{M} \rightarrow  \mathcal{N} \rightarrow 0$ be an exact sequence of sheaves of $\oscr_S$-modules. Let $T \in \mathrm{End}_{\oscr_S} (\mathcal{M})$ be such that $T(\mathcal{L}) \subseteq \mathcal{L}$. Then if two out of $\mathcal{L}, \mathcal{M}, \mathcal{N}$ have slope decomposition with respect to $T$, then so does the third.
Moreover, taking the infinite slope part yields an exact sequence $0 \rightarrow \mathcal{L}^{\infty s}  \rightarrow \mathcal{M}^{\infty s}  \rightarrow  \mathcal{N}^{\infty s} \rightarrow 0$  
 and taking the finite slope part gives an exact sequence $0 \rightarrow \mathcal{L}^{f s}  \rightarrow \mathcal{M}^{fs}  \rightarrow  \mathcal{N}^{fs} \rightarrow 0$. 
 \end{enumerate}
 \end{lem}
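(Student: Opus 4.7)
The plan is to reduce both statements to the affinoid local setting, where one has honest $h'$-slope decompositions, and then use the uniqueness of slope decompositions to transport everything along maps. The guiding principle is that once one has the relevant polynomial $Q \in A[X]$ of slope $\leq h'$ providing the decomposition in the sense of definition \ref{defi-slope-module}, equivariant maps automatically respect the splitting, and taking sub/quotient modules is safe because finite type Tate algebras are Noetherian, so submodules of finite modules remain finite.

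For part (1), I would fix a point $x \in S$ and, after shrinking to a common affinoid $U = \Spa(A,A^+)$, find a single $h' \geq h$ and polynomials $Q_{\mathcal{M}}, Q_{\mathcal{N}} \in A[X]$ of slope $\leq h'$ realizing the $h'$-slope decompositions of $\mathcal{M}|_U$ and $\mathcal{N}|_U$; replacing both by $Q = Q_{\mathcal{M}} Q_{\mathcal{N}}$ gives one polynomial adapted to both. By uniqueness of slope decompositions (applied over each affinoid subopen, and in particular pointwise via lemma \ref{lemma-slope-decomp}), equivariance of $f$ forces $f(\mathcal{M}^{\leq h'}) \subseteq \mathcal{N}^{\leq h'}$ and $f(\mathcal{M}^{>h'}) \subseteq \mathcal{N}^{>h'}$. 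Then $\ker(f) = (\ker(f) \cap \mathcal{M}^{\leq h'}) \oplus (\ker(f) \cap \mathcal{M}^{>h'})$ defines the slope decomposition of $\ker(f)$, with finiteness of the $\leq h'$ piece guaranteed by Noetherianness of $A$, and with $Q$ still serving as an annihilator and remaining invertible on the complement. The arguments for $\mathrm{im}(f)$ and $\mathrm{coker}(f)$ are analogous.

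For part (2), the easy cases are deduced directly from part (1): if $\mathcal{L}, \mathcal{M}$ have slope decompositions then $\mathcal{N} = \mathrm{coker}(\mathcal{L}\to\mathcal{M})$, and if $\mathcal{M}, \mathcal{N}$ do then $\mathcal{L} = \ker(\mathcal{M}\to\mathcal{N})$. The main obstacle is the middle case, where $\mathcal{L}$ and $\mathcal{N}$ have slope decompositions and we must construct one on $\mathcal{M}$. Here I would again work locally on $U$ with a common $h'$ and a single slope $\leq h'$ polynomial $Q$ which annihilates both $\mathcal{L}^{\leq h'}$ and $\mathcal{N}^{\leq h'}$; then $Q^\star(T)^2$ annihilates any lift to $\mathcal{M}$ of a $\leq h'$-class, and I would define $\mathcal{M}^{\leq h'}$ as the $Q$-torsion subsheaf and $\mathcal{M}^{>h'}$ via an Ash--Stevens style projector built from a Bezout-type identity between $Q^\star(T)^2$ and a slope $\leq h'$ polynomial acting invertibly on $\mathcal{L}^{>h'}$ and $\mathcal{N}^{>h'}$. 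Verifying finiteness of $\mathcal{M}^{\leq h'}$ follows from the short exact sequence $0 \to \mathcal{L}^{\leq h'} \to \mathcal{M}^{\leq h'} \to \mathcal{N}^{\leq h'} \to 0$ and Noetherianness.

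Finally, the exactness of $0 \to \mathcal{L}^{\infty s} \to \mathcal{M}^{\infty s} \to \mathcal{N}^{\infty s} \to 0$ and $0 \to \mathcal{L}^{fs} \to \mathcal{M}^{fs} \to \mathcal{N}^{fs} \to 0$ follows formally from the compatibility of the local $\leq h'$ and $>h'$ decompositions produced above, by taking direct limits in $h'$: locally on $U$ every short exact sequence splits as a direct sum of the $\leq h'$ and $>h'$ pieces, and passing to the colimit (for $fs$) and the intersection (for $\infty s$) preserves exactness. The main difficulty throughout is really the middle case of part (2), i.e. producing a polynomial witness for the slope decomposition of $\mathcal{M}$ from those of $\mathcal{L}$ and $\mathcal{N}$; everything else is bookkeeping around uniqueness and Noetherianness.
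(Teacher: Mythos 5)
Your reduction to the affinoid-local module case is exactly the paper's approach: the paper's entire proof is ``We reduce easily to the case of modules where this is [Ash--Stevens], prop.\ 4.1.2.''\ so you are reconstructing the Ash--Stevens argument that the paper takes as read. Part (1) and the two easy cases of part (2) are fine (a couple of standard points you wave at but should be aware of: kernels of sheaf maps are computed sectionwise but images and cokernels are sheafifications, so you need coherence of the $\leq h'$ pieces and Tate acyclicity on affinoids to make ``$\mathrm{im}(f)(V) = \mathrm{im}(f(V))$'' honest).

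However, the middle case of (2), which you correctly identify as the only real content, contains a concrete error. You claim ``$Q^\star(T)^2$ annihilates any lift to $\mathcal{M}$ of a $\leq h'$-class.'' This is false. If $m \in \mathcal{M}(V)$ lifts $\bar{m} \in \mathcal{N}(V)^{\leq h'}$, then $Q^\star(T)(m) \in \mathcal{L}(V)$, but nothing forces it into $\mathcal{L}(V)^{\leq h'}$; if it has a component $\ell \neq 0$ in $\mathcal{L}(V)^{>h'}$, then $Q^\star(T)^2(m) = Q^\star(T)(\ell) \neq 0$ and indeed no power of $Q^\star(T)$ kills $m$, since $Q^\star(T)$ is invertible on $\mathcal{L}(V)^{>h'}$. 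Equivalently: adding an element of $\mathcal{L}^{>h'}$ to a lift gives another lift of the same $\leq h'$ class, but one of infinite slope. The subsequent ``Bezout-type identity between $Q^\star(T)^2$ and a slope $\leq h'$ polynomial acting invertibly on the $>h'$ parts'' is also unclear as written, since $Q^\star$ itself already has both properties and $Q^\star, (Q^\star)^2$ are not coprime, so no Bezout identity separates them. The standard repair (and what Ash--Stevens actually does) is: restrict to $\mathcal{M}' := $ preimage of $\mathcal{N}^{\leq h'}$, observe $Q_N^\star(T)(\mathcal{M}') \subseteq \mathcal{L}$, post-compose with the projection $p: \mathcal{L} \to \mathcal{L}^{>h'}$, and note that $p \circ Q_N^\star(T)$ restricted to $\mathcal{L}^{>h'} \subseteq \mathcal{M}'$ is an isomorphism. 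This splits $\mathcal{M}' = \mathcal{L}^{>h'} \oplus \ker(p \circ Q_N^\star(T))$, the second summand being the correct $(\mathcal{M}')^{\leq h'}$ (which one checks is killed by $Q^\star(T)$, not by $Q^\star(T)^2$); then combine with the preimage of $\mathcal{N}^{>h'}$. Your final paragraph on the $\infty s$ and $fs$ sequences is also a little quick: the decomposition is not uniform in $h'$, and surjectivity of $\mathcal{M}^{\infty s} \to \mathcal{N}^{\infty s}$ requires choosing compatible lifts as $h' \to \infty$, which is where the explicit compatibility of the splittings produced by the corrected argument is actually used.
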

 \begin{proof} We reduce easily to the case of modules where this is  \cite{ashstevensslopes}, prop. 4.1.2.
 \end{proof}

We now explain the construction of  the spectral variety $\mathcal{Z} \hookrightarrow \mathbb{A}^{1,an}_S$.  For any affinoid open $U = \Spa (B, B^+)$ for which we have a slope decomposition $ \mathcal{M}(U) = \mathcal{M}(U)^{\leq h} \oplus \mathcal{M}(U)^{> h}$,  we  define a map $B[X] \rightarrow \mathrm{End}_{B} ( \mathcal{M}(U)^{\leq h})$ by sending $X$ to $T^{-1}$ (note that $T$ is invertible on $ \mathcal{M}(U)^{\leq h}$). We let $I$ be the kernel of this map. We let $\mathcal{Z}_{U,h} \hookrightarrow \mathbb{A}^{1,an}_S$ be $\Spa ( B[X]/I, (B[X]/I)^+)$ where $(B[X]/I)^+$ is the integral closure of $B^+$ in $B[X]/I$. If $U'   \subseteq U$, is an open affinoid,  we see (using the flatness of $\oscr_S(U) \rightarrow \oscr_S(U')$) that $\mathcal{Z}_{U,h} \times_{U} U' = \mathcal{Z}_{U',h}$. If $h' \geq h$ and  $\mathcal{M}(U)$ also has an $h'$-slope decomposition, we see that $\mathcal{Z}_{U,h} \hookrightarrow \mathcal{Z}_{U,h'}$ is a union of connected components since $\mathcal{M}(U) = \mathcal{M}(U)^{\leq h} \oplus (\mathcal{M}(U)^{\leq h'}  \cap  \mathcal{M}(U)^{> h}) \oplus  \mathcal{M}(U)^{>h'}$. 

We let $\mathcal{Z} = \coprod_{U,h} \mathcal{Z}_{U,h} / \sim$ where $\sim$ is the equivalence relation defined by the inclusions  $\mathcal{Z}_{U,h} \hookrightarrow \mathcal{Z}_{U,h'}$ for $h \geq h'$ and  $\mathcal{Z}_{U',h} \hookrightarrow \mathcal{Z}_{U,h}$ for $U' \subseteq U$. Let $\pi : \mathcal{Z} \rightarrow S$ be the projection which is locally quasi-finite, and partially proper. 
There is a coherent sheaf $\mathcal{M}_{\mathcal{Z}}^{fs} \hookrightarrow \mathcal{M}_{\mathcal{Z}} = \pi^\star \mathcal{M}$ defined by $\mathcal{M}_{\mathcal{Z}}^{fs}(\mathcal{Z}_{U,h}) = \mathcal{M}(U)^{\leq h}$.  We see that $\pi_\star \mathcal{M}_{\mathcal{Z}}^{fs} = \mathcal{M}^{fs}$ is the finite slope part of $\mathcal{M}$. 

In many  applications, there is also a commutative algebra $H$ with $T \in H$,  which acts on $\mathcal{M}$.  It is clear that $H$ acts also on $\mathcal{M}^{fs}$. 
We can consider $\oscr_{\mathcal{E}}$ which is the coherent $\oscr_{\mathcal{Z}}$-subalgebra of $\mathrm{End}_{\oscr_{\mathcal{Z}}}( \mathcal{M}_{\mathcal{Z}}^{fs})$ generated by $H$. We let $\mathcal{E}$ be its relative adic spectrum over $\mathcal{Z}$. The morphism $\mathcal{E} \rightarrow \mathcal{Z}$ is finite surjective. This is the eigenvariety attached to $H$ acting on $\mathcal{M}$. 

 \subsubsection{Finite slope decomposition  for an algebra of operators} It is often the case that we want to consider the finite slope decomposition of a module or a sheaf when we have an algebra acting. 
We now let $\mathbb{Z}_{\geq 0}^r$ be the free monoid on $r$ generators, generated by element $T_1, \cdots, T_r$. We assume that we have an action of $\mathbb{Z}_{\geq 0}^r$ on a sheaf $\mathcal{M}$. We also assume that for all the operators $T \in \mathbb{Z}_{\geq 1}^r$, the sheaf $\mathcal{M}$ has slope decomposition.

\begin{prop}\label{prop-independence-choiceT} Let $T,T' \in  \mathbb{Z}_{\geq 1}^r$, and consider the finite slope projections $p_T : \mathcal{M} \rightarrow \mathcal{M}^{T, fs}$ and $p_{T'} : \mathcal{M} \rightarrow \mathcal{M}^{T',fs}$. Then there is an isomorphism $\mathcal{M}^{T,fs} \rightarrow \mathcal{M}^{T',fs}$ compatible with the projections $p_T$ and $p_{T'}$. 
\end{prop}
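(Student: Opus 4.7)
The plan is to mimic the argument of Lemma \ref{lem-indep-t1} in the relative setting, exploiting commutativity of the $\mathbb{Z}_{\geq 0}^r$-action and the fact that both $T$ and $T'$ lie in $\mathbb{Z}_{\geq 1}^r$. Since both operators have strictly positive entries in every coordinate, there exist integers $N, M \geq 1$ and elements $T'', T''' \in \mathbb{Z}_{\geq 0}^r$ with $T^N = T' T''$ and $(T')^M = T T'''$; combining these gives the key identity $T^{NM-1} = T''' (T'')^M$ in the monoid.

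First, I would work on an affinoid open $U = \Spa(B, B^+) \subseteq S$ on which $\mathcal{M}(U)$ admits a $T$-slope decomposition $\mathcal{M}(U)^{T,\leq h} \oplus \mathcal{M}(U)^{T,>h}$. Commutativity of the monoid action forces this decomposition to be stable under $T'$, $T''$ and $T'''$. On the coherent summand $N := \mathcal{M}(U)^{T,\leq h}$ the operator $T$, and hence $T^{NM-1}$, acts invertibly, so $T^{NM-1} = T''' (T'')^M$ shows that $T''$ acts invertibly on $N$; the relation $T^N = T' T''$ then forces $T'$ itself to act invertibly on $N$.

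Next, since $N$ is a finitely generated $B$-module, applying Cayley--Hamilton to any generating set produces a monic polynomial $Q \in B[X]$ annihilating $T'$, with constant term $Q(0) \in B^{\times}$ because $T'$ is invertible on $N$. Because $B$ is an affinoid $F$-algebra, both the coefficients of $Q$ and the element $Q(0)^{-1}$ are bounded, which yields uniform upper and lower bounds on the valuations of the roots of $Q$ at every point of $\Spa(B, B^+)$. Passing to the characteristic polynomial $\widetilde{Q}$ of $(T')^{-1}$ on $N$, we obtain a unitary polynomial of slope $\leq h'$ for some $h' = h'(h, U) \in \qq$ such that $\widetilde{Q}^{\star}(T') = 0$; this exhibits $N$ as concentrated in $T'$-slope $\leq h'$.

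After shrinking $U$ so that $\mathcal{M}(U)$ also admits a $T'$-slope decomposition at some level $\geq h'$ (permitted by hypothesis), uniqueness of slope decomposition forces a canonical inclusion $\mathcal{M}(U)^{T,\leq h} \hookrightarrow \mathcal{M}(U)^{T', \leq h'}$, and hence a natural map of sheaves $\mathcal{M}^{T,fs} \to \mathcal{M}^{T', fs}$ compatible with the projections $p_T$ and $p_{T'}$. Swapping the roles of $T$ and $T'$ yields the reverse inclusion $\mathcal{M}(U)^{T',\leq h_1} \hookrightarrow \mathcal{M}(U)^{T, \leq h_2}$ for suitable $h_2 = h_2(h_1)$, and the two constructions are inverse to one another by uniqueness. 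The main technical obstacle is the bookkeeping of slope bounds when shrinking $U$ to accommodate both slope decompositions simultaneously; once one checks that $h'$ can be chosen uniformly on a cofinal system of affinoid opens, the patching and passage to the (global) quotient by infinite-slope sections is formal and gives the desired isomorphism.
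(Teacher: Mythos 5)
Your proposal follows the paper's proof almost step for step: use the monoid relations in $\mathbb{Z}_{\geq 0}^r$ to show that $T'$ acts invertibly on the coherent summand $N = \mathcal{M}^{T,\leq h}$, invoke a Cayley--Hamilton argument to produce a unitary polynomial $Q$ with $Q^\star(T')=0$ on $N$, and conclude by symmetry. (Your detour through the identity $T^{NM-1}=T'''(T'')^M$ is unnecessary: from $T^N=T'T''$, invertibility of $T^N$ on $N$ together with commutativity already forces $T'$ to be invertible there, as the paper notes.)

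There is, however, a genuine gap in the Cayley--Hamilton step as you state it. You work over the affinoid ring $B=\oscr_S(U)$ and claim that ``applying Cayley--Hamilton to any generating set produces a monic polynomial $Q \in B[X]$ annihilating $T'$ with constant term $Q(0)\in B^\times$ because $T'$ is invertible on $N$.'' This does not follow. The determinant trick gives $Q(0)=\pm\det(A)$ where $A$ is \emph{some} matrix lift of $T'$ to a free presentation $B^m \twoheadrightarrow N$, and when $N$ is not free there is no reason for $A$, hence $\det(A)$, to be invertible in $B$ even though $T'$ is invertible on $N$. A concrete counterexample: take $B=\qq_p\langle x\rangle$, $N=B/(x-p)\cong\qq_p$, and $T'=$ multiplication by $x$. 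Then $T'$ acts on $N$ as multiplication by $p\in\qq_p^\times$, hence invertibly, but the lift $A=(x):B\to B$ has $\det(A)=x\notin B^\times$. The choice of lift matters, and a naive one fails. The paper closes exactly this gap by working at a stalk $\oscr_{S,y}$: over a local ring, Nakayama lets one choose a minimal presentation $\oscr_{S,y}^m\twoheadrightarrow N_y$, and then any lift $\tilde{T}'$ of the invertible $T'$ is automatically invertible because its reduction modulo $\mathfrak{m}_y$ is. Taking $Q=\det(\tilde{T}'^{-1}-X\cdot\mathrm{Id})$ then gives a unitary polynomial with $Q(0)$ a unit and $Q^\star(T')=0$ on a neighborhood of $y$, and the result is glued from these local pieces. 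Your argument needs to be localized in the same way before the Cayley--Hamilton step can deliver what you want.
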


\begin{proof} Let $x \in S$, let $h \in \qq$, and let $V$ be a neighborhood of $x$ such that we have  $h$-slope decomposition with respect to $T$:
$$ \mathcal{M}_{\vert{V}}= \mathcal{M}_{\vert V}^{\leq_T h} \oplus \mathcal{M}_{\vert V}^{>_Th}.$$
We claim that  $\mathcal{M}_{\vert V}^{\leq_T h}$ has slope decomposition with respect to $T'$ and that the projection $\mathcal{M}_{\vert V} \rightarrow \mathcal{M}_{\vert V}^{\leq_T h}$ orthogonal to $ \mathcal{M}_{\vert V}^{>_Th}$ factors through $\mathcal{M}^{T',fs}_{\vert V}$.  This will show that the projection $p_T : \mathcal{M} \rightarrow \mathcal{M}^{T, fs}$ factors $\mathcal{M} \rightarrow \mathcal{M}^{T', fs} \rightarrow \mathcal{M}^{T,fs}$. Reversing the roles of $T$ and $T'$ we conclude the proof of the proposition. 

We observe that $T$ is invertible on $\mathcal{M}_{\vert V}^{\leq_T h}$ and therefore $T'$ is also invertible as there is $n \in \ZZ_{\geq 0}$ and $u \in \mathbb{Z}_{\geq 0}^r$ such that $T^n = T' u $. 
Note that  $\mathcal{M}_{\vert V}^{\leq_T h}$ is a coherent sheaf.  For any $y \in V$, we claim that we can find a neighborhood $W$ of $y$ and  a unitary polynomial $Q$ such that $Q(0)$ is a unit and  such that $Q^\star(T') = 0$ on $(\mathcal{M}_{\vert V}^{\leq_T h})_{\vert W}$. Indeed, consider the stalk $(\mathcal{M}_{\vert V}^{\leq_T h})_y$. This is a finite $\oscr_{S,y}$-module. By Nakayama's lemma, we can pick a  surjection 
$\oscr_{S,y}^m \rightarrow (\mathcal{M}_{\vert V}^{\leq_T h})_y$ and lift $T'$ to an invertible endomorphism $\tilde{T}'$ of $\oscr_{S,y}^m$. We let $Q = \det(\tilde{T'}^{-1}-XId)$. This is a unitary polynomial whose constant coefficient is a unit and $Q^\star(T') = 0$ by Cayley-Hamilton. 
It follows that  $\mathcal{M}_{\vert V}^{\leq_T h}$ has a slope decomposition with respect to $T'$ and that it is equal to its finite slope part for $T'$.

\end{proof} 

\subsubsection{Relative spectral theory}\label{relative-spectral-1} We recall briefly the relative spectral theory for compact operators.  The original reference is \cite{1997InMat.127..417C}. 
Let $(A,A^+)$ be  a finite type complete Tate $(F,\ocal_F)$-algebra, and let $S = \Spa(A,A^+)$.  Let  $M^\bullet \in Ob(\mathcal{K}^{proj}(\mathbf{Ban}(A)))$, and  let $\mathcal{M}^{\bullet}$ be the associated complex of  Banach sheaves on $S$. Let $T \in  \mathrm{End}_{\mathcal{D}(\mathbf{Ban}(A))}(M^\bullet)$  be a compact operator.   Let $\HH^i(\mathcal{M}^\bullet)$ be the $i$th cohomology sheaf.

\begin{prop}\label{prop-existence-decompositionT} The sheaves  $\HH^{i}(\mathcal{M}^\bullet)$ have  slope decomposition for $T$.  Moreover, there is an object   $\mathcal{M}^{\bullet,fs} \in \mathcal{D}(\mathrm{Mod}(\oscr_S))$ and a morphism $\mathcal{M}^{\bullet} \rightarrow \mathcal{M}^{\bullet,fs}$ (unique up to a non unique quasi-isomorphism)  with the property that $\HH^i(\mathcal{M}^{\bullet,fs}) = \HH^{i}(\mathcal{M}^\bullet)^{fs}$.
\end{prop}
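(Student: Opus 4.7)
The plan is to prove this by combining the classical Riesz--Serre theory of compact operators over a Tate algebra (as developed by Coleman \cite{1997InMat.127..417C} and Ash--Stevens \cite{ashstevensslopes}) with a termwise argument on complexes. First I would replace $M^\bullet$ by a representative in $\mathcal{C}^{proj}(\mathbf{Ban}(A))$ on which $T$ is realized by an actual compact morphism of complexes, using Definition \ref{defi-notop-compact} and Lemma \ref{lem-invert-quasi-isom} so that the choice is well defined up to homotopy. Associated to each term $M^i$ is the Fredholm determinant $P_i(X)=\det(1-XT\mid M^i) \in A\{\{X\}\}$, which is entire in $X$ by Serre's theory of compact operators extended to affinoid base rings.

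Next, I would run the relative spectral theorem locally on $S$. Fix $x\in S$ and $h\in\qq$. The specialization $P_{i,x}\in k(x)\{\{X\}\}$ factors as $P_{i,x}^{\leq h}\cdot P_{i,x}^{>h}$, where $P_{i,x}^{\leq h}$ is a polynomial collecting the roots of slope $\leq h$. By Coleman's Hensel-type lifting for Fredholm series, this coprime factorization spreads to a factorization $P_i = P_i^{\leq h'}\cdot P_i^{>h'}$ over an affinoid neighborhood $U=\Spa(B,B^+)$ of $x$, for some $h'\geq h$. Applying $(P_i^{\leq h'})^\star(T)$ produces an idempotent $e_i^{\leq h'}\in \End_B(M^i\hat\otimes_A B)$ whose image $(M^i|_U)^{\leq h'}$ is a finite projective $B$-module and whose kernel $(M^i|_U)^{>h'}$ is exactly the locus where every $Q^\star(T)$ for $Q$ of slope $\leq h'$ acts invertibly. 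This gives an $h'$-slope decomposition of each term in the complex.

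Since $T$ commutes with the differentials of $M^\bullet$, so do the projectors $e_i^{\leq h'}$; hence the decomposition is a decomposition of complexes $\mathcal{M}^\bullet|_U = (\mathcal{M}^\bullet|_U)^{\leq h'}\oplus (\mathcal{M}^\bullet|_U)^{>h'}$. Taking cohomology and invoking Lemma \ref{lem-functorialityoffs} (in its module version from \cite{ashstevensslopes}, prop.~4.1.2) immediately gives an $h'$-slope decomposition of each sheaf $\HH^i(\mathcal{M}^\bullet)|_U$, establishing the first assertion. The compatibility as $U$ varies and as $h$ varies follows from the uniqueness of slope decompositions and from the fact that $\mathcal{Z}_{U,h}\hookrightarrow\mathcal{Z}_{U,h'}$ is a union of connected components for $h\leq h'$, so these local decompositions glue.

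To produce $\mathcal{M}^{\bullet,fs}\in\mathcal{D}(\mathrm{Mod}(\oscr_S))$, I would glue the local finite slope pieces. Concretely, over each $U$ above, the perfect subcomplexes $(\mathcal{M}^\bullet|_U)^{\leq h'}$ for increasing $h'$ form a direct system of direct summands, and their filtered colimit $(\mathcal{M}^\bullet|_U)^{fs}$ is a complex of $\oscr_U$-modules sitting inside $\mathcal{M}^\bullet|_U$ as a split sub. By uniqueness of the slope decompositions these $(\mathcal{M}^\bullet|_U)^{fs}$ are canonically identified on overlaps, so they glue to $\mathcal{M}^{\bullet,fs}$ on $S$ together with a canonical map $\mathcal{M}^\bullet\to\mathcal{M}^{\bullet,fs}$. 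Exactness of filtered colimits plus the termwise splittings gives $\HH^i(\mathcal{M}^{\bullet,fs})=\colim_h \HH^i(\mathcal{M}^\bullet)^{\leq h} = \HH^i(\mathcal{M}^\bullet)^{fs}$, as required; the uniqueness up to non-unique quasi-isomorphism comes from the uniqueness of slope decompositions on cohomology. The main technical obstacle is the relative Fredholm theory: spreading the Riesz decomposition from a point to an affinoid neighborhood, and checking that the resulting projectors are themselves compact and hence really cut out a finite projective summand in each degree. Once this input is in hand, the passage from modules to complexes and then to the sheaf-theoretic statement is formal.
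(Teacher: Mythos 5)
Your overall strategy tracks the paper's: choose a compact lift of $T$ on a representative complex, factor the Fredholm series locally on $S$, form the Riesz projectors degree by degree, and transfer the resulting decomposition to cohomology via Lemma \ref{lem-functorialityoffs}. But the construction of $\mathcal{M}^{\bullet,fs}$ as a \emph{filtered colimit} of the slope $\leq h'$ pieces is wrong, and this is the substantive content of the proposition. The finite slope part is defined as the quotient $\mathcal{M}/\mathcal{M}^{\infty s}$, and the natural comparison map $\colim_h \mathcal{M}^{\leq h}\rightarrow \mathcal{M}/\mathcal{M}^{\infty s}$ is injective but in general \emph{not} surjective, nor is the colimit a split sub. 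For a concrete example take $M=\hat{\bigoplus}_{n\geq 0}\mathcal{L}_n$ with $T$ acting on $\mathcal{L}_n$ by $p^n$: then $M^{\infty s}=0$, so $M^{fs}=M$, while $\cup_h M^{\leq h}=\bigoplus_n \mathcal{L}_n\subsetneq M$, and this inclusion does not split. So your asserted identity $\HH^i(\mathcal{M}^{\bullet,fs})=\colim_h\HH^i(\mathcal{M}^\bullet)^{\leq h}=\HH^i(\mathcal{M}^\bullet)^{fs}$ fails, and in addition the colimit yields a map \emph{into} $\mathcal{M}^\bullet$, not the required map $\mathcal{M}^\bullet\rightarrow\mathcal{M}^{\bullet,fs}$. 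The paper avoids this by forming the spectral variety $\tilde{\mathcal{Z}}\to S$ from the \emph{total} Fredholm determinant $\tilde{P}(X)=\prod_k\det(1-X\tilde{T}|M^k)$, defining a coherent complex $\mathcal{M}^{\bullet,fs}_{\tilde{\mathcal{Z}}}$ on $\tilde{\mathcal{Z}}$, and setting $\mathcal{M}^{\bullet,fs}=\pi_\star\mathcal{M}^{\bullet,fs}_{\tilde{\mathcal{Z}}}$; the map $\mathcal{M}^\bullet\rightarrow\mathcal{M}^{\bullet,fs}$ then comes by adjunction from $\pi^\star\mathcal{M}^\bullet\rightarrow\mathcal{M}^{\bullet,fs}_{\tilde{\mathcal{Z}}}$.

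Two further remarks. First, with degreewise Fredholm series $P_i$ as you use, the Riesz projectors $e_i$ on $M^i$ and $e_{i+1}$ on $M^{i+1}$ are in general \emph{different} entire series in $T$, so ``the projectors commute with $d$ because $T$ does'' is not a valid step as stated; you should instead invoke the intrinsic characterization of the slope decomposition and apply Lemma \ref{lemma-slope-decomp} to the $T$-equivariant map $d$. (Using the total determinant, as the paper does, makes all the projectors the \emph{same} series in $\tilde{T}$, which is why that choice is convenient.) Second, the uniqueness of $\mathcal{M}^{\bullet,fs}$ up to a non-unique quasi-isomorphism is asserted but not argued. The complex depends on the choice of compact lift $\tilde{T}$, which is only well-defined up to homotopy, and the termwise decomposition of the complex is \emph{not} homotopy-invariant; comparing two lifts requires forming a cone with a diagonal endomorphism built from a homotopy, applying the spectral theory there, and invoking the TR4 axiom, as the paper does. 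This is not a formality that follows from ``uniqueness of slope decompositions on cohomology.''
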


\begin{proof} We begin by describing a  construction which depends on  a lift $\tilde{T} \in \mathrm{End}_{A}(M^\bullet)$ which is compact in all degrees. We let $$\tilde{{P}}(X) = \prod_k \mathrm{det}(1-X\tilde{T}\vert M^k) \in A[[X]]$$ be the (total) Fredholm determinant of $\tilde{T}$ (see \cite{1997InMat.127..417C}, section A 2). The series $\tilde{{P}}(X)$ is an entire series, and we let $\tilde{\mathcal{Z}} \hookrightarrow \mathbb{G}^{an}_m \times \Spa (A, A^+)$ be the vanishing locus of $\tilde{{P}}$. This is the spectral variety associated to $\tilde{T}$.  The morphism $\pi : \tilde{\mathcal{Z}} \rightarrow \Spa (A, A^+)$ is locally quasi-finite, flat, and partially proper (see \cite{MR3831033}, thm. B1 for a  short proof in the language of adic spaces). Any point $z \in  \tilde{\mathcal{Z}} $ has a neighborhood $\mathcal{U}_z$ such that $\overline{\{z\}} \subseteq \mathcal{U}_z$ and $\mathcal{U}_z \rightarrow \pi(\mathcal{U}_z)$ is finite flat. 
We can describe such a neighborhood more precisely. Let  $x \in \Spa(A,A^+)$ be the image of $z$. Then there is a neighborhood $\Spa(B,B^+)$ of  $x$ in $\Spa(A, A^+)$ and a factorization in $B\{ \{ X\}\}$ (the ring of global functions on $\mathbb{A}^{1,an}_{\Spa(B,B^+)}$) $\tilde{P}(X) = \tilde{R}(X) \tilde{Q}(X)$ where $\tilde{R}(X) $ is a Fredholm series (that is $R(0)=1$), $\tilde{Q}(X) = 1 + a_1 X + \cdots + a_d X^d$ is a polynomial with $a_d \in B^\times$,  $ \tilde{R}(X)$ and  $\tilde{Q}(X)$ are prime to each other, and $\tilde{Q}(z)=0$. Then $\mathcal{U}_z = V(\tilde{Q}(X)) \subseteq \mathbb{A}^1 \times \Spa (B, B^+)$ is a neighborhood of $z$ in $\tilde{\mathcal{Z}}$.

Over $\tilde{\mathcal{Z}}$ we have a complex of coherent sheaves $\tilde{\mathcal{M}}_{\tilde{\mathcal{Z}}}^{\bullet,fs}$ whose definition we briefly recall.  Let $\tilde{Q}^{\ast}(X) = X^d \tilde{Q}(X^{-1})$.  Over $\Spa(B,B^+)$ we have a unique decomposition $M^\bullet \hat{\otimes}_A B = M^\bullet(\tilde{Q}) \oplus N^\bullet(\tilde{Q})$ where $\tilde{Q}^\ast(\tilde{T})$ is zero on $M^\bullet(\tilde{Q})$ and invertible on $N^\bullet(\tilde{Q})$. Moreover, the projection on each factor is given by an entire series in $\tilde{T}$ (see \cite{MR144186}, proposition 12).  

It follows that $M^\bullet(\tilde{Q})$ has a natural structure of a complex of $B[X]/\tilde{Q}(X)$ modules of finite type (with $X^{-1}$ acting as $\tilde{T}$), and we let ${\mathcal{M}}_{\tilde{\mathcal{Z}}}^{\bullet,fs}\vert_{V(\tilde{Q}(X))} = M^\bullet(\tilde{Q})$. These glue to give the complex ${\mathcal{M}}_{\tilde{\mathcal{Z}}}^{\bullet,fs}$ over $\tilde{\mathcal{Z}}$.  We observe that by construction ${\mathcal{M}}_{\tilde{\mathcal{Z}}}^{\bullet,fs}$ is a perfect complex of $\pi^{-1} \oscr_{\Spa(A,A^+)}$-modules. Moreover, if $M^\bullet$ is concentrated in the range $[a,b]$, so is ${\mathcal{M}}_{\tilde{\mathcal{Z}}}^{\bullet,fs}$.

We can actually assume (after possibly changing the neighborhood $\mathcal{U}_z$) that in the factorization  $\tilde{P}(X) = \tilde{R}(X) \tilde{Q}(X)$, there exists $h \in \qq$ such that $\tilde{Q}(X)$ has slope $\leq h$ and $\tilde{R}(X)$ has slope $>h$. We deduce that 
$$M^\bullet \hat{\otimes}_A B = M^\bullet(\tilde{Q}) \oplus N^\bullet(\tilde{Q}) = (M^\bullet \hat{\otimes}_A B)^{\leq h} \oplus (M^\bullet \hat{\otimes}_A B)^{>h}$$ is an $h$-slope decomposition of the complex. Indeed, for any unitary  polynomial $S(X)$ of slope $\leq h$, we deduce that $S(X)$ and $\tilde{R}(X)$ are coprime to each other in $B\{ \{ X\}\}$ since the resultant $\mathrm{Res}(S,\tilde{R}) \in B^\times$. It follows from \cite{1997InMat.127..417C}, lem A4.1 that $S^\star(\tilde{T})$ acts invertibly on $(M^\bullet \hat{\otimes}_A B)^{>h}$. 

By varying $z$ in the fiber of $x$, we deduce that $\mathcal{M}^\bullet$ has a slope decomposition with respect to $\tilde{T}$. We let $\mathcal{M}^{\bullet,fs}$ be the finite slope quotient.  Observe that  $\mathcal{M}^{\bullet,fs} = \pi_\star {\mathcal{M}}_{\tilde{\mathcal{Z}}}^{\bullet,fs}$. The map $\mathcal{M}^\bullet \rightarrow \mathcal{M}^{\bullet, fs}$  is given by adjunction   by the map  $\pi^\star \mathcal{M}^\bullet \rightarrow {\mathcal{M}}_{\tilde{\mathcal{Z}}}^{\bullet, fs}$  (which in the above notations, is locally given by the projection $M^\bullet \hat{\otimes} B \rightarrow  M^\bullet(\tilde{Q})$, orthogonal to  $N^\bullet(\tilde{Q})$).

It follows from  lemma \ref{lem-functorialityoffs} that $\HH^i(\mathcal{M}^\bullet)$ has a slope decomposition and that $\HH^i(\mathcal{M}^\bullet)^{fs} = \HH^i(\mathcal{M})^{fs}$

We now discuss unicity. We have an exact triangle $\mathcal{M}^{\bullet, \infty s} \rightarrow \mathcal{M}^{\bullet} \rightarrow \mathcal{M}^{\bullet, fs} \stackrel{+1}\rightarrow $ in $\mathcal{D}(\mathrm{Mod}(S))$. Another choice of compact lift $\tilde{T}'$  of $T$ gives another exact triangle 
$\mathcal{M}^{',\bullet, \infty s} \rightarrow \mathcal{M}^{\bullet} \rightarrow \mathcal{M}^{',\bullet, fs} \stackrel{+1}\rightarrow $.

 Let $C^\bullet$ be the cone of the morphism of complexes $ \mathcal{M}^{\bullet, \infty s} \rightarrow \mathcal{M}^\bullet \rightarrow  \mathcal{M}^{',\bullet,fs}$ obtained by taking inclusion in $\mathcal{M}^\bullet$ followed by projection. 
  The diagram:
 \begin{eqnarray*}
 \xymatrix{  \mathcal{M}^{\bullet, \infty s} \ar[r] \ar[d]^{\tilde{T}} & \mathcal{M}^{',\bullet,fs} \ar[d]^{\tilde{T}'} \\
  \mathcal{M}^{\bullet, \infty s}  \ar[r] &  \mathcal{M}^{',\bullet,fs}}
  \end{eqnarray*}
   commutes up to homotopy. We deduce that there is a morphism $\tilde{T}'' : C^\bullet \rightarrow C^\bullet$  which on $C^n = \mathcal{M}^{',n,fs}\oplus  \mathcal{M}^{n+1, \infty s} $ is given by    $\begin{pmatrix} 
        \tilde{T}' & b_n \\
        0& \tilde{T} \\
     \end{pmatrix}$ where $b_n$ depends on the choice of a homotopy between $\tilde{T}$ and $\tilde{T}'$ (see \cite{stacks-project}, TAG 014F). 
  We deduce that the complex $C^\bullet$ has slope decomposition with respect to $\tilde{T}''$ and we let $C^{\bullet, fs}$ be its finite slope quotient. 
  The map $ \mathcal{M}^{',\bullet,fs} \rightarrow C^{\bullet, fs}$ is a quasi-isomorphism. 
 By the TR4 axiom on triangulated categories, we find that there is a commutative diagram 
 \begin{eqnarray*}  
 \xymatrix{ \mathcal{M}^\bullet \ar[r] \ar[d] &  \mathcal{M}^{\bullet, fs}  \ar[d] \\
 \mathcal{M}^{', \bullet, fs} \ar[r] & C^{\bullet}}
 \end{eqnarray*} 
and we deduce that in the commutative diagram: 
\begin{eqnarray*}  
 \xymatrix{ \mathcal{M}^\bullet \ar[r] \ar[d] &  \mathcal{M}^{\bullet, fs}  \ar[d] \\
 \mathcal{M}^{', \bullet, fs} \ar[r] & C^{\bullet, fs}}
 \end{eqnarray*}
the right vertical map and the bottom horizontal line are quasi-isomorphisms. 

\end{proof}

\begin{rem}\label{rem-useful-perfect} The proof reveals that there is a non-canonical spectral variety $\pi : \tilde{\mathcal{Z}}  \rightarrow S$ such that $\mathcal{M}^{\bullet,fs} = \pi_\star \mathcal{M}_{\tilde{\mathcal{Z}}}^{\bullet, fs}$ where $\mathcal{M}_{\tilde{\mathcal{Z}}}^{\bullet, fs}$ is a complex of coherent $\oscr_{\tilde{Z}}$-modules, perfect as a complex of $\pi^{-1}\oscr_{S}$-modules, of the same  amplitude as  $M^\bullet$. Moreover, we have a projection $\pi^\star \mathcal{M}^{\bullet} \rightarrow \pi_\star \mathcal{M}_{\tilde{\mathcal{Z}}}^{\bullet, fs}$. This projection is locally on $\tilde{\mathcal{Z}}$ given by an entire series in $\tilde{T}$ with coefficients in $\pi^{-1} \oscr_S$. 
\end{rem}

\subsubsection{Relative spectral theory for an algebra of operators}\label{relative-spectral-2}
We now let $\mathbb{Z}_{\geq 0}^r$ be the free monoid on $r$ generators, generated by element $T_1, \cdots, T_r$. We assume that we have an action of $\mathbb{Z}_{\geq 0}^r$ on an object  $M^\bullet \in Ob(\mathcal{K}^{proj}(\mathbf{Ban}(A)))$. We also assume that the operators $T \in \mathbb{Z}_{\geq 1}^r$ are potent compact.

For any choice of $T \in \mathbb{Z}_{\geq 1}^r$ acting compactly, we have a finite slope projection $\mathcal{M}^\bullet \rightarrow \mathcal{M}^{\bullet, T,  fs}$ and we can consider the cohomlogy $\HH^i (\mathcal{M}^{\bullet, T,fs}) = \HH^i (\mathcal{M}^{\bullet})^{T,fs}$. 

\begin{lem} For any $T,T'$ acting compactly, we have a canonical isomorphism  $\HH^i (\mathcal{M}^{\bullet})^{T,fs} \simeq \HH^i (\mathcal{M}^{\bullet})^{T',fs}$ commuting with the projections $\HH^i (\mathcal{M}^{\bullet}) \rightarrow  \HH^i (\mathcal{M}^{\bullet})^{T,fs}$ and $\HH^i (\mathcal{M}^{\bullet}) \rightarrow  \HH^i (\mathcal{M}^{\bullet})^{T',fs}$.
\end{lem}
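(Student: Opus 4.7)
The plan is to reduce this statement about complexes to Proposition \ref{prop-independence-choiceT}, which already gives the analogous independence result for a single sheaf carrying a $\mathbb{Z}_{\geq 0}^r$-action.

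First I would observe that since each $T \in \mathbb{Z}_{\geq 1}^r$ is potent compact on $M^\bullet$, some power $T^n$ is compact in the sense of Definition \ref{defi-notop-compact}, so Proposition \ref{prop-existence-decompositionT} produces a slope decomposition of the cohomology sheaves $\HH^i(\mathcal{M}^\bullet)$ with respect to $T^n$. Since a sheaf admits a slope decomposition for $T$ if and only if it admits one for $T^n$ (invertibility of $T$ and $T^n$ are equivalent, and slope $\leq h$ factorizations for $T^n$ pull back to slope $\leq h/n$ factorizations for $T$), the sheaves $\HH^i(\mathcal{M}^\bullet)$ acquire slope decompositions with respect to every element of $\mathbb{Z}_{\geq 1}^r$, and in particular $\HH^i(\mathcal{M}^\bullet)^{T,fs} = \HH^i(\mathcal{M}^\bullet)^{T^n,fs}$.

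Next I would apply Proposition \ref{prop-independence-choiceT} to the sheaf $\HH^i(\mathcal{M}^\bullet)$, equipped with its $\mathbb{Z}_{\geq 0}^r$-action inherited from the action on $M^\bullet$. This proposition produces the required canonical isomorphism
\[
\HH^i(\mathcal{M}^\bullet)^{T,fs} \;\xrightarrow{\sim}\; \HH^i(\mathcal{M}^\bullet)^{T',fs}
\]
together with the compatibility with the projections from $\HH^i(\mathcal{M}^\bullet)$. One then notes that these projections factor the natural projection of the cohomology of $\mathcal{M}^\bullet$ coming from the passage to the finite slope quotient in $\mathcal{D}(\mathrm{Mod}(\oscr_S))$ supplied by Proposition \ref{prop-existence-decompositionT}, via the identification $\HH^i(\mathcal{M}^{\bullet,T-fs}) = \HH^i(\mathcal{M}^\bullet)^{T,fs}$.

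The only step that requires a little care is checking that the equivalence ``slope decomposition for $T$ $\Leftrightarrow$ slope decomposition for $T^n$'' preserves the finite slope quotient, so that the isomorphism we produce at the level of $T^n,T'^m$ is indeed an isomorphism at the level of $T,T'$; this is however routine and follows from the fact that a compact lift of $T$ induces a compact lift of $T^n$ and that the Fredholm determinant of $T^n$ factors the one of $T$, so the two spectral varieties agree after pullback along $X \mapsto X^n$. No real obstacle arises, because the whole construction of the finite slope quotient has already been carried out at the level of cohomology sheaves in Proposition \ref{prop-existence-decompositionT}, and the combinatorial lemma \ref{prop-independence-choiceT} is purely an argument about endomorphisms of coherent sheaves.
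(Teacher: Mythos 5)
Your proposal is correct and takes the same route the paper does: the paper's proof consists of exactly one sentence — ``This follows from Proposition \ref{prop-independence-choiceT}'' — and you correctly identify that the lemma is deduced by applying Proposition \ref{prop-independence-choiceT} to the cohomology sheaves $\HH^i(\mathcal{M}^\bullet)$ with their inherited $\ZZ_{\geq 0}^r$-action, using Proposition \ref{prop-existence-decompositionT} to equip them with the necessary slope decompositions. The extra step you spell out about passing between $T$ and $T^n$ is a reasonable way to verify the (arguably over-stated) hypothesis of Proposition \ref{prop-independence-choiceT} that the sheaf admit slope decompositions for \emph{all} elements of $\ZZ_{\geq 1}^r$; in fact one can also simply note that the proof of Proposition \ref{prop-independence-choiceT} only uses slope decompositions for the two elements $T,T'$, which are supplied directly by Proposition \ref{prop-existence-decompositionT} since both act compactly by assumption. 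One small caveat: your parenthetical ``slope $\leq h$ factorizations for $T^n$ pull back to slope $\leq h/n$ factorizations for $T$'' is stated more precisely than is warranted or needed — the relative notion of slope decomposition only requires an $h'$-slope decomposition for \emph{some} $h' \geq h$, and the clean route is the one you point to anyway via spectral varieties — but this does not affect the validity of the argument.
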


\begin{proof} This follows from proposition \ref{prop-independence-choiceT}.
\end{proof}

\begin{rem} It is not clear to us whether   there is a quasi-isomorphism $\mathcal{M}^{\bullet,T',fs} \rightarrow \mathcal{M}^{\bullet, T,  fs}$. In this paper, what we will do is choose a compact operator $T$ once and for all. Ultimately, we are only interested in the cohomology, and the ambiguity disappears.
 We  note that we could only prove that the object $\mathcal{M}^{\bullet, T,  fs}$ is well defined up to a non-canonical quasi-isomorphism (in some sense, it depends on the choice of a compact lift $\tilde{T}$ of $T$). It is not even clear to us that $T'$ will act on $\mathcal{M}^{\bullet, T,  fs}$. One can prove this locally on the spectral variety $\tilde{\mathcal{Z}}_T$: the projection $\pi^\star \mathcal{M}^\bullet \rightarrow \mathcal{M}_{\tilde{\mathcal{Z}}_T}^{\bullet, T,  fs}$ is locally given by an entire series in $\tilde{T}$ and if we fix a lift $\tilde{T}'$ of $T'$ commuting up to homotopy with $\tilde{T}$ one can prove that $\tilde{T}'$ commutes up to homotopy with the projector, hence giving an action of $T'$ on $\mathcal{M}_{\tilde{\mathcal{Z}}_T}^{\bullet, T,  fs}$, locally on $\tilde{\mathcal{Z}}_T$.
\end{rem}

\subsubsection{The case of projective or inductive limits of complexes}\label{relative-spectral-3}   We now work in a slightly more general setting than in section \ref{relative-spectral-1}.  We  consider an object $``\lim_i"M_i^\bullet \in  Ob(\mathrm{Pro}_{\N}(\mathcal{K}^{proj}(\mathbf{Ban}(A))))$ or $``\colim_i"M_i^\bullet \in  Ob(\mathrm{Ind}_{\N}(\mathcal{K}^{proj}(\mathbf{Ban}(A))))$ together with a compact operator $T$. By lemma \ref{lem-potent-compact-factor}, $T$ induces a compact operator $T_i$ of ${M}_i^\bullet$ for $i$ large enough. 

We let $\mathcal{M}_i^\bullet$ be the complex of Banach sheaves over $S$ attached to $M_i^\bullet$. By proposition \ref{prop-existence-decompositionT}, for all $i $ large enough and all $k \in \ZZ$,  we have a projection $\HH^k(\mathcal{M}^\bullet_i) \rightarrow \HH^k(\mathcal{M}_i^\bullet)^{fs}$. 

\begin{lem} \begin{enumerate}
\item For $``\lim_i"M_i^\bullet \in  Ob(\mathrm{Pro}_{\N}(\mathcal{K}^{proj}(\mathbf{Ban}(A))))$, the maps $\HH^k (\mathcal{M}^\bullet_i)^{fs} \rightarrow \HH^k(\mathcal{M}^\bullet_{i-1})^{fs}$ are isomorphisms.
 
\item For $``\colim_i"M_i^\bullet \in  Ob(\mathrm{Ind}_{\N}(\mathcal{K}^{proj}(\mathbf{Ban}(A))))$, the maps $\HH^k (\mathcal{M}^\bullet_i)^{fs} \rightarrow \HH^k(\mathcal{M}^\bullet_{i+1})^{fs}$ are isomorphisms.
\end{enumerate}

\end{lem}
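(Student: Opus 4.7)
The plan is to reduce both statements to the factorization data produced by Lemma \ref{lem-potent-compact-factor} (resp.\ Lemma \ref{lem-potent-compact-factor2}), and to sheafify the argument already appearing in the pro-system case of the proof of Proposition \ref{prop-establishing-slope-decomp}. First I would apply Lemma \ref{lem-potent-compact-factor} to produce, for $i$ sufficiently large, a compact endomorphism $T_i$ of $M_i^\bullet$ together with a factorization diagram
\begin{eqnarray*}
\xymatrix{ M_{i+1}^\bullet \ar[d]_{f_i} \ar[r]^{T_{i+1}} & M_{i+1}^\bullet \ar[d]^{f_i} \\
M_i^\bullet \ar[ru]^{g_i} \ar[r]^{T_i} & M_i^\bullet}
\end{eqnarray*}
in $\mathcal{K}^{proj}(\mathbf{Ban}(A))$, in which $f_i$ is the structural transition arrow of the pro-system. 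Unpacking the construction, the compact lift $T':M^\bullet\to N^\bullet$ and the structural arrows $\phi_i:M_i^\bullet\to M^\bullet$, $\psi_j:N^\bullet\to M_j^\bullet$ (normalized so that $T_j=\psi_j\circ T'\circ\phi_j$) allow one to take $g_i:=\psi_{i+1}\circ T'\circ\phi_i$. The compatibilities $\phi_{i+1}=\phi_i\circ f_i$ and $\psi_i=f_i\circ\psi_{i+1}$ then yield
\[
f_i\circ g_i=T_i,\qquad g_i\circ f_i=T_{i+1},\qquad f_i\circ T_{i+1}=T_i\circ f_i
\]
in the homotopy category.

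Second I would sheafify and take cohomology. Passing to the associated complexes of Banach sheaves $\mathcal{M}_i^\bullet$ on $S=\Spa(A,A^+)$, Proposition \ref{prop-existence-decompositionT} endows each $\HH^k(\mathcal{M}_i^\bullet)$ with a slope decomposition with respect to the compact operator $T_i$. Since $f_i$ intertwines $T_{i+1}$ with $T_i$, Lemma \ref{lem-functorialityoffs} implies that $\HH^k(f_i)$ restricts to a map
\[
\HH^k(f_i)^{fs}:\HH^k(\mathcal{M}_{i+1}^\bullet)^{fs}\longrightarrow\HH^k(\mathcal{M}_i^\bullet)^{fs}.
\]
Exactly as in the pro-system case of the proof of Proposition \ref{prop-establishing-slope-decomp}, working stalkwise over rank one points of $S$ the identities $f_i\circ g_i=T_i$ and $g_i\circ f_i=T_{i+1}$ force $M_{i+1}^{\bullet,\leq h}\to M_i^{\bullet,\leq h}$ to be a quasi-isomorphism and the off-diagonal maps $M_{i+1}^{\bullet,\leq h}\to M_i^{\bullet,>h}$ and $M_{i+1}^{\bullet,>h}\to M_i^{\bullet,\leq h}$ to vanish on cohomology; sheafifying on an affinoid cover of $S$ on which slope decompositions exist and passing to the colimit over $h$ promotes this to the assertion that $\HH^k(f_i)^{fs}$ is an isomorphism.

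The inductive case is strictly parallel using Lemma \ref{lem-potent-compact-factor2} instead, whose factorization diagram has transition arrows running $M_i^\bullet\to M_{i+1}^\bullet$ and a diagonal $M_{i+1}^\bullet\to M_i^\bullet$; the composition identities and the ensuing invertibility argument are identical. The main bookkeeping point, and the only non-formal step, is verifying that the sheaf-level slope decompositions of Proposition \ref{prop-existence-decompositionT} are compatible with the transition maps, which is automatic from the $T$-equivariance of $f_i$ and Lemma \ref{lem-functorialityoffs}; I do not expect any genuine obstacle beyond this unpacking.
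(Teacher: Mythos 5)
Your opening move is exactly right: the factorization identities coming from Lemma \ref{lem-potent-compact-factor}, namely that the transition arrow $f_i$ and the diagonal $g_{i-1}$ compose to $T_i$ on one side and $T_{i-1}$ on the other, are what drive the paper's proof as well. You also correctly identify (via Lemma \ref{lem-functorialityoffs}) that $\HH^k(f_i)$ descends to a map of finite slope parts. But the middle step is where the proposal diverges from the paper and acquires a genuine gap.

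You invoke the pro-system case of the proof of Proposition \ref{prop-establishing-slope-decomp} and claim that the identities force $M_{i+1}^{\bullet,\leq h}\to M_i^{\bullet,\leq h}$ to be a quasi-isomorphism and the off-diagonal maps $M_{i+1}^{\bullet,\leq h}\to M_i^{\bullet,>h}$, $M_{i+1}^{\bullet,>h}\to M_i^{\bullet,\leq h}$ to vanish. But the argument in that proof \emph{uses that one is over a field} precisely at this step (the paper writes ``Using that we are over a field, we deduce that these are the zero map''): for complexes of vector spaces, a map that is zero on cohomology is null-homotopic, hence zero in the homotopy category. Over a Tate algebra $A$ this implication fails. ``Working stalkwise over rank one points and then sheafifying'' does not repair it: stalkwise null-homotopies need not glue, and even if one could show the map is zero at every rank one point, that gives vanishing of cohomology of the map, not vanishing of the map. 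You have also not reduced to a perfect complex where a Nakayama-type argument would apply, since the relevant sheaves in Proposition \ref{prop-existence-decompositionT} are constructed locally on a spectral variety, not on $S$ itself. Your flagged ``main bookkeeping point'' (compatibility of slope decompositions with transition maps) is actually the harmless part --- Lemma \ref{lem-functorialityoffs} handles it --- whereas the complex-level vanishing claim is where the argument breaks.

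The paper's proof avoids all of this by working exclusively with the cohomology \emph{sheaves}, not the complexes. On cohomology one has the factorizations $\HH^k(T_i)=\HH^k(g_{i-1})\circ\HH^k(f_i)$ through $\HH^k(\mathcal{M}_{i-1}^\bullet)$ and $\HH^k(T_{i-1})=\HH^k(f_i)\circ\HH^k(g_{i-1})$ through $\HH^k(\mathcal{M}_i^\bullet)$. Since $T_i$ and $T_{i-1}$ are invertible on the respective finite slope parts (the slope decomposition forces $Q^\star(T)=0$ on the $\leq h$ part with $Q^\star$ unitary and $Q^\star(0)=1$, giving an inverse for $T$), the first factorization shows $\HH^k(f_i)^{fs}$ is injective and the second shows it is surjective. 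No statement about quasi-isomorphisms of complexes or vanishing of off-diagonal maps is ever needed. You should replace your complex-level argument with this direct cohomology-level one.
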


\begin{proof} We only prove the first item. The action of $T_i$ factorizes as  $T_i  : \HH^k(\mathcal{M}^\bullet_i) \rightarrow \HH^k(\mathcal{M}^\bullet_{i-1}) \rightarrow \HH^k(\mathcal{M}^\bullet_i)$. We deduce that there is a bijective map  $T_i  : \HH^k(\mathcal{M}^\bullet_i)^{fs} \rightarrow \HH^k(\mathcal{M}^\bullet_{i-1})^{fs} \rightarrow \HH^k(\mathcal{M}^\bullet_i)^{fs}$. The action of $T_{i-1}$ factorizes as well as  $T_{i-1}  : \HH^k(\mathcal{M}^\bullet_{i-1}) \rightarrow \HH^k(\mathcal{M}^\bullet_{i}) \rightarrow \HH^k(\mathcal{M}^\bullet_{i-1})$ from which we deduce that there is a bijective map $T_{i-1}  : \HH^k(\mathcal{M}^\bullet_{i-1})^{fs} \rightarrow \HH^k(\mathcal{M}^\bullet_{i})^{fs} \rightarrow \HH^k(\mathcal{M}^\bullet_{i-1})^{fs}$.
\end{proof}

Let  $\mathcal{M}^\bullet \in \mathcal{D}(\mathrm{Mod}(\oscr_S))$ be either $\lim_i \mathcal{M}_i^\bullet$ or $\colim_i\mathcal{M}_i^\bullet$. Then we let $\HH^k(\mathcal{M}^\bullet)^{fs} = \HH^k(\mathcal{M}_i^\bullet)^{fs}$ for any large enough $i$. 
We also define $\mathcal{M}^{\bullet, fs} = \mathcal{M}_i^{\bullet, fs}$ for some chosen $i$ large enough. We remark that this definition depends on the choice of $i$. But the ambiguity disappears when we pass to cohomology. 

\subsection{Locally analytic inductions and the locally analytic BGG resolution}

In this section we recall some basic facts about analytic inductions and BGG resolutions for $p$-adic groups. Standard references for this material are \cite{MR2846490}, section 3 and \cite{JONES20111616}. The notations for this section are as follows. We let $F$ be a finite extension of $\qq_p$, and we let   $M \rightarrow \Spec~\ocal_F$  be a split reductive group. We fix a maximal torus $T$ and a Borel $B$ containing $T$. We let $\Phi_M = \Phi_M^+ \coprod \Phi_M^-$ be the root system. We also let $\Delta_M \subset \Phi_M^+$ be the positive simple roots. We denote by $W_M$ the Weyl group, and denote by $\ell : W_M \rightarrow \ZZ_{\geq 0}$ the length function.  For all $i \in  \ZZ_{\geq 0}$, we let $W_M^{(i)}$ be the set of elements in $W_M$ of length $i$.  We denote by $w_{0,M}$ the longest element of $W_M$. We let $\rho_M$ be half the sum of the positive roots.  The Weyl group acts on $T$, $X_\star(T)$ and $X^\star(T)$. We also have the  dotted action  on $X^\star(T)$ given by $w \cdot \kappa = w(\kappa + \rho_M)- \rho_M$. We let $X^\star(T)^{M,+}$ be the cones of dominant characters, and we let  $X^\star(T)^{M,++}$ be the cones of dominant regular  characters.  We use a $-$ sign to denote the opposite cones. We assume that $T_F = T \times_{\Spec~\ocal_F} \Spec~F$ is in fact defined over $\qq_p$. Namely, there is a torus $T_{\qq_p}$ and an isomorphism $T_{\qq_p} \times \Spec~F = T_F$.  We often drop the subscripts $F$ or $\qq_p$ when the context is clear. 

\begin{rem} In our applications, $M$ will be the Levi  $M_\mu^c$ of the group $G$ which is part of the Shimura datum.  A slight warning is that   the torus $T_{\qq_p}$ will in general  be  the conjugate of a maximal torus of $G_{\qq_p}$ by an element of  the absolute Weyl group of $G$ which is not necessarily rational.  
\end{rem}

We let $T^d \subseteq T_{\qq_p}$ be the maximal split subtorus. We let $T(\ZZ_p) \subseteq T(\qq_p)$ be the maximal compact subgroup. 
There is a valuation map $v : T(\qq_p) \rightarrow X_\star(T^d) \otimes \qq$ whose image is a lattice and whose kernel is $T(\ZZ_p)$.  
We let $T^{M,+}$ be the monoid in $T(\qq_p)$ of elements $t$ such that $v(\alpha(t)) \geq 0$ for all $\alpha \in \Phi_M^+$ and we let $T{M,++}$ be the monoid in $T^{M,+}$ of elements $t$ such that $v(\alpha(t)) > 0$ for all $\alpha \in \Phi_M^+$.

 For any $\kappa \in X^\star(T)$,
  we let $F(\kappa)$ be the one dimensional $F$ vector space endowed with the action of $T(\qq_p)$ via the character $\kappa$.  If $V$ is a $F$-vector space endowed with an action of (a submonoid of) $T(\qq_p)$, we let $V(\kappa) =   V \otimes F(\kappa)$.   

 We let $\mathcal{M}$ be the  quasi-compact adic space over $\Spa(F, \ocal_F)$ attached to $M$ and we denote by $\mathcal{M}_n$ the subgroup of $\mathcal{M}$ of elements reducing to $1$ modulo $p^n$. We define in a similar fashion $\mathcal{T}$, the quasi-compact torus over $\Spa(F, \ocal_F)$ attached to $T$ and $\mathcal{T}_n$ the subgroup of $\mathcal{T}$ of elements reducing to $1$ modulo $p^n$. 
 
 We let ${M}_1 \subseteq {M}(\ocal_F)$ be a closed subgroup possessing an Iwahori decomposition, in the sense that the product map  $$\overline{N}_1 \times T_1 \times N_1 \rightarrow M_1$$ is an isomorphism,  where $N_1 = M_1 \cap U$, $T_1 = M_1 \cap T$, $\overline{N}_1 = M_1 \cap \overline{U}$ (for $U$ and $\overline{U}$ the unipotent radical of $B$ and the opposite Borel $\overline{B}$ respectively).  We also assume that $T_1 = T(\ZZ_p)$, and that $T^{M,-}$ normalizes $\overline{N}_1$.

\subsubsection{Algebraic inductions}
Let $\kappa \in X^{\star}(T)^{M,+}$.  We have the algebraic representation $V_\kappa$ of $M$ with highest weight $\kappa$. It can be realized as an algebraic induction:
\begin{eqnarray*}
V_{\kappa} &=& \mathrm{Ind}_{B}^{M}(w_{0, M} \kappa) \\
&=& \{ f : M \rightarrow \mathbb{A}^1\mid f(mb) = (w_{0, M} \kappa)(b^{-1}) f(m),~\forall~(m,b) \in M \times B  \} 
\end{eqnarray*}
We have a left action of $M$ given by $h  f (m) = f(h^{-1}m)$ for any $h \in M$.

\subsubsection{Analytic weights} Let $(A,A^+)$ be a complete Tate algebra over $(F, \ocal_F)$  and let $\kappa_A$ be a continuous morphism $T(\ZZ_p) \rightarrow A^\times$. Let $n \in \ZZ_{\geq 0}$. We say that $\kappa_A$ is $n$-analytic if the map $\kappa_A$ can be extended to a pairing:
$$ T(\ZZ_p) \mathcal{T}_n \times \Spa(A,A^+) \rightarrow \mathbb{G}_m^{an}$$ where $T(\ZZ_p)\mathcal{T}_n$ is the subgroup of $\mathcal{T}$ generated by $T(\ZZ_p)$ and $\mathcal{T}_n$ (this is a finite union of translates of $\mathcal{T}_n$).
We recall that any continuous character $\kappa_A$ is $n$-analytic for some $n$ (see \cite{MR2846490}, lemma 3.2.5. for example).

\subsubsection{Analytic inductions}\label{subsection-analyinduction} Let $(A,A^+)$ be a complete Tate algebra over $(F, \ocal_F)$ and let $S = \Spa(A,A^+)$. We  assume that $A$ is uniform (i.e $A^0$ is bounded) and equip $A$ with the supremum norm.  Let $n_0 \in \ZZ_{\geq 0}$ be an integer.  We now fix a character $\kappa_A : w_{0,M}^{-1}T(\ZZ_p)w_{0,M} \rightarrow A^\times$ which is $n_0$-analytic, or equivalently a character $w_{0,M}\kappa_A:T(\ZZ_p)\rightarrow A^\times$ which is $n_0$-analytic.  

\begin{rem} The notation $w_{0,M}\kappa_A$ may seem strange so let us explain it to orient the reader. Let $\kappa \in X^\star(T)$ be an algebraic character of $T$. Then for any $w \in W_M$, we have $\langle w \kappa, t \rangle = \langle \kappa, w^{-1} t \rangle$ so  that $w\kappa (t)  = \kappa (w^{-1} t w)$. 
\end{rem}

Let $\mathcal{B}$ be the quasi-compact adic space attached to $B$. For all $n \geq n_0$ we  can define $$V_{\kappa_A}^{n-an} =  \mathrm{an-Ind}^{M_1\mathcal{M}_n}_{\mathcal{B}\cap (M_1\mathcal{M}_n)}(w_{0,M}\kappa_A) = $$ $$  \{ f : (M_1\mathcal{M}_n)_S \rightarrow \mathbb{A}^{1,an}_S\mid f(mb) = (w_{0,M}\kappa_A)(b^{-1})  f(m),~\forall~(m,b) \in   (M_1\mathcal{M}_n)_S \times (\mathcal{B} \cap (M_1\mathcal{M}_n))_S \}.$$ 

This is a  Banach $A$-module  for the supremum norm. We let $V_{\kappa_A}^{n-an,+}$ be the module of elements with supremum norm less or equal than one. 
The  space $V_{\kappa_A}^{n-an}$ carries  the following actions of the group $(M_1 \mathcal{M}_{n})_S$  and of the monoid  $T^{M,+}$:
\begin{itemize}
\item $hf(m) = f(h^{-1}m)$ for $h,m \in (M_1 \mathcal{M}_{n})_S$,
\item $tf(m)=  f( t^{-1} \overline{n}_m t_m t) $ for $t \in T^{M,+}$, $m \in (M_1 \mathcal{M}_{n})_S$, and $m =  \overline{n}_m t_m n_m$ the Iwahori decomposition of $m$. 
\end{itemize}

These actions respect the submodule $V_{\kappa_A}^{n-an,+}$. 

We record the following lemma for later use. Let $M_1^p$ be the  quotient of $M_1$ by its maximal normal pro-$p$ subgroup. 

\begin{lem}\label{lem-inductive-lim-rep}   The representation $V_{\kappa_A}^{n-an,+} \otimes_{A^+} A^+/A^{++}$ of $M_1\mathcal{M}_n$  is a countable inductive limit of finite projective $A^+/A^{++}$-submodules $V_i$ stable under the action of  $M_1\mathcal{M}_n$, and with the property that  the action on $V_{i+1}/V_i$ factors through an action of   $M_1^p$.
\end{lem}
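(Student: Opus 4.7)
The plan begins by using the Iwahori decomposition of $M_1\mathcal{M}_n$ as $\overline{N}^\flat \cdot T^\flat \cdot N^\flat$, where $\overline{N}^\flat = \overline{N}_1 \cdot (\overline{U} \cap \mathcal{M}_n)$, $T^\flat = T_1 \cdot \mathcal{T}_n$, $N^\flat = N_1 \cdot (U \cap \mathcal{M}_n)$. Restriction of a function $f$ to $\overline{N}^\flat$ identifies $V_{\kappa_A}^{n-an}$ with the Banach space of $S$-valued analytic functions on $\overline{N}^\flat$, and identifies $V_{\kappa_A}^{n-an,+}$ with its unit ball. After fixing an ordering of $\Phi_M^-$ and choosing exponential coordinates, $\overline{N}^\flat$ is a product of closed balls indexed by $\alpha \in \Phi_M^-$, with radii depending on whether $\alpha$ belongs to $\overline{N}_1$ or to the $\mathcal{M}_n$ part. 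Rescaling the coordinates to unit radius, the integral space becomes the Tate algebra $A^+\langle y_\alpha : \alpha \in \Phi_M^- \rangle$. Since a pseudo-uniformizer of $A^+$ (and hence $p$) lies in $A^{++}$, the Tate convergence condition collapses modulo $A^{++}$, yielding
\[
V_{\kappa_A}^{n-an,+} \otimes_{A^+} A^+/A^{++} \;\cong\; (A^+/A^{++})\bigl[y_\alpha : \alpha \in \Phi_M^-\bigr].
\]

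The next step is to set $V_i$ to be the submodule of polynomials of total degree at most $i$ in the $y_\alpha$: this is a finite free $A^+/A^{++}$-module, and the $V_i$ exhaust the whole module as $i \to \infty$. To prove $M_1\mathcal{M}_n$-stability, I would spell out the action explicitly: for $h \in M_1\mathcal{M}_n$, write the Iwahori decomposition $h^{-1}\overline{n} = \overline{n}'(h,\overline{n})\cdot t'(h,\overline{n})\cdot n'(h,\overline{n})$ on $\overline{N}^\flat$; then $(h\cdot f)(\overline{n}) = (w_{0,M}\kappa_A)(t'(h,\overline{n})^{-1})\, f(\overline{n}'(h,\overline{n}))$. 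The components of $\overline{n}'(h,\overline{n})$ in the $y_\alpha$-coordinates are rational in $\overline{n}$, with denominators that are units in $A^+$ (bounded away from zero on $\overline{N}^\flat$). An induction on the height of the roots, using Chevalley's commutator relations (trivial when the unipotent radical of $B \cap M$ is abelian, as in our Shimura-datum applications), shows that modulo $A^{++}$ the substitution $\overline{n}'(h,\cdot)$ is polynomial of bounded degree, and similarly the character factor $(w_{0,M}\kappa_A)(t'(h,\cdot)^{-1})$ is polynomial mod $A^{++}$; hence each $V_i$ is preserved.

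The main step is to show that the induced action on $V_{i+1}/V_i$ factors through $M_1^p$. The kernel of $M_1\mathcal{M}_n \twoheadrightarrow M_1^p$ contains $\mathcal{M}_n$ together with the maximal normal pro-$p$ subgroup of $M_1$, and every element of this kernel reduces to the identity in $M(A^+/A^{++})$. For such $h$, an explicit coordinate calculation shows that $\overline{n} \mapsto \overline{n}'(h,\overline{n})$ reduces modulo $A^{++}$ to a translation $y_\alpha \mapsto y_\alpha + c_\alpha$ with $c_\alpha \in A^+/A^{++}$, while the character $(w_{0,M}\kappa_A)(t'(h,\cdot)^{-1})$ reduces to $1$ (the pro-$p$ part of $T_1$ lies in the kernel, and $\mathcal{T}_n$ maps to $1$). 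A coordinate translation preserves each $V_i$ and acts trivially on the associated graded $V_{i+1}/V_i$, so the kernel acts trivially on these quotients, which gives the required factorization. The main obstacle will be this last coordinate analysis: proving that the substitution $\overline{n}'(h,\cdot)$ is polynomial of bounded degree modulo $A^{++}$, and that it reduces to a pure translation when $h$ maps to $1$ in $M_1^p$. Both assertions reduce to an explicit computation with an integral Chevalley basis of $M$ over $\mathcal{O}_F$, and are straightforward when the unipotent radical is abelian, which covers all the Shimura-datum cases relevant to the paper.
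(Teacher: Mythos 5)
Your proposal takes a genuinely different, coordinate-based route from the paper's, which is a short abstract argument: the paper notes that $\kappa_A \bmod A^{++}$ factors through a finite-order character, reduces to the case that $A^+/A^{++}$ is a finite field, observes that stabilizers are open so the module is an increasing union of finite-dimensional subrepresentations, and then uses the fact that pro-$p$ groups acting in characteristic $p$ always have nonzero fixed vectors to extract the filtration with trivial pro-$p$ action on the graded pieces. Your explicit approach is an interesting alternative, but as written it has two real gaps.

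First, the degree filtration is the wrong one. The claim that ``the unipotent radical of $B\cap M$ is abelian, as in our Shimura-datum applications'' confuses two different groups: it is the unipotent radical of $P_\mu$ that is abelian (because $\mu$ is minuscule), not the unipotent radical of $B\cap M_\mu$, which is the full unipotent radical of a Borel of the Levi $M_\mu$. For instance in the Siegel case $M_\mu\cong \mathrm{GL}_g\times\mathbb{G}_m$ and $U_{M_\mu}$ is non-abelian as soon as $g\geq 3$. In that situation an element $h$ of $\overline{N}^\flat$ in the kernel of $M_1\mathcal{M}_n\twoheadrightarrow M_1^p$ can have rescaled coordinates that are units modulo $A^{++}$, and the group law $\overline{n}\mapsto h^{-1}\overline{n}$ is then, mod $A^{++}$, a genuine (non-translation) linear substitution $y_\alpha\mapsto y_\alpha - a_\alpha + \sum_{\beta+\gamma=\alpha} c_{\beta\gamma} a_\beta y_\gamma+\cdots$, which preserves total degree but acts \emph{nontrivially} on the associated graded of the total-degree filtration. (A Heisenberg example: $z\mapsto z-c+ab-ay$ sends $z^{i+1}$ to $(z-ay)^{i+1}\neq z^{i+1}$ modulo degree $\leq i$.) To fix this you should filter by weighted degree with $y_\alpha$ given weight $\mathrm{ht}(\alpha)$: then the commutator corrections drop weight and the kernel acts trivially on the associated graded.

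Second, the assertion that ``every element of this kernel reduces to the identity in $M(A^+/A^{++})$'' is also false: $N_1=U\cap M_1$ lies in the maximal normal pro-$p$ subgroup of $M_1$ but its elements generally have unit entries, so they do not reduce to the identity. Its action on $V_{\kappa_A}^{n-an,+}\otimes A^+/A^{++}$ is nonetheless trivial, but this must be established by an actual computation (the relevant rational substitutions have numerators divisible by $p^n$ after rescaling, and the character factor $\kappa_A(t')$ reduces to $1$ because $\kappa_A\bmod A^{++}$ factors through a finite-order character killing $1+p\mathcal{O}_F$). With those two repairs---the height-weighted filtration and the honest computation that $N^\flat$ acts trivially mod $A^{++}$---your coordinate argument can be made to work, but it ends up considerably longer and more delicate than the paper's one-paragraph reduction to representation theory of pro-$p$ groups over finite fields.
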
 
\begin{proof} The character $\kappa_A ~\mod A^{++}$ factors through a finite character and therefore it is locally constant  on the reduced ring $A^+/A^{++}$. We may assume it is constant.  We are then reduced to the case that $A$ is a finite field extension of $F$ and $A^+/A^{++}$ is a finite field. The stabilizer of any vector $v \in V_{\kappa_A}^{n-an,+} \otimes_{A^+} A^+/A^{++}$ is open in $M_1\mathcal{M}_n$ and we deduce that $V_{\kappa_A}^{n-an,+} \otimes_{A^+} A^+/A^{++}$ is a countable inductive limit of finite dimensional representations. Since pro-$p$ groups have non-zero fixed vectors on non-trivial finite dimensional representations in characteristic $p$, the claim follows.
\end{proof} 

We also let $V_{\kappa_A}^{lan} = \mathrm{colim}_n V_{\kappa_A}^{n-an}$ be the locally analytic induction. 

\begin{lem}\label{lem-slopeboundBGG} The operators $t \in T^{M,++}$ are compact on $V_{\kappa_A}^{n-an}$.  Moreover, if $A$ is a field, the maps $V_{\kappa_A}^{n-an} \rightarrow V_{\kappa_A}^{n+1-an}$ induce isomorphisms on the finite slope part, and the slopes of $t \in T^{M,+}$ on   $V_{\kappa_A}^{lan, fs}$ are $\geq0$.  
\end{lem}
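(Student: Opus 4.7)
The plan is to prove the three claims by analyzing the action of $t$ through the Iwahori decomposition of $M_1\mathcal{M}_n$.

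First, I would use the decomposition $M_1\mathcal{M}_n = (\overline{N}_1\overline{U}_n)\cdot (T_1\mathcal{T}_n)\cdot (N_1 U_n)$ to identify $V_{\kappa_A}^{n-an}$, via restriction to $\overline{N}_1\overline{U}_n$, with the ring of analytic functions on $\overline{N}_1\overline{U}_n$ (a finite disjoint union of polydisks of radius $p^{-n}$). Under this identification, for $t \in T^{M,++}$ with $k = \min_{\alpha\in\Phi_M^+} v(\alpha(t)) > 0$, the formula $(tf)(m)= f(t^{-1}\bar{n}_m t\cdot t_m)$ only uses values of $f$ on $t^{-1}\overline{N}_1\overline{U}_n t \cdot T_1\mathcal{T}_n \subset \overline{N}_1\overline{U}_{n+k}\cdot T_1\mathcal{T}_n$. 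Thus $t$ factors as
$$V_{\kappa_A}^{n-an} \xrightarrow{\mathrm{res}} V_{\kappa_A}^{(n+k)-an} \xrightarrow{\tilde{t}} V_{\kappa_A}^{n-an}.$$
Compactness of $t$ then reduces to compactness of the restriction map, which in turn reduces to the standard statement that the restriction map between rings of analytic functions on a polydisk of radius $p^{-n}$ and a sub-polydisk of radius $p^{-(n+k)}$ is compact.

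For the second claim, assume $A$ is a field. By replacing $t$ with a power if necessary (which does not change the finite slope part), I would arrange that $\min(t) \geq 1$. Then the same formula $(tf)(m)= f(t^{-1}\bar{n}_m t\cdot t_m)$ defines a map $\tilde{t} : V_{\kappa_A}^{(n+1)-an} \to V_{\kappa_A}^{n-an}$, since for $m \in M_1\mathcal{M}_n$ we have $t^{-1}\bar{n}_m t \cdot t_m \in M_1\mathcal{M}_{n+1}$. Writing $r$ for the restriction $V_{\kappa_A}^{n-an}\to V_{\kappa_A}^{(n+1)-an}$, one immediately verifies the diamond identities $\tilde{t}\circ r = t$ on $V_{\kappa_A}^{n-an}$ and $r\circ \tilde{t} = t$ on $V_{\kappa_A}^{(n+1)-an}$. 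Inverting $t$ on the finite slope parts (possible since $t$ is compact and we are over a field, by the spectral theory of Section \ref{sect-slope-decompo}), $r$ becomes an isomorphism with inverse $t^{-1}\tilde{t}$, and by Proposition \ref{prop-independence-choiceT} the resulting isomorphism does not depend on the choice of $t \in T^{M,++}$. Passing to the colimit gives $V_{\kappa_A}^{lan,fs} = V_{\kappa_A}^{n-an, fs}$ for every $n$.

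For the slope bound, I would note that for every $t \in T^{M,+}$ and every $f \in V_{\kappa_A}^{n-an,+}$ (supremum norm $\leq 1$),
$$\|tf\| = \sup_{m}\bigl|f(t^{-1}\bar{n}_m t\cdot t_m)\bigr| \leq \|f\| \leq 1,$$
so the lattice $V_{\kappa_A}^{n-an,+}$ is preserved by $t$. Hence each generalized eigenvalue of $t$ on the finite-dimensional slope-$\leq h$ subspace of $V_{\kappa_A}^{n-an}$ has $p$-adic valuation $\geq 0$, and the conclusion for $V_{\kappa_A}^{lan,fs}$ follows by the preceding step. The main technical point is the compactness of the restriction map, but this is a routine reduction to the polydisk case once the coordinates coming from the Iwahori decomposition are in place.
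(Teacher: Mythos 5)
Your proof is correct and follows essentially the same route as the paper's: factor $t$ through the compact restriction map to a higher-analyticity space, use the resulting diamond identities $\tilde{t}\circ r = t = r\circ\tilde{t}$ to invert $r$ on the finite slope part, and observe that $t\in T^{M,+}$ preserves the unit ball $V_{\kappa_A}^{n\text{-}an,+}$ to bound the slopes from below. The paper's proof is terser (it states the factorization and the lattice preservation without spelling out the diamond argument), but the underlying mechanism is identical.
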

\begin{proof} If $t \in T^{M,+ +}$, one sees easily that the map $t : V_{\kappa_A}^{n-an} \rightarrow V_{\kappa_A}^{n-an}$ improves analyticity. In particular, if $\min_{ \alpha \in \Delta_M} v( \alpha(t)) \geq 1$, one  has a factorization $$V_{\kappa_A}^{n-an} \rightarrow V_{\kappa_A}^{n+1-an} \rightarrow V_{\kappa_A}^{n-an}$$ where the first inclusion is compact.  Moreover, we deduce from the definition  of the action that if $t \in T^{M,+}$, then $t$ preserves the open and bounded submodule $V_{\kappa_A}^{n-an,+}$.
\end{proof}

The space  $V_{\kappa_A}^{n-an}$ embeds in $\HH^0( M_1\mathcal{M}_n, \oscr_{M_1\mathcal{M}_n})$ and similarly $V_{\kappa_A}^{lan}$ embeds in $$\colim_n \HH^0( M_1\mathcal{M}_n, \oscr_{M_1\mathcal{M}_n}).$$
We have a left action of $M_1\mathcal{M}_n$ on $\HH^0( M_1\mathcal{M}_n, \oscr_{M_1\mathcal{M}_n})$ given by $h \ast f(m) = f(mh)$. 
Passing to the limit over $n$ and differentiating, we get an action of the Lie algebra $\mathfrak{m}$ of $M$ on  $\colim_n \HH^0( M_1\mathcal{M}_n, \oscr_{M_1\mathcal{M}_n})$ which can be extended to an action of  the enveloping algebra $\mathcal{U}(\mathfrak{m})$. 

\begin{rem} We are not requiring that  $M_1 \subseteq M(\ocal_F)$ is an open subgroup, and $M_1$ will not always be Zariski dense in $M$. 
In our definition of the locally analytic induction, we consider analytic functions on neighborhoods of $M_1$ in $\mathcal{M}$. These functions are not necessarily determined by their restrictions to $M_1$. 
\end{rem}

\subsubsection{Twist by a finite order character} We  fix a character $\kappa_A : w_{0,M}^{-1}T(\ZZ_p)w_{0,M} \rightarrow A^\times$ which is $n$-analytic. Let $w_{0,M}\chi : M_1 \rightarrow \overline{F}^\times$ be a finite order character which we assume is trivial on $M_1\cap \mathcal{M}_n$. We also denote by $w_{0,M}\chi$ its restriction to $T(\ZZ_p)$ and by $\chi$ the corresponding character of $w_{0,M}^{-1}T(\ZZ_p)w_{0,M}$, which is trivial on $w_{0,M}^{-1}T(\ZZ_p)w_{0,M}\cap\mathcal{T}_n$ and hence $n$-analytic.  We also denote by $w_{0,M}\chi$ the corresponding $1$-dimensional representation of $M_1$. We endow it with the trivial action  of $T^{M,+}$. 
We have the following lemma:  
\begin{lem}\label{lem-add-character} There is  a canonical map $V^{n-an}_{\kappa_A} \otimes_F w_{0,M} \chi \rightarrow V^{n-an}_{\kappa_A \otimes \chi} $ which is an isomorphism of $(M_1, T^{M,+})$-modules.
\end{lem}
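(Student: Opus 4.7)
The approach is to define $\phi$ by an explicit twist by the character $w_{0,M}\chi$ and verify that it is an isomorphism respecting both actions. First, I extend $w_{0,M}\chi$ from $M_1$ to $M_1\mathcal{M}_n$ by declaring it trivial on $\mathcal{M}_n$. This extension is well-defined because $w_{0,M}\chi$ is trivial on $M_1 \cap \mathcal{M}_n$, so it factors through $M_1\mathcal{M}_n / \mathcal{M}_n = M_1/(M_1 \cap \mathcal{M}_n)$, and since $\chi$ on $T(\ZZ_p)$ is $n$-analytic the extended character is an $n$-analytic character of $M_1\mathcal{M}_n$. I then define $\phi \colon V^{n-an}_{\kappa_A} \otimes_F w_{0,M}\chi \to V^{n-an}_{\kappa_A \otimes \chi}$ on pure tensors by
\[
  \phi(f \otimes v)(m) = (w_{0,M}\chi)(m^{-1})\, f(m)\, v,
\]
where $v$ is a basis vector of the one-dimensional representation $w_{0,M}\chi$ and I identify the target with a scalar via $v$.

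To verify that $\phi(f \otimes v) \in V^{n-an}_{\kappa_A \otimes \chi}$, one checks the transformation rule under $b \in \mathcal{B} \cap M_1\mathcal{M}_n$: using that $w_{0,M}\chi$ is multiplicative together with $f(mb) = (w_{0,M}\kappa_A)(b^{-1}) f(m)$, a direct computation yields $\phi(f \otimes v)(mb) = (w_{0,M}(\kappa_A \otimes \chi))(b^{-1})\, \phi(f \otimes v)(m)$. The obvious inverse map $\psi(g)(m) = (w_{0,M}\chi)(m)\, g(m) \otimes v$ provides a two-sided inverse, so $\phi$ is an isomorphism of $A$-modules. For $M_1$-equivariance I use that $M_1$ acts diagonally on the tensor product (via the natural action on $V^{n-an}_{\kappa_A}$ and by multiplication by $(w_{0,M}\chi)(h)$ on $w_{0,M}\chi$); the identity $(w_{0,M}\chi)(m^{-1}h) = (w_{0,M}\chi)(h)\, (w_{0,M}\chi)(m^{-1})$ makes both sides match. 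For $T^{M,+}$-equivariance I unfold the action $(tf)(m) = f(t^{-1}\overline{n}_m t_m t)$: the Iwahori decomposition of $t^{-1}\overline{n}_m t_m t$ has $\overline{N}$-part $t^{-1}\overline{n}_m t \in \overline{N}_1$ (using the contraction property of $T^{M,+}$) and torus part $t_m t$, and reducing both sides to evaluations of $(w_{0,M}\chi)$ on torus components produces the required match.

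The main technical hurdle is the $T^{M,+}$-equivariance, since $T^{M,+} \not\subseteq T(\ZZ_p)$ in general and the extended character $w_{0,M}\chi$ is not \emph{a priori} defined on the element $t$ appearing in the action formula. The resolution requires a careful analysis of which subproducts $w_{0,M}\chi$ actually sees: because $w_{0,M}\chi$ is a character of $M_1$ it factors through the abelianization and is therefore trivial on $N_1$ and $\overline{N}_1$, so its values depend only on torus components of Iwahori decompositions; combined with the hypothesis that $T^{M,+}$ acts trivially on the one-dimensional space $w_{0,M}\chi$, the relevant identity on torus components becomes tautological. This compatibility between the character extension and the Iwahori-decomposition formula for the $T^{M,+}$-action is the conceptual heart of the proof.
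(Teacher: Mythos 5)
Your map $\phi(f\otimes v)(m) = (w_{0,M}\chi)^{-1}(m)\,f(m)\,v$ is exactly the map the paper writes down (``send $a\otimes 1$ to $a\cdot(w_{0,M}\chi)^{-1}$''), and your verification of the transformation law under $\mathcal{B}\cap M_1\mathcal{M}_n$, the construction of the inverse, and the $M_1$-equivariance computation are all correct. For the $T^{M,+}$-equivariance, your reduction to the identity $(w_{0,M}\chi)(\overline{n}_m t_m n_m) = (w_{0,M}\chi)(t^{-1}\overline{n}_m t\cdot t_m)$, using that $t^{-1}\overline n_m t\in\overline N_1$ and that $T^{M,+}$ acts trivially on the line, is the right way to fill in what the paper ``leaves to the reader.''

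The one step I would tighten is the justification for why $w_{0,M}\chi$ only sees torus components. You write that since $w_{0,M}\chi$ is a character of $M_1$ it factors through the abelianization and is ``therefore trivial on $N_1$ and $\overline N_1$.'' That inference requires $N_1,\overline N_1\subseteq[M_1,M_1]$, which is not automatic: the commutators $[T(\ZZ_p),N_\alpha]$ generate $(\alpha(t)-1)N_\alpha$, which for $p=2$ (and sometimes $p=3$) is a proper subgroup. What actually saves the argument in the paper's applications (see the proof of Proposition \ref{prop-special2}) is that $w_{0,M}\chi$ is by construction the inflation of a character of $T(\ZZ_p)/T_b$ along the Iwahori-decomposition projection $M_1\to T(\ZZ_p)$, so triviality on $\overline N_1,N_1$ is built in. This should be understood as part of the hypothesis of the lemma rather than deduced from $w_{0,M}\chi$ being a character of $M_1$; with that reading, your proof is complete and coincides with the paper's.
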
 
\begin{proof} The map is defined by  sending a function $a \otimes 1 \in V^{n-an}_{\kappa_A}$ to the  function $a w_{0,M} \chi^{-1}$ on $M_1\mathcal{M}_n$. The rest of the lemma follows easily and is left to the reader.
\end{proof}

\subsubsection{Locally algebraic induction}\label{section-locally-algebraic-induction} Let $\kappa \in X^\star(T)^{M,+}$.  From $\kappa$ we obtain a character $\kappa:w_{0,M}^{-1}T(\ZZ_p)w_{0,M}\to F^\times$ as the composition of the inclusion $w_{0,M}^{-1}T(\ZZ_p)w_{0,M}\subset T(F)$ and $\kappa:T(F)\to F^\times$.
From the definitions, restriction induces a natural inclusion $\iota : V_\kappa \hookrightarrow V_\kappa^{lan}$. There is an action of $M$ on $V_\kappa$, and therefore actions of $M_1$ and $T^{M,+}$. The map $\iota $ is $M_1$-equivariant, but not $T^{M,+}$-equivariant. More precisely, we have the following formula: $$\iota (t v) = (w_{0,M}\kappa)(t) t \iota (v)$$
from which we deduce that the twisted map $\iota : V_\kappa \rightarrow V_\kappa^{lan} (w_{0,M} \kappa)$ is $(M_1, T^{M,+})$-equivariant. 
We can define the subspace $V_{\kappa}^{lalg}$ of $V_{\kappa}^{lan}$, consisting of elements arising from functions on $M_1\mathcal{M}_n$ for some $n$ which on each component of $M_1\mathcal{M}_n$ are the restriction of a polynomial function on $M$. This is a $(M_1,T^{M,+})$-subrepresentation. This space contains the space $V_{\kappa}(-w_{0,M} \kappa)$ of algebraic functions. Let 
$$ V_{\mathbf{1}}^{sm} = \mathrm{sm-Ind}_{B\cap M_1}^{M_1} \mathbf{1} = $$ $$ \{f : M_1 \rightarrow F\mid f(mb) = f(m)~\forall~(m,b) \in M_1 \times B\cap M_1,~f ~\textrm{is locally constant}\}.$$
The map $V_{\kappa} \otimes V_{\mathbf{1}}^{sm} \rightarrow V_{\kappa}^{lalg}(w_{0,M} \kappa)$ is an isomorphism of $(M_1, T_1^+)$-representations.

\subsubsection{The BGG complex}  For all $\alpha \in \Delta_M$, we fix a generator $X_\alpha$ of the root space $\mathfrak{u}_\alpha \subseteq \mathfrak{m}$ and we have the corresponding generator $X_{-\alpha}$ of $\mathfrak{u}_{-\alpha}$. Let $\kappa \in X^\star(T)$. 
\begin{lem}\label{lem-defi-maps-teta} For all $\alpha \in \Delta_M$ such that $\langle \kappa, \alpha^\vee\rangle \geq -1$, we have   maps:  
\begin{eqnarray*}
\Theta_{\alpha} : V_{\kappa}^{lan}(w_{0,M} \kappa) &\rightarrow& V_{s_\alpha \cdot  \kappa}^{lan}( w_{0,M} (s_{\alpha}\cdot  \kappa)) \\
f & \mapsto & X_{w_{0,M}\alpha}^{\langle   \kappa, \check{\alpha}\rangle +1} \ast f 
\end{eqnarray*}
equivariant for the action of $(M_1,T^{M,+})$.
\end{lem}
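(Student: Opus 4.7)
To prove the lemma I would proceed in three steps.

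First, I would verify that $\Theta_\alpha f$ lies in $V_{s_\alpha\cdot\kappa}^{lan}$, i.e.\ satisfies the correct right-$\mathcal{B}$-transformation rule. Setting $n = \langle\kappa,\alpha^\vee\rangle$, $\gamma = -w_{0,M}\alpha\in\Delta_M^+$, and $\mu = w_{0,M}\kappa$, one has $w_{0,M}(s_\alpha\cdot\kappa) = \mu - (n+1)w_{0,M}\alpha = \mu + (n+1)\gamma$, and $\langle\mu,\gamma^\vee\rangle = -n$. For $b \in \mathcal{B}\cap M_1\mathcal{M}_n$ the identity $mb\exp(\tau X) = m\exp(\tau\mathrm{Ad}(b)X)b$ gives
\begin{equation*}
(X_{w_{0,M}\alpha}^{n+1}\ast f)(mb) = \mu(b^{-1}) \cdot \big((\mathrm{Ad}(b)X_{w_{0,M}\alpha})^{n+1}\ast f\big)(m).
\end{equation*}
Writing $b = tu$ with $t\in \mathcal{T}$, $u\in \mathcal{U}$, the torus part contributes $(w_{0,M}\alpha)(t)^{n+1}$ by $\mathrm{Ad}(t)X_{w_{0,M}\alpha} = (w_{0,M}\alpha)(t)X_{w_{0,M}\alpha}$, and this combines with $\mu(b^{-1})$ to give the expected character $(w_{0,M}(s_\alpha\cdot\kappa))(b^{-1})$. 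It remains to show that $(\mathrm{Ad}(u)X_{w_{0,M}\alpha})^{n+1} \ast f = X_{w_{0,M}\alpha}^{n+1}\ast f$ for $u\in\mathcal{U}$.

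This is the key computation, and the cleanest route is via Verma modules. The right-$\mathfrak{b}$-equivariance of $f$ means that right-regular differentiation factors through $\mathcal{U}(\mathfrak{m})/\mathfrak{I}_\mu$, where $\mathfrak{I}_\mu$ is the right ideal generated by $X_\beta$ for $\beta \in \Phi_M^+$ and $H - d\mu(H)$ for $H \in \mathfrak{t}$; this quotient is dual to the Verma module $M(\mu)$. The classical BGG identity $[X_\gamma, X_{-\gamma}^{n+1}] = (n+1)X_{-\gamma}^n(H_\gamma - n)$ combined with $H_\gamma v_\mu = -n\, v_\mu$ shows that $X_{-\gamma}^{n+1} = X_{w_{0,M}\alpha}^{n+1}$ generates a submodule isomorphic to $M(s_\alpha\cdot\mu)$ inside $M(\mu)$; equivalently, modulo $\mathfrak{I}_\mu$, conjugation by any $u\in\mathcal{U}$ fixes $X_{w_{0,M}\alpha}^{n+1}$. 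The hypothesis $\langle\kappa,\alpha^\vee\rangle \geq -1$ ensures $n+1 \geq 0$ so that the expression is meaningful.

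Equivariance under $M_1$ is immediate since right-regular differentiation commutes with left translation by elements of $M_1 \mathcal{M}_n$. Equivariance under $T^{M,+}$ requires unpacking the formula $(tf)(m) = f(t^{-1}\overline{n}_m t_m t)$ using the Iwahori decomposition of $m$: conjugation by $t$ sends $X_{w_{0,M}\alpha}$ to $(w_{0,M}\alpha)(t)X_{w_{0,M}\alpha}$, and the resulting scalar exactly compensates the twist $\mathrm{Ind}(\mu) \leadsto \mathrm{Ind}(\mu + (n+1)\gamma)$ after taking the $T^{M,+}$-actions into account on both sides.

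The main obstacle is the Verma module identification modulo $\mathfrak{I}_\mu$ in the presence of the non-commutative conjugation $\mathrm{Ad}(u)$; the argument is classical over $\qq_p$-points but here must be done at the level of the locally analytic function space, which requires that the BCH expansion of $\mathrm{Ad}(\exp Y) X_{w_{0,M}\alpha}$ converges on $\mathcal{M}_n$ — this is exactly where the $n$-analyticity condition on $\kappa_A$ enters, ensuring that all differential operators in play act on $V_{\kappa_A}^{n-an}$.
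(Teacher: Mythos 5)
The paper itself does not give a proof of this lemma; it simply cites Urban \cite{MR2846490} and Jones \cite{JONES20111616}, so your argument is a genuine reconstruction. The overall strategy is the correct one: reduce the well-definedness of $\Theta_\alpha$ to a singular-vector computation in a Verma module via the annihilator of the right-regular action, and then handle the $T^{M,+}$-twist and the analytic convergence separately. The derivation of the torus factor is fine, and the observation that $\mathrm{Ad}(\exp Z)X_{w_{0,M}\alpha}$ is a finite sum (so everything stays in $\mathcal{U}(\mathfrak{m})$) is the right thing to say about convergence.

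However, the Verma-module step contains a sign error that, as written, invalidates the key conclusion. For $f$ with $f(mb)=(w_{0,M}\kappa)(b^{-1})f(m)$ and $\mu=w_{0,M}\kappa$, right-regular differentiation gives $X_\beta\ast f=0$ for $\beta\in\Phi_M^+$ and $H\ast f=-d\mu(H)f$, so the evaluation map $\mathcal{U}(\mathfrak{m})\to F$, $Z\mapsto (Z\ast f)(m)$, kills the \emph{left} ideal generated by $X_\beta$ and $H+d\mu(H)$; the corresponding cyclic quotient is the Verma module $M(-\mu)$ of highest weight $-\mu$, not (the dual of) $M(\mu)$. You indeed compute $\langle\mu,\gamma^\vee\rangle=-n$, hence $H_\gamma v_\mu=-nv_\mu$, and then invoke $[X_\gamma,X_{-\gamma}^{n+1}]=(n+1)X_{-\gamma}^n(H_\gamma-n)$ to conclude singularity — but plugging in gives $(n+1)(-n-n)X_{-\gamma}^nv_\mu=-2n(n+1)X_{-\gamma}^nv_\mu$, which is nonzero unless $n=0$. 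So the claimed singularity of $X_{-\gamma}^{n+1}v_\mu$ does not follow from what you wrote. Working instead in $M(-\mu)$, where $\langle-\mu,\gamma^\vee\rangle=n$, the same identity produces $(n+1)(n-n)=0$, and the singular-vector argument goes through; then, since $v_{-\mu}$ and $X_{-\gamma}^{n+1}v_{-\mu}$ are both $\mathfrak{n}^+$-singular, $\mathrm{Ad}(u)(X_{-\gamma}^{n+1})v_{-\mu}=uX_{-\gamma}^{n+1}u^{-1}v_{-\mu}=X_{-\gamma}^{n+1}v_{-\mu}$ for $u\in\exp(\mathfrak{n}^+)$, i.e. $\mathrm{Ad}(u)(X_{-\gamma}^{n+1})-X_{-\gamma}^{n+1}$ lies in the annihilator of $f$. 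The rest of your argument (the $T^{M,+}$-equivariance and the analyticity check) then carries over, though the $T^{M,+}$-part would benefit from the same care about exactly which twist of $V_\kappa^{lan}$ receives the action, since that is where the normalization of the theorem's statement comes from.
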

\begin{proof} These maps are constructed in \cite{MR2846490}, proposition 3.2.11, remark 3.3.11, proposition 3.3.12, as well as  \cite{JONES20111616}, section 5, see also the remark below theorem 13.
\end{proof}

\begin{rem}  The normalization of \cite{MR2846490} and \cite{JONES20111616} is slightly different from the one we use here. They realize the highest weight $\kappa$ representation 
$V_\kappa$ as the following induction: $ V_\kappa' = \{ f : M \rightarrow \mathbb{A}^1\mid f(\overline{b} m) = \kappa(\overline{b}) f(m),~\forall~(m,\overline{b}) \in M \times B  \}$. To translate to our  setting, one simply  applies the involution  $m \mapsto w_{0,M} m^{-1} w_{0,M}$.   We therefore get an isomorphism $V_\kappa \rightarrow V_\kappa'$ which sends $f$ to $f'$ defined by $f'(m) = f(w_{0,M} m^{-1} w_{0,M})$. There is an action of $M$ on $V_\kappa'$ induced from the right translation action of $M$ on itself. We find that $m f' = ((w_{0,M} m w_{0,M}^{-1}) f)'$. This explains the twist by $w_{0,M}$ appearing in lemma \ref{lem-defi-maps-teta} compared to \emph{loc. cit}.
\end{rem}

\begin{rem} We have $$w_{0,M} ( s_\alpha \cdot \kappa-\kappa) = - (\langle \kappa, \alpha^\vee\rangle +1) w_{0,M} \alpha.$$ In particular, for any $t \in T^{M,+}$, 
$(\langle \kappa, \alpha^\vee\rangle +1) \langle - w_{0,M} \alpha, v(t) \rangle \geq 0$.  This means that $\Theta_\alpha$ increases the slopes. 
\end{rem}

\begin{thm}[\cite{JONES20111616}, thm. 26, \cite{MR2846490}, sect. 3.3.9]\label{thm-BGG-resolution} There is an exact sequence of $(M_1,T^{M,+})$-representations 
$$ 0 \rightarrow  V_{\kappa} \otimes V_{\mathbf{1}}^{sm}  \rightarrow V_{\kappa}^{lan}(w_{0,M} \kappa) \rightarrow \bigoplus_{ w \in W_M^{(1)}} V^{lan}_{ w \cdot \kappa}(  w_{0,M}(w \cdot \kappa)) \rightarrow \cdots \rightarrow $$ $$ 
\bigoplus_{ w \in W_M^{(i)}} V^{lan}_{ w \cdot  \kappa}(w_{0,M}(w \cdot \kappa)) \rightarrow  \cdots \rightarrow V_{w_{0,M} \cdot \kappa}^{lan}(w_{0,M} (w_{0,M} \cdot \kappa)) \rightarrow 0$$
where the first map $V_{\kappa} \otimes V_{\mathbf{1}}^{sm}   \rightarrow V_{\kappa}^{lan}(w_{0,M} \kappa)$ is the natural inclusion, the second map  $ V_{\kappa}^{lan}(w_{0,M} \kappa) \rightarrow \bigoplus_{ w \in W_M^{(1)}} V^{lan}_{ w \cdot \kappa}(  w_{0,M}(w \cdot \kappa))$ is a linear combination of the maps $\Theta_{\alpha}$ for $\alpha \in \Delta_M$, and more generally  the  differentials $\oplus_{ w \in W_M^{(i)}} V^{lan}_{ w \cdot  \kappa}(w_{0,M} (w \cdot \kappa))\rightarrow \oplus_{ w \in W_M^{(i+1)}} V^{lan}_{ w \cdot  \kappa}(w_{0,M} (w \cdot \kappa))$ are linear combinations  of  maps of the form $X\ast \cdot$ for suitable elements $X \in \mathcal{U}(\mathfrak{m})$.  
\end{thm}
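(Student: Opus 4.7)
The plan is to reduce this to the classical (dual) BGG resolution of the finite-dimensional representation $V_\kappa$, transported to the locally analytic side by the interpretation of the differentials $\Theta_\alpha$ as left multiplications by elements of the enveloping algebra.

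First I would note that by Lemma \ref{lem-defi-maps-teta}, each $\Theta_\alpha$ is nothing but the action of $X_{w_{0,M}\alpha}^{\langle \kappa,\alpha^\vee\rangle+1}\in \mathcal{U}(\mathfrak{m})$ via the left-invariant differential operators coming from the right-translation action of $M_1\mathcal{M}_n$ on itself. These are exactly the locally analytic incarnations of the standard Verma-module embeddings $M(s_\alpha\cdot\kappa)\hookrightarrow M(\kappa)$ that appear in classical BGG theory (the twist by $w_{0,M}$ simply reflects that we have induced from $\mathcal{B}$ rather than the opposite Borel). Second, I would assemble these $\Theta_\alpha$ along reduced expressions of elements of $W_M$ and check the BGG commutation relations on each square indexed by $w'\leq w$ with $\ell(w)=\ell(w')+2$; this is formal from the identities in $\mathcal{U}(\mathfrak{m})$ used in classical BGG theory and applied in the locally analytic setting by Jones \cite{JONES20111616} and Schraen (cf. \cite{MR2846490}, sect.~3.3.9), and the relations are independent of the representation on which $\mathcal{U}(\mathfrak{m})$ acts.

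Third, I would prove exactness by duality. The strong continuous dual of $V_{w\cdot\kappa}^{lan}$ is a locally analytic Verma-like module, and the dual complex, after accounting for the "smooth" directions via the factorization $V_\kappa^{lalg}(w_{0,M}\kappa)=V_\kappa\otimes V_{\mathbf{1}}^{sm}$ recorded in section \ref{section-locally-algebraic-induction}, becomes (up to completion and tensoring with locally constant functions) the classical dual BGG resolution of $V_\kappa$. Exactness of that complex is the classical theorem of Bernstein--Gelfand--Gelfand; dualizing back gives exactness of our sequence. The identification of the kernel of the first nonconstant differential with $V_\kappa\otimes V_{\mathbf{1}}^{sm}$ is then transparent: a locally analytic function on $M_1\mathcal{M}_n$ satisfying the $\mathcal{B}$-equivariance is killed by every $X_{w_{0,M}\alpha}^{\langle\kappa,\alpha^\vee\rangle+1}$ for $\alpha\in\Delta_M$ precisely when, on each connected component of $M_1\mathcal{M}_n$, it is the restriction of a polynomial function on $M$ of appropriately bounded degree, i.e. precisely when it lies in $V_\kappa^{lalg}(w_{0,M}\kappa)$.

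Finally I would check $(M_1,T^{M,+})$-equivariance. The $M_1$-equivariance is automatic because every construction is in terms of right-invariant functions on $M_1\mathcal{M}_n$. For $T^{M,+}$, the key point is that $\Theta_\alpha$ is multiplication by a weight vector of weight $-(\langle \kappa,\alpha^\vee\rangle+1)w_{0,M}\alpha=w_{0,M}(s_\alpha\cdot\kappa)-w_{0,M}(\kappa)$ under the adjoint action of $T$; the twists $(w_{0,M}(w\cdot\kappa))$ appearing in the statement are rigged precisely so that this weight shift is absorbed and the differentials become $T^{M,+}$-equivariant.

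The main obstacle is step three: one must check that the classical BGG exactness really transports to exactness of a complex of Fréchet (or LB-) spaces in the locally analytic category, rather than merely giving exactness after dualizing. This is the technical heart of the construction and is the content of the references cited after the statement; in our own exposition I would invoke \cite{JONES20111616}, Thm.~26 and \cite{MR2846490}, sect.~3.3.9 for this final step, having set up the differentials and the equivariances by hand.
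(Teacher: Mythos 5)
The paper offers no proof of this theorem; it is stated with an attribution to the two cited references and taken as known. Since you concede in your last paragraph that the exactness step would have to be cited from the same two sources, your proposal in the end agrees with the paper's treatment.

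That said, two parts of your outline overstate what is formal. First, the duality heuristic in step three is not a valid argument on its own: the strong dual of $V^{lan}_{w\cdot\kappa}$ is the completed distribution module $D^{lan}_{w\cdot\kappa}$, not an algebraic Verma module, and exactness of the classical BGG resolution of algebraic Verma modules does not transport across this completion (nor across continuous duality in a category of Fr\'echet/LB-spaces) without work---that work is precisely what Jones does. Second, the identification of the kernel with $V_\kappa\otimes V^{sm}_{\mathbf 1}$ is not ``transparent'': the claim that a locally analytic function annihilated by the $X_{w_{0,M}\alpha}^{\langle\kappa,\alpha^\vee\rangle+1}$ for $\alpha\in\Delta_M$ must be locally polynomial is a genuine classicality statement, and the locally constant factor $V^{sm}_{\mathbf 1}$ has no counterpart in the algebraic BGG theorem, which sees no smooth directions. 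Both points are the actual substance of \cite{JONES20111616}, Thm.~26 and \cite{MR2846490}, sect.~3.3.9, so---as the paper does---it is cleaner to cite them for the whole statement rather than present the surrounding scaffolding as a proof with one outsourced step.
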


\begin{defi} Let $\kappa \in X_\star(T)^{M,+}$. We say that a $T^{M,+}$- eigensystem $\lambda$  in $V_\kappa^{lan}$  is of $M$-small slope (abbreviated $+,ss_M$) if $$v(\lambda) < -  (\langle \kappa, \alpha^\vee\rangle +1)w_{0,M} \alpha$$ for some $\alpha \in \Delta_M$.

We say that a $T^{M,+}$- eigensystem $\lambda$  in $V_\kappa$  is of $M$-small slope (abbreviated $+,ss_M$) if $$v(\lambda) <  w_{0,M}\kappa -  (\langle \kappa, \alpha^\vee\rangle +1)w_{0,M} \alpha$$ for some $\alpha \in \Delta_M$.
\end{defi}

\begin{coro}\label{coro-BGG-classical-1} The map $V_{\kappa}^{+,ss_M} \rightarrow V_{\kappa}^{lan,+,ss_M}(w_{0,M} \kappa)$ is an isomorphism of $(M_1, T^{M,+})$-modules.

\end{coro}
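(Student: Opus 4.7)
The plan is to deduce the corollary from the BGG resolution of Theorem \ref{thm-BGG-resolution} by taking $(+,ss_M)$-parts. First, by Lemma \ref{lem-slopeboundBGG} every element of $T^{M,++}$ acts compactly on each $V_{w\cdot\kappa}^{lan}$, and this compactness persists after twisting by the algebraic characters $w_{0,M}(w\cdot\kappa)$; consequently every term of the BGG resolution admits a slope decomposition with respect to $T^{M,+}$. The small slope projector is an exact functor on categories of such modules (Lemma \ref{lem-functorialityoffs}), so applying it to the BGG resolution produces an exact sequence of small slope parts.

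The core of the proof is then a slope vanishing statement: for each $w \in W_M$ with $\ell(w) \geq 1$, the $(+,ss_M)$-part of $V_{w\cdot\kappa}^{lan}(w_{0,M}(w\cdot\kappa))$ vanishes, the small slope condition being the one inherited Hecke-equivariantly from $V_\kappa^{lan}(w_{0,M}\kappa)$. For $w = s_\alpha$ a simple reflection, Lemma \ref{lem-slopeboundBGG} ensures the $T^{M,+}$-slopes on the finite slope part of $V_{s_\alpha\cdot\kappa}^{lan}$ are nonnegative, so after the twist every slope is bounded below (in the dominance order on $X^\star(T^d)_{\mathbb{R}}$) by
$$w_{0,M}(s_\alpha \cdot \kappa) = w_{0,M}\kappa - (\langle\kappa,\alpha^\vee\rangle+1)w_{0,M}\alpha.$$
The small slope condition on $V_\kappa^{lan}(w_{0,M}\kappa)$ is precisely the negation of this bound for some $\alpha$, so any small slope eigenvalue $\lambda$ of $V_\kappa^{lan}(w_{0,M}\kappa)$ fails the slope lower bound for $V_{s_\alpha\cdot\kappa}^{lan}(w_{0,M}(s_\alpha\cdot\kappa))$; hence $\Theta_\alpha$ annihilates the corresponding eigenvector. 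For higher $w$, an inductive slope comparison along the Bruhat order combined with Lemma \ref{lem-bruhat-inequality} (applied inside $W_M$) propagates this vanishing, since $w \geq s_\alpha$ forces $w_{0,M}(w\cdot\kappa) \geq w_{0,M}(s_\alpha \cdot \kappa)$.

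The expected main obstacle is to verify that every small slope vector in $V_\kappa^{lan}(w_{0,M}\kappa)$ is simultaneously annihilated by all components $\Theta_\beta$, not merely the one witnessing the small slope condition; this is a combinatorial check on the shifts $w_{0,M}(s_\beta \cdot \kappa)$ as $\beta$ varies in $\Delta_M$, carried out by the slope comparison above. Once this is established, exactness of the $(+,ss_M)$-BGG sequence identifies the small slope part of $V_\kappa^{lan}(w_{0,M}\kappa)$ with that of $V_\kappa \otimes V_{\mathbf{1}}^{sm}$. Since $V_{\mathbf{1}}^{sm}$ is a smooth $M_1$-representation on which $T^{M,+}$ acts only through its intersection with $T(\ZZ_p)$ (so with zero slopes), passing to small slope parts reduces this last space to $V_\kappa^{+,ss_M}$, yielding the desired isomorphism and $(M_1,T^{M,+})$-equivariance from the equivariance of all maps in the BGG resolution.
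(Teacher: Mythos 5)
Your plan is the same as the paper's: deduce the corollary from the BGG resolution of Theorem \ref{thm-BGG-resolution} together with the slope lower bound of Lemma \ref{lem-slopeboundBGG}, apply the small slope projector term by term, and then use that $V_\kappa\to V_\kappa\otimes V_{\mathbf 1}^{sm}$ is an isomorphism on finite slope parts. You also correctly flag where the work is: one must show a small slope eigenvector in $V_\kappa^{lan}(w_{0,M}\kappa)$ is killed by \emph{every} $\Theta_\beta$, not only by the $\Theta_{\alpha_0}$ for the $\alpha_0$ witnessing the $ss_M$ condition.

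But you do not actually establish this. Your sentence ``carried out by the slope comparison above'' is not accurate: the slope comparison you give only shows that $\Theta_{\alpha_0}$ kills the eigenvector for the witnessing $\alpha_0$. For a different $\beta\in\Delta_M$ with $\beta\neq\alpha_0$, one must check that $v(\lambda)\not\geq w_{0,M}(s_\beta\cdot\kappa)$, which requires more than the bound from Lemma \ref{lem-slopeboundBGG}. Concretely, one has to rule out the simultaneous inequalities $v(\lambda)\geq w_{0,M}(s_\beta\cdot\kappa)$ and $v(\lambda)<w_{0,M}(s_{\alpha_0}\cdot\kappa)$, which by transitivity would force $w_{0,M}(s_\beta\cdot\kappa)<w_{0,M}(s_{\alpha_0}\cdot\kappa)$. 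Writing
$$ w_{0,M}(s_{\alpha_0}\cdot\kappa)-w_{0,M}(s_\beta\cdot\kappa)=(\langle\kappa,\alpha_0^\vee\rangle+1)(-w_{0,M}\alpha_0)-(\langle\kappa,\beta^\vee\rangle+1)(-w_{0,M}\beta), $$
this is a combination of the two \emph{distinct} simple roots $-w_{0,M}\alpha_0$ and $-w_{0,M}\beta$ with coefficients of opposite sign (the negativity of the second coefficient uses $M$-dominance of $\kappa$), so it cannot lie in $\mathbb{R}_{\geq 0}\Delta_d$; this is the missing lemma and it is what makes the argument work. Your Bruhat-order induction does not substitute for it: propagating from $s_{\alpha_0}$ via $w\geq s_{\alpha_0}$ never reaches $s_\beta$ for $\beta\neq\alpha_0$, and once all length-one terms are handled the higher terms follow from exactness without any Bruhat argument. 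Finally, your parenthetical that $T^{M,+}$ acts on $V_{\mathbf 1}^{sm}$ ``only through its intersection with $T(\ZZ_p)$'' is not literally correct — the action does not factor through $T(\ZZ_p)$ — but the intended conclusion, that the finite slope part of $V_{\mathbf 1}^{sm}$ is the line of constants so that $(V_\kappa\otimes V_{\mathbf 1}^{sm})^{fs}=V_\kappa^{fs}$, matches the paper.
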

\begin{proof} It follows from theorem \ref{thm-BGG-resolution} and lemma \ref{lem-slopeboundBGG},  that the map $V_{\kappa} \otimes V_{\mathbf{1}}^{sm} \rightarrow V_{\kappa}^{lan}(w_{0,M} \kappa)$ is an isomorphism on the small slope part. On the other hand, the map $V_\kappa \rightarrow V_{\kappa} \otimes V_{\mathbf{1}}$ is an isomorphism on the finite slope part. 
\end{proof}

\begin{ex} Let $M = \mathrm{SL}_2/\qq_p$, with diagonal torus $T$ and upper triangular Borel $B$. Then $X^\star(T) = \ZZ$, and there is a unique simple root $\alpha = 2$. For any $k \in X^\star(T)^{M,+} = \ZZ_{\geq 0}$, we have $V_k = \mathrm{Sym}^k(\mathrm{St})$. The valuation of the eigenvalues of $t = \mathrm{diag}(p,p^{-1})$ on $V_k$ are $-k, -k+2, \cdots, k$. 
The $M-ss$ condition on $V_k$ translates into the condition that the eigenvalues of $t$ have valuation $< -k + 2(k+1) = k+2$. This condition is always satisfied (in the case of $\mathrm{SL}_2$) and we have $V_k = V_k^{M-ss}$. The space $V_{k}^{lan}$ identifies with the space  of locally analytic functions on $p\ZZ_p$ and the action of $t$ is given by $f(z) \mapsto f(p^2z)$. A basis of finite slope vectors in $V_k^{lan}$ is given by the monomial functions $z \mapsto z^n$ for $n \in \ZZ_{\geq 0}$. The slopes of $t$ on this basis are $0, 2, \cdots, 2n, \cdots $. 
The inclusion $V_k  \hookrightarrow V_k^{lan}$ identifies $V_k$ with the polynomial functions of degree $\leq k$, which is indeed the space defined by the slope condition $M-ss$. 
\end{ex}

\subsubsection{Distributions}
Let $\kappa_A :w_{0,M}^{-1}T(\ZZ_p)w_{0,M} \rightarrow A^\times$ be $n_0$-analytic.  For all $n \geq n_0$,  
we now consider $ (V_{\kappa_A}^{n-an})^\vee$  the continuous $A$-dual. This is a Banach $A$-module. However it is not a projective Banach $A$-module in general (it is so if $A$ is a field).  In order to remedy this, we introduce another related space. Let  $\overset{\circ}{\mathcal{M}}_n$ be the open subset of $\mathcal{M}_n$ of elements $m \in \mathcal{M}_n$ such that $m = 1 ~\mod p^{n + \epsilon}$ for some $\epsilon >0$ (this is an ``open polydisc'').  Let 
\begin{eqnarray*}
{V}_{\kappa_A}^{\circ, n-an} &= &\\
  \{  f : (M_1\overset{\circ}{\mathcal{M}}_n)_S \rightarrow \mathbb{A}^{1,an}_S&,& f(mb) = (w_{0,M}\kappa_A)(b^{-1})  f(m),~\forall~(m,b) \in   (M_1\mathcal{M}_n)_S \times (\mathcal{B} \cap (M_1\mathcal{M}_n))_S \\ && f ~\textrm{ is bounded} \}
\end{eqnarray*} 
Let ${V}_{\kappa_A}^{\circ,n-an,+} \subseteq{V}_{\kappa_A}^{\circ,n-an}$ be the subset of functions which are bounded by $1$. The space ${V}_{\kappa_A}^{\circ,n-an,+}$ is a projective limit of finite free $A^+$-module, which defines a topology on it, and this determines a topology on $V_{\kappa_A}^{\circ,n-an}$ (with this topology$V_{\kappa_A}^{\circ,n-an}$ is not a Banach module).

We have a map $V_{\kappa_A}^{n-an} \rightarrow{V}_{\kappa_A}^{\circ, n-an} $ with dense image. 
 We let ${D}^{n-an}_{\kappa_A} = ({V}_{\kappa_A}^{\circ,n-an})^\vee$ be the continuous $A$-dual, equipped with the strong topology. This is a projective Banach $A$-module and we have a map ${D}^{n-an}_{\kappa_A}  \rightarrow  (V_{\kappa_A}^{n-an})^\vee$ with dense image.  We let $D^{n-an, +}_{\kappa_A}$ be the continuous $A^+$-dual of $V_{\kappa_A}^{\circ,n-an, +}$. It is an open and bounded submodule of ${D}^{n-an}_{\kappa_A}$. We let ${D}^{lan}_{\kappa_A}= \lim_n {D}^{n-an}_{\kappa_A} = \lim_n (V_{\kappa_A}^{lan})^\vee $.  This is a compact projective limit of  Banach $A$-modules (the distributions of weight $\kappa_A$). There is a perfect pairing:
$$ \langle-,- \rangle :  {V}^{lan}_{ \kappa_A} \times D_{\kappa_A}^{lan} \rightarrow A.$$
The space   $D^{lan}_{\kappa_A}$ carries a right action of $(M_1, T^{M,+})$ defined by $\langle m f , \mu \rangle = \langle f, \mu m \rangle$, $\langle t f, \mu \rangle = \langle f, \mu t \rangle$ for $(t, m, f, \mu) \in T^{M,+} \times M_1 \times V_{\kappa_A}^{lan} \times D^{lan}_{ \kappa_A}$. The space $D^{lan}_{\kappa_A}$ therefore carries a left  action of $(M_1, T^{M,-})$ defined by $\langle m^{-1} f, \mu \rangle = \langle f, m \mu \rangle$, $\langle t^{-1} f, \mu \rangle = \langle f, t \mu \rangle$ for $ (t, m, f, \mu) \in T^{M,-} \times M_1 \times V_{\kappa_A}^{lan} \times D^{lan}_{ \kappa_A}$. 
The action of $T^{M,--}$ is by compact operators on $D^{lan}_{\kappa_A}$.  

Let $\kappa \in X^\star(T)^+$.  By dualizing the exact sequence of theorem \ref{thm-BGG-resolution}, we get the following complex of $(M_1, T^{M,-})$-representations: 
$$ 0 \rightarrow   D^{lan}_{ w_{0,M}.\kappa} ( -w_{0,M} (w_{0,M} \cdot \kappa)) \rightarrow  \cdots \rightarrow \oplus_{ w \in W_M^{(i)}} D^{lan}_{ w \cdot  \kappa}(-w_{0,M}  (w \cdot \kappa)) $$ $$\rightarrow \cdots  \rightarrow \oplus_{ w \in W_M^{(1)}} D^{lan}_{ w \cdot \kappa}( -w_{0,M} (w \cdot \kappa) ) \rightarrow D^{lan}_{\kappa}(-w_{0,M} \kappa) \rightarrow  0 $$
This complex is exact except in the last degree. The cokernel  of $\oplus_{ w \in W_M^{(1)}} D^{lan}_{ w \cdot \kappa}(- w_{0,M} (w \cdot \kappa) ) \rightarrow D^{lan}_{\kappa}(-w_{0,M} \kappa)$ maps to $V_\kappa^\vee$. 
Passing to the finite slope part  gives an exact sequence: 
$$ 0 \rightarrow   D^{lan}_{ w_{0,M}.\kappa} (-w_{0,M} (w_{0,M} \cdot \kappa) )^{fs} \rightarrow  \cdots \rightarrow \oplus_{ w \in W_M^{(i)}} D^{lan}_{ w \cdot  \kappa}(-w_{0,M}  (w \cdot \kappa))^{fs} $$ $$\rightarrow \cdots  \rightarrow \oplus_{ w \in W_M^{(1)}} D^{lan}_{ w \cdot \kappa}( -w_{0,M} (w \cdot \kappa))^{fs} \rightarrow D^{lan}_{\kappa}(-w_{0,M} \kappa)^{fs} \rightarrow  V_\kappa^\vee \rightarrow 0 $$

\begin{defi} Let $\kappa \in X_\star(T)^{M,+}$. We say that a $T^{M,-}$- eigensystem $\lambda$  in $D_\kappa^{lan}$  is of $M$-small slope (abbreviated $-,ss_M$) if $$v(\lambda) >  (\langle \kappa, \alpha^\vee\rangle +1)w_{0,M} \alpha$$ for some $\alpha \in \Delta_M$.

We say that a $T^{M,-}$- eigensystem $\lambda$  in $V^\vee_\kappa$  is of $M$-small slope (abbreviated $-,ss_M$) if $$v(\lambda) > -  w_{0,M}\kappa +  (\langle \kappa, \alpha^\vee\rangle +1)w_{0,M} \alpha$$ for some $\alpha \in \Delta_M$.
\end{defi}

We have the following control theorem:

\begin{coro} The map $(D_\kappa^{lan})^{-,ss_M}(-w_{0,M}\kappa) \rightarrow (V^\vee_{\kappa})^{-,ss_M}$ is an isomorphism of $(M_1,T^{M,-})$-modules.
\end{coro}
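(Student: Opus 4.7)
\medskip

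The plan is to prove this corollary by a dualization of the argument used for Corollary \ref{coro-BGG-classical-1}, namely by exploiting the dual BGG complex displayed immediately before the definition of the $-,ss_M$ condition. Explicitly, this complex
$$ 0 \rightarrow D^{lan}_{w_{0,M}\cdot\kappa}(-w_{0,M}(w_{0,M}\cdot\kappa))^{fs} \rightarrow \cdots \rightarrow \bigoplus_{w \in W_M^{(1)}} D^{lan}_{w\cdot\kappa}(-w_{0,M}(w\cdot\kappa))^{fs} \rightarrow D^{lan}_\kappa(-w_{0,M}\kappa)^{fs} \rightarrow V_\kappa^\vee \rightarrow 0$$
is a resolution in the category of $(M_1,T^{M,-})$-modules. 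The finite slope part has already been extracted, and the $-,ss_M$ condition — which is defined by a slope inequality that is preserved by $T^{M,-}$-equivariant maps — cuts out a further direct summand of each term. In particular applying the $-,ss_M$ projector to the complex preserves exactness.

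Thus it suffices to prove that $\bigl(D^{lan}_{w\cdot\kappa}(-w_{0,M}(w\cdot\kappa))\bigr)^{-,ss_M,fs} = 0$ for every $w \in W_M$ with $w\neq 1$. First I would dualize the slope bound of Lemma \ref{lem-slopeboundBGG}: since slopes of $T^{M,+}$ on $V_\mu^{lan,fs}$ are $\geq 0$, the slopes of the induced left $T^{M,-}$-action on $D^{lan}_\mu$, viewed as elements of $X^\star(T^d)_{\mathbb{R}}$, are also bounded appropriately, so that on the twist $D^{lan}_{w\cdot\kappa}(-w_{0,M}(w\cdot\kappa))^{fs}$ every slope $\chi$ satisfies $\chi\leq -w_{0,M}(w\cdot\kappa)$. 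Second I would produce the combinatorial inequality: given $w\neq 1$ in $W_M$, lift a reduced expression to choose a simple reflection $s_\alpha \leq w$ in the Bruhat order on $W_M$ with $\alpha\in\Delta_M$; then applying Lemma \ref{lem-bruhat-inequality} inside $M$ with $\nu=\kappa$ (which lies in $X^\star(T)^{M,+}-\rho_M$ shifted appropriately) gives $w\cdot\kappa\preceq_M s_\alpha\cdot\kappa$, i.e. $s_\alpha\cdot\kappa-w\cdot\kappa\in\R_{\geq 0}\Delta_M$. Applying the involution $-w_{0,M}$, which sends $\Delta_M$ to $\R_{\geq 0}\Delta_M$ (because $-w_{0,M}$ permutes the simple roots of $M$), we obtain
$$-w_{0,M}(w\cdot\kappa)\;\leq\;-w_{0,M}(s_\alpha\cdot\kappa)\;=\;-w_{0,M}\kappa + (\langle\kappa,\alpha^\vee\rangle+1)\,w_{0,M}\alpha.$$

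Combining these two steps: any slope $\chi$ appearing in $D^{lan}_{w\cdot\kappa}(-w_{0,M}(w\cdot\kappa))^{fs}$ satisfies $\chi\leq -w_{0,M}\kappa + (\langle\kappa,\alpha^\vee\rangle+1)w_{0,M}\alpha$ for this particular $\alpha\in\Delta_M$, and in fact the same bound with some $\alpha$ holds in every simple direction because the choice of $\alpha$ depends on $w$ but we only need existence of one. But this is exactly the negation of the $-,ss_M$ condition after the twist by $-w_{0,M}\kappa$ (which is the condition $\chi > -w_{0,M}\kappa + (\langle\kappa,\alpha^\vee\rangle+1)w_{0,M}\alpha$ for \emph{some} $\alpha$); hence the $-,ss_M$ part vanishes. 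The main obstacle I anticipate is bookkeeping: keeping track of which partial order and which twist governs the slope inequality at each step, particularly that the $-,ss_M$ condition on $D^{lan}_\kappa$ is a condition on slopes \emph{before} the twist by $-w_{0,M}\kappa$ while the combinatorial inequality most naturally appears after the twist, and that the involution $-w_{0,M}$ reverses the partial order on $M$-dominant cones in a way that matches the reversal of inequalities when passing from $T^{M,+}$ to $T^{M,-}$.
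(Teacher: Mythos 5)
Your approach — dualizing the BGG resolution and combining the dual of Lemma~\ref{lem-slopeboundBGG} with the Bruhat inequality — is exactly what the paper intends by its one-line proof (``This is the dual of corollary~\ref{coro-BGG-classical-1}''), and your reduction, slope bound, and use of Lemma~\ref{lem-bruhat-inequality} are correct.  However, there is a genuine gap in the final logical step.

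The $-,ss_M$ condition is existential: $\chi$ is small slope if $\chi > -w_{0,M}(s_{\alpha'}\cdot\kappa)$ for \emph{some} $\alpha'\in\Delta_M$.  To show the $-,ss_M$ part of $D^{lan}_{w\cdot\kappa}(-w_{0,M}(w\cdot\kappa))^{fs}$ vanishes, you must therefore show that the inequality fails for \emph{every} $\alpha'\in\Delta_M$.  What you have shown is that for one specific $\alpha$ (chosen with $s_\alpha\leq w$) the bound $\chi\leq -w_{0,M}(s_\alpha\cdot\kappa)$ holds, so the inequality fails for that one $\alpha$; the sentence ``this is exactly the negation of the $-,ss_M$ condition'' is not correct, because a priori $\chi$ could still satisfy $\chi > -w_{0,M}(s_{\alpha'}\cdot\kappa)$ for some other $\alpha'\neq\alpha$.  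The phrase ``we only need existence of one'' points in the wrong direction: existence of one $\alpha'$ satisfying the inequality \emph{establishes} the small-slope condition rather than negating it.

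The gap is fillable, but requires an additional combinatorial fact that you have not recorded.  From $\chi\leq -w_{0,M}(w\cdot\kappa)$ and the putative $\chi > -w_{0,M}(s_{\alpha'}\cdot\kappa)$ one deduces $-w_{0,M}(s_{\alpha'}\cdot\kappa) < -w_{0,M}(w\cdot\kappa)$, and since $w_{0,M}$ is order-reversing this gives $s_{\alpha'}\cdot\kappa < w\cdot\kappa$.  One must then show this cannot hold for $w\neq 1$.  Since exactness of the dual BGG complex only forces you to treat the terms indexed by $W_M^{(1)}$ (the kernel of $D^{lan}_\kappa(-w_{0,M}\kappa)^{fs}\to V_\kappa^\vee$ is the image of $\bigoplus_{\alpha\in\Delta_M}D^{lan}_{s_\alpha\cdot\kappa}(\cdots)^{fs}$), it suffices to rule out $s_{\alpha'}\cdot\kappa < s_\beta\cdot\kappa$ for $\alpha',\beta\in\Delta_M$; here $\kappa - s_{\alpha'}\cdot\kappa = (\langle\kappa,\alpha'^\vee\rangle+1)\alpha'$ and $\kappa - s_{\beta}\cdot\kappa = (\langle\kappa,\beta^\vee\rangle+1)\beta$ are positive multiples of simple roots, so their difference has a strictly negative coefficient on $\beta$ when $\alpha'\neq\beta$ and vanishes when $\alpha'=\beta$, hence cannot be $>0$.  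If you insist on treating all $w\neq 1$, the corresponding claim ($\kappa + \rho_M - w(\kappa+\rho_M)$ cannot be a proper submultiple of $(\langle\kappa+\rho_M,\alpha'^\vee\rangle)\alpha'$) needs a separate argument, e.g.\ comparing the lengths of $\kappa+\rho_M$ and $w(\kappa+\rho_M)$ under a Weyl-invariant inner product.  Once this observation is supplied, the rest of the proof is sound.
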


\begin{proof} This is the dual  of corollary \ref{coro-BGG-classical-1}. \end{proof}

\subsection{$p$-adic families of  sheaves} 

\subsubsection{Definition of the sheaves}
We  let $K_p = K_{p,m',0}$ with $m' >0$. Let $w \in \WM$.  By the results of section \ref{sect-further-restriction-group-structure}, for any $n \geq 0$, over $(\pi_{HT, K_p}^{tor})^{-1}(]C_{w,k}[_{n,n} K_p)$ the torsor $\mathcal{M}_{dR}^{an}$ has a reduction to a torsor $\mathcal{M}_{HT,n,K_p}$ under the group $K_{p,w,M_\mu}^c\mathcal{M}_{\mu,n}^c$.
The group $K_{p,w,M_\mu}^c$ has an Iwahori decomposition by proposition \ref{prop-Iwahori-deco-of-weird-group}. Moreover, $K_{p,w,M_\mu}^c \cap T^c = wT^c(\ZZ_p)w^{-1}$. 

Let  $(A, A^+)$ be a Tate algebra over $(F, \ocal_{F})$.  Let $\nu_A:T^c(\ZZ_p) \rightarrow A^\times$ be an $n$-analytic character.   Let $\kappa_A:w_{0,M}wT^c(\ZZ_p)(w_{0,M}w)^{-1}\rightarrow A^\times$ be given by $\kappa_A=-w_{0,M}w\nu_A-(w_{0,M}w\rho+\rho)$.

We can construct a sheaf $\mathcal{V}_{\nu_A}^{n-an}$ over $(\pi_{HT,K_p}^{tor})^{-1}(]C_{w,k}[_{n,n}K_p)$,  modeled on $V_{\kappa_A}^{n-an}$.  Namely consider the torsor $\pi : \mathcal{M}_{HT,n,K_p} \rightarrow (\pi_{HT,K_p}^{tor})^{-1}(]C_{w,k}[_{n,n}K_p)$ and $\pi  \times 1 : \mathcal{M}_{HT,n,K_p} \times \Spa(A,A^+) \rightarrow (\pi_{HT}^{tor})^{-1}(]C_{w,k}[_{n,n}K_p) \times \Spa (A, A^+)$. We let $\mathcal{V}_{\nu_A}^{n-an}$ be the subsheaf of $(\pi\times 1)_\star \oscr_{\mathcal{M}_{HT,n,K_p} \times \Spa (A,A^+)}$ of sections which satisfy $f(m b) = (w_{0,M}\kappa_A)(b^{-1}) f(m)$ for all $b \in \mathcal{B}^c \cap (K_{p,w,M_\mu}^c\mathcal{M}^c_{\mu,n})$.  We also have $\mathcal{D}_{\nu_A}^{n-an} \subseteq (\mathcal{V}_{\nu_A}^{n-an})^\vee\otimes \mathcal{V}_{-2\rho_{nc}}=(\mathcal{V}_{\nu_A-2w^{-1}\rho_{nc}}^{n-an})^\vee$, a sheaf locally modeled on $D_{\kappa_A+2\rho_{nc}}^{n-an}$.

\subsubsection{First properties}  We prove that the interpolation sheaves are locally projective Banach sheaves.

\begin{prop}\label{dist-sheaves-Banach} The sheaves $\mathcal{V}_{\nu_A}^{n-an}$ and $\mathcal{D}_{\nu_A}^{n-an}$ are locally projective Banach sheaves over  $(\pi_{HT,K_p}^{tor})^{-1}(]C_{w,k}[_{n,n}K_p)$. More precisely, for any affinoid  $\mathcal{U} = \Spa(R,R^+) \hookrightarrow (\pi_{HT,K_p}^{tor})^{-1}(]C_{w,k}[_{n,n}K_p)$, $\mathcal{V}_{\nu_A}^{n-an}(\mathcal{U})$ and $\mathcal{D}_{\nu_A}^{n-an}(\mathcal{U})$ are projective Banach $R \hat{\otimes}_F A$-modules and the maps  $\mathcal{V}_{\nu_A}^{n-an}(\mathcal{U}) \hat{\otimes}_{R} \oscr_{\mathcal{U}} \rightarrow \mathcal{V}_{\nu_A}^{n-an}\vert_{\mathcal{U}}$ and $\mathcal{D}_{\nu_A}^{n-an}(\mathcal{U}) \hat{\otimes}_{R} \oscr_{\mathcal{U}} \rightarrow \mathcal{D}_{\nu_A}^{n-an}\vert_{\mathcal{U}}$ are isomorphisms. 
\end{prop}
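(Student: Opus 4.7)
My plan is to reduce the statement to the situation where the torsor $\mathcal{M}_{HT,n,K_p}$ is trivial, and then read off the conclusion from the corresponding properties of the model spaces $V_{\kappa_A}^{n-an}$ and $D_{\kappa_A + 2\rho_{nc}}^{n-an}$, which are well-known to be projective Banach $A$-modules (orthonormalizable in the case of $V$, and a direct factor of an orthonormalizable module for $D$). The work is in setting up and effecting this trivialization-and-descent argument in the abelian-type setting.

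First I will reduce to the Hodge-type case. By principle \ref{abelianstrategy} and remark \ref{rem-on-the-abelian-case}, the torsor $\mathcal{M}_{HT,n,K_p}$ for an abelian-type Shimura datum arises, on each connected component, as the quotient by a finite group $\Delta$ of the pushout along $M_{\mu_{B_1}}^c \to M_{\mu_G}^c$ of the analogous torsor for a Hodge-type cover $(B_1,X_{B_1}) \to (G,X)$. Since the definition of $\mathcal{V}_{\nu_A}^{n-an}$ depends only on the torsor and the model representation, this identifies $\mathcal{V}_{\nu_A}^{n-an}$ locally with the $\Delta$-invariants of its pullback to the Hodge-type cover; because we are in characteristic zero, averaging by $\tfrac{1}{|\Delta|}\sum_{\delta\in\Delta}\delta$ splits off the invariants as a direct factor, so the conclusion on the abelian-type side follows from the Hodge-type case.

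Next I will work in the Hodge-type case with a perfect cone decomposition. Fix an affinoid $\mathcal{U}=\Spa(R,R^+)\hookrightarrow (\pi_{HT,K_p}^{tor})^{-1}(]C_{w,k}[_{n,n}K_p)$. Using a pregood affinoid cover of $\mathcal{S}^{tor}_{K^p,\Sigma}$ and intersecting with $\mathcal{U}$, I may assume $\mathcal{U}$ is pregood. Proposition \ref{prop-torsor-become-trivial} then provides a compact open $K_p'\subseteq K_p$ such that on the finite flat cover $\mathcal{U}'=\mathcal{U}\times_{\mathcal{S}^{tor}_{K^pK_p,\Sigma}}\mathcal{S}^{tor}_{K^pK_p',\Sigma}=\Spa(R',R'^+)$, the torsor $\mathcal{M}_{HT,n,K_p}$ acquires a section. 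Over $\mathcal{U}'\times \Spa(A,A^+)$ this section identifies $\mathcal{V}_{\nu_A}^{n-an}|_{\mathcal{U}'}$ with the sheaf $V_{\kappa_A}^{n-an}\,\hat{\otimes}_F\, \oscr_{\mathcal{U}'\times\Spa(A,A^+)}$, and similarly for $\mathcal{D}_{\nu_A}^{n-an}$ with model $D_{\kappa_A+2\rho_{nc}}^{n-an}$. Since $V_{\kappa_A}^{n-an}$ and $D_{\kappa_A+2\rho_{nc}}^{n-an}$ are projective Banach $A$-modules, the completed tensor product assertion and the fact that these presentations descend to any rational subset $\mathcal{V}\subseteq \mathcal{U}'$ are straightforward, giving the desired local projective Banach sheaf structure over $\mathcal{U}'$ together with the base change isomorphism.

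Finally I will descend from $\mathcal{U}'$ to $\mathcal{U}$. The map $\mathcal{U}'\to\mathcal{U}$ is finite flat, Galois with group $H=K_p/K_p'$ (a finite group of order invertible in $R$), so the projector $e=\tfrac{1}{|H|}\sum_{h\in H}h$ realizes $\mathcal{V}_{\nu_A}^{n-an}(\mathcal{U})= (\mathcal{V}_{\nu_A}^{n-an}(\mathcal{U}'))^{H}$ as a direct summand of $\mathcal{V}_{\nu_A}^{n-an}(\mathcal{U}')$ as a topological $R\,\hat{\otimes}_F A$-module. Since a direct summand of a projective Banach module is projective, this yields the projectivity of $\mathcal{V}_{\nu_A}^{n-an}(\mathcal{U})$; compatibility with base change to rational subsets $\mathcal{V}\subseteq\mathcal{U}$ follows by applying the same averaging after base change. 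The argument for $\mathcal{D}_{\nu_A}^{n-an}$ is identical, using its identification with a subsheaf of $(\mathcal{V}_{\nu_A-2w^{-1}\rho_{nc}}^{n-an})^\vee$. The main obstacle is making the trivialization of proposition \ref{prop-torsor-become-trivial} interact well with the weight parameter $\Spa(A,A^+)$ and with the sheaf property over rational subsets; this is a routine but slightly technical verification that the base-change property $(2)$ of definition \ref{defi-banach-sheaf} is preserved by the two operations (completed tensor product on the cover, and taking $H$-invariants on descent).
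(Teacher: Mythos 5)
Your proposal follows essentially the same route as the paper's proof: invoke Proposition~\ref{prop-torsor-become-trivial} to produce a finite flat cover $\mathcal{U}'=\Spa(R',R'^+)\to\mathcal{U}$, Galois with finite group $H$, on which $\mathcal{M}_{HT,n,K_p}$ trivializes; identify $\mathcal{V}_{\nu_A}^{n-an}(\mathcal{U}')$ with $V_{\kappa_A}^{n-an}\hat{\otimes}_F R'$; and descend by realizing $\mathcal{V}_{\nu_A}^{n-an}(\mathcal{U})=(V_{\kappa_A}^{n-an}\hat{\otimes}_F R')^{H}$ as a direct factor (via the averaging projector) of a projective Banach $R\hat{\otimes}_F A$-module. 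The base-change claim $\mathcal{V}_{\nu_A}^{n-an}(\mathcal{U})\hat{\otimes}_R\oscr_{\mathcal{U}}\simeq\mathcal{V}_{\nu_A}^{n-an}|_{\mathcal{U}}$ then follows by the same averaging after base change, exactly as in the paper.

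Two remarks on your handling of the hypotheses. First, you make the abelian-to-Hodge reduction explicit (via Principle~\ref{abelianstrategy} and Remark~\ref{rem-on-the-abelian-case}), whereas the written proof in the paper invokes Proposition~\ref{prop-torsor-become-trivial} directly even though that proposition is stated only for Hodge-type data with a perfect cone decomposition; your version is in fact more careful on this point. Second, your reduction ``intersecting with $\mathcal{U}$, I may assume $\mathcal{U}$ is pregood'' is not quite justified as stated: the intersection of a pregood affinoid with an arbitrary affinoid is only known to be a finite union of rational subsets of the pregood one (each of which is pregood), not a single pregood affinoid, so at best this yields the locally projective Banach sheaf property rather than projectivity of $\mathcal{V}_{\nu_A}^{n-an}(\mathcal{U})$ for literally every affinoid $\mathcal{U}$. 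The paper's written proof has the same gap, since it applies Proposition~\ref{prop-torsor-become-trivial} without checking pregoodness; so this is a shared imprecision rather than a defect specific to your argument.
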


\begin{proof} It follows from proposition \ref{prop-torsor-become-trivial}, that there is a finite flat morphism $f : \mathcal{U}' = \Spa (R', (R')^+) \rightarrow \mathcal{U}$,  a finite group $H$ acting on $\mathcal{U}'$ such that $\mathcal{U}'/H = \mathcal{U}$, such that the torsor $\mathcal{M}_{HT,n,K_p}\vert_{\mathcal{U}'}$ is trivial.  It follows that $\mathcal{V}_{\nu_A}^{n-an}(\mathcal{U}) = (V_{\kappa_A}^{n-an} \hat{\otimes} R')^H$ (where $H$ acts semi-linearly) is a direct factor of the projective $A \hat{\otimes}_F R$-module $V_{\kappa_A}^{n-an} \hat{\otimes} R'$.  Similarly, $$ \mathcal{V}_{\nu_A}^{n-an}\vert_{\mathcal{U}} = (V_{\kappa_A}^{n-an} \otimes_F \oscr_{\mathcal{U}'})^H =  (V_{\kappa_A}^{n-an} \otimes_F R' \otimes_R  \oscr_{\mathcal{U}})^H = \mathcal{V}_{\nu_A}^{n-an}(\mathcal{U}) \hat{\otimes}_{R} \oscr_{\mathcal{U}}.$$
The case of the distribution sheaf is similar and left to the reader. 
\end{proof}

\begin{coro}\label{coro-inpro}   Let $U \subseteq (\pi_{HT, K_p}^{tor})^{-1}(]C_{w,k}[_{n,n} K_p)$ be an open subset which is a finite union of quasi-Stein opens. Then $$\mathrm{R}\Gamma_{et}(U, \mathcal{V}_{\nu_A}^{n-an}) = \mathrm{R}\Gamma_{an}(U, \mathcal{V}_{\nu_A}^{n-an}),\quad\text{and}\quad \mathrm{R}\Gamma_{et}(U, \mathcal{D}_{\nu_A}^{n-an}) = \mathrm{R}\Gamma_{an}(U, \mathcal{D}_{\nu_A}^{n-an})$$ and they are both representable by objects of $\mathrm{Pro}_{\N}(\mathcal{K}^{proj}(\mathbf{Ban}(A)))$.
\end{coro}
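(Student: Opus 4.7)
The plan is to combine Proposition \ref{dist-sheaves-Banach} on the structure of the sheaves with Lemma \ref{lem-complex-frechet} to obtain representability in $\mathrm{Pro}_{\mathbb{N}}(\mathcal{K}^{proj}(\mathbf{Ban}(A)))$, and then to identify etale and analytic cohomology by exploiting a finite flat Galois trivialization of the torsor $\mathcal{M}_{HT,n,K_p}$.

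First I would observe that Proposition \ref{dist-sheaves-Banach} says more than ``locally projective Banach sheaf'': the isomorphism $\mathcal{V}_{\nu_A}^{n-an}(\mathcal{U})\hat{\otimes}_R\oscr_{\mathcal{U}}\simeq\mathcal{V}_{\nu_A}^{n-an}|_{\mathcal{U}}$ (and likewise for $\mathcal{D}_{\nu_A}^{n-an}$) holds on \emph{any} affinoid $\mathcal{U}$. Combined with the basic lemma recalled just before Definition \ref{defi-banach-sheaf} (the case of $M\hat{\otimes}_A\oscr_{\mathcal{X}}$) and the quasi-Stein analogue stated just before \S\ref{section-Cohomological properties of Banach sheaves}, this gives vanishing of the higher analytic cohomology of both sheaves on affinoids and on quasi-Stein opens. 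Since $U$ is a finite union of quasi-Stein opens, Lemma \ref{lem-complex-frechet} then represents $\mathrm{R}\Gamma_{an}(U,\mathcal{V}_{\nu_A}^{n-an})$ and $\mathrm{R}\Gamma_{an}(U,\mathcal{D}_{\nu_A}^{n-an})$ by objects of $\mathrm{Pro}_{\mathbb{N}}(\mathcal{K}^{proj}(\mathbf{Ban}(A)))$.

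For the identification of etale with analytic cohomology, I would argue locally. By Proposition \ref{prop-torsor-become-trivial} (in the Hodge type case, extended to the general abelian case via Principle \ref{abelianstrategy} and Remark \ref{rem-on-the-abelian-case}), any pregood affinoid $\mathcal{U}\subseteq(\pi_{HT, K_p}^{tor})^{-1}(]C_{w,k}[_{n,n} K_p)$ admits a finite flat Galois cover $f:\mathcal{U}'\to\mathcal{U}$ with group $H$ trivializing the torsor $\mathcal{M}_{HT,n,K_p}$. Over $\mathcal{U}'$ the sheaves pull back to $V_{\kappa_A}^{n-an}\hat{\otimes}_F\oscr_{\mathcal{U}'}$ and $D_{\kappa_A+2\rho_{nc}}^{n-an}\hat{\otimes}_F\oscr_{\mathcal{U}'}$ respectively, whose higher cohomology vanishes (etale or analytic alike, by the same basic lemma applied to the trivialized module). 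Hochschild--Serre for the finite Galois cover $f$ then identifies both cohomologies on $\mathcal{U}$ with $H^\bullet(H,V_{\kappa_A}^{n-an}\hat{\otimes}_F R')$ (resp.\ distribution analogue); and since the coefficients live in an $F$-vector space and $|H|$ is invertible in $F\supseteq\mathbb{Q}_p$, higher $H$-cohomology vanishes, yielding the $H$-invariants $\mathcal{V}_{\nu_A}^{n-an}(\mathcal{U})$ (resp.\ $\mathcal{D}_{\nu_A}^{n-an}(\mathcal{U})$) in degree zero and zero higher up. Applying this on the terms and intersections of an affinoid cover of $U$, the two Cech-to-derived-functor spectral sequences have identical $E_2$-pages and yield $\mathrm{R}\Gamma_{et}(U,-)=\mathrm{R}\Gamma_{an}(U,-)$.

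The main technical obstacle is the comparison of etale and analytic cohomology: the sheaves are intrinsically defined on the etale site because the torsor $\mathcal{M}_{HT,n,K_p}$ is only etale-locally trivial, so the passage to the analytic site is not automatic. The essential input that makes everything work is the finite flatness of the trivializing cover provided by Proposition \ref{prop-torsor-become-trivial}, together with the fact that its Galois group has order invertible in the coefficient field, which kills all higher group cohomology after Hochschild--Serre.
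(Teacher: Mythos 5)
Your proposal correctly unpacks the paper's one-line proof ("follows from Proposition~\ref{dist-sheaves-Banach} and a slight elaboration of Lemma~\ref{lem-complex-frechet}"), and your identification of the genuine obstacle --- that the sheaves are a priori only \'etale sheaves, because the torsor $\mathcal{M}_{HT,n,K_p}$ is only \'etale-locally trivial --- is the right one. One refinement is in order: the cover $f\colon\mathcal{U}'\to\mathcal{U}$ supplied by Proposition~\ref{prop-torsor-become-trivial} is finite flat with $\mathcal{U}=\mathcal{U}'/H$, but it need \emph{not} be \'etale, since the maps $\mathcal{S}^{tor}_{K^pK'_p,\Sigma}\to\mathcal{S}^{tor}_{K^pK_p,\Sigma}$ can ramify along the toroidal boundary; so ``Hochschild--Serre'' is not literally available. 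What one actually has (and what the proof of Proposition~\ref{dist-sheaves-Banach} uses) is the trace-splitting argument: $|H|$ being invertible in $F$, the trace exhibits $\mathscr{F}$ as a direct summand of $f_\star f^\star\mathscr{F}$, and $f_\star$ being exact reduces acyclicity on $\mathcal{U}$ to that of the trivialized sheaf $V_{\kappa_A}^{n-an}\hat\otimes\oscr_{\mathcal{U}'}$ (resp.\ its distribution analogue). Moreover, for the analytic cohomology no descent step is needed at all: Proposition~\ref{dist-sheaves-Banach} already identifies the restriction to any affinoid as $M\hat\otimes_R\oscr_{\mathcal{U}}$ with $M$ projective Banach, so the acyclicity lemmas for $M\hat\otimes\oscr$ on affinoids and quasi-Stein opens apply directly; the finite flat cover is only needed on the \'etale side. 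With these presentation corrections your argument is the intended one, and the extension of Lemma~\ref{lem-complex-frechet} from $\mathbf{Ban}(F)$ to $\mathbf{Ban}(A)$ that you invoke implicitly is indeed routine.
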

\begin{proof} This follows from proposition \ref{dist-sheaves-Banach} and lemma \ref{lem-complex-frechet} (or a slight elaboration of it).
\end{proof}

\subsubsection{Interpolation sheaves and locally algebraic weights}
 
 Let $\nu\in X^\star(T^c)$ and let $\kappa=-w_{0,M}w\nu-(w_{0,M}w\rho+\rho)$ be algebraic characters of $T$.  By restriction they also define characters $\nu:T^c(\ZZ_p)\to F^\times$, $\kappa:w_{0,M}wT^c(\ZZ_p)(w_{0,M}w)^{-1}\to F^\times$.
 
\begin{prop}\label{prop-special1} Suppose $\kappa\in X^\star(T^c)^{M_\mu,+}$.  Let $K_p = K_{p,m', b}$ for $m'\geq b\geq 0$, $m'>0$.  Over $(\pi_{HT,K_p}^{tor})^{-1}(]C_{w,k}[_{n,n}K_p)$  we have dual morphisms $\mathcal{V}_\kappa \rightarrow \mathcal{V}_{\nu}^{n-an}$ and $\mathcal{D}^{n-an}_{\nu} \rightarrow \mathcal{V}_{\kappa+2\rho_{nc}}^\vee= \mathcal{V}_{-w_{0, M}\kappa-2\rho_{nc}}$.
\end{prop}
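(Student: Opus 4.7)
The plan is to construct both maps directly by tracing through the torsor-theoretic definitions of the sheaves, using the following numerical coincidence as the crucial input: with $\nu$ algebraic and the relation $\kappa = -w_{0,M}w\nu - (w_{0,M}w\rho+\rho)$, the analytic character $\kappa_A:w_{0,M}wT^c(\ZZ_p)(w_{0,M}w)^{-1}\to F^\times$ used in the construction of $\mathcal{V}_\nu^{n-an}$ (namely $\kappa_A = -w_{0,M}w\nu_A-(w_{0,M}w\rho+\rho)$ with $\nu_A = \nu$) is literally equal to the algebraic character $\kappa\in X^\star(T^c)$ restricted to its source. In particular $w_{0,M}\kappa_A$ and $w_{0,M}\kappa$ agree on $wT^c(\ZZ_p)w^{-1} = K_{p,w,M_\mu}^c\cap T^c$ (cf.\ Proposition \ref{prop-Iwahori-deco-of-weird-group}).

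To construct the first map $\mathcal{V}_\kappa\to\mathcal{V}_\nu^{n-an}$: by Propositions \ref{prop-first-reduction} and \ref{prop-reduction-2}, over $(\pi_{HT,K_p}^{tor})^{-1}(]C_{w,k}[_{n,n}K_p)$ there is a nested chain of torsors $\mathcal{M}_{HT,n,K_p}\subseteq\mathcal{M}_{dR}$ for the groups $K_{p,w,M_\mu}^c\mathcal{M}_{\mu,n}^c\subseteq\mathcal{M}_\mu^c$. By definition, sections of $\mathcal{V}_\kappa$ are functions $f:\mathcal{M}_{dR}\to\mathbb{A}^1$ with $f(mb)=(w_{0,M}\kappa)(b^{-1})f(m)$ for $b\in\mathcal{B}^c\cap\mathcal{M}_\mu^c$, while sections of $\mathcal{V}_\nu^{n-an}$ are analytic functions on $\mathcal{M}_{HT,n,K_p}$ transforming by $(w_{0,M}\kappa_A)$ under $b\in\mathcal{B}^c\cap(K_{p,w,M_\mu}^c\mathcal{M}_{\mu,n}^c)$. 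The restriction of sections along $\mathcal{M}_{HT,n,K_p}\hookrightarrow\mathcal{M}_{dR}$ therefore carries $\mathcal{V}_\kappa$ into $\mathcal{V}_\nu^{n-an}$: the smaller transformation group sits inside the larger one, the unipotent part of the transformation is trivial on both sides, and the torus parts coincide by the character identity above.

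For the dual map $\mathcal{D}_\nu^{n-an}\to\mathcal{V}_{\kappa+2\rho_{nc}}^\vee$: the weight $2\rho_{nc}\in X^\star(T^c)$ is $W_M$-invariant and satisfies $\langle\alpha^\vee,2\rho_{nc}\rangle=0$ for all $\alpha\in\Delta_M$ (since $W_M$ permutes the positive non-compact roots), hence $\kappa+2\rho_{nc}\in X^\star(T^c)^{M_\mu,+}$, and its associated analytic weight is $\nu-2w^{-1}\rho_{nc}$ (using $w_{0,M}(2\rho_{nc})=2\rho_{nc}$). The first part applied to the pair $(\kappa+2\rho_{nc},\ \nu-2w^{-1}\rho_{nc})$ yields a map $\mathcal{V}_{\kappa+2\rho_{nc}}\to\mathcal{V}_{\nu-2w^{-1}\rho_{nc}}^{n-an}$, and this map factors through the bounded (``open polydisc'') version $\mathcal{V}_{\nu-2w^{-1}\rho_{nc}}^{\circ,n-an}$ since algebraic functions are bounded on any polydisc. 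Taking continuous $\oscr$-duals and invoking the identification $(\mathcal{V}_{\nu-2w^{-1}\rho_{nc}}^{\circ,n-an})^\vee=\mathcal{D}_\nu^{n-an}$ recorded in the paragraph preceding the proposition produces the desired map.

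The main bookkeeping point -- and the only real obstacle -- is verifying cleanly the two identities $\kappa_A=\kappa$ and $w_{0,M}(2\rho_{nc})=2\rho_{nc}$, i.e.\ tracing precisely how the Weyl-group shift $w_{0,M}w$ in the definition of the analytic weight interacts with the defining formula and with the $2\rho_{nc}$ twist; once these are in hand both maps arise tautologically, the first from restriction along $\mathcal{M}_{HT,n,K_p}\hookrightarrow\mathcal{M}_{dR}$ and the second by dualizing the first map applied to the Serre-shifted weight $\kappa+2\rho_{nc}$.
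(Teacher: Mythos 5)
Your proposal is correct and fills in exactly what the paper's one-line proof (``This follows from the construction. Compare with section \ref{section-locally-algebraic-induction}'') leaves implicit: the first map is restriction of functions along the inclusion of torsors $\mathcal{M}_{HT,n,K_p}\subseteq\mathcal{M}_{dR}$, the transformation rules match because $\kappa_A$ (as defined from $\nu_A=\nu$ by $\kappa_A=-w_{0,M}w\nu_A-(w_{0,M}w\rho+\rho)$) agrees with the algebraic $\kappa$ on the group $w_{0,M}wT^c(\ZZ_p)(w_{0,M}w)^{-1}$, and the second map arises by dualizing the first map applied to $(\kappa+2\rho_{nc},\nu-2w^{-1}\rho_{nc})$, using $W_M$-invariance of $2\rho_{nc}$ and the identification of $\mathcal{D}_\nu^{n-an}$ with the dual of the bounded ``$\circ$'' version of $\mathcal{V}_{\nu-2w^{-1}\rho_{nc}}^{n-an}$. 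This is the same argument the paper intends, just spelled out.
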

\begin{proof} This follows from the construction. Compare with  section \ref{section-locally-algebraic-induction}.
\end{proof}

We would like to have a similar formula for locally algebraic  dominant weights.  We introduce some notation.  Let $m' \geq b \geq 0$.  We let $K_p'=K_{p,m',b}Z_s(\ZZ_p)$.  Then the map $\mathcal{S}_{K^p K_p', \Sigma}^{tor} \rightarrow \mathcal{S}_{K^p K_{p, m',0}, \Sigma}^{tor}$ is an \'etale cover with group $T^c(\ZZ_p)/T^c_b$.  For any character $\chi : T^c(\ZZ_p)/T^c_b \rightarrow F^\times$, we get an invertible sheaf $\oscr_{ \mathcal{S}_{K^p K_{p, m',0}, \Sigma}^{tor}}(\chi)$. For any sheaf of $\oscr_{ \mathcal{S}_{K^p K_{p, m',0}, \Sigma}^{tor}}$-modules $\mathscr{F}$, we let $\mathscr{F}(\chi)$ denote $\mathscr{F} \otimes_{\oscr_{ \mathcal{S}_{K^p K_{p, m',0}, \Sigma}^{tor}}} \oscr_{ \mathcal{S}_{K^p K_{p, m',0}, \Sigma}^{tor}}(\chi)$.

\begin{prop}\label{prop-special2}  Let $K_p = K_{p,m',0}$ with $m'>0$.  Let $\chi : T^c(\ZZ_p)/T^c_b \rightarrow F^\times$ be a character with $b\leq m',n$. Let $\nu_A:T^c(\ZZ_p)\to A^\times$ be an $n$-analytic character. Over $(\pi_{HT,K_p}^{tor})^{-1}(]C_{w,k}[_{n,n}K_p)$ we have $\mathcal{V}^{n-an}_{\nu_A \chi} = \mathcal{V}^{n-an}_{\nu_A}(\chi)$.
\end{prop}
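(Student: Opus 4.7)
The plan is to exhibit both sheaves as arising from the same reduction torsor with different equivariance data, and compare these via pullback to the étale cover $\pi:\mathcal{S}_{K^pK_p',\Sigma}^{tor}\to\mathcal{S}_{K^pK_p,\Sigma}^{tor}$ (where $K_p'=K_{p,m',b}Z_s(\ZZ_p)$), which is a Galois cover with group $T^c(\ZZ_p)/T^c_b$ by construction of $\oscr_{\mathcal{S}_{K^pK_{p,m',0},\Sigma}^{tor}}(\chi)$. Both sheaves $\mathcal{V}^{n-an}_{\nu_A}$ and $\mathcal{V}^{n-an}_{\nu_A\chi}$ at level $K_p$ are built from the same torsor $\mathcal{M}_{HT,n,K_p}$ under $K^c_{p,w,M_\mu}\mathcal{M}^c_{\mu,n}$ (note that $\nu_A\chi$ is still $n$-analytic because $b\leq n$), with equivariance by the characters $\kappa_A$ and $\kappa_A\cdot(-w_{0,M}w\chi)$ respectively, the latter differing by the finite-order character $-w_{0,M}w\chi$ on the $T^c$-component.

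I would first pull back to level $K_p'$, where we have a finer reduction $\mathcal{M}_{HT,n,K_p'}$ under $K'^c_{p,w,M_\mu}\mathcal{M}^c_{\mu,n}$, whose $T^c$-part is $wT^c_bw^{-1}\cdot\mathcal{T}^c_n$ by proposition \ref{prop-Iwahori-deco-of-weird-group}; the pullback $\pi^*\mathcal{M}_{HT,n,K_p}$ is the pushout of $\mathcal{M}_{HT,n,K_p'}$ along $K'^c_{p,w,M_\mu}\mathcal{M}^c_{\mu,n}\hookrightarrow K^c_{p,w,M_\mu}\mathcal{M}^c_{\mu,n}$. Using the pushout description, $\pi^*\mathcal{V}^{n-an}_{\nu_A}$ (resp.\ $\pi^*\mathcal{V}^{n-an}_{\nu_A\chi}$) identifies with sections of the finer torsor satisfying equivariance by the restriction of $\kappa_A$ (resp.\ of $\kappa_A\cdot(-w_{0,M}w\chi)$) to $\mathcal{B}^c\cap K'^c_{p,w,M_\mu}\mathcal{M}^c_{\mu,n}$.

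The crucial point is that the two equivariance conditions agree after this restriction. Indeed, the $T^c$-part of $\mathcal{B}^c\cap K'^c_{p,w,M_\mu}\mathcal{M}^c_{\mu,n}$ is $wT^c_bw^{-1}\cdot\mathcal{T}^c_n$, and the $n$-analytic character $-w_{0,M}w\chi$ is trivial on both $wT^c_bw^{-1}$ (since $\chi$ factors through $T^c(\ZZ_p)/T^c_b$) and on $\mathcal{T}^c_n$ (since $\chi$ has finite order with $\chi|_{T^c_n}=1$ because $n\geq b$, so its unique $n$-analytic extension is locally constant, taking the value $\chi(t)$ on each coset $t\mathcal{T}^c_n$ with $t\in T^c(\ZZ_p)$). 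Hence the two equivariance conditions coincide on $\mathcal{B}^c\cap K'^c_{p,w,M_\mu}\mathcal{M}^c_{\mu,n}$, yielding a canonical isomorphism $\pi^*\mathcal{V}^{n-an}_{\nu_A}\simeq\pi^*\mathcal{V}^{n-an}_{\nu_A\chi}$ of $\oscr$-modules.

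Finally I would compare the Galois descent data. The Galois group $T^c(\ZZ_p)/T^c_b$ of the cover identifies with $K^c_{p,w,M_\mu}/K'^c_{p,w,M_\mu}$, and it acts on the pushout torsor by right multiplication. Under the isomorphism of the previous step, the $T^c(\ZZ_p)/T^c_b$-action on $\pi^*\mathcal{V}^{n-an}_{\nu_A\chi}$ is the action on $\pi^*\mathcal{V}^{n-an}_{\nu_A}$ twisted exactly by the factor by which the equivariance conditions differ on $K^c_{p,w,M_\mu}/K'^c_{p,w,M_\mu}$, namely by $\chi$ under the natural identification. Descending, this yields $\mathcal{V}^{n-an}_{\nu_A\chi}=\mathcal{V}^{n-an}_{\nu_A}\otimes\oscr(\chi)=\mathcal{V}^{n-an}_{\nu_A}(\chi)$ by definition of the twist. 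The main (but essentially bookkeeping) obstacle is reconciling the left/right action conventions so that the twist $-w_{0,M}w\chi$ on the $M_\mu^c$-side translates to the expected twist $\chi$ on the Galois descent side.
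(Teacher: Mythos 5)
Your proposal is correct and is essentially the same argument as the paper's, just organized differently: where you pull back to level $K_p'$, identify the two sheaves via triviality of the locally constant twist character on the finer reduction group, and then compare Galois descent data directly, the paper first realizes the \'etale cover $(\pi^{tor}_{HT,K'_p})^{-1}(]C_{w,k}[_{n,n}K'_p)\to(\pi^{tor}_{HT,K_p})^{-1}(]C_{w,k}[_{n,n}K_p)$ explicitly as a pushout of $\mathcal{M}_{HT,n,K_p}$ along the quotient $K_{p,w,M_\mu}\mathcal{M}^1_{\mu,n,n}\to wT(\ZZ_p)w^{-1}/wT_bw^{-1}$ (noting the conjugation-by-$w$ identification of Galois groups), and then invokes lemma \ref{lem-add-character} as a packaged representation-theoretic statement ($V^{n-an}_{\kappa_A}\otimes w_{0,M}\chi'\simeq V^{n-an}_{\kappa_A\otimes\chi'}$), whose proof is precisely your observation that a finite-order character trivial on $T^c_b$ extends as a locally constant function to the $n$-analytic neighborhood. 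The two ingredients — the pushout realization of the cover, with its $w$-conjugation Galois equivariance, and the local constancy of the $n$-analytic extension of $\chi$ — are the same in both; you have inlined the lemma rather than citing it.
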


\begin{proof} The map 
$(\pi^{tor}_{HT,K'_p})^{-1} (]C_{w,k}[_{n,n} K'_p)  \rightarrow (\pi^{tor}_{HT,K_p})^{-1} (
  ]C_{w,k}[_{n,n} K_p) $ is an \'etale cover of group $T^c(\ZZ_p)/T^c_b$ since $]C_{w,k}[_{n,n} K'_p= ]C_{w,k}[_{n,n} K_p$.  We also have a map of torsors 
$\mathcal{M}_{HT,n,n,K'_p} \rightarrow \mathcal{M}_{HT,n,n, K_p}$ equivariant for the map: $K'_{p,w,M_{\mu}} \mathcal{M}^1_{\mu,n,n} \rightarrow K_{p,w,M_{\mu}} \mathcal{M}^1_{\mu,n,n}$. We form the quotient: $$K_{p,w,M_{\mu}} \mathcal{M}^1_{\mu,m,n}/ K'_{p,w,M_{\mu}} \mathcal{M}^1_{\mu,m,n} = wT(\ZZ_p)w^{-1}/wT_b w^{-1}.$$ 
By taking the pushout of the map $\mathcal{M}_{HT,m,n,K'_p} \rightarrow \mathcal{M}_{HT,m,n, K_p}$  via $K_{p,w,M_{\mu}} \mathcal{M}^1_{\mu,m,n} \rightarrow wT(\ZZ_p)w^{-1}/wT_b w^{-1}$, we get a map:
$$ (\pi^{tor}_{HT,K'_p})^{-1} ( ]C_{w,k}[_{m,n} K'_p) \rightarrow  \mathcal{M}_{HT,m,n, K_p} \times_{K_{p,w,M_{\mu}} \mathcal{M}^1_{\mu,m,n}} (wT(\ZZ_p)w^{-1}/wT_b w^{-1})$$
  This map is necessarily an isomorphism, because the left hand side is an \'etale cover of $(\pi^{tor}_{HT,K_p})^{-1} (
  ]C_{w,k}[_{m,n} K_p) $ of group $T(\ZZ_p)/T_b$ and the right hand side is an \'etale cover of group $wT(\ZZ_p)w^{-1}/wT_b w^{-1}$. Moreover, the map is equivariant under the isomorphism $T(\ZZ_p)/T_b \rightarrow wT(\ZZ_p)w^{-1}/wT_b w^{-1}$ given by conjugation by $w$. 
It follows that  the \'etale cover $(\pi^{tor}_{HT,K'_p})^{-1} (
  ]C_{w,k}[_{m,n} K'_p)  \rightarrow (\pi^{tor}_{HT,K_p})^{-1} (
  ]C_{w,k}[_{m,n} K_p) $ can be realized as a pushout of the torsor $\mathcal{M}_{HT,n, K_p}$.   
 We deduce from lemma \ref{lem-add-character} that $V^{n-an}_{\kappa_A} \otimes w\chi =  V_{\kappa_A \otimes (w_{0,M} w  \chi^{-1})}$. 
 The lemma follows.
\end{proof} 

\begin{coro}\label{cor-special-localg} Let $\nu=\nu_{alg}\chi$ be a locally algebraic character of $T^c(\ZZ_p)$, with $\nu_{alg}\in X^*(T^c)$ algebraic and $\chi:T^c(\ZZ_p)\to \overline{F}^\times$ a finite order character of conductor $b\leq n$.   Let $K_p=K_{p,m',0}$ for $m'\geq b$, $m'>0$.  If $\kappa_{alg}=-w_{0,M}w\nu_{alg}+(w_{0,M}w\rho-\rho)$ is $M$-dominant, then over $(\pi_{HT,K_p}^{tor})^{-1}(]C_{w,k}[_{n,n}K_p)$ we have maps of sheaves $$\mathcal{V}_{\kappa_{alg}}(\chi)\to\mathcal{V}_\nu^{n-an}\quad\mathrm{and}\quad\mathcal{D}_{\nu}^{n-an}\to\mathcal{V}_{-2\rho_{nc}-\kappa_{alg}}(-\chi).$$
\end{coro}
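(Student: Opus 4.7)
The plan is to deduce this from propositions \ref{prop-special1} and \ref{prop-special2} by simply twisting by the invertible sheaf $\oscr(\chi)$, with no further geometric input.

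First I would note that since $\nu_{alg}$ is algebraic it is $n$-analytic for every $n$, and since $\chi$ has conductor $b\leq n$ the character $\chi$ is also $n$-analytic; so $\nu=\nu_{alg}\chi$ is $n$-analytic and proposition \ref{prop-special2} applies. Applying that proposition with the $n$-analytic character $\nu_A=\nu_{alg}$ yields, over $(\pi_{HT,K_p}^{tor})^{-1}(]C_{w,k}[_{n,n}K_p)$, the identification of Banach sheaves
$$\mathcal{V}_\nu^{n-an}=\mathcal{V}_{\nu_{alg}\chi}^{n-an}=\mathcal{V}_{\nu_{alg}}^{n-an}(\chi).$$
Dualizing this identity (and using that $\mathcal{D}_{\nu_A}^{n-an}$ is by definition the continuous dual of $\mathcal{V}_{\nu_A}^{\circ,n-an}$, twisted by the fixed algebraic factor $\mathcal{V}_{-2\rho_{nc}}$ which is independent of $\chi$) gives the companion identity
$$\mathcal{D}_\nu^{n-an}=\mathcal{D}_{\nu_{alg}\chi}^{n-an}=\mathcal{D}_{\nu_{alg}}^{n-an}(-\chi).$$

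Next I would apply proposition \ref{prop-special1} to the algebraic character $\nu_{alg}$, using the hypothesis that the corresponding algebraic weight $\kappa_{alg}$ is $M_\mu$-dominant. This produces, over the same locally closed neighborhood, morphisms of sheaves
$$\mathcal{V}_{\kappa_{alg}}\longrightarrow\mathcal{V}_{\nu_{alg}}^{n-an}\qquad\text{and}\qquad\mathcal{D}_{\nu_{alg}}^{n-an}\longrightarrow\mathcal{V}_{-2\rho_{nc}-\kappa_{alg}}.$$
Tensoring the first with $\oscr(\chi)$ and the second with $\oscr(-\chi)$, and combining with the two displayed identities above, yields the maps $\mathcal{V}_{\kappa_{alg}}(\chi)\to\mathcal{V}_\nu^{n-an}$ and $\mathcal{D}_\nu^{n-an}\to\mathcal{V}_{-2\rho_{nc}-\kappa_{alg}}(-\chi)$ asserted by the corollary.

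The only real obstacle is the dualized version of proposition \ref{prop-special2}, namely the compatibility $\mathcal{D}_{\nu_{alg}\chi}^{n-an}=\mathcal{D}_{\nu_{alg}}^{n-an}(-\chi)$. This is not a geometric input but rather a functoriality statement: the twist of the analytic induction by the finite order character $\chi$ passes through the continuous duality defining $\mathcal{D}^{n-an}$, since both the $2w^{-1}\rho_{nc}$-shift built into the definition of $\mathcal{D}^{n-an}$ and the line bundle $\oscr(-2\rho_{nc})$ are algebraic and independent of $\chi$; so the argument of proposition \ref{prop-special2} applies verbatim to $\mathcal{D}^{n-an}$ once one unwinds the definition. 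Beyond that, everything is a direct concatenation of previously established identifications, which is why the corollary is stated as such.
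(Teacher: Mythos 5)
Your proposal follows exactly the paper's own argument, which is simply to combine propositions \ref{prop-special1} and \ref{prop-special2}; the one implicit detail — that twisting by the finite character $\chi$ passes through the continuous duality defining $\mathcal{D}^{n-an}$ because the $2\rho_{nc}$-shift is fixed and algebraic — is exactly what you spell out. One remark: proposition \ref{prop-special1} actually produces the target $\mathcal{V}_{-w_{0,M}\kappa_{alg}-2\rho_{nc}}$ (not $\mathcal{V}_{-\kappa_{alg}-2\rho_{nc}}$), so the weight in the corollary's second display (reproduced in your step with proposition \ref{prop-special1}) appears to be a typo dropping the $w_{0,M}$, and likewise the stated formula $\kappa_{alg}=-w_{0,M}w\nu_{alg}+(w_{0,M}w\rho-\rho)$ disagrees in sign with the convention $\kappa_{alg}=-w_{0,M}w\nu_{alg}-(w_{0,M}w\rho+\rho)$ used in the paragraph preceding proposition \ref{prop-special1} and elsewhere in the paper; your argument is otherwise sound once these typos are repaired.
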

\begin{proof} This is a combination of propositions \ref{prop-special1} and \ref{prop-special2}.
\end{proof}

\subsubsection{Definition of the Hecke action}\label{subsubsec-hecke-action}

Let $t \in T^+$. Let $K_p = K_{p, m',0}$.    We consider the Hecke correspondence:  
\begin{eqnarray*}
\xymatrix{ & S^{tor}_{K^p (K_p  \cap t K_p t^{-1}), \Sigma''} \ar[rd]^{p_1} \ar[ld]_{p_2} & \\
S^{tor}_{K^pK_p, \Sigma} & & S^{tor}_{K^pK_p, \Sigma'}}
\end{eqnarray*}

Over $p_2^{-1} \big( (\pi^{tor}_{HT, K_p})^{-1}( ]C_{w,k}[_{n,n}K_p)\big) \cap p_1^{-1} \big( (\pi^{tor}_{HT, K_p})^{-1}( ]C_{w,k}[_{n,n}K_p) \big)$ we have a map  
\begin{eqnarray*}
\xymatrix{ p_1^\star \mathcal{M}^{an}_{HT} \ar[r]^{[t^{-1}]} &  p_2^\star \mathcal{M}^{an}_{HT} \\
p_1^\star \mathcal{M}_{HT,n,K_p} \ar[u] &  p_2^\star \mathcal{M}_{HT,n,K_p}\ar[u]}
\end{eqnarray*}
which is represented by $K_{p,w, M_\mu}^c \mathcal{M}_{\mu,n}^c wt^{-1}w^{-1} K_{p,w, M_\mu}^c \mathcal{M}_{\mu,n}^c$ by proposition \ref{prop-representing-torsor-map}.  So far, all this discussion depends only on the double coset $K_p t K_p$ and therefore only on the image of $t$ in $T^+/T(\ZZ_p)$. In particular, if $t \in T(\ZZ_p)$, all the maps are the identity. 

We now consider the following map: $$ \widetilde{[t^{-1}]} :   p_1^\star \mathcal{M}^{an}_{HT} /\mathcal{U}^{an}  \rightarrow   p_2^\star \mathcal{M}^{an}_{HT}/\mathcal{U}^{an}$$ which is given by  $ x  \mathcal{U}^{an}  \mapsto [t^{-1}] x  wtw^{-1}\mathcal{U}^{an}$.  This map depends on $t$ and not only on its class in $T^+/T(\ZZ_p)$. 

\begin{lem}\label{lem-hecke-local} \begin{enumerate}
\item The map $\widetilde{[t^{-1}]}$ restricts to a map $$p_1^\star \mathcal{M}_{HT,n,K_p}/\mathcal{U}_n \rightarrow p_2^\star \mathcal{M}_{HT,n,K_p}/\mathcal{U}_n.$$
\item If $t \in T^{++}$ and $n \geq 1$,   this map factors $$p_1^\star \mathcal{M}_{HT,n,K_p}/\mathcal{U}_n \hookrightarrow p_1^\star \mathcal{M}_{HT,n-1,K_p}/\mathcal{U}_{n-1} \rightarrow p_2^\star \mathcal{M}_{HT,{n},K_p}/\mathcal{U}_n.$$
\item Let $\nu_A$ be an $n$-analytic weight. Then $\widetilde{[t^{-1}]}$ induces a morphism 
$$ \widetilde{[t^{-1}]} :  p_2^\star \mathcal{V}_{\nu_A}^{n-an} \rightarrow p_1^\star \mathcal{V}_{\nu_A}^{n-an}$$
which is locally modeled on the morphism $wtw^{-1} : V_{\kappa_A}^{n-an} \rightarrow V_{\kappa_A}^{n-an}$ defined in section \ref{subsection-analyinduction}.  If $t\in T^{++}$ then this is a compact  morphism (in the sense of definition \ref{defi-compact-map}).
\end{enumerate}
\end{lem}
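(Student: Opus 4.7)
The approach is to compute everything in local coordinates using Proposition \ref{prop-representing-torsor-map}, which tells us that on the reductions $\mathcal{M}_{HT,n,K_p}$, the map $[t^{-1}]$ is locally represented by the double coset $K^c_{p,w,M_\mu}\mathcal{M}^c_{\mu,n}\cdot wt^{-1}w^{-1}\cdot K^c_{p,w,M_\mu}\mathcal{M}^c_{\mu,n}$, combined with the Iwahori decomposition of $K^c_{p,w,M_\mu}\mathcal{M}^c_{\mu,n}$ from Proposition \ref{prop-Iwahori-deco-of-weird-group}. The key elementary ingredient is that conjugation by $wtw^{-1}$ normalizes $\mathcal{T}$, $\mathcal{U}^{an}_{M_\mu}$ and $\overline{\mathcal{U}}^{an}_{M_\mu}$, acting on the coordinate of $U_\alpha$ for $\alpha\in\Phi_M$ by multiplication by $(w^{-1}\alpha)(t^{-1})$.

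For part (1) I would pick local sections $x_i$ of $p_i^\star\mathcal{M}_{HT,n,K_p}$ and write $[t^{-1}]x_1=x_2\,h_1(wt^{-1}w^{-1})h_2$ with $h_1,h_2\in K^c_{p,w,M_\mu}\mathcal{M}^c_{\mu,n}$. Then
\[
\widetilde{[t^{-1}]}(x_1\,\mathcal{U}^{an})=x_2\cdot h_1\cdot \bigl(wt^{-1}w^{-1}h_2\,wtw^{-1}\bigr)\mathcal{U}^{an}.
\]
Writing $h_2=\overline{n}_2t_2n_2$ in its Iwahori decomposition, conjugation preserves this decomposition, and because $w\in\WM$ satisfies $w^{-1}\Phi^-_M\subseteq\Phi^-$, the exponent $(w^{-1}\alpha)(t^{-1})$ has non-negative valuation for every $\alpha\in\Phi^-_M$ and every $t\in T^+$. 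Hence the $\overline{N}$-component $\overline{n}_2'$ of the conjugate remains in $\overline{\mathcal{N}}_{\mu,n}$; modulo $\mathcal{U}^{an}$ the $n_2'$-part disappears and the product lands in $K^c_{p,w,M_\mu}\mathcal{M}^c_{\mu,n}/\mathcal{U}_n$, proving (1). Under the stronger hypothesis $t\in T^{++}$ with $\min(t)\geq 1$, the same computation gives $v((w^{-1}\alpha)(t^{-1}))\geq 1$ for $\alpha\in\Phi^-_M$, so even if we start from $\overline{n}_2\in\overline{\mathcal{N}}_{\mu,n-1}$ the conjugate lies in $\overline{\mathcal{N}}_{\mu,n}$. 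Translated into the language of torsors, this is precisely the statement that the level-$(n-1)$ version of the map in (1) already lands in the smaller subspace $p_2^\star\mathcal{M}_{HT,n,K_p}/\mathcal{U}_n\hookrightarrow p_2^\star\mathcal{M}_{HT,n-1,K_p}/\mathcal{U}_{n-1}$, which is the factorization asserted in (2).

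Part (3) is then formal: by construction $\mathcal{V}_{\nu_A}^{n-an}$ is the sheaf on $(\pi^{tor}_{HT,K_p})^{-1}(]C_{w,k}[_{n,n}K_p)$ of $(w_{0,M}\kappa_A)$-equivariant analytic functions on the torsor $\mathcal{M}_{HT,n,K_p}$, so contravariantly pulling functions back through $\widetilde{[t^{-1}]}$ gives, in a local trivialization of $\mathcal{M}_{HT,n,K_p}$, precisely the endomorphism of $V_{\kappa_A}^{n-an}$ induced by left translation by $wtw^{-1}$ as defined in Section \ref{subsection-analyinduction}. For $t\in T^{++}$ this endomorphism is compact by Lemma \ref{lem-slopeboundBGG} (it factors through the compact inclusion $V_{\kappa_A}^{n-an}\hookrightarrow V_{\kappa_A}^{(n-1)-an}$), and Proposition \ref{dist-sheaves-Banach} combined with the torsor-level factorization from (2) propagates this into a compact morphism of Banach sheaves in the sense of Definition \ref{defi-compact-map}.

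The main technical obstacle is bookkeeping: one has to be scrupulous about which torsor reduction is in play at each stage, distinguishing $\mathcal{M}_{HT,n,n,K_p}$ from its pushout $\mathcal{M}_{HT,n,K_p}$ and remembering that passing from torsors to associated sheaves of functions reverses all arrows. Additional care is needed because right-multiplication by $wtw^{-1}$ in the definition of $\widetilde{[t^{-1}]}$ is by an element that is not itself in $K^c_{p,w,M_\mu}\mathcal{M}^c_{\mu,n}$, so one must verify directly that its conjugation action preserves the relevant subgroups modulo $\mathcal{U}^{an}$. Once the signs line up, the substance of the argument reduces to the contraction/expansion of root groups under $T^{+}$ and $T^{++}$, controlled by the defining property $w\in\WM$.
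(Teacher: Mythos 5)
Your proof is correct and follows essentially the same route as the paper, which gives no argument beyond the hint ``Observe that $wtw^{-1}\in T^{M_\mu,+}$'' --- precisely the fact $w^{-1}\Phi_M^{\pm}\subseteq\Phi^{\pm}$ that drives your computation. The only small slip is in part (3): the compact inclusion among locally analytic inductions goes from smaller to larger radius parameter (i.e. $V_{\kappa_A}^{(n-1)\text{-an}}\hookrightarrow V_{\kappa_A}^{n\text{-an}}$, equivalently $V_{\kappa_A}^{n\text{-an}}\hookrightarrow V_{\kappa_A}^{(n+1)\text{-an}}$ as in Lemma~\ref{lem-slopeboundBGG}), not $V_{\kappa_A}^{n\text{-an}}\hookrightarrow V_{\kappa_A}^{(n-1)\text{-an}}$ as you wrote; the rest of the compactness argument is unaffected.
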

\begin{proof} Easy and left to the reader.  Observe that $wtw^{-1} \in T^{M_\mu,+}$. 
\end{proof}

Now for $\nu_A$ an $n$-analytic weight, we can now define a map
$$ t : \mathrm{R}(p_1)_\star p_2^\star \mathcal{V}_{\nu_A}^{n-an} \rightarrow \mathcal{V}_{\nu_A}^{n-an}$$ 
as the composite of $ \widetilde{[t^{-1}]} : \mathrm{R}(p_1)_\star p_2^\star \mathcal{V}_{\nu_A}^{n-an}  \rightarrow \mathrm{R}(p_1)_\star p_1^\star \mathcal{V}_{\nu_A}^{n-an}$ and $(-w^{-1} w_{0,M} \rho + \rho)(t) \mathrm{Tr}_{p_1} : \mathrm{R}(p_1)_\star p_1^\star \mathcal{V}_{\nu_A}^{n-an} \rightarrow \mathcal{V}_{\nu_A}^{n-an}$.

\begin{lem}\label{lem-action-by-weight} If $t \in T(\ZZ_p)$, then $$t : \mathrm{R}(p_1)_\star p_2^\star \mathcal{V}_{\nu_A}^{n-an} =  \mathcal{V}_{\nu_A}^{n-an} \rightarrow \mathcal{V}_{\nu_A}^{n-an}$$
acts via scalar multiplication by $\nu_A(t)$.
\end{lem}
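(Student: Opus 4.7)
The proof is a direct computation, and the plan is as follows.

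First, I would observe that for $t \in T(\ZZ_p)$ one has $t \in K_{p,m',0}$, so $K_{p,m',0} \cap t K_{p,m',0} t^{-1} = K_{p,m',0}$ and the Hecke correspondence reduces to $p_1 = p_2 = \mathrm{id}$ on $\mathcal{S}^{tor}_{K^pK_p,\Sigma}$. Consequently $\mathrm{R}(p_1)_\star p_2^\star \mathcal{V}_{\nu_A}^{n-an} = \mathcal{V}_{\nu_A}^{n-an}$ and $\mathrm{Tr}_{p_1}$ is the identity, so the Hecke operator $t$ reduces to the composite $\widetilde{[t^{-1}]}$ followed by multiplication by the scalar $(-w^{-1}w_{0,M}\rho+\rho)(t)$.

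Next, I would apply proposition \ref{prop-representing-torsor-map} to $t^{-1}$: since $wt^{-1}w^{-1} \in wT^c(\ZZ_p)w^{-1}$ lies in the torus part of $K_{p,w,M_\mu}^c$ (by proposition \ref{prop-Iwahori-deco-of-weird-group}), the double coset $K_{p,w,M_\mu}^c \mathcal{M}_{\mu,n}^c \cdot wt^{-1}w^{-1} \cdot K_{p,w,M_\mu}^c \mathcal{M}_{\mu,n}^c$ equals $K_{p,w,M_\mu}^c \mathcal{M}_{\mu,n}^c$. Thus the map $[t^{-1}]:\mathcal{M}_{HT,n,K_p}\to\mathcal{M}_{HT,n,K_p}$ is trivial as a map of $K_{p,w,M_\mu}^c\mathcal{M}_{\mu,n}^c$-torsors, so by the defining formula $\widetilde{[t^{-1}]}(x\mathcal{U}^{an}) = [t^{-1}]x\cdot wtw^{-1}\mathcal{U}^{an}$ we see that, on local trivializations, $\widetilde{[t^{-1}]}$ is simply right multiplication by $wtw^{-1}$ on the quotient $\mathcal{M}_{HT,n,K_p}/\mathcal{U}_n^c$.

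Then, since $wtw^{-1}$ lies in $wT^c(\ZZ_p)w^{-1} \subset \mathcal{B}^c \cap K_{p,w,M_\mu}^c\mathcal{M}_{\mu,n}^c$, the defining relation for sections $f \in \mathcal{V}_{\nu_A}^{n-an}$, namely $f(xb) = (w_{0,M}\kappa_A)(b^{-1})f(x)$, yields
\[
\widetilde{[t^{-1}]}^\star f \;=\; (w_{0,M}\kappa_A)(wt^{-1}w^{-1})\cdot f.
\]
Combining with the normalization, the total scalar is $(-w^{-1}w_{0,M}\rho+\rho)(t)\cdot(w_{0,M}\kappa_A)(wt^{-1}w^{-1})$, and I would complete the proof by a direct character computation using $\kappa_A = -w_{0,M}w\nu_A - (w_{0,M}w\rho + \rho)$ together with the identity $\chi(utu^{-1}) = (u^{-1}\chi)(t)$ (applied with $u = w_{0,M}w$), verifying that this product collapses to $\nu_A(t)$.

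The only real substance is the careful bookkeeping of the Weyl-group actions on $\nu_A$, $\rho$, and $w_{0,M}w\rho$; everything else is formal. The main obstacle, such as it is, is ensuring compatibility of the sign conventions in the definition of $\kappa_A$ with the normalizing factor $(-w^{-1}w_{0,M}\rho+\rho)(t)$ so that the contributions of $\rho$ and its Weyl translates cancel and leave only $\nu_A(t)$.
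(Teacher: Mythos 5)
Your approach is exactly the paper's: observe that for $t \in T(\ZZ_p)$ the correspondence collapses to the identity (so $\mathrm{Tr}_{p_1}=\mathrm{id}$), invoke proposition \ref{prop-representing-torsor-map} to conclude that $[t^{-1}]$ is the trivial map of $K_{p,w,M_\mu}^c\mathcal{M}_{\mu,n}^c$-torsors, observe that $\widetilde{[t^{-1}]}$ therefore reduces to right multiplication by $wtw^{-1}$, and evaluate the resulting scalar using the defining equivariance relation of the sheaf. The paper's own proof is a one-sentence version of exactly this plan.

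However, you should not defer the final character computation, because that computation is the entire content of the lemma, and when one actually carries it out with the formulas as written in the paper it does not close. Concretely, with the convention $(v\chi)(s)=\chi(v^{-1}sv)$ and the definition $\kappa_A=-w_{0,M}w\nu_A-(w_{0,M}w\rho+\rho)$, one finds
\[
(w_{0,M}\kappa_A)(wt^{-1}w^{-1})=\kappa_A\bigl((w_{0,M}w)t^{-1}(w_{0,M}w)^{-1}\bigr)=\nu_A(t)\cdot\rho(t)\cdot(w^{-1}w_{0,M}\rho)(t),
\]
and multiplying by the stated normalization $(-w^{-1}w_{0,M}\rho+\rho)(t)=(w^{-1}w_{0,M}\rho)(t)^{-1}\rho(t)$ yields $\nu_A(t)\cdot(2\rho)(t)$, not $\nu_A(t)$; the $\rho$-factors reinforce rather than cancel. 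The two terms do cancel only if the trace normalization is $\bigl(-(w^{-1}w_{0,M}\rho+\rho)\bigr)(t)$, i.e.\ with $-\rho$ rather than $+\rho$. Relatedly, the one-line proof given in the paper cites the identity $\nu_A=w^{-1}w_{0,M}\kappa_A-w^{-1}w_{0,M}\rho+\rho$, but from the definition one actually gets $w^{-1}w_{0,M}\kappa_A=-\nu_A-\rho-w^{-1}w_{0,M}\rho$, which is not the same. So the step you wave away with ``verifying that this product collapses to $\nu_A(t)$'' is precisely where the bookkeeping must be done carefully; doing it surfaces a sign typo in the paper's normalization that you should flag and correct rather than silently assert away.
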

\begin{proof} This follows from the identity $\nu_A = w^{-1} w_{0,M} \kappa_A-w^{-1} w_{0,M} \rho + \rho$. The  scalar multiplication by $w^{-1} w_{0,M} \kappa_A(t)$ comes from the map $\widetilde{[t^{-1}]}$ and the multiplication by $(-w^{-1} w_{0,M} \rho + \rho)(t)$ comes from the normalization of the trace map.
\end{proof}

By duality we also obtain a morphism:  $ p_2^\star \mathcal{D}_{\nu_A}^{n-an} \rightarrow p_1^\star \mathcal{D}_{\nu_A}^{n-an}$ which is locally modeled on $(w{t}w^{-1})^{-1} : D_{\kappa_A+2\rho_{nc}}^{n-an} \rightarrow D_{\kappa_A+2\rho_{nc}}^{n-an}$.

\subsection{Locally analytic overconvergent cohomology}\label{section-locally-overconvergent-coho}

Fix $w \in \WM$.  For a choice of $+$ or $-$ and a weight $\nu_A : T^c(\ZZ_p) \rightarrow A^\times$ we want to define a finite slope, overconvergent, locally analytic  cohomology $\mathrm{R}\Gamma_{w, an}(K^p, \nu_A)^{\pm,fs}$ and the cuspidal counterpart $\mathrm{R}\Gamma_{w,an}(K^p, \nu_A, cusp)^{\pm, fs}$ by taking cohomologies of the analytic sheaves $\mathcal{V}_{\nu_A}^{n-an}$ or $\mathcal{D}_{\nu_A}^{n-an}$ (for $n$ large enough),  with suitable support conditions of neighborhoods on the inverse image of $\mathcal{P_\mu}\backslash\mathcal{P_\mu}wK_p$ by the Hodge-Tate period map, and passing to finite slope parts for a suitable Hecke operator.

\subsubsection{First definition}\label{section-first-defi2}

Let $K_p=K_{p,m',0}$ with $m'>n$ and let $\nu_A : T^c(\ZZ_p) \rightarrow A^\times$ be an $n$-analytic weight. We fix $t \in T^{++}$ and we assume that $\min(t) = \inf_{\alpha \in \Phi^+} v(\alpha(t)) \geq 1$ in order to simplify notations. We let $T$ be the associated Hecke operator. 

We have that $$T^{n+1}  ((\pi_{HT, K_p}^{tor})^{-1}(]X_{w,k}[) ) \cap (T^t)^{n+1}((\pi_{HT, K_p}^{tor})^{-1}(\overline{]Y_{w,k}[}))  \subseteq  ( \pi_{HT,K_p}^{tor})^{-1}(]C_{w,k}[_{n+1,\overline{n+1}} K_p) $$ $$ \subseteq ( \pi_{HT,K_p}^{tor})^{-1}(]C_{w,k}[_{n,n} K_p)$$ 
by lemma \ref{lem-dynamic-corres}, and hence the sheaves $\mathcal{V}_{\nu_A}^{n-an}$ and $\mathcal{D}_{\nu_A}^{n-an}$ are defined over a neighborhood of $T^{n+1}  ((\pi_{HT, K_p}^{tor})^{-1}(]X_{w,k}[) ) \cap (T^t)^{n+1}((\pi_{HT, K_p}^{tor})^{-1}(\overline{]Y_{w,k}[})) $. 

We also have that $$T^{n+1}  ((\pi_{HT, K_p}^{tor})^{-1}(\overline{]X_{w,k}[} )) \cap (T^t)^{n+1}((\pi_{HT, K_p}^{tor})^{-1}(]Y_{w,k}[))  \subseteq  ( \pi_{HT,K_p}^{tor})^{-1}(]C_{w,k}[_{\overline{n+1},{n+1}} K_p) $$ $$ \subseteq ( \pi_{HT,K_p}^{tor})^{-1}(]C_{w,k}[_{n,n} K_p)$$ 
and therefore, the sheaves $\mathcal{V}_{\nu_A}^{n-an}$ and $\mathcal{D}_{\nu_A}^{n-an}$ are defined over a neighborhood of $T^{n+1}  ((\pi_{HT, K_p}^{tor})^{-1}(\overline{]X_{w,k}[} )) \cap (T^t)^{n+1}((\pi_{HT, K_p}^{tor})^{-1}(]Y_{w,k}[))$ as well.

We define:
$$\mathrm{R}\Gamma_{w,n-an}(K^pK_p, \nu_A)^{+,fs}  : = $$
$$  \mathrm{R}\Gamma_{ T^{n+1}  ((\pi_{HT, K_p}^{tor})^{-1}(]X_{w,k}[) ) \cap (T^t)^{n+1}((\pi_{HT, K_p}^{tor})^{-1}(\overline{]Y_{w,k}[}))}(T^{n+1}((\pi_{HT, K_p}^{tor})^{-1}(]X_{w,k}[)), \mathcal{V}^{n-an}_{\nu_A})^{+,fs}.$$
Implicit in this definition is that it makes sense to take the finite slope part: namely the cohomology is an object of $\mathrm{Pro}_{\N}(\mathcal{K}^{proj}(\mathbf{Ban}(A)))$ and that $\mathcal{H}_{p,m',0}^+$ acts on it in a way that $\mathcal{H}_{p,m',0}^{++}$ acts by potent compact operators.  This is proved in Theorem \ref{thm-finiteslopecoho2} below.  Note that $\mathrm{R}\Gamma_{w,n-an}(K^pK_p, \nu_A)^{+,fs}$ is an object of the derived category of sheaves of $\oscr_{\Spa(A,A^+)}$-modules, which possibly depends on several choices (see sections \ref{relative-spectral-1}, \ref{relative-spectral-2}, \ref{relative-spectral-3}), but the cohomologies $H^k_{w,n-an}(K^pK_p,\nu_A)^{+,fs}$ are well defined.

Similarly, we define:
$$\mathrm{R}\Gamma_{w, n-an}(K^pK_p, \nu_A)^{-,fs}  : =  $$
$$  \mathrm{R}\Gamma_{ T^{n+1}  ((\pi_{HT, K_p}^{tor})^{-1}(\overline{]X_{w,k}[})) \cap (T^t)^{n+1}((\pi_{HT, K_p}^{tor})^{-1}(]Y_{w,k}[))}((T^{t})^{n+1}((\pi_{HT, K_p}^{tor})^{-1}(]Y_{w,k}[)), \mathcal{D}^{n-an}_{\nu_A})^{-,fs}.$$ Again implicit in this definition is that it makes sense to take the finite slope part: namely the cohomology is an object of $\mathrm{Pro}_{\N}(\mathcal{K}^{proj}(\mathbf{Ban}(A)))$ and that $\mathcal{H}_{p,m',0}^-$ acts on it in a way that $\mathcal{H}_{p,m',0}^{--}$ acts by potent compact operators.  This is also proved in Theorem \ref{thm-finiteslopecoho2} below.

We have similar definitions for cuspidal cohomologies $\mathrm{R}\Gamma_{w,n-an}(K^pK_p, \nu_A,cusp)^{+,fs}$ and $\mathrm{R}\Gamma_{w, n-an}(K^pK_p, \nu_A,cusp)^{-,fs}$

\subsubsection{Existence of finite slope cohomology} We now justify that the cohomologies introduced in the previous section are well defined. 

\begin{thm} \label{thm-finiteslopecoho2}  Let $K_p=K_{p,m',0}$ for some $m'>n$,  $w\in\WM$, and $\nu_A: T^c(\ZZ_p) \rightarrow A^\times$ an $n$-analytic character.
\begin{enumerate}
\item The cohomologies  $$  \mathrm{R}\Gamma_{ T^{n+1}  ((\pi_{HT, K_p}^{tor})^{-1}(]X_{w,k}[) ) \cap (T^t)^{n+1}((\pi_{HT, K_p}^{tor})^{-1}(\overline{]Y_{w,k}[}))}(T^{n+1}((\pi_{HT, K_p}^{tor})^{-1}(]X_{w,k}[)), \mathcal{V}^{n-an}_{\nu_A}),$$ $$  \mathrm{R}\Gamma_{ T^{n+1}  ((\pi_{HT, K_p}^{tor})^{-1}(\overline{]X_{w,k}[}) ) \cap (T^t)^{n+1}((\pi_{HT, K_p}^{tor})^{-1}(]Y_{w,k}[))}((T^{t})^{n+1}((\pi_{HT, K_p}^{tor})^{-1}(]Y_{w,k}[)), \mathcal{D}^{n-an}_{\nu_A})$$  are objects of $\mathrm{Pro}_{\N}(\mathcal{K}^{proj}(\mathbf{Ban}(A)))$. 
\item  There is an action of $\mathcal{H}_{p,m',0}^+$ on $$  \mathrm{R}\Gamma_{ T^{n+1}  ((\pi_{HT, K_p}^{tor})^{-1}(]X_{w,k}[ )) \cap (T^t)^{n+1}((\pi_{HT, K_p}^{tor})^{-1}(\overline{]Y_{w,k}[}))}(T^{n+1}((\pi_{HT, K_p}^{tor})^{-1}(]X_{w,k}[)), \mathcal{V}^{n-an}_{\nu_A})$$ for which $\mathcal{H}_{p,m',0}^{++}$ acts via compact operators.   
\item  There is an action of $\mathcal{H}_{p,m',0}^-$ on $$  \mathrm{R}\Gamma_{ T^{n+1}  ((\pi_{HT, K_p}^{tor})^{-1}(\overline{]X_{w,k}[}) ) \cap (T^t)^{n+1}((\pi_{HT, K_p}^{tor})^{-1}(]Y_{w,k}[))}((T^{t})^{n+1}((\pi_{HT, K_p}^{tor})^{-1}(]Y_{w,k}[)), \mathcal{D}^{n-an}_{\nu_A})$$ for which $\mathcal{H}_{p,m',0}^{--}$ acts via compact operators.  
\item All the analogous statements hold for cuspidal cohomology. 
 \end{enumerate}
\end{thm}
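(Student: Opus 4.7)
\medskip

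The plan is to adapt the proof of Theorem \ref{thm-finiteslopecoho} to the locally analytic setting. The key new issue is that, unlike the coherent sheaves $\mathcal{V}_\kappa$, the Banach sheaves $\mathcal{V}_{\nu_A}^{n-an}$ and $\mathcal{D}_{\nu_A}^{n-an}$ are only defined on the quasi-compact open $(\pi_{HT,K_p}^{tor})^{-1}(]C_{w,k}[_{n,n}K_p)$. The choice of supports with the $T^{n+1}$ and $(T^t)^{n+1}$ iterates is precisely what makes the support $\mathcal{U}\cap\mathcal{Z}$, as well as all the relevant intersections obtained after applying $p_1^{-1}$ and $p_2^{-1}$ in the Hecke correspondence, fit inside this domain of definition, thanks to the dynamical inclusions of lemma \ref{lem-dynamic-corres}. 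I will only write the argument in the $+$ non-cuspidal case; the other three cases are identical after swapping the roles of $\mathcal{V}^{n-an}_{\nu_A}$ and $\mathcal{D}^{n-an}_{\nu_A}$ and replacing $]X_{w,k}[$, $\overline{]Y_{w,k}[}$ by their duals and by their cuspidal versions.

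The first point follows from corollary \ref{coro-inpro}: it suffices to observe that the open sets appearing in the definition are finite unions of quasi-Stein open subsets of $\mathcal{S}_{K^pK_p,\Sigma}^{tor}$ (they are pullbacks of such subsets of $\mathcal{S}^\star_{K^pK_p}$ under the map $\pi_{K^pK_p,\Sigma}$, which has quasi-Stein fibers), that their intersections with the closed support are contained in $(\pi_{HT,K_p}^{tor})^{-1}(]C_{w,k}[_{n,n}K_p)$ by lemma \ref{lem-dynamic-corres}(3)--(5), and then to apply the standard distinguished triangle of section \ref{section-spectral-sequence} to reduce to plain cohomology. For the second point, let $t\in T^+$, $T=[K_ptK_p]$. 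By lemma \ref{lem-dynamic-corres}(1) and its transpose, $T(\mathcal{U})\subseteq\mathcal{U}$ and $T^t(\mathcal{Z})\subseteq\mathcal{Z}$, so the construction of section \ref{section-action-sheaf} applies once we have a map $p_2^\star\mathcal{V}_{\nu_A}^{n-an}\to p_1^\star\mathcal{V}_{\nu_A}^{n-an}$ on a neighborhood of $p_2^{-1}(\mathcal{Z})\cap p_1^{-1}(\mathcal{U})$; this neighborhood is again contained in the domain of definition of both sheaves on $\mathcal{S}^{tor}_{K^p(K_p\cap tK_pt^{-1}),\Sigma''}$, and the desired map is provided by lemma \ref{lem-hecke-local}(3). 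That these operators assemble into an action of $\mathcal{H}_{p,m',0}^+$ follows, as in the proof of proposition \ref{prop-comp-classical-coh}, from the geometric composition formula for Hecke correspondences of section \ref{section-compos-Hecke-op}, which does not depend on the coefficient sheaf.

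For the compactness of the $\mathcal{H}_{p,m',0}^{++}$ action (third step), there are now two independent sources of compactness at our disposal: the ``restriction--corestriction'' geometric compactness of lemma \ref{lem-criterium-compacity2}, and the fact that for $t\in T^{++}$ the sheaf map $\widetilde{[t^{-1}]}$ is itself compact in the sense of definition \ref{defi-compact-map} by lemma \ref{lem-hecke-local}(3). Either one combined with lemma \ref{last-lemma-compact} suffices; we will use the geometric one, which was already established for exactly the relevant pairs of supports $(\mathcal{U}\cap T^t(\mathcal{Z})\subseteq \mathcal{U},\;T(\mathcal{U})\cap\mathcal{Z}\subseteq T(\mathcal{U}))$ in the proof of Theorem \ref{thm-finiteslopecoho}: the quasi-compact opens $\mathcal{U}'$ and the closed subsets $\mathcal{Z}_1\subseteq\overset{\circ}{\mathcal{Z}_2}$ constructed there using lemmas \ref{lem-topological-description} and \ref{lem-dynamic-corres} depend only on the underlying topology and work verbatim here, provided one shifts the starting exponent from $0$ to $n+1$ so that all intermediate supports remain inside $(\pi_{HT,K_p}^{tor})^{-1}(]C_{w,k}[_{n,n}K_p)$. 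The main technical obstacle is precisely this book-keeping: one must check that after replacing the various cells $]C_{w,k}[_{\ast,\ast}K_p$ in that proof by their translates under suitable powers of $T$ and $T^t$, the resulting quasi-compact opens and closed subsets still satisfy the strict containments $\overline{\mathcal{U}'}\subseteq\mathcal{U}$ and $\mathcal{Z}_1\subseteq\overset{\circ}{\mathcal{Z}_2}$ required by lemma \ref{lem-criterium-compacity2}, which is guaranteed by the strict contraction estimates of lemma \ref{lemma-strongcontraction}. Applying lemma \ref{last-lemma-compact} with $\phi=\widetilde{[t^{-1}]}$ then shows that $T$ is compact, and since $\mathcal{H}_{p,m',0}^{++}$ is generated as an ideal by such $T$, the whole ideal acts by potent compact operators.
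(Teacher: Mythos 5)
Your proposal tracks the paper's brief indication of proof very closely: point~(1) via corollary~\ref{coro-inpro}, the Hecke action via lemma~\ref{lem-hecke-local} and section~\ref{section-compos-Hecke-op}, and the potent compactness via lemma~\ref{last-lemma-compact} with the support-shift bookkeeping. The structure is sound and the main technical observation — that replacing the starting exponent $0$ by $n+1$ keeps all the relevant supports inside $(\pi_{HT,K_p}^{tor})^{-1}(]C_{w,k}[_{n,n}K_p)$ so that the Banach sheaves and their Hecke maps are even defined — is exactly the right point and is what makes the proof of theorem~\ref{thm-finiteslopecoho} transfer.

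One remark of yours is misleading and worth correcting, though. You say there are ``two independent sources of compactness'' (geometric shrinking, and compactness of $\widetilde{[t^{-1}]}$) and that ``either one combined with lemma~\ref{last-lemma-compact} suffices.'' This is not right: lemma~\ref{lem-criterium-compacity2} is stated and proved for locally free \emph{coherent} sheaves, and the geometric shrinking $\overline{\mathcal{U}''}\subseteq\mathcal{U}$ alone does \emph{not} make the restriction map compact for a general locally projective Banach sheaf (think of a sheaf of the form $V\hat\otimes_F\oscr_{\mathcal{X}}$ with $V$ infinite-dimensional: the map $\oscr(\mathcal{U})\hat\otimes V\to\oscr(\mathcal{U}')\hat\otimes V$ is not compact). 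Lemma~\ref{last-lemma-compact} is precisely the combination that is needed — its hypotheses demand \emph{both} the geometric shrinking of lemma~\ref{lem-criterium-compacity2} \emph{and} a compact morphism $\phi$ of Banach sheaves — so one cannot ``pick either.'' In practice your argument does verify both inputs (the geometric containments inherited from the proof of theorem~\ref{thm-finiteslopecoho}, and the compactness of $\widetilde{[t^{-1}]}$ from lemma~\ref{lem-hecke-local}(3)) and correctly feeds them into lemma~\ref{last-lemma-compact}, so the proof reaches the right conclusion; but the framing of the two ``sources'' as interchangeable would mislead a reader, and would actually produce a gap if someone took it literally and tried to drop the compact sheaf map.
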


\begin{proof} The property that the objects at hand are objects of $\mathrm{Pro}_{\N}(\mathcal{K}^{proj}(\mathbf{Ban}(A)))$ is corollary \ref{coro-inpro}. The rest of the argument follows almost verbatim the proof of theorem \ref{thm-finiteslopecoho}, now using lemmas \ref{last-lemma-compact} and \ref{lem-hecke-local} to see that the operators in $\mathcal{H}_{p,m',0}^{\pm\pm}$ are potent compact. The details  are left  to the reader.
\end{proof}
 
\subsubsection{Change of analyticity radius}

\begin{thm}\label{thm-change-of-radius}  Let $m'>n+1$. Let $\nu_A$ be an $n$-analytic character.  The maps 
$\HH^i_{w,n+1-an}(K^pK_p, \nu_A)^{+,fs} \to \HH^i_{w,n-an}(K^pK_p, \nu_A)^{+,fs}$ and $\HH^i_{w,n-an}(K^pK_p, \nu_A)^{-,fs} \to \HH^i_{w,n+1-an}(K^pK_p, \nu_A)^{-,fs}$
are quasi-isomorphisms. The same results hold for cuspidal cohomologies. 
\end{thm}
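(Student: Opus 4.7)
The plan is to exploit the fact that for a sufficiently regular Hecke operator $T = [K_p t K_p]$ with $t \in T^{++}$, the local model of the action on $V_{\kappa_A}^{n-an}$ (via $wtw^{-1} \in T^{M_\mu,++}$) factors through the natural compact inclusion $V_{\kappa_A}^{(n+1)-an} \hookrightarrow V_{\kappa_A}^{n-an}$ by lemma \ref{lem-slopeboundBGG}. Dually, the action on $D_{\kappa_A+2\rho_{nc}}^{n-an}$ factors through $D_{\kappa_A+2\rho_{nc}}^{(n+1)-an}$. Globalizing via the torsor $\mathcal{M}_{HT,n,K_p}$ and proposition \ref{prop-representing-torsor-map}, one obtains factorizations of sheaf maps (over an appropriate common open)
\[
\mathcal{V}_{\nu_A}^{(n+1)-an}\hookrightarrow\mathcal{V}_{\nu_A}^{n-an}\xrightarrow{\,T\,}\mathcal{V}_{\nu_A}^{(n+1)-an}\hookrightarrow\mathcal{V}_{\nu_A}^{n-an},
\]
and the dual statement for the distribution sheaves, with $T$ factoring through restriction in either direction.

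First I would fix, for each choice of analyticity radius, an allowed support condition in the sense of theorem \ref{thm-finiteslopesupport} that lies inside the locus where \emph{both} $\mathcal{V}_{\nu_A}^{n-an}$ and $\mathcal{V}_{\nu_A}^{(n+1)-an}$ are defined (this uses the lemmas of section \ref{section-dynamics} to see that $T^{n+2}$, $(T^t)^{n+2}$ of the relevant tubes are contained in $(\pi_{HT,K_p}^{tor})^{-1}(]C_{w,k}[_{n+1,n+1}K_p)$). Then I would verify that the infinite diamond argument of section \ref{section-infinite-diamond} (proposition \ref{last-prop-diamond} and corollary \ref{coro-infinitediamond}) applies to the locally analytic sheaves, using proposition \ref{dist-sheaves-Banach} and corollary \ref{coro-inpro} to place everything in $\mathrm{Pro}_{\mathbb{N}}(\mathcal{K}^{proj}(\mathbf{Ban}(A)))$, and lemma \ref{last-lemma-compact} together with the compactness provided by the analyticity-improving Hecke to ensure the relevant restriction-corestriction maps are compact. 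This gives the analog of theorem \ref{thm-finiteslopesupport} for the sheaves $\mathcal{V}_{\nu_A}^{n-an}$ and $\mathcal{D}_{\nu_A}^{n-an}$: the finite slope cohomology is independent of the choice of allowed support.

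Second, I would set up the two-out-of-three diagram: on a common allowed support I have natural restriction maps $\mathrm{R}\Gamma_{w,n-an}(K^pK_p,\nu_A) \to \mathrm{R}\Gamma_{w,(n+1)-an}(K^pK_p,\nu_A)$ (in the $+$ case), and the factorization of $T$ on the source through the target, and of $T$ on the target through the source. Since $T$ acts invertibly on each finite slope part, both composites become quasi-isomorphisms on finite slope parts, hence so does the restriction map itself. The cuspidal and distribution cases are identical, the latter using that the dual inclusion $\mathcal{D}_{\nu_A}^{n-an}\hookrightarrow\mathcal{D}_{\nu_A}^{(n+1)-an}$ is the natural one (compact in the reverse direction), with $T^-$ acting on the distributions in place of $T^+$.

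The main obstacle will be the careful bookkeeping of supports: since the definition in section \ref{section-first-defi} ties the radius $n$ to a specific power $T^{n+1}$ in the support condition, one must first establish an analytic-sheaf version of theorem \ref{thm-finiteslopesupport} in order to meaningfully compare cohomologies at different radii on a common region. Once this is in place, the argument reduces to the purely local statement of lemma \ref{lem-slopeboundBGG} about the Hecke operator improving analyticity, together with the formal diamond argument of section \ref{subsection-analytic-continuation}.
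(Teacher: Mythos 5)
Your proposal is essentially the same approach as the paper's proof: factor the sufficiently regular Hecke operator $T$ through the sheaf of the other radius of analyticity, and conclude by the standard argument that restriction becomes a quasi-isomorphism on the finite slope part because $T$ is invertible there. However, there is a systematic error in your labeling of the maps. In the paper's conventions (see lemma \ref{lem-slopeboundBGG} and the definition of $V_{\kappa_A}^{n-an}$ as analytic functions on $M_1\mathcal{M}_n$, with $\mathcal{M}_{n+1}\subsetneq\mathcal{M}_n$), the natural compact map is the \emph{restriction} $V_{\kappa_A}^{n-an}\hookrightarrow V_{\kappa_A}^{(n+1)-an}$, not the other way; it is the analyticity-improving Hecke operator that goes $V_{\kappa_A}^{(n+1)-an}\to V_{\kappa_A}^{n-an}$, and dually the natural map on distribution sheaves is $\mathcal{D}_{\nu_A}^{(n+1)-an}\to\mathcal{D}_{\nu_A}^{n-an}$. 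You have both of these reversed, consistently. Your chain of maps has the right alternating structure, but the arrows are labeled exactly backwards relative to the paper's conventions; once corrected, the argument is correct and proves that both natural maps between the two radii become quasi-isomorphisms on finite slope parts (which implies the theorem, since the map in the statement is the one-way composite).

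Two further remarks. First, the ``change of allowed support'' result for the analytic sheaves that you propose to verify is precisely theorem \ref{thm-finiteslopesupport2}, which the paper has already established before theorem \ref{thm-change-of-radius}, so you can simply cite it rather than rederive it. Second, the paper's proof is even shorter than your plan: rather than constructing a fresh common allowed support inside $]C_{w,k}[_{n+1,n+1}K_p$, it keeps the default support condition for radius $n$ (namely $T^{n+1}(\cdots)$, $(T^t)^{n+1}(\cdots)$) and uses that $\mathcal{V}^{(n+1)-an}_{\nu_A}$ is defined on a neighborhood of that support's intersection, so one can write down the factorization of $T$ through $\mathrm{R}\Gamma(\cdots,\mathcal{V}^{(n+1)-an}_{\nu_A})$ over the same support; it then invokes (implicitly) theorem \ref{thm-finiteslopesupport2} to identify the latter's finite slope part with $\HH^i_{w,n+1-an}(K^pK_p,\nu_A)^{+,fs}$.
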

\begin{proof} Let $T = [K_{p,m',0} t K_{p,m',0}]$ for $t \in T^{++}$ satisfying $\min(t) \geq 1$.  The endomorphism  $T $ of $$ \mathrm{R}\Gamma_{ T^{n+1}  ((\pi_{HT, K_p}^{tor})^{-1}(]X_{w,k}[) ) \cap (T^t)^{n+1}((\pi_{HT, K_p}^{tor})^{-1}(\overline{]Y_{w,k}[}))}(T^{n+1}((\pi_{HT, K_p}^{tor})^{-1}(]X_{w,k}[)), \mathcal{V}^{n-an}_{\nu_A}) $$
factors into 
$$  \mathrm{R}\Gamma_{ T^{n+1}  ((\pi_{HT, K_p}^{tor})^{-1}(]X_{w,k}[) ) \cap (T^t)^{n+1}((\pi_{HT, K_p}^{tor})^{-1}(\overline{]Y_{w,k}[}))}(T^{n+1}((\pi_{HT, K_p}^{tor})^{-1}(]X_{w,k}[)), \mathcal{V}^{n-an}_{\nu_A})$$
$$\longrightarrow $$
$$ \mathrm{R}\Gamma_{ T^{n+1}  ((\pi_{HT, K_p}^{tor})^{-1}(]X_{w,k}[) ) \cap (T^t)^{n+1}((\pi_{HT, K_p}^{tor})^{-1}(\overline{]Y_{w,k}[}))}(T^{n+1}((\pi_{HT, K_p}^{tor})^{-1}(]X_{w,k}[)), \mathcal{V}^{n+1-an}_{\nu_A}) $$ $$\longrightarrow$$
$$ \mathrm{R}\Gamma_{ T^{n+1}  ((\pi_{HT, K_p}^{tor})^{-1}(]X_{w,k}[) ) \cap (T^t)^{n+1}((\pi_{HT, K_p}^{tor})^{-1}(\overline{]Y_{w,k}[}))}(T^{n+1}((\pi_{HT, K_p}^{tor})^{-1}(]X_{w,k}[)), \mathcal{V}^{n-an}_{\nu_A}).$$
The $-$ case follows similarly.
\end{proof}

\subsubsection{Change of support condition} It is important to us that the cohomology $\mathrm{R}\Gamma_{w,n-an}(K^pK_p, \nu_A)^{\pm,fs}$ can actually be realized as the finite slope part of cohomology groups with different support conditions.  The following definition is similar to definition \ref{defi-support-condition}. We start by fixing an element $t \in T^{++}$ such that $\min(t) \geq 1$ and we set $C = \max(t)$. 

\begin{defi}\label{defi-support-condition2} Let $m'>n$.  A $(+,w, K_{p,m',0}, n-an)$-allowed support is a pair $(\mathcal{U}, \mathcal{Z})$  where:

\begin{enumerate}
\item $\mathcal{U}$ is an open subset of  $\mathcal{S}_{K^pK_{p,m',0}, \Sigma}^{tor}$ which is a finite union of quasi-Stein open subsets. 
\item $\mathcal{Z}$ is a closed subset of  $\mathcal{S}_{K^pK_{p,m',0}, \Sigma}^{tor}$ whose complement is a finite union of quasi-Stein open subsets.
\item   There exists $m,l,s \in \ZZ_{\geq 0}$  such that:
 $$(\pi_{HT, K_{p, m',0}}^{tor})^{-1}(]C_{w,k}[_{m, \overline{0}} K_{p,m',0} \cap ]C_{w,k}[_{0, \overline{l +s}} K_{p,m',0}) \subseteq \mathcal{Z} \cap \mathcal{U}
  \subseteq (\pi_{HT, K_{p, m',0}}^{tor})^{-1}(]C_{w,k}[_{{0}, C\overline{l}}K_{p,m',0}),$$
 $$(\pi_{HT, K_{p, m',0}}^{tor})^{-1}(]C_{w,k}[_{m+s, \overline{0}} K_{p,m',0} \cap ]C_{w,k}[_{0, \overline{l}} K_{p,m',0}) \subseteq \mathcal{U} \subseteq (\pi_{HT, K_{p, m',0}}^{tor})^{-1}(]C_{w,k}[_{Cm,-1} K_{p,m',0}),$$
 $$ \mathcal{Z} \cap \mathcal{U} \subseteq  (\pi_{HT,K_{p,m',0}}^{tor})^{-1}(]C_{w,k}[_{n,n} K_{p,m',0}).$$
 \end{enumerate}

Let $m'>n$.  A $(-,w, K_{p,m',0},n-an)$-allowed support is a pair $(\mathcal{U}, \mathcal{Z})$  where:
\begin{enumerate}
\item $\mathcal{U}$ is an open subset of  $\mathcal{S}_{K^pK_{p,m',0}, \Sigma}^{tor}$ which is a finite union of quasi-Stein open subsets. 
\item $\mathcal{Z}$ is a closed subset of  $\mathcal{S}_{K^pK_{p,m',0}, \Sigma}^{tor}$ whose complement is a finite union of quasi-Stein open subsets.
\item There exists $m,l,s \in \ZZ_{\geq 0}$ such that:
$$(\pi_{HT, K_{p,m',0}}^{tor})^{-1}(]C_{w,k}[_{\overline{m+s}, {0} } K_{p,m',0} \cap ]C_{w,k}[_{\overline{0}, n} K_{p,m',0}) \subseteq \mathcal{Z}  \cap \mathcal{U}\subseteq (\pi_{HT, K_{p,m',0}}^{tor})^{-1}( ]C_{w,k}[_{C\overline{m}, {0}} K_{p,m',0}),$$
 $$(\pi_{HT, K_{p,m',0}}^{tor})^{-1}(]C_{w,k}[_{\overline{m}, 0} K_{p,m',0} \cap ]C_{w,k}[_{\overline{0}, l+s}  K_{p,m',0} ) \subseteq \mathcal{U} \subseteq (\pi_{HT, K_{p,m',0}}^{tor})^{-1}(]C_{w,k}[_{-1, Cl} K_{p,m',0}),$$
  $$ \mathcal{Z} \cap \mathcal{U} \subseteq  (\pi_{HT,K_{p,m',0}}^{tor})^{-1}(]C_{w,k}[_{n,n} K_{p,m',0}).$$
 \end{enumerate}
\end{defi}
 
\begin{thm}\label{thm-finiteslopesupport2} Let $m'>n$, $w\in\WM$, and $\nu_A $ an $n$-analytic character.

\begin{enumerate} 
\item  Let $(\mathcal{U}, \mathcal{Z})$ be a $(+,w,K_{p,m',0},n-an)$-allowed support condition.  Then $\mathrm{R}\Gamma_{\mathcal{Z} \cap \mathcal{U}}(\mathcal{U}, \mathcal{V}^{n-an}_{\nu_A})$ and $\mathrm{R}\Gamma_{\mathcal{Z} \cap \mathcal{U}}(\mathcal{U}, \mathcal{V}^{n-an}_{\nu_A}(-D))$ are objects of $\mathrm{Pro}_{\N}(\mathcal{K}^{proj}(\mathbf{Ban}(A)))$ and carry a   compact action of $T^s$ for sufficiently large $s$.  Moreover there are canonical isomorphisms
$$\HH^i_{w, n-an}(K^pK_{p,m',0},\nu_A)^{+,fs}\simeq \HH^i_{\mathcal{Z} \cap \mathcal{U}}(\mathcal{U}, \mathcal{V}^{n-an}_{\nu_A})^{T^s-fs}$$
and
$$\HH^i_{w, n-an}(K^pK_{p,m',0},\nu_A,cusp)^{+,fs}\simeq \HH^i_{\mathcal{Z} \cap \mathcal{U}}(\mathcal{U}, \mathcal{V}^{n-an}_{\nu_A}(-D))^{T^s-fs}.$$

\item  Let $(\mathcal{U}, \mathcal{Z})$ be a $(-,w,K_{p,m',0}, n-an)$-allowed support condition.  Then $\mathrm{R}\Gamma_{\mathcal{Z} \cap \mathcal{U}}(\mathcal{U}, \mathcal{D}^{n-an}_{\nu_A})$ and $\mathrm{R}\Gamma_{\mathcal{Z} \cap \mathcal{U}}(\mathcal{U}, \mathcal{D}^{n-an}_{\nu_A}(-D))$ are objects of $\mathrm{Pro}_{\N}(\mathcal{K}^{proj}(\mathbf{Ban}(A)))$ and carry a  compact action of $T^s$ for sufficiently large $s$.  Moreover there are canonical isomorphisms
$$\HH^i_{w, n-an}(K^pK_{p,m',0},\nu_A)^{-,fs}\simeq \HH^i_{\mathcal{Z} \cap \mathcal{U}}(\mathcal{U}, \mathcal{D}^{n-an}_{\nu_A})^{T^s-fs}$$
and
$$\HH^i_{w, n-an}(K^pK_{p,m',0},\nu_A,cusp)^{-,fs}\simeq \HH^i_{\mathcal{Z} \cap \mathcal{U}}(\mathcal{U}, \mathcal{D}^{n-an}_{\nu_A}(-D))^{T^s-fs}.$$
 \end{enumerate}
\end{thm}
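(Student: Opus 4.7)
The proof will closely mirror that of Theorem \ref{thm-finiteslopesupport} in the algebraic case, with the algebraic sheaves $\mathcal{V}_\kappa$ and $\mathcal{V}_\kappa(-D)$ replaced by the locally projective Banach sheaves $\mathcal{V}_{\nu_A}^{n-an}$ and $\mathcal{D}_{\nu_A}^{n-an}$, and the key input being the dynamics of the Hecke correspondences on the support conditions that are encoded in Definition \ref{defi-support-condition2}. I will treat only the non-cuspidal $+$ case in detail, as the other cases follow with essentially cosmetic modifications.

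First, since the support condition guarantees $\mathcal{Z}\cap \mathcal{U}\subseteq (\pi_{HT,K_{p,m',0}}^{tor})^{-1}(]C_{w,k}[_{n,n}K_{p,m',0})$, the sheaf $\mathcal{V}_{\nu_A}^{n-an}$ is defined on a neighborhood of $\mathcal{Z}\cap\mathcal{U}$, so the cohomology $\mathrm{R}\Gamma_{\mathcal{Z}\cap\mathcal{U}}(\mathcal{U},\mathcal{V}_{\nu_A}^{n-an})$ makes sense; and because $\mathcal{V}_{\nu_A}^{n-an}$ is a locally projective Banach sheaf (Proposition \ref{dist-sheaves-Banach}) and $\mathcal{U}$, $\mathcal{U}\setminus(\mathcal{Z}\cap\mathcal{U})$ are finite unions of quasi-Stein opens, Corollary \ref{coro-inpro} (an elaboration of Lemma \ref{lem-complex-frechet}) immediately yields that this cohomology is representable in $\mathrm{Pro}_{\N}(\mathcal{K}^{proj}(\mathbf{Ban}(A)))$.

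For the second and third parts, I will run the infinite diamond construction of Section \ref{section-infinite-diamond} with $U_m=(\pi_{HT,K_{p,m',0}}^{tor})^{-1}(T^m(]X_{w,k}[))$ and $Z_l=(\pi_{HT,K_{p,m',0}}^{tor})^{-1}((T^t)^l(\overline{]Y_{w,k}[}))$, where $T=[K_{p,m',0}tK_{p,m',0}]$ for $t\in T^{++}$ with $\min(t)\geq 1$. Using Lemma \ref{lem-dynamic-corres} we get inclusions
\begin{eqnarray*}
U_m\cap Z_{l+s}&\subseteq&(\pi_{HT,K_{p,m',0}}^{tor})^{-1}(]C_{w,k}[_{m,\overline{0}}K_{p,m',0}\cap ]C_{w,k}[_{0,\overline{l+s}}K_{p,m',0}),\\
U_{m+s}\cap Z_l&\subseteq&(\pi_{HT,K_{p,m',0}}^{tor})^{-1}(]C_{w,k}[_{m+s,\overline{0}}K_{p,m',0}\cap ]C_{w,k}[_{0,\overline{l}}K_{p,m',0}),\\
U_m&\supseteq&(\pi_{HT,K_{p,m',0}}^{tor})^{-1}(]C_{w,k}[_{Cm,-1}K_{p,m',0}),\\
Z_l&\supseteq&(\pi_{HT,K_{p,m',0}}^{tor})^{-1}(]C_{w,k}[_{0,\overline{Cl}}K_{p,m',0}),
\end{eqnarray*}
which, together with the corresponding inclusions built into Definition \ref{defi-support-condition2}, show that $(\mathcal{U},\mathcal{Z})$ is squeezed between two consecutive stages of the diamond. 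Once I establish that some restriction-corestriction map in the diamond is compact, the conclusion of Corollary \ref{coro-infinitediamond} provides the desired compact action of $T^s$ on $\mathrm{R}\Gamma_{\mathcal{Z}\cap\mathcal{U}}(\mathcal{U},\mathcal{V}_{\nu_A}^{n-an})$ and the canonical quasi-isomorphism
$$\mathrm{R}\Gamma_{\mathcal{Z}\cap\mathcal{U}}(\mathcal{U},\mathcal{V}_{\nu_A}^{n-an})^{T^s-fs}\simeq \mathrm{R}\Gamma_{U_0\cap Z_0}(U_0,\mathcal{V}_{\nu_A}^{n-an})^{T-fs}=\mathrm{R}\Gamma_{w,n-an}(K^pK_{p,m',0},\nu_A)^{+,fs}.$$

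The main point that differs from Theorem \ref{thm-finiteslopesupport} is the compactness of the restriction-corestriction map
$$\mathrm{R}\Gamma_{U_0\cap Z_{n+2}}(U_0,\mathcal{V}_{\nu_A}^{n-an})\longrightarrow \mathrm{R}\Gamma_{U_{n+2}\cap Z_{n+1}}(U_{n+2},\mathcal{V}_{\nu_A}^{n-an}).$$
This is where the fact that $\mathcal{V}_{\nu_A}^{n-an}$ is \emph{not} coherent matters: compactness no longer comes for free from Lemma \ref{lem-criterium-compacity2}. Instead I will factor this map through the Hecke correspondence using the compact sheaf morphism $\widetilde{[t^{-1}]}:p_2^\star\mathcal{V}_{\nu_A}^{n-an}\to p_1^\star\mathcal{V}_{\nu_A}^{n-an}$ supplied by Lemma \ref{lem-hecke-local}(3) (this is precisely where the hypothesis $\mathcal{Z}\cap\mathcal{U}\subseteq (\pi_{HT,K_{p,m',0}}^{tor})^{-1}(]C_{w,k}[_{n,n}K_{p,m',0})$ and the bound $m'>n$ are used, so that the Hecke map on sheaves is defined and compact), and then appeal to Lemma \ref{last-lemma-compact} together with the geometric inclusions of Lemma \ref{lem-dynamic-corres} and Lemma \ref{lem-criterium-compacity2} applied to the underlying analytic spaces. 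This is the delicate step; once it is in place, the remainder is bookkeeping. The $-$ case is handled identically after replacing $\mathcal{V}_{\nu_A}^{n-an}$ by $\mathcal{D}_{\nu_A}^{n-an}$ and exchanging the roles of $U$ and $Z$, and the cuspidal versions follow by the same argument tensored with $\oscr(-D)$.
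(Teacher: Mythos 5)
Your overall approach is the same as the paper's (the paper in fact only says the proof is ``very similar to that of theorem~\ref{thm-finiteslopesupport}, and left to the reader''), and you correctly identify the two genuine ingredients that change when one passes from the coherent sheaf $\mathcal{V}_\kappa$ to the Banach sheaf $\mathcal{V}_{\nu_A}^{n-an}$: the support condition must guarantee the sheaf is defined near $\mathcal{Z}\cap\mathcal{U}$, and the compactness mechanism needed to drive the infinite diamond must be sourced from the compact \emph{sheaf} morphism of Lemma~\ref{lem-hecke-local}(3) together with Lemma~\ref{last-lemma-compact}, rather than from Lemma~\ref{lem-criterium-compacity2} alone.

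However, there is a slip in the way you present the compactness step. You claim you will show the \emph{restriction-corestriction map}
\[
\mathrm{R}\Gamma_{U_0\cap Z_{n+2}}(U_0,\mathcal{V}_{\nu_A}^{n-an})\longrightarrow \mathrm{R}\Gamma_{U_{n+2}\cap Z_{n+1}}(U_{n+2},\mathcal{V}_{\nu_A}^{n-an})
\]
is compact ``by factoring it through the Hecke correspondence.'' This is not correct: the restriction-corestriction map is a purely geometric map on cohomology with support and does not factor through the Hecke correspondence at all; moreover, since $\mathcal{V}_{\nu_A}^{n-an}$ is not coherent, this map genuinely fails to be compact (Lemma~\ref{lem-criterium-compacity2} only treats locally free sheaves of finite rank, and Lemma~\ref{last-lemma-compact} requires a compact sheaf morphism, not the identity). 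What you should show instead is that the \emph{Hecke operator} $T^s$ on $\mathrm{R}\Gamma_{\mathcal{Z}\cap\mathcal{U}}(\mathcal{U},\mathcal{V}_{\nu_A}^{n-an})$ is potent compact, since it \emph{is} defined via the Hecke correspondence, and its sheaf-level component $\widetilde{[t^{-1}]}:p_2^\star\mathcal{V}_{\nu_A}^{n-an}\to p_1^\star\mathcal{V}_{\nu_A}^{n-an}$ is compact by Lemma~\ref{lem-hecke-local}(3); then Lemma~\ref{last-lemma-compact} yields compactness at the level of cohomology. This is in fact sufficient: Proposition~\ref{last-prop-diamond} (and hence Theorem~\ref{last-thm-diamond} and Corollary~\ref{coro-infinitediamond}) only require potent compactness of $T$ on each vertex of the diamond; the stated assumption in Section~\ref{section-infinite-diamond} that some restriction-corestriction be compact is just the most convenient way to get this in the coherent case, but it is not the only way, and it is not available here. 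With this correction the rest of your bookkeeping goes through.
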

\begin{proof} This is very similar to the proof of theorem \ref{thm-finiteslopesupport}, and left to the reader.
\end{proof}
 
\subsubsection{Change of level}Now we investigate how the finite slope cohomologies $\mathrm{R}\Gamma_{w, n-an}(K^pK_p,\nu_A)^{\pm,fs}$ and $\mathrm{R}\Gamma_{w, n-an}(K^pK_p,\nu_A,cusp)^{\pm,fs}$ vary with the level $K_p$.
\begin{thm}\label{thm-change-of-level2}
For all $w\in\WM$ and all $m'' \geq m'>n$, the pullback map 
$$ \HH^i_{w, n-an}(K^pK_{p,m',0},\nu_A)^{+, fs}  \rightarrow  \HH^i_{w, n-an}(K^pK_{p,m'',0},\nu_A)^{+, fs}$$ 
and the trace map
$$ \HH^i_{w, n-an}(K^pK_{p,m'',0},\nu_A)^{-, fs}  \rightarrow  \HH^i_{w, n-an}(K^pK_{p,m',0},\nu_A)^{-, fs}$$ are quasi-isomorphisms, compatible with the action of $T(\qq_p)$, and the same statements are true for cuspidal cohomology.
\end{thm}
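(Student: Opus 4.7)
The plan is to follow the strategy used for the classical case in Theorem \ref{thm-change-of-level}, namely to reduce the statement to suitable relations in the Hecke algebra that factor the invertible Hecke operators on the finite slope part through intermediate cohomologies at different levels. The key input is an analog of Lemma \ref{lemma-relations-Hecke-algebra} for the locally analytic sheaves $\mathcal{V}^{n-an}_{\nu_A}$ and $\mathcal{D}^{n-an}_{\nu_A}$.

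First I would verify that the Hecke operator relations of Lemma \ref{lemma-relations-Hecke-algebra}(3),(4) lift to the locally analytic setting: for any $t \in T^{++}$ with $\min(t) \geq 1$ and any $m'' \geq m' > n$, the composition of pullback and cohomological correspondence gives
\begin{eqnarray*}
[K_{p,m'',0} t K_{p,m'',0}] &=& [K_{p,m'',0} 1 K_{p,m',0}] \circ [K_{p,m',0} t K_{p,m'',0}],\\
[K_{p,m',0} t K_{p,m',0}] &=& [K_{p,m',0} t K_{p,m'',0}] \circ [K_{p,m'',0} 1 K_{p,m',0}],
\end{eqnarray*}
and similarly for $t \in T^{--}$ using trace maps, acting on the cohomologies $\HH^i_{w,n-an}(K^p K_{p,\ast,0},\nu_A)$. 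The verification reduces to the same double coset computation of Proposition \ref{prop-conv-doublecoset} together with the fact that, by Proposition \ref{prop-representing-torsor-map} and Lemma \ref{lem-hecke-local}, the map on the torsor $\mathcal{M}_{HT,n,K_p}$ (and thence on $\mathcal{V}^{n-an}_{\nu_A}$) induced by a Hecke correspondence is locally represented by $wtw^{-1}$, so the composition law for cohomological correspondences is controlled by exactly the same group theory as in the classical case.

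From these relations one obtains commutative diagrams analogous to those in Corollary \ref{cor-classical-coh-diagrams}, which exhibit the Hecke operator $[K_{p,m'',0}tK_{p,m'',0}]$ (resp.\ $[K_{p,m',0}tK_{p,m',0}]$) as a composition of pullback and a map through the lower-level cohomology (resp.\ trace). Choosing $t \in T^{++}$ acting as a potent compact operator (the existence of such $t$ is built into the definition of the finite slope part, see Theorem \ref{thm-finiteslopecoho2}), and passing to the $+,fs$ part where this Hecke operator becomes invertible, these factorizations force the pullback map
$$\HH^i_{w,n-an}(K^pK_{p,m',0},\nu_A)^{+,fs} \to \HH^i_{w,n-an}(K^pK_{p,m'',0},\nu_A)^{+,fs}$$
to have both a left and a right inverse up to invertible operators, hence to be an isomorphism. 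The $-$ case is treated symmetrically using $t \in T^{--}$ and trace maps. The $T(\qq_p)/T(\ZZ_p)$ compatibility follows because the identifications are induced by Hecke-equivariant maps, and the cuspidal version is identical after replacing $\mathcal{V}^{n-an}_{\nu_A}$ by its cuspidal subsheaf $\mathcal{V}^{n-an}_{\nu_A}(-D)$ (resp.\ $\mathcal{D}^{n-an}_{\nu_A}(-D)$), which does not affect the dynamical and compactness properties used.

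The main obstacle, minor but needing care, is to check that the geometric composition of the toroidally-compactified Hecke correspondences yields the Hecke relations above at the level of cohomology with supports and locally analytic coefficients. This requires combining the discussion of Section \ref{section-compos-Hecke-op} with the dynamical behaviour of the supports under iteration (Lemma \ref{lem-dynamic-corres}) to ensure that the support conditions defining $\mathrm{R}\Gamma_{w,n-an}(K^pK_p,\nu_A)^{\pm,fs}$ remain $(\pm,w,K_{p,m',0},n-an)$-allowed (Definition \ref{defi-support-condition2}) throughout the composition, so that Theorem \ref{thm-finiteslopesupport2} identifies the relevant cohomologies. Once this is in place, the proof is formal.
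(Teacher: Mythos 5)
Your proposal is correct and follows exactly the approach the paper indicates: reduce to the Hecke-algebra relations of Lemma \ref{lemma-relations-Hecke-algebra}(3),(4) in the locally analytic setting, factor the compact Hecke operator through the intermediate level, and invert on the finite slope part. The paper simply states that the proof is similar to Theorem \ref{thm-change-of-level} and leaves the details to the reader; you have filled them in in the intended way.
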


\begin{proof} This is very similar to the proof of theorem \ref{thm-change-of-level}. Details are left to the reader.
\end{proof}

As a result of the theorems \ref{thm-change-of-radius} and \ref{thm-change-of-level2}, we can let $$\mathrm{R}\Gamma_{w,an}(K^p,\nu_A)^{\pm,fs}~\textrm{and}~\mathrm{R}\Gamma_{w,an}(K^p,\nu_A,cusp)^{\pm,fs}$$ denote respectively $$\mathrm{R}\Gamma_{w, n-an}(K^pK_{p,m',0},\nu_A)^{\pm,fs} ~\textrm{and}~\mathrm{R}\Gamma_{w,n-an}(K^pK_{p,m',0},\nu_A,cusp)^{\pm,fs}$$ for  some choice of $n$ and  $m'>n$.  Although the cohomology complexes depend on a number of choices, the cohomology groups are independent of any choice. 

\subsection{Two spectral sequences} We consider two spectral sequences. The first spectral sequence is a Tor spectral sequence for specialization in the weight space. The second spectral sequence comes from the locally analytic BGG resolution and relates locally analytic overconvergent and overconvergent cohomology in classical weights. 
\subsubsection{The Tor spectral sequence}

\begin{thm}\label{thm-tor-spectral} Let $w\in\WM$, let $\nu_{A}$ be a weight, and let $x \in \Spa (A, A^+) = S$. Let $\nu_x$ be the corresponding weight. We have spectral sequences:
$$ E_2^{p,q} = \mathrm{Tor}^{\oscr_S}_{-p}( \HH^q_{w,an}(K^p, \nu_A)^{\pm,fs}, k(x)) \Rightarrow \HH^{p+q}_{w,an}(K_p, \nu_x)^{\pm,fs}$$
and similarly for cuspidal cohomology.
\end{thm}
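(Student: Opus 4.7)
The plan is to exhibit $\mathrm{R}\Gamma_{w,an}(K^p,\nu_A)^{\pm,fs}$ as (locally on a spectral variety) a perfect complex of $\oscr_S$-modules whose derived base change along $\oscr_S \to k(x)$ computes $\mathrm{R}\Gamma_{w,an}(K^p,\nu_x)^{\pm,fs}$. Once this is established, the spectral sequence is the usual hypercohomology / Tor spectral sequence for a perfect complex specialized at a point.

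I would proceed as follows. First, using the construction in section \ref{section-first-defi2} and the allowed support conditions of theorem \ref{thm-finiteslopesupport2}, choose a $(\pm,w,K_{p,m',0},n{-}an)$-allowed support $(\mathcal{U},\mathcal{Z})$ so that the cohomology $\mathrm{R}\Gamma_{\mathcal{Z}\cap\mathcal{U}}(\mathcal{U},\mathcal{V}^{n-an}_{\nu_A})$ (resp.\ with $\mathcal{D}^{n-an}_{\nu_A}$ in the $-$ case) represents $\mathrm{R}\Gamma_{w,an}(K^p,\nu_A)^{\pm,fs}$ after taking finite slope for a chosen compact Hecke operator $T^s$, $t\in T^{++}$. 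By proposition \ref{dist-sheaves-Banach}, $\mathcal{V}^{n-an}_{\nu_A}$ and $\mathcal{D}^{n-an}_{\nu_A}$ are locally projective Banach sheaves which are modeled, over an \'etale local trivialization, on $V^{n-an}_{\kappa_A}$ and $D^{n-an}_{\kappa_A+2\rho_{nc}}$. These base-change well in the weight: namely, if $A\to k(x)$ specializes $\nu_A$ to $\nu_x$, then $V^{n-an}_{\kappa_A}\hat{\otimes}_A k(x)=V^{n-an}_{\kappa_x}$ (and similarly for $D$), since the underlying Banach modules are direct summands of orthonormalizable ones. Taking a finite \v{C}ech-type resolution as in lemma \ref{lem-complex-frechet} and corollary \ref{coro-inpro}, we see that $\mathrm{R}\Gamma_{\mathcal{Z}\cap\mathcal{U}}(\mathcal{U},\mathcal{V}^{n-an}_{\nu_A})$ is represented in $\mathrm{Pro}_{\mathbb{N}}(\mathcal{K}^{proj}(\mathbf{Ban}(A)))$ by a complex all of whose terms are direct summands of $A(I)$ for various index sets $I$, and whose specialization at $k(x)$ is the corresponding complex for $\nu_x$.

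Next I would invoke the relative spectral theory of sections \ref{relative-spectral-1}--\ref{relative-spectral-3} and remark \ref{rem-useful-perfect}: choosing a compact lift $\tilde{T}^s$, there is a spectral variety $\pi:\tilde{\mathcal{Z}}\to S$ and a complex $\mathcal{M}^{\bullet,fs}_{\tilde{\mathcal{Z}}}$ of coherent $\oscr_{\tilde{\mathcal{Z}}}$-modules, perfect as a complex of $\pi^{-1}\oscr_S$-modules, such that $\pi_\star\mathcal{M}^{\bullet,fs}_{\tilde{\mathcal{Z}}}$ represents $\mathrm{R}\Gamma_{w,an}(K^p,\nu_A)^{\pm,fs}$, and the projection $\pi^\star \mathcal{M}^\bullet\to \mathcal{M}^{\bullet,fs}_{\tilde{\mathcal{Z}}}$ is locally on $\tilde{\mathcal{Z}}$ given by an entire series in $\tilde{T}^s$ with coefficients in $\pi^{-1}\oscr_S$.

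The main technical point, and the step that requires the most care, is then to show that taking finite slope commutes with derived specialization: for any closed point $z\in\tilde{\mathcal{Z}}$ above $x\in S$, the complex $\mathcal{M}^{\bullet,fs}_{\tilde{\mathcal{Z}}}\otimes^L_{\pi^{-1}\oscr_S,x}k(x)$ is canonically identified with the analogous finite slope complex for the specialized weight $\nu_x$ on the fiber. This follows from two observations: (i) the underlying Banach complex $M^\bullet$ specializes correctly, by the previous paragraph; and (ii) because the factorization $\tilde{P}(X)=\tilde{R}(X)\tilde{Q}(X)$ of the Fredholm determinant used in the construction of $\mathcal{M}^{\bullet,fs}_{\tilde{\mathcal{Z}}}$ specializes (after possibly shrinking $S$ around $x$) to the analogous factorization for $\tilde{T}^s$ acting on $M^\bullet\otimes_A k(x)$, the universal projector defined by an entire series in $\tilde{T}^s$ specializes to the corresponding projector on $M^\bullet\otimes_A k(x)$. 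Summing over points $z$ of the fiber $\pi^{-1}(x)$ (which is a finite set since $\pi$ is locally quasi-finite and partially proper), we conclude that
\[
\mathrm{R}\Gamma_{w,an}(K^p,\nu_A)^{\pm,fs}\otimes^L_{\oscr_S}k(x)\simeq \mathrm{R}\Gamma_{w,an}(K^p,\nu_x)^{\pm,fs}
\]
as objects of $\mathcal{D}(k(x))$.

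Finally, because $\pi_\star\mathcal{M}^{\bullet,fs}_{\tilde{\mathcal{Z}}}$ is, locally around $x$, a bounded complex of coherent (hence finitely presented) $\oscr_S$-modules representing $\mathrm{R}\Gamma_{w,an}(K^p,\nu_A)^{\pm,fs}$, the standard hypercohomology spectral sequence
\[
E_2^{p,q}=\mathrm{Tor}^{\oscr_S}_{-p}\!\bigl(\HH^q_{w,an}(K^p,\nu_A)^{\pm,fs},k(x)\bigr)\ \Rightarrow\ \HH^{p+q}\!\bigl(\pi_\star\mathcal{M}^{\bullet,fs}_{\tilde{\mathcal{Z}}}\otimes^L_{\oscr_S}k(x)\bigr)
\]
together with the identification of the abutment just established, yields the spectral sequence of the theorem. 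The argument for cuspidal cohomology is identical, replacing $\mathcal{V}^{n-an}_{\nu_A}$ by $\mathcal{V}^{n-an}_{\nu_A}(-D)$ (resp.\ $\mathcal{D}^{n-an}_{\nu_A}(-D)$). The main obstacle, as indicated, is verifying that the entire-series projector of the relative spectral theory behaves well under base change in $A$; this is the only place where one really uses the explicit construction of $\mathcal{M}^{\bullet,fs}_{\tilde{\mathcal{Z}}}$ rather than its formal properties.
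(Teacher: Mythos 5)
Your argument follows the same route as the paper's: realize the finite slope cohomology, on a small affinoid and up to a slope bound, as a perfect complex over $\oscr_S$ coming from the relative spectral theory, check that this perfect complex base-changes correctly to the fiber at $x$, and then apply the standard Tor spectral sequence (the paper cites \cite{stacks-project}, TAG 061Z). The paper's own proof is considerably more compressed — it fixes a slope bound $h'$, shrinks to an affinoid $\Spa(B,B^+)$ around $x$ where a slope $\leq h'$ decomposition exists, and invokes remark \ref{rem-useful-perfect} for the existence of a perfect complex $M^\bullet$ of $B$-modules which base-changes correctly. Your write-up essentially unpacks what the paper hides inside ``by construction'': the good behaviour of the Banach sheaves $\mathcal{V}^{n-an}_{\nu_A}$, $\mathcal{D}^{n-an}_{\nu_A}$ under specialization of the weight variable, the flatness (projectivity) of the terms of the representing complex, and, crucially, the compatibility of the entire-series projector of remark \ref{rem-useful-perfect} with base change along $A \to k(x)$. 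Identifying that last point as the real content of the theorem is correct and worth emphasizing.

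One technical slip: you assert that the fiber $\pi^{-1}(x)$ of the spectral variety over $x$ is finite because $\pi$ is locally quasi-finite and partially proper. It is not: the fiber is discrete but typically infinite, since the Fredholm series generically has infinitely many zeros. The paper avoids this by fixing the slope bound $h'$, which isolates a finite union of connected components of $\tilde{\mathcal{Z}}$ that are finite flat over a neighbourhood of $x$; the spectral sequence for the full finite slope part is then the filtered colimit over $h'$ of the truncated ones. Your ``summing over the points of the fiber'' should be replaced by this slope-truncation device; once that is done, the rest of your argument goes through unchanged.
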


\begin{rem} Concretely, if we are interested in computing the slope $\leq h$ part of $\mathrm{R}\Gamma_{w,an}(K_p, \nu_x)^{\pm,fs}$ for a given compact operator $t \in T^{\pm}$, we first can find $h' > h$ and $\Spa (B, B^+) \hookrightarrow S$ an affinoid open containing $x$ such that $\HH^i_{w,an}(K_p, \nu_x)^{\pm,fs}\vert_{\Spa(B,B^+)}$ have slope $\leq h'$ decomposition (actually, we can suppose that $x$ is  a maximal point and we can take $h=h'$). We can thus consider the finite $B$-module $\HH^i_{w,an}(K_p, \nu_x)^{\pm, \leq h'}( \Spa(B,B^+)) $. The spectral sequence specializes  to $$\mathrm{Tor}^{B}_{-p}( \HH^q_{w,an}(K^p, \nu_A)^{\pm,\leq h'}(\Spa(B,B^+)), k(x)) \Rightarrow \HH^{p+q}_{w,an}(K_p, \nu_x)^{\pm,\leq h'}.$$
\end{rem}

\begin{proof} Following the above remark, it suffices to show that for any $h$, there exists $B$ and $h'$ and a spectral sequence $$\mathrm{Tor}^{B}_{-p}( \HH^q_{w,an}(K^p, \nu_A)^{\pm,\leq h'}(\Spa(B,B^+)), k(x)) \Rightarrow \HH^{p+q}_{w,an}(K_p, \nu_x)^{\pm,\leq h'}.$$  
But by construction (see remark \ref{rem-useful-perfect}),  we can find a  complex  $M^\bullet$ of $B$-modules which computes $\mathrm{R}\Gamma_{w,an}(K^p, \nu_B)^{\pm,\leq h'}$ and such that $M^\bullet \otimes_B k(x)$ computes $\mathrm{R}\Gamma_{w,an}(K^p, \nu_x)^{\pm,\leq h'}$. We can therefore apply \cite{stacks-project}, example TAG 061Z. 
\end{proof}

\subsubsection{The spectral sequence from locally analytic overconvergent to overconvergent cohomology}

Let $w\in\WM$ and let $\nu=\nu_{alg}\chi:T^c(\ZZ_p) \rightarrow F^\times$ be a locally algebraic weight so that $\kappa_{alg}=-w_{0,M}w\nu_{alg}-(w_{0,M}w\rho+\rho)$ is $M$-dominant.

By corollary \ref{cor-special-localg}, we have morphisms:
$$ \mathrm{R}\Gamma_w( K^p, \kappa_{alg}, \chi )^{+,fs} \rightarrow \mathrm{R}\Gamma_{w,an}(K^p,\nu)^{+,fs}$$
$$\mathrm{R}\Gamma_{w,an}(K^p,\nu)^{-,fs} \rightarrow \mathrm{R}\Gamma_w( K^p, -w_{0,M}\kappa_{alg}-2\rho_{nc}, \chi^{-1} )^{-,fs}$$
and similarly for cuspidal cohomology.   We remark that these morphisms are not Hecke equivariant. Namely, on $ \mathrm{R}\Gamma_{w,an}( K^p, \nu )^{+,fs}$ we have an action of $T^+$ with the property that $T(\ZZ_p)$ acts via $\nu$ and on $\mathrm{R}\Gamma_{w,an}(K^p,\nu)^{-,fs}$, we have an action of $T^{-}$ such that $T(\ZZ_p)$ acts by $-\nu$ (see lemma \ref{lem-action-by-weight}). We deduce (compare with section \ref{section-locally-algebraic-induction}) that there is a $T^+$-equivariant map:
$\mathrm{R}\Gamma_w( K^p, \kappa_{alg}, \chi )^{+,fs} \rightarrow \mathrm{R}\Gamma_{w,an}(K^p,\nu)^{+,fs}(-\nu_{alg})$ and a $T^-$-equivariant map: 
$\mathrm{R}\Gamma_{w,an}(K^p,\nu)^{-,fs}(\nu_{alg}) \rightarrow \mathrm{R}\Gamma_w( K^p, -w_{0,M}\kappa_{alg}-2\rho_{nc}, \chi^{-1} )^{-,fs}$

We can study these maps with the help of the locally analytic BGG resolution.

\begin{thm}\label{spectral-sequence-lan-to-classical} In the setting above, there is a $\mathcal{H}_{p,m,0}^{+}$-equivariant spectral sequence  $\mathbf{E}^{p,q}_w(K^p, \kappa,\chi)^+$ converging to  finite slope overconvergent cohomology $ \HH^{p+q}_w(K^p,\kappa_{alg},\chi)^{+,fs}$,
such that $$\mathbf{E}_{w,1}^{p,q}(K^p, \kappa_{alg},\chi)^{+} = $$ $$ \oplus_{v \in W_M, \ell(v) = p} \HH^{q}_{w,an}\big( K^p,(((w_{0,M}w)^{-1}vw_{0,M}w)\cdot\nu_{alg})\chi\big)^{+,fs} \big( -(((w_{0,M}w)^{-1}vw_{0,M}w)\cdot\nu_{alg})\big).$$

There is a $\mathcal{H}_{p,m,0}^{-}$-equivariant spectral sequence  $\mathbf{E}^{p,q}_w(K^p, \kappa,\chi)^-$ converging to  finite slope overconvergent cohomology $ \HH^{p+q}_w(K^p,-w_{0,M}\kappa_{alg}-2\rho_{nc},\chi^{-1})^{-,fs}$,
such that $$\mathbf{E}_{w,1}^{p,q}(K^p, \kappa_{alg},\chi)^{-} = $$ $$\oplus_{v \in W_M, \ell(v) = -p} \HH^{q}_{w,an}\big( K^p,  (((w_{0,M}w)^{-1}vw_{0,M}w)\cdot\nu_{alg})\chi\big)^{-,fs}\big((((w_{0,M}w)^{-1}vw_{0,M}w)\cdot\nu_{alg})\big).$$ 
\end{thm}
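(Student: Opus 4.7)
The plan is to sheafify the locally analytic BGG resolution of Theorem \ref{thm-BGG-resolution} (for the $+$ case) or its dual for distributions (for the $-$ case), obtain an exact sequence of locally projective Banach sheaves on a suitable neighborhood of the Igusa variety $(\pi_{HT,K_p}^{tor})^{-1}(wK_p)$, and then pass to cohomology with supports and take finite slope parts.

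First, set $u = w_{0,M}w$ and $\kappa_A = -u\nu_{alg} - (u\rho+\rho)$, which is $M$-dominant because $\nu_{alg}$ is such that $\kappa_{alg}$ is $M$-dominant. Using the identification $w'\rho^M = \rho^M$ for $w' \in W_M$, one checks that if $\kappa'_A = w'\cdot\kappa_A$, then the associated $p$-adic weight is $\nu'_{alg} = (u^{-1}w'u)\cdot\nu_{alg}$, with the dotted action in $W_M$ viewed as a subgroup of the Weyl group of $G$. Thus the BGG resolution of Theorem \ref{thm-BGG-resolution} applied to $\kappa_A$ gives an exact sequence of $(K^c_{p,w,M_\mu}\mathcal{M}^c_{\mu,n},T^{M_\mu,+})$-representations
$$ 0 \to V_{\kappa_A}\otimes V_{\mathbf{1}}^{sm} \to V_{\kappa_A}^{n-an}(w_{0,M}\kappa_A) \to \bigoplus_{v \in W_M^{(1)}} V_{v\cdot\kappa_A}^{n-an}(w_{0,M}(v\cdot\kappa_A)) \to \cdots $$
the maps being linear combinations of operators $X\ast\cdot$ for $X \in \mathcal{U}(\mathfrak{m})$. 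Using the reduction $\mathcal{M}_{HT,n,K_p}$ of the torsor $\mathcal{M}_{HT}^{an}$ constructed in proposition \ref{prop-reduction-2}, this sheafifies over $(\pi_{HT,K_p}^{tor})^{-1}(]C_{w,k}[_{n,n}K_p)$ to an exact sequence of locally projective Banach sheaves
$$ 0 \to \mathcal{V}_{\kappa_{alg}}(\chi) \to \mathcal{V}_{\nu_{alg}\chi}^{n-an} \to \bigoplus_{v \in W_M^{(1)}} \mathcal{V}_{(u^{-1}vu)\cdot(\nu_{alg}\chi)}^{n-an} \to \cdots \to \mathcal{V}_{(u^{-1}w_{0,M}u)\cdot(\nu_{alg}\chi)}^{n-an} \to 0 $$
extending the map $\mathcal{V}_{\kappa_{alg}}(\chi) \to \mathcal{V}_{\nu_{alg}\chi}^{n-an}$ of corollary \ref{cor-special-localg}. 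Since the connecting maps are given by differential operators, they are automatically continuous; exactness follows from the exactness of the sheafified sequence at the level of local trivializations of the torsor.

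Apply $\mathrm{R}\Gamma_{\mathcal{Z}\cap\mathcal{U}}(\mathcal{U},-)$ for an allowed support pair $(\mathcal{U},\mathcal{Z})$ that is simultaneously $(+,w,K_{p,m',b})$-allowed (definition \ref{defi-support-condition}) and $(+,w,K_{p,m',0},n\text{-}an)$-allowed (definition \ref{defi-support-condition2}), for $m'$ and $n$ large enough. This produces a spectral sequence of $\mathcal{H}_{p,m',0}^+$-modules whose $\mathbf{E}_1^{p,q}$ is $\bigoplus_{v \in W_M^{(p)}} \HH^{q}_{\mathcal{Z}\cap\mathcal{U}}(\mathcal{U}, \mathcal{V}_{(u^{-1}vu)\cdot(\nu_{alg}\chi)}^{n-an})$ and abutment $\HH^{p+q}_{\mathcal{Z}\cap\mathcal{U}}(\mathcal{U}, \mathcal{V}_{\kappa_{alg}}(\chi))$. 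One needs to check the spectral sequence is compatible with a single choice of compact lift $\tilde T$ acting on all the complexes (so that one can take $\tilde T$-finite slope termwise), and this follows from the fact that the BGG differentials commute with the $T^{M_\mu,+}$-action, hence with the Hecke action of $T$ on the cohomology. Passing to the finite slope part using Proposition \ref{prop-establishing-slope-decomp} and invoking theorems \ref{thm-finiteslopesupport}, \ref{thm-finiteslopesupport2} identifies the abutment with $\HH^{p+q}_w(K^p,\kappa_{alg},\chi)^{+,fs}$ and the $\mathbf{E}_1$-terms with $\bigoplus_{v \in W_M^{(p)}} \HH^q_{w,an}(K^p,(u^{-1}vu)\cdot(\nu_{alg}\chi))^{+,fs}$. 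The twist $(-(u^{-1}vu)\cdot\nu_{alg})$ in the statement appears precisely because, by lemma \ref{lem-action-by-weight}, the natural Hecke action on $\mathcal{V}_{\nu'}^{n-an}$ has $T(\ZZ_p)$ acting via $\nu'$, while the classical Hecke action on $\mathcal{V}_{\kappa_{alg}}(\chi)$ is trivial on $T_b$; the inclusion of corollary \ref{cor-special-localg} is equivariant only after twisting by $-\nu'_{alg}$.

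For the $-$ case, dualize: the dual BGG complex of section on distributions gives an exact resolution of $\mathcal{V}_{-w_{0,M}\kappa_{alg}-2\rho_{nc}}(\chi^{-1})$ (placed in degree $0$) by the sheaves $\mathcal{D}_{(u^{-1}vu)\cdot(\nu_{alg}\chi)}^{n-an}$ placed in degree $-\ell(v)$, with maps adjoint to the $+$ case. Exactly as above, taking cohomology with $(-,w,K_{p,m',b})$-allowed support, finite slope parts, and applying theorems \ref{thm-finiteslopesupport}, \ref{thm-finiteslopesupport2} yields the stated spectral sequence; the appearance of $\ell(v) = -p$ is bookkeeping from placing $V_\kappa^\vee$ in degree $0$.

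The main obstacle is verifying that the sheafification of the BGG differentials is genuinely exact on the overconvergent neighborhoods (and not merely on stalks), and that the resulting spectral sequence admits a single compact Hecke operator acting compatibly on all pages so that the finite slope projection commutes with passing to $\mathbf{E}_\infty$. Both points reduce, after choosing a local trivialization of $\mathcal{M}_{HT,n,K_p}$ on a pregood affinoid cover (proposition \ref{prop-torsor-become-trivial}), to the corresponding statements for the BGG resolution at the level of Banach $A$-modules, where the $T^{M_\mu,+}$-equivariance of the differentials is explicit in theorem \ref{thm-BGG-resolution}.
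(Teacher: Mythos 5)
Your proposal is correct and follows essentially the same route as the paper's own (very terse) proof: the paper says only that the spectral sequence comes from the BGG resolution of Theorem \ref{thm-BGG-resolution} together with its distribution variant, and cites the identities $\nu_{alg}+\rho = -w^{-1}w_{0,M}(\kappa_{alg}+\rho)$ and $-w^{-1}w_{0,M}(v\cdot\kappa_{alg}+\rho)-\rho = ((w_{0,M}w)^{-1}vw_{0,M}w)\cdot\nu_{alg}$, which are precisely the weight translation you verify via $u = w_{0,M}w$. Your argument correctly sheafifies the BGG resolution using the torsor reduction of proposition \ref{prop-reduction-2}, passes through a common allowed support condition so that both the classical and $n$-analytic cohomologies are computed, and identifies the finite slope parts via theorems \ref{thm-finiteslopesupport} and \ref{thm-finiteslopesupport2}; the explanation of the Tate twist via lemma \ref{lem-action-by-weight} matches the paper's bookkeeping. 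The one small slip: ``$w'\rho^M = \rho^M$'' should read ``$v\rho_{nc}=\rho_{nc}$ for $v\in W_M$'' (the half-sum of non-compact positive roots is $W_M$-invariant since $W_M$ permutes the roots of the unipotent radical of $P_\mu$), which is the input that makes $v\cdot\kappa_{alg}+\rho = v(\kappa_{alg}+\rho)$ and hence the weight identity; this does not affect the validity of what you derive from it.
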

\begin{proof} This is the spectral sequence associated to the BGG sequence of theorem \ref{thm-BGG-resolution}, as well as its variant for distribution sheaves. We use the following identities: $\nu_{alg} + \rho = -w^{-1} w_{0,M} ( \kappa_{alg} + \rho)$ and $-w^{-1} w_{0,M} (v.\cdot \kappa_{alg} + \rho)- \rho = ((w_{0,M}w)^{-1}vw_{0,M}w)\cdot\nu_{alg}$. 
\end{proof}

\subsection{Cohomological vanishing} 
The following is the analogue of  theorem \ref{theorem-coho-van}.

\begin{thm}\label{thm-one-half-conj} For all $w \in \WM$ and weights $\nu_A$, the cohomology  $\mathrm{R}\Gamma_{w,an}(K^p, \nu_A,cusp)^{\pm,fs}$ is concentrated in degree $[0, \ell_{\pm}(w)]$.
\end{thm}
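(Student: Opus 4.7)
The plan is to follow the strategy of the proof of Theorem \ref{theorem-coho-van}, replacing the classical automorphic vector bundle $\mathcal{V}_\kappa$ by the Banach sheaves $\mathcal{V}_{\nu_A}^{n-an}$ (in the $+$ case) or $\mathcal{D}_{\nu_A}^{n-an}$ (in the $-$ case). I will only discuss the $+$ case; the $-$ case is obtained by the obvious modifications and is compatible by duality (cf. Serre duality for the analytic theory, section \ref{subsection-analytic-duality}).

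First, fix $n$ large enough that $\nu_A$ is $n$-analytic and choose $s\geq n$ with $s>0$, and take $K_p=K_{p,m',0}$ with $m'>s$. By the analogue of Example \ref{example-allowed-support} for the locally analytic theory, the pair
\[
\mathcal{U}=(\pi_{HT,K_p}^{tor})^{-1}\!\big(]C_{w,k}[_{s,-1}K_p\big),\quad
\mathcal{Z}=(\pi_{HT,K_p}^{tor})^{-1}\!\big(]C_{w,k}[_{\overline{0},\overline{s}}K_p\big)
\]
is a $(+,w,K_p,n\text{-}an)$-allowed support condition (the intersection $\mathcal{Z}\cap\mathcal{U}=(\pi_{HT,K_p}^{tor})^{-1}(]C_{w,k}[_{s,\overline{s}}K_p)$ lies inside $(\pi_{HT,K_p}^{tor})^{-1}(]C_{w,k}[_{n,n}K_p)$ by Lemma \ref{lem-nice-intersections}). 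By Theorem \ref{thm-finiteslopesupport2} it suffices to bound the cohomological amplitude of $\mathrm{R}\Gamma_{\mathcal{Z}\cap\mathcal{U}}(\mathcal{U},\mathcal{V}_{\nu_A}^{n-an}(-D))$ by $[0,\ell_+(w)]$.

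The second step is to push down to the minimal compactification. Using the projection $\pi_{K^pK_p,\Sigma}:\mathcal{S}^{tor}_{K^pK_p,\Sigma}\to\mathcal{S}^\star_{K^pK_p}$, I claim
\[
\mathrm{R}^i(\pi_{K^pK_p,\Sigma})_\star\mathcal{V}_{\nu_A}^{n-an}(-D)=0\quad\text{for all }i>0.
\]
In the Siegel (and more generally Hodge) case this follows from Theorem \ref{thm-formal-vanishingtominimal} (resp. \ref{thm-formal-vanishingtominimal2}) combined with the fact that $\mathcal{V}_{\nu_A}^{n-an}(-D)$ arises as the generic fiber of a small, flat formal Banach sheaf on $\mathfrak{S}^{tor,mod}_{K^pK_p,\Sigma}$: indeed Lemma \ref{lem-inductive-lim-rep} shows that modulo $\varpi$ the model is an increasing union of coherent subsheaves whose successive quotients, after pulling back to a finite Galois cover trivializing the torsor $\mathcal{M}_{HT,n,K_p}$, are direct summands of a single coherent sheaf twisted by $(-D)$. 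The abelian type case is then deduced from the Hodge type case using the descent principle \ref{abelianstrategy} and Theorem \ref{thm-abelian-strategy}, combined with finite averaging. Granting this, we may compute the cohomology of $\mathcal{V}_{\nu_A}^{n-an}(-D)$ on any open of $\mathcal{S}^{tor}_{K^pK_p,\Sigma}$ that is the preimage of an open in $\mathcal{S}^\star_{K^pK_p}$ by computing the cohomology of the pushforward on that minimal open.

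The third step is the key dimension count. After passage to a finite level $K'_p$ trivializing characters and using the analogue of Lemma \ref{lem-nice-disjoint-union} to reduce to the case of a single translate $]C_{w,k}[_{s,\overline{s}}$, the distinguished triangle
\[
\mathrm{R}\Gamma_{\mathcal{Z}\cap\mathcal{U}}(\mathcal{U},\mathcal{V}_{\nu_A}^{n-an}(-D))\to\mathrm{R}\Gamma(\mathcal{U},\mathcal{V}_{\nu_A}^{n-an}(-D))\to\mathrm{R}\Gamma(\mathcal{U}\smallsetminus\mathcal{Z},\mathcal{V}_{\nu_A}^{n-an}(-D))\xrightarrow{+1}
\]
reduces the problem to showing (i) that $\pi_{HT,K_p}^{-1}(]C_{w,k}[_{s,s-1})$ is quasi-Stein in the minimal compactification so that the middle term (after pushforward, by step 2) is concentrated in degree $0$, and (ii) that $]C_{w,k}[_{s,s-1}\smallsetminus]C_{w,k}[_{s,\overline{s}}$ admits a covering by $\ell_+(w)=\#\big((w^{-1}\Phi^{-,M})\cap\Phi^+\big)$ acyclic (for $\mathcal{V}_{\nu_A}^{n-an}(-D)$) open subsets. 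Part (i) is the standard quasi-Steinness argument from the proof of Theorem \ref{theorem-coho-van}. Part (ii) uses exactly the explicit parametrization of $]C_{w,k}[$ by coordinates $(u_\alpha)_{\alpha\in w^{-1}\Phi^{-,M}}$ (Corollary \ref{coro-description-bruhat-cell}), where the $\ell_+(w)$ coordinates in $(w^{-1}\Phi^{-,M})\cap\Phi^+$ provide the covering. Combining (i) and (ii) with the distinguished triangle yields amplitude $[0,\ell_+(w)]$.

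The main obstacle is step 2, the vanishing of higher pushforwards along the projection to the minimal compactification for the Banach sheaves $\mathcal{V}_{\nu_A}^{n-an}(-D)$ and $\mathcal{D}_{\nu_A}^{n-an}(-D)$. Once one checks that the relevant formal models exist and are small in the sense of Theorem \ref{thm-small-vanish} (which rests on Lemma \ref{lem-inductive-lim-rep} in the $\mathcal{V}$ case, and on the dual description of $D^{n-an,+}_{\kappa_A+2\rho_{nc}}$ as a countable inductive limit of finite $A^+$-modules in the $\mathcal{D}$ case), the argument of Theorem \ref{vanishingtominimal} goes through almost verbatim, again using the descent Principle \ref{abelianstrategy} to reduce the abelian type case to Hodge type.
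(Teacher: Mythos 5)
Your proposal follows essentially the same strategy as the paper. The paper encapsulates your step 2 as Lemma \ref{lemma-local-vanishing} (extended to abelian type by Lemma \ref{lemma-local-vanishing1}): the cohomology $\mathrm{R}\Gamma\big((\pi_{HT,K_p}^{tor})^{-1}(\mathcal{U}),\mathcal{V}_{\nu_A}^{n-an}(-D)\big)$, and likewise for $\mathcal{D}_{\nu_A}^{n-an}(-D)$, is concentrated in degree $0$ for every affinoid $\mathcal{U}\subseteq{}]C_{w,k}[_{n,n}K_p$ --- a statement phrased directly on affinoid preimages rather than as a higher pushforward vanishing of the Banach sheaf, which sidesteps the question of whether the adic pushforward of a non-coherent Banach sheaf is well-behaved and lets the proof run entirely on the formal models via the projection formula, ampleness, and smallness exactly as you outline, followed by the same quasi-Stein and $\ell_+(w)$-acyclic covering count from the proof of Theorem \ref{theorem-coho-van}. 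One caution: because the theorem concerns only cuspidal cohomology and the analytic Serre duality (Theorems \ref{thm-cup-products}, \ref{thm-duality-field}) pairs cuspidal $+$ cohomology against \emph{non-cuspidal} $-$ cohomology, duality cannot deduce the $-$ case from the $+$ case here; the ``obvious modifications'' (running the identical formal-model argument with $\mathcal{D}_{\nu_A}^{n-an}(-D)$ in place of $\mathcal{V}_{\nu_A}^{n-an}(-D)$) are what is actually required, and the paper's Lemma \ref{lemma-local-vanishing} treats both sheaves uniformly.
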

\begin{proof} This is exactly the same as the proof of theorem \ref{theorem-coho-van}, granting lemmas \ref{lemma-local-vanishing} and \ref{lemma-local-vanishing1} below.
\end{proof}

The key  lemma to prove the theorem is the following (this lemma and its proof are inspired by \cite{MR3275848}): 

\begin{lem}\label{lemma-local-vanishing} Assume that $(G,X)$ is a Hodge type Shimura datum. Suppose the weight $\nu_A$ is $n$-analytic, and let $K_p \subseteq  K_{p,m',0}$ for some $m'>n$. For all affinoid opens $\mathcal{U} \subseteq ]C_{w,k}[_{n,n}K_p$, $\mathrm{R}\Gamma( (\pi_{HT,K_p}^{tor})^{-1}(\mathcal{U}), \mathcal{V}_{\nu_A}^{n-an}(-D))$ and $\mathrm{R}\Gamma((\pi_{HT,K_p}^{tor})^{-1}(\mathcal{U}), \mathcal{D}_{\nu_A}^{n-an}(-D))$ are concentrated in degree $0$. 
\end{lem}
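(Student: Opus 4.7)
The plan is to reduce this vanishing to an application of Theorem \ref{thm-small-vanish} for small formal Banach sheaves on suitable formal models of the minimal compactification. The argument proceeds in two main steps: first, a reduction from the toroidal to the minimal compactification via $\pi : \mathcal{S}^{tor}_{K^pK_p,\Sigma} \to \mathcal{S}^\star_{K^pK_p}$, which factors the Hodge--Tate period map as $\pi_{HT,K_p}^{tor} = \pi_{HT,K_p} \circ \pi$; and second, an application of Theorem \ref{thm-small-vanish} after constructing an appropriate small formal Banach sheaf on a formal model of $(\pi_{HT,K_p})^{-1}(\mathcal{U})$.

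For the first step, I would generalize Theorem \ref{thm-formal-vanishingtominimal2} from the coherent sheaves $\mathcal{V}_\kappa(-D)$ to the Banach sheaves $\mathcal{V}_{\nu_A}^{n-an}(-D)$ and $\mathcal{D}_{\nu_A}^{n-an}(-D)$, showing $\mathrm{R}^i\pi_\star = 0$ in positive degree. The crucial input is Lemma \ref{lem-inductive-lim-rep}: it exhibits $V_{\kappa_A}^{n-an,+} \otimes_{A^+} A^+/A^{++}$ as a filtered colimit of finite projective $(M_1\mathcal{M}_n)$-subrepresentations whose successive quotients carry only an action of $M_1^p$, and thus split as direct summands of a fixed classical representation of $M_1$. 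Sheafifying via the torsor $\mathcal{M}_{HT,n,K_p}$ of section \ref{sect-further-restriction-group-structure} and twisting by $(-D)$, this exhibits $\mathcal{V}_{\nu_A}^{n-an,+}(-D)/\varpi$ as a filtered colimit of coherent subsheaves whose subquotients are direct summands of the classical cuspidal automorphic sheaf $\mathcal{V}_\kappa(-D)/\varpi$, where $\kappa$ is the algebraic weight determined by $\nu_A \bmod A^{++}$. Applying Theorem \ref{thm-formal-vanishingtominimal2} stepwise and passing to the colimit (then inverting $\varpi$) yields the required higher direct image vanishing; the distribution case is handled dually using that $D_{\kappa_A}^{n-an,+}/\varpi$ is a filtered colimit of duals of such finite projective modules.

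For the second step, I invoke the formal models of section \ref{subsec-integral-hodge}. Since $\mathcal{U} \subseteq ]C_{w,k}[_{n,n} K_p$ is affinoid, I fix a formal model $\mathfrak{U}$ of the (pullback to $\mathcal{FL}_{G,\mu}$ of) $\mathcal{U}$ and consider $\mathfrak{S}^{\star,mod}_{K^pK_p,\mathfrak{U}}$, whose generic fiber is $(\pi_{HT,K_p})^{-1}(\mathcal{U})$ and which admits an ample invertible sheaf, being a normalized blow-up of an open affine formal subscheme of $\mathfrak{S}^{\star,mod}_{K^pK_p}$. I then construct a flat small formal Banach sheaf $\mathfrak{F}$ on $\mathfrak{S}^{\star,mod}_{K^pK_p,\mathfrak{U}}$ whose generic fiber is $\pi_\star\mathcal{V}_{\nu_A}^{n-an}(-D)$, using Lemma \ref{lem-inductive-lim-rep} together with step one (which ensures that $\pi_\star$ respects the filtered colimit). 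The smallness is witnessed by the coherent sheaf $\pi_\star(\mathcal{V}_\kappa(-D))$, and Theorem \ref{thm-small-vanish} then yields $\mathrm{H}^i(\mathfrak{S}^{\star,mod}_{K^pK_p,\mathfrak{U}}, \mathfrak{F}) \otimes_{\ocal_F} F = 0$ for $i>0$; combined with step one this gives the lemma for $\mathcal{V}_{\nu_A}^{n-an}(-D)$, and the distribution sheaf case is entirely analogous.

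The main obstacle is the careful construction and verification of the flat small formal Banach sheaf $\mathfrak{F}$: one must produce an integral structure on the torsor $\mathcal{M}_{HT,n,K_p}$ (which is only étale-locally trivial by proposition \ref{prop-torsor-become-trivial}), verify that the smallness condition survives pushforward along $\pi$, and handle the distribution side by dualizing the filtration in a controlled way. Once these technicalities are in place, the conclusion follows formally from Theorems \ref{thm-small-vanish} and \ref{thm-formal-vanishingtominimal2}.
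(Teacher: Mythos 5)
Your plan shares the key inputs with the paper's proof (Lemma \ref{lem-inductive-lim-rep}, the formal models of section \ref{subsec-integral-hodge}, the smallness framework, Theorem \ref{thm-formal-vanishingtominimal2}), but the route you take diverges at the crucial step, and there is a genuine gap there. You want to push the formal Banach sheaf forward to the minimal formal model $\mathfrak{S}^{\star,mod}_{K^pK_p,\mathfrak{U}}$ and apply Theorem \ref{thm-small-vanish} directly. But Theorem \ref{thm-small-vanish} requires the \emph{generic fiber} of the formal scheme to be affinoid, and for an arbitrary affinoid open $\mathcal{U}\subseteq ]C_{w,k}[_{n,n}K_p$ the preimage $(\pi_{HT,K_p})^{-1}(\mathcal{U})$ need not be affinoid at level $K_p$: Theorem \ref{thm-affineness} only guarantees affinoidness for rational subsets of certain very good affinoid neighborhoods, and Proposition \ref{prop-bretagne} only for the preimages of the standard opens $\mathcal{U}_i$ of $\mathbb{P}^{r-1}$. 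You never address this hypothesis. The paper avoids it entirely: it stays on the toroidal side, fixes an ample line bundle $\mathcal{L}$ on $\mathfrak{S}^{\star,mod}_{K^pK_p'',\mathfrak{U}}$, and proves vanishing for the twisted sheaf $f_{K_p''}^\star\mathcal{L}^m\otimes\mathfrak{V}_{\nu_A}^{n-an,+}(-D)$ working first modulo $A^{++}$ (where the graded pieces are coherent and independent of $i$, so Serre vanishing gives a uniform $m$), then lifts via Lemma \ref{lem-handy}, inverts $p$ to strip the twist (noting $\mathcal{L}[1/p]$ yields a rank-one projective module of global sections), and finally descends to the generic fiber through a \v{C}ech argument with Theorem \ref{thm-formal-B-sheaves}. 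No appeal to Theorem \ref{thm-small-vanish} is ever made.

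Two further points deviate from what actually works. First, your identification of the smallness witness as $\mathcal{V}_\kappa(-D)/\varpi$ with $\kappa$ the algebraic weight of $\nu_A \bmod A^{++}$ is not quite right: the paper first replaces $K_p$ by a small pro-$p$ level $K_p''\subseteq K_{p,m',1}$, so that the torsor cocycle $m_{i,j}$ lands in the maximal normal pro-$p$ subgroup of $K_{p,w,M_\mu}^c\mathcal{M}_{\mu,n}^c$; consequently it acts trivially on the $M_1^p=wT^c(\ZZ_p)w^{-1}/wT_1^cw^{-1}$-module quotients $V_i/V_{i-1}$, and the sheaves $\mathscr{F}_i/\mathscr{F}_{i-1}$ are literally trivial (free), not direct summands of some nontrivial $\mathcal{V}_\kappa(-D)/\varpi$. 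This is what makes $(f_{K_p''})_\star\mathscr{F}_i/\mathscr{F}_{i-1}(-D)$ a single coherent sheaf independent of $i$, which you need for the ample twist to kill all higher cohomology with one $m$. Second, your ``step one'' extension of Theorem \ref{thm-formal-vanishingtominimal2} to Banach sheaves, and the assertion that the pushforward remains a flat small formal Banach sheaf, are both nontrivial claims; the paper deliberately only ever applies Theorem \ref{thm-formal-vanishingtominimal2} to the coherent graded pieces. Finally, you omit the final descent step: the paper establishes the result at level $K_p''$ and then takes $K_p/K_p''$-invariants to recover the statement at level $K_p$.
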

\begin{proof} We only consider the case of the sheaf $\mathcal{V}_{\nu_A}^{n-an}(-D)$, the other case is identical.   For any compact open $K_p' \subseteq K_p$, let $\mathfrak{S}^{tor}_{K^pK_p', \Sigma}$ be an integral toroidal compactification, and let $\mathfrak{S}^{\star}_{K^pK_p'}$ be an integral minimal compactification (whose existence is given by the main results of  \cite{MR3948111}, see also section \ref{subsec-integral-hodge}). Let $\mathfrak{U}$ be a formal model for $\mathcal{U}$, realized as an open subset of the normalization of a blow-up of $\mathfrak{FL}_{G,\mu}$.  By theorem \ref{thm-description-toro-compactificiation2},
for $K_p' \subseteq K_p$ small enough, we have  normal formal models $\mathfrak{S}^{\star,mod}_{K^pK_p', \mathfrak{U}} \rightarrow \mathfrak{S}^{\star}_{K^pK_p'}$ for the map $\pi_{HT,K_p'}^{-1}(\mathcal{U}) \rightarrow \mathcal{S}^{\star}_{K^pK_p'}$ and $\mathfrak{S}^{tor,mod}_{K^pK_p', \Sigma, \mathfrak{U}} \rightarrow \mathfrak{S}^{tor}_{K^pK_p',\Sigma}$ for the map $(\pi^{tor}_{HT,K_p'})^{-1}(\mathcal{U}) \rightarrow \mathcal{S}^{tor}_{K^pK_p',\Sigma}$ and we have a map $f_{K_p'} : \mathfrak{S}^{tor,mod}_{K^pK_p', \Sigma, \mathfrak{U}} \rightarrow \mathfrak{S}^{\star,mod}_{K^pK_p', \mathfrak{U}}$. 

 We may actually assume that $K_p'$ is a subgroup of the pro $p$-group $K_{p,m',1}$. 
By proposition \ref{prop-torsor-become-trivial} for $K_p'' \subseteq K_p'$ small enough, the torsor $\mathcal{M}_{HT,n,K_{p,m',1}}$
  is  trivial  on the generic fiber of a Zariski cover  of $\mathfrak{S}^{tor,mod}_{K^pK_p'', \Sigma, \mathfrak{U}}$. 

We can construct a formal model for the Banach sheaf $\mathcal{V}_{\nu_A}^{n-an}$ that we denote $\mathfrak{V}_{\nu_A}^{n-an,+}$ over $\mathfrak{S}^{tor,mod}_{K^pK_p'', \Sigma, \mathfrak{U}} $. Indeed let $\mathfrak{S}^{tor,mod}_{K^pK_p'', \Sigma, \mathfrak{U}} = \cup_i \mathfrak{V}_{K_p'',i}$ be a Zariski covering with the property that the torsor $\mathcal{M}_{HT,n, K_{p,m',1}}$ is trivial over the generic fiber $\mathcal{V}_{K_p'',i}$ of $\mathfrak{V}_{K_p'',i}$. We fix such a trivialization. We can construct the associated $1$-cocycle: over each intersection $\mathfrak{V}_{K_p'', i,j} = \mathfrak{V}_{K_p'',i} \cap \mathfrak{V}_{K_p'',j}$ we have an element $m_{i,j} \in K_{p,m',1,w,M_\mu}^c\mathcal{M}_{\mu,n}^c$ describing the change of trivialization of the torsor $\mathcal{M}_{HT,n, K_{p,m',1}}$.  Over $\mathfrak{V}_{K_p'',i}$ we let $\mathfrak{V}_{\nu_A}^{n-an,+} = V_{\kappa_A}^{n-an,+} \hat{\otimes}_{\ocal_F} \oscr_{\mathfrak{V}_{K_p'',i}}$ and we glue these sheaves using multiplication by $m_{i,j}$ on each intersection.  The sheaf $\mathfrak{V}_{\nu_A}^{n-an,+}$ is a flat formal Banach sheaf (see section \ref{section-Cohomological properties of Banach sheaves}).  We claim that it is also small (see again section \ref{section-Cohomological properties of Banach sheaves}). Indeed,  it follows from lemma \ref{lem-inductive-lim-rep} that the representation $V_{\kappa_A}^{n-an,+} \otimes_{A^+} A^+/A^{++}$ of $K_{p,w,M_\mu}^c\mathcal{M}_{\mu,n}^c$ is an inductive limit of finite free $A^+/A^{++}$-submodules $\mathrm{colim}_i V_i$, stable under  $K_{p,w,M_\mu}^c\mathcal{M}_{\mu,n}^c$,  and such that over $V_i/V_{i-1}$ the action  factors over the quotient $K_{p,w,M_\mu}^{c,p}$ of  $K_{p,w,M_\mu}^c$ by its maximal normal pro-$p$ subgroup.  We find that $K_{p,w,M_\mu}^{c,p} = wT^c(\ZZ_p)w^{-1} /wT_1^cw^{-1}$ is a finite abelian group.  Moreover,  the elements $m_{i,j}$  map trivially to $K_{p,w,M_\mu}^{c,p}$. 

It follows  that the sheaf $\mathfrak{V}_{\nu_A}^{n-an,+}\otimes_{A^+}A^+/A^{++} = \mathrm{colim}_i \mathscr{F}_i$ is an inductive limit of locally free sheaves  of $A^+/A^{++} \otimes \oscr_{\mathfrak{S}^{tor, mod}_{K^pK_p'', \Sigma, \mathfrak{U}}}$-modules  with the property that $\mathscr{F}_i/\mathscr{F}_{i-1}$ is the trivial sheaf.  We now fix $\mathcal{L}$ an ample line bundle over $\mathfrak{S}^{\star, mod}_{K^pK_p'', \mathfrak{U}} $ (this last formal scheme is quasi-projective, so $\mathcal{L}$ exists). 

We claim that there exists $m\geq 0$ such that $\mathrm{R}\Gamma( \mathfrak{S}^{tor,mod}_{K^pK_p'', \Sigma, \mathfrak{U}} , f_{K_p''}^\star \mathcal{L}^m \otimes \mathfrak{V}_{\nu_A}^{n-an,+}(-D_{K_p''}))$ (where $-D_{K''_p}$ is the  boundary divisor at level $K''_p$) is concentrated in degree $0$. 
 The cohomology is represented by a complex of $A^+$-modules which are completions of free $A^+$-modules (take the \v{C}ech complex associated with  the covering $\cup \mathfrak{V}_{K_p'',i}$). It follows from lemma \ref{lem-handy} below that it suffices to prove that there exists $m \geq 0$ such that 
 $\mathrm{R}\Gamma( \mathfrak{S}^{tor,mod}_{K^pK_p'', \Sigma, \mathfrak{U}} , f_{K_p''}^\star\mathcal{L}^m  \otimes \mathfrak{V}_{\nu_A}^{n-an,+}/A^{++}(-D_{K_p}))$ is concentrated in degree $0$. We are therefore reduced to prove that there exists $m$ such that   $\mathrm{R}\Gamma( \mathfrak{S}^{tor,mod}_{K^pK_p'', \Sigma, \mathfrak{U}} , f_{K_p''}^\star \mathcal{L}^m \otimes \mathscr{F}_i/\mathscr{F}_{i-1}(-D_{K''_p}))$ is concentrated in degree $0$.  By the projection formula, $$\mathrm{R}\Gamma( \mathfrak{S}^{tor,mod}_{K^pK_p'', \Sigma, \mathfrak{U}} , f_{K_p''}^\star \mathcal{L}^m \otimes \mathscr{F}_i/\mathscr{F}_{i-1}(-D_{K''_p})) = \mathrm{R}\Gamma( \mathfrak{S}^{\star,mod}_{K^pK_p'', \Sigma, \mathfrak{U}} , \mathcal{L}^m \otimes \mathrm{R} (f_{K_p''})_\star \mathscr{F}_i/\mathscr{F}_{i-1}(-D_{K''_p})).$$
  We first observe that   $\mathrm{R}(f_{K_p''})_\star \mathscr{F}_i/\mathscr{F}_{i-1}(-D_{K''_p})$ is a sheaf concentrated in degree $0$ by  theorem  \ref{thm-formal-vanishingtominimal2}. 
Since $\mathcal{L}$ is very ample on $\mathfrak{S}^{\star, mod}_{K^pK_p'', \Sigma, \mathfrak{U}} $ and $(f_{K_p''})_\star \mathscr{F}_i/\mathscr{F}_{i-1}(-D_{K''_p})$  is a  coherent sheaf independant of $i$,  it follows that there exists $m$ such that $\mathrm{R}\Gamma( \mathfrak{S}^{tor,mod}_{K^pK_p', \Sigma, \mathfrak{U}} , f_{K_p''}^\star \mathcal{L}^m \otimes \mathscr{F}_i/\mathscr{F}_{i-1}(-D_{K''_p}))$ is concentrated in degree $0$ for all $i$. 

Let $L = \HH^0(\mathfrak{S}^{\star,mod}_{K^pK_p'', \mathfrak{U}} , \mathcal{L}[1/p])$. This is a rank $1$ projective $\oscr_{\mathcal{S}^{\star}_{K^pK_p''}}((\pi_{HT,K_p''})^{-1}(\mathcal{U}))$-module.   We observe that $$f_{K_p''}^\star \mathcal{L}^m \otimes_{\oscr_{\mathfrak{S}^{tor,mod}_{K^pK_p'', \Sigma, \mathfrak{U}}}} \mathfrak{V}_{\nu_A}^{n-an,+}(-D_{K''_p})[1/p]  = L^m \otimes_{\oscr_{\pi_{HT,K_p''}^{-1}(\mathcal{U})}} \mathfrak{V}_{\nu_A}^{n-an,+}(-D_{K''_p})[1/p].$$

We deduce that $$\mathrm{R}\Gamma( \mathfrak{S}^{tor,mod}_{K^pK_p'', \Sigma, \mathfrak{U}} , f_{K_p''}^\star\mathcal{L}^m \otimes \mathfrak{V}_{\nu_A}^{n-an,+}(-D_{K''_p})[1/p]) = $$ $$ L^m \otimes_{\oscr_{\pi_{HT,K_p''}^{-1}(\mathcal{U})}} \mathrm{R}\Gamma(\pi_{HT,K_p''}^{-1}(\mathcal{U}),   \mathfrak{V}_{\nu_A}^{n-an,+}(-D_{K''_p})[1/p])$$ and therefore  $\mathrm{R}\Gamma( \mathfrak{S}^{tor,mod}_{K^pK_p'', \Sigma, \mathfrak{U}},   \mathfrak{V}_{\nu_A}^{n-an,+}(-D_{K''_p})[1/p])$ is concentrated in degree $0$. Taking a Zariski covering of $ \mathfrak{S}^{tor,mod}_{K^pK_p'', \Sigma, \mathfrak{U}}$, we deduce that the associated augmented \v{C}ech complex on the generic fiber is exact. By theorem \ref{thm-formal-B-sheaves}, affinoids are acyclic for Banach sheaves arisings as generic fibers of flat small formal Banach sheaves. We deduce from the \v{C}ech to cohomology spectral sequence that  $\mathrm{R}\Gamma( (\pi_{HT,K_p''}^{tor})^{-1}(\mathcal{U}),   \mathcal{V}_{\nu_A}^{n-an,+}(-D_{K''_p}))$ is concentrated in degree $0$. The lemma follows   by taking the invariants under $K_p/K_p''$.

\end{proof}

\begin{lem}\label{lem-handy} Let $A^+$ be a ring, $A^++$ be an ideal of $A^+$. Assume that $A^+$ is $A^++$-adically complete and separated, Let $M_0 \stackrel{f_0}\rightarrow M_1 \stackrel{f_1}\rightarrow M_2$ be a complex of complete, separated, and torsion free $A^+$-modules. Assume that $$M_0 \otimes_{A^+} A^+/A^{++} \rightarrow M_1 \otimes_{A^+} A^+/A^{++} \rightarrow M_2 \otimes_{A^+} A^+/A^{++}$$ is exact.  Then the complex  $M_0 \stackrel{f_0}\rightarrow M_1 \stackrel{f_1}\rightarrow M_2$ is exact.
\end{lem}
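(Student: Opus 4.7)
The plan is to use a successive-approximation ($\pi$-adic Cauchy) argument, exploiting torsion-freeness of each $M_i$ to divide by powers of a uniformizer. In the Tate setting where this lemma is applied, $A^{++}$ is an ideal of definition that may (after a possibly trivial adjustment of ideal of definition) be taken to equal $\pi A^+$ for a pseudo-uniformizer $\pi$, and the $A^{++}$-adic topology then coincides with the $\pi$-adic topology; I would therefore proceed under the assumption $A^{++} = \pi A^+$ with $\pi$ a non-zero-divisor on $A^+$. By torsion-freeness, $\pi$ is then a non-zero-divisor on each $M_i$ as well.

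Fix $x \in M_1$ with $f_1(x) = 0$; the aim is to produce $y \in M_0$ with $f_0(y) = x$. I would build inductively a sequence $y_n \in M_0$ satisfying $y_{n+1} - y_n \in \pi^{n+1} M_0$ and $x - f_0(y_n) \in \pi^{n+1} M_1$. The base case $n = 0$ is immediate from the hypothesis: the reduction of $x$ modulo $\pi M_1$ lies in $\ker \bar{f}_1 = \mathrm{Im}\,\bar{f}_0$, and any preimage lifts to a suitable $y_0 \in M_0$. For the inductive step, write $x - f_0(y_n) = \pi^{n+1} m$ for a unique $m \in M_1$ (unique because $\pi$ is a non-zero-divisor on $M_1$); then $\pi^{n+1} f_1(m) = f_1(x - f_0(y_n)) = 0$ combined with torsion-freeness of $M_2$ forces $f_1(m) = 0$. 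Re-applying the base case to $m$ produces $\tilde{y} \in M_0$ with $m - f_0(\tilde{y}) \in \pi M_1$, and then $y_{n+1} := y_n + \pi^{n+1}\tilde{y}$ satisfies both required properties by direct verification.

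Finally, completeness of $M_0$ in the $\pi$-adic topology supplies a limit $y := \lim_n y_n \in M_0$, and continuity of $f_0$ (it is $A^+$-linear, hence sends $\pi^n M_0$ into $\pi^n M_1$) gives $f_0(y_n) \to f_0(y)$. Since $f_0(y_n) \to x$ by construction, separatedness of $M_1$ forces $f_0(y) = x$, completing the proof of exactness at $M_1$. The main subtlety — and the only place where the torsion-free hypothesis is essential — is the inductive step, where one must both recover $m$ from $\pi^{n+1} m$ inside $M_1$ and deduce $f_1(m) = 0$ from $f_1(\pi^{n+1} m) = 0$ inside $M_2$; both operations fail if $\pi$ has torsion on the relevant module. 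Handling a general (non-principal) $A^{++}$ would require a separate argument, but this is unnecessary for the applications in the paper.
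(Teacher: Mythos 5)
Your proof is correct and follows essentially the same successive-approximation argument as the paper: build lifts $y_n$ with $x-f_0(y_n)\in\pi^{n+1}M_1$, use torsion-freeness of $M_1$ and $M_2$ to divide by $\pi^{n+1}$ and land back in $\mathrm{Ker}(f_1)$, and close the limit via completeness of $M_0$ and separatedness of $M_1$. The restriction to $A^{++}=\pi A^{+}$ principal that you make explicit is also what the paper's proof quietly requires — its step ``since $M_2$ is torsion free, $A^{++}M_1\cap\mathrm{Ker}(f_1)=A^{++}\mathrm{Ker}(f_1)$'' is precisely this division-by-$\pi$ step and is not obviously valid for a non-principal ideal — and it covers the intended application, where $A^{++}$ is generated by a uniformizer of $\ocal_F$.
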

\begin{proof} It follows from the assumptions that $\mathrm{Im}(f_0) + (A^{++}M_1 \cap  \mathrm{Ker}(f_1))= \mathrm{Ker}(f_1)$. Since $M_2$ is torsion free, we deduce that 
$ (A^{++}M_1 \cap  \mathrm{Ker}(f_1))= A^{++} \mathrm{Ker} (f_1)$. Let $m \in \mathrm{Ker}(f_1)$. By successive approximation, we can construct a sequence of elements $m_n \in M_0$ for $n \geq 0$,  with $m_n - m_{n+1} \in (A^{++})^n M_0$ and $f_0(m_n) - m \in \mathrm(A^{++})^{n+1} M_1$. The sequence $m_n$ converges to $m_\infty$ and $f_0(m_\infty) = m$. 
\end{proof}

\begin{lem}\label{lemma-local-vanishing1} Lemma \ref{lemma-local-vanishing}  holds without assuming that  $(G,X)$ is an Hodge type Shimura datum. 
\end{lem}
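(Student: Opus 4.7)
The plan is to reduce to the Hodge type case by Principle \ref{abelianstrategy}, using the same explicit diagram
$$\xymatrix{(B_1,X_{B_1}) \ar[r] \ar[d] & (G,X) \\ (G_1,X_1) & }$$
with $(G_1,X_1)$ Hodge type, $B_1 \to G$ and $B_1 \to G_1$ inducing isomorphisms on derived groups. By Lemma \ref{lem-compact-abelian} the groups $K_{p,m',0}$ for $G$, $B_1$, and $G_1$ all have the same image in $G^{\mathrm{ad}}(\qq_p)$, so via the common identification $\mathcal{FL}_{G,\mu} = \mathcal{FL}_{B_1,\mu_{B_1}} = \mathcal{FL}_{G_1,\mu_{G_1}}$ the support condition $\mathcal{U} \subseteq ]C_{w,k}[_{n,n}K_p$ is the same in the three settings, as observed in Remark \ref{rem-on-the-abelian-case}.

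First, I would descend from $(B_1,X_{B_1})$ to $(G,X)$. By Theorem \ref{thm-abelian-torsor}(4)--(5), after choosing a suitable $K' \subseteq B_1(\mathbb{A}_f)$ mapping to $K$, the map $\mathcal{S}^{tor,0}(B_1,X_{B_1})_{K',\Sigma} \to \mathcal{S}^{tor,0}(G,X)_{K,\Sigma}$ is finite \'etale Galois with group $\Delta = \Delta(K,K')$, and the torsor $\mathcal{M}_{HT}(G,X)$ is the $\Delta$-quotient of $\mathcal{M}_{HT}(B_1,X_{B_1}) \times^{M^c_{\mu_{B_1}}} M^c_{\mu_G}$. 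Passing to the locally analytic interpolation sheaf (with boundary vanishing), this gives
$$\mathcal{V}^{n-an}_{\nu_A, G}(-D) = \bigl((\pi_{B_1 \to G})_\star\,\mathcal{V}^{n-an}_{\nu_A, B_1}(-D)\bigr)^\Delta$$
on the preimage of $\mathcal{U}$, and similarly for the distribution sheaf. Since $\pi_{B_1 \to G}$ is finite, its pushforward is exact, and since $|\Delta|$ is invertible in $F$, $\Delta$-invariants are exact. Hence concentration in degree $0$ for $(B_1,X_{B_1})$ implies the same for $(G,X)$, and we are reduced to treating $(B_1,X_{B_1})$.

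Second, I would descend from $(B_1,X_{B_1})$ to $(G_1,X_1)$. Theorem \ref{thm-abelian-torsor}(1) identifies the towers of connected components of the two Shimura varieties (at infinite level), and part (2) promotes this to a finite \'etale Galois morphism between towers of tame level tame structures at $p$. Theorem \ref{thm-abelian-torsor}(3) then expresses $\mathcal{M}_{HT}(B_1,X_{B_1})$ as the fiber product $\pi_1^\star \mathcal{M}_{HT}(G_1,X_1) \times_{\pi_3^\star\mathcal{M}_{HT}(G_1^{ab},X_{G_1^{ab}})} \pi_2^\star \mathcal{M}_{HT}(T,X_T)$. Since the Shimura variety for the torus $T = \mathrm{Res}_{E/\qq}\mathbb{G}_m$ is zero-dimensional, the fiber product decomposition identifies $\mathcal{M}_{HT,n,K_p}$ for $B_1$ with a disjoint union of \'etale twists of the one for $G_1$ by characters coming from the torus component. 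Consequently the sheaves $\mathcal{V}^{n-an}_{\nu_A,B_1}(-D)$ become, on each connected component and at suitable finite level, the analogous sheaf for $(G_1,X_1)$ tensored with a locally free rank one sheaf (a character twist by a finite order character coming from the torus, as in Proposition \ref{prop-special2}). Cohomology is unchanged (up to such a twist) under this reduction, and Lemma \ref{lemma-local-vanishing} applied to $(G_1,X_1)$ yields the desired concentration in degree $0$.

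The main obstacle will be executing step two cleanly: one has to verify that on each connected component the torsor identification is strict enough to reduce the \emph{cuspidal} sheaves $\mathcal{V}^{n-an}_{\nu_A}(-D)$ and $\mathcal{D}^{n-an}_{\nu_A}(-D)$ via the formal models of section \ref{subsec-integral-hodge}, matching boundary divisors under the finite \'etale descent and handling the fiber product over the abelian quotient carefully so as not to lose control of the interpolation in the weight variable. Once the torsor comparison is pinned down at the level of the formal models parametrizing neighborhoods of $(\pi^{tor}_{HT,K_p})^{-1}(\mathcal{U})$, the vanishing transfers from Lemma \ref{lemma-local-vanishing} essentially formally.
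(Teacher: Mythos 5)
Your proposal is correct and follows essentially the same two-step reduction as the paper: descend $G$ to $B_1$ via the finite, generically \'etale $\Delta(K,K')$-cover and the formula $\mathcal{V}^{n-an}_{\nu_A} = (f_\star \mathcal{V}^{n-an}_{\nu'_A})^{\Delta(K,K')}$, and then descend $B_1$ to $G_1$ using the fiber-product description of the torsor together with a finite \'etale Galois cover $G_1 \to B_1$ at a deeper level. One small simplification the paper uses in the second step that you gloss over: the torsor coming from $(T,X_T)$ is actually trivial, so the pullback of the $G_1$-sheaf along the finite map $B_1 \to G_1$ is literally equal to the $B_1$-sheaf (no residual rank-one twist to carry around); and the integral formal models of section \ref{subsec-integral-hodge} are needed only inside Lemma \ref{lemma-local-vanishing} for the Hodge case, not for the descent argument itself, so the last concern you raise dissolves.
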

\begin{proof}  We consider a diagram of Shimura datum with $(G,X)$ of abelian type and $(G_1,X_1)$ of  Hodge type as in section \ref{section-Abelian-type2}:
\begin{eqnarray*}
\xymatrix{ (B_1,X_{B_1}) \ar[r] \ar[d] & (B, X_B) \ar[d] \\
(G_1,X_1) &  (G,X)}
\end{eqnarray*}
We may assume that the various morphisms between the groups $G_1, B_1, B, G$ over $\qq$ extend to morphisms over $\ocal_F$.
We will prove that lemma \ref{lemma-local-vanishing} holds first for  $B_1$ and second for $G$. We add decorations $H$ or $(H,X_H)$ to objects depending on the Shimura datum $(H,X_H)$ in order to make the distinction between the Shimura data. 
Let $\nu_A : T(B_1)(\ZZ_p) \rightarrow A^\times$ be a continuous, $n$-analytic character. It induces a character $\nu'_A : T(G_1)(\ZZ_p) \rightarrow A^\times$. 
We consider the morphism of  compact open subgroups $K_p = K_{p,m',0}(B_1) \rightarrow K'_p = K_{p,m',0}(G_1)$ and 
we deduce a finite morphism of Shimura varieties:
$$f : \mathcal{S}^{tor}(B_1,X_{B_1})_{K, \Sigma} \rightarrow \mathcal{S}^{tor}(G_1,X_{G_1})_{K',\Sigma}$$
with $K = K^pK_p $ and $K' = (K')^p K'_p$ and by restriction (see theorem \ref{thm-abelian-strategy}, lemma \ref{lem-compact-abelian})  we have  a finite morphism: 
$$ f : (\pi_{HT,K_p}^{tor})^{-1}(]C_{w,k}[_{n,n} K_p) \rightarrow (\pi_{HT,K'_p}^{tor})^{-1}(]C_{w,k}[_{n,n} K'_p)$$
We claim that there is an isomorphism $$f^\star \mathcal{V}_{\nu'_A}^{n-an} = \mathcal{V}_{\nu_A}^{n-an}.$$ This follows from remark \ref{rem-connection-Hodge-abelian} and remark \ref{rem-on-the-abelian-case}, observing that the torsor comming from the Shimura variety $(T,X_T)$ is trivial. We insist that  the above formula does not describe  the Hecke action at the level of $B_1$, but this is irrelevant for us now.  
We can the find another compact $K'' \subseteq K'$ such that there is a finite, generically finite \'etale morphism $f : \mathcal{S}^{tor,0}(G_1,X_{G_1})_{K'', \Sigma} \rightarrow \mathcal{S}^{tor,0}(B_1,X_{B_1})_{K,\Sigma}$ with group $\Delta$.
We now deduce that $\mathrm{R}\Gamma( (\pi_{HT,K_p}^{tor})^{-1}(\mathcal{U})^0, \mathcal{V}_{\nu_A}^{n-an}(-D))$  is the direct factor of invariants under $\Delta$ of $\mathrm{R}\Gamma( (\pi_{HT,K''_p}^{tor})^{-1}(\mathcal{U})^0, \mathcal{V}_{\nu'_A}^{n-an}(-D))$. We wrote  $(\pi_{HT,K_p}^{tor})^{-1}(\mathcal{U})^0$ for $\mathcal{S}^{tor,0}(B_1,X_{B_1})_{K,\Sigma} \cap (\pi_{HT,K_p}^{tor})^{-1}(\mathcal{U})$ and similarly for $G_1$. The lemma is thus proven for $B_1$. We observe that the lemma holds for any refinement of the cone decomposition $\Sigma$ by the projection formula.  
We now deduce the lemma  for $G$.  We reset the notations.  Let $\nu_A : T(G)(\ZZ_p) \rightarrow A^\times$ be a continuous, $n$-analytic character. It induces a character $\nu'_A : T(B_1)(\ZZ_p) \rightarrow A^\times$. 
Let $K = K^pK_p$ be a compact open subgroup of $G(\mathbb{A}_f)$. 
We can  find a compact open subgroup $K'  \subseteq B_1(\mathbb{A}_f)$ such that there is a finite, generically finite \'etale morphism $f : \mathcal{S}^{tor,0}(B_1,X_{B_1})_{K', \Sigma} \rightarrow \mathcal{S}^{tor,0}(G,X_{G})_{K,\Sigma}$ with group $\Delta(K,K')$.
This map induces a finite map : $ f : (\pi_{HT,K'_p}^{tor})^{-1}(]C_{w,k}[_{n,n} K'_p)^0 \rightarrow (\pi_{HT,K_p}^{tor})^{-1}(]C_{w,k}[_{n,n} K_p)^0$. We find that $$ \mathcal{V}^{n-an}_{\nu_A} = (f_\star \mathcal{V}^{n-an}_{\nu'_A})^{\Delta(K,K')}.$$ 
We deduce that  $\mathrm{R}\Gamma( (\pi_{HT,K_p}^{tor})^{-1}(\mathcal{U})^0, \mathcal{V}_{\nu_A}^{n-an}(-D))$  is the direct factor of invariants under $\Delta(K,K')$ of $\mathrm{R}\Gamma( (\pi_{HT,K'_p}^{tor})^{-1}(\mathcal{U})^0, \mathcal{V}_{\nu'_A}^{n-an}(-D))$. The lemma is thus proven for $G$.
\end{proof}
\subsection{Cup products and Serre duality}\label{subsection-analytic-duality} We now consider cup-products on locally analytic overconvergent cohomology. 

\begin{thm}\label{thm-cup-products} For all $w \in \WM$ and weights $\nu_A$ there is a  pairing: 
$$\langle, \rangle :  \HH^{i}_{w,an}(K_p, \nu_A, cusp)^{\pm,fs} \times \HH^{d-i}_{w,an}(K_p, \nu_A)^{\mp,fs} \rightarrow A$$
For any $t \in T(\qq_p)$ we have $\langle t -, -\rangle = \langle -, t^{-1} - \rangle$. 
Let $\nu=\nu_{alg}\chi$ be a locally algebraic weight so that $\kappa_{alg}=-w_{0,M}w\nu_{alg}-(w_{0,M}w\rho+\rho)$ is $M$-dominant.  The above pairing induces a pairing between the spectral sequences:
$$\langle, \rangle_{p,q,r} :  \mathbf{E}_{w,r}^{p,q}(K^p, \kappa_{alg}, \chi, cusp)^\pm \times \mathbf{E}^{-p,d-q}_{w,r}(K^p,\kappa,\chi^{-1})^\mp \rightarrow F$$
On the abutment of the spectral sequence the pairing $\langle, \rangle_{p,q,\infty}$ is induced by the  pairing  of theorem \ref{thm-pairing-over-coho}:
$$\HH^{p+q}_w(K^p,\kappa,\chi,cusp)^{\pm,fs} \times \HH^{d-p-q}_w(K^p,-2\rho_{nc}- w_{0,M} \kappa, \chi^{-1})^{\mp,fs} \rightarrow F.$$
\end{thm}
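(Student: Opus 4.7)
The proof proceeds in close parallel with theorem \ref{thm-pairing-over-coho}, now exploiting the duality built into the construction of the sheaves. The key point is that by definition $\mathcal{D}_{\nu_A}^{n-an} \hookrightarrow (\mathcal{V}_{\nu_A}^{n-an})^\vee \otimes \mathcal{V}_{-2\rho_{nc}}$ as sheaves on a suitable neighborhood of $(\pi_{HT,K_p}^{tor})^{-1}(]C_{w,k}[_{n,n}K_p)$, so there is a natural evaluation pairing $\mathcal{V}_{\nu_A}^{n-an}\otimes_{\oscr}\mathcal{D}_{\nu_A}^{n-an}\to \mathcal{V}_{-2\rho_{nc}}$, and similarly $\mathcal{V}_{\nu_A}^{n-an}(-D)\otimes_{\oscr}\mathcal{D}_{\nu_A}^{n-an}\to\mathcal{V}_{-2\rho_{nc}}(-D)$, landing in the dualizing sheaf.

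First, following the model of theorem \ref{thm-pairing-over-coho}, I fix $n$ analyticity and $K_p=K_{p,m',0}$ with $m'$ large, and pick $s>n$. By theorem \ref{thm-finiteslopesupport2} and example \ref{example-allowed-support} (adapted to the locally analytic setting), I realize $\mathrm{R}\Gamma_{w,an}(K^p,\nu_A,cusp)^{+,fs}$ as the finite slope part of
\[
\mathrm{R}\Gamma_{(\pi_{HT,K_p}^{tor})^{-1}(]C_{w,k}[_{s,\overline{s}}K_p)}((\pi_{HT,K_p}^{tor})^{-1}(]C_{w,k}[_{s,-1}K_p),\mathcal{V}_{\nu_A}^{n-an}(-D))
\]
and $\mathrm{R}\Gamma_{w,an}(K^p,\nu_A)^{-,fs}$ as the finite slope part of
\[
\mathrm{R}\Gamma_{(\pi_{HT,K_p}^{tor})^{-1}(]C_{w,k}[_{\overline{s+1},s-1}K_p)}((\pi_{HT,K_p}^{tor})^{-1}(]C_{w,k}[_{-1,s-1}K_p),\mathcal{D}_{\nu_A}^{n-an}).
\]
The intersection of the two support conditions is contained in $(\pi_{HT,K_p}^{tor})^{-1}(]C_{w,k}[_{\overline{s+1},\overline{s}}K_p)$, and the two ambient opens intersect inside $(\pi_{HT,K_p}^{tor})^{-1}(]C_{w,k}[_{s,s-1}K_p)$.

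Applying proposition \ref{prop-construction-cuprod} together with the sheaf pairing above and the trace map of theorem \ref{thm-duality-GK} then yields
\[
\HH^i_{w,an}(K^p,\nu_A,cusp)^{+,fs}\times \HH^{d-i}_{w,an}(K^p,\nu_A)^{-,fs}\to A,
\]
and the analogous argument with the roles of $+$ and $-$ reversed (swapping $\mathcal{V}$ and $\mathcal{D}$) produces the opposite-handed pairing. Independence of the allowed support and the choice of representative follows exactly as in the proof of theorem \ref{thm-pairing-over-coho}, from naturality of both the cup product and the trace. For Hecke equivariance, I compare the definition of $t\in T(\qq_p)$ acting on each side: on $\mathcal{V}_{\nu_A}^{n-an}$ the operator $t$ is built in section \ref{subsubsec-hecke-action} from the local model $wtw^{-1}$ on $V^{n-an}_{\kappa_A}$ twisted by $(-w^{-1}w_{0,M}\rho+\rho)(t)$ via the trace, while the dual operator on $\mathcal{D}_{\nu_A}^{n-an}$ is $(wtw^{-1})^{-1}$ against this same normalization. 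Hence the adjunction $\langle t-,-\rangle=\langle -,t^{-1}-\rangle$ drops out of the commutativity of trace with the natural transformations, exactly as in the classical argument.

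For the compatibility with the BGG spectral sequence of theorem \ref{spectral-sequence-lan-to-classical} in a locally algebraic weight $\nu=\nu_{alg}\chi$, I use that the BGG resolution of theorem \ref{thm-BGG-resolution} and its distribution-dual version are paired term-by-term via the locally analytic duality at the level of the groups $V_{w\cdot\kappa_{alg}}^{lan}$ and $D_{w\cdot\kappa_{alg}}^{lan}$. Sheafifying, this gives a pairing of complexes of locally projective Banach sheaves whose hypercohomology spectral sequences are the ones in theorem \ref{spectral-sequence-lan-to-classical}; at every page the induced pairing $\langle,\rangle_{p,q,r}$ is obtained by cup-product-plus-trace on the associated graded pieces, and at $r=\infty$ it agrees with the pairing of theorem \ref{thm-pairing-over-coho} because both are induced by the canonical identification of $\mathcal{V}_{\kappa_{alg}}\otimes\mathcal{V}_{-w_{0,M}\kappa_{alg}-2\rho_{nc}}$ with $\mathcal{V}_{-2\rho_{nc}}$ followed by the same Serre trace.

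The main technical obstacle is bookkeeping the normalizations: the appearance of the various twists by $\nu_{alg}$ in theorem \ref{spectral-sequence-lan-to-classical}, together with the shift by $-w^{-1}w_{0,M}\rho+\rho$ hidden in the definition of the Hecke action on $\mathcal{V}_{\nu_A}^{n-an}$ (lemma \ref{lem-action-by-weight}), must be tracked through the BGG differentials so that the dualities at each $E_r$ page are compatible with those at $E_{r+1}$ and with the classical Serre pairing at $E_\infty$. Once one verifies that the local pairing $V^{n-an}_{\kappa_A}\otimes D^{n-an}_{\kappa_A+2\rho_{nc}}\to F$ is $(M_1,T^{M_\mu,\pm})$-equivariant for adjoint actions -- which is forced by construction of $D^{n-an}$ as the continuous dual -- the rest is formal.
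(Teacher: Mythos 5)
Your proposal is correct and takes essentially the same route as the paper: realize $\mathrm{R}\Gamma_{w,an}(K^p,\nu_A,cusp)^{+,fs}$ and $\mathrm{R}\Gamma_{w,an}(K^p,\nu_A)^{-,fs}$ via the same allowed support conditions $]C_{w,k}[_{s,\overline{s}}K_p\subset ]C_{w,k}[_{s,-1}K_p$ and $]C_{w,k}[_{\overline{s+1},s-1}K_p\subset ]C_{w,k}[_{-1,s-1}K_p$, apply the cup product of Proposition~\ref{prop-construction-cuprod} with the tautological sheaf pairing $\mathcal{V}_{\nu_A}^{n-an}(-D)\otimes\mathcal{D}_{\nu_A}^{n-an}\to\mathcal{V}_{-2\rho_{nc}}(-D)\otimes A$, and compose with the trace of Theorem~\ref{thm-duality-GK}. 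The one place you go into more detail than the paper is the verification of Hecke adjunction and of the compatibility with the BGG spectral sequence, which the paper dispatches with ``this pairing intertwines the actions'' and ``the rest follows from functoriality of the trace map''; your expansion of those checks (tracking the $(-w^{-1}w_{0,M}\rho+\rho)(t)$ normalization and pairing the BGG resolution term-by-term with its distribution dual) is consistent with what is implicitly meant.
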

\begin{proof} We construct the pairing $$\langle, \rangle :  \HH^{i}_{w,an}(K_p, \nu_A, cusp)^{+,fs} \times \HH^{d-i}_{w,an}(K_p, \nu_A)^{-,fs} \rightarrow F.$$  We can realize $\mathrm{R}\Gamma_{w,an}(K^p, \nu_A, cusp)^{+,fs}$ as the the finite slope part of 
$$ \mathrm{R}\Gamma_{(\pi_{HT, K_{p, m',0}}^{tor})^{-1}(]C_{w,k}[_{s,\overline{s}}K_{p,m',0})}((\pi^{tor}_{HT,K_{p,m',0}})^{-1}(]C_{w,k}[_{s,-1}K_{p,m',0}), \mathcal{V}^{n-an}_{\nu_A}(-D))$$
for   $s >> 0$ and  $m' >>s$  by definition \ref{defi-support-condition2} and theorem \ref{thm-finiteslopesupport2}.  Similarly we can realize $\mathrm{R}\Gamma_{w,an}(K^p, \nu_A )^{-,fs}$ as the finite slope part of 
$$ \mathrm{R}\Gamma_{(\pi_{HT, K_{p, m',0}}^{tor})^{-1}(]C_{w,k}[_{\overline{s+1},s-1}K_{p,m',0})}((\pi^{tor}_{HT,K_{p,m',0}})^{-1}(]C_{w,k}[_{-1,s-1}K_{p,m',0}), \mathcal{D}^{n-an}_{\nu_A}).$$

Moreover, by construction we have a pairing $\mathcal{V}_{\nu_A}^{n-an}(-D)\times\mathcal{D}_{\nu_A}^{n-an}\to \mathcal{V}_{-2\rho_{nc}}(-D)\otimes A$.  We have a cup-product by proposition \ref{prop-construction-cuprod}: 
$$ \HH^i_{(\pi_{HT, K_{p, m',0}}^{tor})^{-1}(]C_{w,k}[_{s,\overline{s}}K_{p,m',0})}((\pi^{tor}_{HT,K_{p,m',0}})^{-1}(]C_{w,k}[_{s,-1}K_{p,m',0}), \mathcal{V}^{n-an}_{\nu_A}(-D))   \times $$
$$  
\HH^{d-i}_{(\pi_{HT, K_{p, m',0}}^{tor})^{-1}(]C_{w,k}[_{\overline{s+1},s-1}K_{p,m',0})}((\pi^{tor}_{HT,K_{p,m',0}})^{-1}(]C_{w,k}[_{-1,s-1}K_{p,m',0}), \mathcal{D}^{n-an}_{\nu_A})$$
$$ \rightarrow \HH^d_{ (\pi_{HT, K_{p, m',0}}^{tor})^{-1}(]C_{w,k}[_{\overline{s+1},\overline{s}}K_{p,m',0})}( (\pi^{tor}_{HT,K_{p,m',0}})^{-1}(]C_{w,k}[_{s,s-1}K_{p,m',0}),  \mathcal{V}_{-2\rho_{nc}}(-D) \otimes A)$$
and there is a trace map (by theorem \ref{thm-duality-GK}): $$\HH^d_{ (\pi_{HT, K_{p, m',0}}^{tor})^{-1}(]C_{w,k}[_{\overline{s+1},\overline{s}}K_{p,m',0})}( (\pi^{tor}_{HT,K_{p,m',0}})^{-1}(]C_{w,k}[_{s,s-1}K_{p,m',0}),  \mathcal{V}_{-2\rho_{nc}}(-D) \otimes A) \rightarrow A.$$

This pairing intertwines the actions of $\mathcal{H}_{p,m,0}^+$ and $\mathcal{H}_{p,m,0}^{-}$. It is straightforward (and painful) to check that the induced pairing $$\langle, \rangle :  \HH^{i}_{w,an}(K_p, \nu_A,  cusp)^{+,fs} \times \HH^{d-i}_{w,an}(K_p,\nu_A)^{-,fs} \rightarrow A$$ is independent of choices.

The rest of the theorem follows from the functoriality of the trace map. 

\end{proof}

\begin{thm}\label{thm-duality-field} Let $w \in \WM$, let $A$ be a field and fix a weight $\nu_A$.
The pairing $\langle, \rangle :  \HH^{i}_{w,an}(K_p, \nu_A, cusp)^{\pm,fs} \times \HH^{d-i}_{w,an}(K_p, \nu_A)^{\mp,fs} \rightarrow A$ is non-degenerate.
\end{thm}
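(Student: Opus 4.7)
The plan is to adapt the argument of Theorem \ref{thm-perfect-pairing} to the locally analytic setting, replacing the automorphic vector bundles $\mathcal{V}_\kappa$ and $\mathcal{V}_{-2\rho_{nc}-w_{0,M}\kappa}$ by the Banach sheaves $\mathcal{V}_{\nu_A}^{n-an}(-D)$ and $\mathcal{D}_{\nu_A}^{n-an}$, which are locally projective by Proposition \ref{dist-sheaves-Banach} and stand in sheaf-theoretic duality by construction.

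First I would establish the analogues of Proposition \ref{prop-dagger1} and Corollary \ref{coro-dagger3} in the locally analytic setting. That is, on the $-$ side I would write $\mathrm{R}\Gamma_{w,an}(K^p,\nu_A)^{-,fs}$ as the fiber of a map
$$\mathrm{R}\Gamma((\pi_{HT,K_p}^{tor})^{-1}(\overline{]Y_{w,k}[}),\mathcal{D}_{\nu_A}^{n-an})^{-,fs}\to \mathrm{R}\Gamma((\pi_{HT,K_p}^{tor})^{-1}(\overline{]\cup_{w'>w}Y_{w',k}[}),\mathcal{D}_{\nu_A}^{n-an})^{-,fs}$$
(using the quasi-Stein/affinoid trick as in Proposition \ref{prop-dagger2}, which goes through because $\mathcal{D}_{\nu_A}^{n-an}$ is a locally projective Banach sheaf and the dynamical inputs of Lemma \ref{lem-dynamic-corres} are independent of the coefficient sheaf), and similarly on the $+$ side using cohomology with compact support (i.e.\ cohomology with support in the closed sets $(\pi_{HT,K_p}^{tor})^{-1}(\overline{]Y_{w,k}[})$, $(\pi_{HT,K_p}^{tor})^{-1}(\overline{]\cup_{w'>w}Y_{w',k}[})$) of the Banach sheaf $\mathcal{V}_{\nu_A}^{n-an}(-D)$. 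By the naturality of cup-products, it suffices to show that each of the two resulting pairings
$$\mathrm{R}\Gamma((\pi_{HT,K_p}^{tor})^{-1}(\overline{Z}),\mathcal{D}_{\nu_A}^{n-an})^{-,fs}\times \mathrm{R}\Gamma_{(\pi_{HT,K_p}^{tor})^{-1}(\overline{Z})}(\mathcal{S}^{tor}_{K^pK_p,\Sigma},\mathcal{V}_{\nu_A}^{n-an}(-D))^{+,fs}\to A$$
for $Z=]Y_{w,k}[$ and $Z=]\cup_{w'>w}Y_{w',k}[$ is non-degenerate.

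Next I would, following verbatim the \v{C}ech computation in the proof of Theorem \ref{thm-perfect-pairing}, pick a finite affinoid cover $\{U_i\}$ of $(\pi_{HT,K_p}^{tor})^{-1}(Z)$ and represent the two sides by dual \v{C}ech complexes $M^\bullet$ and $N^\bullet$, where each term
$$\HH^0((U_J)^\dagger,\mathcal{D}_{\nu_A}^{n-an}),\qquad \HH^d_c((U_J)^\dagger,\mathcal{V}_{\nu_A}^{n-an}(-D))$$
pair perfectly as topological $A$-modules: on affinoid dagger spaces this is precisely Theorem \ref{thm-duality-GK}, tensored against the local fiber pairing $V_{\kappa_A}^{n-an}\times D_{\kappa_A}^{n-an}\to A$ coming from the definition of $\mathcal{D}_{\nu_A}^{n-an}$ (which, crucially, is a topological duality of locally convex $A$-modules since $A$ is a field, so that the global-sections/associated-sheaf comparison of Proposition \ref{dist-sheaves-Banach} is clean).

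Then I would pick a compact Hecke operator $t\in T^{--}$, represent the Hecke action on $M^\bullet$ by a compact endomorphism $\widetilde t$ exactly as in the proof of Theorem \ref{thm-perfect-pairing} (using Lemma \ref{lem-dynamic-corres}(7) to shrink the covering so that $t$ maps $V\supseteq\overline{\cup U_i}$ into $(\pi_{HT,K_p}^{tor})^{-1}(\overline{Z})$ and Lemma \ref{lem-representing-mapT} to lift $t$ to a map of \v{C}ech complexes), and note that its adjoint represents the action of $t^{-1}$ on $N^\bullet$. Passing to the slope $\leq h$ part on both sides, one obtains a perfect pairing of finite-dimensional $A$-vector spaces, and taking $h\to\infty$ gives the desired perfect pairing on finite slope parts. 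The main obstacle, as in the coherent case, is the compactness of the operator $\widetilde t$ representing $t$ on the \v{C}ech complex computing $\mathrm{R}\Gamma((\pi_{HT,K_p}^{tor})^{-1}(\overline{Z}),\mathcal{D}_{\nu_A}^{n-an})^{-,fs}$; here one uses both the geometric dynamical compactness from Lemma \ref{lem-criterium-compacity2} and the fact that the map $\widetilde{[t^{-1}]}:p_2^\star\mathcal{D}_{\nu_A}^{n-an}\to p_1^\star\mathcal{D}_{\nu_A}^{n-an}$ is itself compact on the sheaf level (Lemma \ref{lem-hecke-local}(3)), which are exactly the inputs needed to invoke Lemma \ref{last-lemma-compact}.
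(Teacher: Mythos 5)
Your plan to transplant the Proposition \ref{prop-dagger1}/Corollary \ref{coro-dagger3} decomposition breaks down at the very first step, because the sheaves $\mathcal{V}_{\nu_A}^{n-an}$ and $\mathcal{D}_{\nu_A}^{n-an}$ are only defined on $(\pi_{HT,K_p}^{tor})^{-1}(]C_{w,k}[_{n,n}K_p)$, a shrunken tube around the $w$-Igusa variety where the torsor reduction of Proposition \ref{prop-reduction-2} is available; they do not extend to $(\pi_{HT,K_p}^{tor})^{-1}(\overline{]Y_{w,k}[})$, let alone $(\pi_{HT,K_p}^{tor})^{-1}(\overline{]\cup_{w'>w}Y_{w',k}[})$. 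Indeed by Lemma \ref{lem-decomposition-cells} the set $\overline{]Y_{w,k}[}$ contains $\overline{]Y_{w',k}[}$ for every $w'>w$, which is disjoint from any neighborhood of $C_w$. So the objects $\mathrm{R}\Gamma((\pi_{HT,K_p}^{tor})^{-1}(\overline{]Y_{w,k}[}),\mathcal{D}_{\nu_A}^{n-an})$ and the analogous cohomologies with compact support in your proposed fiber/cone sequences are simply not defined. This constraint is already built into Definition \ref{defi-support-condition2}, where every allowed support condition for the locally analytic theory must satisfy $\mathcal{Z}\cap\mathcal{U}\subseteq(\pi_{HT,K_{p,m',0}}^{tor})^{-1}(]C_{w,k}[_{n,n}K_{p,m',0})$. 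The paper gets around this by using a different, more local decomposition: it introduces the auxiliary closed subsets $]C_{w,k}[_{\overline{s-1}',\overline{s}}=\cap_{\epsilon>0}]C_{w,k}[_{s-1-\epsilon,\overline{s}}$ and $]C_{w,k}[_{\overline{[s-1,s]}',\overline{s}}$, both contained in $]C_{w,k}[_{n,n}K_p$ for $s$ large, and writes the $+$ cuspidal cohomology as the cone over the resulting exact triangle. Your \v{C}ech and compactness arguments would then need to be run with those supports instead.

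There is also a secondary inaccuracy worth flagging: the pairing $V_{\kappa_A}^{n-an}\times D_{\kappa_A}^{n-an}\to A$ is \emph{not} a topological duality for fixed $n$, since $D_{\kappa_A}^{n-an}$ is by construction the continuous dual of the ``open-disc'' space $V_{\kappa_A}^{\circ,n-an}$, not of $V_{\kappa_A}^{n-an}$. To invoke the affinoid dagger-space duality of Theorem \ref{thm-duality-GK} one really wants the honest dual sheaf $(\mathcal{V}_{\nu_A}^{n-an})^\vee\otimes\mathcal{V}_{-2\rho_{nc}}$ as coefficient on the $-$ side, and then one must argue that the natural map $\mathcal{D}_{\nu_A}^{n-an}\to(\mathcal{V}_{\nu_A}^{n-an})^\vee\otimes\mathcal{V}_{-2\rho_{nc}}$ becomes an isomorphism after passing to the finite slope part and the limit over $n$; the paper handles this explicitly.
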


\begin{proof} We follow closely the proof of  theorem \ref{thm-perfect-pairing}. Our strategy is to express the cohomologies as cones of cohomologies (possibly with compact support) of dagger spaces  and to use the duality for affinoid dagger spaces to produce dual complexes whose finite slope part computes the cohomology. 
We can realize $\mathrm{R}\Gamma_{w,an}(K^p, \nu_A, cusp)^{+,fs}$ as the the finite slope part of 
$$ \mathrm{R}\Gamma_{(\pi_{HT, K_{p, m',0}}^{tor})^{-1}(]C_{w,k}[_{s,\overline{s}}K_{p,m',0})}((\pi^{tor}_{HT,K_{p,m',0}})^{-1}(]C_{w,k}[_{s,-1}K_{p,m',0}), \mathcal{V}^{n-an}_{\nu_A}(-D))$$
for   $s >> 0$ and  $m' >>s$  by definition \ref{defi-support-condition2} and theorem \ref{thm-finiteslopesupport2}
 
 We now introduce the following closed subspaces  of $\mathcal{FL}_{G, \mu}$: 
  $$]C_{w,k}[_{\overline{s-1}',\overline{s}} = \cap_{ \epsilon >0} ]C_{w,k}[_{s-1- \epsilon,\overline{s}}, ~\textrm{and} ~]C_{w,k}[_{\overline{[s-1,s]}',\overline{s}} = ]C_{w,k}[_{\overline{s-1}',\overline{s}} \setminus ]C_{w,k}[_{s,\overline{s}}.$$ 
  Both are the closure of quasi-compact open subspaces of $\mathcal{FL}_{G,\mu}$.  We have an exact triangle:
  $$ \mathrm{R}\Gamma_{(\pi_{HT, K_{p, m',0}}^{tor})^{-1}(]C_{w,k}[_{\overline{[s-1,s]}',\overline{s}}K_{p,m',0})}((\pi^{tor}_{HT,K_{p,m',0}})^{-1}(]C_{w,k}[_{s-2,-1}K_{p,m',0}), \mathcal{V}^{n-an}_{\nu_A}(-D)) \rightarrow  $$ $$ \mathrm{R}\Gamma_{(\pi_{HT, K_{p, m',0}}^{tor})^{-1}(]C_{w,k}[_{\overline{s-1}',\overline{s}}K_{p,m',0})}((\pi^{tor}_{HT,K_{p,m',0}})^{-1}(]C_{w,k}[_{s-2,-1}K_{p,m',0}), \mathcal{V}^{n-an}_{\nu_A}(-D)) $$ $$ \rightarrow \mathrm{R}\Gamma_{(\pi_{HT, K_{p, m',0}}^{tor})^{-1}(]C_{w,k}[_{s,\overline{s}}K_{p,m',0})}((\pi^{tor}_{HT,K_{p,m',0}})^{-1}(]C_{w,k}[_{s,-1}K_{p,m',0}), \mathcal{V}^{n-an}_{\nu_A}(-D)) \stackrel{+1}\rightarrow$$
  
  We observe that $\mathrm{R}\Gamma_{(\pi_{HT, K_{p, m',0}}^{tor})^{-1}(]C_{w,k}[_{\overline{[s-1,s]}',\overline{s}}K_{p,m',0})}((\pi^{tor}_{HT,K_{p,m',0}})^{-1}(]C_{w,k}[_{s-2,-1}K_{p,m',0}), \mathcal{V}^{n-an}_{\nu_A}(-D))$ and $ \mathrm{R}\Gamma_{(\pi_{HT, K_{p, m',0}}^{tor})^{-1}(]C_{w,k}[_{\overline{s-1}',\overline{s}}K_{p,m',0})}((\pi^{tor}_{HT,K_{p,m',0}})^{-1}(]C_{w,k}[_{s-2,-1}K_{p,m',0}), \mathcal{V}^{n-an}_{\nu_A}(-D)) $ are the cohomology with compact support of the dagger spaces $(\pi_{HT, K_{p, m',0}}^{tor})^{-1}(]C_{w,k}[_{\overline{[s-1,s]}',\overline{s}}K_{p,m',0})$ and  $(\pi_{HT, K_{p, m',0}}^{tor})^{-1}(]C_{w,k}[_{\overline{s-1}',\overline{s}}K_{p,m',0})$ respectively.

  For the $-$ theory, we see that  for any $\epsilon >0$, the cohomology
   $$ \mathrm{R}\Gamma_{(\pi_{HT, K_{p, m',0}}^{tor})^{-1}(]C_{w,k}[_{\overline{s},s-\epsilon}K_{p,m',0})}((\pi^{tor}_{HT,K_{p,m',0}})^{-1}(]C_{w,k}[_{-1,s-\epsilon}K_{p,m',0}), (\mathcal{V}^{n-an}_{\nu_A})^\vee \otimes \mathcal{V}_{-2\rho_{nc}})$$
   is quasi-isomorphic to the cone of 
  $$\mathrm{R}\Gamma((\pi^{tor}_{HT,K_{p,m',0}})^{-1}(]C_{w,k}[_{\overline{s}',s-\epsilon}K_{p,m',0}), (\mathcal{V}^{n-an}_{\nu_A})^\vee \otimes \mathcal{V}_{-2\rho_{nc}})[-1]   \rightarrow $$ $$\mathrm{R}\Gamma((\pi^{tor}_{HT,K_{p,m',0}})^{-1}(]C_{w,k}[_{\overline{[s-1,s]}',s-\epsilon}K_{p,m',0}), (\mathcal{V}^{n-an}_{\nu_A})^\vee \otimes \mathcal{V}_{-2\rho_{nc}})[-1]$$
  We point out  that in this argument, in order to apply duality theorems, we prefer to use as coefficients $(\mathcal{V}^{n-an}_{\nu_A})^\vee \otimes \mathcal{V}_{-2\rho_{nc}}$ rather than $\mathcal{D}^{n-an}_{\nu_A}$. There is a map $\mathcal{D}^{n-an}_{\nu_A} \rightarrow   (\mathcal{V}^{n-an}_{\nu_A})^\vee \otimes \mathcal{V}_{-2\rho_{nc}}$. When we pass to the inverse limit on $n$, these maps induce an isomorphism. Since $A$ is a field, $(\mathcal{V}^{n-an}_{\nu_A})^\vee \otimes \mathcal{V}_{-2\rho_{nc}}$ is a projective Banach sheaf. 
  
  Passing to the limit over $\epsilon$, we deduce easily  that the finite slope part of the cone of  $$\mathrm{R}\Gamma((\pi^{tor}_{HT,K_{p,m',0}})^{-1}(]C_{w,k}[_{\overline{s}', \overline{s}}K_{p,m',0}), (\mathcal{V}^{n-an}_{\nu_A})^\vee \otimes \mathcal{V}_{-2\rho_{nc}})[-1]   \rightarrow $$ $$ \mathrm{R}\Gamma((\pi^{tor}_{HT,K_{p,m',0}})^{-1}(]C_{w,k}[_{\overline{[s-1,s]}', \overline{s}}K_{p,m',0}), (\mathcal{V}^{n-an}_{\nu_A})^\vee \otimes \mathcal{V}_{-2\rho_{nc}})[-1]$$
  computes $\mathrm{R}\Gamma_{w,an}(K^p, \nu_A)^{-,fs}$.

 We can consider a finite covering  $\cup_i U_i ^\dagger$  of $(\pi_{HT, K_{p, m',0}}^{tor})^{-1}(]C_{w,k}[_{\overline{s-1}',\overline{s}}K_{p,m',0})$ by affinoid dagger spaces (or equivalently by closed spaces which are the closure of quasi-compact open spaces) and we let $\cup_j V_j^\dagger$ be a covering of  $(\pi_{HT, K_{p, m',0}}^{tor})^{-1}(]C_{w,k}[_{\overline{[s-1,s]}',\overline{s}}K_{p,m',0})$ by affinoid dagger spaces, refining $\cup_i (U_i ^\dagger \cap (\pi_{HT, K_{p, m',0}}^{tor})^{-1}(]C_{w,k}[_{\overline{[s-1,s]}',\overline{s}}K_{p,m',0}))$.
 
 We let $A^\bullet = \check{C}_c(\{U_i^\dagger\}, \mathcal{V}^{n-an}_{\nu_A}(-D))$ and $B^\bullet =  \check{C}_c(\{V_i^\dagger\}, \mathcal{V}^{n-an}_{\nu_A}(-D))$
  be the \v{C}hech cohomology complex with compact support. There is a natural  map $B^\bullet \rightarrow A^\bullet$ and we let $C^\bullet$ be the cone of this map which represents $$\mathrm{R}\Gamma_{(\pi_{HT, K_{p, m',0}}^{tor})^{-1}(]C_{w,k}[_{s,\overline{s}}K_{p,m',0})}((\pi^{tor}_{HT,K_{p,m',0}})^{-1}(]C_{w,k}[_{s,-1}K_{p,m',0}), \mathcal{V}^{n-an}_{\nu_A}(-D)).$$
  
 Let $t \in T^{++}$. As explained in lemma \ref{lem-representing-mapT} , we can lift $t$ to a compact endomorphism of $C^\bullet$. 
 
 We let $D^\bullet$ be the complex which is the strict dual of $C^\bullet$.  By the duality for affinoid spaces, this complex computes the cone of $$\mathrm{R}\Gamma((\pi^{tor}_{HT,K_{p,m',0}})^{-1}(]C_{w,k}[_{\overline{s}',s-\epsilon}K_{p,m',0}), (\mathcal{V}^{n-an}_{\nu_A})^\vee \otimes \mathcal{V}_{-2\rho_{nc}})[-1] \rightarrow $$ $$   \mathrm{R}\Gamma((\pi^{tor}_{HT,K_{p,m',0}})^{-1}(]C_{w,k}[_{\overline{[s-1,s]}',s-\epsilon}K_{p,m',0}), (\mathcal{V}^{n-an}_{\nu_A})^\vee \otimes \mathcal{V}_{-2\rho_{nc}})[-1].$$ 
 For any slope $h$, taking the slope $\leq h$ part yields a perfect pairing between $C^{\bullet, \leq h}$ and $D^{\bullet, \leq h}$ and passing to cohomology gives the theorem.
 
  \end{proof}
  
Using duality we can deduce an improved vanishing theorem.

  \begin{thm}\label{theo-vanishing-big-sheaves} Let $w\in\WM$ and let $\nu_A$ be a weight.  The cohomology  $\HH^i_{w,an}(K^p, \nu_A,cusp)^{\pm,fs}$ is concentrated in degree $[0, \ell_{\pm}(w)]$ and  the cohomology  $\HH^i_{w,an}(K^p, \nu_A)^{\pm,fs}$ is concentrated in degree $[\ell_{\pm}(w),d]$. Moreover, if $A$ is a domain,  $\HH^{\ell_{\pm}(w)}_{w,an}(K^p, \nu_A)^{\pm,fs}$ is a torsion free sheaf of $\oscr_{\Spa(A,A^+)}$-modules. 
\end{thm}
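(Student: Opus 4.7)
The cuspidal statement $\HH^i_{w,an}(K^p, \nu_A, cusp)^{\pm, fs} = 0$ for $i \not\in [0, \ell_\pm(w)]$ is exactly Theorem \ref{thm-one-half-conj}, so only the non-cuspidal vanishing and the torsion-freeness need new input. The overall plan is to verify the non-cuspidal bound \emph{pointwise} using Serre duality and the cuspidal case, then upgrade to the family by exhibiting a perfect complex concentrated in the correct range, and finally read off torsion-freeness from this representative.

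For the pointwise step, I would fix $x \in \Spa(A, A^+)$ with complete residue field $k(x)$ and apply Theorem \ref{thm-duality-field} over $k(x)$ to obtain a perfect pairing
\[ \HH^i_{w,an}(K_p, \nu_x, cusp)^{\pm, fs} \times \HH^{d-i}_{w,an}(K_p, \nu_x)^{\mp, fs} \longrightarrow k(x). \]
Combined with Theorem \ref{thm-one-half-conj} applied to the specialized weight $\nu_x$, this forces $\HH^i_{w,an}(K_p, \nu_x)^{\pm, fs} = 0$ for $i < \ell_\pm(w)$, while the upper bound $i \leq d$ is immediate from $\dim \mathcal{S}^{tor}_{K,\Sigma} = d$. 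To globalize, I would invoke Remark \ref{rem-useful-perfect}: on a spectral variety $\pi : \tilde{\mathcal{Z}} \to \Spa(A, A^+)$, our cohomology is represented by a complex $\mathcal{M}^{\bullet, fs}_{\tilde{\mathcal{Z}}}$ of coherent sheaves which is perfect as a complex of $\pi^{-1} \oscr_S$-modules, with amplitude in $[0, d]$. The Tor spectral sequence of Theorem \ref{thm-tor-spectral} identifies its derived fiber at each point with $\mathrm{R}\Gamma_{w,an}(K_p, \nu_x)^{\pm, fs}$, so the pointwise vanishing says this perfect complex has Tor-amplitude $[\ell_\pm(w), d]$. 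A standard fact about perfect complexes (Stacks Project, Tag 0658) then replaces it by a quasi-isomorphic complex $D^\bullet$ of finite projective modules concentrated in degrees $[\ell_\pm(w), d]$, yielding the vanishing of $\HH^i_{w,an}(K^p, \nu_A)^{\pm, fs}$ outside that range.

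For the torsion-freeness when $A$ is a domain, the complex $D^\bullet$ constructed above presents $\HH^{\ell_\pm(w)}_{w,an}(K^p, \nu_A)^{\pm, fs}$ locally as the kernel of a differential between finite projective modules, hence as a submodule of a finite projective module; any such submodule is torsion-free, and this property descends under the locally quasi-finite pushforward $\pi$ to give a torsion-free sheaf of $\oscr_{\Spa(A, A^+)}$-modules. The main technical burden will be the careful bookkeeping between the spectral variety $\tilde{\mathcal{Z}}$ and the base $\Spa(A, A^+)$ — in particular, verifying that the various local perfect complex representatives (which are only unique up to non-canonical quasi-isomorphism, cf.\ section \ref{relative-spectral-2}) glue sufficiently well to produce a coherent torsion-free sheaf on the base. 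However, the essential analytic content (pointwise duality together with the cuspidal vanishing of Theorem \ref{thm-one-half-conj}) is already in hand, so the remaining work is essentially homological algebra.
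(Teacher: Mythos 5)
Your proposal is correct and takes essentially the same approach as the paper: cuspidal vanishing from Theorem \ref{thm-one-half-conj}, the pointwise non-cuspidal bound from the duality of Theorem \ref{thm-duality-field}, and then a perfect-complex/Tor-amplitude argument to spread the vanishing over the base and extract torsion-freeness in the domain case. The paper works directly with slope-$\leq h'$ decompositions over affinoid opens $\Spa(B,B^+)$ of the base and truncates the resulting perfect complex of $B$-modules (citing Mumford's lemma), whereas you route through the spectral variety $\tilde{\mathcal{Z}}$ and cite the Stacks Project for the same truncation fact, but these are cosmetic differences and the core argument is identical.
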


\begin{proof} The case of $\HH^i_{w,an}(K^p, \nu_A,cusp)^{\pm,fs}$ is theorem \ref{thm-one-half-conj}. The case of  $\HH^i_{w,an}(K^p, \nu_A)^{\pm,fs}$ when $A$ is a field follows from the duality theorem \ref{thm-duality-field}. The case of general coefficients can be deduced as  follows. First observe that we can suppose that $A$ is reduced as the universal weight space is reduced.  Let us fix a compact operator $t \in T^{\pm}$. For any point $x \in \Spa(A,A^+)$ and any slope $h$, we can find a slope $h' \geq h$ and an open $\Spa (B, B^+) \hookrightarrow \Spa(A,A^+)$ such that  $\HH^i_{w,an}(K^p, \nu_A)\vert_{\Spa(B,B)^+}$ has slope $h'$ decomposition for $t$, and  $\HH^i_{w,an}(K^p, \nu_A)(\Spa(B,B)^+)^{\pm,\leq h'}$ is computed by a perfect complex  $C^\bullet$ of $B$-modules. Since we know that for any $x \in \Spa(B,B^+)$, $C^\bullet \otimes_B k(x)$ has cohomology concentrated in the range $[\ell_{\pm}(w),d]$, we deduce that $C^\bullet$ is quasi-isomorphic to $\tau_{\geq \ell_{\pm}(w)} C^\bullet$, and this truncated complex  is a perfect complex (by an application of \cite{mumford}, lemma 2 on p. 49). It follows that its cohomology in degree $\ell_{\pm}(w)$ is torsion free if $A$ is a domain. 
\end{proof}
 
\subsection{Slope estimates and control theorem}\label{subsection-control2}  We now formulate a conjecture regarding the slopes of the locally analytic overconvergent cohomologies. This is of course consistent with conjecture \ref{conj-strongslopes}. 

\begin{conj}\label{conj-strongslopes2}
Fix $w\in\WM$,  and let $\nu:T^c(\Z_p) \rightarrow \mathbb{C}_p^\times $ be a continuous character. 
\begin{enumerate}
\item  For any  character $\lambda$ of $T^+$ on $\mathrm{R}\Gamma_{w,an}(K^p, \nu)^{+,fs}$ or $\mathrm{R}\Gamma_{w,an}(K^p, \nu,cusp)^{+,fs}$,  we have $v(\lambda)\geq 0$.  
\item For any  character $\lambda$ of $T^-$ on  $\mathrm{R}\Gamma_{w,an}(K^p, \nu)^{-,fs}$ or $\mathrm{R}\Gamma_{w,an}(K^p, \nu,cusp)^{-,fs}$, we have $v(\lambda)\leq 0$.  
\end{enumerate}
\end{conj}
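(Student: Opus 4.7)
\medskip

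\noindent\textbf{Proof proposal for Conjecture \ref{conj-strongslopes2}.}

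The plan is to adapt the strategy used for Theorem \ref{thm-slopes}: construct a natural integral structure on the sheaves $\mathcal{V}_{\nu_A}^{n-an}$ and $\mathcal{D}_{\nu_A}^{n-an}$, show that the Hecke action of $t\in T^+$ (resp. $t\in T^-$) preserves it up to a factor of non-negative (resp. non-positive) valuation, and deduce that each Hecke eigenvalue satisfies $v(\lambda)\geq 0$ (resp. $v(\lambda)\leq 0$). By duality (Theorem \ref{thm-duality-field}), it suffices to treat the $+$ case. Since $T^c(\ZZ_p)$ is compact, $\nu$ takes values in $\ocal_{\C_p}^\times$, so after enlarging $F$ we may assume that $\nu_A$ factors through $A^{+,\times}$ for a suitable affinoid model. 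Then the character $\kappa_A=-w_{0,M}w\nu_A-(w_{0,M}w\rho+\rho)$ is integrally defined up to a fixed twist by a rational character that will contribute in a controlled way.

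The first step is to construct integral structures. Locally on $(\pi_{HT,K_p}^{tor})^{-1}(]C_{w,k}[_{n,n}K_p)$, the sheaf $\mathcal{V}_{\nu_A}^{n-an}$ is modeled on the Banach $A$-module $V_{\kappa_A}^{n-an}$, which carries the natural sup-norm lattice $V_{\kappa_A}^{n-an,+}$. Because the reduction of structure group $\mathcal{M}_{HT,n,K_p}$ constructed in Proposition \ref{prop-reduction-2} takes values in $K^c_{p,w,M_\mu}\mathcal{M}^{c,1}_{\mu,n,n}\subset\mathcal{M}^c_\mu$, twisting by this integral torsor preserves the lattice $V_{\kappa_A}^{n-an,+}$. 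This gives an integral Banach subsheaf $\mathcal{V}_{\nu_A}^{n-an,+}\subseteq\mathcal{V}_{\nu_A}^{n-an}$ in the sense of Definition \ref{defi-integral-structure-sheaf}, entirely parallel to Corollary \ref{coro-existence-integral-struct}. A similar construction produces $\mathcal{D}_{\nu_A}^{n-an,+}$.

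The second step is to track the action of the normalized Hecke operator. Unwinding the construction of Section \ref{subsubsec-hecke-action}, the action of $t\in T^+$ is the composite $\bigl((-w^{-1}w_{0,M}\rho+\rho)(t)\cdot\mathrm{Tr}_{p_1}\bigr)\circ\widetilde{[t^{-1}]}$. The unnormalized map $\widetilde{[t^{-1}]}:p_2^\star\mathcal{V}_{\nu_A}^{n-an}\to p_1^\star\mathcal{V}_{\nu_A}^{n-an}$ is locally modeled on left translation by $wtw^{-1}\in T^{M_\mu,+}$ on $V_{\kappa_A}^{n-an}$, and this translation preserves the sup-norm unit ball $V_{\kappa_A}^{n-an,+}$ (the $+$ part of $T^{M_\mu}$ acts by isometries on the analytic induction, as in Lemma \ref{lem-slopeboundBGG}). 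Hence $\widetilde{[t^{-1}]}$ carries $p_2^\star\mathcal{V}_{\nu_A}^{n-an,+}$ into $p_1^\star\mathcal{V}_{\nu_A}^{n-an,+}$. The trace $\mathrm{Tr}_{p_1}$ for the finite flat map $p_1$ preserves integral structures in the usual sense, and the scalar normalization $(-w^{-1}w_{0,M}\rho+\rho)(t)$ has valuation $\langle v(t),\rho-w^{-1}w_{0,M}\rho\rangle\geq 0$ because $\rho-w^{-1}w_{0,M}\rho$ is a non-negative sum of positive roots (a standard fact about the Weyl dot action). Consequently the normalized Hecke operator maps $\mathcal{V}_{\nu_A}^{n-an,+}$ into itself. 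The argument for $\mathcal{D}_{\nu_A}^{n-an,+}$ is dual.

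The final step is to transfer this integrality to the finite slope cohomology. Following Lemma \ref{lemma-construct-lattice}, one takes a $(+,w,K_{p,m',b})$-allowed support $(\mathcal{U},\mathcal{Z})$ with $\mathcal{U}$ quasi-compact, and forms $\HH^i_{\mathcal{U}\cap\mathcal{Z}}(\mathcal{U},\mathcal{V}_{\nu_A}^{n-an,+})$ as the cohomology of an explicit \v{C}ech-type complex of $A^+$-modules. The Hecke operator preserves this integral complex by the previous step, so by the spectral theory argument of Lemma \ref{lemma-construct-lattice} the image in $\HH^i_{w,an}(K^p,\nu)^{+,\leq h}$ for each $h$ is open and bounded; this bounds the eigenvalues of any Hecke character from below by $0$. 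The key technical input needed here is an analogue of Lemma \ref{lem-Cechversusanalytic} for the integral structure $\mathcal{V}_{\nu_A}^{n-an,+}$ on a smooth affinoid: one needs that its higher analytic cohomology is killed by a fixed power of $p$. This reduces, via the Bartenwerfer-type arguments and the local triviality of the torsor $\mathcal{M}_{HT,n,K_p}$ after a finite flat cover (Proposition \ref{prop-torsor-become-trivial}), to the vanishing for $\oscr^+\hat\otimes V_{\kappa_A}^{n-an,+}$, which in turn follows from Bartenwerfer's original theorem by the smallness arguments already used in the proof of Theorem \ref{thm-one-half-conj}. This last verification, reconciling the non-coherent Banach setting with the \v{C}ech-to-derived functor comparison for the $+$-integral structure at the boundary, is the main technical obstacle; once in hand, the conjecture follows for both the full and cuspidal cohomology, and the $-$ case follows from Theorem \ref{thm-duality-field} by passing to the adjoint under the Serre-duality pairing.
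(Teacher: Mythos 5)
The statement you address is stated in the paper as Conjecture~\ref{conj-strongslopes2}, and the authors themselves only establish the strictly weaker bound of Theorem~\ref{thm-slopes2} (namely $v(\lambda)\geq -w^{-1}w_{0,M}\rho-\rho$, improved by duality to $-w^{-1}w_{0,M}\rho-\rho+w^{-1}2\rho_{nc}$) by precisely the integral-structure method you propose. If your argument were sound it would close a gap that the authors explicitly flag as open, so it deserves careful scrutiny; and on scrutiny the central claim --- that $\mathrm{Tr}_{p_1}\circ\widetilde{[t^{-1}]}$ preserves the integral structure, so that the normalized operator has slopes $\geq v((-w^{-1}w_{0,M}\rho+\rho)(t))\geq 0$ --- is provably false.

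To see this concretely, take $G=\mathrm{GL}_2$ and $w=w_0$. Here $M_\mu=T$ so $w_{0,M}=\mathrm{id}$, and your scalar normalization has $v((-w^{-1}w_{0,M}\rho+\rho)(t)) = \langle v(t),\rho-w_0\rho\rangle = \langle v(t),2\rho\rangle$, which equals $1$ for the standard $t=\mathrm{diag}(p^{-1},1)\in T^+$ giving $U_p$. Your argument therefore produces the bound $v(\lambda)\geq 1$ on $\HH^1_{w_0,an}$. But the conjectured bound $v(\lambda)\geq 0$ is sharp at $w=w_0$: after twisting by $\nu_{alg}=-w_0(\kappa+\rho)-\rho$, classical cohomology classes in $\HH^1$ contributed by weight-$k$ cusp forms with unnormalized $U_p$-slope $k$ (the classes which make Coleman's classicality threshold $k-1$ the right one for $\HH^0$) land with locally analytic slope exactly $0$. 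In fact, at $w=w_0^M$ the direct (non-duality) bound of Theorem~\ref{thm-slopes} already coincides with the conjectured bound, and the authors cannot do better because it is attained; your proposed argument would improve it by $\langle v(t), \rho - w^{-1}w_{0,M}\rho\rangle$, which is impossible.

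Tracking where the accounting goes wrong: in the paper's own version of this argument, the unnormalized composite $\mathrm{Tr}_{p_1}\circ\widetilde{[t^{-1}]}$, applied to the integral cohomology, loses a factor of $p^{\langle v(t),2\rho\rangle}$, and this power of $p$ is exactly $\deg(p_1) = [K_p:K_p\cap tK_pt^{-1}]$. Your step ``the trace $\mathrm{Tr}_{p_1}$ for the finite flat map $p_1$ preserves integral structures in the usual sense'' is the hidden assumption that fails: the generic-fibre trace of the Hecke correspondence, while preserving $\mathcal{O}^+$ on structure sheaves, does not descend to an endomorphism of the integral cohomology complex built from $\mathcal{V}_{\nu_A}^{n-an,+}$, because the formal model of the correspondence is not finite flat and the trace carries a ramification defect of the order of $\deg(p_1)$. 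This $\rho$-shift is precisely the obstruction that separates Theorem~\ref{thm-slopes2} from Conjecture~\ref{conj-strongslopes2}; to actually prove the conjecture one would need a finer control of the integral geometry of the Hecke correspondence than the crude lattice argument gives, and that is exactly what is missing.
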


\begin{rem} The inequalities in conjecture \ref{conj-strongslopes2} are compatible with those of conjecture \ref{conj-strongslopes} due to the way we have renormalized the Hecke operators acting on locally analytic cohomology.  Similarly the slightly weaker bounds we prove in theorem \ref{thm-slopes2} are compatible with those of theorem \ref{thm-slopes}.
\end{rem} 

We can prove  a bound which is slightly weaker than the conjecture.

\begin{thm}\label{thm-slopes2} 
 Fix $w\in\WM$,  and let $\nu : T(\ZZ_p) \rightarrow \mathbb{C}_p^\times $ be a continuous character. 
 \begin{enumerate}
 \item  For any  character $\lambda$ of $T^+$ on $\mathrm{R}\Gamma_{w,an}(K^p, \nu)^{+,fs}$ and $\mathrm{R}\Gamma_{w,an}(K^p, \nu,cusp)^{+,fs}$,  we have $v(\lambda)\geq -w^{-1}w_{0,M}\rho-\rho$ and $v(\lambda)\geq -w^{-1}w_{0,M}\rho-\rho + w^{-1} 2\rho_{nc}$ 
\item For any  character $\lambda$ of $T^-$ on  $\mathrm{R}\Gamma_{w,an}(K^p, \nu)^{-,fs}$ and $\mathrm{R}\Gamma_{w,an}(K^p, \nu,cusp)^{-,fs}$, we have $v(\lambda)\leq -w^{-1}\rho+\rho$ and $v(\lambda) \leq -w^{-1}\rho+ w^{-1} 2\rho_{nc}$. 
\end{enumerate}
\end{thm}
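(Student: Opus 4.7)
The plan is to parallel the proof of Theorem \ref{thm-slopes} step by step, replacing the classical automorphic sheaves $\mathcal{V}_\kappa$ with the locally analytic Banach sheaves $\mathcal{V}^{n-an}_{\nu_A}$ and $\mathcal{D}^{n-an}_{\nu_A}$, and replacing the integral lattice $V_\kappa^+$ with the sup-norm $\leq 1$ lattices $V^{n-an,+}_{\kappa_A}$ and $D^{n-an,+}_{\kappa_A+2\rho_{nc}}$ introduced in sections \ref{subsection-analyinduction} and following. By the duality of Theorem \ref{thm-cup-products} combined with the fact that the bounds are exchanged under $\nu_A\mapsto -w_{0,M}\nu_A-2w^{-1}\rho_{nc}$ together with $+\leftrightarrow -$, it suffices to prove the first bound $v(\lambda)\geq -w^{-1}w_{0,M}\rho-\rho$ in the $+$ case for both cuspidal and non-cuspidal cohomology; the second bound follows by pairing with the dual on the opposite side, as was done in Theorem \ref{thm-slopes} to upgrade the single bound to both bounds.

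The first task is to construct an integral structure on the sheaf. The key input is Proposition \ref{prop-reduction-2}: over $(\pi^{tor}_{HT,K_p})^{-1}(]C_{w,k}[_{n,n}K_p)$ the torsor $\mathcal{M}_{HT}$ reduces to an \'etale $K^c_{p,w,M_\mu}\mathcal{M}^c_{\mu,n}$-torsor $\mathcal{M}_{HT,n,n,K_p}$, and this group preserves the lattice $V^{n-an,+}_{\kappa_A}$ inside $V^{n-an}_{\kappa_A}$. Twisting this torsor by $V^{n-an,+}_{\kappa_A}$ defines $\mathcal{V}^{n-an,+}_{\nu_A}$, an integral structure on $\mathcal{V}^{n-an}_{\nu_A}$ in the sense of definition \ref{defi-integral-structure-sheaf}; the same recipe applied to the dual lattice produces $\mathcal{D}^{n-an,+}_{\nu_A}$.

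The second and main task is the analogue of Lemma \ref{lem-sheaf-map-integral}: to control how the Hecke correspondence moves the integral structure. By Lemma \ref{lem-hecke-local}(3), the geometric map $\widetilde{[t^{-1}]}:p_2^\star\mathcal{V}^{n-an}_{\nu_A}\to p_1^\star\mathcal{V}^{n-an}_{\nu_A}$ is locally modeled on the action of $wtw^{-1}\in T^{M_\mu,+}$ on $V^{n-an}_{\kappa_A}$ as defined in section \ref{subsection-analyinduction}, and this action is norm-contracting on $V^{n-an,+}_{\kappa_A}$ by construction (the contracting factor $t^{-1}\bar n_m t$ improves analyticity on $\overline U$). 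The actual Hecke operator is however $(-w^{-1}w_{0,M}\rho+\rho)(t)\,\mathrm{Tr}_{p_1}\circ\widetilde{[t^{-1}]}$; combining the integrality of $\widetilde{[t^{-1}]}$, the integrality properties of the trace map, and the scalar factor, and using the precise contribution of the trace map $\mathrm{Tr}_{p_1}$ on formal integral models (which introduces the $-2\rho$ correction coming from the ramification along the correspondence, as in the computation producing Lemma \ref{lem-sheaf-map-integral}), one concludes that for $t\in T^+$ the operator $[K_ptK_p]$ sends $p_2^\star\mathcal{V}^{n-an,+}_{\nu_A}$ into $p^{\langle v(t),-w^{-1}w_{0,M}\rho-\rho\rangle}\,p_1^\star\mathcal{V}^{n-an,+}_{\nu_A}$.

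The third task combines Steps 1--2 with a lattice construction: for any $(+,w,K_{p,m',0},n-an)$-allowed support $(\mathcal{U},\mathcal{Z})$ with $\mathcal{U}$ quasi-compact and $\mathcal{Z}$ with quasi-compact complement, one adapts Lemma \ref{lemma-construct-lattice} using Lemma \ref{lem-Cechversusanalytic} applied to the locally projective Banach sheaf $\mathcal{V}^{n-an}_{\nu_A}$ to see that the image of the integral cohomology $\HH^i_{\mathcal{U}\cap\mathcal{Z}}(\mathcal{U},\mathcal{V}^{n-an,+}_{\nu_A})$ in $\HH^i_{w,an}(K^p,\nu_A)^{+,\leq h}$ is an open and bounded $\ocal_F$-submodule for every slope $h$. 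Consequently $p^{-\langle v(t),-w^{-1}w_{0,M}\rho-\rho\rangle}[K_ptK_p]$ preserves this open bounded submodule, which forces any eigenvalue $\lambda$ of $[K_ptK_p]$ to satisfy $v(\lambda(t))\geq \langle v(t),-w^{-1}w_{0,M}\rho-\rho\rangle$ for all $t\in T^+$; this is exactly the first bound. The cuspidal case proceeds identically using $\mathcal{V}^{n-an,+}_{\nu_A}(-D)$. The $-$ case is the formal dual using $\mathcal{D}^{n-an,+}_{\nu_A}$.

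The main obstacle is the precise bookkeeping in Step 2: one must carefully track the integrality of the trace $\mathrm{Tr}_{p_1}$ on formal models of the toroidal compactification (using the integral structures developed in sections \ref{sect-formal-perf-Siegel} and \ref{subsec-integral-hodge}) and verify that the combined effect of the geometric contribution, the scalar normalization $(-w^{-1}w_{0,M}\rho+\rho)(t)$, and the ramification along the correspondence produces exactly the factor $p^{\langle v(t),-w^{-1}w_{0,M}\rho-\rho\rangle}$ claimed. This is the direct counterpart of the integral calculation in the proof of Lemma \ref{lem-sheaf-map-integral}, but requires handling the profinite group $K^c_{p,w,M_\mu}\mathcal{M}^c_{\mu,n}$ and its action on the infinite-dimensional lattice $V^{n-an,+}_{\kappa_A}$ in place of the finite-dimensional $M^c_\mu$-representation $V^+_\kappa$.
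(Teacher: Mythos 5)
The overall strategy is the one the paper intends — transplant the integral-lattice argument of Theorem~\ref{thm-slopes} (via Lemma~\ref{lem-sheaf-map-integral} and Lemma~\ref{lemma-construct-lattice}) to the Banach sheaves $\mathcal{V}_{\nu_A}^{n-an}$ and $\mathcal{D}_{\nu_A}^{n-an}$, using the sup-norm lattices, the dual statement via Theorem~\ref{thm-cup-products}, and the cohomological setup of Lemma~\ref{lem-Cechversusanalytic}. That part of the proposal matches what the paper leaves to the reader, and the final numerology $p^{\langle v(t),\,-w^{-1}w_{0,M}\rho-\rho\rangle}$ is the correct one.

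The difficulty is in the bookkeeping of Step 2, and I believe your attribution of the $-2\rho$ correction is wrong. You write that $\widetilde{[t^{-1}]}$ is integral (norm-contracting) and that the $-2\rho$ is introduced by the trace $\mathrm{Tr}_{p_1}$, citing ``the computation producing Lemma~\ref{lem-sheaf-map-integral}.'' But Lemma~\ref{lem-sheaf-map-integral} contains no trace contribution at all: the factor $p^{\langle wv(t),\,w_{0,M}\kappa\rangle}$ there comes entirely from the geometric map $p_2^\star\mathcal{V}_\kappa^+\to p_1^\star\mathcal{V}_\kappa^+$, and the proof of Theorem~\ref{thm-slopes} then uses that $\mathrm{Tr}_{p_1}$ is integral to conclude that $p^{-\langle wv(t),\,w_{0,M}\kappa\rangle}T$ preserves the integral cohomology. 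If the trace were introducing a $-2\rho$ shift on formal models, the bound in Theorem~\ref{thm-slopes} would already be off by $2\rho$, which it is not. The citation is therefore not doing what you want it to do, and the trace is not the culprit.

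Where the $-2\rho$ really comes from can be extracted from the paper's own consistency requirement. In a locally algebraic weight $\nu=\nu_{alg}\chi$ with $\nu_{alg}=-w^{-1}w_{0,M}(\kappa_{alg}+\rho)-\rho$, the map $\mathcal{V}_{\kappa_{alg}}\hookrightarrow\mathcal{V}_\nu^{n-an}$ intertwines the classical Hecke operator with the analytic one twisted by $(-\nu_{alg})$, so the analytic operator restricted to $\mathcal{V}_{\kappa_{alg}}$ is $\nu_{alg}(t)\cdot T_{cl}$. Peeling off the normalization scalar $(\rho-w^{-1}w_{0,M}\rho)(t)$ and the factor $p^{\langle wv(t),\,w_{0,M}\kappa_{alg}\rangle}$ from the classical geometric map, one finds that $\widetilde{[t^{-1}]}$ restricted to $V_{\kappa_{alg}}$ moves $V_{\kappa_{alg}}^+$ by exactly $p^{\langle v(t),\,-2\rho\rangle}$. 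So $\widetilde{[t^{-1}]}$ is \emph{not} lattice-preserving; it, not the trace, produces the $-2\rho$. The reason the naive sup-norm argument (which you are implicitly appealing to) breaks down is that at the torsor level $\widetilde{[t^{-1}]}$ is built from right translation by $wtw^{-1}\notin K^c_{p,w,M_\mu}\mathcal{M}^c_{\mu,n}$, so it does not preserve the $K^c_{p,w,M_\mu}\mathcal{M}^c_{\mu,n}$-reduction defining the integral structure, even though the induced operator ``looks contracting'' on functions. Repairing Step 2 therefore requires establishing the locally analytic analogue of Lemma~\ref{lem-sheaf-map-integral} directly for $\widetilde{[t^{-1}]}$ — a genuine computation, not a formal appeal to trace-integrality — before the lattice argument of Step 3 can close the proof.
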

\begin{proof} This is similar to the proof of theorem \ref{thm-slopes} and left to the reader.
\end{proof}

\begin{coro}\label{coro-class-overconvergent-analytic} Fix $w\in \WM$, and let $\nu=\nu_{alg}\chi$ be a locally algebraic character.  Let $\kappa_{alg}=-w_{0,M}w\nu_{alg}-(w_{0,M}w\rho+\rho)$ and suppose $\kappa_{alg}\in X^\star(T)^{M,+}$.  Then the morphisms 
$$ \mathrm{R}\Gamma_w( K^p, \kappa_{alg}, \chi )^{+,sss_{M,w}(\kappa_{alg})} \rightarrow \mathrm{R}\Gamma_{w,an}(K^p,\nu)(-\nu_{alg})^{+,sss_{M,w}(\kappa_{alg})}$$
$$\mathrm{R}\Gamma_{w,an}(K^p,\nu)(\nu_{alg})^{-,sss_{M,w}(-w_{0,M}\kappa_{alg}-2\rho_{nc})} \rightarrow \mathrm{R}\Gamma_w( K^p, -w_{0,M}\kappa-2\rho_{nc}, \chi^{-1} )^{-,sss_{M,w}(-w_{0,M}\kappa_{alg}-2\rho_{nc})}$$
and the corresponding morphisms for cuspidal cohomology are all quasi-isomorphisms.

\end{coro}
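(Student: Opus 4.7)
The plan is to deduce the corollary by applying the BGG spectral sequence of Theorem \ref{spectral-sequence-lan-to-classical} and showing that the strongly small slope projector $sss_{M,w}(\kappa_{alg})$ kills every column of the $E_1$-page except the one corresponding to $v=1 \in W_M$, forcing the spectral sequence to degenerate to the claimed quasi-isomorphism. I will treat the non-cuspidal $+$ case in detail; the cuspidal case is identical, and the $-$ case follows either from the analogous spectral sequence or by invoking the duality theorems \ref{thm-perfect-pairing} and \ref{thm-duality-field} together with the symmetry in Proposition \ref{prop-plusminus-symmetry}.

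First, I would invoke the spectral sequence
$$\mathbf{E}_{w,1}^{p,q}(K^p,\kappa_{alg},\chi)^{+}=\bigoplus_{v\in W_M,\,\ell(v)=p}\HH^{q}_{w,an}(K^p,\mu_v\chi)^{+,fs}(-\mu_v)\Rightarrow \HH^{p+q}_w(K^p,\kappa_{alg},\chi)^{+,fs},$$
where $\mu_v=((w_{0,M}w)^{-1}vw_{0,M}w)\cdot\nu_{alg}$. The term for $v=1$ at $p=0$ is exactly $\HH^{q}_{w,an}(K^p,\nu)^{+,fs}(-\nu_{alg})$, so it will suffice to show that for all $v\neq 1$ the term vanishes after projecting to the $+,sss_{M,w}(\kappa_{alg})$ part. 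By Lemma \ref{lem-functorialityoffs} the small-slope functor is exact on sheaves admitting slope decomposition, so this vanishing forces degeneration at $E_1$ and yields the desired isomorphism.

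Next, I would combine Theorem \ref{thm-slopes2} with the renormalization of the Hecke action by $-\mu_v$. The bound $v(\lambda)\geq -w^{-1}w_{0,M}\rho-\rho$ on the untwisted analytic cohomology becomes, after subtracting $\mu_v$, a bound $v(\lambda)\geq -w^{-1}w_{0,M}\rho-\rho-\mu_v$ on the twisted term. Using $w_{0,M}w(\nu_{alg}+\rho)=-(\kappa_{alg}+\rho)$ and $(w_{0,M}w)^{-1}=w^{-1}w_{0,M}$, a short manipulation gives
$$v(\lambda)\geq w^{-1}w_{0,M}(v\cdot\kappa_{alg}),$$
together with the stronger variant involving $+w^{-1}2\rho_{nc}$ coming from the second inequality of Theorem \ref{thm-slopes2}.

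Third, I would compare this bound with Definition \ref{defi-sss}: the $+,sss_{M,w}(\kappa_{alg})$ condition on a slope $\lambda$ asserts, for each $w'\in W_M\setminus\{1\}$, the failure of either $\lambda\geq w^{-1}w_{0,M}w'(\kappa_{alg})$ or the corresponding shift by $2\rho_{nc}$. Taking $w'=v$ and absorbing the $\rho$-shift from the dotted action into the $2\rho_{nc}$-correction precisely as in the passage from $ss^M$ to $sss^M$ in Theorem \ref{thm-control-thm-Coleman} (and the slope bound of Theorem \ref{thm-slopes}), one checks that the bound above contradicts $sss_{M,w}(\kappa_{alg})$ whenever $v\neq 1$. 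Hence $\mathbf{E}_{w,1}^{p,q}(K^p,\kappa_{alg},\chi)^{+,sss_{M,w}(\kappa_{alg})}=0$ for all $p\neq 0$, and the spectral sequence degenerates to give $\mathrm{R}\Gamma_w(K^p,\kappa_{alg},\chi)^{+,sss_{M,w}(\kappa_{alg})}\simeq \mathrm{R}\Gamma_{w,an}(K^p,\nu)(-\nu_{alg})^{+,sss_{M,w}(\kappa_{alg})}$.

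The main obstacle is the bookkeeping in Step 3: one has to carefully reconcile the dotted versus undotted Weyl actions, the two $\rho$'s (of $G$ and of $M$), and the extra $2\rho_{nc}$-correction, to confirm that the weaker slope bound of Theorem \ref{thm-slopes2} (rather than the conjectural optimal bound of Conjecture \ref{conj-strongslopes2}) is already strong enough once one uses the ``strongly small'' rather than ``small'' slope condition. This is exactly the reason the condition $sss_{M,w}$ was introduced with its disjunctive form, and the verification parallels the analogous step in the classical classicality theorem \ref{thm-control-thm-Coleman}.
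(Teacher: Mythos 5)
Your proposal is correct and follows exactly the same route as the paper: the paper's proof is the one-line remark that the corollary ``follows from the spectral sequence of theorem \ref{spectral-sequence-lan-to-classical}, together with the slope bounds of theorem \ref{thm-slopes2},'' and you have simply unwound the bookkeeping that the paper leaves implicit, checking that after twisting by $-\mu_v$ the bounds of Theorem \ref{thm-slopes2} contradict the $sss_{M,w}(\kappa_{alg})$ condition for $v\neq 1$ (using that $W_M$ fixes $\rho_{nc}$ and that $w^{-1}w_{0,M}(v\rho_M-\rho_M)$ is a nonnegative combination of positive roots for $w\in\WM$).
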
 

\begin{proof} This follows from the spectral sequence of theorem \ref{spectral-sequence-lan-to-classical}, together with the slope bounds  of theorem \ref{thm-slopes2}.
\end{proof}

\begin{rem} If we assume conjectures \ref{conj-strongslopes} and \ref{conj-strongslopes2}, we can replace the $sss_{M,w}$ condition by the $ss_{M,w}$ condition in the above corollary.
\end{rem}

\subsection{Eigenvarieties}
Consider the Iwasawa algebra $\ZZ_p[[T^c(\ZZ_p)]]$ and the weight space $$\mathcal{W} = \Spa ( \ZZ_p[[T^c(\ZZ_p)]], \ZZ_p[[T^c(\ZZ_p)]]) \times_{\Spa (\ZZ_p, \ZZ_p)} \Spa (\qq_p, \ZZ_p).$$

For a $(A,A^+)$ a complete Tate $(F,\ocal_F)$-algebra, $$\mathrm{Hom} ( \Spa (A,A^+), \mathcal{W}) = \{ \textrm{Continuous characters}~\nu_A : T^c(\ZZ_p) \rightarrow A^\times\}.$$

Also let $\widehat{T}$ be the analytic adic  space of characters of $T(\qq_p)$, whose restriction to $T(\ZZ_p)$ factor through $T^c(\ZZ_p)$.  
If we fix a splitting $\xi$ for the map $T(\qq_p) \rightarrow T(\qq_p)/T(\ZZ_p)$, and we fix an isomorphism $T(\qq_p)/T(\ZZ_p) \simeq \ZZ^r$ (for  $r = \mathrm{rank}(X^\star(T^d))$), then we have an isomorphism $\widehat{T} \simeq \mathcal{W} \times (\mathbb{G}_m^{an})^{r}$, where the map sends a character $\lambda$ to $(\lambda\vert_{T^c(\ZZ_p)}, \lambda(\xi(e_1)), \cdots, \lambda(\xi(e_r)))$ (for the canonical basis $e_1, \cdots, e_r$ of $\ZZ^r$). 
We also observe that there is a natural map $T \rightarrow \oscr_{\widehat{T}}$.

Let $\Spa(A,A^+)\subset\mathcal{W}$ be an affinoid open and let $\nu_A^{un}:T^c(\ZZ_p)\rightarrow A^\times$ be the universal character.

For each $w\in \WM$ we have sheaves over $\Spa(A,A^+)$:  $\oplus_i  \HH^i_{w,an}(K^p,\nu_A^{un})^{\pm,fs}$ and $\oplus_i \HH^i_{w,an}(K^p, \nu_A^{un}, cusp)^{\pm,fs}$.
Gluing these sheaves for an affinoid covering of $\mathcal{W}$ we deduce that there are sheaves of $\oscr_{\mathcal{W}}$-modules $\oplus_{i=\ell_{\pm}(w)}^{d} \HH^i_{w,an}(K^p,\nu^{un})^{\pm,fs}$ and $\oplus_{i=0}^{\ell_{\pm}(w)} \HH^i_{w,an}(K^p, \nu^{un}, cusp)^{\pm,fs}$, admitting slope decomposition. 

\begin{rem} Our weight variable  $\nu$ is morally a translate of the infinitesimal character. We recall how to switch between the weight of our coherent cohomologies and the infinitesimal character. Let $\nu = \nu^{alg} \chi$ be a locally algebraic weight. 
Then $\HH^i_{w,an}(K^p,\nu^{un})^{+,fs} \otimes_{\oscr_{\mathcal{W}}} k(\nu)$ and $\HH^i_{w,an}(K^p,\nu^{un}, cusp)^{+,fs} \otimes_{\oscr_{\mathcal{W}}} k(\nu)$ are related to classical  or overconvergent  cohomology  in weight $\kappa_{alg}$ where $\nu_{alg} = -w^{-1}w_{0,M} (\kappa_{alg} + \rho)-\rho$.
On the other hand, $\HH^i_{w,an}(K^p,\nu^{un})^{-,fs} \otimes_{\oscr_{\mathcal{W}}} k(\nu)$ and $\HH^i_{w,an}(K^p,\nu^{un}, cusp)^{-,fs} \otimes_{\oscr_{\mathcal{W}}} k(\nu)$ are related to classical or overconvergent cohomology  in weight $\kappa_{alg}^\vee= -w_{0,M}\kappa_{alg}-2\rho_{nc}$ and $\nu_{alg} = w^{-1}(\kappa_{alg}^\vee + \rho)-\rho$.
\end{rem}

We also have cup-products: 
$$ \HH^i_{w,an}(K^p,\nu^{un})^{\pm,fs} \otimes \HH^{d-i}_{w,an}(K^p, \nu^{un}, cup)^{\mp,fs} \rightarrow  \oscr_{\mathcal{W}}.$$
We modify the action of $T(\qq_p)$ on $ \HH^i_{w,an}(K^p,\nu^{un})^{-,fs}$ and $\HH^i_{w,an}(K^p,\nu^{un},cusp)^{-,fs}$, by composing it with the inverse map $t \mapsto t^{-1}$. With this modification, the operators $t \in T(\qq_p)$ are self adjoint for the pairing. 
The action of $T(\qq_p)$ on the various sheaves  $ \HH^i_{w,an}(K^p,\nu^{un})^{\pm,fs}$ and $\HH^i_{w,an}(K^p,\nu^{un},cusp)^{\pm,fs}$ over $\mathcal{W}$ produces a spectral variety, which is a Zariski closed subset  $\mathcal{Z} \hookrightarrow \widehat{T}$ (see section \ref{section-construction-eigenvariety}).   We have coherent sheaves $ \HH^i_{w,an}(K^p,\nu^{un})_{\mathcal{Z}}^{\pm,fs}$ and $\HH^i_{w,an}(K^p,\nu^{un},cusp)_{\mathcal{Z}}^{\pm,fs}$  of $\oscr_{\widehat{T}}$-modules supported on $\mathcal{Z}$ and  $\oscr_\mathcal{Z}$ acts faithfully on the direct sum of all these sheaves. Moreover, if $\pi :  \widehat{T} \rightarrow \mathcal{W}$ is the projection to the weight space, 
$ \pi_\star \HH^i_{w,an}(K^p,\nu^{un})_{\mathcal{Z}}^{\pm,fs}  = \HH^i_{w,an}(K^p,\nu^{un})^{\pm,fs} $ and $\pi_\star \HH^i_{w,an}(K^p,\nu^{un},cusp)_{\mathcal{Z}}^{\pm,fs} = \HH^i_{w,an}(K^p,\nu^{un}, cusp)^{\pm,fs}$.

We  assume that $K^p = \prod K_\ell$ and we let $S'$ be the set of primes $\ell \neq p$ such that $K_\ell$ is not hyperspecial. We let $S = S' \cup \{p\}$.   Let $\mathcal{H}^{S} = \mathcal{C}^\infty_c( G(\mathbb{A}_f^S)// K^S, \qq)$ be the spherical Hecke algebra at places away from $S$.   We let $\mathcal{E}$ be the finite adic space over $\mathcal{Z}$ whose algebra is the coherent $\oscr_{\mathcal{Z}}$-algebra generated by the operators  $h \in \mathcal{H}^{S}$ acting on the sum of  the sheaves $ \HH^i_{w,an}(K^p,\nu^{un})_{\mathcal{Z}}^{\pm,fs}$ and $\HH^i_{w,an}(K^p,\nu^{un},cusp)_{\mathcal{Z}}^{\pm,fs}$, for all $i \in \ZZ$, all  $w \in \WM$, and all choices of $+$ or $-$.

A  (classical rigid analytic) point of $\mathcal{Z}$ corresponds to  a character $\lambda_p:T(\qq_p) \to \C_p^\times$. We can attach to $\lambda_p$ the weight $\nu = \lambda_p\vert_{T^c(\ZZ_p)} :  T^c(\ZZ_p)\to \C_p^\times$.  When the weight $\nu = \nu_{alg}\chi$ is locally algebraic, then we let $\lambda_p^{sm} = \lambda_p \nu_{alg}^{-1}$ with $\nu_{alg}$ viewed as a character of $T(\qq_p)$. Then $\lambda_p^{sm}$ factors through a character of $T(\qq_p)/T_b$ where $b$ is the conductor of $\chi$. 
\begin{rem} The superscript ${sm}$  stands for smooth, because the character $\lambda_p^{sm}$ is the smooth character attached to $\lambda_p$. Remark that the Hecke action on classical cohomology produces smooth characters. This is visible in point $(1)$ of theorem \ref{thm-eigenvariety}  below. 
\end{rem}

  A point of $\mathcal{E}$ corresponds to a pair $(\lambda_p,\lambda^S)$ where $\lambda^S: \mathcal{H}^{S}\to \C_p$ is a character.

\begin{thm}\label{thm-eigenvariety}   The eigenvariety $\pi:\mathcal{E} \rightarrow \mathcal{W}$ is locally quasi-finite and partially proper. It carries graded coherent sheaves $$\bigoplus_{w\in\WM,k\in\ZZ} (\HH^k_{w,an}(K^p,\nu^{un})_{\mathcal{Z}}^{+,fs} \oplus \HH^k_{w,an}(K^p,\nu^{un})_{\mathcal{Z}}^{-,fs}  $$ $$ \oplus \HH^k_{w,an}(K^p,\nu^{un}, cusp)_{\mathcal{Z}}^{+,fs} \oplus \HH^k_{w,an}(K^p,\nu^{un}, cusp)_{\mathcal{Z}}^{-,fs} )$$ 
 and they satisfy the following properties:
 \begin{enumerate} 
\item (Any classical, finite slope eigenclass gives a point of the eigenvariety) For any $\kappa_{alg}\in X^\star(T^c)^{M,+}$, finite order character $\chi:T^c(\ZZ_p)\to\overline{F}^\times$, and any system of Hecke eigenvalues $(\lambda^{sm}_p,\lambda^S)$ occurring in $\HH^i(K^p,\kappa_{alg},\chi)^{+,fs}$ (resp. $\HH^i(K^p,\kappa_{alg}^\vee,\chi^{-1})^{-,fs}$, $\HH^i(K^p,\kappa_{alg},\chi,cusp)^{+,fs}$, or $\HH^i(K^p,\kappa_{alg}^\vee,\chi^{-1},cusp)^{-,fs}$) there is a $w=w_Mw^M\in W$, so that if $\nu=\nu_{alg}\chi$ with $\nu_{alg}=-w^{-1}w_{0,M}(\kappa_{alg}+\rho)-\rho$, then $(\lambda^{sm}_p \nu_{alg},\lambda^S)$ is a point of the eigenvariety $\mathcal{E}$ which lies in the support of $\oplus_{k\in\mathbb{Z}}\HH^k_{w^M,an}(K^p,\nu^{un})_{\mathcal{Z}}^{+,fs} $ (resp. $\oplus_{k\in\mathbb{Z}}\HH^k_{w^M,an}(K^p,\nu^{un})_{\mathcal{Z}}^{-,fs} $, $\oplus_{k\in\mathbb{Z}}\HH^k_{w^M,an}(K^p,\nu^{un},cusp)_{\mathcal{Z}}^{+,fs} $, or $\oplus_{k\in\mathbb{Z}}\HH^k_{w^M,an}(K^p,\nu^{un},cusp)_{\mathcal{Z}}^{-,fs} $).

\item (Small slope points of the eigenvariety in regular, locally algebraic weights are classical) Conversely if $\nu=\nu_{alg}\chi$ is a locally algebraic weight with $\nu_{alg}\in X^\star(T)^+$, and $(\lambda_p,\lambda^S)$ is a point of $\mathcal{E}$ in the support of $\oplus_{k\in\mathbb{Z}}\HH^k_{w,an}(K^p,\nu^{un})_{\mathcal{Z}}^{+,fs} $  (resp. $\oplus_{k\in\mathbb{Z}}\HH^k_{w,an}(K^p,\nu^{un})_{\mathcal{Z}}^{-,fs} $, $\oplus_{k\in\mathbb{Z}}\HH^k_{w,an}(K^p,\nu^{un},cusp)_{\mathcal{Z}}^{+,fs} $, or $\oplus_{k\in\mathbb{Z}}\HH^k_{w,an}(K^p,\nu^{un},cusp)_{\mathcal{Z}}^{-,fs} $) for some $w\in\WM$, and if $\lambda_p^{sm}=\lambda_p\nu_{alg}^{-1}$ satisfies $+,sss_w(\nu)$ then $(\lambda^{sm}_p,\lambda^S)$ occurs in $\HH^i(K^p,\kappa_{alg},\chi)^{+,fs}$ (resp. $\HH^i(K^p,\kappa^\vee_{alg},\chi^{-1})^{-,fs}$, $\HH^i(K^p,\kappa_{alg},\chi,cusp)^{+,fs}$, or $\HH^i(K^p,\kappa^\vee_{alg},\chi^{-1},cusp)^{-,fs}$) for $\kappa_{alg}=-w_{0,M}w(\nu_{alg}+\rho)-\rho$.

\item (Serre duality interpolates over the eigenvariety) We have pairings: 
 $$\HH^k_{w,an}(K^p,\nu^{un})_{\mathcal{Z}}^{\pm,fs} \otimes \HH^{d-k}_{w,an}(K^p,\nu^{un}, cusp)_{\mathcal{Z}}^{\mp,fs} \rightarrow  \pi^{-1} \oscr_{\mathcal{E}}.$$
and these pairings are compatible with Serre duality under the classicality theorem. 
\end{enumerate} 
\end{thm}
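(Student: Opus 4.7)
The construction of $\mathcal{E}$ and its coherent sheaves is a formal consequence of the relative spectral theory of Sections \ref{relative-spectral-1}--\ref{relative-spectral-3}. One applies it to the direct sum of all the sheaves $\HH^k_{w,an}(K^p,\nu^{un})^{\pm,fs}$ and $\HH^k_{w,an}(K^p,\nu^{un},cusp)^{\pm,fs}$ over $\mathcal{W}$, using Theorem \ref{thm-finiteslopecoho2} to see that the ideals $\mathcal{H}_{p,m,0}^{\pm\pm}$ act by potent compact operators, and Proposition \ref{prop-existence-decompositionT} to produce slope decompositions locally on $\mathcal{W}$. This yields the spectral variety $\mathcal{Z} \hookrightarrow \widehat{T}$, and adjoining the action of $\mathcal{H}^S$ gives $\mathcal{E}$, finite over $\mathcal{Z}$. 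Local quasi-finiteness and partial properness over $\mathcal{W}$ are inherited from the corresponding properties of Fredholm hypersurfaces.

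For part (1), given an eigensystem $(\lambda_p^{sm},\lambda^S)$ in $\HH^i(K^p,\kappa_{alg},\chi)^{+,fs}$, the Bruhat spectral sequence of Theorem \ref{thm-spectral-sequence} gives a $w^M \in \WM$ such that the same eigensystem already appears in $\HH^i_{w^M}(K^p,\kappa_{alg},\chi)^{+,fs}$. The BGG spectral sequence of Theorem \ref{spectral-sequence-lan-to-classical} then expresses this overconvergent cohomology as an iterated extension of locally analytic cohomologies $\HH^q_{w^M,an}(K^p,(u\cdot\nu_{alg,0})\chi)^{+,fs}(-u\cdot\nu_{alg,0})$ indexed by $v \in W_M$, where $\nu_{alg,0} = -(w^M)^{-1}w_{0,M}(\kappa_{alg}+\rho)-\rho$ and $u = (w_{0,M}w^M)^{-1}vw_{0,M}w^M$. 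Lifting the eigensystem to one of these terms and setting $w_M = v^{-1}w_{0,M} \in W_M$ and $w = w_M w^M$, a direct calculation gives $u\cdot\nu_{alg,0} = -w^{-1}w_{0,M}(\kappa_{alg}+\rho)-\rho = \nu_{alg}$. Untwisting produces the desired point $(\lambda_p^{sm}\nu_{alg},\lambda^S)$ of $\mathcal{E}$ lying in the support of $\HH^q_{w^M,an}(K^p,\nu^{un})^{+,fs}_{\mathcal{Z}}$. The cuspidal and $-$ cases are identical.

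For part (2), the hypothesis $+,sss_w(\nu)$ on $\lambda_p^{sm}$ is by definition the combination of $+,sss^M(\kappa_{alg})$ and $+,sss_{M,w}(\kappa_{alg})$. Corollary \ref{coro-class-overconvergent-analytic} applied to $+,sss_{M,w}(\kappa_{alg})$ gives that the eigensystem comes from one in $\HH^q_w(K^p,\kappa_{alg},\chi)^{+,fs}$. The $+,sss^M(\kappa_{alg})$ condition, combined with Corollary \ref{coro-theorem-slopes} to kill all contributions from $w' \notin C(\kappa_{alg})^+$, ensures via the spectral sequence of Theorem \ref{thm-spectral-sequence} that this eigensystem in turn lifts to classical cohomology. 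For part (3), the pairings of Theorem \ref{thm-cup-products} glue over $\mathcal{W}$ and descend to $\mathcal{E}$ using that $T(\qq_p)$ (after the involution $t\mapsto t^{-1}$) and $\mathcal{H}^S$ act self-adjointly; compatibility with the classical Serre pairing of Theorem \ref{thm-pairing-over-coho} follows from functoriality of the trace map at each step.

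The main obstacle is the combinatorial bookkeeping in part (1): tracking the precise Weyl group elements and weight twists as one composes the two spectral sequences, and verifying that they match the parametrization in the statement. All of the heavy analytic input (vanishing, slope bounds, compactness of Hecke operators, gluing of Banach sheaves) was already established in earlier sections.
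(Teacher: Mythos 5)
Your proposal follows the same overall strategy as the paper: the construction of $\mathcal{E}$ via relative spectral theory, and the proof of (1) by lifting a classical eigensystem successively through the Bruhat spectral sequence of Theorem \ref{thm-spectral-sequence} and the BGG spectral sequence of Theorem \ref{spectral-sequence-lan-to-classical}, with (2) proved by the reverse descent via the control theorems. However, there are two genuine gaps.

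First, your explicit identification of the Weyl element contains an algebra slip. With $\nu_{alg,0}+\rho = -(w^M)^{-1}w_{0,M}(\kappa_{alg}+\rho)$ and $u=(w_{0,M}w^M)^{-1}vw_{0,M}w^M$, one computes $u\cdot\nu_{alg,0} = -(w^M)^{-1}w_{0,M}v(\kappa_{alg}+\rho)-\rho$, and requiring this to equal $-w^{-1}w_{0,M}(\kappa_{alg}+\rho)-\rho$ with $w=w_Mw^M$ forces $w_M^{-1}w_{0,M}=w_{0,M}v$, i.e.\ $w_M = w_{0,M}v^{-1}w_{0,M}$, not $w_M=v^{-1}w_{0,M}$ as you wrote. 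This is a correctable computational error, not a conceptual one, but as stated your "direct calculation gives $u\cdot\nu_{alg,0}=\nu_{alg}$" is false.

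Second, and more importantly, you omit the role of the Tor spectral sequence of Theorem \ref{thm-tor-spectral}, which the paper invokes in \emph{both} directions. After lifting through the Bruhat and BGG sequences, you land in $\HH^q_{w^M,an}(K^p,\nu)^{+,fs}$ for a fixed locally algebraic weight $\nu$, but membership in the support of the coherent sheaf $\HH^q_{w^M,an}(K^p,\nu^{un})^{+,fs}_{\mathcal{Z}}$ over $\widehat{T}$ — which is what defines a point of $\mathcal{E}$ — requires a base-change argument: the Tor spectral sequence expresses the specialized cohomology as an iterated extension of $\mathrm{Tor}^{\oscr_\mathcal{W}}_{-p}(\HH^q_{w,an}(K^p,\nu^{un})^{+,fs},k(\nu))$, from which nonvanishing of the appropriate fiber follows. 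The gap is even sharper in part (2): there you implicitly pass from a point in the support of the sheaf over $\mathcal{W}$ to an eigenclass in $\HH^k_{w,an}(K^p,\nu)^{+,fs}$ before applying Corollary \ref{coro-class-overconvergent-analytic}, but this step is not automatic. The paper resolves it by taking $k$ maximal such that the eigensystem lies in the support of $\HH^k_{w,an}(K^p,\nu^{un})^{+,fs}$, so that the potentially problematic higher Tor differentials (involving $\HH^{k'}$ for $k'>k$) vanish and the $E_2^{0,k}$-term of the Tor spectral sequence survives to $E_\infty$, providing the desired class. Without this argument, the reverse direction of the spectral-theory bookkeeping is not complete.
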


\begin{proof} We only prove the first point for $\HH^i(K^p,\kappa_{alg},\chi)^{+,fs}$. The other cases are similar. There is a succession of three spectral sequences, having as input the sheaves $\HH^k_{w,an}(K^p, \nu^{un})^{+,fs}$ (for varying $k$ and $w$) and finally converging to the classical cohomology $\HH^i(K^p,\kappa_{alg},\chi)^{+,fs}$.
The first spectral sequence is the Tor spectral sequence  of theorem \ref{thm-tor-spectral}, from the cohomology sheaf over $\mathcal{W}$ to the locally analytic overconvergent cohomology in a given weight.  
The second spectral sequence is the BGG spectral sequence of theorem \ref{spectral-sequence-lan-to-classical}, from locally analytic overconvergent cohomology to overconvergent cohomology. The third spectral sequence is the spectral sequence of theorem \ref{thm-spectral-sequence}, from overconvergent to classical cohomology.
Therefore, starting from a classical class, we can lift it successively to the $E_1$ terms of the last two spectral sequences (for suitable choices of $w_M$  and $w^M$) and then to a class in a suitable  $\mathrm{Tor}^{\oscr_{\mathcal{W}}}_{-p}( \HH^q_{w,an}(K^p, \nu^{un})^{+,fs}, k(\nu))$ for $\nu = \nu_{alg}\chi$. Therefore,  $(\lambda^{sm}_p \nu_{alg},\lambda^S)$ is in the support of $\HH^q_{w,an}(K^p, \nu^{un})^{+,fs}$. 

We now prove point $(2)$. Let $(\lambda_p,\lambda^S)$ be a point  in the support of $\oplus_{k\in\mathbb{Z}}H^k_{w,an}(K^p, \nu^{un})^{+,fs}$ over a weight  $\nu$. Then one sees that there is a corresponding class in $\oplus_k \HH^{k}_{w,an}( K^p, \nu)^{+,fs}$. Indeed, take $k$ maximal such that $(\lambda_p,\lambda^S)$ is in the support of $\oplus_{k\in\mathbb{Z}}H^k_{w,an}(K^p, \nu^{un})^{+,fs}$. The Tor spectral sequence implies that $(\lambda_p, \lambda^S)$ is in the support of $\HH^{k}_{w,an}( K^p, \nu)^{+,fs}$. 
  Then we conclude by the control theorems (corollary \ref{coro-class-overconvergent-analytic} and theorem \ref{thm-control-thm-Coleman}): $$\oplus_k \HH^{k}_{w,an}( K^p, \nu)^{+,sss_w(\nu)} = \oplus_k \HH^{k}( K^p, -w_{0,M}w(\nu_{alg}+\rho)-\rho, \chi)^{+,sss_w(\nu)}.$$
The last point follows from the functoriality of the pairing.
\end{proof}

We now define certain components of the eigenvariety of maximal dimension and show that they contain all finite slope interior cohomology classes. We first define sheaves:
$$ \overline{\HH}^{\ell_{\pm}(w)}_{w,an}(K^p, \nu^{un})^{\pm,fs} = \mathrm{Im}( {\HH}^{\ell_{\pm}(w)}_{w,an}(K^p, \nu^{un}, cusp)^{\pm,fs} \rightarrow  {\HH}^{\ell_{\pm}(w)}_{w,an}(K^p, \nu^{un})^{\pm,fs}).$$

\begin{prop}\label{prop-properties-interior}

\begin{enumerate}
\item The sheaves  $\overline{\HH}^{\ell_{\pm}(w)}_{w,an}(K^p, \nu^{un})^{\pm,fs}$ are torsion free sheaves of $\oscr_{\mathcal{W}}$-modules.
\item For any $\nu:T^c(\ZZ_p)\to\C_p^\times$, the map $\overline{\HH}^{\ell_{\pm}(w)}_{w,an}(K^p, \nu^{un})^{\pm,fs} \otimes_{\oscr_{\mathcal{W}}} k(\nu) \rightarrow \overline{\HH}^{\ell_{\pm}(w)}_{w,an}(K^p, \nu)^{\pm,fs}$ is surjective. 
\item For any $\kappa_{alg} \in X^{\star}(T^c)^{M_\mu, +}$ and $\chi:T^c(\ZZ_p) \rightarrow \overline{F}^\times$ a finite order character, let $\nu=\nu^{alg}\chi$ with $\nu^{alg} = -w^{-1} w_{0,M} (\kappa_{alg} + \rho)-\rho$.  Then the map
$$\overline{\HH}^{\ell_+(w)}_{w}(K^p, \kappa, \chi)^{+,fs} \rightarrow \overline{\HH}^{\ell_{+}(w)}_{w,an}(K^p, \nu)^{+,fs}$$ is injective and the map 
$$ \overline{\HH}^{\ell_{-}(w)}_{w,an}(K^p, \nu)^{-,fs} \rightarrow \overline{\HH}^{\ell_-(w)}_{w}(K^p, -w_{0,M}\kappa-2\rho_{nc}, \chi^{-1})^{-,fs}$$ is surjective. 
\item There is a pairing $$\overline{\HH}^{\ell_{+}(w)}_{w,an}(K^p, \nu^{un})^{+,fs} \times \overline{\HH}^{\ell_{-}(w)}_{w,an}(K^p, \nu^{un})^{-,fs}  \rightarrow \oscr_{\mathcal{W}}$$ compatible with the preceding maps and the pairings $$\overline{\HH}^{\ell_{+}(w)}_{w,an}(K^p, \nu)^{+,fs} \times \overline{\HH}^{\ell_{-}(w)}_{w,an}(K^p, \nu)^{-,fs}  \rightarrow k(\nu)$$ and $$\overline{\HH}^{\ell_{+}(w)}_{w}(K^p, \kappa, \chi)^{+,fs} \times \overline{\HH}^{\ell_{-}(w)}_{w}(K^p, -w_{0,M} \kappa-2 \rho_{nc}, \chi^{-1})^{-,fs}  \rightarrow F.$$
\end{enumerate}
\end{prop}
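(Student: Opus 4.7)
For Item (1), I would simply observe that Theorem \ref{theo-vanishing-big-sheaves} shows that over any affinoid $\Spa(A,A^+)\subseteq\mathcal{W}$ with $A$ a domain, the sheaf $\HH^{\ell_\pm(w)}_{w,an}(K^p,\nu^{un})^{\pm,fs}$ is torsion free; since $\overline{\HH}^{\ell_\pm(w)}_{w,an}(K^p,\nu^{un})^{\pm,fs}$ is by definition a subsheaf of it, torsion-freeness is immediate.

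For Item (2), the plan is to apply the Tor spectral sequence (Theorem \ref{thm-tor-spectral}) to cuspidal cohomology. Since Theorem \ref{thm-one-half-conj} gives vanishing of $\HH^q_{w,an}(K^p,\nu^{un},cusp)^{\pm,fs}$ for $q>\ell_\pm(w)$, all higher Tor contributions at total degree $\ell_\pm(w)$ vanish, yielding an isomorphism
$$\HH^{\ell_\pm(w)}_{w,an}(K^p,\nu^{un},cusp)^{\pm,fs}\otimes_{\oscr_{\mathcal{W}}}k(\nu)\;\xrightarrow{\;\sim\;}\;\HH^{\ell_\pm(w)}_{w,an}(K^p,\nu,cusp)^{\pm,fs}.$$
Surjectivity on the interior then follows by a diagram chase: a class in $\overline{\HH}^{\ell_\pm(w)}_{w,an}(K^p,\nu)^{\pm,fs}$ lifts to cuspidal in weight $\nu$, then through the isomorphism above to cuspidal over the weight space, and finally pushes to the non-cuspidal side to produce a class in $\overline{\HH}^{\ell_\pm(w)}_{w,an}(K^p,\nu^{un})^{\pm,fs}$ that specializes correctly.

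For Item (3), I will actually prove the stronger statement that, in the $+$ case, the whole map $\HH^{\ell_+(w)}_w(K^p,\kappa,\chi)^{+,fs}\to \HH^{\ell_+(w)}_{w,an}(K^p,\nu)^{+,fs}(-\nu_{alg})$ is already injective; restricting to the interior then gives the claim. The point is that in the BGG spectral sequence of Theorem \ref{spectral-sequence-lan-to-classical}, each term $E_1^{p,\ell_+(w)-p}$ for $p\geq 1$ involves analytic non-cuspidal cohomology in degree $\ell_+(w)-p<\ell_+(w)$, which vanishes by Theorem \ref{theo-vanishing-big-sheaves} regardless of the shifted weight. Hence the abutment filtration at total degree $\ell_+(w)$ has trivial $F^1$, so $\HH^{\ell_+(w)}_w=E_\infty^{0,\ell_+(w)}\subseteq E_1^{0,\ell_+(w)}=\HH^{\ell_+(w)}_{w,an}(K^p,\nu)^{+,fs}(-\nu_{alg})$. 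For the $-$ case, the analogous direct BGG argument fails because the relevant vanishing goes the wrong way, so I would instead deduce the surjectivity from the $+$ case by Serre duality: the classical pairing of Theorem \ref{thm-perfect-pairing} and the locally analytic pairing of Theorem \ref{thm-duality-field} identify $(\overline{\HH}^+)^\vee$ with $\overline{\HH}^-$ on both sides, and the sheaf-level inclusion $\mathcal{V}_\kappa\hookrightarrow\mathcal{V}^{an}_\nu$ and its dual $\mathcal{D}^{an}_\nu\twoheadrightarrow\mathcal{V}_{\kappa^\vee}$ of Corollary \ref{cor-special-localg} are transposes of each other, so injectivity on one side is equivalent to surjectivity on the other.

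For Item (4), the pairing from Theorem \ref{thm-eigenvariety}(3) restricts to the interior once one checks that the kernel of the cuspidal-to-non-cuspidal map on one factor pairs to zero with the interior on the other; this orthogonality is automatic from the formula $\langle\phi^{an,+}(x),y\rangle=\langle x,\phi^{an,-}(y)\rangle$ which holds by functoriality of the cup product pairing in Theorem \ref{thm-cup-products}. The compatibility with the pairings in specific weights and with classical Serre duality (Theorem \ref{thm-pairing-over-coho}) then follows from the corresponding compatibilities of Theorems \ref{thm-cup-products} and \ref{thm-eigenvariety}(3). The main obstacle I anticipate is keeping the Hecke normalizations (the $(-\nu_{alg})$ twists) and the finite-order character $\chi$ correctly aligned in Item (3), but once the bookkeeping is set up the argument is purely a consequence of the vanishing theorems already established.
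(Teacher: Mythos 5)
Your treatments of Items (1), (2), the $+$ half of (3), and (4) essentially reproduce the paper's arguments (the paper just invokes Theorem \ref{theo-vanishing-big-sheaves} for (1), the Tor spectral sequence and cuspidal vanishing for (2), the $+$ BGG spectral sequence with the non-cuspidal vanishing for the $+$ half of (3), and declares (4) clear).

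Where you diverge is the $-$ half of Item (3). You assert that ``the analogous direct BGG argument fails because the relevant vanishing goes the wrong way,'' and fall back on Serre duality. In fact a direct BGG argument does work, and it is the one the paper uses: one applies the $-$ BGG spectral sequence of Theorem \ref{spectral-sequence-lan-to-classical} to \emph{cuspidal} cohomology, not to the plain cohomology. In the $-$ spectral sequence the indexing has $p\leq 0$, so the terms contributing to total degree $\ell_-(w)$ have $q=\ell_-(w)-p\geq\ell_-(w)$; for cuspidal cohomology the vanishing of Theorem \ref{thm-one-half-conj} (concentration in $[0,\ell_-(w)]$) kills everything with $q>\ell_-(w)$, so the abutment $\HH^{\ell_-(w)}_w(K^p,\kappa^\vee,\chi^{-1},cusp)^{-,fs}$ is a quotient of $E_1^{0,\ell_-(w)}=\HH^{\ell_-(w)}_{w,an}(K^p,\nu,cusp)^{-,fs}$; one then passes to interior cohomology via the obvious commuting square with surjective verticals. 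You only considered applying the $-$ BGG to non-cuspidal cohomology (whose vanishing theorem \ref{theo-vanishing-big-sheaves} points the wrong way), and missed that the cuspidal version has the complementary vanishing.

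Your proposed duality route for the $-$ half of (3) can be made to work, but it carries an extra burden that you do not fully discharge: you need the Serre pairings of Theorems \ref{thm-perfect-pairing} and \ref{thm-duality-field} to descend to \emph{perfect} pairings on interior cohomology (so that $\overline{\HH}^{+}$ and $\overline{\HH}^{-}$ really are duals), together with the adjointness of the two BGG edge maps under these pairings. These facts do follow from the functoriality built into Theorems \ref{thm-pairing-over-coho}--\ref{thm-cup-products} and linear algebra over a field, but they need to be stated; as written, this is the weakest link in the proposal. The cuspidal BGG route avoids all of this bookkeeping, which is presumably why the authors chose it.
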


\begin{proof} The first point follows from theorem \ref{theo-vanishing-big-sheaves}. The vanishing theorem and Tor spectral sequence imply that ${\HH}^{\ell_{\pm}(w)}_{w,an}(K^p, \nu^{un},cusp)^{\pm,fs} \otimes_{\oscr_{\mathcal{W}}} k(\nu) \rightarrow {\HH}^{\ell_{\pm}(w)}_{w,an}(K^p, \nu,cusp)^{\pm,fs}$ is an isomorphism, which gives the second point.  The vanishing theorems and the BGG spectral sequence imply that ${\HH}^{\ell_+(w)}_{w}(K^p, \kappa, \chi)^{+,fs} \rightarrow {\HH}^{\ell_{+}(w)}_{w,an}(K^p, \nu)^{+,fs}$ is injective and 
${\HH}^{\ell_{-}(w)}_{w,an}(K^p, \nu,cusp)^{-,fs} \rightarrow \overline{\HH}^{\ell_-(w)}_{w}(K^p, -w_{0,M}\kappa-2\rho_{nc}, \chi^{-1}, cusp)^{-,fs}$ is surjective, which gives the third point follows. The last point is clear.
\end{proof}

\begin{rem} There is  an asymmetry between  the $-$ and $+$ theories in point (3). This asymmetry results from our choice to develop the $+$ theory using locally analytic induction sheaves and the $-$ theory using their dual distribution sheaves.
\end{rem}

We let $\mathcal{Z}^!_w$, $\mathcal{E}^!_w$ be the supports of $ \overline{\HH}^{\ell_{\pm}(w)}_{w,an}(K^p, \nu^{un})_{\mathcal{Z}}^{\pm,fs}$ and $ \overline{\HH}^{\ell_{\pm}(w)}_{w,an}(K^p, \nu^{un})_{\mathcal{Z}}^{\pm,fs}$. We let $\mathcal{Z}^!$ and $\mathcal{E} ^!$ be the union of the $\mathcal{Z}^!_w$ and $\mathcal{E}^!_w$.

\begin{thm}\label{thm-eigenvariety2}   The following holds:
\begin{enumerate}
\item  The eigenvariety $\mathcal{E}^{!}$ and the spectral variety $\mathcal{Z}^{!}$ are either empty or equidimensional of dimension  $\dim \mathcal{W}$.  The same is true of the $\mathcal{E}^{!}_w$ and $\mathcal{Z}^{!}_w$ for each $w\in\WM$.

\item  For all $w \in \WM$, $\kappa_{alg}\in X^\star(T^c)^{M,+}$, and $\chi : T^c(\ZZ_p) \rightarrow \overline{F}^\times$ a finite order character, let $\nu_{alg}=-w^{-1}w_{0,M}(\kappa_{alg}+\rho)-\rho$. 
Any eigenclass  in $\overline{\HH}^{\ell_{-}(w)}_w(K^p, \kappa^\vee_{alg}, \chi^{-1})^{-,fs}$ or   $\overline{\HH}^{\ell_{+}(w)}_w(K^p, \kappa_{alg}, \chi)^{+,fs}$ gives a point of $\mathcal{E}_w^{!}$ of weight $\nu=\nu_{alg}\chi$.

\item Let $\kappa_{alg}\in X^\star(T^c)^{M,+}$ and $\chi:T^c(\Z_p)\to \overline{F}^\times$ be a finite order character.  For any eigenclass $c$ in $\overline{\HH}^i(K^p,\kappa_{alg},\chi)^{+,fs}$ or $\overline{\HH}^i(K^p,\kappa_{alg}^\vee,\chi^{-1})^{-,fs}$ there is a $w\in\WM$ with $\ell_{\pm}(w)=i$ so that $c$ gives a point of $\mathcal{E}_w^{!}$ of weight $\nu=\nu_{alg}\chi$ where $\nu_{alg}=-w^{-1}w_{0,M}(\kappa_{alg}+\rho)-\rho$.  
\end{enumerate}
\end{thm}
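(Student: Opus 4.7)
The plan is to deduce all three parts from Proposition~\ref{prop-properties-interior}, together with Corollary~\ref{coro-concentration-interior} and Theorem~\ref{thm-eigenvariety}. The main geometric content lies in Part~(2); Part~(3) then follows formally from the Cousin complex decomposition of classical interior cohomology, and Part~(1) is a consequence of torsion-freeness over the weight space.

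For Part~(1), I would argue as follows. By Proposition~\ref{prop-properties-interior}(1), the coherent $\oscr_{\mathcal{W}}$-module $\overline{\HH}^{\ell_{\pm}(w)}_{w,an}(K^p, \nu^{un})^{\pm,fs}$ is torsion-free. It is the pushforward via $\pi:\mathcal{Z}\to\mathcal{W}$ of the coherent sheaf on $\mathcal{Z}$ whose support is $\mathcal{Z}^!_w$, and $\pi$ is locally quasi-finite and partially proper by Theorem~\ref{thm-eigenvariety}. Torsion-freeness forces every associated point of the pushforward to sit above a generic point of $\mathcal{W}$, so every irreducible component of $\mathcal{Z}^!_w$ dominates an irreducible component of $\mathcal{W}$; local quasi-finiteness of $\pi$ then yields $\dim \mathcal{Z}^!_w = \dim \mathcal{W}$ equidimensionally, and the same for $\mathcal{E}^!_w$ since $\mathcal{E}\to\mathcal{Z}$ is finite.

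For Part~(2), suppose we start with an eigensystem $(\lambda_p^{sm},\lambda^S)$ occurring in the classical interior cohomology. In the $+$ case, the injection of Proposition~\ref{prop-properties-interior}(3) produces a corresponding eigensystem, after the normalization $\lambda_p = \lambda_p^{sm}\nu_{alg}$, in $\overline{\HH}^{\ell_+(w)}_{w,an}(K^p,\nu)^{+,fs}$. Proposition~\ref{prop-properties-interior}(2) then lifts this eigensystem to the fiber $\overline{\HH}^{\ell_+(w)}_{w,an}(K^p,\nu^{un})^{+,fs}\otimes_{\oscr_{\mathcal{W}}}k(\nu)$, whence the corresponding point lies in the support $\mathcal{E}^!_w$. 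In the $-$ case, Proposition~\ref{prop-properties-interior}(3) gives a Hecke-equivariant \emph{surjection} from $\overline{\HH}^{\ell_-(w)}_{w,an}(K^p,\nu)^{-,fs}$ onto the classical interior cohomology; since every finite-slope part decomposes into generalized eigenspaces for a compact Hecke operator and is finite-dimensional in each slope $\leq h$ piece, any eigensystem downstairs lifts to one upstairs, and Proposition~\ref{prop-properties-interior}(2) then finishes the argument as in the $+$ case.

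For Part~(3), given an eigenclass $c\in\overline{\HH}^i(K^p,\kappa_{alg},\chi)^{+,fs}$, I would apply Corollary~\ref{coro-concentration-interior}: the space $\overline{\HH}^i(K^p,\kappa_{alg},\chi)^{+,fs}$ is a Hecke-equivariant subquotient of $\HH^i(\overline{\mathcal{C}ous}(K^p,\kappa_{alg},\chi)^+)$, which in turn is a subquotient of the degree $i$ piece $\bigoplus_{w\in\WM,\ \ell_+(w)=i}\overline{\HH}^i_w(K^p,\kappa_{alg},\chi)^{+,fs}$. Hence the Hecke eigensystem of $c$ appears in some summand with $\ell_+(w)=i$, and applying Part~(2) yields the required point of $\mathcal{E}^!_w$; the $-$ case is identical using the Cousin complex for $\kappa^\vee_{alg}$ in the $-$ theory. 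The main obstacle I anticipate is in Part~(1), where one must verify in the adic setting that the associated-prime characterization of torsion-freeness interacts correctly with the locally quasi-finite morphism $\pi$; once this is cleanly phrased, the dimension count becomes automatic.
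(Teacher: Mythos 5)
Your proposal is correct and follows the same route as the paper's proof, which is given only in two sentences: part (1) and (2) from Proposition~\ref{prop-properties-interior}, part (3) from part (2) and Corollary~\ref{coro-concentration-interior}. You have spelled out the details that the paper leaves implicit, and your handling of the $+$/$-$ asymmetry (injection in the $+$ case via Proposition~\ref{prop-properties-interior}(3) vs.\ lifting an eigensystem across the surjection in the $-$ case) is exactly what is needed. One small point worth noting: when you lift an eigensystem across the surjection $\overline{\HH}^{\ell_-(w)}_{w,an}(K^p,\nu)^{-,fs}\twoheadrightarrow\overline{\HH}^{\ell_-(w)}_w(K^p,\kappa^\vee_{alg},\chi^{-1})^{-,fs}$, the precise reason an eigensystem lifts is that after truncating to slope $\leq h$ both sides become finite-dimensional and the surjection restricts to a surjection of generalized $\lambda$-eigenspaces, so a nonzero generalized eigenspace downstairs forces one upstairs, which then contains a genuine eigenvector. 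You gesture at this but it deserves a sentence. Your anticipated difficulty in part (1) --- making precise in the adic setting that torsion-freeness of the pushforward plus local quasi-finiteness of $\pi:\mathcal{Z}\to\mathcal{W}$ forces equidimensionality --- is real but standard in the eigenvariety literature; the paper glosses over it too.
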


 \begin{proof} This first two points follow from proposition \ref{prop-properties-interior}.  The third point follows from the second point and corollary \ref{coro-concentration-interior}.
 \end{proof}
 
This theorem in particular implies that any finite slope system of Hecke eigenvalues in classical or overconvergent interior cohomology admits an analytic deformation of maximum dimension.

In light of point (3) of the theorem, it is natural to ask: given an eigenclass $c$ occurring in $\overline{\HH}^i(K^p,\kappa_{alg},\chi)^{+,fs}$, what is the set $\WM(c)$ of $w\in\WM$ with $\ell_+(w)=i$ for which $c$ occurs in $\overline{\HH}^i_w(K^p,\kappa_{alg},\chi)$?  It is natural to compare $\WM(c)$ to the set $C(\kappa_{alg})^+$, i.e. the set of $w\in\WM$ for which $\nu_{alg}+\rho\in X^{\star}(T^c)^+$.
  
\begin{prop}\label{determine-WM(c)} Assume that the eigenclass $c$ satisfies $+,ss^M(\kappa_{alg})$.  Then $\WM(c)\subseteq C(\kappa)^+$. 
\end{prop}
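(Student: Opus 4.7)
The plan is to show that the slope bound of Theorem \ref{thm-slopes} can be upgraded, for \emph{interior} cohomology only, to the full strength of Conjecture \ref{conj-strongslopes}, by a $p$-adic interpolation bootstrap over the component $\mathcal{E}^!_w$ constructed in Theorem \ref{thm-eigenvariety2}. Granted this improved bound, the proposition follows immediately from the definition of $+,ss^M(\kappa_{alg})$: if $w\in\WM\setminus C(\kappa_{alg})^+$, the bound $v(\lambda)\geq w^{-1}w_{0,M}(\kappa_{alg}+\rho)+\rho$ directly contradicts $\lambda\not\geq w^{-1}w_{0,M}(\kappa_{alg}+\rho)+\rho$.

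More precisely, fix $w\in \WM(c)$ so that $c$ gives a nonzero eigenclass in $\overline{\HH}^{\ell_+(w)}_w(K^p,\kappa_{alg},\chi)^{+,fs}$ with Hecke eigensystem $(\lambda_p,\lambda^S)$. By Theorem \ref{thm-eigenvariety2}~(3), the class $c$ gives a point $x$ on the component $\mathcal{E}^!_w$ of weight $\nu_0=\nu_{alg}\chi$ with $\nu_{alg}=-w^{-1}w_{0,M}(\kappa_{alg}+\rho)-\rho$; by Theorem \ref{thm-eigenvariety2}~(1), the irreducible component $C$ of $\mathcal{E}^!_w$ through $x$ has dimension $\dim\mathcal{W}$, so its projection to $\mathcal{W}$ is open and finite in a neighborhood of $\nu_0$. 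The Hecke eigenvalues $\lambda(t)$ for $t\in T(\qq_p)$ are analytic functions on $\mathcal{E}$, and the locally algebraic weights $\nu'=\nu'_{alg}\chi'$ for which the corresponding $\kappa'_{alg}=-w_{0,M}w(\nu'_{alg}+\rho)-\rho$ is $M$-dominant (and even $G$-regular) form a Zariski dense subset of any neighborhood of $\nu_0$ in $\mathcal{W}$, so their preimages form a Zariski dense set of classical points $x'$ in a neighborhood of $x$ in $C$, each corresponding to a nonzero eigenclass in $\overline{\HH}^{\ell_+(w)}_w(K^p,\kappa'_{alg},\chi')^{+,fs}$.

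At each such classical point $x'$, Theorem \ref{thm-slopes} applied to the interior cohomology in regular weight $\kappa'_{alg}$ gives
\[v(\lambda_{x'}(t))\geq \langle v(t),w^{-1}w_{0,M}\kappa'_{alg}\rangle = -\langle v(t),\nu'_{alg}+\rho+w^{-1}w_{0,M}\rho\rangle\]
for every $t\in T^+$. Choosing a family of translations of $\nu_0$ by an algebraic weight $\tau\in X^\star(T^c)$ within the image of $C\to\mathcal{W}$, the change in the bound is controlled linearly by $\tau$, so on $C$ the quantity $v(\lambda(t))+\langle v(t),\nu^{un}_{alg}\rangle$—interpreted via the map $T(\qq_p)\to\oscr_{\mathcal{E}}$ of Theorem \ref{thm-eigenvariety-intro} in a way that is well defined along the locally algebraic locus—is bounded below by the constant $-\langle v(t),\rho+w^{-1}w_{0,M}\rho\rangle$ on a Zariski dense set. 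By continuity of the valuation of an analytic function and the density of classical regular points, the same bound holds at $x$, which gives $v(\lambda)\geq w^{-1}w_{0,M}(\kappa_{alg}+\rho)+\rho$, i.e.\ the Conjecture \ref{conj-strongslopes} bound at the original class $c$. Combined with the $+,ss^M(\kappa_{alg})$ hypothesis, this forces $w\in C(\kappa_{alg})^+$.

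The main difficulty is the interpolation step: although $\lambda(t)$ is defined globally on $\mathcal{E}$, the shift $\nu^{un}_{alg}(t)$ is only algebraically meaningful at locally algebraic weights, so one must organize the deformation along a fixed algebraic direction (or equivalently twist by a character of $\mathcal{W}$ corresponding to an algebraic weight) in order to obtain a well-defined analytic comparison between $v(\lambda)$ at $x$ and at the nearby classical specializations. This is the ``bootstrapping trick using $p$-adic interpolation'' alluded to in the introduction, and it is the content of Theorem \ref{thm-strongslopes-interior}, from which the present proposition is a formal consequence.
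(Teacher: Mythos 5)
Your high-level reduction is correct and matches the paper: the proposition is an immediate formal consequence of the improved slope bound for interior cohomology (Theorem~\ref{thm-strongslopes-interior}~(2)), combined with the definition of the condition $+,ss^M(\kappa_{alg})$. This is exactly how the paper argues.

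However, your sketch of the interpolation step that establishes Theorem~\ref{thm-strongslopes-interior} contains a genuine gap. At the nearby classical regular-weight specializations $x'$, you invoke Theorem~\ref{thm-slopes}, which gives
$$v(\lambda_{x'}(t)) \geq \langle v(t), w^{-1}w_{0,M}\kappa'_{alg}\rangle,$$
or equivalently, after the normalization shift to $\lambda_p = \lambda \cdot \nu_{alg}$, the weight-independent bound $v(\lambda_p(t)) \geq -\langle v(t), \rho + w^{-1}w_{0,M}\rho\rangle$. Interpolating a weight-independent bound over the eigenvariety simply reproduces the same bound at the point $x$; it does not upgrade it to $v(\lambda_p(t))\geq 0$, which is what $v(\lambda)\geq w^{-1}w_{0,M}(\kappa_{alg}+\rho)+\rho$ requires. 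Your final sentence, asserting that the interpolated inequality gives the Conjecture~\ref{conj-strongslopes} bound, is therefore not justified: $v(\lambda_p(t)) \geq -\langle v(t),\rho+w^{-1}w_{0,M}\rho\rangle$ is strictly weaker than $v(\lambda_p(t))\geq 0$ in general. The missing ingredient is Corollary~\ref{coro-lafforgue-estimates}: in $G$-regular weight, the full conjectured bound $v(\lambda)\geq -\nu'_{alg}$ (i.e.\ $v(\lambda_p)\geq 0$) is known, because the coherent cohomology classes embed via Faltings's dual BGG spectral sequence into Betti cohomology, where the integral-lattice argument of Proposition~\ref{prop-lafforgue} applies. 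The bootstrap works precisely because this stronger, normalized bound holds at the dense set of regular classical points; replacing Corollary~\ref{coro-lafforgue-estimates} by Theorem~\ref{thm-slopes} loses exactly the $\rho$-shift that the proposition needs. One should also be careful with ``continuity of the valuation'': the paper's argument locates a nearby regular classical specialization with the \emph{same} slope, using the local structure of the eigenvariety over weight space, rather than appealing to a generic continuity principle, which does not preserve the required inequality.
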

\begin{proof}
By theorem \ref{thm-strongslopes-interior} (2) below, $\overline{\HH}_w^i(K^p,\kappa_{alg},\chi)^{+,ss^M(\kappa)}=0$ unless $w\in C(\kappa)^+$. 
\end{proof}

In particular if $C(\kappa)^+$ has only one element of length $i$ (for example if $\kappa+\rho$ is regular) then $\WM(c)$ is determined under this small slope hypothesis.

\begin{ex}
We give a basic counter-example to the proposition if we drop the small slope hypothesis. We  assume the Shimura variety is compact, so that the constant function $1\in H^0(S_K,\oscr)$ defines a finite slope interior cohomology class.  Then $\WM(1)=\{Id\}$, and the corresponding point of the eigenvariety $\mathcal{E}$ is in weight $\nu_{alg}=-w_{0,M}\rho-\rho=-2\rho_{nc}$, and so in particular $\nu_{alg}+\rho$ is not dominant.  On the other hand we have $C(0)^+=\{w_0^M\}$.
\end{ex}

\begin{rem} If $c$ is a cohomology class represented by an automorphic representation  $\pi$ which is tempered at $\infty$ and contributes to coherent cohomology in weight $\kappa$, then $\pi_\infty$ is a limit of discrete series which is described by the pair consisting of its infinitesimal character $-\kappa-\rho$ and a chamber $wX^\star(T)^+_{\qq} \subseteq X^\star(T)^{M,+}_\qq$ for $w \in C(\kappa)^+$ (see \cite{harris-ann-arb}, sect. 3.3). We can ask if this class lifts to a point in $\mathcal{E}_{w}^!$.
\end{rem}

\subsection{Improved slope bounds for interior cohomology and applications}
Using the interior eigenvariety we are able to prove that the conjectured slopes bounds \ref{conj-strongslopes2}, \ref{conj-strongslopes}, and \ref{conj-strongslopes-classical} hold for interior cohomology.  The idea is that classical points in regular weight satisfy the correct slope bound by corollary \ref{coro-lafforgue-estimates}, and these points are dense in the interior eigenvariety.
\begin{thm}\label{thm-strongslopes-interior}
\begin{enumerate}
\item Fix $w\in \WM$ and a locally algebraic weight $\nu=\nu_{alg} \chi$ for $\nu_{alg} \in X^\star(T^c)$ and $\chi:T^c(\ZZ_p)\to \overline{F}^\times$ a finite order character.  For any character $\lambda$ of $T^\pm$ on $\overline{\HH}^{\ell_{\pm}(w)}_{w,an}(K^p,\nu)^{\pm,fs}$ we have $v(\lambda)\geq 0$ in the $+$ case and $v(\lambda)\leq 0$ in the $-$ case.
\item Fix $w\in\WM$, $\kappa\in X^\star(T^c)^{M,+}$, and a finite order character $\chi:T^c(\ZZ_p)\to F^\times$.  For any character $\lambda$ of $T^\pm$ on $\overline{\HH}^{\ell_{\pm}(w)}_w(K^p,\kappa,\chi)^{\pm,fs}$ we have $v(\lambda)\geq w^{-1}w_{0,M}(\kappa+\rho)+\rho$ in the $+$ case and $v(\lambda)\leq w^{-1}(\kappa+\rho)-\rho$ in the $-$ case.
\item Fix $\kappa\in X^\star(T^c)^{M,+}$ and a finite order character $\chi:T^c(\ZZ_p)\to \overline{F}^\times$.  Let $\nu=-w^{-1}w_{0,M}(\kappa+\rho)-\rho$ for any $w\in C(\kappa)^+$.  Then for any character $\lambda$ of $T\pm$ on $\overline{\HH}^i(K^p,\kappa,\chi)^{\pm,fs}$  we have $v(\lambda)\geq -\nu$ in the $+$ case and $v(\lambda)\leq -w_0\nu$ in the $-$ case.
\end{enumerate}
\end{thm}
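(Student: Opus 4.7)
My plan is to first reduce Parts~(2) and~(3) to Part~(1), and then establish Part~(1) by a density argument on the interior eigenvariety $\mathcal{E}^!_w$.  For~(1)$\Rightarrow$(2), given an eigenclass in $\overline{\HH}^{\ell_+(w)}_w(K^p,\kappa,\chi)^{+,fs}$ with $T^+$-character $\lambda$, theorem~\ref{thm-eigenvariety2}(2) produces a point of $\mathcal{E}^!_w$ of weight $\nu=\nu_{alg}\chi$ with $\nu_{alg}=-w^{-1}w_{0,M}(\kappa+\rho)-\rho$, whose full $T(\qq_p)$-character is $\lambda_p=\lambda\cdot\nu_{alg}$ and which corresponds to an eigensystem in $\overline{\HH}^{\ell_+(w)}_{w,an}(K^p,\nu)^{+,fs}$.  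Part~(1) forces $v(\lambda_p)\geq 0$, hence $v(\lambda)\geq -\nu_{alg}=w^{-1}w_{0,M}(\kappa+\rho)+\rho$; the $-$ case is parallel via theorem~\ref{thm-perfect-pairing}.  For~(2)$\Rightarrow$(3), any eigenclass in $\overline{\HH}^i(K^p,\kappa,\chi)^{\pm,fs}$ is a subquotient of $\bigoplus_{w\in\WM,\ell_\pm(w)=i}\overline{\HH}^i_w(K^p,\kappa,\chi)^{\pm,fs}$ by corollary~\ref{coro-concentration-interior}, so $\lambda$ appears on some $\overline{\HH}^i_w$ with $\ell_\pm(w)=i$.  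Part~(2) gives $v(\lambda)\geq w^{-1}w_{0,M}(\kappa+\rho)+\rho$ in the $+$ case; choosing $v\in C(\kappa)^+$ with $v^{-1}w_{0,M}(\kappa+\rho)=-(\nu+\rho)$, dominance of $\nu+\rho$ forces $-(w^{-1}v)(\nu+\rho)\geq -(\nu+\rho)$, so $v(\lambda)\geq -\nu$.

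I would prove Part~(1) in the $+$ case, the $-$ case being symmetric.  Fix $t\in T^+$ and let $\tilde t\in\oscr_{\mathcal{E}}$ denote the image of $t$ under the canonical map $T(\qq_p)\to\oscr_{\mathcal{E}}$; the assertion reduces to showing $|\tilde t(z)|\leq 1$ at every point $z\in\mathcal{E}^!_w$.  As $|\tilde t|$ is the absolute value of a holomorphic function, it is upper semicontinuous, so the locus $\{|\tilde t|\leq 1\}$ is analytically closed, and it suffices to produce an analytically dense subset of $\mathcal{E}^!_w$ on which the bound already holds.

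The natural candidate is the set of points $z\in\mathcal{E}^!_w$ lying over a classical algebraic weight $\nu=\nu_{alg}\chi$ with $\nu_{alg}+\rho$ $G$-regular dominant and whose smooth Hecke character $\lambda_p^{sm}=\lambda_p\nu_{alg}^{-1}$ satisfies the strongly small slope condition $sss_w(\nu_{alg})$.  For such a $z$, theorem~\ref{thm-eigenvariety}(2) lifts the eigensystem to a class in the classical interior cohomology $\overline{\HH}^{\ell_+(w)}(K^p,\kappa,\chi)^{+,fs}$ for $\kappa=-w_{0,M}w(\nu_{alg}+\rho)-\rho$; since $\kappa+\rho$ is $G$-regular, corollary~\ref{coro-lafforgue-estimates} yields $v(\lambda_p^{sm})\geq -\nu_{alg}$, equivalently $v(\tilde t(z))=v(\lambda_p(t))\geq 0$.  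For analytic density: $\mathcal{E}^!_w$ is covered by affinoid pieces, each finite flat over an affinoid of $\mathcal{W}$ by the spectral construction of section~\ref{relative-spectral-1}, and on each such piece the eigenvalues of $t$ are bounded in valuation by some fixed $h$.  The condition $sss_w(\nu_{alg})$ has the explicit form $v(\lambda)<$ (affine function of $\nu_{alg}$), so it is automatic on a slope $\leq h$ piece once $\nu_{alg}$ is sufficiently regular.  The sufficiently regular classical algebraic weights form an analytically dense subset of $\mathcal{W}$, and their preimage under the locally finite flat map is analytically dense in each affinoid piece of $\mathcal{E}^!_w$.  Upper semicontinuity then extends the bound $|\tilde t|\leq 1$ from this dense subset to all of $\mathcal{E}^!_w$.

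The principal obstacle will be making the analytic density claim fully rigorous: one must track carefully how the density of regular algebraic weights transports under the locally quasi-finite map $\mathcal{E}^!_w\to\mathcal{W}$, using the explicit local factorization $\tilde P(X)=\tilde R(X)\tilde Q(X)$ of the Fredholm determinant from proposition~\ref{prop-existence-decompositionT}.  Once Part~(1) is established, the downstream application will be to replace the strongly small slope conditions in theorems~\ref{thm-12-main} and~\ref{thm-betti-sss-vanishing} for interior cohomology by the weaker small slope conditions, confirming the expected sharp vanishing statements for interior coherent and Betti cohomology.
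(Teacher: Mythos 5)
Your proposal is correct and follows essentially the same strategy as the paper: reduce parts~(2) and~(3) to part~(1), then prove~(1) by lifting to the interior eigenvariety $\mathcal{E}^!_w$ and invoking classicality (theorem~\ref{thm-eigenvariety}(2)) and the known slope bound in regular weight (corollary~\ref{coro-lafforgue-estimates}). Your reductions $(1)\Rightarrow(2)$ via theorem~\ref{thm-eigenvariety2}(2) and $(2)\Rightarrow(3)$ via corollary~\ref{coro-concentration-interior} together with the Bruhat inequality of lemma~\ref{lem-bruhat-inequality} match the paper's argument exactly.

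Where your treatment of part~(1) differs is in how the bound is transported from a dense set of nice points to the given point. The paper simply asserts that one can find another point of $\mathcal{E}^!$ over a sufficiently regular locally algebraic weight with \emph{the same} $v(\lambda_p)$; you instead argue by density, observing that the bound holds on all points over sufficiently regular algebraic weights and then extending. Your version is arguably a sharper account of why this works, but one phrase is off: on the adic space the locus $\{|\tilde t|\leq 1\}$ is a rational subdomain, hence \emph{open}, not ``analytically closed.'' The correct formulation is via the continuity of $|\tilde t|$ on the associated Berkovich space (or equivalently via the maximum modulus principle on each affinoid $\mathcal{U}\subset\mathcal{E}^!_w$ finite over an affinoid $V\subset\mathcal{W}$, together with the fact that locally algebraic characters $\nu_{alg}\chi$ with $\nu_{alg}$ arbitrarily $G$-regular are \emph{topologically} dense in $\mathcal{W}$, not merely Zariski dense — Zariski density would not suffice, as $|f|\leq 1$ can hold on a Zariski dense subset without holding globally). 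With that correction your density argument makes precise what the paper leaves implicit, and both proofs otherwise coincide.
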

\begin{proof}
In the $-$ case, the first point implies the second point by theorem \ref{thm-eigenvariety2} (3).  In the $+$ case, one repeats the constructions of this section, exchanging the roles of the sheaves of locally analytic functions and distributions.  The second point implies the third point as in the proof of proposition \ref{prop-strongslopes-implies-classical} using corollary \ref{coro-concentration-interior}.

We now prove the first point.  The eigensystem $\lambda$ gives a point $(\nu,\lambda_p,\lambda^S)$ of $\mathcal{E}^!$ (where $\lambda_p=\lambda$ in the $+$ case and $\lambda^t$ in the $-$ case.)  We can find another point $(\nu',\lambda_p',\lambda^{'S})$ which satisfies $v(\lambda'_p)=v(\lambda_p)$ and $\nu'=\nu'_{alg}\chi$ is locally algebraic, and $\nu'_{alg}\in X^\star(T^c)^+$ is sufficiently large so that $v(\lambda'_p)$ satisfies $+,sss(\nu)$.  Then this point is classical by theorem \ref{thm-eigenvariety} (2), and so the slope bound is satisfied by corollary \ref{coro-lafforgue-estimates}.
\end{proof}

As a consequence we deduce a vanishing theorem for interior cohomology which improves on theorem \ref{thm-first-vanishing-sss}. 

\begin{thm}\label{thm-strong-vanishing-ss}  Let $\kappa\in X^\star(T^c)^{M,+}$ and let  $\chi : T^c(\ZZ_p) \rightarrow \overline{F}^\times$ be a finite order character.  We have that $\overline{\HH}^i(K^p, \kappa, \chi)^{ss^M(\kappa)}$ is supported in the range $[\ell_{\min}(\kappa),\ell_{\max}(\kappa) ]$.
\end{thm}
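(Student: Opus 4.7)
The plan is to combine the interior Cousin complex of Section \ref{section-interior-coho} with the improved slope bounds for interior overconvergent cohomology from Theorem \ref{thm-strongslopes-interior}(2). This is really the same strategy that gave the weaker Theorem \ref{thm-first-vanishing-sss}, but with the stronger bound of Theorem \ref{thm-strongslopes-interior}(2) replacing Theorem \ref{thm-slopes}, so that the strongly small slope condition gets upgraded to the small slope condition. The argument applies separately for a choice of $\pm$, and in both directions one works $\chi$-isotypic component by $\chi$-isotypic component.

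First, by Corollary \ref{coro-concentration-interior}, for each $p$ the cohomology $\overline{\HH}^p(K^p,\kappa,\chi)^{\pm,fs}$ is a subquotient of $\HH^p(\overline{\mathcal{C}ous}(K^p,\kappa,\chi)^{\pm})$, where the interior Cousin complex has $p$-th term
\[
\bigoplus_{w\in \WM,\,\ell_\pm(w)=p} \overline{\HH}^{\ell_\pm(w)}_w(K^p,\kappa,\chi)^{\pm,fs}.
\]
Passing to the $\pm,ss^M(\kappa)$ direct summand (which, by Lemma \ref{lem-functorialityoffs} in its abelian-category incarnation and by exactness of slope decomposition, commutes with taking subquotients) we see that $\overline{\HH}^p(K^p,\kappa,\chi)^{\pm,ss^M(\kappa)}$ is a subquotient of $\HH^p$ of the complex whose $p$-th term is
\[
\bigoplus_{w\in\WM,\,\ell_\pm(w)=p} \overline{\HH}^{\ell_\pm(w)}_w(K^p,\kappa,\chi)^{\pm,ss^M(\kappa)}.
\]

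Next, I would apply Theorem \ref{thm-strongslopes-interior}(2): every character $\lambda$ of $T^+$ occurring on $\overline{\HH}^{\ell_+(w)}_w(K^p,\kappa,\chi)^{+,fs}$ satisfies $v(\lambda)\geq w^{-1}w_{0,M}(\kappa+\rho)+\rho$, and dually in the $-$ case. But the condition $+,ss^M(\kappa)$ is by Definition \ref{defi-ss} precisely the requirement that $v(\lambda)\not\geq w^{-1}w_{0,M}(\kappa+\rho)+\rho$ for every $w\in\WM\setminus C(\kappa)^+$, and symmetrically in the $-$ case. Combining the two, the summand indexed by $w$ vanishes on the small slope part whenever $w\notin C(\kappa)^\pm$:
\[
\overline{\HH}^{\ell_\pm(w)}_w(K^p,\kappa,\chi)^{\pm,ss^M(\kappa)}=0\qquad\text{for }w\notin C(\kappa)^\pm.
\]

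Finally, only the indices $w\in C(\kappa)^\pm$ contribute to the small slope Cousin complex, and these satisfy $\ell_\pm(w)\in[\ell_{\min}(\kappa),\ell_{\max}(\kappa)]$ by the very definitions of $\ell_{\min}(\kappa)$ and $\ell_{\max}(\kappa)$. Hence the complex $\overline{\mathcal{C}ous}(K^p,\kappa,\chi)^{\pm,ss^M(\kappa)}$ is concentrated in degrees lying in $[\ell_{\min}(\kappa),\ell_{\max}(\kappa)]$, so its cohomology, and thus the subquotient $\overline{\HH}^i(K^p,\kappa,\chi)^{\pm,ss^M(\kappa)}$, vanishes for $i\notin[\ell_{\min}(\kappa),\ell_{\max}(\kappa)]$. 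Since the $ss^M(\kappa)$ part of the smooth admissible $G(\qq_p)$-representation $\overline{\HH}^i(K^p,\kappa,\chi)$ is in either Jacquet-module interpretation generated by the $\pm$ finite slope components (Proposition \ref{prop-jacquet-ss}), this gives the theorem. The only real input is Theorem \ref{thm-strongslopes-interior}(2), whose proof via $p$-adic interpolation over the interior eigenvariety is the genuine content; given that, the vanishing here is formal.
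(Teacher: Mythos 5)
Your proof is correct and follows the same route as the paper: apply the improved slope bound of Theorem \ref{thm-strongslopes-interior}(2) to kill the $w\notin C(\kappa)^\pm$ terms of the interior Cousin complex under the $\pm,ss^M(\kappa)$ condition, then invoke Corollary \ref{coro-concentration-interior} to deduce the concentration of $\overline{\HH}^i(K^p,\kappa,\chi)^{\pm,ss^M(\kappa)}$. The paper's proof is just a terse one-liner citing exactly these two ingredients; your write-up fills in the same details that are left implicit there.
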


\begin{proof}  
Theorem \ref{thm-strongslopes-interior} (2) implies that the complex $\overline{\mathcal{C}ous}(K^p, \kappa, \chi)^{\pm,ss^M(\kappa)}$ is concentrated in the range $[\ell_{\min}(\kappa),\ell_{\max}(\kappa) ]$, which implies the theorem by corollary \ref{coro-concentration-interior}
\end{proof}

As another application we are able to improve our classicality theorem for overconvergent cuspforms.

\begin{thm}\label{thm-strong-classicality}
Let $\kappa\in X^\star(T^c)^{M,+}$ satisfy $C(\kappa)^+=\{1\}$, and let $\chi:T^c(\ZZ_p)\to \overline{F}^\times$ be a finite order character.  Then $\HH^0_{Id}(K^p,\kappa,\chi,cusp)^{+,ss^M(\kappa)}\to\HH^0_{Id}(K^p,\kappa,\chi)^{+,ss^M(\kappa)}$ factors through $\HH^0(K^p,\kappa,\chi)^{+,ss^M(\kappa)}$.
\end{thm}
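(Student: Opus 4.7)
The plan is to use the spectral sequence of Theorem \ref{thm-spectral-sequence} to identify $\HH^0(K^p,\kappa,\chi)^{+,fs}$ as a specific subspace of $\HH^0_{Id}(K^p,\kappa,\chi)^{+,fs}$, and then to show via the improved slope bound on interior cohomology (Theorem \ref{thm-strongslopes-interior}(2)) that the image of overconvergent cuspforms lands in this subspace after imposing the $+,ss^M(\kappa)$ condition. The hypothesis $C(\kappa)^+=\{1\}$ means that $\kappa+\rho$ is $G$-regular and that the unique element of $\WM$ in $C(\kappa)^+$ has length $0$, so in particular every $w\in\WM$ of length $\geq 1$ is outside $C(\kappa)^+$.

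First I would analyze the non-cuspidal spectral sequence $\mathbf{E}^{p,q}(K^p,\kappa,\chi)^+\Rightarrow\HH^{p+q}(K^p,\kappa,\chi)^{+,fs}$. By the vanishing theorem \ref{theorem-coho-van2}, $\mathbf{E}_1^{p,q}(K^p,\kappa,\chi)^+=0$ whenever $q<0$, so all higher differentials $d_r$ into or out of $E_r^{0,0}$ vanish, and $E_\infty^{p,-p}=0$ for $p\not=0$. Therefore $\HH^0(K^p,\kappa,\chi)^{+,fs}=E_\infty^{0,0}=E_2^{0,0}$ is identified with the kernel of the first differential
$$d_1^{0,0}:\HH^0_{Id}(K^p,\kappa,\chi)^{+,fs}\longrightarrow\bigoplus_{w\in\WM,\ell_+(w)=1}\HH^1_w(K^p,\kappa,\chi)^{+,fs}.$$
This identification is Hecke-equivariant, and therefore passes to the $+,ss^M(\kappa)$ part, giving the description of $\HH^0(K^p,\kappa,\chi)^{+,ss^M(\kappa)}$ inside $\HH^0_{Id}(K^p,\kappa,\chi)^{+,ss^M(\kappa)}$ as $\ker(d_1^{0,0})$.

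Next, the inclusion of sheaves $\mathcal{V}_{\kappa,\Sigma}(-D_{K,\Sigma})\hookrightarrow\mathcal{V}_{\kappa,\Sigma}$ induces a morphism from the cuspidal spectral sequence to the non-cuspidal one, and in particular a commutative square relating the two $d_1^{0,0}$ maps. Chasing this square reduces the theorem to showing that the right vertical map
$$\bigoplus_{\ell_+(w)=1}\HH^1_w(K^p,\kappa,\chi,cusp)^{+,ss^M(\kappa)}\longrightarrow\bigoplus_{\ell_+(w)=1}\HH^1_w(K^p,\kappa,\chi)^{+,ss^M(\kappa)}$$
is zero. By the very definition of interior cohomology, the image of each component equals $\overline{\HH}^1_w(K^p,\kappa,\chi)^{+,ss^M(\kappa)}=\overline{\HH}^{\ell_+(w)}_w(K^p,\kappa,\chi)^{+,ss^M(\kappa)}$.

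Finally, the hypothesis $C(\kappa)^+=\{1\}$ ensures that every $w\in\WM$ with $\ell_+(w)=1$ lies in $\WM\setminus C(\kappa)^+$. By Theorem \ref{thm-strongslopes-interior}(2), every character $\lambda$ of $T^+$ occurring on $\overline{\HH}^{\ell_+(w)}_w(K^p,\kappa,\chi)^{+,fs}$ satisfies $v(\lambda)\geq w^{-1}w_{0,M}(\kappa+\rho)+\rho$, which directly contradicts the $+,ss^M(\kappa)$ condition applied with witness $w'=w$. Hence $\overline{\HH}^1_w(K^p,\kappa,\chi)^{+,ss^M(\kappa)}=0$ for every such $w$, the right vertical map in the square above vanishes, and consequently the image of $\HH^0_{Id}(K^p,\kappa,\chi,cusp)^{+,ss^M(\kappa)}$ in $\HH^0_{Id}(K^p,\kappa,\chi)^{+,ss^M(\kappa)}$ lies in $\ker(d_1^{0,0})=\HH^0(K^p,\kappa,\chi)^{+,ss^M(\kappa)}$. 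There is no serious obstacle once the improved interior slope bound is in hand; the whole point of the argument is to leverage Theorem \ref{thm-strongslopes-interior}, which was itself proved by $p$-adic interpolation over the interior eigenvariety, to gain the single unit of slope that distinguishes $ss^M(\kappa)$ from $sss^M(\kappa)$.
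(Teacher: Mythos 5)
Your proof is correct and follows essentially the same route as the paper: both set up the commutative square relating the cuspidal and non-cuspidal degree-one Cousin terms, use the vanishing theorem \ref{theorem-coho-van2} (via proposition \ref{prop-deg-spect-sequ2}) to identify $\HH^0(K^p,\kappa,\chi)^{+,fs}$ with $\ker(d_1^{0,0})$, and then kill the right vertical arrow on the small-slope part. The only (cosmetic) difference is that you invoke theorem \ref{thm-strongslopes-interior}(2) directly where the paper cites theorem \ref{thm-strong-vanishing-ss}, whose proof is itself a one-line consequence of \ref{thm-strongslopes-interior}(2).
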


\begin{proof}
We have a commutative diagram
\begin{eqnarray*}
\xymatrix{&&\HH^0_{Id}(K^p,\kappa,\chi,cusp)^{+,fs}\ar[r]\ar[d]& \mathcal{C}ous^1(K^p,\kappa,\chi,cusp)^+\ar[d]\\
0\ar[r]&\HH^0(K^p,\kappa,\chi)^{+,fs}\ar[r]&\HH^0_{Id}(K^p,\kappa,\chi)^{+,fs}\ar[r]& \mathcal{C}ous^1(K^p,\kappa,\chi)^+}
\end{eqnarray*}
where the bottom row is exact by proposition \ref{prop-deg-spect-sequ2}.  After passing to the $+,ss^M(\kappa)$ part of this diagram, the right vertical arrow is 0 by theorem \ref{thm-strong-vanishing-ss}.
\end{proof}

\begin{rem}
Theorem \ref{thm-strong-classicality} says that a small slope overconvergent cuspform comes from a classical eigenform.  Although we have not proved it, this eigenform should also be cuspidal (for instance this would follow from conjeture \ref{conj-strongslopes}).  We note that it is possible for an overconvergent cuspform which does not have small slope to come from a non cuspidal eigenform.  A basic example is the non-ordinary $p$-stabilization of an Eisenstein series for $\mathrm{GL}_2$.
\end{rem}

\subsection{Application: construction of Galois representations and local-global compatibility $p$}\label{subsec-localglobal} 
Let $L$ be a totally real or CM number field.  The class of regular, algebraic, essentially (conjugate) self dual, cuspidal automorphic representations for $\mathrm{GL}_n/L$ have been studied intensively. In particular, one can attach to them compatible systems of Galois representations which satisfy many of the expected properties (see, e.g., \cite{MR3272052}, \cite{BLGGT}):
\begin{thm}[Bellaiche, Caraiani, Chenevier, Clozel, Harris, Kottwitz,
  Labesse, Shin, Taylor, \ldots]\label{thm-regular-galois}  Let $\pi$ be a regular, algebraic, essentially (conjugate) self dual,  cuspidal automorphic representation of $\mathrm{GL}_n/L$ with $\pi^c = \pi^\vee \otimes \chi$ and infinitesimal character $\lambda = ( ( \lambda_{1, \tau}, \ldots, \lambda_{n, \tau})_{\tau \in \mathrm{Hom}(L,\C)})$ with $\lambda_{1, \tau} > \cdots > \lambda_{n, \tau}$. 
  Then for each isomorphism $\iota:\overline{\qq}_p\simeq \C$ there is a continuous
  Galois representation $\rho_{\pi, \iota}: G_L \rightarrow \mathrm{GL}_n(\overline{\qq}_p)$
  such that:
  \begin{enumerate}
  \item $\rho_{\pi, \iota}^c \simeq \rho_{\pi, \iota}^\vee \otimes \epsilon_p^{1-n} \otimes \chi_{\iota}$ where $\chi_{\iota}$ is the $p$-adic realization of $\chi$ and $\epsilon_p$ is the cyclotomic character.
  \item $\rho_{\pi, \iota}$ is pure.
\item $\rho_{\pi, \iota}$ is de Rham at all places dividing $p$, with  $\iota^{-1} \circ \tau$-Hodge-Tate weights: $(-\lambda_{n,\tau} + \frac{n-1}{2}, \cdots, -\lambda_{1,\tau} + \frac{n-1}{2})$.
 \item  For all finite  place 
$v$ one has
$$ \iota \mathrm{WD} (\rho_{\pi, \iota} \vert_{G_{F_v}})^{F-ss} = \mathrm{rec} (\pi_v \otimes \vert \det \vert_v^{\frac{1-n}{2}}).$$
\end{enumerate}
 \end{thm}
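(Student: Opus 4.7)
The plan is to construct the desired Galois representation by descending $\pi$ to a (twisted, suitably chosen) cuspidal automorphic representation on a quasi-split unitary group, and then realizing it in the $\ell$-adic étale cohomology of the corresponding unitary Shimura variety, from which all the listed properties can be extracted. First I would separate into the two cases: if $L$ is CM with maximal totally real subfield $L^+$ and $\pi^c = \pi^\vee \otimes \chi$, the representation $\pi$ descends (by results of Labesse, Mok, and Kaletha--Mínguez--Shin--White) to a cuspidal automorphic representation $\Pi$ of the quasi-split unitary group $U_n^*/L^+$ whose base change to $\mathrm{GL}_n(\mathbb{A}_L)$ is $\pi$ (twisted appropriately to normalize the central character). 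If $L$ is totally real, I would first base change $\pi$ to $L \cdot F$ for an auxiliary imaginary quadratic field $F$, descend to a unitary group over $L$, and at the end use cyclic base change compatibility to recover the Galois representation over $L$ itself.

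Next, I would exploit the fact that $\pi_\infty$ is a discrete series representation at every archimedean place (this uses regularity of $\lambda$): by Matsushima's formula and the work of Kottwitz--Harris--Taylor--Shin on the cohomology of PEL-type unitary Shimura varieties, the finite part $\Pi_f$ appears with nonzero multiplicity in the middle-degree $\ell$-adic étale cohomology $H^d_{\mathrm{\acute{e}t}}(\mathrm{Sh}_{K,\overline{L^+}}, \mathcal{L}_\lambda)$ of the unitary Shimura variety for a suitable local system $\mathcal{L}_\lambda$ indexed by the infinitesimal character. The $\Pi_f$-isotypic component carries a commuting action of $G_{L^+}$, and from the Galois representation on this component I would extract $\rho_{\pi,\iota}: G_L \to \mathrm{GL}_n(\overline{\qq}_p)$ (after choosing $\iota$). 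Points (1) and (3) then follow from standard features of this construction: the essential conjugate self-duality comes from Poincaré duality on the Shimura variety, and the Hodge--Tate weights are computed via Faltings's $p$-adic comparison theorem applied to $\mathcal{L}_\lambda$.

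For local-global compatibility at places away from $p$ (point (4) at finite $v \nmid p$), the unramified case follows from the Eichler--Shimura congruence and the explicit description of the Satake parameters of $\Pi_v$ at hyperspecial level. The ramified places require genuinely more work: this is the content of a long sequence of papers beginning with Taylor--Yoshida and culminating in the work of Shin, Chenevier--Harris, and Caraiani (using nearby cycle methods and the Rapoport--Zink uniformization of the basic stratum). At places $v | p$, the same comparison theorem of Faltings combined with results of Tsuji, Niziol, and Scholze on semistable/crystalline comparison identifies the Weil--Deligne representation of $\rho_{\pi,\iota}|_{G_{L_v}}$ with that associated to $\Pi_v$ via Fontaine's machinery. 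Purity (point (2)) follows from Deligne's Weil II together with the fact that the $\pi_f$-isotypic component sits inside the intersection cohomology of a proper smooth model, and the local--global compatibility established above pins down the Frobenius eigenvalues. The hardest conceptual step, which is the main obstacle and the reason this theorem is a composite of many papers rather than a single proof, is local-global compatibility at the ramified finite places (including those above $p$) in the full generality stated, since it requires understanding the bad reduction of the Shimura variety and the interaction of nearby cycles with automorphic induction.
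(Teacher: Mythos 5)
This theorem is not proved in the paper; it is quoted as a black box with citations to \cite{MR3272052} and \cite{BLGGT}, and the paper's contribution is the subsequent $p$-adic interpolation to the weakly regular case. Your outline is a faithful high-level account of the strategy carried out in that literature, so the approach agrees with what the paper intends the reader to consult. Two technical imprecisions worth flagging, since they are precisely the points that made the theorem a composite of many papers rather than a single one: first, the Shimura variety attached to a quasi-split unitary group is not proper, so one must either pass to intersection cohomology of the minimal compactification (Morel, Shin) or descend further to an inner form of $U_n$ that is anisotropic at infinity --- and for $n$ even such an inner form need not exist over $L^+$ without extra sign conditions, which is why the essentially conjugate self-dual case in full generality was completed via $p$-adic congruences (Chenevier--Harris, and the patching argument in \cite{BLGGT}) rather than by a direct geometric realization. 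Second, the $\Pi_f$-isotypic summand of cohomology carries a Galois action of dimension a priori only a multiple of $n$; extracting an $n$-dimensional $\rho_{\pi,\iota}$ requires either the full Kottwitz--Shin description of the isotypic component as an external tensor product, or an irreducibility/Chebotarev argument. Condition (1) also does not really come from Poincar\'e duality in the way you suggest; it is deduced from the relation $\pi^c\simeq\pi^\vee\otimes\chi$ and local-global compatibility at the unramified places via strong multiplicity one and Chebotarev. Your identification of ramified (and $\ell=p$) local-global compatibility as the genuine bottleneck is correct.
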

The Galois representations of this theorem are usually found in the \'etale cohomology of unitary Shimura varieties, while in the remaining cases they are constructed by $p$-adic interpolation.

In \cite{F-Pilloni}, section 9, we defined a certain class of cuspidal automorphic representations for the group $\mathrm{GL}_n/L$ which (when $L$ is CM) realize in the coherent cohomology of unitary Shimura varieties, by weakening the condition of regular to weakly regular and odd.  To these weakly regular, odd, algebraic, essentially (conjugate) self dual, cuspidal automorphic representations, one can still attach compatible system of Galois representations (\cite{2015arXiv150705922B}, \cite{MR3989256}, \cite{MR3512528}), but at present many of their expected properties are not known. 
The techniques of this paper allows for a new construction of the Galois representation via analytic families. The advantage of this construction is that we can prove some instances of local-global compatibility at $p$, using results of Kisin \cite{MR1992017} on the interpolation of crystalline periods in analytic families, as in the work of Jorza and Mok \cite{MR3039824}, \cite{MR3200667}.

  \begin{thm} Let $\pi$ be a  weakly regular, odd, algebraic, essentially (conjugate) self dual, cuspidal automorphic representation of $\mathrm{GL}_n/L$ with $\pi^c = \pi^\vee \otimes \chi$ and infinitesimal character $\lambda = ( ( \lambda_{1, \tau}, \ldots, \lambda_{n, \tau})_{\tau \in \mathrm{Hom}(L,\C)})$ with $\lambda_{1, \tau} \geq \cdots \geq \lambda_{n, \tau}$. 
  Then for each isomorphism $\iota:\overline{\qq}_p\simeq \C$ there is a continuous
  Galois representation $\rho_{\pi, \iota}: G_L \rightarrow \mathrm{GL}_n(\overline{\qq}_p)$
  such that:
  \begin{enumerate}
  \item $\rho_{\pi, \iota}^c \simeq \rho^\vee \otimes \epsilon_p^{1-n} \otimes \chi_{\iota}$ where $\chi_{\iota}$ is the $p$-adic realization of $\chi$.
 \item $\rho_{\pi, \iota}$ is unramified at all finite places $v \nmid p$ for which $\pi_v$ is unramified and one has
$$\iota WD (\rho_{\pi, \iota} \vert_{G_{L_v}})^{F-ss} = \mathrm{rec} (\pi_v \otimes \vert \det \vert_v^{\frac{1-n}{2}}).$$
\item $\rho_{\pi, \iota}$ has generalized $\iota^{-1}\circ\tau$-Hodge--Tate weights $(-\lambda_{n,\tau} + \frac{n-1}{2}, \cdots,
 -\lambda_{1,\tau} + \frac{n-1}{2})$.
 \item Let $v \mid p$ be a place of $L$ and assume that $\pi_v$ is a regular principal series (i.e. the Jacquet module $(\pi_v)_U$ is a direct sum of $n!$ distinct characters of $T(L_v)$). Then $\rho_{\pi,\iota}\vert_{G_{L_v}}$ is potentially crystalline and 
 $$\iota WD (\rho_{\pi, \iota} \vert_{G_{L_v}})^{F-ss} = \mathrm{rec} (\pi_v \otimes \vert \det \vert_v^{\frac{1-n}{2}}).$$
\end{enumerate}
 \end{thm}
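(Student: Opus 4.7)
The plan is to deduce the theorem from the case of regular $\pi$ (theorem \ref{thm-regular-galois}) by $p$-adic interpolation, using the eigenvariety from theorem \ref{thm-eigenvariety2} to deform $\pi$ to a family of regular weight cuspidal automorphic representations where the corresponding Galois representations are already known. The novel content is the local-global compatibility at $p$ in point (4), which we will obtain from Kisin's interpolation of crystalline periods in analytic families.

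First I would descend $\pi$ to a cuspidal automorphic representation $\Pi$ on a suitable quasi-split unitary group $U/L^+$ (where $L^+$ is the maximal totally real subfield of $L$, or $L$ itself in the totally real case). This uses the results of \cite{MR3338302} (conditional on \cite{MR3135650}) and is carried out in \cite{F-Pilloni}, section 9, where one verifies that the weak regularity/oddness hypotheses on $\pi$ allow for such a descent, and moreover that $\Pi_\infty$ is a non-degenerate limit of discrete series whose Harish-Chandra parameters match a weight $\kappa\in X^\star(T^c)^{M,+}$ and a Kostant representative $w\in\WM$. In particular $\Pi$ contributes to the interior coherent cohomology $\overline{\HH}^{\ell(w)}(K^p,\kappa,\chi)^{+,fs}$ of the Shimura variety for $U$, for some choice of tame level $K^p$ and $p$-stabilization (i.e. a refinement of $\Pi_v$ for each $v\mid p$).

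Next, I would apply theorem \ref{thm-eigenvariety2} to lift the corresponding Hecke eigensystem $c$ to a point $x\in\mathcal{E}_w^!$ of the interior eigenvariety, of weight $\nu=\nu_{alg}\chi$ with $\nu_{alg}=-w^{-1}w_{0,M}(\kappa+\rho)-\rho$. The crucial property, ensured by part (1) of theorem \ref{thm-eigenvariety2}, is that $x$ sits on a component of $\mathcal{E}_w^!$ of dimension equal to $\dim\mathcal{W}$. Inside this component, classical points in $G$-regular algebraic weights $\nu'_{alg}$ with sufficiently large $+,sss_w(\nu')$ slope data are Zariski-dense (this uses that regular, small slope points are classical by theorem \ref{thm-eigenvariety}(2), and a standard density argument for components of the eigenvariety of maximum dimension obtained by varying the weight in a sufficiently small neighborhood). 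At each such classical accumulation point, the corresponding automorphic representation is regular (in the ordinary sense), essentially self-dual and cuspidal, so theorem \ref{thm-regular-galois} attaches to it a Galois representation satisfying full local-global compatibility. Interpolating these Galois representations over the component containing $x$, following the construction of Galois pseudocharacters on eigenvarieties (as in Chenevier's work), produces a continuous $\rho_{\pi,\iota}:G_L\to\mathrm{GL}_n(\overline{\qq}_p)$ satisfying (1), (2), and the generalized Hodge--Tate weight formula (3), the latter from Sen theory applied in families.

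The main obstacle, and the genuinely new content, is point (4). Here I would proceed as follows. The $p$-distinguished hypothesis that $\pi_v$ is a regular principal series means that the eigensystem $\lambda_p$ of $x$ at $v$ corresponds to a choice of $n$ distinct crystalline Frobenius eigenvalues $\varphi_1,\ldots,\varphi_n$, and that $\mathrm{rec}(\pi_v\otimes|\det|_v^{(1-n)/2})$ is a sum of $n$ distinct unramified characters. By the density of regular classical points, we may choose on the component containing $x$ a curve through $x$ along which the $p$-refinement is preserved and along which all sufficiently nearby classical points have $\rho_{\pi',\iota}|_{G_{L_v}}$ potentially crystalline with known Weil--Deligne representation. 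Kisin's theorem \cite{MR1992017} on the interpolation of crystalline periods in analytic families then guarantees that, for each $i$, the crystalline period corresponding to the eigenvalue $\varphi_i$ extends analytically to $x$, so that the Weil--Deligne representation of $\rho_{\pi,\iota}|_{G_{L_v}}$ admits $n$ independent crystalline periods matching $\varphi_1,\ldots,\varphi_n$. Because the $\varphi_i$ are distinct, this forces $\rho_{\pi,\iota}|_{G_{L_v}}$ to be potentially crystalline and pins down its Weil--Deligne representation to be $\mathrm{rec}(\pi_v\otimes|\det|_v^{(1-n)/2})$ up to Frobenius semisimplification, as claimed. This is the strategy of Jorza and Mok \cite{MR3039824}, \cite{MR3200667}; the real work is checking that the curve in $\mathcal{E}_w^!$ produced from point (4) of theorem \ref{thm-eigenvariety-intro} has enough classical points, in sufficiently generic weight, to apply Kisin's results at every characteristic-zero refinement simultaneously. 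The $+,sss_w(\nu')$ condition from theorem \ref{thm-eigenvariety}(2) is exactly strong enough to guarantee this genericity once $\nu'_{alg}$ is chosen deeply in the dominant cone.
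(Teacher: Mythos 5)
Your proposal reconstructs essentially the same argument as the paper: descend $\pi$ to a cuspidal representation on a quasi-split unitary group realizing in interior coherent cohomology, apply Theorem \ref{thm-eigenvariety2} to obtain a point of the interior eigenvariety $\mathcal{E}_w^!$ on a maximal-dimensional component containing a Zariski-dense set of regular classical points, invoke Theorem \ref{thm-regular-galois} at those points, and conclude via Kisin's crystalline period interpolation as in Jorza--Mok. The paper's actual proof is a brief sketch that additionally notes that the existence of $\rho_{\pi,\iota}$ with (1) and (2) is already known from \cite{F-Pilloni} (so no pseudocharacter interpolation is needed), and that one first performs a solvable base change so that $L$ is CM, all primes above $p$ split, and each $\pi_v$ is a finite slope constituent of a principal series; these are harmless reductions rather than gaps in your argument.
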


\begin{proof}
The existence of a Galois representation satisfying points (1) and (2) is already known (see \cite{F-Pilloni}, theorem 9.10).  In order to prove points (3) and (4), by base change we may assume that $L$ is CM with maximal totally real subfield $F$, all the primes above $p$ in $F$ split in $L$, and for all primes $v\mid p$ of $L$, $\pi_v$ is a constituent of a principal series representation (and so has finite slope in the sense of section \ref{subsection-jacquet}).

Let $\mathrm{GU}(n)/F$ be the quasi-split unitary similitude group in $n$ variables, and let $G\subset\mathrm{Res}_{F/\qq}$ be the subgroup where the similitude factor lands in $\mathbb{G}_m\subseteq\mathrm{Res}_{F/\qq}\mathbb{G}_m$.  The group $G$ admits a PEL Shimura datum $(G,X)$, so that $\pi$ realizes in the interior coherent cohomology of the corresponding Shimura variety, as explained in the proof of theorem 9.11 of \cite{F-Pilloni} (we note that this is conditional on the main results of \cite{MR3338302}).

The result now follows from theorems \ref{thm-eigenvariety2} and \ref{thm-regular-galois}, using the results of \cite{MR1992017} as explained in proof of theorem 4.1 of \cite{MR3039824}.
\end{proof}

\bibliographystyle{amsalpha}
\bibliography{H1eigenvariety}

\end{document}